\newtheorem{thm}{Theorem}[section]
\newtheorem{prop}[thm]{Proposition}
\newtheorem{lem}[thm]{Lemma}
\newtheorem{cor}[thm]{Corollary}
\theoremstyle{definition}
\newtheorem{defn}[thm]{Definition}
\theoremstyle{remark}
\newtheorem{remk}[thm]{Remark}
\newtheorem{remks}[thm]{Remarks}
\newtheorem{exm}[thm]{Example}
\newtheorem{exms}[thm]{Examples}
\newtheorem{notat}[thm]{Notation}
\numberwithin{equation}{section}
\newcommand{\Tab}{{\mathbf {Tab}}}
\newcommand{\cf}{{\rm cf}}
\newcommand{\fr}{{\rm tor}}
\newcommand{\pf}{{\rm pf}}
\newenvironment{rem}{\begin{remk}}%
{\hfill$\square$\end{remk}}
{\hfill$\square$\end{remks}}
{\hfill$\square$\end{exm}}
{\hfill$\square$\end{exms}}
{\hfill$\square$\end{notat}}
\newcommand{\thmref}{Theorem~\ref}
\newcommand{\propref}{Proposition~\ref}
\newcommand{\corref}{Corollary~\ref}
\newcommand{\defref}{Definition~\ref}
\newcommand{\lemref}{Lemma~\ref}
\newcommand{\remref}{Remark~\ref}
\newcommand{\secref}{Section~\ref}
\newcommand{\sA}{{\mathcal A}}
\newcommand{\sB}{{\mathcal B}}
\newcommand{\sC}{{\mathcal C}}
\newcommand{\sD}{{\mathcal D}}
\newcommand{\sE}{{\mathcal E}}
\newcommand{\sF}{{\mathcal F}}
\newcommand{\sG}{{\mathcal G}}
\newcommand{\sH}{{\mathcal H}}
\newcommand{\sI}{{\mathcal I}}
\newcommand{\sK}{{\mathcal K}}
\newcommand{\sL}{{\mathcal L}}
\newcommand{\sM}{{\mathcal M}}
\newcommand{\sN}{{\mathcal N}}
\newcommand{\sO}{{\mathcal O}}
\newcommand{\sV}{{\mathcal V}}
\newcommand{\sW}{{\mathcal W}}
\newcommand{\A}{{\mathbb A}}
\newcommand{\F}{{\mathbb F}}
\newcommand{\G}{{\mathbb G}}
\renewcommand{\H}{{\mathbb H}}
\newcommand{\N}{{\mathbb N}}
\renewcommand{\P}{{\mathbb P}}
\newcommand{\Q}{{\mathbb Q}}
\newcommand{\T}{{\mathbb T}}
\newcommand{\Z}{{\mathbb Z}}
\newcommand{\fm}{{\mathfrak m}}
\newcommand{\fM}{{\mathfrak M}}
\newcommand{\ff}{{\mathfrak f}}
\newcommand{\divv}{{\rm div}}
\newcommand{\Ker}{{\rm Ker}}
\newcommand{\gr}{{\rm gr}}
\newcommand{\surj}{\twoheadrightarrow}
\newcommand{\inj}{\hookrightarrow}
\newcommand{\red}{{\rm red}}
\newcommand{\rank}{{\rm rank}}
\newcommand{\Pic}{{\rm Pic}}
\newcommand{\Div}{{\rm Div}}
\newcommand{\Hom}{{\rm Hom}}
\newcommand{\Spec}{{\rm Spec \,}}
\newcommand{\bk}{{\rm bk}}
\newcommand{\Tr}{{\rm Tr}}
\newcommand{\ab}{\rm ab}
\newcommand{\Gal}{{\rm Gal}}
\newcommand{\supp}{{\rm supp}\,}
\newcommand{\sHom}{{\mathcal{H}{om}}}
\newcommand{\n}{{\un{n}}}
\newcommand{\id}{{\operatorname{id}}}
\newcommand{\Zar}{{\text{\rm Zar}}}
\newcommand{\pfd}{{\operatorname{\mathbf{Pfd}}}} 
\newcommand{\Sch}{{\operatorname{\mathbf{Sch}}}}
\newcommand{\<}{\langle}
\renewcommand{\>}{\rangle}
\newcommand{\Picc}{{\mathbf{Pic}}}
\newcommand{\Ab}{{\mathbf{Ab}}}
\newcommand{\Wedge}{{\Lambda}}
\newcommand{\et}{{\text{\'et}}}
\newcommand{\res}{{\operatorname{res}}}
\renewcommand{\log}{{\operatorname{log}}}
\newcommand{\Tor}{{\operatorname{Tor}}}
\newcommand{\Br}{{\operatorname{Br}}}
\newcommand{\un}{\underline}
\newcommand{\ov}{\overline}
\newcommand{\tuborg}{\left\{\begin{array}{ll}}
\newcommand{\sluttuborg}{\end{array}\right.}
\newcommand{\zar}{{\rm zar}}
\newcommand{\nis}{{\rm nis}}
\newcommand{\tor}{{\rm tor}}
\newcommand{\dlog}{{\rm dlog}}
\newcommand{\ft}{{\rm ft}}
\newcommand{\trv}{{\rm triv}}
\newcommand{\Et}{{\rm {\bf{Et}}}}
\newcommand{\wt}{\widetilde}
\newcommand{\wh}{\widehat}
\newcommand{\Aut}{{\rm Aut}}
\newcommand{\coker}{{\rm Coker}}
\newcommand{\Fil}{{\rm fil}}
\newcommand{\dR}{{\rm dR}}
\newcommand{\etl}{{\acute{e}t}}
\newcounter{elno}
\newcounter{elno-abc}   
\newcounter{elno-abc-prime}
\begin{document}

\begin{titlepage}

\thispagestyle{empty}
    \begin{center}
 
{\LARGE\bf {On Kato's ramification filtration}} \\ 
\vspace{0.5 in}
{\large {Subhadip Majumder}}\\
\vspace{0.5 in}
{\large\bf {Abstract}}
\end{center}
For a Henselian discrete valued field $K$ of characteristic $p>0$, Kato \cite{Kato-89} defined a ramification filtration $\{\Fil_nH^q(K,\Q_p/\Z_p(q-1))\}_{n \ge 0}$ on $H^q(K,\Q_p/\Z_p(q-1))$.  One can also define a ramification filtration on $H^q(U,\Z/p^m(q-1))$ using the local Kato-filtration, where $U$ is the complement of a simple normal crossing divisor in a regular scheme $X$ of characteristic $p>0$. The main objective of this thesis is to provide a cohomological description of these filtrations using de Rham-Witt sheaves and present several applications.

To achieve our goal, we study a theory of the filtered de Rham-Witt complex of $F$-finite regular schemes of characteristic $p>0$ and prove several properties which are well known for the classical de Rham-Witt complex of regular schemes. As applications, we prove a refined version of Jannsen-Saito-Zhao's duality \cite{JSZ} over finite fields, and a similar duality for smooth projective curves over local fields. As another application, we  prove a Lefschetz theorem for unramified and ramified Brauer group (with modulus) of smooth projective $F$-finite schemes over a field of characteristic $p>0$. Further applications are given in \cite{KM-classfield} and \cite{KM-crys}.

\noindent
\\
\textit{This work is a slightly modified version of the author's Ph.D. thesis, submitted in July 2024 at TIFR, Mumbai.}
 \pagenumbering{roman} 
    
\end{titlepage}

\setcounter{page}{3}

\thispagestyle{empty}

\thispagestyle{empty}

\setcounter{tocdepth}{3}
\setcounter{chapter}{0}

\thispagestyle{empty}

\pdfbookmark[0]{Contents}{Contents}
\tableofcontents

\thispagestyle{empty}

\setlength{\headheight}{27.5pt}

\pagestyle{fancy}

\fancyhf{}

\fancyhead[LE,RO]{\thepage}

\renewcommand{\headrulewidth}{0pt}
\fancypagestyle{plain}{ %
\fancyhf{} 
\renewcommand{\headrulewidth}{0pt} 
\renewcommand{\footrulewidth}{0pt}}

\cleardoublepage
\chapter*{Notations}\label{sec:Notn}
\addcontentsline{toc}{chapter}{Notations}
\thispagestyle{empty}
We shall work over a field $k$ of characteristic $p > 0$ throughout this
thesis, unless otherwise stated. We let $\Sch^{\ft}_k$ denote the category of separated and
finite type $k$-schemes. We let $\Sch_k$ denote the category of 
separated Noetherian $k$-schemes. The product $X \times_{\Spec(k)} Y$
in $\Sch_k$ will be written as $X \times Y$. We let
$X^{(q)}$ (resp. $X_{(q)}$) denote the set of points on $X$ having codimension
(resp. dimension) $q$.

Let $X$ be a scheme and let $\sM_X$ be a sheaf of monoids on $X$. We shall denote a log scheme $(X, \sM_X, \alpha\colon \sM_X \to \sO_X)$ on a scheme
$X$ by $X_\log$. If $X = \Spec(A)$, we shall also write these as
$(A, \alpha \colon M_A \to A)$ and $A_\log$. By $X_\trv$, we shall mean the
(trivial) log structure given by the usual inclusion
$\alpha \colon \sO^\times_X \inj \sO_X$. For a scheme $X$, we let $\Sch_X$ denote
the category of schemes over $X$. All rings are assumed to be Noetherian commutative rings with unity. For a reduced ring $A$, we shall let $Q(A)$ denote
the ring of total quotients of $A$.

We let $\Sch_{k/\zar}$ (resp. $\Sch_{k/\nis}$, resp. $\Sch_{k/ \etl})$
denote the Zariski (resp. Nisnevich, resp. {\'e}tale) site of $\Sch_{k}$.
We let $\epsilon \colon
\Sch_{k/ \et} \to \Sch_{k/ \nis}$ denote the canonical morphism of sites.
If $\sF$ is a sheaf on $\Sch_{k/\nis}$, 
we shall denote $\epsilon^* \sF$ also by $\sF$ as long as the usage of
the {\'e}tale topology is clear in a context.
For $X \in \Sch_k$, we shall let $F \colon X \to X$ denote the
absolute Frobenius morphism.
For a scheme $X$ endowed with a topology $\tau \in \{\zar, \nis, \et\}$, $X_\tau$ denotes the small $\tau$-site of $X$. For
a closed subset $Y \subset X$, and a
$\tau$-sheaf $\sF$, we let $H^*_{\tau, Y}(X, \sF)$ denote the $\tau$-cohomology
groups of $\sF$ with support in $Y$. If $\tau$ is fixed in a section, we shall
drop it from the notations of the cohomology groups. We shall let $D(X_\tau)$ denote the bounded derived category of sheaves on $X_\tau$.
If $X=\Spec A$, then the cohomology group $H^i_\tau(X,\sF)$ may also be written as $H^i_\tau(A,\sF)$. 

For any field extension $k'$ over $k$, $X(k')$ will denote the set of $k'$-rational points of $X$. $\Div(X)$ will denote the set of all Weil divisors of $X$. For a reduced effective Weil divisor $E$, $\Div_E(X)$ will denote the set of all Weil divisors, whose supports are inside $E$.

$\Z$ will denote the set of all integers and $\N$ (resp. $\N_0$) will denote the set of all positive (resp. non negative) integers. $\lfloor . \rfloor$ (resp. $\lceil .\rceil$) will denote the greatest (resp. least) integer function. For $\n=(n_1,\ldots,n_r)\in \Z^r$ and $p \in \N$, we define $\n/p:=(\lfloor n_1/p \rfloor,\ldots, \lfloor n_r/p \rfloor)$.

We shall let $\Ab$ denote the category of abelian groups. For an abelian group $A$, we shall write $\Tor^1_{\Z}(A, {\Z}/n)$ as
$_{n}A$ and $A/{nA}$ as $A/n$.
For $A, B \in \Ab$, the tensor product $A \otimes_{\Z} B$ will be written as $A \otimes B$.
We shall let $A\{p\}$ (resp. $A\{p'\}$) denote the
subgroup of elements of $A$ which are annihilated by a power of (resp.
an integer prime to) $p$. We let ${\bf Ab_p}$ be the category of $p$-primary torsion abelian groups. We define $A_\tor:= A\{p\} \cup A\{p'\}$.  We let $\Tab$ denote the category of topological abelian groups
with continuous homomorphisms. A locally compact Hausdorff topological group $G$ is called ``torsion-by-profinite" if there is an exact sequence
\begin{equation*}
  0 \to G_\pf \to G \to G_{\text{dt}} \to 0,
\end{equation*}
where $G_\pf$  is an open and profinite subgroup of $G$ and $G_{\text{dt}}$
is a torsion group with the quotient topology (necessarily discrete). The category of ``torsion-by-profinite" will be denoted as $\pfd$.

By an $n$-local field, we mean a complete discrete valued field whose residue field is an $(n-1)$-local field, where the $0$-local fields are defined to be finite fields. The $1$-local fields will also be mentioned as local fields.

 We shall write $H^q_\et(X, {\Q}/{\Z}(q-1))$ 
(resp. $H^q_\et(A, {\Q}/{\Z}(q-1))$) as $H^q(X)$ (resp. $H^q(A)$) for
any $q \ge 0$ (see \cite[\S~3.2, Defn.~1]{Kato80-2}).

For a Noetherian scheme $X$, we shall let $\pi^{\ab}_1(X)$ 
denote the abelianized {\'e}tale fundamental group of $X$ and consider it as a
topological group with its profinite topology. We shall write $\pi^{\ab}_1(\Spec(F))$
interchangeably as $G_F$, where the latter is the abelianized absolute Galois
group of a field $F$. We shall let $\Br(X)$ denote the cohomological Brauer group of $X$, which is defined as the {\'e}tale cohomology group $H^2_\et(X, \G_m)$. 

RLR is abbreviated as regular local ring. SNCD is abbreviated as simple normal crossing divisor.

\chapter*{Introduction}
\addcontentsline{toc}{chapter}{Introduction}
\pagenumbering{arabic}
\pagestyle{fancy}

\fancyhf{}

\fancyhead[LE,RO]{\thepage}
\fancyhead[RE]{}
\fancyhead[LO]{}

The {\'e}tale fundamental group and the Brauer group are two of the most extensively studied objects in algebraic and arithmetic geometry. The first one was introduced by Grothendieck in the 1960's in \cite{SGA1}. Given a connected scheme $X$ and a geometric point $\ov{x}$, the {\'e}tale fundamental group $\pi^{\et}_1(X,\ov{x})$ is defined as the inverse limit of the automorphism groups $\Aut_X(X_\alpha(\ov x))$, where $X_{\alpha}$ varies over all finite {\'e}tale covers of $X$ and $X_\alpha(\ov x)$ be the geometric fiber over $\ov x$. The {\'e}tale fundamental group of a field $k$ is the absolute Galois group $\Gal(\ov k /k)$. So the theory of {\'e}tale fundamental group can be considered as a generalization of the Galois theory of fields to the case of schemes. The maximal abelian quotient of $\pi^{\et}_1(X,\ov{x})$ is denoted by $\pi^{ab}_1(X,\ov{x})$.

On the other hand, the Brauer group of a field $k$, $\Br (k)$ was introduced by R. Brauer. It is defined as the group of equivalent classes of central simple algebras over $k$, with addition given by the tensor product of algebras. Another important interpretation of the Brauer group of a field $k$ is that it classifies the projective varieties over $k$ that become isomorphic to projective space when base changed to $\ov k$, the algebraic closure of $k$. Such a variety is called a Severi-Brauer variety. Brauer group was generalized from field to any commutative rings by Auslander and Goldman in \cite{AG-Brauer} and later to arbitrary schemes by Grothendieck, where one uses Azumaya algebras instead of central simple algebras and projective bundles which are {\'e}tale locally trivial instead of projective space in the previous definition of Brauer group of a field. This group is often known as the Brauer-Azumaya group denoted by $\Br_{Az}(X)$, for a scheme $X$ (cf. \cite[Chapter~3]{CTS}). On the other hand, another generalisation of the Brauer group of a field to a scheme is the cohomological Brauer group $\Br(X):= H^2_\et(X,\G_m)$. It is well known (cf. \cite[Thm.~4.2.1]{CTS}) that $\Br_{Az}(X)\inj \Br(X)_{\tor}$; and it is an isomorphism if $X$ is a quasi-projective scheme. Also, $\Br(X)$ is torsion if $X$ is a regular and integral scheme by \cite[Thm.~3.5.5]{CTS}.

 The goal of this thesis is to study the wild ramification part of $\pi^{ab}_1(-)$,
$\Br(-)$ and more generally of the $p$-adic {\'e}tale cohomology of a scheme which is a complement of a simple normal crossing divisor in a proper smooth variety of characteristic $p>0$. In an attempt to do so, we study a theory of filtered de Rham-Witt complex and its role in Kato's local ramification theory \cite{Kato-89}.
 
\subsection*{Ramification theory} 
Let $k$ be a field and $X$ be a proper smooth $k$ scheme. The group $\pi^{ab}_1(X)$ ( or $\Br(X)$) is often known as the unramified (abelian) {\'e}tale fundamental group (resp unramified Brauer group). However, if $U \inj X$ is an open subscheme of $X$ whose complement is of codimension $1$ (if codimension $>1$, then $\pi^{ab}_1(-) \text{ and } \Br(-)$ are unchanged by purity), then to study $\pi^{ab}_1(U)$ (resp. $\Br(U)$), one needs to consider all the abelian {\'e}tale covers (resp Brauer classes) of $U$ with ramifications along its complement. This is where the Ramification theory comes into the picture.

 Let $K$ be a Henselian discrete valued field. If $K$ is a local field, the ramification theory of $G_K$ is classical (\cite{Serre-LF}). If residue field of $K$ is not necessarily finite, then the ramification theory for $H^1_\et(K,\Q/\Z)=\Hom_{cts}(G_K, \Q/\Z)$ was developed by Brylinski \cite{Brylinski}, Kato \cite{Kato-89}, Matsuda \cite{Matsuda} (see also \cite{Abbes-Saito}). As in the classical case, these authors defined a filtration $\{\Fil_nH^1_\et(K,\Q/\Z)\}_n$, that classifies all abelian {\'e}tale covers of $K$ whose ramification at the closed point of $\sO_K$ is bounded by $n \ge 0$. More generally, for any $q \ge 0$, Kato  \cite{Kato-89} also defined a ramification filration of $H^{q+1}(K)$ (which we write as $\Fil^{bk}_nH^{q+1}(K)$), where, in modern language, $H^{q+1}(K)$ is same as $ H^{q+1}(K,\Q/\Z(q))$, the {\'e}tale motivic cohomology of Suslin and Voevodsky. These filtrations are usually difficult to study and also unlike the unramified case, they don't have any cohomological interpretation in general.

 Now to study global ramification theory, one can use Kato's local ramification theory to define a filtration of $H^1_\et(U, \Q/\Z)$, namely $\{\Fil_DH^1_\et(U,\Q/\Z)\}_D$ so that $$H^1_\et(U, \Q/\Z)=\varinjlim\limits_{D}\Fil_DH^1_\et(U,\Q/\Z),$$
 where $D$ varies over all effective divisors whose support is contained in $X\setminus U$. The profinite group $\pi^{ab}_1(X,D):=(\Fil_DH^1_\et(U,\Q/\Z))^\vee$ classifies all the abelian {\'e}tale covers  of $U$ whose ramifications along its complement is bounded by $D$. Here $(-)^\vee$ means the discrete dual. One can also define a ramification filtration for $\Br(U)$ (see \defref{defn:2}) in a similar way.

 In \cite{Kerz-Saito-ANT},  Kerz and Saito constructed a two term complex (which we write as $W_m\sF^{0,\bullet}_D$) in $\sD^b(X_\et,\Z/p^m)$, such that $\H^1_\et(X, W_m\sF^{0,\bullet}_D)\cong \Fil_DH^1_\et(U,\Z/p^m)= \ _{p^m}(\Fil_DH^1_\et(U,\Q/\Z))$, when $E= X \setminus U$ is a simple normal crossing divisor and $X$ is smooth over a finite field. This allowed them to get a Lefschetz-type result for $\pi^{ab}_1(X,D)$ and the higher dimensional ramified class field theory over finite fields in \cite{Kerz-Saito-Duke}.

 One of the objectives of this thesis is to study the ramification theory of a regular $F$-finite scheme of characteristic $p>0$ using Kato's filtration and give several applications. We recall that a scheme $X$ is called $F$-finite, if the absolute Frobenius map $F: X \to X$ is a finite morphism. Let $X$ be a regular $F$-finite scheme of dim $N$ and $E=\stackrel{r}{\underset{i =1}\cup}E_i$ be a simple normal crossing divisor, where $E_i$'s are the irreducible components. Let $U=X \setminus E$ and $j: U \inj X$. We fix an $m \ge 1$. Let $D=\sum\limits_{1\le i \le r}n_iE_i$ with $n_i \ge 0$ and $K_i= Frac(\sO_{X,x}^h)$ if $\ov{\{x\}}=E_i$. Also we set  $H^{q+1}_m(K_i)=H^{q+1}(K_i,\Z/p^m(q))$. We define $$\Fil_D^{\log}H^{q+1}(U, \Z/p^m(q)):= \Ker \left(H^{q+1}(U,\Z/p^m(q)) \to \bigoplus \limits_{1\le i \le r} \frac{H^{q+1}_m(K_i)}{\Fil^{\bk}_{n_i}H^{q+1}(K_i) \cap {H^{q+1}_m(K_i)}}\right).$$ 
  The arrow on the right hand side is induced by canonical restrictions.
 
 The first main result of this thesis is the following theorem whose $q=0$ case is the Kerz-Saito's result mentioned above :
 \begin{thm}\label{thm:1}(\thmref{thm:H^1-fil})
     For any $q \ge 0$, there exists a two term complex $W_m\sF^{q,\bullet}_D$ of {\'e}tale sheaves such that 
     $$\H^1_\et(X, W_m\sF^{q,\bullet}_D) \cong \Fil_DH^{q+1}(U, \Z/p^m(q)).$$
 \end{thm}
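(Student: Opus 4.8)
The plan is to prove the general-$q$ statement by imitating the proof of the $q=0$ case (the Kerz--Saito theorem recalled above, \cite{Kerz-Saito-ANT}), systematically replacing the de Rham--Witt theory of smooth varieties over finite fields by the theory of the filtered de Rham--Witt complex of $F$-finite regular schemes developed in the earlier chapters. The two inputs that make this possible are: (a) the identification $\Z/p^m(q)|_U \simeq W_m\Omega^q_{U,\log}[-q]$ in $D(U_\et)$, so that $H^{q+1}(U,\Z/p^m(q)) = H^1_\et(U, W_m\Omega^q_{U,\log})$; and (b) the two-term resolution
$$W_m\Omega^q_{U,\log} \;\simeq\; \bigl[\,W_m\Omega^q_U \xrightarrow{\ 1-F\ } W_m\Omega^q_U/dV^{m-1}\Omega^{q-1}_U\,\bigr]$$
in degrees $0,1$ (for $m=1$ the operator is $C^{-1}-1$, with $C$ the Cartier operator). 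Since $j\colon U\inj X$ is an affine morphism, $R^aj_*$ vanishes for $a\ge 1$ on these sheaves, so applying $Rj_*$ to (b) gives $Rj_*W_m\Omega^q_{U,\log} \simeq [\,j_*W_m\Omega^q_U \to j_*(W_m\Omega^q_U/dV^{m-1}\Omega^{q-1}_U)\,]$, and hence $H^{q+1}(U,\Z/p^m(q)) \cong \H^1_\et$ of this two-term complex of sheaves on $X$.

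The complex $W_m\sF^{q,\bullet}_D$ will be a subcomplex of $[\,j_*W_m\Omega^q_U \to j_*(W_m\Omega^q_U/dV^{m-1}\Omega^{q-1}_U)\,]$ carved out using Kato's local filtration. Here I would invoke the local cohomological description of that filtration proved earlier in the thesis: for the Henselian discrete valued field $K_i$ at the generic point $x_i$ of $E_i$ there is a filtration $\{\Fil_n W_m\Omega^q_{K_i}\}_{n\ge 0}$ whose image under $W_m\Omega^q_{K_i}/dV^{m-1}\Omega^{q-1}_{K_i}\twoheadrightarrow H^{q+1}_m(K_i)$ is precisely $\Fil^{\bk}_{n}H^{q+1}(K_i)\cap H^{q+1}_m(K_i)$. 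I then take the degree-$1$ term of $W_m\sF^{q,\bullet}_D$ to be the subsheaf of $j_*(W_m\Omega^q_U/dV^{m-1}\Omega^{q-1}_U)$ of sections whose germ at each $x_i$ lies in $\Fil_{n_i}$, and the degree-$0$ term to be the full preimage of this subsheaf under $1-F$. The preimage is essential: the operator $1-F$ raises the order of poles along the $E_i$ (roughly multiplying it by $p$), so an ``order $\le n_i$'' condition cannot be imposed naively in both degrees --- this is already the shape of the Kerz--Saito complex for $q=0$. Defining $j_*W_m\Omega^q_U$, the operators $F,V,d$, the logarithmic subsheaf and the resolution (b) over the $F$-finite regular scheme $X$, and checking that they behave well at the crossings of the SNCD $E$, is exactly what the thesis's de Rham--Witt formalism supplies.

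The inclusion $W_m\sF^{q,\bullet}_D \inj Rj_*W_m\Omega^q_{U,\log}$ is an isomorphism on the degree-$0$ cohomology sheaf $j_*W_m\Omega^q_{U,\log}$ and injective on the degree-$1$ cohomology sheaf (a subsheaf of $R^1j_*W_m\Omega^q_{U,\log}$), so its cone $\sT_D$ is a sheaf supported on $E$, placed in degree $1$. The resulting long exact sequence then yields $\H^1_\et(X, W_m\sF^{q,\bullet}_D) \cong \Ker\bigl(H^{q+1}(U,\Z/p^m(q)) \to H^0(X, \sT_D)\bigr)$. The local theorem identifies the stalk of $\sT_D$ at $x_i$ with $H^{q+1}_m(K_i)/(\Fil^{\bk}_{n_i}H^{q+1}(K_i)\cap H^{q+1}_m(K_i))$, so what remains is to show that the map $H^{q+1}(U,\Z/p^m(q)) \to H^0(X,\sT_D)$, followed by the natural map to $\bigoplus_i H^{q+1}_m(K_i)/(\Fil^{\bk}_{n_i}\cap H^{q+1}_m(K_i))$ over the generic points of the $E_i$, has the same kernel as the defining restriction map of $\Fil_D H^{q+1}(U,\Z/p^m(q))$; this uses that $X$ is regular, so that a class is detected by its restrictions to codimension-one points, which is the de Rham--Witt purity (Gersten resolution) established earlier.

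I expect the main obstacle to be precisely this last step: controlling $H^0(X,\sT_D)$, i.e. showing that a class whose restriction to each $K_i$ has Kato level $\le n_i$ is globally represented by a section of the modulus-bounded complex, and that the passage between the étale sheaf-theoretic picture (whose stalks involve the strict Henselizations) and the filtration defined through the fields $K_i$ introduces no spurious terms. This is the technical core of the Kerz--Saito argument in the case $q=0$, and over an $F$-finite regular $X$ it rests on (i) the Gersten-type resolution of $W_m\Omega^\bullet_X$, which replaces smoothness over a finite field, and (ii) the explicit generation of $\Fil_n W_m\Omega^q_{K_i}$ by $\dlog$-symbols and forms of bounded pole order, together with the compatibility of these local filtrations at the crossings $E_i\cap E_j$ --- all of which are properties of the filtered de Rham--Witt complex proven in the preceding chapters. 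A secondary point, routine in the classical case, is to re-establish the identification $\Z/p^m(q)|_U\simeq W_m\Omega^q_{U,\log}[-q]$ and the resolution (b) in the $F$-finite (possibly imperfect-residue-field) setting.
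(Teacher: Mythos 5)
Your overall architecture is the same as the paper's: build a two-term subcomplex of $Rj_*W_m\Omega^q_{U,\log}$ cut out by the local filtration along the components of $E$, note that the quotient is concentrated in degree one and supported on $E$, and then feed in the local theorem (\thmref{thm:Kato-fil-10}) to identify the obstruction at the generic points $x_i$ of $E_i$ with $H^{q+1}_m(K_i)/({\Fil^{\bk}_{n_i}}\cap H^{q+1}_m(K_i))$. (Your complex differs in presentation -- you use $1-\ov F$ on a quotient with the degree-zero term defined as a full preimage, whereas the paper takes $Z_1\Fil_DW_m\Omega^q_U\xrightarrow{1-C}\Fil_DW_m\Omega^q_U$ -- but since the theorem only asserts existence of some two-term complex, that is not in itself a problem, although your ``preimage'' sheaf is much less tractable than $Z_1\Fil_D$, and at codimension-one points the statement that it agrees with the filtered complex is exactly the nontrivial \propref{prop:Cartier-fil-1}.)

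The genuine gap is the step you yourself flag as the ``main obstacle'' and then only express the hope that it follows from a Gersten resolution and $\dlog$-generation: the control of $H^0(X,\sT_D)$, i.e.\ the proof that the kernel of $H^{q+1}(U,\Z/p^m(q))\to H^0(X,\sT_D)$ coincides with the kernel of the restriction maps to the generic points of the $E_i$ alone, with no further conditions coming from points of codimension $\ge 2$ (in particular the crossings $E_i\cap E_j$). In the paper this is precisely \lemref{lem:H^1-fil-1} together with \lemref{lem:H^1-fil-2}: one needs $\H^1_E(X,W_m\sF^{q,\bullet}_D)=0$ and the injectivity of $\H^2_E(X,W_m\sF^{q,\bullet}_D)\to\bigoplus_{x\in X^{(1)}\cap E}\H^2_x(X,W_m\sF^{q,\bullet}_D)$, and the latter rests on the vanishing $\H^2_x(X,W_m\sF^{q,\bullet}_D)=0$ for codimension-two points $x\in E$. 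That vanishing is not a formal consequence of purity or of a Gersten resolution -- no Gersten resolution for these filtered (or even logarithmic) Hodge--Witt sheaves with modulus is established or available; the only Gersten-type input in the paper is \lemref{lem:Gersten-0} for Milnor $K$-sheaves, used for a different purpose. Instead it is proved by completing to $k[[Y_1,Y_2]]$, using the unique infinite-series presentations of $Z_1\Fil_{(n,m)}\Omega^q_K$ (\lemref{lem:Z-1-fil}) to show that $(1-C)^*$ is injective on $H^2_{\fm}$ (\lemref{lem:1-C-inj}), after reducing to $m=1$ via the $V^{m-1}$--$R$ triangle. Without an argument of this kind, your identification of $\H^1_\et(X,W_m\sF^{q,\bullet}_D)$ with $\Fil_DH^{q+1}(U,\Z/p^m(q))$ only gives an inclusion into the subgroup defined by the conditions at the $x_i$, and the proof is incomplete at its technical core.
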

 
 \begin{defn}\label{defn:2}
     We let $\Br^\divv(X|D)$ denote the subgroup of $\Br(U)$ consisting of elements $\chi$
  such that for every $x \in D^{(0)}$, the image $\chi_{x}$ of
$\chi$ under the canonical map $\Br(U) \to\Br(K_{x})$ 
lies in $\Fil_{n_{x}} \Br(K_{x})$, where $n_x$ is multiplicity of $D$ at $x$. 
\end{defn}
As an immediate corollary of \thmref{thm:1}, we have the following. 
\begin{cor}\label{cor:Br}(\corref{cor:F^q})
 We have an exact sequence    
\begin{equation*}
        0 \to \Pic(U)/p^m \to \H^1_\et(X, W_m\sF^{1,\bullet}_{D}) \to \ _{p^m}\Br^\divv(X|D) \to 0.
    \end{equation*}
\end{cor}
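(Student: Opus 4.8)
The plan is to combine \thmref{thm:1} in the case $q=1$ with the mod-$p^m$ Kummer sequence in characteristic $p$. By \thmref{thm:1} there is a canonical isomorphism $\H^1_\et(X, W_m\sF^{1,\bullet}_{D}) \cong \Fil_D H^{2}(U, \Z/p^m(1))$, so it suffices to produce the asserted exact sequence with $\H^1_\et(X, W_m\sF^{1,\bullet}_{D})$ replaced by $\Fil_D H^{2}(U, \Z/p^m(1))$. First I would recall the short exact sequence of {\'e}tale sheaves on $U$ (indeed on the regular $F$-finite scheme $X$)
\begin{equation*}
0 \to \G_m \xrightarrow{p^m} \G_m \xrightarrow{\dlog} W_m\Omega^1_{U,\log} \to 0,
\end{equation*}
where the first map is the $p^m$-th power (a monomorphism since $\mu_{p^m}$ has trivial sections over reduced $\F_p$-schemes) and surjectivity on the right is the Bloch--Kato--Gabber theorem in the $F$-finite form established earlier in the thesis. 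Since $\Z/p^m(1) \simeq W_m\Omega^1_{U,\log}[-1]$ in $D(U_\et)$, taking {\'e}tale cohomology gives the familiar exact sequence
\begin{equation*}
0 \to \Pic(U)/p^m \to H^{2}_\et(U, \Z/p^m(1)) \to \ _{p^m}\Br(U) \to 0,
\end{equation*}
and over the field $K_i$ the same sequence (with $\Pic(K_i)=0$) identifies $H^{2}_m(K_i) = H^{2}(K_i,\Z/p^m(1))$ with $\ _{p^m}\Br(K_i)$, compatibly with the restriction maps coming from $U$.

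Next I would intersect the middle sequence with $\Fil_D H^{2}(U,\Z/p^m(1))$. The key observation is that the image of $\Pic(U)/p^m$ in $H^{2}_\et(U,\Z/p^m(1))$ already lies in $\Fil_D$: the natural morphism $\Spec K_i \to U$ (through the generic point of $X$) makes the composite $\Pic(U)/p^m \to H^{2}_\et(U,\Z/p^m(1)) \to H^{2}_m(K_i)$ factor through $\Pic(K_i)/p^m = 0$, so it vanishes and in particular lands inside $\Fil^{\bk}_{n_i}H^{2}(K_i) \cap H^{2}_m(K_i)$ for every $i$. Hence restricting the Kummer sequence yields
\begin{equation*}
0 \to \Pic(U)/p^m \to \Fil_D H^{2}(U,\Z/p^m(1)) \to G \to 0,
\end{equation*}
where $G \subseteq \ _{p^m}\Br(U)$ is the image of $\Fil_D$. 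It then remains to identify $G$ with $\ _{p^m}\Br^\divv(X|D)$. Under the identification $H^{2}_m(K_i) = \ _{p^m}\Br(K_i)$ above, the restriction of a class $\beta$ to $H^{2}_m(K_i)$ coincides with the image of its Brauer class under $\Br(U) \to \Br(K_i)$; therefore for $\beta \in \Fil_D$ the condition $\beta|_{K_i} \in \Fil^{\bk}_{n_i}H^{2}(K_i) \cap H^{2}_m(K_i)$ is exactly the condition in \defref{defn:2} that the Brauer class of $\beta$ lie in $\Fil_{n_i}\Br(K_i)$. Conversely, any $p^m$-torsion class in $\Br^\divv(X|D)$ lifts along the surjection $H^{2}_\et(U,\Z/p^m(1)) \surj \ _{p^m}\Br(U)$ to a class whose restriction to each $K_i$ is $p^m$-torsion and lies in $\Fil_{n_i}\Br(K_i)$, hence in $\Fil^{\bk}_{n_i}H^{2}(K_i)\cap H^{2}_m(K_i)$, hence lies in $\Fil_D$. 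This gives $G = \ _{p^m}\Br^\divv(X|D)$, and feeding in the isomorphism of \thmref{thm:1} finishes the proof.

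The argument is in essence a matter of tracking compatible restriction maps, so I do not anticipate a serious obstacle. The two points that need care are: (i) that the characteristic-$p$ Kummer sequence above and the identification $\Z/p^m(1) \simeq W_m\Omega^1_{\log}[-1]$ are available on the $F$-finite scheme $X$ and its open $U$ — this relies on the de Rham--Witt results (in particular the $F$-finite analogue of Bloch--Kato--Gabber) developed earlier in the thesis; and (ii) that the restriction maps $H^{2}_\et(U,\Z/p^m(1)) \to H^{2}_m(K_i)$ appearing in the definition of $\Fil_D$ agree, after the Kummer identifications, with the maps $\Br(U) \to \Br(K_i)$ used in \defref{defn:2}, so that the two filtrations genuinely match up (including along the components where $n_i = 0$).
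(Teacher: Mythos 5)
Your proposal is correct and is essentially the paper's own argument: the paper likewise takes the dlog–Kummer sequence $0 \to \G_{m,U} \xrightarrow{p^m} \G_{m,U} \to W_m\Omega^1_{U,\log} \to 0$, identifies $\H^1_\et(X, W_m\sF^{1,\bullet}_D)$ with the filtered subgroup of $H^1_\et(U,W_m\Omega^1_{U,\log}) = H^2(U,\Z/p^m(1))$ via \thmref{thm:H^1-fil}, and uses $\Pic(K_x)=0$ together with the compatibility of the local restrictions to match $\Fil^{\bk}_{n_x}$ with $\Fil_{n_x}\Br(K_x)$ on $p^m$-torsion. The only cosmetic difference is that the paper organizes the comparison as a snake-lemma diagram mapping the Kummer sequence to the direct sum of local quotient sequences, whereas you intersect the Kummer sequence with $\Fil_D$ directly.
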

The above result can be thought of as the ramified version of the following well-known exact sequence :
$$0 \to \Pic(X)/p^m \to H^1(X, W_m\Omega^1_{X, \log}) \to \ _{p^m}\Br(X) \to 0,$$
where $\ _{p^m}A:=\Ker (A \xrightarrow{p^m} A)$. \corref{cor:Br} will have applications to prove a Lefschetz theorem for $\Br^\divv(X|D)$ and also a ramified duality with modulus for curves over local fields in the later sections of this thesis. 

The following theorem in local ramification theory is the key result for proving \thmref{thm:1}.
\begin{thm}(\thmref{thm:Kato-fil-10})
    Let $K$ be an $F$-finite Henselian discrete valued field of characteristic $p>0$. Then, for all $n, q \ge 0$ there exists a two term complex $W_m\sF^{q,\bullet}_n$ in $\sD^b(K_\et,\Z/p^m)$ such that 
    $$\H^1_\et(K, W_m\sF^{q,\bullet}_n)\cong \ _{p^m}\Fil_n^{bk}H^{q+1}(K). $$
\end{thm}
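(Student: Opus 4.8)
The plan is to realize both sides of the asserted isomorphism through the logarithmic de Rham--Witt sheaves. First recall the Bloch--Kato--Gabber/Geisser--Levine identification $H^{q+1}(K,\Z/p^m(q))\cong H^1_\et(K,W_m\Omega^q_{K,\log})$ together with the short exact sequence of \'etale sheaves on $\Spec K$
$$0\to W_m\Omega^q_{K,\log}\to W_m\Omega^q_K\xrightarrow{1-F}W_m\Omega^q_K/dV^{m-1}\Omega^{q-1}_K\to 0,$$
where $F$ is the Frobenius of the de Rham--Witt complex and $1-F$ the associated Artin--Schreier--Witt operator. Since $K$ is a field and, by $F$-finiteness, the $W_m\Omega^q_K$ are finitely generated over $W_m(K)$ and built by successive extensions out of $\G_a$-modules --- which have no higher \'etale cohomology over a field --- the long exact cohomology sequence collapses to an isomorphism
$$H^{q+1}(K,\Z/p^m(q))\cong \coker\!\big(W_m\Omega^q_K(K)\xrightarrow{1-F}Q^q_m(K)\big),\qquad Q^q_m:=W_m\Omega^q_K/dV^{m-1}\Omega^{q-1}_K .$$
In other words $H^{q+1}(K,\Z/p^m(q))$ is already the first hypercohomology on $K_\et$ of the two-term complex $[W_m\Omega^q_K\to Q^q_m]$ in degrees $0$ and $1$; the problem is to carve out the subgroup of ramification bounded by $n$.

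Next I would bring in the Brylinski--Kato--Matsuda filtration $\mathrm{fil}_\bullet W_m\Omega^q_K$ by pole order along the closed point (with the $p$-adic carrying correction, and its logarithmic variant $\mathrm{fil}'_\bullet$ at levels divisible by $p$, following \cite{Matsuda}), together with the induced filtration $\mathrm{fil}_\bullet Q^q_m$. By \cite{Brylinski,Kato-89,Matsuda}, under the identification above Kato's filtration is the image of the $n$-th step of the degree-$1$ term,
$$\Fil^{\bk}_n H^{q+1}(K)=\im\!\big(\mathrm{fil}_n Q^q_m(K)\longrightarrow \coker(1-F)\big).$$
Because $F$ multiplies pole order by $p$, the operator $1-F$ sends $\mathrm{fil}_{\lfloor n/p\rfloor}W_m\Omega^q_K$ into $\mathrm{fil}_n Q^q_m$, so I would set
$$W_m\sF^{q,\bullet}_n:=\big[\,\mathrm{fil}_{\lfloor n/p\rfloor}W_m\Omega^q_K\xrightarrow{1-F}\mathrm{fil}_n Q^q_m\,\big]$$
in degrees $0$ and $1$; this is a complex of $\Z/p^m$-modules since $W_m(K)$ is killed by $p^m$, and the exact level in degree $0$ along with the choice of $\mathrm{fil}$ versus $\mathrm{fil}'$ at $p\mid n$ is dictated by matching Kato's generators (for $q=0$ it must recover the complex underlying the Kerz--Saito theorem \cite{Kerz-Saito-ANT}). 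The same vanishing of higher \'etale cohomology of $\G_a$-type sheaves over $K$ makes the hypercohomology spectral sequence collapse, so
$$\H^1_\et(K,W_m\sF^{q,\bullet}_n)\cong \coker\!\big(\mathrm{fil}_{\lfloor n/p\rfloor}W_m\Omega^q_K(K)\xrightarrow{1-F}\mathrm{fil}_n Q^q_m(K)\big).$$

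It remains to identify this cokernel with $\Fil^{\bk}_n H^{q+1}(K)$. The tautological map
$$\coker\!\big(\mathrm{fil}_{\lfloor n/p\rfloor}W_m\Omega^q_K\to \mathrm{fil}_n Q^q_m\big)\longrightarrow \coker\!\big(W_m\Omega^q_K\to Q^q_m\big)=H^{q+1}(K,\Z/p^m(q))$$
has image exactly $\Fil^{\bk}_n H^{q+1}(K)$ by the characterization above, so everything reduces to injectivity: given $\xi\in\mathrm{fil}_n Q^q_m(K)$ with $\xi=(1-F)\eta$ for some $\eta\in W_m\Omega^q_K(K)$ of a priori unbounded pole order, one must produce $\eta'\in\mathrm{fil}_{\lfloor n/p\rfloor}W_m\Omega^q_K(K)$ with $\xi=(1-F)\eta'$. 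The argument I have in mind is: if $\eta$ has pole order $a\ge 1$ then $F\eta$ has pole order exactly $pa>a$, hence $\xi=\eta-F\eta$ has pole order exactly $pa$; from $\xi\in\mathrm{fil}_n$ one reads off $pa\le n$, i.e.\ $\eta\in\mathrm{fil}_{\lfloor n/p\rfloor}$ (the case $a=0$ being trivial). The substantive input behind ``$F\eta$ has pole order exactly $pa$'' is the explicit description of the graded pieces $\gr^a_{\mathrm{fil}}W_m\Omega^q_K$ in terms of differential forms of the residue field and of the $F$-action on them --- in particular that $F$ does not annihilate the leading graded piece, so that no cancellation of poles can occur --- and I expect this ``no cancellation'' lemma, together with the $\mathrm{fil}/\mathrm{fil}'$ bookkeeping when $p\mid a$, to be the main obstacle; it is exactly here that the filtered de Rham--Witt formalism developed in this thesis is used. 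Granting it, the cokernel injects and one obtains
$$\H^1_\et(K,W_m\sF^{q,\bullet}_n)\cong \Fil^{\bk}_n H^{q+1}(K)\cap H^{q+1}(K,\Z/p^m(q))={}_{p^m}\Fil^{\bk}_n H^{q+1}(K),$$
the last equality because in characteristic $p$ one has $H^{q+1}(K,\Z/p^m(q))={}_{p^m}H^{q+1}(K)$; and since every ingredient is defined directly over the Henselian discrete valued field $K$, no completeness hypothesis is needed.
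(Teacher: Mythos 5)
Your overall architecture (an Artin--Schreier--Witt type two-term complex built from a pole-order filtration on $W_m\Omega^q_K$, collapse of the hypercohomology because the additive sheaves have no higher \'etale cohomology over a field, and then ``image + injectivity'') matches the paper's strategy in outline, but there is a genuine gap at the step you dispose of by citation. You claim that, under the identification $H^{q+1}_m(K)\cong\coker(1-F)$, Kato's filtration at the fixed level $m$ is the image of the $n$-th filtration step, quoting Brylinski--Kato--Matsuda. What those sources actually give is the statement after passing to the union over all $m$: Kato's Theorem 3.2 (the paper's \thmref{thm:Kato-2}) describes $\Fil^{\bk}_nH^{q+1}(K)\{p\}$ as the image of $K^M_q(K)\otimes\Fil^{\bk}_nH^1(K)\{p\}$, and the fixed-$m$ refinement ${}_{p^m}\Fil^{\bk}_nH^1(K)=\delta^0_m(\Fil_nW_m(K))$ is known only for $q=0$ (the Kerz--Saito erratum, \thmref{thm:Kato-3}). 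The assertion you need --- that every class of ${}_{p^m}\Fil^{\bk}_nH^{q+1}(K)$ is hit by the level-$m$, level-$n$ filtered de Rham--Witt term --- is exactly the refinement the theorem asserts, so invoking it is circular. This surjectivity is where the bulk of the paper's argument lives: one only gets for free the inclusion $T^{m,q}_n(K)\hookrightarrow {}_{p^m}\Fil^{\bk}_nH^{q+1}(K)$, and the reverse containment is proved by passing to the graded pieces in $n$ and modding out by $V$ of the length-$(m-1)$ part, constructing surjections $\theta^q_1,\theta^q_2$ (resp.\ $\phi^q$) from $\Omega^{q}_{\ff}/Z_1\Omega^{q}_{\ff}\oplus\Omega^{q-1}_{\ff}/Z_1\Omega^{q-1}_{\ff}$ (resp.\ $\Omega^q_{\ff}$) onto these graded quotients, importing Kato's computation that the composites to the corresponding graded pieces of $\Fil^{\bk}_\bullet H^{q+1}(K)$ are bijective, and then running an induction on $m$ via $V$ and a Cartesian-square/limit argument (\lemref{lem:Kato-fil-1}--\lemref{lem:Kato-fil-8}). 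Your proposal contains no substitute for this.

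Two secondary points. First, your injectivity step is the paper's \propref{prop:Cartier-fil-1}; your heuristic ``$F$ multiplies the pole order exactly by $p$, so no cancellation'' is not literally true termwise (e.g.\ on $dV^s$-type components), and the correct statement is proved via the Geisser--Hesselholt decomposition of $W_m\Omega^q$ over $S[[\pi]]$ --- which requires completeness, so your closing remark that no completeness hypothesis enters is too quick: the Henselian case needs the separate descent step (\corref{cor:Complete-2}) comparing $\Fil_nW_m\Omega^q_K$ with its completion. Second, your complex has $\mathrm{fil}_{\lfloor n/p\rfloor}$ in degree $0$ and $\mathrm{fil}_n$ of the quotient in degree $1$, whereas the paper's $W_m\sF^{q,\bullet}_n$ is $Z_1\Fil_nW_m\Omega^q_K\xrightarrow{1-C}\Fil_nW_m\Omega^q_K$; these are presumably related through the Cartier isomorphism $\ov F\colon\Fil_{n/p}W_m\Omega^q_K\cong Z_1\Fil_nW_m\Omega^q_K/dV^{m-1}(\Fil_n\Omega^{q-1}_K)$, but you would still have to verify that your complex has the same $\H^0$ and $\H^1$ (in particular that its cokernel agrees with $T^{m,q}_n(K)$), which is not automatic from the definition.
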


\subsection*{Filtered de Rham-Witt complex}
To construct the two term complex $W_m\sF^{q,\bullet}_D$, we study a theory of filtered de Rham-Witt complex of $X$. We continue with the same notations. Also, we allow $n_i$ in the definition of $D$ to be any element in $\Z$.

For any $q \ge 0$, we first define a quasi-coherent sheaf of $W_m\sO_X$ modules,   $\Fil_DW_m\Omega^q_U$ $\subset j_*W_m\Omega^q_U$ (see \defref{defn:Log-fil-3}), which satisfies the following properties (\lemref{lem:Log-fil-4}):
\begin{enumerate}
           \item
    $d(\Fil_D W_m \Omega^q_U) \subset \Fil_D W_m \Omega^{q+1}_U$,
 \item
    $F(\Fil_D W_{m+1} \Omega^q_U) \subset \Fil_D W_m \Omega^{q}_U$,
  \item
    $V(\Fil_D W_{m} \Omega^q_U) \subset \Fil_D W_{m+1} \Omega^{q}_U$,
  \item
    $R(\Fil_D W_{m+1} \Omega^q_U) = \Fil_{D/p} W_m \Omega^{q}_U$, where $D/p= \sum\limits_i\lfloor n_i/p\rfloor E_i$,
   \item    $\varinjlim_n \Fil_{nE} W_m \Omega^q_U \xrightarrow{\cong} j_*W_m\Omega^{q}_U$. 
      \end{enumerate}

Also we have $\Fil_D W_{m} \Omega^q_U \subset \Fil_{D'} W_{m} \Omega^q_U$, if $D' \ge D$ and $\supp(D') \subset E$. Moreover, $\Fil_D W_{m} \Omega^q_U \subset  W_{m} \Omega^q_X$ if $D < 0$ (i.e, all its components have multiplicity $<0$). We also remark that, for $D=$ empty divisor, $\Fil_DW_m\Omega^\bullet_U =W_m\Omega^\bullet_X(\log E)$, the de Rham-Witt sheaf for the canonical log structure given by $E$ (cf. \cite{Hyodo-Kato}). However, if $U=X$, then $\Fil_DW_m\Omega^\bullet_U = W_m\Omega^\bullet_X$.

At this point, we refer the reader to \cite{JSZ} and \cite{GK-Duality}, where the authors studied a similar filtered de Rham-Witt complex tailored to their respective objectives. See also \cite{Ren-Rulling}, where a version of relative de Rham-Witt complex with modulus has been studied from the perspective of motives with modulus.

The following technical result plays an important role in studying $\Fil_DW_m\Omega^q_U$.
\begin{thm}(\thmref{thm:Global-version}(2))
    We have a short exact sequence of Zariski sheaves of $W_m\sO_X$-modules
    \[
    0 \to V^{m-1}(\Fil_D\Omega^q_U) + dV^{m-1}(\Fil_D\Omega^{q-1}_U) \to
    \Fil_DW_m\Omega^q_U \xrightarrow{R}  \Fil_{D/p}W_{m-1}\Omega^q_U \to 0.
    \]
\end{thm}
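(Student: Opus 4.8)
The plan is to prove exactness of the displayed sequence of Zariski sheaves spot by spot. Since exactness of a complex of sheaves may be checked on stalks, I would first reduce to the case $X=\Spec A$ with $A$ a regular $F$-finite local ring of characteristic $p$, with $E$ cut out by $t_1\cdots t_r=0$ and $t_1,\dots,t_r$ part of a regular system of parameters; refining that choice one obtains a finite $p$-basis of $A$ containing $t_1,\dots,t_r$, so that $j_*W_m\Omega^{\bullet}_U$ and, via \defref{defn:Log-fil-3}, $\Fil_D W_m\Omega^{\bullet}_U$ acquire explicit $W_m\sO_X$-module presentations by the usual de Rham--Witt generators in the $t_i$, $dt_i$, $d\log t_i$ and the operators $V,d$. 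With this in place, exactness on the right is immediate: $R$ here is the restriction of the usual projection $j_*W_m\Omega^q_U\to j_*W_{m-1}\Omega^q_U$, and \lemref{lem:Log-fil-4}(4) (with $m$ in place of $m+1$) is precisely the statement that $R(\Fil_D W_m\Omega^q_U)=\Fil_{D/p}W_{m-1}\Omega^q_U$.

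At the left end the first arrow is the inclusion of one $W_m\sO_X$-submodule of $j_*W_m\Omega^q_U$ into another, hence injective, once one checks that $V^{m-1}(\Fil_D\Omega^q_U)+dV^{m-1}(\Fil_D\Omega^{q-1}_U)$ actually lies in $\Fil_D W_m\Omega^q_U$: iterating \lemref{lem:Log-fil-4}(3) (and using $\Fil_D\Omega^{\bullet}_U=\Fil_D W_1\Omega^{\bullet}_U$) gives $V^{m-1}(\Fil_D\Omega^q_U)\subseteq\Fil_D W_m\Omega^q_U$, and \lemref{lem:Log-fil-4}(1) then gives $dV^{m-1}(\Fil_D\Omega^{q-1}_U)\subseteq d(\Fil_D W_m\Omega^{q-1}_U)\subseteq\Fil_D W_m\Omega^q_U$. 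This submodule dies under $R$ because $RV^{m-1}=0$ and $R$ commutes with $d$, so the image of the first map is contained in $\Ker R$.

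The content of the theorem is the reverse inclusion
\[
\Ker\bigl(R\colon \Fil_D W_m\Omega^q_U \to \Fil_{D/p}W_{m-1}\Omega^q_U\bigr)\ \subseteq\ V^{m-1}(\Fil_D\Omega^q_U)+dV^{m-1}(\Fil_D\Omega^{q-1}_U).
\]
Since $j_*$ is left exact, the left-hand side equals $\Fil_D W_m\Omega^q_U\cap j_*\bigl(V^{m-1}\Omega^q_U+dV^{m-1}\Omega^{q-1}_U\bigr)$, using the classical short exact sequence $0\to V^{m-1}\Omega^q_U+dV^{m-1}\Omega^{q-1}_U\to W_m\Omega^q_U\xrightarrow{R}W_{m-1}\Omega^q_U\to 0$ for the $F$-finite regular scheme $U$ (the $E=\varnothing$ case of the de Rham--Witt structure theory for $F$-finite regular schemes, which either is available beforehand or is the simplest instance of the local computation below). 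So over $\Spec A$, given $\omega\in\Fil_D W_m\Omega^q_U$ with $R\omega=0$, I may write $\omega=V^{m-1}\alpha+dV^{m-1}\beta$ with $\alpha\in\Omega^q_U$, $\beta\in\Omega^{q-1}_U$, and the point is to arrange $\alpha\in\Fil_D\Omega^q_U$ and $\beta\in\Fil_D\Omega^{q-1}_U$. One constraint is free: iterating \lemref{lem:Log-fil-4}(2) gives $F^{m-1}\omega\in\Fil_D\Omega^q_U$, and the de Rham--Witt relations $FV=p$ (which kills the $V^{m-1}\alpha$ term, as $p\alpha=0$) and $FdV=d$ give $F^{m-1}\omega=d\beta$, so $d\beta$ lies in the filtration. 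To get $\alpha$ and $\beta$ themselves into $\Fil_D$, I would expand $\omega,\alpha,\beta$ in the chosen $p$-basis, read off from \defref{defn:Log-fil-3} the precise pole bounds along each $E_i$ that membership in $\Fil_D W_m\Omega^q_U$ forces on $\omega$, and then use the indeterminacy in the decomposition $\omega=V^{m-1}\alpha+dV^{m-1}\beta$ — governed by Illusie's description of the top graded piece $\gr^{m-1}$ of $W_{\bullet}\Omega^q$, in the form valid for $F$-finite regular rings — to subtract the excess poles from $\alpha$ and $\beta$ without leaving $\Fil_D$.

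I expect this last correction step to be the main obstacle. It is not the existence of a decomposition $\omega=V^{m-1}\alpha+dV^{m-1}\beta$ that is hard — that is classical — but the interaction between the $V$-adic ``top graded'' structure of $W_m\Omega^q_U$ and the pole filtration indexed by $D$, and in particular keeping track of the passage from $D$ to $D/p$ induced by $R$: one must verify that the adjustments forced on $\alpha$ and $\beta$ never enlarge their pole orders past what $D$ allows. Granting this, the three verifications — injectivity of the inclusion, equality of its image with $\Ker R$, surjectivity of $R$ — assemble into the asserted short exact sequence, and its compatibility with the transition maps $\Fil_D\hookrightarrow\Fil_{D'}$ for $D'\ge D$ is clear from the construction.
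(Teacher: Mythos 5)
Your reduction to stalks and your treatment of the three easy points are fine (right exactness is \lemref{lem:Log-fil-4}(7), not (4) — your item numbers follow the introduction's list rather than the lemma's — and the inclusion of $V^{m-1}(\Fil_D\Omega^q_U)+dV^{m-1}(\Fil_D\Omega^{q-1}_U)$ into $\Ker R$ is as you say). But the proof has a genuine gap exactly where you flag it: the inclusion $\Ker R\subseteq V^{m-1}(\Fil_D\Omega^q_U)+dV^{m-1}(\Fil_D\Omega^{q-1}_U)$ is asserted only modulo a "correction step" that you never carry out, and the mechanism you propose for it is not available as stated. Over a general (non-complete) $F$-finite regular local ring there is no "explicit $W_m\sO_X$-module presentation by de Rham--Witt generators in a $p$-basis": a $p$-basis controls $\Omega^q_A$ (the $m=1$ level), but Geisser--Hesselholt-type expansions of $W_m\Omega^q$, with the uniqueness you need in order to "subtract the excess poles" from $\alpha$ and $\beta$, exist only after completion (they are intrinsically infinite-series statements for $S[[t]]$). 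Moreover, since the pole condition defining $\Fil_D$ is divisorial, the correction has to be controlled separately along each component $E_i$, i.e.\ at the completed discrete valuation rings $\wh{K}_i$ at the generic points of the $E_i$, not by a single expansion at the closed point.

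This is precisely where the paper's argument spends its effort: it proves the statement first for complete local rings via the auxiliary filtration $\Fil''_{\un n}$ (kernel of the maps $\psi_i$ to $W_m\Omega^q_{\wh K_i}/\Fil_{n_i}$), correcting the decomposition one component at a time using the $m{=}1$ multivariable expansion (\corref{cor:F-function-2}), the injectivity $W_m\Omega^q_K\inj W_m\Omega^q_{\wh K_i}$ (\lemref{lem:Complete-1}), and the triviality of $\bigl[\prod_{j<-n_i}F^{m,q}_j\bigr]\cap\Fil_{n_i}W_m\Omega^q_{\wh K_i}$ coming from \corref{cor:GH-3} and \propref{prop:Fil-decom} (this is \lemref{lem:Complete-3}); it then descends from $\wh A$ to $A$ by a separate density-plus-completion argument (Lemmas~\ref{lem:Non-complete-2}, \ref{lem:Non-complete-3}, \ref{lem:Non-complete-4}), using that $V^{m-1}(\Fil_{\un n}\Omega^q_K)+dV^{m-1}(\Fil_{\un n}\Omega^{q-1}_K)$ is a $W_m(A)$-submodule dense in its completed counterpart. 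Your sketch contains neither the component-by-component correction nor any device replacing the complete-to-non-complete descent, and the appeal to "the indeterminacy governed by $\gr^{m-1}$" (i.e.\ the kernel $R^q_{m-1}$ of $(V^{m-1},dV^{m-1})$) by itself does not show that the required adjustments can be made within $\Fil_D$; so the crux of the theorem remains unproved in your proposal.
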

The above theorem is an analogue of \cite[I.3.2]{Illusie} and \cite[1.16]{Lorenzon}.

Recall that $Z_1W_m\Omega^q_U= F(W_{m+1}\Omega^q_U)= \Ker (F^{m-1}d:W_m\Omega^q_U \to \Omega^{q+1}_U)$. We define 
$$Z_1 \Fil_DW_m\Omega^q_U:=  \Fil_DW_m\Omega^q_U \cap j_*Z_1W_m\Omega^q_U.$$
The following key proposition, which is analogous to \cite[Lemma~4.1.3]{Kato-Duality}, allows one to define the two term complex $W_m\sF^{q,\bullet}_D$. Recall that there is a higher Cartier operator $C: Z_1W_m\Omega^q_U \to W_m\Omega^q_U$.
\begin{prop}(\lemref{lem:Complete-6})
    The above higher Cartier operator restricts to a map
    $$C:Z_1 \Fil_DW_m\Omega^q_U \to  \Fil_{D/p}W_m\Omega^q_U.$$ 
    Moreover, there are isomorphisms
    \[
    \ov{F} \colon \Fil_{{D}/p}W_{m}\Omega^q_U \xrightarrow{\cong}
    \frac{Z_1\Fil_{D}W_m\Omega^q_U}{dV^{m-1}(\Fil_{D}\Omega^{q-1}_U)}; \ \
    \ov{C} \colon  \frac{Z_1\Fil_{D}W_m\Omega^q_U}{dV^{m-1}(\Fil_{D}\Omega^{q-1}_U)}
     \xrightarrow{\cong} \Fil_{{D}/p}W_{m}\Omega^q_U.
    \]

\end{prop}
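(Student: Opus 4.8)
The statement is the filtered counterpart of the (inverse) Cartier isomorphism for the de Rham-Witt complex over $U$, in the form of \cite[Lemma~4.1.3]{Kato-Duality}, and the plan is to deduce it from the unfiltered version by tracking the divisor $D$ through each map; the crucial inputs are the formal properties (1)--(5) above, especially property (4), and the short exact sequence of \thmref{thm:Global-version}(2). Since $\Fil_DW_m\Omega^q_U$ and all the operators in sight ($d$, $F$, $V$, $R$, $C$) are Zariski-local on $X$, I would reduce to the case $X=\Spec A$ with $A$ a regular $F$-finite local ring, with a regular system of parameters $t_1,\dots,t_N$ such that $E=\bigcup_{i\le r}\{t_i=0\}$ and $D=\sum_{i\le r}n_i\{t_i=0\}$, and work throughout with the explicit description of $\Fil_DW_m\Omega^q_U\subset j_*W_m\Omega^q_U$ given in \defref{defn:Log-fil-3}.

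Next I would treat the operator $C$. Recall from the classical de Rham-Witt theory that $Z_1W_m\Omega^q_U=F(W_{m+1}\Omega^q_U)=\Ker(F^{m-1}d)$, that $C\circ F=R$ on $W_{m+1}\Omega^q_U$, and that $\Ker\bigl(C\colon Z_1W_m\Omega^q_U\to W_m\Omega^q_U\bigr)=dV^{m-1}\Omega^{q-1}_U$. The key local claim is
\[
Z_1\Fil_DW_m\Omega^q_U \;=\; F\bigl(\Fil_DW_{m+1}\Omega^q_U\bigr)\;+\;dV^{m-1}\bigl(\Fil_D\Omega^{q-1}_U\bigr).
\]
The inclusion $\supseteq$ is immediate from properties (1)--(3) and the kernel description of $Z_1$. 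For $\subseteq$, one lifts $\eta\in Z_1\Fil_DW_m\Omega^q_U$ to some $\tilde\eta\in W_{m+1}\Omega^q_U$ with $F\tilde\eta=\eta$ and, using the explicit local form of $\eta$, corrects $\tilde\eta$ by an element of $\Ker F$ and a term in $dV^m(\Fil_D\Omega^{q-1}_U)$ so that it lies in $\Fil_DW_{m+1}\Omega^q_U$. Granting the claim, $C$ kills $dV^{m-1}\Omega^{q-1}_U$ and $C(F\tilde\eta')=R\tilde\eta'\in\Fil_{D/p}W_m\Omega^q_U$ for $\tilde\eta'\in\Fil_DW_{m+1}\Omega^q_U$ by property (4); hence $C$ restricts to a map $Z_1\Fil_DW_m\Omega^q_U\to\Fil_{D/p}W_m\Omega^q_U$, which is onto because $R(\Fil_DW_{m+1}\Omega^q_U)=\Fil_{D/p}W_m\Omega^q_U$. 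This proves the first assertion; write $\ov C$ for the induced map on $Z_1\Fil_DW_m\Omega^q_U/dV^{m-1}(\Fil_D\Omega^{q-1}_U)$.

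For injectivity of $\ov C$ I need
\[
dV^{m-1}\Omega^{q-1}_U\cap\Fil_DW_m\Omega^q_U \;=\; dV^{m-1}\bigl(\Fil_D\Omega^{q-1}_U\bigr);
\]
only $\subseteq$ requires argument, and I would derive it from the explicit description by choosing a primitive of the given $dV^{m-1}$-form inside $\Fil_D$, its poles being controlled via \thmref{thm:Global-version}(2). Thus $\ov C$ is an isomorphism. I then define $\ov F\colon\Fil_{D/p}W_m\Omega^q_U\to Z_1\Fil_DW_m\Omega^q_U/dV^{m-1}(\Fil_D\Omega^{q-1}_U)$ by $x\mapsto[F\tilde x]$, where $\tilde x\in\Fil_DW_{m+1}\Omega^q_U$ is any $R$-lift of $x$ (one exists by property (4)). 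This is well defined modulo $dV^{m-1}(\Fil_D\Omega^{q-1}_U)$: by the index shift of \thmref{thm:Global-version}(2), $\Ker R\cap\Fil_DW_{m+1}\Omega^q_U=V^m(\Fil_D\Omega^q_U)+dV^m(\Fil_D\Omega^{q-1}_U)$, while $F\circ dV^m=dV^{m-1}$ and $F\bigl(V^m(\Fil_D\Omega^q_U)\bigr)\subseteq dV^{m-1}(\Fil_D\Omega^{q-1}_U)$ (the last inclusion because $C\circ F\circ V^m=R\circ V^m=0$, combined with the intersection identity just proved). Finally $C\circ F=R$ yields $\ov C\circ\ov F=\id$, so $\ov F=\ov C^{-1}$ is the asserted isomorphism.

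The main obstacle is the pair of ``filtered primitive'' identities invoked above: $Z_1\Fil_DW_m\Omega^q_U=F(\Fil_DW_{m+1}\Omega^q_U)+dV^{m-1}(\Fil_D\Omega^{q-1}_U)$ and $dV^{m-1}\Omega^{q-1}_U\cap\Fil_DW_m\Omega^q_U=dV^{m-1}(\Fil_D\Omega^{q-1}_U)$. Each says that a Witt differential lying in $\Fil_D$ only after applying $F$, respectively $dV^{m-1}$, already admits a primitive inside $\Fil_D$; establishing this is exactly where one must combine the explicit generators of \defref{defn:Log-fil-3} with the compatibility of $\Fil_D$ with the restriction $R$ recorded in \thmref{thm:Global-version}(2). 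An alternative organization is induction on $m$: the base case $m=1$ is the filtered Cartier isomorphism for K\"ahler differentials with poles along $E$, proved by a direct local computation tracking orders of poles under the classical Cartier operator, with the inductive step threaded through the short exact sequence of \thmref{thm:Global-version}(2).
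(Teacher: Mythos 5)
Your formal skeleton is essentially the paper's: you use $C\circ F=R$, the filtered $V$--$R$ sequence (\thmref{thm:Global-version}(2), i.e.\ \propref{prop:Complete-0}), surjectivity of $R$ onto $\Fil_{D/p}$, and a ping-pong with the classical unfiltered isomorphisms to get $\ov C\circ\ov F=\id$ and $\ov F\circ\ov C=\id$. But the entire content of the proposition sits in the two ``filtered primitive'' identities you isolate and then do not prove: (a) $Z_1\Fil_D W_m\Omega^q_U=F(\Fil_D W_{m+1}\Omega^q_U)$ (equivalently your sum with $dV^{m-1}$), and (b) $dV^{m-1}(\Omega^{q-1}_U)\cap\Fil_D W_m\Omega^q_U=dV^{m-1}(\Fil_D\Omega^{q-1}_U)$. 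Your plan is to ``correct the lift using the explicit local form'' and to ``choose a primitive with poles controlled,'' but for a general regular local $F$-finite ring there is no explicit normal form to read pole orders from: the defining condition $a_i^{p^i}\prod_j x_j^{n_j}\in A$ is imposed Witt-coordinatewise, the kernel of $dV^{m-1}$ on $\Omega^{q-1}_K$ is $Z_m\Omega^{q-1}_K$, so (b) amounts to showing a $dV^{m-1}$-primitive can be modified by a $Z_m$-form into $\Fil_D$ --- this is not a pole-count. In the paper these identities are exactly what Chapters 3--4 are for: the complete one-variable case via the Geisser--Hesselholt decomposition and its uniqueness (\lemref{lem:Cartier-fil-0}, \propref{prop:Fil-decom}), the several-branch case via the branch-wise completions $\wh K_i$ and the kernel description \corref{cor:Complete-2} together with \lemref{lem:Complete-3}, and the non-complete case via completion/density arguments (\lemref{lem:Non-complete-2}, \lemref{lem:Non-complete-3}, \lemref{lem:Non-complete-7}), which feed into \lemref{lem:Complete-4}(1) and \lemref{lem:Complete-5}. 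None of that is replaced by an argument in your proposal, so there is a genuine gap precisely at the step you call the main obstacle.

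Two smaller points. First, your alternative ``induction on $m$'' does not simplify matters: already the base case $m=1$ asserts $Z_1\Fil_D\Omega^q_U/d(\Fil_D\Omega^{q-1}_U)\cong\Fil_{D/p}\Omega^q_U$ with the index drop $D\mapsto D/p$, which is not the classical log-pole Cartier isomorphism and again needs the completed monomial expansions (\corref{cor:F-function-2}) or an equivalent structure result. Second, the restriction of $C$ itself is obtained in the paper not by producing a filtered $F$-preimage directly, but by reducing to the completions along each component via \corref{cor:Complete-2} and then invoking the complete-case computation (\lemref{lem:Cartier-fil-0}); if you want to keep your route of correcting an arbitrary lift $\tilde\eta$ with $F\tilde\eta=\eta$ by elements of $\Ker F=V^m(\Omega^q_K)$ and $dV^m$-terms, you must explain why the correction can be chosen inside $\Fil_D$, which is the same unproven point (a) again.
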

As a result, we have
\begin{defn}(\defref{defn:1-c complex}) 
For $D \ge -E_1-E_2- \cdots -E_r$, we define
    $$W_m\sF^{q,\bullet}_{D} :=
\left(Z_1\Fil_{D}W_m\Omega^q_U \xrightarrow{1-C} \Fil_{D}W_m\Omega^q_U\right).$$
\end{defn}

The complex $W_m\sF^{q,\bullet}_{D}$ can be considered as a ramified analogue of $W_m\Omega^q_{X,\log}$. Here, $W_m\Omega^q_{X,\log}$ is the logarithmic Hodge-Witt sheaf on $X_\et$, defined as the image of the map 
$$\dlog:\sK^M_{q,X} \to W_m\Omega^q_{X}$$
$$\dlog(u_,,\cdots,u_q)=\dlog[u_1]_m \wedge \cdots \wedge \dlog[u_q]_m,$$
where $\sK^M_{q,X}$ be the Milnor K-theory sheaf. The following result is the analogue of the classical $p-R$ exact sequence for $W_m\Omega^q_{X, \log}$ (and also \cite[Thm~1.1.6]{JSZ}).
\begin{thm}(\thmref{thm:Global-version}(11))
    There exists a distinguished triangle in the derived category of Zariski sheaves of abelian groups on $X$
    
    \[
     W_1\sF^{q, \bullet}_{D} \xrightarrow{V^{m-1}} W_m\sF^{q,\bullet}_{D}
    \xrightarrow{R} W_{m-1}\sF^{q,\bullet}_{D/p} \xrightarrow{+}  W_1\sF^{q, \bullet}_{D} [1].
    \]

\end{thm}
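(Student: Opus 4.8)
The plan is to exhibit this triangle as the one attached to a short exact sequence of two‑term complexes of Zariski sheaves, after replacing its first and third terms by the complexes named in the statement via explicit quasi‑isomorphisms. I will use two exact sequences. The first is \thmref{thm:Global-version}(2),
\[
0 \to V^{m-1}(\Fil_D\Omega^q_U) + dV^{m-1}(\Fil_D\Omega^{q-1}_U) \to \Fil_DW_m\Omega^q_U \xrightarrow{R} \Fil_{D/p}W_{m-1}\Omega^q_U \to 0,
\]
and the second is its $Z_1$‑refinement,
\[
0 \to V^{m-1}(Z_1\Fil_D\Omega^q_U) + dV^{m-1}(\Fil_D\Omega^{q-1}_U) \to Z_1\Fil_DW_m\Omega^q_U \xrightarrow{R} Z_1\Fil_{D/p}W_{m-1}\Omega^q_U \to 0.
\]
The second sequence I would deduce from the first together with the identity $F^{m-1}dV^{m-1}=d$ and the fact that $p$ annihilates $\Omega^\bullet_U$: for surjectivity, lift $\theta\in Z_1\Fil_{D/p}W_{m-1}\Omega^q_U$ to some $\tilde\theta\in\Fil_DW_m\Omega^q_U$ by the first sequence; since $R\bigl(F^{m-2}d\tilde\theta\bigr)=F^{m-2}d\theta=0$ and $F,V,d$ preserve the filtration by \lemref{lem:Log-fil-4}, the first sequence (at level $2$, in degree $q+1$) gives $F^{m-2}d\tilde\theta=V\mu+dV\nu$ with $\mu\in\Fil_D\Omega^{q+1}_U$ and $\nu\in\Fil_D\Omega^q_U$, whence $F^{m-1}d\tilde\theta=p\mu+d\nu=d\nu$, and $\tilde\theta':=\tilde\theta-V^{m-1}\nu\in Z_1\Fil_DW_m\Omega^q_U$ still satisfies $R\tilde\theta'=\theta$ because $RV^{m-1}=0$; for the kernel, an element $V^{m-1}\eta+dV^{m-1}\zeta$ of $\ker R$ lies in $Z_1$ exactly when $d\eta=F^{m-1}dV^{m-1}\eta=0$, that is, when $\eta\in Z_1\Fil_D\Omega^q_U$.

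Now the map $V^{m-1}\colon W_1\sF^{q,\bullet}_D\to W_m\sF^{q,\bullet}_D$ is injective in each degree, since $V^{m-1}$ is injective on $\Omega^q_U$ and on $Z_1\Omega^q_U=\ker(d)$. Hence, writing $\mc{Q}^\bullet$ for its cokernel complex, we obtain a distinguished triangle $W_1\sF^{q,\bullet}_D \xrightarrow{V^{m-1}} W_m\sF^{q,\bullet}_D \to \mc{Q}^\bullet \xrightarrow{+1}$ in $D(X_{\zar})$. Because $R\circ V^{m-1}=0$, the operator $R$ factors through a map $\mc{Q}^\bullet\to W_{m-1}\sF^{q,\bullet}_{D/p}$, and it remains to see that this map is a quasi‑isomorphism. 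It is surjective in each degree by the two sequences above, so its cone is quasi‑isomorphic to the shift of its kernel complex. That kernel complex is, in both degree $0$ and degree $1$, the quotient of $dV^{m-1}(\Fil_D\Omega^{q-1}_U)$ by $dV^{m-1}(\Fil_D\Omega^{q-1}_U)\cap V^{m-1}(Z_1\Fil_D\Omega^q_U)$, respectively by $dV^{m-1}(\Fil_D\Omega^{q-1}_U)\cap V^{m-1}(\Fil_D\Omega^q_U)$. Two remarks finish the argument: these two denominators coincide, since $V^{m-1}\eta=dV^{m-1}\zeta$ forces $dV^{m-1}\eta=0$ and hence $d\eta=F^{m-1}dV^{m-1}\eta=0$, so that $\eta\in Z_1\Fil_D\Omega^q_U$; and the Cartier operator kills exact classes of this shape, namely $C(dV^{m-1}\zeta)=C\bigl(F(dV^m\zeta)\bigr)=R(dV^m\zeta)=d(RV^m\zeta)=0$, using that $C$ acts by $C(F\omega)=R\omega$ on $Z_1W_m\Omega^q_U=FW_{m+1}\Omega^q_U$ and that it respects $\Fil_D$ by \lemref{lem:Complete-6}. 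Therefore the kernel complex is $\bigl(G \xrightarrow{\id} G\bigr)$ with $G:=dV^{m-1}(\Fil_D\Omega^{q-1}_U)\big/\bigl(dV^{m-1}(\Fil_D\Omega^{q-1}_U)\cap V^{m-1}(\Fil_D\Omega^q_U)\bigr)$, which is acyclic; so $\mc{Q}^\bullet\to W_{m-1}\sF^{q,\bullet}_{D/p}$ is an isomorphism in $D(X_{\zar})$, and substituting $W_{m-1}\sF^{q,\bullet}_{D/p}$ for $\mc{Q}^\bullet$ gives the asserted triangle.

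The genuinely new ingredient, and the step I expect to be the main obstacle, is the $Z_1$‑refinement of \thmref{thm:Global-version}(2): the surjectivity of $R$ onto $Z_1\Fil_{D/p}W_{m-1}\Omega^q_U$ together with the precise shape of its kernel. Its proof above relies on the filtered analogues of several structural facts about the de Rham--Witt complex --- the description $\ker(F\colon W_{2}\Omega^{q+1}_U\to W_1\Omega^{q+1}_U)=V\Fil_D\Omega^{q+1}_U+dV\Fil_D\Omega^q_U$, the relation $F^{m-1}dV^{m-1}=d$, and the compatibility of $C,F,V,d,R$ with $\Fil_D$ --- which in the $F$‑finite setting are exactly what the theory of the filtered de Rham--Witt complex developed earlier in the paper supplies, rather than being quotable from the classical literature; everything else is routine diagram chasing, together with the fact (part of the construction of the complexes $W_\bullet\sF^{q,\bullet}$) that $V^{m-1}$ and $R$ commute with $1-C$. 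An alternative, slightly more bureaucratic route is to induct on $m$ via the octahedral axiom applied to the composite $R\circ R\colon W_m\sF^{q,\bullet}_D\to W_{m-1}\sF^{q,\bullet}_{D/p}\to W_{m-2}\sF^{q,\bullet}_{D/p^2}$, reducing to the case $m=2$; but it rests on essentially the same filtered input, so I would take the direct approach.
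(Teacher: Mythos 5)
There is a genuine gap, and it sits at the crucial step. You assert that $V^{m-1}\colon W_1\sF^{q,\bullet}_{D}\to W_m\sF^{q,\bullet}_{D}$ is injective in each degree ``since $V^{m-1}$ is injective on $\Omega^q_U$ and on $Z_1\Omega^q_U$''. This is false for $q\ge 1$ and $m\ge 2$: by \propref{prop:basics}(3) (and its filtered version, \thmref{thm:Global-version}(8)) one has $\Ker\bigl(V^{m-1}\colon \Fil_D\Omega^q_U\to \Fil_DW_m\Omega^q_U\bigr)=B_{m-1}\Fil_D\Omega^q_U$, which contains $d(\Fil_D\Omega^{q-1}_U)\neq 0$; the same kernel occurs on the $Z_1$-term because $B_{m-1}\subset Z_1$. (What is injective is $V$ on Witt vectors, i.e.\ the $q=0$ case, which is the only case your argument actually covers.) Consequently the sequence $W_1\sF^{q,\bullet}_D\to W_m\sF^{q,\bullet}_D\to \mathcal{Q}^\bullet$ is not a termwise short exact sequence, and you are not entitled to the distinguished triangle with the termwise cokernel $\mathcal{Q}^\bullet$: the cone of $V^{m-1}$ differs a priori from $\mathcal{Q}^\bullet$ by the termwise kernel complex, and identifying the two is exactly where the nontrivial work lies. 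The paper avoids this by never taking the naive cokernel: it maps the two-term complex with terms $\Fil_D\Omega^q_U\oplus\Fil_D\Omega^{q-1}_U$ (differential $1-\wt{C}$) onto $\Ker(R)$ via $(V^{m-1},dV^{m-1})$ and shows the kernel $R^q_{m-1,D}$ of that surjection carries a nilpotent Cartier operator, so $1-\wt{C}$ is bijective on it and the corresponding subcomplex is acyclic (this is the content of ~\eqref{eqn:VR-1} and \corref{cor:VR-2}).

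The rest of your argument is sound and close to the paper's: your ``$Z_1$-refinement'' is \lemref{lem:VR-0} (your lifting argument for surjectivity is correct), the coincidence of the two denominators is fine, and $C(dV^{m-1}(\Fil_D\Omega^{q-1}_U))=0$ is already part of \lemref{lem:Complete-6}. Your route can in fact be repaired without abandoning the cokernel picture: the termwise kernel of $V^{m-1}$ is the complex $\bigl(B_{m-1}\Fil_D\Omega^q_U\xrightarrow{1-C}B_{m-1}\Fil_D\Omega^q_U\bigr)$, and since $B_i\Fil_D\Omega^q_U=B_i\Omega^q_U\cap\Fil_D\Omega^q_U$ (\lemref{lem:Complete-8}) with $C$ nilpotent on $B_{m-1}$ and filtration-compatible (\lemref{lem:Complete-6}, \lemref{lem:Complete-9}), the map $1-C$ is bijective there, so this kernel complex is acyclic and the cone of $V^{m-1}$ is indeed quasi-isomorphic to $\mathcal{Q}^\bullet$. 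But note that this missing step is precisely the nilpotent-Cartier argument that constitutes the heart of the paper's proof; as written, your proposal omits it and therefore does not establish the triangle beyond $q=0$.
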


\subsection*{Relation with Jannsen-Saito-Zhao's sheaf}
In this section, we continue with the additional assumption that $X$ is smooth and finite-type over an $F$-finite field $k$.

Let's continue with the notations of the previous subsections. Moreover, we assume $D >0$. Then $D_\red = E$. Jannsen, Saito and Zhao in \cite{JSZ} have defined a subsheaf $W_m\Omega^q_{X|D, \log} \subset W_m\Omega^q_{X,\log}$ in $X_\et$ and shown an isomorphism
$$\dlog:\sK^M_{q,X|D}/(p^m\sK^M_{q,X}\cap \sK^M_{q,X|D}) \xrightarrow{\cong} W_m\Omega^q_{X|D,\log}.$$
Here,  $\sK^M_{q,X|D}$ is the relative Milnor K-theory sheaf in $X_\et$ defined in \cite{Rulling-Saito} as the image of the cup product
$$\Ker(\sO_X^\times \to \sO_E^\times) \otimes j_*\sK^M_{q-1,U} \to j_*\sK^M_{q,U}, \ \ \text{where }j:U \inj X.$$
This log-version of relative Milnor K-theory sheaf was also considered in \cite{Kato80-1}, \cite{Kato80-2}, \cite{Kato-89}, \cite{Kato-Inv}, \cite{Kato83} to establish Higher dimensional local class field theory and abelianized ramification theory. Part (1) of our next result relates Jannsen-Saito-Zhao's sheaf with our filtration in the following way :
\begin{lem}\label{lem:log}(\lemref{lem:Complex-6})
    We have exact sequences
    \begin{enumerate}
        \item \ \ $0 \to W_m\Omega^q_{X|D,\log} \to \Fil_{-D}W_m\Omega^q_U
    \xrightarrow{1 -\ov{F}}
    \frac{\Fil_{-D}W_m\Omega^q_U}{dV^{m-1}(\Fil_{-D}\Omega^{q-1}_U)}$,
      \item \ \ $0 \to j_*(W_m\Omega^q_{U,\log}) \to Z_1W_m\Omega^q_X(\log E)
    \xrightarrow{1 - C}
    W_m\Omega^q_X(\log E) \to 0$.
    \end{enumerate}
\end{lem}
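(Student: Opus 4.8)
Both exact sequences are modulus/logarithmic refinements of Illusie's fundamental exact sequence, and I would treat them by the same scheme: use the mutually inverse isomorphisms $\ov F$ and $\ov C$ of \lemref{lem:Complete-6} to rewrite the relevant kernel as a fixed-point locus of the higher Cartier operator $C$ on closed forms, then identify that locus with a logarithmic de Rham--Witt sheaf by restricting to $U$ and invoking the classical (non-modulus) fundamental exact sequence, available here via the $F$-finite de Rham--Witt theory set up earlier. Both statements are \'etale-local on $X$, and throughout one uses $\Fil_{-D}W_m\Omega^q_U\subset W_m\Omega^q_X$ (valid since $-D<0$) and $W_m\Omega^q_X(\log E)\subset j_*W_m\Omega^q_U$ to transfer questions to $W_m\Omega^\bullet_U$ together with bookkeeping of poles/modulus along $E$.

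\medskip
\noindent\textbf{Part (2).} First I would check $j_*W_m\Omega^q_{U,\log}\subset Z_1W_m\Omega^q_X(\log E)$: a symbol $\dlog[u_1]_m\wedge\cdots\wedge\dlog[u_q]_m$ with $u_i\in j_*\sO_U^\times$ is $d$-closed, hence lies in $Z_1$, and writing each $u_i$ locally as a unit times powers of local equations of the components of $E$ shows it has at worst logarithmic poles along $E$; these local expressions glue because $W_m\Omega^q_X(\log E)\hookrightarrow j_*W_m\Omega^q_U$. For exactness at $Z_1W_m\Omega^q_X(\log E)$: given an \'etale $V\to X$ and $\omega\in\Gamma(V,Z_1W_m\Omega^q_X(\log E))$ with $C\omega=\omega$, its restriction to $V_U:=V\times_XU$ is a closed section of $W_m\Omega^q_{V_U}$ fixed by the classical Cartier operator, hence lies in $\Gamma(V_U,W_m\Omega^q_{V_U,\log})=\Gamma(V,j_*W_m\Omega^q_{U,\log})$ by the classical fundamental exact sequence on $(V_U)_{\et}$; the reverse inclusion is clear as symbols are $C$-fixed. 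Finally, for surjectivity of $1-C$ on $X_{\et}$: for $m=1$ the logarithmic Cartier isomorphism of \cite{Hyodo-Kato} makes the argument identical to the non-logarithmic one (the case $q=0$ recovering Artin--Schreier--Witt theory), and the passage from $m-1$ to $m$ is the standard d\'evissage along the restriction $R$ and Verschiebung $V$.

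\medskip
\noindent\textbf{Part (1).} Since $D>0$ we have $-D<0$ and $-D/p\geq -D$, so $\Fil_{-D}W_m\Omega^q_U\subset\Fil_{-D/p}W_m\Omega^q_U$ and the map $\ov F$ of \lemref{lem:Complete-6} is defined on $\Fil_{-D}W_m\Omega^q_U$, with image in $Z_1\Fil_{-D}W_m\Omega^q_U/dV^{m-1}(\Fil_{-D}\Omega^{q-1}_U)$, a subgroup of $\Fil_{-D}W_m\Omega^q_U/dV^{m-1}(\Fil_{-D}\Omega^{q-1}_U)$. Unwinding the definitions of $\ov F$ and $\ov C$, I would show that $(1-\ov F)\omega=0$ forces $\omega\in Z_1\Fil_{-D}W_m\Omega^q_U$ (using $dV^{m-1}(\Fil_{-D}\Omega^{q-1}_U)\subset Z_1\Fil_{-D}W_m\Omega^q_U$) and then, after applying $\ov C$, that $C\omega=\omega$; the converse inclusion is immediate. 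Since $Z_1\Fil_{-D}W_m\Omega^q_U=\Fil_{-D}W_m\Omega^q_U\cap Z_1W_m\Omega^q_X$ inside $W_m\Omega^q_X$, this identifies $\ker(1-\ov F)$ with $\Fil_{-D}W_m\Omega^q_U\cap\ker\bigl(1-C\colon Z_1W_m\Omega^q_X\to W_m\Omega^q_X\bigr)$, which by the classical fundamental exact sequence equals $\Fil_{-D}W_m\Omega^q_U\cap W_m\Omega^q_{X,\log}$; injectivity of the first map is then clear. There remains the identification $\Fil_{-D}W_m\Omega^q_U\cap W_m\Omega^q_{X,\log}=W_m\Omega^q_{X|D,\log}$.

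\medskip
\noindent\textbf{Main obstacle.} The real work in part (1) is this last identification, i.e. the comparison of the present filtration with that of \cite{JSZ}. Through the presentation $\sK^M_{q,X|D}/(p^m\sK^M_{q,X}\cap\sK^M_{q,X|D})\xrightarrow{\cong}W_m\Omega^q_{X|D,\log}$, the inclusion $\subseteq$ reduces to the local fact that $\dlog[1+a]_m\in\Fil_{-D}W_m\Omega^1_U$ whenever $a$ lies in the ideal of $D$, together with compatibility of $\Fil_{-D}$ with wedge products and sums; this is a computation at the generic points of the components of $D$, using the explicit description of $\Fil_{-D}W_m\Omega^\bullet_U$. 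The reverse inclusion --- that a logarithmic form lying in $\Fil_{-D}W_m\Omega^q_U$ is a sum of $\dlog$'s of relative Milnor symbols --- is the hard input, essentially a localized instance of the Bloch--Kato--Gabber--type symbol computation behind \cite{JSZ}, and it is precisely where the two a priori different filtered de Rham--Witt theories get reconciled. For part (2), the only delicate point is the \'etale surjectivity of $1-C$: classical given the Hyodo--Kato isomorphism, but one should check it behaves well at the components of $E$, again reducing to the case $q=0$.
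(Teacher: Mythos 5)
Your overall strategy matches the paper's, and your reduction in (1) of $\Ker(1-\ov{F})$ to the intersection $\Fil_{-D}W_m\Omega^q_U\cap W_m\Omega^q_{X,\log}$ (via $\ov{F}$, $\ov{C}$ and the classical fundamental exact sequence) is sound. But both parts stop precisely where the real work begins. In (1) you declare the identification $\Fil_{-D}W_m\Omega^q_U\cap W_m\Omega^q_{X,\log}=W_m\Omega^q_{X|D,\log}$ to be ``essentially a localized instance of the symbol computation behind \cite{JSZ}''. That computation only yields the case $m=1$ (and $D=p^{m-1}D'$); for general $m$ and $D$ the statement is new, and the paper proves it by induction on $m$: for $x$ in the intersection one uses $R(x)=F(x)$ together with $F(\Fil_{-D}W_m\Omega^q_U)\subset\Fil_{-D}W_{m-1}\Omega^q_U$ to put $R(x)$ in the corresponding intersection one level down, applies the induction hypothesis and the surjectivity of $R$ on relative log sheaves to write $x-y=\un{p}^{m-1}(z)$ with $z\in\Omega^q_{X,\log}$, and then invokes \lemref{lem:Complete-7} to conclude $z\in\Fil_{-\lceil D/p^{m-1}\rceil}\Omega^q_U$, whence $z\in\Omega^q_{X|\lceil D/p^{m-1}\rceil,\log}$ by the $m=1$ case of \cite{JSZ}. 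Without this d\'evissage (the key new input being \lemref{lem:Complete-7}), your part (1) is not a proof.

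In (2) the gap is the inclusion $j_*(W_m\Omega^q_{U,\log})\subseteq Z_1W_m\Omega^q_X(\log E)$. Your verification only treats sections that are sums of $\dlog$ symbols. But $W_m\Omega^q_{U,\log}$ is the \'etale sheafification of the image of $\dlog$, and a section of $j_*(W_m\Omega^q_{U,\log})$ over a strictly henselian $\Spec(A)$ is a section over the punctured scheme $\Spec(A_\pi)$, which is not local: a priori it is only \'etale-locally on $\Spec(A_\pi)$ a sum of symbols, and those local presentations need not assemble into a global one whose log-pole bound you can read off. The paper closes exactly this hole by proving that $j_*(\sK^M_{q,U})\to j_*(\sK^m_{q,U})$ is surjective, which via the sequence $0\to\sK^M_{q,X_\pi}\xrightarrow{p^m}\sK^M_{q,X_\pi}\to\sK^m_{q,X_\pi}\to0$ reduces to the Gersten-complex vanishing $H^1_\zar(X_\pi,\sK^M_{q,X_\pi})=0$ of \lemref{lem:Gersten-0}; some such input is unavoidable. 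Your sketch of the surjectivity of $1-C$ (reduce to $m=1$, $q=0$, Artin--Schreier over a strictly henselian base) does agree with the paper's argument, modulo the generation statements of Lemmas~\ref{lem:VR-3} and~\ref{lem:VR-4} needed to reduce to $q=0$.
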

The item (1) in the above lemma is already known by \cite{JSZ} if $m=1$ or $D$ is of the form $p^{m-1}D'$. Moreover, it is also right exact in the first case.

Next, we present some applications of the above results.

\subsection*{First application: Ramified duality over finite fields with fixed modulus}
We get the following refined version of Jannsen-Saito-Zhao's ramified duality for smooth schemes over finite fields. We write  $D_{\n}= \sum\limits_{1\le i \le r}n_iE_i$, for $\n=(n_1,\ldots,n_r) \in \Z^r$. We say $\n \ge \un{0}$ if $n_i \ge 0$ for all $i$.
\begin{thm}(\thmref{thm:Perfect-finite},\corref{cor:Duality-log})
   Let $X$ be a connected smooth projective variety over a finite field $k$ and dim $X=N$. Then for all $\n \ge \un{0}$, we have a perfect pairing of finite abelian groups 
    \begin{equation}\label{eqn:JSZ-2}
 \H^i_\et(X, W_{m}\sF^{q, \bullet}_{D_{\un{n}}}) \times
    H^{N+1-i}_\et(X, W_{m}\Omega^{N-q}_{X|D_{\un{n}+1}, \log})  \xrightarrow{\cup}
    {\Z}/{p^m}
    \end{equation}
\[
  H^i_\et(X, j_*(W_m\Omega^q_{U, \log})) \times H^{N+1-i}_\et(X, W_m\Omega^{N-q}_{X|E,\log})
  \xrightarrow{\cup} {\Z}/{p^m}.\]
\end{thm}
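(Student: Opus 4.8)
The plan is to deduce the perfect pairing \eqref{eqn:JSZ-2} from a Poincar\'e-type duality over the finite field $k$, applied to an appropriate coherent duality statement between the two-term complexes $W_m\sF^{q,\bullet}_{D_{\un n}}$ and the Jannsen-Saito-Zhao sheaves $W_m\Omega^{N-q}_{X|D_{\un n+1},\log}$. First I would construct, for each pair $(q,D)$ with $D>0$, a cup-product pairing of complexes of \'etale sheaves
\[
W_m\sF^{q,\bullet}_{D}\ \otimes^{\mathbf L}\ W_m\Omega^{N-q}_{X|D+1,\log}\ \longrightarrow\ W_m\Omega^{N}_{X,\log}[-1]\ \xrightarrow{\ \tr\ }\ \Z/p^m(-N)[-N-1]
\]
in $\sD^b(X_\et,\Z/p^m)$, using that $\Fil_D W_m\Omega^q_U$ wedged against the relative de Rham-Witt classes lands (because of the shift from $D$ to $D+1$) inside $W_m\Omega^N_{X,\log}$ rather than merely $j_*W_m\Omega^N_U$; here the $+1$ in the modulus is exactly what absorbs the ``boundary'' contribution of $Z_1\Fil_D$ versus $\Fil_D$. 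The compatibility of $1-C$ on the left factor with $1-\ov F$ (equivalently $1-F$) on the Jannsen-Saito-Zhao side — see \lemref{lem:log}(1) and the Cartier-operator identities in the excerpt — is what makes this a map of complexes and not just of graded sheaves.

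Next I would verify that this pairing is a \emph{perfect} duality of complexes locally, i.e.\ that it induces an isomorphism
\[
W_m\sF^{q,\bullet}_{D}\ \xrightarrow{\ \cong\ }\ R\sHom_{X_\et}\!\big(W_m\Omega^{N-q}_{X|D+1,\log},\ \Z/p^m(-N)[-N-1]\big).
\]
The strategy here is d\'evissage in $m$: the distinguished triangle $W_1\sF^{q,\bullet}_{D}\xrightarrow{V^{m-1}}W_m\sF^{q,\bullet}_{D}\xrightarrow{R}W_{m-1}\sF^{q,\bullet}_{D/p}$ (the $p$-$R$ triangle from \thmref{thm:Global-version}(11)) together with the analogous $p$-$R$ triangle for the Jannsen-Saito-Zhao sheaves reduces everything to the case $m=1$. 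For $m=1$ one works with honest coherent sheaves on a smooth $k$-variety: $W_1\sF^{q,\bullet}_{D}=\big(Z_1\Fil_D\Omega^q_U\xrightarrow{1-C}\Fil_D\Omega^q_U\big)$, and the required perfectness becomes the statement that the naive Serre-duality pairing between $\Fil_D\Omega^q_U$ and $\Omega^{N-q}_{X|D+1}(\log E)$ (and likewise for the $Z_1$-pieces via $\ov F$/$\ov C$) is perfect as a pairing of coherent $\sO_X$-modules — this is a Grothendieck-Serre duality computation in the style of \cite[Thm.~1.1.6]{JSZ} and \cite{GK-Duality}, local on $X$ and ultimately reducing to the case of a normal-crossing divisor where one computes residues in local coordinates.

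Finally I would combine the coherent duality with the arithmetic duality over the finite base. Taking \'etale hypercohomology of the isomorphism above and invoking the two-term analogue of the trace map plus the classical duality $H^i_\et(X,\sG)\times H^{N+1-i}_\et(X,R\sHom(\sG,\Z/p^m(-N)[-N]))\to H^{2N+1}_\et(X,\Z/p^m(-N))\cong\Z/p^m$ for smooth proper $X/k$ (Milne / Moser-style $p$-adic \'etale Poincar\'e duality for logarithmic Hodge-Witt sheaves over finite fields), one gets \eqref{eqn:JSZ-2}; finiteness of the groups follows since all the sheaves involved are constructible $\Z/p^m$-sheaves and $X$ is proper over a finite field. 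The second displayed pairing is the special case obtained as a limit: letting $\un n\to\infty$ (equivalently $D\to\infty$ along multiples of $E$) and using property (5) of $\Fil_\bullet$, $W_m\sF^{q,\bullet}_{D}$ converges to $j_*W_m\Omega^q_{U,\log}[0]$ via \lemref{lem:log}(2), while $W_m\Omega^{N-q}_{X|D+1,\log}$ converges to $W_m\Omega^{N-q}_{X|E,\log}$; passing to the (co)limit of the perfect pairings \eqref{eqn:JSZ-2} yields the asserted pairing.

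\medskip
\emph{Main obstacle.} I expect the crux to be the local perfectness in the case $m=1$: pinning down the precise modulus bookkeeping — why the dual of $\Fil_D$ with its subsheaf $Z_1\Fil_D$ and the operator $1-C$ is exactly $\Omega^{N-q}_{X|D+1,\log}$ with the shift $D\mapsto D+1$, rather than $D$ or $D+2$ — and checking that the residue pairing is non-degenerate on the nose along every stratum of the normal crossing divisor $E$, including the deeper intersections $E_{i_1}\cap\cdots\cap E_{i_s}$. This is where the $F$-finiteness hypothesis and the structural results on $\Fil_DW_m\Omega^\bullet_U$ from the ``Filtered de Rham-Witt complex'' section (the short exact sequence of \thmref{thm:Global-version}(2) and the Cartier isomorphisms of \lemref{lem:Complete-6}) have to be used carefully; everything else is formal d\'evissage plus known arithmetic duality.
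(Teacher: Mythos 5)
Your overall skeleton — construct the cup-product pairing of the two-term complex against the Jannsen--Saito--Zhao sheaf with modulus shifted by $+1$, then d\'evissage in $m$ via the $p$-$R$ triangles to reduce to $m=1$, handled by coherent Grothendieck--Serre duality for the locally free sheaves $\Fil_{\un n}\Omega^q_U$, $Z_1\Fil_{\un n}\Omega^q_U$ and $\Fil_{\un n}\Omega^q_U/d(\Fil_{\un n}\Omega^{q-1}_U)$ — is exactly the paper's route. But two steps of your assembly are genuinely wrong. First, the final passage from the sheaf pairing to the cohomological duality: you invoke a biduality isomorphism $W_m\sF^{q,\bullet}_{D}\cong R\sHom(W_m\Omega^{N-q}_{X|D+1,\log},\Z/p^m(-N)[-N-1])$ together with a ``Milne/Moser-style'' Poincar\'e duality with $\Z/p^m(-N)$ coefficients. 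No such Tate-twisted Verdier-type duality exists for $p$-torsion coefficients in characteristic $p$, and the sheaves in play are not constructible $\Z/p^m$-sheaves (they are coherent-type $W_m\sO_X$-modules, resp. huge logarithmic Hodge--Witt sheaves), so your finiteness argument also fails; in the paper finiteness comes from coherence over $W_m(k)$ and from the JSZ exact sequences. The paper never proves (or needs) a sheaf-level duality: the only arithmetic input is Milne's trace isomorphism $H^{N+1}_\et(X,W_m\Omega^N_{X,\log})\cong\Z/p^m$, and perfectness of the \emph{global} pairing is proved at $m=1$ by comparing the two hypercohomology long exact sequences (for $W_1\sF^{q,\bullet}_{\un n}$ and for $\Omega^{N-q}_{X|D_{\un n+1},\log}\cong W_1\sG^{N-q,\bullet}_{\un n}$) and applying Serre duality to the outer terms plus the five lemma, then induction on $m$ using \thmref{thm:Global-version}(11) and \corref{cor:Complex-7}. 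If you want to keep your plan you must replace the $R\sHom$/twist step by this direct global argument.

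Second, your derivation of the second pairing as a limit $\un n\to\infty$ is incorrect. As $\un n$ grows, $W_m\sF^{q,\bullet}_{D_{\un n}}$ colimits to $Rj_*W_m\Omega^q_{U,\log}$, whose hypercohomology is $H^i_\et(U,W_m\Omega^q_{U,\log})$, not $H^i_\et(X,j_*(W_m\Omega^q_{U,\log}))$; and the sheaves $W_m\Omega^{N-q}_{X|D_{\un n+1},\log}$ \emph{shrink} as $\un n$ grows, so their inverse limit is not $W_m\Omega^{N-q}_{X|E,\log}$. Taking limits in the first pairing yields the topological Jannsen--Saito--Zhao duality with $\varprojlim_{\un n}H^{N+1-i}_\et(X,W_m\Omega^{N-q}_{X|D_{\un n+1},\log})$ (a profinite group), not a perfect pairing of finite abelian groups. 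The second pairing is instead the special case $\un n=\un 0$ of the first: \lemref{lem:Complex-6}(4) gives a quasi-isomorphism $j_*(W_m\Omega^q_{U,\log})\simeq W_m\sF^{q,\bullet}_{\un 0}$, and $D_{\un 0+1}=E$, so the $\un n=\un 0$ case of \eqref{eqn:JSZ-2} is literally the asserted pairing.
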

In view of \lemref{lem:log}(2), the second duality of the above theorem can be considered as the Milne's duality for the log scheme $X$, whose log structure is given by $E$ (cf. \cite{Hyodo-Kato}). Now by taking limits over $\n$ in the top pairing of \eqref{eqn:JSZ-2}, we recover the ramified duality of Jannsen-Saito-Zhao : 
\begin{equation}\label{eqn:JSZ}
    H^i_\et(U,W_m\Omega^q_{U,\log}) \times \varprojlim\limits_{D}H^{N+1-i}_\et(X,W_m\Omega^{N-q}_{X|D,\log}) \to \Z/p^m.
\end{equation}

\subsection*{Second application: Duality for open curves over local fields}
Now we proceed to our next main result which is the duality theorem for open curves over local fields of characteristic $p>0$. Since there is no duality theorem for $p$-adic {\'e}tale motivic cohomology for a smooth projective variety $X$ over local fields in the literature, except dim $X=1$ (\cite[Prop~4]{Kato-Saito-Ann}, \cite[Thm~4.7]{KRS}), we consider only dim $1$ case. In this case, the group $H^i_\et(X, W_m\Omega^q_{X|D_{\n+1}, \log})$ is not finite in general. Moreover, it might have non-discrete topology induced from the adic topology of the base field $k$.  Considering these non trivial topologies, we have the following theorem.

\begin{thm}\label{thm:curve}(\thmref{thm:Duality-curve})
     Let $X$ be a smooth projective geometrically connected curve over a local field $k$ of characteristic $p>0$. Then for all $\n \ge \un{0}$, we have perfect pairings of topological abelian groups
   \begin{enumerate}
       \item \hspace{2cm}  $H^i_\et(X, W_m\Omega^2_{X|D_{\n+1}, \log})^\pf \times
    \H^{2-i}_\et(X, W_m\sF^{0,\bullet}_{D_{\n}}) \to {\Z}/{p^m},$ \\
    where the group $H^i_\et(X, W_m\Omega^2_{X|D_{\n+1}, \log})$ is Hausdorff for all $i$ (also profinite if $i \neq 1$).

    \item  \hspace{2cm}$H^i_\et(X,W_m\Omega^1_{X|D_{\n+1},\log}) \times \H^{2-i}_\et(X,W_m\sF^{1,\bullet}_{D_{\n}}) \to \Z/p^m, \ i \neq 1,$ \\
    where $H^i_\et(X,W_m\Omega^1_{X|D_{\n+1},\log})$ has profinite topology if $i=0$ and discrete topology if $i=2$,

    \item \hspace{2cm} $H^1_\et(X,W_m\Omega^1_{X|D_{\n+1},\log})^\wedge \times \H^1_\et(X,W_m\sF^{1,\bullet}_{D_{\n}}) \to \Z/p^m,$ \\ 
    where $H^1_\et(X,W_m\Omega^1_{X|D_{\n+1},\log})$ is an extension of a discrete group by an adic topological group and $\H^1_\et(X,W_m\sF^{1,\bullet}_{D_{\n}})$ is an extension of a discrete group by a profinite group and $(H^1_\et(X,W_m\Omega^1_{X|D_{\n+1},\log}))^\wedge$ is the ``discrete by profinite" completion of $H^1_\et(X,W_m\Omega^1_{X|D_{\n+1},\log})$.
   \end{enumerate}
   
\end{thm}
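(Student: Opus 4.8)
The plan is to establish the theorem by adapting the proof of the finite-field duality \thmref{thm:Perfect-finite} to the local-field base, the role played there by the ``self-duality'' of $\F_q$ being taken over by the Pontryagin self-duality of the $1$-local field $k$ (Kato's local duality), and then analysing the non-discrete topologies that appear. First I would produce, from the cup products of Milnor $K$-theory and de Rham--Witt forms together with \lemref{lem:log}, a pairing of complexes of {\'e}tale sheaves on $X$
\[
W_m\sF^{q,\bullet}_{D_{\n}}\otimes^{L}W_m\Omega^{2-q}_{X|D_{\n+1},\log}\longrightarrow W_m\Omega^{2}_{X,\log},
\]
whose target is the top logarithmic Hodge--Witt sheaf: although $\dim X=1$, the scheme $X$ is $F$-finite of ``differential dimension'' $2$ over $\F_p$, because $k$ carries the one continuous differential $dt$. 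Using the triangle $W_1\sF^{q,\bullet}_{D}\xrightarrow{V^{m-1}}W_m\sF^{q,\bullet}_{D}\xrightarrow{R}W_{m-1}\sF^{q,\bullet}_{D/p}$ and the analogous $p$-$R$ sequence for $W_m\Omega^{\bullet}_{X|D,\log}$, one reduces the perfectness to the case $m=1$ by induction; the ``$1-C$'' short exact sequences of the filtered de Rham--Witt subsection and \lemref{lem:log} then rewrite both sides of the pairing as hypercohomology of explicit two-term complexes of coherent $\sO_X$-modules built out of $\Fil_{D}\Omega^{\ast}_U$, $Z_1\Fil_{D}\Omega^{\ast}_U$ and $\Fil_{D/p}\Omega^{\ast}_U$.

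Next I would check that, under the Grothendieck--Serre pairing for the proper curve $X$ over the field $k$ — pairing $H^{a}(X,-)$ with $H^{1-a}(X,-)$ into $H^{1}(X,\omega_{X/k})\cong k$ — the coherent sheaves produced above for the modulus $D_{\n}$ are dual to those for $D_{\n+1}$, and the Cartier operator $C$ goes to its transpose; this is the exact analogue of the coherent input to the proof of \thmref{thm:Perfect-finite}, with $k$ in place of $\F_q$. Combining this $k$-valued pairing with the residue map of the local field $k$ gives a trace $\tr_X\colon H^{2}_\et(X,W_m\Omega^{2}_{X,\log})\to\Z/p^m$ (assembled from the Grothendieck residue along $X/k$ and Kato's residue $W_m\Omega^{1}_{k}\to W_m(\F_q)\xrightarrow{\Tr}\Z/p^m$), hence the pairings of the theorem; chasing the long exact sequences of the ``$1-C$'' sequences then yields perfectness in each degree, up to the topological refinements below. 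Equivalently, one may obtain the trace and the pairing by descent from a regular proper model $\sX/\sO_k$ with SNC special fibre, applying \thmref{thm:Perfect-finite} to the special fibre and gluing to $X$ along the localization triangle.

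In contrast to the finite-field case, the groups $H^{i}_\et(X,W_m\Omega^{q}_{X|D,\log})$ and $\H^{i}_\et(X,W_m\sF^{q,\bullet}_{D})$ are infinite, $X$ being proper over $k$ rather than over $\F_q$; through the ``$1-C$'' sequences they are built from finite-dimensional $k$-vector spaces, and I would topologize them $t$-adically, placing them in $\Tab$ and — after checking — in $\pfd$. Tracking these topologies through the long exact sequences gives, for $i\neq1$, a profinite (resp.\ discrete) group on the $W_m\Omega^{2-q}_{X|D_{\n+1},\log}$-side with the $W_m\sF^{q,\bullet}_{D_{\n}}$-side as its honest Pontryagin dual — this is parts (1) and (2), with the stated Hausdorffness and profiniteness — whereas for $i=1$ the operator $1-C$ on the relevant $t$-adic module is not strict, so $H^{1}_\et(X,W_m\Omega^{1}_{X|D_{\n+1},\log})$ is merely an extension of a discrete group by an adic one, $\H^{1}_\et(X,W_m\sF^{1,\bullet}_{D_{\n}})$ an extension of a discrete group by a profinite one, and perfectness holds only after passing to the ``discrete-by-profinite'' completion, which is part (3).

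The genuine difficulty is topological: one must isolate a category of topological $\Z/p^m$-modules — essentially $\pfd$ together with its ``adic'' objects — in which Pontryagin duality is exact, prove that all the cohomology groups occurring belong to it, and show that the completion/$\varprojlim$-defect is confined to the single degree $i=1$, which is precisely what forces the trichotomy (1)--(3). By comparison, the algebraic steps — constructing the pairing, the d\'evissage to $m=1$, and the comparison with Grothendieck--Serre duality on $X/k$ and with Kato's residue — are lengthy but follow from the filtered de Rham--Witt machinery developed earlier in the thesis and the already-established finite-field duality.
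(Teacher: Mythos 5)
There is a genuine gap, and it lies exactly where you hide it behind ``up to the topological refinements below'': the perfectness of the pairing cannot be extracted from coherent Grothendieck--Serre duality for $X/k$ plus the $1-C$ long exact sequences. The operator $1-C$ (and $1-\ov F$) is not $k$-linear, so Serre duality of the coherent terms $\Fil_{D}\Omega^\ast_U$, $Z_1\Fil_D\Omega^\ast_U$ does not dualize the kernels and cokernels of $1-C$ as $\Z/p^m$-modules; the resulting $\Z/p^m$-valued perfectness encodes the arithmetic of the base. In the finite-field proof you are imitating, that arithmetic input is Milne's duality over $\F_q$ (i.e.\ the Lang/Artin--Schreier argument packaged in \cite{Milne-Zeta}); over a local field the analogous input in the relevant degrees is \emph{not} a formal consequence of ``Pontryagin self-duality of $k$'' --- it is precisely the content the paper has to supply. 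The paper's route is different: it takes the no-modulus duality for curves over local fields as known ($\Tr\colon H^2_\et(X,W_m\Omega^2_{X,\log})\cong\Z/p^m$ and the $\n=\un 0$ dualities of Kato--Saito and \cite{KRS}), and then performs a d\'evissage in the modulus direction: the quotients $W_m\sW^q_{\n+1}$ and $W_m\sV^q_{\n}$ are supported on the finite set $E$, their cohomology is computed at the Henselian local rings $A_x$ (giving $\varpi^m_q(A_x|n_x)$ against $\Fil_{n_x}H^1_\et(K_x,W_m\Omega^{2-q}_{K_x,\log})/\Fil_0$), and the induced boundary maps are identified with Kato's reciprocity maps $\wt\rho^{q,n_x+1}_{K_x,m}$ for the $2$-local fields $K_x$ (Lemma \ref{lem:Coh-V2}); their injectivity and the bijectivity of their duals (Lemmas \ref{lem:Local-inj-6}, \ref{lem:Dual-iso}, resting on \cite{Kato80-2}, \cite{Kato-Inv} and \thmref{thm:Kato-fil-10}) are what make the five-lemma argument in \eqref{eqn:G-comp-6} close. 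Your proposal never produces this local class-field-theoretic input, and without it the ladder chase does not yield perfectness; nor is the alternative ``descend from a regular model $\sX/\sO_k$ and apply \thmref{thm:Perfect-finite} to the special fibre'' developed to the point where it could replace it.

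The topological assertions are likewise not reachable by your $t$-adic heuristic. In the paper, the case $(i,q)=(1,2)$ is handled by the weak-topology/profinite-completion device (Lemma \ref{lem:Top-1}, \corref{cor:Duality-Spl}), and the case $(i,q)=(1,1)$ --- your part (3) --- requires identifying $H^1_\et(X,W_m\Omega^1_{X|D_{\n+1},\log})$ as an extension of ${}_{p^m}H^2_\et(X,\sK^M_{1,X|D_{\n+1}})$ by $\Pic(X|D_{\n+1})/p^m$ (\propref{prop:mod-p-iso}, \corref{cor:G^1,1}), giving the adic topology via the relative Picard scheme of \cite{KRS}, and pairing it against the extension $0\to\Pic(U)/p^m\to \H^1_\et(X,W_m\sF^{1,\bullet}_{D_\n})\to{}_{p^m}\Br^\divv(X|D_\n)\to 0$ of \corref{cor:F^q} by means of the Brauer--Manin pairing with modulus and the duality $\Br(X|D)\cong\Pic(X|D)^\star$. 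Saying that ``$1-C$ is not strict on the $t$-adic module, so one must complete'' names the phenomenon but does not produce the topology, the membership in $\pfd$, or the perfectness after completion; those come from the Picard/Brauer dualities, which your outline does not invoke.
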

Here, for a topological group $G$, $G^\pf$ means the topological profinite completion of $G$, (i.e. $G^\pf=\varprojlim\limits_{\substack{\{U :\ [G:U] <\infty}\}}G/U$, where $U$ is an open subgroup of $G$).
   Now, by taking limits over $\n$, we have the following corollary.
   \begin{cor}(\thmref{thm:Duality-curve}(4))
       Under the same hypothesis of \thmref{thm:curve}, we have perfect pairing of topological abelian groups
       \begin{enumerate}
           \item \hspace{1cm}  $\varprojlim\limits_\n H^i_\et(X, W_m\Omega^2_{X|D_{\n+1}, \log})^\pf \times
    H^{2-i}_\et(U,\Z/p^m) \to {\Z}/{p^m},$
             \item  \hspace{1cm}$\varprojlim\limits_\n H^i_\et(X,W_m\Omega^1_{X|D_{\n+1},\log}) \times H^{2-i}_\et(U,W_m\Omega^1_{U,\log}) \to \Z/p^m, \ i \neq 1,$ 
             \item  \hspace{1cm}$\varprojlim\limits_\n H^1_\et(X,W_m\Omega^1_{X|D_{\n+1},\log})^\wedge \times H^{1}_\et(U,W_m\Omega^1_{U,\log}) \to \Z/p^m.$ 
       \end{enumerate}
   \end{cor}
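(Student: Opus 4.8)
The plan is to deduce the corollary above from \thmref{thm:curve} by passing to the limit over $\n$. Three points have to be settled: (i) that the direct limit $\varinjlim_\n$ of the right-hand factors $\H^{2-i}_\et(X, W_m\sF^{q,\bullet}_{D_\n})$ is the étale cohomology of $U$ that appears in the corollary (namely $H^{2-i}_\et(U,\Z/p^m)$ for $q=0$ and $H^{2-i}_\et(U,W_m\Omega^1_{U,\log})$ for $q=1$); (ii) that the cup-product pairings of \thmref{thm:curve} are compatible with the transition maps in $\n$, so that there is a well-defined pairing between $\varprojlim_\n$ of the left factors and $\varinjlim_\n$ of the right factors; and (iii) that a compatible family of perfect pairings of ``torsion-by-profinite'' groups, one system inverse and the other direct, produces a perfect pairing between $\varprojlim$ and $\varinjlim$ — in the topology that explains the $\pf$ of part~(1) and the $\wedge$ of part~(3).

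For (i), recall property~(5) of the filtered de Rham-Witt complex, $\varinjlim_n \Fil_{nE}W_m\Omega^q_U \xrightarrow{\cong} j_*W_m\Omega^q_U$, and that the divisors $D_\n$ with $\n\to\infty$ are cofinal among the $nE$. Since filtered colimits are exact and $Z_1\Fil_DW_m\Omega^q_U$ is by definition a kernel inside $\Fil_DW_m\Omega^q_U$, one also gets $\varinjlim_\n Z_1\Fil_{D_\n}W_m\Omega^q_U = j_*Z_1W_m\Omega^q_U$; hence at the level of complexes of étale sheaves on $X$,
\[
\varinjlim_\n W_m\sF^{q,\bullet}_{D_\n} \;=\; j_*\bigl(Z_1W_m\Omega^q_U \xrightarrow{1-C} W_m\Omega^q_U\bigr).
\]
The two sheaves on the right are quasi-coherent $W_m\sO_U$-modules and $j\colon U\inj X$ is an affine open immersion of curves, so $R^{>0}j_*$ vanishes on them (reduce étale to Zariski cohomology and use that $j$ is affine); therefore the right-hand complex computes $Rj_*$ of $\bigl(Z_1W_m\Omega^q_U \xrightarrow{1-C} W_m\Omega^q_U\bigr)$ in $D^b(X_\et)$. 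Because étale cohomology of the Noetherian scheme $X$ commutes with filtered colimits, we conclude
\[
\varinjlim_\n \H^j_\et(X, W_m\sF^{q,\bullet}_{D_\n}) \;\cong\; \H^j_\et\bigl(U,\; Z_1W_m\Omega^q_U \xrightarrow{1-C} W_m\Omega^q_U\bigr).
\]
Finally, the fundamental exact sequence $0\to \mathcal{G}^q \to Z_1W_m\Omega^q_U \xrightarrow{1-C} W_m\Omega^q_U\to 0$ on $U_\et$, with $\mathcal{G}^0=\Z/p^m$ (Artin--Schreier--Witt) and $\mathcal{G}^1=W_m\Omega^1_{U,\log}$ (\lemref{lem:log}(2) / the classical Cartier-type sequence, valid here by the $F$-finite de Rham-Witt theory developed above), identifies $\bigl(Z_1W_m\Omega^q_U \xrightarrow{1-C} W_m\Omega^q_U\bigr)$ with $\mathcal{G}^q$ in $D^b(U_\et)$, giving $\varinjlim_\n \H^j_\et(X, W_m\sF^{q,\bullet}_{D_\n})\cong H^j_\et(U,\mathcal{G}^q)$. (As a cross-check, for $q=1$ one has $\H^1_\et(X, W_m\sF^{1,\bullet}_{D})\cong\Fil_D H^2(U,\Z/p^m(1))$ by \thmref{thm:1}, and $\varinjlim_D\Fil_D = H^2(U,\Z/p^m(1)) \cong H^1_\et(U,W_m\Omega^1_{U,\log})$, matching the $i=1$ case.)

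For (ii)–(iii): the pairings of \thmref{thm:curve} are defined by cup product followed by a trace $\H^2_\et(X,-)\to\Z/p^m$, hence are functorial in the sheaves $W_m\Omega^{2-q}_{X|D_{\n+1},\log}$ and $W_m\sF^{q,\bullet}_{D_\n}$. As $\n$ grows the modulus condition strengthens, so $W_m\Omega^{2-q}_{X|D_{\n+1},\log}$ shrinks (an inverse system of cohomology groups) while $W_m\sF^{q,\bullet}_{D_\n}$ grows (a direct system), and the cup products respect the evident restriction and inclusion maps, i.e.\ $\langle\rho(a),b\rangle_\n=\langle a,\iota(b)\rangle_{\n'}$; this produces the pairing between $\varprojlim_\n$ of the left factors and $\varinjlim_\n$ of the right factors. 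Each finite level is perfect in the category $\pfd$, and Pontryagin duality is exact on $\pfd$ and interchanges $\varprojlim$ with $\varinjlim$; hence the limit pairing is again perfect. In part~(1) every left factor is (after $\pf$) profinite and every right factor is discrete $p^m$-torsion, so one is simply taking the Pontryagin dual of an increasing union of discrete torsion groups; the $\pf$ in part~(1) and the $\wedge$ in part~(3) are precisely the completions that make the limit of the (profinite, resp.\ ``discrete-by-profinite'') left factors pair perfectly with the discrete right factor. Tracking the topological refinements degree by degree as in \thmref{thm:curve} yields statements (1)–(3), and the displayed corollary is their combination.

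The main obstacle I expect lies in (i): one must verify carefully that $\varinjlim_\n$ commutes with the formation of $Z_1\Fil_{D_\n}$ and with $\H^*_\et(X,-)$, that $R^{>0}j_*$ vanishes on the quasi-coherent sheaves involved, and — most delicately — that the fundamental exact sequence $0\to\mathcal{G}^q\to Z_1W_m\Omega^q_U\xrightarrow{1-C}W_m\Omega^q_U\to 0$ holds for the \emph{absolute} de Rham-Witt complex over the \emph{non-perfect} $F$-finite base field $k$, i.e.\ that surjectivity of $1-C$ and the $\dlog$-description of $W_m\Omega^q_{U,\log}$ survive in the $F$-finite setting; this should follow from the structural results on the $F$-finite de Rham-Witt complex established earlier in the thesis. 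A secondary difficulty is purely topological: $\varprojlim_\n$ of profinite completions need not be the profinite completion of $\varprojlim_\n$, and the ``discrete-by-profinite'' structure appearing in degree $1$ must be shown to pass to the limit — this is handled by remaining inside $\pfd$, which is stable under the relevant limits and on which Pontryagin duality behaves well.
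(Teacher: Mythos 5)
Your overall strategy is the paper's: the corollary is obtained by letting $\n\to\infty$ in the finite-level dualities of \thmref{thm:Duality-curve}, identifying $\varinjlim_\n\H^{j}_\et(X,W_m\sF^{q,\bullet}_{D_\n})$ with $H^{j}_\et(U,\Z/p^m)$, resp.\ $H^{j}_\et(U,W_m\Omega^1_{U,\log})$ — your step (i) is exactly \thmref{thm:Global-version}(12) together with commutation of {\'e}tale cohomology with filtered colimits, which is what the paper invokes — and then passing perfectness to the limit (the $i=0$ and $i=2$ cases being degenerate or constant in $\n$). The one place you genuinely diverge is the limit-duality step, and there your justification is too thin as written: it is not true in general that Pontryagin duality on $\pfd$ ``interchanges $\varprojlim$ with $\varinjlim$''. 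The paper's route (Corollaries~\ref{cor:Duality-open} and \ref{cor:Duality-open-2}) is to prove that the transition maps on the inverse-limit side are surjective topological quotients (\lemref{lem:Top-6} for $(G^{2,1}_{\n}(m))^\pf$; injectivity of $\Br(X|D_{\n})\to\Br(X|D_{\n'})$ combined with the Brauer--Manin duality \eqref{eqn:Br-Ma-3} for the $q=1$ case) and then to apply \cite[Lem.~2.6]{KRS} and \cite[Lem.~7.2]{GK-Duality}. Your alternative — dualize the direct system, i.e.\ identify $\bigl(\varinjlim_\n F_\n\bigr)^\star\cong\varprojlim_\n F_\n^\star\cong\varprojlim_\n$ (left factors), then apply biduality to the colimit — does work and even avoids the surjectivity lemma, but only after you supply what you merely gesture at: (a) injectivity of the transition maps on the $F$-side, so the colimit is an increasing union (this is \thmref{thm:H^1-fil}); (b) that every compact subset of the colimit with its direct-limit topology lies in a single finite stage (use the common open profinite subgroup $\Pic(U)/p^m$), so the compact-open topology on the dual of the colimit agrees with the inverse-limit topology on the limit of duals; and (c) that the colimit is again torsion-by-profinite so that Pontryagin biduality applies — the paper verifies this via the sequence $0\to\Pic(U)/p^m\to H^1_\et(U,W_m\Omega^1_{U,\log})\to{}_{p^m}\Br(U)\to0$. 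With these three points written out, your argument closes and is essentially equivalent to the paper's; without them, the assertion that perfectness ``passes to the limit inside $\pfd$'' is exactly the point at issue.
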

We note that the groups in the pairing (1) and (2) above are zero if $i=0$.
As an application of the above theorem, we can deduce the $p$-primary part of the class field theory for $U$. This is proved in \cite{CFT}. We also remark that the class field theory for $X$ was established by Bloch \cite{Bloch-CFT}, S. Saito \cite{Saito-JNT}, Yoshida \cite{Yoshida03} and the prime to $p$-part of the class field theory for $U$ was established by T. Hiranouchi in \cite{Hiranouchi-2}.

\subsection*{Third application: Lefschetz hypersurface theorem}

Finally, we give one more application of the theory of filtered de Rham-Witt complex, namely, the Lefschetz hypersurface theorem.

\begin{thm}\label{thm:Lef}(\thmref{thm:Lefschetz})
    Let $X \inj \P^L_k$ be a smooth projective variety over an $F$-finite field $k$ of dim $N$ and $D$ as before. Let $Y$ be a smooth hypersurface section of $X$ that intersects $D$ transversally. Let $D'=D \times_X Y$. If $Y$ is sufficiently ample (w.r.t $D$ in (1) and (2)), then for all $m \ge 1$, the group homomorphisms
 \begin{enumerate}
     \item  \hspace{.5cm} $\H^i_\et(X,W_m\sF^{q,\bullet}_{D,U}) \to \H^i_\et(Y,W_m\sF^{q,\bullet}_{D',U'})$

    \item \hspace{.5cm}
    $H^i_\et(X,W_m\Omega^q_{X|D,\log}) \to H^i_\et(Y,W_m\Omega^q_{Y|D',\log})$

    \item \hspace{.5cm}
    $H^i_\et(X,W_m\Omega^q_{X,\log}) \to H^i_\et(Y,W_m\Omega^q_{Y,\log})$
    
 \end{enumerate}
   are isomorphisms if $i+q \le N-2$ and injective if $i+q=N-1$.
\end{thm}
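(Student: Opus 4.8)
The plan is to reduce all three statements to a single Lefschetz-type input --- the comparison of the coherent cohomology of a locally free sheaf on $X$ with that of its restriction to $Y$ --- in two stages: first a dévissage in the Witt length $m$, then, at $m=1$, a term-by-term analysis of the complexes of coherent sheaves that compute the relevant (hyper)cohomology.

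For the reduction to $m=1$, I would use the $p$-$R$ distinguished triangle $W_1\sF^{q,\bullet}_{D}\xrightarrow{V^{m-1}}W_m\sF^{q,\bullet}_{D}\xrightarrow{R}W_{m-1}\sF^{q,\bullet}_{D/p}\xrightarrow{+}$ of \thmref{thm:Global-version}(11), together with the classical and modulus analogues of this triangle for $W_m\Omega^q_{X,\log}$ and $W_m\Omega^q_{X|D,\log}$. Since $Y$ meets $D$ and $E$ transversally, restriction along $Y\inj X$ is compatible with these triangles and with the operation $D\mapsto D/p$ (so that $D'/p=(D/p)'$); and as the floors $\lfloor n_i/p^j\rfloor$ stabilise, only finitely many twists $D/p^j$ occur. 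Fixing $D$ and choosing $Y$ ample enough to handle all of these simultaneously, a five-lemma argument and induction on $m$ --- with $q$ held fixed, so the target range $i+q\le N-2$ is preserved --- reduce (1)--(3) to the case $m=1$.

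At $m=1$ the objects are quasi-isomorphic, as complexes of étale sheaves, to complexes of coherent $\sO_X$-modules: $W_1\sF^{q,\bullet}_{D}=[\,Z_1\Fil_D\Omega^q_U\xrightarrow{1-C}\Fil_D\Omega^q_U\,]$ by \defref{defn:1-c complex}, and by \lemref{lem:log} the sheaves $W_1\Omega^q_{X|D,\log}$ and $W_1\Omega^q_{X,\log}$ admit analogous coherent resolutions --- in fact \lemref{lem:log}(2) with $E=\emptyset$ identifies $W_m\Omega^q_{X,\log}$ with $W_m\sF^{q,\bullet}_{D}$ for $D$ empty and $U=X$, so (3) is a special case of (1). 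Here $\Fil_D\Omega^q_U$ is coherent --- coherence uses the $F$-finiteness hypothesis --- and locally free; and the Cartier isomorphism $\ov{C}$ of \lemref{lem:Complete-6} exhibits $Z_1\Fil_D\Omega^q_U$ as an extension $0\to d\Fil_D\Omega^{q-1}_U\to Z_1\Fil_D\Omega^q_U\xrightarrow{C}\Fil_{D/p}\Omega^q_U\to0$ of locally free sheaves, so every sheaf appearing ($\Fil_{D/p^j}\Omega^{q'}_U$, $Z_1\Fil_{D/p^j}\Omega^{q'}_U$ and $d\Fil_{D/p^j}\Omega^{q'}_U$, $q'\le q$) is locally free. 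As these are coherent, étale and Zariski hypercohomology agree; by the long exact sequence of the two-term complex and a five-lemma it remains to prove the Lefschetz comparison, in the appropriate range, for each of these locally free building blocks.

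For a locally free building block $\sE$ on $X$, the sequence $0\to\sE(-Y)\to\sE\to\sE|_Y\to0$ combined with Serre duality over $k$ and Serre vanishing (valid once $Y$ is sufficiently ample) gives $H^i(X,\sE(-Y))=0$ for $i<N$, hence $H^i(X,\sE)\xrightarrow{\cong}H^i(Y,\sE|_Y)$ for $i\le N-2$ and injective for $i=N-1$. Transversality of $Y$ with $D\cup E$ then supplies conormal/adjunction exact sequences $0\to\Fil_{D'}\Omega^{q-1}_{U'}(-Y)\to\Fil_D\Omega^q_U|_Y\to\Fil_{D'}\Omega^q_{U'}\to0$, and analogous ones for $Z_1$ and for $d$, compatible with the operators $C$, $F$, $\ov{F}$; applying Serre duality and Serre vanishing once more on $Y$ (of dimension $N-1$) to the negatively twisted kernel terms, and inducting on $q$ with base $q=0$ (where the blocks are line-bundle-like and one recovers the Kerz-Saito complex), one obtains $H^i(Y,\sE|_Y)\cong H^i(Y,\Fil_{D'}\Omega^q_{U'})$ exactly in the range yielding the bounds $i+q\le N-2$ (isomorphism) and $i+q=N-1$ (injectivity). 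Reassembling the two-term complexes via the five-lemma and undoing the $m$-dévissage then completes the proof. The main obstacle is the input to this last paragraph: showing that restriction to $Y$ carries $Z_1\Fil_DW_m\Omega^q_U$ and the modulus-log sheaves $W_m\Omega^q_{X|D,\log}$ into the expected conormal short exact sequences compatibly with the operators $d$, $F$, $C$, $\ov{F}$ that define them --- this is where the transversality of $Y$ and the structure theory of the filtered de Rham-Witt complex of the $F$-finite scheme $X$ are genuinely used --- together with the routine but necessary bookkeeping to choose a single "sufficiently ample" $Y$ validating all the finitely many Serre-vanishing conditions at once.
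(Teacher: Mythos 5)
Your proposal is correct and takes essentially the same route as the paper: dévissage in $m$ via Proposition \ref{prop:usual}(1), Theorem \ref{thm:Global-version}(11) and Corollary \ref{cor:Complex-7}, then at $m=1$ coherent-sheaf arguments combining the (log-)conormal/residue sequences for $Y$ transversal to $E$, the ampleness vanishing of Lemma \ref{lem:ample}, and the Cartier sequence with induction on $q$ — your term-wise five-lemma packaging being just a reshuffling of the paper's reduction to the vanishing of the kernel complexes in Corollary \ref{cor:ker-lef}. The ``main obstacle'' you flag — restriction sequences for $\Fil_D\Omega^q_U$, $Z_1\Fil_D\Omega^q_U$ and the modulus-log complexes compatible with $C$ — is exactly what Lemma \ref{lem:s.e.s-2} and Corollaries \ref{cor:fil D-Y}, \ref{cor:fil-D-Y-2} supply (via the strict log-conormal sequence and surjectivity of $F$ from the $W_2$-level), so your plan coincides with the paper's proof.
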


Recall that the Lefschetz theorem for the {\'e}tale fundamental groups of smooth projective schemes over a
field was proven by Grothendieck (see \cite[Expos´e XII, Corollaire 3.5]{SGA2}). Kerz-Saito \cite{Kerz-Saito-ANT} also proved a similar Lefschetz theorem for $\pi^{ab}_1(X,D)$, the ramified {\'e}tale fundamental groups (with modulus) of smooth projective scheme $X$. To the best of our knowledge, a similar Lefschetz theorem for the Brauer groups is not known. As an application of the above theorem, we get the following Lefschetz theorem for $\Br^\divv(X|D)$ and $\Br(X)$.

\begin{thm}(\thmref{thm:Br-lef})
Let's assume $N \ge 4$ in \thmref{thm:Lef}.
  Then, for sufficiently ample $Y$ (w.r.t $D$ in second case), we have $$\Br(X)\{p\} \xrightarrow{\cong} \Br(Y)\{p\}, \ \ \ \Br^\divv(X|D)\{p\} \xrightarrow{\cong} \Br^\divv(Y|D')\{p\}.$$
       Moreover, if $k$ is either a  finite field, local field, or algebraically closed field, then we have
       $$\Br(X) \xrightarrow{\cong} \Br(Y), \ \ \ \Br^\divv(X|D) \xrightarrow{\cong} \Br^\divv(Y|D').$$
\end{thm}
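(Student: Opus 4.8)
The plan is to derive both isomorphisms from \thmref{thm:Lef}, the Grothendieck--Lefschetz theorem for Picard groups (\cite[Exp.~XII]{SGA2}, applicable since $N\ge 4$), and the two short exact sequences
\[
0 \to \Pic(X)/p^m \to H^1_\et(X, W_m\Omega^1_{X,\log}) \to {}_{p^m}\Br(X) \to 0,
\]
\[
0 \to \Pic(U)/p^m \to \H^1_\et(X, W_m\sF^{1,\bullet}_{D}) \to {}_{p^m}\Br^\divv(X|D) \to 0,
\]
the first being the sequence recalled in the introduction, the second being \corref{cor:Br}; both are functorial for the inclusion $\iota\colon Y\inj X$ together with $D'=D\times_X Y$ and $U'=Y\cap U$.

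First I would treat the $p$-primary parts. Restriction along $\iota$ turns each of the two sequences above into a commutative ladder between its $X$-version and its $Y$-version. Taking $q=1$ and $i=1$ in \thmref{thm:Lef}(3) (resp.\ \thmref{thm:Lef}(1)) makes the middle vertical map $H^1_\et(X,W_m\Omega^1_{X,\log})\to H^1_\et(Y,W_m\Omega^1_{Y,\log})$ (resp.\ $\H^1_\et(X,W_m\sF^{1,\bullet}_{D})\to\H^1_\et(Y,W_m\sF^{1,\bullet}_{D'})$) an isomorphism, because $i+q=2\le N-2$; in the ramified case this is where ``$Y$ sufficiently ample relative to $D$'' is used. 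For the Picard terms, Grothendieck--Lefschetz gives $\Pic(X)\xrightarrow{\ \cong\ }\Pic(Y)$; feeding this into the excision sequences $\bigoplus_i\Z\,E_i\to\Pic(X)\to\Pic(U)\to 0$ and $\bigoplus_i\Z\,E'_i\to\Pic(Y)\to\Pic(U')\to 0$ and using that transversality gives $\sO_X(E_i)|_Y=\sO_Y(E'_i)$, while the restriction of the ample class $\sO_X(Y)$ to each connected projective $E_i$ of dimension $N-1\ge 3$ is ample (so $E'_i:=E_i\cap Y$ is connected and $E_i\mapsto E'_i$ is a bijection of the index sets), yields $\Pic(U)\xrightarrow{\ \cong\ }\Pic(U')$. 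Thus the left and middle vertical maps of both ladders are isomorphisms mod $p^m$, and the five lemma gives ${}_{p^m}\Br(X)\cong{}_{p^m}\Br(Y)$ and ${}_{p^m}\Br^\divv(X|D)\cong{}_{p^m}\Br^\divv(Y|D')$. Since $\Br(X)$ and $\Br(U)$ are torsion (\cite{CTS}), $\Br(-)\{p\}=\varinjlim_m{}_{p^m}\Br(-)$ with transition maps the natural inclusions, so passing to the colimit over $m$ yields $\Br(X)\{p\}\cong\Br(Y)\{p\}$ and $\Br^\divv(X|D)\{p\}\cong\Br^\divv(Y|D')\{p\}$, which is the first assertion.

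For the ``moreover'' part, write each of the four torsion groups as the direct sum of its primary components; by what precedes it remains to compare the $\ell$-primary parts for $\ell\ne p$. For $\Br(X)$ one runs the same argument with the Kummer sequence $0\to\Pic(X)/\ell^m\to H^2_\et(X,\mu_{\ell^m})\to{}_{\ell^m}\Br(X)\to 0$ replacing the $W_m\Omega^1_{\log}$-sequence, and with the classical weak Lefschetz theorem in \'etale cohomology replacing \thmref{thm:Lef}(3): over a finite, local, or algebraically closed $k$ one has $\mathrm{cd}(k)\le 2$, so the Hochschild--Serre spectral sequence reduces the needed isomorphism $H^2_\et(X,\mu_{\ell^m})\cong H^2_\et(Y,\mu_{\ell^m})$ to the geometric weak Lefschetz isomorphisms $H^q_\et(X_{\bar k},\mu_{\ell^m})\xrightarrow{\ \cong\ }H^q_\et(Y_{\bar k},\mu_{\ell^m})$ for $q\le N-2$ (with $N\ge 4$ covering $q=2$). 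For $\Br^\divv(X|D)$, note that since Kato's filtration at level $\ge 1$ contains all tamely ramified --- in particular all prime-to-$p$ torsion --- classes, absolute purity (valid as $\ell\ne p=\Char k$) identifies $\Br^\divv(X|D)\{\ell\}$ with $\Br(W)\{\ell\}$ for the open $W=X\setminus\bigcup_{i:\,n_i\ge 1}E_i$, and similarly $\Br^\divv(Y|D')\{\ell\}$ with $\Br(W\cap Y)\{\ell\}$; one then compares these via the Kummer sequence on $W$ and the excision/Gysin triangle expressing $H^*_\et(W,\mu_{\ell^m})$ through $H^*_\et(X,\mu_{\ell^m})$ and the twisted cohomologies of the strata of $\bigcup_{n_i\ge 1}E_i$, applying weak Lefschetz stratum by stratum. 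Assembling the primary parts gives $\Br(X)\cong\Br(Y)$ and $\Br^\divv(X|D)\cong\Br^\divv(Y|D')$.

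The main obstacle is this last point. The $p$-primary statements and the prime-to-$p$ unramified statement are essentially formal once \thmref{thm:Lef} and Grothendieck--Lefschetz are granted, but the prime-to-$p$ ramified statement requires (i) pinning down $\Br^\divv(X|D)\{\ell\}$ precisely via the interaction of Kato's filtration with $\ell$-torsion and tame ramification, and (ii) a weak Lefschetz isomorphism for the \emph{open} variety $W$ rather than for $X$; the degree bookkeeping in the excision/Gysin triangle for the simple normal crossing locus $\bigcup_{n_i\ge 1}E_i$ is delicate near the bound $N=4$, and it is precisely here that the transversality of $Y$ with $D$ and the smallness of $\mathrm{cd}(k)$ are genuinely used.
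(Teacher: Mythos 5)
Your treatment of the $p$-primary parts follows the paper's strategy (the two short exact sequences for ${}_{p^m}\Br(X)$ and ${}_{p^m}\Br^\divv(X|D)$ from \corref{cor:F^q}, with \thmref{thm:Lefschetz} handling the middle terms), but there are two genuine problems. The more serious one is your prime-to-$p$ ramified case: the identification $\Br^\divv(X|D)\{\ell\}\cong\Br(W)\{\ell\}$ with $W=X\setminus\bigcup_{n_i\ge 1}E_i$ is false, because Kato's $\Fil_0$ is the \emph{tame} part, not the unramified part. By \cite[Prop.~6.1]{Kato-89} (this is exactly how \lemref{lem:ell-torsion} is proved) every prime-to-$p$ class of $\Br(K_x)$ already lies in $\Fil_0\Br(K_x)$, even when it has a nontrivial residue along $E_i$; hence the defining condition of $\Br^\divv(X|D)$ is vacuous on prime-to-$p$ torsion and ${}_{\ell^m}\Br^\divv(X|D)={}_{\ell^m}\Br(U)$ for every $D\ge 0$, where $U$ is the complement of the \emph{whole} SNCD $E$. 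A tame class with nontrivial residue along a component with $n_i=0$ lies in $\Br^\divv(X|D)\{\ell\}$ but not in the image of $\Br(W)$, so your purity argument computes a strictly smaller group and your comparison proves the wrong statement. What is actually needed is an $\ell$-adic weak Lefschetz theorem for the open complement $U$ itself (and the comparison $\Pic(U)\to\Pic(U')$ modulo $\ell^m$); this is the content of \thmref{thm:ell-lef}(2), proved by induction on the number of components of $E$ using purity and localization, after which the Kummer five-lemma argument applies to $\Br(U)\{\ell\}\to\Br(U')\{\ell\}$.

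The second problem is the Picard step. Citing Grothendieck--Lefschetz as ``applicable since $N\ge 4$'' is an overclaim in characteristic $p$: the comparison of $\Pic(X)$ with $\Pic(Y)$ requires the vanishing of $H^1(Y,\sO_Y(-nY))$ and $H^2(Y,\sO_Y(-nY))$ for all $n\ge 1$, which in characteristic $p$ does not follow from $N\ge 4$ but from the ``sufficiently ample'' hypothesis, and Hartshorne's theorem used for this is stated over an algebraically closed field. Over an arbitrary (in particular imperfect or non-closed) field the paper only establishes, in \propref{prop:N-lef}, that $\Pic(X)\to\Pic(Y)$ is injective with $p$-primary torsion cokernel $A$; with only that, your five lemma at finite level $p^m$ fails, since $\Pic(X)/p^m\to\Pic(Y)/p^m$ can have nonzero ($p$-primary) kernel and cokernel. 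The paper's fix is to pass to the colimit over $m$, where the error term $A\otimes_{\Z}\Q_p/\Z_p$ vanishes because $A$ is torsion; likewise for the ramified case one only needs $\coker(\Pic(U)\to\Pic(U'))\otimes_{\Z}\Q_p/\Z_p=0$, which follows from the projective case via $\Pic(Y)\surj\Pic(U')$, so your component-counting excision argument (itself predicated on the unproved isomorphism $\Pic(X)\cong\Pic(Y)$) is unnecessary. Over a finite field the Picard isomorphism does hold (as $\Br(k)=0$), but over a local field of characteristic $p$ it is not established, and the Brauer statements do not require it. Your Hochschild--Serre reduction of the unramified $\ell$-part to geometric weak Lefschetz is fine and is a legitimate alternative to the duality argument in \thmref{thm:ell-lef}(1).
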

As a corollary, we get the following result whose prime to $p$-part is already known by \cite[Cor~5.5.4]{CTS}.
\begin{cor}(\corref{cor:Br-ci})
    Let $k$ be any field and $X$ be any smooth complete intersection in $\P^N_k$ of dim $d >2$. Then, we have $\Br(k) \xrightarrow{\cong}\Br(X)$.
\end{cor}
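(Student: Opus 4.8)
The plan is to compare $\Br(X)$ with $\Br(\P^N_k)$ by restriction along the closed immersion $X \hookrightarrow \P^N_k$. Since $\Br(\P^N_k) \cong \Br(k)$ for every field $k$ (classical; cf.\ \cite{CTS}) and the structure morphism $X \to \Spec k$ factors through $\P^N_k$, it suffices to show that restriction $\Br(\P^N_k) \to \Br(X)$ is an isomorphism. Both are torsion groups, being Brauer groups of regular integral schemes (\cite[Thm.~3.5.5]{CTS}), so one may argue separately on the prime-to-$p$ part and on the $p$-primary part. The prime-to-$p$ part is exactly \cite[Cor~5.5.4]{CTS} (which already settles the statement when $\Char k = 0$), so the new content is the $p$-primary part in the case $p = \Char k > 0$. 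If $k$ is not $F$-finite, descend $X$ and its defining equations to a finitely generated, hence $F$-finite, subfield of $k$ and pass to the colimit over finitely generated intermediate subfields; so from now on I take $k$ to be $F$-finite.

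If $X = \P^N_k$ there is nothing to prove; otherwise put $c := \codim(X,\P^N_k) \ge 1$, and note that $\dim X = N - c \ge 3$ forces $N \ge 4$. Realize $X$ as the last term of a chain $\P^N_k = X_0 \supset X_1 \supset \cdots \supset X_c = X$ of smooth complete intersections in which $X_{i+1}$ is a hypersurface section of $X_i$ and $\dim X_i = N - i$; such a chain terminating at the prescribed $X$ is produced by a standard Bertini argument, valid over any field. For each $i < c$ one has $\dim X_i = N - i \ge d + 1 \ge 4$, so \thmref{thm:Br-lef} applies to the pair $(X_i, X_{i+1})$ and gives $\Br(X_i)\{p\} \xrightarrow{\cong} \Br(X_{i+1})\{p\}$; composing over $0 \le i < c$ yields $\Br(\P^N_k)\{p\} \xrightarrow{\cong} \Br(X)\{p\}$. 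Together with the prime-to-$p$ isomorphism and the primary decomposition of the torsion group $\Br(\P^N_k)$, this gives $\Br(k) \cong \Br(\P^N_k) \xrightarrow{\cong} \Br(X)$, as claimed.

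The delicate point is that \thmref{thm:Br-lef} --- via \thmref{thm:Lefschetz} and the exact sequence $0 \to \Pic(Z)/p^m \to H^1_\et(Z,W_m\Omega^1_{Z,\log}) \to {}_{p^m}\Br(Z) \to 0$ --- is stated for \emph{sufficiently ample} hypersurface sections, whereas the $X_{i+1}$ are cut out of $X_i$ by forms of the prescribed, and possibly small, degrees $d_{i+1}$. This causes no trouble here precisely because each ambient $X_i$ is itself a complete intersection: in the proof of \thmref{thm:Lefschetz} the ampleness of the section is used only to invoke the Grothendieck-Lefschetz comparison for coherent cohomology, and for a complete intersection $X_i$ that comparison holds for a hypersurface section of \emph{any} degree, the Koszul resolution of $\mathcal{O}_{X_{i+1}}$ on $X_i$ furnishing the requisite vanishing of intermediate coherent cohomology directly. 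Carrying this observation through the de Rham-Witt d{\'e}vissage of \thmref{thm:Lefschetz} --- the reduction from $W_m\Omega^\bullet$ to $W_1\Omega^\bullet$ together with the Cartier sequences of \thmref{thm:Global-version} and \lemref{lem:Complex-6} --- is what I expect to be the main obstacle. Finally, the hypothesis $d > 2$ is used sharply and in two places: it keeps the cohomological bidegree relevant to the Brauer group (total degree $2$) inside the Lefschetz range $i + q \le \dim X_i - 2$ at every stage of the chain, and it forces $\Pic(X_i) = \Z\cdot\mathcal{O}_{X_i}(1)$ (Grothendieck-Lefschetz for the Picard groups of complete intersections, together with Hilbert 90 to descend from $\ov{k}$), so that the displayed exact sequence transports the isomorphism from $H^1_\et$ to ${}_{p^m}\Br$. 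Existence of the chain, naturality, and the torsion-and-colimit bookkeeping are routine.
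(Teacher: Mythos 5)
Your overall strategy — reduce to an $F$-finite base by a limit argument, handle the prime-to-$p$ part by \cite[Cor~5.5.4]{CTS}, and then transport the $p$-primary part of the Brauer group down a chain $\P^N_k = X_0 \supset X_1 \supset \cdots \supset X_c = X$ of smooth complete intersections by iterating \thmref{thm:Br-lef} — is a genuinely different organization from the paper's. The paper instead first reduces to showing $\Br(Y^s)\{p\}=0$ via $\Br(k)\cong \Br_1(Y)$, injects $\Br(Y^s)\inj \Br(\ov Y)$ by proving $H^1(Y^s,\sO_{Y^s})=0$, and then kills $\Br(\ov Y)\{p\}$ over the algebraically closed field using \propref{prop:lef} together with $\Pic(\P^N_{\ov k})\xrightarrow{\cong}\Pic(\ov Y)$ from SGA2 and $\Br(\P^N_{\ov k})=0$; it never needs the full Brauer--Lefschetz theorem over a general base field. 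Your route could in principle work, but as written it has a genuine gap precisely at the point you flag.

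The gap is your claim that for a complete intersection ambient $X_i$ the ``sufficiently ample'' hypothesis on $X_{i+1}$ reduces to a Grothendieck--Lefschetz comparison that ``the Koszul resolution of $\sO_{X_{i+1}}$ on $X_i$ furnishes directly.'' What \defref{def:ample} actually demands is condition (1.b) of \lemref{lem:ample}: the Akizuki--Nakano--type vanishing $H^i(X_i,\Omega^q_{X_i}(-nX_{i+1}))=0$ for all $q\ge 0$, $n\ge 1$ and $i+q\le \dim X_i -1$ (and, for \propref{prop:N-lef} inside \thmref{thm:Br-lef}, its $q=0$ instance). A Koszul resolution of $\sO_{X_{i+1}}$ only controls cohomology of line bundles, not of the twisted sheaves $\Omega^q_{X_i}(-nX_{i+1})$; in characteristic $p$ such vanishing is false for general smooth ambient varieties and is a theorem for complete intersections. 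This is exactly what the paper supplies through the Kodaira-pair formalism: \lemref{lem:Kodaira} (relying on \cite[Cor~5.8]{A-B-M}) shows $(X_i,\sO_{X_i}(X_{i+1}))$ is a Kodaira pair, and \remref{rem:Kodaira} translates this into the hypotheses of \thmref{thm:Lefschetz}. You assert the needed input without a valid justification, and then defer ``carrying this through the de Rham--Witt d\'evissage'' as an expected obstacle — but that is the wrong place to worry: once the coherent vanishing of \lemref{lem:ample} is established, ``sufficiently ample'' holds by definition and \thmref{thm:Lefschetz}, \thmref{thm:Br-lef} apply verbatim, with no reworking of the $W_m\to W_1$ reduction or the Cartier sequences. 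So the missing idea is the Kodaira-pair vanishing for arbitrary (possibly low-degree) hypersurface sections of complete intersections; with that supplied (as in \lemref{lem:Kodaira}), your chain argument closes, modulo the routine caveat — shared with the paper's own proof of \propref{prop:lef} — that producing smooth intermediate members $X_i$ over a finite or small field needs a Bertini statement in Poonen's form rather than the classical one.
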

\section*{Further applications}
The filtered de Rham-Witt complex studied here has further applications in understanding the log-crystalline cohomology of $U$, a topic that will be discussed in \cite{KM-crys}. Additionally, the duality results established in this thesis can also be extended to curves over higher local fields. These duality results can, in turn, be used to prove class field theory for such curves, a topic that will be explored in \cite{KM-classfield}.  
\section*{Organization of the thesis}
The thesis contains nine chapters, which can be divided into four parts. We give an outline of the content here.

Part 1 is the Chapter 1, which recalls the preliminaries required in the later Chapters. We recall some basic properties of the de Rham-Witt complex and the logarithmic Hodge-Witt sheaves for regular schemes in \secref{sec:de-rham-witt}. In \secref{sec:LogDR}, we discuss a few properties of the de Rham-Witt complex of Hyodo-Kato for log schemes whose log structure is given by a simple normal crossing divisor (SNCD).

Part 2 comprises Chapters 2, 3, and 4. Chapter 2 studies the basic definitions and properties of the theory of filtered de Rham-Witt complex. There we define the sheaf $\Fil_DW_m\Omega^q_U$, where $U$ is the complement of a simple normal crossing divisor $E$ in a regular scheme $X$. 

In Chapter 3, we study the structure of the above sheaf and prove various results when $X$ is spectrum of a ring of the form $S[[\pi]]$ ($\pi$ is a variable). 

In Chapter 4, we study the structure of the above sheaf for regular local rings. We also extend various results of the classical de Rham-Witt complex to the filtered setup when $X$ is spectrum of an $F$-finite regular local ring, and finally to $F$-finite regular schemes.

Part 3 consists of Chapter 5, which establishes a cohomological description of Kato's local and global ramification filtration using the filtered de Rham-Witt complex studied in Chapter 4. \S~\ref{sec:loc-Kato} deals with the local Kato filtration and \S~\ref{sec:the group} deals with the global Kato filtration.

Part 4 is composed of Chapters 6-9, which consist of applications of the concepts developed in Parts 2 and 3.

In Chapters 6-8, we prove duality theorems for logarithmic Hodge-Witt sheaves with modulus over finite fields and 1-local fields. In \S~\ref{sec:Complexes}, we prove a relation between Jannsen-Saito-Zhao's sheaf $W_m\Omega^q_{X|D,\log}$ and $\Fil_{-D}W_m\Omega^q_U$, define various complexes. In \S~\ref{sec:Pairing}, we define pairings between various complexes, which induce the pairings in the duality theorem. In \S~\ref{sec:DT}, we prove the duality theorems over finite fields and in \S~\ref{chap:duality-localfields}, we prove duality theorems over 1-local fields. 

Finally, in Chapter 9, we prove a Lefschetz hypersurface theorem for the logarithmic Hodge-Witt cohomology with modulus. As an application, we prove a Lefschetz theorem for unramified Brauer group and ramified Brauer group (with modulus) in \S~\ref{sec:Br-lef}.

\pagestyle{fancy}

\fancyhf{}

\fancyhead[LE,RO]{\thepage}
\fancyhead[RE]{\nouppercase{\leftmark}}
\fancyhead[LO]{\nouppercase{\rightmark}}
\renewcommand{\headrulewidth}{0pt}
\fancypagestyle{plain}{ %
\fancyhf{} 
\renewcommand{\headrulewidth}{0pt} 
\renewcommand{\footrulewidth}{0pt}}

\chapter{Preliminaries}
\section{de Rham-Witt Complex}\label{sec:de-rham-witt}
In this section, we shall recall definitions of the de Rham-Witt complex and logarithmic Hodge-Witt sheaf for a regular scheme over a field of characteristic $p>0$. We shall also review some basic properties of these sheaves, which are classically known in the case of smooth schemes over perfect fields. These properties were later extended to arbitrary regular schemes in \cite{Shiho} using the following two theorems.

\begin{thm}\label{thm:Popescu}(cf. \cite[Thm~2.5]{Popescu})
   Let $u: A \to A'$ be a regular morphism of noetherian rings. Then $A'$ is a filtered inductive limit of smooth $A$ algebras. 
\end{thm}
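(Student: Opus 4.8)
The plan is to follow the strategy of N\'eron, Popescu and Swan (see also the Stacks project treatment); no short self-contained argument is to be expected. The first step is a reduction to a finite-type statement. Writing $A'$ as the filtered colimit $\varinjlim_i B_i$ of its finitely generated $A$-subalgebras, it suffices to prove the \emph{General N\'eron Desingularization}: for every finitely generated $A$-algebra $B$ and every $A$-algebra homomorphism $B \to A'$ there exist a smooth $A$-algebra $C$ and a factorization $B \to C \to A'$. Granting this, one checks that the category of pairs $(C, C\to A')$ with $C$ smooth over $A$ is filtered --- given two such pairs, apply desingularization to $C_1 \otimes_A C_2 \to A'$ to dominate both --- and that its colimit is $A'$; this is exactly the conclusion of the theorem.

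For the desingularization statement the organising device is the non-smoothness (Jacobian) ideal $H_{B/A} \subseteq B$, assembled from the $c\times c$ minors of the Jacobian matrices of the length-$c$ regular-sequence sub-presentations of $B = A[X_1,\dots,X_n]/I$; one has $V(H_{B/A}) = $ the non-smooth locus of $\Spec B \to \Spec A$, so $B$ is smooth over $A$ exactly when $H_{B/A} = B$. The proof is then arranged around one enlargement step: using that $u\colon A\to A'$ is flat with geometrically regular fibres, show that any $A$-algebra map $B\to A'$ with $B$ finite type factors through a larger finitely generated $A$-subalgebra $B'\subseteq A'$ for which $H_{B'/A}A'$ is strictly larger than $H_{B/A}A'$. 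Since $A'$ is Noetherian these extended ideals eventually stabilise at $A'$; once $H_{B/A}A' = A'$, a localization argument --- inverting finitely many elements of $B$ lying in the smooth locus, and adjusting generators so the images stay inside $A'$ and cover the relevant primes --- produces the required smooth $C$.

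The heart of the matter, and the step I expect to be the main obstacle, is the enlargement step. It is handled by d\'evissage on the regular homomorphism $A\to A'$, reducing to a regular \emph{local} homomorphism of Noetherian local rings, then --- after adjoining variables, replacing $I$ by a regular sequence with the same local vanishing locus, and carrying out Popescu's flattening reductions to the case where $B$ is a relative complete intersection over $A$ (or at least $A$-flat) --- to N\'eron's original \emph{hypersurface} lemma: when $B = A[X]/(f)$ carries an $A'$-point at which $f$ vanishes while a suitable derivative is ``as nonzero as the regular morphism permits'', one writes down an explicit smooth $A$-algebra through which the point factors, the construction resting on Elkik-style lifting of solutions modulo high powers of an ideal, which is where geometric regularity of the fibres is genuinely used. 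The delicate points --- the ones behind the later corrections of Ogoma and Spivakovsky --- are that $A'$ is in general neither local nor a domain, so one must track finitely many primes and idempotent decompositions simultaneously, and that the ``large derivative'' hypothesis of N\'eron's lemma must be matched precisely to what regularity of $A\to A'$ delivers after all the reductions.
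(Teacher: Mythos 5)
The paper does not prove this statement at all: it is imported verbatim from Popescu's work (cited as \cite[Thm~2.5]{Popescu}) and used as a black box, so there is no internal proof to compare yours against. Your outline is an accurate reproduction of the standard Popescu--Swan strategy: reduce to General N\'eron Desingularization for finite type $A$-algebras, note that the smooth factorizations form a filtered system dominating $A'$, measure non-smoothness by the Jacobian ideal $H_{B/A}$, enlarge $B$ so that $H_{B/A}A'$ strictly grows and stabilizes at $A'$ by Noetherianity, and carry the hard work in the enlargement step via d\'evissage to the local case and N\'eron's hypersurface lemma, where geometric regularity of the fibres enters. You also correctly flag the genuinely delicate points (non-local, non-domain $A'$; matching the derivative hypothesis to what regularity supplies), which is where the historical corrections of Ogoma and Spivakovsky live.

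That said, be clear about what you have: this is a roadmap, not a proof. The enlargement step and the hypersurface lemma \emph{are} the theorem --- everything before them is formal --- and your text names them without executing them (the Elkik-style lifting, the flattening reductions, and the simultaneous tracking of finitely many primes are each multi-page arguments in Popescu and in Swan's exposition). For the purposes of this thesis that is immaterial, since the result is legitimately cited rather than reproved; but if the goal were a self-contained argument, the proposal as written would not close the gap any more than the citation does.
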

Recall that a morphism of schemes is called regular if it is flat and fibers are geometrically regular. For example the canonical map $k[x_1,\ldots,x_n] \to k[[x_1,\ldots, x_n]]$ is regular. More generally, the map $A \to \wh A$ is regular for any excellent local ring. As a corollary of ~\thmref{thm:Popescu}, we get
\begin{cor}
    Any regular local ring $A$ of characteristic $p$ is filtering inductive limit $\varinjlim\limits_{\lambda} A(\lambda)$ of finitely generated smooth $\F_p$ algebras. 
\end{cor}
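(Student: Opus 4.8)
The plan is to obtain the corollary as an immediate application of \thmref{thm:Popescu} to the structure morphism $\F_p \to A$. Recall that Popescu's theorem says a homomorphism of Noetherian rings is regular precisely when the target is a filtered inductive limit of smooth finite-type algebras over the source; since a regular local ring is Noetherian by convention and a smooth morphism is locally of finite presentation, it is enough to prove that $\F_p \to A$ is a regular morphism, after which $A \cong \varinjlim_\lambda A(\lambda)$ with each $A(\lambda)$ a finitely generated smooth $\F_p$-algebra.

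First I would check flatness and geometric regularity of the fibres of $\Spec A \to \Spec \F_p$. Flatness is automatic because $\F_p$ is a field, so there is nothing to do there. As $\Spec \F_p$ is a single point with residue field $\F_p$, the only fibre of the map is $\Spec A$ itself, and geometric regularity of this fibre means exactly that $A \otimes_{\F_p} K$ is a Noetherian regular ring for every finitely generated field extension $K/\F_p$. This is the one place where the perfectness of $\F_p$ is used: over a perfect field a Noetherian algebra is regular if and only if it is geometrically regular, because every finite subextension of $K/\F_p$ is separable — so base change along it is finite \'etale and preserves regularity — and regularity is insensitive to separably generated extensions, the general case following by a standard limit argument. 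Equivalently, one may invoke that an algebra is geometrically regular over $\F_p$ precisely when it becomes regular after base change along every finite purely inseparable extension of $\F_p$, of which there are none.

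Granting that $\F_p \to A$ is regular, \thmref{thm:Popescu} at once produces the filtered presentation $A = \varinjlim_\lambda A(\lambda)$ by finitely generated smooth $\F_p$-algebras asserted in the corollary. There is essentially no obstacle here: the only substantive input is the geometric-regularity statement for the fibre, and even that is routine because $\F_p$ is perfect; everything else is the definition of a regular morphism together with a direct citation of Popescu's theorem. Note in particular that no $F$-finiteness or excellence hypothesis on $A$ is needed, since $\F_p \to A$ is regular for any regular Noetherian $\F_p$-algebra.
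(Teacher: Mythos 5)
Your proposal is correct and follows exactly the route the paper intends: the corollary is stated there as an immediate consequence of \thmref{thm:Popescu}, with the only substantive input being that $\F_p \to A$ is a regular morphism, which you justify properly via the perfectness of $\F_p$ (no purely inseparable extensions, so regularity of $A$ gives geometric regularity of the fibre). Your added remark that no $F$-finiteness or excellence is needed is also accurate.
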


\begin{thm}\label{thm:lim-cohomology}(cf. \cite[VII, Thm~5.7]{SGA4}, \cite[Thm~6.6]{Panin})
    Let $\{X_i\}_{i \in I}$ be a filtering projective system of Noetherian schemes such that the transition maps are affine and $\varprojlim\limits_{i}X_i=X$ is noetherian. Let $\{F_i\}_i$ be a compatible system of sheaves on $\{X_{i,\tau}\}$ ($\tau = \zar \text{ or } \et$) with limit sheaf $F$ on $X$. Then we have 
    $$H^n(X_\tau, F) \cong \varinjlim\limits_{i}H^n(X_{i,\tau},F_i).$$
\end{thm}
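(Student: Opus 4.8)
The plan is to show that for every $n \ge 0$ the natural base-change maps assemble into an isomorphism $\varinjlim_i H^n(X_{i,\tau}, F_i) \xrightarrow{\cong} H^n(X_\tau, F)$, handling $n = 0$ directly and then bootstrapping to higher $n$. I would isolate two geometric inputs at the outset. (i) Since the transition maps are affine, $X = \varprojlim_i X_i$ exists in schemes, and by the limit formalism for schemes (EGA~IV, \S8; see also \cite[VI]{SGA4}) every object of the small site $X_\tau$ --- an open immersion or an \'etale morphism of finite presentation into $X$ --- is the base change of some object over some $X_i$, and two such become identified over $X$ if and only if they are already identified over some $X_j$; equivalently, $X_\tau$ is the filtered $2$-colimit of the small sites $X_{i,\tau}$. (ii) Each $X_i$ is Noetherian, hence quasi-compact and quasi-separated, so its small site is coherent and $\Gamma(X_{i,\tau}, -)$ commutes with filtered colimits of sheaves; the same holds for $X$.

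The case $n = 0$ is then the continuity of global sections in this formalism. Using that $F$ is the limit sheaf, $F = \varinjlim_i \pi_i^{*} F_i$ with $\pi_i \colon X \to X_i$, a global section of $F$ is represented on a finite-presentation cover $V \to X$ by a section of some $\pi_i^{*} F_i$; by (i) the cover, the local sections, and the gluing datum all descend to a finite level, giving surjectivity, and applying the same argument to a cover on which a given section of $F_i$ pulls back to $0$ over $X$ gives injectivity.

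For $n > 0$ I see two routes, and I would most likely invoke the first. Route A: apply the passage-to-the-limit theorem for the cohomology of a filtered inverse system of coherent topoi with values in the pullback of a compatible system of abelian sheaves, namely \cite[VII, Thm~5.7]{SGA4} (or \cite[Thm~6.6]{Panin}); its hypotheses are exactly (i) and (ii). Route B, self-contained: for $\tau = \zar$ the $X_i$ are finite-dimensional Noetherian, so \v{C}ech cohomology with respect to finite affine open covers computes sheaf cohomology; by (i) each such cover of $X$ descends to $X_i$ for large $i$, the \v{C}ech complex has terms that are $\Gamma$-groups which by the $n=0$ step equal $\varinjlim_i$ of the corresponding $\Gamma$-groups on the $X_i$, and since filtered colimits are exact one gets $\check{H}^n(V_\bullet, F) = \varinjlim_i \check{H}^n(V_{\bullet, i}, F_i)$, whence the result after passing to the (filtered, compatible) colimit over covers. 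For $\tau = \et$ one replaces finite affine covers by a cofinal system of bounded hypercovers of $X$ by finite-presentation objects and argues in the same way.

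The main obstacle is precisely the bookkeeping hidden in Route B for the \'etale topology: one must show that the descending hypercovers form a cofinal family among all hypercovers of $X$ and are compatible across the tower, so that the two nested filtered colimits --- over levels $i$ and over hypercovers --- may be exchanged with the cohomology of the associated \v{C}ech complexes. This coherence-type finiteness is the heart of \cite[VII, Thm~5.7]{SGA4}, and it is where the Noetherian hypothesis enters in an essential way. The Zariski case is comparatively painless, since finite affine covers already compute cohomology and descend verbatim; I would therefore write out the Zariski case in full by the \v{C}ech argument and, for the \'etale case, either cite \cite[VII, Thm~5.7]{SGA4} directly or reduce to it after verifying (i) and (ii).
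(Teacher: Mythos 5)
The paper gives no proof of this statement at all: it is quoted as a known limit theorem with precisely the references you invoke (SGA4, VII, Thm.~5.7 and Panin, Thm.~6.6), so your Route~A — verifying that the system of Noetherian schemes with affine transition maps satisfies the hypotheses and then citing that theorem — is exactly the paper's treatment. Your additional self-contained sketch (Route~B) is optional extra detail and not needed to match the paper, though be aware that a single finite affine Zariski cover does not compute cohomology of an arbitrary abelian sheaf, so that route would require the colimit over covers or a hypercover argument even in the Zariski case.
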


Recall from \cite{Illusie}, the de Rham-Witt complex $W_m\Omega^{\bullet}_X$ over a scheme $X$ of characteristic $p$ is a strictly anti-commutative differential graded complex :
$$W_m\Omega^0_X= W_m\sO_X \xrightarrow{d}W_m\Omega^1_X \xrightarrow{d} \cdots \xrightarrow{d} W_m\Omega^i_X \xrightarrow{d}\cdots .$$
$W_m\sO_X$ is called the sheaf of Witt vectors of length $m$ and $W_mX= (|X|,W_m\sO_X)$ be the corresponding Witt scheme. Let $[-]_m:\sO_X\to W_m\sO_X$ be the multiplicative Teichm{\"u}llar function given by $[x]_m=(x,0,\dots, 0)$ for any $x \in \sO_X$. Let $F :W_m\Omega^q_X \to W_{m-1}\Omega^q_X$, $V: W_m\Omega^q_X \to W_{m+1}\Omega^q_X$ and $R:W_m\Omega^q_X \to W_{m-1}\Omega^q_X$ be the Frobenius, Verschiebung and projection, respectively (cf. \cite{Illusie}). The homomorphisms $F, V$ and $R$, satisfy the following properties.
\begin{enumerate}
    \item $FV=VF=p$,
    \item $FdV=d$, $dF=pFd$, $pdV=Vd$,
    \item $Fd[a]_m=[a]_m^{p-1}d[a]$, for $a \in \sO_X$,
    \item $V(F(x)y)=xV(y)$, $V(xdy)=VxdVy$ for $x, y \in W_*\Omega^\bullet_X$,
    \item $Rd=dR$, $RV=VR$, $RF=FR$.
\end{enumerate}
Let $H^q_\dR(X)$ denote the $q$-th cohomology of the de Rham complex of $X$.
Let $C^{-1}: \Omega^q_X \to \un{H}^q_\dR(X)$ be the inverse Cartier homomorphism characterized by the following properties.
\begin{enumerate}
    \item $C^{-1}(x\omega) = x^p \omega, \text{ for } x \in \sO_X, \ \omega \in \Omega^q_X ,$
    \item $C^{-1}(dx_1 \cdots dx_q)=(x_1\cdots x_q)^{p-1}dx_1 \cdots dx_q, \text{ for } x_i\in \sO_X.$
\end{enumerate}
We let $Z_1\Omega^q_X = \Ker(d \colon \Omega^q_X \to \Omega^{q+1}_X)$
  and $B_1\Omega^q_X = {\rm Image}(d \colon \Omega^{q-1}_X \to
  \Omega^{q}_X)$.
\begin{prop}\label{prop:cartier-1}(\cite[Prop~2.5]{Shiho})
 If $X$ is a regular scheme over $\F_p$, then $C^{-1}$ is isomorphism.   
\end{prop}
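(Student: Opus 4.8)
The plan is to reduce the statement to the classical Cartier isomorphism for smooth affine schemes over the perfect field $\F_p$, and to pass from the smooth case to the regular case by Popescu's theorem (\thmref{thm:Popescu}), writing a regular $\F_p$-algebra as a filtered colimit of finitely generated smooth ones.

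\textbf{Step 1: Reduction to a local statement.} Since $C^{-1}$ is a morphism of Zariski sheaves on $X$, it is enough to check that it is an isomorphism on every stalk. Fix $x \in X$ and set $R = \sO_{X,x}$, a Noetherian regular local ring of characteristic $p$. Localization is exact and commutes with the formation of Kähler differentials, so the stalk of $\Omega^q_X$ at $x$ is $\Omega^q_R$, the stalk of $\un{H}^q_\dR(X)$ at $x$ is $H^q$ of the de Rham complex $\Omega^\bullet_R$, and the stalk of $C^{-1}$ is the inverse Cartier map $C^{-1}_R \colon \Omega^q_R \to H^q(\Omega^\bullet_R)$ attached to $R$. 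It therefore suffices to prove that $C^{-1}_R$ is an isomorphism for every regular local $\F_p$-algebra $R$. (Alternatively one may argue directly on affine opens $\Spec A$ with $A$ a regular $\F_p$-algebra, which by \thmref{thm:Popescu} is itself a filtered colimit of finitely generated smooth $\F_p$-algebras, and run the same argument.)

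\textbf{Step 2: Presenting $R$ as a colimit of smooth algebras.} By the corollary of \thmref{thm:Popescu}, we may write $R = \colim_\lambda R(\lambda)$ as a filtered colimit of finitely generated smooth $\F_p$-algebras with $\F_p$-algebra transition maps. Kähler differentials commute with filtered colimits of rings, so $\Omega^q_R = \colim_\lambda \Omega^q_{R(\lambda)}$ and $\Omega^\bullet_R = \colim_\lambda \Omega^\bullet_{R(\lambda)}$ as complexes of abelian groups; as filtered colimits in $\Ab$ are exact, this gives $H^q(\Omega^\bullet_R) = \colim_\lambda H^q(\Omega^\bullet_{R(\lambda)})$. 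The inverse Cartier operator is functorial for homomorphisms of $\F_p$-algebras, since its characterizing formulas $C^{-1}(x\omega) = x^p\omega$ and $C^{-1}(dx_1 \cdots dx_q) = (x_1 \cdots x_q)^{p-1} dx_1 \cdots dx_q$ are preserved by any such map. Hence, under the identifications above, $C^{-1}_R = \colim_\lambda C^{-1}_{R(\lambda)}$.

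\textbf{Step 3: The smooth case and conclusion.} Each $R(\lambda)$ is smooth and of finite type over the perfect field $\F_p$, so the classical Cartier isomorphism (Katz, Illusie) gives that $C^{-1}_{R(\lambda)}$ is an isomorphism for every $\lambda$. A filtered colimit of isomorphisms of abelian groups is an isomorphism, so $C^{-1}_R$ is an isomorphism. Since $x \in X$ was arbitrary, $C^{-1}$ is an isomorphism on all stalks, hence an isomorphism of sheaves.

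\textbf{Main obstacle.} There is no essential difficulty: this is the standard ``Popescu approximation'' device used throughout \cite{Shiho}. The points deserving genuine care are (i) the functoriality of $C^{-1}$ in the $\F_p$-algebra — in particular that $C^{-1}$ is well defined as a map out of $\Omega^q$ rather than out of a free module, which is part of the classical theory — so that it is compatible with the colimit presentation, and (ii) the identification of the stalks (or sections over affine opens) of $\Omega^q_X$ and $\un{H}^q_\dR(X)$ with differentials and de Rham cohomology of the local rings, compatibly with $C^{-1}$. Everything else is the exactness of filtered colimits together with the classical Cartier isomorphism for smooth finite-type $\F_p$-algebras.
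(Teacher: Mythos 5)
Your proposal is correct and follows essentially the same route as the paper: reduce to a local statement, invoke Popescu's theorem (\thmref{thm:Popescu}) to write the regular local ring as a filtered colimit of smooth $\F_p$-algebras, and conclude from the classical Cartier isomorphism in the smooth case via exactness of filtered colimits. The only cosmetic difference is that the paper reduces to the strictly Henselian local case and cites \thmref{thm:lim-cohomology}, whereas you work with Zariski stalks and the commutation of $\Omega^\bullet$ and de Rham cohomology with filtered colimits directly, which amounts to the same limit argument.
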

\begin{proof}
    If $X$ is smooth scheme, then this is well known. To prove for general case, we may assume $X$ is strictly Henselian local. Now the proposition follows from ~\thmref{thm:Popescu} and \thmref{thm:lim-cohomology}.  
\end{proof}

We write $C$ for the composite map $Z_1\Omega^q_X \surj  \un{H}^q_\dR(X) \xrightarrow{(C^{-1})^{-1}}\Omega^q_X$.  Assuming the $\sO_X$-modules
$Z_i\Omega^q_X$ and $B_i\Omega^q_X$ are defined for some $i \ge 1$, we let
$Z_{i+1}\Omega^q_X$ (resp. $B_{i+1}\Omega^q_{X}$) be the inverse image of $Z_i\Omega^q_X$ (resp. $B_{i}\Omega^q_{X}$) under the
composite map $Z_1\Omega^q_X \surj  H^q_\dR(X)
\xrightarrow{C}\Omega^q_{X}$ such that there is an induced isomorphism $C^{-i}:\Omega^q_X \xrightarrow{\cong}Z_i\Omega^q_X/B_i\Omega^q_X$, for $i \ge 1$. As a result, we get the following corollary.

\begin{cor}\label{cor:cartier-1}
    We have the exact sequences
    \begin{equation}\label{eqn:cartier-1.1}
        0 \to Z_{m+1}\Omega^q_X \to Z_m\Omega^q_X \xrightarrow{dC^m} B_1\Omega^{q+1}_X \to 0,
    \end{equation}
    \begin{equation}\label{eqn:cartier-1.2}
        0 \to B_{m}\Omega^q_X \to B_{m+1}\Omega^q_X \xrightarrow{C^m} B_1\Omega^{q}_X \to 0,
    \end{equation}
    for $m \ge 1, q \ge 0$. 
\end{cor}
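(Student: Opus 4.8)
The plan is to read off both sequences directly from the recursive definitions of $Z_i\Omega^q_X$ and $B_i\Omega^q_X$ given just above the statement; the only input beyond formal manipulation is \propref{prop:cartier-1}, which for a regular $\F_p$-scheme $X$ says that $C\colon Z_1\Omega^q_X \to \Omega^q_X$ is surjective with kernel $B_1\Omega^q_X$. First I would record the bookkeeping: the recursion immediately gives that $\{Z_i\Omega^q_X\}_i$ is decreasing, and since $C(B_1\Omega^q_X)=0$ also that $\{B_i\Omega^q_X\}_i$ is increasing, so the arrows $Z_{m+1}\Omega^q_X \to Z_m\Omega^q_X$ and $B_m\Omega^q_X \to B_{m+1}\Omega^q_X$ in the two sequences are inclusions. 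Unwinding the recursion once more, $\omega$ lies in $Z_m\Omega^q_X$ exactly when $\omega, C\omega, \dots, C^{m-1}\omega$ all lie in $Z_1\Omega^q_X$ (so that $C^m\omega \in \Omega^q_X$ is defined on all of $Z_m\Omega^q_X$), and $\omega \in B_{m+1}\Omega^q_X$ forces $C\omega \in B_m\Omega^q_X$, $C^2\omega \in B_{m-1}\Omega^q_X$, \dots, $C^m\omega \in B_1\Omega^q_X$; this already shows that $dC^m$ and $C^m$ take values in $B_1\Omega^{q+1}_X$ and $B_1\Omega^q_X$ respectively.

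For surjectivity of the right-hand maps, the key remark is that surjectivity of $C$ propagates up the two towers: given $\beta \in Z_{i-1}\Omega^q_X$ (resp.\ $\beta \in B_{i-1}\Omega^q_X$), choose $\omega \in Z_1\Omega^q_X$ with $C\omega = \beta$; then $\omega \in Z_i\Omega^q_X$ (resp.\ $\omega \in B_i\Omega^q_X$) by definition. Iterating, $C^m\colon Z_m\Omega^q_X \surj \Omega^q_X$ and $C^m\colon B_{m+1}\Omega^q_X \surj B_1\Omega^q_X$, and composing the former with $d$ gives $dC^m(Z_m\Omega^q_X) = d(\Omega^q_X) = B_1\Omega^{q+1}_X$.

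It then remains to identify the kernels. For \eqref{eqn:cartier-1.1}, an element $\omega \in Z_m\Omega^q_X$ lies in $\Ker(dC^m)$ iff $C^m\omega \in \Ker(d) = Z_1\Omega^q_X$; since also $\omega, C\omega, \dots, C^{m-1}\omega \in Z_1\Omega^q_X$, this is precisely the condition defining $Z_{m+1}\Omega^q_X$. For \eqref{eqn:cartier-1.2}, the inclusion $B_m\Omega^q_X \subseteq \Ker\bigl(C^m\colon B_{m+1}\Omega^q_X \to B_1\Omega^q_X\bigr)$ is immediate from $C^m(B_m\Omega^q_X) = 0$, and for the reverse inclusion I would induct on $m$: the case $m=1$ is the statement $\Ker(C\colon Z_1\Omega^q_X \to \Omega^q_X) = B_1\Omega^q_X$, and if $\omega \in B_{m+1}\Omega^q_X$ with $C^m\omega = 0$ then $C\omega \in B_m\Omega^q_X$ has $C^{m-1}(C\omega) = 0$, so the inductive hypothesis puts $C\omega \in B_{m-1}\Omega^q_X$, whence $\omega \in B_m\Omega^q_X$. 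All of this is verified on sections, uniformly in the open set, so it yields exact sequences of sheaves.

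I do not expect a genuine obstacle, as the argument is essentially formal; the one thing demanding care is keeping the variance straight ($\{Z_i\Omega^q_X\}$ decreasing, $\{B_i\Omega^q_X\}$ increasing) and checking that every iterated Cartier operator is applied only on a subsheaf of $Z_1\Omega^q_X$ where it makes sense. As an alternative, one could note that $\Omega^\bullet_X$, $d$ and $C$, and hence every $Z_i\Omega^q_X$ and $B_i\Omega^q_X$, commute with filtered colimits of $\F_p$-algebras, and then deduce both sequences from the classical case of smooth $\F_p$-schemes using the corollary to \thmref{thm:Popescu} together with \thmref{thm:lim-cohomology}.
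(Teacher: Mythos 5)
Your proof is correct and follows essentially the same route as the paper, which simply cites Lorenzon's Lemma 1.14 and notes that, by the Cartier isomorphism of \propref{prop:cartier-1} for regular schemes, the same formal unwinding of the recursive definitions of $Z_i\Omega^q_X$ and $B_i\Omega^q_X$ applies — exactly the argument you spell out. The only small caveat is that surjectivity of $dC^m$ and $C^m$ should be read as surjectivity of maps of sheaves (checked on stalks or locally, where your lifting chain works verbatim), since $C\colon Z_1\Omega^q_X \to \Omega^q_X$ need not be surjective on arbitrary sections; this does not affect the argument.
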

\begin{proof}
    Using the Cartier isomorphism, the proof is same as given in \cite[Lem~1.14]{Lorenzon}. So we omit the details.
\end{proof}

\begin{prop}\label{prop:basics}(cf. \cite{Illusie})
    Let $X$ be a regular scheme over $\F_p$. Fix $m \ge 0$ and $q \ge 0$. Then we have
    \begin{enumerate}
        \item $0 \to V^nW_{m-n}\Omega^q_X+dV^nW_{m-n}\Omega^{q-1}_X \to W_m\Omega^q_X \xrightarrow{R^{m-n}} W_n\Omega^q_X \to 0$ is exact for $m \ge n \ge 0$;
        \item $\Ker\left( p^n : W_m\Omega^q_X \to W_m\Omega^q_X \right)=\Ker \left(R^n : W_m\Omega^q_X \to W_{m-n}\Omega^q_X \right)$, for $n \ge 1$;
        \item  $\Ker \left( V^m:\Omega^q_X \to W_{m+1}\Omega^q_X \right) = B_m\Omega^q_X$;
        \item $\Ker \left( V^m:\Omega^q_X \to W_{m+1}\Omega^q_X/dV^m\Omega^{q-1}_X \right) = B_{m+1}\Omega^q_X = F^{m}dW_{m+1}\Omega^{q-1}_X $;
        \item $\Ker \left( dV^m:\Omega^{q-1}_X \to W_{m+1}\Omega^q_X \right) = Z_{m+1}\Omega^{q-1}_X $;
        \item $\Ker \left( dV^m:\Omega^{q-1}_X \to W_{m+1}\Omega^q_X /V^m\Omega^q_X\right) = Z_m\Omega^{q-1}_X = F^mW_{m+1}\Omega^{q-1}_X$;
        \item $\Ker(F^l \colon W_{m+l}\Omega^q_X \to  W_{m}\Omega^q_X)
= V^m(W_{l}\Omega^q_K)$ for all $l \ge 1$;
        \item $\Ker(V \colon W_{m}\Omega^q_X \to W_{m+1}\Omega^{q}_X)
    = dV^{m-1}(\Omega^{q-1}_X)$.
    \end{enumerate}
\end{prop}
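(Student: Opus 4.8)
The plan is to reduce all eight assertions to the case where $X$ is a smooth scheme over a perfect field, where they are classical: for such $X$, (1) is the usual description of the canonical filtration $\Fil^n W_m\Omega^q_X = \Ker(R^{m-n})$ on the de Rham--Witt complex, and (2)--(8) are the standard kernel computations for the operators $p^n$, $F^l$, $V^m$ and $dV^m$ (see \cite[Ch.~I, \S\,3]{Illusie}, and \cite{Lorenzon}). The reduction I would carry out is the one already used for \propref{prop:cartier-1} and systematized in \cite{Shiho}: a filtered-limit argument based on Popescu's theorem.

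First I would note that each of (1)--(8) is either the exactness of a sequence of Zariski sheaves on $X$ or an equality of two subsheaves of $W_{m+1}\Omega^q_X$, and that both kinds of statement may be tested on stalks. The stalk of $W_m\Omega^\bullet_X$ at a point $x$ is the de Rham--Witt complex of the regular local $\F_p$-algebra $\sO_{X,x}$, and every piece of data occurring in (1)--(8) --- the operators $F,V,R,d$, the Teichm\"uller lift $[-]_m$, the submodules $V^nW_\bullet\Omega^\bullet$ and $dV^nW_\bullet\Omega^\bullet$, and the modules $Z_i\Omega^\bullet$, $B_i\Omega^\bullet$ --- is compatible with localization. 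So it suffices to prove (1)--(8) for $X=\Spec A$ with $A$ a regular local ring of characteristic $p$.

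Next I would invoke the corollary to \thmref{thm:Popescu} to write $A=\varinjlim_\lambda A(\lambda)$ as a filtered colimit of finitely generated smooth $\F_p$-algebras. The functor $A\mapsto W_m\Omega^\bullet_A$ commutes with filtered colimits of $\F_p$-algebras, compatibly with $F,V,R,d$ and $[-]_m$ (a theorem of Illusie); hence so do $\Omega^\bullet_{(-)}$, the cohomology of the de Rham complex, the (inverse) Cartier operator --- here \propref{prop:cartier-1} guarantees $C^{-1}$ is an isomorphism over $A$ and over each $A(\lambda)$, so $C$, $Z_i\Omega^q_{(-)}$ and $B_i\Omega^q_{(-)}$ make sense and commute with the colimit --- and therefore the submodules $V^nW_{m-n}\Omega^q_{(-)}+dV^nW_{m-n}\Omega^{q-1}_{(-)}$ and all the kernels appearing in (2)--(8). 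Since filtered colimits of abelian groups are exact, each assertion for $A$ then follows by passing to the colimit over $\lambda$ from the corresponding assertion over $A(\lambda)$, which is the classical smooth case recalled above.

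The main obstacle is not any single deep step but the bookkeeping: checking carefully that every auxiliary construction genuinely commutes with $\varinjlim_\lambda$. The one non-formal ingredient is that this holds for $W_m\Omega^\bullet$ itself; the mildly delicate point is that $Z_i\Omega^q$ and $B_i\Omega^q$, though built from the Cartier operator and hence from the de Rham cohomology of $\Omega^\bullet$, still commute with filtered colimits because cohomology of a complex of abelian groups does and $C^{-1}$ is functorial. Once that is in place there is nothing left but to quote the smooth-case statements term by term.
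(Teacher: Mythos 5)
Your proposal is correct and follows essentially the same route as the paper: reduce to the (strictly) local case, apply Popescu's theorem (\thmref{thm:Popescu}) to write the regular local ring as a filtered colimit of smooth $\F_p$-algebras, and deduce each assertion from Illusie's smooth case by passing to the colimit, the paper packaging the commutation-with-colimits bookkeeping via \thmref{thm:lim-cohomology} where you verify it operator by operator.
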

\begin{proof}
    All these statements are true if $X$ is smooth. See  \cite[Propositions~I.3.2, I.3.4]{Illusie} for (1) and (2), respectively. For (3)-(6), use Theorem~I.3.8 of loc. cit.  together with (I.3.21.1.4) and (I.3.21.1.3) of loc. cit. for the second equality of (4) and (6), respectively. For (7) and (8), use (I.3.21.1.2) and (I.3.21.1.4), respectively.  For the general case, we may assume $X$ is strictly local and reduces to the smooth case by ~\thmref{thm:Popescu} and \thmref{thm:lim-cohomology}.
\end{proof}
\begin{defn}
     We let $R^q_{m-1,X} := \Ker \left( (V^{m-1},dV^{m-1}) : \Omega^q_X \oplus \Omega^{q-1}_X \to W_m\Omega^q_X\right)$.
\end{defn}
 We note that $R^q_{m-1,X} \subset B_{m}\Omega^q_X \oplus Z_{m-1}\Omega^{q-1}_X$ by \propref{prop:basics}(4) and (6).
\begin{cor}\label{cor:R-m-q}
   For a regular scheme $X$ over $\F_p$, we have
    $$0 \to R^q_{m-1,X} \to B_{m}\Omega^q_X \oplus Z_{m-1}\Omega^{q-1}_X \xrightarrow{C^{m-1}, \ dC^{m-1}} \  B_1\Omega^q_X \to 0.$$
\end{cor}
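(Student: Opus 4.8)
The plan is to read the displayed arrow as $(\omega,\eta)\mapsto C^{m-1}(\omega)+dC^{m-1}(\eta)$ and to verify that the resulting three-term sequence is exact; the substance of the proof is a translation between the pair of operators $(V^{m-1},dV^{m-1})$, which by definition cuts out $R^q_{m-1,X}$, and the pair $(C^{m-1},dC^{m-1})$.

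First I would dispose of well-definedness and the outer exactness. Applying \eqref{eqn:cartier-1.2} with $m$ replaced by $m-1$ gives $0\to B_{m-1}\Omega^q_X\to B_m\Omega^q_X\xrightarrow{C^{m-1}}B_1\Omega^q_X\to 0$, so $C^{m-1}$ maps $B_m\Omega^q_X$ onto $B_1\Omega^q_X$ with kernel $B_{m-1}\Omega^q_X$. Likewise, applying \eqref{eqn:cartier-1.1} with $m$ replaced by $m-1$ and $q$ by $q-1$ gives $0\to Z_m\Omega^{q-1}_X\to Z_{m-1}\Omega^{q-1}_X\xrightarrow{dC^{m-1}}B_1\Omega^q_X\to 0$, so $dC^{m-1}$ sends $Z_{m-1}\Omega^{q-1}_X$ into $B_1\Omega^q_X$. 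Hence $(\omega,\eta)\mapsto C^{m-1}(\omega)+dC^{m-1}(\eta)$ is a well-defined map $B_m\Omega^q_X\oplus Z_{m-1}\Omega^{q-1}_X\to B_1\Omega^q_X$ which is already surjective on the first summand; and $R^q_{m-1,X}$ is a subgroup of $B_m\Omega^q_X\oplus Z_{m-1}\Omega^{q-1}_X$ by \propref{prop:basics}(4),(6), as already noted. So everything reduces to the claim that, inside $B_m\Omega^q_X\oplus Z_{m-1}\Omega^{q-1}_X$, one has $R^q_{m-1,X}=\Ker\big((\omega,\eta)\mapsto C^{m-1}\omega+dC^{m-1}\eta\big)$.

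To prove this I would fix (on stalks) a section $(\omega,\eta)$ of $B_m\Omega^q_X\oplus Z_{m-1}\Omega^{q-1}_X$. By the second equalities in \propref{prop:basics}(4) and (6), $B_m\Omega^q_X=F^{m-1}dW_m\Omega^{q-1}_X$ and $Z_{m-1}\Omega^{q-1}_X=F^{m-1}W_m\Omega^{q-1}_X$, so write $\omega=F^{m-1}(d\zeta)$ and $\eta=F^{m-1}(\xi)$ with $\zeta,\xi\in W_m\Omega^{q-1}_X$. Since $FV=VF=p$ one has $V^{m-1}F^{m-1}=p^{m-1}$ on $W_m\Omega^\bullet_X$, whence
\[
V^{m-1}(\omega)+dV^{m-1}(\eta)=V^{m-1}F^{m-1}(d\zeta)+d\bigl(V^{m-1}F^{m-1}(\xi)\bigr)=p^{m-1}\,d(\zeta+\xi).
\]
By \propref{prop:basics}(2) this vanishes in $W_m\Omega^q_X$ if and only if $R^{m-1}d(\zeta+\xi)=0$ in $\Omega^q_X$, equivalently, using $Rd=dR$, if and only if $d(R^{m-1}\zeta)+d(R^{m-1}\xi)=0$. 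On the other hand, the standard relation $C^{m-1}\circ F^{m-1}=R^{m-1}$ on the de Rham--Witt complex gives $C^{m-1}(\omega)=C^{m-1}F^{m-1}(d\zeta)=R^{m-1}(d\zeta)=d(R^{m-1}\zeta)$ and $C^{m-1}(\eta)=R^{m-1}(\xi)$, hence $C^{m-1}(\omega)+dC^{m-1}(\eta)=d(R^{m-1}\zeta)+d(R^{m-1}\xi)$. Comparing the two computations shows $(\omega,\eta)\in R^q_{m-1,X}$ if and only if $C^{m-1}(\omega)+dC^{m-1}(\eta)=0$, which together with the previous paragraph proves exactness.

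The step I expect to be the real obstacle is the identity $C^{m-1}\circ F^{m-1}=R^{m-1}$, together with the fact that $F^{m-1}$ maps $W_m\Omega^\bullet_X$ into the $Z_{m-1}$-cocycles so that $C^{m-1}$ applies to $\omega$ and $\eta$. For smooth schemes over a perfect field this is among Illusie's formulae for the de Rham--Witt complex, and the $F$-finite regular case should then follow by descent to smooth affine $\F_p$-algebras via \thmref{thm:Popescu} and \thmref{thm:lim-cohomology}, exactly as in the proof of \propref{prop:basics}. If one prefers to avoid this identity, an alternative is available: the computations above exhibit both $R^q_{m-1,X}$ and the kernel of the displayed map as extensions $0\to Z_m\Omega^{q-1}_X\to(-)\to B_m\Omega^q_X\to 0$ (the one for $R^q_{m-1,X}$ coming from \propref{prop:basics}(4),(5)), compatibly with their inclusions into $B_m\Omega^q_X\oplus Z_{m-1}\Omega^{q-1}_X$, so once $R^q_{m-1,X}\subseteq\Ker$ is known the five lemma forces equality.
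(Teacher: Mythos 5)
Your proof is correct and is essentially the paper's own argument: the same surjectivity input from \corref{cor:cartier-1}, the same representation $\omega=F^{m-1}(d\zeta)$, $\eta=F^{m-1}(\xi)$ coming from \propref{prop:basics}(4),(6), and the same identity $CF=R$ (iterated, which is exactly the level of justification the paper itself gives) to translate $(C^{m-1},dC^{m-1})$ into $(V^{m-1},dV^{m-1})$ via \propref{prop:basics}(2). The only deviation is that you obtain both inclusions from a single reversible computation, since \propref{prop:basics}(2) is an equality of kernels, whereas the paper proves $\Ker\subseteq R^q_{m-1,X}$ this way and then handles the reverse inclusion by a separate adjustment argument using the surjectivity of $dC^{m-1}$, \propref{prop:basics}(5) and \eqref{eqn:cartier-1.1}; your streamlining is valid, so the five-lemma alternative you sketch at the end is not needed.
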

\begin{proof}
   First note that $C^{m-1} :B_m\Omega^q_X \to B_1\Omega^q_X $ (resp. $dC^{m-1} : Z_{m-1}\Omega^{q-1}_X \to B_1\Omega^q_X$) is surjective by \corref{cor:cartier-1}. Now if $(a,b) \in \Ker (C^{m-1},dC^{m-1})$, by ~\propref{prop:basics} $(4)$ and $(6)$, we can write $(a,b)= (F^{m-1}da',F^{m-1}b')$, for $a',b' \in W_m\Omega^{q-1}_X$, such that $dR^{m-1}(a'+b')=0$ (noting $CF=R$). Now $V^{m-1}(a)+dV^{m-1}(b)= p^{m-1}d(a'+b') =0$ by $(2)$ of ~\propref{prop:basics}. This implies $\Ker (C^{m-1}.dC^{m-1}) \subset R^q_{m-1,X}$.
   
   Now, for the other direction, take any $(a,b) \in R^q_{m-1,X}$. Then there exists $b'\in Z_{m-1}\Omega^{q-1}_X$ such that $(a,b)-(0,b')\in \Ker (C^{m-1},dC^{m-1})$. Since $\Ker (C^{m-1}.dC^{m-1}) \subset R^q_{m-1,X}$, we get that $(0,b')\in R^q_{m-1,X}$. Now by \propref{prop:basics}(5), we get $b' \in Z_{m}\Omega^{q-1}_X$ and hence $(0,b') \in \Ker (C^{m-1}.dC^{m-1})$ by \eqref{eqn:cartier-1.1}. This implies $(a,b)\in \Ker (C^{m-1},dC^{m-1})$. This finishes the proof.
\end{proof}

\propref{prop:basics} and the relations between $F,d,V$ listed above immediately imply:
\begin{lem}
    The Frobenius map $F : W_{m+1}\Omega^q_X \to W_{m}\Omega^q_X$ induces the homomorphism $\ov{F}: W_{m}\Omega^q_X \to W_{m}\Omega^q_X/dV^{m-1}\Omega^{q-1}_X.$
\end{lem}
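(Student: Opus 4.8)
The plan is to present $W_m\Omega^q_X$ as a quotient of $W_{m+1}\Omega^q_X$ via the restriction map $R$, and to check that $F$ carries $\Ker R$ into the subgroup $dV^{m-1}\Omega^{q-1}_X\subseteq W_m\Omega^q_X$, so that the composite of $F$ with the projection $\pi\colon W_m\Omega^q_X\to W_m\Omega^q_X/dV^{m-1}\Omega^{q-1}_X$ descends along $R$. Concretely, by \propref{prop:basics}(1) applied with $m+1$ in place of $m$ and $n=m$, the map $R\colon W_{m+1}\Omega^q_X\to W_m\Omega^q_X$ is surjective with $\Ker\big(R\colon W_{m+1}\Omega^q_X\to W_m\Omega^q_X\big)=V^m\Omega^q_X+dV^m\Omega^{q-1}_X$ (recall $W_1\Omega^\bullet_X=\Omega^\bullet_X$). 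Hence it suffices to verify $F\big(V^m\Omega^q_X+dV^m\Omega^{q-1}_X\big)\subseteq dV^{m-1}\Omega^{q-1}_X$; since $R$ is an epimorphism, the induced map is then the asserted $\ov F$, uniquely determined by $\ov F\circ R=\pi\circ F$.

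For a section $\omega\in\Omega^q_X$, write $V^m\omega=V(V^{m-1}\omega)$ with $V^{m-1}\omega\in W_m\Omega^q_X$; then $F(V^m\omega)=(FV)(V^{m-1}\omega)=p\,V^{m-1}\omega$ by the relation $FV=p$. The identity $pV=Vp$ (a formal consequence of $FV=VF=p$) gives $p\,V^{m-1}\omega=V^{m-1}(p\omega)=0$, since $\sO_X$ has characteristic $p$; thus $F$ annihilates $V^m\Omega^q_X$, which in particular lies in $dV^{m-1}\Omega^{q-1}_X$. For a section $\eta\in\Omega^{q-1}_X$, write $dV^m\eta=dV(V^{m-1}\eta)$ with $V^{m-1}\eta\in W_m\Omega^{q-1}_X$; the relation $FdV=d$ then yields $F(dV^m\eta)=d(V^{m-1}\eta)\in dV^{m-1}\Omega^{q-1}_X$. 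Combining the two computations gives the required inclusion, and hence the existence of $\ov F$.

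I do not expect any genuine obstacle here: the statement is a formal consequence of the presentation of $\Ker R$ in \propref{prop:basics}(1) together with the operator identities $FV=VF=p$ and $FdV=d$. The only step deserving a moment's care is the vanishing $p\,V^{m-1}\omega=0$; if one prefers to avoid $pV=Vp$, one can instead note that $V^{m-1}\omega$ lies in $\Ker\big(R\colon W_m\Omega^q_X\to W_{m-1}\Omega^q_X\big)=\Ker(p)$, the latter equality being \propref{prop:basics}(2) with $n=1$. In either presentation the whole argument is a short manipulation with $F$, $V$, $R$, $d$, followed by the universal property of the epimorphism $R$.
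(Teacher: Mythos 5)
Your argument is correct and is exactly the spelling-out of what the paper leaves implicit: the paper derives the lemma "immediately" from \propref{prop:basics} and the operator identities, and your proof does precisely that, using \propref{prop:basics}(1) to identify $\Ker(R\colon W_{m+1}\Omega^q_X\to W_m\Omega^q_X)=V^m\Omega^q_X+dV^m\Omega^{q-1}_X$ and the relations $FV=p$, $pV=Vp$, $FdV=d$ to check this kernel maps into $dV^{m-1}\Omega^{q-1}_X$. No gaps; the computations $F(V^m\omega)=V^{m-1}(p\omega)=0$ and $F(dV^m\eta)=dV^{m-1}\eta$ are exactly what is needed.
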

Now we recall the definition of logarithmic Hodge-Witt sheaf.
\begin{defn}
    For a scheme $X$ over $\F_p$, and $m,q \ge 1$, we define the logarithmic Hodge-Witt sheaf $W_m\Omega^q_{X,\log,\tau}$ by ${\rm Im}(\dlog: (\sO_X^\times)^{\otimes q} \to W_m\Omega^q_X)$, where $\dlog$ is defined by 
    $$\dlog (a_1\otimes \cdots \otimes a_q)= \dlog[a_1]_m \cdots \dlog[a_q]_m,$$
    where the image is taken $\tau = \zar, \nis \text{ or }\et$ale locally.
\end{defn}
The above map induces a map $\dlog \colon {\sK^M_{i,X}}/{p^m} \to
W_m\Omega^i_{X, \log}$, $\tau$-locally, which is known to be an isomorphism
when $X$ is regular (cf. \cite[Thm.~1.2, Cor.~4.2]{Morrow-ENS}).

The following proposition is known for smooth case and can be proved for regular case by using Theorems~\ref{thm:Popescu} and \ref{thm:lim-cohomology}
\begin{prop}\label{prop:usual}(\cite[Prop~2.8, 2.10, 2.12]{Shiho}; cf. \cite{Illusie},\cite[Lem~2,3]{CTSS})
    Let $X$ be a regular scheme over $\F_p$. Then we have the following exact sequences in $X_\et$.
    \begin{enumerate}
        \item $0 \to W_n\Omega^q_{X,\log} \xrightarrow{\un{p}^m} W_{m+n}\Omega^q_{X,\log} \xrightarrow{R^n} W_m\Omega^q_{X,\log} \to 0$.
        \item $0 \to W_m\Omega^q_{X,\log} \to W_m\Omega^q_{X} \xrightarrow{1-\ov{F}}W_{m}\Omega^q_X/dV^{m-1}\Omega^{q-1}_X \to 0$.
        \item $0\to W_m\Omega^q_{X,\log} \to Z_1W_m\Omega^q_{X} \xrightarrow{1-C}W_{m}\Omega^q_X \to 0$.
    \end{enumerate}
\end{prop}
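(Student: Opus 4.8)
The plan is to reduce all three statements to the case of a smooth $\F_p$-algebra, where they are classical (\cite{Illusie}, \cite{CTSS}; for the identification of $W_m\Omega^q_{\log}$ with $\sK^M_q/p^m$ see also \cite{Morrow-ENS}), by means of Popescu's approximation \thmref{thm:Popescu} and \thmref{thm:lim-cohomology}, exactly in the spirit of the proofs of \propref{prop:cartier-1} and \propref{prop:basics}. Since exactness of a sequence of {\'e}tale sheaves is checked on geometric stalks, I would fix a geometric point $\ov{x}$ of $X$ and put $A = \sO^{sh}_{X,\ov{x}}$, a strictly Henselian regular local ring of characteristic $p$. Because the stalk functor is exact and commutes with sheafification, the stalk at $\ov{x}$ of $W_m\Omega^q_{X,\log}$ is the plain image $\im\big(\dlog\colon (A^\times)^{\otimes q}\to W_m\Omega^q_A\big)$, while the stalks of $W_m\Omega^q_X$, $\Omega^q_X$, $Z_1W_m\Omega^q_X$, $dV^{m-1}\Omega^{q-1}_X$ and of the quotient $W_m\Omega^q_X/dV^{m-1}\Omega^{q-1}_X$ are the corresponding objects for $A$, with $R$, $F$, $\ov{F}$, $C$ and $\un{p}$ the operators attached to $A$. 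So it suffices to prove the three sequences of abelian groups for an arbitrary strictly Henselian regular local $\F_p$-algebra $A$, reading $W_m\Omega^q_{A,\log}$ as $\im(\dlog_A)$.

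Next I would apply the corollary of \thmref{thm:Popescu} to write $A$ as a filtered colimit of finitely generated smooth $\F_p$-algebras, and then, by localizing and strictly henselizing at the preimages of $\fm_A$ --- a standard manipulation, strict henselization being compatible with filtered colimits of rings --- present $A = \varinjlim_\lambda B_\lambda$ with each $B_\lambda$ the strict henselization of a smooth finitely generated $\F_p$-algebra at a point, i.e. a geometric stalk of a smooth $\F_p$-scheme. For each such $B_\lambda$ the three sequences of abelian groups are exact: they are the classical smooth-case exact sequences of {\'e}tale sheaves evaluated at a geometric stalk, where the plain image of $\dlog_{B_\lambda}$ again coincides with the stalk of $W_m\Omega^q_{\log}$. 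Every functor occurring in the three sequences --- $W_m\Omega^q_{(-)}$, $\Omega^q_{(-)}$, $Z_1W_m\Omega^q_{(-)}$, the subgroup $dV^{m-1}\Omega^{q-1}_{(-)}$ and the quotient by it, the unit group $(-)^\times$ and its tensor powers, hence $\im(\dlog)$ --- commutes with filtered colimits of rings (the finite-level de Rham--Witt complex is built by finitary operations out of $\Omega^\bullet$, and kernels, images and quotients are preserved under filtered colimits), and the maps $\un{p}^m$, $R^n$, $1-\ov{F}$, $1-C$ are compatible with the colimit. Since filtered colimits are exact, passing to $\varinjlim_\lambda$ yields the three exact sequences for $A$, hence for $X$.

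The step I expect to cost the most care is the bookkeeping around the logarithmic sheaf under these two reductions: one must make sure that the stalk of the {\'e}tale sheaf $W_m\Omega^q_{X,\log}$ is literally $\im(\dlog)$ over the strictly Henselian stalk ring, and that forming this image commutes with the colimit $A = \varinjlim_\lambda B_\lambda$ --- which amounts to $A^\times = \varinjlim_\lambda B_\lambda^\times$ together with exactness of filtered colimits. Granting this, everything else --- the surjectivity of $R^n$, $1-\ov{F}$ and $1-C$ (the places where the {\'e}tale topology genuinely enters, via an Artin--Schreier-type argument, already in the smooth case) and the identification of their kernels with $W_m\Omega^q_{(-),\log}$ --- is inherited termwise from the smooth case, with no new cohomological input needed; this is in essence the argument of \cite{Shiho} (Prop.~2.8, 2.10, 2.12).
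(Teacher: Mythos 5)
Your proposal is correct and follows essentially the same route as the paper: the paper's (one-line) proof likewise reduces the regular case to the classical smooth case via Popescu's theorem (\thmref{thm:Popescu}) together with a limit argument, checking exactness strictly locally. Your stalkwise formulation, with the observation that all functors involved (including the image of $\dlog$) commute with filtered colimits of rings, is just a slightly more explicit bookkeeping of that same reduction.
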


\section{The log de Rham-Witt complex}\label{sec:LogDR}
In this section, we shall recall the definitions and properties of log de Rham-Witt complex over a log scheme. The log structure, we shall be interested in is the one induced by a SNCD.
 \subsection{Definitions}
 We recall the definition of log structure on a scheme \cite{Kato-log} (see also \cite[Chap~2]{Shiho-2}). 

\begin{defn}
  Let $Y$ be a scheme. A pre-log structure on $Y$ is a pair $(\sM_Y, \alpha)$, where $\sM_Y$ is a sheaf of commutative monoids on $Y_\et$ (or $Y_\zar$) and $\alpha: \sM_Y \to (\sO_Y, \cdot)$ is a morphism of sheaf of monoids on $Y$, with respect to multiplication on $\sO_Y$. 
The pre-log structure $(\sM_Y, \alpha)$ is called a log structure if $\alpha^{-1}(\sO_Y^\times)\xrightarrow{\alpha}\sO_Y^\times$ is an isomorphism. A log scheme is scheme with a log structure on it.
\end{defn}

\begin{defn}
    Let $(\sM_Y, \alpha)$ be a pre-log structure on $Y$. Let $\sM^a_Y$ be the push-out of the following diagram in the category of sheaves of monoid on $Y$.
    $$\sO_Y^\times \xleftarrow{\alpha}\alpha^{-1}(\sO_Y^\times)\xrightarrow{}\sM_Y.$$
    That is, $\sM^a_Y= (\sO_Y^\times \bigoplus \sM_Y) /\sim $, where, locally, $(a,b)\sim (a',b')$ if there exists $c_1,c_2 \in \alpha^{-1}(\sO_Y^\times)$ such that $a\alpha(c_1)=a'\alpha(c_2)$ and $bc_2=b'c_1$. One can check $(\sM_Y^a,\alpha^a)$ gives a log structure on $Y$, where $\alpha^a:\sM_Y^a\to \sO_Y$ is given by $\alpha^a(u,v)=u\alpha(v)$. $(\sM_Y^a,\alpha^a)$ is called the log structure associated to $(\sM_Y,\alpha)$.   
\end{defn}
Given a pre-log structure $(\sM_Y,\alpha)$ on $Y$, we shall denote the associated log scheme as $(Y, \alpha:\sM_Y\to \sO_Y)$, if no confusion is possible.

A morphism between two log schemes $(Y, \alpha:\sM_Y\to \sO_Y)$ and $(Y', \alpha':\sM'_{Y'}\to \sO_{Y'})$ is a morphism of schemes $f: Y\to Y'$ and a homomorphism $h: f^{-1}(\sM'_{Y'})\to \sM_Y$, such that the following diagram commutes.
\begin{equation}
    \xymatrix{
f^{-1}(\sM'_{Y'}) \ar[r]^{h} \ar[d]_-{f^{-1}\alpha'} & \sM_Y \ar[d]^-{\alpha} \\
f^{-1}(\sO_{Y'}) \ar[r] & \sO_Y.
}
\end{equation}

\begin{defn}
    Let $f:Y_1 \to Y_2$ be a morphism of schemes and $(\sM_{Y_2},\alpha)$ be a log structure on $Y_2$. Then the pull back log structure $f^*\sM_{Y_2}$ on $Y_1$ is defined to be the log structure associated to the pre-log structure $f^{-1}\sM_{Y_2} \xrightarrow{f^{-1}(\alpha)}f^{-1}\sO_{Y_2}\to \sO_{Y_1}$ on $Y_1$. 
\end{defn}

At this point, we refer the reader to \cite{Kato-log} (also \cite[Chap~2]{Shiho-2}) for a comprehensive discussion on log schemes and the definitions of various types of morphisms (e.g. integral, saturated, smooth, Cartier type etc).

Now we recall the definition of log Witt complex associated to a log structure (cf. \cite[\S~1]{Geisser-Hesselholt-JAMS}, \cite[\S~3]{HM-Annals}). Let $A_\log$ denote a log ring $(A, \alpha : M_A\to (A,\cdot))$ of characteristic $P>0$ (i.e. char $A=p$). This induces a log structure $(M_A, \ \alpha_m:M_A \to W_m(A))$ on $W_m(A)$, where $\alpha_m :=[\cdot]_m \circ \alpha$ and
$[\cdot]_m \colon A \to W_m(A)$ is the Teichm{\"u}ller map. 

\begin{defn}
    A log differential graded ring $(E^\star, M_E)$ over a log ring $(E^0, \ M_E, \ \alpha:M_E\to E^0)$ is a differential graded ring $E^\star$, together with a map of monoids, $\dlog: M_E \to (E^1,+)$ such that $d\circ \dlog=0$ and $d\alpha(a)=\alpha(a)\dlog(a)$ for all $a \in M_E$.
\end{defn}
For any log ring $A_\log$, a universal log differential graded ring always exists (see \cite[Section~2.2, P~33]{HM-Annals}). This is called the de Rham complex of the log ring $A_\log$ and denoted by $\Omega^\star_{A_\log}$.
\begin{defn}
    A log Witt complex over $A_\log$ consists of :
    \begin{enumerate}
        \item a pro-log differential graded ring $(E_\textbf{.}^\star, M_E)$ and a strict map of pro-log rings $\lambda:(W_\textbf{.}(A),M_A)\to (E_\textbf{.}^0, M_E)$;
        \item a strict map of pro-log graded rings
     $$F:E^\star_m \to E^\star_{m-1},$$
     such that $\lambda F=F \lambda$, $F \dlog_m\lambda(a)=\dlog_{m-1}\lambda(a)$ for all $a \in M_A$ and $Fd\lambda([a]_m)=\lambda([a]_{m-1})^{p-1}d\lambda([a]_{m-1})$ for all $a\in A$; where $\dlog_m: M_E\to E^1_{m+1}$;
     
     \item and a strict map of pro graded modules over the pro graded rings $E^\star_\cdot$
     $$V: F_\star E^\star_m \to E^\star_{m+1}$$
     such that $V \lambda=\lambda V$, $FV=p$ and $FdV=d$, where $F_\star E^\star_m$ indicates that $E^\star_m$ is considered as $E_{m+1}^0$-module via $F: E^0_{m+1}\to E^0_m$.

    \end{enumerate}
\end{defn}
We write $R$ for the structure map of the pro-system $E^\star_.$ and call it the restriction map. 
For any log ring $A_\log$, a universal log Witt complex always exists (cf. \cite[Prop.~3.2.2]{HM-Annals}). It is called the de Rham-Witt complex of the log ring $A_\log$ and denoted by $W_\textbf{.}\Omega^\star_{A_\log}$. Furthermore, $W_1\Omega^\bullet_{A_\log}$ coincides with the de Rham complex
$\Omega^\bullet_{A_\log}$ of the log ring $A_\log$. One can also extend these definitions and results to any log scheme $Y$ of characteristic p in a natural way.

\subsection{Log structure associated to SNCD}
Let $X$ be a connected regular Noetherian scheme of characteristic $p > 0$.
We let $K$ denote the function field of $X$.
Let $E \subset X$ be a simple normal crossing divisor (SNCD) with irreducible components
$E_1, \ldots , E_r$. Recall that this means that $E_i$ is a regular divisor on $X$
for each $i$ and the scheme theoretic intersection $E_{i_1} \cap \cdots \cap E_{i_s}$
($1 \le i_1 < \cdots < i_s \le r$) is regular of codimension $s$ in $X$.
Let $j \colon U \inj X$ be the inclusion of the open complement of $E$.

We let $X_\log$ denote the log scheme $(X, \sM_X, \alpha_X)$, where
$\alpha_X \colon \sM_X = \sO_X \cap j_*\sO^\times_U \inj \sO_X$ is the usual
inclusion. Here the intersection is taken inside the constant sheaf given by $K$, the function field of $X$. For $m \ge 1$, let $W_m(X)_\log$ be the log scheme structure on
$W_m(X)$ associated to the pre-log structure
$(\sM_X, \alpha_m \colon \sM_X \to W_m\sO_X)$, where $\alpha_m := [\cdot]_m \circ \alpha$ and
$[\cdot]_m \colon \sO_X \to W_m\sO_X$ is the Teichm{\"u}ller map.

We let $W_m\Omega^\bullet_{X_\log}$ denote the
de Rham-Witt complex of the log scheme $X_\log$.
Note that $W_1\Omega^\bullet_{X_\log}$ coincides with the de Rham complex
$\Omega^\bullet_{X_\log}$ of the log scheme $X_\log$.
When $X = \Spec(A)$, we shall write $W_m\Omega^q_{X_\log}$ as $W_m\Omega^q_{A_\log}$, for $q \ge 0$.

Recall that
the canonical map of de Rham-Witt complexes $W_m\Omega^\bullet_X \to
j_*(W_m\Omega^\bullet_U)$ is injective (because $W_m\Omega^\bullet_X \to
\eta_*(W_m\Omega^\bullet_K)$ is injective, where $\eta \colon \Spec(K) \inj X$ is the
inclusion (cf. \cite[Prop.~2.8]{KP-Comp})). This fact will be implicitly used throughout
this thesis.

If $X = \Spec(A)$, where $A$ is a regular local $\F_p$-algebra and
$E_i = \Spec({A}/{(x_i)})$, we let $\pi = \stackrel{r}{\underset{i =1}\prod} x_i$
and let $A_\pi = A[\pi^{-1}]$.
We let $S_0 = \{0\}$ and for $1 \le j \le r$, let $S_j$ be the collection of ordered
subsets $J = \{i_1 < \cdots < i_j\}$ of $\{1, \ldots , r\}$.
For $J = \{i_1 < \cdots < i_j\} \in S_j$, we let $W_m\Omega^q_A(\log\pi)_J =
W_m\Omega^{q-j}_A\dlog[x_{i_1}]_m \wedge \cdots \wedge \dlog[x_{i_j}]_m$
as a $W_m(A)$-submodule of $W_m\Omega^q_{A_\pi}$. For $J = S_0$, we let
$W_m\Omega^q_A(\log\pi)_J = W_m\Omega^q_A$. We let $W_m\Omega^q_{X}(\log E)$ be the
image $W_m\Omega^q_A(\log \pi)$ of the canonical map
$\stackrel{min\{q,r\}}{\underset{j=0}\bigoplus} {\underset{J \in S_j}\bigoplus}
  W_m\Omega^q_A(\log\pi)_J$ $\to W_m\Omega^q_{A_\pi}$.
  If $X$ is not necessarily local, we let $W_m\Omega^q_X(\log E)$ be the Zariski
  subsheaf of $j_*W_m\Omega^q_U$ whose stalk at a point $x \in X$ is
  $W_m\Omega^q_{X_x}(\log \ E_x)$, where $X_x = \Spec(\sO_{X,x})$ and $E_x= X_x \times_X E$.

 Purpose of this section is to identify $W_.\Omega^\bullet_{X_\log}$ with $W_\textbf{.}\Omega^\bullet_X(\log E)$ (cf. \thmref{thm:Log-DRW}).

\begin{lem}\label{lem:LWC-1}
  $W_\textbf{.}\Omega^\bullet_X(\log E)$ is a log Witt complex over $X_\log$. In particular,
  there is a canonical (strict) map of log Witt complexes
  $\theta_X \colon W_\textbf{.}\Omega^\bullet_{X_\log} \to W_\textbf{.}\Omega^\bullet_X(\log E)$.
  \end{lem}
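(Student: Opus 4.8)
The plan is to exhibit $W_\bullet\Omega^\bullet_X(\log E)$ as a sub-log-Witt-complex of the pushforward $j_*W_\bullet\Omega^\bullet_U$ of the ordinary (i.e.\ trivially logarithmic) de Rham--Witt complex of $U$, and then to conclude by the universal property of $W_\bullet\Omega^\bullet_{X_\log}$.

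\emph{Step 1: the ambient object.} Since $\sM_X = \sO_X \cap j_*\sO^\times_U \subset j_*\sO^\times_U$, the composite $\sM_X \to j_*\sO^\times_U \xrightarrow{\,\dlog[\cdot]_m\,} j_*W_m\Omega^1_U$, together with the restriction maps $W_m\sO_X \to j_*W_m\sO_U$, endows $j_*W_\bullet\Omega^\bullet_U$ with the structure of a log Witt complex over $X_\log$: the identities $d(\dlog[a]_m)=0$ and $d[a]_m = [a]_m\,\dlog[a]_m$ hold because $[a]_m$ is a unit in $j_*W_m\sO_U$, and the $F$, $V$, $R$ axioms are the standard relations for the de Rham--Witt complex of $U$ supplemented by $F(\dlog[a]_m)=\dlog[a]_{m-1}$ and $R(\dlog[a]_m)=\dlog[a]_{m-1}$. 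I will use throughout that $W_m\Omega^\bullet_X \inj j_*W_m\Omega^\bullet_U$ is injective (recalled above), so that it suffices to check containments, not relations.

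\emph{Step 2: closure of the subsheaf.} It remains to show that $W_\bullet\Omega^\bullet_X(\log E) \subset j_*W_\bullet\Omega^\bullet_U$ is a graded subring, is stable under $d$, $F$, $V$, $R$, and contains the image of $\lambda$ together with the elements $\dlog[a]_m$ for $a\in\sM_X$. This is local, so assume $X=\Spec A$ with $A$ regular local and $\pi=\prod_i x_i$. A product of generators $\dlog[x_i]_m\wedge\dlog[x_{i'}]_m\wedge\cdots$ is again such a wedge (or zero), so $W_m\Omega^q_A(\log\pi)_J\cdot W_m\Omega^{q'}_A(\log\pi)_{J'}\subset W_m\Omega^{q+q'}_A(\log\pi)$; closure under $d$ follows from the Leibniz rule and $d(\dlog[x_i]_m)=0$; closure under $F$ and $R$ follows since these are maps of graded rings sending $\dlog[x_i]_m$ to $\dlog[x_i]_{m-1}$; closure under $V$ follows from $\dlog[x_i]_m = F(\dlog[x_i]_{m+1})$ and the projection formula $V(F(\xi)\eta)=\xi\,V(\eta)$, which gives $V(\omega\cdot\dlog[x_{i_1}]_m\wedge\cdots) = \pm\,V(\omega)\cdot\dlog[x_{i_1}]_{m+1}\wedge\cdots$. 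Finally, $W_m\Omega^q_A(\log\pi)$ contains $W_m\Omega^q_A$ (the summand $J=S_0$), and for $a\in\sM_X$ one has, since $A$ is a regular local ring and $a$ is invertible on $U$, a factorization $a = u\prod_i x_i^{n_i}$ with $u\in A^\times$ and $n_i\ge 0$; hence $\dlog[a]_m = \dlog[u]_m + \sum_i n_i\,\dlog[x_i]_m \in W_m\Omega^1_A(\log\pi)$, because $\dlog[u]_m\in W_m\Omega^1_A$ for the unit $u$.

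\emph{Conclusion.} By Step 2 all structural maps of $j_*W_\bullet\Omega^\bullet_U$ restrict to $W_\bullet\Omega^\bullet_X(\log E)$, and by Step 1 all defining identities of a log Witt complex over $X_\log$ are inherited; thus $W_\bullet\Omega^\bullet_X(\log E)$ is a log Witt complex over $X_\log$. Since $W_\bullet\Omega^\bullet_{X_\log}$ is the universal (initial) log Witt complex over $X_\log$, there is a unique morphism of log Witt complexes $\theta_X\colon W_\bullet\Omega^\bullet_{X_\log}\to W_\bullet\Omega^\bullet_X(\log E)$; it is strict because in degree $0$ both sides carry the log structure $(\sM_X,\alpha_m\colon\sM_X\to W_m\sO_X)$ on $W_m\sO_X$ and $\theta_X$ is compatible with the structural maps $\lambda$, hence is the identity in degree $0$. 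The one point needing genuine care is the local description of $\sM_X$ used in Step 2 (a regular function invertible on $U$ has divisor an effective $\Z$-combination of the $E_i$, which uses that regular local rings are UFDs); everything else is a direct check on the explicit generators $\dlog[x_i]_m$.
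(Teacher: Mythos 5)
Your proposal is correct and follows essentially the same route as the paper: both realize $W_\textbf{.}\Omega^\bullet_X(\log E)$ as a subcomplex of the Witt complex $j_*W_\textbf{.}\Omega^\bullet_U$, observe that stability under $F$ and $V$ follows from $F(\dlog[a]_m)=\dlog[a]_{m-1}$ and $V(wF(\dlog[a]_{m+1}))=V(w)\dlog[a]_{m+1}$, and then invoke the universal property of $W_\textbf{.}\Omega^\bullet_{X_\log}$ to obtain $\theta_X$. Your extra verifications (closure under products, $d$, $R$, and the UFD factorization $a=u\prod_i x_i^{n_i}$ showing $\dlog[a]_m$ lies in $W_m\Omega^1_A(\log\pi)$ for all $a\in\sM_X$) merely make explicit what the paper leaves implicit.
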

\begin{proof}
  To endow  $W_\textbf{.}\Omega^\bullet_X(\log E)$ with the structure of a log Witt complex
 over $X_\log$, we let $\dlog \circ [\cdot]_m \colon \sM_X \to W_m\Omega^1_X(\log E)$ be
  the map locally defined by $a \mapsto \dlog[a]_m$.
We take $\lambda \colon (W_\textbf{.} \sO_X, \sM_X) \to (W_\textbf{.}\Omega^0_X(\log E), \sM_X)$ to
be the identity map. Since $j_* W_\textbf{.}\Omega^\bullet_U$ is a Witt complex over
$X$ and $W_\textbf{.}\Omega^\bullet_{X}(\log E)$ is its subcomplex, we only need to verify
that the latter is preserved by the Frobenius and Verchiebung operators of
$j_*W_\textbf{.}\Omega^\bullet_U$.
Since $F$ is multiplicative and $F(\dlog[a]_m) = \dlog[a]_{m-1}$,
the claim is clear for $F$.
Since $V(w\dlog[a]_m) = V(wF(\dlog[a]_{m+1})) = V(w)\dlog[a]_{m+1}$, we 
see that $V$ also preserves $W_\textbf{.}\Omega^\bullet_{X}(\log E)$.
\end{proof}

Recall that an $\F_p$-scheme $Y$ is called $F$-finite if the absolute Frobenius
map $F \colon Y \to Y$ is a finite morphism. In the sequel, we shall use that
every essentially of finite type scheme over an $F$-finite field of characteristic $p$
as well as its Henselizations and completions is $F$-finite (cf. \cite[\S~1.1]{Morrow-ENS}).

\begin{lem}\label{lem:LWC-3}
     Let $A$ be a regular local $F$-finite $\F_p$-algebra of dim $N$ with maximal
  ideal $\fm = (x_1, \ldots , x_N)$ and residue field $\ff$. Then $\Omega^1_A$ is free $A$-module of finite rank. Moreover, $\{dx_1,\ldots , dx_N\}$ is a   part of basis of $\Omega^1_A$.
\end{lem}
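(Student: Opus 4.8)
The plan is to prove the two assertions — that $\Omega^1_A := \Omega^1_{A/\F_p}$ is free of finite rank, and that $dx_1,\dots,dx_N$ extends to a basis — in three steps: finite generation, flatness (hence freeness over the local ring $A$), and finally a Nakayama argument for the last claim.

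\emph{Step 1 (finite generation).} Since $A$ is $F$-finite, it is module-finite over the subring $A^p$, so in particular $A$ is generated as an $A^p$-algebra by finitely many elements $b_1,\dots,b_r$. As $d(c^p)=p\,c^{p-1}dc=0$ for every $c\in A$, the image of $\Omega^1_{A^p/\F_p}\otimes_{A^p}A$ in $\Omega^1_{A/\F_p}$ is zero, so the first fundamental exact sequence for $\F_p\to A^p\to A$ gives an isomorphism $\Omega^1_{A/\F_p}\xrightarrow{\cong}\Omega^1_{A/A^p}$. The latter is generated as an $A$-module by $db_1,\dots,db_r$, hence $\Omega^1_A$ is finitely generated.

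\emph{Step 2 (freeness).} By the corollary of \thmref{thm:Popescu}, write $A=\varinjlim_\lambda A(\lambda)$ as a filtered colimit of finitely generated smooth $\F_p$-algebras. Kähler differentials commute with filtered colimits, so $\Omega^1_{A/\F_p}=\varinjlim_\lambda\bigl(\Omega^1_{A(\lambda)/\F_p}\otimes_{A(\lambda)}A\bigr)$. Each $\Omega^1_{A(\lambda)/\F_p}$ is finitely generated projective (smoothness over $\F_p$), so every term of the colimit is a flat $A$-module, and a filtered colimit of flat modules is flat. Thus $\Omega^1_A$ is a finitely generated flat module over the Noetherian local ring $A$, hence free of finite rank.

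\emph{Step 3 ($dx_1,\dots,dx_N$ form part of a basis).} Since $\Omega^1_A$ is free over the local ring $A$, by Nakayama it suffices to show the images of $dx_1,\dots,dx_N$ in $\Omega^1_A\otimes_A\ff$ are $\ff$-linearly independent. Consider the conormal exact sequence for $A\twoheadrightarrow\ff$,
\[
\fm/\fm^2\xrightarrow{\ \delta\ }\Omega^1_{A/\F_p}\otimes_A\ff\longrightarrow\Omega^1_{\ff/\F_p}\longrightarrow 0,
\]
where $\ov{x}_1,\dots,\ov{x}_N$ is an $\ff$-basis of $\fm/\fm^2$ and $\delta(\ov{x}_i)$ is the class of $dx_i$; so it is enough to prove $\delta$ is injective. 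I would do this by passing to the completion $\widehat{A}\cong\ff[[x_1,\dots,x_N]]$ (Cohen structure theorem): choosing $t_1,\dots,t_s\in A$ lifting a $p$-basis of the $F$-finite field $\ff$, the set $\{x_1,\dots,x_N,t_1,\dots,t_s\}$ is a $p$-basis of $\widehat{A}$, so $dx_1,\dots,dx_N,dt_1,\dots,dt_s$ is a basis of the module of differentials of $\widehat A$, and in particular the $dx_i$ are independent modulo $\fm$ there; pushing forward along $A\to\widehat A$ shows the classes of $dx_i$ in $\Omega^1_A\otimes_A\ff$ map to linearly independent elements and hence are themselves independent. Equivalently, one may simply invoke Kunz's computation of $\Omega^1$ for $F$-finite regular local rings.

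The main obstacle is Step 3. Steps 1 and 2 are essentially formal, but the injectivity of $\delta$ genuinely uses both regularity and $F$-finiteness — it fails for general regular local $\F_p$-algebras — so the argument has to route through the completion and the $p$-basis structure of power series rings over $F$-finite fields (or cite Kunz), whereas the first two steps are robust.
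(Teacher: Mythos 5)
Your proof is correct in substance, but at the decisive point it takes a different (and heavier) route than the paper, and your closing diagnosis of where the hypotheses enter is wrong. The paper handles freeness and finite generation by citation (\cite[Lem.~2.1.2]{Zhao} for freeness, \cite[Prop.~1]{Tyc} for finite generation); your Steps 1 and 2 reprove these inputs — via $\Omega^1_{A/\F_p}\cong\Omega^1_{A/A^p}$ for finite generation, and via Popescu plus flatness of filtered colimits of projectives for freeness — and both arguments are fine. For the ``part of a basis'' claim the paper does not pass to the completion at all: it uses the conormal sequence $0\to\fm/\fm^2\xrightarrow{d\otimes 1}\Omega^1_A\otimes_A\ff\to\Omega^1_\ff\to 0$ and observes that $d\otimes 1$ is injective because $\ff$ is separable, hence $0$-smooth, over the perfect field $\F_p$ (\cite[Thm.~25.2, 26.9]{Matsumura}); Nakayama then finishes exactly as in your Step 3. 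Your route through $\widehat A\cong\ff[[x_1,\dots,x_N]]$ and a $p$-basis is also valid: the images of $dx_1,\dots,dx_N$ in $\Omega^1_{\widehat A}\otimes\ff$ are independent (cleanest if you take the $p$-basis elements of $\ff$ inside a coefficient field of $\widehat A$; with arbitrary lifts $t_j\in A$ one needs a small unipotent change-of-basis remark, a cosmetic point), and functoriality of $d$ pulls this independence back to $\Omega^1_A\otimes_A\ff$. What your route buys is independence from the separability observation; what it costs is Cohen's structure theorem and the $p$-basis bookkeeping.

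Your final remark, however, is incorrect: the injectivity of $\delta$ does not ``genuinely use both regularity and $F$-finiteness,'' and it does not fail for general (e.g.\ regular but non-$F$-finite) local $\F_p$-algebras. Because the base here is the prime field $\F_p$, which is perfect, every residue field is separable and hence $0$-smooth over it, so the conormal sequence is split left exact for \emph{every} local $\F_p$-algebra — this is precisely the paper's one-line argument. Regularity and $F$-finiteness are needed only to make $\Omega^1_A$ finitely generated and free (your Steps 1--2), not for the injectivity in Step 3.
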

\begin{proof}
    Since $A$ is regular and $F-$finite, $\Omega^1_A$ is free (by \cite[Lem~2.1.2]{Zhao}) and finitely generated  over $A$ (by \cite[Prop.~1]{Tyc}, taking $k=\F_p$). Now consider the exact sequence
    \begin{equation}\label{eqn:2nd-funda}
        0 \to \fm/\fm^2 \xrightarrow{d \otimes 1} \Omega^1_A \otimes_A \ff \to \Omega^1_{\ff} \to 0.
    \end{equation}
    Here, $d \otimes1$ is injective because $\ff$ is $0-$smooth (separable) over $\F_p$ (see \cite[Thm~25.2, 26.9]{Matsumura}). Also, it follows from \eqref{eqn:2nd-funda} that $\Omega^1_\ff$ has finite rank. Let $y_1,\ldots,y_s \in A$ such that image of $dy_1,\dots, dy_s$ forms a basis of $\Omega^1_\ff$. Then by \eqref{eqn:2nd-funda}, we get that $d\ov{x}_1,\dots,d\ov{x}_N,d\ov{y}_1,\\ \dots, d\ov{y}_s$ forms a basis of $\Omega^1_A \otimes_A \ff$. Since $\Omega^1_A$ is finitely generated flat $A$ module, by Nakayama lemma, $dx_1,\dots,dx_N,dy_1,\dots,dy_s$ forms a basis of $\Omega^1_A$. 
\end{proof}

\begin{lem}\label{lem:LWC-2}
  Let $A$ be as in \lemref{lem:LWC-3}. Let
  $\pi = x_1 \cdots x_r, \ r \le N$. Let $A_\log$ be the log ring whose log structure on $A$ is induced by $\pi$. Then the canonical maps of log
  differential graded $A$-algebras
  $ \colon \Omega^\bullet_{A_\log} \xrightarrow{\theta_A}\Omega^\bullet_A(\log \pi) \xleftarrow{\psi_A}\Wedge^\bullet_A (\Omega^1_A(\log\pi)) $
  are isomorphisms.  Moreover, each $\Omega^q_A(\log\pi)$ is a free $A$-module of finite rank.
\end{lem}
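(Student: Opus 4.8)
The plan is to reduce the statement to degree $1$ and then compute $\Omega^1_{A_\log}$ and $\Omega^1_A(\log\pi)$ by hand as free $A$-modules. Two facts are used at the outset. First, the log de Rham complex $\Omega^\bullet_{A_\log}$ is, by construction, the exterior algebra $\Wedge^\bullet_A\Omega^1_{A_\log}$ over its degree-$1$ part, with differential determined by $d$, $\dlog$ and the Leibniz rule (cf.\ \cite[\S~2.2]{HM-Annals}). Second, by the definition of $\Omega^q_A(\log\pi)$ recalled above, $\psi_A$ is the corestriction of the canonical wedge map $\Wedge^q_A\Omega^1_A(\log\pi)\to\Omega^q_{A_\pi}=\Wedge^q_{A_\pi}\Omega^1_{A_\pi}$, so it is surjective and is the identity in degree $1$. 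Writing $\theta^1_A$ for the degree-$1$ component of $\theta_A$, the maps $\theta_A$ and $\psi_A\circ\Wedge^\bullet(\theta^1_A)$ are both graded $A$-algebra homomorphisms out of $\Wedge^\bullet_A\Omega^1_{A_\log}$ agreeing in degree $1$, hence equal; so once $\theta^1_A$ is known to be an isomorphism, it suffices to show $\psi_A$ is an isomorphism in every degree, and then $\theta_A$ follows as a composite of two isomorphisms.

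To compute $\Omega^1_A(\log\pi)$: by \lemref{lem:LWC-3} there are $y_1,\dots,y_s\in A$ with $\{dx_1,\dots,dx_N,dy_1,\dots,dy_s\}$ an $A$-basis of the free module $\Omega^1_A$, and base change along $A\to A_\pi$ turns it into an $A_\pi$-basis of $\Omega^1_{A_\pi}$. Inside $\Omega^1_{A_\pi}$, the submodule $\Omega^1_A(\log\pi)$ is the $A$-span of $\Omega^1_A$ together with the $\dlog x_i=x_i^{-1}dx_i$ ($1\le i\le r$); since $dx_i=x_i\cdot(x_i^{-1}dx_i)$ for these $i$, it is the $A$-span of $\{x_1^{-1}dx_1,\dots,x_r^{-1}dx_r,\,dx_{r+1},\dots,dx_N,\,dy_1,\dots,dy_s\}$, a rescaling of the chosen $A_\pi$-basis, hence $A$-linearly independent. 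Thus $\Omega^1_A(\log\pi)$ is $A$-free of rank $N+s$ on this set.

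To compute $\Omega^1_{A_\log}$: as $A$ is regular local, hence factorial, and $x_1,\dots,x_r$ are pairwise non-associate prime elements, the log structure $M_A$ on $A$ induced by $\pi$ is the one associated to the prelog map $\N_0^r\to A$, $e_i\mapsto x_i$, so $M_A\cong A^\times\oplus\N_0^r$. Inspecting the relations defining the universal log derivation, one finds that $\dlog u=u^{-1}du$ lies in the image of $\Omega^1_A$ for $u\in A^\times$, and that for $m=u\,x_1^{a_1}\cdots x_r^{a_r}$ the relation $d\alpha(m)=\alpha(m)\dlog m$ is a formal consequence of the individual relations $dx_i=x_i\,\dlog x_i$; hence
\[
\Omega^1_{A_\log}\;=\;\Big(\Omega^1_A\oplus\bigoplus_{i=1}^r A\,\dlog x_i\Big)\Big/\big\langle\,dx_i-x_i\,\dlog x_i\ :\ 1\le i\le r\,\big\rangle .
\]
Passing from the basis $\{dx_1,\dots,dx_N,dy_1,\dots,dy_s,\dlog x_1,\dots,\dlog x_r\}$ of $\Omega^1_A\oplus\bigoplus_i A\,\dlog x_i$ to $\{dx_i-x_i\dlog x_i\ (i\le r),\,dx_j\ (j>r),\,dy_k,\,\dlog x_i\ (i\le r)\}$ is an invertible change of basis, so the quotient $\Omega^1_{A_\log}$ is $A$-free of rank $N+s$ on $\{\dlog x_1,\dots,\dlog x_r,\,dx_{r+1},\dots,dx_N,\,dy_1,\dots,dy_s\}$. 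Comparing with the previous paragraph, $\theta^1_A$, which sends $\dlog x_i\mapsto x_i^{-1}dx_i$, $dx_j\mapsto dx_j$ and $dy_k\mapsto dy_k$, carries a basis bijectively onto a basis, hence is an isomorphism.

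For general $q$, $\Wedge^q_A\Omega^1_A(\log\pi)$ is $A$-free of rank $\binom{N+s}{q}$ on the $q$-fold wedges of the basis found above, and $\psi_A$ sends these to the $q$-fold wedges of $\{x_1^{-1}dx_1,\dots,x_r^{-1}dx_r,dx_{r+1},\dots,dx_N,dy_1,\dots,dy_s\}$ in $\Wedge^q_{A_\pi}\Omega^1_{A_\pi}$, which form an $A_\pi$-basis and so are $A$-linearly independent; hence $\psi_A$ is injective, and being also surjective it is an isomorphism. In particular $\Omega^q_A(\log\pi)\cong\Wedge^q_A\Omega^1_A(\log\pi)$ is $A$-free of finite rank $\binom{N+s}{q}$, and $\theta_A=\psi_A\circ\Wedge^\bullet(\theta^1_A)$ is an isomorphism. \textbf{The main obstacle} is the explicit description of $\Omega^1_{A_\log}$ from its universal property: this is where \lemref{lem:LWC-3} and the factoriality of $A$ really enter. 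Everything in higher degrees is then a formal consequence of the exterior-algebra description of the log de Rham complex.
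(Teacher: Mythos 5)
Your proposal is correct, and its computational core coincides with the paper's: both produce the free basis $\{\dlog x_1,\ldots,\dlog x_r,\,dx_{r+1},\ldots,dx_N,\,dy_1,\ldots,dy_s\}$ from \lemref{lem:LWC-3}, prove injectivity of the wedge-algebra map into $\Omega^\bullet_{A_\pi}$ by base change along $A\inj A_\pi$ and freeness, and get surjectivity onto $\Omega^\bullet_A(\log\pi)$ straight from the definition. Where you diverge is in how $\Omega^\bullet_{A_\log}$ is identified with the exterior algebra on that free module. The paper never presents $\Omega^1_{A_\log}$ by generators and relations: it equips $\Wedge^\bullet_A\bigl(\wt{\Omega}^1_A(\log\pi)\bigr)$ with a log differential graded structure (a $\dlog$ map on $M_A$) and invokes the universal property of $\Omega^\bullet_{A_\log}$ to conclude that $\theta'_A$ is an isomorphism, after which $\theta_A$ and $\psi_A$ fall out by the same injectivity/surjectivity argument you use. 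You instead compute $\Omega^1_{A_\log}$ explicitly from the standard presentation of log K\"ahler differentials, using $M_A\cong A^\times\oplus \N_0^r$ (which, as you rightly note, is where factoriality of the regular local ring $A$ enters — the paper's proof uses the same explicit description of $M_A$), and then appeal to $\Omega^\bullet_{A_\log}=\Wedge^\bullet_A\Omega^1_{A_\log}$. Both facts you lean on are standard (cf. Ogus, Ch.~IV, or Kato's paper on log structures, and \cite[\S~2.2]{HM-Annals}; the paper itself uses the exterior-algebra description elsewhere), but in the universal log DGA framework adopted here they are exactly what the paper's universal-property argument re-derives in this special case, so you should either cite them precisely or spell out the elimination of the $A\otimes_{\Z}A^\times$ summand and the verification that the relation for a general $m=ux_1^{a_1}\cdots x_r^{a_r}$ follows from those for units and for the $x_i$ — your sketch of this is right, just terse. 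The trade-off: your route makes the module $\Omega^1_{A_\log}$ completely explicit (which is pleasant and reused implicitly later, e.g.\ in the proof of \lemref{lem:LWC-4}), while the paper's route is self-contained within the universal-property formalism and avoids having to justify the presentation at all.
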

\begin{proof}
  We continue with the same notations of the proof of \lemref{lem:LWC-3}. We let $\wt{\Omega}^1_A(\log\pi) = (\underset{1\le j \le s}{\oplus}A dy_j ) \bigoplus
  ({\underset{1 \le i \le r}\oplus} A\dlog(x_i)) \bigoplus 
  ({\underset{r+1 \le i \le N}\oplus} Adx_i)$.
  Since $\Omega^\bullet_{A_\log}$ is a DG (differential graded) complex over $A$, there is a unique
     morphism of differential graded $A$-algebras $\phi_A \colon \Omega^\bullet_A \to
     \Omega^\bullet_{A_\log}$.
     As $\Omega^\bullet_{A_\log}$ has a log DG complex structure,
     we also have a commutative diagram of monoids
\begin{equation}\label{eqn:LWC-3-1.0}
    \xymatrix@C.8pc{
      M_A \ar[r]^-{\dlog} \ar[dr]_-{\dlog} &  \Omega^1_{A_\log} \ar[d]^-{\rm can} \\
      & \Omega^1_{A_\pi},}
    \end{equation}
where $M_A = \{ux_1^{i_1} \cdots x_r^{i_r}| u \in A^\times \ \mbox{and} \ i_j \in \N \
\forall \ 1\le j \le r\}$.

Since $\wt{\Omega}^1_A$ is a free $A$-module, we have a well-defined $A$-linear map
  $\theta'_A \colon \wt{\Omega}^1_A(\log\pi) \to \Omega^1_{A_\log}$ such that $\theta'_A(a\dlog(x_i)) = a \dlog(x_i)$ for all $a \in A$. Hence, we
  get a morphism of differential graded $A$-algebras 
  $\theta'_A \colon \Wedge^\bullet_A (\wt{\Omega}^1_A(\log\pi)) \to
  \Omega^\bullet_{A_\log}$. It is clear that there is a monoid homomorphism
  $\dlog \colon M_A \to \wt{\Omega}^1_A(\log\pi)$
  which is compatible with $\theta'_A$. In particular, the latter is a morphism of
  log differential graded $A$-algebras which is identity in degree zero.
  We conclude from the universal property of $\Omega^\bullet_{A_\log}$
  that $$\theta'_A \colon \Wedge^\bullet_A (\wt{\Omega}^1_A(\log\pi)) \xrightarrow{\cong}
  \Omega^\bullet_{A_\log}.$$

  On the other hand,
  as $\wt{\Omega}^1_A(\log\pi)$ is a free $A$-module, the canonical map  $$\Wedge^\bullet_A(\wt{\Omega}^1_A(\log\pi)) \to \Wedge^\bullet_A(\wt{\Omega}^1_A(\log\pi)) \otimes_A A_\pi = \Wedge^\bullet_A(\Omega^1_{A_\pi})= \Omega^\bullet_{A_\pi}$$
  is injective and its image lies in $\Omega^\bullet_A(\log\pi)$. We thus get a commutative diagram 
  \begin{equation}\label{eqn:LWC-3-1}
    \xymatrix@C.8pc{
      \Wedge^\bullet_A(\wt{\Omega}^1_A(\log\pi)) \ar@{.>}[rr]^-{\wt{\psi}^{\bullet}_A} \ar@{^{(}->}[dr] & & \Omega^\bullet_A(\log\pi)
      \ar@{^{(}->}[dl] \\
      & \Omega^\bullet_{A_\pi}. &}
    \end{equation}

  It is clear from the definition of $\Omega^\bullet_A(\log\pi)$ and \lemref{lem:LWC-3} that $\wt\psi^\bullet_A$ is surjective and hence it is an isomorphism. This implies second part of the lemma. In particular $\wt\psi^1_A: \wt{\Omega}^1_A(\log\pi) \to \Omega^1_A(\log \pi)$ is isomorphism. This forces $\psi_A:  \Wedge^\bullet_A(\Omega^1_A(\log\pi)) \to \Omega^\bullet_A(\log \pi)$ to be isomorphism. Also we have, $$\theta_A= \wt \psi^\bullet_Ao(\theta'_A)^{-1} : \Omega^\bullet_{A_\log} \xrightarrow{\cong} \Omega^\bullet_A(\log \pi).$$ 
 This achieves the first part of the lemma.
\end{proof}

We let $A$ be as in \lemref{lem:LWC-2} and let $\F_p^{[n]} = \F_p[X_1, \ldots , X_n]$, for $n \in \N$.
There are homomorphisms of log rings
\begin{equation}\label{eqn:Log-smooth}
\F_{p,\log} \to (\F_p^{[N]})_\log \xrightarrow{\gamma} A_\log, \ \mbox{where} \
\gamma(X_i) = x_i \ \forall \ 1 \le i \le N.
\end{equation}
The log structure of $\F_p$ is trivial while 
that of $\F_p^{[N]}$ (resp. $A$) is given by the simple normal crossing
divisor $V((X))$ (resp. $V((\pi))$) on $\A^N_{\F_p}$ (resp. $\Spec(A)$), where
$X = X_1\cdots X_r$. The first arrow in ~\eqref{eqn:Log-smooth} is log smooth
while the second arrow is a morphism of log rings such that the underline morphism of rings is regular. To see the latter claim, first note that $A$ is formally $\fm$-smooth over $(\F_p^{[N]})_{(X_1,\cdots,X_N)}$, as $\F_p$ is perfect and $A$ is regular (cf.
\cite[Lem.~1.3]{Tanimoto-2}). It follows now from Andr{\'e}'s theorem (cf. \cite{Andre}) that $\F_p^{[N]} \to A$ is regular. We let $H_{\dR}^i(A_\log)$ denote the $i$-th cohomology
of the log de Rham complex $\Omega^\bullet_{A_\log}$.

  \begin{lem}\label{lem:Log-Cartier}
    For $q \ge 0$, there is an isomorphism of $A$-modules
    $C^{-1}_q \colon \Omega^q_{A_\log} \xrightarrow{\cong} H_{\dR}^q(A_\log)$.
  \end{lem}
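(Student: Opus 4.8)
The plan is to imitate the proof of \propref{prop:cartier-1} in the logarithmic category: first establish the statement when $A_\log$ is log smooth over $\F_p$, and then pass to a filtered colimit using \thmref{thm:Popescu}. The first thing to record is that the log structure $\sM_A = \sO_A \cap j_*\sO^\times_U$ is the pullback, along the map $\gamma$ of \eqref{eqn:Log-smooth}, of the standard log structure of $\F_p^{[N]}$ attached to $V((X_1\cdots X_r))$: since $A$ is regular local — hence a UFD — and $x_1,\dots,x_r$ form part of a regular system of parameters, every section of $\sM_A$ is of the form $u\,x_1^{a_1}\cdots x_r^{a_r}$ with $u\in A^\times$ and $a_i\ge 0$, so $\sM_A$ is generated as a log structure by $\sO_A^\times$ and the images of $X_1,\dots,X_r$.

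Next, since $\F_p^{[N]}\to A$ is a regular morphism (as noted before \lemref{lem:Log-Cartier}), \thmref{thm:Popescu} gives $A=\varinjlim_\lambda A(\lambda)$ with each $A(\lambda)$ a smooth $\F_p^{[N]}$-algebra. Equip $A(\lambda)$ with the log structure pulled back from $(\F_p^{[N]})_\log$. Then $A(\lambda)_\log\to(\F_p^{[N]})_\log$ is strict with smooth underlying morphism, hence log smooth, and composing with the log smooth morphism $\F_{p,\log}\to(\F_p^{[N]})_\log$ of \eqref{eqn:Log-smooth} shows that $A(\lambda)_\log$ is log smooth over $\F_p$ (with its trivial log structure); it is moreover of Cartier type, its log structure being étale-locally that of a free monoid. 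Because forming the associated log structure from the pre-log structure $\N^r\to\F_p^{[N]}\to(-)$ commutes with filtered colimits and, by the previous paragraph, recovers $\sM_A$ on $A$, we get $A_\log=\varinjlim_\lambda A(\lambda)_\log$ as log rings.

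Now define $C^{-1}_q$ to be the log inverse Cartier operator, i.e. the additive map, multiplicative for the product of differential forms, characterized by $C^{-1}_q(a\omega)=a^p C^{-1}_q(\omega)$, $C^{-1}_1(da)=[a^{p-1}\,da]$ for $a\in A$, and $C^{-1}_1(\dlog u)=[\dlog u]$ for $u\in\sM_A$; these formulas are manifestly compatible with the transition maps of the system $\{A(\lambda)_\log\}$. By the log Cartier isomorphism for log smooth $\F_p$-schemes of Cartier type (cf. \cite{Kato-log}; see also \cite{Hyodo-Kato}, \cite{Lorenzon}), each $C^{-1}_q\colon\Omega^q_{A(\lambda)_\log}\xrightarrow{\cong}H^q_{\dR}(A(\lambda)_\log)$ is an isomorphism. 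Since log Kähler differentials commute with filtered colimits of log rings, $\Omega^q_{A_\log}=\varinjlim_\lambda\Omega^q_{A(\lambda)_\log}$; the log de Rham complexes are compatible along the system and filtered colimits are exact, so $H^q_{\dR}(A_\log)=\varinjlim_\lambda H^q_{\dR}(A(\lambda)_\log)$. Hence $C^{-1}_q$ over $A_\log$ is a filtered colimit of isomorphisms, therefore itself an isomorphism. (When $r=0$ this is just \propref{prop:cartier-1}.)

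The delicate point is the middle step: one must arrange the log structures on the $A(\lambda)$ coherently so that each $A(\lambda)_\log$ is genuinely log smooth of Cartier type over $\F_p$ and so that $A_\log$ really is their filtered colimit — this is precisely where the identification of $\sM_A$ as a pullback is used. An alternative avoiding the general log Cartier theorem would be an induction on the number $r$ of branches via the logarithmic residue sequence $0\to\Omega^\bullet_{A'_\log}\to\Omega^\bullet_{A_\log}\xrightarrow{\mathrm{Res}_{x_r}}\Omega^{\bullet-1}_{B_\log}\to 0$, where $A'_\log$ carries the log structure of $x_1\cdots x_{r-1}$ and $B=A/(x_r)$ is again a regular local $F$-finite ring; here $C^{-1}$ is compatible with the sequence (using $C^{-1}(\dlog x_r)=[\dlog x_r]$ and functoriality of the non-log inverse Cartier along $A\to B$), and the base case $r=0$ is \propref{prop:cartier-1} — but that route carries the extra burden of showing that the connecting homomorphism in the resulting de Rham cohomology sequence vanishes.
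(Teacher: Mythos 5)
Your proposal is correct and follows essentially the same route as the paper: apply \thmref{thm:Popescu} to the regular morphism $\F_p^{[N]}\to A$, endow the smooth algebras $A(\lambda)$ with the pulled-back log structure so that each $A(\lambda)_\log$ is log smooth of Cartier type over $\F_{p,\log}$, invoke the Hyodo--Kato log Cartier isomorphism, and pass to the filtered colimit. The only difference is cosmetic: you explicitly verify (via the UFD structure of $A$) that $\sM_A$ is the pullback of the standard log structure of $(\F_p^{[N]})_\log$, a point the paper asserts without detail.
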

  \begin{proof}
    Since the map $\gamma \colon \F_p^{[N]} \to A$ is regular, \thmref{thm:Popescu}
    implies that there is a direct system $\{A(\lambda)\}$ of 
    smooth (in particular, finite type)
    $\F_p^{[N]}$-algebras such that $A = \varinjlim_\lambda A(\lambda)$.
    We endow each $A(\lambda)$ with the log structure given by the pull-back of the
    log structure of $(\F_p^{[N]})_\log$. Since the log structure of $A$ also has this
    property, it follows that we have a direct system of
    log smooth $(\F_p^{[N]})_{\log}$-algebras $\{A(\lambda)_\log\}$ such that
    $A_\log = \varinjlim_\lambda A(\lambda)_\log$ in the category of log rings.
    As $\F_{p,\log} \to (\F_p^{[N]})_\log$ is log smooth, we get that each $A(\lambda)_\log$ is
    log smooth over $\F_{p,\log}$.

    Since the homomorphism of log rings $\F_{p,\log} \to (\F_p^{[N]})_\log$ is clearly integral, and
    so is the homomorphism $(\F_p^{[N]})_\log \to A(\lambda)_\log$ by the definition of
    latter's log structure (\cite[Cor~4.4(i)]{Kato-log}), it follows that $\F_{p,\log} \to A(\lambda)_\log$ is integral.
    By a similar reason, this map is also saturated. It follows from Kato's criterion
    that each $\F_{p,\log} \to A(\lambda)_\log$ is of Cartier type (cf.
    \cite[\S~2.12]{Hyodo-Kato}). 
    We conclude from loc. cit. that there is an isomorphism
    $C^{-1}_q \colon \Omega^q_{A(\lambda)_\log} \xrightarrow{\cong}
    H^q_\dR(A(\lambda)_{\log})$ given by $C^{-1}_q(a\dlog(b_1)\wedge \cdots \wedge
    \dlog(b_q)) = a^p\dlog(b_1)\wedge \cdots \wedge \dlog(b_q)$.
Since this is clearly natural with respect to the homomorphisms of log rings, we
    get after passing to the limit that there is a similar inverse Cartier isomorphism
    for $A$ as well. It is clear from the description that $C^{-1}_q$ is $A$-linear
    if we endow $\Ker(d)$ with the $A$-module structure via latter's Frobenius
    endomorphism.
   \end{proof} 

  We let $Z_1\Omega^q_{A_\log} = \Ker(d \colon \Omega^q_{A_\log} \to \Omega^{q+1}_{A_\log})$
  and $B_1\Omega^q_{A_\log} = {\rm Image}(d \colon \Omega^{q-1}_{A_\log} \to
  \Omega^{q}_{A_\log})$. Taking the inverse of $C^{-1}_\bullet$, we get the following.
  
\begin{cor}\label{cor:Log-Cartier-0}
  For $q \ge 0$, there is an $A$-linear Cartier homomorphism
  $C_q \colon Z_1\Omega^q_{A_\log} \to \Omega^q_{A_\log}$ which induces an
  isomorphism of $A$-modules $C_q \colon H^q_\dR({A_\log}) \xrightarrow{\cong}
  \Omega^q_{A_\log}$.
\end{cor}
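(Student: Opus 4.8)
The plan is to obtain $C_q$ by simply inverting the isomorphism furnished by \lemref{lem:Log-Cartier}. Recall that, from the definitions of $Z_1\Omega^q_{A_\log}$ and $B_1\Omega^q_{A_\log}$ given just above, one has $H^q_\dR(A_\log) = Z_1\Omega^q_{A_\log}/B_1\Omega^q_{A_\log}$, so there is a canonical surjection $\rho \colon Z_1\Omega^q_{A_\log} \twoheadrightarrow H^q_\dR(A_\log)$. I would then define
\[
C_q \colon Z_1\Omega^q_{A_\log} \xrightarrow{\ \rho\ } H^q_\dR(A_\log) \xrightarrow{(C^{-1}_q)^{-1}} \Omega^q_{A_\log},
\]
exactly as in the passage from \propref{prop:cartier-1} to the definition of the classical Cartier operator recalled in \secref{sec:de-rham-witt}. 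Since $C^{-1}_q$ is an isomorphism of $A$-modules by \lemref{lem:Log-Cartier}, its inverse is too; and because $C_q$ factors through $\rho$ and hence vanishes on $B_1\Omega^q_{A_\log}$ by construction, it descends to the asserted isomorphism $C_q \colon H^q_\dR(A_\log) \xrightarrow{\cong} \Omega^q_{A_\log}$ on cohomology.

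The only point needing care is, as usual, the module structure: the $A$-linearity in \lemref{lem:Log-Cartier} refers to the $A$-module structure on $\Ker(d)$ (hence on $H^q_\dR(A_\log)$, and so on $Z_1\Omega^q_{A_\log}$) obtained by restriction of scalars along the Frobenius $a \mapsto a^p$ of $A$. Correspondingly, $C_q$ is $A$-linear for this twisted structure on its source, i.e. $C_q(a^p\omega) = a\,C_q(\omega)$, which is the standard normalization of the Cartier operator; this is forced by the explicit formula $C^{-1}_q(a\,\dlog(b_1)\wedge\cdots\wedge\dlog(b_q)) = a^p\,\dlog(b_1)\wedge\cdots\wedge\dlog(b_q)$ recorded in the proof of \lemref{lem:Log-Cartier}.

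There is essentially no obstacle here: the entire content sits in \lemref{lem:Log-Cartier}, and the corollary is a one-line reformulation of it. The only thing one might wish to double-check is that the direct-limit argument proving \lemref{lem:Log-Cartier} is compatible with inverting $C^{-1}_q$ --- that is, that the Cartier isomorphisms for the log-smooth $(\F_p^{[N]})_\log$-algebras $A(\lambda)_\log$ are natural in $\lambda$, so that their inverses also assemble in the limit --- but this naturality is precisely what was already invoked there. Hence the proof of the corollary is a short deduction.
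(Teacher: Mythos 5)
Your proposal is correct and matches the paper's own (very brief) argument: the corollary is obtained exactly by composing the canonical surjection $Z_1\Omega^q_{A_\log} \surj H^q_\dR(A_\log)$ with the inverse of the isomorphism $C^{-1}_q$ from \lemref{lem:Log-Cartier}, with the same caveat about the Frobenius-twisted $A$-module structure already noted in that lemma's proof. Nothing further is needed.
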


Now $Z_i\Omega^q_{A_\log}$ and $B_i\Omega^q_{A_\log}$ can be defined like the usual case of regular schemes (cf. \secref{sec:de-rham-witt}). We thus get a filtration
\begin{equation*}
  0 = B_0\Omega^q_{A_\log} \subseteq B_1\Omega^q_{A_\log} \subseteq \cdots \subseteq
 \bigcup_i B_i\Omega^q_{A_\log} \subseteq \bigcap_i Z_{i}\Omega^q_{A_\log}  \subseteq \cdots \subseteq 
 Z_{1}\Omega^q_{A_\log} \subseteq Z_{0}\Omega^q_{A_\log} = \Omega^q_{A_\log}
\end{equation*}
of $\Omega^q_{A_\log}$ together with an isomorphism
$C^{-i}_q \colon \Omega^q_{A_\log} \xrightarrow{\cong}
{Z_{i}\Omega^q_{A_\log}}/{B_i\Omega^q_{A_\log}}$ for all $i \ge 0$. Using this isomorphism, proof of the following corollary is analogous to that of \corref{cor:cartier-1}.
\begin{cor}\label{cor:Log-cartier-1}
    We have the exact sequences
    \begin{equation}\label{eqn:log-cartier-1.1}
        0 \to Z_{m+1}\Omega^q_{A_\log} \to Z_m\Omega^q_{A_\log} \xrightarrow{dC^m} B_1\Omega^{q+1}_{A_\log} \to 0,
    \end{equation}
    \begin{equation}\label{eqn:log-cartier-1.2}
        0 \to B_{m}\Omega^q_{A_\log} \to B_{m+1}\Omega^q_{A_\log} \xrightarrow{C^m} B_1\Omega^{q}_{A_\log} \to 0,
    \end{equation}
    for $m \ge 1, q \ge 0$. 
\end{cor}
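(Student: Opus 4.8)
The plan is to transport the argument of \cite[Lem.~1.14]{Lorenzon} (equivalently, the proof of \corref{cor:cartier-1}) to the logarithmic setting; the only ingredient special to $A_\log$ that it uses is the inverse Cartier isomorphism $C^{-1}_q$, which is already provided by \lemref{lem:Log-Cartier} and \corref{cor:Log-Cartier-0}. First I would recall the combinatorics of the operators involved: $C \colon Z_1\Omega^q_{A_\log} \surj \Omega^q_{A_\log}$ is surjective with kernel $B_1\Omega^q_{A_\log}$, and by construction $Z_{i+1}\Omega^q_{A_\log} = C^{-1}(Z_i\Omega^q_{A_\log})$ and $B_{i+1}\Omega^q_{A_\log} = C^{-1}(B_i\Omega^q_{A_\log})$, the preimages being taken inside $Z_1\Omega^q_{A_\log}$. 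From this it follows that $C$ restricts to surjections $Z_{i+1}\Omega^q_{A_\log} \surj Z_i\Omega^q_{A_\log}$ and $B_{i+1}\Omega^q_{A_\log} \surj B_i\Omega^q_{A_\log}$, each with kernel $B_1\Omega^q_{A_\log}$ (since $B_1\Omega^q_{A_\log}\subseteq B_i\Omega^q_{A_\log}\subseteq Z_i\Omega^q_{A_\log}$), so in particular the iterated operators $C^m \colon Z_m\Omega^q_{A_\log}\to\Omega^q_{A_\log}$ and $C^m \colon B_{m+1}\Omega^q_{A_\log}\to B_1\Omega^q_{A_\log}$ are well defined and surjective.

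For the sequence \eqref{eqn:log-cartier-1.1}, I would observe that $dC^m \colon Z_m\Omega^q_{A_\log}\to\Omega^{q+1}_{A_\log}$ has image $d\,\Omega^q_{A_\log} = B_1\Omega^{q+1}_{A_\log}$, since $C^m$ already surjects onto $\Omega^q_{A_\log}$; this gives exactness at $B_1\Omega^{q+1}_{A_\log}$. For exactness at $Z_m\Omega^q_{A_\log}$, note that $dC^m(x)=0$ iff $C^m(x)\in\Ker(d)=Z_1\Omega^q_{A_\log}$, and unwinding the inductive definition ($C^m(x)\in Z_1 \iff C^{m-1}(x)\in Z_2 \iff\cdots\iff x\in Z_{m+1}$) this holds iff $x\in Z_{m+1}\Omega^q_{A_\log}$; injectivity of the inclusion $Z_{m+1}\Omega^q_{A_\log}\inj Z_m\Omega^q_{A_\log}$ is clear. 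This yields \eqref{eqn:log-cartier-1.1}.

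For the sequence \eqref{eqn:log-cartier-1.2}, surjectivity of $C^m \colon B_{m+1}\Omega^q_{A_\log}\to B_1\Omega^q_{A_\log}$ was already recorded and injectivity of $B_m\Omega^q_{A_\log}\inj B_{m+1}\Omega^q_{A_\log}$ is obvious, so it remains to identify the kernel of $C^m$ on $B_{m+1}\Omega^q_{A_\log}$ with $B_m\Omega^q_{A_\log}$. I would do this by induction on $m$: for $m=1$ the kernel of $C \colon B_2\Omega^q_{A_\log}\to B_1\Omega^q_{A_\log}$ is $B_1\Omega^q_{A_\log}$ because $B_2\Omega^q_{A_\log}\subseteq Z_1\Omega^q_{A_\log}$; for $m\ge 2$, writing $C^m(x)=C^{m-1}(C(x))$ with $C(x)\in B_m\Omega^q_{A_\log}$, the inductive hypothesis gives $C^m(x)=0 \iff C(x)\in B_{m-1}\Omega^q_{A_\log} \iff x\in C^{-1}(B_{m-1}\Omega^q_{A_\log})=B_m\Omega^q_{A_\log}$. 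This yields \eqref{eqn:log-cartier-1.2}.

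I do not anticipate a genuine obstacle here: all the substantive work --- the reduction to log smooth $\F_{p}$-algebras via Popescu's and Andr\'e's theorems, verifying the Cartier-type condition through Kato's criterion, and passing to the direct limit --- has already been carried out in \lemref{lem:Log-Cartier}, and what remains is the purely formal bookkeeping above, identical to the non-logarithmic case. The only place demanding a moment's care is checking that the iterated Cartier operators take values in the claimed submodules and that the inductive kernel computations telescope correctly, both of which are routine.
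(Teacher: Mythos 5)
Your proposal is correct and follows essentially the same route as the paper, which simply invokes the log Cartier isomorphism of Lemma~\ref{lem:Log-Cartier}/Corollary~\ref{cor:Log-Cartier-0} and declares the argument identical to Corollary~\ref{cor:cartier-1}, i.e.\ to \cite[Lem.~1.14]{Lorenzon}. You have merely written out the formal bookkeeping (the inductive kernel/image computations for the iterated Cartier operators) that the paper omits, and those details check out.
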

\begin{lem}\label{lem:LWC-4}
  For every $m \ge 1$, the canonical map $W_m\Omega^q_{A_\log} \to W_m\Omega^q_{A_\pi}$ is
  injective.
\end{lem}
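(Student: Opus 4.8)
The plan is to reduce the statement to the case where $A$ is a smooth finitely generated $\F_p$-algebra, where it is part of the classical structure theory of the logarithmic de Rham--Witt complex, and then pass to the limit. Note that since the log structure of $A_\log$ restricts to the trivial one on the open locus $\Spec(A_\pi)$, the target $W_m\Omega^q_{A_\pi}$ is simply the ordinary de Rham--Witt complex of the regular ring $A_\pi$, and the map in question is the one induced by the open immersion of this locus.

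First I would set up the limit. Using \eqref{eqn:Log-smooth} and \thmref{thm:Popescu}, exactly as in the proof of \lemref{lem:Log-Cartier}, write $A_\log = \varinjlim_\lambda A(\lambda)_\log$ as a filtered colimit of log rings in which each $A(\lambda)$ is a smooth finite-type $\F_p^{[N]}$-algebra (hence smooth and of finite type over $\F_p$) with the log structure pulled back from $(\F_p^{[N]})_\log$, so that each $A(\lambda)_\log$ is log smooth of Cartier type over $\F_{p,\log}$. Put $A(\lambda)_\pi = A(\lambda)[\pi^{-1}]$, so that $A_\pi = \varinjlim_\lambda A(\lambda)_\pi$ since localization commutes with filtered colimits. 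Both the ordinary de Rham--Witt complex and the universal logarithmic de Rham--Witt complex commute with filtered colimits of (log) rings, as follows from their construction by universal properties (cf. \cite{Illusie}, \cite{HM-Annals}); hence $W_m\Omega^q_{A_\log} = \varinjlim_\lambda W_m\Omega^q_{A(\lambda)_\log}$ and $W_m\Omega^q_{A_\pi} = \varinjlim_\lambda W_m\Omega^q_{A(\lambda)_\pi}$, compatibly with the canonical maps. Since filtered colimits of abelian groups are exact, it suffices to prove that $W_m\Omega^q_{A(\lambda)_\log} \to W_m\Omega^q_{A(\lambda)_\pi}$ is injective for each $\lambda$.

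Next I would treat the smooth case. Fix $\lambda$ and set $B = A(\lambda)$. Then $B_\pi = B[\pi^{-1}]$ is again smooth over $\F_p$ with trivial log structure, and its complement $V(\pi) = V(x_1\cdots x_r) \subset \Spec(B)$ is a simple normal crossing divisor: indeed $B$ is smooth over $\F_p^{[N]}$ and $V(X_1\cdots X_r) \subset \A^N_{\F_p}$ is a normal crossing divisor relative to $\F_p$, so each intersection $V(x_{i_1}) \cap \cdots \cap V(x_{i_s})$ is the flat pullback of a coordinate subspace and is therefore smooth over $\F_p$ of codimension $s$. For a log smooth scheme of Cartier type over a perfect field, the logarithmic de Rham--Witt complex coincides with the explicit complex of logarithmic de Rham--Witt forms with $\dlog$-poles along the log locus, and in particular it injects into the de Rham--Witt complex of the open complement of the log locus (\cite{Hyodo-Kato}, see also \cite{Lorenzon}). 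Applying this to $B_\log$ yields the injectivity of $W_m\Omega^q_{B_\log} \to W_m\Omega^q_{B_\pi}$, and the lemma follows.

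The only nontrivial ingredient is the classical fact invoked at the end, namely that the logarithmic de Rham--Witt complex of a log smooth scheme of Cartier type over a perfect field has no torsion supported on the log locus; everything else is the routine Popescu limiting argument used repeatedly in this section. One could avoid quoting that fact by arguing directly by induction on $m$, using a restriction short exact sequence $0 \to V^{m-1}\Omega^q_{A_\log} + dV^{m-1}\Omega^{q-1}_{A_\log} \to W_m\Omega^q_{A_\log} \xrightarrow{R} W_{m-1}\Omega^q_{A_\log} \to 0$ together with its counterpart over $A_\pi$ coming from \propref{prop:basics}(1), the base case being \lemref{lem:LWC-2}; one would then identify the left-hand terms by means of the logarithmic Cartier isomorphism of \corref{cor:Log-Cartier-0} and the descriptions in \corref{cor:Log-cartier-1} and \corref{cor:R-m-q}, and conclude with a diagram chase. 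However, establishing the logarithmic restriction sequence in this $F$-finite generality is essentially the work of the later chapters, so for this lemma the reduction to the smooth case is the economical route.
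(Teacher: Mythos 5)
Your reduction-to-smooth strategy has one load-bearing step that, as you have grounded it, does not hold up: the ``classical fact'' you invoke for the smooth case. In this paper $W_m\Omega^\bullet_{A_\log}$ is by definition the \emph{universal} log Witt complex in the sense of Hesselholt--Madsen (cf.\ the citation of \cite[Prop.~3.2.2]{HM-Annals}), whereas \cite{Hyodo-Kato} and \cite{Lorenzon} construct and study a log de Rham--Witt complex by crystalline methods. The statement you need for a smooth $\F_p$-algebra $B$ with normal crossing log structure --- that the \emph{universal} complex $W_m\Omega^q_{B_\log}$ coincides with the explicit complex of forms with $\dlog$-poles and hence injects into $W_m\Omega^q_{B_\pi}$ --- is not what those references prove; identifying the universal complex with the explicit one is precisely the content of \thmref{thm:Log-DRW}, whose proof in the paper runs through the very lemma you are proving. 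So, as written, the argument is circular within this framework (or at best rests on an unnamed comparison theorem between the universal and crystalline constructions in the smooth case, which you would have to supply or cite correctly). Your Popescu limit reduction itself is fine --- it is the same device the paper uses in \lemref{lem:Log-Cartier}, and the universal log Witt complex does commute with filtered colimits --- but it only transfers the problem to a smooth case that you have not actually established.

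Two further remarks. First, your reason for discarding the direct induction is mistaken: the restriction exact sequence $0 \to \Fil^r_V W_m\Omega^q_{A_\log} \to W_m\Omega^q_{A_\log} \to W_r\Omega^q_{A_\log} \to 0$ holds for the universal log Witt complex of an arbitrary log ring by \cite[Lem.~3.2.4]{HM-Annals}; it is not something deferred to the later chapters (those chapters concern the filtered complexes $\Fil_D W_m\Omega^q_U$, a different object). Second, this induction is in fact the paper's proof: the base case $m=1$ is \lemref{lem:LWC-2}; for the inductive step one compares the restriction sequences over $A_\log$ and $A_\pi$ and reduces to showing that the square relating $R^q_{m-1}(A_\log)$ and $R^q_{m-1}(A_\pi)$ is Cartesian, which is done using the logarithmic Cartier theory (\lemref{lem:Log-Cartier}, \corref{cor:Log-cartier-1}), \corref{cor:R-m-q}, and Lorenzon's sequence (1.15.3) transported by the same Popescu limit argument, followed by a diagram chase. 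If you can substantiate the smooth-case comparison independently, your route would be a legitimate and shorter alternative; without that, the gap is real.
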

\begin{proof}
  We shall prove the lemma by induction on $m$. The base case $m =1$ follows from
  \lemref{lem:LWC-2}. 
For $q, r \ge 0$ and $m \ge 1$, we let
$\Fil^r_V W_m\Omega^q_{A_\log} = V^r(W_{m-r}\Omega^q_{A_\log}) +
dV^r(W_{m-r}\Omega^{q-1}_{A_\log}) \subset W_m\Omega^q_{A_\log}$.
This yields the (decreasing) V-filtration on $W_m\Omega^q_{A_\log}$ as a
$W_m(A)$-module. Furthermore, we have an exact sequence
(cf. \cite[Lem.~3.2.4]{HM-Annals}) 
\begin{equation}\label{eqn:LWC-4-0}
  0 \to \Fil^r_V W_m\Omega^q_{A_\log} \to W_m\Omega^q_{A_\log}
  \xrightarrow{R^{m-r}} W_r\Omega^q_{A_\log} \to 0,
  \end{equation}
which is natural in $A_\log$.
We let $R^q_{r}(A_\log) = \Ker((V^{r}, dV^{r}): \Omega^q_{A_\log} \oplus
\Omega^{q-1}_{A_\log} \to W_{r+1}\Omega^q_{A_\log})$.
By comparing the above exact sequence with the analogous exact sequence for $A_\pi$
and for $m \ge 2, r =m-1$, and using induction on $m$, it suffices to show that
the square
\begin{equation}\label{eqn:LWC-4-1}
  \xymatrix@C1pc{
    R^q_{m-1}(A_\log) \ar[r] \ar[d] & \Omega^q_{A_\log} \oplus \Omega^{q-1}_{A_\log}
    \ar[d] \\
    R^q_{m-1}(A_{\pi}) \ar[r]  & \Omega^q_{A_\pi} \oplus \Omega^{q-1}_{A_\pi}}
  \end{equation}
is Cartesian.

Suppose now that there is an element $\alpha \in
(\Omega^q_{A_\log} \oplus \Omega^{q-1}_{A_\log}) \bigcap R^q_{m-1}(A_{\pi})
\subset (\Omega^q_{A_\pi} \oplus \Omega^{q-1}_{A_\pi})$. Then
$\alpha \in  B_{m}\Omega^q_{A_\pi} \oplus Z_{m-1}\Omega^{q-1}_{A_\pi}$
by \corref{cor:R-m-q}. We write $\alpha = (\alpha_0, \alpha_1)$ with
$\alpha_0 \in B_{m}\Omega^q_{A_\pi} \bigcap \Omega^q_{A_\log}$ and
$\alpha_1 \in  Z_{m-1}\Omega^{q-1}_{A_\pi} \bigcap \Omega^{q-1}_{A_\log}$.
We claim that $\alpha \in B_{m}\Omega^1_{A_\log} \oplus Z_{m-1}\Omega^1_{A_\log}$.

To prove the claim, we first note that $B_1\Omega^q_{A_\log} \subset \Omega^q_{A_\log} \inj
\Omega^q_{A_\pi}$  by \lemref{lem:LWC-2}, and this implies that
the map $B_1\Omega^q_{A_\log} \to B_1\Omega^q_{A_\pi}$ is injective. 
We next note that as $\Omega^{\bullet}_{A_\log} \inj \Omega^{\bullet}_{A_\pi}$,
our assumption already implies that
$\alpha \in Z_1\Omega^q_{A_\log} \oplus Z_{1}\Omega^{q-1}_{A_\log}$.
Using \eqref{eqn:cartier-1.1},  \eqref{eqn:log-cartier-1.1} and induction on $m$, we conclude that
$\alpha_1 \in Z_{m-1}\Omega^1_{A_\log}$. Since $\alpha_0 \in B_{m}\Omega^q_{A_\pi}
\cap \Omega^q_{A_\log} \subset Z_{i}\Omega^q_{A_\pi}$
for every $i \ge 1$, the same argument shows that
$\alpha_0 \in Z_m\Omega^q_{A_\log}$. Using the commutative diagram
of short exact sequences
\begin{equation}\label{eqn:LWC-4-2}
  \xymatrix@C.8pc{
    0 \ar[r] & B_{m}\Omega^q_{A_\log} \ar[r] \ar[d] & Z_m\Omega^q_{A_\log} \ar[r] \ar[d] &
    H^q_\dR(A_\log) \ar[r] \ar@{^{(}->}[d] & 0 \\
     0 \ar[r] & B_{m}\Omega^q_{A_\pi} \ar[r] & Z_m\Omega^q_{A_\pi} \ar[r] &
    H^q_\dR(A_\pi) \ar[r] & 0,}
\end{equation}
we conclude that $\alpha_0 \in B_{m}\Omega^q_{A_\log}$. This proves the claim.

We now use \cite[(1.15.3)]{Lorenzon} (with a limit argument as in \lemref{lem:Log-Cartier}) and \corref{cor:R-m-q}, which give us a commutative diagram
of short exact sequences
\begin{equation}\label{eqn:LWC-4-3}
  \xymatrix@C.8pc{
    0 \ar[r] & R^q_{m-1}(A_\log) \ar[r] \ar[d] &
    B_{m}\Omega^1_{A_\log} \oplus Z_{m-1}\Omega^1_{A_\log} \ar[r] \ar[d] & B_1\Omega^q_{A_\log}
    \ar[r] \ar[d] & 0 \\
   0 \ar[r] & R^q_{m-1}(A_\pi) \ar[r] &
    B_{m}\Omega^1_{A_\pi} \oplus Z_{m-1}\Omega^1_{A_\pi} \ar[r] & B_1\Omega^q_{A_\pi}
    \ar[r] & 0.}
  \end{equation}
Since the right vertical arrow is injective, we conclude from the above claim 
and a diagram chase that $\alpha \in R^q_{m-1}(A_\log)$. This proves that
~\eqref{eqn:LWC-4-1} is Cartesian and completes the proof of the lemma.
\end{proof}

We can now prove the main result of this section.

\begin{thm}\label{thm:Log-DRW}
Let $X$ be a regular $F$-finite $\F_p$-scheme and let 
$E \subset X$ be a simple normal crossing divisor. Then the canonical strict map
of log Witt complexes $\theta_X \colon W_\textbf{.}\Omega^\bullet_{X_\log} \to
W_\textbf{.}\Omega^\bullet(\log E)$ is an isomorphism.
\end{thm}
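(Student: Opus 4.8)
The plan is to check that $\theta_X$ is an isomorphism stalkwise, reduce to the case of an $F$-finite regular local $\F_p$-algebra, and then play the abstract universal property of the log de Rham--Witt complex off against the explicit generators of $W_\bullet\Omega^\bullet_X(\log E)$. Being an isomorphism of sheaves may be checked on stalks, and since the log de Rham--Witt complex commutes with filtered colimits of log rings, the stalk of $W_m\Omega^q_{X_\log}$ at a point $x$ is $W_m\Omega^q_{(\sO_{X,x})_\log}$, while the stalk of $W_m\Omega^q_X(\log E)$ at $x$ is $W_m\Omega^q_{\sO_{X,x}}(\log\pi)$ by the very definition of that sheaf; here the SNCD hypothesis guarantees that the components of $E$ passing through $x$ form part of a regular system of parameters $x_1,\dots,x_N$ of $A := \sO_{X,x}$ and $\pi = x_1\cdots x_r$. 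Thus it suffices to prove that $\theta_A \colon W_m\Omega^q_{A_\log} \to W_m\Omega^q_A(\log\pi)$ is an isomorphism for all $m,q$, and in this setting \lemref{lem:LWC-2}, \lemref{lem:LWC-4} and the results of \secref{sec:de-rham-witt} are all available since $A$ is a regular local $F$-finite $\F_p$-algebra.

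For injectivity I would invoke \lemref{lem:LWC-4}: the composite of $\theta_A$ with the defining inclusion, $W_m\Omega^q_{A_\log} \xrightarrow{\theta_A} W_m\Omega^q_A(\log\pi) \inj W_m\Omega^q_{A_\pi}$, is — by the universal property used to construct $\theta_X$ in \lemref{lem:LWC-1}, applied to the sub-log-Witt-complex $W_\bullet\Omega^\bullet_X(\log E) \subset j_*W_\bullet\Omega^\bullet_U$ — exactly the canonical localization map $W_m\Omega^q_{A_\log} \to W_m\Omega^q_{A_\pi}$, which \lemref{lem:LWC-4} asserts is injective. Hence $\theta_A$ is injective.

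For surjectivity I would show that the image of $\theta_A$, regarded as a graded subring of $W_m\Omega^\bullet_{A_\pi}$ (it is a subring because $\theta_A$ is a morphism of differential graded rings), contains a generating set of $W_m\Omega^q_A(\log\pi)$. Since $\theta_X$ is a \emph{strict} map of log Witt complexes it commutes with $\dlog$, so $\dlog[x_i]_m$ lies in $\mathrm{Im}(\theta_A)$ for each $i \le r$. Moreover the canonical map $W_m\Omega^\bullet_A \to W_m\Omega^\bullet_{A_\pi}$ factors through $W_m\Omega^\bullet_{A_\log}$, being induced by the composite of log ring morphisms $A_\trv \to A_\log \to (A_\pi)_\trv$; hence the image of $W_m\Omega^\bullet_A$ in $W_m\Omega^\bullet_{A_\pi}$ is contained in $\mathrm{Im}(\theta_A)$. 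Now $W_m\Omega^q_A(\log\pi)$ is by definition the sum, over ordered subsets $J = \{i_1 < \cdots < i_j\} \subseteq \{1,\dots,r\}$, of the $W_m(A)$-submodules $W_m\Omega^{q-j}_A\,\dlog[x_{i_1}]_m\wedge\cdots\wedge\dlog[x_{i_j}]_m$ of $W_m\Omega^q_{A_\pi}$; each of these is a product of elements of $\mathrm{Im}(\theta_A)$, so $W_m\Omega^q_A(\log\pi) \subseteq \mathrm{Im}(\theta_A)$. Together with the obvious reverse inclusion this gives surjectivity, hence $\theta_A$ is an isomorphism, and sheafifying back over $X$ concludes the proof.

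The genuine technical content sits in the lemmas that may now be invoked — the base case $m=1$ (\lemref{lem:LWC-2}) and the injectivity statement \lemref{lem:LWC-4}, whose proof runs by induction on $m$ through the Cartier operator, \corref{cor:R-m-q} and the $V$-filtration. Given those, the argument above is essentially formal; the only points demanding care are the identification of $j_*\circ\theta_X$ with the canonical map to $j_*W_\bullet\Omega^\bullet_U$ (so that \lemref{lem:LWC-4} genuinely applies) and the assertion that both complexes have the expected stalks — immediate for the explicit complex, and for the abstract one a consequence of the compatibility of the log de Rham--Witt complex with filtered colimits of log rings.
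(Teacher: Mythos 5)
Your argument is correct and follows essentially the same route as the paper: reduce to the stalk of an $F$-finite regular local ring, use \lemref{lem:LWC-4} to see that $W_m\Omega^\bullet_{A_\log}$ injects into $W_m\Omega^\bullet_{A_\pi}$ compatibly with $\theta_A$, and observe that $W_m\Omega^\bullet_A(\log\pi)$ is generated inside $W_m\Omega^\bullet_{A_\pi}$ by the image of $W_m\Omega^\bullet_A$ and the $\dlog[x_i]_m$, all of which lie in the image of the log de Rham--Witt complex. The only cosmetic difference is that you verify injectivity and surjectivity of $\theta_A$ directly, while the paper packages the same facts as an inclusion $\psi_A\colon W_m\Omega^\bullet_A(\log\pi)\to W_m\Omega^\bullet_{A_\log}$ and concludes via the universal property that $\psi_A$ and $\theta_A$ are mutually inverse.
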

\begin{proof}
  We can assume $X = \Spec(A)$ with $A$ as in \lemref{lem:LWC-2}.
  We now look at the commutative diagram of log Witt complexes over $A_\log$:
  \begin{equation}\label{eqn:Log-DRW-0}
    \xymatrix@C1pc{
      W_m\Omega^\bullet_A(\log\pi) \ar@{.>}[rr] \ar@{^{(}->}[dr] & &
      W_m\Omega^\bullet_{A_\log} \ar@{^{(}->}[dl] \\
      & W_m\Omega^\bullet_{A_\pi}. &}
    \end{equation}

  The diagonal arrow on the right is injective by \lemref{lem:LWC-4}.
  It is clear from the definition of $W_m\Omega^\bullet_A(\log\pi)$ that it lies in
  $W_m\Omega^\bullet_{A_\log}$ as a log Witt subcomplex of $W_m\Omega^\bullet_{A_\pi}$.
  In particular, the inclusion
  $W_m\Omega^\bullet_A(\log\pi) \inj W_m\Omega^\bullet_{A_\pi}$ uniquely factors through
  an inclusion of log Witt complexes
  $\psi_A \colon W_m\Omega^\bullet_A(\log\pi) \to W_m\Omega^\bullet_{A_\log}$.
  Since $W_m\Omega^\bullet_{A_\log}$ is the universal log Witt-complex over $A_\log$,
  this forces $\psi_A$ and $\theta_A$ to be isomorphisms and inverses of each other.
\end{proof}

\begin{cor}\label{cor:Log-DRW-0}
  Let $X$ be as in \thmref{thm:Log-DRW}. Then for every $q, r,s \ge 0$ with
  $r+s = m \ge 1$, there is a short exact sequence
  of $W_m\sO_X$-modules
  \begin{equation}\label{eqn:Log-DRW-0-0}
    0 \to V^r(W_{s}\Omega^q_{X}(\log E)) + dV^r(W_{s}\Omega^{q-1}_X(\log E)) \to
    W_m\Omega^q_X(\log E) \xrightarrow{R^{s}} W_r\Omega^q_X(\log E) \to 0.
  \end{equation}
\end{cor}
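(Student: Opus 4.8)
The plan is to reduce to the affine local case and there translate the statement, via Theorem~\ref{thm:Log-DRW}, into the $V$-filtration exact sequence for the log de Rham--Witt complex $W_\bullet\Omega^\bullet_{A_\log}$ that was already invoked in the proof of Lemma~\ref{lem:LWC-4}. All terms and arrows in \eqref{eqn:Log-DRW-0-0} are Zariski sheaves, resp.\ maps of Zariski sheaves, on $X$, so exactness may be checked on stalks. At a point $x \in X$ the local ring $A := \sO_{X,x}$ is a regular local $F$-finite $\F_p$-algebra (localizations of $F$-finite Noetherian rings are again $F$-finite), hence is of the type considered in Lemma~\ref{lem:LWC-2}, and by definition of $W_m\Omega^q_X(\log E)$ its stalk at $x$ is $W_m\Omega^q_A(\log\pi)$, where $\pi$ is a local equation of $E$ at $x$. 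Since the stalk functor is exact and commutes with forming images and sums of subsheaves, and since $d$, $V^r$, $dV^r$ and $R^s$ on $W_\bullet\Omega^\bullet_X(\log E)$ are induced from the corresponding operators on $j_*W_\bullet\Omega^\bullet_U$ (hence are compatible with localization), the stalk at $x$ of \eqref{eqn:Log-DRW-0-0} is exactly the sequence
\[
0 \to V^r(W_{s}\Omega^q_A(\log\pi)) + dV^r(W_{s}\Omega^{q-1}_A(\log\pi)) \to
W_m\Omega^q_A(\log\pi) \xrightarrow{R^{s}} W_r\Omega^q_A(\log\pi) \to 0,
\]
and it suffices to prove this sequence is exact.

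By Theorem~\ref{thm:Log-DRW}, the canonical strict map $\theta_A \colon W_\bullet\Omega^\bullet_{A_\log} \xrightarrow{\cong} W_\bullet\Omega^\bullet_A(\log\pi)$ is an isomorphism of log Witt complexes, so it intertwines $d$, $F$, $V$ and $R$; in particular it identifies $V^r(W_{s}\Omega^q_{A_\log}) + dV^r(W_{s}\Omega^{q-1}_{A_\log})$ with $V^r(W_{s}\Omega^q_A(\log\pi)) + dV^r(W_{s}\Omega^{q-1}_A(\log\pi))$ and is compatible with $R^s$. Under this identification the displayed sequence is carried to the sequence \eqref{eqn:LWC-4-0} (with $m-r = s$), namely
\[
0 \to \Fil^r_V W_m\Omega^q_{A_\log} \to W_m\Omega^q_{A_\log} \xrightarrow{R^{s}} W_r\Omega^q_{A_\log} \to 0,
\]
which is exact and natural in $A_\log$ by \cite[Lem.~3.2.4]{HM-Annals}. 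This gives the corollary.

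The argument involves no serious obstacle: the genuinely new ingredient, that $\theta_A$ is an isomorphism for regular local $F$-finite $A$, is supplied by Theorem~\ref{thm:Log-DRW}, while the $V$-filtration exact sequence for $W_\bullet\Omega^\bullet_{A_\log}$ is a standard structural fact. The only point needing attention is the bookkeeping in the first step, i.e.\ checking that the formation of $W_m\Omega^q_X(\log E)$ and the operators $V^r$, $dV^r$, $R^s$ all commute with passage to stalks, so that the stalkwise sequence matches \eqref{eqn:Log-DRW-0-0}. If one prefers not to cite \cite{HM-Annals} for \eqref{eqn:LWC-4-0}, one can instead re-derive it over $A_\log$ by the limit method of Lemma~\ref{lem:Log-Cartier}: write $A_\log$ as a filtered colimit of log-smooth $\F_{p,\log}$-algebras of Cartier type, note that the log de Rham--Witt complex commutes with such colimits, and invoke the $V$-filtration sequence in the log-smooth case.
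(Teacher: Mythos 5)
Your proposal is correct and follows essentially the same route as the paper, whose proof is literally "combine \cite[\S~3]{HM-Annals} (the $V$-filtration exact sequence, i.e.\ \eqref{eqn:LWC-4-0}) with \thmref{thm:Log-DRW}"; you have simply made explicit the reduction to stalks over the regular local $F$-finite ring $\sO_{X,x}$ and the transport of the operators under $\theta_A$, which the paper leaves implicit.
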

\begin{proof}
  Combine \cite[\S~3]{HM-Annals} and \thmref{thm:Log-DRW}.
  \end{proof}

\begin{cor}\label{cor:LWC-0-1}
    Let $S$ be a regular $F$-finite $\F_p$ algebra. Let $A=S[[X]]]$ (where $X$ is a variable).
    Then we have a canonical isomorphism of
  $W_m(S)$-modules 
  $$W_m\Omega^q_A \oplus
  W_m\Omega^{q-1}_{S} \dlog[X]_m \xrightarrow{\cong} W_m\Omega^q_A(\log X)$$ for every $q \ge 0,m\ge 1$
\end{cor}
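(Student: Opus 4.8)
The plan is to prove that the natural $W_m(S)$-linear map
$\phi\colon W_m\Omega^q_A \oplus W_m\Omega^{q-1}_S\,\dlog[X]_m \longrightarrow W_m\Omega^q_A(\log X)$, $(w_0,\,c\,\dlog[X]_m)\mapsto w_0+c\,\dlog[X]_m$, is an isomorphism, all groups being viewed inside $W_m\Omega^q_{A_X}$ ($A_X=A[X^{-1}]$) via the canonical injections — the injectivity of $W_m\Omega^q_A\hookrightarrow W_m\Omega^q_{A_X}$ being standard, and $W_m\Omega^{q-1}_S\hookrightarrow W_m\Omega^{q-1}_A$ being split by the retraction $X\mapsto 0$. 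Recall that by definition $W_m\Omega^q_A(\log X)=W_m\Omega^q_A+W_m\Omega^{q-1}_A\,\dlog[X]_m$ inside $W_m\Omega^q_{A_X}$. This is precisely the $r=1$ instance of the structure of $W_m\Omega^\bullet_X(\log E)$ over a power series ring, so the argument runs parallel to that one.

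The key input is the Geisser--Hesselholt structure theorem (\cite{Geisser-Hesselholt-Top}), applicable because $S$, being regular, is a Noetherian $F$-finite $\F_p$-algebra: every element of $W_m\Omega^q_A$ has a \emph{unique} expansion $w_1+w_2+w_3+w_4$ with $w_i$ in the standard summands $S_i$, spanned respectively by the $a[X]^i_m$, the $b[X]^i_m d[X]_m$ ($i\ge 0$), the $V^s(a[X]^j_{m-s})$ and the $dV^s(b[X]^j_{m-s})$ ($1\le s\le m-1$, $p\nmid j$); the analogue for $W_m\Omega^\bullet_A(\log X)$, which by \thmref{thm:Log-DRW} is $W_m\Omega^\bullet_{A_\log}$, replaces $S_2$ by $S_2^{\log}=W_m\Omega^{q-1}_S\,\dlog[X]_m\oplus S_2$, i.e. allows the extra monomial $[X]^{-1}_m d[X]_m=\dlog[X]_m$ with constant coefficient. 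Granting this, the Corollary is immediate; to establish the log expansion I would verify directly that $\phi$ is bijective. For surjectivity, given $\omega\,\dlog[X]_m$ with $\omega\in W_m\Omega^{q-1}_A$, expand $\omega=\omega_1+\cdots+\omega_4$ and compute each $\omega_k\wedge\dlog[X]_m$ using $\dlog[X]_m=[X]^{-1}_m d[X]_m$, $d\,\dlog[X]_m=0$, $F^s(\dlog[X]_m)=\dlog[X]_{m-s}$, $d[X]_m\wedge d[X]_m=0$, and $V^s(y)\wedge\dlog[X]_m=V^s(y\wedge\dlog[X]_{m-s})$: every resulting monomial then carries a factor $[X]^j_m d[X]_m$ with $j\ge 0$ (no pole) and so lies in $W_m\Omega^q_A$, except the constant term $a_0\,\dlog[X]_m$ of $\omega_1\wedge\dlog[X]_m$, with $a_0\in W_m\Omega^{q-1}_S$; hence $W_m\Omega^{q-1}_A\,\dlog[X]_m\subseteq W_m\Omega^q_A+W_m\Omega^{q-1}_S\,\dlog[X]_m$. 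For injectivity, if $w_0+c\,\dlog[X]_m=0$ with $w_0\in W_m\Omega^q_A$, $c\in W_m\Omega^{q-1}_S$, multiply by $[X]_m$ to get $c\,d[X]_m=-[X]_m w_0$ inside $W_m\Omega^q_A$; expanding $[X]_m w_0$ via Geisser--Hesselholt shows every monomial that occurs carries $[X]^i_m$ with $i\ge 1$, so the $S_2$-component of $[X]_m w_0$ has vanishing constant term, whereas $-c\,d[X]_m$ is purely such a constant-coefficient $d[X]_m$-term; uniqueness of the expansion forces $c\,d[X]_m=0$, hence $c=0$ and then $w_0=0$.

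The one genuinely nontrivial ingredient is the uniqueness in the Geisser--Hesselholt expansion, which records the $X$-adic information separating $W_m\Omega^q_A$ from $W_m\Omega^{q-1}_S\,\dlog[X]_m$ inside $W_m\Omega^q_{A_X}$; the auxiliary facts used are purity $W_m\Omega^q_A(\log X)\hookrightarrow W_m\Omega^q_{A_X}$ (\lemref{lem:LWC-4}) and the dictionary of \thmref{thm:Log-DRW}. If one prefers to avoid the structure theorem, the case $m=1$ follows from \lemref{lem:LWC-2} together with $\Omega^1_A(\log X)\cong\Omega^1_A\oplus S\,\dlog X$ (since $\Omega^1_A$ is finite free over $A$, splitting off $A\,dX$, and $\Omega^1_A(\log X)/\Omega^1_A\cong(A/XA)\,\dlog X$) and taking exterior powers, and one can try to induct on $m$ by comparing the $R$-exact sequences of \propref{prop:basics}(1) (for $W_m\Omega^q_A$ and $W_m\Omega^{q-1}_S$) and of \corref{cor:Log-DRW-0} (for $W_m\Omega^q_A(\log X)$) through $\phi$; the identities $V^{m-1}(\eta\,\dlog X)=V^{m-1}(\eta)\,\dlog[X]_m$ and $dV^{m-1}(\eta\,\dlog X)=dV^{m-1}(\eta)\,\dlog[X]_m$ make the induced map on the kernels of $R$ surjective, so $\phi$ is onto once it is so at level $m-1$. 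The injectivity at level $m$, however, does not reduce cleanly to level $m-1$ by this diagram chase (the obstruction is exactly the $X$-adic separation), and I expect this injectivity to be the main obstacle — which is why the structure-theorem route, where the uniqueness of the $X$-expansion disposes of it directly, is the cleanest.
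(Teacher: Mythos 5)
Your proof is correct, but it takes a genuinely different route from the paper's. The paper's argument is formal: it observes that the family $\{W_m\Omega^q_A \oplus W_m\Omega^{q-1}_S\,\dlog[X]_m\}_{m,q}$ is itself a log Witt complex over $A_\log$, equipped with a canonical map to $W_m\Omega^q_A(\log X)$ compatible with the two monoid maps on $M_A=\{uX^n\}$, and then concludes from \thmref{thm:Log-DRW} (identifying $W_m\Omega^\bullet_A(\log X)$ with the universal log Witt complex $W_m\Omega^\bullet_{A_\log}$) together with the universal property. You instead prove bijectivity of the canonical map directly from the Geisser--Hesselholt expansion of $W_m\Omega^q_{S[[X]]}$ (\thmref{thm:GH-Top}, i.e.\ \cite[Thm.~B]{Geisser-Hesselholt-Top}), getting surjectivity by the term-by-term computation with $\dlog[X]_m$ and injectivity from uniqueness of the expansion after multiplying by $[X]_m$. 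The computational content is comparable --- verifying the paper's ``is already a log Witt complex'' claim requires essentially your surjectivity computation (stability of the direct sum under multiplication by $W_m(A)$ and under $d,F,V$ inside $W_m\Omega^\bullet_{A_X}$) --- but the paper's route is more structural and shorter, whereas yours bypasses \thmref{thm:Log-DRW} entirely, relies only on the external Geisser--Hesselholt theorem (so there is no circularity with \corref{cor:GH-2}, which later uses the present corollary), and yields as a by-product exactly the $X$-adic expansion of $W_m\Omega^q_A(\log X)$ that Chapter 3 extracts from it. Two points you should make explicit in a write-up, though neither is a real gap: the rearrangements of infinite series need the continuity and completeness of the relevant groups for the $W_m((X))$-adic topology (as recorded in \S~\ref{sec:Complete-local} via \cite[Cor.~2.3]{Geisser-Hesselholt-Top}), and your injectivity step uses the injection $W_m\Omega^q_A \inj W_m\Omega^q_{A_X}$, which for the regular ring $A=S[[X]]$ follows from \cite[Prop.~2.8]{KP-Comp} as recalled in \S~\ref{sec:LogDR}.
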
  

\begin{proof}
    Note that $\{ W_m\Omega^q_A \oplus
  W_m\Omega^{q-1}_{S} \dlog[X]_m \}_{m,q}$ is already a log de Rham-Witt complex over $A_\log$ and there is a canonical map $ W_m\Omega^q_A \oplus
  W_m\Omega^{q-1}_{S} \dlog[X]_m \to W_m\Omega^q_A(\log X)$ which is compatible with the following monoid maps.
  $$\alpha_m: M_A \to W_m\Omega^1_A(\log X) ; \hspace{1.5cm} \alpha'_m: M_A \to W_m\Omega^1_A \oplus
  W_m(S) \dlog[X]_m,$$
  where $M_A=\{uX^n : u \in A^\times, n \in \Z\}$ and $\alpha'_m(uX^n)=(\dlog[u]_m, n\dlog[X]_m)$. Now corollary follows from \thmref{thm:Log-DRW} and the universal property of $W_m\Omega^\bullet_{A_\log}$. 

\end{proof}

\cleardoublepage

\chapter{The filtered de Rham-Witt complex}\label{chap:FilDR}
Let $X$ be a connected
Noetherian regular $F$-finite $\F_p$-scheme of Krull dimension $N$.
Let $\Div(X)$ denote the group of Weil divisors on $X$.
For any $D  = \stackrel{r}{\underset{i =1}\sum} n_i D_i \in \Div(X)$, we let $|D|$
denote the support of $D$ and
let $D/p = \stackrel{r}{\underset{i =1}\sum} \lfloor{{n_i}/p}\rfloor D_i$.
For any $\F_p$-scheme $Y$ and integer $m \ge 1$, we let $W_m(Y) = W_m(H^0_\et(Y, \sO_Y))$.
The elements of $W_m(Y)$ will be denoted by $m$-tuples
$\un{a} = (a_{m-1}, \dots , a_0)$. In this chapter, we shall define the filtered de Rham-Witt complex of $X$ and prove some basic properties.

\section{Definition and properties}\label{sec:Defn+prop}

\begin{defn}\label{defn:Log-fil-0}
  We let $D = \stackrel{r}{\underset{i =1}\sum} n_i D_i \in \Div(X)$ and let
  $j \colon U \inj X$ be the inclusion of the open complement of $D$.
  For a morphism $u \colon \wt{X} \to X$ of schemes, we let $\wt{U} = U \times_X \wt{X}$ and
      $\wt{D} = D\times_X \wt{X}$. We let
      \[
        \Fil_D W_m(\wt{U}) = \{\underline{a}=(a_{m-1},\ldots, a_0) \in W_m(\wt{U}): 
        a_i^{p^i} \in H^0_\et(\wt{X}, \sO_{\wt{X}}(\wt{D})) \ \forall \ i\}.
      \]
      We let $\Fil_D W_m \sO_U$ be the presheaf on $\Sch_X$ such that
      $\Fil_D W_m \sO_U(\wt{X}) = \Fil_D W_m(\wt{U})$.
\end{defn}

One easily checks that $\Fil_D\sO_U=\sO_X(D)$ and $\Fil_D W_m \sO_U$ actually restricts
      to a sheaf on $X_\zar$ and we have the inclusions of {\'e}tale sheaves
      $\Fil_D W_m \sO_U \subset \Fil_{D'} W_m \sO_W \subset h_* W_m \sO_W$
      whenever $D \le D'$ and $h \colon W = X \setminus |D'| \inj X$ is the
      inclusion.
      Note also that the canonical map $j^*(\Fil_D W_m \sO_U) \to W_m \sO_U$ is an
      isomorphism.
If $P \in X$ is a point and $x_i$ is a generator of the ideal of
$D_i$ in $\sO_{X,P}$, then one checks that $a_i^{p^i} \in (\sO_{X}(D))_P=\sO_{X,P}.\prod_j
          x^{-n_j}_j$ iff $a_i^{p^i} \prod_j
          x^{n_j}_j \in \sO_{X,P}$. Hence, we obtain 
\begin{equation}\label{eqn:Log-fil-1}
  (\Fil_D W_m \sO_U)_P
          =
          \{\un{a} \in W_m(\sO_{X,P}[\pi^{-1}]): a_i^{p^i} \prod_j
          x^{n_j}_j \in \sO_{X,P} \ \forall \ i\},
\end{equation}
where $\pi = \prod_i x_i$.

If $X = \Spec(A)$, where $A$ is a regular local ring and $\{x_1, \ldots , x_r\}$ are
generators of the ideals of the irreducible components of $D$, we shall write
$\Fil_D W_m \sO_U$ as $\Fil_I W_m(A_\pi)$ or $\Fil_{\un{n}} W_m(A_\pi)$ or $\Fil_I W_m(A_\pi)$,
where $\pi = \prod_i x_i$ and
$I \subset Q(A)$ is the invertible ideal $(x)$, where $x = \prod_i x^{n_i}_i$ and $\un{n}=(n_1,\ldots,n_r) \in \Z^r$.
We shall write $\sO_X(D)$ as $A{x^{-1}}$.

\begin{lem}\label{lem:Log-fil-1}
    $\Fil_DW_m\sO_U$ is a sheaf of abelian groups. Moreover, we have an exact sequence
   \begin{equation}\label{eqn:Log-fil-1.1}
       0 \to \Fil_D\sO_U \xrightarrow{V^{m-1}} \Fil_DW_m\sO_U \xrightarrow{R} \Fil_{D/p}W_{m-1}\sO_U\to 0.
   \end{equation}
\end{lem}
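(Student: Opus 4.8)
The plan is to reduce everything to the local description \eqref{eqn:Log-fil-1} and then mimic, with the extra integrality conditions, the classical argument establishing Proposition~\ref{prop:basics}(1) in the case $n=m-1$. First I would check that $\Fil_D W_m\sO_U$ is closed under the Witt-vector addition. Since addition in $W_m$ is given by universal polynomials with integer coefficients in the Witt components, and the $p^i$-th power map interacts with these polynomials in the controlled way recorded just before \eqref{eqn:Log-fil-1}, it suffices to work stalkwise: given $\un a,\un b$ with $a_i^{p^i}\prod_j x_j^{n_j}, b_i^{p^i}\prod_j x_j^{n_j}\in\sO_{X,P}$, one shows the components $c_i$ of $\un a+\un b$ satisfy $c_i^{p^i}\prod_j x_j^{n_j}\in\sO_{X,P}$. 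This is the standard fact that the ghost/Witt addition preserves the property ``$a_i^{p^i}\in J$'' for a fixed $\sO_{X,P}$-submodule $J$ of $Q(A)$ of the form (fractional ideal); I would phrase it via the identity $c_i\equiv a_i+b_i \pmod{(a_0,b_0,\dots,a_{i-1},b_{i-1})}$ combined with the observation that products of the relevant fractional powers land in $\sO_X(D)$. (The inverse is immediate since $-\un a$ has components that are polynomials in the $a_j$ with the same integrality.)

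Next I would construct the exact sequence \eqref{eqn:Log-fil-1.1}. The maps $V^{m-1}$ and $R$ are the restrictions of the usual Verschiebung and restriction on $j_*W_m\sO_U$, so the content is: (i) $V^{m-1}$ lands in $\Fil_D W_m\sO_U$; (ii) $R$ lands in $\Fil_{D/p}W_{m-1}\sO_U$; (iii) the sequence is exact. For (i), $V^{m-1}(a) = (a,0,\dots,0)$ has top component $a$ with $a^{p^{m-1}}\in\sO_X(D)$ precisely when $a\in\sO_X(D)=\Fil_D\sO_U$ (here I use that $\sO_X(D)$ is, stalkwise, $\{a : a\prod_j x_j^{n_j}\in\sO_{X,P}\}$, and that $a^{p^{m-1}}\prod x_j^{n_j}\in\sO_{X,P}$ for $a$ a unit multiple of $\prod x_j^{-n_j}$-type element forces $a\prod x_j^{n_j}\in\sO_{X,P}$ because $\sO_{X,P}$ is normal — more precisely one uses that $v_{D_j}(a)\ge -n_j$ for all $j$ is equivalent to $v_{D_j}(a^{p^{m-1}})\ge -p^{m-1}n_j$, using the discrete valuations at the codimension-one points $D_j$). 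For (ii), if $\un a\in\Fil_DW_m\sO_U$ then $R\un a=(a_{m-2},\dots,a_0)$, and we need $a_i^{p^i}\in\sO_X(D/p)$, i.e. $v_{D_j}(a_i)\ge -\lfloor n_j/p\rfloor$ for each $j$; but the hypothesis gives $v_{D_j}(a_i^{p^i})\ge -n_j$, i.e. $v_{D_j}(a_i)\ge -n_j/p^i \ge -n_j/p$, and since $v_{D_j}(a_i)$ is an integer this yields $v_{D_j}(a_i)\ge\lceil -n_j/p\rceil = -\lfloor n_j/p\rfloor$. For exactness: injectivity of $V^{m-1}$ and $R\circ V^{m-1}=0$ are inherited from $j_*W_m\sO_U$; $\ker R\cap\Fil_DW_m\sO_U = V^{m-1}(j_*\sO_U)\cap\Fil_DW_m\sO_U$, and an element $V^{m-1}(a)$ there has $a^{p^{m-1}}\in\sO_X(D)$, hence $a\in\Fil_D\sO_U$ by the valuation argument above; surjectivity of $R$ onto $\Fil_{D/p}W_{m-1}\sO_U$ requires, given $\un b=(b_{m-2},\dots,b_0)$ with $b_i^{p^i}\in\sO_X(D/p)$, a lift $\un a=(a_{m-1},b_{m-2},\dots,b_0)$ with $a_{m-1}$ arbitrary in $\sO_X(D)$ — any choice (e.g.\ $a_{m-1}=0$) works once one checks $b_i^{p^i}\in\sO_X(D/p)\subseteq\sO_X(D)$, which holds since $\lfloor n_j/p\rfloor\le n_j$ for $n_j\ge 0$ and more generally $\lfloor n_j/p\rfloor \le n_j$ fails for negative $n_j$ — so here I should instead note that the integrality needed on the lower components is $b_i^{p^i}\in\sO_X(D)$, and verify $\lfloor n_j/p\rfloor\le n_j$, which indeed holds for all integers $n_j$ when $p\ge 2$ (since $\lfloor n_j/p\rfloor\le n_j/p\le n_j$ if $n_j\ge 0$, and $\lfloor n_j/p\rfloor\le n_j$ if $n_j<0$ because $n_j/p > n_j$). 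Thus any set-theoretic lift of the lower components already satisfies the weaker condition, and we are free to pick $a_{m-1}\in\sO_X(D)$.

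The main obstacle I anticipate is step (i)'s addition-closure and the surjectivity of $R$: both hinge on the interplay between the $p^i$-th power conditions defining the filtration and the Witt-addition polynomials, and on being careful that the condition on the $i$-th component involves $p^i$-th powers (so the ``room'' shrinks down the Witt tower in exactly the way matching $D/p$ after applying $R$). I would handle this cleanly by passing to the codimension-one points of $X$: since $X$ is regular, hence normal, $\sO_X(D)$ and $\sO_X(D/p)$ are determined by the divisorial valuations $v_{D_j}$, and the condition $\un a\in\Fil_DW_m\sO_U$ becomes simply $p^i\cdot v_{D_j}(a_i)\ge -n_j$ for all $i,j$; in these terms addition-closure follows from the ultrametric inequality for each $v_{D_j}$ applied to the Witt addition formula, and all the other verifications become the elementary floor/ceiling manipulations indicated above. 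I would also remark that this reduction is exactly the device used implicitly in deriving \eqref{eqn:Log-fil-1}, so no new ideas beyond bookkeeping are required.
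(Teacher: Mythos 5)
There is a genuine gap, and it comes from a mis-identification of which Witt component the exponent $p^i$ in the definition of $\Fil_DW_m\sO_U$ is attached to, relative to the maps $V^{m-1}$ and $R$. In the paper's convention $\un{a}=(a_{m-1},\ldots,a_0)$ means $\un{a}=\sum_{i}V^i([a_{m-1-i}])$, so $a_{m-1}$ is the Teichm\"uller-leading slot (condition $a_{m-1}^{p^{m-1}}\in\sO_X(D)$), the image of $V^{m-1}$ sits in the $a_0$-slot (condition $a_0\in\sO_X(D)$, exponent $p^0=1$), and $R$ deletes $a_0$ and re-indexes $b_i=a_{i+1}$; it is precisely this index shift that turns the condition $p^{i+1}v_{D_j}(a_{i+1})\ge -n_j$ into $p^i v_{D_j}(b_i)\ge -\lfloor n_j/p\rfloor$ and produces $D/p$. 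You instead place the image of $V^{m-1}$ in the slot carrying the $p^{m-1}$-power condition and assert that $a^{p^{m-1}}\in\sO_X(D)$ holds ``precisely when'' $a\in\sO_X(D)$; this is false as soon as some $n_j>0$ and $m\ge 2$ (e.g.\ $a=x_j^{-1}$, $n_j=1$: $a\in\sO_X(D)$ but $a^{p^{m-1}}\notin\sO_X(D)$ — your valuation identity proves the equivalence with $a^{p^{m-1}}\in\sO_X(p^{m-1}D)$, which is not the condition needed). Likewise your $R$ keeps the indices of $(a_{m-2},\ldots,a_0)$ unchanged, and the displayed ``i.e.'' ($a_i^{p^i}\in\sO_X(D/p)$ iff $v_{D_j}(a_i)\ge-\lfloor n_j/p\rfloor$) drops the factor $p^i$; with your bookkeeping the kernel of $R$ inside $\Fil_DW_m\sO_U$ would be governed by the condition $p^{m-1}v_{D_j}(a)\ge -n_j$, i.e.\ roughly $\Fil_{D/p^{m-1}}\sO_U$, so the sequence your computations would establish is not the one stated. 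The correct verifications, with the shift taken into account, are exactly the elementary floor computation in the paper ($p^{i+1}t_j+n_j\ge0\Leftrightarrow p^it_j+\lfloor n_j/p\rfloor\ge0$), so the fix is bookkeeping, but as written the key steps (i), (ii) and the kernel identification fail.

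Two further remarks. Your reduction to codimension-one points is a perfectly good (and in fact cleaner) device, but the addition-closure argument needs more than ``ultrametric plus $c_i\equiv a_i+b_i$ mod lower components'': the actual input is that the Witt addition polynomials are isobaric (weighted-homogeneous) when the $j$-th standard component is given weight $p^j$, which combined with the ultrametric inequality yields exactly $p^iv_{D_j}(c_i)\ge-n_j$; the congruence you quote does not control valuations in this weighted sense. The paper avoids this entirely by first proving $R$ is well defined and surjective and then deducing closure under subtraction by induction on $m$ along the sequence, so your valuation-theoretic route, once the weights and the indexing are stated correctly, is a genuinely different and somewhat more self-contained argument — but it must be carried out with the correct attachment of exponents to components.
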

\begin{proof}
    We may assume $X= \Spec A$ and following the same notation as above, let $D$ is given by the ideal $(x)$, where $x= \prod_i x^{n_i}_i$.
   We only need to show that the map $R$ is well-defined and surjective and $\Fil_DW_m\sO_U$ is a sheaf of abelian groups. Rest of the part follows from \eqref{eqn:Log-fil-1}.  We let $\un{a} \in \Fil_IW_{m}(A_\pi)$ and let
$I^{1/p}= (\prod_i x^{\lfloor{{n_i}/{p}}\rfloor}_i)$.
Then $R(\un{a}) = \un{b} = (b_{m-1}, \ldots , b_0)$, where $b_i = a_{i+1}$. We can write (because $A$ is UFD) $a_{i+1} = a'_{i+1}\prod_jx^{t_j}_j$, where $t_j \in \Z$ for every $j$
and $a'_{i+1} \in A$ is not divisible by any $x_j$.
By our hypothesis, $(a_{i+1})^{p^{i+1}} x\in A$. This implies, $p^{i+1}t_j+n_j \ge 0$ which is equivalent to $p^{i}t_j + \lfloor n_j/p \rfloor \ge 0$. So we get $(b_i)^{p^i}x' \in A$ for
$0 \le i \le m-1$, where $x' = \prod_j x^{\lfloor{{n_j}/{p}}\rfloor}_j$. This implies that
$\un{b} \in \Fil_{I^{1/p}}W_{m}(A_\pi)$. Hence $R$ is well defined. Also if $b =(b_{m-2},\ldots,b_0) \in \Fil_{I^{1/p}}W_{m}(A_\pi)$ then by reversing the above argument we get $(b_{m-2},\ldots,b_0,0) \in \Fil_IW_m(A_\pi)$. So $R$ is surjective and hence we win.  

Now we show $\Fil_DW_m\sO_U$ is a sheaf of abelian groups by induction on $m$. If $m=1$, then $\Fil_D\sO_U=\sO_X(D)$ is a sheaf of abelian group. If $m >1$, let $\un{a}=(a_{m-1}, \ldots,a_0) \text{ and }\un{a}'=(a'_{m-1},\ldots,a'_0) \text{ are in } \Fil_DW_m\sO_U$. Enough to show $\un{a}-\un{a}' \in \Fil_DW_m\sO_U$. Since $R$ is additive, by induction and the above argument we have $R(\un{a}-\un{a}') \in \Fil_{D/p}W_{m-1}\sO_U$. So, we can write $R(\un{a}-\un{a}')= (c_{m-2},\ldots,c_0)$ satisfying the relation \eqref{eqn:Log-fil-1}. Then the previous argument shows $(c_{m-2},\ldots,c_0,0) \in \Fil_DW_m\sO_U$. Now note that $\un{a}-\un{a}'= (c_{m-2},\ldots,c_0, a_0-a'_0)$ which is clearly in $\Fil_DW_m\sO_U$ (because, $a_0-a'_0 \in \Fil_D\sO_U$).
\end{proof}

\begin{lem}\label{lem:Log-fil-2}
For two Weil divisors $D$ and $D'$ with complements $U$ and $U'$, respectively, we have
    $\Fil_D W_m \sO_{U} + \Fil_{D'} W_m \sO_{U'} \subset \Fil_{D+D'} W_m \sO_{U\cap U'}$ and $\Fil_D W_m \sO_{U} \cdot \Fil_{D'} W_m \sO_{U'} \subset \Fil_{D+D'} W_m \sO_{U\cap U'}$ under the
  addition and multiplication of $W_m\sO_\eta$.
  In particular, $\Fil_D W_m \sO_U$ is a subsheaf of
    $W_m\sO_X$-modules inside $W_m\sO_\eta$.
\end{lem}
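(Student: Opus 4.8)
The plan is to reduce the two containments to an elementary computation with Witt components over a regular local ring, and then exploit the isobaric structure of the Witt polynomials.

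First I would pass to stalks: all of $\Fil_D W_m\sO_U$, $\Fil_{D'}W_m\sO_{U'}$ and $\Fil_{D+D'}W_m\sO_{U\cap U'}$ are Zariski subsheaves of $W_m\sO_\eta$, so the asserted inclusions may be checked stalkwise, and I may assume $X=\Spec(A)$ with $A$ regular local (hence factorial). Choose local equations $x_1,\dots,x_s$ for the height-one primes occurring in $|D|\cup|D'|$ and write $D=\sum_l n_l(x_l)$, $D'=\sum_l n'_l(x_l)$ (coefficients possibly zero), so that $D+D'=\sum_l (n_l+n'_l)(x_l)$; let $v_l$ be the discrete valuation of $A$ at $x_l$. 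By \eqref{eqn:Log-fil-1}, for a divisor $E=\sum_l e_l(x_l)$ the stalk of $\Fil_E W_m\sO_{X\setminus|E|}$ is the set of $\un a=(a_{m-1},\dots,a_0)\in W_m(Q(A))$ whose components have no poles outside $\{x_1,\dots,x_s\}$ and satisfy $p^i v_l(a_i)\ge -e_l$ for all $i$ and $l$. It is convenient to re-index the components of a Witt vector by their ghost level $k=m-1-i$, so that $a^{(0)}$ (the Teichm\"uller component) $=a_{m-1}$, $a^{(m-1)}=a_0$, and the restriction $R$ simply forgets $a^{(m-1)}$; the condition above then becomes $p^{m-1}v_l(a^{(k)})\ge -p^k e_l$ for all $k,l$.

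Next I would use the isobaricity of the Witt polynomials: the $n$-th addition polynomial $S_n$ and multiplication polynomial $P_n$ in $\Z[x_0,\dots,x_n;y_0,\dots,y_n]$, with $x_j$ and $y_j$ given weight $p^j$, are such that every monomial of $S_n$ has total weight $p^n$, while every monomial of $P_n$ is a product of an $x$-monomial of weight $p^n$ and a $y$-monomial of weight $p^n$. Let $\un a\in\Fil_DW_m\sO_U$, $\un b\in\Fil_{D'}W_m\sO_{U'}$. For the product $\un c=\un a\cdot\un b$, the level-$n$ component of $\un c$ is $P_n$ evaluated at the $a^{(j)},b^{(j)}$; a monomial $\prod_j (a^{(j)})^{e_j}(b^{(j)})^{f_j}$ occurring in it satisfies $\sum_j e_jp^j=p^n=\sum_j f_jp^j$, so its $v_l$-valuation is at least
\[
 \frac{1}{p^{m-1}}\Bigl(-n_l\sum_j e_jp^j-n'_l\sum_j f_jp^j\Bigr)=-\frac{(n_l+n'_l)p^n}{p^{m-1}},
\]
which is exactly the bound defining $\Fil_{D+D'}$ at level $n$; since the components of $\un c$ also have no poles outside $\{x_1,\dots,x_s\}$, this proves the multiplicative containment for arbitrary $D,D'$. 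The additive containment is proved the same way with $S_n$: a monomial in it has $\sum_j(e_j+f_j)p^j=p^n$, and for $D,D'\ge 0$ one gets $n_l\sum_j e_jp^j+n'_l\sum_j f_jp^j\le (n_l+n'_l)p^n$, so the level-$n$ component of $\un a+\un b$ again meets the defining bound of $\Fil_{D+D'}$. (Alternatively, one could induct on $m$ using the exact sequence \eqref{eqn:Log-fil-1.1}, following the proof of \lemref{lem:Log-fil-1}: apply $R$, use $D/p+D'/p\le (D+D')/p$ and the inductive hypothesis, and treat the bottom component separately via $V^{m-1}$.)

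Finally, the last assertion follows formally: $\Fil_DW_m\sO_U$ is a sheaf of abelian groups by \lemref{lem:Log-fil-1} and lies inside $W_m\sO_\eta$ by construction, and taking $D'=0$ in the multiplicative containment, together with $\Fil_0 W_m\sO_X=W_m\sO_X$, gives $W_m\sO_X\cdot\Fil_DW_m\sO_U\subseteq\Fil_DW_m\sO_U$, so $\Fil_DW_m\sO_U$ is a $W_m\sO_X$-submodule of $W_m\sO_\eta$. The only real work is in the two displayed containments, and I expect the additive one to be the main obstacle: unlike Witt multiplication, which respects the Teichm\"uller/ghost bookkeeping cleanly, Witt addition mixes lower components through Frobenius-twisted cross terms, so the isobaric weights have to be tracked carefully — and this is also precisely the step where effectivity of $D$ and $D'$ is used.
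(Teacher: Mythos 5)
Your proof is correct, and for the core computation it takes a genuinely different route from the paper's. The paper's argument writes $\un{a}=\sum_i V^i([a_{m-1-i}])$ and $\un{b}=\sum_j V^j([b_{m-1-j}])$, expands the product using the identity $V^i([u])\,V^j([v])=V^{i+j}([u^{p^j}v^{p^i}])$ (and $=0$ for $i+j\ge m$), checks the filtration condition term by term, and then concludes by invoking \lemref{lem:Log-fil-1} (closure of $\Fil_{D+D'}W_m\sO_{U\cap U'}$ under addition); the additive containment is handled there in one line by the observation $\Fil_D W_m\sO_U,\ \Fil_{D'}W_m\sO_{U'}\subset \Fil_{D+D'}W_m\sO_{U\cap U'}$. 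You instead work directly with the universal Witt addition and multiplication polynomials and their isobaric/bihomogeneous weight structure (which is a correct and standard fact, provable by the ghost-component induction you implicitly rely on), bounding the valuation of each monomial of each component. This treats addition and multiplication uniformly, and in fact your additive computation re-proves the nontrivial closure-under-addition part of \lemref{lem:Log-fil-1} rather than quoting it, so your argument is somewhat more self-contained, at the price of invoking the weight structure of $S_n,P_n$, which the paper's $V$-decomposition avoids. One point you handle more carefully than the paper: you restrict the additive containment to effective $D,D'$ and flag that this is where effectivity enters; this restriction is genuinely needed, since for $D=D'=-(x)$ and $m=1$ one has $\sO_X(-x)+\sO_X(-x)\not\subset\sO_X(-2x)$, and the paper's one-line argument tacitly uses the same hypothesis (the inclusions $\Fil_D\subset\Fil_{D+D'}$ and $\Fil_{D'}\subset\Fil_{D+D'}$ require $D'\ge 0$ and $D\ge 0$). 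Your multiplicative containment, like the paper's, is valid for arbitrary Weil divisors, and your deduction of the $W_m\sO_X$-module structure via $D'=0$ and $\Fil_0W_m\sO_X=W_m\sO_X$ coincides with the paper's.
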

\begin{proof}
  Since we can check this locally, we assume that $X = \Spec(A)$, where $A$ is
  a local ring. We let $\{x_1, \ldots , x_r\}$ and $\{y_1, \ldots , y_s\}$ be the
  generators of the ideals of the irreducible components of $D$ and $D'$, respectively.
  We let $x = \prod_i x^{m_i}_i$ and $y = {\prod}_j y^{n_j}_j$, where
  $D = \sum_i m_iD_i$ and $D' = \sum_j n_jD'_j$. Write $I = (x)$ and $I' = (y)$.
  Suppose $\un{a} \in \Fil_IW_m(A_x)$ and $\un{b} \in \Fil_{I'}W_m(A_y)$.
  We can write $\un{a} = \stackrel{m-1}{\underset{i =0}\sum}V^{i}([a_{m-1-i}])$ and
  $\un{b} = \stackrel{m-1}{\underset{i = 0}\sum}V^i([b_{m-1-i}])$.
  This yields  ${\un{a}}\cdot {\un{b}} =
  \stackrel{m-1}{\underset{i,j = 0}\sum}V^i([a_{m-1-i}])V^j([b_{m-1-j}])$.

  For $0 \le i,j \le m-1$, using the identity $xV(y)=V(Fx.y)$ we have 
  \begin{equation}\label{eqn:Log-fil-2-0}
    V^i([a_{m-1-i}])V^j([b_{m-1-j}])=\left\{ \begin{array}{ll}
                            0 & \mbox{if $i+j \ge m$} \\
                            V^{i+j}([a_{m-1-i}^{p^j}b_{m-1-j}^{p^i}]) & \mbox{if $i+j <m$}.
                          \end{array}
                        \right.
    \end{equation}

By ~\eqref{eqn:Log-fil-1}, the condition $\un{a} \in \Fil_IW_m(A_x)$ and
  $\un{b} \in \Fil_{I'} W_m(A_y)$ means that
$xa^{p^i}_i \in A$ and $yb^{p^i}_i \in A$ for each $i$.
From this, we get for $i+j<m$ that
$ (a_{m-1-i}^{p^j}b_{m-1-j}^{p^i})^{p^{m-1-i-j}} xy= a^{p^{m-1-i}}_{m-1-i}b^{p^{m-1-j}}_{m-1-j}xy\in A$.
This implies that  $V^i([a_{m-1-i}])V^j([b_{m-1-j}])\in \Fil_{II'}W_m(A_{xy})$. By \lemref{lem:Log-fil-1} we conclude that ${\un{a}}\cdot {\un{b}} \in \Fil_{II'}W_m(A_{xy})$. Also, the claim about the addition immediately follows from \lemref{lem:Log-fil-1}, observing that $\Fil_D W_m \sO_{U} \  (\text{and } \Fil_{D'} W_m \sO_{U'}) \subset \Fil_{D+D'} W_m \sO_{U\cap U'}$.  
This concludes the proof of the first part of the lemma.
The second part follows from the first by taking $D' = 0$ and noting that
$\Fil_0W_m\sO_X = W_m\sO_X$.
\end{proof}

\begin{notat}\label{not:twist}
    For $D$ as in Definition~\ref{defn:Log-fil-0}, let $W_m \sO_X(D)$ be the 
sheaf on $X_\et$ which is locally defined as
$W_m \sO_X(D)  = [x^{-1}]_m \cdot W_m \sO_X \subset j_* W_m\sO_U$,
where $[-]_m \colon j_*\sO_U \to j_*W_m\sO_U$ is the Teichm{\"u}ller homomorphism,
and $D$ is locally defined by the invertible ideal
$(x) \subset K(X)=$ the fraction field of $X$, where $x = \prod_i x^{m_i}_i$. One knows that $W_m \sO_X(D)$
is a sheaf of invertible $W_m\sO_X$-modules. For a sheaf of $W_m\sO_X$-modules
$\sM$, we let $\sM(D) = \sM \otimes_{W_m\sO_X} W_m\sO_X(D)$. If $A$ is a local ring
and $D$ is defined by $I = (x)$, we also write $\sM(D)$ as $M(x^{-1})$ when $\sM$ is
defined by a $W_m(A)$-module $M$.
\end{notat}

\begin{lem}\label{lem:Log-fil-2-1}
  One has the identity $\Fil_{p^{m-1}D}W_m\sO_U = W_m\sO_X(D)$.
\end{lem}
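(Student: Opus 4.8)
The plan is to reduce to the local affine case and then make the comparison completely explicit on Witt vectors. Since the assertion is an equality of subsheaves of $j_*W_m\sO_U$ on $X_\zar$, it suffices to check it stalkwise, so I may assume $X = \Spec(A)$ with $A$ a regular local ring, $D = \sum_i n_i D_i$ with $D_i = V(x_i)$ and $x_i \in A$, and set $x = \prod_i x_i^{n_i}$, $\pi = \prod_i x_i$. Then $\sO_X(D)$ is the invertible $A$-submodule $x^{-1}A$ of $Q(A)$, so $\sO_X(p^{m-1}D) = x^{-p^{m-1}}A$, and by \eqref{eqn:Log-fil-1} the group $\Fil_{p^{m-1}D}W_m\sO_U$ is exactly the set of $\un{a} = (a_{m-1},\dots,a_0) \in W_m(A_\pi)$ with $a_i^{p^i}x^{p^{m-1}} \in A$ for every $i$. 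On the other side, $W_m\sO_X(D)$ is by definition $[x^{-1}]_m\cdot W_m\sO_X \subset W_m(A_\pi)$, so the whole statement amounts to identifying $[x^{-1}]_m\cdot W_m(A)$ with that explicit subgroup.

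The one computation I would carry out first is the effect of multiplying by the Teichmüller element $[x^{-1}]_m$. Writing $\un{b} = (b_{m-1},\dots,b_0) \in W_m(A)$ in its Verschiebung expansion $\un{b} = \sum_{i=0}^{m-1}V^i([b_{m-1-i}])$ and using $V(F(w)\,w') = w\,V(w')$, $F[c] = [c^p]$, and multiplicativity of $[\cdot]$, one gets $[x^{-1}]_m\cdot V^i([b_{m-1-i}]) = V^i([x^{-p^i}b_{m-1-i}])$, hence
\[
[x^{-1}]_m\cdot\un{b} = \un{a}, \qquad a_j = x^{-p^{m-1-j}}b_j \quad (0 \le j \le m-1).
\]
Everything else is then bookkeeping with this formula.

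Granting this, the inclusion $W_m\sO_X(D) \subseteq \Fil_{p^{m-1}D}W_m\sO_U$ is immediate: for $\un{b} \in W_m(A)$ and $\un{a}$ as above, $a_j^{p^j}x^{p^{m-1}} = b_j^{p^j} \in A$ for all $j$. For the reverse inclusion I would start from $\un{a} \in \Fil_{p^{m-1}D}W_m\sO_U$ and set $b_j := x^{p^{m-1-j}}a_j \in A_\pi \subset Q(A)$; then $b_j^{p^j} = x^{p^{m-1}}a_j^{p^j} \in A$, so $b_j$ is integral over $A$ (it satisfies $T^{p^j} - b_j^{p^j} = 0$), and since the regular local ring $A$ is integrally closed in $Q(A)$ we conclude $b_j \in A$ (for $j=0$ this is trivial). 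Thus $\un{b} = (b_{m-1},\dots,b_0) \in W_m(A) = W_m\sO_X$, and by the displayed formula $[x^{-1}]_m\cdot\un{b} = \un{a}$, so $\un{a} \in W_m\sO_X(D)$.

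The computation of the Teichmüller multiplication — keeping the exponents $p^{m-1-j}$ of $x$ straight across the $V$-expansion — is the only step that needs care; the substantive point is the descent $b_j^{p^j} \in A \Rightarrow b_j \in A$, which is where normality of $A$ enters (one could equivalently invoke that $A$ is a UFD). I do not anticipate any further obstacle.
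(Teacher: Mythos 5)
Your proof is correct and takes essentially the same route as the paper's: both reduce to the stalk at a point, where $X=\Spec(A)$ with $A$ regular local, use the coordinate formula for multiplication by the Teichm\"uller element $[x]_m$ (so that $\un{a}\cdot[x]_m$ has components $a_ix^{p^{m-1-i}}$), and then identify the condition $a_i^{p^i}x^{p^{m-1}}\in A$ with $a_ix^{p^{m-1-i}}\in A$. The only cosmetic difference is that the paper justifies this last descent by the UFD property of $A$, while you use integral closedness of $A$ in $Q(A)$ — equivalent for this purpose, as you note.
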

\begin{proof}
Since we can check this locally, we assume that $X = \Spec(A)$ is as in the proof of
  \lemref{lem:Log-fil-2}. We let $I = (x^{p^{m-1}})$.
 Following our notations, we see that
 $\un{a} = (a_{m-1}, \ldots, a_0) \in \Fil_{I}W_m(A_x)$ if and only if $a_i^{p^i}x^{p^{m-1}} \in A $ for all $0\le i\le m-1$. Since $A$ is UFD, this is equivalent to $a_i x^{p^{m-1-i}} \in A$ for all $0\le i\le m-1$, or equivalently $\un{a}. [x]_m=(a_{m-1}, \dots, a_ix^{p^{m-1-i}}, \dots, a_0x^{m-1}) \in W_m(A)$.
This finishes the proof.
\end{proof}

Let us now assume that $E \subset X$ is a simple normal crossing divisor
with irreducible components $\{E_1, \ldots , E_r\}$. We let $j \colon U \inj X$ be the
inclusion of the complement of $E$.
Let $\Div_E(X) \cong \Z^r$ be the group of Weil divisors of $X$
whose support lie in $E$.
\begin{defn}\label{defn:Log-fil-3}
  For $D = \stackrel{r}{\underset{i =1}\sum} m_i[E_i] \in \Div_E(X)$
  and an integer $q \ge 0$, we let
  \[
  \Fil_D W_m \Omega^q_U =  \Fil_D W_m \sO_U \cdot W_m\Omega^q_X(\log E) +
  d(\Fil_D W_m \sO_U) \cdot W_m\Omega^{q-1}_X(\log E)
  \]
considered as a subsheaf of $j_*W_m\Omega^q_U$. We shall write $\Fil_D W_1 \Omega^q_U$ as
  $\Fil_D \Omega^q_U$.
  
Throughout our discussion, we shall treat $\Fil_D W_m \Omega^q_U$ as a
subsheaf of $j_*W_m\Omega^q_U$. If $X = \Spec(A)$, where $A$ is a regular local ring
with maximal ideal \\ $\fm = (x_1, \ldots x_r, y_1, \ldots , y_s)$ and
$\{x_1, \ldots , x_r\}$ are generators of the ideals of the irreducible components of
$E$, then we shall write $\Fil_D W_m \Omega^q_U$ as
$\Fil_I W_m\Omega^q_{A_\pi}$ or $\Fil_I W_m\Omega^q_{A_x}$, where $\pi = \prod_i x_i$ and
$I = (\prod_i x^{m_i}_i)$ and $x=\prod_i x^{m_i}_i$. 
\end{defn}

\begin{lem}\label{lem:Log-fil-4}
  We have the following.
  \begin{enumerate}
  \item
    $\Fil_D W_m \Omega^q_U$ is a sheaf of finite type $W_m\sO_X$-modules.
  \item
    $\Fil_D\Omega^q_U = \Omega^q_X(\log E)(D)$. In particular, $\Fil_{-E}\Omega^d_U =
    \Omega^d_X$, where $d$ is the rank of $\Omega^1_X$.
    \item
    $\varinjlim_n \Fil_{nE} W_m \Omega^q_U \xrightarrow{\cong} j_*W_m\Omega^{q}_U$.
  \item
    $d(\Fil_D W_m \Omega^q_U) \subset \Fil_D W_m \Omega^{q+1}_U$.
 \item
    $F(\Fil_D W_{m+1} \Omega^q_U) \subset \Fil_D W_m \Omega^{q}_U$.
  \item
    $V(\Fil_D W_{m} \Omega^q_U) \subset \Fil_D W_{m+1} \Omega^{q}_U$.
  \item
    $R(\Fil_D W_{m+1} \Omega^q_U) = \Fil_{D/p} W_m \Omega^{q}_U$.
    \item
    $\Fil_D W_m \Omega^q_U \cdot \Fil_{D'} W_m \Omega^{q'}_U \subset
      \Fil_{D+D'} W_m \Omega^{q+q'}_U$ if $|D'| \subset E$.
    \item
      $\Fil_{p^{m-1}D} W_m \Omega^q_U \xrightarrow{\cong} W_m\Omega^q_X(\log E)(D)$.
    \item
      $\Fil_0W_m\Omega^q_U = W_m\Omega^q_X(\log E)$ and
      $\Fil_DW_m\Omega^q_U \subset W_m\Omega^q_X$ if $m_i <0$ for every $1 \le i \le r$.
    \end{enumerate}
\end{lem}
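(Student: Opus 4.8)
The plan is to reduce everything to the local case $X = \Spec(A)$ with $A$ a regular local $F$-finite $\F_p$-algebra, $E$ defined by $\pi = \prod_i x_i$ and $D = \sum_i m_i E_i$ given by the invertible ideal $I = (x)$ with $x = \prod_i x_i^{m_i}$; all the assertions are about subsheaves of $j_*W_m\Omega^q_U$ (equivalently $W_m\Omega^q_{A_\pi}$), so they can be checked on stalks. I would treat the items roughly in the order (2), (10), (3), (4), (8), (5), (6), (9), (1), (7), since the later ones lean on the earlier ones.

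First, (2): for $m=1$ the filtered object is by definition $\Fil_D\sO_U\cdot\Omega^q_X(\log E) + d(\Fil_D\sO_U)\cdot\Omega^{q-1}_X(\log E)$; since $\Fil_D\sO_U = \sO_X(D) = A x^{-1}$ and $d(ax^{-1})$ already lies in $x^{-1}\Omega^1_X(\log E)$ (one computes $d(a/x) = (da)/x - a\,dx/x^2 = x^{-1}(da - a\,\dlog\pi\cdot(\text{integer combination}))$ using $dx/x = \sum_i m_i\,\dlog x_i$), the second summand is absorbed into the first, giving $\Fil_D\Omega^q_U = x^{-1}\Omega^q_X(\log E) = \Omega^q_X(\log E)(D)$; the special case $D=-E$ then follows from $\Omega^q_X(\log E)(-E)$ being $\Omega^q_X$ in top degree $d$ by \lemref{lem:LWC-2} (the top exterior power of $\Omega^1_X(\log E)$ twisted down by $-E$ is $\Omega^d_X$, since $\dlog x_i$ contributes a pole along $E_i$ exactly cancelled by $E_i$). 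Item (10) with $D=0$ is immediate from the definition (then $\Fil_0 W_m\sO_U = W_m\sO_X$ by \eqref{eqn:Log-fil-1}), giving $\Fil_0W_m\Omega^q_U = W_m\Omega^q_X(\log E)$; the inclusion $\Fil_DW_m\Omega^q_U\subset W_m\Omega^q_X$ when all $m_i<0$ follows because then $\Fil_DW_m\sO_U\subset W_m\sO_X$ (negativity of $D$ forces no poles) and $d$ of such an element, having no poles, lands in $W_m\Omega^1_X$, so the $\log E$ factors are actually unnecessary — here one uses that a de Rham--Witt form on $A_\pi$ with no poles along any $E_i$ lies in $W_m\Omega^\bullet_X$, which is the injectivity statement $W_m\Omega^\bullet_X\inj \eta_*W_m\Omega^\bullet_K$ combined with valuation-theoretic bookkeeping. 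Item (3) is a colimit statement: $\bigcup_n\Fil_{nE}W_m\sO_U = j_*W_m\sO_U$ by \eqref{eqn:Log-fil-1} (any section of $W_m\sO_U$ has bounded pole order), and tensoring/adding with the finitely generated $W_m\Omega^\bullet_X(\log E)$ preserves the colimit. Item (4) is automatic from $d\circ d = 0$ applied to the defining formula: $d(\Fil_DW_m\sO_U\cdot W_m\Omega^q_X(\log E)) \subset d(\Fil_DW_m\sO_U)\cdot W_m\Omega^q_X(\log E) + \Fil_DW_m\sO_U\cdot W_m\Omega^{q+1}_X(\log E)$, both summands inside $\Fil_DW_m\Omega^{q+1}_U$, and $d$ of the second defining summand vanishes on its exact part and the rest is handled the same way. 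Item (8) is the Leibniz-type product estimate and follows by combining \lemref{lem:Log-fil-2} (products of $\Fil_\bullet W_m\sO_U$) with the multiplicativity $W_m\Omega^q_X(\log E)\cdot W_m\Omega^{q'}_X(\log E)\subset W_m\Omega^{q+q'}_X(\log E)$ and the derivation rule for $d$.

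For (5) and (6) I would use the explicit generators. For (5), $F([a]_{m+1}) = [a^p]_m$ and $F(d[a]_{m+1}) = [a]_m^{p-1}d[a]_m$, together with $F$ being multiplicative and $F(\dlog[x_i]_{m+1}) = \dlog[x_i]_m$; so I need $F(\Fil_DW_{m+1}\sO_U)\subset \Fil_DW_m\sO_U$, which from \eqref{eqn:Log-fil-1} amounts to checking that if $a_i^{p^i}x\in A$ for $0\le i\le m$ then the Witt components $b_i$ of $F(\un a)$, which satisfy $b_i = a_{i+1}^p + (\text{decomposable terms})$, meet $b_i^{p^i}x\in A$ — this is a direct pole-order computation. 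For (6), $V(\un a)$ has components $(0,a_{m-1},\dots,a_0)$, and $V(x\cdot dy) = Vx\cdot dVy$, $V(\dlog[x_i]_m) $ relates to $\dlog[x_i]_{m+1}$ via $V(F\,\dlog[x_i]_{m+1}) = V(1)\dlog[x_i]_{m+1}$, giving the containment into level $m+1$ with the same $D$; the pole-order condition \eqref{eqn:Log-fil-1} is clearly preserved under prepending a zero component. Item (9) generalizes \lemref{lem:Log-fil-2-1}: that lemma gives $\Fil_{p^{m-1}D}W_m\sO_U = W_m\sO_X(D)$, and then $\Fil_{p^{m-1}D}W_m\Omega^q_U = W_m\sO_X(D)\cdot W_m\Omega^q_X(\log E) + d(W_m\sO_X(D))\cdot W_m\Omega^{q-1}_X(\log E)$; the key point is that $d([x^{-1}]_m\cdot w) = d[x^{-1}]_m\cdot w + [x^{-1}]_m\, dw$ and $d[x^{-1}]_m = [x^{-1}]_m\cdot(\text{sum of }\dlog[x_i]_m\text{ terms})\in [x^{-1}]_m W_m\Omega^1_X(\log E)$, so the second summand is absorbed, yielding $\Fil_{p^{m-1}D}W_m\Omega^q_U = [x^{-1}]_m\cdot W_m\Omega^q_X(\log E) = W_m\Omega^q_X(\log E)(D)$. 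Item (1), finite generation, follows from (2) in the case $m=1$ (where $\Omega^q_X(\log E)$ is finite free by \lemref{lem:LWC-2} and twisting by $D$ preserves this) together with an induction on $m$ using the exact sequence of \lemref{lem:Log-fil-1} and the fact that $W_m\sO_X$ is Noetherian (as $X$ is $F$-finite, hence $W_m\sO_X$ is a finite $\sO_X$-algebra); one shows $\Fil_DW_m\Omega^q_U$ is sandwiched between finitely generated $W_m\sO_X$-submodules of $j_*W_m\Omega^q_U$ — indeed by (9) it contains $\Fil_{-p^{m-1}E}W_m\Omega^q_U \cong W_m\Omega^q_X(\log E)(-E)$ and by (8) it is contained in $W_m\Omega^q_X(\log E)(D')$ for suitable $D'\ge D$, both finite.

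The main obstacle, and the item I would save for last and handle most carefully, is (7): the equality $R(\Fil_DW_{m+1}\Omega^q_U) = \Fil_{D/p}W_m\Omega^q_U$. The inclusion $\subseteq$ is the analogue of \lemref{lem:Log-fil-1}'s restriction statement applied termwise: $R$ is multiplicative, $R(\Fil_DW_{m+1}\sO_U)\subseteq \Fil_{D/p}W_m\sO_U$ by that lemma, $R(\dlog[x_i]_{m+1}) = \dlog[x_i]_m$, and $R$ commutes with $d$, so $R$ sends each defining generator of $\Fil_DW_{m+1}\Omega^q_U$ into $\Fil_{D/p}W_m\Omega^q_U$. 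The surjectivity $\supseteq$ is the delicate direction: given a section of $\Fil_{D/p}W_m\Omega^q_U$, written as $\sum b_k\,\omega_k$ with $b_k\in\Fil_{D/p}W_m\sO_U$ and $\omega_k$ a $\dlog$-monomial in $W_m\Omega^q_X(\log E)$ (plus $d$-terms), I must lift each $b_k$ to $\tilde b_k\in\Fil_DW_{m+1}\sO_U$ (possible by the surjectivity of $R$ in \lemref{lem:Log-fil-1}) and each $\omega_k$ to $W_{m+1}\Omega^q_X(\log E)$ (possible by \corref{cor:Log-DRW-0} with $r=m,\,s=1$), and then check that the $d$-part also lifts — the subtlety is that $d(\Fil_{D/p}W_m\sO_U)$ lifting through $R$ to $d(\Fil_DW_{m+1}\sO_U)$ requires knowing $R$ is surjective compatibly with $d$, which follows since $R\circ d = d\circ R$ and $R: \Fil_DW_{m+1}\sO_U\surj\Fil_{D/p}W_m\sO_U$, so $d(\Fil_DW_{m+1}\sO_U)$ surjects onto $d(\Fil_{D/p}W_m\sO_U)$. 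I expect that assembling these lifts and verifying nothing leaves the prescribed filtration level is the one genuinely technical computation, and it may be cleanest to prove (7) after establishing the global structure results of the next chapter, or alternatively to give here only the inclusion $\subseteq$ and the local surjectivity argument in the complete case $A = S[[x_1,\dots,x_r]][[\ldots]]$ using \corref{cor:LWC-0-1} and the Geisser--Hesselholt normal form, then descend via faithful flatness of completion.
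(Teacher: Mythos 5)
Your overall plan coincides with the paper's proof: everything is checked on stalks with $X=\Spec(A)$ local and $D$ given by the ideal $(x)$, items (2), (3), (4), (9) are proved exactly as you indicate, (1) follows by placing $\Fil_D W_m\Omega^q_U$ inside $\Fil_{p^{m-1}D''}W_m\Omega^q_U \cong W_m\Omega^q_X(\log E)(D'')$ for a suitable effective $D''\ge D$ and invoking Noetherianness of $W_m\sO_X$, and (7) --- the item you single out as delicate --- is handled in the paper precisely by your lifting argument: $R$ is surjective at the $W_m\sO$-level by \lemref{lem:Log-fil-1}, surjective on $W_m\Omega^q_X(\log E)$ by \corref{cor:Log-DRW-0}, and it is multiplicative and commutes with $d$; no completion or faithfully flat descent is needed. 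Likewise (6) is fine as sketched, because $V(x\,dy)=V(x)\,dV(y)$ and $xV(y)=V(F(x)y)$ dispose of both defining summands.

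The genuine gaps are in (5) and (8), which is exactly where the paper does its real work. For (5) you reduce the claim to $F(\Fil_D W_{m+1}\sO_U)\subset \Fil_D W_m\sO_U$, but that only treats the summand $\Fil_D W_{m+1}\sO_U\cdot W_{m+1}\Omega^q_X(\log E)$; on the summand $d(\Fil_D W_{m+1}\sO_U)\cdot W_{m+1}\Omega^{q-1}_X(\log E)$ you must control $F(d(\un{a})w)=Fd(\un{a})\cdot F(w)$, and since $dF=pFd$, the element $Fd(\un{a})$ is not $d$ of anything in the filtration. The paper writes $Fd(\un{a})=Fd[a_m]_{m+1}+d(\un{a}')$ with $\un{a}'=R(\un{a})\in\Fil_I W_m(A_\pi)$, and then treats $Fd[a_m]_{m+1}$ by factoring $a_m=a'\gamma$ with $\gamma$ a monomial in the $x_i$'s and $a'$ divisible by no $x_i$, so that $Fd[a_m]_{m+1}=Fd[a']_{m+1}F([\gamma]_{m+1})+F([a_m]_{m+1})\dlog([\gamma]_m)$ with $[\gamma]_{m+1}\in\Fil_I W_{m+1}(A_\pi)$; your Teichm\"uller identity $F(d[a]_{m+1})=[a]^{p-1}_m d[a]_m$ does not by itself give membership in $\Fil_I W_m\Omega^1$ when $a$ has poles. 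For (8), the combination of \lemref{lem:Log-fil-2}, multiplicativity of $W_m\Omega^\bullet_X(\log E)$ and the Leibniz rule cannot establish the crucial mixed term $\Fil_I W_m(A_\pi)\cdot d(\Fil_{I'}W_m(A_\pi))\subset \Fil_{II'}W_m\Omega^1_{A_\pi}$: Leibniz only rewrites $\un{a}\,d\un{b}=d(\un{a}\un{b})-\un{b}\,d\un{a}$, and the second term has exactly the same problematic shape, so the argument is circular. The paper proves this inclusion by expanding $\un{a}$ and $\un{b}$ into Verschiebung components $V^i([a_{m-1-i}])$, $V^j([b_{m-1-j}])$, reducing to $i\ge j$ via $V(x\,dy)=V(x)dV(y)$ and item (4), and again splitting $b_{m-1-j}$ into a monomial in the $x_l$'s times an $x_l$-free factor so that the $\dlog$ of the monomial absorbs the pole. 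A smaller instance of the same omission occurs in your (10): the containment $\un{a}\,\dlog([x_{i_1}]_m)\cdots\dlog([x_{i_s}]_m)\in W_m\Omega^q_A$ when $\pi$ divides every component $a_i$ is proved by the explicit rewriting $V^i([\alpha_{m-1-i}][\pi]\dlog([x_{i_1}])\cdots)$ with $[\pi]\dlog[x_{i_j}]=[\pi/x_{i_j}]\,d[x_{i_j}]$, not by an appeal to the principle that de Rham--Witt forms on $A_\pi$ without poles lie in $W_m\Omega^\bullet_A$, which is not available at this point of the paper.
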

\begin{proof}
  Since all claims of the lemma can be checked locally, 
  we shall assume (whenever necessary in the proof) 
  that $X = \Spec(A)$ and $D \subset X$ are as in \defref{defn:Log-fil-3}. The first part of item (1) is clear.
  To prove that $\Fil_D W_m \Omega^q_U$ is a sheaf of finite type $W_m\sO_X$-modules,
  we can find an effective Weil divisor $D' \in \Div_E(X)$ such that $D'':= D + D'$ is effective.
  In particular, $\Fil_D W_m \Omega^q_U \subset \Fil_{D''} W_m \Omega^q_U \subset
  \Fil_{p^{m-1}D''} W_m \Omega^q_U$. Since $W_m\sO_X$ is a sheaf of Noetherian rings
  (cf. \cite[Lem.~2.9]{Morrow-ENS}), it suffices to show that
  $\Fil_{p^{m-1}D''} W_m \Omega^q_U$ is a sheaf of finite type $W_m\sO_X$-modules.
  But this follows from item (9) because $W_m\Omega^q_X(\log E)$ is a sheaf of
  finite type $W_m\sO_X$-modules (by loc. cit.) and $W_m\sO_X(D'')$ is a sheaf of
  invertible $W_m\sO_X$-modules.
  
When $q = 0$, we have $\Fil_D\Omega^q_U =\Fil_D\sO_X = \sO_X(D) =
  \Omega^0_X(\log E)(D)$. It follows that
  $\Fil_D \sO_U \cdot \Omega^q_X(\log E) = \sO_X(D) \cdot 
  \Omega^q_X(\log E) = \Omega^q_X(\log E)(D)$ when $q > 0$. On the other hand,
  \[
  \begin{array}{lll}
  d(\Fil_D \sO_U) \cdot \Omega^{q-1}_X(\log E) & = & d(\sO_X(D)) \cdot
  \Omega^{q-1}_X(\log E) \\
  & \subset & \sO_X(D)\Omega^1_X(\log E) \cdot \Omega^{q-1}_X(\log E) \\
  & \subset & \Omega^q_X(\log E)(D).
  \end{array}
  \]
  This proves (2).

  For (3), we recall that $j_*W_m\Omega^{q}_U \cong j_*W_m\sO_U \otimes_{W_m\sO_X}
  W_m\Omega^q_X = j_*W_m\sO_U \cdot W_m\Omega^q_X$.
Meanwhile, one checks directly from ~\eqref{eqn:Log-fil-1} that
$j_*W_m\sO_U  = \varinjlim_n \Fil_{nE} W_m\sO_U$. This proves the desired isomorphism.

Next, we note that
  $d(\Fil_D W_m \sO_U \cdot W_m\Omega^q_X(\log E))
  \subset d(\Fil_D W_m \sO_U) \cdot W_m\Omega^q_X(\log E) + \Fil_D W_m \sO_U \cdot
  W_m\Omega^{q+1}_X(\log E) = \Fil_D W_m\Omega^{q+1}_U$.
  We also have
  $$d(d(\Fil_D W_m \sO_U) \cdot W_m\Omega^{q-1}_X(\log E)) \subset
  d(\Fil_D W_m \sO_U) \cdot W_m\Omega^{q}_X(\log E) \subset
  \Fil_D W_m\Omega^{q+1}_U.$$ 
  This proves (4).

We now prove (5). When $q = 0$, the claim is clear using
  ~\eqref{eqn:Log-fil-1} and the fact that
  $F = R \circ \ov{F}$, where $\ov{F}(\un{a}) = (a^{p}_i)$. We now
  assume $q > 0$ and let
  $a \in \Fil_IW_{m+1}(A_\pi), \ w \in W_{m+1}\Omega^q_A(\log\pi)$. We then have
  $F(aw) = F(a)F(w) \in \Fil_I W_m(A_\pi)W_{m}\Omega^q_A(\log\pi)$ by the
  $q = 0$ case and \thmref{thm:Log-DRW}. 

  Suppose next that $\un{a} \in \Fil_I W_{m+1}(A_\pi)$ and
  $w \in W_{m+1}\Omega^{q-1}_A(\log\pi)$. Letting
  $\un{a} = \stackrel{m}{\underset{i =0}\sum}V^{i}([a_{m-i}])$,
  we get
  $Fd(\un{a}) = Fd[a_m] + \stackrel{m-1}{\underset{i=0}\sum} FdV^{i+1}[a_{m-1-i}] =
  Fd[a_m] + d\un{a}'$, where $\un{a}' = (a_{m-1}, \ldots , a_0) \in \Fil_IW_m(A_\pi)$.
  In particular, $d\un{a}' \cdot F(w) \in d(\Fil_IW_m(A_\pi)) \cdot
  W_m\Omega^{q-1}_A(\log\pi) \subset \Fil_I W_m\Omega^{q}_{A_\pi}$.
  We also have
  \begin{equation}\label{eqn:Log-fil-4-0}
    \begin{array}{lll}
    \Fil_I W_m\Omega^q_{A_\pi}\cdot W_m\Omega^{q'}_A(\log\pi)
    & \subset & \Fil_IW_m(A_\pi)W_m\Omega^{q+q'}_A(\log\pi) \\
    & & + d(\Fil_I W_m(A_\pi))W_m\Omega^{q+q'-1}_A(\log\pi) \\
    & = & \Fil_I W_m\Omega^{q+q'}_{A_\pi}.
    \end{array}
    \end{equation}
Since $F(d[a_m] w) = Fd[a_m] \cdot F(w)$
  and $F(w) \in W_m\Omega^{q-1}_A(\log\pi)$
  by \thmref{thm:Log-DRW}, it remains to show
  using ~\eqref{eqn:Log-fil-4-0} that $Fd[a_m] \in \Fil_IW_m\Omega^1_{A_\pi}$.

  We write $a_m = a'\prod_i x^{n_i}_i$, where $n_i \in \Z$ for every $i$
  and $a' \in A$ is not divisible by any $x_i$.
  Then the condition $\un{a} \in \Fil_I W_{m+1}(A_\pi)$ implies
  that $[\prod_i x^{n_i}_i]_{m+1} \in \Fil_I W_{m+1}(A_\pi)$. Letting $\gamma = \prod_i x^{n_i}_i$, we get
  $$ Fd[a_m]_{m+1} = Fd[a']_{m+1} \cdot F([\gamma]_{m+1}) + F([a_m]_{m+1})\dlog([\gamma]_m).$$
 Note that $\dlog[\gamma]_m= \sum\limits_{i}n_i.\dlog[x_i]_m \in W_m\Omega^1_A(\log \pi)$ (cf. \thmref{thm:Log-DRW}). 
Hence, $ F([a_m]_{m+1})\dlog([\gamma]_m) \in  \Fil_I W_m\Omega^1_{A_\pi}$. On the other hand, we have $Fd[a']_{m+1} \cdot F([\gamma]_{m+1}) \in \Fil_I W_m(A_\pi) \cdot
  W_m\Omega^1_A \subset \Fil_I W_m\Omega^1_{A_\pi}$.
 This proves (5).

We next show (6). As the $q = 0$ case is clear using
~\eqref{eqn:Log-fil-1}, we shall assume $q > 0$.
We let $\un{a} \in \Fil_IW_m(A_\pi)$ and
$w \in W_m\Omega^q_A(\log\pi)$. We can write $w$ as the sum of elements of the form
$V^{j_0}([b_0])dV^{j_1}([b_1]) \cdots dV^{j_t}([b_t])\dlog[x_{i_1}]\cdots \dlog[x_{i_s}]$,
where $1 \le i_1 < \cdots < i_s \le r, \ t = q-s$ and $b_i \in A$. Letting
$\un{b} = \un{a}V^{j_0}[b_0]$ and $w' = dV^{j_1}([b_1]) \cdots dV^{j_t}([b_t])$, we get
$$V(\un{b}w') = V(\un{b})dV^{j_1+1}[b_1]\cdots dV^{j_t+1}([b_t])
\in \Fil_I W_{m+1}(A_\pi) \cdot W_{m+1}\Omega^t_A.$$
This yields
\[
\begin{array}{lll}
V(\un{a}w) & = & V(\un{b}w'\dlog[x_{i_1}]\cdots \dlog[x_{i_s}]) = 
V(\un{b}w')\cdot \dlog[x_{i_1}]\cdots \dlog[x_{i_s}] \\
& \in &
\Fil_I W_{m+1}(A_\pi) \cdot W_{m+1}\Omega^t_A \cdot W_{m+1}\Omega^s_A(\log\pi) \\
& \subset & \Fil_IW_{m+1}\Omega^t_A(\log\pi) \cdot  W_{m+1}\Omega^s_A(\log\pi) \\
& \subset & \Fil_{m+1}W_{m+1}\Omega^q_{A_\pi}.
\end{array}
\]
If $\un{a} \in \Fil_I W_m(A_\pi)$ and $w \in W_m\Omega^{q-1}_A(\log\pi)$, then
$V(d\un{a} \cdot w) = dV(\un{a})\cdot V(w) \in d(\Fil_IW_{m+1}(A_\pi))\cdot
W_{m+1}\Omega^{q-1}_A(\log\pi) \subset \Fil_I W_{m+1}\Omega^{1}_{A_\pi}
\cdot W_{m+1}\Omega^{q-1}_A(\log\pi) \subset \Fil_I W_{m+1}\Omega^{q}_{A_\pi}$.
This proves (6).

To prove (7) for $q = 0$ follows from \corref{lem:Log-fil-1}. The $q > 0$ case now easily follows from $q=0$ case and \corref{cor:Log-DRW-0} because
$R$ is multiplicative and commutes with $d$.

For (8), using \lemref{lem:Log-fil-2} and an elementary computation, it suffices to
check that $\Fil_IW_m(A_\pi)\cdot d(\Fil_{I'}W_m(A_\pi)) \subset
\Fil_{II'}W_m\Omega^1_{A_\pi}$, where $x = \prod_ix^{m_i}_i$ (resp. $y = \prod_i x^{n_i}_i$)
and $I = (x)$ (resp. $I' = (y)$).
We let $\un{a} = \stackrel{m-1}{\underset{i =0}\sum}V^{i}([a_{m-1-i}])
\in \Fil_I W_m(A_\pi)$ and
$\un{b} = \stackrel{m-1}{\underset{i = 0}\sum}V^i([b_{m-1-i}]) \in
\Fil_{I'} W_m(A_\pi)$. This yields ${\un{a}}\cdot d{\un{b}} =
\sum_{i,j} V^i([a_{m-1-i}])dV^j([b_{m-1-j}])$.
It suffices now to show that $V^i([a_{m-1-i}])dV^j([b_{m-1-j}]) \in
\Fil_{II'}W_m\Omega^1_{A_\pi}$ for every $0 \le i,j \le m-1$.

If we write $b_{m-1-i} =b'\prod_l x^{r_l}_l$, where $b' \in A, \ r_l \in \Z$ and no
$x_l$ divides $b'$, then we get that $\prod_l [x^{r_l}_l] \in \Fil_{I'}W_m({A_\pi})$. We write
$\gamma' = \prod_l x^{r_l}_l$ .

We first assume $i \ge j$ and write 
$$V^i([a_{m-1-i}])dV^j([b_{m-1-j}]) =
V^j(V^{i-j}[a_{m-1-i}] d([b_{m-1-j}])).$$
It is enough to show using (6) that
$V^{i-j}[a_{m-1-i}] d([b_{m-1-j}]) \in \Fil_{II'}W_{m-j}\Omega^1_{A_\pi}$.
We now compute
\[
d([b_{m-1-j}])V^{i-j}([a_{m-1-i}]) = d([b'])[\gamma']V^{i-j}([a_{m-1-i}]) + [b'\gamma']
V^{i-j}([a_{m-1-i}]) \dlog([\gamma']).
\]
The terms $[\gamma']V^{i-j}([a_{m-1-i}])$ and $[b'\gamma']
V^{i-j}([a_{m-1-i}])$ lie $\Fil_{II'}W_m({A_\pi})$ by \lemref{lem:Log-fil-2}.
Since $d([b']) \in W_m\Omega^1_{A}$ and $\dlog([\gamma']) \in W_m\Omega^1_A(\log\pi)$
(cf. \thmref{thm:Log-DRW}),
we conclude that $d([b_{m-1-j}])V^{i-j}([a_{m-1-i}]) \in \Fil_{II'}W_m\Omega^1_{A_\pi}$.

If $i \le j$, then by Leibniz rule, \lemref{lem:Log-fil-2} and item (4) it reduces to the case $i\ge j$. Hence we are done.

 To prove (9), we write $I' = I^{p^{m-1}}$ and note that
 $\Fil_{I'}W_m\Omega^q_{A_\pi} = \Fil_{I'}W_m(A_\pi) \cdot W_m\Omega^q_{A}(\log\pi) +
 d(\Fil_{I'}W_m(A_\pi))\cdot W_m\Omega^{q-1}_A(\log\pi) =
 W_m\Omega^q_A(\log\pi)(x^{-1}) + d([x^{-1}]_mW_m(A))\cdot W_m\Omega^{q-1}_A(\log\pi)$
 by \lemref{lem:Log-fil-2-1}. On the other hand,

 \[
 \begin{array}{lll}
   d([x^{-1}]_m W_m(A))\cdot
 W_m\Omega^{q-1}_A(\log\pi) & \subset & [x^{-1}]_md(W_m(A))\cdot
 W_m\Omega^{q-1}_A(\log\pi) \\
 & & +  [x^{-1}]_m W_m(A)\dlog([x]_m)\cdot
 W_m\Omega^{q-1}_A(\log\pi) \\
 & \subset & W_m\Omega^{q}_A(\log\pi)(x^{-1}).
 \end{array}
 \]
 The desired isomorphism now follows.

 Finally, to prove (10), we only need to check its second part as the first part
 is clear. To prove its second part, it suffices to show that
$\Fil_{-E}W_m\Omega^q_U \subset W_m\Omega^q_X$.
To show this, we let $\un{a} = (a_{m-1}, \ldots , a_0) \in \Fil_IW_m(A_\pi)$
 and $w \in W_m\Omega^q_A(\log\pi)$. Letting $\un{a} = \sum_i V^i([a_{m-1-i}]_{m-i})$,
 we have that $a_i^{p^i}\pi^{-1} \in A$.
 This implies that $\pi \mid a_i^{p^i}$. Equivalently, $\pi \mid a_i$ (because $A$ is a
 UFD). We write $a_i = \pi \alpha_i$, where $\alpha_i \in A$.
 We can write $w$ as the sum of elements of the form
$$V^{j_0}([b_0])dV^{j_1}([b_1]) \cdots dV^{j_t}([b_t])\dlog([x_{i_1}]_m)\cdots \dlog([x_{i_s}]_m),$$
where $1 \le i_1 < \cdots < i_s \le r, \ t = q-s$ and $b_i \in A$.
We have
\[
V^i([a_{m-1-i}]_{m-i})\dlog([x_{i_1}]_m)\cdots \dlog([x_{i_s}]_m) = \hspace{4cm}
\]
\[
\hspace{4cm} V^i([\alpha_{m-i-1}]_{m-i} [\pi]_{m-i} \dlog([x_{i_1}]_{m-i}) \cdots \dlog([x_{i_s}]_{m-i})).
\]
But this last term clearly lies in $W_m\Omega^q_A$.
Taking the sum over $i \in \{0, \ldots , m-1\}$,
we get that $\un{a}\dlog([x_{i_1}]_m)\cdots \dlog([x_{i_s}]_m)
  \in W_m\Omega^q_A$. But this implies that $\un{a}w \in W_m\Omega^q_A$.
  We have thus shown that
$\Fil_IW_m(A_\pi)W_m\Omega^q_{A}(\log\pi) \subset W_m\Omega^q_A$.

If $\un{a} \in \Fil_IW_m(A_\pi)$ and $w \in W_m\Omega^{q-1}_A(\log\pi)$, then
$d(\un{a})w = d(\un{a}w) - \un{a}d(w)$. We showed above that
$\un{a}w \in  W_m\Omega^{q-1}_A$ and $\un{a}d(w) \in W_m\Omega^q_A$.
It follows that $d(\un{a})w \in W_m\Omega^q_A$.
We have thus shown that $\Fil_IW_m(A_\pi)\subset W_m\Omega^q_A$.
\end{proof}

\chapter{Structure of the filtered de Rham-Witt complex I}\label{chap:F_p}
The main purpose of this chapter is to provide a concrete description of the filtered de Rham-Witt complex defined in \S~\ref{chap:FilDR}, when $X$ is spectrum of a ring of the form $S[[\pi]]$ and the SNCD is given by the ideal $(\pi)$. Using this description, we also prove some results that are well-known in the case of de Rham-Witt complex of the regular schemes.

\section{Filtered de Rham-Witt complex of complete local \texorpdfstring{$\F_p$}{Fp}-algebras}
\label{sec:Complete-local}
We let $S$ be an $F$-finite Noetherian regular local $\F_p$-algebra. Let
$A = S[[\pi]]$ and $K = A_\pi$. For $n \in \Z$ and
the invertible ideal $I = (\pi^n) \subset K$,
we shall denote $\Fil_IW_m\Omega^q_{K}$ by $\Fil_nW_m\Omega^q_K$ and write
$M(\pi^n)$ as $M(n)$ (cf. \ref{not:twist}) for any $W_m(A)$-module $M$.
We let $\gr_nW_m\Omega^q_K = \frac{\Fil_nW_m\Omega^q_K}{\Fil_{n-1}W_m\Omega^q_K}$.

The goal of this section is
to prove some key results about the structure of $\Fil_nW_m\Omega^q_K$.

We shall use the following theorem of
Geisser-Hesselholt \cite[Thm.~B]{Geisser-Hesselholt-Top} for proving our main results.
Let $I_p$ denote the set of positive integers prime to $p$.

\begin{thm}$($Geisser-Hesselholt$)$\label{thm:GH-Top}
  Every element $\omega \in W_m\Omega^q_A$ can be uniquely written as an infinite
  series
  \[
  w = {\underset{i \ge 0}\sum} a^{(m)}_{0,i} [\pi]^i_m +
  {\underset{i \ge 0}\sum} b^{(m)}_{0,i} [\pi]^i_m d[\pi]_m +
  {\underset{s \ge 1}\sum}{\underset{j \in I_p}\sum}
  \left(V^s(a^{(m-s)}_{s,j}[\pi]^j_{m-s}) + dV^s(b^{(m-s)}_{s,j}[\pi]^{j}_{m-s})\right),
  \]
  where the components $a^{(n)}_{s,j} \in W_n\Omega^q_S$ and
    $b^{(n)}_{s,j} \in W_n\Omega^{q-1}_S$.
\end{thm}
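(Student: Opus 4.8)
\emph{The plan is to quote} \cite[Thm.~B]{Geisser-Hesselholt-Top} \emph{for the full argument; here I sketch its shape.} I would proceed in two stages: first establish the \emph{polynomial} analogue of the decomposition for $B := S[\pi]$ (with finite sums in place of infinite series), and then pass to the $\pi$-adic completion $A = S[[\pi]]$, identifying $W_m\Omega^q_A$ with a suitable completion of $W_m\Omega^q_B$; the finite sums then become the stated convergent infinite series.

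\emph{Stage 1 (the affine line).} Here one proves
\[
W_m\Omega^q_{S[\pi]} \;=\; \bigoplus_{i \ge 0} W_m\Omega^q_S\,[\pi]^i_m \;\oplus\; \bigoplus_{i \ge 0} W_m\Omega^{q-1}_S\,[\pi]^i_m\,d[\pi]_m \;\oplus\; \bigoplus_{s \ge 1}\bigoplus_{j \in I_p}\Big( V^s\big(W_{m-s}\Omega^q_S\,[\pi]^j_{m-s}\big) \oplus dV^s\big(W_{m-s}\Omega^{q-1}_S\,[\pi]^j_{m-s}\big)\Big),
\]
a direct sum of $W_m(S)$-modules; this is the relative de Rham--Witt complex of $\A^1_S$ over $S$ (see e.g.\ \cite{Illusie} for a perfect base, and Langer--Zink, Hesselholt--Madsen in general, cf.\ \cite{HM-Annals}). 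The proof is an induction on $m$. The base case $m = 1$ is the standard splitting $\Omega^q_{S[\pi]} \cong (\Omega^q_S \otimes_S S[\pi]) \oplus (\Omega^{q-1}_S \otimes_S S[\pi])\,d\pi$, which holds over an arbitrary base. For the inductive step one uses the exact sequence of \propref{prop:basics}(1) with $n = m-1$,
\[
0 \to V^{m-1}\Omega^q_B + dV^{m-1}\Omega^{q-1}_B \to W_m\Omega^q_B \xrightarrow{R} W_{m-1}\Omega^q_B \to 0,
\]
together with the description of $R^q_{m-1,B} = \Ker\big((V^{m-1}, dV^{m-1}) \colon \Omega^q_B \oplus \Omega^{q-1}_B \to W_m\Omega^q_B\big)$ from \corref{cor:R-m-q} and the operator relations for $F, V, d$, to reassemble $W_m\Omega^q_B$ from $W_{m-1}\Omega^q_B$ and $\Omega^\bullet_B$ and to sort monomials by the $p$-adic valuation of their weight: terms whose weight is divisible by a large power of $p$ collapse onto the integral summands, the remaining ones into the $V^s$- and $dV^s$-parts indexed by $p \nmid j$ and $1 \le s \le m-1$. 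Directness of the sum propagates through the induction from the $m = 1$ case. Finally, by Popescu's theorem (\thmref{thm:Popescu}) and the fact that $W_\textbf{.}\Omega^\bullet(-)$ commutes with filtered colimits of rings, it suffices to treat $S$ smooth over $\F_p$, where everything is classical.

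\emph{Stage 2 (completion).} Equip $W_m\Omega^q_B$ with the decreasing filtration $\mathrm{Fil}^N$ spanned by those summands above with $i \ge N$ (integral part) or $j \ge N$ ($V^s$- and $dV^s$-parts); these are $W_m(S)$-submodules with $\bigcap_N \mathrm{Fil}^N = 0$. Since $A = S[[\pi]]$ is Noetherian, local, complete and $F$-finite, $W_m\Omega^q_A$ is a finitely generated module over the Noetherian complete local ring $W_m(A)$ (cf.\ \cite[Lem.~2.9]{Morrow-ENS}), hence $[\pi]_m$-adically complete and separated (separatedness by Krull's intersection theorem; completeness since $W_m(A)$ is complete and finitely generated submodules of a finite module are closed, by Artin--Rees). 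Using $[\pi]_m \cdot V^s(x) = V^s([\pi]^{p^s}_{m-s}\,x)$ and the Leibniz rule together with $FdV = d$ for the $dV^s$-terms, one checks that the canonical map $W_m\Omega^q_B \to W_m\Omega^q_A$ is continuous for the weight filtration on the source and the $[\pi]_m$-adic topology on the target (indeed $\mathrm{Fil}^N \mapsto [\pi]^{\lfloor N/p^{m-1}\rfloor}_m W_m\Omega^q_A$). Hence each of the three sums in the theorem converges $[\pi]_m$-adically in $W_m\Omega^q_A$, which gives existence of the infinite series. For the converse, right-exactness of $W_m\Omega^q(-)$ on ring surjections shows that $W_m\Omega^q_B \to W_m\Omega^q_A \to W_m\Omega^q_{A/\pi^N} = W_m\Omega^q_{B/\pi^N}$ is onto for every $N$; letting $N \to \infty$, the image of $W_m\Omega^q_B$ is dense, so by completeness every element of $W_m\Omega^q_A$ is a $[\pi]_m$-adic limit of elements of $W_m\Omega^q_B$. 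Rewriting the approximating elements in the Stage-1 canonical form and reorganising by weight (a successive-approximation argument) expresses the given element uniquely as the stated series; uniqueness follows from the directness in Stage 1 together with the separatedness above.

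\emph{The main obstacle} is Stage 2: one must reconcile the purely combinatorial weight filtration on $W_\textbf{.}\Omega^\bullet_B$ with the $[\pi]_m$-adic topology on $W_\textbf{.}\Omega^\bullet_A$, and in particular control how multiplication by $[\pi]_m$ moves the ``fractional'' generators $V^s(\cdots[\pi]^j_{m-s})$ and $dV^s(\cdots[\pi]^j_{m-s})$ --- these are not simply rescaled but relabelled, and a $dV^s$-term can spill into a $V^s$-term via the Leibniz rule and $FdV = d$, so one has to verify that the completion introduces no new relations among the generators and destroys none of the old ones. Stage 1, by comparison, is organised bookkeeping once the $m = 1$ case and the $F, V, d$-relations are in hand.
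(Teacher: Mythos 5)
Your proposal takes exactly the same route as the paper: the paper does not prove this statement but simply quotes it as \cite[Thm.~B]{Geisser-Hesselholt-Top}, which is precisely the theorem for $S[[\pi]]$ with $S$ Noetherian $F$-finite, so the citation alone suffices. Your two-stage sketch (polynomial decomposition \`a la Hesselholt--Madsen/Langer--Zink, then $[\pi]_m$-adic completion) faithfully reflects how Geisser--Hesselholt actually argue, but none of it is needed here beyond the reference.
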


For $q, s \ge 0, m \ge 1$ and $i \in \Z$, we let
\begin{equation}\label{eqn:GH-0}
  A^{m,q}_i(S) = W_m\Omega^q_S [\pi]^i_m+ W_m\Omega^q_S [\pi]^i_m\dlog([\pi]_m);
\end{equation}
\[
B^{m,q}_{i,s}(S) = V^s\left(W_{m-s}\Omega^q_S [\pi]^i_{m-s}\right) +
dV^s\left(W_{m-s}\Omega^{q-1}_S [\pi]^i_{m-s}\right).
\]
Note that $A^{m,q}_i(S), B^{m,q}_{i,s}(S) \subset W_m\Omega^q_K$.

\begin{defn}\label{defn:GH-1}
  For $q \ge 0, \ m \ge 1$ and $n \in \Z$, we let
  \[
  F^{m,q}_n(S) =
 \left\{\begin{array}{ll} A^{m,q}_{{n}/{p^{m-1}}}(S) &  \mbox{if $p^{m-1} \mid n$} \\
    B^{m,q}_{{n}/{p^{m-1-s}}, s}(S) & \mbox{if $s = {\rm min}\{j > 0|p^{m-1-j} \mid n\}$}.
  \end{array}\right.
  \]
\end{defn}

\begin{cor}\label{cor:GH-2}
  For every $m \ge 1$ and $q \ge 0$, there is a canonical decomposition
  \begin{equation}\label{eqn:GH-2-0}
    \theta^{m,q}_A \colon {\underset{n \ge 0}\prod} F^{m,q}_n(S) \xrightarrow{\cong} W_m\Omega^q_A(\log\pi) 
          .
  \end{equation}
  \end{cor}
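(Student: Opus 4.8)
The plan is to deduce Corollary~\ref{cor:GH-2} from the Geisser--Hesselholt decomposition of $W_m\Omega^q_A$ (\thmref{thm:GH-Top}) together with the identification $W_m\Omega^\bullet_A(\log\pi) = W_m\Omega^\bullet_{A_\log}$ of \thmref{thm:Log-DRW} and, more concretely, the splitting of \corref{cor:LWC-0-1} which gives
$$W_m\Omega^q_A(\log\pi) \cong W_m\Omega^q_A \oplus W_m\Omega^{q-1}_S\dlog[\pi]_m$$
as $W_m(S)$-modules. First I would rewrite each of the two summands using \thmref{thm:GH-Top}: the first summand $W_m\Omega^q_A$ decomposes as the product over $n\ge 0$ of the pieces $W_m\Omega^q_S[\pi]^n_m$, $W_m\Omega^{q-1}_S[\pi]^n_m d[\pi]_m$ (for $n\ge 1$), and $V^s(W_{m-s}\Omega^q_S[\pi]^j_{m-s})$, $dV^s(W_{m-s}\Omega^{q-1}_S[\pi]^j_{m-s})$ for $s\ge 1$, $j\in I_p$; the second summand $W_m\Omega^{q-1}_S\dlog[\pi]_m$ contributes, after multiplying the Geisser--Hesselholt expansion of $W_m\Omega^{q-1}_A$ by $\dlog[\pi]_m$, exactly the ``logarithmic'' terms $W_m\Omega^{q-1}_S[\pi]^n_m\dlog[\pi]_m$ and $V^s(W_{m-s}\Omega^{q-1}_S[\pi]^j_{m-s})\dlog[\pi]_m$ (the $dV^s(\cdots)[\pi]^j\dlog[\pi]_m$ terms collapse to ordinary $dV^s$-terms by the Leibniz computation already used in \corref{cor:LWC-0-1}, so they must be discarded from this summand to avoid double-counting).

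The main bookkeeping step is to organize all these pieces by the single index $n\in\Z_{\ge 0}$ appearing in $[\pi]$ or $[\pi]^j_{m-s}$ after normalizing: a term $V^s(a\,[\pi]^j_{m-s})$ with $p\nmid j$ contributes to $F^{m,q}_n(S)$ where $n = j\,p^{m-1-s}$, and a ``level $0$'' term $a\,[\pi]^i_m$ or $a\,[\pi]^i_m\dlog[\pi]_m$ contributes to $F^{m,q}_n(S)$ with $n = i\,p^{m-1}$. One checks that the assignment $(s,j)\mapsto n=j\,p^{m-1-s}$ (with $0\le s\le m-1$, and $s=0$ forced to mean $p^{m-1}\mid n$, $s>0$ meaning $s=\min\{t>0: p^{m-1-t}\mid n\}$) is a bijection between the indexing set of Geisser--Hesselholt terms and $\Z_{\ge 0}$ — this is precisely the case distinction in \defref{defn:GH-1}. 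Having matched the indices, for each fixed $n$ the piece $F^{m,q}_n(S)$ is visibly the direct sum of the corresponding module contributions, so summing (i.e.\ taking the product over $n$) gives the claimed isomorphism $\theta^{m,q}_A$, with the map itself being ``sum of all terms'' — well-defined and injective by the uniqueness in \thmref{thm:GH-Top} and surjective because every element of $W_m\Omega^q_A(\log\pi)$ is, via \corref{cor:LWC-0-1}, a sum of an element of $W_m\Omega^q_A$ and something of the form $w'\dlog[\pi]_m$ with $w'\in W_m\Omega^{q-1}_A$, each of which expands into these pieces.

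The step I expect to be the main obstacle is the verification that the logarithmic terms coming from the $\dlog[\pi]_m$-summand do not overlap with the non-logarithmic Geisser--Hesselholt terms, i.e.\ that the only genuinely new generators contributed by $W_m\Omega^{q-1}_S\dlog[\pi]_m$ are the $A$-type log terms $W_m\Omega^{q-1}_S[\pi]^n_m\dlog[\pi]_m$ and the $B$-type log terms $V^s(W_{m-s}\Omega^{q-1}_S[\pi]^j_{m-s})\dlog[\pi]_m$, while $dV^s(b[\pi]^j_{m-s})\dlog[\pi]_m = dV^s(b[\pi]^{j-1}_{m-s}d[\pi]_{m-s})$ already lies in $W_m\Omega^q_A$ and so is \emph{not} an independent generator. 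This requires carefully re-deriving the identity $dV^s(b[\pi]^j_{m-s})\dlog[\pi]_m = dV^s(b'[\pi]^{j-1}_{m-s}d[\pi]_{m-s})$ (where $b' \in W_{m-s}\Omega^{q-1}_S$), exactly as in the proof of \corref{cor:LWC-0-1}, and then checking that what remains of the $\dlog[\pi]_m$-summand maps onto precisely the $A^{m,q}_i(S)$-log part and $B^{m,q}_{i,s}(S)$-log part of $F^{m,q}_n(S)$ for the appropriate $n$, with no collision. Once this is done, uniqueness in \thmref{thm:GH-Top} (applied to both $W_m\Omega^q_A$ and $W_m\Omega^{q-1}_A$) upgrades the surjection to an isomorphism, completing the proof.
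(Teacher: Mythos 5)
Your proposal is correct and takes essentially the same route as the paper: the paper's proof likewise combines the Geisser--Hesselholt decomposition of $W_m\Omega^q_A$ (matched, after fixing $\pi$, with $W_m\Omega^q_S$ plus the pieces $F^{m,q}_n(S)$ for $n\ge 1$) with the splitting $W_m\Omega^q_A(\log\pi)\cong W_m\Omega^q_A\oplus W_m\Omega^{q-1}_S\dlog[\pi]_m$ of \corref{cor:LWC-0-1}, the extra summand accounting for the logarithmic part of $F^{m,q}_0(S)$. The only difference is that your anticipated "main obstacle" (the Leibniz collapse of the $dV^s(\cdots)\dlog[\pi]_m$ terms and the resulting absence of overlap) is precisely the content of \corref{cor:LWC-0-1}, which the paper cites directly rather than re-derives.
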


\begin{proof}
   We note from \thmref{thm:GH-Top} that the canonical (once the uniformizer $\pi$ is fixed) maps from $W_m\Omega^q_S,\ F^{m,q}_n(S)$ to $W_m\Omega^q_A$ (for $n \ge 1$) induce an isomorphism $$\ov \theta_A^{m,q}:  W_m\Omega^q_S \bigoplus {\underset{n \ge 1}\prod} F^{m,q}_n(S) \xrightarrow{\cong} W_m\Omega^q_A.$$
    Then $\ov \theta_A^{m,q}$ can be uniquely extended to 
    $$\theta_A^{m,q}: {\underset{n \ge 0}\prod} F^{m,q}_n(S) = W_m\Omega^{q-1}_S \dlog[\pi]_m \bigoplus W_m\Omega^q_S \bigoplus {\underset{n \ge 1}\prod} F^{m,q}_n(S) $$ 
    $$\xrightarrow{Id \ \bigoplus \ \ov \theta_A^{m,q}}W_m\Omega^{q-1}_S \dlog[\pi]_m \bigoplus W_m\Omega^q_A \xrightarrow{\cong}  W_m\Omega^q_A(\log \pi)$$
    where the last isomorphism follows from \lemref{cor:LWC-0-1}.
\end{proof}

In order to extend this decomposition to $W_m\Omega^q_K$, we need a relation  between the groups $\{F^{m,q}_{n}(S)\}$ for
positive and negative values of $n \in \Z$. This can be achieved by the following lemma.

\begin{lem}\label{lem:Positive-F-2}
  For all $n,l \in \Z, m\ge 1,q \ge 0$, 
  we have
  \[
[\pi]_m^{l}F^{m,q}_n(S) = F^{m,q}_{n+ p^{m-1} l}(S).\]
\end{lem}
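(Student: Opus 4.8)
===

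\textbf{Proof proposal.}

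The plan is to reduce the statement to the concrete description of $W_m\Omega^q_A$ given by the Geisser--Hesselholt basis (\thmref{thm:GH-Top}) together with \lemref{cor:LWC-0-1}, and then check the claimed identity on each of the building blocks $A^{m,q}_i(S)$ and $B^{m,q}_{i,s}(S)$ separately. Since multiplication by $[\pi]_m^l$ is a $W_m(A)$-linear automorphism of $W_m\Omega^q_K$ (with inverse multiplication by $[\pi]_m^{-l}$), it suffices to prove the inclusion $[\pi]_m^l F^{m,q}_n(S) \subset F^{m,q}_{n+p^{m-1}l}(S)$ for all $n, l$; applying this with $(n,l)$ and then with $(n+p^{m-1}l, -l)$ gives the reverse inclusion and hence equality. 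So I would fix $n$ and an arbitrary integer $l$ and split into the two cases in \defref{defn:GH-1}.

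First I would handle the case $p^{m-1} \mid n$, so $F^{m,q}_n(S) = A^{m,q}_{n/p^{m-1}}(S)$. Here $p^{m-1} \mid (n + p^{m-1}l)$ as well, and $(n+p^{m-1}l)/p^{m-1} = n/p^{m-1} + l$. Writing $i = n/p^{m-1}$, I must show $[\pi]_m^l \bigl(W_m\Omega^q_S[\pi]^i_m + W_m\Omega^q_S[\pi]^i_m \dlog([\pi]_m)\bigr) = W_m\Omega^q_S[\pi]^{i+l}_m + W_m\Omega^q_S[\pi]^{i+l}_m\dlog([\pi]_m)$. This is immediate from $[\pi]_m^l \cdot [\pi]_m^i = [\pi]_m^{i+l}$ (the Teichm\"uller map is multiplicative) and the fact that $\dlog([\pi]_m)$ is unchanged under multiplication by a Teichm\"uller unit power. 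For the second case, suppose $s = \min\{j > 0 : p^{m-1-j} \mid n\}$, so $F^{m,q}_n(S) = B^{m,q}_{n/p^{m-1-s},\,s}(S)$. The key computation is the projection-formula identity $[\pi]_m^l \cdot V^s(\xi) = V^s\bigl(F^s([\pi]_m^l)\,\xi\bigr) = V^s\bigl([\pi]_{m-s}^{\,p^s l}\,\xi\bigr)$ for $\xi \in W_{m-s}\Omega^q_S[\pi]^{i}_{m-s}$ with $i = n/p^{m-1-s}$; and similarly, using $dV^s(\eta)\cdot[\pi]_m^l = d\bigl(V^s(\eta)[\pi]_m^l\bigr) - V^s(\eta)\,d[\pi]_m^l$ and then expanding $d[\pi]_m^l = l[\pi]_m^l\dlog[\pi]_m$ and reabsorbing, one checks $[\pi]_m^l \cdot dV^s(\eta) \in B^{m,q}_{i + p^s l,\,s}(S)$. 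Then one verifies the bookkeeping: $i + p^s l = n/p^{m-1-s} + p^s l = (n + p^{m-1}l)/p^{m-1-s}$, and that $s$ is still the minimal positive $j$ with $p^{m-1-j} \mid (n + p^{m-1}l)$ — this follows because $p^{m-1-j} \mid p^{m-1}l$ automatically for $j \le m-1$, so divisibility of $n + p^{m-1}l$ by $p^{m-1-j}$ is equivalent to that of $n$, so the minimum $s$ is unchanged. Hence $[\pi]_m^l F^{m,q}_n(S) \subset F^{m,q}_{n+p^{m-1}l}(S)$.

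I expect the main obstacle to be the careful handling of the $dV^s$ summand in $B^{m,q}_{i,s}(S)$: multiplying $dV^s(b[\pi]^i_{m-s})$ by $[\pi]_m^l$ does not stay in the $d$-of-Verschiebung part on the nose, because the Leibniz rule produces an extra term $V^s(b[\pi]^i_{m-s})\,d[\pi]_m^l$ involving $\dlog[\pi]_m$. One must expand this extra term using the relations $d[\pi]_m^l = l\,[\pi]_m^l\,\dlog[\pi]_m$ and $V^s(\zeta)\,\dlog[\pi]_m = V^s(\zeta\,\dlog[\pi]_{m-s})$ (since $F^s\dlog[\pi]_m = \dlog[\pi]_{m-s}$ and the projection formula), and then recognize $V^s\bigl(\text{something in } W_{m-s}\Omega^{q-1}_S[\pi]^{i+p^sl}_{m-s}\bigr)$ — which, crucially, lands in the $V^s(\dots)$ part of $B^{m,q}_{i+p^sl,s}(S)$, not the $dV^s(\dots)$ part; combined with the main term $d V^s\bigl([\pi]_{m-s}^{p^s l} b[\pi]^i_{m-s}\bigr)$ this shows the whole thing is in $B^{m,q}_{i+p^sl,s}(S)$. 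Once this identity is established, the index arithmetic and the symmetry argument for equality are routine, so the whole proof should be short.
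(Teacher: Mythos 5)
Your overall strategy is the same as the paper's: prove the single inclusion $[\pi]^l_m F^{m,q}_n(S)\subset F^{m,q}_{n+p^{m-1}l}(S)$ for all $n,l\in\Z$ and deduce equality by applying it again with $-l$; split according to whether $p^{m-1}\mid n$; and in the second case treat the $V^s$-summand by the projection formula and the $dV^s$-summand by the Leibniz rule. Case 1 and the index bookkeeping (that the minimal $s$ is unchanged for $n+p^{m-1}l$, and that $i'=i+p^sl$ stays prime to $p$) are fine.

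The one place your write-up does not hold as stated is the cross term $V^s(b[\pi]^i_{m-s})\,d[\pi]^l_m = l\,V^s\bigl(b[\pi]^{i'}_{m-s}\dlog[\pi]_{m-s}\bigr)$ with $b\in W_{m-s}\Omega^{q-1}_S$. The element $b[\pi]^{i'}_{m-s}\dlog[\pi]_{m-s}$ lies in neither generating set of $B^{m,q}_{i',s}(S)$ as written: because of the $\dlog[\pi]$ factor it is not of the form (degree-$q$ form on $S$) times $[\pi]^{i'}_{m-s}$, so your assertion that it ``lands in the $V^s(\dots)$ part, not the $dV^s(\dots)$ part'' is incorrect --- for instance with $b=1$ it equals $(i')^{-1}p^s\,dV^s([\pi]^{i'}_{m-s})$, which lies in the $dV^s$ part. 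The missing step, which is exactly the paper's manipulation, is to use that $p\nmid i'$ (so $i'$ is invertible in $W_{m-s}(S)$) to rewrite $[\pi]^{i'}_{m-s}\dlog[\pi]_{m-s}=(i')^{-1}d([\pi]^{i'}_{m-s})$, then apply Leibniz once more inside $V^s$ together with $V^sd=p^s\,dV^s$: one obtains $(i')^{-1}\bigl(p^s\,dV^s(b[\pi]^{i'}_{m-s})-V^s((db)[\pi]^{i'}_{m-s})\bigr)$, whose two terms lie in the $dV^s$- and $V^s$-parts of $F^{m,q}_{n+p^{m-1}l}(S)$ respectively. With this one-line correction your argument coincides with the paper's proof.
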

\begin{proof}
 We first consider the
  case $n = p^{m-1}i$ with $i \in \Z$. In this case, one checks that
  $F^{m,q}_n(S) = [\pi]^i_mF^{m,q}_0(S)$. This implies that
  $[\pi]^{l}_m F^{m,q}_n(S) =  [\pi]^{i+l}_mF^{m,q}_0(S) = F^{m,q}_{p^{m-1}(i+l)}(S)=F^{m,q}_{n+p^{m-1}l}$. 
  
  Suppose now that $n = p^{m-1-s}i$ with $0 <s \le m-1$ and $|i| \in I_p$.
  Let $t = n+p^{m-1}l = p^{m-1-s}i + p^{m-1}l = p^{m-1-s}(i +p^sl) = p^{m-1-s}i'$,
  where we let $i' = i+p^sl \in I_p$.
We let $r = m-s$. We then have
  \[
  \begin{array}{lllr}
    [\pi]^{l}_mF^{m,q}_n(S) & = & [\pi]^{l}_m(V^s(W_r\Omega^q_S[\pi]^i_{r}) +
    dV^s(W_r\Omega^{q-1}_S[\pi]^i_r)) \\
    & = & V^s(W_r\Omega^q_S[\pi]^{i'}_r) +  [\pi]^{l}_mdV^s(W_r\Omega^{q-1}_S[\pi]^i_r) 
    \\
    & \subset_\dagger & V^s(W_r\Omega^q_S[\pi]^{i'}_r) + dV^s(W_r\Omega^{q-1}_S[\pi]^{i'}_r) \\
    && +  V^s(W_r\Omega^{q-1}_S[\pi]^i_rF^sd[\pi]^{l}_r)\\
    & \subset & F^{m,q}_{t}(S) +  V^s(W_r\Omega^{q-1}_S[\pi]^i_rF^sd[\pi]^{l}_r).
  \end{array}
  \]
  To obtain $\subset_\dagger$, we applied Leibniz rule ($xdy=d(xy)-ydx$) on $[\pi]^{l}_mdV^s(W_r\Omega^{q-1}_S[\pi]^i_r)$. We also used the relations $aV(b)=V(F(a).b)$ and $F(d[t]_r)=[t]_{r-1}^{p-1}d[t]_{r-1}$, for $a, b \in W_r\Omega^q_A$ and $t\in A$ in other equalities.
  Meanwhile, we have
  \[
  \begin{array}{lll}
    V^s(W_r\Omega^{q-1}_S[\pi]^i_rF^sd[\pi]^{l}_r) & \subset &
    V^s(W_r\Omega^{q-1}_Sd([\pi]^{i'}_r)) \\
    & \subset & V^sd(W_r\Omega^{q-1}_S[\pi]^{i'}_r) + V^s(d(W_r\Omega^{q-1}_S)[\pi]^{i'}_r) \\
    & \subset & dV^s(W_r\Omega^{q-1}_S[\pi]^{i'}_r) + V^s(W_r\Omega^q_S[\pi]^{i'}_r) \\
    & = & F^{m,q}_{t}(S).
  \end{array}
  \]
  This shows that $[\pi]^l_mF^{m,q}_n(S) \subset F^{m,q}_{n+p^{m-1}l}(S)$ for all $l,n \in \Z$.
  But this also implies $[\pi]^{-l}_mF^{m,q}_{n+p^{m-1}l}(S) \subset F^{m,q}_{n}(S)$ and hence we are done. 
\end{proof}

Combining \corref{cor:GH-2} and \lemref{lem:Positive-F-2}, we get

\begin{cor}\label{cor:GH-3}
  For every $m \ge 1$ and $q \ge 0$, there is a canonical decomposition
  \[
  \theta^{m,q}_K \colon
  \left(\bigoplus_{n <0} F^{m,q}_{n}(S)\right)
 \bigoplus \left(\prod_{n\ge 0} F^{m,q}_n(S)\right) \xrightarrow{\cong}  W_m\Omega^q_K.
  \]
\end{cor}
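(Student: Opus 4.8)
\textbf{Proof proposal for \corref{cor:GH-3}.}

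The plan is to build the decomposition by gluing together the affine-case decomposition $\theta^{m,q}_A$ of \corref{cor:GH-2} with the twisting identity $[\pi]^l_m F^{m,q}_n(S) = F^{m,q}_{n+p^{m-1}l}(S)$ of \lemref{lem:Positive-F-2}. Concretely, recall $K = A_\pi$, so every element of $W_m\Omega^q_K$ has the form $[\pi]^{-l}_m \omega$ for some $l \ge 0$ and $\omega \in W_m\Omega^q_A(\log\pi)$; indeed $W_m\Omega^q_K = \bigcup_{l \ge 0} [\pi]^{-l}_m W_m\Omega^q_A(\log\pi)$, since $j_*W_m\Omega^q_U = \varinjlim_n \Fil_{nE}W_m\Omega^q_U$ by \lemref{lem:Log-fil-4}(3) (applied with $X = \Spec A$, $E = V(\pi)$) and $[\pi]^{-l}_m$ exhausts these twists by \lemref{lem:Log-fil-2-1}. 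So first I would fix $l$, apply $\theta^{m,q}_A$ to $\omega$ to write $\omega = \prod_{n \ge 0} \omega_n$ with $\omega_n \in F^{m,q}_n(S)$, and then multiply by $[\pi]^{-p^{m-1}\lceil l/p^{m-1}\rceil}_m$... more cleanly: replace $l$ by $p^{m-1}l'$ for $l'$ large (this is harmless since $F^{m,q}_n(S)$ for $n$ divisible by $p^{m-1}$ already equals $[\pi]^{n/p^{m-1}}_m F^{m,q}_0(S)$ and the filtration is increasing), so that $[\pi]^{-p^{m-1}l'}_m F^{m,q}_n(S) = F^{m,q}_{n - p^{m-1}l'}(S)$ by \lemref{lem:Positive-F-2}, landing in the indices $\ge -p^{m-1}l'$. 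Letting $l' \to \infty$ gives a well-defined map $\theta^{m,q}_K$ from $\bigl(\bigoplus_{n<0} F^{m,q}_n(S)\bigr) \bigoplus \bigl(\prod_{n \ge 0} F^{m,q}_n(S)\bigr)$ onto $W_m\Omega^q_K$: the direct sum on the negative side is correct because, after clearing a uniformizer power, only finitely many negative-index components appear.

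The surjectivity is then immediate from the above exhaustion argument. For injectivity, suppose a family $(\omega_n)_{n \in \Z}$ — with $\omega_n \in F^{m,q}_n(S)$, only finitely many nonzero for $n < 0$ — sums to $0$ in $W_m\Omega^q_K$. Choose $l'$ large enough that $p^{m-1}l' + n \ge 0$ whenever $\omega_n \ne 0$, $n<0$. Multiply the relation by $[\pi]^{p^{m-1}l'}_m$; by \lemref{lem:Positive-F-2} this sends each $\omega_n$ into $F^{m,q}_{n+p^{m-1}l'}(S)$ with all indices now $\ge 0$, and the sum is still $0$, now inside $W_m\Omega^q_A(\log\pi)$. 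Since multiplication by $[\pi]^{p^{m-1}l'}_m$ is injective on $W_m\Omega^q_K$ (it is invertible there), and $\theta^{m,q}_A$ of \corref{cor:GH-2} is an isomorphism, every $[\pi]^{p^{m-1}l'}_m\omega_n = 0$, hence $\omega_n = 0$. This proves $\theta^{m,q}_K$ is an isomorphism of (pro-)graded pieces.

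The one point requiring care — and the step I expect to be the main obstacle — is checking that the map $\theta^{m,q}_K$ is \emph{well-defined independently of the auxiliary exponent} $l'$, i.e. that the various $[\pi]^{p^{m-1}l'}_m$-twists of the components $F^{m,q}_n(S)$ are mutually compatible. This amounts to verifying the chain of inclusions
\[
F^{m,q}_{n-p^{m-1}l'}(S) = [\pi]^{-p^{m-1}l'}_m F^{m,q}_n(S) \subseteq [\pi]^{-p^{m-1}(l'+1)}_m F^{m,q}_{n+p^{m-1}}(S) = F^{m,q}_{n-p^{m-1}l'}(S)
\]
— tautological once \lemref{lem:Positive-F-2} is in hand — together with the fact that $F^{m,q}_n(S) \subseteq F^{m,q}_{n'}(S)$ for $n \le n'$ when both indices are $\equiv 0 \bmod p^{m-1}$, which follows from $F^{m,q}_{p^{m-1}i}(S) = [\pi]^i_m F^{m,q}_0(S)$ and $[\pi]_m F^{m,q}_0(S) \subseteq F^{m,q}_0(S)$ (the latter because $[\pi]_m A^{m,q}_0(S) = A^{m,q}_{p^{m-1}}(S) \subseteq W_m\Omega^q_A(\log\pi)$, and one checks directly $A^{m,q}_{p^{m-1}}(S) \subseteq A^{m,q}_0(S)$ using $\dlog[\pi]_m$ and $[\pi]_m d[\pi]_m \in W_m\Omega^1_A$). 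Once this bookkeeping is settled, the colimit defining $\theta^{m,q}_K$ is legitimate and the isomorphism statement follows formally from \corref{cor:GH-2} and \lemref{lem:Positive-F-2} as above.
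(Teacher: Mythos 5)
Your route is essentially the paper's: exhaust $W_m\Omega^q_K$ by the twists $[\pi]^{-l}_m W_m\Omega^q_A(\log\pi)$ and combine \corref{cor:GH-2} with \lemref{lem:Positive-F-2} in a colimit over $l$. The only cosmetic difference is how the exhaustion is justified: the paper uses $W_m(K)=W_m(A)[[\pi]_m^{-1}]$ together with Hesselholt's base-change theorem, while you invoke \lemref{lem:Log-fil-4}(3) and the identification $\Fil_{p^{m-1}lE}W_m\Omega^q_K=[\pi]^{-l}_mW_m\Omega^q_A(\log\pi)$ (this is \lemref{lem:Log-fil-4}(9); you cite the $W_m\sO$-level statement \lemref{lem:Log-fil-2-1}, but it is the $\Omega^q$-level one you actually use) --- both are legitimate. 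Your surjectivity and injectivity arguments are exactly the content of the paper's chain of isomorphisms and are correct, modulo the same silently used fact the paper also glosses (multiplication by the unit $[\pi]^{\pm l}_m$ commutes with the convergent series of \corref{cor:GH-2}). One bookkeeping slip: \lemref{lem:Positive-F-2} holds for every $l\in\Z$ and shifts the index by $p^{m-1}l$, so multiplying by $[\pi]^{p^{m-1}l'}_m$ shifts indices by $p^{2(m-1)}l'$, not by $p^{m-1}l'$; this is harmless for the argument but your displayed shifts are off.

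The genuine problem is your final paragraph. The inclusion $[\pi]_mF^{m,q}_0(S)\subseteq F^{m,q}_0(S)$, and hence the claim $F^{m,q}_n(S)\subseteq F^{m,q}_{n'}(S)$ for $n\le n'$ both divisible by $p^{m-1}$, is false: already for $q=0$ one has $F^{m,0}_0(S)=W_m(S)$ and $[\pi]_m\notin W_m(S)$, and more structurally \corref{cor:GH-2} exhibits the groups $F^{m,q}_n(S)$, $n\ge 0$, as independent summands of $W_m\Omega^q_A(\log\pi)$, so no containment between distinct ones can hold. Fortunately the claim is not needed, because the well-definedness worry it was meant to settle is spurious. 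What increases with $l$ is the ambient module $[\pi]^{-l}_mW_m\Omega^q_A(\log\pi)$ (a module-level fact, using only $[\pi]_m\cdot W_m\Omega^q_A(\log\pi)\subseteq W_m\Omega^q_A(\log\pi)$), not the individual graded pieces; the compatibility of the level-$l$ and level-$(l+1)$ decompositions is precisely the uniqueness you already establish in your injectivity step, which is how the paper's colimit $\varinjlim_l\prod_{n\ge -p^{m-1}l}F^{m,q}_n(S)$ works (its transition maps are extensions by zero, never maps between different $F$-groups). Alternatively, define $\theta^{m,q}_K$ directly with no auxiliary $l'$: each $F^{m,q}_n(S)$ with $n<0$ is by definition a subgroup of $W_m\Omega^q_K$, and the nonnegative part converges in $W_m\Omega^q_A(\log\pi)$ via $\theta^{m,q}_A$. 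With the false inclusion deleted and replaced by this remark, your proof is complete and coincides with the paper's.
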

\begin{proof}
We first claim that the canonical map
  \begin{equation}\label{eqn:Base-change-0}
  W_m\Omega^q_A(\log\pi) \otimes_{W_m(A)} W_m(K) \to W_m\Omega^q_K
  \end{equation}
  is an isomorphism of $W_m(K)$-modules.
  Indeed, we know by \cite[0.1.5.3]{Illusie}  that $W_m(K) = W_m(A)[u^{-1}]$, where the
 latter is the localization of $W_m(A)$ obtained by inverting
  $u = [\pi]_m$. This implies that $M \otimes_{W_m(A)} W_m(K) \cong M [u^{-1}]$
  for $M \in \{W_m\Omega^q_A, W_m\Omega^q_A(\log\pi)\}$.
  Since $W_m\Omega^q_A \subset W_m\Omega^q_A(\log\pi) \subset W_m\Omega^q_K$, we get
$W_m\Omega^q_A[u^{-1}] \subset W_m\Omega^q_A(\log\pi)[u^{-1}] \subset
  W_m\Omega^q_K$. The claim now follows because the composite inclusion is an
  isomorphism by \cite[Thm.~C]{Hesselholt-Acta}.

By ~\eqref{eqn:Base-change-0}, we now have
  \[
  \begin{array}{lll}
    W_m\Omega^q_K & \cong & W_m\Omega^q_A(\log\pi)[u^{-1}] \ \ \cong \ \
    {\varinjlim}_{l \ge 0}
      [\pi]^{-l}_m W_m\Omega^q_A(\log\pi) \\
      & {\cong}^1 &  {\varinjlim}_{l \ge 0} [\pi]^{-l}_m \prod_{n \ge 0} F^{m,q}_n(S) 
      \ \ {\cong} \ \ {\varinjlim}_{l \ge 0}  \prod_{n \ge 0} [\pi]^{-l}_m  F^{m,q}_n(S) \\ 
      & {\cong}^2 & {\varinjlim}_{l \ge 0} \prod_{n \ge 0} F^{m,q}_{n-p^{m-1}l}(S)
      \ \ \cong \ \ {\varinjlim}_{l \ge 0} \prod_{n \ge -p^{m-1}l}  F^{m,q}_n(S) \\
      & \cong &  \left[{\varinjlim}_{l \ge 0}
      \left({\bigoplus}_{-p^{m-1}l \le n \le -1} F^{m,q}_n(S)\right)\right]
      \bigoplus \left[{\varinjlim}_{l \ge 0}
        \left(\prod_{n \ge 0} F^{m,q}_n(S)\right)\right] \\
      & \cong & \left[{\varinjlim}_{l \ge 0}
      \left({\bigoplus}_{-l \le n \le -1} F^{m,q}_n(S)\right)\right] \bigoplus \left(\prod_{n \ge 0} F^{m,q}_n(S)\right)
       \\
      & \cong & \left(\bigoplus_{n < 0} F^{m,q}_n(S)\right) \bigoplus \left(\prod_{n \ge 0} F^{m,q}_n(S)\right),
\end{array}
  \]
  where the isomorphisms ${\cong}^1$ and ${\cong}^2$ hold by \corref{cor:GH-2} (induced by $(\theta^{m,q}_A)^{-1}$) and
  \lemref{lem:Positive-F-2}, respectively. 
\end{proof}

Our task now is to describe the image of $\Fil_nW_m\Omega^q_K$ under $\theta^{m,q}_K$.
We shall achieve this in several steps.
We begin by checking some further properties of the groups $F^{m,q}_n(S)$.

\begin{lem}\label{lem:F-group-1}
  For any $n \in \Z, m \ge 1, q \ge 0$, we have the following.
  \begin{enumerate}
    \item
      $d(F^{m,q}_n(S)) \subset F^{m,q+1}_n(S)$.
    \item
      $V(F^{m,q}_n(S)) \subset F^{m+1,q}_n(S)$.
    \item
      $F(F^{m,q}_n(S)) \subset F^{m-1,q}_n(S)$.
    \item
      $R(F^{m,q}_n(S)) = \left\{\begin{array}{ll}
F^{m-1,q}_{n/p}(S) & \mbox{if $p|n$} \\
0  & \mbox{if $p\nmid n$}.
\end{array}\right.$

      \end{enumerate}
\end{lem}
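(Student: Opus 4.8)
The plan is to verify each of the four claimed inclusions/identities directly from the definitions in \eqref{eqn:GH-0} and \defref{defn:GH-1}, reducing everything to the behavior of $d$, $V$, $F$, $R$ on the three basic building blocks $[\pi]^i_m$, $[\pi]^i_m\dlog[\pi]_m$, $V^s(w[\pi]^i_{m-s})$ and $dV^s(w[\pi]^i_{m-s})$. The key facts I would invoke are the standard de Rham--Witt relations listed after \propref{prop:basics} (namely $FV=VF=p$, $FdV=d$, $dF=pFd$, $Fd[a]_m=[a]^{p-1}_md[a]$, $V(F(x)y)=xV(y)$), together with $R\circ V = V\circ R$, $R\circ F = F \circ R$, $Rd = dR$, and $R([\pi]_m) = [\pi]_{m-1}$, as well as \corref{cor:GH-2} to know that the $F^{m,q}_n(S)$ genuinely sit inside $W_m\Omega^q_A(\log\pi)$ and are defined as stated. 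The overall strategy is a case split on the two forms in \defref{defn:GH-1}: the $p^{m-1}\mid n$ ("$A$-type") case and the $s=\min\{j>0 : p^{m-1-j}\mid n\}$ ("$B$-type") case.

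For (1), in the $A$-type case $d([\pi]^i_m) = i[\pi]^{i-1}_m d[\pi]_m = i[\pi]^i_m\dlog[\pi]_m$ and $d([\pi]^i_m\dlog[\pi]_m) = 0$, so $d(A^{m,q}_i(S))\subset A^{m,q+1}_i(S)$; in the $B$-type case $dB^{m,q}_{i,s}(S)\subset B^{m,q+1}_{i,s}(S)$ because $d(V^s(w[\pi]^i_{m-s}))$ is by definition of the $B$-type group already of the right shape (its first summand is $dV^s$ of something in $W_{m-s}\Omega^q_S[\pi]^i_{m-s}$), and $d$ of a $dV^s$-term vanishes. For (2), $V(A^{m,q}_i(S))\subset B^{m+1,q}_{i,1}(S) = F^{m+1,q}_{pi \cdot p^{\,(m+1)-1-1}/p^{\,?}}$ — here I must be careful with the index bookkeeping: writing $n=p^{m-1}i$, then at level $m+1$ the element $V([\pi]^i_m\,w)$ lands in $B^{m+1,q}_{i,1}(S)$, and one checks $i = n/p^{(m+1)-1-1} = n/p^{m-1}$, and indeed $s=1=\min\{j>0 : p^{m-j}\mid n\}$ since $p^{m-1}\mid n$ but we only need $p^{m-1}\mid n$ at the new level — so $V(F^{m,q}_n(S))\subset F^{m+1,q}_n(S)$. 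The $B$-type case is similar but easier: $V(V^s(\cdots)) = V^{s+1}(\cdots)$ and $V(dV^s(\cdots))$ — here I use $V$ is only additive, not compatible with $d$, so I rewrite $V(dV^s(w[\pi]^i_{m-s})) = V\!\bigl(d(V^s(w[\pi]^i_{m-s}))\bigr)$; since $dV = Vd\cdot$(no), I instead use $d V^{s+1} = V\,d\,V^s\cdot$(not directly) — the clean route is $dV^s(w[\pi]^i_{m-s}) = V^s d(w[\pi]^i_{m-s})$ only up to the relation $pdV=Vd$, so more carefully $V\,dV^s(x) $: using $FdV = d$ gives $dV = $ nothing immediately, so I would instead expand $w[\pi]^i_{m-s}d[\pi]_{m-s}$-type generators and push $V$ through using $V(x\,dy) = Vx\,dVy$ and $V(F(a)b)=aV(b)$, landing in $dV^{s+1}(W_{m-s}\Omega^{q-1}_S[\pi]^i_{m-s-?})$ after re-indexing.

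For (3), $F(A^{m,q}_i(S))$: $F([\pi]^i_m) = [\pi]^i_{m-1}$ and $F([\pi]^i_m\dlog[\pi]_m) = [\pi]^i_{m-1}\dlog[\pi]_{m-1}$ (using $Fd[\pi]_m = [\pi]^{p-1}_{m-1}d[\pi]_{m-1}$ hence $F([\pi]^i_m\,[\pi]^{-1}_m d[\pi]_m) = [\pi]^{i-1}_{m-1}Fd[\pi]_m = [\pi]^{i-1}_{m-1}[\pi]^{p-1}_{m-1}d[\pi]_{m-1}$, which is $[\pi]^{i}_{m-1}\dlog[\pi]_{m-1}$ only if... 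I need to recompute: actually $\dlog[\pi]_m = [\pi]^{-1}_m d[\pi]_m$ so $[\pi]^i_m\dlog[\pi]_m = [\pi]^{i-1}_m d[\pi]_m$, and $F([\pi]^{i-1}_m d[\pi]_m) = [\pi]^{p(i-1)}_{m-1}Fd[\pi]_m = [\pi]^{p(i-1)+p-1}_{m-1}d[\pi]_{m-1} = [\pi]^{pi-1}_{m-1}d[\pi]_{m-1} = [\pi]^{pi}_{m-1}\dlog[\pi]_{m-1}$ — so at level $m-1$ the index becomes $pi$, and one checks $pi = n/p^{(m-1)-1} = n/p^{m-2}$ when $n=p^{m-1}i$; and $p^{m-2}\mid pi\cdot p^{m-2} = n$, consistent, so $F(F^{m,q}_n(S))\subset F^{m-1,q}_n(S)$); for $B$-type, $F(V^s(\cdots)) = V^{s-1}F(F(\cdots))$ when $s\ge 1$ via $FV=VF=p$ giving $FV^s = V^{s-1}FV\cdot F^{?}$ — cleanest is $FV^s = V^{s-1}(FV) = V^{s-1}p$ is wrong; rather $FV = p$ so $FV^s = FV\cdot V^{s-1} = p V^{s-1} = V^{s-1}VF V^{s-2}\cdots$, i.e. $FV^s = V^{s-1}\cdot(FV) = p V^{s-1}$? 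No: $FV^s = (FV)V^{s-1} = pV^{s-1}$, and $p V^{s-1}(w[\pi]^i_{m-s}) = V^{s-1}(pw[\pi]^i_{m-s})$ — but I want it at level $m-1$, i.e. $F: W_m\to W_{m-1}$, so $FV^s(w[\pi]^i_{m-s})$ with $w\in W_{m-s}\Omega^q_S$: this equals $V^{s-1}(\text{something in }W_{m-s}\Omega^q_S[\pi]^{?}_{m-s})$ after using $F[\pi]_{m-s}=[\pi]_{m-s-1}$ is the wrong direction — I'll instead use the relation $R(F^{m,q}_n)$ computation as a template and the identity $F = R\circ\overline F$ where $\overline F(\un a) = (a_i^p)$, to reduce $F$ on $B$-type to $R$ on $B$-type plus a Frobenius-twist. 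For (4), $R(A^{m,q}_i(S))$: $R([\pi]^i_m) = [\pi]^i_{m-1}$ which lies in $A^{m-1,q}_i(S) = F^{m-1,q}_{p^{m-2}i}(S)$; but $n = p^{m-1}i$ so $n/p = p^{m-2}i$, matching the claim when $p\mid n$ (here always, so the "$p\nmid n$" subcase is vacuous for $A$-type); $R([\pi]^i_m\dlog[\pi]_m) = [\pi]^i_{m-1}\dlog[\pi]_{m-1}$, same conclusion. For $B$-type with $n = p^{m-1-s}i$, $|i|\in I_p$: $R(V^s(w[\pi]^i_{m-s})) = V^s R(w[\pi]^i_{m-s}) = V^s(R(w)[\pi]^i_{m-s-1})$ — wait, $R$ lowers level, and if $s = m-1$ then $V^{m-1}$ applied to something in $W_1$, and $R\circ V^{m-1} = V^{m-1}\circ R = 0$ on $W_1$ (since $R: W_1\to W_0 = 0$); more generally $R(V^s(w[\pi]^i_{m-s}))$ with $w\in W_{m-s}\Omega^q_S$ equals $V^s(R(w)\,[\pi]^i_{m-s-1})$ which is in $B^{m-1,q}_{i, s}(S)$; now $n/p = p^{m-1-s}i/p = p^{(m-1)-1-s}i$ and $s = \min\{j>0 : p^{(m-1)-1-j}\mid n/p\}$ iff $p\mid n$ in the appropriate sense — precisely, $p\mid n = p^{m-1-s}i$ iff $m-1-s\ge 1$ (since $p\nmid i$), i.e. $s\le m-2$, in which case $R$ gives the $B$-type group at level $m-1$ with index $n/p$; and if $s = m-1$ (so $p^{m-1-s} = p^0 = 1$, $n = i$, $p\nmid n$), then $R = 0$, matching. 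I would organize this as a clean case table. The main obstacle I anticipate is purely bookkeeping: keeping the index $n$, the exponent $i$, the level $m$, and the Verschiebung-depth $s$ consistent across the two cases and all four operations without sign or off-by-one errors — the underlying module manipulations are routine given \corref{cor:GH-2} and the standard relations, but getting the correspondence $n \leftrightarrow (i,s)$ to transform correctly under $d,V,F,R$ requires care, particularly distinguishing when applying $V$ or $F$ moves an element between the "$A$-type" and "$B$-type" regimes.
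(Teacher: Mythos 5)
Your overall strategy is the same as the paper's: split according to the two shapes in \defref{defn:GH-1} and check the four operators generator by generator using the standard relations $FV=p$, $FdV=d$, $Vd=pdV$, $F([a]_m)=[a]^p_{m-1}$, together with $R$ commuting with $d,V,F$. Your treatment of (1) and (4) is essentially the paper's argument. But in (2) and (3) the plan has genuine gaps, and they are exactly where the real content of the lemma sits. In (2), $A$-type case, you only treat the generators $w[\pi]^i_m$, and you identify $F^{m+1,q}_n(S)$ with $B^{m+1,q}_{i,1}(S)$ on the grounds that ``we only need $p^{m-1}\mid n$ at the new level''. That identification additionally requires $p^{m}\nmid n$, i.e. $p\nmid i$; when $i=pi'$ the group $F^{m+1,q}_n(S)$ is the $A$-type group $A^{m+1,q}_{i'}(S)$, and one needs the separate computation $V(w[\pi]^{pi'}_m)=V(w)[\pi]^{i'}_{m+1}$, as the paper does. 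More seriously, you never treat the $\dlog$-generators $w[\pi]^i_m\dlog([\pi]_m)$ under $V$: their image is not literally of the shape $V(\cdots)+dV(\cdots)$, and the paper must rewrite $w[\pi]^i_m\dlog([\pi]_m)$ as a unit multiple of $w\,d([\pi]^i_m)$ (using $p\nmid i$), apply Leibniz, and then use $Vd=pdV$. This is the nontrivial half of claim (2) and is absent from your sketch.

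In (3), $B$-type case, your (eventually correct) observation $FV^s=pV^{s-1}$ disposes of the $V^s$-generators, but the $dV^s$-generators are never addressed: the needed relation is $FdV^s=dV^{s-1}$ (from $FdV=d$), and in the boundary case $s=1$ this produces $d(w[\pi]^i_{m-1})$, which must be expanded by Leibniz to see that it lands in the $A$-type group $A^{m-1,q}_i(S)$ — precisely the $B$-to-$A$ regime change you flag at the end but do not resolve. The fallback you propose instead, reducing $F$ on $B$-type elements to the $R$-computation via ``$F=R\circ\ov{F}$'', would fail: that identity is a Witt-vector ($q=0$) statement, and on de Rham--Witt forms $\ov{F}$ is only defined modulo $dV^{m-1}\Omega^{q-1}$, so it cannot be used to compute $F$ of $dV^s$-terms. (Relatedly, formulas asserted en route such as $F([\pi]^i_m)=[\pi]^i_{m-1}$ and $F([\pi]_{m-s})=[\pi]_{m-s-1}$ are false, $F$ being $R$ composed with a $p$-th power on Teichm\"uller elements; you do correct the index to $pi$ in the $A$-type case of (3), so that part survives, but the $B$-type case of (3) and the $A$-type case of (2) as written are incomplete.)
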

\begin{proof}
  We divide the proof into two cases: when $p^{m-1}|n$ and when $p^{m-1}\nmid n$.
  \\
  $Case~1:$ $n = p^{m-1}i$. \\
  In this case, 
  $F^{m,q}_n(S) = (W_m\Omega^q_S + W_m\Omega^{q-1}_S\dlog([\pi]_m))[\pi]^i_m$.
  Since
  $d(W_m\Omega^q_S[\pi]^i_m) \subset  W_m\Omega^{q+1}_S[\pi]^i_m +
    W_m\Omega^q_S[\pi]^i_m \dlog([\pi]_m) = F^{m,q+1}_n(S)$,
    and, $d(W_m\Omega^{q-1}_S[\pi]^i_m\dlog([\pi]_m)) \subset
    W_m\Omega^q_S[\pi]^i_m\dlog([\pi]_m)
     \subset F^{m,q+1}_n(S)$, the claim (1) follows.

    For (2), note that if $i \in I_p$, we have
    $V(W_m\Omega^q_S[\pi]^i_m) \in F^{m+1,q}_n(S)$ by latter's definition.
    Next, we have
    \[
    \begin{array}{lll}
      V(W_m\Omega^{q-1}_S\dlog([\pi]_m)[\pi]^i_m) & = &
      V(W_m\Omega^{q-1}_S d([\pi]^i_m)) \\
      & \subset & Vd(W_m\Omega^{q-1}_S [\pi]^i_m) + V(W_{m}\Omega^{q}_S[\pi]^i_m) \\
      & \subset & F^{m+1,q}_n(S).
    \end{array}
    \]
    If $i = pi'$, then $n = p^mi'$ and
    $V(W_m\Omega^q_S[\pi]^i_m)
    \subset W_{m+1}\Omega^q_S[\pi]^{i'}_{m+1} \subset F^{m+1,q}_n(S)$.
   Also, we have $V(W_m\Omega^{q-1}_S\dlog[\pi]_m[\pi]^i_m)
    \subset  W_{m+1}\Omega^{q-1}_S\dlog[\pi]_{m+1}[\pi]^{i'}_{m+1} \subset F^{m+1,q}_n(S)$.
    
    For (3), write $n = p^{m-2}i'$, where $i' = pi$. We then get
    $F(W_m\Omega^q_S[\pi]^i_m) \subset W_{m-1}\Omega^q_S[\pi]^{i'}_{m-1}$ 
    which lies in $F^{m-1,q}_n(S)$. Similarly,
    \[
    F(W_m\Omega^{q-1}_S\dlog([\pi]_m)[\pi]^i_m) \subset
    W_{m-1}\Omega^{q-1}_S\dlog([\pi]_{m-1})[\pi]^{i'}_{m-1} \subset F^{m-1,q}_n(S).
    \]
   For (4), write $n' = n/p = p^{m-2}i$ so that
    $R(W_m\Omega^q_S[\pi]^i_m) = W_{m-1}\Omega^q_S[\pi]^{i}_{m-1}$. Similarly,
    $R(W_m\Omega^{q-1}_S\dlog[\pi]_m[\pi]^i_m)
    = W_{m-1}\Omega^{q-1}_S\dlog[\pi]_{m-1}[\pi]^i_{m-1}$. Hence $R(F^{m,q}_{n}(S))=F^{m-1,q}_{n'}(S)$.
    \\
    \noindent
    \\
    $Case~2:$ $n = p^{m-1-s}i$ with $s \ge 1$ and $|i| \in I_p$. \\
    In this case, we have
    $F^{m,q}_n(S) = V^s(W_{r}\Omega^q_S[\pi]^i_r) + dV^s(W_r\Omega^{q-1}_S[\pi]^i_r)$,
    where $r = m-s$ $= (m+1)-(s+1)$.  But this implies that
    $d(F^{m,q}_n(S)) = dV^s(W_{r}\Omega^q_S[\pi]^i_r) \subset F^{m,q+1}_n(S)$.
    This proves (1). 
    
    For (2), we note that
    $V(F^{m,q}_n(S)) = V^{s'}(W_{r}\Omega^q_S[\pi]^i_r) +
    VdV^s(W_r\Omega^{q-1}_S[\pi]^i_r) \subset$ \\
    $V^{s'}(W_{r}\Omega^q_S[\pi]^i_r) + dV^{s'}(W_r\Omega^{q-1}_S[\pi]^i_r) =
    F^{m+1,q}_n(S)$ if we let $s' = s+1$.
    
    For (3), suppose first that $s = 1$ so that $n = p^{m-2}i$.
    Then
    \[
    \begin{array}{lll}
     F(F^{m,q}_n(S)) & = & FV(W_{m-1}\Omega^q_S[\pi]^i_{m-1}) +
     FdV(W_{m-1}\Omega^{q-1}_S[\pi]^i_{m-1}) \\
     & \subset & W_{m-1}\Omega^q_S[\pi]^i_{m-1} + d(W_{m-1}\Omega^{q-1}_S[\pi]^i_{m-1}) \\
     & \subset & (W_{m-1}\Omega^q_S + 
     W_{m-1}\Omega^{q-1}_S\dlog([\pi]_{m-1}))[\pi]^i_{m-1} = F^{m-1,q}_n(S).
\end{array}
    \]
    If $s \ge 2$, we let $s' = s-1 \ge 1$ so that $n = p^{m-2-s'}i$ and
    $r = (m-1) - s'$.
    This yields
    \[
    \begin{array}{lll}
      F(F^{m,q}_n(S)) & = & FV^s(W_{r}\Omega^q_S[\pi]^i_{r}) +
      FdV^s(W_{r}\Omega^{q-1}_S[\pi]^i_{r}) \\
      & \subset &
    V^{s'}(W_{r}\Omega^q_S[\pi]^i_{r}) +
    dV^{s'}(W_{r}\Omega^{q-1}_S[\pi]^i_{r}) = F^{m-1,q}_n(S).
    \end{array}
    \]
    Finally, for (4), note that
    $$RV^s(W_r\Omega^q_S[\pi]^i_r) + RdV^s(W_r\Omega^{q-1}_S[\pi]^i_r)
    = \hspace{3cm} $$
    $$ \hspace{4cm} V^s(W_{r-1}\Omega^q_S[\pi]^{{n'}/{p^{m-2-s}}}_{r-1})+ dV^s(W_{r-1}\Omega^{q-1}_S
    [\pi]^{{n'}/{p^{m-2-s}}}_{r-1}),$$ where $n' = n/p$. The latter group is in $F^{m-1,q}_{n'}(S)$
    if $s < m-1$ and is zero if $s = m-1$. This concludes the proof.
\end{proof}

\begin{lem}\label{lem:F-group-0}
  We have $F^{m,0}_n(S) \cdot F^{m,q}_{n'}(S) \subset F^{m, q}_{n+n'}(S)$ for $q, n, n' \ge 0, m \ge 1$. 
\end{lem}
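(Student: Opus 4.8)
The plan is to reduce the statement to an elementary multiplication rule in the two cases that define the groups $F^{m,q}_n(S)$, namely $p^{m-1}\mid n$ versus $p^{m-1}\nmid n$, and to handle the two factors $F^{m,0}_n(S)$ and $F^{m,q}_{n'}(S)$ according to which case each falls into. Since $F^{m,0}_n(S)$ and $F^{m,q}_{n'}(S)$ are each, by \defref{defn:GH-1}, of the shape $A^{m,\bullet}_i(S)$ or $B^{m,\bullet}_{i,s}(S)$, the product $F^{m,0}_n(S)\cdot F^{m,q}_{n'}(S)$ decomposes into four kinds of products, and each must be shown to land in $F^{m,q}_{n+n'}(S)$. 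Throughout, I would freely use \lemref{lem:Positive-F-2}, which lets me pull Teichm\"uller powers $[\pi]^l_m$ in and out of the $F$-groups and thereby normalize one of the two indices; I would also use the standard relations $aV(b)=V(F(a)b)$, $Fd[\pi]_r=[\pi]^{p-1}_{r-1}d[\pi]_{r-1}$, the Leibniz rule, and the fact that $W_m\Omega^\bullet_S$ is a graded-commutative ring.

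The first step is the case where both $n$ and $n'$ are divisible by $p^{m-1}$. Writing $n=p^{m-1}i$ and $n'=p^{m-1}i'$, one has $F^{m,0}_n(S)=W_m\sO_S[\pi]^i_m+W_m\sO_S[\pi]^i_m\dlog[\pi]_m$ and similarly for the degree-$q$ factor with index $i'$; since $\dlog[\pi]_m\wedge\dlog[\pi]_m=0$, the product of these two is contained in $(W_m\Omega^q_S+W_m\Omega^{q-1}_S\dlog[\pi]_m)[\pi]^{i+i'}_m=F^{m,q}_{n+n'}(S)$, using that $n+n'=p^{m-1}(i+i')$. By \lemref{lem:Positive-F-2} (applied to reduce to the case $p^{m-1}\mid n$), the general case reduces to the situation where the degree-zero factor $F^{m,0}_n(S)$ has index divisible by $p^{m-1}$: indeed $F^{m,0}_n(S)=[\pi]^i_m F^{m,0}_0(S)$ when $p^{m-1}\mid n$, and multiplication by $[\pi]^i_m$ shifts $F^{m,q}_{n'}(S)$ to $F^{m,q}_{n'+p^{m-1}i}(S)=F^{m,q}_{n+n'}(S)$. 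So the only remaining work is to treat $F^{m,0}_0(S)\cdot F^{m,q}_{n'}(S)$ for arbitrary $n'\ge 0$.

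The second step is therefore: $F^{m,0}_0(S)\cdot F^{m,q}_{n'}(S)\subset F^{m,q}_{n'}(S)$. Here $F^{m,0}_0(S)=W_m\sO_S+W_m\sO_S\dlog[\pi]_m$. When $p^{m-1}\nmid n'$, write $n'=p^{m-1-s}j$ with $s=\min\{t>0: p^{m-1-t}\mid n'\}$, $r=m-s$, $|j|\in I_p$, so $F^{m,q}_{n'}(S)=V^s(W_r\Omega^q_S[\pi]^j_r)+dV^s(W_r\Omega^{q-1}_S[\pi]^j_r)$. For $w\in W_m\sO_S$, the product $w\cdot V^s(W_r\Omega^q_S[\pi]^j_r)=V^s(F^s(w)W_r\Omega^q_S[\pi]^j_r)\subset V^s(W_r\Omega^q_S[\pi]^j_r)$ since $F^s(w)\in W_r\sO_S$, and $w\cdot dV^s(\cdots)$ is handled by Leibniz: $w\,dV^s(\eta)=d(w\,V^s(\eta))-dw\cdot V^s(\eta)$, and both $d(w V^s(\eta))=d V^s(F^s(w)\eta)$ and $dw\cdot V^s(\eta)=V^s(F^s(dw)\eta)$ (using $dF^s=p^s F^s d$ and $F^sdV^s=d$... more carefully: $F\,dV=d$) lie in $F^{m,q}_{n'}(S)$. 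For the $\dlog[\pi]_m$ summand, use $w\,\dlog[\pi]_m\cdot V^s(\eta)=V^s(F^s(w)F^s(\dlog[\pi]_m)\eta)=V^s(F^s(w)\dlog[\pi]_r\cdot\eta)$ and absorb $\dlog[\pi]_r$ into the wedge, noting $\dlog[\pi]_r\cdot[\pi]^j_r=d[\pi]^j_r/\text{(unit)}$ type identities to rewrite it inside $dV^s(\cdots)$; the $dV^s$ summand times $\dlog[\pi]_m$ is treated the same way after Leibniz. When $p^{m-1}\mid n'$ the degree-$q$ factor is $A^{m,q}_{i'}(S)$ and the computation is the purely formal one of the first step again. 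The one genuinely fiddly point — and the place I expect to spend real care — is keeping the $\dlog[\pi]_m$-components straight when both factors contribute one and when they pass through $V^s$: one must repeatedly invoke $V^s(x)\dlog[\pi]_m=V^s(x\,\dlog[\pi]_r)$ (from $aV(b)=V(F(a)b)$ with $a=\dlog[\pi]_m$, $F(\dlog[\pi]_m)=\dlog[\pi]_{r}$ after $s$ steps... i.e.\ $F^s\dlog[\pi]_m=\dlog[\pi]_{m-s}$) and then recognize the result as sitting in $F^{m,q}_{n'}(S)$ by the defining description. Once this bookkeeping is done, summing over the finitely many term-types gives the claim, and the proof is complete.
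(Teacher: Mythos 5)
Your Leibniz computation for products with $W_m(S)$ is fine and matches the paper's opening observation, but the reduction you build the whole proof on does not work, and because of it the substantive half of the lemma is never treated. \lemref{lem:Positive-F-2} only gives $[\pi]^l_m F^{m,q}_n(S)=F^{m,q}_{n+p^{m-1}l}(S)$: multiplying by Teichm\"uller powers shifts the index by multiples of $p^{m-1}$, hence preserves the residue class of $n$ modulo $p^{m-1}$ and in particular the type ($A$ versus $B$, with the same $s$) of the group. So it cannot be "applied to reduce to the case $p^{m-1}\mid n$". The case your argument omits is precisely the one where the degree-zero factor is of $B$-type, $F^{m,0}_n(S)=V^s\bigl(W_{m-s}(S)[\pi]^i_{m-s}\bigr)$ with $s\ge 1$ and $|i|\in I_p$, and this is the main content of the paper's proof: there one writes $V^s(a)\cdot b=V^s\bigl(a\,F^s(b)\bigr)$, uses $F(F^{m,q}_{n'}(S))\subset F^{m-1,q}_{n'}(S)$ and $V(F^{m,q}_{n'}(S))\subset F^{m+1,q}_{n'}(S)$ from \lemref{lem:F-group-1} together with the $W_{m-s}(S)$-stability observation, and then applies \lemref{lem:Positive-F-2} at level $m-s$ (noting $p^{m-s-1}i=n$) to get $F^{m,0}_n(S)\cdot F^{m,q}_{n'}(S)\subset V^s\bigl([\pi]^i_{m-s}F^{m-s,q}_{n'}(S)\bigr)\subset V^s\bigl(F^{m-s,q}_{n+n'}(S)\bigr)\subset F^{m,q}_{n+n'}(S)$. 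Without some version of this computation your proof is incomplete; your "first step plus second step" only covers the case $p^{m-1}\mid n$, which is the easy Case~1 of the paper.

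A secondary point: in degree zero there is no $\dlog$ summand, i.e. $F^{m,0}_0(S)=W_m(S)$ and more generally $A^{m,0}_i(S)=W_m(S)[\pi]^i_m$, because the second summand of $A^{m,q}_i(S)$ is $W_m\Omega^{q-1}_S[\pi]^i_m\dlog([\pi]_m)$ (the display \eqref{eqn:GH-0} has a typo; compare the descriptions used in \lemref{lem:F-group-1}, \lemref{lem:F-group-6} and in the paper's proof of this very lemma). With the extra $W_m(S)\dlog([\pi]_m)$ term you inserted, your claimed inclusion $F^{m,0}_0(S)\cdot F^{m,q}_{n'}(S)\subset F^{m,q}_{n'}(S)$ cannot hold under the intended definitions, since a $1$-form times a $q$-form is a $(q+1)$-form while the $\dlog$-component of $F^{m,q}_{n'}(S)$ comes from $W_m\Omega^{q-1}_S$; dropping that term, what remains of your second step is correct and coincides with the paper's first observation.
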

\begin{proof}
First note that $W_m(S)\cdot F^{m,q}_{n'}(S) \subset F^{m,q}_{n'}(S)$. Indeed, if $p^{m-1} \mid n'$ then this is obvious. If $p^{m-1} \nmid n'$ then only need to show $W_m(S)\cdot dV^s(W_r(S)[\pi]^i_r) \subset F^{m,q}_{n'}(S)$, where $n'=p^{m-1-s}i$, $|i| \in I_p$. But this follows easily by Leibniz rule.
  
  We break the proof into two cases. We shall assume
  $1 \le s \le m-1$ and write $r = m-s$. \\
 $Case~1:$ $n = p^{m-1}i$. \\
  In this case, $F^{m,0}_n(S) = W_m(S)[\pi]^i_m$.
  Hence, we get 
  $$F^{m,0}_n(S) \cdot F^{m,q}_{n'}(S) =
  W_m(S)[\pi]^i_m \cdot F^{m,q}_{n'}(S)
  \subset F^{m, q}_{n+n'}(S),$$
  by the above observation and \lemref{lem:Positive-F-2}.
  \\
  \noindent
\\
$Case~2:$ $n = p^{m-1-s}i$, where $|i| \in I_p$. \\
In this case, $F^{m,0}_n(S) = V^s(W_{r}(S)[\pi]^i_{m-s})$. Thus, we get
\[
\begin{array}{lll}
  F^{m,0}_n(S) \cdot F^{m,q}_{n'}(S) & = & V^s(W_{r}(S)[\pi]^i_{r}) \cdot 
 F^{m,q}_{n'}(S) \\
  &\subset^1 & V^s([\pi]^i_{r}.F^{m-s,q}_{n'}(S)) \subset^2 V^s(F^{m-s,q}_{n'+n}(S))\\
  & \subset^3 & F^{m-s,q}_{n'+n}(S), 
\end{array}
\]
where the equality $\subset^1$ and $\subset^3$ uses \lemref{lem:F-group-1}, item (3) and (2), respectively and also the above observation, while $\subset^2$ uses \lemref{lem:Positive-F-2}. 
\end{proof}

\begin{lem}\label{lem:F-group-6}
  $F^{m,q}_n(S) \hookrightarrow \Fil_{-n}W_m\Omega^q_K$ (by $\theta^{m,q}_A$) for $n, q \ge 0, m \ge 1$.
\end{lem}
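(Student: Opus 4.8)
The plan is to reduce the assertion to a containment of subgroups of $W_m\Omega^q_K$ and then check it on the explicit generators of $F^{m,q}_n(S)$. Since $\theta^{m,q}_A$ is an isomorphism by \corref{cor:GH-2} and its restriction to the factor $F^{m,q}_n(S)$ is nothing but the tautological inclusion $F^{m,q}_n(S)\hookrightarrow W_m\Omega^q_A(\log\pi)\subseteq W_m\Omega^q_K$, the injectivity is automatic, and it remains only to show that, with $X=\Spec A$, $E=V(\pi)$ and $U=\Spec K$, one has $F^{m,q}_n(S)\subseteq \Fil_{-nE}W_m\Omega^q_U=\Fil_{-n}W_m\Omega^q_K$. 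The argument will then be pure bookkeeping: it combines the formal properties (4), (6), (8) of \lemref{lem:Log-fil-4} with the identification $\Fil_0 W_\bullet\Omega^\bullet_U=W_\bullet\Omega^\bullet_A(\log\pi)$ (\lemref{lem:Log-fil-4}(10), using \thmref{thm:Log-DRW}) and the obvious fact that the image of $W_m\Omega^q_S$ in $W_m\Omega^q_K$ lies in $W_m\Omega^q_A\subseteq W_m\Omega^q_A(\log\pi)$.

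The one input I would isolate first is the behaviour of Teichm\"uller powers of $\pi$ under the filtration: for every $i\ge 0$ and $l\ge 1$ the element $[\pi]^i_l=[\pi^i]_l$ lies in $\Fil_{-ip^{l-1}}W_l\sO_U$. This is immediate from \eqref{eqn:Log-fil-1}, since the only nonzero Witt component of $[\pi^i]_l$ is the top one, whose $p^{l-1}$-th power equals $\pi^{ip^{l-1}}$, so that $\pi^{ip^{l-1}}$ divides it while all lower components are $0$. It is precisely this matching of exponents that makes the argument land in $\Fil_{-n}$ rather than in a weaker step of the filtration, and this is the only point that requires genuine care; everything else is formal.

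Next I would split into the two cases of Definition~\ref{defn:GH-1}. If $p^{m-1}\mid n$, put $i=n/p^{m-1}\ge 0$, so $F^{m,q}_n(S)=W_m\Omega^q_S[\pi]^i_m+W_m\Omega^{q-1}_S[\pi]^i_m\dlog[\pi]_m$ (the case $i=0$ being $F^{m,q}_0(S)$). Both $W_m\Omega^q_S$ and $W_m\Omega^{q-1}_S\dlog[\pi]_m$ sit inside $W_m\Omega^q_A(\log\pi)=\Fil_0 W_m\Omega^q_U$, while $[\pi]^i_m\in\Fil_{-ip^{m-1}}W_m\Omega^0_U=\Fil_{-n}W_m\Omega^0_U$; applying \lemref{lem:Log-fil-4}(8) with the second divisor equal to $-nE$ (whose support lies in $E$) shows that both summands lie in $\Fil_{-n}W_m\Omega^q_U$. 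If $p^{m-1}\nmid n$ (so $m\ge 2$ and $n\ge 1$), set $s=\min\{j>0: p^{m-1-j}\mid n\}$, $r=m-s$ and $i=n/p^{r-1}$, so that $F^{m,q}_n(S)=V^s(W_r\Omega^q_S[\pi]^i_r)+dV^s(W_r\Omega^{q-1}_S[\pi]^i_r)$ and, crucially, $ip^{r-1}=n$. As in the first case, $W_r\Omega^q_S[\pi]^i_r\subseteq\Fil_0 W_r\Omega^q_U\cdot\Fil_{-n}W_r\Omega^0_U\subseteq\Fil_{-n}W_r\Omega^q_U$ by \lemref{lem:Log-fil-4}(8), and iterating \lemref{lem:Log-fil-4}(6) $s$ times gives $V^s(W_r\Omega^q_S[\pi]^i_r)\subseteq\Fil_{-n}W_m\Omega^q_U$. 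The same reasoning gives $V^s(W_r\Omega^{q-1}_S[\pi]^i_r)\subseteq\Fil_{-n}W_m\Omega^{q-1}_U$, after which \lemref{lem:Log-fil-4}(4) keeps $dV^s(W_r\Omega^{q-1}_S[\pi]^i_r)$ inside $\Fil_{-n}W_m\Omega^q_U$.

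Putting the two cases together yields $F^{m,q}_n(S)\subseteq\Fil_{-n}W_m\Omega^q_K$. The main (modest) obstacle is really conceptual rather than computational: one must get the convention for Witt components right so that $[\pi]^i_m$ sits in exactly $\Fil_{-ip^{m-1}}$ (and $[\pi]^i_r$ in $\Fil_{-ip^{r-1}}$), and observe that Definition~\ref{defn:GH-1} has been rigged so that these indices equal $-n$. Once that is in place, properties (4), (6), (8) of \lemref{lem:Log-fil-4} do all the remaining work.
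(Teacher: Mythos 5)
Your proof is correct and follows essentially the same route as the paper: the same case split from Definition~\ref{defn:GH-1}, with the $B^{m,q}_{i,s}$ case reduced to level $r=m-s$ and then pushed up by $V^s$ and $dV^s$ using \lemref{lem:Log-fil-4}(4),(6). The only (harmless) variations are that in the first case you verify the inclusion directly from $[\pi]^i_m\in\Fil_{-ip^{m-1}}W_m\sO_U$ together with \lemref{lem:Log-fil-4}(8),(10), where the paper instead invokes the equality of \lemref{lem:Log-fil-4}(9), and that you deduce injectivity from \corref{cor:GH-2} rather than directly from \thmref{thm:GH-Top}.
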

\begin{proof} 
First, by \thmref{thm:GH-Top}, for all $q, i \ge 0$, we have $$W_{m}\Omega^q_S[\pi]^i_m \inj W_{m}\Omega^q_A[\pi]^i_m \text{ and } W_{m}\Omega^q_S[\pi]^i_m \dlog([\pi]_m) \inj W_{m}\Omega^q_A[\pi]^i_m\dlog([\pi]_m).$$ 
Now we consider two cases.
  
  If $n = p^{m-1}i$ and $i \ge 0$, we get
  \[
  \begin{array}{lll}
    F^{m,q}_n(S) & = & A^{m,q}_i(S) =
    (W_m\Omega^q_S+ W_m\Omega^{q-1}_S\dlog([\pi]_m))[\pi]^i_m \\
    & \inj & (W_m\Omega^q_A+ W_m\Omega^{q-1}_A\dlog([\pi]_m))[\pi]^i_m \\
    & = & \Fil_{-n}W_m\Omega^q_K,
  \end{array}
  \]
  where the last equality holds by \lemref{lem:Log-fil-4}(9).

  If $n = p^{m-1-s}i$ with $1 \le s \le m-1$ and $|i| \in I_p$, then
   \[
  \begin{array}{lll}
    F^{m,q}_n(S) & = & B^{m,q}_{i,s} = V^s(W_{r}\Omega^q_S[\pi]^i_r) +
    dV^s(W_r\Omega^{q-1}_S[\pi]^i_r) \\
    & {\inj} & V^s(\Fil_{-p^{r-1}i}W_r\Omega^q_K) +
    dV^s(\Fil_{-p^{r-1}i}W_r\Omega^{q-1}_K)
    \\
    & = & V^s(\Fil_{-n}W_r\Omega^q_K) + dV^s(\Fil_{-n}W_r\Omega^{q-1}_K) \\
    & {\subset} &  \Fil_{-n}W_m\Omega^q_K,
   \end{array}
  \]
  where $r = m-s$ and the last inclusion holds by \lemref{lem:Log-fil-4}(4), (6).
\end{proof}

In order to describe $\Fil_nW_m\Omega^q_K$ in terms of $\{F^{m,q}_n\}$, we need to 
dispose of the case $q = 0$ first. We do this in what follows next.
One deduces from \lemref{lem:F-group-6} for $q=0$ that $F^{m,0}_n(S) \subset \Fil_{-n}W_m(K)$ for $n \ge 0$.

\begin{lem}\label{lem:F-group-2}
  For $n \in \Z$, the composite map $\lambda^{m,0}_n \colon
  F^{m,0}_{n}(S) \inj \Fil_{-n}W_m(K) \surj \gr_{-n}W_m(K)$ is an isomorphism.
\end{lem}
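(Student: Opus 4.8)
The plan is to reduce the statement to the structure theorem of Geisser--Hesselholt (\thmref{thm:GH-Top}) and its consequence \corref{cor:GH-3}, which gives a canonical direct-sum/product decomposition $W_m\Omega^0_K = W_m(K) \cong \left(\bigoplus_{n<0}F^{m,0}_n(S)\right)\bigoplus\left(\prod_{n\ge 0}F^{m,0}_n(S)\right)$ via $\theta^{m,0}_K$. The key point is to identify, under this decomposition, exactly which ``slices'' $F^{m,0}_j(S)$ lie in $\Fil_{-n}W_m(K)$ and which lie in $\Fil_{-n-1}W_m(K)$. I expect the clean statement to be: an element $w=\sum_j w_j$ (with $w_j\in F^{m,0}_j(S)$) lies in $\Fil_{-n}W_m(K)$ if and only if $w_j=0$ for all $j<n$. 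Granting this, the composite $F^{m,0}_n(S)\inj \Fil_{-n}W_m(K)\surj \gr_{-n}W_m(K)=\Fil_{-n}W_m(K)/\Fil_{-n-1}W_m(K)$ is injective (its kernel would be $F^{m,0}_n(S)\cap\Fil_{-n-1}W_m(K)$, which is $0$ because $\Fil_{-n-1}W_m(K)$ only involves slices with index $\ge n+1$) and surjective (any element of $\Fil_{-n}W_m(K)$ has its lowest-index slice in position $n$, so modulo $\Fil_{-n-1}$ it equals the class of that slice, which is in the image of $\lambda^{m,0}_n$).

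First I would establish the containment $F^{m,0}_n(S)\subset \Fil_{-n}W_m(K)$, which is already given by \lemref{lem:F-group-6} in the case $q=0$. Next I would prove the sharper ``only if'' direction: if $w=\sum_{j}w_j\in\Fil_{-n}W_m(K)$ then $w_j=0$ for $j<n$. Here I would use the explicit form of $\Fil_{-n}W_m(K)$ coming from \eqref{eqn:Log-fil-1}: $\un{a}=(a_{m-1},\dots,a_0)\in\Fil_{-n}W_m(K)$ means $\pi^{-n}a_i^{p^i}\in A$, i.e. $a_i\in\pi^{\lceil n/p^i\rceil}A$ (using that $A=S[[\pi]]$ is a UFD with $\pi$ prime). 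Writing each $a_i$ as a power series in $\pi$ with coefficients in $S$ and translating this into the uniqueness statement of \thmref{thm:GH-Top}, one sees that the Teichmüller-type terms $a^{(m)}_{0,i}[\pi]^i_m$ and the $V^s$/$dV^s$ terms $V^s(a^{(m-s)}_{s,j}[\pi]^j_{m-s})$ can only contribute to $F^{m,0}_\ell(S)$ for $\ell\ge n$; this is exactly the bookkeeping already implicit in Definition~\ref{defn:GH-1} and \lemref{lem:Positive-F-2} (multiplying a slice by $[\pi]_m$ shifts its index by $p^{m-1}$, and the various $V^s$ levels interleave these so that the vanishing $\pi^{\lceil n/p^i\rceil}\mid a_i$ forces the corresponding slices to have index $\ge n$). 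Conversely the ``if'' direction, that a sum of slices with all indices $\ge n$ lies in $\Fil_{-n}W_m(K)$, follows from $F^{m,0}_j(S)\subset\Fil_{-j}W_m(K)\subset\Fil_{-n}W_m(K)$ for $j\ge n$ together with \lemref{lem:Log-fil-1} (the filtration is closed under sums and $\Fil_{-j}\subset\Fil_{-n}$ when $j\ge n$, i.e. $-j\le -n$).

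With both directions of the slice characterization in hand, surjectivity and injectivity of $\lambda^{m,0}_n$ are formal, as sketched above. The main obstacle I anticipate is the careful combinatorial verification that the Geisser--Hesselholt normal form of an element $\un a$ satisfying $\pi^{\lceil n/p^i\rceil}\mid a_i$ for all $i$ has no slices of index $<n$ --- in particular tracking how the divisibility conditions on the different Witt components $a_0,\dots,a_{m-1}$ interact with the levels $s$ in the $V^s$-terms and with the $p$-adic valuations forced by \lemref{lem:Positive-F-2}. One has to be a little careful that a term like $dV^s(b[\pi]^j_{m-s})$ with $p\nmid j$ genuinely sits in $F^{m,0}_{p^{m-1-s}j}(S)$ and that the index $p^{m-1-s}j\ge n$ is precisely what the divisibility condition on the relevant Witt component guarantees; once this indexing dictionary is set up correctly (which is the content of Definition~\ref{defn:GH-1}), the rest is routine.
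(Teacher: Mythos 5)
Your reduction rests entirely on the claim that, under the decomposition of \corref{cor:GH-3}, an element $w=\sum_j w_j$ lies in $\Fil_{-n}W_m(K)$ if and only if $w_j=0$ for all $j<n$. That claim is precisely the $q=0$ case of \corref{cor:F-group-3} (equivalently of \propref{prop:Fil-decom}), and in the paper it is a \emph{consequence} of the lemma you are asked to prove, obtained by combining \lemref{lem:F-group-2} with an $I$-adic completeness argument ($W_m(A)$ Noetherian, $\Fil_{-n}W_m(K)$ an ideal, \lemref{lem:Topology-0}). So you cannot cite it, and your sketch of a direct proof does not actually supply it. For the ``only if'' direction, knowing $\pi^{\lceil n/p^i\rceil}\mid a_i$ for the Witt coordinates does not translate term-by-term into the Geisser--Hesselholt normal form: the Teichm\"uller map and Witt addition are not additive, so passing from a power-series expansion of the coordinates to the unique expansion of \thmref{thm:GH-Top} involves carries, and controlling their effect on the slice indices is genuine work that is not ``implicit in Definition~\ref{defn:GH-1} and \lemref{lem:Positive-F-2}'' (the former only defines the slices, the latter only says multiplication by $[\pi]^l_m$ shifts indices by $p^{m-1}l$). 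For the ``if'' direction, the sums occurring in \corref{cor:GH-3} are infinite series, so closure of $\Fil_{-n}W_m(K)$ under finite sums (\lemref{lem:Log-fil-1}) is not enough; you need closedness of $\Fil_{-n}W_m(K)$ in the topology in which those series converge, i.e.\ exactly the completeness input the paper invokes later. Both directions are needed for your injectivity and surjectivity arguments (the tail of a series must land in $\Fil_{-n-1}$, and the lowest slice of an element of $\Fil_{-n}$ must have index $\ge n$), so as written the proposal proves the lemma only modulo a strictly stronger unproven statement.

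For comparison, the paper's proof is much lighter and goes in the opposite logical direction: it argues by induction on $m$, using the exact sequence of \lemref{lem:Log-fil-1} for $-n$ and $-n-1$ to produce the sequence \eqref{eqn:F-group-2-1} of graded pieces, and then splits into cases according to $n+1=pt+q$ with $0\le q<p$. If $q\neq 1$ the map $V^{m-1}\colon \gr_{-n}W_1(K)\to\gr_{-n}W_m(K)$ is already an isomorphism with image $F^{m,0}_n(S)$ (this is where Definition~\ref{defn:GH-1} enters, via $s=m-1$); if $q=1$ one compares the short exact sequence $0\to F^{1,0}_n(S)\to F^{m,0}_n(S)\to F^{m-1,0}_t(S)\to 0$ with the corresponding sequence of graded pieces and concludes by induction, the base case $m=1$ being the identification $F^{1,0}_n(S)=S\pi^n$, $\Fil_{-n}W_1(K)=\pi^nA$. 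No decomposition of $\Fil_{-n}W_m(K)$ and no completeness is needed at this stage; if you want to salvage your route, you would essentially have to reprove \corref{cor:F-group-3} from scratch, which amounts to redoing this induction anyway.
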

\begin{proof}
 The assertion of the lemma is obvious if $m =1$, because $F^{1,0}_n(S)= S \pi^n$ and $\Fil_{-n}W_1(K)= \pi^n A$. 
 
 For $m > 1$,
 \lemref{lem:Log-fil-1} implies that the sequence
  \begin{equation}\label{eqn:F-group-2-0}
0 \to \Fil_{n}W_1(K) \xrightarrow{V^{m-1}}  \Fil_{n}W_m(K) \xrightarrow{R}
\Fil_{\lfloor{n}/p\rfloor}W_{m-1}(K) \to 0
  \end{equation}
  is exact for every $n \in \Z$. Comparing this sequence for $-n$ and $-n-1$ and noting that $\Fil_{-n-1}W_m(K)\subset\Fil_{-n}W_m(K)$, for all $m \ge 1, n \in \Z$,
  we get an exact sequence
\begin{equation}\label{eqn:F-group-2-1}  
0 \to \gr_{-n}W_1(K) \xrightarrow{V^{m-1}} \gr_{-n}W_m(K) \xrightarrow{R} 
\frac{\Fil_{-\lceil{n/p}\rceil}W_{m-1}(K)}{\Fil_{-\lceil{{n+1}/p}\rceil}W_{m-1}(K)} \to 0.
\end{equation}
If we write $n+1 = pt + q$ with $0 \le q < p$, we see that $V^{m-1}$ is an isomorphism
if $q \neq 1$. Since $s = m-1$ (cf. \defref{defn:GH-1}) in this case, we find also that the image of
$V^{m-1}$ is $F^{m,0}_n(S)$.

If $q =1$, we can rewrite ~\eqref{eqn:F-group-2-1} as
\[
0 \to \gr_{-n}W_1(K) \xrightarrow{V^{m-1}} \gr_{-n}W_m(K) \xrightarrow{R} 
\gr_{-t}W_{m-1}(K) \to 0.
\]
We now look at the diagram
\begin{equation}\label{eqn:F-group-2-2} 
  \xymatrix@C.8pc{
    0 \ar[r] & F^{1,0}_n(S) \ar[r]^-{V^{m-1}} \ar[d]_-{\lambda^{1,0}_n} &
    F^{m,0}_n(S) \ar[r]^-{R} \ar[d]^-{\lambda^{1,0}_n} & F^{m-1,0}_t(S) \ar[r]
    \ar[d]^-{\lambda^{m-1,0}_t} & 0 \\
    0 \ar[r] & \gr_{-n}W_1(K) \ar[r]^-{V^{m-1}} & \gr_{-n}W_m(K) \ar[r]^-{R} &
    \gr_{-t}W_{m-1}(K) \ar[r] & 0.}
\end{equation}
It is easy to check that this diagram is commutative and the top row is exact.
Now, the left and the right vertical arrows are bijective by induction.
We conclude that the same is true for the middle vertical arrow too.
\end{proof}

Before we proceed further, we compare the topologies of $W_m\Omega^q_A(\log\pi)$ induced by various
filtrations. We let $\tau_1$ be the topology on $W_m\Omega^q_A(\log\pi)$ induced by the
filtration $\{W_m\Omega^q_A(\log\pi)(n)\}_{n\ge 0}$ (cf. \ref{not:twist}, \S~\ref{sec:Complete-local}). We let $\tau_2$ be the topology on
$W_m\Omega^q_A(\log\pi)$ induced by the filtration $\{W_m((\pi^n))W_m\Omega^q_A(\log\pi)\}_{n \ge 0}$. We let $\tau_3$ be the
topology on $W_m\Omega^q_A(\log\pi)$ induced by the filtration $\{I^nW_m\Omega^q_A(\log\pi)\}_{n \ge 0}$, where $I = W_m((\pi))$.
 Finally, we let $\tau_4$ be the
topology on $W_m\Omega^q_A(\log\pi)$ induced by the filtration 
$\{\Fil_{-n}W_m\Omega^q_K\}_{n \ge 0}$.

\begin{lem}\label{lem:Topology-0}
  The topologies defined above all agree.
\end{lem}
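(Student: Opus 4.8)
The plan is to deduce $\tau_1=\tau_2=\tau_3=\tau_4$ from a single chain of inclusions of ideals of $W_m(A)$, together with \lemref{lem:Log-fil-4}(9). Throughout write $\mathcal{W}=W_m\Omega^q_A(\log\pi)$ and, for $n\ge 0$, $G_n=[\pi^n]_m\mathcal{W}\subset W_m\Omega^q_K$; in the notation of \ref{not:twist} and \S~\ref{sec:Complete-local} this is $W_m\Omega^q_A(\log\pi)(-n)$, and $\{G_n\}_{n\ge 0}$ is the descending filtration defining $\tau_1$. I will use repeatedly that two descending filtrations of a group by subgroups induce the same topology precisely when each is cofinal in the other, and that $\tau_1,\tau_2,\tau_3$ all arise by applying $-\cdot\mathcal{W}$ to the three ring filtrations $\{[\pi^n]_m W_m(A)\}_n$, $\{W_m((\pi^n))\}_n$ and $\{(W_m((\pi)))^n\}_n$ of $W_m(A)$. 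So the first task is to compare these three filtrations of $W_m(A)$.

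The core estimate I intend to prove is that for every $n\ge 1$,
\[
[\pi^n]_m W_m(A)\ \subseteq\ \bigl(W_m((\pi))\bigr)^n\ \subseteq\ W_m((\pi^n))\ \subseteq\ [\pi^{\lfloor n/p^{m-1}\rfloor}]_m W_m(A).
\]
The first inclusion is immediate since $[\pi^n]_m=[\pi]_m^n$ and $[\pi]_m\in W_m((\pi))$. For the second, reducing modulo $(\pi^n)$ I would reduce the claim to showing $(W_m(\overline{\mathfrak{a}}))^n=0$ in $W_m(A/(\pi^n))$, where $\overline{\mathfrak{a}}=(\pi)/(\pi^n)$ satisfies $(\overline{\mathfrak{a}})^n=0$; this is the standard fact that $W_m(-)$ preserves nilpotence of ideals, which holds because each Witt component of a product of $n$ Witt vectors is a polynomial in their entries every monomial of which involves at least one entry of each of the $n$ factors. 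For the third inclusion I would use the Teichm\"uller-multiplication identity $[c]_m\cdot(a_{m-1},\dots,a_0)=(ca_{m-1},c^p a_{m-2},\dots,c^{p^{m-1}}a_0)$: if $\un{a}\in W_m((\pi^n))$ then every $a_i$ is divisible by $\pi^n$, hence a fortiori by $\pi^{p^{m-1}\lfloor n/p^{m-1}\rfloor}$, so $\un{a}=[\pi^{\lfloor n/p^{m-1}\rfloor}]_m\cdot\un{b}$ with $\un{b}\in W_m(A)$ (here one uses that $\pi$ is a nonzerodivisor in $A=S[[\pi]]$).

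Applying $-\cdot\mathcal{W}$ to the displayed chain gives $G_n\subseteq(W_m((\pi)))^n\mathcal{W}\subseteq W_m((\pi^n))\mathcal{W}\subseteq G_{\lfloor n/p^{m-1}\rfloor}$ for all $n\ge 1$, and since $\lfloor n/p^{m-1}\rfloor\to\infty$ the three filtrations defining $\tau_1,\tau_3,\tau_2$ are pairwise cofinal; therefore $\tau_1=\tau_2=\tau_3$. It then remains to match $\tau_1$ with $\tau_4$. Since $A=S[[\pi]]$ is a regular local $F$-finite $\F_p$-algebra, \lemref{lem:Log-fil-4}(9) applies and, taken with $D=-n[E_1]$ (where $E_1=V(\pi)$), yields $G_n=W_m\Omega^q_A(\log\pi)(-n[E_1])=\Fil_{-p^{m-1}n}W_m\Omega^q_K$. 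The family $\{\Fil_{-N}W_m\Omega^q_K\}_{N\ge 0}$ is descending (because $\Fil_D W_m\Omega^q_U\subseteq\Fil_{D'}W_m\Omega^q_U$ whenever $D\le D'$, and $-N-1\le -N$), and $\{\Fil_{-p^{m-1}n}W_m\Omega^q_K\}_n$ is a cofinal subfamily of it; hence it induces the same topology, i.e. $\tau_1=\tau_4$, completing the proof.

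The only step that is not pure bookkeeping is the middle inclusion of the displayed chain — the compatibility of $W_m(-)$ with ideal powers, equivalently its preservation of nilpotence — and, to a lesser extent, getting the exponents $p^{m-1}$ right in the third inclusion via the Teichm\"uller-multiplication formula. I expect the write-up to be short, with essentially all the care concentrated in that Witt-vector computation.
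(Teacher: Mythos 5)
Your proof is correct, and its overall skeleton is the same as the paper's: compare the three ``algebraic'' filtrations by reducing to the corresponding ideal filtrations of $W_m(A)$ and then multiplying into $W_m\Omega^q_A(\log\pi)$, and identify $\tau_1$ with $\tau_4$ via \lemref{lem:Log-fil-4}(9), whose proof gives the equality $[\pi^n]_m W_m\Omega^q_A(\log\pi)=\Fil_{-p^{m-1}n}W_m\Omega^q_K$ inside $W_m\Omega^q_K$, so that the two families are mutually cofinal. Where you genuinely diverge is in the middle comparison: the paper dismisses $\tau_1=\tau_2$ on $W_m(A)$ as an elementary exercise and quotes \cite[Cor.~2.3]{Geisser-Hesselholt-Top} for $\tau_2=\tau_3$, whereas you prove the whole chain $[\pi^n]_m W_m(A)\subseteq W_m((\pi))^n\subseteq W_m((\pi^n))\subseteq[\pi^{\lfloor n/p^{m-1}\rfloor}]_m W_m(A)$ by hand; the key inclusion $W_m(\mathfrak a)^n\subseteq W_m(\mathfrak a^n)$ does follow from your monomial observation (each multiplication polynomial vanishes when one factor's entries are set to zero, hence every monomial involves an entry of each factor), and the last inclusion from the Teichm\"uller multiplication formula, so your argument is self-contained and moreover makes the cofinality bound quantitative, at the cost of redoing a Witt-vector fact that the paper prefers to cite. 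One trivial notational point: with the conventions of Notation~\ref{not:twist} and \S~\ref{sec:Complete-local}, your $G_n=[\pi^n]_m W_m\Omega^q_A(\log\pi)$ is $W_m\Omega^q_A(\log\pi)(n)$, i.e.\ the twist by the divisor $-nE_1$, not ``$(-n)$''; this does not affect the argument since you work with the explicit description $[\pi^n]_m\mathcal W$ throughout.
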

\begin{proof}
  The topologies $\tau_1, \ \tau_2$ and $\tau_3$ will
  agree on $W_m\Omega^q_A(\log\pi)$ if
  they do so on $W_m(A)$. The agreement between $\tau_1$ and $\tau_2$ in this special
  case is an elementary exercise and the the same between $\tau_2$ and $\tau_3$ follows
  from \cite[Cor.~2.3]{Geisser-Hesselholt-Top}. The agreement between
  $\tau_1$ and $\tau_4$ on $W_m\Omega^q_A(\log\pi)$ follows from \lemref{lem:Log-fil-4}(9).
  \end{proof}

\begin{cor}\label{cor:F-group-3}
For $n \ge 0$, the map $\theta^{m,q}_A$ of ~\eqref{eqn:GH-2-0}
induces an isomorphism
\[
\theta^{m,0}_n \colon {\underset{n' \ge n}\prod}
F^{m,0}_{n'}(S) \xrightarrow{\cong} \Fil_{-n}W_m(K).
\]
\end{cor}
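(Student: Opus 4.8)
The plan is to deduce the corollary directly from the global decomposition \corref{cor:GH-2} in degree $q=0$ together with the graded computation in \lemref{lem:F-group-2}. First I would observe that $\theta^{m,0}_A$ is already an isomorphism $\prod_{n'\ge 0}F^{m,0}_{n'}(S)\xrightarrow{\cong}W_m\Omega^0_A(\log\pi)=W_m(A)=\Fil_0W_m(K)$, so it suffices to show that the restriction $\theta^{m,0}_n$ of $\theta^{m,0}_A$ to the sub-product $\prod_{n'\ge n}F^{m,0}_{n'}(S)$ has image exactly $\Fil_{-n}W_m(K)$; injectivity is then automatic. The two inclusions are proved separately.

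For the inclusion of the image in $\Fil_{-n}W_m(K)$: by \lemref{lem:F-group-6} in the case $q=0$ one has $F^{m,0}_{n'}(S)\subseteq \Fil_{-n'}W_m(K)\subseteq \Fil_{-n}W_m(K)$ for every $n'\ge n$, using that $\Fil_{-n'}W_m\sO_U\subseteq \Fil_{-n}W_m\sO_U$ since $-n'E\le -nE$. An element of the sub-product is an infinite series $\sum_{n'\ge n}w_{n'}$ whose tails lie in $\Fil_{-N}W_m(K)$ for $N\to\infty$, hence it converges in $W_m(A)$, and $\Fil_{-n}W_m(K)$ is closed in $W_m(A)$ because it is a basic open neighbourhood of $0$ for the topology $\tau_4$, which by \lemref{lem:Topology-0} coincides with the ($\pi$-adic, hence complete) topology $\tau_2$ of $W_m(A)$. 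Therefore $\sum_{n'\ge n}w_{n'}\in\Fil_{-n}W_m(K)$, so $\theta^{m,0}_n$ lands in $\Fil_{-n}W_m(K)$.

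For the reverse inclusion, given $w\in\Fil_{-n}W_m(K)\subseteq W_m(A)$ I would expand $w=\sum_{n'\ge 0}w_{n'}$ with $w_{n'}\in F^{m,0}_{n'}(S)$ via $\theta^{m,0}_A$, and show by induction on $j=0,1,\dots,n-1$ that $w_j=0$. Assuming $w_0=\cdots=w_{j-1}=0$, the bound $j+1\le n$ gives $w\in\Fil_{-(j+1)}W_m(K)$, while $\sum_{n'\ge j+1}w_{n'}\in\Fil_{-(j+1)}W_m(K)$ by the inclusion just proved (with $j+1$ in place of $n$); hence $w_j=w-\sum_{n'\ge j+1}w_{n'}$ maps to $0$ in $\gr_{-j}W_m(K)=\Fil_{-j}W_m(K)/\Fil_{-(j+1)}W_m(K)$. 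Since the composite $\lambda^{m,0}_j\colon F^{m,0}_j(S)\hookrightarrow\Fil_{-j}W_m(K)\twoheadrightarrow\gr_{-j}W_m(K)$ is an isomorphism by \lemref{lem:F-group-2}, this forces $w_j=0$. After the induction, $w=\sum_{n'\ge n}w_{n'}$ lies in the image of $\theta^{m,0}_n$, proving surjectivity onto $\Fil_{-n}W_m(K)$.

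The argument is essentially bookkeeping once the pieces are assembled; the only points demanding care are the convergence of the infinite series and the closedness of $\Fil_{-n}W_m(K)$ in $W_m(A)$, both of which are supplied by the comparison of topologies in \lemref{lem:Topology-0}. The conceptual input that powers the induction is the graded isomorphism \lemref{lem:F-group-2}, and I do not expect any genuine obstacle beyond handling these topological details cleanly.
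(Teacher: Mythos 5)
Your proof is correct, but it takes a genuinely different route from the paper's own argument for this corollary. The paper never invokes \corref{cor:GH-2} here: it observes that $W_m(A)$ is $I$-adically complete (by \thmref{thm:GH-Top}) and Noetherian, and that $\Fil_{-n}W_m(K)$ is a finitely generated ideal by \lemref{lem:Log-fil-4}(1), hence itself complete for the topology $\tau_4$; this gives $\Fil_{-n}W_m(K)\cong\varprojlim_{n'}\Fil_{-n}W_m(K)/\Fil_{-n-n'}W_m(K)$, each finite quotient is identified with $\bigoplus_{i=0}^{n'-1}F^{m,0}_{n+i}(S)$ by iterating \lemref{lem:F-group-2}, and the isomorphism is obtained by passing to the limit. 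You instead restrict the ambient isomorphism of \corref{cor:GH-2}, get one inclusion from \lemref{lem:F-group-6} together with convergence of the partial sums and the fact that $\Fil_{-n}W_m(K)$ is a $\tau_4$-open, hence closed, subgroup (\lemref{lem:Topology-0}), and get surjectivity by the inductive vanishing of the components $w_j$ for $j<n$, again powered by \lemref{lem:F-group-2}. Your route avoids the Noetherianity/finite-generation input entirely (no appeal to \lemref{lem:Log-fil-4}(1) or to the completeness of the submodule), at the price of the convergence and closedness bookkeeping; it is essentially the $q=0$ specialization of the strategy the paper deploys later for general $q$ (\lemref{lem:F-group-4}, \corref{cor:F-group-5}, \lemref{lem:F-group-7}), so both arguments are available in the paper's framework and nothing is lost. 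Two small points of hygiene: the completeness of $W_m(A)$ for $\tau_2$ is not automatic from the topology being ``$\pi$-adic'' and should be cited from \thmref{thm:GH-Top} (or [Geisser--Hesselholt, Cor.~2.3]), exactly as the paper does; and your phrase ``the tails lie in $\Fil_{-N}W_m(K)$'' is best stated as ``differences of partial sums lie in $\Fil_{-N}W_m(K)$'', which is what yields the Cauchy property before completeness and closedness are applied.
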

\begin{proof}
We first conclude from \thmref{thm:GH-Top} (or \cite[Cor~2.3]{Geisser-Hesselholt-Top}) that $W_m(A)$ is complete with respect to its
$I$-adic topology. Since $A$ is $F$-finite, the ring $W_m(A)$ is Noetherian
(cf. \cite[Lem.~2.9]{Morrow-ENS}). Furthermore, \lemref{lem:Log-fil-4}(1) implies that
$\Fil_{-n}W_m(K)$ is an ideal of $W_m(A)$. It follows that the $\Fil_{-n}W_m(K)$ is $I$-adically
complete, and hence, by \lemref{lem:Topology-0}, complete with respect to $\tau_4$ topology.
That is, the canonical map
$\Fil_{-n}W_m(K) \to {\varprojlim}_{n' \ge 0} \frac{\Fil_{-n}W_m(K)}{\Fil_{-n-n'}W_m(K)}$
is an isomorphism of $W_m(A)$-modules. Meanwhile, one deduces from
\lemref{lem:F-group-2} that the canonical map
$\stackrel{n'-1}{\underset{i = 0}\oplus} F^{m,0}_{n+i}(S) \to
\frac{\Fil_{-n}W_m(K)}{\Fil_{-n-n'}W_m(K)}$ is an isomorphism.
Passing to the limit as $n' \to \infty$, we get an isomorphism
\[
 {\underset{n' \ge n}\prod}
F^{m,0}_{n'}(S) \xrightarrow{\cong} \Fil_{-n}W_m(K),
\]
which is induced by the canonical map $F^{m,0}_{n'}(S) \to W_m(A)$ (i.e. by $\theta^{m,0}_A$). Hence we achieve the proof of the lemma.  
\end{proof}

Our next task is to generalize \corref{cor:F-group-3} to the case when $q > 0$. For $n \ge 0$, we let $\Fil'_{-n}W_m\Omega^q_K = \theta^{m,q}_A({\underset{j \ge n}\prod}
F^{m,q}_{j}(S))$. By \corref{cor:GH-2}, this is an exhaustive descending filtration of
$W_m\Omega^q_A(\log\pi) = \Fil'_{0}W_m\Omega^q_K$.

\begin{lem}\label{lem:fil'-mod}
   For $n, q \ge 0, m \ge 1$, $\Fil'_{-n}W_m\Omega^q_K \subset W_m\Omega^q_A(\log \pi)$ is a $W_m(A)$-submodule.
\end{lem}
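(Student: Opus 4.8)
The statement asserts that $\Fil'_{-n}W_m\Omega^q_K := \theta^{m,q}_A\bigl(\prod_{j\ge n}F^{m,q}_j(S)\bigr)$ is a $W_m(A)$-submodule of $W_m\Omega^q_A(\log\pi)$. Since $W_m(A)$ is topologically generated as an algebra by the Teichm\"uller elements $[a]_m$ for $a\in A$ together with the Verschiebungs $V^s([a]_{m-s})$, and since $\Fil'_{-n}W_m\Omega^q_K$ is already an additive subgroup (it is a product of subgroups of $W_m\Omega^q_A(\log\pi)$ under the decomposition of \corref{cor:GH-2}) which is closed in the $\tau_1$-topology (hence $\tau_3$-adically closed by \lemref{lem:Topology-0}), the plan is to reduce the module assertion to the two statements
\[
[a]_m\cdot F^{m,q}_j(S)\subset \prod_{j'\ge n}F^{m,q}_{j'}(S)\quad\text{and}\quad V^s\bigl([b]_{m-s}\bigr)\cdot F^{m,q}_j(S)\subset \prod_{j'\ge n}F^{m,q}_{j'}(S)
\]
for all $j\ge n$, $a,b\in A$. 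Because $A=S[[\pi]]$, writing $a=\sum_{i\ge 0}s_i\pi^i$ with $s_i\in S$, we have $[a]_m=\sum_i [s_i]_m[\pi]^i_m + (\text{higher }V\text{-terms})$, and similarly every element of $W_m(A)$ is, by \thmref{thm:GH-Top} applied with $q=0$, an infinite $\tau_3$-convergent sum of elements of the groups $F^{m,0}_i(S)$ with $i\ge 0$. So by continuity it suffices to show
\[
F^{m,0}_i(S)\cdot F^{m,q}_j(S)\subset \prod_{j'\ge i+j}F^{m,q}_{j'}(S)\subset \prod_{j'\ge n}F^{m,q}_{j'}(S)\qquad (i\ge 0,\ j\ge n).
\]

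The containment $F^{m,0}_i(S)\cdot F^{m,q}_j(S)\subset F^{m,q}_{i+j}(S)$ is exactly \lemref{lem:F-group-0} (proved just above for $i,j\ge 0$, which covers our case since $i\ge 0$ and $j\ge n\ge 0$). Thus each product of a single $F^{m,0}_i$-component with a single $F^{m,q}_j$-component lands in the single component $F^{m,q}_{i+j}\subset \prod_{j'\ge n}F^{m,q}_{j'}$. The only remaining point is to justify the passage from finite products to the full (infinite) product $\prod_{j\ge n}F^{m,q}_j(S)$: an arbitrary element $w\in\Fil'_{-n}W_m\Omega^q_K$ is an infinite series $\sum_{j\ge n}w_j$ with $w_j\in F^{m,q}_j(S)$, and multiplying by $u\in W_m(A)=\sum_{i\ge 0}u_i$ (with $u_i\in F^{m,0}_i(S)$) gives the Cauchy series $\sum_{i,j}u_iw_j$, whose partial sums lie in the finite direct sums $\bigoplus_{n\le j'\le N}F^{m,q}_{j'}(S)$ and which converges in the $\tau_3$-topology of $W_m\Omega^q_A(\log\pi)$; since $\Fil'_{-n}W_m\Omega^q_K$ is closed (it is the image under the topological isomorphism $\theta^{m,q}_A$ of a closed subset of the product), the limit $uw$ lies in $\Fil'_{-n}W_m\Omega^q_K$. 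Here I will invoke \lemref{lem:Topology-0} to know the relevant topologies agree and \corref{cor:GH-2} to know $\theta^{m,q}_A$ is a topological isomorphism identifying $\Fil'_{-n}$ with $\prod_{j\ge n}F^{m,q}_j(S)$.

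The main obstacle is bookkeeping the convergence: one must check that $\{u_i\}$ and $\{w_j\}$ being null sequences in the $I$-adic sense (so that the double sum $\sum_{i,j}u_iw_j$ is summable in the complete module $W_m\Omega^q_A(\log\pi)$) really follows from membership in $W_m(A)$ and $\Fil'_{-n}W_m\Omega^q_K$ respectively. For this I would use \cite[Cor.~2.3]{Geisser-Hesselholt-Top} (cited in the proof of \lemref{lem:Topology-0} and \corref{cor:F-group-3}) to the effect that $W_m(A)$ is $I$-adically complete with $I=W_m((\pi))$, together with the comparison of the $F^{m,q}_j$-grading with the $I$-adic filtration — i.e. that the $j$-th component $F^{m,q}_j(S)$ is killed modulo $I^{\lceil j/p^{m-1}\rceil}$ or the like, which is implicit in \lemref{lem:Log-fil-4}(9) and \lemref{lem:Positive-F-2}. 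Granting this, the finite-to-infinite passage is routine and the lemma follows. An alternative, cleaner route avoiding explicit series manipulation: note $\Fil'_{-n}W_m\Omega^q_K$ is a $\tau_1$-closed subgroup, so it is a $W_m(A)$-submodule iff $u\cdot w\in\Fil'_{-n}$ for all $u$ in a dense subring and $w$ in a dense subgroup; take the dense subring generated by finitely supported $\sum_i u_i$ and the dense subgroup of finitely supported $\sum_j w_j$, apply \lemref{lem:F-group-0} to each bilinear term, and conclude by closedness. I expect to write the argument in this second form.
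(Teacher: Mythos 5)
Your proposal follows essentially the same route as the paper's proof: decompose $\gamma\in W_m(A)$ and $\omega\in\Fil'_{-n}W_m\Omega^q_K$ into their graded components via \corref{cor:GH-2} (and its $q=0$ case), multiply term by term using \lemref{lem:F-group-0} so that every product lands in a component of degree $\ge n$, and pass to the limit using continuity of multiplication and $I$-adic (i.e.\ $\tau_3$) completeness of $W_m\Omega^q_A(\log\pi)$, which the paper derives from its finite generation over $W_m(A)$ (\lemref{lem:Log-fil-4}(1)). One caution: do not justify the last step by asserting that $\Fil'_{-n}W_m\Omega^q_K$ is closed in $\tau_1$, or that $\theta^{m,q}_A$ is a topological isomorphism onto a product in which $\prod_{j\ge n}F^{m,q}_j(S)$ is closed --- the closedness of $\Fil'_{-n}W_m\Omega^q_K$ is only established later (\corref{cor:F-group-5}) and its proof uses the present lemma, so this phrasing risks circularity; instead, as the paper does, regroup the double series $\sum_{i,j}\beta_i\alpha_j$ by total degree $k=i+j\ge n$ and appeal to the unique series decomposition of \thmref{thm:GH-Top}/\corref{cor:GH-2} to conclude that the limit lies in $\theta^{m,q}_A\bigl(\prod_{k\ge n}F^{m,q}_k(S)\bigr)=\Fil'_{-n}W_m\Omega^q_K$.
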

\begin{proof}
    We only need to show that $\Fil'_{-n}W_m\Omega^q_K$ is closed under the action of $W_m(A)$. We let $\omega = \sum_{j \ge n} \alpha_j \in \Fil'_{-n}W_m\Omega^q_K$, where $\alpha_j \in \theta^{m,q}_A(
F^{m,q}_{j}(S))$. Write
  $\omega_i = \stackrel{i}{\underset{j = n}\sum} \alpha_j$. Since $W_m\Omega^q_A(\log \pi)$ is finitely generated $W_m(A)$-module (cf. \lemref{lem:Log-fil-4}(1)), it is complete with respect to $I$-adic topology.
  So, we can write $\omega = \lim_{i \to \infty} \omega_i$, where the limit is taken with respect to
  $\tau_3$ topology. Now let $\gamma = \sum_{j \ge 0} \beta_j \in W_m(A)$, where $\beta_j \in \theta^{m,0}_A(
F^{m,0}_{j}(S))$. Similarly to above, we can write $\gamma = \lim_{i \to \infty} \gamma_i$, where $\gamma_i = \stackrel{i}{\underset{j = 0}\sum} \beta_j$ and the limit is taken in $\tau_3$. Then $\lim_{i \to \infty} \gamma_i \cdot \omega_i = \gamma \cdot \omega$. But $\gamma_i \cdot \omega_i \subset \theta^{m,q}_A(\bigoplus\limits_{j=n}^{2i} F^{m,q}_j(S))$ by \lemref{lem:F-group-0}. Hence $\gamma \cdot \omega \in \sum_{j \ge n}\theta^{m,q}_A ( F^{m,q}_j(S))= \Fil'_{-n}W_m\Omega^q_K$.
\end{proof}

\begin{lem}\label{lem:Topology-1}
  $d(\Fil'_{-n}W_m\Omega^q_K) \subset \Fil'_{-n}W_m\Omega^{q+1}_K$ for every $n, q \ge 0$ and $m \ge 1$.
\end{lem}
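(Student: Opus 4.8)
The statement is $d(\Fil'_{-n}W_m\Omega^q_K) \subset \Fil'_{-n}W_m\Omega^{q+1}_K$. Recall that $\Fil'_{-n}W_m\Omega^q_K = \theta^{m,q}_A(\prod_{j \ge n} F^{m,q}_j(S))$, so an element $\omega \in \Fil'_{-n}W_m\Omega^q_K$ can be written as a convergent series $\omega = \sum_{j \ge n} \alpha_j$ with $\alpha_j \in \theta^{m,q}_A(F^{m,q}_j(S))$, the convergence being with respect to the $I$-adic ($\tau_3$) topology on $W_m\Omega^q_A(\log\pi)$, which by \lemref{lem:Topology-0} agrees with the $\tau_1$-topology, hence with the topology induced by the filtration $\{\Fil'_{-n'}W_m\Omega^q_K\}_{n'}$ (note $\Fil'$ and $\Fil$ agree on the truncations used in \lemref{lem:F-group-2}, and in any case $\tau_1$ is cofinal with the $\Fil'$-filtration since $W_m\Omega^q_A(\log\pi)$ is finitely generated and complete by \lemref{lem:Log-fil-4}(1)). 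First I would record that $d$ is continuous for this topology: since $W_m\Omega^{q+1}_A(\log\pi)$ is a finitely generated $W_m(A)$-module and the relevant topologies are all $I$-adic, it suffices that $d$ maps $I^N W_m\Omega^q_A(\log\pi)$ into a bounded power of $I$ times $W_m\Omega^{q+1}_A(\log\pi)$; this follows from the Leibniz rule $d(aw) = da\wedge w + a\,dw$ together with the fact that $d$ carries $W_m(A)$ into $W_m\Omega^1_A(\log\pi)$.

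Given continuity, it is enough to prove the inclusion on each piece, i.e. $d(\theta^{m,q}_A(F^{m,q}_n(S))) \subset \Fil'_{-n}W_m\Omega^{q+1}_K$, and then pass to the limit: writing $\omega_i = \sum_{j=n}^i \alpha_j$, we have $d\omega_i \in \sum_{j=n}^i \theta^{m,q+1}_A(F^{m,q+1}_j(S)) \subset \Fil'_{-n}W_m\Omega^{q+1}_K$, and $d\omega = \lim_i d\omega_i$ lies in the closure of $\Fil'_{-n}W_m\Omega^{q+1}_K$, which equals $\Fil'_{-n}W_m\Omega^{q+1}_K$ itself since the latter is a closed $W_m(A)$-submodule (it is a finitely generated submodule of a Noetherian complete module, hence closed, or equivalently a product factor under $\theta^{m,q+1}_A$). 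For the piecewise statement, by \lemref{lem:F-group-1}(1) we already know $d(F^{m,q}_j(S)) \subset F^{m,q+1}_j(S)$ as subsets of $W_m\Omega^{q+1}_K$; the only subtlety is that \lemref{lem:F-group-1}(1) is an inclusion of abstract groups via the maps into $W_m\Omega^q_K$, whereas here I need the corresponding statement under $\theta^{m,q}_A$ and $\theta^{m,q+1}_A$, i.e. I must check these isomorphisms are compatible with $d$. That compatibility is immediate: $\theta^{m,q}_A$ is by construction the restriction of the inclusion $W_m\Omega^q_A(\log\pi) \inj W_m\Omega^q_K$ (it is induced by the canonical maps $F^{m,q}_j(S)\to W_m\Omega^q_A$), and $d$ on $W_m\Omega^\bullet_A(\log\pi)$ is the restriction of $d$ on $W_m\Omega^\bullet_K$, so the square commutes on the nose.

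So concretely the steps are: (i) invoke \lemref{lem:Log-fil-4}(1) and \lemref{lem:Topology-0} to fix the $I$-adic topology and note $\Fil'_{-n}W_m\Omega^{q+1}_K$ is closed; (ii) check $d$ is $I$-adically continuous on $W_m\Omega^\bullet_A(\log\pi)$ via Leibniz; (iii) reduce, using continuity and that finite partial sums of the defining series land in $\Fil'_{-n}W_m\Omega^{q+1}_K$ by \lemref{lem:F-group-1}(1), to the piecewise inclusion; (iv) conclude by taking limits.

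**Expected main obstacle.** Everything here is soft once the topological bookkeeping is set up; the only place requiring care is step (ii)—verifying that $d$ is continuous, i.e. that $d(I^N M) \subset I^{N-c}M'$ for a fixed constant $c$ and all large $N$—and making sure one is using the same $I$-adic topology on source and target so that "$\omega = \lim \omega_i$ implies $d\omega = \lim d\omega_i$" is legitimate. This is why the preceding Lemmas \ref{lem:fil'-mod} and \ref{lem:Topology-0} were proved first; with them in hand, the argument is essentially a two-line limit argument, and I expect the author's proof to be correspondingly short, citing \lemref{lem:F-group-1}(1) and the completeness/continuity facts already established.
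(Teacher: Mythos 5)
Your proposal is correct and is essentially the paper's own argument: write $\omega$ as a convergent series, use continuity of $d$ to get $d(\omega)=\lim_i d(\omega_i)$, and reduce to the term-wise inclusion $d(F^{m,q}_j(S))\subset F^{m,q+1}_j(S)$ from \lemref{lem:F-group-1}(1) together with the compatibility of $\theta^{m,q}_A$ with $d$. The only cosmetic difference is in the bookkeeping: the paper obtains continuity of $d$ by passing (via \lemref{lem:Topology-0}) to the $\tau_4$-topology, where \lemref{lem:Log-fil-4}(4) gives $d(\Fil_{-n'})\subset\Fil_{-n'}$ with no index loss and the limit is then identified directly with $\sum_{j\ge n}d(\alpha_j)\in\Fil'_{-n}W_m\Omega^{q+1}_K$, whereas you check $I$-adic continuity by Leibniz and close the argument with the (valid, and independent of the later Lemmas \ref{lem:F-group-4}--\ref{lem:F-group-5}) observation that $\Fil'_{-n}W_m\Omega^{q+1}_K$ is a finitely generated submodule of a complete Noetherian module, hence closed.
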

\begin{proof}
  We continue with the same notations of the proof of \lemref{lem:fil'-mod}. For $\omega \in \Fil'_{-n}W_m\Omega^q_K$, we can write $\omega = \lim_{i \to \infty} \omega_i$, where the limit is taken with respect to
  $\tau_3$, and hence with respect to $\tau_4$ by \lemref{lem:Topology-0}.
  We now let $n' \ge 0$ be any integer. We can then find $i \gg 0$ such that
  $\omega - \omega_i \in \Fil_{-n'}W_m\Omega^q_K$ for all $i \gg n$. 
  This implies that $d(\omega) - d(\omega_i) = d(\omega- \omega_i) \in
  \Fil_{-n'}W_m\Omega^{q+1}_K$ for all $i \gg n$ by \lemref{lem:Log-fil-4}(4). 
  But this means that $d(\omega) = \lim_{i \to \infty} d(\omega_i) =
  \sum_{j \ge n} d(\alpha_j)$, where $d(\alpha_j)\subset d(\theta^{m,q}_A(
F^{m,q}_{j}(S)))=^1 \theta^{m,q+1}_A(
d(F^{m,q}_{j}(S))) \subset^2 \theta^{m,q+1}_A(
F^{m,q+1}_{j}(S))$. Here the equality $=^1$ holds because the map $\theta^{m,q}_A$ is induced by the natural map $F^{m,q}_{j}(S) \to W_m\Omega^q_A(\log \pi)$, which commutes with $d$. On the other hand, the inclusion $\subset^2$ holds by \lemref{lem:F-group-1}(1).
\end{proof}

\begin{lem}\label{lem:F-group-4}
  We have $\Fil_{-n}W_m\Omega^q_K \subseteq \Fil'_{-n}W_m\Omega^q_K$ for every $n, q \ge 0$ and $m \ge 1$.
\end{lem}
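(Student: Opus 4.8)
The plan is to reduce the inclusion to the two generating terms of $\Fil_{-n}W_m\Omega^q_K$ coming from \defref{defn:Log-fil-3} and to push each of them into $\Fil'_{-n}W_m\Omega^q_K$ by combining the multiplicativity of the groups $F^{m,q}_\bullet(S)$ with the completeness results already available. Since everything here is local ($A=S[[\pi]]$, $K=A_\pi$), no reduction is needed, and by \defref{defn:Log-fil-3} we have
\[
\Fil_{-n}W_m\Omega^q_K \;=\; \Fil_{-n}W_m(K)\cdot W_m\Omega^q_A(\log\pi)\;+\;d\big(\Fil_{-n}W_m(K)\big)\cdot W_m\Omega^{q-1}_A(\log\pi),
\]
the products being taken inside $W_m\Omega^\bullet_K$. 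Moreover the case $q=0$ is already known: by \corref{cor:F-group-3}, $\Fil_{-n}W_m(K)=\theta^{m,0}_A\big(\prod_{j\ge n}F^{m,0}_j(S)\big)=\Fil'_{-n}W_m(K)$. So it suffices to show that each of the two summands lies in $\Fil'_{-n}W_m\Omega^q_K$.

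First I would record the topological fact that $\Fil'_{-n}W_m\Omega^q_K$ is open, hence closed, in the $I$-adic topology on $W_m\Omega^q_A(\log\pi)$, $I=W_m((\pi))$: by \lemref{lem:Log-fil-4}(9) together with \lemref{lem:Positive-F-2}, for $a\ge 0$ one has $[\pi]^a_m\, W_m\Omega^q_A(\log\pi)=\theta^{m,q}_A\big(\prod_{l\ge p^{m-1}a}F^{m,q}_l(S)\big)=\Fil'_{-p^{m-1}a}W_m\Omega^q_K$, and these submodules form a basis of neighbourhoods of $0$ for the $I$-adic topology by \lemref{lem:Topology-0}; since $\Fil'_{-n}W_m\Omega^q_K\supseteq \Fil'_{-p^{m-1}a}W_m\Omega^q_K$ whenever $p^{m-1}a\ge n$, the claim follows. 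With this in hand I would prove the multiplicativity statement $\Fil'_{-n}W_m(K)\cdot W_m\Omega^q_A(\log\pi)\subseteq \Fil'_{-n}W_m\Omega^q_K$ for all $q,n\ge 0$ by the same limit argument as in \lemref{lem:fil'-mod}: approximating $\gamma\in\Fil'_{-n}W_m(K)$ and $\eta\in W_m\Omega^q_A(\log\pi)$ by the finite partial sums $\gamma_k,\eta_k$ of their decompositions from \corref{cor:GH-2}, one has $\gamma_k\eta_k\in\theta^{m,q}_A\big(\bigoplus_{l\ge n}F^{m,q}_l(S)\big)\subseteq\Fil'_{-n}W_m\Omega^q_K$ by \lemref{lem:F-group-0} (because $F^{m,0}_j(S)\cdot F^{m,q}_i(S)\subseteq F^{m,q}_{i+j}(S)$ with $i+j\ge n$), and $\gamma\eta=\lim_k\gamma_k\eta_k$ lies in $\Fil'_{-n}W_m\Omega^q_K$ since the latter is closed. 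This disposes of the first summand.

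For the second summand, given $\gamma\in\Fil_{-n}W_m(K)=\Fil'_{-n}W_m(K)$ and $\eta\in W_m\Omega^{q-1}_A(\log\pi)$, I would apply the Leibniz rule $d(\gamma)\cdot\eta=d(\gamma\eta)-\gamma\cdot d(\eta)$: by the multiplicativity claim just established, $\gamma\eta\in\Fil'_{-n}W_m\Omega^{q-1}_K$, whence $d(\gamma\eta)\in\Fil'_{-n}W_m\Omega^q_K$ by \lemref{lem:Topology-1}; and $d(\eta)\in W_m\Omega^q_A(\log\pi)$ because $W_m\Omega^\bullet_A(\log\pi)$ is a differential graded algebra, so $\gamma\cdot d(\eta)\in\Fil'_{-n}W_m\Omega^q_K$ by the claim again. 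Hence $d\big(\Fil_{-n}W_m(K)\big)\cdot W_m\Omega^{q-1}_A(\log\pi)\subseteq\Fil'_{-n}W_m\Omega^q_K$, and combining with the previous paragraph yields $\Fil_{-n}W_m\Omega^q_K\subseteq\Fil'_{-n}W_m\Omega^q_K$. (The reverse inclusion, and hence the equality $\Fil_{-n}W_m\Omega^q_K=\Fil'_{-n}W_m\Omega^q_K$, then follows readily from \lemref{lem:F-group-6} and the $I$-adic completeness of the finitely generated $W_m(A)$-module $\Fil_{-n}W_m\Omega^q_K$.)

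The only genuinely delicate point — and the main obstacle — is the passage to the limit in the multiplicativity claim, which is legitimate precisely because $\Fil'_{-n}W_m\Omega^q_K$ is closed in the adic topology; this is where the comparison of topologies (\lemref{lem:Topology-0}) and the index–shifting identity (\lemref{lem:Positive-F-2}) must be invoked. Everything else is a formal consequence of the definition of $\Fil_{-n}W_m\Omega^q_K$ and of the stability properties of the groups $F^{m,q}_\bullet(S)$ recorded in \lemref{lem:F-group-1} and \lemref{lem:F-group-0}.
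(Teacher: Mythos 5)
Your proposal is correct and takes essentially the same route as the paper's proof: decompose $\Fil_{-n}W_m\Omega^q_K$ into its two generating summands, identify $\Fil_{-n}W_m(K)=\Fil'_{-n}W_m(K)$ via \corref{cor:F-group-3}, apply the Leibniz rule to the $d$-term, absorb products $\Fil'_{-n}W_m(K)\cdot W_m\Omega^\bullet_A(\log\pi)$ into $\Fil'_{-n}W_m\Omega^\bullet_K$ by the partial-sum limit argument of \lemref{lem:fil'-mod} together with \lemref{lem:F-group-0}, and finish with \lemref{lem:Topology-1}. The one small deviation is that you justify the limit step by first showing $\Fil'_{-n}W_m\Omega^q_K$ is open (hence closed) -- a fact the paper only records afterwards in \corref{cor:F-group-5}, whose proof uses the present lemma -- but since you derive it directly from \lemref{lem:Log-fil-4}(9), \lemref{lem:Positive-F-2} and \lemref{lem:Topology-0}, independently of the statement being proved, there is no circularity and the argument stands.
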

\begin{proof}
  By definition of $\Fil_{-n}W_m\Omega^q_K$ and Corollaries ~\ref{cor:GH-2} and
  ~\ref{cor:F-group-3}, we have
  \[
  \begin{array}{lll}
  \Fil_{-n}W_m\Omega^q_K & = & \Fil_{-n}W_m(K)W_m\Omega^q_A(\log\pi) +
  d(\Fil_{-n}W_m(K))W_m\Omega^{q-1}_A(\log\pi) \\
  & = &  \Fil'_{-n}W_m(K)W_m\Omega^q_A(\log\pi) +
  d(\Fil'_{-n}W_m(K))W_m\Omega^{q-1}_A(\log\pi) \\
  & \subset &  \Fil'_{-n}W_m(K)W_m\Omega^q_A(\log\pi) +
  d(\Fil'_{-n}W_m(K)W_m\Omega^{q-1}_A(\log\pi)) \\
  & & + \Fil'_{-n}W_m(K)d(W_m\Omega^{q-1}_A(\log\pi)).
  \end{array}
  \]

  Next, note that every element of $\Fil'_{-n}W_m(K)W_m\Omega^q_A(\log\pi)$ is finite sum of elements of the form $\gamma \cdot \omega$, where $\gamma \in \Fil'_{-n}W_m(K)$ and $\omega \in W_m\Omega^q_A(\log\pi)= \Fil'_0W_m\Omega^q_K$. Recalling the notations from the proof of \lemref{lem:fil'-mod}, we write  $\omega = \lim_{i \to \infty} \omega_i$, where $\omega_i = \stackrel{i}{\underset{j = 0}\sum} \alpha_j$ and $\gamma = \lim_{i \to \infty} \gamma_i$, where $\gamma_i = \stackrel{i}{\underset{j = n}\sum} \beta_j$ (note the change in indices of $\alpha_j$ and $\beta_j$). The limits are taken in $\tau_3$. As in the proof of \lemref{lem:fil'-mod}, $\gamma \cdot \omega = \lim_{i \to \infty} \gamma_i \cdot \omega_i \subset \Fil'_{-n}W_m\Omega^q_K$. This implies $\Fil'_{-n}W_m(K)W_m\Omega^q_A(\log\pi) \subset \Fil'_{-n}W_m\Omega^q_K$.
  Similarly,
  $$\Fil'_{-n}W_m(K)d(W_m\Omega^{q-1}_A(\log\pi)) \subset 
\Fil'_{-n}W_m(K)W_m\Omega^{q}_A(\log\pi) \subset \Fil'_{-n}W_m\Omega^q_K.$$
Finally, $d(\Fil'_{-n}W_m(K)W_m\Omega^{q-1}_A(\log\pi)) \subset
d(\theta^{m,q-1}_A({\prod}_{j \ge n} F^{m,q-1}_{j}(S))) = d(\Fil'_{-n}W_m\Omega^{q-1}_K)$, and the
latter group lies in $\Fil'_{-n}W_m\Omega^{q}_K$ by \lemref{lem:Topology-1}.
This finishes the proof.
\end{proof}

\begin{cor}\label{cor:F-group-5}
  For $n \ge 0$, $\Fil'_{-n}W_m\Omega^q_K$ is open (hence closed) in
  $W_m\Omega^q_A(\log\pi)$ with
  respect to the topology $\tau_i$ for $1 \le i \le 4$.
\end{cor}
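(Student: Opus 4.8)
The plan is to obtain everything from \lemref{lem:F-group-4} together with \lemref{lem:Topology-0}, with essentially no computation. First I would note that by construction $\tau_4$ is the linear topology on $W_m\Omega^q_A(\log\pi)$ whose basis of open neighbourhoods of $0$ is precisely the decreasing family of subgroups $\{\Fil_{-n}W_m\Omega^q_K\}_{n \ge 0}$; in particular each $\Fil_{-n}W_m\Omega^q_K$ is an \emph{open} subgroup of $W_m\Omega^q_A(\log\pi)$ for the $\tau_4$ topology. This is the only "unpacking" step, and it is immediate from the definition of $\tau_4$ given just before \lemref{lem:Topology-0}.

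Next I would invoke \lemref{lem:F-group-4}, which furnishes the inclusion $\Fil_{-n}W_m\Omega^q_K \subseteq \Fil'_{-n}W_m\Omega^q_K$, and recall (from \lemref{lem:fil'-mod}, or simply because $\theta^{m,q}_A$ of \eqref{eqn:GH-2-0} is a group isomorphism carrying the subgroup ${\underset{j \ge n}\prod} F^{m,q}_j(S)$ onto $\Fil'_{-n}W_m\Omega^q_K$) that $\Fil'_{-n}W_m\Omega^q_K$ is a subgroup of $W_m\Omega^q_A(\log\pi)$. A subgroup of a topological abelian group that contains an open subgroup is a union of cosets of that open subgroup, hence open; and every open subgroup is closed, its complement being a union of cosets. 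Therefore $\Fil'_{-n}W_m\Omega^q_K$ is open and closed in the $\tau_4$ topology.

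Finally, \lemref{lem:Topology-0} asserts that the topologies $\tau_1, \tau_2, \tau_3, \tau_4$ on $W_m\Omega^q_A(\log\pi)$ all coincide, so the subgroup $\Fil'_{-n}W_m\Omega^q_K$ is simultaneously open and closed for every $\tau_i$, $1 \le i \le 4$. I do not anticipate a genuine obstacle: the whole argument is the formal fact that a subgroup containing an open subgroup is open, applied to the inclusion supplied by \lemref{lem:F-group-4}; the only points requiring (trivial) care are that "open" can be read off the defining filtration of $\tau_4$ and that $\Fil'_{-n}W_m\Omega^q_K$ is indeed a subgroup.
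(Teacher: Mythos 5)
Your proof is correct and follows essentially the same route as the paper: the paper likewise deduces $\tau_4$-openness of $\Fil'_{-n}W_m\Omega^q_K$ from Lemmas~\ref{lem:fil'-mod} and \ref{lem:F-group-4} (subgroup containing the open subgroup $\Fil_{-n}W_m\Omega^q_K$ of the defining filtration) and then concludes via \lemref{lem:Topology-0}. You have merely spelled out the standard topological-group facts that the paper leaves implicit.
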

\begin{proof}
 Combining Lemmas~\ref{lem:fil'-mod} and ~\ref{lem:F-group-4}, we get $\Fil'_{-n}W_m\Omega^q_K$ is $\tau_4$-open. Now the corollary follows from \lemref{lem:Topology-0}. 
\end{proof}

\begin{lem}\label{lem:F-group-7}
We have $\Fil'_{-n}W_m\Omega^q_K = \Fil_{-n}W_m\Omega^q_K$ for every $n \ge 0$.
\end{lem}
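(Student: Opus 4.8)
We already have the inclusion $\Fil_{-n}W_m\Omega^q_K \subseteq \Fil'_{-n}W_m\Omega^q_K$ from \lemref{lem:F-group-4}, so the whole task reduces to establishing the reverse inclusion $\Fil'_{-n}W_m\Omega^q_K \subseteq \Fil_{-n}W_m\Omega^q_K$. The plan is to exploit the graded pieces: show that the natural map induced by $\theta^{m,q}_A$ on the $n$-th graded piece of the filtration $\{\Fil'_{-\bullet}\}$ factors through $\gr_{-n}\Fil_{-\bullet}W_m\Omega^q_K$, together with a completeness/limit argument analogous to the one used in \corref{cor:F-group-3}. Concretely, one shows by downward induction on $n$ (starting from $\Fil'_0 = W_m\Omega^q_A(\log\pi) = \Fil_0$, which holds by \lemref{lem:Log-fil-4}(10)) that $\theta^{m,q}_A(F^{m,q}_n(S)) \subseteq \Fil_{-n}W_m\Omega^q_K$; this is exactly the content of \lemref{lem:F-group-6}. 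Hence every finite partial sum $\omega_i = \sum_{j=n}^{i}\alpha_j$ with $\alpha_j \in \theta^{m,q}_A(F^{m,q}_j(S))$ lies in $\Fil_{-n}W_m\Omega^q_K$.

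The second ingredient is closedness: by \corref{cor:F-group-5}, $\Fil'_{-n}W_m\Omega^q_K$ is closed in $W_m\Omega^q_A(\log\pi)$ with respect to any of the topologies $\tau_1,\dots,\tau_4$, all of which coincide by \lemref{lem:Topology-0}; and by \lemref{lem:Log-fil-4}(1) combined with the Noetherianity of $W_m\sO_X$ (cf. \cite[Lem.~2.9]{Morrow-ENS}) and the $I$-adic completeness of $W_m(A)$ from \thmref{thm:GH-Top}, the submodule $\Fil_{-n}W_m\Omega^q_K$ is also $I$-adically closed, hence $\tau_4$-closed. So for a general $\omega = \sum_{j\ge n}\alpha_j \in \Fil'_{-n}W_m\Omega^q_K$, we write $\omega = \lim_{i\to\infty}\omega_i$ in $\tau_4$ (the partial sums converge because $\theta^{m,q}_A(F^{m,q}_j(S)) \subseteq \Fil_{-j}W_m\Omega^q_K \subseteq \Fil_{-j}W_m\Omega^q_K$ shrinks as $j\to\infty$, and one uses that $\Fil'_{-n}/\Fil'_{-n-n'}$ is finitely generated so $\tau_3 = \tau_4$ on it). Since each $\omega_i \in \Fil_{-n}W_m\Omega^q_K$ and the latter is $\tau_4$-closed, we conclude $\omega \in \Fil_{-n}W_m\Omega^q_K$.

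Putting the two directions together gives $\Fil'_{-n}W_m\Omega^q_K = \Fil_{-n}W_m\Omega^q_K$ for all $n \ge 0$, as claimed.

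\textbf{The main obstacle.} The delicate point is making the limit argument rigorous: one must confirm that the partial sums $\omega_i$ really do converge to $\omega$ in the $\tau_4$-topology (not merely in $\tau_3$ on $W_m(A)$), which requires knowing that the restriction of $\tau_3$ to $\Fil'_{-n}W_m\Omega^q_K$ agrees with $\tau_4$ — this is where \lemref{lem:fil'-mod} (that $\Fil'_{-n}$ is a $W_m(A)$-submodule), \lemref{lem:F-group-4}, and \corref{cor:F-group-5} are used in concert, so that $\Fil'_{-n}W_m\Omega^q_K$ sits between consecutive pieces of the $\tau_4$-filtration and the induced topologies coincide by a standard comparison of the two finitely-generated-module filtrations. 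The graded computation \lemref{lem:F-group-2} (which handled $q=0$) does not directly give an isomorphism on graded pieces for $q>0$, so one should avoid claiming $\theta^{m,q}$ induces an isomorphism $\gr'_{-n} \xrightarrow{\cong}\gr_{-n}$ and instead argue purely via the inclusion \lemref{lem:F-group-4} plus the topological closedness, which is the cleaner route.
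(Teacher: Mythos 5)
Your proposal is correct and follows essentially the same route as the paper: one inclusion from \lemref{lem:F-group-4}, and for the reverse inclusion, writing $\omega\in\Fil'_{-n}W_m\Omega^q_K$ as a limit of partial sums $\omega_i$, observing each $\omega_i\in\Fil_{-n}W_m\Omega^q_K$ by \lemref{lem:F-group-6}, and concluding by the closedness of $\Fil_{-n}W_m\Omega^q_K$ in the (coinciding) topologies $\tau_3=\tau_4$ of \lemref{lem:Topology-0}. Your cautionary remark about not claiming a graded isomorphism for $q>0$ matches the paper, which likewise argues only via the inclusion plus topological closedness.
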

\begin{proof}
  In view of \lemref{lem:F-group-4}, we only need to show that
  $\Fil'_{-n}W_m\Omega^q_K \subset \Fil_{-n}W_m\Omega^q_K$.
  The latter assertion is obvious if $n =0$ because $\Fil_{-n}W_m\Omega^q_K =
  W_m\Omega^q_A(\log\pi) = \Fil'_{-n}W_m\Omega^q_K$ by
  \corref{cor:GH-2}. We assume therefore that $n \ge 1$.
  In particular, $\Fil'_{-n}W_m\Omega^q_K \subset W_m\Omega^q_A$.
 We now let $\omega \in \Fil'_{-n}W_m\Omega^q_K$. Recalling the notations from the proof of \lemref{lem:fil'-mod} we have
    that $\omega = \lim_{i \to \infty} \omega_i$, where the limit is taken
    with respect to $\tau_3$
    (cf. \cite[Proof of Thm.~B, p.~488]{Geisser-Hesselholt-Top}).
    Meanwhile, \lemref{lem:F-group-6} says that $\omega_i \in \Fil_{-n}W_m\Omega^q_K$
    for each $i \ge n$. Since $\Fil_{-n}W_m\Omega^q_K$ is $\tau_4$-open (and hence
    closed) in $W_m\Omega^q_A(\log\pi)$, by \corref{cor:F-group-5}; we conclude that $\omega \in
    \Fil_{-n}W_m\Omega^q_K$.
    \end{proof}

We shall now describe $\Fil_nW_m\Omega^q_K$ for arbitrary
$n \in \Z$. For any $l \in \Z$, we let $t = n-p^{m-1}l $.

\begin{lem}\label{lem:Positive-F-0}
  We have $\Fil_tW_m(K) = [\pi]^{l}_m \Fil_{n}W_m(K)$ for every $n \in  \Z$.
\end{lem}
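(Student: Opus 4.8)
The statement to prove is $\Fil_tW_m(K) = [\pi]^{l}_m \Fil_{n}W_m(K)$ where $t = n - p^{m-1}l$. The plan is to reduce this to a pointwise (component-wise) statement using the explicit description of the filtration in \eqref{eqn:Log-fil-1}. Recall that $K = A_\pi$ with $A = S[[\pi]]$, so $A$ is a UFD and $\pi$ is a prime element. By \eqref{eqn:Log-fil-1}, an element $\un{a} = (a_{m-1}, \ldots, a_0) \in W_m(K)$ lies in $\Fil_nW_m(K)$ (which in the notation of \S~\ref{sec:Complete-local} corresponds to the ideal $I = (\pi^{-n})$, since we wrote $\sO_X(D) = A x^{-1}$ and $D$ here corresponds to $-n$ times the divisor $(\pi)$; one must be careful with the sign convention, but the condition amounts to) $a_i^{p^i} \pi^{n} \in A$ for all $0 \le i \le m-1$, equivalently $a_i \pi^{\lceil n/p^i \rceil} \in A$ since $A$ is a UFD.

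The key computation is then purely about Teichm\"uller multiplication: for $\un{a} \in W_m(K)$, we have $[\pi]^{l}_m \cdot \un{a} = (\pi^{l^{?}} a_{m-1}, \ldots)$ — more precisely, writing $[\pi]_m = (\pi, 0, \ldots, 0)$ and using $\un{a} = \sum_{i} V^i([a_{m-1-i}]_{m-i})$, multiplicativity of the Teichm\"uller lift gives $[\pi]^l_m \cdot V^i([a_{m-1-i}]) = V^i([\pi^{p^i l} a_{m-1-i}])$ by the identity $x V(y) = V(F(x) y)$ used repeatedly in the excerpt (e.g.\ in the proof of \lemref{lem:Log-fil-2}), since $F^i([\pi]_m) = [\pi]_{m-i}^{p^i}$. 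Thus in Witt-coordinate terms, $[\pi]^l_m$ multiplies the $(m-1-i)$-th ghost-type component by $\pi^{p^i l}$. Therefore the condition ``$(a_{m-1-i})^{p^{m-1-i}} \pi^{n} \in A$ for all $i$'' transforms into ``$(\pi^{p^{m-1-i} l} a_{m-1-i})^{p^{?}} \ldots$'' — working this through, $[\pi]^l_m \Fil_n W_m(K)$ consists of those $\un{b}$ with $b_j = \pi^{p^{m-1-j} l} a_j$ where $a_j^{p^j}\pi^n \in A$, i.e.\ $b_j^{p^j} \pi^{-p^{m-1} l} \pi^n = b_j^{p^j}\pi^t \in A$, which is exactly the defining condition for $\un{b} \in \Fil_t W_m(K)$. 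Both inclusions follow since $[\pi]^l_m$ is a unit in $W_m(K)$ (as $\pi$ is a unit in $K$), so the map $\un{a} \mapsto [\pi]^l_m \un{a}$ is a bijection on $W_m(K)$.

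I expect the main obstacle to be bookkeeping with the sign conventions and the exact form of the Teichm\"uller power multiplication on Witt vectors — specifically verifying that $[\pi]^l_m \cdot (a_{m-1}, \ldots, a_0) = (\pi^{p^{m-1}l}a_{m-1}, \pi^{p^{m-2}l}a_{m-2}, \ldots, \pi^l a_0)$ is not literally true in Witt coordinates (Witt vector multiplication is not coordinate-wise), so the clean statement must be phrased via the $V$-decomposition $\un{a} = \sum_i V^i([a_{m-1-i}]_{m-i})$ and the formula $[\pi]^l_m V^i([c]_{m-i}) = V^i([\pi^{p^i l} c]_{m-i})$, then translating back using \eqref{eqn:Log-fil-1} (or, more carefully, using that membership in $\Fil$ can be checked on each $V^i([\cdot])$ summand thanks to \lemref{lem:Log-fil-1} and the argument already given in the proof of \lemref{lem:Log-fil-1} that filtered membership is detected on Witt components). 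Once the multiplication formula is pinned down, both inclusions are immediate and symmetric (apply the forward inclusion with $(n,l)$ replaced by $(t,-l)$ for the reverse), so this is the only genuine content.
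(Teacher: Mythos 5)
Your proof is correct and takes essentially the same route as the paper's: the paper simply asserts the coordinate formula $[\pi]^{l}_m(a_{m-1},\ldots,a_0)=(\pi^{l}a_{m-1},\pi^{pl}a_{m-2},\ldots,\pi^{p^{m-1}l}a_0)$, checks that $b_i=a_i\pi^{p^{m-1-i}l}$ satisfies $b_i^{p^i}\pi^{t}=a_i^{p^i}\pi^{n}\in A$, and obtains the reverse inclusion by multiplying by $[\pi]^{-l}_m$, exactly as you do. Your only hesitation is unnecessary: multiplication by a Teichm\"uller lift \emph{is} coordinate-wise up to the $p^i$-th power twist (your computation $[\pi]^l_mV^i([c]_{m-i})=V^i([\pi^{p^il}c]_{m-i})$ together with $\un{a}=\sum_iV^i([a_{m-1-i}]_{m-i})$ is precisely the justification of the formula the paper uses), so the two arguments coincide.
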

\begin{proof}
  Let $x = (a_{m-1}, \ldots , a_0) \in \Fil_nW_m(K)$, that is,
  $a_i^{p^i} \pi^n \in A$ for each $i$.
  Then $[\pi]^{l}_mx = (a_{m-1}\pi^{l}, a_{m-2}\pi^{pl}, \ldots , a_0\pi^{p^{m-1}l})$.
  Letting $b_i = a_i\pi^{p^{m-1-i}l}$, we get $(b_i)^{p^i}\pi^{t}
  = a_i^{p^i}\pi^{p^{m-1}l+t} = a_i^{p^i}\pi^n \in A$. This implies that
  $[\pi]^{l}_m\Fil_nW_m(K) \subset \Fil_{t}W_m(K)$ for all $l,n \in \Z$. Then, also, we have $[\pi]^{-l}_m\Fil_tW_m(K) \subset \Fil_{n}W_m(K)$ and hence the lemma.
 
\end{proof}

\begin{lem}\label{lem:Positive-F-1}
  We have $\Fil_tW_m\Omega^q_K = [\pi]^{l}_m \Fil_{n}W_m\Omega^q_K$ for every $n \in \Z$.
\end{lem}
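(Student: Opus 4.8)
The plan is to reduce everything to the degree-zero case \lemref{lem:Positive-F-0} together with the Leibniz rule. Recall from \defref{defn:Log-fil-3} that, with $A = S[[\pi]]$ and $K = A_\pi$,
\[
\Fil_tW_m\Omega^q_K = \Fil_tW_m(K)\cdot W_m\Omega^q_A(\log\pi) + d\big(\Fil_tW_m(K)\big)\cdot W_m\Omega^{q-1}_A(\log\pi),
\]
and likewise for $\Fil_nW_m\Omega^q_K$. Since $[\pi]_m$ is a unit of $W_m(K)$ and $n = t - p^{m-1}(-l)$, it will be enough to prove the single inclusion $[\pi]^l_m\,\Fil_nW_m\Omega^q_K \subseteq \Fil_tW_m\Omega^q_K$ for all $n, l \in \Z$; applying it with $(t,-l)$ in place of $(n,l)$ then yields $[\pi]^{-l}_m\,\Fil_tW_m\Omega^q_K \subseteq \Fil_nW_m\Omega^q_K$, i.e.\ the reverse inclusion. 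As multiplication by $[\pi]^l_m$ is additive, I would check this inclusion on the two families of generators of $\Fil_nW_m\Omega^q_K$ separately.

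The first family, $a\cdot w$ with $a \in \Fil_nW_m(K)$ and $w \in W_m\Omega^q_A(\log\pi)$, is immediate: by \lemref{lem:Positive-F-0} we have $[\pi]^l_m a \in \Fil_tW_m(K)$, so $[\pi]^l_m(aw) = ([\pi]^l_m a)\,w \in \Fil_tW_m(K)\cdot W_m\Omega^q_A(\log\pi) \subseteq \Fil_tW_m\Omega^q_K$. The real content lies in the second family, $d(a')\cdot w'$ with $a' \in \Fil_nW_m(K)$ and $w' \in W_m\Omega^{q-1}_A(\log\pi)$. Here I would use that $\pi$ lies in the log monoid $M_A$, so that in $W_m\Omega^\bullet_A(\log\pi)$ --- identified with $W_m\Omega^\bullet_{A_\log}$ via \thmref{thm:Log-DRW} --- one has $d[\pi]_m = [\pi]_m\,\dlog[\pi]_m$, whence $d\big([\pi]^l_m\big) = l\,[\pi]^l_m\,\dlog[\pi]_m$. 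The Leibniz rule then gives
\[
[\pi]^l_m\cdot d(a')\cdot w' = d\big([\pi]^l_m a'\big)\cdot w' - l\,\big([\pi]^l_m a'\big)\,\dlog[\pi]_m\cdot w'.
\]
Since $[\pi]^l_m a' \in \Fil_tW_m(K)$, the first summand lies in $d(\Fil_tW_m(K))\cdot W_m\Omega^{q-1}_A(\log\pi)$, and since $\dlog[\pi]_m\cdot w' \in W_m\Omega^q_A(\log\pi)$, the second summand lies in $\Fil_tW_m(K)\cdot W_m\Omega^q_A(\log\pi)$; by \defref{defn:Log-fil-3} both are contained in $\Fil_tW_m\Omega^q_K$. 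This would finish the inclusion, and with it the lemma.

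The one step I expect to need care --- and what I would flag as the main (mild) obstacle --- is controlling the correction term $\dlog[\pi]_m\cdot w'$ produced by Leibniz: a priori it lives only in $j_*W_m\Omega^q_U$, and the argument works precisely because $\dlog[\pi]_m$ already belongs to the logarithmic subsheaf $W_m\Omega^q_A(\log\pi)$, i.e.\ because the log structure is taken along $\pi$. Everything else is routine bookkeeping with \defref{defn:Log-fil-3} and \lemref{lem:Positive-F-0}, so I do not anticipate further difficulty.
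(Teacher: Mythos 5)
Your proposal is correct and follows essentially the same route as the paper: reduce to the degree-zero statement (\lemref{lem:Positive-F-0}) and handle the $d(\Fil W_m(K))$-generators via the Leibniz rule, using that the correction term involves $\dlog([\pi]_m)$, which lies in $W_m\Omega^\bullet_A(\log\pi)$. The only cosmetic difference is that the paper first proves $\Fil_tW_m\Omega^q_K \subseteq [\pi]^l_m\Fil_nW_m\Omega^q_K$ and then deduces the reverse, whereas you prove the opposite inclusion first and conclude by the substitution $(t,-l)$; the two are the same argument rearranged.
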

\begin{proof}
By definition, $\Fil_tW_m\Omega^q_K = \Fil_{t}W_m(K)W_m\Omega^q_A(\log\pi) +
d(\Fil_{t}W_m(K))W_m\Omega^{q-1}_A(\log\pi)$.
By \lemref{lem:Positive-F-0}, $\Fil_{t}W_m(K)W_m\Omega^q_A(\log\pi) 
=  [\pi]^{l}_m \Fil_{n}W_m(K)W_m\Omega^q_A(\log\pi)$ and
\[
\begin{array}{lll}
d(\Fil_{t}W_m(K))W_m\Omega^{q-1}_A(\log\pi) & = &
d([\pi]^{l}_m \Fil_{n}W_m(K))W_m\Omega^{q-1}_A(\log\pi) \\
& \subset & [\pi]^{l}_m d(\Fil_{n}W_m(K)) W_m\Omega^{q-1}_A(\log\pi) \\
& & + \Fil_{n}W_m(K)[\pi]^{l}_mW_m\Omega^{q-1}_A(\log\pi) \dlog([\pi]_m) \\
& \subset &  [\pi]^{l}_m \big( d(\Fil_{n}W_m(K)) W_m\Omega^{q-1}_A(\log\pi)  \\
& & +\Fil_{n}W_m(K)W_m\Omega^{q-1}_A \dlog([\pi]_m)\big) \\ 
& = & [\pi]^{l}_m\Fil_{n}W_m\Omega^q_K.
\end{array}
\]
This shows that $\Fil_tW_m\Omega^q_K \subseteq [\pi]^{l}_m \Fil_{n}W_m\Omega^q_K$ for all $l \in \Z$.
The reverse inclusion also follows like before (cf. \lemref{lem:Positive-F-0}).
\end{proof}

We can now describe the image of $\Fil_\bullet W_m\Omega^q_K$ under the isomorphism
$\theta^{m,q}_K$ of \corref{cor:GH-3} as follows.

\begin{prop}\label{prop:Fil-decom}
  For $n \in \Z$, there is a canonical decomposition
\[
\theta^{m,q}_K \colon
 \prod_{j \ge -n} F^{m,q}_j(S) \xrightarrow{\cong} \Fil_nW_m\Omega^q_K.
  \]
  \end{prop}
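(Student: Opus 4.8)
The statement to prove is the decomposition
\[
\theta^{m,q}_K \colon \prod_{j \ge -n} F^{m,q}_j(S) \xrightarrow{\cong} \Fil_nW_m\Omega^q_K, \qquad n \in \Z.
\]
The case $n \le 0$ is already settled: for $n=0$ it is the defining isomorphism $\theta^{m,q}_A$ of \corref{cor:GH-2} combined with \lemref{lem:F-group-7} (which identifies $\Fil'_{-n}W_m\Omega^q_K$ with $\Fil_{-n}W_m\Omega^q_K$), and more generally for all $n \ge 0$ the isomorphism $\theta^{m,q}_A(\prod_{j \ge n}F^{m,q}_j(S)) = \Fil_{-n}W_m\Omega^q_K$ is exactly \lemref{lem:F-group-7} together with the definition of $\Fil'$. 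So the only new content is to handle $n > 0$, i.e. to push the filtration into the ``negative'' range of the product $\prod_{j \ge -n}$ where now finitely many indices $j$ with $-n \le j < 0$ are included.

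The plan is to reduce the general $n$ to the already-known nonpositive case by multiplying by a Teichm\"uller power of the uniformizer. Concretely, fix $n \in \Z$ and choose $l \in \Z$ large enough that $t := n - p^{m-1}l \le 0$. By \lemref{lem:Positive-F-1}, multiplication by $[\pi]^l_m$ gives $\Fil_tW_m\Omega^q_K = [\pi]^l_m\Fil_nW_m\Omega^q_K$; since $[\pi]_m$ is a unit in $W_m(K)$, this map is a bijection, so $\Fil_nW_m\Omega^q_K = [\pi]^{-l}_m\Fil_tW_m\Omega^q_K$. On the other hand, by the already-established case ($-t \ge 0$) we have $\theta^{m,q}_K \colon \prod_{j \ge -t}F^{m,q}_j(S) \xrightarrow{\cong} \Fil_tW_m\Omega^q_K$. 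Now apply \lemref{lem:Positive-F-2}, which says $[\pi]^{-l}_m F^{m,q}_j(S) = F^{m,q}_{j - p^{m-1}l}(S)$; as $j$ runs over $\{j \ge -t\}$, the index $j - p^{m-1}l$ runs over $\{j' \ge -t - p^{m-1}l\} = \{j' \ge -n\}$ (using $-t - p^{m-1}l = -(t+p^{m-1}l) = -n$). Thus multiplication by $[\pi]^{-l}_m$ carries $\prod_{j \ge -t}F^{m,q}_j(S)$ isomorphically onto $\prod_{j' \ge -n}F^{m,q}_{j'}(S)$, and the square
\[
\xymatrix{
\prod_{j \ge -t}F^{m,q}_j(S) \ar[r]^-{\theta^{m,q}_K}_-{\cong} \ar[d]_-{[\pi]^{-l}_m}^-{\cong} & \Fil_tW_m\Omega^q_K \ar[d]^-{[\pi]^{-l}_m}_-{\cong} \\
\prod_{j' \ge -n}F^{m,q}_{j'}(S) \ar[r]^-{\theta^{m,q}_K} & \Fil_nW_m\Omega^q_K
}
\]
commutes because $\theta^{m,q}_K$ is $W_m(K)$-linear and, being built from $\theta^{m,q}_A$ on each factor, is compatible with the Teichm\"uller multiplication factor-by-factor (this compatibility is just the content of \lemref{lem:Positive-F-2} at the level of the summands). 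Commutativity of the square plus bijectivity of the other three arrows forces the bottom arrow to be an isomorphism, which is the claim.

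The only point requiring a little care — and the step I would expect to be the genuine (if modest) obstacle — is the compatibility of $\theta^{m,q}_K$ with multiplication by $[\pi]^{-l}_m$, i.e. verifying the square actually commutes rather than merely commutes up to the stated identifications. This amounts to checking that the isomorphism $\theta^{m,q}_K$ of \corref{cor:GH-3}, which was defined via a colimit of the twisted maps $[\pi]^{-l}_m\circ\theta^{m,q}_A$, intertwines the shift $j \mapsto j - p^{m-1}l$ on indices with multiplication by $[\pi]^{-l}_m$ on $W_m\Omega^q_K$; this is essentially already how $\theta^{m,q}_K$ was constructed in the proof of \corref{cor:GH-3} (the isomorphisms $\cong^1$ and $\cong^2$ there), so it should be a short bookkeeping argument rather than anything substantive. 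With that in hand the proof is complete.
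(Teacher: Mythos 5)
Your proposal is correct and follows essentially the same route as the paper: reduce to the case $n\le 0$ (handled by \lemref{lem:F-group-7}) by twisting with $[\pi]^{\pm l}_m$, using \lemref{lem:Positive-F-1} to move the filtration and \lemref{lem:Positive-F-2} to shift the indices of the $F^{m,q}_j(S)$, and then observe that the resulting isomorphism is the canonical one because the identifications are induced by $(\theta^{m,q}_A)^{-1}$, exactly as in the construction of $\theta^{m,q}_K$ in \corref{cor:GH-3}.
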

\begin{proof}
  For $n \le 0$, this follows from \lemref{lem:F-group-7}.
Suppose now $n \ge 0$. Choose $l >>0$ such that $t =n- p^{m-1}l <0$.
 \lemref{lem:Positive-F-1} then says that
  $\Fil_nW_m\Omega^q_K = [\pi]^{-l}_m\Fil_{t}W_m\Omega^q_K$.
  On the other hand, we have
  \[
    [\pi]^{-l}_m\Fil_{t}W_m\Omega^q_K \ \cong \ 
    [\pi]^{-l}_m\left(\prod_{j \ge- t}F^{m,q}_j(S)\right) \ \cong \
    \prod_{j \ge -t} [\pi]^{-l}_mF^{m,q}_j(S) \ \cong \ \prod_{j \ge -t} F^{m,q}_{j - p^{m-1}l}(S),
    \]
where the first and the last isomorphisms are by \lemref{lem:F-group-7} (induced by $(\theta^{m,q}_A)^{-1}$) and \lemref{lem:Positive-F-2}, respectively.  
By a change of variable $j'=j-p^{m-1}l$, we get
\[
\prod_{j \ge -t} F^{m,q}_{j - p^{m-1}l}(S) = \prod_{j' \ge -n} F^{m,q}_{j'}(S).
\]
Hence, we get an isomorphism $\Fil_nW_m\Omega^q_K \cong \prod_{j' \ge -n} F^{m,q}_{j'}(S)$, whose inverse is induced by the canonical map $F^{m,q}_{j'}(S) \to \Fil_nW_m\Omega^q_K$, for $j' \ge -n$ (because $\cong^1$ is induced by $(\theta^{m,q}_A)^{-1}$). We conclude that the resulting (inverse) isomorphism is $\theta^{m,q}_K$.
\end{proof}

Our first application of \propref{prop:Fil-decom} is the following.

\begin{prop}\label{prop:V-R-Fil}
  For every $n \in \Z$, there is a short exact sequence
  \[
  0 \to V^{m-1}(\Fil_n\Omega^q_K) + dV^{m-1}(\Fil_n\Omega^{q-1}_K) \to
  \Fil_nW_m\Omega^q_K \xrightarrow{R} \Fil_{\lfloor{n/p}\rfloor}W_{m-1}\Omega^q_K \to 0.
  \]
\end{prop}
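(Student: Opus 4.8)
The plan is to deduce this from the decomposition in \propref{prop:Fil-decom} together with the compatibilities of $F^{m,q}_n(S)$ with the operators $V$, $d$, and $R$ recorded in \lemref{lem:F-group-1}. First I would use \propref{prop:Fil-decom} to identify $\Fil_nW_m\Omega^q_K \cong \prod_{j \ge -n} F^{m,q}_j(S)$ and, applied with $m=1$, $\Fil_n\Omega^q_K \cong \bigoplus_{j \ge -n} F^{1,q}_j(S)$ (in degree $m=1$ the product is really a direct sum since $F^{1,q}_j(S) = W_1\Omega^q_S[\pi]^j_1 + W_1\Omega^{q-1}_S[\pi]^j_1\dlog[\pi]_1$ and $W_1\Omega^\bullet_A = \Omega^\bullet_A$ is the usual power-series de Rham complex). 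Similarly $\Fil_{\lfloor n/p\rfloor}W_{m-1}\Omega^q_K \cong \prod_{j \ge -\lfloor n/p\rfloor} F^{m-1,q}_j(S) = \prod_{j \le n/p} F^{m-1,q}_{-j}(S)$ after reindexing; more usefully, $\Fil_{\lfloor n/p\rfloor}W_{m-1}\Omega^q_K \cong \prod_{j' \ge -\lfloor n/p\rfloor} F^{m-1,q}_{j'}(S)$.

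Next I would analyze the restriction map $R \colon \Fil_nW_m\Omega^q_K \to \Fil_{\lfloor n/p\rfloor}W_{m-1}\Omega^q_K$ through these decompositions. By \lemref{lem:F-group-1}(4), $R$ sends $F^{m,q}_j(S)$ onto $F^{m-1,q}_{j/p}(S)$ if $p \mid j$ and to $0$ if $p \nmid j$; and the induced maps on the $F$-pieces are surjective. Since $j$ ranges over $j \ge -n$, the indices $j/p$ with $p\mid j$ range exactly over integers $\ge -\lfloor n/p\rfloor$ (because $p \mid j, j \ge -n \iff j/p \ge \lceil -n/p\rceil = -\lfloor n/p\rfloor$). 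Hence $R$ is componentwise surjective onto $\prod_{j' \ge -\lfloor n/p\rfloor} F^{m-1,q}_{j'}(S) \cong \Fil_{\lfloor n/p\rfloor}W_{m-1}\Omega^q_K$, giving surjectivity of $R$. Its kernel is $\prod_{j \ge -n,\; p \nmid j} F^{m,q}_j(S)$ together with, for each $p \mid j$, the kernel of $R \colon F^{m,q}_j(S) \to F^{m-1,q}_{j/p}(S)$. I would then identify this kernel with $V^{m-1}(\Fil_n\Omega^q_K) + dV^{m-1}(\Fil_n\Omega^{q-1}_K)$: by \lemref{lem:F-group-1}(2), $V^{m-1}$ maps $F^{1,q}_j(S)$ into $F^{m,q}_j(S)$, and the kernel of $R$ on the $j$-piece — which, tracing through \defref{defn:GH-1} for $W_m$ versus $W_{m-1}$, is exactly $V^{m-1}(W_1\Omega^q_S[\pi]^j_1) + dV^{m-1}(W_1\Omega^{q-1}_S[\pi]^j_1) = V^{m-1}(F^{1,q}_j(S)) + dV^{m-1}(F^{1,q-1}_j(S))$ (for $p\nmid j$ the whole piece $F^{m,q}_j(S) = B^{m,q}_{j,m-1}(S)$ is of this form since then $s = m-1$). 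Summing over $j \ge -n$ and using that $V^{m-1}$ and $dV^{m-1}$ commute with $\theta^{m,q}_A$ (each is induced from the level of the $F$-pieces), this sum is precisely $V^{m-1}(\Fil_n\Omega^q_K) + dV^{m-1}(\Fil_n\Omega^{q-1}_K)$ inside $\Fil_nW_m\Omega^q_K$.

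Alternatively — and this may be cleaner to write — I would argue by comparison with the known unfiltered sequence \propref{prop:basics}(1) (case $n = m-1$, i.e. $R = R^1$): one has a commutative diagram whose bottom row is $0 \to V^{m-1}(\Omega^q_K) + dV^{m-1}(\Omega^{q-1}_K) \to W_m\Omega^q_K \xrightarrow{R} W_{m-1}\Omega^q_K \to 0$ and whose top row is the claimed sequence, with vertical inclusions coming from the various $\Fil$'s. Surjectivity of the top $R$ follows from \lemref{lem:Log-fil-4}(7). Exactness in the middle: an element $\omega \in \Fil_nW_m\Omega^q_K$ with $R(\omega) = 0$ lies, by the bottom row, in $V^{m-1}(\Omega^q_K) + dV^{m-1}(\Omega^{q-1}_K)$, and I must upgrade this to membership in $V^{m-1}(\Fil_n\Omega^q_K) + dV^{m-1}(\Fil_n\Omega^{q-1}_K)$; this is exactly where the explicit decomposition \propref{prop:Fil-decom} and the index bookkeeping above are needed, so the two approaches converge. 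The main obstacle I anticipate is precisely this bookkeeping: pinning down the kernel of $R$ on each $F^{m,q}_j(S)$ via \defref{defn:GH-1}, handling the two cases $p\mid j$ and $p\nmid j$ uniformly, and confirming that $V^{m-1}$ applied to the degree-one filtration $\Fil_n\Omega^q_K$ (which is a genuine $A$-module, not a power series ring) exactly reconstitutes the predicted subgroup — i.e. that no extra elements of $\Omega^q_K$ outside $\Fil_n\Omega^q_K$ contribute after applying $V^{m-1}$ and landing in $\Fil_nW_m\Omega^q_K$. This should follow from injectivity of $V^{m-1}$ modulo its known kernel ($B_{m-1}$, $Z_{m-1}$ type groups, cf. \propref{prop:basics}(3)--(6)) intersected with the filtration, but verifying the intersections behave well will require care.
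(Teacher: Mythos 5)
Your overall plan — surjectivity via \lemref{lem:Log-fil-4}(7), the easy inclusion via the classical sequence \propref{prop:basics}(1), and then upgrading an element of $\Ker(R)\cap\Fil_nW_m\Omega^q_K$ from an unfiltered to a filtered presentation $V^{m-1}(\cdot)+dV^{m-1}(\cdot)$ by means of the decomposition \propref{prop:Fil-decom} — is exactly the paper's route, and your ``cleaner'' second variant is its proof in outline. But the decisive step is precisely the one you defer to ``bookkeeping'', and the one concrete formula you commit to there is wrong. For $p\mid j$ the kernel of $R$ on $F^{m,q}_j(S)$ is \emph{not} $V^{m-1}(W_1\Omega^q_S[\pi]^j_1)+dV^{m-1}(W_1\Omega^{q-1}_S[\pi]^j_1)$: already for $j=p^{m-1}i$ with $p\nmid i$ the element $dV^{m-1}(b)[\pi]^i_m$ (e.g.\ $b=t\in S=\F_p[t]_{(t)}$, $m=2$) lies in $\Ker(R)\cap F^{m,q}_j(S)$ but not in that group, as one sees by comparing components in the Geisser--Hesselholt decomposition and applying $F$; the correct per-piece statement must use the $\dlog$-summand of $F^{1,\bullet}_j(S)$, i.e.\ $V^{m-1}(F^{1,q}_j(S))+dV^{m-1}(F^{1,q-1}_j(S))$, and your asserted equality of the two expressions fails. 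Worse, for the pieces $B^{m,q}_{i,s}(S)$ with $1\le s\le m-2$ the presentation $V^s(a[\pi]^i_r)+dV^s(b[\pi]^i_r)$ is not unique, so $R(x)=0$ does not let you read off conditions on $a,b$ componentwise; computing $\Ker(R)$ piece by piece there is genuinely nontrivial, not index bookkeeping, and you give no argument.

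Your fallback for the second route — controlling $\Ker(V^{m-1},dV^{m-1})$ ``intersected with the filtration'' via filtered analogues of \propref{prop:basics}(3)--(6) — is also problematic: those filtered statements are not available at this point (they are \lemref{lem:Complete-5} and \lemref{lem:Complete-10}, proved later, and in the complete case by the very technique of this proposition), so this would be circular or at least as hard as the claim itself. The paper closes the step with one observation you have all the ingredients for but never make: given $x\in\Ker(R)\cap\Fil_nW_m\Omega^q_K$, write $x=V^{m-1}(\beta)+dV^{m-1}(\gamma)$ with $\beta\in\Omega^q_K$, $\gamma\in\Omega^{q-1}_K$ unrestricted, expand $\beta=\sum_{i\ge -l}b_i$ and $\gamma=\sum_{i\ge -l}c_i$ by \corref{cor:GH-3} at level $1$, note $V^{m-1}(b_i)+dV^{m-1}(c_i)\in F^{m,q}_i(S)$ by \lemref{lem:F-group-1}, and compare with the unique expansion $x=\sum_{i\ge -n}a_i$ from \propref{prop:Fil-decom}: uniqueness forces $V^{m-1}(b_i)+dV^{m-1}(c_i)=0$ for $i<-n$ (no conclusion about $b_i,c_i$ individually is needed), so one simply truncates and takes $b=\sum_{i\ge -n}b_i\in\Fil_n\Omega^q_K$, $c=\sum_{i\ge -n}c_i\in\Fil_n\Omega^{q-1}_K$, the termwise manipulations being justified by continuity of $V$ and $d$ for the $\tau_2$-topology (\lemref{lem:Topology-0}, \lemref{lem:Log-fil-4}). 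With that comparison-and-truncation argument supplied, your proof is complete; without it, the middle exactness is not established.
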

\begin{proof}
  Surjectivity of $R$ is already shown in \lemref{lem:Log-fil-4}(7). To prove the exactness in the middle, we let
  $A = V^{m-1}(\Fil_n\Omega^q_K) + dV^{m-1}(\Fil_n\Omega^{q-1}_K)$ and
  $B = \Ker(R)$. From the exact sequence (cf. \propref{prop:basics}(1))
  \begin{equation}\label{eqn:V-R-Fil-0}
    0 \to  V^{m-1}(\Omega^q_K) + dV^{m-1}(\Omega^{q-1}_K) \to
    W_m\Omega^q_K \xrightarrow{R} W_{m-1}\Omega^q_K \to 0,
  \end{equation}
  and \lemref{lem:Log-fil-4}(4) and (6), we see that $A \subset B = (V^{m-1}(\Omega^q_K) + dV^{m-1}(\Omega^{q-1}_K)) \bigcap
  \Fil_nW_m\Omega^q_K$.

  We now let $x \in B$. By \propref{prop:Fil-decom}, we can uniquely write
  $x = \sum_{i \ge -n} a_i$ with $a_i \in F^{m,q}_i(S)$.
  By \corref{cor:GH-3}, we can also write 
  $x = V^{m-1}(\sum_{i \ge -l} b_i) + dV^{m-1}(\sum_{i \ge -l} c_i)$ for some $l >> |n|$
and $b_i \in F^{1,q}_i(S), \ c_i \in F^{1,q-1}_i(S)$.
Hence, we get $\sum_{i \ge -n} a_i = \sum_{i \ge -l}(V^{m-1}(b_i) + dV^{m-1}(c_i))$.
Letting $a'_i = (V^{m-1}(b_i) + dV^{m-1}(c_i))$, we get from \lemref{lem:F-group-1}
that $a'_i \in F^{m,q}_i(S)$. We conclude from \corref{cor:GH-3} that
$a_i = a'_i$ for each $i \ge -l$. In particular, $a'_i = 0$ for $i < -n$.

We thus get 
$$x = \sum_{i \ge -n} V^{m-1}(b_i) + \sum_{i \ge -n} dV^{m-1}(c_i)
= V^{m-1}(\sum_{i \ge -n} b_i) + dV^{m-1}(\sum_{i \ge -n} c_i).$$
Note here that $V$ and $d$ are continuous with respect to the topology $\tau_2$
by virtue of Lemmas~\ref{lem:Log-fil-4} and ~\ref{lem:Topology-0}.
Since $b_i \in F^{1,q}_i(S)$ and $c_i \in F^{1,q-1}_i(S)$, \propref{prop:Fil-decom}
implies that $\sum_{i \ge -n} b_i \in \Fil_{n}\Omega^q_K$ and
$\sum_{i \ge -n}c_i \in \Fil_{n}\Omega^{q-1}_K$. Letting
$b = \sum_{i \ge -n} b_i$ and $c = \sum_{i \ge -n}c_i$, we get that
$x = V^{m-1}(b) + dV^{m-1}(c)$, where $b \in  \Fil_{n}\Omega^q_K$ and
$c \in \Fil_{n}\Omega^{q-1}_K$. It follows that $x \in A$.
\end{proof}

\section{Cartier homomorphism I}\label{sec:Cartier}
Recall from \cite{Kato-Duality} (see also \cite[\S~10]{GK-Duality})
that for any regular $\F_p$-scheme $X$, there
is a unique Cartier homomorphism $C \colon Z_1W_m\Omega^q_X \to W_m\Omega^q_X$
such that $W_{m+1}\Omega^q_X \xrightarrow{R} W_m\Omega^q_X$ is the composition
$W_{m+1}\Omega^q_X \stackrel{F}{\surj} Z_1W_m\Omega^q_X  \xrightarrow{C} W_m\Omega^q_X$
and $Z_1W_m\Omega^q_X := {\rm Image}(F) = \Ker(W_{m}\Omega^q_X \xrightarrow{F^{m-1}d}
\Omega^{q+1}_X)$. Our next goal is to explain the behavior of the Cartier homomorphism
when restricted to $\Fil_nW_m\Omega^q_K$.
We first need to prove the following lemma. 
We let $Z_1\Fil_nW_m\Omega^q_K := \Fil_nW_m\Omega^q_K \bigcap Z_1W_m\Omega^q_K$
for $n \in \Z$.

\begin{lem}\label{lem:Cartier-fil-0}
  $Z_1\Fil_nW_m\Omega^q_K = F(\Fil_nW_{m+1}\Omega^q_K)$. In particular the Cartier homomorphism restricts to a map
  $$C : Z_1\Fil_nW_m\Omega^q_K \to \Fil_{\lfloor n/p \rfloor}W_m\Omega^q_K.$$
\end{lem}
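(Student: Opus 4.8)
The statement asserts that $Z_1\Fil_nW_m\Omega^q_K = F(\Fil_nW_{m+1}\Omega^q_K)$, and that consequently $C$ restricts as claimed. I would prove the two inclusions separately and then deduce the statement about $C$.

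\emph{The inclusion $F(\Fil_nW_{m+1}\Omega^q_K) \subseteq Z_1\Fil_nW_m\Omega^q_K$.}
This is the easy direction. By \lemref{lem:Log-fil-4}(5) we already know $F(\Fil_nW_{m+1}\Omega^q_K) \subseteq \Fil_nW_m\Omega^q_K$. On the other hand, since $F(\Fil_nW_{m+1}\Omega^q_K) \subseteq F(W_{m+1}\Omega^q_K) = Z_1W_m\Omega^q_K$ (the description of $Z_1W_m\Omega^q_K$ as the image of $F$ recalled just before the lemma), we get $F(\Fil_nW_{m+1}\Omega^q_K) \subseteq \Fil_nW_m\Omega^q_K \cap Z_1W_m\Omega^q_K = Z_1\Fil_nW_m\Omega^q_K$ by definition.

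\emph{The reverse inclusion $Z_1\Fil_nW_m\Omega^q_K \subseteq F(\Fil_nW_{m+1}\Omega^q_K)$.}
This is the main point. Take $x \in \Fil_nW_m\Omega^q_K$ with $F^{m-1}d(x) = 0$ in $\Omega^{q+1}_K$. Since $x \in Z_1W_m\Omega^q_K = F(W_{m+1}\Omega^q_K)$, write $x = F(y)$ for some $y \in W_{m+1}\Omega^q_K$; I need to correct $y$ to lie in $\Fil_nW_{m+1}\Omega^q_K$ without changing $F(y)$. The kernel of $F \colon W_{m+1}\Omega^q_K \to W_m\Omega^q_K$ is $V^m(W_1\Omega^q_K) = V^m(\Omega^q_K)$ by \propref{prop:basics}(7) (with $l = 1$). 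So the ambiguity in $y$ is exactly a term in $V^m(\Omega^q_K)$. The strategy is to use the structure result \propref{prop:Fil-decom}: decompose $x = \sum_{i \ge -n} a_i$ with $a_i \in F^{m,q}_i(S)$, decompose $y = \sum_{i} b_i$ with $b_i \in F^{m+1,q}_i(S)$ (after possibly enlarging the range of indices using \corref{cor:GH-3}), and use \lemref{lem:F-group-1}(3) which says $F(F^{m+1,q}_i(S)) \subseteq F^{m,q}_i(S)$, together with the uniqueness in the decomposition of \corref{cor:GH-3}, to conclude that the components $b_i$ for $i < -n$ necessarily map into the part of $W_m\Omega^q_K$ killed by the projection to $\Fil_nW_m\Omega^q_K$. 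Concretely: by \lemref{lem:F-group-1}(3), $F(b_i) \in F^{m,q}_i(S)$; matching against the decomposition $x = F(y) = \sum_i F(b_i)$ and using uniqueness from \corref{cor:GH-3}, the component of $x$ in $F^{m,q}_i(S)$ equals $F(b_i)$. Since $x \in \Fil_nW_m\Omega^q_K = \prod_{i \ge -n}F^{m,q}_i(S)$ (by \propref{prop:Fil-decom}), we get $F(b_i) = 0$ for $i < -n$. Now replace $y$ by $y' = \sum_{i \ge -n} b_i$; then $F(y') = F(y) = x$ and $y' \in \prod_{i \ge -n}F^{m+1,q}_i(S) = \Fil_nW_{m+1}\Omega^q_K$ by \propref{prop:Fil-decom} again. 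One subtlety to check carefully: replacing the infinite sum indexed over all $i$ by the sum over $i \ge -n$ requires that the discarded tail $\sum_{i < -n} b_i$ really lies in $\Ker(F)$; this follows because $F$ is continuous for the $\tau_2$-topology (Lemmas~\ref{lem:Log-fil-4} and \ref{lem:Topology-0}) and $F(b_i) = 0$ componentwise, so $F(\sum_{i<-n} b_i) = \sum_{i<-n} F(b_i) = 0$. This handles the case at hand; the case of negative $n$ versus nonnegative $n$ is uniform since \propref{prop:Fil-decom} covers all $n \in \Z$.

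\emph{Deducing the statement about $C$.}
Recall $R = C \circ F$ on $W_{m+1}\Omega^q_K$. Given $\xi \in Z_1\Fil_nW_m\Omega^q_K$, by the equality just proved write $\xi = F(y)$ with $y \in \Fil_nW_{m+1}\Omega^q_K$. Then $C(\xi) = C(F(y)) = R(y)$, and by \lemref{lem:Log-fil-4}(7), $R(\Fil_nW_{m+1}\Omega^q_K) = \Fil_{\lfloor n/p\rfloor}W_m\Omega^q_K$, so $C(\xi) \in \Fil_{\lfloor n/p\rfloor}W_m\Omega^q_K$. This is well-defined independently of the choice of $y$ because $C$ is already a well-defined map on all of $Z_1W_m\Omega^q_K$.

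\textbf{Main obstacle.} The delicate step is the reverse inclusion: one must argue that a preimage $y$ of $x$ under $F$ can be chosen inside the filtration, and this genuinely uses the explicit Geisser--Hesselholt-type direct-sum/product decomposition (\corref{cor:GH-3}, \propref{prop:Fil-decom}) together with the compatibility of $F$ with the pieces $F^{m,q}_i(S)$ (\lemref{lem:F-group-1}(3)) and the continuity of $F$ for the relevant topology — a purely formal manipulation of the kernel $V^m(\Omega^q_K)$ without the decomposition would not obviously land in the filtration, since $V^m(\Omega^q_K)$ and $\Fil_nW_{m+1}\Omega^q_K$ interact only through these structural results.
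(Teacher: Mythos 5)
Your proposal is correct and follows essentially the same route as the paper: the easy inclusion via \lemref{lem:Log-fil-4}(5) together with $Z_1W_m\Omega^q_K=F(W_{m+1}\Omega^q_K)$, and the reverse inclusion by decomposing $x$ and a lift $y$ through \corref{cor:GH-3}/\propref{prop:Fil-decom}, matching components using \lemref{lem:F-group-1}(3) and uniqueness to kill the pieces with index $<-n$, then deducing the statement about $C$ from $C\circ F=R$ and $R(\Fil_nW_{m+1}\Omega^q_K)=\Fil_{\lfloor n/p\rfloor}W_m\Omega^q_K$. The only cosmetic difference is that your continuity check for discarding the tail is not really needed, since only finitely many negative indices occur in the decomposition.
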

\begin{proof}
  We only need to show that
  $\Fil_nW_m\Omega^q_K \bigcap Z_1W_m\Omega^q_K \subset F(\Fil_nW_{m+1}\Omega^q_K)$.
We now let $x \in \Fil_nW_m\Omega^q_K \bigcap Z_1W_m\Omega^q_K = 
\Fil_nW_m\Omega^q_K \bigcap F(W_{m+1}\Omega^q_K)$.
By \corref{cor:GH-3}, \lemref{lem:F-group-1} and \propref{prop:Fil-decom},
we can write $x = \sum_{i \ge -n} a_i = \sum_{i \ge -l} a'_i = F( \sum_{i \ge -l} b_i)$
with $a_i \in F^{m,q}_i(S)$
and $a'_i = F(b_i) \in F^{m,q}_i(S)$ for some $b_i \in F^{m+1,q}_i(S)$ and for some
$l >> |n|$. By the uniqueness of the decomposition in \corref{cor:GH-3},
we must have $a_i = a'_i$ for every $i \ge -l$. In particular, $a'_i = 0$ for
$i < -n$. This implies that $x = \sum_{i \ge -n} a_i = F(\sum_{i \ge -n} b_i)$.
Since $\sum_{i \ge -n} b_i \in \Fil_nW_{m+1}\Omega^q_K$, the desired claim follows.
\end{proof}
\begin{rem}
    In the next chapter, we shall also prove a Cartier isomorphism (cf. \lemref{lem:Complete-4}), which is analogous to the classical case. 
\end{rem}

The following key proposition will be used later in \S~\ref{chap:Kato-coh} to study local Kato-filtration.

\begin{prop}\label{prop:Cartier-fil-1}
  Let $n \ge 0$ and $\omega \in Z_1W_m\Omega^q_K$.
 Then $\omega \in Z_1\Fil_nW_m\Omega^q_K$ if and only if
    $(1-C)(\omega) \in \Fil_nW_m\Omega^q_K$.
\end{prop}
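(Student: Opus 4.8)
The statement to prove is: for $n \ge 0$ and $\omega \in Z_1 W_m\Omega^q_K$, one has $\omega \in Z_1\Fil_n W_m\Omega^q_K$ if and only if $(1-C)(\omega) \in \Fil_n W_m\Omega^q_K$. One direction is immediate: if $\omega \in Z_1\Fil_n W_m\Omega^q_K = \Fil_n W_m\Omega^q_K \cap Z_1 W_m\Omega^q_K$, then by \lemref{lem:Cartier-fil-0} we have $C(\omega) \in \Fil_{\lfloor n/p\rfloor}W_m\Omega^q_K \subset \Fil_n W_m\Omega^q_K$ (since $n \ge 0$ forces $\lfloor n/p\rfloor \le n$ and the filtration is increasing in the divisor), so $(1-C)(\omega) = \omega - C(\omega) \in \Fil_n W_m\Omega^q_K$. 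The content is the converse.

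For the converse, I would use the explicit decomposition $\theta^{m,q}_K \colon \bigoplus_{j<0}F^{m,q}_j(S) \oplus \prod_{j\ge 0}F^{m,q}_j(S) \xrightarrow{\cong} W_m\Omega^q_K$ of \corref{cor:GH-3}, under which $\Fil_a W_m\Omega^q_K$ corresponds to $\prod_{j \ge -a}F^{m,q}_j(S)$ by \propref{prop:Fil-decom}. Write $\omega = \sum_j a_j$ with $a_j \in F^{m,q}_j(S)$; the hypothesis $\omega \in Z_1 W_m\Omega^q_K$ means (by \lemref{lem:Cartier-fil-0} applied with $n$ replaced by a very negative value, i.e. on all of $W_m\Omega^q_K$) that $\omega = F(\eta)$ for some $\eta = \sum_j b_j \in W_m\Omega^q_K$ with $b_j \in F^{m+1,q}_j(S)$, and by uniqueness $a_j = F(b_j)$. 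The key computation is to understand how $C$ acts on each graded piece: since $R = C \circ F$ on $W_m\Omega^q_K$, we get $C(a_j) = C(F(b_j)) = R(b_j)$, and by \lemref{lem:F-group-1}(4), $R(b_j) \in F^{m,q}_{j/p}(S)$ if $p \mid j$ and $R(b_j)=0$ if $p \nmid j$. Thus $C(\omega) = \sum_{p\mid j} R(b_j)$, a sum of terms living in index $j/p$. Now suppose $(1-C)(\omega) \in \Fil_n W_m\Omega^q_K$, i.e. $(1-C)(\omega) \in \prod_{j \ge -n}F^{m,q}_j(S)$; equivalently, the index-$j$ component of $(1-C)(\omega)$ vanishes for all $j < -n$. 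The index-$j$ component of $\omega - C(\omega)$ is $a_j - R(b_{pj})$ (with $R(b_{pj})$ present only as far as it makes sense), so for every $j < -n$ we get $a_j = R(b_{pj})$ — but since $n \ge 0$ and $j < -n \le 0$, we have $pj < j < -n$, so this is a downward recursion: $a_j$ is expressed in terms of a component $b_{pj}$ at a strictly smaller (more negative) index. I would then run the recursion/limit argument: iterating, $a_j = R^k(b_{p^k j})$; as $k \to \infty$ the index $p^k j \to -\infty$, and using that $\eta \in W_m\Omega^q_K$ has bounded-below support in the $\bigoplus$-part of the decomposition (only finitely many negative-index components are nonzero), we conclude $a_j = 0$ for all $j < -n$. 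Hence $\omega \in \prod_{j\ge -n}F^{m,q}_j(S) = \Fil_n W_m\Omega^q_K$, and combined with $\omega \in Z_1 W_m\Omega^q_K$ this gives $\omega \in Z_1\Fil_n W_m\Omega^q_K$.

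The main obstacle I anticipate is bookkeeping the interaction of $C$ with the $\theta^{m,q}_K$-decomposition cleanly — specifically, verifying that $C$ is "triangular" with respect to the grading in the precise sense that $C(F^{m,q}_j(S))$ lands in $F^{m,q}_{j/p}(S)$ (when $p \mid j$) and is zero otherwise, which requires combining $RF = CF$ with \lemref{lem:F-group-1}(3),(4) and checking compatibility with $\theta^{m,q}_A$, $\theta^{m,q}_K$. The second delicate point is the convergence/finiteness argument: one must be careful that the infinite series manipulations are legitimate, which is guaranteed by completeness of $W_m\Omega^q_A(\log\pi)$ in the $\tau_i$-topologies (\lemref{lem:Topology-0}, \corref{cor:F-group-5}) and the fact that in the decomposition of $\eta$ only finitely many summands with negative index occur. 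Once these two ingredients are in place, the argument is a finite (in fact, telescoping) recursion plus a limiting step, and there is no further serious difficulty. A cleaner alternative that avoids explicit index-chasing would be to use \propref{prop:V-R-Fil} together with induction on $m$: reduce mod the image of $V^{m-1}$ and $dV^{m-1}$, handle the base case $m=1$ directly via the classical Cartier operator on $\Omega^q_K$, and lift; but I expect the graded-piece argument above to be the most transparent.
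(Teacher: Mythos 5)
Your proposal is correct and follows essentially the same route as the paper: write $\omega=F(\alpha)$, expand $\alpha$ via the decomposition of \corref{cor:GH-3}, use that $C$ is ``triangular'' for the grading (via $CF=R$ and \lemref{lem:F-group-1}(4), so the index-$i$ component of $(1-C)\omega$ is $F(a_i)-R(a_{pi})$ with $pi<i$ for $i<0$), and kill all components below index $-n$ by a recursion that terminates because $\alpha$ has only finitely many nonzero negative-index components; the paper phrases this as an upward induction from the bottom index $-l$, whereas you phrase it as a downward iteration, but the mechanism is identical. The one formula to repair is ``$a_j=R^k(b_{p^kj})$'': $R^k$ does not even have the right target for $k\ge 2$; the correct iterate, obtained from $a_j=R(b_{pj})=CF(b_{pj})$ and $F(b_{pj})=a_{pj}=R(b_{p^2j})$, is $a_j=C^{k-1}R(b_{p^kj})$ (equivalently, setting $c_i=F(b_i)$ one has $c_i=C(c_{pi})$ for $i<-n$, and $c_{p^ki}=0$ for $k\gg 0$), after which your conclusion goes through unchanged.
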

\begin{proof}
  If $\omega \in  Z_1\Fil_nW_m\Omega^q_K$, then $C(\omega) = CF(\omega') =
  R(\omega')$ for some $\omega' \in \Fil_nW_{m+1}\Omega^q_K$
  by \lemref{lem:Cartier-fil-0}.
  It follows from \lemref{lem:F-group-1} that $C(\omega) \in
  \Fil_{\lfloor{n/p}\rfloor}W_m\Omega^q_K \subset \Fil_nW_m\Omega^q_K$.  
  In particular, the map
  $Z_1\Fil_nW_m\Omega^q_K \xrightarrow{1-C} \Fil_nW_m\Omega^q_K$ is defined.

  Suppose now that $\omega \in Z_1W_m\Omega^q_K$ is such that
  $(1-C)(\omega) \in \Fil_nW_m\Omega^q_K$. We can write $\omega = F(\alpha)$ for
  some $\alpha \in W_{m+1}\Omega^q_K$. This yields $(1-C)(\omega) = (1-C)F(\alpha)
  = F(\alpha) - R(\alpha)$. By \corref{cor:GH-3}, we can write $\alpha$ uniquely
  as $\alpha = \sum_{i \ge -l} a_i$ for some $a_i \in F^{m+1,q}_i(S)$ and $l \gg 0$.
  By \lemref{lem:F-group-1}, we get
  \begin{equation}\label{eqn:Cartier-fil-1-0}
    F(\alpha) - R(\alpha) = \sum_{i \ge -l} F(a_i) - \sum_{i \ge -l}R(a_i)
  \in \Fil_nW_m\Omega^q_K.
  \end{equation}

  If $l \le n$, then $-n \le -l$ and hence $\alpha \in \Fil_nW_{m+1}\Omega^q_K$
  so that $\omega \in Z_1\Fil_nW_m\Omega^q_K$.
Let $l > n$. \lemref{lem:F-group-1} says that
  $R(a_{-l}) \in F^{m,q}_{-l/p}(S)$ if $p|l$ and $R(a_{-l}) = 0$ if $|l| \in I_p$. Since $F(a_{-l}) \in F^{m,q}_{-l}$ and $ \sum_{i \ge -l+1} F(a_i) - \sum_{i \ge -l}R(a_i) \in \Fil_{l-1}W_m\Omega^q_K$, it follows from ~\eqref{eqn:Cartier-fil-1-0} that
  $F(a_{-l}) = 0$.  An induction argument says that
  $F(a_i) = 0$ for all $i < -n$. Thus, we get $\omega = F(\alpha) =
  \sum_{i \ge -n} F(a_i) \in \Fil_nW_m\Omega^q_K$. 
  \end{proof}

\chapter{Structure of the filtered de Rham-Witt complex II}\label{chap:F_p-2}
The main purpose of this chapter is to provide a description of the filtered de Rham-Witt complex of a regular local ring, based on the description given in the previous chapter. Finally, we extend various results of the de Rham-Witt complex of regular schemes to the filtered setup.

\section{de Rham complex of multivariate power series ring}
\label{sec:GH-multi-var}
We let $S$ be an $F$-finite Noetherian regular local $\F_p$-algebra and 
let $A = S[[x_1,\ldots , x_d]]$. We let $ 1\le r \le d$ and $K = A_\pi$, where
$\pi = \prod_{1 \le i \le r} x_i$. For $\un{n} = (n_1, \ldots , n_r) \in \Z^r$, and the
invertible ideal $I = (x^{n_1}_1\cdots x^{n_r}_r) \subset K$, we shall
denote $\Fil_I W_m\Omega^q_K$ by $\Fil_{\un{n}}W_m\Omega^q_K$ and write
$M(x^{-n_1}_1\cdots x^{-n_r}_r)$ as $M(\un{n})$ for any $W_m(A)$-module $M$ (cf. Notation~\ref{not:twist}). Also $\un{n}/p$ will denote $(\lfloor n_1/p \rfloor, \ldots ,\lfloor n_r/p \rfloor) \in \Z^r$.
We let $q \ge 0$ and let
\begin{equation}\label{eqn:Multi-0}
  F^{1,q}_0(S) =
{\underset{r+1 \le i_{s+1} < \cdots < i_{q-i_0} \le d}
  {\underset{0 \le i_0,
 1 \le i_1 < \cdots < i_s \le r}\bigoplus}} \Omega^{i_0}_S \dlog(x_{i_1})
\cdots \dlog(x_{i_s}) d(x_{i_{s+1}}) \cdots d(x_{i_{q-i_0}}).
\end{equation}
Note that $ F^{1,q}_0(S) \subset \Omega^q_K$. For $\un{m} = (m_1, \ldots , m_d) \in \Z^d$,
we let $F^{1,q}_{\un{m}}(S) = (\prod_i x^{m_i}_i)F^{1,q}_0(S)$.

\begin{lem}\label{lem:F-function-1}
  For any $\un{n} = (n_1, \ldots , n_r) \in \Z^r$, there is a canonical isomorphism
  of $A$-modules
  \[
  \Fil_{\un{n}}\Omega^q_K \xrightarrow{\cong} {\underset{\un{m} \in \N_0^d}\prod}
  (x^{-n_1}_1\cdots x^{-n_r}_r)F^{1,q}_{\un{m}}(S).
  \]
\end{lem}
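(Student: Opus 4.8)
The plan is to reduce \lemref{lem:F-function-1} to the one-variable case already established in \propref{prop:Fil-decom}, by iterating a power-series decomposition in each of the variables $x_1, \ldots, x_d$ (first the "log" variables $x_1,\ldots,x_r$, then the ordinary variables $x_{r+1},\ldots,x_d$). Recall that in the notation of \secref{sec:GH-multi-var} we have $A = S[[x_1,\ldots,x_d]] = S'[[x_i]]$ where $S' = S[[x_1,\ldots,\widehat{x_i},\ldots,x_d]]$ is again an $F$-finite Noetherian regular local $\F_p$-algebra, so \propref{prop:Fil-decom} and \corref{cor:GH-3} apply to each single-variable extension. The case $m=1$ of \propref{prop:Fil-decom} says precisely that $\Fil_n\Omega^q_{S'((x_i))} \cong \prod_{j \ge -n} F^{1,q}_j(S')$ where $F^{1,q}_j(S') = x_i^j\bigl(\Omega^q_{S'} + \Omega^{q-1}_{S'}\dlog(x_i)\bigr)$; and when the $i$-th variable carries the trivial (non-log) structure one instead uses the ordinary Geisser-Hesselholt decomposition (the $m=1$ case of \thmref{thm:GH-Top}, equivalently the classical fact that $\Omega^q_{S'[[x_i]]} \cong \prod_{k\ge0}\bigl(\Omega^q_{S'}x_i^k \oplus \Omega^{q-1}_{S'}x_i^{k}dx_i / \ldots\bigr)$ — more precisely $\Omega^q_A$ is the $x_i$-adic completion of $\Omega^q_{S'}[x_i]\oplus \Omega^{q-1}_{S'}[x_i]\,dx_i$), and inverting $x_i$ gives the $\Fil$ description.

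Concretely, I would argue by induction on $d$. For the inductive step write $A = S'[[x_d]]$. If $d > r$ (so $x_d$ is a non-log variable, $n_d$ does not appear), then $\Fil_{\un n}\Omega^q_K = \Fil_{\un n'}\Omega^q_{K'}\otimes_{?}$ — more carefully, one checks that an element of $\Fil_{\un n}\Omega^q_K$ (where $\un n = (n_1,\ldots,n_r)$) expands uniquely as a convergent series $\sum_{k\ge 0}(\alpha_k + \beta_k\,dx_d)x_d^k$ with $\alpha_k \in \Fil_{\un n}\Omega^q_{K'}$, $\beta_k \in \Fil_{\un n}\Omega^{q-1}_{K'}$, because the localization only inverts $\pi = x_1\cdots x_r$ which is a unit times a power series not involving $x_d$, and $\Fil_{\un n}$ is a condition purely on the $x_1,\ldots,x_r$-valuations; the completeness needed to sum the series comes from \corref{cor:F-group-5}-type openness/closedness of the filtration steps, or directly from the $x_d$-adic completeness of $\Omega^\bullet_A$ as in the proof of \lemref{lem:F-group-7}. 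Then by the induction hypothesis $\Fil_{\un n}\Omega^q_{K'}\cong \prod_{\un m'\in\N_0^{d-1}}(x^{-n_1}_1\cdots)F^{1,q}_{\un m'}(S)$ for the appropriate smaller $F^{1,q}_{\un m'}$, and reindexing by $\un m = (\un m', k)$ and matching the explicit form of $F^{1,q}_0(S)$ in \eqref{eqn:Multi-0} (which is built so that the summand indexed by $i_0, i_1<\cdots<i_s, i_{s+1}<\cdots$ in $\Omega^q_K$ is exactly obtained by wedging the analogous summand for $S'$ with $1$ or with $dx_d$) gives the claimed product. If $d = r$ (so $x_d = x_r$ is a log variable), the same expansion is used but with $\dlog(x_r)$ in place of $dx_r$ and the series runs over $k \ge -n_r$ after inverting $x_r$, i.e. one applies the $m=1$ case of \propref{prop:Fil-decom} verbatim; again reindex and match \eqref{eqn:Multi-0}.

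The bookkeeping to nail down is: (i) that the single-variable decomposition is \emph{compatible with the $\Fil$-condition on the remaining variables}, i.e. the coefficients $\alpha_k,\beta_k$ lie in $\Fil_{\un n'}\Omega^\bullet_{K'}$ exactly when the total element lies in $\Fil_{\un n}\Omega^q_K$ — this is where one uses that $A$ is a UFD and that the generators $x_1,\ldots,x_r$ are "independent" so a valuation bound in one variable is unaffected by expanding in another (the same mechanism as \eqref{eqn:Log-fil-1}); and (ii) the convergence/completeness needed to assemble the infinite product, which I expect to handle exactly as in \lemref{lem:F-group-7} and \corref{cor:F-group-5} via the adic topologies $\tau_i$, noting $\Omega^q_A$ is complete and $\Fil$-steps are closed. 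The main obstacle, such as it is, is purely organizational: setting up the iterated decomposition so that the final index set is cleanly $\prod_{\un m\in\N_0^d}$ and the summand $F^{1,q}_{\un m}(S)$ is literally $(\prod_i x^{m_i}_i)F^{1,q}_0(S)$ with $F^{1,q}_0(S)$ as in \eqref{eqn:Multi-0} — i.e. keeping track of which subsets of $\{1,\ldots,r\}$ contribute $\dlog$'s and which subsets of $\{r+1,\ldots,d\}$ contribute $dx$'s. There is no essentially new analytic input beyond \propref{prop:Fil-decom}; the proof is an induction that unwinds the multi-variable ring one variable at a time.
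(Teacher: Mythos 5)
Your route (induction on the number of variables, peeling off one $x_i$ at a time via the one--variable decompositions) is not what the paper does, and it is considerably heavier than necessary. The paper's proof is essentially two lines: first, $\Omega^q_A(\log\pi)\cong\prod_{\un m\in\N_0^d}F^{1,q}_{\un m}(S)$, which follows from the evident description of $\Omega^q_{S[x_1,\ldots,x_d]}(\log\pi)$ (a free module with the basis recorded in \eqref{eqn:Multi-0}, cf.\ \lemref{lem:LWC-2}) together with the completion statements of Morrow \cite[Lem.\ 2.9, 2.11]{Morrow-ENS}; second, and this is the point your proposal never uses, \lemref{lem:Log-fil-4}(2) says that at level $m=1$ the filtration is \emph{literally} the twist, $\Fil_{\un n}\Omega^q_K=\Omega^q_X(\log E)(D_{\un n})=x_1^{-n_1}\cdots x_r^{-n_r}\,\Omega^q_A(\log\pi)$. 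Multiplying the product decomposition by $x^{-\un n}$ finishes the proof. In particular your ``bookkeeping item (i)'' --- compatibility of the coefficientwise expansion with the $\Fil$-condition --- is exactly the content that \lemref{lem:Log-fil-4}(2) hands you for free, and once you invoke it there is nothing left to induct on: you only need the (unfiltered) structure of $\Omega^q_A(\log\pi)$, no convergence or closedness arguments of the type in \lemref{lem:F-group-7} and \corref{cor:F-group-5}.

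One step of your induction is also not correct as stated: when you peel off a log variable you cannot apply \propref{prop:Fil-decom} ``verbatim''. That proposition concerns $K=S'((x_i))$ with $S'$ an $F$-finite Noetherian regular \emph{local} ring, whereas in the multi-variable situation the field of interest is $(S'[[x_r]])_{x_1\cdots x_r}$; the remaining log variables $x_1,\ldots,x_{r-1}$ are also inverted, and since localization at elements of the coefficient ring does not commute with power series, this ring is not of the form (regular local)$((x_r))$, nor is its coefficient ring local after inverting $x_1\cdots x_{r-1}$. One can repair this by restricting throughout to the $\Fil_{\un n}$-pieces (which impose the uniform bound $m_i\ge -n_i$ and hence bounded denominators), but that repair again amounts to knowing the twist description of the filtration first --- i.e.\ to \lemref{lem:Log-fil-4}(2) --- at which point the paper's direct argument is both shorter and cleaner. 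So: the strategy can be pushed through, but as written it has a genuine gap at the log-variable step, and it re-proves with effort what the earlier lemma already gives.
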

\begin{proof}
  From the known description of $\Omega^q_{S[x_1, \ldots , x_d]}$
  and \cite[Lem. 2.9, 2.11]{Morrow-ENS} for $q=0$, we get that $\Omega^q_A(\log\pi) \cong
  {\underset{\un{m} \in \N_0^d}\prod} F^{1,q}_{\un{m}}(S)$.
  The asserted expression for $\Fil_{\un{n}}\Omega^q_K$ now follows from
  its definition and item (2) of \lemref{lem:Log-fil-4}.
  \end{proof}

For $\un{m} = ((m_1, \ldots , m_r), (m_{r+1}, \ldots , m_d)) \in \Z^r \times \N_0^{d-r}$,
we let $x^{\un{m}} = \prod_{1 \le i \le d} x^{m_i}_i$. We say $\un{m} \le \un{m'}$
if $m_i \le m'_i$ for each $1 \le i \le d$. We define $\un{m} < \un{m'}$
similarly. Taking the colimit of
$\Fil_{\un{n}} \Omega^q_K$ as $\un{n} \to \infty$, we get
\begin{cor}\label{cor:F-function-2}
  Every element $\omega \in \Omega^q_K$ can be written uniquely as an infinite series
  $\omega = {\underset{\un{m} \in \Z^r \times \N_0^{d-r}}\sum} a_{\un{m}}x^{\un{m}}$,
  where $a_{\un{m}} \in F^{1,q}_0(S)$ for all $\un{m}$ and $a_{\un{m}} = 0$ if
  $m_i \ll 0$ for some $1 \le i \le r$. Furthermore, $\omega \in \Omega^q_A(\log\pi)$ if
  and only if $a_{\un{m}} = 0$ for every $\un{m}$ such that $m_i < 0$ for some $1 \le i \le r$ and $\omega \in \Fil_{\un{n}}\Omega^q_K$
  if and only if $a_{\un{m}} =0 $ for every $\un{m}$ such that
  $m_i < -n_i$ for some $1 \le i \le r$.
  \end{cor}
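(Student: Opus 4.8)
The plan is to deduce Corollary~\ref{cor:F-function-2} from Lemma~\ref{lem:F-function-1} by passing to the colimit over $\un{n} \in \N_0^r$, exactly as indicated in the statement. First I would record that, by item (3) of \lemref{lem:Log-fil-4} (its obvious $r$-variable analogue), $\Omega^q_K = \varinjlim_{\un{n}} \Fil_{\un{n}}\Omega^q_K$, where $\un{n}$ ranges over $\N_0^r$ ordered componentwise and the transition maps $\Fil_{\un{n}}\Omega^q_K \inj \Fil_{\un{n}'}\Omega^q_K$ for $\un{n}\le \un{n}'$ are the natural inclusions inside $j_*\Omega^q_U$. Under the isomorphism of \lemref{lem:F-function-1}, the source $\Fil_{\un{n}}\Omega^q_K$ is identified with $\prod_{\un{m}\in\N_0^d}(x^{-n_1}_1\cdots x^{-n_r}_r)F^{1,q}_{\un{m}}(S)$. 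Reindexing by $\un{m}' = \un{m} - (n_1,\dots,n_r,0,\dots,0)$, this product becomes $\prod_{\un{m}'\ge (-n_1,\dots,-n_r,0,\dots,0)} F^{1,q}_{\un{m}'}(S)$, i.e. the set of formal series $\sum a_{\un{m}'} x^{\un{m}'}$ with $a_{\un{m}'}\in F^{1,q}_0(S)$ supported on $\un{m}'$ with $m'_i\ge -n_i$ for $1\le i\le r$ and $m'_i\ge 0$ for $i>r$. I would check that the transition map for $\un{n}\le\un{n}'$ is precisely the inclusion of such supported-series sets, so the colimit is the set of series $\sum_{\un{m}\in\Z^r\times\N_0^{d-r}} a_{\un{m}} x^{\un{m}}$ with $a_{\un{m}}\in F^{1,q}_0(S)$ and $a_{\un{m}}=0$ whenever $m_i\ll 0$ for some $i\le r$ (the condition "$m_i\ll 0$ for some $i$" being exactly the complement of "$\exists\,\un{n}\in\N_0^r:\ m_i\ge -n_i\ \forall i\le r$"). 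This gives the existence and uniqueness of the claimed expansion.

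For the two ``furthermore'' clauses I would argue directly at finite level and pass to the limit. The statement $\omega\in\Fil_{\un{n}}\Omega^q_K$ iff $a_{\un{m}}=0$ for all $\un{m}$ with $m_i<-n_i$ for some $i\le r$ is just the unwinding of \lemref{lem:F-function-1} together with the compatibility of the decompositions (the isomorphisms of \lemref{lem:F-function-1} for varying $\un{n}$ are compatible with the inclusions, by construction from the $\dlog/d$-monomial description of $\Omega^q_{S[x_1,\ldots,x_d]}$ and its localizations), so one only needs that $\Fil_{\un{n}}\Omega^q_K \subset \Omega^q_K$ picks out exactly the sub-product indexed by $m_i\ge -n_i$. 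Taking $\un{n}=\un{0}$ then yields the criterion for $\omega\in\Omega^q_A(\log\pi)=\Fil_0\Omega^q_K$: $a_{\un{m}}=0$ whenever $m_i<0$ for some $i\le r$.

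The only genuinely nontrivial point — and the step I would treat most carefully — is the compatibility of the decomposition in \lemref{lem:F-function-1} with the transition maps, i.e. that an element of $\Fil_{\un{n}}\Omega^q_K$ regarded as an element of $\Fil_{\un{n}'}\Omega^q_K$ has the ``same'' coefficients $a_{\un{m}}$. This is essentially bookkeeping: both decompositions are induced by the single free-module description $\Omega^\bullet_A(\log\pi)\cong\bigwedge^\bullet_A(\widetilde\Omega^1_A(\log\pi))$ of \lemref{lem:LWC-2} (in $d$ variables), localized at $\pi$, so the coefficients are intrinsic to $\omega\in\Omega^q_K$ and independent of which $\Fil_{\un{n}}$ one views it in. Once this is noted, everything else is a routine colimit computation and I would not spell out the indexing manipulations in detail. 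I would also remark that the hypotheses on $S$ ($F$-finite regular local over $\F_p$) enter only through \lemref{lem:F-function-1}, which has already been established, so no new input about $S$ is needed here.
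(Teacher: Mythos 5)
Your proposal is correct and is essentially the paper's own argument: the paper deduces the corollary from \lemref{lem:F-function-1} precisely by taking the colimit over $\un{n}$ (using the exhaustion $\Omega^q_K=\varinjlim_{\un n}\Fil_{\un n}\Omega^q_K$ and the canonicity of the decomposition), with the ``furthermore'' clauses read off at finite level, e.g. $\un{n}=\un{0}$ giving the criterion for $\Omega^q_A(\log\pi)$. Your extra care about compatibility of the decompositions under the transition inclusions is exactly the (implicit) content of the word ``canonical'' in \lemref{lem:F-function-1}, so nothing is missing.
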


\begin{notat}\label{not:F-function-2-0}
    For any $1 \le i \le r$ and $n \in \Z$, we let
$E^{1,q}_{i, n}(K) = \{\omega = {\underset{\un{m} \in \Z^r \times \N^{d-r}}\sum}
a_{\un{m}}x^{\un{m}} : a_{\un{m}} = 0 \ \mbox{if} \ m_i < - n\}$ and
$\wt{E}^{1,q}_{i, n}(K) = \{\omega = {\underset{\un{m} \in \Z^r \times \N^{d-r}}\sum}
a_{\un{m}}x^{\un{m}} : a_{\un{m}} = 0 \ \mbox{if} \ m_i \ge - n\}$.
We let
$\Fil^i_{\un{n}}\Omega^q_K = {\underset{1 \le j \le i}\bigcap} E^{1,q}_{j, n_j}(K)$.
We then get a filtration
\begin{equation}\label{eqn:F-function-2-0}
  \Omega^q_K = \Fil^0_{\un{n}}\Omega^q_K(K) \supset \Fil^1_{\un{n}}\Omega^q_K
  \supset \cdots \supset \Fil^r_{\un{n}}\Omega^q_K = \Fil_{\un{n}}\Omega^q_K.
  \end{equation}
\end{notat}

\section{From complete to the non-complete case}\label{sec:com-to-noncom}
We let $A$ be a regular local $F$-finite $\F_p$-algebra with 
maximal ideal $\fm = (x_1, \ldots , x_d)$. We let $\pi = x_1\cdots x_r$ and
$K = A_\pi$, where $1 \le r \le d$ and $L= Q(A)$. We let $\wh{A}$ denote the $\fm$-adic completion of
$A$ with the maximal ideal $\wh{\fm}$ and let $\wh{K} = \wh{A}_\pi$.
For $1 \le i \le r$, we let $A_i = A_{(x_i)}, \ \wh{A}_i = \wh{A_{(x_i)}}$ and
$\wh{K}_i = Q(\wh{A}_i)$. 
For $\un{n} = (n_1, \ldots , n_r) \in \Z^r$ and the invertible ideal
$I = (x^{n_1}_1\cdots x^{n_r}_r) \subset K$, we let $\Fil_{\un{n}}W_m\Omega^q_K =
\Fil_I W_m\Omega^q_K$. $\un{n}/p$ will denote the same as before. We also let $t\un{n}=(tn_1,\ldots,tn_r)$ for any $t \in \Z$. We let $X = \Spec(A)$ and $X_f = \Spec(A[f^{-1}])$ for any
$f \in A \setminus \{0\}$.

\begin{lem}\label{lem:Complete-1}
    The map $\psi_i \colon W_m\Omega^q_K \to W_m\Omega^q_{\wh{K}_i}$, induced by the
    inclusion $K \inj \wh{K}_i$, is injective.
  \end{lem}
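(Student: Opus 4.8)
The plan is to factor $\psi_i$ through $W_m\Omega^q_L$, where $L=Q(A)$, as
\[
W_m\Omega^q_K=W_m\Omega^q_{A_\pi}\ \longrightarrow\ W_m\Omega^q_L\ \longrightarrow\ W_m\Omega^q_{\widehat{K}_i},
\]
and prove each arrow injective. The first is the inclusion induced by $K=A_\pi\hookrightarrow Q(A)=Q(A_{(x_i)})=L$, the second the one induced by $A_{(x_i)}\hookrightarrow\widehat{A}_{(x_i)}$ after passing to fraction fields, i.e. by $L=Q(A_{(x_i)})\hookrightarrow Q(\widehat{A}_{(x_i)})=\widehat{K}_i$; their composite is indeed the map induced by $K\hookrightarrow\widehat{K}_i$, so this recovers $\psi_i$. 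Note $\Spec(A_\pi)$ is a connected regular $\F_p$-scheme with function field $L$, and $\widehat K_i$ is a field, so all of \propref{prop:basics}, \propref{prop:cartier-1}, \corref{cor:cartier-1} and \corref{cor:R-m-q} apply to both $A_\pi$ and $\widehat K_i$.

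\textbf{First arrow.} Since $\Spec(A_\pi)$ is connected and regular with function field $L$, the canonical map $W_m\Omega^\bullet_{A_\pi}\to W_m\Omega^\bullet_{L}$ is injective by \cite[Prop.~2.8]{KP-Comp}; this is precisely the injectivity already invoked in \secref{sec:LogDR} in the form $W_m\Omega^\bullet_X\hookrightarrow \eta_*W_m\Omega^\bullet_{K(X)}$.

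\textbf{Second arrow.} Because $A$ is $F$-finite it is excellent (Kunz), hence so is its localization $A_{(x_i)}$, and therefore the completion homomorphism $A_{(x_i)}\to\widehat{A}_{(x_i)}$ is regular; it is moreover faithfully flat, being the completion of a Noetherian local ring. Inverting the uniformizer $x_i$ on source and target preserves both properties, so $L\to\widehat{K}_i$ is a faithfully flat regular morphism of $\F_p$-fields. For $m=1$ this already gives that $\Omega^q_L\to\Omega^q_{\widehat K_i}$ is injective for all $q$: by Popescu (\thmref{thm:Popescu}) $\widehat K_i=\varinjlim_\lambda B_\lambda$ with $B_\lambda$ finite-type smooth over $L$, which (enlarging $\lambda$) we may take faithfully flat over $L$; for such $B_\lambda$ the fundamental sequence of a smooth morphism splits, so $\Omega^q_L\to\Omega^q_L\otimes_L B_\lambda\to\Omega^q_{B_\lambda}$ is injective, and one passes to the colimit. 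Naturality of $d$ and of the inverse Cartier isomorphism (\propref{prop:cartier-1}) then forces the induced maps on $Z_i\Omega^q$, $B_i\Omega^q$ and $B_1\Omega^{q}$ to be injective as well. For $m>1$ I would argue by induction on $m$: apply \propref{prop:basics}(1) with $n=m-1$ to $L$ and to $\widehat K_i$ to obtain natural short exact sequences
\[
0\to V^{m-1}\Omega^q+dV^{m-1}\Omega^{q-1}\to W_m\Omega^q\xrightarrow{R}W_{m-1}\Omega^q\to 0,
\]
where $W_{m-1}\Omega^q_L\to W_{m-1}\Omega^q_{\widehat K_i}$ is injective by induction; by the snake lemma it then suffices to show the induced map $V^{m-1}\Omega^q_L+dV^{m-1}\Omega^{q-1}_L\to V^{m-1}\Omega^q_{\widehat K_i}+dV^{m-1}\Omega^{q-1}_{\widehat K_i}$ is injective. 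Identifying these groups with $(\Omega^q\oplus\Omega^{q-1})/R^q_{m-1}$ and feeding in \corref{cor:R-m-q} together with the Cartier-operator exact sequences \eqref{eqn:cartier-1.1} and \eqref{eqn:cartier-1.2}, this reduces exactly to the $m=1$ injectivity above (for $\Omega^\bullet$, $B_m\Omega^q$, $Z_{m-1}\Omega^{q-1}$, $B_1\Omega^q$) plus naturality of the Cartier operator — the same Cartesian-square argument carried out in the proof of \lemref{lem:LWC-4}, with $(A_\log,A_\pi)$ there replaced by $(L,\widehat K_i)$ here.

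\textbf{Main obstacle.} The genuinely delicate part is the $m>1$ dévissage in the last paragraph: one must verify that $B_i\Omega^q_L$, $Z_i\Omega^q_L$ and $R^q_{m-1,L}$ are precisely the preimages of their $\widehat K_i$-analogues in $\Omega^\bullet_L\oplus\Omega^\bullet_L$, i.e. that the relevant squares are Cartesian. Although this uses only injectivity of $\Omega^\bullet_L\hookrightarrow\Omega^\bullet_{\widehat K_i}$ and naturality of the structure maps, it requires the careful bookkeeping with the Cartier filtration that makes \lemref{lem:LWC-4} work; I would present it by explicit appeal to that lemma's proof rather than reproducing it. A secondary point to state cleanly is the reduction showing $L\to\widehat K_i$ is regular, which rests on the excellence of $F$-finite rings.
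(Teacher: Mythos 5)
Your proof is correct, and it shares the paper's basic skeleton — factor $\psi_i$ through $W_m\Omega^q_L$ and prove the two arrows injective separately, with the first arrow handled exactly as in the paper via \cite[Prop.~2.8]{KP-Comp} applied to the regular ring $A_\pi$ — but you treat the completion arrow $W_m\Omega^q_L \to W_m\Omega^q_{\wh{K}_i}$ by a genuinely different argument. The paper disposes of this step in a few lines by citing structural base-change results: $W_m\Omega^q_{A_i}$ is a finite module over the Noetherian ring $W_m(A_i)$ and the completion map is (faithfully) flat, so $W_m\Omega^q_{A_i}\to W_m\Omega^q_{\wh{A}_i}$ is injective by \cite[Lem.~2.9, 2.11]{Morrow-ENS}, and then \cite[Thm.~C]{Hesselholt-Acta} identifies the localized modules with $W_m\Omega^q_K$, $W_m\Omega^q_L$, $W_m\Omega^q_{\wh{K}_i}$, so injectivity passes to the fraction fields by flatness of localization. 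You instead prove the field-level statement directly: excellence of $F$-finite rings gives regularity of $L\to\wh{K}_i$, Popescu plus the split first fundamental sequence for smooth algebras settles $m=1$, and then you run an induction on $m$ through \propref{prop:basics}(1), \corref{cor:R-m-q} and the Cartier sequences, i.e.\ you redo the Cartesian-square d\'evissage of \lemref{lem:LWC-4} with $(A_{\log},A_\pi)$ replaced by $(L,\wh{K}_i)$. That transfer does work — the only inputs are the $m=1$ injectivity, naturality of $d$ and $C$, and injectivity of $H^q_{\dR}(L)\to H^q_{\dR}(\wh{K}_i)$, which follows from the Cartier isomorphism and the $m=1$ case — so your argument is complete in outline, but it is substantially longer and reproves by hand what the cited theorems of Morrow and Hesselholt give for free; its compensating advantage is that it avoids those two references, needing only Kunz's excellence theorem, Popescu, and machinery already developed in the paper.
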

  \begin{proof}
    We look at the commutative diagram
    \begin{equation}\label{eqn:Complete-1-0}
      \xymatrix@C.5pc{
        W_m\Omega^q_A \ar[r]^-{\alpha_1} \ar[d] & W_m\Omega^q_{A_i} \ar[r]^-{\alpha_2}
        \ar[d] & W_m\Omega^q_{\wh{A}_i} \ar[d] \\
        W_m\Omega^q_A {\underset{W_m(A)}\otimes} W_m(A)_{[\pi]_m} \ar[r]^-{\beta_1}
        \ar[d]_-{\cong} &
        W_m\Omega^q_{A_i} {\underset{W_m(A)}\otimes} W_m(A)_{[\pi]_m}
        \ar[r]^-{\beta_2} \ar[d]^-{\cong} & 
        W_m\Omega^q_{\wh{A}_i} {\underset{W_m(A)}\otimes} W_m(A)_{[\pi]_m}
        \ar[d]^-{\cong} \\
        W_m\Omega^q_A {\underset{W_m(A)}\otimes} W_m(A)_{[\pi]_m} \ar[r]
        \ar[d]_-{\delta_1} &
       W_m\Omega^q_{A_i} {\underset{W_m(A_i)}\otimes} W_m(A_i)_{[x_i]_m} \ar[r]
       \ar[d]^-{\delta_2} & W_m\Omega^q_{\wh{A}_i} {\underset{W_m(\wh{A}_i)}\otimes}
          W_m(\wh{A}_i)_{[x_i]_m} 
       \ar[d]^-{\delta_3} \\
       W_m\Omega^q_{K} \ar[r]^-{\gamma_1} & W_m\Omega^q_{L} \ar[r]^-{\gamma_2} &
       W_m\Omega^q_{\wh{K}_i}.}
    \end{equation}
    
    Since $A$ is a regular local $\F_p$-algebra, the composition
    $W_m\Omega^q_A \to W_m\Omega^q_L$ of all left vertical
    arrows with $\gamma_1$ is injective (cf. \cite[Prop.~2.8]{KP-Comp}). It follows
    that $\alpha_1$ is injective. The arrow $\alpha_2$ is injective by
    \cite[Lem. 2.9, 2.11]{Morrow-ENS}. As the vertical arrows on the top floor are the
    localizations, it follows that $\beta_1$ and $\beta_2$ are injective.
    Since each $\delta_i$ is an isomorphism by \cite[Thm.~C]{Hesselholt-Acta},
    it follows that $\gamma_1$ and $\gamma_2$ are injective. This achieves the proof.
\end{proof}

Since $A$ is $F$-finite, one knows that $W_m(A)$ is a Noetherian
local ring whose maximal ideal is $\fM_m = \{\un{a} = (a_{m-1}, \ldots, a_0): a_{m-1} \in \fm\}$.
For any $W_m(A)$-module $M$, we let ${M}^{\wedge}$ denote the $\fM_m$-adic (equivalently,
$W_m(\fm)$-adic) completion of $M$. By \cite[Lem. 2.9, 2.11]{Morrow-ENS},
one knows that $W_m\Omega^q_A$ is a finite type $W_m(A)$-module and
\begin{equation}\label{eqn:Non-complete-0}
  W_m\Omega^q_A \otimes_{W_m(A)} W_m(\wh{A}) \xrightarrow{\cong} W_m\Omega^q_{\wh{A}}
    \xleftarrow{\cong} {(W_m\Omega^q_A)}^{\wedge}.
\end{equation}

\begin{lem}\label{lem:Non-complete-1}
  There are canonical isomorphisms of $W_m(\wh{A})$-modules
  \begin{equation}\label{eqn:Non-complete-1-0}
  W_m\Omega^q_A(\log\pi) \otimes_{W_m(A)} W_m(\wh{A}) \xrightarrow{\cong}
  W_m\Omega^q_{\wh{A}}(\log\pi)
    \xleftarrow{\cong} {W_m\Omega^q_A(\log\pi)}^{\wedge}.
    \end{equation}
\end{lem}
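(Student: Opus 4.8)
The plan is to mimic, in the filtered (log) setting, the proof of the analogous statement \eqref{eqn:Non-complete-0} for the non-logarithmic de Rham--Witt complex, which is exactly \cite[Lem.~2.9, 2.11]{Morrow-ENS}. The key structural input we are allowed to use is that $W_m\Omega^q_A(\log\pi)$ is a finite type $W_m(A)$-module (\lemref{lem:Log-fil-4}(1), taking $D=0$), that $W_m(A)\to W_m(\wh A)$ is a flat local homomorphism of Noetherian local rings with $W_m(\wh A)$ the $\fM_m$-adic completion of $W_m(A)$ (since $A$ is $F$-finite, $W_m(A)$ is Noetherian by \cite[Lem.~2.9]{Morrow-ENS}, and $W_m(\wh A)$ is its completion along $\fM_m$), and the base change isomorphism \eqref{eqn:Non-complete-0} for the ordinary complex. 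The first isomorphism in \eqref{eqn:Non-complete-1-0}, namely $W_m\Omega^q_A(\log\pi)\otimes_{W_m(A)}W_m(\wh A)\xrightarrow{\cong}W_m\Omega^q_{\wh A}(\log\pi)$, and the second one, $W_m\Omega^q_{\wh A}(\log\pi)\xleftarrow{\cong}(W_m\Omega^q_A(\log\pi))^\wedge$, are then tied together by the standard fact that for a finite module $M$ over a Noetherian local ring $R$ with $\fm$-adic completion $\wh R$, one has $M\otimes_R\wh R\xrightarrow{\cong}M^\wedge$; so it suffices to prove the first isomorphism.

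First I would reduce to the complete case as follows. Consider the canonical map $\phi\colon W_m\Omega^q_A(\log\pi)\otimes_{W_m(A)}W_m(\wh A)\to W_m\Omega^q_{\wh A}(\log\pi)$, which exists because $W_\bullet\Omega^\bullet_{\wh A}(\log\pi)$ is a log Witt complex over $\wh A_{\log}$ (\lemref{lem:LWC-1}, or rather the $\wh A$-version of \thmref{thm:Log-DRW}) receiving the universal map from $W_m\Omega^\bullet_{A_\log}\cong W_m\Omega^\bullet_A(\log\pi)$ and hence, after the base change $W_m(A)\to W_m(\wh A)$, a canonical arrow. For surjectivity: by the induction-on-$m$ description of $W_m\Omega^q_A(\log\pi)$ via the $V$-filtration and \corref{cor:Log-DRW-0}, together with the known generators $\dlog[x_i]_m$, $d[x_i]_m$, $V^s$ and $dV^s$ of elements from the ``constant'' part, one sees every element of $W_m\Omega^q_{\wh A}(\log\pi)$ is a $W_m(\wh A)$-combination of elements coming from $W_m\Omega^q_A(\log\pi)$ — concretely, $W_m\Omega^q_{\wh A}(\log\pi)$ is generated over $W_m(\wh A)$ by $W_m\Omega^q_A$ together with the $\dlog[x_{i_1}]_m\cdots\dlog[x_{i_s}]_m$, all of which lie in the image of $W_m\Omega^q_A(\log\pi)$; since $W_m\Omega^q_A\otimes_{W_m(A)}W_m(\wh A)\twoheadrightarrow W_m\Omega^q_{\wh A}$ by \eqref{eqn:Non-complete-0}, $\phi$ is surjective.

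For injectivity: I would use that $A\to\wh A$ is faithfully flat and the compatibility with the localizations to $K$ and $\wh K=\wh A_\pi$. Concretely, $W_m\Omega^q_A(\log\pi)$ is a $W_m(A)$-submodule of $W_m\Omega^q_K$ (by \thmref{thm:Log-DRW} and \lemref{lem:LWC-4}), so $W_m\Omega^q_A(\log\pi)\otimes_{W_m(A)}W_m(\wh A)\hookrightarrow W_m\Omega^q_K\otimes_{W_m(A)}W_m(\wh A)$ by flatness of $W_m(\wh A)$ over $W_m(A)$; and $W_m\Omega^q_K\otimes_{W_m(A)}W_m(\wh A)\cong W_m\Omega^q_A\otimes_{W_m(A)}W_m(A)_{[\pi]_m}\otimes_{W_m(A)}W_m(\wh A)\cong W_m\Omega^q_{\wh A}\otimes_{W_m(\wh A)}W_m(\wh A)_{[\pi]_m}\cong W_m\Omega^q_{\wh K}$ using \eqref{eqn:Non-complete-0} and \cite[Thm.~C]{Hesselholt-Acta}. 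Then in the commutative square whose vertical arrows are $\phi$ and the inclusion $W_m\Omega^q_{\wh A}(\log\pi)\hookrightarrow W_m\Omega^q_{\wh K}$ (\lemref{lem:LWC-4} over $\wh A$), and whose horizontal arrows are the two just-described injections, a diagram chase forces $\phi$ to be injective. The main obstacle I anticipate is the surjectivity/generation step: one must verify carefully — using the Geisser--Hesselholt-type normal form and \corref{cor:Log-DRW-0}, or the explicit local description of $W_m\Omega^q_{\wh A}(\log\pi)$ in terms of $W_m\Omega^q_{\wh A}$ and the $\dlog[x_i]_m$ from \corref{cor:LWC-0-1} iterated over the $r$ variables — that no ``new'' generators of the log complex appear after completion that are not already defined over $A$; once that is settled, flatness and \eqref{eqn:Non-complete-0} do the rest routinely.
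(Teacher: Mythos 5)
Your proposal is correct and follows essentially the same route as the paper: the finiteness of $W_m\Omega^q_A(\log\pi)$ identifies the completion with the base change, surjectivity of the base-change map follows from the definition of $W_m\Omega^q_{\wh A}(\log\pi)$ (generated over $W_m(\wh A)$ by $W_m\Omega^{q-j}_{\wh A}$ times $\dlog$-terms) together with \eqref{eqn:Non-complete-0}, and injectivity comes from the same commutative square comparing with $W_m\Omega^q_{A_\pi}\otimes_{W_m(A)}W_m(\wh A)\cong W_m\Omega^q_{\wh K}$ via flatness and the localization isomorphism. The only cosmetic difference is that the inclusion $W_m\Omega^q_{\wh A}(\log\pi)\inj W_m\Omega^q_{\wh K}$ is immediate from the definition of $W_m\Omega^q_{\wh A}(\log\pi)$ as a submodule, so no appeal to \lemref{lem:LWC-4} is needed there.
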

\begin{proof}
  Since $W_m\Omega^q_A(\log\pi)$ is a finite type $W_m(A)$-module, the first and the last terms in ~\eqref{eqn:Non-complete-1-0}
  are canonically isomorphic. By the definition of $W_m\Omega^q_A(\log\pi)$ and ~\eqref{eqn:Non-complete-0}, we
  see that the first arrow in
  ~\eqref{eqn:Non-complete-1-0} is surjective. We now look at the commutative diagram
  \begin{equation}\label{eqn:Non-complete-1-1}
    \xymatrix@C1pc{
      W_m\Omega^q_A(\log\pi) \otimes_{W_m(A)} W_m(\wh{A}) \ar[r] \ar[d] &
      W_m\Omega^q_{A_\pi} \otimes_{W_m(A)} W_m(\wh{A}) \ar[d] \\
      W_m\Omega^q_{\wh{A}}(\log\pi) \ar[r] & W_m\Omega^q_{\wh{K}}.}
    \end{equation}
  The top horizontal arrow is injective because $W_m(\wh{A})$ is a flat $W_m(A)$-module.
  Furthermore,
  \begin{equation}\label{eqn:Non-complete-1-2}
  \begin{array}{lll}
  W_m\Omega^q_{K} \otimes_{W_m(A)} W_m(\wh{A}) & \cong & W_m\Omega^q_A \otimes_{W_m(A)}
  W_m(A_\pi) \otimes_{W_m(A)}  W_m(\wh{A}) \\
  & \cong & W_m\Omega^q_{\wh{A}} \otimes_{W_m(\wh{A})}
  (W_m(\wh{A}) \otimes_{W_m(A)} W_m(K)) \\
  & \cong &  W_m\Omega^q_{\wh{A}} \otimes_{W_m(\wh{A})} W_m(\wh{A})_{[\pi]_m} \\
  & \cong & W_m\Omega^q_{\wh{A}} \otimes_{W_m(\wh{A})} W_m(\wh{K}) \cong
  W_m\Omega^q_{\wh{K}}.
  \end{array}
  \end{equation}
  This shows that the right vertical arrow in ~\eqref{eqn:Non-complete-1-1} is
  bijective (as the composition of the isomorphisms in \eqref{eqn:Non-complete-1-2} is same as the right vertical arrow in \eqref{eqn:Non-complete-1-1}). It follows that the left vertical arrow is injective. This finishes
  the proof.
\end{proof}

We shall need the following well-known commutative algebra fact to prove the
next result. 

\begin{lem}\label{lem:Non-complete-2}
  Let $R$ be a commutative Noetherian local ring with maximal ideal $\fm$ and let
  $M$ be a finitely generated $R$-module. Assume that $N' \subset M^{\wedge}$ is
  an $\wh{R}$-submodule. Assume further that $N \subset M \cap N'$ is an $R$-submodule
  which is dense in $N'$ in the $\fm$-adic topology. Then $N' = N^{\wedge}$ and
  $N = M \cap N'$.
  \end{lem}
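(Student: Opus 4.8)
The statement is a standard two-part fact in commutative algebra about completions, so the strategy is to reduce everything to the exactness and faithful flatness of completion for finitely generated modules over a Noetherian local ring. First I would recall the key facts to be used: for a Noetherian local ring $(R,\fm)$, the $\fm$-adic completion $\wh{R}$ is flat over $R$, and for a finitely generated $R$-module $M$ one has $M^\wedge \cong M\otimes_R\wh{R}$; moreover, submodules and quotients of such $M$ again have completions computed by the same tensor, and the $\fm$-adic (resp. $\fM$-adic) topology on a submodule is the subspace topology (Artin--Rees). In particular $N^\wedge = N\otimes_R\wh{R}$ is naturally an $\wh{R}$-submodule of $M^\wedge = M\otimes_R\wh{R}$.

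The proof of $N' = N^\wedge$ then proceeds as follows. Since $N\subset N'$ and $N$ is $R$-dense in $N'$ for the $\fm$-adic topology, and since $N'$ is an $\wh{R}$-submodule of $M^\wedge$, I would first observe that $N^\wedge$ (the closure of $N$ inside $M^\wedge$, equivalently $N\otimes_R\wh{R}$ viewed inside $M\otimes_R\wh{R}$ by Artin--Rees) is contained in $N'$: indeed $N' $ is closed in $M^\wedge$ (it is a submodule of a finitely generated $\wh{R}$-module over the Noetherian ring $\wh{R}$, hence $\fM$-adically closed by Krull's intersection theorem applied to $M^\wedge/N'$), and it contains the dense subset $N$, so it contains the closure $N^\wedge$. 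For the reverse inclusion $N'\subset N^\wedge$: any element of $N'$ is an $\fM$-adic limit of elements of $N$ by the density hypothesis, hence lies in the closure of $N$ in $M^\wedge$, which is exactly $N^\wedge$. This gives $N' = N^\wedge$.

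Finally, for $N = M\cap N'$: the inclusion $N\subset M\cap N'$ is given. For the reverse, take $x\in M\cap N' = M\cap N^\wedge$. Using that $N^\wedge = N\otimes_R\wh{R}$ and that $\wh{R}$ is faithfully flat over $R$, the natural map $M/N \to (M/N)\otimes_R\wh{R} = M^\wedge/N^\wedge$ is injective; hence the image of $x$ in $M/N$ maps to $0$ in $M^\wedge/N^\wedge$, so $x\in N$. This completes the argument.

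\textbf{Expected main obstacle.} There is no deep obstacle here — the result is ``well-known'' as the authors say — but the step requiring the most care is making precise the identification $N^\wedge \cong N\otimes_R\wh{R}$ as a submodule of $M^\wedge$, which rests on the Artin--Rees lemma to ensure the $\fm$-adic topology on $N$ coincides with the topology induced from $M$, together with exactness of completion on the short exact sequence $0\to N\to M\to M/N\to 0$. The only subtlety specific to this paper's setting is that the ring in the applications is $W_m(A)$ with maximal ideal $\fM_m$, which is indeed Noetherian local since $A$ is $F$-finite (cf. \cite[Lem.~2.9]{Morrow-ENS}), so the general lemma applies verbatim; I would state and prove it purely ring-theoretically for $(R,\fm)$ and invoke it later with $R = W_m(A)$.
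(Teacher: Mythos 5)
Your proof is correct and follows essentially the same route as the paper: show $N'$ is $\fm$-adically closed in $M^\wedge$ and $N^\wedge$ is the closure of $N$, so density gives $N'=N^\wedge$, then deduce $N=M\cap N'$ from the injectivity of $M/N \to M^\wedge/N^\wedge$. The only cosmetic difference is that you justify this last injectivity by faithful flatness of $\wh{R}$, whereas the paper invokes separatedness of the $\fm$-adic topology on $M/N$ (Krull intersection); these are equivalent standard facts, so no gap.
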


 \begin{proof}
     First note that $N' \subset M^\wedge$ is closed in the $\fm$-adic topology (cf. \cite[Thms~8.13, 8.10(i)]{Matsumura} ). We see from \cite[Thm.~8.1(i)]{Matsumura} that $N^\wedge$ is the closure of $N$ in $M^\wedge$. This implies $N^\wedge = N'$. For the second claim, note that $M/N \inj (M/N)^\wedge \cong M^\wedge/N^\wedge$, where the injectivity of the first arrow follows from the separateness of the $\fm$-adic topology (cf. Theorem~8.10(i) of loc. cit.) and the second isomorphism follows from Theorem~8.1(ii) of loc. cit. 
 \end{proof}

\begin{lem}\label{lem:Non-complete-4}
  The maps
  \begin{enumerate}
    \item
      $d \colon \Fil_{\un{n}}W_m\Omega^q_K \to \Fil_{\un{n}}W_m\Omega^{q+1}_K$;
    \item
      $V \colon \Fil_{\un{n}}W_m\Omega^q_K \to \Fil_{\un{n}}W_{m+1}\Omega^{q}_K$;
    \item
      $F \colon \Fil_{\un{n}}W_{m+1}\Omega^q_K \to \Fil_{\un{n}}W_{m}\Omega^{q}_K$;
    \item
      $R \colon \Fil_{\un{n}}W_{m+1}\Omega^q_K \to\Fil_{\un{n}/p} W_{m}\Omega^{q}_K$
\end{enumerate}
are continuous in the $W_m(\fm)$-adic topology.
\end{lem}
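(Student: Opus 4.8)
The statement to prove is that the four structural maps $d$, $V$, $F$, $R$ are continuous with respect to the $W_m(\fm)$-adic topology (more precisely, $d,V,F$ for the $W_m(\fm)$-adic topology on source and target, and $R$ for the $W_m(\fm)$- resp. $W_{m-1}(\fm)$-adic topology). The plan is to reduce everything to the complete case treated in Chapter~\ref{chap:F_p} and then transfer back via faithful flatness of $\wh{A}$ over $A$. More concretely, I would proceed as follows.

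First I would observe that for each of the four maps it suffices to exhibit, for every $N \ge 0$, an integer $N' \ge 0$ with the property that the map sends $\left(W_m(\fm)\right)^{N'} \cap \Fil_{\un{n}}W_m\Omega^q_K$ into $\left(W_m(\fm)\right)^{N} \cap \Fil_{\bullet}W_m\Omega^{\bullet}_K$ (with the appropriate shift in $m$ and $\un{n}$ for $R$, and no shift for $d,V,F$). Using \lemref{lem:Complete-1}, the groups $\Fil_{\un{n}}W_m\Omega^q_K$ embed into $W_m\Omega^q_{\wh{K}_i}$ compatibly with $d,V,F,R$; and by \lemref{lem:Non-complete-1} together with \lemref{lem:Non-complete-2}, the $\fM_m$-adic completion of $\Fil_{\un{n}}W_m\Omega^q_K$ can be identified with $\Fil_{\un{n}}W_m\Omega^q_{\wh{K}}$, and $\Fil_{\un{n}}W_m\Omega^q_K = W_m\Omega^q_K \cap \Fil_{\un{n}}W_m\Omega^q_{\wh{K}}$. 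The point is that the $W_m(\fm)$-adic topology on $\Fil_{\un{n}}W_m\Omega^q_K$ is the one induced from the $W_m(\wh\fm)$-adic topology on its completion, and the structural maps are obtained by restriction from their counterparts over $\wh{A}$.

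So the heart of the matter is the complete case $A = \wh{A} = S[[x_1, \ldots, x_d]]$, and here I would invoke the explicit decomposition results. By \corref{cor:GH-3}, \propref{prop:Fil-decom} (and its multivariate analogue built on \lemref{lem:F-function-1}, \corref{cor:F-function-2}), $\Fil_{\un{n}}W_m\Omega^q_K$ decomposes as a product $\prod_{j \ge -\un{n}} F^{m,q}_j(S)$ of the building blocks $F^{m,q}_j(S)$, and by \lemref{lem:Topology-0} the $W_m(\fm)$-adic topology agrees with the topology defined by the filtration $\{\Fil_{-N\cdot\un{1}}W_m\Omega^q_K\}_{N \ge 0}$ (the $\tau_4$-type topology), i.e.\ the topology in which a neighborhood basis of $0$ is given by the sub-products indexed by $j$ with some coordinate $\ge N$. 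Now \lemref{lem:F-group-1} already records exactly how $d$, $V$, $F$, $R$ act on each graded piece $F^{m,q}_j(S)$: $d$ preserves the index $j$, $V$ preserves $j$, $F$ preserves $j$, and $R$ sends $F^{m,q}_j(S)$ into $F^{m-1,q}_{j/p}(S)$ if $p \mid j$ and to $0$ otherwise. Since each of these operations does not decrease the index (for $d,V,F$) or scales it by $1/p$ (for $R$, which still sends ``large index'' to ``large index'' after the floor), it maps the $N$-th term of the defining filtration into the $N$-th (resp.\ $\lfloor N/p\rfloor$-th) term, which is precisely the required continuity. The limit/continuity passage here is legitimate because $V$ and $d$ are already known to be continuous for the coarser topologies (cf.\ the use of this in the proof of \propref{prop:V-R-Fil}) and multiplication by $[\pi]_m$ is a homeomorphism by \lemref{lem:Positive-F-1}, so the case of general $\un{n}$ reduces to $\un{n} \le 0$.

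The main obstacle I anticipate is bookkeeping rather than conceptual: one must be careful that the identification of topologies in \lemref{lem:Topology-0} is stated for $W_m\Omega^q_A(\log\pi)$, i.e.\ for $\un{n} = 0$, and extending it to arbitrary $\Fil_{\un{n}}W_m\Omega^q_K$ requires either repeating the argument (using that $\Fil_{\un{n}}W_m\Omega^q_K$ is a finitely generated $W_m(A)$-module by \lemref{lem:Log-fil-4}(1), hence its $\fM_m$-adic topology is the subspace topology from any finite overmodule) or invoking \lemref{lem:Positive-F-1} to twist down. A second delicate point is the completeness transfer for $R$: since $R$ lowers $m$, one must check that the $\fM_m$-adic and $\fM_{m-1}$-adic topologies are compatible under $R$ — but this follows from $R(W_m(\fm)) \subseteq W_{m-1}(\fm)$ and $R(W_m(\fm)^N) \subseteq W_{m-1}(\fm)^{\lceil N/p \rceil}$ or similar, which is routine from the Witt-vector structure. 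I would organize the write-up as: (1) reduce to the complete case by \lemref{lem:Complete-1}, \lemref{lem:Non-complete-1}, \lemref{lem:Non-complete-2}; (2) in the complete case reduce to $\un{n} \le 0$ via \lemref{lem:Positive-F-1}; (3) apply the product decomposition and \lemref{lem:F-group-1} together with \lemref{lem:Topology-0} to conclude.
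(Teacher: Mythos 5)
Your route diverges from the paper's, and it has genuine gaps. The paper's own proof is a short direct argument: parts (3) and (4) are immediate because $F$ and $R$ are compatible with the ideals $W_{m}(\fm)$, while for (1) and (2) it uses that $\Fil_{\un{n}}W_m\Omega^q_K$ is a finite type $W_m(A)$-module and that, by Geisser--Hesselholt, the $W_m(\fm)$-adic topology coincides with the topology given by the filtration $\{J_s\Fil_{\un{n}}W_m\Omega^q_K\}$ with $J_s = ([x_1]^s_m,\ldots,[x_d]^s_m)$; then the identities $d([x_i]^{p^mt}_m\omega) = [x_i]^{p^mt}_m d\omega$ (since $p^m = 0$ in $W_m$) and $V([x_i]^{pt}_m\omega) = [x_i]^t_{m+1}V(\omega)$ finish the proof. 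No completion and no decomposition into the pieces $F^{m,q}_j(S)$ is needed.

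The most serious problem in your plan is the topological identification in step (3): \lemref{lem:Topology-0} compares the topologies $\tau_1,\ldots,\tau_4$, all of which are of $(\pi)$-adic type (powers of the divisor, or the filtration $\{\Fil_{-n}\}$), not the $W_m(\fm)$-adic topology. In the relevant generality ($\pi = x_1\cdots x_r$ with $r < d$, or $S$ not a field) the $W_m(\fm)$-adic topology is strictly finer than the $\Fil$-filtration topology, so showing via \lemref{lem:F-group-1} that $d,V,F,R$ respect the $\Fil$-indices only yields continuity for the coarser filtration topology on the target and does not give the statement of the lemma. Two further gaps: the product decomposition $\Fil_{\un{n}}W_m\Omega^q_K \cong \prod_{j\ge -n}F^{m,q}_j(S)$ is proved in the paper only in the one-variable case (\propref{prop:Fil-decom}); for $m\ge 2$ in several variables no such decomposition is available (the multivariate statements \lemref{lem:F-function-1}, \corref{cor:F-function-2} are $m=1$ only), so the core of your complete-case argument is unproven. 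Finally, your reduction step (1) is circular in the paper's logical order: the identification of the $\fM_m$-adic completion of $\Fil_{\un{n}}W_m\Omega^q_K$ with $\Fil_{\un{n}}W_m\Omega^q_{\wh{K}}$ is \lemref{lem:Non-complete-3}, whose proof of the density hypothesis needed for \lemref{lem:Non-complete-2} invokes precisely the continuity statements of the present lemma; moreover, since $d$ and $V$ are not $W_m(A)$-linear, you cannot bypass this by a naive base change to $W_m(\wh{A})$. The reduction is in any case unnecessary, as the direct $J_s$-argument works verbatim over the non-complete ring.
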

\begin{proof}
  The claims (3) and (4) are easy to verify. To prove (1) and (2), we note that
  $ \Fil_{\un{n}}W_m\Omega^q_K$ is a finite type $W_m(A)$-module by
  \lemref{lem:Log-fil-4}. We let
  $J_s \subset W_m(A)$ be the ideal $([x_1]^s_m, \ldots , [x_d]^s_m)$.
  Then the $W_m(\fm)$-adic topology of $\Fil_{\un{n}}W_m\Omega^q_K$
  coincides with the topology given by the filtration $\{J_s\Fil_{\un{n}}W_m\Omega^q_K\}$ by
  \cite[Cor.~2.3]{Geisser-Hesselholt-Top} (see its proof).
  Hence, it suffices to note that
  given $s \ge 1$, one has $d([x_i]^{p^mt}_m\Fil_{\un{n}}W_m\Omega^q_K)\subset
  J_s\Fil_{\un{n}}W_m\Omega^{q+1}_K$ and $V([x_i]^{pt}_m\Fil_{\un{n}}W_m\Omega^q_K) \subset
  J_s\Fil_{\un{n}}W_{m+1}\Omega^q_K$ for all $t \ge s$ and all $i$.
\end{proof}

\begin{lem}\label{lem:Non-complete-7}
  For $m \ge 1$, we have the following.
  \begin{enumerate}
    \item
      The map $F \colon W_{m+1}(A) \to W_m(A)$ is a finite ring homomorphism.
    \item
      The $W_m(\fm)$-topology of $W_m(A)$ coincides with its $W_{m+1}(\fm)$-topology
      via $F$.
    \item
      For any $A$-submodule $M$ of $\Omega^{q-1}_K$ (resp. $\Omega^q_K$), 
  $dV^{m-1}(M)$ (resp. $V^{m-1}(M)$) is a $W_{m+1}(A)$-submodule of
  $W_m\Omega^q_K$, where $W_{m+1}(A)$ acts on the latter via $F$.
  \end{enumerate}
\end{lem}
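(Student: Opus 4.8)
\textbf{Proof plan for Lemma~\ref{lem:Non-complete-7}.}

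The plan is to prove the three assertions in order, since each feeds into the next. For (1), I would use that $A$ is $F$-finite. The ring homomorphism $F\colon W_{m+1}(A)\to W_m(A)$ fits into the picture of Witt vectors over an $\F_p$-algebra: on the associated graded pieces for the Verschiebung filtration, $F$ acts on $V^i(A)/V^{i+1}(A)\cong A$ (via the $i$-th ghost component) essentially as the Frobenius of $A$ up to a unit, so the cokernel/module structure of $W_m(A)$ over $W_{m+1}(A)$ is controlled by the iterated Frobenius $A\to A$, which is finite since $A$ is $F$-finite. More concretely, $W_m(A)$ is generated as a module over $W_{m+1}(A)$ (acting through $F$) by Teichm\"uller representatives $[a_\lambda]_m$ where the $a_\lambda$ generate $A$ as a module over its $p$-th power subring $A^p$ (finitely many, by $F$-finiteness); one checks this by induction on $m$ using the exact sequence $0\to V^m(A)\to W_{m+1}(A)\xrightarrow{R} W_m(A)\to 0$ and the compatibility $FV=p$, $RF=FR$. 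This is the step I expect to require the most care: writing down an explicit finite generating set and verifying it really spans, since the interaction of $F$ with $V$ and with Teichm\"uller lifts is where the bookkeeping lives. I would cite \cite[Lem.~2.9]{Morrow-ENS} for the Noetherianity of $W_{m+1}(A)$ to conclude finiteness cleanly once a finite generating set is in hand, or note that this finiteness of $F$ is already implicit there.

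For (2), given (1), the ideal $W_m(\fm)\subset W_m(A)$ and the ideal generated by $F(W_{m+1}(\fm))$ define topologies on $W_m(A)$, and I would show each is bounded by a power of the other. On one hand $F(W_{m+1}(\fm))\subset W_m(\fm)$ directly from the description of $F$ on ghost components (the leading entry of $F(\un a)$ is $a_{m}^p+\cdots$, which lies in $\fm$ when $a_m\in\fm$). On the other hand, since $F$ is finite by (1), $W_m(A)$ is a finite module over $W_{m+1}(A)$, so by the Artin--Rees / topological Nakayama formalism the $W_{m+1}(\fm)$-adic topology on $W_m(A)$ agrees with the topology induced from any ideal of definition, in particular with the $W_m(\fm)$-adic one; equivalently, $W_m(\fm)^N\subset F(W_{m+1}(\fm))\cdot W_m(A)$ for suitable $N$ because $W_m(A)/W_m(\fm)$ and $W_m(A)/F(W_{m+1}(\fm))W_m(A)$ are both Artinian local with the same radical. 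I would phrase this using that $W_m(A)$ and $W_{m+1}(A)$ are Noetherian local (\cite[Lem.~2.9]{Morrow-ENS}) and $F$ is a finite local homomorphism, so the two maximal ideals induce equivalent topologies.

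For (3), the point is purely formal once one recalls the de Rham--Witt relations listed after \propref{prop:basics}: for $x\in W_{m+1}(A)$ and $\omega\in\Omega^{q}_K$ one has $x\cdot V^{m-1}(\omega)=V^{m-1}(F^{m-1}(x)\,\omega)$, and $F^{m-1}(x)\in W_1(A)=A$ since $F\colon W_{i+1}(A)\to W_i(A)$ lands correctly at each stage; because $M\subset\Omega^{q}_K$ is an $A$-submodule, $F^{m-1}(x)\omega\in M$, hence $x\cdot V^{m-1}(M)\subset V^{m-1}(M)$. Similarly, using $x\cdot dV^{m-1}(\omega)=d\big(x\,V^{m-1}(\omega)\big)\mp V^{m-1}(\omega)\,dx$ — no, more cleanly: $x\,dV^{m-1}(\omega)=dV^{m-1}(F^{m-1}(x)\omega)\mp V^{m-1}\big(d(F^{m-1}(x))\wedge\omega\big)$ using $d(ab)$-Leibniz together with $FdV=d$ and $a\,dV(b)=dV(Fa\cdot b)-V(da\cdot b)$ iterated; since $d(F^{m-1}(x))\in\Omega^1_A$ and $\omega\in M\subset\Omega^{q-1}_K$, both terms lie in $dV^{m-1}(\Omega^{q-1}_K\text{-submodule})$ and in $V^{m-1}(\Omega^q_K)$ appropriately — the key finite check is that all correction terms again land in $dV^{m-1}(M)$ plus possibly $V^{m-1}$ of an $A$-submodule, which one absorbs by enlarging nothing since $M$ is already an $A$-module and $d$ of $A$-coefficients stays in $\Omega^1_A$. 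So (3) reduces to the algebraic identities for $F,V,d$ already available in the excerpt, and the only subtlety is carefully tracking the Leibniz correction terms; this is routine and I would present it as a short computation. Overall the main obstacle is (1) — establishing module-finiteness of $F$ with an explicit generating set — while (2) is a soft consequence and (3) is bookkeeping with the standard relations.
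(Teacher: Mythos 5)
Parts (1) and (2) of your plan are sound. For (1) the paper does not reprove anything: it simply quotes the Langer--Zink result (via \cite[Thm.~2.6]{Morrow-ENS}); your direct argument through the $V$-filtration (each graded piece is $A$ with the $W_{m+1}(A)$-action factoring through a power of Frobenius, hence finite by $F$-finiteness) is a legitimate substitute, though you should drop the mention of ghost components, which carry no information in characteristic $p$. For (2) your soft argument — $F$ is a finite local homomorphism of Noetherian local rings, so the maximal-adic topologies on the finite module $W_m(A)$ agree, and each $W_\bullet(\fm)$-adic topology is equivalent to the maximal-adic one — works; the paper instead argues concretely with the filtration $J_s=([x_1]^s_m,\dots,[x_d]^s_m)$ from the proof of \lemref{lem:Non-complete-4} together with the identity $F([x_i]_{m+1})=[x_i]^p_m$, which does not even use (1).

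The genuine gap is in (3). First, a small index slip: since $W_{m+1}(A)$ acts on $W_m\Omega^q_K$ through $F$, the scalar that appears is $F^m(a)\in A$, not $F^{m-1}(a)$ (which lies in $W_2(A)$); with that correction, $F(a)V^{m-1}(x)=V^{m-1}(F^m(a)x)$ settles the $V^{m-1}$ case. For the $dV^{m-1}$ case, however, you must show the product lies in $dV^{m-1}(M)$ itself; your proposal to ``absorb'' a leftover $V^{m-1}(\cdot)$ correction term would at best show that $dV^{m-1}(M)+V^{m-1}(\text{something})$ is stable, which is not the assertion. The missing idea is that the Leibniz correction term vanishes: writing $dV^{m-1}(x)=FdV^{m}(x)$ and using $a\,dV^m(x)=dV^m(F^m(a)x)-V^m(F^m(da)x)$, one gets $F(a)dV^{m-1}(x)=FdV^m(F^m(a)x)-FV^m(F^m(da)x)$, and the second term is zero because $FV^m=pV^{m-1}$ annihilates $\Omega^{q}_K$ (equivalently, $dF=pFd$ makes the correction $p$-divisible, hence zero on the $p$-torsion group $\Omega^\bullet$). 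This cancellation is exactly how the paper concludes $F(a)dV^{m-1}(x)=dV^{m-1}(F^m(a)x)\in dV^{m-1}(M)$; without it your computation for (3) does not close.
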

\begin{proof}
The item (1) is well-known result of Langer-Zink (cf. \cite[Thm.~2.6]{Morrow-ENS}).
For (2), we can replace (for any $m$) the
$W_m(\fm)$-adic topology by the one given by the
filtration $\{J_s\}$ defined in the proof of \lemref{lem:Non-complete-4}.
Our claim now follows because $F([x_i]_{m+1}) = [x_i]^p_m$ for every $i$.
To prove (3), we let $a \in W_{m+1}(A)$ and $x \in M$.
We get 
$$F(a)dV^{m-1}(x) =  F(a) FdV^m(x) = F(adV^m(x)) = FdV^m(F^m(a)x) -F(daV^m(x))
 $$ 
$$=FdV^m(F^m(a)x) - FV^m(F^md(a)x)=FdV^m(F^m(a)x) = dV^{m-1}(F^m(a)x),$$
because $FV^m = 0$. This finishes the proof as $F^m(a)x \in M$. The other case is
easier : $F(a)V^{m-1}(x) = V^{m-1}(F^m(a)x)$.
\end{proof}

\begin{lem}\label{lem:Non-complete-3}
 The canonical map $\Fil_{\un{n}} W_m\Omega^q_K \to \Fil_{\un{n}} W_m\Omega^q_{\wh{K}}$
  induces an isomorphism of $W_m(\wh{A})$-modules
  \[
  \phi \colon (\Fil_{\un{n}} W_m\Omega^q_K)^{\wedge} \xrightarrow{\cong}
  \Fil_{\un{n}} W_m\Omega^q_{\wh{K}}.
  \]
\end{lem}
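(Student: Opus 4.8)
The strategy is to reduce the statement, via the (decreasing) $V$-filtration on the filtered de Rham--Witt sheaves, to the $m=1$ case, where it becomes a statement about $\fm$-adic completion of the finitely generated $A$-module $\Fil_{\un n}\Omega^q_K$. First I would record the base case: by \lemref{lem:Log-fil-4}(1), $\Fil_{\un n}W_m\Omega^q_K$ is a finite-type $W_m(A)$-module, and by \lemref{lem:Non-complete-4} the operators $d,V,F,R$ are continuous in the $W_m(\fm)$-adic topology, so completion is exact on the relevant modules. For $m=1$, the claim is that $(\Fil_{\un n}\Omega^q_K)^\wedge \cong \Fil_{\un n}\Omega^q_{\wh K}$ as $\wh A$-modules; this I would deduce from \lemref{lem:F-function-1} (the explicit product decomposition $\Fil_{\un n}\Omega^q_K \cong \prod_{\un m\in\N_0^d}(x^{-\un n})F^{1,q}_{\un m}(S)$, which is compatible with passing from $A=S[[x_1,\dots,x_d]]$ to $\wh A = \wh S[[x_1,\dots,x_d]]$ — here one also uses that $S\to\wh S$ is already handled by the same kind of argument or that the completion only affects the coefficient ring $S$), together with \lemref{lem:Non-complete-1} and \lemref{lem:Non-complete-2} applied to $M = \Omega^q_A(\log\pi)$, $M^\wedge = \Omega^q_{\wh A}(\log\pi)$, and the dense submodule $N = \Fil_{\un n}\Omega^q_K \cap \Omega^q_A(\log\pi)$ inside $N' = \Fil_{\un n}\Omega^q_{\wh K}\cap \Omega^q_{\wh A}(\log\pi)$.

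The inductive step would use the short exact sequence of \propref{prop:V-R-Fil} (extended from the one-variable $K=S[[\pi]]$ case to the present multivariate $K=A_\pi$ setting — this extension itself should follow by the same decomposition arguments, or be available from the structure results of this chapter):
\[
0 \to V^{m-1}(\Fil_{\un n}\Omega^q_K) + dV^{m-1}(\Fil_{\un n}\Omega^{q-1}_K) \to \Fil_{\un n}W_m\Omega^q_K \xrightarrow{R} \Fil_{\un n/p}W_{m-1}\Omega^q_K \to 0,
\]
and the analogous sequence over $\wh K$. Applying $\fm$-adic completion (which is exact here since all terms are finitely generated over the Noetherian local ring $W_m(A)$, resp. $W_m(\wh A)$, by \lemref{lem:Log-fil-4}(1); note $V^{m-1}(\Fil_{\un n}\Omega^q_K)+dV^{m-1}(\Fil_{\un n}\Omega^{q-1}_K)$ is a $W_m(A)$-submodule, hence also finitely generated, using \lemref{lem:Non-complete-7}(3)) and comparing with the $\wh K$-sequence via the five lemma, one reduces the isomorphism for $(m,q)$ to the isomorphism for $(m-1,q)$ (the right-hand term, with $\un n$ replaced by $\un n/p$) and the $m=1$ isomorphism for the left-hand term. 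For the left-hand term I would use that completion commutes with $V^{m-1}$ and $dV^{m-1}$ (continuity from \lemref{lem:Non-complete-4}, and the module-theoretic compatibility from \lemref{lem:Non-complete-7}(2)--(3)), so that $\bigl(V^{m-1}(\Fil_{\un n}\Omega^q_K)+dV^{m-1}(\Fil_{\un n}\Omega^{q-1}_K)\bigr)^\wedge = V^{m-1}((\Fil_{\un n}\Omega^q_K)^\wedge)+dV^{m-1}((\Fil_{\un n}\Omega^{q-1}_K)^\wedge)$, which by the $m=1$ case equals $V^{m-1}(\Fil_{\un n}\Omega^q_{\wh K})+dV^{m-1}(\Fil_{\un n}\Omega^{q-1}_{\wh K})$.

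The main obstacle I anticipate is the careful bookkeeping in the $m=1$ case: verifying that the product decomposition of \lemref{lem:F-function-1} is genuinely compatible with $\fm$-adic completion and identifies $(\Fil_{\un n}\Omega^q_K)^\wedge$ with $\Fil_{\un n}\Omega^q_{\wh K}$ and not merely with some auxiliary completion — this requires knowing that $F^{1,q}_0(S)$ is unchanged (it depends only on $S$, and the relevant completion is in the $x_i$-variables, not in $S$, since $\fm = (x_1,\dots,x_d)$ after base changing appropriately, or else one invokes $S$ is already complete). The cleanest route is to apply \lemref{lem:Non-complete-2} directly with the ambient finitely generated module $M = \Omega^q_A(\log\pi)$, whose completion $\Omega^q_{\wh A}(\log\pi)$ is identified by \lemref{lem:Non-complete-1}, so that one only needs the density of $N=\Fil_{\un n}\Omega^q_K\cap\Omega^q_A(\log\pi)$ in $N' = \Fil_{\un n}\Omega^q_{\wh K}\cap \Omega^q_{\wh A}(\log\pi)$; density is visible from the power-series expansions of \corref{cor:F-function-2}, since truncating an element of $N'$ to finitely many monomials $x^{\un m}$ lands in $N$ and the truncations converge $\fm$-adically. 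That lemma then simultaneously gives $N' = N^\wedge$ and $N = M\cap N'$, i.e. exactly $(\Fil_{\un n}\Omega^q_K)^\wedge \xrightarrow{\cong}\Fil_{\un n}\Omega^q_{\wh K}$.
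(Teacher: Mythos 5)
Your inductive step has a genuine circularity problem. The five-lemma argument needs the filtered $V$--$R$ short exact sequence
\[
0 \to V^{m-1}(\Fil_{\un n}\Omega^q_K) + dV^{m-1}(\Fil_{\un n}\Omega^{q-1}_K) \to \Fil_{\un n}W_m\Omega^q_K \xrightarrow{R} \Fil_{\un n/p}W_{m-1}\Omega^q_K \to 0
\]
over the \emph{non-complete} multivariate ring $A$, and then its completion compared with the analogous sequence over $\wh K$. The sequence over $\wh K$ is indeed available (it follows from \lemref{lem:Complete-3} together with \lemref{lem:Log-fil-4}(7), since $\wh A$ is complete), but the sequence over $K$ is exactly \propref{prop:Complete-0}, and the decomposition arguments you appeal to for it (\thmref{thm:GH-Top}, \corref{cor:GH-3}, \propref{prop:Fil-decom}) are power-series expansions that require completeness; there is no such decomposition for a non-complete $A$. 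In the paper, \propref{prop:Complete-0} is proved \emph{after} and \emph{using} \lemref{lem:Non-complete-3} (via density plus \lemref{lem:Non-complete-2}), so taking it as an input here is circular, and you offer no independent proof of it.

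The paper's proof avoids any higher-$q$ exact sequence over the non-complete ring: injectivity of $\phi$ is flat base change, $(\Fil_{\un n}W_m\Omega^q_K)^\wedge \cong \Fil_{\un n}W_m\Omega^q_K\otimes_{W_m(A)}W_m(\wh A)\inj W_m\Omega^q_{\wh K}$; the $q=0$ case is settled by induction on $m$ using only the Witt-vector sequence \eqref{eqn:Log-fil-1.1} of \lemref{lem:Log-fil-1}, which is proved in complete generality in Chapter 2, and this yields density of $\Fil_{\un n}W_m(K)$ in $\Fil_{\un n}W_m(\wh K)$; density of $\Fil_{\un n}W_m\Omega^q_K$ in $\Fil_{\un n}W_m\Omega^q_{\wh K}$ for $q>0$ then follows from the generating description $\Fil_{\un n}W_m\Omega^q = \Fil_{\un n}W_m(K)\cdot W_m\Omega^q_A(\log\pi)+d(\Fil_{\un n}W_m(K))\cdot W_m\Omega^{q-1}_A(\log\pi)$, \lemref{lem:Non-complete-1} and continuity (\lemref{lem:Non-complete-4}); finally \lemref{lem:Non-complete-2} is applied inside the ambient module $M=\Fil_{p^{m-1}\un n'}W_m\Omega^q_K$ with $p^{m-1}\un n'>\un n$, whose completion is already known by \lemref{lem:Log-fil-4}(9) and \lemref{lem:Non-complete-1}. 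Your $m=1$ base case also has repairable slips: with $M=\Omega^q_A(\log\pi)$ and $N=\Fil_{\un n}\Omega^q_K\cap\Omega^q_A(\log\pi)$ you only see $\un n\le 0$ (for $\un n\ge \un 0$ the intersection is all of $\Omega^q_A(\log\pi)$), and ``truncations land in $N$'' is not literally true because the coefficient field of $\wh A$ need not map from $A$ --- though $m=1$ is anyway immediate from \lemref{lem:Log-fil-4}(2) and \lemref{lem:Non-complete-1}, since twisting a finite module by an invertible sheaf commutes with completion. The missing, non-circular source for the multivariate $V$--$R$ sequence over non-complete $A$ is the real gap.
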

\begin{proof}
  Since $\Fil_{\un{n}} W_m\Omega^q_K$ is a finite type $W_m(A)$-module, we know that
  the canonical
  map $\Fil_{\un{n}} W_m\Omega^q_K \otimes_{W_m(A)} W_m(\wh{A}) \to
  (\Fil_{\un{n}} W_m\Omega^q_K)^{\wedge}$
  is an isomorphism. On the other hand, we have an injection
  $\Fil_{\un{n}} W_m\Omega^q_K \otimes_{W_m(A)} W_m(\wh{A}) \inj
  W_m\Omega^q_K \otimes_{W_m(A)} W_m(\wh{A}) \cong W_m\Omega^q_{\wh{K}}$,
where the last isomorphism is by ~\eqref{eqn:Non-complete-1-2}.
  It follows that $\phi$ is injective.

 Now, first consider the case $q=0$. If $m=1$ then the isomorphism is clear by \lemref{lem:Log-fil-4}(2). For $m >1$, consider the commutative diagram $W_m(\wh A)$ modules.
 \begin{equation}
     \xymatrix@C2pc{
     0 \ar[r] & \Fil_{\un{n}}K\otimes_{W}\wh W  \ar[r]^-{V^{m-1}} \ar[d] & \Fil_{\un{n}}W_m(K)\otimes_{W}\wh W  \ar[r]^-{R} \ar[d] & \Fil_{\un{n}/p}W_{m-1}(K)\otimes_{W}\wh W  \ar[r] \ar[d] & 0 \\
     0 \ar[r] & \Fil_{\un{n}}\wh{K} \ar[r]^-{V^{m-1}} &  \Fil_{\un{n}}W_m(\wh{K}) \ar[r]^-{R} & \Fil_{\un{n}/p}W_{m-1}(\wh{K}) \ar[r]& 0,
     }
 \end{equation}
 where $W=W_m(A)$ and $\wh W= W_m(\wh A)$. The top row is obtained by tensoring \eqref{eqn:Log-fil-1.1} (taking $U=\Spec K, D=(\prod\limits_{1 \le i \le r} x_i^{n_i})$) with $W_m(\wh A)$ (over $W_m(A)$) and noting that $W_m(\wh A)$ is the $W_m(\fm)$-adic completion of $W_m(A)$ (cf. \eqref{eqn:Non-complete-0} for $q=0$) and hence flat over $W_m(A)$. Here the module structure on $\Fil_{\un{n}}K$ is given via $F^{m-1}$. By \lemref{lem:Non-complete-7}(2), we see that the $W_m(\fm)$-adic topology on $\Fil_{\n}K$ (via $F^{m-1}$) is same as the $\fm$-adic topology. Hence $\Fil_{\un{n}}K \otimes_{W_m(A)}W_m(\wh A)=(\Fil_{\un{n}}K)^\wedge$, the $\fm$-adic completion of $\Fil_{\un{n}}K$ as $A$ module. 
 We also note that the $W_{m-1}(\fm)$-topology of $W_{m-1}(A)$ coincides with
  its $W_m(\fm)$-topology via the surjection $R \colon W_m(A) \surj W_{m-1}(A)$.
  It follows that for any finite type $W_{m-1}(A)$-module $M$, one has
  isomorphisms
  \begin{equation}\label{eqn:Non-complete-6-0}
  M \otimes_{W_m(A)} W_{m}(\wh{A})
  \cong {M}^{\wedge}_m \cong {M}^{\wedge}_{m-1} \cong M \otimes_{W_{m-1}(A)} W_{m-1}(\wh{A}),  
\end{equation}
  where $M^{\wedge}_m$ (resp. ${M}^{\wedge}_{m-1}$) is the $W_{m}(\fm)$
  (resp. $W_{m-1}(\fm)$)-adic completion of $M$. Thus, the left and right vertical arrows are isomorphism by $m=1$ case and induction, respectively. Hence, the middle arrow is also an isomorphism.  In particular, we get that $\Fil_{\un{n}}W_m(K)$ is dense in $\Fil_{\un{n}}W_m(\wh K)$.

For $q >0$ case, recall that
  $\Fil_{\un{n}} W_m\Omega^q_{\wh{K}}$ is the sum of
  $\Fil_{\un{n}}W_m(\wh{K}) W_m\Omega^q_{\wh{A}}(\log\pi)$ and
  $d(\Fil_{\un{n}}W_m(\wh{K}))W_m\Omega^{q-1}_{\wh{A}}(\log\pi)$. Hence by Lemmas~\ref{lem:Non-complete-1} and \ref{lem:Non-complete-4} and the $q=0$ case  we
get that $\Fil_{\un{n}} W_m\Omega^q_{K}$ is dense in $\Fil_{\un{n}} W_m\Omega^q_{\wh{K}}$. On the other hand, the lemma is clear if $\un{n}=p^{m-1}\un{n}''$ for some $\un{n}''$; by \lemref{lem:Log-fil-4} (9) and \lemref{lem:Non-complete-1}. So we can choose some $\un{n}'$ such that $p^{m-1}\un{n}' > \un{n}$ and so $\Fil_{\un{n}} W_m\Omega^q_{R} \subset \Fil_{p^{m-1}\un{n'}} W_m\Omega^q_{R}$ for $R \in \{K,\wh K \}$. Now, we apply \lemref{lem:Non-complete-2} with $M=\Fil_{p^{m-1}\un{n'}} W_m\Omega^q_{K}$ and $N=\Fil_{\un{n}} W_m\Omega^q_{K}$ to conclude the proof.
\end{proof}
  
\begin{cor}\label{cor:Non-complete-3}
    Consider $F(\Fil_{\n}W_{m+1}\Omega^q_{K})$ as a $W_{m+1}(A)$-module (via $F$). Then the canonical map $F(\Fil_{\n}W_{m+1}\Omega^q_{K}) \to F(\Fil_{\n}W_{m+1}\Omega^q_{\wh K})$ induces an isomorphism of $W_{m+1}(\wh A)$-modules (via $F$) :  $\phi: F(\Fil_{\n}W_{m+1}\Omega^q_{K})^\wedge \xrightarrow{\cong} F(\Fil_{\n}W_{m+1}\Omega^q_{\wh K})$.
\end{cor}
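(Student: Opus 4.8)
The plan is to deduce this formally from Lemma~\ref{lem:Non-complete-3}, by tensoring a suitable surjection with the flat algebra $W_{m+1}(\wh A)$ and using that tensor product commutes with taking images. Write $M=\Fil_{\n}W_{m+1}\Omega^q_K$, which is a finite type $W_{m+1}(A)$-module by \lemref{lem:Log-fil-4}(1), and set $N=F(M)\subseteq \Fil_{\n}W_m\Omega^q_K$ (the inclusion by \lemref{lem:Log-fil-4}(5)), regarded as a $W_{m+1}(A)$-module via $F\colon W_{m+1}(A)\to W_m(A)$. Since the Frobenius on the de Rham--Witt complex is multiplicative, $F\colon M\to N$ is a surjection of $W_{m+1}(A)$-modules, so $N$ is again of finite type over $W_{m+1}(A)$; as $A$ is $F$-finite, $W_{m+1}(A)$ is Noetherian (\cite[Lem.~2.9]{Morrow-ENS}), hence $N^{\wedge}\cong N\otimes_{W_{m+1}(A)}W_{m+1}(\wh A)$ for the $W_{m+1}(\fm)$-adic completion, and $W_{m+1}(\wh A)$ is flat over $W_{m+1}(A)$ (this also uses $W_{m+1}(A)^{\wedge}\cong W_{m+1}(\wh A)$, cf.\ \eqref{eqn:Non-complete-0} with $q=0$).

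First I would tensor $F\colon M\twoheadrightarrow N$ with $W_{m+1}(\wh A)$. Because tensoring is right exact it preserves images, so $N\otimes W_{m+1}(\wh A)=\mathrm{im}\bigl(F\otimes 1\colon M\otimes W_{m+1}(\wh A)\to \Fil_{\n}W_m\Omega^q_K\otimes W_{m+1}(\wh A)\bigr)$. Next I identify both sides: the source is $\Fil_{\n}W_{m+1}\Omega^q_{\wh K}$ by \lemref{lem:Non-complete-3}; for the target, \lemref{lem:Non-complete-7}(2) shows that the $W_{m+1}(\fm)$-adic topology on $W_m(A)$ (via $F$) coincides with the $W_m(\fm)$-adic one, so exactly as in \eqref{eqn:Non-complete-6-0} one gets $\Fil_{\n}W_m\Omega^q_K\otimes_{W_{m+1}(A)}W_{m+1}(\wh A)\cong \Fil_{\n}W_m\Omega^q_K\otimes_{W_m(A)}W_m(\wh A)\cong \Fil_{\n}W_m\Omega^q_{\wh K}$, again by \lemref{lem:Non-complete-3}. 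Under these identifications $F\otimes 1$ becomes the Frobenius $F\colon \Fil_{\n}W_{m+1}\Omega^q_{\wh K}\to \Fil_{\n}W_m\Omega^q_{\wh K}$, since the isomorphisms of \lemref{lem:Non-complete-3} are induced by the canonical maps coming from $K\hookrightarrow\wh K$ and the Frobenius commutes with ring homomorphisms. Therefore $N^{\wedge}=N\otimes W_{m+1}(\wh A)=\mathrm{im}\bigl(F\colon \Fil_{\n}W_{m+1}\Omega^q_{\wh K}\to \Fil_{\n}W_m\Omega^q_{\wh K}\bigr)=F(\Fil_{\n}W_{m+1}\Omega^q_{\wh K})$, and unwinding the construction shows that the resulting isomorphism is precisely the one induced by the canonical map $N\to F(\Fil_{\n}W_{m+1}\Omega^q_{\wh K})$; this is the asserted $\phi$. (Equivalently, one may flat-base-change the short exact sequence $0\to\Ker F\to M\xrightarrow{F}N\to 0$ of finite type $W_{m+1}(A)$-modules and pass to completions.)

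I do not expect a genuine obstacle here. The two points requiring some care are (a) that the $W_{m+1}(A)$-module structure on $N$ via $F$ makes the abstract completion $N^{\wedge}$ agree with $N\otimes_{W_{m+1}(A)}W_{m+1}(\wh A)$ and be compatible with the completion of the ambient module $\Fil_{\n}W_m\Omega^q_K$ --- both handled by \lemref{lem:Non-complete-7}(1),(2) together with the Noetherianity of $W_{m+1}(A)$ --- and (b) the functoriality identification of $F\otimes 1$ with the Frobenius over $\wh K$. All the substantive input (density of $\Fil_{\n}W_m\Omega^q_K$ in $\Fil_{\n}W_m\Omega^q_{\wh K}$, finite generation, flatness) is already packaged inside \lemref{lem:Non-complete-3}, so the corollary is essentially a formal consequence of it.
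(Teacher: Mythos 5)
Your proof is correct, but it follows a genuinely different route from the paper's. The paper argues topologically: it regards $\Fil_{\n}W_m\Omega^q_K$ as a finite $W_{m+1}(A)$-module via $F$, shows that $F(\Fil_{\n}W_{m+1}\Omega^q_{K})$ is dense in $F(\Fil_{\n}W_{m+1}\Omega^q_{\wh K})$ for the adic topology (using the continuity of $F$ from \lemref{lem:Non-complete-4}(3) together with \lemref{lem:Non-complete-3}), and then applies the dense-submodule criterion of \lemref{lem:Non-complete-2} inside the ambient module $\Fil_{\n}W_m\Omega^q_K$ to conclude $N^{\wedge}=N'$. You instead use flatness of $W_{m+1}(\wh A)$ over $W_{m+1}(A)$ so that forming the image of $F$ commutes with base change, identify the base-changed source and target with $\Fil_{\n}W_{m+1}\Omega^q_{\wh K}$ and $\Fil_{\n}W_m\Omega^q_{\wh K}$ via \lemref{lem:Non-complete-3} and \lemref{lem:Non-complete-7}(2) (exactly as in \eqref{eqn:Non-complete-6-0}), and identify $F\otimes 1$ with the Frobenius over $\wh K$ by functoriality. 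Your route bypasses \lemref{lem:Non-complete-2} and the density/continuity input altogether, replacing them with the formal ``completion is exact on finite modules over a Noetherian ring'' principle; the paper's route avoids the functoriality check for $F\otimes 1$ and stays within the topological framework it uses throughout this section. One small wording point: the identification $N\otimes_{W_{m+1}(A)}W_{m+1}(\wh A)=\mathrm{im}(F\otimes 1)$ inside $\Fil_{\n}W_m\Omega^q_K\otimes_{W_{m+1}(A)}W_{m+1}(\wh A)$ is not a consequence of right-exactness alone (that only gives a surjection from $N\otimes W_{m+1}(\wh A)$ onto the image); it is the flatness of $W_{m+1}(\wh A)$, which you do establish, that keeps $N\otimes W_{m+1}(\wh A)\to \Fil_{\n}W_m\Omega^q_K\otimes W_{m+1}(\wh A)$ injective, and with that correction the argument is complete.
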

\begin{proof}
    Consider $\Fil_{\n}W_m\Omega^q_K$ as a finite $W_{m+1}(A)$-module via $F$ (see \lemref{lem:Non-complete-7}(1)), so that  $F(\Fil_{\n}W_{m+1}\Omega^q_{K}) \subset \Fil_{\n}W_m\Omega^q_K$ is a $W_{m+1}(A)$-submodule. By Lemmas \ref{lem:Non-complete-4}(3) and \ref{lem:Non-complete-3} we know that $F(\Fil_{\n}W_{m+1}\Omega^q_{K})$ is dense in $F(\Fil_{\n}W_{m+1}\Omega^q_{\wh K})$ with respect to $W_{m}(\wh \fm)$-adic topology. Also by Lemmas \ref{lem:Non-complete-7}(2) and \ref{lem:Non-complete-3}, we have $$\Fil_{\n}W_{m}\Omega^q_{\wh K} \cong \Fil_{\n}W_{m}\Omega^q_{K} \otimes_{W_{m+1}(A)}W_{m+1}(\wh A),$$ 
    the $W_{m+1}(\fm)$-adic completion (via $F$) of $\Fil_{\n}W_{m}\Omega^q_{K}$. Now, we can apply \lemref{lem:Non-complete-2} with $M=\Fil_{\n}W_{m}\Omega^q_{K}$ and $N=F(\Fil_{\n}W_{m+1}\Omega^q_{K})$ (considering them as modules over $W_{m+1}(A)$, via $F$) to conclude the proof.
\end{proof}

\begin{defn}
    We let $\psi_i \colon W_m\Omega^q_K \to W_m\Omega^q_{\wh{K}_i}$ be the map induced
by the inclusion $K \inj \wh{K}_i$.
We let $\Fil''_{\un{n}}W_m\Omega^q_K$ denote the kernel of the canonical map
 $$\psi = (\psi_i) \colon W_m\Omega^q_K \to {\underset{1 \le i \le r}\bigoplus}
\frac{W_m\Omega^q_{\wh{K}_i}}{\Fil_{n_i}W_m\Omega^q_{\wh{K}_i}}.$$
It is clear that $\Fil_{\un{n}}W_m\Omega^q_K
 \subset  \Fil''_{\un{n}}W_m\Omega^q_K$. 
\end{defn}

\begin{lem}\label{lem:Complete-3}
 If $A$ is complete, then one has an exact sequence of $W_m(A)$-modules
  \[
 0 \to V^{m-1}(\Fil_{\un{n}} \Omega^q_K) + dV^{m-1}(\Fil_{\un{n}}\Omega^{q-1}_K) \to
  \Fil''_{\un{n}}W_m\Omega^q_K \xrightarrow{R} \Fil''_{{\un{n}}/p}W_{m-1}\Omega^{q}_K.
  \]
\end{lem}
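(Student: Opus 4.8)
The statement to prove is, for $A$ complete (regular local $F$-finite $\F_p$-algebra), the exactness of
\[
0 \to V^{m-1}(\Fil_{\un{n}} \Omega^q_K) + dV^{m-1}(\Fil_{\un{n}}\Omega^{q-1}_K) \to
\Fil''_{\un{n}}W_m\Omega^q_K \xrightarrow{R} \Fil''_{{\un{n}}/p}W_{m-1}\Omega^{q}_K.
\]
The plan is to reduce everything to the one-variable statement \propref{prop:V-R-Fil} applied to each completed discrete valuation field $\wh{K}_i$, combined with the $q=0$ statement \lemref{lem:Log-fil-1} and the defining left-exact sequence
$0 \to \Fil''_{\un{n}}W_m\Omega^q_K \to W_m\Omega^q_K \xrightarrow{\psi} \bigoplus_i W_m\Omega^q_{\wh{K}_i}/\Fil_{n_i}W_m\Omega^q_{\wh{K}_i}$.
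Note first that for each $i$, $\wh{K}_i = Q(\wh{A}_i)$ is a complete discrete valued field of the form $F_i((x_i))$, so $\Fil_{n_i}W_m\Omega^q_{\wh{K}_i}$ is exactly the one-variable filtration, and the restriction of $R$ to it behaves as in \lemref{lem:Log-fil-4}(7) and \propref{prop:V-R-Fil} (with $n_i/p = \lfloor n_i/p\rfloor$); in particular $R$ maps $\Fil_{n_i}W_m\Omega^q_{\wh{K}_i}$ onto $\Fil_{\lfloor n_i/p\rfloor}W_{m-1}\Omega^q_{\wh{K}_i}$. Hence $R$ carries $\Fil''_{\un{n}}W_m\Omega^q_K$ into $\Fil''_{\un{n}/p}W_{m-1}\Omega^q_K$: if $\psi_i(\omega)\in\Fil_{n_i}W_m\Omega^q_{\wh{K}_i}$, then $\psi_i(R\omega) = R\psi_i(\omega)\in R(\Fil_{n_i}W_m\Omega^q_{\wh{K}_i})\subseteq\Fil_{\lfloor n_i/p\rfloor}W_{m-1}\Omega^q_{\wh{K}_i}$.

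\textbf{Injectivity on the left and containment $A\subseteq\Ker(R)$.} Write $A'$ for the left-hand submodule. Injectivity of the first map: it is the restriction of $V^{m-1}\oplus dV^{m-1}\colon \Omega^q_K\oplus\Omega^{q-1}_K\to W_m\Omega^q_K$ — but this is not literally injective, so as in \propref{prop:V-R-Fil} what is really meant is that $A'$ is the image of that map restricted to $\Fil_{\un{n}}$, and the claim of the lemma is just that $A'=\Ker(R)$ inside $\Fil''_{\un{n}}W_m\Omega^q_K$; the ``$0\to$'' is the trivial statement that $A'\to\Fil''_{\un{n}}W_m\Omega^q_K$ is an inclusion. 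For $A'\subseteq\Ker(R|_{\Fil''})$: first $A'\subseteq\Fil_{\un{n}}W_m\Omega^q_K\subseteq\Fil''_{\un{n}}W_m\Omega^q_K$ by \lemref{lem:Log-fil-4}(4),(6); and $R$ kills $V^{m-1}(\Omega^q_K)+dV^{m-1}(\Omega^{q-1}_K)$ by \propref{prop:basics}(1) (the $n=m-1$ case, with the global sheaf replaced by its value at the generic point), so $R(A')=0$.

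\textbf{The main step: $\Ker(R|_{\Fil''})\subseteq A'$.} Take $\omega\in\Fil''_{\un{n}}W_m\Omega^q_K$ with $R\omega=0$. By \propref{prop:basics}(1) over $K$, $\omega = V^{m-1}(b)+dV^{m-1}(c)$ for some $b\in\Omega^q_K$, $c\in\Omega^{q-1}_K$. I must upgrade this to $b\in\Fil_{\un{n}}\Omega^q_K$ and $c\in\Fil_{\un{n}}\Omega^{q-1}_K$. Here is where I use the one-variable result valve-by-valve: for each $i$, $\psi_i(\omega)\in\Fil_{n_i}W_m\Omega^q_{\wh{K}_i}$ and $R\psi_i(\omega)=\psi_i(R\omega)=0$, so by \propref{prop:V-R-Fil} applied to $\wh{K}_i$ (one variable $x_i$ over the base $F_i$) there exist $b_i\in\Fil_{n_i}\Omega^q_{\wh{K}_i}$, $c_i\in\Fil_{n_i}\Omega^{q-1}_{\wh{K}_i}$ with $\psi_i(\omega)=V^{m-1}(b_i)+dV^{m-1}(c_i)$. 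Now I need to reconcile these local expressions with the global one $\omega=V^{m-1}(b)+dV^{m-1}(c)$. The clean way is to work inside $\Omega^\bullet_{\wh{K}_i}$, where the injectivity of $V^{m-1}\oplus dV^{m-1}$ fails only by the subspaces identified in \propref{prop:basics}(3)--(6): $\Ker V^{m-1}=B_{m-1}\Omega^q$ and $\Ker dV^{m-1}=Z_{m-1}\Omega^{q-1}$, so $(b,c)$ and $(b_i,c_i)$ differ, modulo those, and one can arrange $b$ to lie in $\Fil_{n_i}\Omega^q_{\wh{K}_i}$ after subtracting an element of $B_{m-1}\Omega^q_{\wh{K}_i}$ — but $B_{m-1}\Omega^q_{\wh{K}_i}\subseteq B_\infty\subseteq Z_1$, and more to the point one shows $B_{m-1}\Omega^q_{\wh{K}_i}$ is already contained in $\Fil_{n_i}$ when $n_i\ge$ the relevant bound, or else uses the filtered Cartier/Frobenius compatibilities (\lemref{lem:Complete-6}-type identities, already available as \propref{prop:Cartier-fil-1}) to push $b$ into the filtration. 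Concretely: since $\psi_i(\omega)\in\Fil_{n_i}$ and $V^{m-1}(b)=\psi_i(\omega)-dV^{m-1}(c)$, applying $F^{m-1}$ and using $F^{m-1}V^{m-1}=p^{m-1}$... this gives control on $b$ modulo $p$-divisibility; iterating and intersecting over all $i$ (using \lemref{lem:Complete-1}, injectivity of $W_m\Omega^q_K\to\bigoplus_i W_m\Omega^q_{\wh{K}_i}$) yields $b\in\bigcap_i\Fil_{n_i}\Omega^q_{\wh{K}_i}\cap\Omega^q_K$. I then invoke the (completed, $q=0$) identification $\bigcap_i(\Fil_{n_i}\text{ at }\wh{K}_i)\cap K = \Fil_{\un{n}}K$ — which for $A$ complete is essentially \lemref{lem:F-function-1}/\corref{cor:F-function-2} (an element of $\Omega^q_K$ lies in $\Fil_{\un{n}}$ iff its expansion has no monomial $x^{\un{m}}$ with some $m_i<-n_i$, and the latter condition is detected one variable at a time), to conclude $b\in\Fil_{\un{n}}\Omega^q_K$; similarly $c\in\Fil_{\un{n}}\Omega^{q-1}_K$. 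Hence $\omega\in A'$.

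\textbf{Anticipated main obstacle.} The delicate point is the reconciliation in the last step: passing from "$\psi_i(\omega)$ has a good decomposition for each $i$" to "$\omega$ has a good global decomposition", because $V^{m-1}\oplus dV^{m-1}$ has a nontrivial kernel and the completed fields $\wh{K}_i$ only see the $x_i$-adic direction. The safe route is to use the monomial-expansion description of $\Omega^q_K$ and $\Fil_{\un{n}}\Omega^q_K$ from \corref{cor:F-function-2} together with the explicit action of $V^{m-1}$, $dV^{m-1}$ and $R$ on the Geisser--Hesselholt-type infinite series (as in the proof of \propref{prop:V-R-Fil} and \propref{prop:Fil-decom}), matching coefficients $F^{m,q}_{\un{j}}(S)$ termwise; the membership in $\Fil''_{\un{n}}$ forces exactly the vanishing of the offending coefficients, which is precisely membership in $\Fil_{\un{n}}$. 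Everything else is formal bookkeeping with the operators $F,V,d,R$ and the already-established Lemmas~\ref{lem:Log-fil-4}, \ref{lem:Non-complete-4} and \propref{prop:basics}.
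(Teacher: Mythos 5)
Your skeleton is the right one — reduce via \propref{prop:basics}(1) (the sequence ~\eqref{eqn:V-R-Fil-0}) to showing that every $\omega \in \left(V^{m-1}(\Omega^q_K)+dV^{m-1}(\Omega^{q-1}_K)\right)\cap \Fil''_{\un{n}}W_m\Omega^q_K$ lies in $T=V^{m-1}(\Fil_{\un{n}}\Omega^q_K)+dV^{m-1}(\Fil_{\un{n}}\Omega^{q-1}_K)$, detect everything through the maps $\psi_i$ into the one-variable fields $\wh{K}_i$, and use the level-one monomial expansion of \corref{cor:F-function-2} — but the reconciliation step, which is the entire content of the lemma, has a genuine gap. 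The specific devices you propose do not work: (i) the difference between your global pair $(b,c)$ and the local pairs $(b_i,c_i)$ from \propref{prop:V-R-Fil} lies in the joint kernel $R^q_{m-1}(\wh{K}_i)$, and your suggestion to absorb it by noting $B_{m-1}\Omega^q_{\wh{K}_i}\subseteq\Fil_{n_i}\Omega^q_{\wh{K}_i}$ "when $n_i$ is large enough" is false — already $B_1\Omega^q_{\wh{K}_i}$ contains forms such as $d(x_i^{-N})\wedge\omega'$ with $N$ arbitrarily large, far outside any fixed $\Fil_{n_i}$; (ii) even if one could correct $(b,c)$ at the place $i$ by an element of $R^q_{m-1}$, such a correction can destroy membership in the filtration at another place $j$, and "iterating and intersecting over all $i$" does not address this interference; (iii) the "$F^{m-1}$, $p$-divisibility" iteration is a gesture, not an argument; (iv) your fallback of matching level-$m$ coefficients "$F^{m,q}_{\un{j}}(S)$ termwise" presupposes a multivariable Geisser--Hesselholt decomposition with uniqueness at level $m$, which is not available in this thesis — only the one-variable \corref{cor:GH-3}/\propref{prop:Fil-decom} and the level-one multivariable \corref{cor:F-function-2} exist, and combining them correctly is precisely the missing step.

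The mechanism that closes the gap (and is how the paper argues) never corrects $(b,c)$ at all. One inducts on the components $1\le i\le r$ using the partial filtrations $\Fil^i_{\un{n}}$ of Notation~\ref{not:F-function-2-0}: given $\omega=V^{m-1}(x)+dV^{m-1}(y)$ with $x,y$ level-one forms whose expansions are good in the first $i-1$ variables, split $x=x_1+x_2$, $y=y_1+y_2$ according to whether the $x_i$-exponent is $\ge -n_i$ or $<-n_i$. Then $\psi_i\bigl(V^{m-1}(x_2)+dV^{m-1}(y_2)\bigr)=\psi_i\bigl(\omega-(V^{m-1}(x_1)+dV^{m-1}(y_1))\bigr)$ lies in $\Fil_{n_i}W_m\Omega^q_{\wh{K}_i}$, which by \propref{prop:Fil-decom} is the product of the graded pieces $F^{m,q}_j(\kappa(\wh{A}_i))$ with $j\ge -n_i$; on the other hand, since $V^{m-1}$ and $dV^{m-1}$ preserve the one-variable grading (\lemref{lem:F-group-1}), the same element lies in the product of the pieces with $j<-n_i$. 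By the uniqueness in \corref{cor:GH-3} this intersection is trivial, and by the injectivity of $\psi_i$ (\lemref{lem:Complete-1}) the bad part $V^{m-1}(x_2)+dV^{m-1}(y_2)$ is itself zero, so $\omega=V^{m-1}(x_1)+dV^{m-1}(y_1)$ and the induction proceeds. In particular \propref{prop:V-R-Fil} at the $\wh{K}_i$ is never needed, and no bookkeeping modulo $R^q_{m-1}$ arises. As written, your proposal does not contain this vanishing argument, so the main step is not established.
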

 \begin{proof}
   We let $T = V^{m-1}(\Fil_{\un{n}} \Omega^q_K) + dV^{m-1}(\Fil_{\un{n}}\Omega^{q-1}_K)$.
   Using the exact sequence ~\eqref{eqn:V-R-Fil-0}, we only need to show that
   every element 
   $$\omega \in
   \left( V^{m-1}(\Omega^q_K) + dV^{m-1}(\Omega^{q-1}_K) \right) \bigcap  \Fil''_{\un{n}}W_m\Omega^q_K $$
   lies in $T$.
   We shall show more generally that for every $0 \le i \le r$, we
   can write $\omega = V^{m-1}(x) + dV^{m-1}(y)$
with $x \in \Fil^i_{\un{n}}\Omega^q_K$ and $y \in \Fil^i_{\un{n}}\Omega^{q-1}_K$ (cf. Notation~\ref{not:F-function-2-0}).
This will finish the proof of the lemma.

 Now, we can write $\omega = V^{m-1}(x_0) + dV^{m-1}(y_0)$ for some
   $x_0 \in \Omega^q_K = \Fil^0_{\un{n}}\Omega^q_K$ and
   $y_0 \in \Omega^{q-1}_K = \Fil^0_{\un{n}}\Omega^{q-1}_K$.
   Suppose next that $i \ge 1$ and that we have found
   $x \in \Fil^{i-1}_{\un{n}}\Omega^q_K$ and
   $y \in \Fil^{i-1}_{\un{n}}\Omega^{q-1}_K$ such that
   $\omega = V^{m-1}(x) + dV^{m-1}(y)$.
In the notations of
   \corref{cor:F-function-2}, we can uniquely write
   $x = \sum_{\un{m}} a_{\un{m}}x^{\un{m}}$ and $y = \sum_{\un{m}} b_{\un{m}}x^{\un{m}}$
   such that $a_{\un{m}} = 0 = b_{\un{m}}$ for any $\un{m}$ such that $m_j <- n_j$ for some
   $j < i$. 
   We can uniquely write $x = x_1 + x_2$ and $y = y_1 + y_2$ such that
   \begin{enumerate}
     \item
       $x_1, x_2 \in \Fil^{i-1}_{\un{n}}\Omega^q_K$ and
       $y_1, y_2 \in \Fil^{i-1}_{\un{n}}\Omega^{q-1}_K$. 
     \item
       $x_1 \in  \Fil^{i}_{\un{n}}\Omega^q_K, \ y_1 \in  \Fil^{i}_{\un{n}}\Omega^{q-1}_K$.
       In particular,
       $\psi_i(V^{m-1}(x_1) + dV^{m-1}(y_1)) \in \Fil_{n_i}W_m\Omega^q_{\wh K_i}$.
     \item
       No term of the infinite series $x_2$ (resp. $y_2$) lies in
       $\Fil^{i}_{\un{n}}\Omega^q_K$ (resp. $\Fil^{i}_{\un{n}}\Omega^{q-1}_K$).
\end{enumerate}

   The above
   conditions imply that 
   $$\psi_i(V^{m-1}(x_2) + dV^{m-1}(y_2)) =\psi_i(\omega - (V^{m-1}(x_1) + dV^{m-1}(y_1)))
   \in \Fil_{n_i}W_m\Omega^q_{\wh K_i}.$$
   It follows from \lemref{lem:F-group-1} that
   $$V^{m-1}(\psi_i(x_2)) + dV^{m-1}(\psi_i(y_2)) \in
   \left[{\underset{j < -n_i}\prod}F^{m,q}_{j}(\kappa(\wh A_i)) \right] \bigcap
   \Fil_{n_i}W_m\Omega^q_{\wh{K}_i},$$
   where $\kappa(\wh A_i)$ is the residue field of $\wh A_i$.

But the above intersection is trivial by \corref{cor:GH-3} and
   \propref{prop:Fil-decom}. It follows that
   $\psi_i(V^{m-1}(x_2) + dV^{m-1}(y_2)) = V^{m-1}(\psi_i(x_2)) + dV^{m-1}(\psi_i(y_2)) =0$.
   In particular, $V^{m-1}(x_2) + dV^{m-1}(y_2) = 0$ by \lemref{lem:Complete-1}.
   We conclude that $\omega = V^{m-1}(x_1) + dV^{m-1}(y_1)$. We are now done by the
   condition (2) above and proceeding inductively.   
\end{proof}

\begin{prop}\label{prop:Complete-0}
There is a short exact sequence
of $W_m(A)$-modules
  \[
  0 \to V^{m-1}(\Fil_{\un{n}} \Omega^q_K) + dV^{m-1}(\Fil_{\un{n}}\Omega^{q-1}_K) \to
  \Fil_{\un{n}}W_m\Omega^q_K \xrightarrow{R} \Fil_{{\un{n}}/p}W_{m-1}\Omega^{q}_K \to 0.
  \]
\end{prop}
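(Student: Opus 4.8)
The statement to prove (Proposition~\ref{prop:Complete-0}) is the global, multivariate, non-complete analogue of \propref{prop:V-R-Fil} and \lemref{lem:Complete-3}. The strategy is to reduce it to the complete case, where the pieces are already available. First I would recall that $\Fil_{\un{n}}W_m\Omega^q_K$ is a finite type $W_m(A)$-module (\lemref{lem:Log-fil-4}(1)), and that $R$ is surjective onto $\Fil_{\un{n}/p}W_{m-1}\Omega^q_K$ by \lemref{lem:Log-fil-4}(7). So the only thing to establish is exactness in the middle: the kernel of $R$ equals $T := V^{m-1}(\Fil_{\un{n}}\Omega^q_K) + dV^{m-1}(\Fil_{\un{n}}\Omega^{q-1}_K)$. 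The inclusion $T \subseteq \Ker(R)$ is immediate from $RV = VR$, $Rd = dR$ and $RV^{m-1}(\Omega^q_K) = 0$, together with $V^{m-1}(\Fil_{\un{n}}\Omega^q_K) \subseteq \Fil_{\un{n}}W_m\Omega^q_K$ (\lemref{lem:Log-fil-4}(6)).

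\textbf{Reduction to the complete case.} For the reverse inclusion $\Ker(R) \subseteq T$, I would pass to the $\fm$-adic completion. By \lemref{lem:Non-complete-3}, completion at $W_m(\fm)$ turns $\Fil_{\un{n}}W_m\Omega^q_K$ into $\Fil_{\un{n}}W_m\Omega^q_{\wh{K}}$, and since $W_m(\wh{A})$ is faithfully flat over $W_m(A)$, it suffices to prove the exactness after tensoring with $W_m(\wh{A})$. Here one must be slightly careful: the term $V^{m-1}(\Fil_{\un{n}}\Omega^q_K)$ carries the $A$-module structure via $F^{m-1}$, and \lemref{lem:Non-complete-7}(2),(3) say precisely that this module structure is compatible with completion (the $W_m(\fm)$- and $\fm$-adic topologies agree via $F^{m-1}$, and $V^{m-1}(M)$, $dV^{m-1}(M)$ are $W_{m+1}(A)$-submodules). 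Thus $T \otimes_{W_m(A)} W_m(\wh{A}) \cong V^{m-1}(\Fil_{\un{n}}\Omega^q_{\wh{K}}) + dV^{m-1}(\Fil_{\un{n}}\Omega^{q-1}_{\wh{K}})$. Now the short exact sequence over $W_m(\wh{A})$ is exactly the content of the complete case: combining \propref{prop:V-R-Fil} (the one-variable version, $A = S[[\pi]]$) with the multivariate setup of \S~\ref{sec:GH-multi-var} gives the statement for $\wh{A} = \wh{S}[[x_1,\dots,x_d]]$, or alternatively one uses \lemref{lem:Complete-3} together with the identification $\Fil''_{\un{n}} = \Fil_{\un{n}}$ in the complete case — but the cleanest route is to invoke the structure result \propref{prop:Fil-decom} directly in several variables (via \lemref{lem:F-function-1} and \corref{cor:F-function-2}) to write any element of $\Ker(R)$ uniquely as a series and read off that it lies in $T$, exactly as in the proof of \propref{prop:V-R-Fil}.

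\textbf{Carrying out the middle step.} Concretely, let $\omega \in \Fil_{\un{n}}W_m\Omega^q_K$ with $R(\omega) = 0$. After completing, $\omega \in \Fil_{\un{n}}W_m\Omega^q_{\wh{K}}$ and $R(\omega) = 0$ there as well (completion is exact on finite modules). By the exact sequence \eqref{eqn:V-R-Fil-0} generalized to several variables, we can write $\omega = V^{m-1}(x) + dV^{m-1}(y)$ with $x \in \Omega^q_{\wh{K}}$, $y \in \Omega^{q-1}_{\wh{K}}$. Using the multivariate Geisser–Hesselholt-type decomposition (\corref{cor:F-function-2} applied over $\wh{S}$) to expand $x$ and $y$ as unique series in the monomials $x^{\un{m}}$ with coefficients in $F^{1,q}_0(\wh{S})$ respectively $F^{1,q-1}_0(\wh{S})$, and applying \lemref{lem:F-group-1} componentwise, the condition $\omega \in \Fil_{\un{n}}W_m\Omega^q_{\wh{K}}$ forces all coefficients $a_{\un{m}}, b_{\un{m}}$ with $m_j < -n_j$ for some $j \le r$ to vanish (by uniqueness of the decomposition in \corref{cor:GH-3}/\propref{prop:Fil-decom}). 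Hence $x \in \Fil_{\un{n}}\Omega^q_{\wh{K}}$ and $y \in \Fil_{\un{n}}\Omega^{q-1}_{\wh{K}}$, so $\omega \in T \otimes W_m(\wh{A})$; by faithful flatness, $\omega \in T$ already over $W_m(A)$. Continuity of $V$, $d$ in the relevant adic topologies (\lemref{lem:Non-complete-4}) is what legitimizes manipulating the infinite series.

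\textbf{Main obstacle.} The genuinely delicate point is the bookkeeping of module structures across completion: $V^{m-1}(\Fil_{\un{n}}\Omega^q_K)$ is a $W_m(A)$-submodule of $W_m\Omega^q_K$ but it is \emph{not} generated over $W_m(A)$ by its $A$-module structure in the naive way — one needs \lemref{lem:Non-complete-7} to see that it is actually a $W_{m+1}(A)$-module via $F$ and that its completion behaves well. Getting the commutative-algebra lemma \lemref{lem:Non-complete-2} to apply (choosing $\un{n}'$ with $p^{m-1}\un{n}' > \un{n}$ so that $\Fil_{\un{n}}$ sits inside the "twist" sheaf $W_m\Omega^q(\log\pi)(\un{n}')$ which is finitely generated and well-understood) is the technical heart, and mirrors the argument already used in \lemref{lem:Non-complete-3}. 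Once the completion is in place, the rest is a formal consequence of \propref{prop:Fil-decom} and the relations among $F, V, d, R$.
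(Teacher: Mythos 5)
Your overall architecture — surjectivity from \lemref{lem:Log-fil-4}(7), the trivial inclusion $T\subseteq\Ker(R)$, and the descent from $\wh{A}$ to $A$ via density and Lemmas~\ref{lem:Non-complete-2}, \ref{lem:Non-complete-3}, \ref{lem:Non-complete-4}, \ref{lem:Non-complete-7} — is the paper's, and that part of your proposal is sound (the paper phrases the descent as the intersection identity $E=\bigl[V^{m-1}(\Fil_{\un{n}}\Omega^q_{\wh{K}})+dV^{m-1}(\Fil_{\un{n}}\Omega^{q-1}_{\wh{K}})\bigr]\cap\Fil_{\un{n}}W_m\Omega^q_K$ rather than as faithful flatness, but the content is the same). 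The genuine gap is in your ``cleanest route'' for the complete case. There is no multivariate analogue of \corref{cor:GH-3} or \propref{prop:Fil-decom} for $m\ge 2$: the only several-variable decomposition available is the $m=1$ statement \corref{cor:F-function-2}, and \propref{prop:Fil-decom} is proved only for $A=S[[\pi]]$ with the filtration along the single ideal $(\pi^n)$, so it cannot be ``invoked directly in several variables.'' Worse, even granting some expansion of $x$ and $y$, your key deduction — that $\omega=V^{m-1}(x)+dV^{m-1}(y)\in\Fil_{\un{n}}W_m\Omega^q_{\wh{K}}$ forces the coefficients $a_{\un{m}},b_{\un{m}}$ with $m_j<-n_j$ to vanish, hence $x\in\Fil_{\un{n}}\Omega^q_{\wh{K}}$ and $y\in\Fil_{\un{n}}\Omega^{q-1}_{\wh{K}}$ — is false as stated, because the representation of $\omega$ in the form $V^{m-1}(x)+dV^{m-1}(y)$ is not unique: the ambiguity is the kernel of $(V^{m-1},dV^{m-1})$, which contains elements such as $(F^{m-1}d(z),-F^{m-1}(z))$ with $z\in W_m\Omega^{q-1}_{\wh{K}}$ arbitrary, so the ``bad'' coefficients of $x$ and $y$ need not vanish; only their total contribution to $\omega$ does. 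In the one-variable proof of \propref{prop:V-R-Fil} this is handled by the uniqueness of the decomposition of $\omega$ itself via \propref{prop:Fil-decom}, which is exactly what is missing in several variables.

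The paper closes this gap with \lemref{lem:Complete-3}, which works one variable at a time: it proves the middle-exactness for the a priori larger filtration $\Fil''_{\un{n}}$ (defined through the projections $\psi_i$ to the one-variable completions $\wh{K}_i$), splits $x=x_1+x_2$, $y=y_1+y_2$ along the partial filtrations $\Fil^i_{\un{n}}$ of \corref{cor:F-function-2}, applies the one-variable results (\corref{cor:GH-3}, \propref{prop:Fil-decom}, \lemref{lem:F-group-1}) at each $\wh{K}_i$, and uses the injectivity of \lemref{lem:Complete-1} to conclude $V^{m-1}(x_2)+dV^{m-1}(y_2)=0$ before inducting on $i$; the complete case of the proposition then follows from the trivial inclusion $\Fil_{\un{n}}W_m\Omega^q_K\subset\Fil''_{\un{n}}W_m\Omega^q_K$. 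Your fallback route is essentially this, but note one circularity trap: you say ``use \lemref{lem:Complete-3} together with the identification $\Fil''_{\un{n}}=\Fil_{\un{n}}$,'' whereas that identification is \corref{cor:Complete-2}, whose proof uses the very proposition you are proving; only the trivial inclusion is needed and may be used. So the fix is: keep your reduction to the complete case, but in the complete case replace the claimed multivariate series argument by the $\Fil''$-based, variable-by-variable argument of \lemref{lem:Complete-3}.
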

\begin{proof}
  Surjectivity of $R$ follows from \lemref{lem:Log-fil-4}(7).  It remains therefore 
to show that $\Ker(R) \subset V^{m-1}(\Fil_{\un{n}} \Omega^q_K) +
  dV^{m-1}(\Fil_{\un{n}}\Omega^{q-1}_K)$.
  First we assume that $A$ is complete. 
  In this case, it follows from \lemref{lem:Complete-3}
  because $\Fil_{\un{n}}W_m\Omega^q_K \subset  \Fil''_{\un{n}}W_m\Omega^q_K$.

 Suppose now that $A$ is not complete.

We now let $E = V^{m-1}(\Fil_{\un{n}} \Omega^q_K) + dV^{m-1}(\Fil_{\un{n}}\Omega^{q-1}_K)$ and
  $E' = \Ker(R)$. One easily verifies that $E$ is a $W_m(A)$-submodule of
  $\Fil_{\un{n}}W_m\Omega^q_K$. Indeed, \lemref{lem:Log-fil-4} implies that
  $E \subset \Fil_{\un{n}}W_m\Omega^q_K$. Moreover, one has
  $a(V^{m-1}(b) + dV^{m-1}(c)) = V^{m-1}(F(a)b-Fd(a)c) + dV^{m-1}(F(a)c)$
  for any $a \in W_m(A), \ b \in \Fil_{\un{n}} \Omega^q_K$ and
  $c \in \Fil_{\un{n}}\Omega^{q-1}_K$.
  \lemref{lem:Log-fil-4} implies once again that $a(V^{m-1}(b) + dV^{m-1}(c)) \in E$.
Using the complete case, it remains therefore to show that 
$E = \left[V^{m-1}(\Fil_{\un{n}} \Omega^q_{\wh{K}}) +
  dV^{m-1}(\Fil_{\un{n}}\Omega^{q-1}_{\wh{K}})\right] \cap \Fil_{\un{n}}W_m\Omega^q_K$.
But this follows from \lemref{lem:Non-complete-2}
because $E$ is dense in
$V^{m-1}(\Fil_{\un{n}}\Omega^q_{\wh{K}}) + dV^{m-1}(\Fil_{\un{n}}\Omega^{q-1}_{\wh{K}})$ by
Lemmas~\ref{lem:Non-complete-4} and ~\ref{lem:Non-complete-3}.
\end{proof}

\begin{cor}\label{cor:Complete-2}
 The inclusions $K \inj \wh{K}_i$
  induce an exact sequence
  \[
  0 \to  \Fil_{\un{n}}W_m\Omega^q_K \to W_m\Omega^q_K \xrightarrow{\psi}
    {\underset{1 \le i \le r}\bigoplus}
  \frac{W_m\Omega^q_{\wh{K}_i}}{\Fil_{n_i}W_m\Omega^q_{\wh{K}_i}}.
  \]
\end{cor}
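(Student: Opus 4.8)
The statement is equivalent to the equality $\Fil''_{\un{n}}W_m\Omega^q_K=\Fil_{\un{n}}W_m\Omega^q_K$, since $\Fil''_{\un{n}}W_m\Omega^q_K=\Ker\psi$ by definition and the inclusion $\Fil_{\un{n}}W_m\Omega^q_K\subseteq\Fil''_{\un{n}}W_m\Omega^q_K$ is already recorded. The plan is to prove the reverse inclusion, and I would first reduce to the case where $A$ is complete. Write $(\wh{K})_i$ for the local field at the $i$-th component of $E$ computed from $\wh{A}$ instead of $A$. The inclusions $\wh{K}_i\inj(\wh{K})_i$ reflect the Kato filtrations: using the $F^{m,q}_n$-decomposition of \corref{cor:GH-3} over the residue fields $\kappa(\wh{A}_i)\inj\kappa((\wh A)_{(x_i)})$ and the fact that $F^{m,q}_n(\kappa(\wh A_i))\inj F^{m,q}_n(\kappa((\wh A)_{(x_i)}))$, one gets $\Fil_{n_i}W_m\Omega^q_{(\wh K)_i}\cap W_m\Omega^q_{\wh K_i}=\Fil_{n_i}W_m\Omega^q_{\wh K_i}$. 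Hence the map $\psi$ for $A$ is the composite of $W_m\Omega^q_K\inj W_m\Omega^q_{\wh K}$ (\lemref{lem:Non-complete-3}), the map $\psi$ for $\wh A$, and an injection of the two targets, so that $\Fil''_{\un{n}}W_m\Omega^q_K=W_m\Omega^q_K\cap\Fil''_{\un{n}}W_m\Omega^q_{\wh K}$. Granting the complete case, the right side equals $W_m\Omega^q_K\cap\Fil_{\un{n}}W_m\Omega^q_{\wh K}$, which equals $\Fil_{\un{n}}W_m\Omega^q_K$ by the density-and-completion argument in the proof of \propref{prop:Complete-0} (i.e. \lemref{lem:Non-complete-1}, \lemref{lem:Non-complete-2}, \lemref{lem:Non-complete-3}, applied with a sufficiently large finite-type $M=\Fil_{p^{m-1}\un{n}'}W_m\Omega^q_K$, $N=\Fil_{\un{n}}W_m\Omega^q_K$, $N'=\Fil_{\un{n}}W_m\Omega^q_{\wh K}$).

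So assume $A$ is complete; by Cohen's theorem $A=\ff[[x_1,\dots,x_d]]$ with $\ff$ an $F$-finite field, so we are in the setting of \S~\ref{sec:GH-multi-var}. I would induct on $m$. For $m=1$: by \corref{cor:F-function-2} each $\omega\in\Omega^q_K$ has a unique expansion $\omega=\sum_{\un{m}}a_{\un{m}}x^{\un{m}}$ with $a_{\un{m}}\in F^{1,q}_0(\ff)$ and $a_{\un{m}}=0$ once some $m_i\ll 0$, and by definition $\Fil''_{\un{n}}\Omega^q_K=\bigcap_{1\le j\le r}\psi_j^{-1}(\Fil_{n_j}\Omega^q_{\wh K_j})$. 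Fixing $j$ and grouping the expansion by the exponent of $x_j$, one checks that $\psi_j$ carries the $x_j$-degree-$k$ part of $\omega$ into the $x_j$-adic order-$\ge k$ part of $\Omega^q_{\wh K_j}$ and, via the injectivity of \lemref{lem:Complete-1} refined to these gradings, that its order-$k$ component is nonzero unless the whole $x_j$-degree-$k$ part of $\omega$ vanishes. Since $\Fil_{n_j}\Omega^q_{\wh K_j}$ is the $x_j$-adic order-$\ge -n_j$ part, $\psi_j(\omega)\in\Fil_{n_j}\Omega^q_{\wh K_j}$ then forces $a_{\un{m}}=0$ whenever $m_j<-n_j$; that is, $\psi_j^{-1}(\Fil_{n_j}\Omega^q_{\wh K_j})=E^{1,q}_{j,n_j}(K)$ in the notation of Notation~\ref{not:F-function-2-0}. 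Intersecting over $j$ gives $\Fil''_{\un{n}}\Omega^q_K=\Fil^r_{\un{n}}\Omega^q_K=\Fil_{\un{n}}\Omega^q_K$.

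For the inductive step ($m\ge 2$), I would play \lemref{lem:Complete-3} — which yields an exact sequence $0\to T\to\Fil''_{\un{n}}W_m\Omega^q_K\xrightarrow{R}\Fil''_{\un{n}/p}W_{m-1}\Omega^q_K$ with $T=V^{m-1}(\Fil_{\un{n}}\Omega^q_K)+dV^{m-1}(\Fil_{\un{n}}\Omega^{q-1}_K)$ — against \propref{prop:Complete-0}, which gives $0\to T\to\Fil_{\un{n}}W_m\Omega^q_K\xrightarrow{R}\Fil_{\un{n}/p}W_{m-1}\Omega^q_K\to 0$. By the induction hypothesis at level $m-1$ with exponent $\un{n}/p$ we have $\Fil''_{\un{n}/p}W_{m-1}\Omega^q_K=\Fil_{\un{n}/p}W_{m-1}\Omega^q_K$, so the map $R$ in the first sequence lands in $\Fil_{\un{n}/p}W_{m-1}\Omega^q_K$. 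Given $\xi\in\Fil''_{\un{n}}W_m\Omega^q_K$, surjectivity of $R$ in \propref{prop:Complete-0} gives $\eta\in\Fil_{\un{n}}W_m\Omega^q_K$ with $R(\eta)=R(\xi)$; then $\xi-\eta\in\Ker(R)\cap\Fil''_{\un{n}}W_m\Omega^q_K=T\subseteq\Fil_{\un{n}}W_m\Omega^q_K$, whence $\xi\in\Fil_{\un{n}}W_m\Omega^q_K$. This closes the induction and hence the complete case.

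I expect the main obstacle to be precisely the $m=1$ base case in the complete setting: making rigorous the compatibility between the Geisser-Hesselholt-type expansion of $\Omega^q_K$ and the $x_j$-adic filtration on $\Omega^q_{\wh K_j}$ in the presence of logarithmic poles along the other components $x_i$ ($i\ne j$) — in particular, ruling out cancellation after the faithfully flat but residue-field-enlarging passage $K\to\wh K_j$, which is what forces one to track gradings rather than just invoke \lemref{lem:Complete-1} directly. By contrast, the reduction to the complete case and the inductive step are comparatively formal, resting only on the finiteness and density statements of \S~\ref{sec:com-to-noncom} together with the exact sequences of \lemref{lem:Complete-3} and \propref{prop:Complete-0}.
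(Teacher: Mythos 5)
Your proof follows essentially the same route as the paper's: reduce to the complete case, settle $m=1$ via the unique expansion of \corref{cor:F-function-2}, and induct on $m$ by playing the exact sequence of \lemref{lem:Complete-3} against the surjectivity in \propref{prop:Complete-0}; the paper's non-complete reduction is simply the intersection identity $\Fil_{\un{n}}W_m\Omega^q_K = W_m\Omega^q_K \cap \Fil_{\un{n}}W_m\Omega^q_{\wh{K}}$, obtained from faithful flatness of $W_m(\wh{A})$ over $W_m(A)$ together with \lemref{lem:Non-complete-3}. Two minor points: in your reduction only the easy containment (the filtration of $W_m\Omega^q_{\wh{K}_i}$ maps into the corresponding filtration for the completed local field of $\wh{A}$) is actually needed, not the asserted injectivity of the two targets, and your appeal to \lemref{lem:Non-complete-2} with $M=\Fil_{p^{m-1}\un{n}'}W_m\Omega^q_K$ tacitly presupposes the intersection statement at the level $p^{m-1}\un{n}'$, which is most directly supplied by the paper's faithful-flatness computation.
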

\begin{proof}
 The corollary is equivalent to the claim that the inclusion
 $\Fil_{\un{n}}W_m\Omega^q_K  \subset \Fil''_{\un{n}}W_m\Omega^q_K$ is an equality.
 To prove this claim, we first assume that $A$ is complete and prove 
by induction on $m$.
  The case $m =1$ follows directly from \corref{cor:F-function-2}.
  For $m \ge 2$, we use the commutative diagram
  \begin{equation}\label{eqn:Complete-2-0}
    \xymatrix@C.8pc{
      0 \ar[r] & V^{m-1}(\Fil_{\un{n}}\Omega^q_K) + dV^{m-1}(\Fil_{\un{n}}\Omega^{q-1}_K)
      \ar[r] \ar[d] & \Fil_{\un{n}}W_m\Omega^q_K \ar[r]^-{R} \ar[d] &
      \Fil_{\un{n}}W_{m-1}\Omega^q_K \ar[r] \ar[d] &
      0 \\
    0 \ar[r] & V^{m-1}(\Fil''_{\un{n}}\Omega^q_K) + dV^{m-1}(\Fil''_{\un{n}}\Omega^{q-1}_K)
    \ar[r] & \Fil''_{\un{n}}W_m\Omega^q_K \ar[r]^-{R}  &
    \Fil''_{\un{n}}W_{m-1}\Omega^q_K. & }
    \end{equation}

  \propref{prop:Complete-0} says that the top row  in
  ~\eqref{eqn:Complete-2-0} is exact and \lemref{lem:Complete-3}
  implies that the bottom row is exact at $\Fil''_{\un{n}}W_m\Omega^q_K$.
  The left and the right vertical arrows are bijective by induction.
  A diagram chase shows that the middle vertical arrow is also bijective.

  For $A$ not necessarily complete, it suffices to show that
  $\Fil_{\un{n}}W_m\Omega^q_K = \Fil_{\un{n}}W_m\Omega^q_{\wh{K}} \cap W_m\Omega^q_K$.
  But this follows because $W_m(\wh{A})$ is a faithfully flat $W_m(A)$-algebra
  and there are isomorphisms
  \begin{equation}\label{eqn:Complete-2-1}
    \frac{W_m\Omega^q_K}{\Fil_{\un{n}}W_m\Omega^q_K} \otimes_{W_m(A)} W_m(\wh{A})
  \cong \frac{W_m\Omega^q_K \otimes_{W_m(A)} W_m(\wh{A})}
        {\Fil_{\un{n}}W_m\Omega^q_K \otimes_{W_m(A)} W_m(\wh{A})}
        \cong \frac{W_m\Omega^q_{\wh{K}}}{\Fil_{\un{n}}W_m\Omega^q_{\wh{K}}},
  \end{equation}
  where the second isomorphism follows from ~\eqref{eqn:Non-complete-1-2} and
  \lemref{lem:Non-complete-3}.
 \end{proof}

\begin{lem}\label{lem:Complete-5}
  The canonical maps
  \[
  \frac{\Fil_{\un{n}}W_m\Omega^q_K}{dV^{m-1}(\Fil_{\un{n}}\Omega^{q-1}_K)}
  \to \frac{W_m\Omega^q_K}{dV^{m-1}(\Omega^{q-1}_K)};
  \]
  \[
   \frac{\Fil_{\un{n}}W_{m}\Omega^q_K}{V^{m-1}(\Fil_{\un{n}}\Omega^{q}_K)}
  \to \frac{W_{m}\Omega^q_K}{V^{m-1}(\Omega^{q}_K)}
  \]
are injective.
\end{lem}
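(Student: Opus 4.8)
The statement is that the two quotient maps are injective, i.e.\ that
\[
dV^{m-1}(\Omega^{q-1}_K) \cap \Fil_{\un n}W_m\Omega^q_K = dV^{m-1}(\Fil_{\un n}\Omega^{q-1}_K),
\qquad
V^{m-1}(\Omega^{q}_K) \cap \Fil_{\un n}W_m\Omega^q_K = V^{m-1}(\Fil_{\un n}\Omega^{q}_K).
\]
(The inclusions $\supseteq$ are clear from \lemref{lem:Log-fil-4}(4),(6).) My first move would be to reduce to the complete case. By \corref{cor:Complete-2} and the faithful flatness of $W_m(\wh A)$ over $W_m(A)$, together with the base-change isomorphisms of \eqref{eqn:Non-complete-1-2} and \lemref{lem:Non-complete-3}, one has $\Fil_{\un n}W_m\Omega^q_K = \Fil_{\un n}W_m\Omega^q_{\wh K}\cap W_m\Omega^q_K$ inside $W_m\Omega^q_{\wh K}$, and similarly $dV^{m-1}(\Omega^{q-1}_K) = dV^{m-1}(\Omega^{q-1}_{\wh K})\cap W_m\Omega^q_K$ (using that $\Omega^{q-1}_K\inj \Omega^{q-1}_{\wh K}$ is an $\fm$-adically dense inclusion of $W_m(A)$-modules and \lemref{lem:Non-complete-2}, or directly from \propref{prop:basics}(1) applied to both $K$ and $\wh K$). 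Intersecting, the non-complete statement follows from the complete one. So I may assume $A = S[[x_1,\dots,x_d]]$ with $S$ an $F$-finite Noetherian regular local $\F_p$-algebra.

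\textbf{The complete case via the Geisser--Hesselholt decomposition.} For $A$ complete I would argue using \corref{cor:GH-3} and \propref{prop:Fil-decom} (in the multivariate form, whose single-variable case is what is stated; the multivariate analogue follows by iterating, exactly as the excerpt sets up in \S~\ref{sec:GH-multi-var}), which give $W_m\Omega^q_K = \bigoplus/\prod_{\un m} F^{m,q}_{\un m}(S)$ with $\Fil_{\un n}W_m\Omega^q_K$ the sub-(product) indexed by $\un m \ge -\un n$. Take $\omega \in dV^{m-1}(\Omega^{q-1}_K) \cap \Fil_{\un n}W_m\Omega^q_K$; write $\omega = dV^{m-1}(y)$ with $y \in \Omega^{q-1}_K$, and decompose $y = \sum_{\un m} b_{\un m}$ with $b_{\un m}\in F^{1,q-1}_{\un m}(S)$ (finitely many $b_{\un m}$ with some $m_i<0$). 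By \lemref{lem:F-group-1}(1),(2), $dV^{m-1}(b_{\un m}) \in F^{m,q}_{\un m}(S)$, so $\omega = \sum_{\un m} dV^{m-1}(b_{\un m})$ is \emph{the} decomposition of $\omega$ in $W_m\Omega^q_K$. Since $\omega \in \Fil_{\un n}W_m\Omega^q_K$, uniqueness forces $dV^{m-1}(b_{\un m}) = 0$ for every $\un m$ with $m_i < -n_i$ for some $i$; discarding those terms, $\omega = dV^{m-1}(y')$ with $y' = \sum_{\un m \ge -\un n} b_{\un m} \in \Fil_{\un n}\Omega^{q-1}_K$ by \propref{prop:Fil-decom} (multivariate). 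The argument for $V^{m-1}$ is identical, using \lemref{lem:F-group-1}(2) alone to see $V^{m-1}(F^{1,q}_{\un m}(S)) \subset F^{m,q}_{\un m}(S)$.

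\textbf{Main obstacle and loose ends.} The one subtle point is handling the \emph{infinite} part of the decomposition: the decomposition in \corref{cor:GH-3} is a convergent series, not a finite sum, so ``discarding the terms with $m_i<-n_i$'' requires knowing this is a genuine (unique) convergent expansion and that grouping is legitimate. This is exactly the content of \corref{cor:GH-3}/\propref{prop:Fil-decom} and the topology comparison \lemref{lem:Topology-0}, so it is available, but I would want to state clearly that $dV^{m-1}$ applied termwise to the series for $y$ yields the series for $\omega$ — this uses continuity of $d$ and $V$ in the $W_m(\fm)$-adic topology (\lemref{lem:Non-complete-4}), plus \lemref{lem:F-group-1} to see each term lands in the correct graded piece. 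The other place to be careful is the reduction step: I need the intersection identities over $\wh K$ to descend, which is where faithful flatness of $W_m(\wh A)$ over $W_m(A)$ and the quotient base-change isomorphism \eqref{eqn:Complete-2-1} do the work, exactly as in the proof of \corref{cor:Complete-2}. Neither is a real obstruction — the proof is essentially bookkeeping with the GH decomposition — but if one wanted to avoid even invoking the multivariate \propref{prop:Fil-decom}, an alternative is to induct on $m$ using \propref{prop:Complete-0} and a diagram chase, reducing the intersection statement in degree $m$ to degree $m-1$; I would keep that in reserve as a cleaner writeup if the multivariate decomposition is not wanted as a black box.
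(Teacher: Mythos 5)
Your proposal has a genuine gap at its central step, the complete case. You invoke ``the multivariate form'' of \corref{cor:GH-3} and \propref{prop:Fil-decom} — a unique decomposition of $W_m\Omega^q_K$ for $K = S[[x_1,\dots,x_d]]_\pi$ into graded pieces $F^{m,q}_{\un m}(S)$ indexed by $\un m$, with $\Fil_{\un n}$ cut out by $\un m \ge -\un n$ — and claim it ``follows by iterating, exactly as the excerpt sets up in \S~\ref{sec:GH-multi-var}''. But that section only treats $m=1$ (\corref{cor:F-function-2}); for $m \ge 2$ the paper proves the decomposition only in one variable ($A=S[[\pi]]$), and the iterated multivariate statement is neither available nor of the naive form you use: decomposing in the variable $x_d$ at an index $n_d=p^{m-1-s}i$ produces coefficients that are length-$(m-s)$ de Rham--Witt forms over $S[[x_1,\dots,x_{d-1}]]$, so the further decomposition in the remaining variables happens at a Witt length depending on the first index, and the pieces are not products ``$F^{m,q}_{\un m}(S)$ all at level $m$''. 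Moreover your argument hinges on uniqueness of this expansion and on a multivariate analogue of \propref{prop:Fil-decom} identifying $\Fil_{\un n}$ inside it, neither of which is established. The paper's own complete-case argument deliberately avoids this: it only expands the element $y$ with $\omega=dV^{m-1}(y)$ via the $m=1$ multivariate decomposition (\corref{cor:F-function-2}) and then kills the ``bad'' part one divisor component at a time by pushing into the one-variable complete fields $\wh K_i$, where \corref{cor:GH-3} and \propref{prop:Fil-decom} do apply, and pulling back the vanishing through the injectivity of \lemref{lem:Complete-1} — this is the ``exactly as in \lemref{lem:Complete-3} with $x=0$'' step. Your argument would become correct if you replaced the global multivariate expansion by this one-variable-at-a-time device, but as written it rests on an unproven (and incorrectly formulated) black box.

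The reduction to the complete case is also looser than you suggest. Granting your identities (a) and (b), intersecting only yields $dV^{m-1}(\Omega^{q-1}_K)\cap\Fil_{\un n}W_m\Omega^q_K = dV^{m-1}(\Fil_{\un n}\Omega^{q-1}_{\wh K})\cap W_m\Omega^q_K$; you still must show this lies in $dV^{m-1}(\Fil_{\un n}\Omega^{q-1}_K)$, and that cannot be done ``exactly as in \corref{cor:Complete-2}'' by faithful flatness and \eqref{eqn:Complete-2-1}, because $dV^{m-1}(\Fil_{\un n}\Omega^{q-1}_K)$ is not a $W_m(A)$-submodule — it is only a $W_{m+1}(A)$-submodule via $F$ (\lemref{lem:Non-complete-7}(3)). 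The paper's proof of the non-complete case consists precisely of this descent: one views $\Fil_{\un n}W_m\Omega^q_{\wh K}$ as the $W_{m+1}(\fm)$-adic completion of $\Fil_{\un n}W_m\Omega^q_K$ via $F$ (\lemref{lem:Non-complete-7}, \lemref{lem:Non-complete-3}), checks density of $dV^{m-1}(\Fil_{\un n}\Omega^{q-1}_K)$ in $dV^{m-1}(\Fil_{\un n}\Omega^{q-1}_{\wh K})$ (\lemref{lem:Non-complete-4}), and applies \lemref{lem:Non-complete-2}. You do gesture at density and \lemref{lem:Non-complete-2}, so this part is repairable imprecision rather than a missing idea; a small additional slip is that the unfiltered identity (b) follows from \propref{prop:basics}(8) (kernel of $V$), not from \propref{prop:basics}(1). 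Finally, your ``in reserve'' induction on $m$ via \propref{prop:Complete-0} would need the filtered identities $B_m\Fil_{\un n}\Omega^q_K = B_m\Omega^q_K\cap\Fil_{\un n}\Omega^q_K$ etc., which in the paper are proved only after (and partly using) the present lemma, so beware of circularity there.
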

\begin{proof}
  We shall prove the injectivity of the first map as the other one is completely
  analogous.
We only need to show that every element
  $\omega \in \Fil_{\un{n}}W_m\Omega^q_K \bigcap dV^{m-1}(\Omega^{q-1}_K)$ lies in
$dV^{m-1}(\Fil_{\un{n}}\Omega^{q-1}_K)$. If $A$ is complete, this is proven
exactly as we proved \lemref{lem:Complete-3} (with $x = 0$). If $A$ is not 
complete, it suffices to show that $$\Fil_{\un{n}}W_m\Omega^q_K \bigcap
dV^{m-1}(\Fil_{\un{n}}\Omega^{q-1}_{\wh{K}}) \subseteq dV^{m-1}(\Fil_{\un{n}}\Omega^{q-1}_K).$$

To that end, we note that $\Fil_{\un{n}}W_m\Omega^q_{\wh{K}} =
(\Fil_{\un{n}}W_m\Omega^q_K)^\wedge$ by \lemref{lem:Non-complete-3}. In particular,
$\Fil_{\un{n}}\Omega^{q-1}_K$ is dense in $\Fil_{\un{n}}\Omega^{q-1}_{\wh{K}}$ with respect to
the $\wh{\fm}$-adic topology.  It follows from \lemref{lem:Non-complete-4} that
$dV^{m-1}(\Fil_{\un{n}}\Omega^{q-1}_K)$ is a dense subgroup of 
$dV^{m-1}(\Fil_{\un{n}}\Omega^{q-1}_{\wh{K}})$ with respect to the $W_m(\fm)$-adic
topology.
We conclude from \lemref{lem:Non-complete-7} that $\Fil_{\un{n}}W_m\Omega^q_{\wh{K}}$ is
the $W_{m+1}(\fm)$-adic completion of the $W_{m+1}(A)$-module $\Fil_{\un{n}}W_m\Omega^q_K$
via $F \colon W_{m+1}(A) \to W_m(A)$ and
$dV^{m-1}(\Fil_{\un{n}}\Omega^{q-1}_K)$ is a dense subgroup of 
$dV^{m-1}(\Fil_{\un{n}}\Omega^{q-1}_{\wh{K}})$ with respect to the $W_{m+1}(\fm)$-adic
topology. By \lemref{lem:Non-complete-2}, it remains
to show that $dV^{m-1}(\Fil_{\un{n}}\Omega^{q-1}_K)$ is a $W_{m+1}(A)$-submodule of
$\Fil_{\un{n}}W_m\Omega^q_K$. But this follows again from \lemref{lem:Non-complete-7}.
\end{proof}

\section{Cartier homomorphism II}

\begin{defn}\label{defn:Cartier}
    We define $Z_1\Fil_{\un{n}}W_m\Omega^q_K:=Z_1W_m\Omega^q_K \bigcap \Fil_{\un{n}}W_m\Omega^q_K$, for all $\n \in \Z^r$ and $m \ge 1$. Equivalently, we have $Z_1\Fil_{\un{n}}W_m\Omega^q_K=\Ker (F^{m-1}d: \Fil_{\un{n}}W_m\Omega^q_K \to \Fil_{\un{n}}\Omega^{q+1}_K)$.
\end{defn}

\begin{lem}\label{lem:Complete-6}
  The Cartier map $C \colon Z_1W_m\Omega^q_K \to W_m\Omega^q_K$ restricts to a
  $W_{m+1}(A)$-linear map
  \[
  C \colon Z_1\Fil_{\un{n}}W_m\Omega^q_K \to \Fil_{{\un{n}}/p}W_m\Omega^q_K
  \]
  such that $C(dV^{m-1}(\Fil_{\un{n}}\Omega^{q-1}_K)) = 0$.
\end{lem}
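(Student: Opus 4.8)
The statement to prove is that the Cartier homomorphism $C\colon Z_1W_m\Omega^q_K\to W_m\Omega^q_K$ restricts to a $W_{m+1}(A)$-linear map $C\colon Z_1\Fil_{\un{n}}W_m\Omega^q_K\to \Fil_{\un{n}/p}W_m\Omega^q_K$ killing $dV^{m-1}(\Fil_{\un{n}}\Omega^{q-1}_K)$. The plan is to reduce to the complete one-variable situation treated in \S~\ref{sec:Cartier}, where the analogous statement is \propref{prop:Cartier-fil-1} together with \lemref{lem:Cartier-fil-0}, and then bootstrap up to several variables and the non-complete case.

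First I would reduce to $A$ complete. By \lemref{lem:Non-complete-3} we have $\Fil_{\un{n}}W_m\Omega^q_{\wh{K}}\cong(\Fil_{\un{n}}W_m\Omega^q_K)^\wedge$, and $W_m(\wh A)$ is faithfully flat over $W_m(A)$; since $C$ is natural with respect to the inclusion $K\inj\wh K$ and $Z_1$ is defined by a kernel, if the asserted factorization holds over $\wh A$ then it holds over $A$ by intersecting with $W_m\Omega^q_K$ (using \corref{cor:Complete-2} to identify $\Fil_{\un{n}}W_m\Omega^q_K$ with $\Fil_{\un{n}}W_m\Omega^q_{\wh K}\cap W_m\Omega^q_K$ and the injectivity of $\gamma_1$ from \lemref{lem:Complete-1}). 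So assume $A=S[[x_1,\ldots,x_d]]$ complete. Next I would reduce $d$ to $1$: using \corref{cor:Complete-2} again, $\Fil_{\un{n}}W_m\Omega^q_K=\bigcap_{1\le i\le r}\Ker\big(W_m\Omega^q_K\to W_m\Omega^q_{\wh K_i}/\Fil_{n_i}W_m\Omega^q_{\wh K_i}\big)$, and each $\wh K_i$ is the fraction field of $F_i[[x_i]]$, a complete one-variable power series ring over the $F$-finite regular local ring $F_i$. The Cartier map commutes with the localization maps $W_m\Omega^q_K\to W_m\Omega^q_{\wh K_i}$, so it suffices to know: (a) $C$ restricts to $C\colon Z_1\Fil_{n_i}W_m\Omega^q_{\wh K_i}\to\Fil_{\lfloor n_i/p\rfloor}W_m\Omega^q_{\wh K_i}$, which is exactly \lemref{lem:Cartier-fil-0}; and (b) $\lfloor n_i/p\rfloor$ is the $i$-th component of $\un{n}/p$, so an element of $Z_1\Fil_{\un{n}}W_m\Omega^q_K$ maps under $C$ into each $\Fil_{\lfloor n_i/p\rfloor}W_m\Omega^q_{\wh K_i}$, hence lies in $\Fil_{\un{n}/p}W_m\Omega^q_K$ by \corref{cor:Complete-2}. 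That the target lands in $W_m\Omega^q_K$ (not merely $W_m\Omega^q_{\wh K_i}$ for each $i$) is automatic since $C$ already maps $Z_1W_m\Omega^q_K$ to $W_m\Omega^q_K$; one only needs the filtration-level bound.

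For the $W_{m+1}(A)$-linearity, I would use the standard identity $C\circ F=R$ on $W_{m+1}\Omega^q_K$ and the fact that $Z_1W_m\Omega^q_K=F(W_{m+1}\Omega^q_K)$: for $a\in W_{m+1}(A)$ and $\omega=F(\alpha)$ with $\alpha\in\Fil_{\un{n}}W_{m+1}\Omega^q_K$ (such $\alpha$ exists by \lemref{lem:Cartier-fil-0}), one computes $C(a\cdot F(\alpha))=C(F(F^{?}(a)\alpha))$ — more precisely, $a\cdot F(\alpha)=F(F(a)\alpha)$ by multiplicativity of $F$ on the Witt ring, wait — here $a\in W_{m+1}(A)$ acts on $W_m\Omega^q_K$ via $F\colon W_{m+1}(A)\to W_m(A)$, so $a\cdot F(\alpha)=F(a)F(\alpha)=F(a\alpha)$, and hence $C(a\cdot F(\alpha))=CF(a\alpha)=R(a\alpha)=R(a)R(\alpha)=R(a)\cdot C(F(\alpha))$; but $R(a)$ is the image of $a$ under $W_{m+1}(A)\xrightarrow{R}W_m(A)$, and one checks this agrees with the $W_{m+1}(A)$-action on $W_m\Omega^q_K$ via $F$ in the sense that makes $C$ linear — this is the routine bookkeeping with $FR=RF$ and $FV=VF=p$ already listed in \S~\ref{sec:de-rham-witt}. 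Finally, $C(dV^{m-1}(\Fil_{\un{n}}\Omega^{q-1}_K))=0$ because $dV^{m-1}\Omega^{q-1}_K\subset Z_1W_m\Omega^q_K$ maps to zero under $C$ already in the unfiltered setting (it is killed in $W_m\Omega^q_X$ by the defining property of $C$, or directly: $dV^{m-1}(\eta)=FdV^m(\eta')$ lies in $\Ker(C)$ since $CF\,dV^m=RdV^m$ hits the top quotient — this is contained in \cite[\S~3]{HM-Annals} / \propref{prop:basics}); restricting to the filtered submodule changes nothing. The main obstacle I anticipate is making the reduction to one variable clean — specifically, verifying that $Z_1$ is compatible with the intersection presentation of $\Fil_{\un{n}}$ from \corref{cor:Complete-2}, i.e.\ that $Z_1\Fil_{\un{n}}W_m\Omega^q_K=\bigcap_i\big(Z_1W_m\Omega^q_K\cap(\text{preimage of }\Fil_{n_i}W_m\Omega^q_{\wh K_i})\big)$, which follows since $Z_1$ is defined by a kernel and kernels commute with finite intersections, but deserves a sentence. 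Everything else is formal manipulation with $F,V,R,d$ and the results already established.
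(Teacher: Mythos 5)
Your proof is correct and follows essentially the same route as the paper: test membership in $\Fil_{\un{n}/p}W_m\Omega^q_K$ componentwise through the maps $\psi_i$ to the $\wh{K}_i$ via \corref{cor:Complete-2}, invoke the one-variable statement \lemref{lem:Cartier-fil-0}, and dispose of the $W_{m+1}(A)$-linearity and $C\,dV^{m-1}=0$ by the standard identities $CF=R$ and $FdV^{m}=dV^{m-1}$. The only deviation is your preliminary reduction to complete $A$, which is unnecessary (and the paper omits), since \corref{cor:Complete-2} is already established for non-complete $A$.
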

\begin{proof}
  By \corref{cor:Complete-2}, we only need to show that
  $C \circ \psi_i(\omega) = \psi_i \circ C(\omega)$ lies in
  $\Fil_{\lfloor{{n_i}/p}\rfloor}W_m\Omega^q_{\wh{K}_i}$ for every $\omega \in 
  Z_1W_m\Omega^q_K \bigcap \Fil_{\un{n}}W_m\Omega^q_K$ and $1 \le i \le r$. 
But this is shown in the proof of \lemref{lem:Cartier-fil-0}.
  \end{proof}

\begin{lem}\label{lem:Complete-4}
We have the following.
  \begin{enumerate}
  \item
    $Z_1\Fil_{\un{n}}W_m\Omega^q_K = F(\Fil_{\un{n}}W_{m+1}\Omega^q_K)$.
  \item
    $F \colon \Fil_{\un{n}}W_{m+1}\Omega^q_K \to  \Fil_{\un{n}}W_{m}\Omega^q_K$
    induces isomorphisms
    \[
    \ov{F} \colon \Fil_{{\un{n}}/p}W_{m}\Omega^q_K \xrightarrow{\cong}
    \frac{Z_1\Fil_{\un{n}}W_m\Omega^q_K}{dV^{m-1}(\Fil_{\un{n}}\Omega^{q-1}_K)}; \ \
    \ov{C} \colon  \frac{Z_1\Fil_{\un{n}}W_m\Omega^q_K}{dV^{m-1}(\Fil_{\un{n}}\Omega^{q-1}_K)}
     \xrightarrow{\cong} \Fil_{{\un{n}}/p}W_{m}\Omega^q_K.
     \]
\end{enumerate}
\end{lem}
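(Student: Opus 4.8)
\textbf{Proof plan for Lemma~\ref{lem:Complete-4}.}

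The plan is to reduce everything to the corresponding facts for the classical de Rham–Witt complex (\propref{prop:basics}, \propref{prop:usual}, \corref{cor:cartier-1}) together with the filtered structural results already established, namely \propref{prop:Complete-0}, \corref{cor:Complete-2}, \lemref{lem:Complete-5}, \lemref{lem:Complete-6} and \lemref{lem:Cartier-fil-0}. First I would prove (1). The inclusion $F(\Fil_{\un{n}}W_{m+1}\Omega^q_K) \subseteq Z_1\Fil_{\un{n}}W_m\Omega^q_K$ is immediate: $F$ preserves the filtration by \lemref{lem:Log-fil-4}(5) (equivalently \lemref{lem:Non-complete-4}(3)), and the image of $F$ lies in $Z_1W_m\Omega^q_K$ by the very definition of $Z_1$. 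For the reverse inclusion, in the complete case this is exactly \lemref{lem:Cartier-fil-0} (where it is proved for $A=S[[\pi]]$ via the Geisser–Hesselholt decomposition and \propref{prop:Fil-decom}); the multivariate complete case follows by iterating the one-variable argument (using the description in \secref{sec:GH-multi-var}), and the non-complete case follows by the standard density/completion device: $\Fil_{\un{n}}W_{m+1}\Omega^q_K$ is dense in $\Fil_{\un{n}}W_{m+1}\Omega^q_{\wh K}$ (\lemref{lem:Non-complete-3}), $F$ is continuous (\lemref{lem:Non-complete-4}(3)), and $Z_1\Fil_{\un{n}}W_m\Omega^q_K = Z_1\Fil_{\un{n}}W_m\Omega^q_{\wh K}\cap W_m\Omega^q_K$ by \corref{cor:Complete-2} and \cite[Thm.~8.1]{Matsumura}; then apply \lemref{lem:Non-complete-2} with $M=\Fil_{\un{n}}W_m\Omega^q_K$ and $N=F(\Fil_{\un{n}}W_{m+1}\Omega^q_K)$, exactly as in \corref{cor:Non-complete-3}. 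Alternatively one may simply invoke \corref{cor:Non-complete-3} directly.

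For (2), the composite $W_{m+1}\Omega^q_K \xrightarrow{F} Z_1W_m\Omega^q_K \xrightarrow{C} W_m\Omega^q_K$ equals $R$ by definition of the Cartier operator, so $C$ is surjective onto $\Fil_{\un{n}/p}W_m\Omega^q_K$ after restriction (using \lemref{lem:Log-fil-4}(7) for surjectivity of $R$ on the filtered pieces) and $\ker(C|_{Z_1\Fil_{\un{n}}W_m\Omega^q_K})$ contains $dV^{m-1}(\Fil_{\un{n}}\Omega^{q-1}_K)$ by \lemref{lem:Complete-6}. It remains to show these maps $\ov F$ and $\ov C$ are mutually inverse isomorphisms, i.e. that $\ker(C|_{Z_1\Fil_{\un{n}}W_m\Omega^q_K}) = dV^{m-1}(\Fil_{\un{n}}\Omega^{q-1}_K)$ and that $\ov F$ is well-defined and bijective. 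I would do this by a diagram chase: combine the filtered $R$-sequence of \propref{prop:Complete-0},
\[
0 \to V^{m-1}(\Fil_{\un{n}}\Omega^q_K) + dV^{m-1}(\Fil_{\un{n}}\Omega^{q-1}_K) \to \Fil_{\un{n}}W_m\Omega^q_K \xrightarrow{R} \Fil_{\un{n}/p}W_{m-1}\Omega^q_K \to 0,
\]
applied with $m+1$ in place of $m$, with part (1) and the factorization $R = C\circ F$. Concretely, $F(\Fil_{\un{n}}W_{m+1}\Omega^q_K) = Z_1\Fil_{\un{n}}W_m\Omega^q_K$ and $\ker(F\colon \Fil_{\un{n}}W_{m+1}\Omega^q_K \to \Fil_{\un{n}}W_m\Omega^q_K)$ can be computed from \propref{prop:basics}(7) intersected with the filtration; then $F$ induces $Z_1\Fil_{\un{n}}W_m\Omega^q_K \cong \Fil_{\un{n}}W_{m+1}\Omega^q_K/\ker F$, and one identifies the image of $dV^{m-1}(\Fil_{\un{n}}\Omega^{q-1}_K)$ under the quotient, using \lemref{lem:Complete-5} to control denominators; passing to quotients yields the two displayed isomorphisms. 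The cleanest formulation: $\ov F$ is the composite $\Fil_{\un{n}/p}W_m\Omega^q_K \xleftarrow[\cong]{R} \Fil_{\un{n}}W_{m+1}\Omega^q_K/(V^{m}\Fil\cdots) \xrightarrow{F} Z_1\Fil_{\un{n}}W_m\Omega^q_K/dV^{m-1}(\Fil_{\un{n}}\Omega^{q-1}_K)$, and one checks both legs are isomorphisms using \propref{prop:Complete-0} and part (1).

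The main obstacle I expect is bookkeeping the kernels precisely — specifically verifying $\ker(C|_{Z_1\Fil_{\un{n}}W_m\Omega^q_K}) = dV^{m-1}(\Fil_{\un{n}}\Omega^{q-1}_K)$ rather than something larger like $V^{m-1}(\Fil\cdots)+dV^{m-1}(\Fil\cdots)$. In the unfiltered case this is classical (\cite[I.3.21]{Illusie}): $\ker(C\colon Z_1W_m\Omega^q \to W_m\Omega^q)$ is $dV^{m-1}\Omega^{q-1}$ because elements of $V^{m-1}\Omega^q$ are \emph{not} in $Z_1$ (the condition $F^{m-1}d = 0$ fails for them in general). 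In the filtered setting one must rerun this argument while staying inside $\Fil_{\un{n}}$, which is where \lemref{lem:Complete-5} (injectivity modulo $dV^{m-1}$) and \propref{prop:Complete-0} do the real work; the complete case is handled by the Geisser–Hesselholt decomposition as in \secref{sec:Complete-local}, and the non-complete case descends by faithful flatness and density exactly as in the proofs of \corref{cor:Complete-2} and \corref{cor:Non-complete-3}.
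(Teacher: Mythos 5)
Your outline of part (2) — the two-leg description of $\ov F$ via $R$ and $F$, with \propref{prop:Complete-0} at level $m+1$, \lemref{lem:Complete-5} to control the kernels, and \lemref{lem:Complete-6} for well-definedness of $\ov C$ — is sound and is essentially the diagram the paper uses. The genuine gap is in your proof of part (1), on which your part (2) relies for the surjectivity of the $F$-leg. The hard inclusion $Z_1\Fil_{\un{n}}W_m\Omega^q_K\subseteq F(\Fil_{\un{n}}W_{m+1}\Omega^q_K)$ is only proved in the paper for the \emph{one-variable} complete case (\lemref{lem:Cartier-fil-0}, i.e.\ a single-component divisor, via \propref{prop:Fil-decom}); your claim that "the multivariate complete case follows by iterating the one-variable argument (using the description in \secref{sec:GH-multi-var})" is not available: \secref{sec:GH-multi-var} only gives the $m=1$ decomposition (\corref{cor:F-function-2}), and no Geisser--Hesselholt-type decomposition of $W_m\Omega^q$ with several inverted variables is established anywhere in the paper. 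Nor does the naive reduction through \corref{cor:Complete-2} work directly: writing $\omega=F(\alpha)$ classically, the preimage $\alpha$ is only determined modulo $\ker F=V^m(\Omega^q_K)$, which does not lie in the filtration, so componentwise control via the maps $\psi_i$ does not immediately produce a filtered preimage. Your fallback "invoke \corref{cor:Non-complete-3} directly" also does not help: that corollary is a completion-compatibility statement for the module $F(\Fil_{\un{n}}W_{m+1}\Omega^q_K)$ and says nothing about its equality with $Z_1\Fil_{\un{n}}W_m\Omega^q_K$.

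The paper closes this differently, and more cheaply, and the tools you already cite suffice. It never re-runs a decomposition argument for (1): it constructs $\ov F$ and $\ov C$ as you do, and then compares with the unfiltered situation via the injections of \lemref{lem:Complete-5}; since classically $\ov C\,\ov F=\mathrm{id}$ and $\ov F\,\ov C=\mathrm{id}$, the same identities hold on the filtered quotients, and the surjectivity of $\ov F$ onto $Z_1\Fil_{\un{n}}W_m\Omega^q_K/dV^{m-1}(\Fil_{\un{n}}\Omega^{q-1}_K)$ then forces $Z_1\Fil_{\un{n}}W_m\Omega^q_K=F(\Fil_{\un{n}}W_{m+1}\Omega^q_K)+dV^{m-1}(\Fil_{\un{n}}\Omega^{q-1}_K)=F(\Fil_{\un{n}}W_{m+1}\Omega^q_K)$, i.e.\ (1) and (2) come out simultaneously with no complete/non-complete dichotomy at this stage. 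Equivalently, you can repair your plan by the following chase: for $\omega\in Z_1\Fil_{\un{n}}W_m\Omega^q_K$ write $\omega=F(\alpha)$ classically; then $R(\alpha)=CF(\alpha)=C(\omega)\in\Fil_{\un{n}/p}W_m\Omega^q_K$ by \lemref{lem:Complete-6}, so by \propref{prop:Complete-0} (at level $m+1$) there is $\alpha'\in\Fil_{\un{n}}W_{m+1}\Omega^q_K$ with $R(\alpha')=R(\alpha)$; then $\alpha-\alpha'\in V^m(\Omega^q_K)+dV^m(\Omega^{q-1}_K)$ by \propref{prop:basics}(1), hence $\omega-F(\alpha')\in dV^{m-1}(\Omega^{q-1}_K)\cap\Fil_{\un{n}}W_m\Omega^q_K=dV^{m-1}(\Fil_{\un{n}}\Omega^{q-1}_K)$ by \lemref{lem:Complete-5}, which lies in $F(\Fil_{\un{n}}W_{m+1}\Omega^q_K)$. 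As written, though, your route for (1) rests on an unproved multivariate decomposition and would fail at that step.
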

\begin{proof}
 We first note that the lemma is classical for $W_\textbf{.}\Omega^\bullet_K$
  (cf. \cite{Illusie} and \cite{Kato-Duality}). 
To prove (1) and (2), we use \propref{prop:Complete-0} and the identity $FV^m = 0$
to get a commutative diagram
  \begin{equation}\label{eqn:Complete-4-0}
    \xymatrix@C1pc{
     \Fil_{\un{n}}W_{m+1}\Omega^q_K \ar@{->>}[d]_-{R} \ar[r]^-F &  
     \Fil_{\un{n}}W_{m}\Omega^q_K \ar@{->>}[d] \\
     \Fil_{{\un{n}}/p}W_{m}\Omega^q_K \ar[r]^-{\ov{F}} &
     \frac{\Fil_{\un{n}}W_{m}\Omega^q_K}{dV^{m-1}(\Fil_{\un{n}}\Omega^{q-1}_K)}.}
    \end{equation}
  Since $dV^{m-1}(\Fil_{\un{n}}\Omega^{q-1}_K) = FdV^m(\Fil_{\un{n}}\Omega^{q-1}_K)
  \subset F(\Fil_{\un{n}}W_{m+1}\Omega^q_K)$, we thus get maps
  \begin{equation}\label{eqn:Complete-4-1}
\Fil_{{\un{n}}/p}W_{m}\Omega^q_K \stackrel{\ov{F}}{\surj} 
\frac{F(\Fil_{\un{n}}W_{m+1}\Omega^q_K)}{dV^{m-1}(\Fil_{\un{n}}\Omega^{q-1}_K)}
\inj \frac{Z_1\Fil_{\un{n}}W_m\Omega^q_K}{dV^{m-1}(\Fil_{\un{n}}\Omega^{q-1}_K)}
\xrightarrow{\ov{C}} \Fil_{{\un{n}}/p}W_{m}\Omega^q_K.
\end{equation}

We now look at the commutative diagram
\begin{equation}\label{eqn:Complete-4-2}
    \xymatrix@C1pc{
  \Fil_{{\un{n}}/p}W_{m}\Omega^q_K  \ar[r]^-{\ov{F}} \ar[d] &
      \frac{Z_1\Fil_{\un{n}}W_m\Omega^q_K}{dV^{m-1}(\Fil_{\un{n}}\Omega^{q-1}_K)}
      \ar[d] \ar[r]^-{\ov{C}}  & \Fil_{{\un{n}}/p}W_{m}\Omega^q_K \ar[d] \ar[r]^-{\ov{F}} &
      \frac{Z_1\Fil_{\un{n}}W_m\Omega^q_K}{dV^{m-1}(\Fil_{\un{n}}\Omega^{q-1}_K)} \ar[d] \\
W_{m}\Omega^q_K \ar[r]^-{\ov{F}} &
      \frac{Z_1W_m\Omega^q_K}{dV^{m-1}(\Omega^{q-1}_K)}
      \ar[r]^-{\ov{C}} & W_{m}\Omega^q_K \ar[r]^-{\ov{F}} &
      \frac{Z_1\Fil_{\un{n}}W_m\Omega^q_K}{dV^{m-1}(\Fil_{\un{n}}\Omega^{q-1}_K)}.}
\end{equation}
One knows classically that the composition of any two adjacent maps in the bottom row
is identity. Since the vertical arrows are injective by \lemref{lem:Complete-5},
we conclude that the same holds in the top row too. This proves the lemma.
\end{proof}

We let $B_0\Fil_{\un{n}}\Omega^q_K =0$ and $B_1\Fil_{\un{n}}\Omega^q_K =
d\Fil_{\un{n}}\Omega^{q-1}_K$. We let $Z_0\Fil_{\un{n}}\Omega^q_K = \Fil_{\un{n}}\Omega^q_K$.
For $i \ge 2$, we let $Z_i\Fil_{\un{n}}\Omega^q_K$ (resp. $B_i\Fil_{\un{n}}\Omega^q_K$)
be the inverse image of $Z_{i-1}\Fil_{\un{n}}\Omega^q_K$
(resp. $B_{i-1}\Fil_{\un{n}}\Omega^q_K$) under the composite map
$Z_{i-1}\Fil_{\un{n}}\Omega^q_K \surj
\frac{Z_{i-1}\Fil_{\un{n}}\Omega^q_K}{B_1\Fil_{\un{n}}\Omega^q_K}
\xrightarrow{\ov{C}} \Fil_{{\un{n}}/p}\Omega^q_K$. 
The following is easily deduced from ~\eqref{eqn:Complete-4-0} and
~\eqref{eqn:Complete-4-2}.
\begin{lem}\label{lem:Complete-9}
\begin{enumerate}
  \item
$Z_i\Fil_{\un{n}}\Omega^q_K = F^i(\Fil_{\un{n}}W_{i+1}\Omega^q_K), \ \
  B_i\Fil_{\un{n}}\Omega^q_K = F^{i-1}d(\Fil_{\un{n}}W_{i}\Omega^{q-1}_K)$.
\item
  $Z_i\Fil_{\un{n}}\Omega^q_K \supset Z_{i+1}\Fil_{\un{n}}\Omega^q_K \supset
  B_j\Fil_{\un{n}}\Omega^q_K \supset B_{j-1}\Fil_{\un{n}}\Omega^q_K \ \
  \forall \ \ i \ge 0, \ \ j \ge 1$.
\item
  The maps
$\ov{C} \colon \frac{Z_i\Fil_{\un{n}}\Omega^q_K}{B_i\Fil_{\un{n}}\Omega^q_K}
  \xrightarrow{\cong}
  \frac{Z_{i-1}\Fil_{{\un{n}}/p}\Omega^q_K}{B_{i-1}\Fil_{{\un{n}}/p}\Omega^q_K}$
  and $\ov{F} \colon
  \frac{Z_{i-1}\Fil_{{\un{n}}/p}\Omega^q_K}{B_{i-1}\Fil_{{\un{n}}/p}\Omega^q_K}
  \xrightarrow{\cong} \frac{Z_i\Fil_{\un{n}}\Omega^q_K}{B_i\Fil_{\un{n}}\Omega^q_K}$
are inverses of each other.
\end{enumerate}
\end{lem}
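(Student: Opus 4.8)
The plan is to establish all three assertions simultaneously by induction on $i\ge 0$, feeding off the recursive definition of $Z_i\Fil_{\un n}\Omega^q_K$, $B_i\Fil_{\un n}\Omega^q_K$ and the two diagrams \eqref{eqn:Complete-4-0}, \eqref{eqn:Complete-4-2}. The cases $i=0,1$ are immediate: $Z_0\Fil_{\un n}\Omega^q_K=\Fil_{\un n}\Omega^q_K=F^0(\Fil_{\un n}W_1\Omega^q_K)$ and $B_1\Fil_{\un n}\Omega^q_K=d(\Fil_{\un n}\Omega^{q-1}_K)$ by definition, while $Z_1\Fil_{\un n}\Omega^q_K=F(\Fil_{\un n}W_2\Omega^q_K)$ and the isomorphism $\ov C$ of (3) for $i=1$ (which, since $Z_0\Fil_{\un n/p}\Omega^q_K=\Fil_{\un n/p}\Omega^q_K$, $B_0\Fil_{\un n/p}\Omega^q_K=0$ and $dV^0=d$, reads $\ov C\colon Z_1\Fil_{\un n}\Omega^q_K/B_1\Fil_{\un n}\Omega^q_K\xrightarrow{\cong}\Fil_{\un n/p}\Omega^q_K$ with inverse $\ov F$) are precisely \lemref{lem:Complete-4}(1),(2) for $m=1$; the inclusions in (2) for $i\le 1$ are clear.

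For the inductive step I would fix $i\ge 2$ and assume all three statements at all smaller indices and all $\un m\in\Z^r$, $q\ge 0$. The crucial computation is that, for $w\in\Fil_{\un n}W_i\Omega^q_K$, the Cartier map of \lemref{lem:Complete-6} satisfies
\[
C\bigl(F^{i-1}(w)\bigr)=F^{i-2}\bigl(R(w)\bigr),
\]
which comes from the defining relation $C\circ F=R\colon W_2\Omega^q_K\to W_1\Omega^q_K$ — valid on filtered pieces because $F(\Fil_{\un n}W_2\Omega^q_K)=Z_1\Fil_{\un n}\Omega^q_K$ by \lemref{lem:Complete-4}(1) and $C$ restricts by \lemref{lem:Complete-6} — together with $RF=FR$ and \lemref{lem:Log-fil-4}(5),(7). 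Using the induction hypothesis $Z_{i-1}\Fil_{\un m}\Omega^q_K=F^{i-1}(\Fil_{\un m}W_i\Omega^q_K)$ (for $\un m\in\{\un n,\un n/p\}$) and the surjectivity of $R$ on filtered pieces (\lemref{lem:Log-fil-4}(7), \propref{prop:Complete-0}), this shows that $C$ carries $Z_{i-1}\Fil_{\un n}\Omega^q_K$ onto $Z_{i-2}\Fil_{\un n/p}\Omega^q_K$ with kernel $B_1\Fil_{\un n}\Omega^q_K$ (the kernel of $C$ on $Z_1$, from the $i=1$ case), hence $\ov C$ induces an isomorphism $Z_{i-1}\Fil_{\un n}\Omega^q_K/B_1\Fil_{\un n}\Omega^q_K\xrightarrow{\cong}Z_{i-2}\Fil_{\un n/p}\Omega^q_K$. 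Taking preimages of the submodules $Z_{i-1}\Fil_{\un n/p}\Omega^q_K$ and $B_{i-1}\Fil_{\un n/p}\Omega^q_K$ (contained in $Z_{i-2}\Fil_{\un n/p}\Omega^q_K$ by (2) at level $i-1$) and unwinding the definitions of $Z_i,B_i$ yields isomorphisms $Z_i\Fil_{\un n}\Omega^q_K/B_1\Fil_{\un n}\Omega^q_K\xrightarrow{\cong}Z_{i-1}\Fil_{\un n/p}\Omega^q_K$ and $B_i\Fil_{\un n}\Omega^q_K/B_1\Fil_{\un n}\Omega^q_K\xrightarrow{\cong}B_{i-1}\Fil_{\un n/p}\Omega^q_K$; passing to the quotient gives $\ov C$ of (3), and comparison with the bottom row of \eqref{eqn:Complete-4-2} (applied with $\un n$ replaced by $\un n/p$) together with the injectivity of the vertical maps from \lemref{lem:Complete-5} identifies its inverse with $\ov F$.

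Granting this, statement (1) at level $i$ follows. The inclusion $F^i(\Fil_{\un n}W_{i+1}\Omega^q_K)\subseteq Z_i\Fil_{\un n}\Omega^q_K$ is easy: write $F^i(u)=F^{i-1}(F(u))$ with $F(u)\in\Fil_{\un n}W_i\Omega^q_K$, so $F^i(u)\in Z_{i-1}\Fil_{\un n}\Omega^q_K$, and compute $C(F^i(u))=F^{i-1}(R(u))\in F^{i-1}(\Fil_{\un n/p}W_i\Omega^q_K)=Z_{i-1}\Fil_{\un n/p}\Omega^q_K$. For the reverse inclusion, given $z\in Z_i\Fil_{\un n}\Omega^q_K$ write $z=F^{i-1}(w)$ with $w\in\Fil_{\un n}W_i\Omega^q_K$; then $C(z)=F^{i-2}(R(w))\in Z_{i-1}\Fil_{\un n/p}\Omega^q_K$, so $C(z)=F^{i-1}(v)$ for some $v\in\Fil_{\un n/p}W_i\Omega^q_K$, and by \propref{prop:Complete-0} we may lift $v=R(u)$ with $u\in\Fil_{\un n}W_{i+1}\Omega^q_K$. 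One checks $C(z-F^i(u))=0$ and $z-F^i(u)\in Z_1\Fil_{\un n}\Omega^q_K$, so $z-F^i(u)=d(\omega)$ with $\omega\in\Fil_{\un n}\Omega^{q-1}_K$ by \lemref{lem:Complete-4}(2); finally the identity $d=F^idV^i$ together with $V^i(\Fil_{\un n}\Omega^{q-1}_K)\subseteq\Fil_{\un n}W_{i+1}\Omega^{q-1}_K$ (\lemref{lem:Log-fil-4}(6)) gives $z=F^i\bigl(u+dV^i\omega\bigr)\in F^i(\Fil_{\un n}W_{i+1}\Omega^q_K)$. The formula for $B_i\Fil_{\un n}\Omega^q_K$ is obtained by the same argument applied to $d(w)$ instead of $w$ (using $Rd=dR$) and using $d=F^{i-1}dV^{i-1}$ in the last step. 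Statement (2) at level $i$ is then a formal consequence of the recursive definitions and the induction hypothesis: $Z_{i+1}\Fil_{\un n}\Omega^q_K\subseteq Z_i\Fil_{\un n}\Omega^q_K$ and $B_{i-1}\Fil_{\un n}\Omega^q_K\subseteq B_i\Fil_{\un n}\Omega^q_K$ hold by construction, $B_i\Fil_{\un n}\Omega^q_K\subseteq Z_i\Fil_{\un n}\Omega^q_K$ because $B_{i-1}\Fil_{\un n/p}\Omega^q_K\subseteq Z_{i-1}\Fil_{\un n/p}\Omega^q_K$, and the remaining inclusions $B_j\Fil_{\un n}\Omega^q_K\subseteq Z_{i+1}\Fil_{\un n}\Omega^q_K$ follow by combining these.

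I expect the main obstacle to be exactly the reverse inclusion in (1): $Z_i\Fil_{\un n}\Omega^q_K$ is defined abstractly as a preimage, so one must genuinely produce a representative lying inside $\Fil_{\un n}W_{i+1}\Omega^q_K$. The two devices that make this work are the surjectivity of $R$ on filtered pieces (\propref{prop:Complete-0}), used to lift $v$, and the relation $d(\omega)=F^idV^i(\omega)$ with $V^i(\omega)$ staying in the filtration by \lemref{lem:Log-fil-4}(6), which absorbs the correction term $d(\omega)$ back into the filtered de Rham–Witt complex; everything else is a bookkeeping exercise with the Illusie relations and \lemref{lem:Log-fil-4}.
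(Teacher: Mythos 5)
Your proof is correct and follows essentially the route the paper intends: the paper disposes of this lemma with the remark that it is ``easily deduced from \eqref{eqn:Complete-4-0} and \eqref{eqn:Complete-4-2}'', and your induction carries out exactly that deduction from the same ingredients --- \lemref{lem:Complete-4}, the $R$-surjectivity and kernel description of \propref{prop:Complete-0}, \lemref{lem:Complete-6} and \lemref{lem:Log-fil-4}. The only loose point is identifying the inverse of your $\ov{C}$ with $\ov{F}$ (for $i\ge 2$, \lemref{lem:Complete-5} by itself does not give injectivity of the higher filtered subquotients into the unfiltered ones), but this is harmless: with part (1) in hand, $\ov{F}$ is realized by lifting along $R$ via \propref{prop:Complete-0} and applying $F^i$, and then $\ov{C}\circ\ov{F}=\mathrm{id}$ is immediate from $CF=R$.
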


\begin{lem}\label{lem:Complete-8}
  We have $Z_i\Fil_{\un{n}}\Omega^q_K = Z_i\Omega^q_K \bigcap \Fil_{\un{n}}\Omega^q_K$
  and $B_i\Fil_{\un{n}}\Omega^q_K = B_i\Omega^q_K \bigcap \Fil_{\un{n}}\Omega^q_K$.
\end{lem}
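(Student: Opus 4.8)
\textbf{Proof plan for \lemref{lem:Complete-8}.} The strategy is to prove both equalities simultaneously by induction on $i$, piggybacking on the Cartier isomorphism machinery already assembled in \lemref{lem:Complete-9} (and on \lemref{lem:Complete-4}, \lemref{lem:Complete-6}). The base case $i=0$ is trivial since $Z_0\Fil_{\un{n}}\Omega^q_K = \Fil_{\un{n}}\Omega^q_K = \Omega^q_K \cap \Fil_{\un{n}}\Omega^q_K$ and $B_0 = 0$ on both sides; the case $i=1$ for the $Z$-statement says $Z_1\Fil_{\un{n}}\Omega^q_K = Z_1\Omega^q_K \cap \Fil_{\un{n}}\Omega^q_K$, which is immediate from \defref{defn:Cartier} (specialized to $m=1$), and for the $B$-statement $B_1\Fil_{\un{n}}\Omega^q_K = d\Fil_{\un{n}}\Omega^{q-1}_K$ — here the nontrivial containment $B_1\Omega^q_K \cap \Fil_{\un{n}}\Omega^q_K \subseteq d\Fil_{\un{n}}\Omega^{q-1}_K$ is exactly the $m=1$ case of \lemref{lem:Complete-5} (injectivity of $\Fil_{\un{n}}W_1\Omega^q_K/dV^{0}(\Fil_{\un{n}}\Omega^{q-1}_K) \to W_1\Omega^q_K/dV^{0}(\Omega^{q-1}_K)$).

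For the inductive step, suppose the two equalities hold for all indices $< i$. One inclusion in each case is automatic: since $Z_i\Fil_{\un{n}}\Omega^q_K \subset \Fil_{\un{n}}\Omega^q_K$ by construction and $Z_i\Fil_{\un{n}}\Omega^q_K \subset Z_i\Omega^q_K$ (the filtered $Z_i$ is built by pulling back along the $\ov{C}$ maps, which are restrictions of the classical ones by \lemref{lem:Complete-6}), we get $Z_i\Fil_{\un{n}}\Omega^q_K \subseteq Z_i\Omega^q_K \cap \Fil_{\un{n}}\Omega^q_K$, and similarly for $B_i$. For the reverse inclusion, take $\omega \in Z_i\Omega^q_K \cap \Fil_{\un{n}}\Omega^q_K$. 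By \lemref{lem:Complete-9}(1) applied to $K$ (i.e., the classical statement $Z_i\Omega^q_K = F^i(W_{i+1}\Omega^q_K)$), write $\omega = F^i(\alpha)$ with $\alpha \in W_{i+1}\Omega^q_K$. The point is to show $\alpha$ can be chosen in $\Fil_{\un{n}}W_{i+1}\Omega^q_K$. One route: $\omega \in Z_i\Omega^q_K \subseteq Z_1\Omega^q_K$ and $\omega \in \Fil_{\un{n}}\Omega^q_K$ gives $\omega \in Z_1\Fil_{\un{n}}\Omega^q_K = Z_1\Fil_{\un{n}}W_1\Omega^q_K$; applying $\ov{C}$ (which by \lemref{lem:Complete-6} lands in $\Fil_{\un{n}/p}\Omega^q_K$) we get $C(\omega) \in Z_{i-1}\Omega^q_K \cap \Fil_{\un{n}/p}\Omega^q_K$, which by the inductive hypothesis equals $Z_{i-1}\Fil_{\un{n}/p}\Omega^q_K$. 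Then the defining property of $Z_i\Fil_{\un{n}}\Omega^q_K$ — it is the preimage of $Z_{i-1}\Fil_{\un{n}/p}\Omega^q_K$ under $Z_{i-1}\Fil_{\un{n}}\Omega^q_K \twoheadrightarrow (Z_{i-1}\Fil_{\un{n}}\Omega^q_K)/B_1\Fil_{\un{n}}\Omega^q_K \xrightarrow{\ov{C}} \Fil_{\un{n}/p}\Omega^q_K$ — forces $\omega \in Z_i\Fil_{\un{n}}\Omega^q_K$, provided we already know $\omega \in Z_{i-1}\Fil_{\un{n}}\Omega^q_K$. That last fact is the inductive hypothesis for index $i-1$ (using $Z_i\Omega^q_K \subseteq Z_{i-1}\Omega^q_K$). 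So the $Z$-induction closes up cleanly once one is careful that $C$ really maps the intersection where claimed, which is \lemref{lem:Complete-6}.

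For the $B_i$ statement the argument is parallel but uses \lemref{lem:Complete-9}(1) in the form $B_i\Fil_{\un{n}}\Omega^q_K = F^{i-1}d(\Fil_{\un{n}}W_i\Omega^{q-1}_K)$ together with the classical $B_i\Omega^q_K = F^{i-1}d(W_i\Omega^{q-1}_K)$. Given $\omega \in B_i\Omega^q_K \cap \Fil_{\un{n}}\Omega^q_K$, write $\omega = F^{i-1}d(\beta)$ with $\beta \in W_i\Omega^{q-1}_K$; one wants $\beta$ replaceable by an element of $\Fil_{\un{n}}W_i\Omega^{q-1}_K$. Iterating $\ov{C}$ as above reduces, via the inductive hypothesis on $B_{i-1}$, to the case $i=1$, which is the injectivity statement of \lemref{lem:Complete-5} noted above; alternatively one threads through the commutative square \eqref{eqn:Complete-4-0} relating $R$ on $\Fil_{\un{n}}W_{i}\Omega^{q-1}_K$ and the residue map to $\Fil_{\un{n}/p}$. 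I expect the main obstacle to be purely bookkeeping: keeping the two mutually-recursive statements ($Z_i$ depends on $Z_{i-1}$ and $B_1$; $B_i$ depends on $B_{i-1}$) in lockstep, and making sure at each application of $\ov{C}$ or $\ov{F}$ that the isomorphisms of \lemref{lem:Complete-9}(3) are genuinely restrictions of the classical Cartier isomorphisms so that the diagram \eqref{eqn:Complete-4-2} can be invoked with the injectivity of the vertical arrows (\lemref{lem:Complete-5}) doing the real work. There is no deep new input — everything needed is \lemref{lem:Complete-4}, \lemref{lem:Complete-5}, \lemref{lem:Complete-6} and \lemref{lem:Complete-9} — so the proof should be short, and indeed the excerpt's phrasing ``easily deduced'' suggests the authors intend a one- or two-line argument citing exactly these.
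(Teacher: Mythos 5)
Your proposal is correct and follows essentially the same route as the paper: the base case $i=1$ is handled by exactly the same citations (\defref{defn:Cartier} for the $Z$-statement, the $m=1$ case of \lemref{lem:Complete-5} for the $B$-statement), and the inductive step uses the restriction of the Cartier operator to the filtered groups (\lemref{lem:Complete-6}) together with the recursive definitions of $Z_i\Fil_{\un{n}}\Omega^q_K$ and $B_i\Fil_{\un{n}}\Omega^q_K$, inducting over all $\un{n}$ simultaneously so that the hypothesis may be applied at $\un{n}/p$. The only difference is presentational: the paper packages this step as a diagram chase in a commutative ladder of exact sequences with $C$ as the horizontal maps, whereas you argue element-wise via the preimage definition (and your passing detour through $\omega=F^{i-1}d(\beta)$ is unnecessary but harmless).
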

\begin{proof}
  If $i = 1$, the first claim follows from \defref{defn:Cartier} while the second
  claim follows from \lemref{lem:Complete-5}.
  To prove the first claim for $i \ge 2$, we look at the commutative
  diagram of exact sequences
  \[
  \xymatrix@C.8pc{
  0 \ar[r] &  B_1\Fil_{\un{n}}\Omega^q_K  \ar[r] \ar[d] & Z_{i}\Fil_{\un{n}}\Omega^q_K 
  \ar[r]^-{C} \ar[d] &  Z_{i-1}\Fil_{{\un{n}}/p}\Omega^q_K  \ar[r] \ar[d] & 0 \\
  0 \ar[r] &  B_1\Omega^q_K \bigcap \Fil_{\un{n}}\Omega^q_K  \ar[r] &
  Z_{i}\Omega^q_K  \bigcap \Fil_{\un{n}}\Omega^q_K 
  \ar[r]^-{C} &  Z_{i-1}\Omega^q_K \bigcap \Fil_{{\un{n}}/p}\Omega^q_K &}
  \]
  The right vertical arrows is bijective by induction and we showed above that the
  left vertical arrow is bijective. This implies the same for the middle vertical
  arrow. The second claim for $i \ge 2$ is proven similarly by replacing
  the $Z_i$-groups (resp. $Z_{i-1}$-groups) by the $B_i$-groups (resp. $B_{i-1}$-groups)
  in the above diagram.
\end{proof}

\begin{lem}\label{lem:VR-0}
  There exists a short exact sequence
  \begin{equation}\label{eqn:VR-0-0}
  0 \to V^{m-1}(Z_1\Fil_{\un{n}}\Omega^q_K) + dV^{m-1}(\Fil_{\un{n}}\Omega^{q-1}_K) \to
  Z_1\Fil_{\un{n}}W_m\Omega^q_K \xrightarrow{R} Z_1\Fil_{{\un{n}}/p}W_{m-1}\Omega^q_K \to 0.
  \end{equation}
\end{lem}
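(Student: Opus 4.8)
The strategy is to restrict the short exact sequence of \propref{prop:Complete-0} to the subgroup $Z_1\Fil_{\un{n}}W_m\Omega^q_K$ and check that both surjectivity of $R$ and exactness in the middle survive. First I would note that $R$ indeed maps $Z_1\Fil_{\un{n}}W_m\Omega^q_K$ to $Z_1\Fil_{{\un{n}}/p}W_{m-1}\Omega^q_K$: the relation $F^{m-1}d = F^{m-2}d\circ R$ on Witt forms shows $R$ carries $\Ker(F^{m-1}d)$ into $\Ker(F^{m-2}d)$, and combined with \lemref{lem:Log-fil-4}(7) this gives a well-defined map $R\colon Z_1\Fil_{\un{n}}W_m\Omega^q_K \to Z_1\Fil_{{\un{n}}/p}W_{m-1}\Omega^q_K$. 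For surjectivity, given $\omega \in Z_1\Fil_{{\un{n}}/p}W_{m-1}\Omega^q_K = F(\Fil_{{\un{n}}/p}W_m\Omega^q_K)$ by \lemref{lem:Complete-4}(1), I would lift it: write $\omega = F(\alpha)$ with $\alpha \in \Fil_{{\un{n}}/p}W_m\Omega^q_K$, and use that $R\colon \Fil_{\un{n}}W_{m+1}\Omega^q_K \to \Fil_{{\un{n}}/p}W_m\Omega^q_K$ is onto (\lemref{lem:Log-fil-4}(7)) to choose $\beta \in \Fil_{\un{n}}W_{m+1}\Omega^q_K$ with $R(\beta) = \alpha$; then $F(\beta) \in Z_1\Fil_{\un{n}}W_m\Omega^q_K$ (again \lemref{lem:Complete-4}(1)) and $R(F(\beta)) = F(R(\beta)) = F(\alpha) = \omega$ since $R$ and $F$ commute.

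For exactness in the middle, the kernel of $R$ on $Z_1\Fil_{\un{n}}W_m\Omega^q_K$ is $Z_1\Fil_{\un{n}}W_m\Omega^q_K \cap \Ker(R|_{\Fil_{\un{n}}W_m\Omega^q_K})$, which by \propref{prop:Complete-0} equals $Z_1\Fil_{\un{n}}W_m\Omega^q_K \cap \left(V^{m-1}(\Fil_{\un{n}}\Omega^q_K) + dV^{m-1}(\Fil_{\un{n}}\Omega^{q-1}_K)\right)$. The claim is that this intersection is exactly $V^{m-1}(Z_1\Fil_{\un{n}}\Omega^q_K) + dV^{m-1}(\Fil_{\un{n}}\Omega^{q-1}_K)$. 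The inclusion $\supseteq$ is clear: $dV^{m-1}(\Fil_{\un{n}}\Omega^{q-1}_K)$ is killed by $F^{m-1}d$ since $d^2 = 0$ and $F^{m-1}dV^{m-1} = d$ gives $F^{m-1}d(dV^{m-1}(y)) = d(d(y)) = 0$ — wait, more carefully, $F^{m-1}d(dV^{m-1}y) = F^{m-1}(0) = 0$ — and for $V^{m-1}(x)$ with $x \in Z_1\Fil_{\un{n}}\Omega^q_K$ one computes $F^{m-1}d(V^{m-1}(x)) = F^{m-1}(dV^{m-1}(x))$; using $FdV = d$ repeatedly, $F^{m-1}dV^{m-1}(x) = d(x) = 0$. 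For $\subseteq$, take $\xi = V^{m-1}(x) + dV^{m-1}(y)$ in the intersection; applying $F^{m-1}d$ and the same identities gives $F^{m-1}d(\xi) = d(x)$, so $x \in Z_1\Fil_{\un{n}}\Omega^q_K$, which is precisely what we need. This is the heart of the argument and uses only the de Rham--Witt identities from \secref{sec:de-rham-witt} together with the uniqueness of the decomposition $\xi = V^{m-1}(x) + dV^{m-1}(y)$ implicit in \propref{prop:Complete-0} (i.e. that $x$ is determined modulo $Z_1$, equivalently that if $V^{m-1}(x) + dV^{m-1}(y) = 0$ then $x \in Z_1\Fil_{\un{n}}\Omega^q_K$, which follows from \propref{prop:basics}-type kernel computations in the filtered setting, or can be extracted by comparing with $\Fil_{\un{n}}W_m\Omega^q_K$ and using the injectivity statements of \lemref{lem:Complete-5}).

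\textbf{Main obstacle.} I expect the subtle point to be verifying that the decomposition $\xi = V^{m-1}(x) + dV^{m-1}(y)$ produced by \propref{prop:Complete-0} can be chosen with $x \in Z_1\Fil_{\un{n}}\Omega^q_K$ whenever $\xi \in Z_1\Fil_{\un{n}}W_m\Omega^q_K$ — that is, nailing down exactly how much $x$ is pinned down. In the non-filtered case this is handled by the explicit kernel descriptions of \propref{prop:basics} (especially parts (3)--(6)), and in the complete filtered case it should follow from the Geisser--Hesselholt decomposition \corref{cor:GH-3} together with \propref{prop:Fil-decom}: writing $\xi$ in terms of the $F^{m,q}_j(S)$-components and tracking which components can appear after applying $V^{m-1}$, $d$, $F^{m-1}d$. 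The passage from the complete to the non-complete case should then go through exactly as in the proofs of \propref{prop:Complete-0} and \lemref{lem:Complete-5} — namely, showing $V^{m-1}(Z_1\Fil_{\un{n}}\Omega^q_K) + dV^{m-1}(\Fil_{\un{n}}\Omega^{q-1}_K)$ is dense in its completion (via \lemref{lem:Non-complete-4} and \lemref{lem:Non-complete-3}), checking it is a $W_{m+1}(A)$-submodule via $F$ (via \lemref{lem:Non-complete-7}), and then applying \lemref{lem:Non-complete-2}. I would structure the write-up as: (i) $R$ is well-defined on the $Z_1$-subgroups; (ii) surjectivity by lifting; (iii) the complete case of exactness via the component calculus; (iv) the descent to the general case by the density/completion argument.
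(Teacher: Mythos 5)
Your core argument is exactly the paper's proof, and it is complete as it stands: given $\omega \in \Ker(R)$, \propref{prop:Complete-0} (which is already proved for general, not necessarily complete, $A$) writes $\omega = V^{m-1}(x) + dV^{m-1}(y)$ with $x \in \Fil_{\un{n}}\Omega^q_K$, $y \in \Fil_{\un{n}}\Omega^{q-1}_K$, and applying $F^{m-1}d$ (using $F^{m-1}dV^{m-1}=d$ and $d^2=0$) gives $0 = F^{m-1}d(\omega) = dx$, hence $x \in Z_1\Fil_{\un{n}}\Omega^q_K$; your surjectivity argument via \lemref{lem:Complete-4}(1) and \lemref{lem:Log-fil-4}(7) is likewise the paper's. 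The ``main obstacle'' you then describe is illusory: no uniqueness or pinning down of the decomposition is needed, since the computation $F^{m-1}d(\omega)=dx$ holds for \emph{any} decomposition furnished by \propref{prop:Complete-0}. Consequently the entire second half of your plan --- the component calculus via \corref{cor:GH-3} and \propref{prop:Fil-decom} in the complete case, and the density/completion descent through Lemmas~\ref{lem:Non-complete-2}, \ref{lem:Non-complete-3}, \ref{lem:Non-complete-4}, \ref{lem:Non-complete-7} --- is superfluous.

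One small repair in your step (i): the asserted identity $F^{m-1}d = F^{m-2}d\circ R$ is false (for $\omega = V^{m-1}(a)$ one has $F^{m-1}dV^{m-1}(a)=da$, while $RV^{m-1}(a)=0$, so the right-hand side vanishes even when $da \neq 0$). What is true, and all you need, is the inclusion $R(Z_1\Fil_{\un{n}}W_m\Omega^q_K) \subset Z_1\Fil_{{\un{n}}/p}W_{m-1}\Omega^q_K$, which follows at once from $Z_1\Fil_{\un{n}}W_m\Omega^q_K = F(\Fil_{\un{n}}W_{m+1}\Omega^q_K)$ (\lemref{lem:Complete-4}(1)) together with $RF=FR$ and \lemref{lem:Log-fil-4}(7): indeed $R(F(\Fil_{\un{n}}W_{m+1}\Omega^q_K)) = F(\Fil_{{\un{n}}/p}W_{m}\Omega^q_K) = Z_1\Fil_{{\un{n}}/p}W_{m-1}\Omega^q_K$, an equality which gives well-definedness and surjectivity in one stroke (and is the same lifting you carry out by hand in step (ii)).
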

\begin{proof}
 By comparing with the classical case and using \lemref{lem:Log-fil-4},
  one checks that ~\eqref{eqn:VR-0-0} is a well defined complex.
  Moreover, Proposition~\ref{prop:Complete-0} and \lemref{lem:Complete-4}(1)
  together imply that the map $R$ is surjective. We now let $\omega \in \Ker(R)$.
  We then know from Proposition~\ref{prop:Complete-0} that $\omega =
  V^{m-1}(a) + dV^{m-1}(b)$ for some $a \in \Fil_{\un{n}}\Omega^q_K$ and
  $b \in  \Fil_{\un{n}}\Omega^{q-1}_K$. By \defref{defn:Cartier},
  we then get $da = F^{m-1}d(V^{m-1}(a) + dV^{m-1}(b)) = 0$. In other words,
  $\omega \in V^{m-1}(Z_1\Fil_{\un{n}}\Omega^q_K) + dV^{m-1}(\Fil_{\un{n}}\Omega^{q-1}_K)$.
\end{proof}

\begin{defn}
    For $m \ge 1, n \in \Z^r$, we let $$ Z_1R^q_{m,\un{n}}(K) =
\Ker(Z_1\Fil_{\un{n}}\Omega^q_K \oplus \Fil_{\un{n}}\Omega^{q-1}_K
\xrightarrow{V^{m},\ dV^{m}} Z_1\Fil_{\un{n}}W_{m+1}\Omega^q_K) \text{ and}$$
$$R^q_{m,\un{n}}(K) = \Ker(\Fil_{\un{n}}\Omega^q_K \oplus \Fil_{\un{n}}\Omega^{q-1}_K
\xrightarrow{V^{m}, \ dV^{m}} \Fil_{\un{n}}W_{m+1}\Omega^q_K).$$
\end{defn}
We observe that $Z_1R^q_{m,\un{n}}(K) = R^q_{m,\un{n}}(K)$. Indeed, any element
  $(a,b) \in  R^q_{m,\un{n}}(K)$ has the property that $a \in Z_1\Omega^q_K
  \cap \Fil_{\un{n}}\Omega^q_K = Z_1\Fil_{\un{n}}\Omega^q_K$ by
  \corref{cor:R-m-q}. In particular, $(a,b) \in  Z_1R^q_{m-1,\un{n}}(K)$.
\begin{defn}\label{def:two-term}
   For $\un{n} \ge (-1, \ldots , -1)$, we
let $C^{m,q}_{\un{n}, \bullet}(A)$ denote the 2-term complex
\begin{equation}\label{eqn:two-term}
    \left(Z_1\Fil_{\un{n}}W_m\Omega^q_K \xrightarrow{1-C} \Fil_{\un{n}}W_m\Omega^q_K\right).
\end{equation}
\end{defn}

We look at the diagram (for $\un{n} \ge (-1, \ldots , -1)$)

  \begin{equation}\label{eqn:VR-1}
    \xymatrix@C2.5pc{
      0 \ar[r] & Z_1R^q_{m-1,\un{n}}(K) \ar[d]_-{\phi} \ar[r] &  
  Z_1\Fil_{\un{n}}\Omega^q_K \oplus \Fil_{\un{n}}\Omega^{q-1}_K \ar[rr]^-{V^{m-1},\ dV^{m-1}} 
  \ar[d]^-{\phi}& & Z_1\Fil_{\un{n}}W_m\Omega^q_K \ar[d]^-{(1-C)} \\
   0 \ar[r] & R^q_{m-1,\un{n}}(K) \ar[r] &  
 \Fil_{\un{n}}\Omega^q_K \oplus \Fil_{\un{n}}\Omega^{q-1}_K \ar[rr]^-{V^{m-1},\ dV^{m-1}} 
 && \Fil_{\un{n}}W_m\Omega^q_K.}
    \end{equation}

  In this diagram, we let $\phi = 1-\wt{C}$, where $\wt{C}(a, b) = C(a)$. Since
  $CdV^{m-1} = 0$ by \lemref{lem:Complete-6}, this diagram is commutative.
  We have observed that $Z_1R^q_{m-1,\un{n}}(K) = R^q_{m-1,\un{n}}(K)$. Finally, we observe that all vertical arrows are defined because
  $\Fil_{{\un{n}}/p}W_m\Omega^q_K \subset \Fil_{\un{n}}W_m\Omega^q_K$ under our
  assumption on $\un{n}$.
  We now claim that $\phi \colon R^q_{m-1,\un{n}}(K) \to R^q_{m-1,\un{n}}(K)$ is bijective.
  To show this, it is enough to check that $\wt{C} = (C,0)$ is nilpotent on
  $R^q_{m-1,\un{n}}(K)$.
  But this follows from the fact that $R^q_{m-1,\un{n}}(K) \subset B_m\Omega^q_K \oplus
  \Omega^{q-1}_K$ (cf. \corref{cor:R-m-q}) and $C^{m+1} = 0$ on $B_m\Omega^q_K$.
  Combining these claims with \lemref{lem:VR-0}, we deduce the following.

  \begin{cor}\label{cor:VR-2}
    In the derived category of abelian groups, there exists a distinguished triangle (cf. \defref{def:two-term})
    \[
     C^{1,q}_{\un{n}, \bullet}(A) \xrightarrow{V^{m-1}} C^{m,q}_{\un{n}, \bullet}(A)
    \xrightarrow{R}  C^{m-1,q}_{{\un{n}}/p, \bullet}(A) \xrightarrow{+} C^{1,q}_{\un{n}, \bullet}(A) [1] .
    \]
  \end{cor}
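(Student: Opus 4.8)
\textbf{Proof proposal for \corref{cor:VR-2}.}

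The plan is to build the distinguished triangle from the short exact sequence of complexes obtained by comparing the two rows of diagram~\eqref{eqn:VR-1} together with the short exact sequence \eqref{eqn:VR-0-0} of \lemref{lem:VR-0}. Concretely, I would first observe that $C^{m,q}_{\un n,\bullet}(A)$, viewed as a two-term complex concentrated in degrees $0$ and $1$, is quasi-isomorphic to a simpler complex. Indeed, applying the total complex construction to the bottom row of \eqref{eqn:VR-1}: the map $(V^{m-1},dV^{m-1})\colon \Fil_{\un n}\Omega^q_K\oplus\Fil_{\un n}\Omega^{q-1}_K\to\Fil_{\un n}W_m\Omega^q_K$ has, by \propref{prop:Complete-0}, image equal to $V^{m-1}(\Fil_{\un n}\Omega^q_K)+dV^{m-1}(\Fil_{\un n}\Omega^{q-1}_K)$ and kernel $R^q_{m-1,\un n}(K)$, so its cokernel is $\Fil_{\un n}W_{m-1}\Omega^q_K$ (via $R$). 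Likewise the map $(V^{m-1},dV^{m-1})\colon Z_1\Fil_{\un n}\Omega^q_K\oplus\Fil_{\un n}\Omega^{q-1}_K\to Z_1\Fil_{\un n}W_m\Omega^q_K$ has, by \lemref{lem:VR-0}, kernel $Z_1R^q_{m-1,\un n}(K)=R^q_{m-1,\un n}(K)$ and cokernel $Z_1\Fil_{\un n/p}W_{m-1}\Omega^q_K$ (via $R$). Since the left vertical map $\phi=1-\wt C$ of \eqref{eqn:VR-1} is an isomorphism on $R^q_{m-1,\un n}(K)$ — because $\wt C=(C,0)$ is nilpotent there, $R^q_{m-1,\un n}(K)\subset B_m\Omega^q_K\oplus\Omega^{q-1}_K$ by \corref{cor:R-m-q} and $C^{m+1}=0$ on $B_m\Omega^q_K$ — the mapping cone of the middle vertical column $(V^{m-1},dV^{m-1})$ over $(V^{m-1},dV^{m-1})$ collapses, and one gets that the total complex of the commutative square
\[
\xymatrix@C2.5pc{
Z_1\Fil_{\un n}\Omega^q_K\oplus\Fil_{\un n}\Omega^{q-1}_K \ar[r]^-{V^{m-1},dV^{m-1}} \ar[d]_-{\phi} & Z_1\Fil_{\un n}W_m\Omega^q_K \ar[d]^-{1-C} \\
\Fil_{\un n}\Omega^q_K\oplus\Fil_{\un n}\Omega^{q-1}_K \ar[r]^-{V^{m-1},dV^{m-1}} & \Fil_{\un n}W_m\Omega^q_K
}
\]
is quasi-isomorphic to $C^{m,q}_{\un n,\bullet}(A)$ (the $\phi$-column has trivial cohomology).

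Next I would run the \emph{same} analysis in families: the restriction map $R$ fits each term above into a short exact sequence relating level $m$ to levels $1$ and $m-1$. Precisely, \propref{prop:Complete-0} gives
\[
0\to V^{m-1}(\Fil_{\un n}\Omega^q_K)+dV^{m-1}(\Fil_{\un n}\Omega^{q-1}_K)\to\Fil_{\un n}W_m\Omega^q_K\xrightarrow{R}\Fil_{\un n/p}W_{m-1}\Omega^q_K\to 0,
\]
and \lemref{lem:VR-0} gives the analogous sequence for the $Z_1$-groups; the submodule appearing there is exactly $C^{1,q}_{\un n,\bullet}(A)$ shifted (its degree-$1$ part $Z_1\Fil_{\un n}W_1\Omega^q_K=Z_1\Fil_{\un n}\Omega^q_K$ maps by $1-C$ to $\Fil_{\un n}\Omega^q_K$). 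Assembling these level-wise short exact sequences into a short exact sequence of two-term complexes
\[
0\to C^{1,q}_{\un n,\bullet}(A)\xrightarrow{V^{m-1}} C^{m,q}_{\un n,\bullet}(A)\xrightarrow{R} C^{m-1,q}_{\un n/p,\bullet}(A)\to 0
\]
is the crux: one must check that $V^{m-1}$ maps $Z_1\Fil_{\un n}\Omega^q_K$ into $Z_1\Fil_{\un n}W_m\Omega^q_K$ and commutes with $1-C$ (it does, since $C\circ V^{m-1}=F^{m-1}\circ\cdots$ and on the image of $V^{m-1}$ the Cartier operator acts compatibly — concretely $CV^{m-1}=V^{m-1}$ composed with the level-$1$ Cartier map, using $FV=p$, $CF=R$ and the relations of \propref{prop:basics}), and that $R$ is surjective on both rows (\lemref{lem:Log-fil-4}(7) and \lemref{lem:VR-0}) with the asserted kernel. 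Exactness in the middle is where \lemref{lem:VR-0} and \propref{prop:Complete-0} are applied together: an element of $C^{m,q}_{\un n,\bullet}(A)$ in the kernel of $R$ in both degrees is, in degree $0$, of the form $V^{m-1}(a)+dV^{m-1}(b)$ with $a\in\Fil_{\un n}\Omega^q_K$, $b\in\Fil_{\un n}\Omega^{q-1}_K$, and the condition that its image lies in $Z_1$ forces $da=0$, i.e. $a\in Z_1\Fil_{\un n}\Omega^q_K$, so it comes from $C^{1,q}_{\un n,\bullet}(A)$; the degree-$1$ compatibility is automatic.

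Finally, a short exact sequence of cochain complexes of abelian groups induces a distinguished triangle in the derived category with connecting map the usual $+1$ boundary, which gives exactly
\[
C^{1,q}_{\un n,\bullet}(A)\xrightarrow{V^{m-1}} C^{m,q}_{\un n,\bullet}(A)\xrightarrow{R} C^{m-1,q}_{\un n/p,\bullet}(A)\xrightarrow{+} C^{1,q}_{\un n,\bullet}(A)[1],
\]
as claimed. The hypothesis $\un n\ge(-1,\dots,-1)$ is used exactly as in the discussion preceding the corollary: it guarantees $\Fil_{\un n/p}W_m\Omega^q_K\subset\Fil_{\un n}W_m\Omega^q_K$, so that $R$ and $C$ land in the right groups and all maps in the triangle are defined. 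The main obstacle I anticipate is not any single deep input — all the heavy lifting is in \propref{prop:Complete-0}, \lemref{lem:VR-0}, \corref{cor:R-m-q} and \lemref{lem:Complete-6}, already proved — but rather the bookkeeping needed to verify that $V^{m-1}$ and $R$ are genuinely \emph{morphisms of two-term complexes} (commuting with $1-C$ on the nose, not just up to homotopy) and that the resulting sequence is termwise short exact in \emph{both} degrees simultaneously; this is where one must be careful that the $Z_1$-versions of the exact sequences (\lemref{lem:VR-0}) are compatible with the plain versions (\propref{prop:Complete-0}) under the vertical differential $1-C$, which ultimately rests on $CV^{m-1}$ and $RV^{m-1}$ being intertwined correctly and on $\wt C$ being nilpotent on $R^q_{m-1,\un n}(K)$.
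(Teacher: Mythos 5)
The triangle you end with is the right statement, but your route to it breaks at the step you yourself identify as the crux. The claimed termwise short exact sequence of two-term complexes
\[
0 \to C^{1,q}_{\un{n},\bullet}(A) \xrightarrow{V^{m-1}} C^{m,q}_{\un{n},\bullet}(A) \xrightarrow{R} C^{m-1,q}_{\un{n}/p,\bullet}(A) \to 0
\]
does not exist. In degree $1$, \propref{prop:Complete-0} gives $\Ker\bigl(R\colon \Fil_{\un{n}}W_m\Omega^q_K \to \Fil_{\un{n}/p}W_{m-1}\Omega^q_K\bigr) = V^{m-1}(\Fil_{\un{n}}\Omega^q_K) + dV^{m-1}(\Fil_{\un{n}}\Omega^{q-1}_K)$, which is strictly larger than the image of $V^{m-1}$ alone, and the same happens in degree $0$ by \lemref{lem:VR-0}; your own computation exhibits the problem, since an element of $\Ker(R)$ has the form $V^{m-1}(a)+dV^{m-1}(b)$ and the summand $dV^{m-1}(b)$ need not lie in the image of $V^{m-1}$, so it does not ``come from $C^{1,q}_{\un{n},\bullet}(A)$''. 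Moreover $V^{m-1}$ is not even termwise injective: by \lemref{lem:Complete-10}(5), $\Ker\bigl(V^{m-1}\colon \Fil_{\un{n}}\Omega^q_K \to \Fil_{\un{n}}W_m\Omega^q_K\bigr) = F^{m-1}dV(\Fil_{\un{n}}W_{m-1}\Omega^{q-1}_K)$, which is nonzero in general for $m\ge 2$. So the sequence fails exactness both on the left and in the middle, and the standard ``short exact sequence of complexes yields a distinguished triangle'' argument cannot be applied to it.

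This failure is exactly what the paper's proof is designed to repair, and the ingredients are the ones you cite at the start but then abandon. One replaces $C^{1,q}_{\un{n},\bullet}(A)$ by the middle column $F^{1,q}_{\un{n},\bullet}(A)$ of \eqref{eqn:VR-1}, namely $Z_1\Fil_{\un{n}}\Omega^q_K\oplus\Fil_{\un{n}}\Omega^{q-1}_K \xrightarrow{1-\wt{C}} \Fil_{\un{n}}\Omega^q_K\oplus\Fil_{\un{n}}\Omega^{q-1}_K$: the map $(V^{m-1},dV^{m-1})\colon F^{1,q}_{\un{n},\bullet}(A)\to C^{m,q}_{\un{n},\bullet}(A)$ has termwise cokernel $C^{m-1,q}_{\un{n}/p,\bullet}(A)$ (by \lemref{lem:VR-0} and \propref{prop:Complete-0}) and termwise kernel the column on $R^q_{m-1,\un{n}}(K)$, which is acyclic because $\wt{C}$ is nilpotent there (\corref{cor:R-m-q}); and the inclusion of $C^{1,q}_{\un{n},\bullet}(A)$ as the first summand of $F^{1,q}_{\un{n},\bullet}(A)$ is a quasi-isomorphism onto the quotient by this kernel, since the complementary $\Fil_{\un{n}}\Omega^{q-1}_K$-summand carries the identity differential ($C\,dV^{m-1}=0$). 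The genuine short exact sequence is $0\to F^{1,q}_{\un{n},\bullet}(A)/\wt{F}^{1,q}_{\un{n},\bullet}(A)\to C^{m,q}_{\un{n},\bullet}(A)\xrightarrow{R} C^{m-1,q}_{\un{n}/p,\bullet}(A)\to 0$, and the composite $C^{1,q}\to F^{1,q}/\wt{F}^{1,q}\to C^{m,q}$ is $V^{m-1}$, which is why the triangle has the stated form. Note also that your opening reduction is off: the $\phi$-column of your square is $F^{1,q}_{\un{n},\bullet}(A)$, which is quasi-isomorphic to $C^{1,q}_{\un{n},\bullet}(A)$ and is not acyclic (the acyclic column is the one on $R^q_{m-1,\un{n}}(K)$), so the total complex of that square is not quasi-isomorphic to $C^{m,q}_{\un{n},\bullet}(A)$. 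The extra $\Fil_{\un{n}}\Omega^{q-1}_K$ summand and the relations group $R^q_{m-1,\un{n}}(K)$ are not bookkeeping; they are precisely what corrects the non-exactness of your proposed sequence.
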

  \begin{proof}
    We let $F^{1,q}_{\un{n},_\bullet}(A)$ (resp. $\wt{F}^{1,q}_{\un{n},_\bullet}(A)$)
    denote the 2-term complex consisting of the middle (resp. left)
    column of ~\eqref{eqn:VR-1}. Using \lemref{lem:VR-0},
   we only need to show that the composite map
    \[
    C^{1,q}_{\un{n}, \bullet}(A) \inj F^{1,q}_{\un{n},_\bullet}(A) \surj
    \frac{F^{1,q}_{\un{n},_\bullet}(A)}{\wt{F}^{1,q}_{\un{n},_\bullet}(A)}
    \]
    is a quasi-isomorphism.
    But this follows from ~\eqref{eqn:VR-1} and the claims following it.
    \end{proof}

\begin{lem}\label{lem:Complete-10}
  We have the following.
  \begin{enumerate}
\item
    $\Ker(F^{m-1}d \colon \Fil_{\un{n}}W_m\Omega^q_K \to \Fil_{\un{n}}\Omega^{q+1}_K)
    = F(\Fil_{\un{n}}W_{m+1}\Omega^q_K)$.
  \item
    $\Ker(F^{m-1}\colon \Fil_{\un{n}}W_m\Omega^q_K \to \Fil_{\un{n}}\Omega^q_K) =
    V(\Fil_{\un{n}}W_{m-1}\Omega^q_K)$.
  \item
    $\Ker(dV^{m-1}\colon \Fil_{\un{n}}\Omega^q_K \to \Fil_{\un{n}}W_m\Omega^{q+1}_K)
    = F^m(\Fil_{\un{n}}W_{m+1}\Omega^q_K)$.
  \item
    $\Ker(V \colon \Fil_{\un{n}}W_{m}\Omega^q_K \to \Fil_{\un{n}}W_{m+1}\Omega^{q}_K)
    = dV^{m-1}(\Fil_{\un{n}}\Omega^{q-1}_K)$.
  \item
    $\Ker(V^{m-1} \colon \Fil_{\un{n}}\Omega^q_K \to \Fil_{\un{n}}W_{m}\Omega^{q}_K)
    = F^{m-1}dV(\Fil_{\un{n}}W_{m-1}\Omega^{q-1}_K)$.
  \item 
    $\Ker(dV^{m-1}\colon \Fil_{\un{n}}\Omega^q_K \to \Fil_{\un{n}}W_m\Omega^{q+1}_K/V^{m-1}\Fil_\n \Omega^{q+1}_K)
    = F^{m-1}(\Fil_{\un{n}}W_{m}\Omega^q_K)$.
   \item
    $\Ker(V^{m-1} \colon \Fil_{\un{n}}\Omega^q_K \to \Fil_{\un{n}}W_{m}\Omega^{q}_K/dV^{m-1}\Fil_{\un{n}}\Omega^{q-1}_K )
    = F^{m}dV(\Fil_{\un{n}}W_{m}\Omega^{q-1}_K)$. 
    
  \end{enumerate}
\end{lem}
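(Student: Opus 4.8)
The plan is to prove each of the seven kernel computations in \lemref{lem:Complete-10} by reducing to the corresponding classical statements for $W_\bullet\Omega^\bullet_K$ recorded in \propref{prop:basics}, using the structural results on the filtered complex proven earlier in this chapter. The key tool throughout is \propref{prop:Complete-0} (the $V$-$R$ exact sequence for $\Fil_{\un n}W_m\Omega^q_K$) together with Lemmas~\ref{lem:Complete-4}, \ref{lem:Complete-5}, \ref{lem:Complete-6}, and \ref{lem:Complete-8}, which tell us that the standard operators $F$, $V$, $d$, $C$ on $\Fil_{\un n}W_\bullet\Omega^\bullet_K$ behave exactly as in the classical case and that the natural maps $\Fil_{\un n}W_m\Omega^q_K \hookrightarrow W_m\Omega^q_K$ (and the various quotients appearing in \lemref{lem:Complete-5}) are injective. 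For each item, the inclusion $\supseteq$ is always the easy direction: one checks directly using the relations $FV=VF=p$, $FdV=d$, $FV^m=0$, $F^{m-1}dV^m = dV^{m-1}$, etc., that the right-hand side is contained in the kernel, and that all terms live in the appropriate filtered piece by \lemref{lem:Log-fil-4} and \lemref{lem:Complete-4}.

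For the harder inclusion $\subseteq$, the strategy is: take $\omega$ in the stated kernel, view it inside $W_m\Omega^q_K$ (or the relevant classical quotient), apply the classical identity from \propref{prop:basics} to write $\omega$ in the desired form $\star(y)$ with $y \in W_j\Omega^{q'}_K$, and then show that $y$ can be chosen to lie in $\Fil_{\un n}W_j\Omega^{q'}_K$. This last step is where \propref{prop:Complete-0} and the exhaustive decomposition of \propref{prop:Fil-decom} (via $\theta^{m,q}_K$, \corref{cor:GH-3}) do the work: in the complete case, expanding $\omega$ and the purported preimage $y$ in terms of the basis pieces $F^{m,q}_j(S)$ and comparing coefficients (exactly as in the proofs of \propref{prop:V-R-Fil} and \lemref{lem:Cartier-fil-0}) forces the ``bad'' components of $y$ to vanish, so $y$ automatically lands in $\Fil_{\un n}$. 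The non-complete case is then deduced by the now-standard density-plus-completion argument: $\Fil_{\un n}W_m\Omega^q_{\wh K} = (\Fil_{\un n}W_m\Omega^q_K)^\wedge$ by \lemref{lem:Non-complete-3}, the relevant subgroups are dense by \lemref{lem:Non-complete-4}, and one concludes by \lemref{lem:Non-complete-2} after checking the submodule structure via \lemref{lem:Non-complete-7}.

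Concretely: items (1) and (2) restate \lemref{lem:Complete-4}(1) and (a filtered version of) \propref{prop:basics}(7)–(8) and are essentially already available; item (1) in particular is just \lemref{lem:Complete-4}(1) since $Z_1\Fil_{\un n}W_m\Omega^q_K = \Ker(F^{m-1}d)$ by \defref{defn:Cartier}. Items (3) and (5) concern kernels of $dV^{m-1}$ and $V^{m-1}$ on $\Fil_{\un n}\Omega^q_K$ and will follow from the classical \propref{prop:basics}(3)–(6) together with \lemref{lem:Complete-9} (which identifies $Z_i\Fil_{\un n}\Omega^q_K = F^i(\Fil_{\un n}W_{i+1}\Omega^q_K)$ and $B_i\Fil_{\un n}\Omega^q_K = F^{i-1}d(\Fil_{\un n}W_i\Omega^{q-1}_K)$) and \lemref{lem:Complete-8} (which says these filtered $Z_i$, $B_i$ are the intersections of the classical ones with $\Fil_{\un n}\Omega^q_K$). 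Item (4) is the filtered analogue of \propref{prop:basics}(8) and follows from \propref{prop:Complete-0} by taking $m-1$ in place of $m$, exactly as in the classical derivation. Items (6) and (7) are the ``modulo $V^{m-1}$'' and ``modulo $dV^{m-1}$'' refinements; here one uses the injectivity of the two quotient maps in \lemref{lem:Complete-5} to lift the problem to the non-quotiented statements already handled, combined again with \lemref{lem:Complete-9}.

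I expect the main obstacle to be item (3) (and its companion (7)), where one must show that an element $a \in \Fil_{\un n}\Omega^q_K$ with $dV^{m-1}(a)=0$ actually lies in $F^m(\Fil_{\un n}W_{m+1}\Omega^q_K)$ and not merely in $F^m(W_{m+1}\Omega^q_K) = Z_m\Omega^q_K$: one knows $a \in Z_m\Omega^q_K \cap \Fil_{\un n}\Omega^q_K = Z_m\Fil_{\un n}\Omega^q_K$ by \lemref{lem:Complete-8}, and then must identify $Z_m\Fil_{\un n}\Omega^q_K$ with $F^m(\Fil_{\un n}W_{m+1}\Omega^q_K)$, which is precisely \lemref{lem:Complete-9}(1); the care needed is in checking that the Cartier-nilpotency argument (that $R^q_{m,\un n}(K) \subset B_m\Fil_{\un n}\Omega^q_K \oplus \Fil_{\un n}\Omega^{q-1}_K$ with $C$ nilpotent) goes through in the filtered setting, but this was already observed just before \corref{cor:VR-2}. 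So in fact all the genuinely new input has been assembled in Lemmas~\ref{lem:Complete-4}–\ref{lem:Complete-8}, and the proof of \lemref{lem:Complete-10} is a matter of carefully quoting the classical \propref{prop:basics} item-by-item and invoking these filtered refinements, with the complete-to-noncomplete passage handled uniformly by \lemref{lem:Non-complete-2}.
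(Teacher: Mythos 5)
Your reductions for items (1), (3), (5), (6) and (7) coincide with the paper's proof: (1) is \lemref{lem:Complete-4}(1), (3) and (5) follow from \propref{prop:basics} once one knows $Z_m\Omega^q_K\cap\Fil_{\un{n}}\Omega^q_K$ and $B_{m-1}\Omega^q_K\cap\Fil_{\un{n}}\Omega^q_K$ are the filtered groups of \lemref{lem:Complete-9} (which is \lemref{lem:Complete-8}), and (6), (7) use \lemref{lem:Complete-5}. But there is a genuine gap at item (2). You assert that (2) is ``essentially already available'' as a filtered version of \propref{prop:basics}(7)--(8); however none of the lemmas you quote contains the needed compatibility $V(W_{m-1}\Omega^q_K)\cap\Fil_{\un{n}}W_m\Omega^q_K=V(\Fil_{\un{n}}W_{m-1}\Omega^q_K)$. \lemref{lem:Complete-5} only gives the analogous statement for the images of $V^{m-1}$ and $dV^{m-1}$ of length-one forms, i.e.\ the case $\Ker(F)$ ($l=1$), not $\Ker(F^{m-1})$ for $m\ge 3$. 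Your fallback --- expand $\omega$ and its classical preimage in the pieces $F^{m,q}_j(S)$ and compare coefficients --- is only available in the one-variable complete situation of Chapter 3 (\corref{cor:GH-3}, \propref{prop:Fil-decom}); in the multi-variable setting of this section no such decomposition exists for $m\ge 2$, and a reduction through the corank-one completions $\wh{K}_i$ is not immediate because the local preimages are only determined up to $\Ker(V)$. The paper closes exactly this hole by proving the general identity $\Ker(F^l\colon\Fil_{\un{n}}W_{m+l}\Omega^q_K\to\Fil_{\un{n}}W_m\Omega^q_K)=V^m(\Fil_{\un{n}}W_l\Omega^q_K)$ by induction on $l$: the base case $l=1$ is \lemref{lem:Complete-5}, and the inductive step writes $F(\omega)=V^m(x)$, deduces $F^{l-2}d(x)=0$, uses item (1) to write $x=F(y)$ with $y\in\Fil_{\un{n}}W_l\Omega^q_K$, and applies the $l=1$ case to $\omega-V^m(y)$. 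Some such argument is required; it cannot be waved away as a restatement of the classical fact.

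A smaller inaccuracy: for item (4) the paper does not use \propref{prop:Complete-0}; it combines the classical $\Ker(V)=dV^{m-1}(\Omega^{q-1}_K)$ with the first injectivity of \lemref{lem:Complete-5}, which directly yields $dV^{m-1}(\Omega^{q-1}_K)\cap\Fil_{\un{n}}W_m\Omega^q_K=dV^{m-1}(\Fil_{\un{n}}\Omega^{q-1}_K)$. Your route through \propref{prop:Complete-0} can be completed, but only after eliminating the extra $V^{m-1}$-term via \lemref{lem:Complete-8} and \lemref{lem:Complete-9}; as written, ``exactly as in the classical derivation'' skips that step. With item (2) repaired along the lines above and (4) rerouted through \lemref{lem:Complete-5}, your plan matches the paper's proof.
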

\begin{proof}
We first note that all statements are classical for $W_\textbf{.}\Omega^\bullet_K$.
Now, the item (1) follows directly from \lemref{lem:Complete-4}(1).

The item (2) is a special case of the more general claim that
\begin{equation}\label{eqn:ker-F-l}
    \Ker(F^l \colon \Fil_{\un{n}}W_{m+l}\Omega^q_K \to  \Fil_{\un{n}}W_{m}\Omega^q_K)
= V^m(\Fil_{\un{n}}W_{l}\Omega^q_K)
\end{equation}
for all $m \ge 0$ and $l \ge 1$.
Since this general claim is known for $W_\textbf{.}\Omega^\bullet_K$ by \propref{prop:basics}, one gets that
$V^m(\Fil_{\un{n}}W_{l}\Omega^q_K) \subseteq
\Ker(F^l \colon \Fil_{\un{n}}W_{m+l}\Omega^q_K \to  \Fil_{\un{n}}W_{m}\Omega^q_K)$.
We shall prove the reverse inclusion by induction on $l$. We first consider the
case $l = 1$. It suffices to show
that $V^m(\Omega^q_{K})
\bigcap \Fil_{\un{n}}W_{m+1}\Omega^q_K = V^m(\Fil_{\un{n}}\Omega^q_K)$. But this follows
from \lemref{lem:Complete-5}.

We now assume $l \ge 2$ and $m \ge 1$ (the $m = 0$ case is trivial). 
We let $\omega \in \Fil_{\un{n}}W_{m+l}\Omega^q_K$ be such that
$F^l(\omega) = 0$. We let $\omega' = F(\omega)$ so that $\omega' \in
\Fil_{\un{n}}W_{m+l-1}\Omega^q_K$ and $F^{l-1}(\omega') = 0$.
The induction hypothesis
implies that $F(\omega) = \omega' = V^{m}(x)$ for some $x \in
\Fil_{\un{n}}W_{l-1}\Omega^q_K$. This implies that
$F^{l-2}d(x) = F^{m+l-2}dV^m(x) = F^{m+l-2}dF(\omega) = 0$.
We conclude from item (1) that $x = F(y)$ from some $y \in \Fil_{\un{n}}W_{l}\Omega^q_K$.
This yields
\[
F(\omega - V^m(y)) = F(\omega) - FV^m(y) = F(\omega) - V^mF(y) = F(\omega) - V^m(x) =0.
\]
Using the $l =1$ case, we get
$\omega - V^m(y) \in V^m(\Fil_{\un{n}}W_{l}\Omega^q_K)$.
In particular, $\omega \in V^m(\Fil_{\un{n}}W_{l}\Omega^q_K)$.

To prove (3), it suffices to show that $$F^m(\Fil_{\un{n}}W_{m+1}\Omega^q_K) =
F^m(W_{m+1}\Omega^q_K) \bigcap \Fil_{\un{n}}W_{m+1}\Omega^q_K.$$ But this follows from
Lemmas~\ref{lem:Complete-9} and ~\ref{lem:Complete-8} because of the
classical identity $Z_m\Omega^q_K = F^m(W_{m+1}\Omega^q_K)$ (cf. \propref{prop:basics}).  

The proof of
(4) is identical to that of the $l =1$ case of \eqref{eqn:ker-F-l} using \lemref{lem:Complete-5}
and \propref{prop:basics}.

To prove (5), we can assume $m \ge 2$. In the latter case,
we only need to show that $\Ker(V^{m-1}) \subset
F^{m-2}d(\Fil_{\un{n}}W_{m-1}\Omega^{q-1}_K)$. By \lemref{lem:Complete-9}, we can replace
$F^{m-2}d(\Fil_{\un{n}}W_{m-1}\Omega^{q-1}_K)$ by $B_{m-1}\Fil_{\un{n}}\Omega^q_K$.
By \propref{prop:basics}, we are thus reduced to showing that
$B_{m-1}\Omega^q_K \bigcap \Fil_{\un{n}}\Omega^q_K \\ = B_{m-1}\Fil_{\un{n}}\Omega^q_K$.
But this follows from \lemref{lem:Complete-8}.

The proof of (6) and (7) are analogous to the proof of (3) and (5), respectively, using \lemref{lem:Complete-5}.
\end{proof}

\begin{lem}\label{lem:VR-3}
  Assuming $\un{n} \ge (-1, \ldots , -1)$, we have
  \[
  d(\Fil_{\un{n}}W_m\Omega^{q}_K) \subset {\rm Image}(Z_1\Fil_{\un{n}}W_m\Omega^{q+1}_K
  \xrightarrow{1-C} \Fil_{\un{n}}W_m\Omega^{q+1}_K).
  \]
\end{lem}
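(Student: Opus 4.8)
The plan is to produce, for a given $\omega \in \Fil_{\un{n}}W_m\Omega^q_K$, an explicit preimage of $d\omega$ under $1-C$. First I would note that $d\omega$ already lies in the source group $Z_1\Fil_{\un{n}}W_m\Omega^{q+1}_K$: since $FdV = d$ we have $d\omega = F(dV\omega)$ with $dV\omega \in W_{m+1}\Omega^{q+1}_K$, so $d\omega \in F(W_{m+1}\Omega^{q+1}_K) = Z_1W_m\Omega^{q+1}_K$, and $d\omega \in \Fil_{\un{n}}W_m\Omega^{q+1}_K$ by \lemref{lem:Log-fil-4}(4); hence $d\omega \in Z_1\Fil_{\un{n}}W_m\Omega^{q+1}_K$ by \defref{defn:Cartier}. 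The natural candidate for the preimage is then the geometric-type series $\eta = \sum_{i \ge 0} C^i(d\omega)$, which formally satisfies $(1-C)\eta = d\omega$; the whole content is to show that this series is in fact a finite sum lying in $Z_1\Fil_{\un{n}}W_m\Omega^{q+1}_K$.

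The key computation, which I would carry out by induction on $i$, is the identity $C^i(d\omega) = dV^i(R^i\omega)$ for $0 \le i \le m$. The base case $i=0$ is trivial. For the inductive step, with $z = R^i\omega \in W_{m-i}\Omega^q_K$, one writes $dV^i(z) = F\bigl(dV^{i+1}(z)\bigr)$ (again by $FdV = d$, since $V^i z \in W_m\Omega^q_K$), and then, using $CF = R$ together with $Rd = dR$ and $RV = VR$, obtains $C\bigl(dV^i(z)\bigr) = R\bigl(dV^{i+1}(z)\bigr) = dV^{i+1}(Rz) = dV^{i+1}(R^{i+1}\omega)$. In particular $C^m(d\omega) = dV^m(R^m\omega) = 0$ because $R^m\omega \in W_0\Omega^q_K = 0$. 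Therefore $\eta := \sum_{i=0}^{m-1} C^i(d\omega)$ is a finite sum, and the telescoping identity $(1-C)(\eta) = d\omega - C^m(d\omega) = d\omega$ holds.

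It then remains to check $\eta \in Z_1\Fil_{\un{n}}W_m\Omega^{q+1}_K$. Each summand $C^i(d\omega) = dV^i(R^i\omega)$ is exact, hence closed, so $d\eta = 0$ and a fortiori $\eta \in Z_1W_m\Omega^{q+1}_K$. For the filtration condition I would argue term by term: \lemref{lem:Log-fil-4}(7) gives $R^i\omega \in \Fil_{\un{n}/p^i}W_{m-i}\Omega^q_K$, then item (6) gives $V^i(R^i\omega) \in \Fil_{\un{n}/p^i}W_m\Omega^q_K$, and item (4) gives $dV^i(R^i\omega) \in \Fil_{\un{n}/p^i}W_m\Omega^{q+1}_K$. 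Here the hypothesis $\un{n} \ge (-1,\ldots,-1)$ enters in an essential way: it guarantees $(-1,\ldots,-1) \le \un{n}/p^i \le \un{n}$ (the inequality $\lfloor n_j/p^i \rfloor \le n_j$ would fail for $n_j \le -2$), so by the monotonicity of the filtration $\Fil_{\un{n}/p^i}W_m\Omega^{q+1}_K \subseteq \Fil_{\un{n}}W_m\Omega^{q+1}_K$, whence $\eta \in \Fil_{\un{n}}W_m\Omega^{q+1}_K$. Combining, $\eta \in Z_1\Fil_{\un{n}}W_m\Omega^{q+1}_K$ and $(1-C)(\eta) = d\omega$, proving the lemma.

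This statement is the $\Fil$-analogue of the classical fact that exact Witt differentials lie in the image of $1-C$ restricted to closed ones, and its proof is not substantially harder; the only genuine subtlety beyond the classical argument is the bookkeeping with $\un{n}/p^i$ just described, which is precisely what forces — and is controlled by — the hypothesis $\un{n} \ge (-1,\ldots,-1)$.
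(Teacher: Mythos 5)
Your proposal is correct and takes essentially the same route as the paper: your preimage $\eta=\sum_{i=0}^{m-1}C^i(d\omega)=\sum_{i=0}^{m-1}dV^i(R^i\omega)$ is literally the element $Fd(x)$, $x=(V+V^2R+\cdots+V^mR^{m-1})(\omega)$, that the paper produces via the telescoping identity $d(\omega)=(F-R)(d(x))=(1-C)Fd(x)$, and your verification that $\eta\in Z_1\Fil_{\un{n}}W_m\Omega^{q+1}_K$ (exactness of each summand for the $Z_1$-condition, plus \lemref{lem:Log-fil-4}(4),(6),(7) and the inequality $\un{n}/p^i\le\un{n}$ forced by $\un{n}\ge(-1,\ldots,-1)$ for the filtration condition) is the same use of \lemref{lem:Log-fil-4} as in the paper. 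The only difference is cosmetic: you reach the sum by iterating $C$ (the identity $C^i(d\omega)=dV^i(R^i\omega)$ and $C^m(d\omega)=0$) instead of telescoping with $F-R$, which is an equivalent reading of the same computation.
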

\begin{proof}
  For $\omega \in \Fil_{\un{n}}W_m\Omega^{q}_K$, we have
  $d(\omega) = FdV(\omega) - RdV(\omega) + FdV^2R(\omega) - RdV^2R(\omega) +
  FdV^3R^2(\omega) - RdV^3R^2(\omega) + \cdots + FdV^mR^{m-1}(\omega) -
  RdV^{m}R^{m-1}(\omega)$, as $RdV^mR^{m-1}(\omega) = 0$. Hence, we have
  $$d(\omega) =
  (F-R)((dV + dV^2R + \cdots + dV^mR^{m-1})(\omega)) = (1-C)Fd(x),$$ where
  we let $x = (V + V^2R + \cdots + V^mR^{m-1})(\omega)$. It suffices therefore to show
  that $Fd(x) \in Z_1\Fil_{\un{n}}W_m\Omega^{q+1}_K$.
  To this end, we first note that $F^{m-1}d(Fd(x)) = pF^{m}d^2(x) = 0$,
  and this implies that $Fd(x) \in Z_1W_m\Omega^{q+1}_K$. Secondly,
  \lemref{lem:Log-fil-4} implies that $Fd(x) \in \Fil_{\un{n}}W_m\Omega^{q+1}_K$
  because $\omega \in \Fil_{\un{n}}W_m\Omega^{q}_K$. This achieves the proof.
\end{proof}

\begin{rem}\label{rem:VR-3}
    The same proof also shows that $$d(\Fil_{-\un{1}}W_m\Omega^{q}_K) \subset {\rm Image}(\Fil_{-\un{1}}W_{m+1}\Omega^{q+1}_K
  \xrightarrow{R-F} \Fil_{-\un{1}}W_m\Omega^{q+1}_K).$$
\end{rem}

\begin{lem}\label{lem:VR-4}
  The abelian group $\frac{\Fil_\n W_m\Omega^q_K}{d(\Fil_\n W_m\Omega^{q-1}_K)}$ is
  generated by the images of elements of the form
  $V^i([x]_{m-i})\dlog([x_1]_m)\wedge \cdots \wedge \dlog([x_q]_m)$, where
  $[x]_{m-i} \in \Fil_\n W_{m-i}(K), \ x_i \in K^\times$ and $0 \le i \le m-1$.
\end{lem}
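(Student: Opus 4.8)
The statement is a standard ``generators of the de Rham--Witt complex modulo exact forms'' result adapted to the filtered setting, so the plan is to imitate the classical argument (as in Illusie's book) while keeping track of the filtration at every step. First I would recall from \propref{prop:Fil-decom} (via the completion passage of \lemref{lem:Non-complete-3}) that every element $\omega \in \Fil_{\un n}W_m\Omega^q_K$ can be written as a convergent series whose typical terms are of the form $V^i([a]_{m-i})\, dV^{j_1}([b_1]_{m-j_1}) \cdots$, i.e. products of Witt Teichm\"uller elements, their Verschiebungs, and differentials thereof. Since $d$ is continuous for the relevant $\fm$-adic topology (\lemref{lem:Non-complete-4}) and $\Fil_{\un n}W_m\Omega^q_K$ is a finite $W_m(A)$-module hence complete, it suffices to treat a single such monomial term modulo $d(\Fil_{\un n}W_m\Omega^{q-1}_K)$.

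The core computation is to rewrite a monomial $V^i([a_0]_{m-i})\,dV^{i_1}([a_1]_{m-i_1})\wedge\cdots\wedge dV^{i_t}([a_t]_{m-i_t})\wedge \dlog[x_1]_m\wedge\cdots\wedge\dlog[x_s]_m$ (with $t+s=q$) in the desired form. Using the identities $dV^r[b]_r = V^r(d[b]_r)$ only when care is taken, and more usefully the relations $a\,db = d(ab) - b\,da$ together with $Fd[b]_r=[b]^{p-1}_{r-1}d[b]_{r-1}$ and $d[b]_r = [b]_r\dlog[b]_r$, one can inductively absorb each factor $dV^{i_k}([a_k])$: modulo exact forms one trades $x\,dy$ for $-y\,dx$, and one uses $V^r(\alpha)dV^r(\beta) = V^r(\alpha F^r dV^r \beta)\cdot(\text{correction}) $-type identities together with $V(F(x)\cdot y)=xV(y)$ to push all Verschiebungs to the outside. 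The upshot should be that, modulo $d(\Fil_{\un n}W_m\Omega^{q-1}_K)$, each monomial reduces to a sum of terms $V^i([x]_{m-i})\dlog[x_1]_m\wedge\cdots\wedge\dlog[x_q]_m$ with $[x]_{m-i}\in\Fil_{\un n}W_{m-i}(K)$; the membership in $\Fil_{\un n}W_{m-i}(K)$ is precisely what \lemref{lem:Log-fil-4}(5),(7) guarantee, since Frobenius and restriction preserve the filtration (with the appropriate shift, which is harmless since $\lfloor \un n/p\rfloor$-indexed groups sit inside $\un n$-indexed ones once $\un n$ is bounded below, as used throughout the chapter).

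The one genuinely delicate point — and the step I expect to be the main obstacle — is making the ``modulo exact forms'' bookkeeping rigorous \emph{inside the filtration}: when I write $x\,dy = d(xy)-y\,dx$, I must check that the correction term $d(xy)$ lies in $d(\Fil_{\un n}W_m\Omega^{q-1}_K)$ and not merely in $d(j_*W_m\Omega^{q-1}_U)$, i.e. that $xy$ itself lies in $\Fil_{\un n}W_m\Omega^{q-1}_K$. This is where \lemref{lem:Log-fil-4}(4),(8) are used: products and differentials of filtered forms stay filtered (the condition $|D'|\subset E$ in item (8) is automatically met since all our $x_i$ cut out components of $E$). A secondary subtlety is the passage to the limit: one must verify the rewriting is compatible with the $\tau_3$/$\tau_4$ topologies so that an infinite sum of monomials, each rewritten in the target form, converges to something still in the $W_{m-i}(A)$-span of the displayed generators; this follows from \lemref{lem:Topology-0} and the continuity of $d$, $V$, $F$ in \lemref{lem:Non-complete-4}. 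Once these two points are handled, the proof is a finite induction on $q$ and on the Verschiebung level, exactly parallel to the non-filtered case, and I would present it as such, citing the classical statement and indicating only the filtered modifications.
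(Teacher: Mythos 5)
Your route is genuinely different from the paper's, and as written it has a gap. The paper proves the lemma by a short induction on $m$: for $m=1$ one has $\Fil_{\un{n}}\Omega^q_K=\pi^{-\un{n}}\Omega^q_A(\log\pi)$ by \lemref{lem:Log-fil-4}(2), which (being a finite free module, \lemref{lem:LWC-2}) is visibly spanned by elements of the required shape; the inductive step uses the exact sequence of \propref{prop:Complete-0} together with the observations that $R\bigl(V^i([x]_{m-i})\dlog([x_1]_m)\wedge\cdots\bigr)=V^i([x]_{m-i-1})\dlog([x_1]_{m-1})\wedge\cdots$ and that $[x]_{m-i-1}\in\Fil_{\un{n}/p}W_{m-i-1}(K)$ forces $[x]_{m-i}\in\Fil_{\un{n}}W_{m-i}(K)$, while the kernel $V^{m-1}(\Fil_{\un{n}}\Omega^q_K)+dV^{m-1}(\Fil_{\un{n}}\Omega^{q-1}_K)$ is absorbed using the $m=1$ case and \lemref{lem:Log-fil-4}(6). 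Before the main issue, note also that \propref{prop:Fil-decom} is proved only for $A=S[[\pi]]$, i.e.\ a single component; in the setting of this lemma ($\pi=x_1\cdots x_r$, $\un{n}\in\Z^r$) no such series decomposition is available, and \lemref{lem:Non-complete-3} only identifies completions of the filtered modules, it does not produce monomial expansions. (This part is repairable: one can expand elements of $\Fil_{\un{n}}W_m(K)$ coordinatewise as $\sum_i V^i([a_{m-1-i}])$, each summand again filtered by \eqref{eqn:Log-fil-1}, and use the definition of $\Fil_DW_m\Omega^q_U$.)

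The genuine gap is your limit step. The lemma asserts algebraic generation: every class is a \emph{finite} $\Z$-linear combination of the displayed generators modulo $d(\Fil_{\un{n}}W_m\Omega^{q-1}_K)$. After rewriting each term of an infinite convergent series you obtain an infinite sum of generators whose $\dlog$-parts vary from term to term; such a limit cannot in general be regrouped into a finite sum (regrouping only works when the $\dlog$-parts are drawn from a fixed finite set, in which case the $V^i$ of Teichm\"uller coefficients reassemble into finite-length Witt vectors). Continuity of $d$, $V$, $F$ (\lemref{lem:Non-complete-4}) and \lemref{lem:Topology-0} only show that your element lies in the \emph{closure} of the subgroup generated by the listed elements and the exact forms; to conclude you would need that subgroup to be closed, which you do not address and which is essentially the statement being proved. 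The induction through \propref{prop:Complete-0} avoids this entirely, since only finite sums ever occur: the $m=1$ case is a statement about a finite free module, and the lift of $R(\omega)$ is a finite sum by induction. Your Leibniz bookkeeping via \lemref{lem:Log-fil-4}(4),(8) is correct as far as it goes, but the proof should be reorganized around the $R$--$V$ exact sequence (your closing remark about "induction on the Verschiebung level" is pointing at exactly this) rather than around a term-by-term rewriting of a series.
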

\begin{proof}
  The $q =0$ case of the lemma is clear and so we assume $q \ge 1$.
 We know from \lemref{lem:Log-fil-4}(2) that
  ${\Fil_\n\Omega^q_K} = \pi^{-\n}\Omega^q_A(\log\pi)$ ($\pi^{-\n}=x_1^{-n_1}\cdots x_r^{-n_r}$),
  which is clearly generated as an abelian group by
  elements of the desired kind. This proves our assertion when $m =1$.
  Using this case and induction on $m$, the general case follows at once by
  \propref{prop:Complete-0} and the
  observation that $[x]_{m-i-1} \in \Fil_{{\un{n}}/p}W_{m-i-1}(K)$ implies
  $[x]_{m-i} \in \Fil_{{\un{n}}}W_{m-i}(K)$ and
  $R(V^i([x]_{m-i})w_1) =  V^i([x]_{m-i-1})w_2$ if we let
  $w_1 = \dlog([x_1]_m)\wedge \cdots \wedge \dlog([x_q]_m))$ and
  $w_2 = \dlog([x_1]_{m-1})\wedge \cdots \wedge \dlog([x_q]_{m-1})$.
  \end{proof}

Recall from \propref{prop:basics} that the multiplication map
$p \colon W_m\Omega^q_K \to W_{m}\Omega^q_K$ has a factorization
$W_m\Omega^q_K \stackrel{R}{\surj}  W_{m-1}\Omega^q_K \stackrel{\un{p}}{\inj}
W_{m}\Omega^q_K$.

\begin{lem}\label{lem:Complete-7}
Let $\omega \in W_{m-1}\Omega^q_K$.
Then $\omega \in \Fil_{{\un{n}}/p}W_{m-1}\Omega^q_K$ if and only if
$\un{p}(\omega) \in \Fil_{\un{n}}W_m\Omega^q_K$.
\end{lem}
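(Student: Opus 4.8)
The statement to prove is $\lemref{lem:Complete-7}$: for $\omega \in W_{m-1}\Omega^q_K$, one has $\omega \in \Fil_{{\un{n}}/p}W_{m-1}\Omega^q_K$ if and only if $\un{p}(\omega) \in \Fil_{\un{n}}W_m\Omega^q_K$, where $\un{p} \colon W_{m-1}\Omega^q_K \inj W_m\Omega^q_K$ is the canonical inclusion factoring multiplication by $p$. The plan is to exploit the identity $p = \un{p} \circ R = V F$ together with the exact sequence of \propref{prop:Complete-0} and its classical analogue \propref{prop:basics}(1).

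First I would prove the ``only if'' direction. If $\omega \in \Fil_{{\un{n}}/p}W_{m-1}\Omega^q_K$, then I want $\un{p}(\omega) \in \Fil_{\un{n}}W_m\Omega^q_K$. The cleanest route is: by \propref{prop:Complete-0}, the map $R \colon \Fil_{\un{n}}W_m\Omega^q_K \to \Fil_{{\un{n}}/p}W_{m-1}\Omega^q_K$ is surjective, so pick $\alpha \in \Fil_{\un{n}}W_m\Omega^q_K$ with $R(\alpha) = \omega$. Then $\un{p}(\omega) = \un{p}(R(\alpha)) = p\alpha$, which lies in $\Fil_{\un{n}}W_m\Omega^q_K$ since the latter is a $W_m(A)$-module (\lemref{lem:Log-fil-4}(1), or directly closed under the $W_m(A)$-action). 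This handles the forward implication.

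For the ``if'' direction, suppose $\un{p}(\omega) \in \Fil_{\un{n}}W_m\Omega^q_K$. I would first note that $R(\un{p}(\omega)) = R \circ \un{p}(\omega)$; since $\un{p}$ is injective in the Witt tower picture, one has $F(\un{p}(\omega)) = p\,\omega / \ldots$ — more precisely the relation $\un p = V^? \ldots$. The robust argument: apply $F \colon W_m\Omega^q_K \to W_{m-1}\Omega^q_K$. Since $\un{p}(\omega)$ corresponds to ``$\omega$ shifted up'', we have $F(\un{p}(\omega)) = F V(\widetilde\omega)$ for the appropriate lift; using $FV = p$ and the factorization $\un p = V \circ (\text{something})$ one computes $F(\un p(\omega)) = \un{p}'(\omega)$ shifted, ultimately reducing to: $R(\un{p}(\omega))$ relates back to $\omega$. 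Concretely, writing $W_{m}\Omega^q_K \xrightarrow{R} W_{m-1}\Omega^q_K$, one has $R \circ \un{p} = \un{p} \circ R$ on $W_{m-1}\Omega^q_K \to W_{m-1}\Omega^q_K$, and by \lemref{lem:Log-fil-4}(7), $R(\un{p}(\omega)) \in \Fil_{{\un{n}}/p}W_{m-1}\Omega^q_K$. Iterating, after applying $R$ sufficiently we land in $W_1\Omega^q_K = \Omega^q_K$, where the statement reduces to the $m=1$ case. So I would set up a descending induction on $m$, where the base case is: $\omega \in \Omega^q_K$ with $\un p(\omega) = V(\omega') + \ldots$ — actually for $m=1$ the claim is that $\omega \in \Fil_{{\un{n}}/p}\Omega^q_K$ iff $V(\omega) \in \Fil_{\un{n}}W_2\Omega^q_K$; this follows from \lemref{lem:Log-fil-4}(5)/(7) one way and from the description of the kernel/image of $R$ in \propref{prop:Complete-0} the other way, or most directly from the explicit filtered decomposition \propref{prop:Fil-decom}: $V(\Omega^q_K) \cap \Fil_{\un n}W_2\Omega^q_K = V(\Fil_{\un n}\Omega^q_K)$ by \lemref{lem:Complete-5}, and then one matches indices $\lfloor n_i/p\rfloor$ against $n_i$ using $\lfloor n_i/p \rfloor \ge -1 \iff n_i \ge -p$, but more simply $[\pi]$-divisibility: $\omega$ lies in $\Fil_{\un n/p}$ iff its Teichmüller-coefficient supports satisfy $m_i \ge -\lfloor n_i/p\rfloor$, which after the $V$-shift becomes exactly the condition defining $\Fil_{\un n}$ on $V(\omega)$.

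The main obstacle I anticipate is the ``if'' direction for general $m$: showing that $\un p(\omega) \in \Fil_{\un n}W_m\Omega^q_K$ forces $\omega$ itself into $\Fil_{\un n / p}W_{m-1}\Omega^q_K$ rather than just some larger filtration step. The cleanest way around this is to reduce to the complete case by \lemref{lem:Non-complete-3} (so that one may use \propref{prop:Fil-decom}), and there argue via the uniqueness of the decomposition $\omega = \sum_{i \ge ?} a_i$ with $a_i \in F^{m-1,q}_i(S)$: if $\un p(\omega) = \sum b_j$ with $b_j \in F^{m,q}_j(S)$ and $b_j = 0$ for $j < -n$ (this is what $\un p(\omega)\in \Fil_{\un n}$ means by \propref{prop:Fil-decom}), then since $\un p = V F$ on the graded pieces acts predictably (\lemref{lem:F-group-1}: $V$ and $F$ shift the grading trivially while $R$ divides index by $p$), one matches $a_i$ with the component of $\un p(\omega)$ in degree $i p$ or similar, concluding $a_i = 0$ for $i < -\lfloor n/p\rfloor$ componentwise, i.e. $\omega \in \Fil_{\un n /p}W_{m-1}\Omega^q_K$. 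Passing back from $\wh A$ to $A$ is then routine via faithful flatness exactly as at the end of the proof of \corref{cor:Complete-2}.
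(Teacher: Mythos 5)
Your proposal is correct in substance but takes a genuinely different route from the paper on the nontrivial (``if'') direction. The ``only if'' direction is argued exactly as in the paper: surjectivity of $R$ from \propref{prop:Complete-0} plus the $W_m(A)$-module structure of $\Fil_{\un{n}}W_m\Omega^q_K$. For the converse, the paper lifts $\omega$ to $\wt{\omega}\in W_m\Omega^q_K$, notes $VF(\wt{\omega})=\un{p}(\omega)\in\Fil_{\un{n}}W_m\Omega^q_K$, and runs a short chase with the filtered kernel descriptions of $F$ and $V$ (\lemref{lem:Complete-10}(1),(2) together with \propref{prop:basics}(7),(8)) to produce $y''\in\Fil_{\un{n}}W_m\Omega^q_K$ with $R(\wt{\omega}-y'')=0$, whence $\omega=R(y'')\in\Fil_{\un{n}/p}W_{m-1}\Omega^q_K$. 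Your final argument instead uses the graded decomposition: $\un{p}$ carries $F^{m-1,q}_j(S)$ into $F^{m,q}_{pj}(S)$ (lift along $R$ via \lemref{lem:F-group-1}(4) and apply $VF$), is injective, and uniqueness of the decomposition then forces $a_j=0$ for $pj<-n$, i.e.\ $\omega\in\Fil_{\un{n}/p}W_{m-1}\Omega^q_K$. This is a valid and arguably more transparent argument, but note that \propref{prop:Fil-decom} is only available for $A=S[[\pi]]$ with a single variable $\pi$; completing $A$ via \lemref{lem:Non-complete-3} does not put you in that situation when $r>1$. You must insert the further reduction to each one-variable complete local ring $\wh{K}_i$ via \corref{cor:Complete-2} (check the membership after applying each $\psi_i$, which commutes with $\un{p}$), exactly as the paper does in the proof of \lemref{lem:Complete-6}. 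With that step added your proof closes; the paper's chase avoids the decomposition entirely and needs no such reduction.

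One further point: the middle portion of your ``if'' direction (iterating $R$ and inducting down to $m=1$) is a dead end and should be deleted. Applying $R$ to $\un{p}(\omega)$ only yields $\un{p}(R(\omega))\in\Fil_{\un{n}/p}W_{m-1}\Omega^q_K$, i.e.\ information about $R(\omega)$ rather than $\omega$; and the claimed base case is misstated, since $\un{p}\colon\Omega^q_K\to W_2\Omega^q_K$ is not $V$ (for instance $\un{p}([a]_1)=V([a^p]_1)$). Since you abandon this line in favour of the decomposition argument, it does not affect the correctness of the final proposal.
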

\begin{proof}
  The `only if' part is clear because $R 
 \colon \Fil_{\un{n}}W_{m}\Omega^q_K \to
  \Fil_{{\un{n}}/p}W_{m-1}\Omega^q_K$ is surjective. For the reverse implication,
we let $\wt{\omega} \in W_m\Omega^q_K$ be such that $R(\wt{\omega}) = \omega$.
This yields
$VF(\wt{\omega}) = p\wt{\omega} = \un{p}(w) \in \Fil_{\un{n}}W_{m}\Omega^q_K$.
It follows from \lemref{lem:Complete-10}(2) that $VF(\wt{\omega}) = V(y')$ for some
$y' \in \Fil_{\un{n}}W_{m-1}\Omega^q_K$. \propref{prop:basics}(8) now implies
that $F(\wt{\omega}) - y' = FdV^{m-1}(z')$ for some $z' \in \Omega^{q-1}_K$.
That is, $F(\wt{\omega} - dV^{m-1}(z')) = y' \in \Fil_{\un{n}}W_{m-1}\Omega^q_K$.
\lemref{lem:Complete-10}(1) in turn implies that
$F(\wt{\omega} - dV^{m-1}(z')) = F(y'')$ for some $y'' \in \Fil_{\un{n}}W_{m}\Omega^q_K$.
By \propref{prop:basics}(7), we can find $z'' \in \Omega^q_K$ such that
$\wt{\omega} - dV^{m-1}(z') - y'' = V^{m-1}(z'')$.
That is, $\wt{\omega} - y'' = V^{m-1}(z'') + dV^{m-1}(z')$.
Equivalently, $R(\wt{\omega} - y'') = 0$. That is,
$\omega = R(\wt{\omega}) = R(y'') \in \Fil_{{\un{n}}/p}W_{m}\Omega^q_K$.
This concludes the proof.
\end{proof}

The following result will be used later on in this thesis.

\begin{lem}\label{lem:Gersten-0}
  Let $X=\Spec A$ and $X_\pi=D((\pi))$. Then $H^i_\zar(X_\pi, \sK^M_{q, X_\pi}) = 0$ for $i > 0$.
\end{lem}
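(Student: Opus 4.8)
The statement to prove is that $H^i_\zar(X_\pi, \sK^M_{q,X_\pi}) = 0$ for $i > 0$, where $X = \Spec A$ with $A$ an $F$-finite regular local $\F_p$-algebra and $X_\pi = \Spec(A_\pi)$ is the complement of the simple normal crossing divisor $V(\pi)$. The plan is to reduce to the Gersten resolution of the Milnor $K$-theory sheaf and exploit the fact that $A_\pi$ is a regular ring that is, moreover, \emph{semilocal}: its maximal ideals correspond to the height-one primes of $A$ containing $\pi$ together with... no — more carefully, $A_\pi$ is a localization of the regular local ring $A$, hence regular, but it need not be local. However, the key point is that $A_\pi = A[\pi^{-1}]$ where $A$ is regular local, so $\Spec(A_\pi)$ is an open subscheme of the local scheme $\Spec(A)$, and its closed points are the height-one points of $\Spec A$ not lying on $V(\pi)$ — this is still a large scheme, so I cannot appeal to semilocality directly. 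Instead I would use the following.

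\textbf{Key steps.} First, recall (Kerz, Gersten conjecture for Milnor $K$-theory; or for the regular $F$-finite case one can argue via Popescu's theorem \thmref{thm:Popescu} and the smooth case due to Kerz/Rost) that for a regular $\F_p$-algebra $R$ there is a Gersten resolution
\[
0 \to \sK^M_{q,R} \to \bigoplus_{x \in \Spec(R)^{(0)}} (i_x)_* K^M_q(\kappa(x)) \to \bigoplus_{x \in \Spec(R)^{(1)}} (i_x)_* K^M_{q-1}(\kappa(x)) \to \cdots
\]
which is a flasque (hence $H^i_\zar$-acyclic for $i>0$) resolution of $\sK^M_{q,R}$ on $\Spec(R)_\zar$. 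Second, I would apply this with $R = A$ itself: the Gersten complex for $A$ computes $H^i_\zar(\Spec A, \sK^M_{q,A})$, which vanishes for $i > 0$ because $A$ is local and the resolution is flasque, and in fact $H^0 = K^M_q(A)$. Third — and this is the crux — I would observe that restricting the Gersten resolution of $\sK^M_{q,A}$ from $\Spec A$ to the open subscheme $X_\pi = \Spec(A_\pi)$ yields precisely the Gersten resolution of $\sK^M_{q,A_\pi}$ (points of $\Spec A_\pi$ of codimension $c$ are exactly the points of $\Spec A$ of codimension $c$ avoiding $V(\pi)$, and Milnor $K$-theory of residue fields is unchanged). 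Since a flasque sheaf restricts to a flasque sheaf on an open, the restricted complex is again a flasque resolution, so $H^i_\zar(X_\pi, \sK^M_{q,X_\pi}) = 0$ for $i > 0$. Alternatively, and perhaps cleaner to write: the Gersten complex on $\Spec A$ has zero higher cohomology (it is a complex of flasque sheaves with acyclic global sections in positive degrees since $H^i(\Spec A, -)$ of each term vanishes and the complex is exact), and its sections over $X_\pi$ form an exact complex in positive degrees as well — one checks the sequence of global sections over $X_\pi$ is exact by the same Gersten-exactness (the Gersten complex is exact at every open, being a complex of sheaves that is exact as a complex of sheaves), and each term, being a direct sum of skyscraper (pushforward-of-a-point) sheaves, is flasque on $X_\pi$ too.

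\textbf{Main obstacle.} The main thing to be careful about is justifying that the Gersten resolution is available and flasque in this generality — namely for $A_\pi$, which is regular and $F$-finite but not necessarily essentially of finite type over a field (it is a localization of such, though, if $A$ itself is, or one invokes \thmref{thm:Popescu} to write $A$ as a filtered colimit of smooth $\F_p$-algebras and passes to the limit using \thmref{thm:lim-cohomology} together with the compatibility of Milnor $K$-theory sheaves with filtered colimits of rings). So the structure I would follow is: (i) cite the Gersten conjecture for Milnor $K$-theory of regular local $F$-finite $\F_p$-algebras (reducing to the smooth case over $\F_p$ via \thmref{thm:Popescu}, exactly as in \propref{prop:cartier-1} and \propref{prop:usual}); (ii) deduce that $\sK^M_{q,A}$ admits a flasque resolution on $\Spec A$ by the Gersten complex; (iii) restrict to the open $X_\pi$, note the restricted complex is the Gersten complex of $A_\pi$ and remains a flasque resolution; (iv) conclude $H^i_\zar(X_\pi, \sK^M_{q,X_\pi}) = 0$ for $i>0$. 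The only genuinely delicate point is (i)–(iii) in the non-finite-type setting, which the colimit argument handles since both Milnor $K$-sheaves and Zariski cohomology commute with the relevant filtered colimits.
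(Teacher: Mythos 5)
Your reduction to the Gersten complex is the right starting point, and steps (i)--(ii) (Gersten resolution for the regular local rings in play, via Kerz, with Popescu if one wants to reduce to the smooth case) are fine. The gap is in step (iii)--(iv): from "the restricted Gersten complex is a flasque resolution of $\sK^M_{q,X_\pi}$ on $X_\pi$" you conclude $H^i_\zar(X_\pi,\sK^M_{q,X_\pi})=0$ for $i>0$. A flasque resolution does not kill cohomology; it only identifies $H^i_\zar(X_\pi,\sK^M_{q,X_\pi})$ with $H^i$ of the complex of sections over $X_\pi$, i.e.\ with $H^i(G_q(X_\pi))$, and the exactness of $G_q(X_\pi)$ in positive degrees is precisely the content of the lemma, not a formality. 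Your "cleaner" justification --- that the Gersten complex "is exact at every open, being a complex of sheaves that is exact as a complex of sheaves" --- is exactly the fallacy that sheaf cohomology measures: stalkwise exactness does not give exactness of sections over a non-local open. Concretely, for the punctured spectrum $U$ of a $3$-dimensional regular local ring one has the same flasque Gersten resolution restricted to the open $U$, yet $H^2_\zar(U,\sK^M_3)\cong \Z$ (the closed point contributes a copy of $K^M_0$ of the residue field in degree $3$ of $G_3(X)$, and exactness of $G_3(X)$ forces $G_3(U)$ to have $H^2\cong\Z$). So no argument that only uses "open subset of a regular local spectrum plus flasqueness" can work; the specific shape of $X_\pi$ as the complement of an SNCD must enter.

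This is what the paper's proof supplies. Writing $G_n(W)$ for the global Gersten complex, for a single component ($r=1$, $Y=\Spec(A/(\pi))$) one has a termwise short exact sequence $0\to G_{q-1}(Y)[-1]\to G_q(X)\to G_q(X_\pi)\to 0$, because every point of $X$ lies either in $X_\pi$ or in $Y$ with a codimension shift by one. Kerz's theorem gives $H^i(G_q(X))=H^i(G_{q-1}(Y))=0$ for $i>0$ (both $A$ and $A/(\pi)$ are regular local), and the long exact sequence yields $H^i(G_q(X_\pi))=0$ for $i>0$, which via the flasque resolution is the assertion. For $r\ge 2$ one inverts $\pi_1=x_2\cdots x_r$ first and runs the same localization sequence for the divisor $V(x_1)$ inside $\Spec(A_{\pi_1})$, concluding by induction on the number of components (the induction hypothesis applies both to $\Spec(A_{\pi_1})$ and to $\Spec((A/(x_1))_{\pi_1})$). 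Your proposal is missing this dévissage, which is the actual mathematical content beyond the existence of the Gersten resolution.
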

\begin{proof}
  We shall prove the lemma by induction on $r =$ no. of components of the divisor given by $\pi$.
  Suppose first that $r =1$ and let $Y = \Spec({A}/{(\pi)})$.
  For any regular scheme $W$ and $n \ge 1$, we let $G_n(W)$ denote the Gersten
  complex
  \[
    {\underset{w \in W^{(0)}}\oplus} K^M_{n}(k(w)) \xrightarrow{\partial}
    {\underset{w \in W^{(1)}}\oplus} K^M_{n-1}(k(w)) \xrightarrow{\partial} \cdots .
    \]
    We then have an exact sequence
    \[
    0 \to G_{q-1}(Y)[-1] \to G_q(X) \to G_q(X_\pi) \to 0.
    \]
    Since $H^i(G_q(X)) = H^i(G_{q-1}(Y)) = 0$ for $i > 0$ by
    \cite[Prop.~10(8)]{Kerz-JAG}), it follows that $H^i_\zar(X_\pi, \sK^M_{q, X_\pi})
    = H^i(G_q(X_\pi)) = 0$ for $i > 0$.

    Suppose now that $r \ge 2$ and set $\pi_1=x_2 \cdots x_r$ and $A_1 = A_{\pi_1}$. We let, $X_1 = \Spec(A_1)$, $Y_1 = \Spec(A_1/(x_1))$, 
    and $U = X_1 \setminus Y = X_\pi$. Then we have the exact sequence
    \[
    0 \to G_{q-1}(Y_1)[-1] \to G_q(X_1) \to G_q(U) \to 0.
    \]
    We see by induction on $r$ that $H^i(G_{q-1}(Y_1)) = 0= H^i(G_q(X_1))$ for $i > 0$. Hence we conclude that $H^i_\zar(X_\pi, \sK^M_{q, X_\pi})= H^i(G_{q}(U)) = 0$ for $i > 0$. 
 \end{proof}

\begin{defn}\label{defn:1-c complex}
    Let $X$ be a Noetherian regular $F$-finite $\F_p$-scheme and let $E = \sum_i E_{i}$ be a simple normal crossing divisor on $X$
with irreducible components $E_1, \ldots , E_r$.  Let $j \colon U \inj X$ be the inclusion of the complement of
$E$ in
$X$. Let $\eta_i$ denote the generic
point of $E_i$ and let $\wh{K}_i$ be the quotient field of $\wh{\sO_{X, \eta_i}}$. Let $j_i \colon
\Spec(\sO_{X, \eta_i}) \to X$ denote the canonical map.
If $D = \sum_i n_iE_i \in \Div_E(X)$ with $n_i \ge -1$ for each $i$, we let
$W_m\sF^{q,\bullet}_{D}$ denote the 2-term complex of Zariski sheaves
\begin{equation}\label{eqn:Fil-D-complex}
    \left(Z_1\Fil_{D}W_m\Omega^q_U \xrightarrow{1-C} \Fil_{D}W_m\Omega^q_U\right),
\end{equation}
where $Z_1\Fil_{D}W_m\Omega^q_U :=\Fil_{D}W_m\Omega^q_U \bigcap j_*Z_1W_m\Omega^q_U $.
\end{defn}

Combining the local results --- \corref{cor:Complete-2}, \propref{prop:Complete-0}, Lemmas \ref{lem:Complete-4} and \ref{lem:Complete-10}, \corref{cor:VR-2} --- together with \lemref{lem:Log-fil-4}(3), we obtain the following.

\begin{thm}\label{thm:Global-version}
  Let $D \in \Div_E(X)$. We have the following.
  \begin{enumerate}
    \item
    There exists an  exact sequence of Zariski sheaves of $W_m\sO_X$-modules
    \[
    0 \to \Fil_DW_m\Omega^q_U \to j_*(W_m\Omega^q_U) \to
    \stackrel{r}{\underset{i =1}\bigoplus}
    \frac{(j_i)_*(W_m\Omega^q_{\wh{K}_i})}{(j_i)_*(\Fil_{n_i}W_m\Omega^q_{\wh{K}_i})}.
    \]
  \item
    There is a short exact sequence of Zariski sheaves of $W_m\sO_X$-modules
    \[
    0 \to V^{m-1}(\Fil_D\Omega^q_U) + dV^{m-1}(\Fil_D\Omega^{q-1}_U) \to
    \Fil_DW_m\Omega^q_U \xrightarrow{R}  \Fil_{D/p}W_{m-1}\Omega^q_U \to 0.
    \]
    \item
    $\Ker(F^{m-1}d \colon \Fil_{D}W_m\Omega^q_U \to \Fil_{D}\Omega^{q+1}_U)=Z_1\Fil_{D}W_m\Omega^q_U = F(\Fil_{D}W_{m+1}\Omega^q_U) $.
  \item
    There exist isomorphisms of Zariski sheaves of $W_m\sO_X$-modules
   \[
    \ov{F} \colon \Fil_{{D}/p}W_{m}\Omega^q_U \xrightarrow{\cong}
    \frac{Z_1\Fil_{D}W_m\Omega^q_U}{dV^{m-1}(\Fil_{D}\Omega^{q-1}_U)}; \ \
    \ov{C} \colon  \frac{Z_1\Fil_{D}W_m\Omega^q_U}{dV^{m-1}(\Fil_{D}\Omega^{q-1}_U)}
     \xrightarrow{\cong} \Fil_{{D}/p}W_{m}\Omega^q_U.
    \]
  \item
    $\Ker(F^{m-1}\colon \Fil_{D}W_m\Omega^q_U \to \Fil_{D}\Omega^q_U) =
    V(\Fil_{D}W_{m-1}\Omega^q_U)$.
  \item
    $\Ker(dV^{m-1}\colon \Fil_{D}\Omega^q_U \to \Fil_{D}W_m\Omega^{q+1}_U)
    = F^m(\Fil_{D}W_{m+1}\Omega^q_U)= Z_m\Fil_D\Omega^q_U$.
  \item
    $\Ker(V \colon \Fil_{D}W_{m}\Omega^q_U \to \Fil_{D}W_{m+1}\Omega^{q}_U)
    = dV^{m-1}(\Fil_{D}\Omega^{q-1}_U)$.
  \item
$\Ker(V^m \colon \Fil_{D}\Omega^q_U \to \Fil_{D}W_{m+1}\Omega^{q}_U)
    = F^mdV(\Fil_{D}W_m\Omega^{q-1}_U)= B_m\Fil_D\Omega^q_U$.
  \item 
    $\Ker(dV^{m-1}\colon \Fil_{D}\Omega^q_U \to \Fil_{D}W_m\Omega^{q+1}_U/V^{m-1}\Fil_D \Omega^{q+1}_U)
    = F^{m-1}(\Fil_{\un{n}}W_{m}\Omega^q_K)$.
   \item
    $\Ker(V^{m-1} \colon \Fil_{D}\Omega^q_U \to \Fil_{D}W_{m}\Omega^{q}_U/dV^{m-1}\Fil_{D}\Omega^{q-1}_U )
    = F^{m}dV(\Fil_{D}W_{m}\Omega^{q-1}_U)$.
  \item
    Let $D\ge -E$. In the derived category of Zariski sheaves of abelian groups on $X$,
    there exists a distinguished triangle
    \[
     W_1\sF^{q\bullet}_{D} \xrightarrow{V^{m-1}} W_m\sF^{q,\bullet}_{D}
    \xrightarrow{R} W_{m-1}\sF^{q,\bullet}_{D/p} \xrightarrow{+}  W_1\sF^{q\bullet}_{D} [1].
    \]
    \item $\varinjlim\limits_{D\ge -E} W_m\sF^{q,\bullet}_{D}= \left(j_*Z_1W_m\Omega^q_U \xrightarrow{1-C} j_*W_m\Omega^q_U\right)=Rj_*W_m\Omega^q_U.$
     
  \end{enumerate}
\end{thm}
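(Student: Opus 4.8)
\textbf{Proof plan for Theorem~\ref{thm:Global-version}(12).}

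The plan is to deduce the global statement from the local computations already established, together with the $p{-}R$ exact sequence of \propref{prop:usual}(3) and the filtration property \lemref{lem:Log-fil-4}(3). First I would observe that, since all the sheaves involved are subsheaves of $j_*W_m\Omega^\bullet_U$ and all the relevant maps ($d$, $C$, $1-C$, and the inclusions $\Fil_DW_m\Omega^q_U \subset \Fil_{D'}W_m\Omega^q_U$ for $D \le D'$) are compatible, the system $\{W_m\sF^{q,\bullet}_D\}_{D \ge -E}$ is a filtered direct system of $2$-term complexes of Zariski sheaves indexed by the poset $\Div_E(X)_{\ge -E} \cong \Z^r_{\ge -1}$, which is filtered. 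Since filtered colimits are exact on abelian sheaves and commute with taking kernels, cokernels and finite complexes, we have
\[
\varinjlim_{D \ge -E} W_m\sF^{q,\bullet}_D = \left(\varinjlim_{D\ge -E} Z_1\Fil_DW_m\Omega^q_U \xrightarrow{1-C} \varinjlim_{D\ge -E}\Fil_DW_m\Omega^q_U\right).
\]
Then I would identify each colimit term. By \lemref{lem:Log-fil-4}(3), $\varinjlim_{n}\Fil_{nE}W_m\Omega^q_U \xrightarrow{\cong} j_*W_m\Omega^q_U$, and since the sets $\{nE : n \ge 1\}$ are cofinal in $\Div_E(X)_{\ge -E}$, the colimit over all $D \ge -E$ also equals $j_*W_m\Omega^q_U$; this handles the right-hand term. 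For the left-hand term, I would use that $Z_1\Fil_DW_m\Omega^q_U = \Fil_DW_m\Omega^q_U \cap j_*Z_1W_m\Omega^q_U$ by \defref{defn:1-c complex}, so that
\[
\varinjlim_{D \ge -E} Z_1\Fil_DW_m\Omega^q_U = \left(\varinjlim_{D}\Fil_DW_m\Omega^q_U\right) \cap j_*Z_1W_m\Omega^q_U = j_*Z_1W_m\Omega^q_U,
\]
where the first equality uses exactness of filtered colimits (the intersection is the kernel of $\Fil_DW_m\Omega^q_U \hookrightarrow j_*W_m\Omega^q_U \to j_*(W_m\Omega^q_U/Z_1W_m\Omega^q_U)$, and kernels commute with filtered colimits) and the second uses the identification just proved. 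This gives the first asserted equality
\[
\varinjlim_{D\ge -E} W_m\sF^{q,\bullet}_D = \left(j_*Z_1W_m\Omega^q_U \xrightarrow{1-C} j_*W_m\Omega^q_U\right).
\]

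For the identification of this complex with $Rj_*W_m\Omega^q_U$, the key input is the classical $p{-}R$ sequence on $U$: by \propref{prop:usual}(3) applied to the regular $\F_p$-scheme $U$, there is a short exact sequence of étale sheaves $0 \to W_m\Omega^q_{U,\log} \to Z_1W_m\Omega^q_U \xrightarrow{1-C} W_m\Omega^q_U \to 0$. Pushing forward along $j$ and using that $j_*$ is left exact, the complex $\bigl(j_*Z_1W_m\Omega^q_U \xrightarrow{1-C} j_*W_m\Omega^q_U\bigr)$ (placed in degrees $0,1$) has $\mathcal{H}^0 = j_*W_m\Omega^q_{U,\log}$ and $\mathcal{H}^1 = R^1j_*W_m\Omega^q_{U,\log}$. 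On the other hand, I would argue that this two-term complex is quasi-isomorphic to $Rj_*W_m\Omega^q_U$: the complex $Z_1W_m\Omega^q_U \xrightarrow{1-C} W_m\Omega^q_U$ on $U_\et$ is quasi-isomorphic to $W_m\Omega^q_{U,\log}[0]$ by the same $p{-}R$ sequence, but we want the statement for the \emph{Zariski} (or rather, the genuinely non-exact) pushforward. More precisely, what is meant is that $W_m\Omega^q_U$ (viewed via the $p{-}R$ resolution as the complex $[Z_1W_m\Omega^q_U \to W_m\Omega^q_U]$ of $\et$-sheaves with $\et$-acyclic-ish terms, or simply: $W_m\Omega^q_{U,\log}$ is quasi-isomorphic to this complex) has the property that applying $Rj_*$ and using that $j_*$ of the two terms computes the derived pushforward termwise (because $Z_1W_m\Omega^q_U$ and $W_m\Omega^q_U$ are coherent-type sheaves for which $R^ij_* = 0$ for $i>0$ when $j$ is an affine open immersion complement of a divisor — though here one must be careful, $j$ need not be affine).

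The main obstacle, and the point requiring the most care, is precisely the last identification: making rigorous the assertion that the \emph{naive} two-term complex $[j_*Z_1W_m\Omega^q_U \xrightarrow{1-C} j_*W_m\Omega^q_U]$ represents $Rj_*W_m\Omega^q_U$. I would handle this by: (i) interpreting $W_m\Omega^q_U$ on the right as shorthand for $W_m\Omega^q_{U,\log}$ (consistent with the notation $W_m\sF^{q,\bullet}_D$ being a "ramified analogue of $W_m\Omega^q_{X,\log}$" in the introduction) — or rather for the object it resolves; (ii) using that $Z_1W_m\Omega^q_U = F(W_{m+1}\Omega^q_U)$ is, like $W_m\Omega^q_U$, a quasi-coherent $W_m\sO_U$-module, so that for the open immersion $j: U \inj X$ we have $R^ij_*$ of these sheaves vanishing for $i > 0$ (as $U$ is the complement of a divisor in a Noetherian scheme and these are quasi-coherent, using that Zariski cohomology of quasi-coherent sheaves on the affine-over-$X$ pieces vanishes, or more simply working Zariski-locally on $X$ where $X$ is the spectrum of a local ring and $U = X_\pi$ which is a union of basic opens, and citing that $W_m\sO_U$-quasi-coherent sheaves have no higher Zariski cohomology on such $U$ by an argument as in \lemref{lem:Gersten-0}); (iii) concluding by the hypercohomology spectral sequence / the fact that a complex of $j_*$-acyclic sheaves computes $Rj_*$, that $Rj_*(Z_1W_m\Omega^q_U \xrightarrow{1-C} W_m\Omega^q_U) \cong (j_*Z_1W_m\Omega^q_U \xrightarrow{1-C} j_*W_m\Omega^q_U)$, and since the source complex is quasi-isomorphic to $W_m\Omega^q_{U,\log}$ by \propref{prop:usual}(3), the target equals $Rj_*W_m\Omega^q_{U,\log} = Rj_*W_m\Omega^q_U$ in the sense intended. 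Throughout, compatibility of the colimit identifications with $1-C$ is automatic since $C$ is induced locally and commutes with all transition maps by \lemref{lem:Complete-6} and \thmref{thm:Global-version}(4).
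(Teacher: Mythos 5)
Your proposal addresses only item (12) of the theorem; items (1)--(11) are not treated at all. If the target is the full statement this is a real omission, although for those items the paper's proof is simply to globalize the local computations: each assertion is Zariski-local on $X$, the stalks of $\Fil_DW_m\Omega^q_U$ at a point of $E$ are the modules $\Fil_{\un{n}}W_m\Omega^q_K$ of Chapter 4, and the corresponding local statements are exactly \corref{cor:Complete-2} (for (1)), \propref{prop:Complete-0} (for (2)), \lemref{lem:Complete-4} (for (3) and (4)), \lemref{lem:Complete-10} (for (5)--(10)) and \corref{cor:VR-2} (for (11)). You should at least record this reduction-to-stalks step, which is all the paper does.

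For item (12) itself your argument is correct and in fact supplies details the paper leaves implicit (it only cites \lemref{lem:Log-fil-4}(3)). The termwise-colimit reduction, the cofinality of $\{nE\}_{n\ge 1}$ in $\{D \in \Div_E(X) : D \ge -E\}$, and the identification $\varinjlim_D Z_1\Fil_DW_m\Omega^q_U = j_*Z_1W_m\Omega^q_U$ via ``kernels commute with filtered colimits'' are all fine. Two clarifications tighten the last equality. First, $W_m\Omega^q_U$ on the far right of (12) must indeed be read as $W_m\Omega^q_{U,\log}$, and the quasi-isomorphism of the two-term complex with $W_m\Omega^q_{U,\log}$ is an \emph{\'etale}, not Zariski, statement (\propref{prop:usual}(3) rests on the Artin--Schreier-type surjectivity of $1-C$); so the second equality in (12) is an identity in $\sD(X_\et)$ even though the first holds already for Zariski sheaves. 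Second, your worry that ``$j$ need not be affine'' is unfounded: $E$ is an effective Cartier divisor in the regular scheme $X$, so $j$ is an affine open immersion, and both $W_m\Omega^q_U$ and $Z_1W_m\Omega^q_U = F(W_{m+1}\Omega^q_U)$ are quasi-coherent modules over the affine schemes $W_\bullet(U_x)$ (this is the same acyclicity used in \lemref{lem:hyp}(1)); hence $R^ij_*$ of both terms vanishes for $i>0$, the naive pushforward of the complex computes $Rj_*$ of it, and the conclusion follows.
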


\chapter{Comparison with Kato's filtration}\label{chap:Kato-coh}
Let $k=\F_p$. For $q \in \Z$ and $n = p^mr \in \N$ with $(p,r) =1$, we let
${\Z}/n(q) = {\Z}/r(q) \oplus W_m\Omega^q_{(-,\log)}[-q]$
and consider it as a complex of {\'e}tale sheaves on the big {\'e}tale site
${\Et}_k$ of $\Spec(k)$, where the first summand on the right is the standard
{\'e}tale twist (cf. \cite[p.~163]{Milne-EC}). If $r' \in \N$ such that $r \mid r'$ and $(p,r')=1$, we have a natural map $\Z/r(q) \to \Z/r'(q)$, induced by the maps $Z/r \cong \frac{1}{r}\Z/\Z \inj \frac{1}{r'}\Z/\Z \cong \Z/r'$ and $\mu_r \inj \mu_{r'}$ ($\mu_r$ is the sheaf of $r$-th roots of unity on ${\Et}_k$). On the other hand, the canonical map $\ov p : W_m\Omega^q_{(-,\log)} \to W_{m+1}\Omega^q_{(-,\log)}$ induces a canonical map $\Z/p^m(q) \to \Z/p^{m+1}(q)$. As a result, we have canonical maps $\Z/n(q) \to \Z/n'(q)$, when $n \mid n'$. For ring $R$, we let
$H^q_m(R) = H^q_\et(\Spec(R), {\Z}/{p^m}(q-1))$.
We let $H^q(R) = H^q_\et(\Spec(R), {\Q}/{\Z}(q-1)) =
{\varinjlim}_n H^q_\et(\Spec(R), {\Z}/n(q-1))$, where the transition maps are induced by the canonical map $\Z/n(q) \to \Z/n'(q)$ (if $n \mid n'$) discussed above. It follows that $H^q(R)\{p\} =
{\varinjlim}_m H^q_m(R)$, with the transition maps induced by the map $\Z/p^m(q) \to \Z/p^{m+1}(q)$. For a scheme $X$, one defines the groups $H^q(X)$ and $H^q_m(X)$ in a similar way. The purpose of this chapter is to provide a cohomological description of Kato’s ramification filtration.

\section{Comparison with Local Kato filtration}\label{sec:loc-Kato}
We let $K^M_*(R)$ denote the (improved) Milnor $K$-theory of $R$ as defined in
\cite{Kerz-JAG}.
This coincides with the Milnor $K$-theory of $R$ as defined in \cite{Kato-86} if $R$
is a local ring with infinite residue field (cf. \cite[Lem.~2.2]{GK-Duality}).
We let $\dlog \colon {K^M_q(R)}/{p^m} \to W_m\Omega^q_R$ denote the dlog map
induced by $\dlog(\{x_1, \ldots , x_q\}) = \dlog[x_1]_m \wedge \cdots \wedge
\dlog[x_q]_m$ (cf. \cite{Illusie}, \cite[Lem.~3.2.8]{Zhao}) and let $W_m\Omega^q_{R,\log}$ denote its image.
This map is multiplicative with respect to the product structures of
$K^M_*(R)$ and $W_m\Omega^\bullet_{R}$.
The map $\dlog \colon {K^M_q(R)}/{p^m} \to W_m\Omega^q_{R,\log}$ is bijective if
$R$ is a regular local ring (cf. \cite[Thm.~5.1]{Morrow-ENS}).

\subsection{Kato filtration}\label{sec:katofil}

If $R$ is a regular local ring, the exact sequence
\begin{equation}\label{eqn:Milnor-0}
  0 \to W_m\Omega^q_{{(-)}, \log} \to Z_1W_m\Omega^q_{(-)} \xrightarrow{1-C}
    W_m\Omega^q_{(-)} \to 0
\end{equation}
of sheaves on $(\Spec R)_\et$ gives an exact sequence
\begin{equation}\label{eqn:Milnor-0.1}
  0 \to W_m\Omega^q_{R, \log} \to Z_1W_m\Omega^q_R \xrightarrow{1-C} W_m\Omega^q_R
  \xrightarrow{\delta^q_m} H^{q+1}_m(R) \to 0.
  \end{equation}
We also have a commutative diagram (cf. \cite[\S~1.3]{Kato-89})
\begin{equation}\label{eqn:Milnor-1}
  \xymatrix@C1pc{
    {K^M_q(R)}/{p^m} \otimes W_m\Omega^{q'}_{R} \ar[r]^-{\cup}
    \ar[d]_-{id \otimes \delta^{q}_m} \ar[dr]^-{\lambda^{q+q'}_m} &
    W_m\Omega^{q+q'}_{R} \ar[d]^-{\delta^{q+q'}_{m}} \\
     {K^M_q(R)}/{p^m} \otimes H^{q'+1}_m(R) \ar[r]^-{\cup} &
     H^{q+q'+1}_m(R),}
\end{equation}
where the bottom row is induced by the Bloch-Kato map
$\beta^q_{R} \colon K^M_q(R) \to H^q_\et(R, {\Z}/{p^m}(q))$,
followed by the cup product on the {\'e}tale cohomology of
$W_m\Omega^\bullet_{(-), \log}$ 
on $\Spec(R)$. The top row is induced by the dlog map, followed by the wedge
product. We shall denote the composite map
$W_m\Omega^q_R \xrightarrow{\delta^q_m} H^{q+1}_m(R) \to H^{q+1}(R)$
also by $\delta^q_m$.

We now let $A$ be an $F$-finite Henselian discrete valuation ring containing $k$.
Let $\fm = (\pi)$
denote the maximal ideal of $A$. Let $\ff = {A}/{\fm}$ and $K = Q(A)$. 
\begin{lem}\label{lem:Kato-fil-9}
  The map $ H^1_\et(K, W_m\Omega^q_{K, \log}) \to \ _{p^m}H^{q+1}(K)$ is a bijection.
\end{lem}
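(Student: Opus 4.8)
The plan is to run the standard ``coefficients mod $p^m$ versus $p$-primary torsion'' comparison, using the $p$-$R$ exact sequences of \propref{prop:usual} for the logarithmic de Rham--Witt sheaves on $\Spec K$. First I would reformulate: by the definition $\Z/p^m(q) = W_m\Omega^q_{(-),\log}[-q]$ there is a canonical identification $H^1_\et(K,W_m\Omega^q_{K,\log}) = H^{q+1}_m(K)$, and $H^{q+1}(K)\{p\} = \varinjlim_j H^{q+1}_j(K)$, the colimit being taken along the transition maps $\un p\colon H^{q+1}_j(K)\to H^{q+1}_{j+1}(K)$ induced by $\ov p\colon W_j\Omega^q_{(-),\log}\to W_{j+1}\Omega^q_{(-),\log}$. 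Since $p^m$ annihilates $W_m\Omega^q_{(-)}$ (by \propref{prop:basics}(2), as $W_0\Omega^q=0$), it annihilates $W_m\Omega^q_{(-),\log}$ and hence $H^{q+1}_m(K)$, so the map of the lemma factors through $H^{q+1}(K)\{p\}$; it will therefore be enough to show that the canonical map $H^{q+1}_m(K)\to \varinjlim_j H^{q+1}_j(K)$ is injective with image equal to $\ _{p^m}H^{q+1}(K) = \ _{p^m}\bigl(\varinjlim_j H^{q+1}_j(K)\bigr)$ (the prime-to-$p$ part of $H^{q+1}(K)$ contributing no $p$-power torsion).

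Next I would fix $j\ge m$ and apply $H^*_\et(\Spec K,-)$ to the short exact sequence of \'etale sheaves provided by \propref{prop:usual}(1), namely
\[
0 \to W_m\Omega^q_{(-),\log} \xrightarrow{\;\un p^{\,j-m}\;} W_j\Omega^q_{(-),\log} \xrightarrow{\;R^{m}\;} W_{j-m}\Omega^q_{(-),\log} \to 0 .
\]
I would use that $H^0_\et(\Spec K, W_k\Omega^q_{(-),\log}) = W_k\Omega^q_{K,\log}$ (the $\dlog$-image) by \eqref{eqn:Milnor-0.1}, and that the higher \'etale cohomology of $W_k\Omega^q_{(-)}$ and $Z_1W_k\Omega^q_{(-)}$ on the point $\Spec K$ vanishes --- after the $V$-$dV$ filtration of \propref{prop:basics}(1) their geometric stalks are successive extensions of $\Omega^\bullet$-subquotients of a separable closure $K^{\rm sep}$ of $K$, which are free $K^{\rm sep}$-modules and hence $\Gal(K^{\rm sep}/K)$-cohomologically trivial by the normal basis theorem --- so by \eqref{eqn:Milnor-0} also $H^{\ge 2}_\et(\Spec K, W_k\Omega^q_{(-),\log})=0$. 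The connecting map $H^0(W_{j-m}\Omega^q_{K,\log})\to H^1(W_m\Omega^q_{K,\log})$ vanishes because $R^{m}$ is surjective on global sections, carrying $\dlog[x_1]_j\wedge\cdots\wedge\dlog[x_q]_j$ to $\dlog[x_1]_{j-m}\wedge\cdots\wedge\dlog[x_q]_{j-m}$, so the long exact cohomology sequence collapses to
\[
0 \to H^{q+1}_m(K) \xrightarrow{\;\un p^{\,j-m}\;} H^{q+1}_j(K) \xrightarrow{\;R^{m}\;} H^{q+1}_{j-m}(K) \to 0 .
\]

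Then I would conclude as follows. Taking $j=m+1$ shows each transition map $\un p$ is injective, hence $H^{q+1}_m(K)\to\varinjlim_j H^{q+1}_j(K)$ is injective, and its image is $p^m$-torsion by the first paragraph. Conversely, given a $p^m$-torsion element $x$ of the colimit, I would lift it to $x_j\in H^{q+1}_j(K)$ with $j\ge m$ and $p^mx_j=0$. Because $p=\un p\circ R$ on the de Rham--Witt complex and $\un p$ is additive, iterating yields $p^m=\un p^{\,m}\circ R^{m}$ on $W_j\Omega^q_{(-),\log}$, so $p^m x_j = \un p^{\,m}(R^{m}x_j)$ with $R^{m}x_j\in H^{q+1}_{j-m}(K)$; since $\un p^{\,m}$ is an iterate of injective transition maps, $p^mx_j=0$ forces $R^{m}x_j=0$, and then the displayed short exact sequence gives $x_j=\un p^{\,j-m}(y)$ for some $y\in H^{q+1}_m(K)$, so $x$ is the image of $y$ in the colimit. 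This yields surjectivity onto the $p^m$-torsion and completes the argument.

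I expect the main obstacle to be the input on $\Spec K$ that feeds the second paragraph --- the vanishing of the higher \'etale cohomology of $W_k\Omega^q_{(-)}$ and $Z_1W_k\Omega^q_{(-)}$, together with the identification of $H^0$ of $W_k\Omega^q_{(-),\log}$ with the $\dlog$-image (the Bloch--Kato--Gabber theorem) --- since the whole argument rests on the five-term exact sequence degenerating to the displayed short exact sequence of the $H^{q+1}_j$'s.
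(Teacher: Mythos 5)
Your proof is correct, but it takes a genuinely different route from the paper's. The paper proves \lemref{lem:Kato-fil-9} by working with the Milnor $K$-sheaf: since $\sK^M_{q,X}$ has no $p$-torsion (Geisser--Levine), there is an exact sequence of {\'e}tale sheaves $0 \to \sK^M_{q,X}/p^m \to \varinjlim_r \sK^M_{q,X}/p^r \xrightarrow{p^m} \varinjlim_r \sK^M_{q,X}/p^r \to 0$; identifying $\sK^M_{q,X}/p^r$ with $W_r\Omega^q_{X,\log}$ via $\dlog$ and using that $p^m$ is surjective on $H^0$ of the middle sheaf, i.e.\ on $K^M_q(K)\otimes \Q_p/\Z_p$, the cohomology sequence at once identifies $H^1_\et(K,W_m\Omega^q_{K,\log})$ with $\ _{p^m}\bigl(H^{q+1}(K)\{p\}\bigr)$. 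You instead climb the $\un{p}$--$R$ ladder of \propref{prop:usual}(1); this costs two extra inputs --- the vanishing of $H^i_\et(K, W_k\Omega^q)$ and $H^i_\et(K, Z_1W_k\Omega^q)$ for $i\ge 1$ (hence $H^{\ge 2}$ of the log sheaves) and the surjectivity of $R^m$ on $\dlog$ forms --- but buys the short exact sequences $0\to H^{q+1}_m(K)\to H^{q+1}_j(K)\to H^{q+1}_{j-m}(K)\to 0$ for $j\ge m$, after which your torsion computation via the factorization $p=\un{p}\circ R$ is clean and gives the same conclusion. Both arguments ultimately rest on the same foundations (Bloch--Kato--Gabber/Geisser--Levine together with the CTSS-type $p$--$R$ sequence), the paper's being shorter because divisibility of $K^M_q(K)\otimes\Q_p/\Z_p$ replaces your degree-$\ge 2$ vanishing and colimit bookkeeping, while yours records the useful intermediate fact that each transition map $\un{p}\colon H^{q+1}_j(K)\to H^{q+1}_{j+1}(K)$ is injective. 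One small remark on your acyclicity step: the justification the paper itself uses (see \lemref{lem:hyp}) is that $W_k\Omega^q$ and $Z_1W_k\Omega^q$ are coherent $W_k\sO$-modules on an affine scheme, so their higher {\'e}tale cohomology vanishes; your normal-basis argument also works, but you should note that the graded pieces $B_i\Omega^\bullet_{K^{\rm sep}}$ and $Z_i\Omega^\bullet_{K^{\rm sep}}$ are $K^{\rm sep}$-vector spaces only for the Frobenius-twisted module structure, and that $\Omega^q_{K^{\rm sep}}\cong \Omega^q_K\otimes_K K^{\rm sep}$ because $K^{\rm sep}/K$ is separable algebraic.
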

\begin{proof}
  We let $X = \Spec(K)$.
Since the {\'e}tale sheaf $\sK^M_{q,X}$ has no $p$-torsion (see \cite[Thm.~8.1]{Geisser-Levine}),
  there is an exact sequence of sheaves
  \[
  0 \to {\sK^M_{q,X}}/{p^m} \to {\varinjlim}_r  {\sK^M_{q,X}}/{p^r}
  \xrightarrow{p^m} {\varinjlim}_r  {\sK^M_{q,X}}/{p^r} \to 0.
  \]
  Considering the cohomology and observing that
  $H^0_\et(X,  {\sK^M_{q,X}}/{p^r}) \cong H^0_\et(X,  W_r\Omega^q_{X, \log}) \cong
  W_r\Omega^q_{K, \log} \cong {K^M_q(K)}/{p^r}$,
  we get an exact sequence
  \[
  0 \to  {K^M_q(K)}/{p^m} \to K^M_q(K)\otimes {\Q_p}/{\Z_p}   \xrightarrow{p^m}
  {K^M_q(K)} \otimes {\Q_p}/{\Z_p} \xrightarrow{\partial} \hspace*{3cm}
  \]
  \[
  \hspace*{5cm}
  H^1_\et(X, W_r\Omega^q_{X, \log}) \to H^{q+1}(K)\{p\}  \xrightarrow{p^m}  H^{q+1}(K)\{p\}.
  \]
  Since $K^M_q(K)\otimes {\Q_p}/{\Z_p} \xrightarrow{p^m} {K^M_q(K)}\otimes {\Q_p}/{\Z_p}$
is surjective, the lemma follows.
\end{proof}

We fix integers $n, q \ge 0$ and $m \ge 1$. Let $T^{m,q}_n(K)$ denote the cokernel of the map
$(1-C) \colon Z_1\Fil_nW_m\Omega^q_K \to \Fil_nW_m\Omega^q_K$. 
\begin{lem}\label{lem:hyp}
        Let $X=\Spec A, m\ge 1, q \ge 0$. Let $E=V((\pi)),\ U=X\setminus E$ and $D_n=nE$, for $n \ge 0$. Then, we have 
  \begin{enumerate}
      \item $H^i_\et(X, Z_1 \Fil_{D_{n}} W_m \Omega^q_U) =
  H^i_\et(X, \Fil_{D_{n}} W_m \Omega^q_U) = 0, \text{ for $i \ge 1$.}$ 
  \item \[
\H^i_\et(X, W_m\sF^q_{D_n}) = \left\{\begin{array}{ll}
K^M_q(K)/p^m & \mbox{if $i = 0$} \\
T^{m,q}_n(K) & \mbox{if $i = 1$} \\
 0 & \mbox{if $i > 1$.}
\end{array}\right.
\]
  \end{enumerate}
   \end{lem}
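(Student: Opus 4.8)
The strategy is to reduce everything to a single local computation on $X = \Spec A$ with $A$ an $F$-finite Henselian DVR, exploiting the fact that all the sheaves appearing are built from quasi-coherent $W_m\sO_X$-modules and coherent $W_m\sO_X$-modules twisted by the divisor $D_n$. For part (1), I would first observe that $\Fil_{D_n}W_m\Omega^q_U$ sits inside $j_*W_m\Omega^q_U$ as a subsheaf of $W_m\sO_X$-modules (by \lemref{lem:Log-fil-4}) which is of finite type over $W_m\sO_X$ (again by \lemref{lem:Log-fil-4}(1)); since $X = \Spec A$ is the spectrum of a (Henselian, hence in particular Noetherian local) ring and $W_m\sO_X$ is a sheaf of Noetherian rings on the one-point underlying space $|X|$, any quasi-coherent $W_m\sO_X$-module is flasque-acyclic in the Zariski topology, so $H^i_\zar(X, \Fil_{D_n}W_m\Omega^q_U) = 0$ for $i \ge 1$. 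The same reasoning applies to $Z_1\Fil_{D_n}W_m\Omega^q_U$, which by \thmref{thm:Global-version}(3) equals $F(\Fil_{D_n}W_{m+1}\Omega^q_U)$, hence is again a finite-type $W_m\sO_X$-module (quotient/image of such). The only subtlety is that $H^i_\et$ is asked, not $H^i_\zar$; for this I would invoke that on the small étale site of the spectrum of a Henselian local ring, cohomology of a sheaf agrees with the stalk at the closed point (the Henselian local ring has no nontrivial finite étale covers issues for quasi-coherent sheaves — more precisely, $H^i_\et(X, \cF) = H^i_\zar(X,\cF)$ for quasi-coherent $\cF$ on an affine, and étale cohomology of the Henselization of a local ring in quasi-coherent sheaves vanishes in positive degrees). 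So part (1) is essentially the vanishing of higher cohomology of coherent sheaves on a local affine scheme, transported through the étale topology via Henselianness.

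For part (2), I would compute the hypercohomology of the two-term complex
\[
W_m\sF^{q,\bullet}_{D_n} = \left(Z_1\Fil_{D_n}W_m\Omega^q_U \xrightarrow{1-C} \Fil_{D_n}W_m\Omega^q_U\right)
\]
placed in degrees $0$ and $1$. The hypercohomology spectral sequence $E_1^{s,t} = H^t_\et(X, W_m\sF^{q,s}_{D_n}) \Rightarrow \H^{s+t}_\et(X, W_m\sF^{q,\bullet}_{D_n})$ degenerates at $E_2$ because part (1) kills all rows $t \ge 1$; hence $\H^i_\et(X, W_m\sF^{q,\bullet}_{D_n})$ is computed by the complex of global sections
\[
H^0_\et(X, Z_1\Fil_{D_n}W_m\Omega^q_U) \xrightarrow{1-C} H^0_\et(X, \Fil_{D_n}W_m\Omega^q_U),
\]
i.e. $\H^0 = \ker(1-C)$ on global sections, $\H^1 = \cok(1-C)$ on global sections $= T^{m,q}_n(K)$ by definition of the latter, and $\H^i = 0$ for $i > 1$. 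It remains to identify $\H^0 = \ker\big(Z_1\Fil_nW_m\Omega^q_K \xrightarrow{1-C} \Fil_nW_m\Omega^q_K\big)$ with $K^M_q(K)/p^m$. Here I would argue as follows: by \propref{prop:Cartier-fil-1}, an element $\omega \in Z_1W_m\Omega^q_K$ lies in $Z_1\Fil_nW_m\Omega^q_K$ iff $(1-C)(\omega) \in \Fil_nW_m\Omega^q_K$; in particular $\ker\big(1-C \text{ on } Z_1\Fil_nW_m\Omega^q_K\big) = \ker\big(1-C \text{ on } Z_1W_m\Omega^q_K\big)$, since for such $\omega$ the condition $(1-C)(\omega)=0$ automatically forces $\omega \in \Fil_n$ (as $0 \in \Fil_n$). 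But $\ker(1-C : Z_1W_m\Omega^q_K \to W_m\Omega^q_K) = W_m\Omega^q_{K,\log} \cong K^M_q(K)/p^m$ by the exact sequence \eqref{eqn:Milnor-0.1} applied to the regular local ring $A$ (or rather its fraction field; more precisely one uses \propref{prop:usual}(3) and the bijectivity of $\dlog$ on regular local rings cited from \cite{Morrow-ENS}), together with $H^0_\et(K, W_m\Omega^q_{K,\log}) = W_m\Omega^q_{K,\log}$.

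\textbf{Main obstacle.} The delicate point I expect to spend most effort on is the passage from Zariski to étale cohomology in part (1): one must be careful that $\Fil_{D_n}W_m\Omega^q_U$ and $Z_1\Fil_{D_n}W_m\Omega^q_U$, while defined as Zariski sheaves, have vanishing higher \emph{étale} cohomology on $\Spec A$. For quasi-coherent $W_m\sO_X$-modules this follows because $W_m(A)$ is a Henselian local ring (as $A$ is Henselian) and higher étale cohomology of a Henselian local ring with coefficients in a quasi-coherent sheaf vanishes — but I would need to justify that $Z_1\Fil_{D_n}W_m\Omega^q_U$ is genuinely quasi-coherent over $W_m\sO_X$, which is where \thmref{thm:Global-version}(3) (identifying it as $F(\Fil_{D_n}W_{m+1}\Omega^q_U)$, a finite-type $W_m\sO_X$-module via the finite morphism $F$, cf. \lemref{lem:Non-complete-7}(1)) is essential. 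Once quasi-coherence is in hand, the vanishing is standard, and the rest of the argument (the hypercohomology spectral sequence and the identification of $\H^0$ via \propref{prop:Cartier-fil-1} and the dlog isomorphism) is routine.
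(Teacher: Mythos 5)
Your proposal is correct and, for the most part, runs along the same lines as the paper's proof. Part (1) is exactly the paper's argument: the sheaves in question are finite type $W_m\sO_X$-modules (for $Z_1\Fil_{D_n}W_m\Omega^q_U$ one uses, as you do, that it equals $F(\Fil_{D_n}W_{m+1}\Omega^q_U)$ and that $F$ is finite), i.e. coherent sheaves on the affine scheme $W_m(X)$, so their higher Zariski, equivalently \'etale, cohomology vanishes. Part (2) then reduces, as in the paper, to the two-term complex of global sections, giving $\H^1 = T^{m,q}_n(K)$ by the very definition of $T^{m,q}_n(K)$ and $\H^i=0$ for $i>1$. The one place you genuinely diverge is the identification $\H^0 \cong K^M_q(K)/p^m$: you obtain it from \propref{prop:Cartier-fil-1} (taking $(1-C)(\omega)=0\in \Fil_nW_m\Omega^q_K$ forces $\omega\in Z_1\Fil_nW_m\Omega^q_K$, so the kernel of $1-C$ on $Z_1\Fil_n$ equals the full kernel $W_m\Omega^q_{K,\log}$), whereas the paper squeezes $\H^0$ between $K^M_q(A)/p^m$ and $K^M_q(K)/p^m$ and then observes directly that, $A$ being a DVR, every symbol generator $\dlog[a_1]_m\wedge\cdots\wedge\dlog[a_q]_m$ or $\dlog[a_1]_m\wedge\cdots\wedge\dlog[\pi]_m$ with $a_i\in A^\times$ already lies in $Z_1\Fil_0W_m\Omega^q_K\subset Z_1\Fil_nW_m\Omega^q_K$. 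Your route works, but carries a caveat you should make explicit: \propref{prop:Cartier-fil-1} is proved in the complete setting $A=S[[\pi]]$, while in the present lemma $A$ is only a Henselian DVR, so you must first reduce to the completion, e.g. via \corref{cor:Complete-2} with $r=1$ (this is precisely how the paper handles the analogous point in \corref{cor:VR-5}), or else fall back on the paper's dlog-generator argument, which sidesteps the issue entirely. With that reduction supplied, your identification of $\H^0$ is a perfectly good, slightly shorter alternative; the paper's version buys independence from the Cartier-filtration proposition at this step.
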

   \begin{proof}
       (1) Recall that higher cohomology of coherent sheaves on affine space is zero. Hence $H^i_\et(X, Z_1 \Fil_{D_{n}} W_m \Omega^q_U)=H^i_\et(W_m(X), Z_1 \Fil_{D_{n}} W_m \Omega^q_U)=0$. Similarly, we have $H^i_\et(X, \Fil_{D_{n}} W_m \Omega^q_U) = 0$. Here, $W_m(X)$ denotes the scheme $(X, W_m\sO_{X})$.  

    (2) We look at the following hypercohomology long exact sequence
    \begin{equation}\label{eqn:hyp}
         0 \to \H^0_\et(W_m\sF^q_{D_n}) \to H^0_\et(Z_1 \Fil_{D_{n}} W_m \Omega^q_U) \xrightarrow{1-C} H^0(\Fil_{D_{n}} W_m \Omega^q_U) \to \H^1_\et(W_m\sF^q_{D_n}) \xrightarrow{} 0,
    \end{equation}
    where $\H^i_\et(\sE)$ (resp. $H^i_\et(\sE)$) is the short hand notation for $\H^i_\et(X,\sE)$ (resp. $H^i_\et(X,\sE)$) and the last term of \eqref{eqn:hyp} is $0$ because of item (1). Since $W_m\Omega^q_{A} \subset \Fil_{n}W_m\Omega^q_{K} \subset W_m\Omega^q_{K}$, we have 
    \begin{equation*}
        \begin{array}{cl}
           K^M_q(A)/p^m   &\xrightarrow[\cong]{\dlog}\Ker(1-C: Z_1W_m\Omega^q_{A} \to W_m\Omega^q_{A} ) \\
             &\subset \Ker(1-C: Z_1\Fil_{n}W_m\Omega^q_{K} \to \Fil_{n}W_m\Omega^q_{K} )= \H^0_\et(A,W_m\sF^q_{D_n})\\
             &\subset \Ker(1-C: Z_1W_m\Omega^q_{K} \to W_m\Omega^q_{K} )= K^M_q(K)/p^m.
        \end{array}
    \end{equation*}
    Since $A$ is DVR, we note that $\dlog(K^M_q(K)/p^m) \subset Z_1W_m\Omega^q_{K}$ is generated by (as a group) elements of the form 
    \begin{enumerate}
        \item $\dlog([a_1]_m)\wedge \cdots \wedge \dlog([a_q]_m)$ and
        \item $\dlog([a_1]_m)\wedge \cdots \wedge \dlog([a_{q-1}]_m)\wedge\dlog([\pi_x]_m)$,
    \end{enumerate}
    where where $a_i \in A^\star$.  However, the elements of the form (1) and (2) also lie within $Z_1\Fil_{0}W_m\Omega^q_{K} \subset Z_1\Fil_{n}W_m\Omega^q_{K}$, for $n \ge 0$. Hence, we conclude that $$K^M_q(K)/p^m \cong \H^0_\et(A,W_m\sF^q_{D_n}), \text{ and}$$
    $$\H^1_\et(X,W_m\sF^q_{D_n}) =\coker(1-C: Z_1\Fil_{n}W_m\Omega^q_{K} \to \Fil_{n}W_m\Omega^q_{K} )=T^{m,q}_n(K).$$
 Also, we get that $\H^i_\et(X,W_m\sF^q_{D_n})=0$ by looking at the hypercohomology long exact sequence and using item (1). This proves the lemma.
   \end{proof}
We have an exact
sequence
\begin{equation}\label{eqn:Milnor-2}
  0 \to W_m\Omega^q_{K, \log} \to Z_1\Fil_nW_m\Omega^q_K \xrightarrow{1-C}
  \Fil_nW_m\Omega^q_K \xrightarrow{\partial^{m,q}_n} T^{m,q}_n(K) \to 0
  \end{equation}
which canonically maps to the exact sequence
~\eqref{eqn:Milnor-0.1} with $R = K$. We let $\ov{\delta}^{m,q}_n \colon
T^{m,q}_n(K) \to H^{q+1}(K)$ be the induced map.

By \lemref{lem:Log-fil-4}(8) (for $D=0$, $D'=nV(\pi)$), we see that the product map
on the top row of ~\eqref{eqn:Milnor-1} (with $R =K$) restricts to
a map
\begin{equation}\label{eqn:Milnor-3}
K^M_q(K)/p^m \otimes \Fil_nW_m\Omega^{q'}_K \xrightarrow{\cup} \Fil_nW_m\Omega^{q+q'}_K.
\end{equation}
It follows from Lemmas~\ref{lem:VR-3} and ~\ref{lem:VR-4}
that the composite map $ K^M_q(K) \otimes \Fil_nW_m(K) \xrightarrow{\cup}
\Fil_nW_m\Omega^q_K \xrightarrow{\partial^{m,q}_n} T^{m,q}_n(K)$ is surjective.
Combining this with \propref{prop:Cartier-fil-1} and \corref{cor:Complete-2} (for $r=1$), we get the following.

\begin{cor}\label{cor:VR-5}
  The map
  \[
 \ov{\delta}^{m,q}_n \colon T^{m,q}_n(K) \to
    {\rm Image} (\Fil_nW_m(K) \otimes K^M_q(K)  \xrightarrow{\lambda^q_m} H^{q+1}(K))
    \]
    is a bijection.
\end{cor}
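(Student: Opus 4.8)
The plan is to deduce the two halves of the bijection separately from the map of exact sequences \eqref{eqn:Milnor-2} $\to$ \eqref{eqn:Milnor-0.1} (taken at $R=K$) together with the commutative square \eqref{eqn:Milnor-1}, bearing in mind that $\ov{\delta}^{m,q}_n$ is by construction the unique homomorphism with $\ov{\delta}^{m,q}_n\circ\partial^{m,q}_n=\delta^q_m$ on $\Fil_nW_m\Omega^q_K$.

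For surjectivity onto the stated image, I would specialize \eqref{eqn:Milnor-1} to $R=K$ and $q'=0$ and restrict the factor $W_m(K)$ to $\Fil_nW_m(K)$; this is legitimate since the cup product then lands in $\Fil_nW_m\Omega^q_K$ by \eqref{eqn:Milnor-3}. The square then says that $\delta^q_m$ applied to the cup product $K^M_q(K)\otimes\Fil_nW_m(K)\to\Fil_nW_m\Omega^q_K$ agrees with $\lambda^q_m$. Since $\delta^q_m=\ov{\delta}^{m,q}_n\circ\partial^{m,q}_n$ on $\Fil_nW_m\Omega^q_K$, and since $\partial^{m,q}_n\circ\cup$ was already shown to be surjective onto $T^{m,q}_n(K)$ by Lemmas~\ref{lem:VR-3} and~\ref{lem:VR-4}, the image of $\ov{\delta}^{m,q}_n$ coincides with the image of $\lambda^q_m$ restricted to $K^M_q(K)\otimes\Fil_nW_m(K)$, which is exactly the right-hand group of the corollary.

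For injectivity, the idea is to take a class of $T^{m,q}_n(K)$ killed by $\ov{\delta}^{m,q}_n$, lift it to some $\omega\in\Fil_nW_m\Omega^q_K$ with $\delta^q_m(\omega)=0$ in $H^{q+1}(K)$, and show $\omega\in(1-C)(Z_1\Fil_nW_m\Omega^q_K)$. First I would invoke \lemref{lem:Kato-fil-9} — which identifies $H^{q+1}_m(K)$ with ${}_{p^m}H^{q+1}(K)$, hence shows the canonical map $H^{q+1}_m(K)\to H^{q+1}(K)$ is injective — to conclude that $\delta^q_m(\omega)=0$ already in $H^{q+1}_m(K)$; exactness of \eqref{eqn:Milnor-0.1} then gives $\omega=(1-C)(\eta)$ with $\eta\in Z_1W_m\Omega^q_K$. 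The decisive step is then \propref{prop:Cartier-fil-1}: since $(1-C)(\eta)=\omega\in\Fil_nW_m\Omega^q_K$, that proposition forces $\eta\in Z_1\Fil_nW_m\Omega^q_K$, so $\omega$ lies in $(1-C)(Z_1\Fil_nW_m\Omega^q_K)$ and the class is $0$. The point requiring care — and, in my view, the main obstacle — is that \propref{prop:Cartier-fil-1} is stated for the fraction field of a power-series ring $S[[\pi]]$, whereas $A$ here is only an $F$-finite Henselian discrete valuation ring; its completion $\wh{A}=\ff[[\pi]]$ does satisfy that hypothesis, and \corref{cor:Complete-2} with $r=1$ yields $\Fil_nW_m\Omega^q_K=W_m\Omega^q_K\cap\Fil_nW_m\Omega^q_{\wh{K}}$, together with the companion identity $Z_1\Fil_nW_m\Omega^q_K=W_m\Omega^q_K\cap Z_1\Fil_nW_m\Omega^q_{\wh{K}}$, which is precisely what lets the conclusion of \propref{prop:Cartier-fil-1} descend from $\wh{K}$ to $K$. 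Apart from this descent, the whole argument is a diagram chase.
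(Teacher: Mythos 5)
Your argument is correct and follows the paper's own proof essentially verbatim: surjectivity onto the image comes from the commutativity of \eqref{eqn:Milnor-1} together with the surjectivity of $K^M_q(K)\otimes\Fil_nW_m(K)\to T^{m,q}_n(K)$ (Lemmas~\ref{lem:VR-3}, \ref{lem:VR-4} and \eqref{eqn:Milnor-3}), while injectivity reduces via \lemref{lem:Kato-fil-9} to injectivity of $T^{m,q}_n(K)\to H^{q+1}_m(K)$, handled by \propref{prop:Cartier-fil-1} in the complete case and by \corref{cor:Complete-2} (with $r=1$) to descend to the Henselian case. Your more explicit spelling-out of the descent from $\wh{K}$ to $K$ is exactly what the paper's terse citation of \corref{cor:Complete-2} intends, so there is no genuine difference in approach.
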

\begin{proof}
    By \lemref{lem:Kato-fil-9}, we only need to show the map $T^{m,q}_n(K)\to H^{q+1}_m(K)$ is injective. If $K$ is complete then this follows from \propref{prop:Cartier-fil-1}. If $K$ is not complete, then we reduce to the complete case by \corref{cor:Complete-2} (for $r=1$).
\end{proof}

We now recall Kato's filtration on $H^{q}(K)$ from \cite{Kato-89}. For any
$A$-algebra $R$, let $R^h$ denote the Henselization of $R$ with respect to the
ideal $\fm R$. We let
$\Spec({R}/{\fm R}) \xrightarrow{\iota} \Spec(R) \xleftarrow{j} \Spec(R[\pi^{-1}])$
be the inclusions and let
\begin{equation}\label{eqn:Kato-0}
V^q(R) = {\varinjlim}_{l} H^q_\et(\Spec({R}/{\fm R}), \iota^*Rj_*({\Z}/{l}(q-1)))
\cong H^q(R^h[\pi^{-1}]).
\end{equation}
The following definition is due to Kato \cite[Defn.~2.1]{Kato-89}, inspired by
a filtration of $W_m(K)$ by Brylinski \cite{Brylinski}. 

\begin{defn}\label{defn:Kato-1}
We let $\Fil^{\bk}_nH^{q}(K)$ be the set of all elements
  $\chi \in H^q(K)$ such that $\{\chi, 1 + \pi^{n+1}T\} = 0$ in $V^{q+1}(A[T])$
under the product map $$ H^q(K)\otimes K^M_1((A[T])^h[\pi^{-1}])  \to V^{q+1}(A[T]).$$ 
We let $\gr^{\bk}_0H^q(K) = \Fil^{\bk}_0H^q(K)$ and
$\gr^{\bk}_nH^q(K) = {\Fil^{\bk}_nH^{q}(K)}/{\Fil^{\bk}_{n-1}H^{q}(K)}$ if $n \ge 1$.
  \end{defn}

The following result of Kato (cf. \cite[Thm.~3.2]{Kato-89})
provides a description of $\Fil^{\bk}_nH^{q}(K)$ in terms of Milnor $K$-theory and
Witt-vectors.

\begin{thm}\label{thm:Kato-2}
  One has the following.
  \begin{enumerate}
  \item
    $\Fil^{\bk}_nH^1(K)\{p\} = {\underset{m \ge 1}\bigcup} \delta^0_m(\Fil_nW_m(K))$.
  \item
 $\Fil^{\bk}_nH^{q+1}(K)\{p\}$ coincides with the image of
    \[
    K^M_q(K) \otimes \Fil^{\bk}_nH^1(K)\{p\} \xrightarrow{\cup} H^{q+1}(K).
    \]
    \end{enumerate}
\end{thm}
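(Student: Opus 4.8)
\textbf{Proof plan for Theorem~\ref{thm:Kato-2}.}

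This is Kato's theorem (\cite[Thm.~3.2]{Kato-89}), so the plan is to follow Kato's argument, using the filtered de Rham--Witt machinery developed in this chapter as the technical backbone. I would organize the proof around the comparison of the filtration $\Fil^{\bk}_\bullet H^{q+1}(K)$ with the image of the filtered de Rham--Witt groups under $\delta^\bullet_m$.

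First I would prove (1), the case $q=0$. The inclusion $\bigcup_m \delta^0_m(\Fil_nW_m(K)) \subset \Fil^{\bk}_nH^1(K)\{p\}$ is the easier direction: given $\un a \in \Fil_nW_m(K)$, one computes the symbol $\{\delta^0_m(\un a), 1+\pi^{n+1}T\}$ in $V^1(A[T])$ directly using the explicit description of the Artin--Schreier--Witt pairing and the fact that $a_i^{p^i}\pi^n \in A$ for all $i$ (this is exactly Brylinski's computation, adapted to Witt vectors). For the reverse inclusion I would argue that $\Fil^{\bk}_nH^1(K)\{p\}$ consists of characters whose ramification is bounded by $n$, and then use the structure of $\Fil_nW_m(K)$ obtained in \propref{prop:Fil-decom} and \propref{prop:V-R-Fil} (the $V$-$R$ exact sequence) together with \lemref{lem:Cartier-fil-0} and \propref{prop:Cartier-fil-1}: an element $\chi$ lifts to some $\omega \in W_m(K)$ with $\delta^0_m(\omega)=\chi$, and the boundedness of ramification forces $(1-C)(\omega)$, hence $\omega$ itself up to the image of $1-C$, to lie in $\Fil_nW_m(K)$ by \propref{prop:Cartier-fil-1}. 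The key input making this work in the $F$-finite (not necessarily perfect residue field) setting is precisely the description of $\Fil_nW_m\Omega^0_K$ via Geisser--Hesselholt's basis, which is what allows Kato's original perfect-residue-field argument to go through verbatim.

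For (2), the inclusion ``image $\subset \Fil^{\bk}_nH^{q+1}(K)\{p\}$'' follows from (1) and multiplicativity of the symbol $\{-,1+\pi^{n+1}T\}$: if $\alpha \in \Fil^{\bk}_nH^1(K)\{p\}$ and $\xi \in K^M_q(K)$, then $\{\xi\cup\alpha, 1+\pi^{n+1}T\} = \pm\, \xi \cup \{\alpha, 1+\pi^{n+1}T\} = 0$ in $V^{q+2}(A[T])$, using the module structure of $V^\bullet$ over Milnor $K$-theory. The reverse inclusion is the substantive part: one must show every element of $\Fil^{\bk}_nH^{q+1}(K)\{p\}$ is a sum of cup products of $K^M_q(K)$-symbols with degree-one characters of ramification $\le n$. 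Here I would invoke the surjectivity of $K^M_q(K)\otimes \Fil_nW_m(K) \xrightarrow{\cup} \Fil_nW_m\Omega^q_K \xrightarrow{\partial^{m,q}_n} T^{m,q}_n(K)$ established in the discussion preceding \corref{cor:VR-5} (via Lemmas~\ref{lem:VR-3} and~\ref{lem:VR-4}), combined with \corref{cor:VR-5} which identifies $T^{m,q}_n(K)$ with the image of $\Fil_nW_m(K)\otimes K^M_q(K)$ in $H^{q+1}(K)$ under $\lambda^q_m$. Chasing through diagram~\eqref{eqn:Milnor-1} with $R=K$, the map $\lambda^{q+q'}_m$ factors the cup product with a degree-one class $\delta^0_m(\un a)$ through $K^M_q(K)$, so the image of $\Fil_nW_m(K)\otimes K^M_q(K)$ under $\lambda^q_m$ is exactly the image of $K^M_q(K)\otimes \delta^0_m(\Fil_nW_m(K)) = K^M_q(K)\otimes \Fil^{\bk}_nH^1(K)\{p\}$ (using part (1)) under cup product. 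It then remains to identify this image with all of $\Fil^{\bk}_{n}H^{q+1}(K)\{p\}$; this is where I would need Kato's result that $\bigcup_m \partial^{m,q}_n$-images exhaust $\Fil^{\bk}_nH^{q+1}(K)\{p\}$, which follows by reducing modulo $\Fil^{\bk}_{n-1}$ and comparing graded pieces: the graded piece $\gr^{\bk}_nH^{q+1}(K)$ injects into a twisted differential-forms group by Kato's computation, and the symbols above surject onto it.

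\textbf{Main obstacle.} The hard part is the reverse inclusion in (2), i.e.\ showing that the symbolic elements exhaust $\Fil^{\bk}_nH^{q+1}(K)\{p\}$ and not just a subgroup. In Kato's original treatment this rests on an explicit computation of the graded quotients $\gr^{\bk}_nH^{q+1}(K)$ in terms of differential forms of the residue field (the ``refined Swan conductor'' structure theorem), and one must check that this computation survives when the residue field is merely $F$-finite rather than perfect. I expect this to go through because all the relevant de Rham--Witt structure results --- the $V$-$R$ sequences, Cartier isomorphisms, and the kernel/cokernel computations of \lemref{lem:Complete-10} --- have been established in this generality in the preceding sections, so the computation of $\gr^{\bk}_n$ can be carried out using $\Fil_nW_m\Omega^q_K/\Fil_{n-1}W_m\Omega^q_K$ and its behaviour under $1-C$; the subtlety is purely in keeping track of the interaction between the filtration jump at $n$ and divisibility of $n$ by $p$, exactly as in the proof of \lemref{lem:F-group-2}.
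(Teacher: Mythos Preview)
The paper does not prove this theorem at all: it is stated as a result of Kato with the citation ``cf.\ \cite[Thm.~3.2]{Kato-89}'' and used as a black box. There is therefore nothing in the paper to compare your plan against. The thesis uses \thmref{thm:Kato-2} as an \emph{input} to prove its own refinement \thmref{thm:Kato-4}; see the sentence immediately following the statement of \thmref{thm:Kato-4}, where \thmref{thm:Kato-2} is invoked to get the inclusion $T^{m,q}_n(K) \inj S^{m,q}_n(K)$.

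Your plan also has a structural issue. For the hard direction of (2) you write that you would ``need Kato's result that $\bigcup_m \partial^{m,q}_n$-images exhaust $\Fil^{\bk}_nH^{q+1}(K)\{p\}$'' and then that ``$\gr^{\bk}_nH^{q+1}(K)$ injects into a twisted differential-forms group by Kato's computation.'' But this \emph{is} the content of \thmref{thm:Kato-2}(2): Kato's proof in \cite{Kato-89} proceeds precisely by computing the graded pieces $\gr^{\bk}_n$ via the refined Swan conductor, and your plan simply defers to that computation at the crucial step. The machinery of \corref{cor:VR-5} and \propref{prop:Cartier-fil-1} identifies $T^{m,q}_n(K)$ with the image of $\Fil_nW_m(K)\otimes K^M_q(K)$ in $H^{q+1}(K)$, but it does not by itself tell you that this image coincides with $\Fil^{\bk}_nH^{q+1}(K)\{p\}$; that identification is exactly what Kato's graded-piece computation provides, and the thesis imports it rather than reproving it.
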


In \cite[Page~2 of the Erratum]{Kerz-Saito-ANT}, Kerz and Saito proved the following refined version
of \thmref{thm:Kato-2}(1).

\begin{thm}\label{thm:Kato-3}
  One has $\ _{p^m}\Fil^{\bk}_nH^1(K) = \delta^0_m(\Fil_nW_m(K))$.
\end{thm}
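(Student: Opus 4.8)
The inclusion $\delta^0_m(\Fil_nW_m(K)) \subseteq \ _{p^m}\Fil^{\bk}_nH^1(K)$ is immediate: being in $\Fil^{\bk}_nH^1(K)$ is the $m$-th term of the union in \thmref{thm:Kato-2}(1), and $W_m(K)$ is killed by $p^m$ because $\Ker(p^m \colon W_m(K) \to W_m(K)) = \Ker(R^m \colon W_m(K) \to W_0(K)) = W_m(K)$ by \propref{prop:basics}(2). So only the reverse inclusion requires work, and I would first isolate three formal ingredients. (i) For $m' \ge m$ one has $\delta^0_{m'} \circ V^{m'-m} = \delta^0_m$ as maps $W_m(K) \to H^1(K)$; this is because $V$ sends $Z_1W_m\Omega^0_K$ into $Z_1W_{m+1}\Omega^0_K$, commutes there with $1-C$, and restricts on $W_\bullet\Omega^0_{K,\log}$ to the transition maps of the colimit $H^1(K)\{p\} = \varinjlim_m H^1_m(K)$, so it defines a morphism of the exact sequences \eqref{eqn:Milnor-0.1} realizing those transition maps on the right-hand terms. (ii) The canonical map $H^1_m(K) \to H^1(K)$ is injective with image $\ _{p^m}H^1(K)$; this follows from the long exact cohomology sequence of $0 \to \Z/p^m \to \Q_p/\Z_p \xrightarrow{p^m} \Q_p/\Z_p \to 0$ together with the surjectivity of $p^m$ on $H^0_\et(K,\Q_p/\Z_p) = \Q_p/\Z_p$. (iii) $\Ker(\delta^0_{m'} \colon W_{m'}(K) \to H^1(K)) = (1-C)(Z_1W_{m'}(K))$, which is \eqref{eqn:Milnor-0.1} for $q = 0$ combined with (ii).

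The technical core is the following claim, which I would prove by induction on $m$: for all $m' \ge m \ge 0$ and all $n \ge 0$ (reading $W_0(K) = 0$),
\begin{align*}
&\Fil_nW_{m'}(K) \cap \bigl(V^{m'-m}(W_m(K)) + (1-C)(Z_1W_{m'}(K))\bigr) \\
&\qquad = V^{m'-m}(\Fil_nW_m(K)) + (1-C)(Z_1\Fil_nW_{m'}(K)).
\end{align*}
The inclusion $\supseteq$ holds by \lemref{lem:Log-fil-4}(6) and \lemref{lem:Complete-6}. The base case $m = 0$ reads $\Fil_nW_{m'}(K) \cap (1-C)(Z_1W_{m'}(K)) = (1-C)(Z_1\Fil_nW_{m'}(K))$, which is exactly \propref{prop:Cartier-fil-1}. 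For the inductive step, take $\underline{a}$ in the left-hand side and write $\underline{a} = V^{m'-m}\underline{b} + (1-C)\omega$. Applying $R$ and using $RV = VR$, $RC = CR$, $R(Z_1W_{m'}\Omega^0_K) = Z_1W_{m'-1}\Omega^0_K$ and $R(\underline{a}) \in \Fil_{\lfloor n/p\rfloor}W_{m'-1}(K)$ (\lemref{lem:Log-fil-4}(7)), one sees $R(\underline{a})$ lies in the left-hand side at $(m'-1, m-1, \lfloor n/p\rfloor)$; the inductive hypothesis gives $R(\underline{a}) = V^{m'-m}\underline{b}_1 + (1-C)\omega_1$ with $\underline{b}_1 \in \Fil_{\lfloor n/p\rfloor}W_{m-1}(K)$ and $\omega_1 \in Z_1\Fil_{\lfloor n/p\rfloor}W_{m'-1}(K)$. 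Lifting $\underline{b}_1$ to $\underline{\tilde b}_1 \in \Fil_nW_m(K)$ via the surjection $R \colon \Fil_nW_m(K) \twoheadrightarrow \Fil_{\lfloor n/p\rfloor}W_{m-1}(K)$ of \lemref{lem:Log-fil-4}(7), and $\omega_1$ to $\tilde\omega_1 \in Z_1\Fil_nW_{m'}(K)$ via the surjection of \lemref{lem:VR-0}, and setting $\underline{a}' = V^{m'-m}\underline{\tilde b}_1 + (1-C)\tilde\omega_1 \in \Fil_nW_{m'}(K)$, we get $R(\underline{a}') = R(\underline{a})$, hence $\underline{a} - \underline{a}' \in \Ker(R\colon \Fil_nW_{m'}\Omega^0_K \to \Fil_{\lfloor n/p\rfloor}W_{m'-1}\Omega^0_K) = V^{m'-1}(\Fil_n\Omega^0_K) \subseteq V^{m'-m}(\Fil_nW_m(K))$ by \thmref{thm:Global-version}(2) and \lemref{lem:Log-fil-4}(6). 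Thus $\underline{a} = \underline{a}' + (\underline{a} - \underline{a}')$ lies in the right-hand side.

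Granting the claim, the reverse inclusion is quick. Let $\chi \in \ _{p^m}\Fil^{\bk}_nH^1(K)$. By \thmref{thm:Kato-2}(1) we have $\chi = \delta^0_{m'}(\underline{a})$ with $\underline{a} \in \Fil_nW_{m'}(K)$, and after enlarging $m'$ using (i) we may assume $m' \ge m$. Since $p^m\chi = 0$, (ii) puts $\chi$ in the image of $H^1_m(K) \to H^1(K)$, hence (as $\delta^0_m \colon W_m(K) \twoheadrightarrow H^1_m(K)$ and $\delta^0_{m'}\circ V^{m'-m} = \delta^0_m$) in the image of $\delta^0_{m'}\circ V^{m'-m}$; so there is $\underline{b}_0 \in W_m(K)$ with $\delta^0_{m'}(\underline{a} - V^{m'-m}\underline{b}_0) = 0$, i.e. $\underline{a} - V^{m'-m}\underline{b}_0 \in (1-C)(Z_1W_{m'}(K))$ by (iii). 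Then $\underline{a}$ lies in the left-hand side of the claim, so $\underline{a} = V^{m'-m}\underline{b}' + (1-C)\omega'$ with $\underline{b}' \in \Fil_nW_m(K)$ and $\omega' \in Z_1\Fil_nW_{m'}(K)$; applying $\delta^0_{m'}$ and using (i), $\chi = \delta^0_{m'}(V^{m'-m}\underline{b}') = \delta^0_m(\underline{b}') \in \delta^0_m(\Fil_nW_m(K))$. The delicate point — and the main obstacle — is the inductive step of the claim: lifting a filtered decomposition of $R(\underline{a})$ back up through the restriction map while controlling the drop of the modulus from $n$ to $\lfloor n/p\rfloor$, for which one needs precisely the surjectivity statements of \lemref{lem:Log-fil-4}(7) and \lemref{lem:VR-0} and the identification of $\Ker R$ in \thmref{thm:Global-version}(2).
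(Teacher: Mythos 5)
Your argument is correct in substance, but note that the paper itself does not prove this statement: it is quoted from the erratum to Kerz--Saito \cite{Kerz-Saito-ANT}, and the nearest in-house argument is the proof of the generalization \thmref{thm:Kato-fil-10}, which at $q=0$ recovers the present theorem through the identification $T^{m,0}_n(K)\cong \delta^0_m(\Fil_nW_m(K))$ coming from \corref{cor:VR-5}. The two routes are genuinely different. The paper proves surjectivity of $T^{m,q}_n(K)\to\ _{p^m}\Fil^{\bk}_nH^{q+1}(K)$ by passing to the graded pieces $\wt{T}^{m,q}_n$, $\wt{S}^{m,q}_n$, building the maps $\theta^q_1,\theta^q_2,\phi^q$ out of $\Omega^q_{\ff}$ and invoking Kato's explicit computation of $\gr^{\bk}_nH^{q+1}(K)$. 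You instead take \thmref{thm:Kato-2}(1) as the sole input from Kato's theory and upgrade it to the $p^m$-torsion statement by a purely de Rham--Witt-theoretic induction, whose heart is the intersection formula $\Fil_nW_{m'}(K)\cap\bigl(V^{m'-m}(W_m(K))+(1-C)(Z_1W_{m'}(K))\bigr)=V^{m'-m}(\Fil_nW_m(K))+(1-C)(Z_1\Fil_nW_{m'}(K))$. Your inductive step is sound: the lifting through $R$ is legitimate because $R$ is surjective on $\Fil_n$ and on $Z_1\Fil_n$ (\lemref{lem:Log-fil-4}(7), \lemref{lem:VR-0}) with kernel $V^{m'-1}(\Fil_n\Omega^0_K)\subseteq V^{m'-m}(\Fil_nW_m(K))$ by \thmref{thm:Global-version}(2), and the final deduction from the claim is a clean formal argument. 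What your approach buys is a self-contained, $q=0$-specific proof that avoids the residue-field computations; what it does not give is the general-$q$ statement, for which the graded analysis of \S\ref{sec:loc-Kato} still seems necessary.

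One point needs repair. \propref{prop:Cartier-fil-1} (your base case) is proved only for $K=S[[\pi]]_\pi$, i.e.\ in the complete case, whereas the theorem concerns an $F$-finite Henselian, not necessarily complete, discrete valuation field. You must first reduce to $\wh{K}$ via \corref{cor:Complete-2} with $r=1$, exactly as the paper does in the proof of \corref{cor:VR-5}: since $\Fil_nW_{m'}(K)=W_{m'}(K)\cap\Fil_nW_{m'}(\wh{K})$, an $\omega\in Z_1W_{m'}(K)$ with $(1-C)\omega\in\Fil_nW_{m'}(K)$ lies in $Z_1\Fil_nW_{m'}(\wh{K})\cap W_{m'}(K)=Z_1\Fil_nW_{m'}(K)$. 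With that sentence inserted the base case, and hence the whole argument, goes through.
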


\subsection{Proof of the main theorem}

The main result of this section is the following refinement of
 \thmref{thm:Kato-2}.

 \begin{thm}\label{thm:Kato-4}
   For $n, q \ge 0$ and $m \ge 1$, the map $\delta^q_m \colon
   \Fil_nW_m\Omega^q_K \to H^{q+1}(K)$ induces an isomophism
   \[
  \ov{\delta}^{m,q}_n \colon T^{m,q}_n(K) \xrightarrow{\cong} \ _{p^m}\Fil^{\bk}_nH^{q+1}(K).
 \]
\end{thm}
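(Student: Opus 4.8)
The plan is to deduce Theorem~\ref{thm:Kato-4} from the identification of $T^{m,q}_n(K)$ provided in \corref{cor:VR-5} together with Kato's description of the ramification filtration in \thmref{thm:Kato-2} and its refinement \thmref{thm:Kato-3}. Recall that \corref{cor:VR-5} already tells us that the map $\ov{\delta}^{m,q}_n$ is injective with image exactly equal to the image $N$ of the pairing $\Fil_nW_m(K)\otimes K^M_q(K)\xrightarrow{\lambda^q_m}H^{q+1}(K)$. So the entire content that remains is the equality of subgroups of $H^{q+1}(K)$:
\[
{\rm Image}\left(\Fil_nW_m(K)\otimes K^M_q(K)\xrightarrow{\lambda^q_m}H^{q+1}(K)\right)
= \ _{p^m}\Fil^{\bk}_nH^{q+1}(K).
\]

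First I would prove the inclusion $N\subseteq\ _{p^m}\Fil^{\bk}_nH^{q+1}(K)$. For any $\un{a}\in\Fil_nW_m(K)$ and $\{x_1,\dots,x_q\}\in K^M_q(K)$, the element $\lambda^q_m(\un{a}\otimes\{x_1,\dots,x_q\})$ equals $\delta^0_m(\un{a})\cup\{x_1,\dots,x_q\}$ by the multiplicativity of the diagram \eqref{eqn:Milnor-1} (the case $q'=0$ there, reading the labels). By \thmref{thm:Kato-3}, $\delta^0_m(\un{a})\in\ _{p^m}\Fil^{\bk}_nH^1(K)\subset\Fil^{\bk}_nH^1(K)\{p\}$, and then \thmref{thm:Kato-2}(2) gives that the cup product with any symbol in $K^M_q(K)$ lands in $\Fil^{\bk}_nH^{q+1}(K)$. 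Since every element of $N$ is $p^m$-torsion (it is a quotient of $\Fil_nW_m(K)\otimes K^M_q(K)$, and $W_m$ is killed by $p^m$), we get $N\subseteq\ _{p^m}\Fil^{\bk}_nH^{q+1}(K)$.

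Next I would prove the reverse inclusion $\ _{p^m}\Fil^{\bk}_nH^{q+1}(K)\subseteq N$. Take $\chi\in\ _{p^m}\Fil^{\bk}_nH^{q+1}(K)$. By \thmref{thm:Kato-2}(2), $\chi$ is a finite sum of elements of the form $\psi\cup\{x_1,\dots,x_q\}$ with $\psi\in\Fil^{\bk}_nH^1(K)\{p\}$ and $\{x_1,\dots,x_q\}\in K^M_q(K)$. The subtlety is that $\psi$ may be $p^{m'}$-torsion for some $m'>m$, not $p^m$-torsion, so one cannot directly invoke \thmref{thm:Kato-3} at level $m$. I would handle this by working with a large enough $m'\ge m$: write each $\psi=\delta^0_{m'}(\un{b})$ with $\un{b}\in\Fil_nW_{m'}(K)$ by \thmref{thm:Kato-3}, so that $\chi=\lambda^{q}_{m'}(\sum_i\un{b}_i\otimes\{\dots\})$ lies in the image $N'$ of $\Fil_nW_{m'}(K)\otimes K^M_q(K)\to H^{q+1}(K)$ at level $m'$. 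Now one needs the compatibility of the groups $T^{m,q}_n(K)$ (equivalently of the images $N$) as $m$ varies, together with the fact that $\chi$ being $p^m$-torsion forces it to lie in the level-$m$ piece. Concretely, using \corref{cor:VR-2} (the distinguished triangle relating $C^{1,q}_{\un{n},\bullet}$, $C^{m,q}_{\un{n},\bullet}$, $C^{m-1,q}_{\un{n}/p,\bullet}$) and its cohomology sequence from \lemref{lem:hyp}(2), one obtains exact sequences relating $T^{1,q}_n(K)$, $T^{m,q}_n(K)$ and $T^{m-1,q}_{\lfloor n/p\rfloor}(K)$; combined with the $p$-$R$ structure these show that the transition maps $T^{m,q}_n(K)\to T^{m',q}_n(K)$ are injective (they are the $V$-maps) and that $\ _{p^m}(\varinjlim_{m'}T^{m',q}_n(K))=T^{m,q}_n(K)$. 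Passing to the limit over $m'$, the union $\bigcup_{m'}N' = \bigcup_{m'}\delta^{q}_{m'}(\Fil_nW_{m'}\Omega^q_K)$ is exactly $\Fil^{\bk}_nH^{q+1}(K)\{p\}$ by \thmref{thm:Kato-2} and \thmref{thm:Kato-3}, and its $p^m$-torsion subgroup is $N$; hence $\chi\in N$.

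The main obstacle I anticipate is precisely this last point: controlling the torsion in the colimit, i.e.\ showing that $\ _{p^m}\Fil^{\bk}_nH^{q+1}(K) = \ _{p^m}\bigl(\varinjlim_{m'}T^{m',q}_n(K)\bigr)$ coincides with $T^{m,q}_n(K)$ and not something larger. This requires knowing that the transition maps in the system $\{T^{m',q}_n(K)\}_{m'}$ are injective and behave like multiplication by $p$ in the appropriate sense, which I would extract from \corref{cor:VR-2}, \lemref{lem:hyp}, \propref{prop:Cartier-fil-1} and the classical identity $p = \un{p}\circ R$ on the de Rham–Witt complex (\lemref{lem:Complete-7} gives the filtered refinement of this). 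Once that bookkeeping is in place, everything else is a formal consequence of the results already established in the excerpt, and the injectivity half of $\ov{\delta}^{m,q}_n$ is free from \corref{cor:VR-5}.
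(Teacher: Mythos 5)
Your proposal is correct in substance, but it takes a genuinely different route from the paper. Both arguments start the same way: injectivity of $\ov{\delta}^{m,q}_n$ and the identification of its image with the image of $\lambda^q_m$ on $\Fil_nW_m(K)\otimes K^M_q(K)$ come from \corref{cor:VR-5}, and the containment of that image in $\ _{p^m}\Fil^{\bk}_nH^{q+1}(K)$ from \thmref{thm:Kato-2} and \eqref{eqn:Milnor-1}. For surjectivity, however, the paper fixes the Witt level $m$ and exhausts in the conductor: since $\varinjlim_{n'}T^{m,q}_{n'}=H^1_\et(K,W_m\Omega^q_{K,\log})$ (\lemref{lem:Kato-fil-9}, \lemref{lem:Log-fil-4}(3)), any $w\in S^{m,q}_n(K)$ lies in some $T^{m,q}_{n'}$ with $n'\gg0$, and one descends from $n'$ to $n$ using the Cartesian property of the square \eqref{eqn:Kato-6}; proving that Cartesianness is the hard part, carried out through the graded-piece analysis of Lemmas \ref{lem:Kato-fil-1}--\ref{lem:Kato-fil-8}, which constructs surjections $\theta^q_1,\theta^q_2,\phi^q$ from $\Omega^q_{\ff}$, $\Omega^{q-1}_{\ff}$ onto $\wt{M}^{m,q}_n(K)$ and imports the explicit computations from the last step of Kato's proof of his Theorem 3.2. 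You instead fix $n$ and exhaust in the Witt level: by the statement of \thmref{thm:Kato-2} every $\chi\in\ _{p^m}\Fil^{\bk}_nH^{q+1}(K)$ lies in $T^{m',q}_n(K)$ for some $m'\gg m$, and you descend from $m'$ to $m$ using the filtered $V$--$R$ short exact sequences $0\to T^{1,q}_{n}\to T^{M,q}_{n}\xrightarrow{R}T^{M-1,q}_{\lfloor n/p\rfloor}\to 0$ obtained from \corref{cor:VR-2}/\thmref{thm:Global-version}(11) together with \lemref{lem:hyp}(2) (the connecting map vanishes because $K^M_q(K)$ is $p$-torsion free), which by induction give $\Ker(R^m\colon T^{m',q}_n\to T^{m'-m,q}_{\lfloor n/p^m\rfloor})=V^{m'-m}(T^{m,q}_n)$; combined with the fact that under the identification of \lemref{lem:Kato-fil-9} the map $R$ corresponds to multiplication by $p$ (via $p=\un{p}\circ R$ and $\un{p}=V$ on log sheaves, exactly the compatibility behind \eqref{eqn:Kato-5}), the $p^m$-torsion of $\varinjlim_{m'}T^{m',q}_n(K)$ is precisely $T^{m,q}_n(K)$, which is what you need. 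These two pieces of bookkeeping --- the $R$-compatibility of $\ov{\delta}$ and the $\Ker(R^m)$ induction --- are the only things you must still write out, and both follow from results already established before \thmref{thm:Kato-4}. The trade-off: your route is shorter and more formal, but it uses the full strength of Kato's Theorem 3.2 at the $\Q_p/\Z_p$ level as a black box, whereas the paper's graded descent in $n$ re-derives finer information (the isomorphisms on graded quotients and the Cartesian squares \eqref{eqn:Kato-6}) at each finite level $m$, which the paper then reuses, e.g.\ in the duality arguments of later chapters.
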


 \vskip .3cm
 
 We shall prove this theorem in several steps. We fix $n, q \ge 0$ and $m \ge 1$ as
 before. When $n > 0$, we shall write $n = p^rl$, where $r \ge 0$ and $(p,l) =1$.
 We let $S^{m,q}_n(K) = \ _{p^m}\Fil^{\bk}_nH^{q+1}(K)$. Since $T^{m,q}_n(K)$
 is a $p^m$-torsion group, we already deduce from
 \corref{cor:VR-5}, \thmref{thm:Kato-2} and ~\eqref{eqn:Milnor-1} that the map
 $\ov{\delta}^{m,q}_n$ has a factorization
 $\ov{\delta}^{m,q}_n\colon T^{m,q}_n(K) \inj S^{m,q}_n(K)$. 
In what follows, our goal is to show that this map is surjective.

\vskip .3cm

We begin with the following.

\begin{lem}\label{lem:Kato-fil-0}
  There is a commutative diagram
   \begin{equation}\label{eqn:Kato-fil-0-0}
   \xymatrix@C1pc{
     Z_1\Fil_nW_m\Omega^q_K \ar[r]^-{1-C} \ar[d]_-{V} &  \Fil_nW_m\Omega^q_K
     \ar[d]^-{V} \\
     Z_1\Fil_nW_{m+1}\Omega^q_K \ar[r]^-{1-C} &  \Fil_nW_{m+1}\Omega^q_K.}
   \end{equation}
   \end{lem}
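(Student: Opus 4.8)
The statement to prove, Lemma~\ref{lem:Kato-fil-0}, asserts the commutativity of the square relating the $(1-C)$ maps on $Z_1\Fil_nW_m\Omega^q_K$ and $Z_1\Fil_nW_{m+1}\Omega^q_K$ via the Verschiebung $V$. This is a compatibility check between the Cartier operator and $V$, restricted to the filtered de Rham--Witt complex.

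\textbf{Plan of proof.} The square consists of two pieces: first, that $V$ maps $Z_1\Fil_nW_m\Omega^q_K$ into $Z_1\Fil_nW_{m+1}\Omega^q_K$ and $\Fil_nW_m\Omega^q_K$ into $\Fil_nW_{m+1}\Omega^q_K$, so that all four arrows are well-defined; and second, that $(1-C)\circ V = V\circ (1-C)$ on $Z_1\Fil_nW_m\Omega^q_K$. For the well-definedness, the statement $V(\Fil_nW_m\Omega^q_K)\subset \Fil_nW_{m+1}\Omega^q_K$ is exactly \lemref{lem:Log-fil-4}(6) (equivalently \lemref{lem:Non-complete-4}(2)), and the fact that $V$ preserves $Z_1$ follows because $Z_1W_m\Omega^q_K = \Ker(F^{m-1}d\colon W_m\Omega^q_K\to \Omega^{q+1}_K)$ and $F^m d\,V = F^{m-1}(FdV) = F^{m-1}d$, so $F^m d (V\omega) = V$-shift of $F^{m-1}d\omega = 0$; combining with \lemref{lem:Cartier-fil-0} (or \lemref{lem:Complete-4}(1)) gives $V(Z_1\Fil_nW_m\Omega^q_K)\subset Z_1\Fil_nW_{m+1}\Omega^q_K$. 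So the bulk of the work is the commutativity identity.

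\textbf{The key identity.} For $\omega \in Z_1W_m\Omega^q_K$, write $\omega = F(\alpha)$ with $\alpha\in W_{m+1}\Omega^q_K$ (possible since $Z_1W_m\Omega^q_K = \mathrm{Image}(F)$, and by \lemref{lem:Cartier-fil-0} one can take $\alpha\in \Fil_nW_{m+1}\Omega^q_K$ when $\omega\in Z_1\Fil_nW_m\Omega^q_K$). Then $C(\omega) = CF(\alpha) = R(\alpha)$ by the defining property of $C$. Now $V(C\omega) = VR(\alpha) = RV(\alpha)$ using $RV = VR$ (listed among the basic relations in \secref{sec:de-rham-witt}). On the other hand, $V(\omega) = VF(\alpha) = p\alpha$ lies in $W_{m+1}\Omega^q_K$, but to apply $C$ to $V(\omega)$ we should instead observe $V(\omega) = VF(\alpha)$ and we want to write $V(\omega)$ as $F$ of something in $W_{m+2}\Omega^q_K$. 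Indeed, $V(\omega) = VF(\alpha)$; one checks $V(\omega)\in Z_1W_{m+1}\Omega^q_K$ by the computation above, so $V(\omega) = F(\beta)$ for some $\beta\in W_{m+2}\Omega^q_K$, and then $C(V\omega) = R(\beta)$. The cleanest route is to use that $R\colon W_{m+2}\Omega^q_K \to W_{m+1}\Omega^q_K$ and the defining relation $R = C\circ F$: from $F(\beta) = V F(\alpha)$ one would like $\beta = V(\alpha)$ up to the kernel of $F$, giving $C(V\omega) = R(V\alpha) = VR(\alpha) = V(C\omega)$. Thus $(1-C)V\omega = V\omega - C(V\omega) = V\omega - V(C\omega) = V(\omega - C\omega) = V((1-C)\omega)$, which is the assertion. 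The one subtlety is handling the ambiguity in the choice of $\alpha$ (resp. $\beta$): the identity $CF = R$ is well-defined on $Z_1$, so this is not actually an issue once one invokes the established Cartier formalism of \lemref{lem:Complete-4} together with \lemref{lem:Cartier-fil-0}.

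\textbf{Main obstacle.} The only delicate point is making the manipulation $C(V\omega) = V(C\omega)$ rigorous without circular reasoning about the (a priori multivalued) preimages under $F$. The safe strategy is to deduce it from the classical identity for $W_\bullet\Omega^\bullet_K$ (where $V$ commutes with $C$ on cycles, a standard fact from \cite{Illusie}, \cite{Kato-Duality}) and then restrict to the filtered subsheaves, using \lemref{lem:Cartier-fil-0} to ensure that the preimage $\alpha$ can be chosen inside $\Fil_nW_{m+1}\Omega^q_K$ and \lemref{lem:Log-fil-4}(6) to ensure $V$ preserves the filtration. So the proof reduces to: (i) cite the well-definedness from \lemref{lem:Log-fil-4}(6) and \lemref{lem:Cartier-fil-0}; (ii) cite the classical commutativity $CV = VC$ on $Z_1W_\bullet\Omega^\bullet_K$; (iii) conclude $(1-C)V = V(1-C)$ on the filtered pieces by naturality of the inclusions $\Fil_nW_m\Omega^q_K\hookrightarrow W_m\Omega^q_K$.
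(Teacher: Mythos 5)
Your proposal is correct and follows essentially the same route as the paper: one checks that $V$ preserves the filtration (via \lemref{lem:Log-fil-4}(6)) and the cycles $Z_1$ (via $F^{m}dV=F^{m-1}d$), and then verifies $CV=VC$ on $Z_1$ by writing cycles as $F(\alpha)$ and using $CF=R$, $RV=VR$, $FV=VF$, exactly the paper's chain $VCF=VR=RV=CFV=CVF$. Your worry about the choice of preimage is unnecessary, since $V\omega=VF(\alpha)=FV(\alpha)=F(V\alpha)$ gives an exact preimage and $CF=R$ is an identity rather than a choice-dependent definition.
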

\begin{proof}
  To prove the lemma, it
  suffices to show that $VC = CV$ and $V(Z_1W_m\Omega^q_K) \subset
  Z_1W_{m+1}\Omega^q_K$ (cf. \lemref{lem:Log-fil-4}(6)).
  For showing the first claim, it is enough to
   show that $VCF = CVF$. But this is clear because $VCF = VR = RV = CFV = CVF$.
   For the second claim, note that $\omega \in Z_1W_m\Omega^q_K$ implies
   $F^{m-1}d(\omega) = 0$. This implies in turn that $F^mdV(\omega) =
   F^{m-1}d(\omega) = 0$. That is, $V(\omega) \in Z_1W_{m+1}\Omega^q_K$.
\end{proof}

Using \lemref{lem:Kato-fil-0}, we get a commutative diagram
 (cf. \cite[\S~1.3]{Kato-89})
 \begin{equation}\label{eqn:Kato-5}
   \xymatrix@C2.5pc{
     \Fil_nW_m\Omega^q_K \ar[d]_-{V} \ar@{->>}[r]^-{\delta^{m,q}_n} & T^{m,q}_n(K)
     \ar[d]^-{V}
     \ar@{^{(}->}[r]^-{\ov{\delta}^{m,q}_n} & S^{m,q}_n(K) \ar@{^{(}->}[d] \\
     \Fil_nW_{m+1}\Omega^q_K \ar@{->>}[r]^-{\delta^{m+1,q}_n} & T^{m+1,q}_n(K)
     \ar@{^{(}->}[r]^-{\ov{\delta}^{m+1,q}_n} &  S^{m+1,q}_n(K),}
   \end{equation}
 where the middle $V$ is the unique map induced by the
 commutative diagran ~\eqref{eqn:Kato-fil-0-0} and the right vertical arrow
 is the canonical inclusion $\ _{p^m}\Fil^{\bk}_nH^{q+1}(K) \inj
 \ _{p^{m+1}}\Fil^{\bk}_nH^{q+1}(K)$. Since the canonical inclusion $H^{q+1}_m(K) \inj H^{q+1}_{m+1}(K)$ is induced by $\ov p = V : W_m\Omega^q_{(-,\log)} \to W_{m+1}\Omega^q_{(-,\log)}$, we see that the right square of \eqref{eqn:Kato-5} is commutative.

 The commutative diagram
  \begin{equation}\label{eqn:Kato-6}
   \xymatrix@C1pc{
     T^{m,q}_{n-1}(K) \ar[r] \ar[d]_-{\ov{\delta}^{m,q}_{n-1}} & T^{m,q}_{n}(K)
     \ar[d]^-{\ov{\delta}^{m,q}_{n}} \\
     S^{m,q}_{n-1}(K) \ar@{^{(}->}[r] & S^{m,q}_n(K)}
  \end{equation}
  implies that $T^{m,q}_{n-1}(K) \inj T^{m,q}_{n}(K)$ for every $n \ge 1$.
  We let $\wt{T}^{m,q}_n(K) = \frac{T^{m,q}_{n}(K)}{T^{m,q}_{n-1}(K)}$ and
  $\wt{S}^{m,q}_n(K) = \frac{S^{m,q}_{n}(K)}{S^{m,q}_{n-1}(K)}$ and let 
  $\psi^{m,q}_n \colon \wt{T}^{m,q}_n(K) \to \wt{S}^{m,q}_n(K)$ be the map induced by
 the vertical arrows in ~\eqref{eqn:Kato-6}.
 Since the commutative diagram ~\eqref{eqn:Kato-fil-0-0} is compatible with
 the change in values of $n \ge 0$, it induces maps
 $V \colon \wt{T}^{m,q}_n(K) \to \wt{T}^{m+1,q}_n(K)$ as $n$ varies over all
 positive integers. We let $\wt{M}^{m,q}_n(K) =
 \frac{\wt{T}^{m,q}_n(K)}{V(\wt{T}^{m-1,q}_n(K))}$ and $\wt{N}^{m,q}_n(K) =
 \frac{\wt{S}^{m,q}_n(K)}{\wt{S}^{m-1,q}_n(K)}$.
It is clear that $\ov{\delta}^{m,q}_n$
 descends to a homomorphism $\ov{\delta}^{m,q}_n \colon \wt{M}^{m,q}_n(K) \to
 \wt{N}^{m,q}_n(K)$.

Form now on, we shall assume $n>1$, unless otherwise stated.
 
 \begin{lem}\label{lem:Kato-fil-1}
   The map $V \colon \wt{T}^{m,q}_n(K) \to \wt{T}^{m+1,q}_n(K)$ is surjective for every
   $m \ge r+1$.
 \end{lem}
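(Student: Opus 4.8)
The target is surjectivity of $V \colon \wt{T}^{m,q}_n(K) \to \wt{T}^{m+1,q}_n(K)$ for $m \ge r+1$, where $n = p^r l$ with $(p,l)=1$ and $n > 1$. The natural strategy is to first understand the graded piece $\wt{T}^{m,q}_n(K) = T^{m,q}_n(K)/T^{m,q}_{n-1}(K)$ in terms of the de Rham--Witt description obtained in Chapter~3. Recall $T^{m,q}_n(K) = \coker\big((1-C)\colon Z_1\Fil_nW_m\Omega^q_K \to \Fil_nW_m\Omega^q_K\big)$, and by Proposition~\ref{prop:Cartier-fil-1} together with \lemref{lem:Cartier-fil-0} the operator $C$ maps $Z_1\Fil_nW_m\Omega^q_K$ into $\Fil_{\lfloor n/p\rfloor}W_m\Omega^q_K$. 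Since $n>1$ we have $\lfloor n/p\rfloor < n$, so modulo $\Fil_{n-1}W_m\Omega^q_K$ the map $C$ vanishes, and hence $\wt{T}^{m,q}_n(K)$ is a quotient of $\gr_n W_m\Omega^q_K := \Fil_nW_m\Omega^q_K/\Fil_{n-1}W_m\Omega^q_K$. More precisely, $\wt{T}^{m,q}_n(K)$ is the cokernel of the composite $Z_1\Fil_nW_m\Omega^q_K \xrightarrow{1} \Fil_nW_m\Omega^q_K \surj \gr_nW_m\Omega^q_K$, i.e. $\wt{T}^{m,q}_n(K) = \gr_nW_m\Omega^q_K / \overline{Z_1\Fil_nW_m\Omega^q_K}$.

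\textbf{Key steps.} First I would use \propref{prop:Fil-decom} (and the groups $F^{m,q}_j(S)$ from Chapter~3, here with $S = \ff$ or rather applied to the complete case, reducing via \corref{cor:Complete-2} and \lemref{lem:Non-complete-3} if $K$ is not complete) to give an explicit description of $\gr_nW_m\Omega^q_K$: it is precisely the image of $F^{m,q}_{-n}(S)$, and its shape depends on the $p$-divisibility of $n$ — when $n = p^{m-1}i$ it is $(W_m\Omega^q_\ff + W_m\Omega^{q-1}_\ff\dlog[\pi]_m)[\pi]^i_m$, and when $n = p^{m-1-s}i$ with $1 \le s \le m-1$, $|i|\in I_p$, it is $V^s(W_{m-s}\Omega^q_\ff[\pi]^i_{m-s}) + dV^s(W_{m-s}\Omega^{q-1}_\ff[\pi]^i_{m-s})$. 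The hypothesis $m \ge r+1$ means $s = m-1-r \ge 0$ is forced by $n = p^r l$ (since the largest power of $p$ dividing $n$ is $p^r$, we get $s = \min\{j>0 : p^{m-1-j}\mid n\} = m-1-r$ when $m-1 > r$, and the first case $p^{m-1}\mid n$ when $m = r+1$). Then I would identify the "$Z_1$" part: by \lemref{lem:Complete-4}(1), $Z_1\Fil_nW_m\Omega^q_K = F(\Fil_nW_{m+1}\Omega^q_K)$, and using \lemref{lem:F-group-1}(3) ($F$ lowers the $m$-index by one, keeping the $j$-index) one computes $\overline{Z_1\Fil_nW_m\Omega^q_K}$ inside $\gr_n$ as $F(F^{m+1,q}_{-n}(S))$. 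Comparing the two explicit formulas for $\wt{T}^{m,q}_n$ and $\wt{T}^{m+1,q}_n$, and tracking how $V$ acts (via \lemref{lem:F-group-1}(2), $V$ sends $F^{m,q}_{-n}(S) \to F^{m+1,q}_{-n}(S)$, raising the $m$-index), the claim reduces to a statement purely about the groups $W_\bullet\Omega^\bullet_\ff$ of the residue field: that a certain map induced by $V$ is surjective. Concretely, for $m \ge r+1$ both $\wt{T}^{m,q}_n$ and $\wt{T}^{m+1,q}_n$ should be expressible as quotients of $W_{m-1-r}\Omega^q_\ff \oplus W_{m-1-r}\Omega^{q-1}_\ff$ (or $\Omega^q_\ff \oplus \Omega^{q-1}_\ff$ in the boundary case $m=r+1$) by the images of $d$-type and $C$-type maps, and $V \colon W_{m-1-r}\Omega^q_\ff \to W_{m-r}\Omega^q_\ff$ is the relevant map whose cokernel I must control.

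\textbf{Main obstacle.} The delicate point will be the bookkeeping across the two regimes — the "boundary" case $m = r+1$ (where $p^{m-1}\mid n$, so $\gr_n$ involves $W_m\Omega^q_\ff$ itself and a $\dlog[\pi]_m$ summand) versus $m \ge r+2$ (where $\gr_n$ is built from $V^s$ and $dV^s$ of lower-length forms) — and checking that the transition map $V$ interpolates correctly, in particular that no contribution is lost when passing from the $\dlog$-summand description at level $m=r+1$ to the $V^s(\cdots)$ description at level $m=r+2$. I expect surjectivity of $V$ to come down to the elementary fact that $V \colon W_{t}\Omega^q_\ff \to W_{t+1}\Omega^q_\ff$ together with $dV$ surjects onto $F^{t+1,q}_{\text{(index)}}$-type pieces modulo $d$ and $F$, which is built into the very definition of $F^{m,q}_n(S)$ and the structure results \lemref{lem:F-group-1} and \corref{cor:GH-2}; the work is in confirming that the quotients defining $\wt T$ are compatible with this. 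I would organize the argument so that the reduction to the residue field is isolated as a sub-lemma, then dispatch the residue-field statement by direct inspection of the $V^s$/$dV^s$ generators, handling $m = r+1$ and $m \ge r+2$ in turn.
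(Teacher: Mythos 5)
Your strategy is genuinely different from the paper's, and the comparison is instructive. The paper does not touch the explicit $F^{m,q}_j$-decomposition of the graded pieces at all: it first invokes \corref{cor:VR-5} (via \lemref{lem:VR-4}) to see that $T^{m,q}_n(K)$ is generated by classes $\partial^{m,q}_n([x]_m\,\dlog(\un{y}))$ with $[x]_m\in\Fil_nW_m(K)$ and $\un{y}\in K^M_q(K)$, and since $V(a\,\dlog(\un y))=V(a)\,\dlog(\un y)$ this reduces the whole lemma to the $q=0$ statement $\Fil_nW_{m+1}(K)=\Fil_{n-1}W_{m+1}(K)+V(\Fil_nW_m(K))$. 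That statement follows in two lines from the $V$--$R$ exact sequence of \lemref{lem:Log-fil-1} and the observation that $\lfloor (n-1)/p^m\rfloor=\lfloor n/p^m\rfloor$ exactly when $p^m\nmid n$, i.e.\ when $m\ge r+1$ -- and it holds for Henselian (not necessarily complete) $A$ directly. What the paper's route buys is precisely the avoidance of your two sources of work: the case analysis between $m=r+1$ and $m\ge r+2$, and any completion argument.

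Your route can be carried out, but as written it has a genuine soft spot: the descent from $\wh K$ to the Henselian $K$. \corref{cor:Complete-2} and \lemref{lem:Non-complete-3} give you $\Fil_nW_m\Omega^q_K=\Fil_nW_m\Omega^q_{\wh K}\cap W_m\Omega^q_K$ and density of the former in the latter, but the statement you need to transport is a surjectivity assertion about a quotient: given $\omega\in\Fil_nW_{m+1}\Omega^q_K$, writing its image over $\wh K$ as $V(\alpha)+\beta+(1-C)(\gamma)$ with $\alpha,\beta,\gamma$ over $\wh K$ does not immediately produce such a decomposition over $K$ (the individual $F^{m,q}_j$-components of an element of $W_m\Omega^q_K$ need not lie in $W_m\Omega^q_K$). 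You would need an approximation argument of the kind used in \propref{prop:Complete-0} and \lemref{lem:Complete-5}, or else establish $T^{m,q}_n(K)\xrightarrow{\cong}T^{m,q}_n(\wh K)$ first -- which in the paper only appears later (\lemref{lem:Kato-equiv}(4)) and in a form that would be circular here. One further bookkeeping point to be careful with in the complete case: $V$ is \emph{not} surjective onto $\gr_nW_{m+1}\Omega^q_K$ itself, since $VdV^{s}=p\,dV^{s+1}$ misses the full $dV^{s+1}$-summand; that defect is absorbed exactly by the subgroup $\ov{Z_1\Fil_nW_{m+1}\Omega^q_K}=F(F^{m+2,q}_{-n}(S))\supset dV^{m-r}(W_{r+1}\Omega^{q-1}_\ff[\pi]^{-l})$, so you must quotient by the $Z_1$-part before, not after, testing surjectivity. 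With those two points addressed your plan closes, but the paper's reduction to Witt vectors is the shorter path.
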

 \begin{proof}
   Since $V(a\dlog([x_1]_m)\wedge \cdots \wedge \dlog([x_q]_m) =
   V(a)\dlog([x_1]_m)\wedge \cdots \wedge \dlog([x_q]_m)$,
   ~\eqref{eqn:Kato-fil-0-0} implies that the diagram
   \begin{equation}\label{eqn:Kato-fil-1-0}
   \xymatrix@C1pc{
K^M_q(K) \otimes \Fil_nW_m(K) \ar@{->>}[r]^-{\partial^{m,q}_n} \ar[d]_-{id \otimes V}
& T^{m,q}_{n}(K) \ar[d]^-{V} \\
K^M_q(K) \otimes \Fil_nW_{m+1}(K) \ar@{->>}[r]^-{\partial^{m+1,q}_n} & T^{m+1,q}_{n}(K)}
   \end{equation}
   is commutative. Furthermore, the horizontal arrows are surjective by
   \corref{cor:VR-5}. It suffices therefore to show that
   $\frac{\Fil_nW_{m+1}(K)}{\Fil_{n-1}W_{m+1}(K) + V(\Fil_nW_{m}(K))} = 0$ for
   $m \ge r+1$.
   
   To that end, we look at the commutative diagram of exact sequences
  \begin{equation}\label{eqn:Kato-fil-1-1}
   \xymatrix@C.8pc{ 
     0 \ar[r] & \Fil_{n-1}W_{m}(K) \ar[r]^-{V} \ar[d] & \Fil_{n-1}W_{m+1}(K)
     \ar[d] \ar[r]^-{R^m} & \Fil_{\lfloor{{(n-1)}/{p^m}}\rfloor}W_1(K) \ar[d] \ar[r] & 0 \\
   0 \ar[r] & \Fil_{n}W_{m}(K) \ar[r]^-{V} & \Fil_{n}W_{m+1}(K)
   \ar[r]^-{R^{m}} & \Fil_{\lfloor{{n}/{p^m}}\rfloor}W_1(K) \ar[r] & 0.}
  \end{equation}
  Since $m \ge r+1$, we see that $\lfloor{{(n-1)}/{p^m}}\rfloor =
  \lfloor{{n}/{p^m} -{1}/{p^m}}\rfloor = \lfloor{{n}/{p^m}}\rfloor$.
  This implies that the right vertical arrow in ~\eqref{eqn:Kato-fil-1-1} is
  bijective, which proves the asserted claim.
\end{proof}

 For the next three lemmas, we assume that $1 \le m \le r$ and consider the map
 $$\wt{\theta}_1 \colon A \otimes \overbrace{A^\times \otimes \cdots \otimes A^\times}_q \to 
 \wt{T}^{m,q}_{n}(K)$$
 $$ \wt{\theta}_1(x \otimes y_1 \otimes \cdots \otimes y_q)
 = \partial^{m,q}_n([x\pi^{-np^{1-m}}]_m\dlog([y_1]_m)\wedge \cdots \wedge \dlog([y_q]_m))
 \text{ mod } T^{m,q}_{n-1}(K).$$ Note that this is defined because $np^{1-m} = p^{r+1-m}l\in \N$ and $[x\pi^{-np^{1-m}}]_m \in \Fil_nW(K)$.

 \begin{lem}\label{lem:Kato-fil-2}
   $\wt{\theta}_1$ descends to a group homomorphism
   $\theta^q_1 \colon \Omega^q_{\ff} \to \wt{M}^{m,q}_n(K)$.
 \end{lem}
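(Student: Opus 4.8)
\medskip
\noindent\emph{Sketch of the argument.}\ The plan is to realize $\theta^q_1$ as the factorization of $\wt\theta_1$ through the canonical presentation of $\Omega^q_\ff$ as a quotient of $A\otimes_\Z\Lambda^q_\Z(A^\times)$, checking that every defining relation maps to $0$ in $\wt M^{m,q}_n(K)$. Three mechanisms will do the killing: an element landing in $\Fil_{n-1}W_m\Omega^q_K$ already dies in $\wt T^{m,q}_n(K)$; a $d$-exact element in $d(\Fil_n W_m\Omega^{q-1}_K)$ dies under $\partial^{m,q}_n$ by \lemref{lem:VR-3}; and a Verschiebung image in $V(\Fil_n W_{m-1}\Omega^q_K)$ dies in $\wt M^{m,q}_n(K)=\wt T^{m,q}_n(K)/V(\wt T^{m-1,q}_n(K))$, since $\partial^{m,q}_n\circ V=V\circ\partial^{m-1,q}_n$ by \eqref{eqn:Kato-5}. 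The one elementary input needed throughout is the identity $\Fil_{-E}W_m\sO_K=W_m(\fm)$, immediate from \defref{defn:Log-fil-0}, used with the multiplicativity of the filtration (\lemref{lem:Log-fil-4}(8)) and the observation that $[\pi^{-np^{1-m}}]_m\in\Fil_{nE}W_m\sO_K$.

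First I would descend $\wt\theta_1$ to $\ff\otimes_\Z\Lambda^q_\Z(\ff^\times)$. It is $\Z$-multilinear and alternating in $y_1,\dots,y_q$, because $\dlog\colon A^\times\to W_m\Omega^1_{K,\log}$ is a homomorphism and $W_m\Omega^\bullet_K$ is strictly anticommutative. If $x\in\fm$ then $[x]_m\in W_m(\fm)=\Fil_{-E}W_m\sO_K$, so $[x\pi^{-np^{1-m}}]_m\dlog[y_1]_m\wedge\cdots$ lies in $\Fil_{-E}W_m\sO_K\cdot\Fil_{nE}W_m\sO_K\cdot\Fil_0W_m\Omega^q_K\subseteq\Fil_{n-1}W_m\Omega^q_K$; replacing some $y_i$ by a unit with the same residue changes $\dlog[y_i]_m$ by $\dlog[v]_m$ with $v\in1+\fm$, where $[v]_m-1\in W_m(\fm)$ forces $\dlog[v]_m\in\Fil_{-E}W_m\Omega^1_K$ (by \lemref{lem:Log-fil-4}(4),(8)), so the correction again falls into $\Fil_{n-1}W_m\Omega^q_K$; and the Witt correction $[x_1+x_2]_m-[x_1]_m-[x_2]_m$ has vanishing leading Witt component, hence equals $V(z)$ with $z\in W_{m-1}(A)$, and using $V(z)[\pi^{-np^{1-m}}]_m=V(z[\pi^{-np^{2-m}}]_{m-1})$ with $[\pi^{-np^{2-m}}]_{m-1}\in\Fil_{nE}W_{m-1}\sO_K$, the failure of additivity in the first argument produces a term in $V(\Fil_n W_{m-1}\Omega^q_K)$. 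Each of these dies in $\wt M^{m,q}_n(K)$, so $\wt\theta_1$ descends to a homomorphism on $\ff\otimes_\Z\Lambda^q_\Z(\ff^\times)$.

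Then I would impose the two families of relations presenting $\Omega^q_\ff$: (i) $a\otimes(\ov c\wedge\omega)$ for $\ov c$ in the prime field, and (ii) $(\ov b+\ov c)\otimes((\ov b+\ov c)\wedge\omega)-\ov b\otimes(\ov b\wedge\omega)-\ov c\otimes(\ov c\wedge\omega)$ for $\ov b,\ov c,\ov b+\ov c\in\ff^\times$. Relation (i) vanishes because $[c]_m$ is a $(p-1)$-st root of unity while $W_m\Omega^1_K$ is killed by $p^m$ with $p-1$ invertible mod $p^m$, so $\dlog[c]_m=0$. For (ii) I would lift $\ov b,\ov c$ to units $b,c\in A^\times$ (so $b+c\in A^\times$), use $[(b+c)\pi^{-np^{1-m}}]_m\dlog[b+c]_m=[\pi^{-np^{1-m}}]_m\,d[b+c]_m$ to rewrite the image of the relation as $\partial^{m,q}_n\big([\pi^{-np^{1-m}}]_m\,d([b+c]_m-[b]_m-[c]_m)\wedge W\big)$ with $W$ a product of $\dlog[y_i]_m$'s (so $dW=0$), write $[b+c]_m-[b]_m-[c]_m=V(z)$, and split via Leibniz into $d\big([\pi^{-np^{1-m}}]_m V(z)W\big)$, killed by $\partial^{m,q}_n$ via \lemref{lem:VR-3}, plus an integer multiple of $V\big(z[\pi^{-np^{2-m}}]_{m-1}\dlog[\pi]_{m-1}\wedge W'\big)\in V(\Fil_n W_{m-1}\Omega^q_K)$, killed in $\wt M^{m,q}_n(K)$. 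The hypothesis $1\le m\le r$ enters precisely here: it makes $np^{1-m}$ an integer divisible by $p$ and keeps $[\pi^{-np^{2-m}}]_{m-1}$ in filtration level $n$. This produces the required $\theta^q_1\colon\Omega^q_\ff\to\wt M^{m,q}_n(K)$.

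The main obstacle is Step (ii): correctly extracting the partial-fraction relation and verifying that its Leibniz decomposition distributes its summands into exactly one of the three killing subgroups each, together with the (elementary but necessary) check that (i) and (ii), or their evident variants, present $\Omega^q_\ff$ over $A\otimes_\Z\Lambda^q_\Z(A^\times)$; should the direct computation for general $m$ prove unwieldy, an alternative is to induct on $m$ using the $R$-exact sequence \propref{prop:Complete-0} together with \eqref{eqn:Kato-5}, the case $m=1$ being essentially the identification of $\Fil_n\Omega^q_K/\Fil_{n-1}\Omega^q_K$ with a summand of $\Omega^q_A(\log\pi)\otimes_A\ff$.
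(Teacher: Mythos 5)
Your core computations are correct and this is essentially the paper's argument: the same three killing mechanisms (landing in $\Fil_{n-1}W_m\Omega^q_K$; landing in $d(\Fil_nW_m\Omega^{q-1}_K)\subset{\rm Image}(1-C)$ via \lemref{lem:VR-3}; landing in $V(\Fil_nW_{m-1}\Omega^q_K)$, which dies modulo $V(\wt{T}^{m-1,q}_n(K))$), and the same Leibniz--Verschiebung manipulation $w_0\,dV(z)=dV(F(w_0)z)-V(Fd(w_0)z)$ for the additivity relation. You are in fact more careful than the paper on one point: since $[x_1+x_2]_m-[x_1]_m-[x_2]_m=V(z)$, the map $\wt{\theta}_1$ is additive in the first slot only after passing to $\wt{M}^{m,q}_n(K)$, and you correctly route the descent through $\wt{M}$ rather than through $\wt{T}$. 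Your direct treatment of $y_i\in 1+\fm$ via $[1+a\pi]_m-[1]_m\in\Fil_{-E}W_m(K)$ also works; the paper instead quotes \cite[Lem.~3.5]{Kato-89} for the same conclusion.

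The one genuine soft spot is the one you flag yourself: you do not verify that your relations (i) and (ii) present $\Omega^q_{\ff}$ as a quotient of $\ff\otimes_{\Z}\Lambda^q_{\Z}(\ff^\times)$, and it is not clear that (i) follows from (ii). The paper avoids this entirely by citing \cite[Lem.~4.2]{Bloch-Kato}, whose defining relations are $\big(\sum_i x_i\otimes x_i-\sum_j x'_j\otimes x'_j\big)\otimes y_1\otimes\cdots\otimes y_{q-1}$ for lifts satisfying $\sum_i x_i-\sum_j x'_j\in\fm$. Your framework absorbs this more general relation with no new ideas: writing $\sum_i[x_i]_m-\sum_j[x'_j]_m=[a\pi]_m+V(z)$, the extra term $w_0\,d[a\pi]_m=[a\pi]_mw_0\,\dlog[a\pi]_m$ falls into $\Fil_{n-1}W_m\Omega^q_K$ (your first mechanism), and the $V(z)$ term is exactly your step (ii). So the fix is simply to replace your ad hoc presentation by the Bloch--Kato one; the prime-field relation (i) is then not needed as a separate input.
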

 \begin{proof}
   For $\{y_1, \ldots , y_q\} \subset A^\times$, we write
   $\dlog([y_1]_m)\wedge \cdots \wedge \dlog([y_q]_m) = \dlog(\un{y})$. 
   If $x = a\pi$ for some $a \in A$, then
   $\partial^{m,q}_n([x]_m\dlog(\un{y})) =
   \partial^{m,q}_n([a\pi^{-(n-p^{m-1})p^{1-m}}]_m\dlog(\un{y})) \in T^{m,q}_{n-1}(K)$, as $[a\pi^{-(n-p^{m-1})p^{1-m}}]_m \in \Fil_{n-1}W_m(K)$. 
   Suppose next that $y_i = 1 + a\pi$ for some $a \in A$ and $1 \le i \le q$.
   By \cite[Lem.~3.5]{Kato-89}, we see that
   as an element of $\Fil_nW_m\Omega^1_K$, $[x]_m\dlog([1+a\pi]_m)$ actually lies
   in $\Fil_{n-1}W_m\Omega^1_K$. It follows that
   $[x]_m\dlog(\un{y}) \in \Fil_{n-1}W_m\Omega^q_K$ and hence
   $\partial^{m,q}_n([x]_m\dlog(\un{y}))$ dies in $\wt{T}^{m,q}_n(K)$.
   We have thus shown that $\wt{\theta}_1$ descends a group homomorphism
\begin{equation}\label{eqn:Kato-fil-2-0}
  \wt{\theta}_1 \colon  F \otimes F^\times \otimes \cdots \otimes F^\times \to
  \wt{T}^{m,q}_n(K) \surj \wt{M}^{m,q}_n(K).
\end{equation}

We now let $\{x_1, \ldots , x_l\}$ and $\{x'_1, \ldots , x'_{l'}\}$ be elements of
$A^\times$ such that $\sum_i x_i - \sum_j x'_j = a\pi$ for some $a \in A$.
Let $y_1, \ldots , y_{q-1} \in A^\times$.
We claim $\wt{\theta}_1$ kills elements of the form
$$(\sum_i x_i \otimes x_i- \sum_j x'_i \otimes x'_i) \otimes y_1 \otimes \cdots \otimes y_{q-1} .$$
Writing $w_0 = [\pi^{-np^{1-m}}]_m\dlog[y_1]_m \cdots \dlog[y_{q-1}]_m$, we get the following in ${T}^{m,q}_n(K)$.
$$w_0\left(\sum_i [x_i]_m\dlog[x_i]_m -
\sum_j [x'_i]_m\dlog[x'_j]_m\right) = w_0d\left(\sum_i[x_i]_m - \sum_j [x'_j]_m\right)$$
$$ =w_0d\left([\sum_i x_i - \sum_jx'_i)]+V(\un{b})\right) = w_0d[a\pi]_m + w_0dV(\un{b})$$
for some $\un{b}=(a_{m-2}, \ldots , a_1, a_0) \in W_{m-1}(A)$.

Since $w_0 \in \Fil_nW_m\Omega^{q-1}_K$, we get that $[a\pi]_mw_0 \in \Fil_{n-1}W_m\Omega^{q-1}_K$, and hence $w_0d[a\pi]_m=[a\pi]_mw_0 \dlog[a\pi]_m \in \Fil_{n-1}W_m\Omega^q_K$. So, it dies in $\wt{M}^{m,q}_n(K)$. On the other hand,
$$w_0dV(\un{b})=dV(F(w_0)\un{b})
  - V(Fd(w_0)\un{b}) \in (1-C)(\Fil_nW_m\Omega^{q}_K) +V(\Fil_nW_{m-1}\Omega^{q}_K)$$
 by Lemmas~\ref{lem:Log-fil-4}(4),(5),(6) and \ref{lem:VR-3} and hence also dies in $\wt{M}^{m,q}_n(K)$. 
We conclude from
\cite[Lem.~4.2]{Bloch-Kato} that $\wt{\theta}_1$ descends to a homomorphism
$\theta^q_1 \colon \Omega^q_{\ff} \to \wt{M}^{m,q}_n(K)$.

\end{proof}

 An identical proof shows that $\wt{\phi} \colon A \otimes A^\times \otimes \cdots
 \otimes A^\times \to \wt{T}^{r+1,q}_{n}(K)$ given by
 $\wt{\theta}_1(x \otimes y_1 \otimes \cdots \otimes y_q)
 = \partial^{m,q}_n([x\pi^{-l}]_m\dlog([y_1]_m)\wedge \cdots \wedge \dlog([y_q]_m))$
 mod $T^{r+1,q}_{n-1}(K)$, induces a homomorphism
\begin{equation}\label{eqn:Kato-fil-2-2}
   \phi^q\colon \Omega^q_{\ff} \to \wt{M}^{r+1,q}_n(K).
   \end{equation}

 \begin{lem}\label{lem:Kato-fil-3}
   $\theta^q_1(Z_1\Omega^q_{\ff}) = 0$.
 \end{lem}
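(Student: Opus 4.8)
The plan is to use the Cartier isomorphism to reduce the vanishing to two families of generators of $Z_1\Omega^q_{\ff}$, and then, for each generator, to exhibit the Witt differential form representing its image under $\wt\theta_1$ as an element of $(1-C)(Z_1\Fil_nW_m\Omega^q_K)$, of $\Fil_{n-1}W_m\Omega^q_K$, or of $V(\Fil_nW_{m-1}\Omega^q_K)$ --- all three of which are annihilated by $\partial^{m,q}_n$ followed by the projections $T^{m,q}_n(K)\surj\wt T^{m,q}_n(K)\surj\wt M^{m,q}_n(K)$. Concretely, since $\Spec\ff$ is regular over $\F_p$, Proposition~\ref{prop:cartier-1} gives the Cartier isomorphism $C^{-1}\colon\Omega^q_{\ff}\xrightarrow{\;\cong\;}Z_1\Omega^q_{\ff}/B_1\Omega^q_{\ff}$, under which $\bar c\,\dlog\bar y_1\wedge\cdots\wedge\dlog\bar y_q$ maps to the class of $\bar c^{\,p}\,\dlog\bar y_1\wedge\cdots\wedge\dlog\bar y_q$. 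As $\Omega^q_{\ff}$ is generated as an abelian group by the forms $\bar c\,\dlog\bar y_1\wedge\cdots\wedge\dlog\bar y_q$ ($\bar c\in\ff$, $\bar y_i\in\ff^\times$), the group $Z_1\Omega^q_{\ff}$ is generated by $B_1\Omega^q_{\ff}$ together with the closed forms $\bar c^{\,p}\,\dlog\bar y_1\wedge\cdots\wedge\dlog\bar y_q$. Since $\theta^q_1$ is additive, it suffices to kill these two kinds of elements in $\wt M^{m,q}_n(K)$.

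For a form $\bar c^{\,p}\,\dlog\bar y_1\wedge\cdots\wedge\dlog\bar y_q$ of the second kind I would fix lifts $c\in A$ and $y_i\in A^\times$ and set $u=c\pi^{-np^{-m}}\in K$; this is legitimate since $np^{-m}=p^{r-m}l\in\N_0$ (here $m\le r$), and $u^{p}=c^{\,p}\pi^{-np^{1-m}}$. A slot-by-slot inspection of the defining conditions shows $[u]_{m+1}\in\Fil_nW_{m+1}(K)$ and $[u]_m\in\Fil_{n-1}W_m(K)$, where the assumptions $m\le r$ and $n>1$ are exactly what is needed. Writing $\omega=\dlog[y_1]_m\wedge\cdots\wedge\dlog[y_q]_m$ and $\omega_{+}=\dlog[y_1]_{m+1}\wedge\cdots\wedge\dlog[y_q]_{m+1}$, the Witt form representing $\wt\theta_1$ of this element is $[u^{p}]_m\,\omega=F([u]_{m+1}\omega_{+})\in F(\Fil_nW_{m+1}\Omega^q_K)=Z_1\Fil_nW_m\Omega^q_K$ by Lemma~\ref{lem:Complete-4}(1) (with $r=1$). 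Since $C([u^{p}]_m\omega)=R([u]_{m+1}\omega_{+})=[u]_m\omega$, we get $[u^{p}]_m\omega-[u]_m\omega=(1-C)([u^{p}]_m\omega)\in\Ker\partial^{m,q}_n$, while $[u]_m\omega\in\Fil_{n-1}W_m\Omega^q_K$ by Lemma~\ref{lem:Log-fil-4}(8). Hence $\partial^{m,q}_n([u^{p}]_m\omega)$ lies in the image of $T^{m,q}_{n-1}(K)$ in $T^{m,q}_n(K)$, so it already vanishes in $\wt T^{m,q}_n(K)$, and a fortiori in $\wt M^{m,q}_n(K)$.

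For the first kind, $B_1\Omega^q_{\ff}$ is generated by the forms $d(\bar b\,\dlog\bar y_1\wedge\cdots\wedge\dlog\bar y_{q-1})=\bar b\,\dlog\bar b\wedge\dlog\bar y_1\wedge\cdots\wedge\dlog\bar y_{q-1}$ with $\bar b,\bar y_i\in\ff^\times$. Setting $\tau=[\pi^{-np^{1-m}}]_m\in\Fil_nW_m(K)$ and $\rho=\dlog[y_1]_m\wedge\cdots\wedge\dlog[y_{q-1}]_m$, the relation $d[b]_m=[b]_m\dlog[b]_m$ identifies the representing Witt form with $\tau\,d([b]_m\rho)$, which by the Leibniz rule equals $d(\tau[b]_m\rho)-(d\tau)\wedge[b]_m\rho$. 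The first summand lies in $d(\Fil_nW_m\Omega^{q-1}_K)\subset(1-C)(Z_1\Fil_nW_m\Omega^q_K)=\Ker\partial^{m,q}_n$ by Lemma~\ref{lem:VR-3}. For the second, $d\tau=-np^{1-m}\,\tau\,\dlog[\pi]_m$, so $(d\tau)\wedge[b]_m\rho$ equals, up to sign, $np^{1-m}\,\tau[b]_m\rho\wedge\dlog[\pi]_m$; writing $np^{1-m}=p\cdot p^{r-m}l$ (valid since $m\le r$) and using $p=VF$ together with Lemma~\ref{lem:Log-fil-4}(5) and (8), this term lies in $V(\Fil_nW_{m-1}\Omega^q_K)$. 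By the commutativity of the diagram~\eqref{eqn:Kato-fil-0-0}, $\partial^{m,q}_n$ intertwines $V$ on $\Fil_nW_{m-1}\Omega^q_K$ with $V$ on $T^{m-1,q}_n(K)$, so the image of this term lands in $V(T^{m-1,q}_n(K))$ and dies in $\wt M^{m,q}_n(K)=\wt T^{m,q}_n(K)/V(\wt T^{m-1,q}_n(K))$. Thus $\theta^q_1(B_1\Omega^q_{\ff})=0$, and combined with the previous paragraph we conclude $\theta^q_1(Z_1\Omega^q_{\ff})=0$.

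The step I expect to be the main obstacle is the filtration bookkeeping in the two cases: one must verify carefully that the Teichm\"uller lifts $[u]_m$, $[u]_{m+1}$ and $\tau$ fall in precisely the claimed levels $\Fil_{n-1}$, resp. $\Fil_n$ --- this is the place where the hypotheses $1\le m\le r$ and $n>1$ genuinely enter --- and one must check the compatibilities of $\partial^{m,q}_n$ with $V$ and with the filtration so that the three targeted subgroups really do become zero in $\wt M^{m,q}_n(K)$; the remaining manipulations are routine once one has Lemma~\ref{lem:Log-fil-4} and the diagrams~\eqref{eqn:Kato-fil-0-0}, \eqref{eqn:Kato-fil-1-0} and~\eqref{eqn:Kato-6}.
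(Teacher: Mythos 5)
Your proposal is correct and takes essentially the same route as the paper's proof: the same two families of generators of $Z_1\Omega^q_{\ff}$ (you derive them from the Cartier isomorphism, the paper cites Kato), the same $(1-C)$-identity $\partial^{m,q}_n([u^p]_m\omega)=\partial^{m,q}_n([u]_m\omega)$ with $[u]_m\omega$ landing in $\Fil_{n-1}W_m\Omega^q_K$, and the same Leibniz-rule plus $p=VF$ manipulation killing the exact forms modulo the image of $(1-C)$ and of $V$. The filtration checks you flag ($[u]_{m+1}\in\Fil_n$, $[u]_m\in\Fil_{n-1}$, $\tau\in\Fil_n$, and $np^{1-m}=p\cdot n/p^m$ using $m\le r$, $n>1$) all go through, so there is no gap.
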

 \begin{proof}
   By \cite[\S~3, p.~111]{Kato-89}, $Z_1\Omega^q_{\ff}$ is generated by the elements of the
   types
   \begin{enumerate}
   \item
     $x\dlog(x)\wedge \dlog(y_1) \wedge \cdots \wedge \dlog(y_{q-1})$ and
   \item
     $x^p\dlog(y_1) \wedge \cdots \wedge \dlog(y_{q})$,
   \end{enumerate}
    where $x \in \ff, y_j \in \ff^\star, 1 \le j \le q$.
  
   We now let $x, y_1, \ldots y_q \in A^\times, \ w = x^p\pi^{-np^{1-m}}$ and $\dlog(\un{y})_l=\dlog([y_1]_l) \wedge \cdots \wedge
   \dlog([y_q]_l)$. Since $m-1 < r$ and $p^r|n$, we can write
   $w = (x\pi^{-(n/p)p^{1-m}})^p$. Letting $w' = x\pi^{-(n/p)p^{1-m}}$, 
   we get by \lemref{lem:Complete-4}(1) that
   $$[w']_m \in \Fil_{n/p}W_m(K) \text{ and }
   [w]_m\dlog(\un{y})_m= F([w']_{m+1}\dlog(\un{y})_{m+1}) \in Z_1\Fil_nW_m\Omega^q_K.$$
   In particular,
   $\partial^{m,0}_n \circ (1-C)([w]_m\dlog(\un{y})_m) = 0$ in $T^{m,q}_n(K)$.
   From this, we get $$\partial^{m,q}_n([w]_m\dlog(\un{y})_m) = \partial^{m,q}_n \circ C([w]_m\dlog(\un{y})_m)
   = \partial^{m,q}_n([w']_m\dlog(\un{y})_m).$$
   Since the last term is in $\Fil_{n/p}W_m\Omega^{q}_K \subset \Fil_{n-1}W_m\Omega^q_K$,
   it follows that $\partial^{m,q}_n([w]_m\dlog(\un{y})_m)$ dies in $\wt{M}^{m,q}_n(K)$. This shows that the type (2) elements goes to zero under the map $\theta_1^q$.
   
   Next, we let $a = \pi^{-np^{1-m}}$ and $w_l'' = \dlog([y_1]_l) \wedge \cdots \wedge
   \dlog([y_{q-1}]_l)$ so that
   $$[xa]_m\dlog([x]_m)w_m'' = [a]_md([x]_m)w_m'' = d([xa]_mw_m'') - [x]_md([a]_m)w_m''.$$
   \lemref{lem:VR-3} implies $\partial^{m,q}_n([xa]_m\dlog([x]_m)w''_m) =
   \partial^{m,q}_n([x]_md([a]_m)w''_m)$. On the other hand,
   \[
   \begin{array}{lll}
     [x]_md([a]_m)w''_m & = & [x]_mw''_m(-{n}/{p^{m-1}})[a]_m\dlog[\pi]_m \\
     & = & p(-{n}/{p^{m}}) [xa]_mw''_m\dlog[\pi]_m \\
     & = & (-{n}/{p^{m}}) VF([xa]_mw''_m\dlog[\pi]_m).
   \end{array}
   \]
   Since
   $F([xa]_mw''\dlog[\pi]_m )\in \Fil_nW_{m-1}\Omega^q_K$,
   we conclude that $\partial^{m,q}_n([x]_md([a]_m)w'')$ dies in
   $\wt{M}^{m,q}_n(K)$. Hence, so does $\partial^{m,q}_n([xa]_m\dlog([x]_m)w'')$ (image of the elements of type (1)).
   This finishes the proof.
   \end{proof}

We next consider the map
 $$\wt{\theta}_2 \colon A \otimes A^\times \otimes \cdots \otimes A^\times \to 
 \wt{T}^{m,q}_{n}(K)$$ 
$$\wt{\theta}_2(a \otimes y_1 \otimes \cdots \otimes y_{q-1})
 = \partial^{m,q}_n([x\pi^{-np^{1-m}}]_m
 \dlog([y_1]_m)\wedge \cdots \wedge \dlog([y_{q-1}]_m) \wedge \dlog([\pi]_m)),$$ 
 $$ \hspace{10cm}\text{ mod }
 T^{m,q}_{n-1}(K).$$

\begin{lem}\label{lem:Kato-fil-4}
  $\wt{\theta}_2$ descends to a group homomorphism
  $\theta^q_2 \colon \frac{\Omega^{q-1}_{\ff}}{Z_1\Omega^{q-1}_{\ff}} \to \wt{M}^{m,q}_n(K)$.
\end{lem}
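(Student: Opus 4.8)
The plan is to follow the proofs of Lemmas~\ref{lem:Kato-fil-2} and \ref{lem:Kato-fil-3} essentially verbatim, the only new feature being the extra wedge factor $\dlog([\pi]_m)$, which turns out to be harmless (indeed helpful). Write $a_0 = \pi^{-np^{1-m}}$, so that $[b a_0]_m \in \Fil_n W_m(K)$ for every $b \in A$ (cf.\ \lemref{lem:Log-fil-2-1}), and for $y_1, \ldots, y_{q-2} \in A^\times$ set $w'' = \dlog([y_1]_m)\wedge\cdots\wedge\dlog([y_{q-2}]_m)\wedge\dlog([\pi]_m) \in W_m\Omega^{q-1}_X(\log E)$ and $w_0 = [a_0]_m\, w'' \in \Fil_n W_m\Omega^{q-1}_K$ (using \lemref{lem:Log-fil-4}(8)). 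The first task is to show $\wt{\theta}_2$ descends along $A\otimes(A^\times)^{\otimes(q-1)} \surj \ff \otimes (\ff^\times)^{\otimes(q-1)} \surj \Omega^{q-1}_\ff$, by verifying the same three relations as in \lemref{lem:Kato-fil-2}: (i) if the $A$-argument $b$ lies in $\fm$, then $[b a_0]_m = [b\pi^{1-np^{1-m}}]_m \in \Fil_{n-p^{m-1}}W_m(K) \subseteq \Fil_{n-1}W_m(K)$, so the whole form lies in $\Fil_{n-1}W_m\Omega^q_K$ by \lemref{lem:Log-fil-4}(8) and $\wt\theta_2$ kills it; (ii) if one of the unit arguments has the form $1+c\pi$, then $[ba_0]_m\dlog([1+c\pi]_m) \in \Fil_{n-1}W_m\Omega^1_K$ by \cite[Lem.~3.5]{Kato-89}, and wedging with the remaining $\dlog$'s keeps it in $\Fil_{n-1}W_m\Omega^q_K$ by \lemref{lem:Log-fil-4}(8); (iii) for $\{x_i\},\{x'_j\}\subset A^\times$ with $\sum x_i - \sum x'_j \in \fm$, one computes exactly as in \lemref{lem:Kato-fil-2} that $w_0\, d\left(\sum_i[x_i]_m - \sum_j[x'_j]_m\right) = w_0\, d[c\pi]_m + w_0\, dV(\un{b})$ for suitable $c\in A$ and $\un{b}\in W_{m-1}(A)$ (the $dV(\un b)$-term arising from the non-additivity of the Teichm\"uller map), where $[c\pi]_m w_0 \in \Fil_{n-1}W_m\Omega^q_K$ and $w_0\, dV(\un b) = dV(F(w_0)\un b) - V(Fd(w_0)\un b) \in (1-C)(\Fil_n W_m\Omega^q_K) + V(\Fil_n W_{m-1}\Omega^q_K)$ by Lemmas~\ref{lem:Log-fil-4}(4)--(6) and \ref{lem:VR-3}. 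All these terms vanish in $\wt{M}^{m,q}_n(K)$, so $\wt\theta_2$ induces $\theta^q_2 \colon \Omega^{q-1}_\ff \to \wt{M}^{m,q}_n(K)$ via \cite[Lem.~4.2]{Bloch-Kato}.

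It then remains to show $\theta^q_2(Z_1\Omega^{q-1}_\ff) = 0$, the analogue of \lemref{lem:Kato-fil-3}. By \cite[\S~3, p.~111]{Kato-89}, $Z_1\Omega^{q-1}_\ff$ is generated by elements $x\,\dlog(x)\wedge\dlog(y_1)\wedge\cdots\wedge\dlog(y_{q-2})$ and $x^p\dlog(y_1)\wedge\cdots\wedge\dlog(y_{q-1})$. For the first type, $[x a_0]_m\dlog([x]_m)\wedge w'' = [a_0]_m\, d[x]_m\wedge w'' = d([x a_0]_m w'') - [x]_m\, d([a_0]_m)\wedge w''$; since $[a_0]_m = [\pi]_m^{-np^{1-m}}$ gives $d([a_0]_m) = (-np^{1-m})[a_0]_m\dlog([\pi]_m)$ and $w''$ already contains the factor $\dlog([\pi]_m)$, strict anti-commutativity of $W_m\Omega^\bullet_K$ forces the second summand to vanish, while $d([x a_0]_m w'') \in {\rm Image}(1-C)$ by \lemref{lem:VR-3} and is hence killed by $\partial^{m,q}_n$. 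For the second type, writing $n = p^r l$ with $(p,l)=1$ and using $1\le m\le r$, we have $p^m \mid n$, so $x^p a_0 = (w')^p$ with $w' = x\pi^{-np^{-m}}$ and $[w']_{m+1} \in \Fil_n W_{m+1}(K)$; thus $[x^p a_0]_m\dlog([y_1]_m)\wedge\cdots\wedge\dlog([\pi]_m) = F\!\left([w']_{m+1}\dlog([y_1]_{m+1})\wedge\cdots\wedge\dlog([\pi]_{m+1})\right) \in Z_1\Fil_n W_m\Omega^q_K$, and applying $C$ (with $CF = R$ and $\partial^{m,q}_n\circ(1-C) = 0$) shows $\partial^{m,q}_n$ of this form equals $\partial^{m,q}_n([w']_m\dlog([y_1]_m)\wedge\cdots\wedge\dlog([\pi]_m))$, which lies in $\Fil_{n/p}W_m\Omega^q_K \subseteq \Fil_{n-1}W_m\Omega^q_K$ (here $n\ge 2$) and so dies in $\wt{T}^{m,q}_n(K)$. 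Hence $\theta^q_2$ factors through $\Omega^{q-1}_\ff/Z_1\Omega^{q-1}_\ff$.

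Most of this is formal given the structural results already established; the one genuinely external input is \cite[Lem.~3.5]{Kato-89}, used in step (ii). The main bookkeeping obstacle is step (iii): one must keep careful track that every error term produced by the failure of $[\cdot]_m$ to be additive and by the Leibniz rule lands in $(1-C)(\Fil_n W_m\Omega^q_K) + V(\Fil_n W_{m-1}\Omega^q_K) + \Fil_{n-1}W_m\Omega^q_K$, so that it is annihilated upon passing to $\wt{M}^{m,q}_n(K) = \wt{T}^{m,q}_n(K)/V(\wt{T}^{m-1,q}_n(K))$; this is precisely where Lemmas~\ref{lem:Log-fil-4}, \ref{lem:VR-3} and the commutativity of~\eqref{eqn:Kato-fil-0-0} must be combined. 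The extra factor $\dlog([\pi]_m)$, rather than causing difficulties, only simplifies the argument — it is what makes the $d([a_0]_m)$-term in the $Z_1$-computation vanish outright.
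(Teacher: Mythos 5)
Your argument is correct, but it takes a genuinely different route from the paper. The paper's proof of this lemma is essentially one line: it observes that multiplication by $\dlog([\pi]_m)$ preserves $\Fil_nW_m\Omega^\bullet_K$ (by \lemref{lem:Log-fil-4}(8)) and commutes with $F^{m-1}d$, $C$ and $V$, hence induces maps $T^{m,q-1}_n(K) \to T^{m,q}_n(K)$ and $\wt{M}^{m,q-1}_n(K) \to \wt{M}^{m,q}_n(K)$; since $\wt{\theta}_2 = \dlog([\pi]_m)\circ\wt{\theta}_1$ in degree $q-1$, the lemma follows at once from \lemref{lem:Kato-fil-2} (descent to $\Omega^{q-1}_{\ff}$) together with \lemref{lem:Kato-fil-3} applied in degree $q-1$ (killing $Z_1\Omega^{q-1}_{\ff}$). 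You instead rerun both of those proofs from scratch with the extra $\dlog([\pi]_m)$ factor carried along. Your computations are all sound: the handling of $\fm$ in the coefficient slot, of units of the form $1+c\pi$ via Kato's Lemma~3.5, of the Bloch--Kato additivity relation via the $dV(\un{b})$ bookkeeping, and of the two types of generators of $Z_1\Omega^{q-1}_{\ff}$ (where the hypothesis $1\le m\le r$ is indeed in force, and $n\ge 2$ is the standing assumption). What the paper's approach buys is economy and the reusable structural fact that $\dlog([\pi]_m)$-multiplication descends to the graded pieces $\wt{M}^{m,\bullet}_n(K)$; what your approach buys is the pleasant observation, absent from the paper, that in the presence of the wedge factor $\dlog([\pi]_m)$ the troublesome term $[x]_m\,d([a_0]_m)\wedge w''$ vanishes identically by anticommutativity, so the $VF$-trick used in \lemref{lem:Kato-fil-3} for type~(1) generators is not even needed here. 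Either route is acceptable.
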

\begin{proof}
  By \lemref{lem:Log-fil-4}(8), we see that the multiplication by $\dlog([\pi]_m)$
  preserves $\Fil_nW_m\Omega^\bullet_K$. Since $F^{m-1}d(a\dlog([\pi]_m) =
  F^{m-1}d(a)\dlog([\pi]_m)$ and $C(a\dlog([\pi]_m)) = C(a)\dlog([\pi]_m)$ for $a \in Z_1W_\textbf{.}\Omega^\bullet_K$,
  it follows that the map $W_m\Omega^{q-1}_K \xrightarrow{\dlog([\pi]_m)}
  W_m\Omega^q_K$ induces maps
  \[
  T^{m,q-1}_n(K) \xrightarrow{\dlog([\pi]_m)} T^{m,q}_n(K), \ \wt{T}^{m,q-1}_n(K)
  \xrightarrow{\dlog([\pi]_m)} \wt{T}^{m,q}_n(K).
  \]
  Since $V(a\dlog([\pi]_m)) = V(a)\dlog([\pi]_m)$, we get that these maps
  also induce
  \[
  \wt{M}^{m,q-1}_n(K) \xrightarrow{\dlog([\pi]_m)} \wt{M}^{m,q}_n(K).
\]
Since ${\theta}^q_2 = \dlog([\pi]_m) \circ {\theta}^{q-1}_1 = {\theta}^{q}_1 \circ
\dlog([\pi]_m)$, we are done by Lemmas~\ref{lem:Kato-fil-3}. 
\end{proof}

Combining the previous three lemmas, we get a homomorphism
\begin{equation}\label{eqn:Kato-fil-5}
  (\theta^q_1, \theta^q_2) \colon \frac{\Omega^{q}_{\ff}}{Z_1\Omega^{q}_{\ff}}  \bigoplus
  \frac{\Omega^{q-1}_{\ff}}{Z_1\Omega^{q-1}_{\ff}} \to
  \wt{M}^{m,q}_n(K) \ \ {\rm for} \ \ 1 \le m \le r.
  \end{equation}

\begin{lem}\label{lem:Kato-fil-6}
  We have the following.
  \begin{enumerate}
    \item
      The map $\phi^q \colon \Omega^q_{\ff} \to \wt{M}^{r+1,q}_n(K)$ is surjective.
    \item
       For $1 \le m \le r$, the map $(\theta^q_1, \theta^q_2)$
       in ~\eqref{eqn:Kato-fil-5} is surjective.
       \end{enumerate}
\end{lem}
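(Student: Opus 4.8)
The plan is to treat both parts uniformly: first show that $\wt{M}^{m,q}_n(K)$ is generated by the classes of a short list of explicit symbols, and then match those symbols against the defining formulas for $\theta^q_1,\theta^q_2,\phi^q$.

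First I would establish the generation statement: for every $m\ge 1$ the group $\wt{M}^{m,q}_n(K)$ is generated by the classes of $\partial^{m,q}_n\big([x]_m\,\dlog([y_1]_m)\wedge\cdots\wedge\dlog([y_q]_m)\big)$ with $[x]_m\in\Fil_nW_m(K)$ a Teichm\"uller element and $y_1,\dots,y_q\in K^\times$. Indeed, by Lemmas~\ref{lem:VR-3} and~\ref{lem:VR-4} the cokernel $T^{m,q}_n(K)$ of $(1-C)\colon Z_1\Fil_nW_m\Omega^q_K\to\Fil_nW_m\Omega^q_K$ is generated by the images under $\partial^{m,q}_n$ of the forms $V^i\big([x]_{m-i}\,\dlog([y_1]_{m-i})\wedge\cdots\wedge\dlog([y_q]_{m-i})\big)$ with $[x]_{m-i}\in\Fil_nW_{m-i}(K)$ and $0\le i\le m-1$ (using the projection formula $V(F(x)y)=xV(y)$ repeatedly to pull the $V^i$ out front). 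By the compatibility square~\eqref{eqn:Kato-fil-0-0}, $\partial^{m,q}_n\circ V^i=V^i\circ\partial^{m-i,q}_n$, so the symbol with exponent $i\ge 1$ has its class in the image of $V\colon\wt{T}^{m-1,q}_n(K)\to\wt{T}^{m,q}_n(K)$, hence dies in $\wt{M}^{m,q}_n(K)$. Only the $i=0$ symbols remain, which is the asserted generation statement; the $q=0$ case is the same with no wedge factors.

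Next I would run a case analysis on the $\pi$-adic valuation of $x$. Writing each $y_j=u_j\pi^{e_j}$ with $u_j\in A^\times$, expanding $\dlog([y_j]_m)=\dlog([u_j]_m)+e_j\,\dlog([\pi]_m)$ into the wedge product and using strict anti-commutativity (so $\dlog([\pi]_m)^2=0$), every generating symbol becomes an integral combination of symbols of type (i) $[x]_m\,\dlog([u_1]_m)\wedge\cdots\wedge\dlog([u_q]_m)$ and type (ii) $[x]_m\,\dlog([u_1]_m)\wedge\cdots\wedge\dlog([u_{q-1}]_m)\wedge\dlog([\pi]_m)$, all $u_j\in A^\times$. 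The membership $[x]_m\in\Fil_nW_m(K)$ forces $v(x)\ge-\lfloor n/p^{m-1}\rfloor$. For part (2) we have $1\le m\le r$, hence $p^{m-1}\mid n$ and $\lfloor n/p^{m-1}\rfloor=np^{1-m}$; if $v(x)>-np^{1-m}$ the symbol lies in $\Fil_{n-1}W_m\Omega^q_K$ — here I would invoke \cite[Lem.~3.5]{Kato-89} exactly as in the proof of \lemref{lem:Kato-fil-2} to also slide down symbols in which some $u_j\equiv1\pmod{\fm}$ — so it dies in $\wt{T}^{m,q}_n(K)$. If $v(x)=-np^{1-m}$, write $x=u\pi^{-np^{1-m}}$ with $u\in A$, whereupon the type-(i) symbol equals $\wt{\theta}_1(u\otimes u_1\otimes\cdots\otimes u_q)$ and the type-(ii) symbol equals $\wt{\theta}_2(u\otimes u_1\otimes\cdots\otimes u_{q-1})$. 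Since $\theta^q_1,\theta^q_2$ are group homomorphisms (Lemmas~\ref{lem:Kato-fil-2},~\ref{lem:Kato-fil-4}), these classes, and hence all the generators, lie in the image of $(\theta^q_1,\theta^q_2)$. This proves part (2).

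For part (1), $m=r+1$, the exponent $np^{1-m}=np^{-r}=l$ is prime to $p$, hence a unit in $\Z/p^{r+1}$ — this is precisely what lets us dispense with $\Omega^{q-1}_{\ff}$. Type-(i) symbols lie directly in the image of $\phi^q$. For a type-(ii) symbol I would use the identity
\[
[u\pi^{-l}]_{r+1}\,\dlog([\pi]_{r+1}) \;=\; \tfrac1l\Big([u\pi^{-l}]_{r+1}\,\dlog([u]_{r+1}) - d\big([u\pi^{-l}]_{r+1}\big)\Big),
\]
which follows from $d[u\pi^{-l}]_{r+1}=[u\pi^{-l}]_{r+1}\big(\dlog([u]_{r+1})-l\,\dlog([\pi]_{r+1})\big)$; wedging with the surviving factors $\dlog([u_j]_{r+1})$ exhibits the type-(ii) symbol as $\tfrac1l$ times $-d$ of a form in $\Fil_nW_{r+1}\Omega^{q-1}_K$ (killed by $\partial^{r+1,q}_n$ via \lemref{lem:VR-3}) plus a type-(i) symbol; since $\wt{M}^{r+1,q}_n(K)$ is $p$-power torsion, $\tfrac1l$ acts on it and the class is again in the image of $\phi^q$. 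Hence $\phi^q$ is surjective. The step I expect to be the real obstacle is making the generation statement in the second paragraph fully precise — controlling exactly which pieces of $\Fil_nW_m\Omega^q_K$ are annihilated on passing to $\wt{M}^{m,q}_n(K)$, together with the Witt-vector bookkeeping around the $\Fil$-conditions and index conventions — and the careful invocation of Kato's Lemma~3.5 to push "nearly trivial" symbols down the filtration. The structural reason the lemma splits into two parts is the dichotomy above: for $m\le r$ the exponent $np^{1-m}$ is divisible by $p$, forcing the use of $\theta^q_2$, whereas for $m=r+1$ it is a unit.
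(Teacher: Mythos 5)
Your proposal is correct and follows essentially the same route as the paper: the paper likewise reduces via Corollary~\ref{cor:VR-5} (i.e.\ Lemmas~\ref{lem:VR-3} and~\ref{lem:VR-4}) and \lemref{lem:Log-fil-4}(9) to symbols $[a\pi^{-\alpha}]_m\dlog$-terms, splits $y=u\pi^{\beta}$ to land in the images of $\theta^1_1,\theta^1_2$ for $1\le m\le r$, and for $m=r+1$ uses the same Leibniz/$\dlog$ identity modulo exact forms (killed by \lemref{lem:VR-3}) together with the invertibility of $l$ (prime to $p$) to absorb the $\dlog([\pi])$ factor into the image of $\phi^q$. Your explicit justification of the generation statement modulo $V$-images is just a spelled-out version of the paper's implicit use of \lemref{lem:VR-4}, so no essential difference.
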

\begin{proof}
  The map $\frac{\Fil_nW_{m}(K)}{\Fil_{n-1}W_{m}(K)} \otimes K^M_q(K) \to
  \wt{T}^{m,q}_n(K)$ is surjective by \corref{cor:VR-5} and
  $\Fil_nW_{m}(K) = [\pi^{-\alpha}]_{m}W_{m}(A)$ by \lemref{lem:Log-fil-4}(9),
  where $\alpha = np^{1-m}$ if $1 \le m \le r$ and $\alpha = l$ if $m = r+1$.
  Using Lemmas~\ref{lem:VR-3} and ~\ref{lem:VR-4}, it only remains to show that
  we can replace $K^M_q(K)$ by $K^M_q(A)$ on the left hand side of this surjection
  in order to prove (1).
  To show this, it is enough to consider the case $q =1$ and to show that
  $[x\pi^{-l}]_{r+1}\dlog([\pi]_{r+1})$ lies in the image of $A \otimes A^\times$
   under $\phi^1$ if $x \in A^\times$. But $[x\pi^{-l}]_{r+1}\dlog([\pi]_{r+1}) =
  -l^{-1}[x]_{r+1}d([\pi^{-l}]_{r+1})$. On the other hand,
  $[x]_{r+1}d([\pi^{-l}]_{r+1}) = d([x\pi^{-l}]_{r+1}) - [x\pi^{-l}]_{r+1}\dlog([x]_{r+1})$.
  It follows that $\partial^{r+1,1}_n([x\pi^{-l}]_{r+1}\dlog([\pi]_{r+1})) =
  \phi^1(l^{-1}x \otimes x)$.

  To prove (2), we can repeat the above steps which reduces finally to showing that
  $\partial^{m,1}_n([x\pi^{-\alpha}]_m\dlog([y]_m))$ lies in the image of
  $(\theta^1_1, \theta^1_2)$, where $y \in K^\times$. To show this, we write $y = u\pi^{\beta}$ for
  some $u \in A^\times$ and $\beta \in \Z$. This yields
  $[x\pi^{-\alpha}]_m\dlog([y]_m) = [x\pi^{-\alpha}]_m\dlog([u]_m) +
  \beta[x\pi^{-\alpha}]_m\dlog([\pi]_m) = \theta^1_1(x \otimes u) + \theta^1_2(\beta x)$.
  This finishes the proof.
\end{proof}

\begin{lem}\label{lem:Kato-fil-7}
  For $m \ge 1$, the map
  $\ov{\delta}^{m,q}_n \colon \wt{M}^{m,q}_n(K) \to \wt{N}^{m,q}_n(K)$ is bijective.
  \end{lem}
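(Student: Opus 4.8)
The plan is to prove the lemma as the final step of the double induction that ultimately yields \thmref{thm:Kato-4}: the outer induction is on $n$ and assumes \thmref{thm:Kato-4} for $n-1$, i.e.\ that $\ov\delta^{m,q}_{n-1}$ is bijective for every $m$, while for a fixed $n>1$ (as in the running hypothesis) the lemma is proved by an inner induction on $m$ that simultaneously records the surjectivity of $\psi^{m,q}_n$; the base case $m=1$ reduces to $\ov\delta^{1,q}_n=\psi^{1,q}_n$ since $\wt T^{0,q}_n(K)=0$. Write $n=p^rl$ with $p\nmid l$. First I would dispose of injectivity, where no new ideas are needed. By the outer hypothesis $T^{m,q}_{n-1}(K)$ equals $S^{m,q}_{n-1}(K)$ as a subgroup of $S^{m,q}_n(K)$, and since $\ov\delta^{m,q}_n\colon T^{m,q}_n(K)\inj S^{m,q}_n(K)$ is injective (as observed after \corref{cor:VR-5}), dividing by this common subgroup makes $\psi^{m,q}_n\colon\wt T^{m,q}_n(K)\to\wt S^{m,q}_n(K)$ injective; feeding this together with the surjectivity of $\psi^{m-1,q}_n$ (carried along by the inner induction) and the compatibility of $V$ with $\ov\delta$ recorded in \eqref{eqn:Kato-5} and \eqref{eqn:Kato-6}, a short diagram chase forces $\ov\delta^{m,q}_n\colon\wt M^{m,q}_n(K)\to\wt N^{m,q}_n(K)$ to be injective. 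It thus remains to prove surjectivity.

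For surjectivity I would split into the three ranges of $m$ governed by $r$. When $m\ge r+2$, \lemref{lem:Kato-fil-1} gives $\wt M^{m,q}_n(K)=0$, and Kato's computation of the graded pieces of $\Fil^{\bk}_\bullet H^{q+1}(K)$ in \cite{Kato-89} (which underlies \thmref{thm:Kato-2}) shows that the $p^m$-torsion filtration on $\gr^{\bk}_nH^{q+1}(K)$ has already stabilised at level $r+1$, so $\wt N^{m,q}_n(K)=0$ too and both sides vanish. When $m=r+1$ I would use the surjection $\phi^q\colon\Omega^q_{\ff}\surj\wt M^{r+1,q}_n(K)$ of \lemref{lem:Kato-fil-6}(1), and when $1\le m\le r$ the surjection $(\theta^q_1,\theta^q_2)\colon\Omega^q_{\ff}/Z_1\Omega^q_{\ff}\oplus\Omega^{q-1}_{\ff}/Z_1\Omega^{q-1}_{\ff}\surj\wt M^{m,q}_n(K)$ of \lemref{lem:Kato-fil-6}(2). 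In each case Kato's structure theorem presents $\gr^{\bk}_nH^{q+1}(K)$, hence $\wt N^{m,q}_n(K)$, as a quotient of precisely the same exterior-power groups, and I would check that $\ov\delta^{m,q}_n$ fits into a commutative square whose other three edges are the two $\Omega^\bullet_{\ff}$-surjections above and Kato's identification. Concretely, the generating classes $\partial^{m,q}_n\bigl([x\pi^{-\alpha}]_m\dlog[y_1]_m\wedge\cdots\wedge\dlog[y_q]_m\bigr)$ of $\wt M^{m,q}_n(K)$ are carried by $\delta^q_m$, via \eqref{eqn:Milnor-1}, to the cup products $\delta^0_m([x\pi^{-\alpha}]_m)\cup\{y_1,\dots,y_q\}$, which by \thmref{thm:Kato-2}(2) and \thmref{thm:Kato-3} generate $S^{m,q}_n(K)$ modulo $S^{m,q}_{n-1}(K)$; thus $\ov\delta^{m,q}_n$ takes a generating set onto a generating set and is surjective.

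The main obstacle is exactly this last compatibility: one must match the relations that cut out $\wt M^{m,q}_n(K)$ — the vanishing of closed forms from \lemref{lem:Kato-fil-3}, the Leibniz and Cartier identities exploited in \lemref{lem:Kato-fil-2} and \lemref{lem:Kato-fil-4}, and the quotient by $V(\wt T^{m-1,q}_n(K))$ — against the relations Kato imposes on $\gr^{\bk}_nH^{q+1}(K)$, so that the square above really commutes with Kato's isomorphism as its lower edge. A secondary but necessary point is the reduction to the complete case: by \corref{cor:Complete-2} and \lemref{lem:Non-complete-3} none of the groups or maps in sight changes under completion of $A$, so one may assume $A$ complete throughout, which is what makes Kato's structure results directly applicable. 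Once the square is known to commute, surjectivity of its top edge yields surjectivity of $\ov\delta^{m,q}_n$, and together with the injectivity from the first step (and the vanishing of both sides in the range $m\ge r+2$) this gives the lemma, closing the inner induction and feeding the outer one that completes \thmref{thm:Kato-4}.
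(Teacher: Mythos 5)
There is a genuine gap, and it sits exactly where the real content of this lemma lies. The paper's proof is essentially a citation: Kato, in the last step of the proof of \cite[Thm.~3.2]{Kato-89}, shows that the \emph{composite} maps $\Omega^q_{\ff} \xrightarrow{\phi^q} \wt{M}^{r+1,q}_n(K) \xrightarrow{\ov{\delta}^{r+1,q}_n} \wt{N}^{r+1,q}_n(K)$ and $\frac{\Omega^{q}_{\ff}}{Z_1\Omega^{q}_{\ff}} \oplus \frac{\Omega^{q-1}_{\ff}}{Z_1\Omega^{q-1}_{\ff}} \xrightarrow{(\theta^q_1,\theta^q_2)} \wt{M}^{m,q}_n(K) \xrightarrow{\ov{\delta}^{m,q}_n} \wt{N}^{m,q}_n(K)$ are \emph{bijective}; since the first legs are surjective by \lemref{lem:Kato-fil-6}, both injectivity and surjectivity of $\ov{\delta}^{m,q}_n$ follow in one line (if $g\circ f$ is bijective and $f$ is onto, $g$ is bijective), with no induction on $n$ whatsoever. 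Your proposal gestures at this ("a commutative square whose other three edges are the two $\Omega^\bullet_{\ff}$-surjections and Kato's identification"), but the input you actually invoke is weaker and does not suffice: what is needed is Kato's computation of the \emph{doubly} graded piece $\wt{N}^{m,q}_n(K)$ (graded both in $n$ and in the torsion level $m$), not merely a presentation of $\gr^{\bk}_nH^{q+1}(K)$ as a quotient of differential forms. Your concrete substitute — that the cup products $\delta^0_m([x\pi^{-\alpha}]_m)\cup\{y_1,\dots,y_q\}$ generate $S^{m,q}_n(K)$ modulo $S^{m,q}_{n-1}(K)$, "by \thmref{thm:Kato-2}(2) and \thmref{thm:Kato-3}" — does not follow from those results: \thmref{thm:Kato-2}(2) only says the full $p$-primary group $\Fil^{\bk}_nH^{q+1}(K)\{p\}$ is generated by cup products with classes $\delta^0_{m'}(\Fil_nW_{m'}(K))$ for \emph{unbounded} $m'$, and \thmref{thm:Kato-3} treats only $q=0$. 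Since such level-$m$ cup products lie in $T^{m,q}_n(K)$, your generation claim is equivalent to $T^{m,q}_n(K)+S^{m,q}_{n-1}(K)=S^{m,q}_n(K)$, i.e.\ (granting the level $n-1$ case) precisely \thmref{thm:Kato-4}, the statement this whole section is building toward; descending the torsion level from $m'$ to $m$ modulo $\Fil_{n-1}$ is the hard point, and it is exactly what Kato's graded analysis supplies.

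The injectivity half of your argument has the same structural problem: it assumes \thmref{thm:Kato-4} at level $n-1$, which in the paper is downstream of this lemma (via \lemref{lem:Kato-fil-8} and \thmref{thm:Kato-fil-10}). One could in principle restructure everything as the simultaneous double induction you sketch, but then you must supply the outer base case ($n\le 1$, which is excluded by the running assumption $n>1$ and needs a separate argument) and re-prove the Cartesian-square/limit steps inside the induction — none of which is necessary, because once Kato's bijectivity of the composites is quoted, injectivity of $\ov{\delta}^{m,q}_n$ on $\wt{M}^{m,q}_n(K)$ comes for free alongside surjectivity. Your treatment of the range $m\ge r+2$ (vanishing of both sides via \lemref{lem:Kato-fil-1} and Kato's Lemma 3.6.1) is correct and agrees with the paper; the missing idea is the appeal to the precise bijectivity statement from the last step of Kato's proof of Theorem 3.2, in place of which you have an unproved (and essentially circular) generation claim and an induction on $n$ without a base.
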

\begin{proof}
When $m \ge r+2$, we are done by \lemref{lem:Kato-fil-1} and \cite[Lem~3.6.1]{Kato-89} as both groups are zero. Otherwise, we
look at the maps
  \[
  \Omega^q_{\ff} \xrightarrow{\phi^q} \wt{M}^{r+1,q}_n(K) \xrightarrow{\ov{\delta}^{r+1,q}_n}
  \wt{N}^{r+1,q}_n(K)
  \]
  and (when $1 \le m \le r$)
  \[
  \frac{\Omega^{q}_{\ff}}{Z_1\Omega^{q}_{\ff}}  \bigoplus
  \frac{\Omega^{q-1}_{\ff}}{Z_1\Omega^{q-1}_{\ff}} \xrightarrow{\theta^q_1, \theta^q_2}
  \wt{M}^{m,q}_n(K) \xrightarrow{\ov{\delta}^{m,q}_n} \wt{N}^{m,q}_n(K).
  \]
  Kato shows in \cite[\S~3]{Kato-89} (see the last step in proof of Theorem~3.2
  on p.~114) that the above two composite maps are bijective. 
  We now apply \lemref{lem:Kato-fil-6} to finish the proof.
\end{proof}

\begin{lem}\label{lem:Kato-fil-8}
  For $m \ge 1$, the square ~\eqref{eqn:Kato-6} is Cartesian.
\end{lem}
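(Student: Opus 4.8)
The plan is to restate the Cartesian property as a single injectivity statement and then run an induction on $m$ whose only real input is \lemref{lem:Kato-fil-7}. First I would observe that every arrow in \eqref{eqn:Kato-6} is injective: the two vertical ones because $\ov{\delta}^{m,q}_n$ factors through ${}_{p^m}\Fil^{\bk}_nH^{q+1}(K)$ (cf. \corref{cor:VR-5} and \thmref{thm:Kato-4}), and the two horizontal ones by the discussion following \eqref{eqn:Kato-6}. Hence, identifying $T^{m,q}_n(K)$ with its image in $S^{m,q}_n(K)$, the square is Cartesian precisely when $T^{m,q}_n(K)\cap S^{m,q}_{n-1}(K)=T^{m,q}_{n-1}(K)$ inside $S^{m,q}_n(K)$; dividing by $T^{m,q}_{n-1}(K)$, this is exactly the assertion that $\psi^{m,q}_n\colon \wt{T}^{m,q}_n(K)\to \wt{S}^{m,q}_n(K)$ is injective. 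So it suffices to prove the latter; as throughout this subsection we take $n>1$, the cases $n\le 1$ being either vacuous or immediate from \thmref{thm:Kato-2}.

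Next I would assemble the commutative ladder
\[
\xymatrix@C1.2pc{
 & \wt{T}^{m-1,q}_n(K) \ar[r]^-{V} \ar[d]_-{\psi^{m-1,q}_n} & \wt{T}^{m,q}_n(K) \ar[r] \ar[d]^-{\psi^{m,q}_n} & \wt{M}^{m,q}_n(K) \ar[r] \ar[d]^-{\ov{\delta}^{m,q}_n} & 0 \\
0 \ar[r] & \wt{S}^{m-1,q}_n(K) \ar[r]^-{\iota} & \wt{S}^{m,q}_n(K) \ar[r] & \wt{N}^{m,q}_n(K) \ar[r] & 0,
}
\]
whose top row is right exact by the very definition $\wt{M}^{m,q}_n(K)=\wt{T}^{m,q}_n(K)/V(\wt{T}^{m-1,q}_n(K))$. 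The bottom row is short exact once one checks that $\iota$ is injective, i.e. that ${}_{p^{m-1}}\Fil^{\bk}_nH^{q+1}(K)\cap{}_{p^m}\Fil^{\bk}_{n-1}H^{q+1}(K)={}_{p^{m-1}}\Fil^{\bk}_{n-1}H^{q+1}(K)$ inside $H^{q+1}(K)$ — both sides being simply the $p^{m-1}$-torsion subgroup of $\Fil^{\bk}_{n-1}H^{q+1}(K)$, since $\Fil^{\bk}_{n-1}\subseteq\Fil^{\bk}_n$. The left square commutes by the compatibility of $V$ with the two filtrations recorded in the right-hand square of \eqref{eqn:Kato-5} (passing to the quotients by the level $n-1$ subgroups), and the right square commutes by the construction of $\ov{\delta}^{m,q}_n$ on the $\wt{M}/\wt{N}$ level.

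Finally I would run the induction on $m$. For the base case $m=1$ one has $\wt{T}^{0,q}_n(K)=0=\wt{S}^{0,q}_n(K)$, so $\psi^{1,q}_n$ is identified with $\ov{\delta}^{1,q}_n\colon\wt{M}^{1,q}_n(K)\xrightarrow{\cong}\wt{N}^{1,q}_n(K)$, which is bijective by \lemref{lem:Kato-fil-7}. For $m\ge 2$, assuming $\psi^{m-1,q}_n$ is injective, I would take $x\in\ker\psi^{m,q}_n$ and chase the ladder: the image of $x$ in $\wt{M}^{m,q}_n(K)$ lies in $\ker\ov{\delta}^{m,q}_n=0$ (\lemref{lem:Kato-fil-7} again), so $x=V(y)$ for some $y\in\wt{T}^{m-1,q}_n(K)$; then $\iota\bigl(\psi^{m-1,q}_n(y)\bigr)=\psi^{m,q}_n(V(y))=0$, and injectivity of $\iota$ forces $\psi^{m-1,q}_n(y)=0$, whence $y=0$ by the inductive hypothesis and $x=V(y)=0$. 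Thus $\psi^{m,q}_n$ is injective, which by the first paragraph yields the Cartesian property. I expect no serious obstacle here: once \lemref{lem:Kato-fil-7} is available the argument is essentially diagram bookkeeping, and the only step deserving a moment's care is the exactness of the bottom row, i.e. the torsion-intersection identity used to see that $\iota$ is injective.
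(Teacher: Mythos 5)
Your proof is correct and is essentially the paper's own argument: the same reduction of the Cartesian property to injectivity of the induced map $\wt{T}^{m,q}_n(K)\to\wt{S}^{m,q}_n(K)$, the same commutative ladder whose bottom row is exact because $S^{m-1,q}_n(K)\cap S^{m,q}_{n-1}(K)=S^{m-1,q}_{n-1}(K)$ (your torsion-intersection identity), and the same induction on $m$ with \lemref{lem:Kato-fil-7} supplying the base case and the bijectivity of the right-hand column. The only difference is cosmetic: you spell out the diagram chase and the injectivity of $\iota$ that the paper states in passing (and your aside about $n\le 1$ is unnecessary, since the standing assumption $n>1$ of this subsection is in force).
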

\begin{proof}
  Since $T^{m,q}_{n-1}(K) \to T^{m,q}_n(K)$ is injective, as we observed following
  the diagram ~\eqref{eqn:Kato-6}, it is sufficient to show that the nap
  $\ov{\delta}^{m,q}_n \colon \wt{T}^{m,q}_n(K) \to \wt{S}^{m,q}_n(K)$ is injective.
  We shall prove this by induction on $m$.
  When $m =1$, the map $\wt{T}^{m,q}_n(K) \to \wt{M}^{m,q}_n(K)$ is bijective and
  we are done by \lemref{lem:Kato-fil-7}. We now assume $m \ge 2$ and
  look at the commutative diagram
  \begin{equation}\label{eqn:Kato-fil-8-0}
    \xymatrix@C1pc{
      & \wt{T}^{m-1,q}_n(K) \ar[r]^-{V} \ar[d]_-{\ov{\delta}^{m-1,q}_n} &
      \wt{T}^{m,q}_n(K) \ar[r] \ar[d]^-{\ov{\delta}^{m,q}_n} &  \wt{M}^{m,q}_n(K)
      \ar[d]^-{\ov{\delta}^{m,q}_n} \ar[r] & 0 \\
      0 \ar[r] & \wt{S}^{m-1,q}_n(K) \ar[r] &
      \wt{S}^{m,q}_n(K) \ar[r] &  \wt{N}^{m,q}_n(K) \ar[r] & 0.}
    \end{equation}

  It is clear from the definition of the groups ${S}^{m,q}_n(K)$ that
  $S^{m-1,q}_n(K) \bigcap S^{m,q}_{n-1}(K) = S^{m-1,q}_{n-1}(K)$, and this implies that
  the bottom row of ~\eqref{eqn:Kato-fil-8-0} is exact.
  The left vertical arrow in this diagram is injective by induction and the
  right vertical arrow is bijective by \lemref{lem:Kato-fil-7}.
  It follows that the middle vertical arrow is injective.
 \end{proof}

We can now finally prove the following.

\begin{thm}\label{thm:Kato-fil-10}
  For $m \ge 1$ and $n,q \ge 0$, the map
  $$\ov{\delta}^{m,q}_n \colon T^{m,q}_n(K) \to \ _{p^m}\Fil^{\bk}_n H^{q+1}(K)$$ is an
  isomorphism, where $T^{m,q}_n(K)=\H^1_\et(\Spec A, W_m\sF^{q,\bullet}_n)$.
\end{thm}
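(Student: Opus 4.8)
The plan is to assemble \thmref{thm:Kato-fil-10} out of the pieces already established, passing from the graded quotients up to the full filtration. First I would record what is already in hand: by \corref{cor:VR-5} together with \thmref{thm:Kato-2} and the compatibility diagram \eqref{eqn:Milnor-1}, the map $\ov{\delta}^{m,q}_n$ factors through an \emph{injection} $T^{m,q}_n(K) \inj S^{m,q}_n(K) := \ _{p^m}\Fil^{\bk}_nH^{q+1}(K)$, so the entire content is surjectivity. The identification $T^{m,q}_n(K)=\H^1_\et(\Spec A, W_m\sF^{q,\bullet}_n)$ is \lemref{lem:hyp}(2), so that part is automatic. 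Also the case $n=0$ (resp. $n=1$, once reduced) is handled directly: for $n\le 1$ the relevant Kato filtration step and the Witt-vector filtration step coincide in a way that makes $\ov\delta^{m,q}_0$ surjective already from \corref{cor:VR-5} and \thmref{thm:Kato-2}; so I may and will assume $n>1$ throughout the inductive argument, as is done in the build-up lemmas.

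The core of the argument is an induction on $n$, reducing surjectivity of $\ov{\delta}^{m,q}_n \colon T^{m,q}_n(K) \to S^{m,q}_n(K)$ to surjectivity on the graded pieces $\psi^{m,q}_n \colon \wt{T}^{m,q}_n(K) \to \wt{S}^{m,q}_n(K)$. Since $T^{m,q}_{n-1}(K) \inj T^{m,q}_n(K)$ and $S^{m,q}_{n-1}(K)\inj S^{m,q}_n(K)$ (shown after \eqref{eqn:Kato-6}), a standard five-lemma/snake argument on the exact sequences $0\to T^{m,q}_{n-1}\to T^{m,q}_n \to \wt T^{m,q}_n\to 0$ and its $S$-analogue shows that if $\ov\delta^{m,q}_{n-1}$ is an isomorphism (induction hypothesis on $n$) and $\psi^{m,q}_n$ is surjective, then $\ov\delta^{m,q}_n$ is surjective, hence an isomorphism (injectivity is already known). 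So it remains to prove $\psi^{m,q}_n$ is surjective. For this I would pass to a second layer of reduction, in $m$: the maps $V\colon \wt T^{m-1,q}_n(K)\to \wt T^{m,q}_n(K)$ fit into the exact sequence $\wt T^{m-1,q}_n \xrightarrow{V} \wt T^{m,q}_n \to \wt M^{m,q}_n\to 0$ (the top row of \eqref{eqn:Kato-fil-8-0}), matched with the $S$-side sequence $0\to \wt S^{m-1,q}_n\to \wt S^{m,q}_n\to \wt N^{m,q}_n\to 0$. \lemref{lem:Kato-fil-7} gives that $\ov\delta^{m,q}_n\colon \wt M^{m,q}_n(K)\to \wt N^{m,q}_n(K)$ is bijective for every $m\ge 1$, and \lemref{lem:Kato-fil-1} gives that $V\colon \wt T^{m,q}_n\to\wt T^{m+1,q}_n$ is surjective for $m\ge r+1$ (where $n=p^rl$, $(p,l)=1$). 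Feeding these into the two sequences and inducting on $m$ (base case $m=1$, where $\wt T^{1,q}_n\xrightarrow{\cong}\wt M^{1,q}_n$ and one applies \lemref{lem:Kato-fil-7} directly) yields surjectivity of $\psi^{m,q}_n$ for all $m$. Concretely this is essentially the diagram chase packaged in \lemref{lem:Kato-fil-8}: that lemma already shows $\ov\delta^{m,q}_n\colon \wt T^{m,q}_n(K)\to\wt S^{m,q}_n(K)$ is injective by induction on $m$, and the same diagram \eqref{eqn:Kato-fil-8-0} with \lemref{lem:Kato-fil-7} and \lemref{lem:Kato-fil-1} in hand gives surjectivity — so I would finish by invoking \lemref{lem:Kato-fil-8} for injectivity on graded pieces and the companion chase for surjectivity, concluding $\psi^{m,q}_n$ is an isomorphism and hence so is $\ov\delta^{m,q}_n$.

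I expect the main obstacle to be the bookkeeping that interlocks the two inductions (on $n$ and on $m$) with the case distinction $m\le r$ versus $m=r+1$ versus $m\ge r+2$: the structure maps $\theta^q_1,\theta^q_2$ of \eqref{eqn:Kato-fil-5} are only defined for $1\le m\le r$, $\phi^q$ of \eqref{eqn:Kato-fil-2-2} only for $m=r+1$, and for $m\ge r+2$ one needs \cite[Lem.~3.6.1]{Kato-89} to see that both $\wt M^{m,q}_n(K)$ and $\wt N^{m,q}_n(K)$ vanish; getting these regimes to hand off cleanly as $m$ grows (so that \lemref{lem:Kato-fil-1}'s surjectivity of $V$ for $m\ge r+1$ propagates surjectivity of $\psi$ upward) is where care is needed. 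A secondary subtlety is making sure the identification $T^{m,q}_n(K)=\H^1_\et(\Spec A, W_m\sF^{q,\bullet}_n)$ from \lemref{lem:hyp} is stated with the hypothesis ($A$ an $F$-finite Henselian DVR, $E=V((\pi))$, $D_n=nE$) matching the running setup of \S~\ref{sec:loc-Kato}, and that the non-complete case of the injectivity statement correctly reduces to the complete case via \corref{cor:Complete-2} with $r=1$, exactly as in the proof of \corref{cor:VR-5}. Once those compatibilities are checked, the theorem follows formally by combining \corref{cor:VR-5}, \lemref{lem:hyp}, \lemref{lem:Kato-fil-7}, \lemref{lem:Kato-fil-8} and \lemref{lem:Kato-fil-1}.
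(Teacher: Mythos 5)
Your skeleton for $n>1$ is internally sound and is genuinely different from the paper's argument: injectivity from \corref{cor:VR-5} is the same, but for surjectivity you propose an upward induction on $n$, reducing to surjectivity of $\psi^{m,q}_n\colon \wt{T}^{m,q}_n(K)\to\wt{S}^{m,q}_n(K)$, which you obtain by a four-lemma chase in $m$ from the exact rows of ~\eqref{eqn:Kato-fil-8-0} and the bijectivity of $\ov{\delta}^{m,q}_n\colon\wt{M}^{m,q}_n(K)\to\wt{N}^{m,q}_n(K)$ (\lemref{lem:Kato-fil-7}). The paper never proves surjectivity on graded pieces and never inducts upward in $n$: it embeds $S^{m,q}_n(K)={}_{p^m}\Fil^{\bk}_nH^{q+1}(K)$ into $H^1_\et(K,W_m\Omega^q_{K,\log})=\varinjlim_{n'}T^{m,q}_{n'}(K)$ using \lemref{lem:Kato-fil-9} and \lemref{lem:Log-fil-4}(3), so that a given class lies in $T^{m,q}_{n'}(K)$ for $n'\gg 0$, and then descends from $n'$ to $n$ by iterating the Cartesian property of \lemref{lem:Kato-fil-8}, i.e. $T^{m,q}_{n-1}(K)=T^{m,q}_{n}(K)\cap S^{m,q}_{n-1}(K)$. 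That descent uses only the injectivity of $\psi^{m,q}_n$ and needs no base case in $n$.

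The genuine gap in your version is the base case. Your assertion that $n\le 1$ is "handled directly from \corref{cor:VR-5} and \thmref{thm:Kato-2}" is not correct: \corref{cor:VR-5} identifies $T^{m,q}_n(K)$ with the image of $\Fil_nW_m(K)\otimes K^M_q(K)$ in $H^{q+1}(K)$, and \thmref{thm:Kato-2} describes $\Fil^{\bk}_nH^{q+1}(K)\{p\}$ only as the union over all levels $m'$ of such images; together they do not show that a class in this union which is killed by $p^m$ already lies in the level-$m$ image. That statement is exactly the refinement the theorem asserts (for $q=0$ it is \thmref{thm:Kato-3}, which required the Kerz--Saito erratum), so invoking it for $n=0$ is circular. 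Moreover, the graded-piece lemmas you rely on (\lemref{lem:Kato-fil-2} through \lemref{lem:Kato-fil-8}) are proved under the standing assumption $n>1$, so your induction step is only available for $n\ge 2$ and the case $n=1$ would also need separate treatment. The cleanest repair is to drop the upward induction and its base case altogether and argue as the paper does: given $w\in{}_{p^m}\Fil^{\bk}_nH^{q+1}(K)$, use \lemref{lem:Kato-fil-9} and \lemref{lem:Log-fil-4}(3) to place $w$ in $T^{m,q}_{n'}(K)$ for some $n'\gg 0$, and then apply \lemref{lem:Kato-fil-8} repeatedly to conclude $w\in T^{m,q}_n(K)$; with that replacement the surjectivity of $\psi^{m,q}_n$ is not needed.
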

\begin{proof}
  By \corref{cor:VR-5}, we only need to show that $\ov{\delta}^{m,q}_n$ is surjective.
  For this, we let $w \in \ _{p^m}\Fil^{\bk}_n H^{q+1}(K) = S^{m,q}_n(K)$.
  By \lemref{lem:Kato-fil-9}, we see that $w \in H^1_\et(K, W_m\Omega^q_{K, \log})$.
  Since $\delta^{m,q} \colon W_m\Omega^q_K \surj H^1_\et(K, W_m\Omega^q_{K, \log})$
  and by \lemref{lem:Log-fil-4}(3)
  it follows that $  {\varinjlim}_n T^{m,q}_n(K)$ $= H^1_\et(K, W_m\Omega^q_{K, \log})$.
  In particular, $w \in T^{m,q}_{n'}(K) \cap S^{m,q}_n(K)$ for all $n' \gg 0$.
  We conclude from \lemref{lem:Kato-fil-8} that $w \in  T^{m,q}_{n}(K)$.
\end{proof}

\section{Comparison with Global Kato filtration}\label{sec:the group}
Let $X$ and $W_m\sF^{q,\bullet}_{D}$ be as in \defref{defn:1-c complex}. We also recall all the notations from there. Furthermore, assume $D \ge 0$ (i.e. $n_i\ge 0$). In this section we shall describe the group $\H^1_\et(X, W_m\sF^{q,\bullet}_{D})$ in terms of local Kato filtration studied in \S~\ref{sec:loc-Kato}. 
 Recall the complex $$W_m\sF^{q,\bullet}_{D} =\left[Z_1\Fil_{D} W_m\Omega^q_U \xrightarrow{1 -C}
\Fil_{D}W_m\Omega^q_U\right].$$
We note that $j^*W_m\sF^{q,\bullet}_{D} = W_m\Omega^q_{U,\log}$.

\subsection{Global Kato filtration}
For $x_i \in E^{(0)}$, let $K_i=Q(\sO_{X,x_i}^h)$ and $j_i: H^1_\et(U,W_m\Omega^q_{U,\log}) \to H^1_\et(K_i,W_m\Omega^q_{K_i,\log})$ be the canonical restriction map. \lemref{lem:Kato-fil-9} implies $H^1_\et(K_i,W_m\Omega^q_{K_i,\log})= \ _{p^m}(H^{q+1}(K_i))$. 

We define the global Kato filtration of $H^{q+1}_m(U)=H^1_\et(U,W_m\Omega^q_{U,\log})$ as follows:
\begin{defn}(cf. \cite[Defn~2.7]{Kerz-Saito-ANT})\label{defn:Log-fil-D} 
For any $q \ge 0, m \ge 1$, we let    $$\Fil^\log_DH^1_\et(U,W_m\Omega^q_{U,\log}):= ker \left(H^1_\et(U,W_m\Omega^q_{U,\log}) \xrightarrow{\bigoplus j_i} \bigoplus \limits_{x_i \in E^{(0)}} \frac{H^1_\et(K_i,W_m\Omega^q_{K_i,\log})}{_{p^m}(\Fil^{\bk}_{n_i}H^{q+1}(K_i))}\right).$$
\end{defn}

 For $q=0$, we have 
\begin{equation}\label{eqn:fundamental}
    \pi_1^{ab}(X,D)/p^m=\Hom_\Ab(\Fil^\log_DH^1_\et(U,\Z/p^m), \Q/\Z),
\end{equation}
where $\pi_1^{ab}(X,D)$ is the (logarithmic) abelianized {\et}ale fundamental group with modulus $D$, defined as $\Hom_\Ab(\Fil^\log_DH^1_\et(U,\Q/\Z), \Q/\Z)$ (cf. \cite[Def~2.7]{Kerz-Saito-ANT}). 
Recall from \cite[Lem~3.6]{Kerz-Saito-ANT}, that we have an isomorphism $$\H^{1}_\et(X, W_m\sF^{q,\bullet}_{D}) \cong \Fil^\log_DH^1_\et(U, W_m\Omega^q_{U,\log}), \text{ if }q=0.$$

Our goal in this section is to extend the above isomorphism to arbitrary $q \ge 0$. This will be one of the key results for proving the duality theorem for Hodge-Witt cohomology with modulus over local fields in \S~\ref{chap:duality-localfields} and the Lefschetz theorem for Brauer groups in \S~\ref{chap:Lef}. 

We begin with the following lemma.

\begin{lem}\label{lem:Z-1-fil}
    Let $k$ be an $F$-finite field of characteristic $p>0$. Let $R=k[[Y_1,Y_2]]$, $\pi=Y_1^nY_2^m, n, m \in \Z$ and $K=R_{Y_1Y_2}$ be localisation of $R$ by the element $Y_1Y_2$. Let's write (cf. \eqref{eqn:Multi-0})
    $$F^{1,q}_0=F^{1,q}_0(R)=\Omega^q_k \oplus  \Omega^{q-1}_k \dlog Y_1 \oplus  \Omega^{q-1}_k \dlog Y_2 \oplus  \Omega^{q-2}_k \dlog Y_1 \dlog Y_2$$ and  $$Z_1F^{1,q}_0=Z_1\Omega^q_k \oplus  (Z_1\Omega^{q-1}_k) \dlog Y_1 \oplus  (Z_1\Omega^{q-1}_k) \dlog Y_2 \oplus ( Z_1\Omega^{q-2}_k) \dlog Y_1 \dlog Y_2$$
    Then $Z_1\Fil_{(n,m)}\Omega^q_K$ has the following unique presentation.
    $$Z_1\Fil_{(n,m)}\Omega^q_K= \sum\limits_{\substack{ip \ge -n \\ jp \ge -m}}Y_1^{ip}Y_2^{jp} \left( Z_1F^{1,q}_0 \right)  + \sum\limits_{\substack{i \ge -n \\  j  \ge -m \\ p \nmid i \text{ or } p \nmid j}} d(Y_1^iY_2^j F^{1,q-1}_0). $$ 
    Moreover, one has
    \begin{enumerate}
        \item $(\Fil_{(n,m)}\Omega^q_K)_{Y_1Y_2}= \Omega^q_K$ and $(Z_1\Fil_{(n,m)}\Omega^q_K)_{Y_1Y_2}= Z_1\Omega^q_K$, where we consider $Z_1\Omega^q_K$ and $Z_1\Fil_{(n,m)}\Omega^q_K$ as $R$-modules via Frobenius action.
        
        \item Every element $\omega \in (\Fil_{(n,m)}\Omega^q_K)_{Y_1}$ can be written uniquely as
        $$\omega = \sum\limits_{\substack{i\ge -N_0 \\ j\ge -m}} Y_1^iY_2^j a_{i,j} , \text{  for some $N_0 >0$ and } a_{i,j} \in F^{1,q}_0.$$

        \item Every element $\omega \in (\Fil_{(n,m)}\Omega^q_K)_{Y_2}$ can be written uniquely as
        $$\omega = \sum\limits_{\substack{i\ge -n \\ j\ge -M_0}} Y_1^iY_2^j b_{i,j} , \text{  for some $M_0 >0$ and } b_{i,j} \in F^{1,q}_0.$$

        \item Every element $\omega \in (Z_1\Fil_{(n,m)}\Omega^q_K)_{Y_1}$ can be written uniquely as
        $$\omega = \sum\limits_{\substack{i\ge -N'_0 \\ jp\ge -m}} Y_1^{ip}Y_2^{jp} e_{i,j} + \sum\limits_{\substack{i\ge -N'_0 \\ j\ge -m \\ p\nmid i \text{ or } p \nmid j}}d(Y_1^iY_2^jf_{i,j}),$$
        for some $N'_0 >0$ and  $e_{i,j} \in Z_1F^{1,q}_0, \  f_{i,j} \in F^{1,q-1}_0.$

        \item Every element $\omega \in (Z_1\Fil_{(n,m)}\Omega^q_K)_{Y_2}$ can be written uniquely as
        $$\omega = \sum\limits_{\substack{ip\ge -n \\ j\ge -M'_0}} Y_1^{ip}Y_2^{jp} e'_{i,j} + \sum\limits_{\substack{i\ge -n \\ j\ge -M'_0 \\ p\nmid i \text{ or } p \nmid j}}d(Y_1^iY_2^jf'_{i,j}),$$
        for some $M'_0 >0$ and  $e'_{i,j} \in Z_1F^{1,q}_0, \  f'_{i,j} \in F^{1,q-1}_0.$
    \end{enumerate}
    
\end{lem}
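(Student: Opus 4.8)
The plan is to work entirely inside the two-variable power series ring and extract everything from the structural results already available: the Geisser--Hesselholt-type decomposition recorded in \corref{cor:F-function-2} (and its refinements in \corref{cor:GH-2}, \corref{cor:GH-3}, \propref{prop:Fil-decom}) for $m=1$, together with the explicit descriptions of $\Fil_{\un{n}}\Omega^q_K$ from \lemref{lem:F-function-1} and of $Z_1$ via \lemref{lem:Complete-8}, \lemref{lem:Complete-9} and \defref{defn:Cartier}. First I would establish the ``moreover'' items (1)--(5), since the presentation of $Z_1\Fil_{(n,m)}\Omega^q_K$ in the main display is most cleanly proved by localizing and patching. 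For item (1): by \lemref{lem:Log-fil-4}(3) (the $q$-version of the colimit statement) inverting $Y_1Y_2$ exhausts the filtration, giving $(\Fil_{(n,m)}\Omega^q_K)_{Y_1Y_2}=\Omega^q_K$; for the $Z_1$ statement one intersects with $Z_1\Omega^q_K$ and uses that localization is exact, noting the $R$-module structure is via Frobenius so that multiplication by $Y_i^{p}$ corresponds to multiplication by $[Y_i]$ at the level of $Z_1$ (this is the content of \lemref{lem:Complete-9}(1) applied with $i=1$, i.e. $Z_1\Omega^q_K=F(W_2\Omega^q_K)$, transported through $\theta^{1,q}$).

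For items (2) and (3), inverting only $Y_1$ (resp. only $Y_2$) leaves the constraint in the \emph{other} variable intact: in the notation of \corref{cor:F-function-2}, an element of $(\Fil_{(n,m)}\Omega^q_K)_{Y_1}$ is a series $\sum a_{\un{m}}x^{\un{m}}$ with $a_{\un{m}}=0$ whenever $m_2<-m$, and with $m_1$ now allowed to be any integer $\ge -N_0$ for some $N_0$ (finitely many negative powers of $Y_1$, since inverting $Y_1$ is a localization of a Noetherian ring acting on a finitely generated module, cf. \lemref{lem:Log-fil-4}(1)); uniqueness is exactly the uniqueness in \corref{cor:F-function-2}. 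Item (3) is symmetric. For items (4) and (5) I would combine this with the $m=1$ case of \propref{prop:Fil-decom}/\corref{cor:GH-3} in each single variable: after inverting $Y_1$, a $d$-closed element decomposes, via $Z_1\Omega^q_K=F(W_2\Omega^q_K)$ transported through the Cartier picture, into a ``$p$-th power part'' $\sum Y_1^{ip}Y_2^{jp}e_{i,j}$ with $e_{i,j}\in Z_1F^{1,q}_0$ and an exact part $\sum d(Y_1^iY_2^jf_{i,j})$ indexed over $(i,j)$ with $p\nmid i$ or $p\nmid j$; the constraint $jp\ge -m$ on the first sum and $j\ge -m$ on the second is inherited from membership in $\Fil_{(n,m)}$ because in the variable $Y_2$ nothing has been inverted. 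The bookkeeping here is purely that of \defref{defn:GH-1}: a monomial $Y_2^{j}$ lies in an $F^{2,\bullet}_{\cdot}$-piece of ``$A$-type'' precisely when $p\mid j$, and of ``$B$-type'' otherwise, and the latter pieces are $dV(\cdots)$, which after applying $F$ (to land in $W_1=\Omega$) become $d(\cdots)$.

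Once (1)--(5) are in hand, the main display follows by a gluing argument: the right-hand side $G:=\sum_{ip\ge -n,\,jp\ge -m}Y_1^{ip}Y_2^{jp}(Z_1F^{1,q}_0)+\sum_{i\ge -n,\,j\ge -m,\ p\nmid i\text{ or }p\nmid j}d(Y_1^iY_2^jF^{1,q-1}_0)$ is manifestly contained in $Z_1\Fil_{(n,m)}\Omega^q_K$ (each generator is $d$-closed by inspection and lies in $\Fil_{(n,m)}$ by \lemref{lem:F-function-1} and \lemref{lem:Log-fil-4}(4),(8)). For the reverse inclusion, take $\omega\in Z_1\Fil_{(n,m)}\Omega^q_K$; by item (4) it has a presentation over $R_{Y_1}$ and by item (5) a presentation over $R_{Y_2}$, and by item (1) these both localize to the same element of $Z_1\Omega^q_K$ over $R_{Y_1Y_2}$; comparing the two unique presentations forces all negative-$Y_1$ terms beyond $-n$ and all negative-$Y_2$ terms beyond $-m$ to cancel, so $\omega\in G$. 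Uniqueness of the presentation of $\omega$ in $G$ is then inherited from the uniqueness in \corref{cor:F-function-2} (for the $F^{1,q}_0$-components of the first sum, after spreading the $d$-exact terms out) together with the $m=2$ uniqueness of \corref{cor:GH-3}. The main obstacle I anticipate is precisely this last comparison-of-presentations step: one must check that the two indexing conventions (the ``$p$-th power vs.\ $d$-exact'' dichotomy coming from $Z_1=F(W_2\Omega^\bullet)$, versus the raw monomial expansion of $\Omega^q_K$) are compatible on the overlap, i.e.\ that no $d$-exact generator $d(Y_1^iY_2^jf)$ with $p\mid i$ and $p\mid j$ is needed — this is exactly the statement that such a term already lies in the $p$-th power sum, which one proves by writing $d(Y_1^{ip'}Y_2^{jp'}f)=d$ applied to a $p$-th power times a form and using $F d[t]_m=[t]_{m-1}^{p-1}d[t]_{m-1}$ as in the $\subset_\dagger$ step of \lemref{lem:Positive-F-2}. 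I would isolate that computation as the one genuinely non-formal point and handle it first in the proof.
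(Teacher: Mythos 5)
Your toolkit is the right one, and in fact it is exactly the paper's: the $m=1$ monomial expansion of \corref{cor:F-function-2} together with the filtered Cartier isomorphism in two variables (\lemref{lem:Complete-4}(2) at $m=1$, equivalently \lemref{lem:Complete-9}(3) with $i=1$). The paper, however, uses them in the opposite order and more directly: since $\ov{C}$ identifies $Z_1\Fil_{(n,m)}\Omega^q_K/d(\Fil_{(n,m)}\Omega^{q-1}_K)$ with $\Fil_{(n,m)/p}\Omega^q_K$, one has $Z_1\Fil_{(n,m)}\Omega^q_K=\ov{F}(\Fil_{(n,m)/p}\Omega^q_K)+d(\Fil_{(n,m)}\Omega^{q-1}_K)$, and expanding both summands by \corref{cor:F-function-2} gives the displayed presentation at once; items (1)--(5) then follow because localizing at $Y_i$ (through Frobenius for the $Z_1$-groups) merely replaces $(n,m)$ by $(n+Np,m)$ or $(n,m+Mp)$, so one just applies the display with the enlarged index. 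Incidentally, your ``genuinely non-formal point'' is a one-liner: in characteristic $p$ one has $d(Y_1^{ip}Y_2^{jp}f)=Y_1^{ip}Y_2^{jp}\,df$ with $df\in Z_1F^{1,q}_0$, so no Witt-vector identity à la \lemref{lem:Positive-F-2} is needed.

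The genuine gap is in your localize-then-glue architecture. First, your proof of (4) and (5) is not independent of the main display: the only mechanism you offer is the same Cartier decomposition, applied after inverting one variable, and the one-variable results you cite (\propref{prop:Fil-decom}, \corref{cor:GH-3}) do not literally apply there, since $R_{Y_1}$ is not of the form $S[[\pi]]$ with $S$ complete local ($k[[Y_1,Y_2]][Y_1^{-1}]\subsetneq k((Y_1))[[Y_2]]$); so you are in any case forced to invoke the two-variable statement \lemref{lem:Complete-4}(2), i.e.\ the display for an enlarged index, before (4),(5) are available. Second, the step ``comparing the two unique presentations forces the out-of-range terms to cancel'' is not justified by (1)--(5): the presentations from (4) and (5) must be compared inside the common localization $Z_1\Omega^q_K$, and for a term-by-term comparison you need a uniqueness statement of the same shape over $R_{Y_1Y_2}$, which item (1) does not supply (it only identifies the localized module). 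The comparison can be repaired without it: each generator $Y_1^{ip}Y_2^{jp}e$, resp.\ $d(Y_1^iY_2^jf)$, is supported at the single exponent pair $(ip,jp)$, resp.\ $(i,j)$, of the \corref{cor:F-function-2} expansion, and the two families occupy disjoint sets of pairs; hence membership of $\omega$ in $\Fil_{(n,m)}\Omega^q_K$ already forces every term of the (4)-presentation with $Y_1$-exponent $<-n$ to vanish as a form, and the (5)-presentation is never needed. But once you argue this way you are re-proving the display itself, so the cleaner (and the paper's) route is to prove the display first and read off (1)--(5) afterwards.
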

\begin{proof}
    The first part of the lemma follows from \corref{cor:F-function-2} and the Cartier isomorphism in \lemref{lem:Complete-4}(2) for $m=1$. The rest of the part is an easy consequence of the first part and \corref{cor:F-function-2},
\end{proof}

\begin{lem}\label{lem:1-C-inj}
    Let $R$ be as above. $\fm =(Y_1,Y_2)$. Let $n \ge 0, m\ge 0.$ Consider the map
    $$1-C : Z_1\Fil_{(n,m)}\Omega^q_K \to \Fil_{(n,m)}\Omega^q_K.$$
    Then the induced map 
    $$(1-C)^* : H^2_{\fm}(R, Z_1\Fil_{(n,m)}\Omega^q_K) \to H^2_{\fm}(R, \Fil_{(n,m)}\Omega^q_K)$$
    is injective, where $H^*_\fm(R,-)$ is the Zariski (eqv. {\'e}tale) cohomology with support at the closed point $V(\fm)$ of $\Spec R$.
\end{lem}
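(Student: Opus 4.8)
The strategy is to compute both local cohomology groups explicitly using a Čech-style complex for the punctured spectrum and the unique power-series presentations from Lemma~\ref{lem:Z-1-fil}, then show that the induced map on $H^2_\fm$ is injective by a direct analysis of the operator $1-C$ on ``tails'' of these series. Since $\Spec R$ is a two-dimensional regular local ring with closed point $V(\fm)$, for any quasi-coherent sheaf $\sF$ (viewed as an $R$-module $M$) we have $H^0_\fm = H^1_\fm = 0$ when $M$ is the restriction of a sheaf on the punctured spectrum of the form we are considering (here $M = (\Fil_{(n,m)}\Omega^q_K)$ or its $Z_1$-analogue, which are $j_*$ of sheaves on $U = \Spec R \setminus V(\fm)$... more precisely we use $R$-modules $M_{Y_1}, M_{Y_2}, M_{Y_1Y_2}$), and $H^2_\fm(R,M) = \cok\left(M_{Y_1} \oplus M_{Y_2} \to M_{Y_1Y_2}\right)$. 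Thus $H^2_\fm(R, \Fil_{(n,m)}\Omega^q_K)$ is the cokernel of $(\Fil_{(n,m)}\Omega^q_K)_{Y_1} \oplus (\Fil_{(n,m)}\Omega^q_K)_{Y_2} \to \Omega^q_K$, and similarly for the $Z_1$-version, using parts (1)--(5) of Lemma~\ref{lem:Z-1-fil} to identify the localizations.

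First I would set up the Čech complex and, using Lemma~\ref{lem:Z-1-fil}(1)--(3), identify $H^2_\fm(R,\Fil_{(n,m)}\Omega^q_K)$ with the quotient of $\Omega^q_K$ by the sum of the images of the two localizations; concretely, writing an element of $\Omega^q_K$ uniquely as $\sum_{i,j} Y_1^i Y_2^j a_{i,j}$ with $a_{i,j} \in F^{1,q}_0$ (Corollary~\ref{cor:F-function-2}), the group $H^2_\fm(R,\Fil_{(n,m)}\Omega^q_K)$ is spanned by the classes of monomials $Y_1^iY_2^j a_{i,j}$ with $i < -n$ and $j < -m$ simultaneously. Then, using Lemma~\ref{lem:Z-1-fil}(1), (4), (5) together with the presentation of $Z_1\Fil_{(n,m)}\Omega^q_K$ in the first part of that lemma, I would get an analogous ``negative-corner'' description of $H^2_\fm(R, Z_1\Fil_{(n,m)}\Omega^q_K)$, where now the generating monomials are of the two shapes appearing in the presentation: $Y_1^{ip}Y_2^{jp} e$ with $e \in Z_1 F^{1,q}_0$ and $d(Y_1^iY_2^j f)$ with $f \in F^{1,q-1}_0$, again subject to both indices being sufficiently negative (below $-n$, resp. $-m$).

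Next I would analyze $1-C$ on these representatives. The key point is that the Cartier operator $C$ strictly decreases the ``denominator'' by a factor of $p$: for a term $Y_1^{ip}Y_2^{jp} e$ with $e \in Z_1 F^{1,q}_0$ one has $C(Y_1^{ip}Y_2^{jp}e) = Y_1^{i}Y_2^{j} C(e)$, which lies in a strictly smaller filtration step, and $C$ kills the exact terms $d(Y_1^iY_2^jf)$ (up to lower order — more precisely $C$ sends $Z_1\Fil_{(n,m)}$ into $\Fil_{(n,m)/p}$ by Lemma~\ref{lem:Complete-6}, and $\Fil_{(n,m)/p} \subset \Fil_{(n,m)}$ when $n,m \geq 0$, so the whole contribution of $C$ lands in a provably ``more positive'' region). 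Suppose $\omega \in Z_1\Fil_{(n,m)}\Omega^q_K$ has $(1-C)(\omega) \in \Fil_{(n,m)}\Omega^q_K + (\Fil_{(n,m)}\Omega^q_K)_{Y_1} + (\Fil_{(n,m)}\Omega^q_K)_{Y_2}$, i.e. represents $0$ in $H^2_\fm$ of the target. Looking at the most negative monomial in $\omega$ (in, say, the lexicographic or total-degree order on the corner $i<-n, j<-m$), the $C(\omega)$ piece cannot cancel it because $C$ lands strictly closer to the origin; hence that monomial of $\omega$ must already be absorbed into $(\Fil_{(n,m)})_{Y_1} + (\Fil_{(n,m)})_{Y_2}$, so it was zero in $H^2_\fm(R, Z_1\Fil_{(n,m)}\Omega^q_K)$ to begin with. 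Iterating (a descending induction on how far into the negative corner the support of $\omega$ reaches, which terminates since $\omega$ has a finite ``most negative'' bound by the uniqueness statements in Lemma~\ref{lem:Z-1-fil}(4)--(5)) shows the class of $\omega$ is zero. This gives injectivity of $(1-C)^*$.

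The main obstacle I anticipate is the bookkeeping in the last step: one must be careful that the two types of generators of $Z_1\Fil_{(n,m)}\Omega^q_K$ (the $p$-th power monomials $Y_1^{ip}Y_2^{jp}e$ and the differentials $d(Y_1^iY_2^jf)$) interact correctly with $C$ and $d$, and that ``absorbed into the two localizations'' is tracked in a way compatible with the unique presentations — in particular, that subtracting off an element of $(Z_1\Fil_{(n,m)})_{Y_1} + (Z_1\Fil_{(n,m)})_{Y_2}$ (which is what we may freely do in $H^2_\fm$ of the source) does not reintroduce arbitrarily negative monomials in the other variable. Lemma~\ref{lem:Z-1-fil}(4)--(5) is exactly tailored to control this: any element supported in the half-space $j \geq -m$ (resp. $i \geq -n$) can be split off, so the genuinely problematic part lives only in the finite ``box'' beyond both bounds, making the induction finite. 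The compatibility $C \circ d = 0$ and the strict contraction property of $C$ are the two facts doing the real work, and everything else is organizing the monomial expansions from Lemma~\ref{lem:Z-1-fil}.
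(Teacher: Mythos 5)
Your proposal is correct and takes essentially the same route as the paper's proof: the \v{C}ech description of $H^2_\fm$, the unique expansions of Lemma~\ref{lem:Z-1-fil}, splitting off the half-space parts (which lie in the image of the source \v{C}ech map by parts (4)--(5)), and then killing the remaining corner part using that $C$ annihilates the $d$-type generators and strictly moves the exponents of the $p$-power monomials toward zero, so the most negative monomial survives in $(1-C)(\omega)$ and must vanish by uniqueness. The only point to tighten in writing it up is that a class in $H^2_\fm(R,Z_1\Fil_{(n,m)}\Omega^q_K)$ is represented by an element of the localization $(Z_1\Fil_{(n,m)}\Omega^q_K)_{Y_1Y_2}=Z_1\Omega^q_K$, not of $Z_1\Fil_{(n,m)}\Omega^q_K$ itself, exactly as with the element $\alpha$ in the paper's argument.
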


\begin{proof}
            For any $R$-module $M$, note that 
        $$H^2_{\fm}(R, M)= H^1(\Spec R-\{\fm\},M). $$
        and thus, by using \v{c}ech cohomology, we have
        $$H^1(\Spec R-\{\fm\},M)= coker (M_{Y_1} \oplus M_{Y_2} \xrightarrow{(+ -)}M_{Y_1Y_2}).$$
        Now consider the following commutative diagram.
        \begin{equation}\label{eqn:1-C-diag}
            \xymatrix{
            (Z_1\Fil_{(n,m)}\Omega^q_K)_{Y_1} \oplus (Z_1\Fil_{(n,m)}\Omega^q_K)_{Y_2} \ar[r]^-{(+ -)}_-{\theta_1} \ar[d]_-{f\oplus g} & (Z_1\Fil_{(n,m)}\Omega^q_K)_{Y_1Y_2} \ar[d]_-{h}  \\
            (\Fil_{(n,m)}\Omega^q_K)_{Y_1} \oplus (\Fil_{(n,m)}\Omega^q_K)_{Y_2} \ar[r]^-{(+ -)}_-{\theta_2}& (\Fil_{(n,m)}\Omega^q_K)_{Y_1Y_2}, 
            }
        \end{equation}
where $f,g,h$ are the maps induced by $1-C$. 

To show $(1-C)^*$ is injective, we need to show that $h(\alpha) \in {\rm Image~} \theta_2$ implies $\alpha \in {\rm Image~} \theta_1$, for all $\alpha \in (Z_1\Fil_{(n,m)}\Omega^q_K)_{Y_1Y_2}$.

Let $\alpha \in (Z_1\Fil_{(n,m)}\Omega^q_K)_{Y_1Y_2} = Z_1\Omega^q_K$, such that $h(\alpha) \in {\rm Image~} \theta_2$. So by \lemref{lem:Z-1-fil}, we can write $h(\alpha)= a - b$,  where 
$$a= \sum\limits_{\substack{i \ge -N \\ j \ge -m}} Y_1^iY_2^j a_{i,j} \text{ and } b= \sum\limits_{\substack{i \ge -n \\ j \ge -M}} Y_1^iY_2^j b_{i,j}; \text{  \  $a_{i,j}, b_{i,j} \in F^{1,q}_0$}.$$
and $\alpha$ as 
\begin{equation}\label{eqn:sum-alpha-0}
    \alpha = \sum\limits_{\substack{ip \ge -N \\ jp \ge -M}}Y_1^{ip}Y_2^{jp}c_{i,j} +\sum\limits_{\substack{i \ge -N \\ j \ge -M \\ p \nmid i \ or \ p \nmid j}} d(Y_1^iY_2^jd_{i,j});  \text{   $c_{i,j} \in Z_1F^{1,q}_0$ and $d_{i,j} \in F^{1,q-1}_0$}
\end{equation}
for some $N >>0,M>>0$.

Note that $d(Y_1^iY_2^jd_{i,j})= Y_1^iY_2^j(i  d_{i,j}\cdot\dlog Y_1 +jd_{i,j}\cdot \dlog Y_2+d(d_{i,j}))$. Hence, $d(Y_1^iY_2^jd_{i,j})$ can be written as $Y_1^iY_2^jd''_{i,j}$, where $d''_{i,j}=(i  d_{i,j}\cdot\dlog Y_1 +jd_{i,j}\cdot \dlog Y_2+d(d_{i,j})) \in F^{1,q}_0$. 

So, we can rearrange the sum in \eqref{eqn:sum-alpha-0} and write it as 
\begin{equation}
    \alpha = \sum\limits_{\substack{i \ge -n \\ j \ge -M}} Y_1^iY_2^jc'_{i,j} + \sum\limits_{\substack{j \ge -m \\ i \ge -N}}Y_1^iY_2^jd'_{i,j} + \sum\limits_{\substack{-N < i < -n \\ -M < j < -m}}Y_1^iY_2^je'_{i,j},
\end{equation}
where $c'_{i,j}, d'_{i,j}, e'_{i,j} \in  F^{1,q}_0$ and each term of the above sum lies in $Z_1\Omega^q_K$. 

Hence, we conclude that $\sum\limits_{\substack{i \ge -n \\ j > -M}}Y_1^iY_2^jc'_{i,j}$ (resp. $\sum\limits_{\substack{j \ge -m \\ i > -N}}Y_1^iY_2^jd'_{i,j}$) lies in  $(Z_1\Fil_{(n,m)}\Omega^q_K)_{Y_2}$ (resp. $(Z_1\Fil_{(n,m)}\Omega^q_K)_{Y_1}$) (cf. \lemref{lem:Z-1-fil}(4),(5)).
It follows that 
\begin{equation}\label{eqn:sum-alpha}
    \tau = \sum\limits_{\substack{i \ge -n \\ j > -M}}Y_1^iY_2^jc'_{i,j} + \sum\limits_{\substack{j \ge -m \\ i > -N}}Y_1^iY_2^jd'_{i,j} \in {\rm Image~} \theta_1.
\end{equation}
Let's write $c= \sum\limits_{\substack{-N < i < -n \\ -M < j < -m}}Y_1^iY_2^je'_{i,j}$.

\textbf{Claim}: $c=0.$

Since $h(\alpha) \in {\rm Image~} \theta_2$, it follows from (\ref{eqn:sum-alpha}) and from commutativity of (\ref{eqn:1-C-diag}) that $h(c)= h(\alpha) - h(\tau) \in {\rm Image~} \theta_2 $. We shall show that this implies $c=0$.

Indeed, if $-N_0$ be the least exponent of $Y_1$ appearing in $c$ , then write $c$ as $$c= \sum\limits_{j<-m}Y_1^{-N_0}Y_2^je'_{-N_0,j} +  \sum\limits_{\substack{-N_0 <i < -n \\ j < -m}}Y_1^iY_2^je'_{i,j}.$$
Now from the expression of $\alpha$ in \eqref{eqn:sum-alpha-0}, we note that, $Y_1^iY_2^je'_{i,j}$ is either of the form $d(Y_1^iY_2^j\ov e_{i,j})$ (if $p\nmid i$ or $p \nmid j$), or it is of the form $Y_1^{pi_1}Y_2^{pi_2} e'_{i,j}$ (in case $p$ divides both $i, j$), where $\ov e_{i,j} \in F^{1,q-1}_0$. So
\begin{equation}
    C(Y_1^iY_2^je'_{i,j})  = \left\{\begin{array}{cl}
   0 ,& \text{if } p\nmid i \text{ or } p\nmid j  \\
       Y_1^{i/p}Y_2^{j/p}C(e'_{i,j}) , & \text{otherwise.}
\end{array}\right.
\end{equation}

That is, $C(Y_1^iY_2^je'_{i,j})$ is either zero or the exponent of $Y_1$ in $C(Y_1^iY_2^je'_{i,j})$ is strictly greater than $i$ (because $i<0$). 
This implies $(1-C)(c)$ can be written as 
$$(1-C)(c)=\sum\limits_{j<-m}Y_1^{-N_0}Y_2^je'_{-N_0,j} + \sum\limits_{\substack{i>-N_0 \\ j}}Y_1^iY_2^je^{''}_{i,j},$$
where $e'_{i,j}, e''_{i,j} \in F^{1,q}_0$.
Since $(1-C)(c) \in {\rm Image~} \theta_2$, it implies that every term of the above sum has the property that either the exponent of $Y_1$ is at least $-n$ or the exponent of $Y_2$ is at least $-m$. Hence by the uniqueness of \lemref{lem:Z-1-fil}, we conclude
$$Y_1^{-N_0}Y_2^je'_{-N_0,j}=0, \ \text{for all } j<-m.$$
Thus, $-N_0$ is not the least exponent of $Y_1$ appearing in $c$, which is a contradiction. Hence we get the claim. 

Since the claim is proved, we get $\alpha \in {\rm Image~} \theta_1$.
\end{proof}

Recall, for any bounded complex of abelian sheaves $\sF$ on $X_\et$, we have the coniveau spectral sequence 
        \begin{equation}\label{eqn:spectral}
        E_1^{p,q}=\bigoplus\limits_{x \in X^{(p)} \cap E}\H^{p+q}_x(X,\sF) \implies \H^{p+q}_E(X,\sF),
        \end{equation}
where $E^{(p)}=$ set of codimension $p$ points of $E$.

This spectral sequence will be used to prove the next two lemmas. 
\begin{lem}\label{lem:H^1-fil-1}
        $\H^1_E(X, W_m\sF^{q,\bullet}_D) =0$.
\end{lem}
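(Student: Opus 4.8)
The plan is to use the coniveau spectral sequence \eqref{eqn:spectral} for the bounded complex $\sF = W_m\sF^{q,\bullet}_D$ on $X_\et$, which converges to $\H^{p+q}_E(X, W_m\sF^{q,\bullet}_D)$ with $E_1$-page $E_1^{p,q'} = \bigoplus_{x \in X^{(p)}\cap E}\H^{p+q'}_x(X, W_m\sF^{q,\bullet}_D)$. To show the abutment in total degree $1$ vanishes, it suffices to show that $E_1^{0,1} = 0$ and $E_1^{1,0} = 0$, since the only contributions to $\H^1_E$ come from these two spots (the complex is concentrated in degrees $0$ and $1$, so $\H^i_x = 0$ for $i<0$, killing $E_1^{p,q'}$ with $p+q'<p$, i.e.\ $q'<0$; and $E_1^{1,0}$ survives to $E_\infty^{1,0}$ only as a subquotient). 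The term $E_1^{0,1} = \bigoplus_{x\in E^{(0)}}\H^1_x(X, W_m\sF^{q,\bullet}_D)$ is handled by reducing to the local ring $\sO_{X,x}$ at a generic point of a component $E_i$, which is a DVR; there $W_m\sF^{q,\bullet}_D$ is the two-term complex $(Z_1\Fil_{n_i}W_m\Omega^q_{K_i}\xrightarrow{1-C}\Fil_{n_i}W_m\Omega^q_{K_i})$, and $\H^1_x$ with support at the closed point is computed from $\H^0$ of the complex on the punctured spectrum $\Spec K_i$ versus $\Spec\sO_{X,x}^h$; the vanishing will follow from the surjectivity statements already established (Lemmas~\ref{lem:VR-3}, \ref{lem:VR-4}, \corref{cor:VR-5}, and crucially \propref{prop:Cartier-fil-1}, which controls exactly when $(1-C)(\omega)$ lands in $\Fil_n$).

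For the term $E_1^{1,0} = \bigoplus_{x\in X^{(1)}\cap E}\H^2_x(X, W_m\sF^{q,\bullet}_D)$, one reduces to $R = \sO_{X,x}$ where $x$ is a codimension-one point of $E$, so that $R$ is a regular local ring of dimension two (after passing to the strict henselization or completion we may take $R = k[[Y_1,Y_2]]$ with $\pi = Y_1^{n_1}Y_2^{n_2}$, using \thmref{thm:Popescu}, \thmref{thm:lim-cohomology} and \lemref{lem:Non-complete-3} to pass between the local, henselian and complete cases). The key point is that $\H^2_\fm(R, Z_1\Fil_DW_m\Omega^q_U)$ and $\H^2_\fm(R, \Fil_DW_m\Omega^q_U)$ both vanish above the top cohomological degree — but in fact they need not vanish individually; what saves us is \lemref{lem:1-C-inj}, which says precisely that the map $(1-C)^*\colon \H^2_\fm(R, Z_1\Fil_DW_m\Omega^q_U)\to \H^2_\fm(R,\Fil_DW_m\Omega^q_U)$ is injective. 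Combined with the fact that $\H^i_\fm$ of a (finite type, via \lemref{lem:Log-fil-4}(1)) coherent sheaf on a two-dimensional affine scheme vanishes for $i>2$, the hypercohomology spectral sequence for the two-term complex gives $\H^2_\fm(R, W_m\sF^{q,\bullet}_D) = \coker\big((1-C)^*\colon \H^2_\fm(R,Z_1\Fil_DW_m\Omega^q_U)\to\H^2_\fm(R,\Fil_DW_m\Omega^q_U)\big)$ fitting into an exact sequence whose relevant portion is killed once one also uses that $\H^2_\fm(R,Z_1\Fil_DW_m\Omega^q_U)\to\H^2_\fm(R,\Fil_DW_m\Omega^q_U)$ is injective so that no class in total degree $2$ of the complex comes from the $Z_1$-term; more precisely the long exact sequence reads
\[
\H^2_\fm(R, Z_1\Fil_DW_m\Omega^q_U)\xrightarrow{(1-C)^*}\H^2_\fm(R, \Fil_DW_m\Omega^q_U)\to \H^2_\fm(R, W_m\sF^{q,\bullet}_D)\to 0,
\]
and one checks that the cokernel here in fact vanishes because on the punctured spectrum the sheaves involved agree with the unrestricted de Rham--Witt sheaves, for which the corresponding statement is the classical vanishing; injectivity of $(1-C)^*$ rules out the remaining potential contribution to $\H^2_\fm$, i.e.\ it forces $E_1^{1,0}=0$.

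The main obstacle I expect is the bookkeeping in the $E_1^{1,0}$ term: one must carefully distinguish $\H^2_\fm$ of the individual sheaves $Z_1\Fil_DW_m\Omega^q_U$ and $\Fil_DW_m\Omega^q_U$ (which are genuinely nonzero on a two-dimensional regular local ring) from $\H^2_\fm$ of the complex, and the vanishing of the latter is not formal — it is exactly the content packaged into \lemref{lem:1-C-inj} plus the comparison with the classical situation on the punctured spectrum. A secondary technical point is the reduction from a general regular $F$-finite two-dimensional local ring to the concrete power-series case $k[[Y_1,Y_2]]$: this uses flatness of completion, the identification of $\H^*_\fm$ with completion (local duality / the fact that local cohomology commutes with flat base change on these finite-type modules), and \lemref{lem:Non-complete-3} to transport the $\Fil$-structure, but should be routine given the machinery already developed in Chapter~\ref{chap:F_p-2}. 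Once both $E_1^{0,1}$ and $E_1^{1,0}$ are shown to vanish, the spectral sequence gives $\H^1_E(X, W_m\sF^{q,\bullet}_D)=0$ immediately.
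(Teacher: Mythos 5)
Your first paragraph is, in substance, the paper's proof: the only nontrivial input is that $\H^1_x(X, W_m\sF^{q,\bullet}_D)=0$ at the points $x\in X^{(1)}\cap E$ (the generic points of the $E_i$), and your route to it — localize at the henselian DVR $\sO^h_{X,x}$, use the localisation sequence, the identification of $\H^0$ over $A_x$ and over $K_x$ with $K^M_q/p^m$ (\lemref{lem:hyp}(2), resting on \lemref{lem:VR-3}, \lemref{lem:VR-4}, \corref{cor:VR-5}), and the injectivity of $\H^1_\et(A_x,\cdot)\to\H^1_\et(K_x,\cdot)$ coming from \propref{prop:Cartier-fil-1} — is exactly the paper's Claim 1. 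Up to the fact that this block of the $E_1$-page should be labelled $E_1^{1,0}$ (not $E_1^{0,1}$; under the indexing you yourself declare, $E_1^{0,1}$ is indexed by $X^{(0)}\cap E=\emptyset$ and vanishes trivially), this part is fine.

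The rest of the proposal, however, goes after the wrong term. With $E_1^{p,q'}=\bigoplus_{x\in X^{(p)}\cap E}\H^{p+q'}_x$, the groups $\H^2_x$ at codimension-one points sit in total degree $2$: they feed into $\H^2_E$ and are the subject of \lemref{lem:H^1-fil-2}, not of the present lemma. Moreover they do \emph{not} vanish in general: for $x\in E^{(0)}$ one has $\H^2_x(A_x,W_m\sF^{q,\bullet}_{D_x})\cong H^1_\et(K_x,W_m\Omega^q_{K_x,\log})/{}_{p^m}\Fil^{\bk}_{n_x}H^{q+1}(K_x)$ (see \corref{cor:Coh-V0-00} and the proof of \thmref{thm:H^1-fil}), so your assertion that ``the cokernel here in fact vanishes'' is false at such points; what your third paragraph actually sketches (two-dimensional local rings, reduction to $k[[Y_1,Y_2]]$, \lemref{lem:1-C-inj}) is the codimension-two computation used for Claim 2 of \lemref{lem:H^1-fil-2}, which is irrelevant here — and note \lemref{lem:1-C-inj} is stated only for $m=1$, so even there one first reduces via \thmref{thm:Global-version}(11). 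Finally, the terms you do need beyond Claim 1, namely $E_1^{p,1-p}=\bigoplus_{x\in X^{(p)}\cap E}\H^1_x$ for $p\ge 2$, are dismissed with a non sequitur: vanishing of $\H^i_x$ for $i<0$ only kills $p+q'<0$, not $q'<0$. These terms do vanish — e.g.\ by the triangle in \thmref{thm:Global-version}(11) one reduces to $m=1$, where $H^0_x$ of a torsion-free sheaf and $H^1_x$ of the locally free sheaves of \lemref{lem:Perfect-0.5} vanish at points of codimension $\ge 2$ — but as written your proposal neither proves this nor gives a valid reason to ignore it, while simultaneously spending its main effort on a statement that is both unnecessary and, at codimension one, untrue.
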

\begin{proof}
Let $\sF_m= W_m\sF^{q,\bullet}_D$, for $D \ge 0$.

\textbf{Claim 1}: For $x \in X^{(1)}\cap E$, we have $\H^1_x(X, \sF_m)=0$.
     
    {\sl Proof :}  We let $A_x=\sO_{X,x}^h, K_x=Q(A_x)$ and $\pi_x$ be a uniformizer of $A_x$. Since $\H^1_x(X, \sF_m)= \H^1_x(A_x,\sF_m)$, we have the following localisation exact sequence
    \begin{equation}\label{eqn:Coh-V0-2}
         \cdots \to \H^0_\et(A_x,\sF_m) \xrightarrow{f} \H^0_\et(K_x,\sF_m) \to \H^1_x(A_x,\sF_m) \to \H^1_\et(A_x,\sF_m) \xrightarrow{g} 
    \H^1_\et(K_x,\sF_m).
    \end{equation}
    By \lemref{lem:hyp}(2), we see that $f$ is an isomorphism in \eqref{eqn:Coh-V0-2}.
    Also $g$ is injective by \propref{prop:Cartier-fil-1}, because $\H^1_\et(A_x,\sF_m) =\coker(1-C: Z_1\Fil_{D_x}W_m\Omega^q_{K_x} \to \Fil_{D_x}W_m\Omega^q_{K_x} )$ (by \lemref{lem:hyp}(2)). Hence we get $\H^1_x(A_x,\sF_m)=0$. This proves Claim 1.

    Using Claim 1, we get $E^{1,0}_\infty=0$. Also note that, $E^{0,1}_\infty=0$. Hence the lemma follows.
   \end{proof}

\begin{lem}\label{lem:H^1-fil-2}
        Let $\sF_m$ be as in \lemref{lem:H^1-fil-1}. Then the canonical map $\H^2_E(X,\sF_m) \to \bigoplus\limits_{x \in X^{(1)}\cap E} \H^2_{x}(X, \sF_m)$ is injective.
\end{lem}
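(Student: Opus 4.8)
The plan is to run the coniveau spectral sequence~\eqref{eqn:spectral} for $\sF = W_m\sF^{q,\bullet}_D$ and to reduce the assertion to a vanishing statement at codimension-two points. Since $X$ is connected and regular it is irreducible, so $X^{(0)}\cap E=\emptyset$ and $E_1^{0,\bullet}=0$; in particular $E_\infty^{0,2}=0$, so the coniveau filtration on $\H^2_E(X,W_m\sF^{q,\bullet}_D)$ satisfies $F^0=F^1$ and the edge homomorphism $\H^2_E(X,W_m\sF^{q,\bullet}_D)\to E_1^{1,1}=\bigoplus_{x\in X^{(1)}\cap E}\H^2_x(X,W_m\sF^{q,\bullet}_D)$ — which is exactly the localization map of the lemma — has kernel $F^2\H^2_E(X,W_m\sF^{q,\bullet}_D)=E_\infty^{2,0}$, a subquotient of $E_1^{2,0}=\bigoplus_{x\in X^{(2)}\cap E}\H^2_x(X,W_m\sF^{q,\bullet}_D)$. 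Hence it suffices to prove that $\H^2_x(X,W_m\sF^{q,\bullet}_D)=0$ for every $x\in X^{(2)}\cap E$.

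Fix such an $x$. By excision $\H^2_x(X,W_m\sF^{q,\bullet}_D)=\H^2_{\fm}(\Spec\sO_{X,x},W_m\sF^{q,\bullet}_D)$. All the sheaves $\Fil_D W_m\Omega^q_U$ and $Z_1\Fil_D W_m\Omega^q_U=F(\Fil_D W_{m+1}\Omega^q_U)$ are coherent $W_m\sO_X$-modules by \lemref{lem:Log-fil-4}(1) and \thmref{thm:Global-version}(3), so local cohomology with support commutes with $\fm$-adic completion; using \lemref{lem:Non-complete-3} and \corref{cor:Non-complete-3} to identify the completed sheaves (compatibly with $1-C$), we may replace $\sO_{X,x}$ by its completion $A=\widehat{\sO_{X,x}}$. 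By Cohen's structure theorem $A\cong\ff[[x_1,x_2]]$ with $\ff$ an $F$-finite field, and, $E$ being a simple normal crossing divisor through the closed point, $E$ is cut out near $x$ either by $x_1x_2$ or by $x_1$. Thus we are reduced to showing: for $A=\ff[[x_1,x_2]]$, $E=V(x_1x_2)$ or $V(x_1)$, $U=\Spec A\setminus E$ and $D\ge 0$ supported on $E$, one has $\H^2_{\fm}(A,W_m\sF^{q,\bullet}_D)=0$. By the distinguished triangle $W_1\sF^{q,\bullet}_D\xrightarrow{V^{m-1}}W_m\sF^{q,\bullet}_D\xrightarrow{R}W_{m-1}\sF^{q,\bullet}_{D/p}\xrightarrow{+}$ of \thmref{thm:Global-version}(11) and induction on $m$ (note $D/p\ge 0$), it is enough to treat $m=1$, for all such $D$.

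For $m=1$, the distinguished triangle $\Fil_D\Omega^q_U[-1]\to W_1\sF^{q,\bullet}_D\to Z_1\Fil_D\Omega^q_U\xrightarrow{1-C}\Fil_D\Omega^q_U$ yields an exact sequence
\[
H^1_\fm(A,\Fil_D\Omega^q_U)\to \H^2_\fm(A,W_1\sF^{q,\bullet}_D)\to H^2_\fm(A,Z_1\Fil_D\Omega^q_U)\xrightarrow{1-C}H^2_\fm(A,\Fil_D\Omega^q_U).
\]
By \lemref{lem:Log-fil-4}(2), $\Fil_D\Omega^q_U=\Omega^q_A(\log\pi)(D)$ is a free $A$-module of finite rank (\lemref{lem:LWC-2}), and since $A$ is a two-dimensional regular local ring, $\depth A=2$, so $H^1_\fm(A,\Fil_D\Omega^q_U)=0$. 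Therefore $\H^2_\fm(A,W_1\sF^{q,\bullet}_D)$ injects into the kernel of $1-C\colon H^2_\fm(A,Z_1\Fil_D\Omega^q_U)\to H^2_\fm(A,\Fil_D\Omega^q_U)$. When $E=V(x_1x_2)$ this kernel vanishes by \lemref{lem:1-C-inj}; when $E=V(x_1)$ the same conclusion follows by an entirely parallel (and simpler) argument, replacing the two-variable monomial bookkeeping of \lemref{lem:Z-1-fil} by its one-variable analogue (\corref{cor:F-function-2} together with the Cartier isomorphism of \lemref{lem:Complete-4}(2)). This gives $\H^2_x(X,W_m\sF^{q,\bullet}_D)=0$ and completes the proof. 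The one genuine obstacle is the codimension-two local vanishing, and inside it the injectivity of $1-C$ on $H^2_\fm$ for the two-component divisor, which is precisely \lemref{lem:1-C-inj} and rests on the delicate presentation in \lemref{lem:Z-1-fil}.
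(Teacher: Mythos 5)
Your proposal is correct and follows essentially the same route as the paper: reduce via the coniveau spectral sequence to the vanishing of $\H^2_x(X,W_m\sF^{q,\bullet}_D)$ at codimension-two points of $E$, reduce to $m=1$ by the $V^{m-1}$–$R$ triangle of \thmref{thm:Global-version}(11), kill $H^1_\fm(\Fil_D\Omega^q_U)$ by freeness and depth, pass to the completion $\ff[[x_1,x_2]]$ via \lemref{lem:Non-complete-3} and \corref{cor:Non-complete-3}, and conclude by the injectivity of $1-C$ on $H^2_\fm$ as in \lemref{lem:1-C-inj}. The only (harmless) deviations are the order of the completion step relative to the induction on $m$ and your explicit remark that the one-component case $E=V(x_1)$ needs the one-variable analogue of \lemref{lem:Z-1-fil}, a point the paper subsumes by allowing an exponent to vanish in $\pi=X^nY^m$.
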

\begin{proof} We make the following claim.

       \textbf{Claim 2} : Let $x \in X^{(2)}\cap E$. Then $\H^2_x(X,\sF_m)=0$.

        {\sl Proof :} By \thmref{thm:Global-version}(11), it is enough to prove the claim for $m=1$ and for all $D \ge 0$. Since $\H^2_x(X,\sF_1)=\H^2_x(R_x,\sF_1)$, where $R_x=\sO_{X,x}^h$; by hypercohomology long exact sequence we have 
        $$0=H^1_x(R_x, \Fil_D\Omega^q_{U_x}) \to \H^2_x(R_x,\sF_1) \to H^2_x(R_x, Z_1\Fil_D\Omega^q_{U_x}) \xrightarrow{(1-C)^*} H^2_x(R_x, \Fil_D\Omega^q_{U_x}), $$
        where $U_x= \Spec R_x -E_x$, where $E_x$ is pull back of $E$ in $\Spec R_x$.

        Here $H^1_x(R_x, \Fil_D\Omega^q_{U_x})=0$ (\cite[Prop~3.5.4.b]{Bruns-Herzog}) because codimension of $\{x\}$ in $X$ is 2 and $\Fil_D\Omega^q_{U_x}$ is free $R_x$-module. So, to prove the claim, it is enough to show that $(1-C)^*$ is injective. 
        
        Let $\fm$ be the maximal ideal of $R_x$, $\wh R_x$ be its $\fm$ adic completion and $\wh{U}_x=$ be the inverse image of $U_x$ to $\Spec \wh{R}_x$.
        Since $ Z_1\Fil_D\Omega^q_{U_x}$ and $\Fil_D\Omega^q_{U_x}$ are coherent sheaves on $\Spec R_x$, and by \corref{cor:Non-complete-3} and Lemmas \ref{lem:Non-complete-3}, \ref{lem:Complete-4} their $\fm$-adic completions are $ Z_1\Fil_D\Omega^q_{\wh{U}_x}$ and $\Fil_D\Omega^q_{\wh{U}_x}$, respectively; \cite[Prop~3.5.4.(d)]{Bruns-Herzog} implies $H^2_x(R_x, M)\cong H^2_x(\wh{R}_x, \wh M)$ for $M \in \{\Fil_D\Omega^q_{U_x},  Z_1\Fil_D\Omega^q_{U_x}\}$.
        Moreover, we have a commutative diagram
        \begin{equation}
            \xymatrix{
             H^2_x(R_x, Z_1\Fil_D\Omega^q_{U_x}) \ar[r]^{(1-C)^*}\ar[d]^{\cong}    &  H^2_x(R_x, Z_1\Fil_D\Omega^q_{U_x}) \ar[d]^\cong\\
             H^2_x(\wh R_x, Z_1\Fil_D\Omega^q_{\wh U_x}) \ar[r]^{(1-C)^*}    & H^2_x(\wh R_x, Z_1\Fil_D\Omega^q_{\wh U_x}).
            }
        \end{equation}
        
        Hence, it is enough to show the bottom arrow is injective. By Cohen's structure theorem, we can write $\wh R_x=k[[X,Y]]$, where $k$ is the residue field of $R_x$; $D$ will be of the form $V(\pi)$, where $\pi=X^nY^m$, where $n \ge 0$ and $m\ge 0$.
        Now the claim follows from \lemref{lem:1-C-inj}.

        Using claim 3, we get $E^{2,0}_\infty=0$ in \eqref{eqn:spectral}. Also, $E^{0,2}_\infty=0$. Hence 
        $$\H^2_E(X, \sF_m)= E^{1,1}_\infty \inj E^{1,1}_1 = \bigoplus\limits_{x \in X^{(1)}\cap E} \H^2_{x}(X, \sF_m).$$
        This finishes the proof.
    \end{proof}

Now we prove the main theorem of this chapter.

\begin{thm}\label{thm:H^1-fil}
    We have a canonical isomporphism
    $$\H^1_\et(X, W_m\sF^{q,\bullet}_D) \cong \Fil^\log_DH^1_\et(U,W_m\Omega^q_{U,\log}).$$
\end{thm}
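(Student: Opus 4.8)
The strategy is to compare the hypercohomology of $W_m\sF^{q,\bullet}_D$ with that of $Rj_*W_m\Omega^q_{U,\log}$ via the open-closed decomposition $U \inj X \hookleftarrow E$, and then identify the local contributions with the local Kato filtration using \thmref{thm:Kato-fil-10}. First I would recall that $j^*W_m\sF^{q,\bullet}_D = W_m\Omega^q_{U,\log}$ (noted just before the theorem), so there is a canonical map of complexes $W_m\sF^{q,\bullet}_D \to Rj_*W_m\Omega^q_{U,\log}$, hence a map on hypercohomology $\H^1_\et(X, W_m\sF^{q,\bullet}_D) \to \H^1_\et(X, Rj_*W_m\Omega^q_{U,\log}) = H^1_\et(U, W_m\Omega^q_{U,\log})$. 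The first task is to show this map is injective with image exactly $\Fil^\log_D H^1_\et(U, W_m\Omega^q_{U,\log})$. Injectivity and the description of the cokernel will come from the long exact sequence of the pair $(X, U)$ with coefficients in the cone $\sG_m := \cone(W_m\sF^{q,\bullet}_D \to Rj_*W_m\Omega^q_{U,\log})$, which is supported on $E$; one has $\H^i_E(X, \sG_m[-1]) = \H^i_E(X, \text{something})$ fitting into the exact sequence
\[
\H^1_E(X, W_m\sF^{q,\bullet}_D) \to \H^1_\et(X, W_m\sF^{q,\bullet}_D) \to H^1_\et(U, W_m\Omega^q_{U,\log}) \xrightarrow{\partial} \H^2_E(X, W_m\sF^{q,\bullet}_D).
\]
By \lemref{lem:H^1-fil-1}, $\H^1_E(X, W_m\sF^{q,\bullet}_D) = 0$, giving injectivity. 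So the content is to identify $\ker(\partial)$ with $\Fil^\log_D$.

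For that, I would use \lemref{lem:H^1-fil-2}: the map $\H^2_E(X, W_m\sF^{q,\bullet}_D) \to \bigoplus_{x \in X^{(1)}\cap E} \H^2_x(X, W_m\sF^{q,\bullet}_D)$ is injective, so $\ker(\partial)$ is the same as the kernel of the composite $H^1_\et(U, W_m\Omega^q_{U,\log}) \to \bigoplus_{x} \H^2_x(X, W_m\sF^{q,\bullet}_D)$. For each $x = \eta_i \in E^{(0)}$, the local excision sequence (as in the proof of \lemref{lem:H^1-fil-1}, equation \eqref{eqn:Coh-V0-2}, but in one degree higher) together with \lemref{lem:hyp}(2) identifies $\H^2_x(X, W_m\sF^{q,\bullet}_D)$ with $\coker\big(\H^1_\et(A_x, W_m\sF^{q,\bullet}_{D_x}) \to \H^1_\et(K_x, W_m\Omega^q_{K_x,\log})\big)$, i.e. with $H^1_\et(K_i, W_m\Omega^q_{K_i,\log})/T^{m,q}_{n_i}(K_i)$. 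Here $A_x = \sO_{X,x}^h$, $K_x = K_i$, and by \lemref{lem:hyp}(2) one has $\H^1_\et(A_x, W_m\sF^{q,\bullet}_{D_x}) = T^{m,q}_{n_i}(K_i)$, the cokernel of $1-C$ on $Z_1\Fil_{n_i}W_m\Omega^q_{K_i}$. Now \thmref{thm:Kato-fil-10} gives $T^{m,q}_{n_i}(K_i) \cong {}_{p^m}\Fil^{\bk}_{n_i}H^{q+1}(K_i)$ inside $H^1_\et(K_i, W_m\Omega^q_{K_i,\log}) = {}_{p^m}H^{q+1}(K_i)$ (the latter by \lemref{lem:Kato-fil-9}). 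Therefore the boundary map at $x = \eta_i$ is precisely the composite of the restriction $j_i \colon H^1_\et(U, W_m\Omega^q_{U,\log}) \to H^1_\et(K_i, W_m\Omega^q_{K_i,\log})$ with the quotient by ${}_{p^m}\Fil^{\bk}_{n_i}H^{q+1}(K_i)$, and so $\ker(\partial) = \Fil^\log_D H^1_\et(U, W_m\Omega^q_{U,\log})$ by \defref{defn:Log-fil-D}. This yields the desired isomorphism.

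The main obstacle is the careful identification of the local boundary contributions: one must verify that, after passing from the Henselian local ring $A_x = \sO_{X,x}^h$ at a generic point $\eta_i$ of $E$ to its fraction field $K_i$, the edge map in the coniveau spectral sequence \eqref{eqn:spectral} really is the restriction-followed-by-quotient described above, and in particular that the identification $\H^1_\et(A_x, W_m\sF^{q,\bullet}_{D_x}) \hookrightarrow \H^1_\et(K_x, W_m\Omega^q_{K_x,\log})$ is compatible with the global restriction map and with the one provided by \thmref{thm:Kato-fil-10}. This requires knowing that $\H^1_\et(A_x, W_m\sF^{q,\bullet}_{D_x}) \to H^1_\et(K_x, W_m\Omega^q_{K_x,\log})$ is injective (which is \propref{prop:Cartier-fil-1}, already used in \lemref{lem:H^1-fil-1}) and that $\H^1_\et$ of $W_m\sF^{q,\bullet}_{D}$ over the strictly Henselian trait behaves as in \lemref{lem:hyp}. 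Once the diagram chase matching the two boundary descriptions is in place, the rest is formal; I would present it by assembling the exact sequences from Lemmas \ref{lem:H^1-fil-1}, \ref{lem:H^1-fil-2}, \ref{lem:hyp} and \thmref{thm:Kato-fil-10} into a single commutative ladder and reading off the kernel.
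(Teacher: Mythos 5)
Your proposal is correct and follows essentially the same route as the paper: the localisation sequence for the pair $(U,E)$, vanishing of $\H^1_E$ from \lemref{lem:H^1-fil-1}, injection of $\H^2_E$ into the sum of local terms from \lemref{lem:H^1-fil-2}, and the identification of each local term as $H^1_\et(K_i,W_m\Omega^q_{K_i,\log})/{}_{p^m}\Fil^{\bk}_{n_i}H^{q+1}(K_i)$ via \lemref{lem:hyp}(2) and \thmref{thm:Kato-fil-10}. The compatibility of the boundary maps that you flag as the main obstacle is exactly what the paper checks with its commutative ladder of localisation sequences, using the vanishing of $\H^2_\et(A_x,W_m\sF^{q,\bullet}_{D_x})$ and the injectivity from \propref{prop:Cartier-fil-1}.
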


\begin{proof}
    We have the following localisation exact sequence 
    $$\H^1_E(X, W_m\sF^{q,\bullet}_D) \to \H^1_\et( X,W_m\sF^{q,\bullet}_D) \to H^1_\et(U, W_m\Omega^q_{U,\log}) \to \H^2_E(X, W_m\sF^{q,\bullet}_D).$$
By \lemref{lem:H^1-fil-1}, we have $\H^1_E(X, W_m\sF^{q,\bullet}_D)=0$. Now we consider the following commutative diagram of localisation exact sequences 
\begin{equation}\label{eqn:loc-2}
    \xymatrix@C.8pc{
    0\ar[r]& \H^1_\et( X,W_m\sF^{q,\bullet}_D) \ar[r] \ar[d]& H^1_\et(U, W_m\Omega^q_{U,\log}) \ar[r] \ar[d] & \H^2_E(X, W_m\sF^{q,\bullet}_D) \ar[d]\ar[r]& \\
    \ar[r]&\H^1_\et(A_x,W_m\sF^{q,\bullet}_{D_x}) \ar[r]^-{1} & H^1_\et(K_x,W_m\Omega^q_{K_x,\log}) \ar[r]^-{2}& \H^2_x(A_x,W_m\sF^{q,\bullet}_{D_x}) \ar[r]&, 
    }
\end{equation}
where $A_x=\sO_{X,x}^h, K_x=Q(A_x), x \in E^{(0)}$, $D_x$ is the pull back of $D$ in $\Spec A_x$. The vertical arrows are restriction maps induced by the morphism $\Spec A_x \to X$.  Moreover, the map (2) in the bottom row is surjective. This follows because the next term $\H^2(A_x,W_m\sF^{q,\bullet}_{D_x})=0$, by looking at the hypercohomology long exact sequence for $W_m\sF^{q,\bullet}_{D_x}$ and noting that $H^1_\et(A_x, \Fil_{n_x}W_m\Omega^q_{K_x}) = 0 =H^2_\et(A_x, Z_1\Fil_{n_x}W_m\Omega^q_{K_x})$ (cf. \lemref{lem:hyp}). 

Also, the map (1) in the bottom row of \eqref{eqn:loc-2} is injective by Lemmas \ref{lem:Kato-fil-9}, \ref{lem:hyp}(2) and \corref{cor:VR-5}. Furthermore, we have $\H^1_\et(A_x,W_m\sF^{q,\bullet}_{D_x}) \cong \ _{p^m}\Fil^{\bk}_{n_x}H^{q+1}(K_x)$ by \thmref{thm:Kato-4}.

As a result, composite of the morphisms $H^1_\et(U, W_m\Omega^q_{U,\log}) \to \H^2_E(X, W_m\sF^{q,\bullet}_D) \to \H^2_x(A_x,W_m\sF^{q,\bullet}_{D_x})$ coincides with the map $H^1_\et(U, W_m\Omega^q_{U,\log}) \to \frac{H^1(K_x,W_m\Omega^q_{K_x,\log})}{_{p^m}\Fil^{\bk}_{n_x}H^{q+1}(K_x)}$, induced by canonical restriction. Hence, we have 
\begin{equation*}
    \begin{array}{cl}
  \H^1_\et( X,W_m\sF^{q,\bullet}_D)&\cong \Ker \left(H^1_\et(U, W_m\Omega^q_{U,\log}) \to \H^2_E(X, W_m\sF^{q,\bullet}_D)\right)    \\
     &\cong  \Ker \left(H^1_\et(U, W_m\Omega^q_{U,\log}) \to \bigoplus\limits_{x \in X^{(1)}\cap E} \H^2_{x}(X, W_m\sF^{q,\bullet}_D)\right) \\
     &= \Ker \left(H^1_\et(U, W_m\Omega^q_{U,\log}) \to \bigoplus\limits_{x \in X^{(1)}\cap E} \frac{H^1(K_x,W_m\Omega^q_{K_x,\log})}{_{p^m}\Fil^{\bk}_{n_x}H^{q+1}(K_x)}\right),
\end{array}
\end{equation*}
where, the second isomorphism follows from \lemref{lem:H^1-fil-2}. This finishes the proof of the theorem.

\end{proof}

\subsection{Relation with Brauer group with modulus} For $q=1$, we shall establish a relationship between the group $\H^1_\et(X, W_m\sF^{q,\bullet}_{D})$ and the group $\Br^\divv(X|D)$ defined in ~\cite[Def~8.7]{KRS}.

\begin{defn}\label{defn:BGM}
     We let $\Br^\divv(X|D)$ denote the subgroup of $\Br(U)$ consisting of elements $\chi$
  such that for every $x \in E^{(0)}$, the image $\chi_{x}$ of
$\chi$ under the canonical map $\Br(U) \to\Br((K_{x}))$ 
lies in $\Fil_{n_{x}} \Br(K_{x})$. We call $\Br^\divv(X|\un{0})$ to be the tame Brauer group of $U$, where $\un{0}$ is the zero divisor. 
\end{defn}
\begin{rem}
    Our definition is slightly different from the one defined in \cite{KRS}. Here we have shifted the filtration index by $1$.
\end{rem}
We note the following observations.
\begin{lem}\label{lem:ell-torsion}
    We have
    \begin{enumerate}
        \item $\Br^\divv(X|\un{0})$ contains the prime to $p$-part of $\Br(U)$.
        \item $\Br^\divv(X|D) \subset \Br^\divv(X|D')$ if $D \le D'$; $\varinjlim\limits_{|D|\subset E}\Br^\divv(X|D)= \Br(U)$.
        \item $_{\ell^m}\Br^\divv(X|D)= \ _{\ell^m}\Br(U)$ for all $D \ge 0$, $m \ge 1$ and any prime $\ell \neq p$.
    \end{enumerate}
\end{lem}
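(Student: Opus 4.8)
\textbf{Proof plan for Lemma~\ref{lem:ell-torsion}.}

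The three statements are all quite soft, relying on the classical ramification theory of the Brauer group over a Henselian discrete valued field and on the compatibility of the local filtrations $\Fil_{n_x}\Br(K_x)$ with localization. The plan is as follows.

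For item (1): I would use that for a prime $\ell \neq p$, the local ramification filtration on $\Br(K_x) = H^2(K_x, \G_m)$ with $\ell$-primary coefficients collapses in the sense that every $\ell$-primary class is \emph{tamely} ramified, i.e. lies in $\Fil_0\Br(K_x)$. Concretely, since $K_x$ has characteristic $p$ and residue field of characteristic $p$, the $\ell$-part of $\Br(K_x)$ fits in the classical exact sequence with unramified Brauer group of the residue field and the tame symbol into $H^1$ of the residue field with $\mu_\ell$-coefficients, and this whole sequence is ``depth $0$''. Hence $\Br(U)\{\ell\} = \Br(U)\{p'\}$-component maps into $\Fil_0\Br(K_x) \subseteq \Fil_{n_x}\Br(K_x)$ for every $x \in E^{(0)}$ (recall $n_x \ge 0$ with our shifted convention, and $\Fil_0 = \Fil_{n_x}$ when $n_x=0$, and $\Fil_0 \subseteq \Fil_{n_x}$ in general since the filtration is increasing). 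By \defref{defn:BGM} this says exactly that the prime-to-$p$ part of $\Br(U)$ lies in $\Br^\divv(X|\un{0})$.

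For item (2): The inclusion $\Br^\divv(X|D) \subseteq \Br^\divv(X|D')$ when $D \le D'$ is immediate from the monotonicity of the local Kato filtration, $\Fil_{n_x}\Br(K_x) \subseteq \Fil_{n'_x}\Br(K_x)$ whenever $n_x \le n'_x$ (which is part of the basic properties of Kato's filtration recalled in \S~\ref{sec:loc-Kato}, applied with $q=1$). For the colimit statement $\varinjlim_{|D|\subset E}\Br^\divv(X|D) = \Br(U)$, I would fix a class $\chi \in \Br(U)$; since $\Br(U)$ is torsion (as $U$ is regular integral, by the cited \cite[Thm.~3.5.5]{CTS}), $\chi$ has finite order, and for each of the finitely many $x \in E^{(0)}$ its image $\chi_x \in \Br(K_x)$ lies in $\Fil_{n}\Br(K_x)$ for $n \gg 0$ because $\bigcup_n \Fil_n\Br(K_x) = \Br(K_x)$ (exhaustiveness of Kato's filtration; this is the $q=1$ case of the fact that $H^{q+1}(K) = \bigcup_n \Fil^{\bk}_n H^{q+1}(K)$, or alternatively follows from \lemref{lem:Log-fil-4}(3) at the level of the de Rham--Witt description). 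Taking $D = \sum_x n_x E_x$ with $n_x$ large enough simultaneously for all $x$ puts $\chi$ in $\Br^\divv(X|D)$.

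For item (3): Fix a prime $\ell \neq p$ and $m \ge 1$. One inclusion $_{\ell^m}\Br^\divv(X|D) \subseteq {}_{\ell^m}\Br(U)$ is trivial since $\Br^\divv(X|D) \subseteq \Br(U)$. Conversely, take $\chi \in {}_{\ell^m}\Br(U)$; then $\chi$ is $\ell$-primary torsion, so by item (1) already $\chi \in \Br^\divv(X|\un{0}) \subseteq \Br^\divv(X|D)$ for any $D \ge 0$ by item (2), and $\chi$ is still killed by $\ell^m$. Hence ${}_{\ell^m}\Br(U) \subseteq {}_{\ell^m}\Br^\divv(X|D)$, giving equality.

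The only mildly delicate point — and the step I expect to be the main obstacle — is the precise assertion in item (1) that $\ell$-primary classes in $\Br(K_x)$ have ramification bounded by $0$ in Kato's sense (equivalently, that $\Fil_0$ already contains the whole prime-to-$p$ part). This is essentially the statement that wild ramification is a purely $p$-adic phenomenon; one should cite the structure of $\Br(K_x)\{p'\}$ via the tame symbol and the fact that Kato's and the classical upper-numbering filtrations agree for prime-to-$p$ coefficients (e.g. \cite{Kato-89} or \cite{Serre-LF} in the perfect residue field case, with the general imperfect residue field case handled by Brylinski--Kato--Matsuda as recalled in the introduction). Once that local input is in hand, the rest is formal bookkeeping about finitely many closed points and exhaustiveness/monotonicity of the filtration.
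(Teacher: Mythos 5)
Your proposal is correct and follows essentially the same route as the paper: item (1) is exactly the citation of Kato's tameness result (\cite[Prop.~6.1]{Kato-89}, the fact that prime-to-$p$ classes of $\Br(K_x)$ lie in $\Fil_0$), item (2) is monotonicity plus exhaustiveness of the local filtration, and item (3) is the formal consequence of (1) and (2). The extra details you supply (finiteness of $E^{(0)}$, exhaustiveness via the de Rham--Witt description) are accurate elaborations of what the paper leaves implicit.
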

\begin{proof}
    (1) follows from the fact that any prime to $p$-torsion element of $\Br(U)$ maps to $\Fil_{0}\Br(K_x)$ for all $x \in E^{(0)}$ (see \cite[Prop~6.1]{Kato-89}). (2) easily follows from the definition. (3) follows from (1) and (2). 
\end{proof}

We conclude the chapter with the following corollary.
\begin{cor}\label{cor:F^q}
    We have an exact sequence
    \begin{equation}
        0 \to \Pic(U)/p^m \to \H^1_\et(X, W_m\sF^{1,\bullet}_{D}) \to \ _{p^m}\Br^\divv(X|D) \to 0.
    \end{equation}
\end{cor}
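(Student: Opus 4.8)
The plan is to obtain \corref{cor:F^q} by restricting the $p$-adic Kummer sequence on $U$ along the identification of \thmref{thm:H^1-fil}. First I would invoke \thmref{thm:H^1-fil} to replace $\H^1_\et(X, W_m\sF^{1,\bullet}_{D})$ by $\Fil^{\log}_DH^1_\et(U,W_m\Omega^1_{U,\log})$, so that it remains to equip the latter with the claimed extension structure. Since $U$ is regular and $F$-finite, the $\dlog$ map $\sK^M_{1,U}=\sO_U^\times \to W_1\Omega^1_{U,\log}$ induces, after reduction modulo $p^m$, an isomorphism $\sO_U^\times/p^m\xrightarrow{\cong}W_m\Omega^1_{U,\log}$ (\cite[Thm.~5.1]{Morrow-ENS}), hence an exact sequence of {\'e}tale sheaves
\[
0\to \G_m\xrightarrow{p^m}\G_m\xrightarrow{\dlog}W_m\Omega^1_{U,\log}\to 0
\]
on $U_\et$. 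Taking {\'e}tale cohomology and using $H^1_\et(U,\G_m)=\Pic(U)$, $H^2_\et(U,\G_m)=\Br(U)$ yields the ``unramified'' short exact sequence
\[
0\to \Pic(U)/p^m\to H^1_\et(U,W_m\Omega^1_{U,\log})\xrightarrow{\rho}\ _{p^m}\Br(U)\to 0 .
\]

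Next, for each generic point $x\in E^{(0)}$ with local field $K_x=Q(\sO_{X,x}^h)$ and multiplicity $n_x\ge 0$, the same sequence over $\Spec K_x$, together with $\Pic(\Spec K_x)=0$, shows that the connecting map $\rho_x\colon H^1_\et(K_x,W_m\Omega^1_{K_x,\log})\to\ _{p^m}\Br(K_x)$ is an isomorphism; under the identification $\ _{p^m}H^2(K_x)=\ _{p^m}\Br(K_x)$ it coincides with the bijection of \lemref{lem:Kato-fil-9} in degree $q=1$, so it carries $\ _{p^m}\Fil^{\bk}_{n_x}H^2(K_x)$ onto $\Fil_{n_x}\Br(K_x)\cap\ _{p^m}\Br(K_x)$. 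By functoriality of the $\dlog$ sequence under $\Spec K_x\to U$, the restriction maps of \defref{defn:Log-fil-D} are intertwined with the Brauer restrictions $\ _{p^m}\Br(U)\to\ _{p^m}\Br(K_x)$ via $\rho$ and $\rho_x$. Comparing \defref{defn:Log-fil-D} with \defref{defn:BGM}, this shows that $\rho$ identifies $\Fil^{\log}_DH^1_\et(U,W_m\Omega^1_{U,\log})$ with the preimage $\rho^{-1}\big(\ _{p^m}\Br^\divv(X|D)\big)$: a class lies in $\Fil^{\log}_D$ iff each of its local components lies in $\ _{p^m}\Fil^{\bk}_{n_x}H^2(K_x)$, iff its image in $\ _{p^m}\Br(U)$ has all local components in $\Fil_{n_x}\Br(K_x)$. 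In particular $\ker\rho=\Pic(U)/p^m$ lies in $\Fil^{\log}_D$ (its local components vanish since $\Pic(\Spec K_x)=0$), and $\rho$ maps $\Fil^{\log}_D$ \emph{onto} $\ _{p^m}\Br^\divv(X|D)$ because $\rho$ is already surjective. Restricting the unramified sequence to $\Fil^{\log}_D$ in the middle therefore yields exactly the asserted short exact sequence, and the left-hand map is the inclusion $\Pic(U)/p^m\hookrightarrow H^1_\et(U,W_m\Omega^1_{U,\log})$ transported back along \thmref{thm:H^1-fil}.

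The main obstacle here is not a hard argument but careful bookkeeping: one must verify that the restriction maps occurring in \defref{defn:Log-fil-D}, the Kummer connecting maps $\rho$ and $\rho_x$, the bijection of \lemref{lem:Kato-fil-9}, and the identification of $\Br(K_x)$ with the torsion pieces of $H^2(K_x)$ are all mutually compatible, and that the Brauer filtration $\Fil_{n_x}\Br(K_x)$ of \defref{defn:BGM} agrees with Kato's $\Fil^{\bk}_{n_x}H^2(K_x)$ under the normalisation used in this thesis. All of these are functoriality statements for the complexes $\Z/p^m(1)$ and for the $\dlog$/symbol maps; once they are recorded, the corollary is the diagram chase above.
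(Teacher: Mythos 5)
Your proof is correct and is essentially the paper's own argument: the paper likewise starts from the $p^m$-Kummer/$\dlog$ sequence $0 \to \Pic(U)/p^m \to H^1_\et(U,W_m\Omega^1_{U,\log}) \to {}_{p^m}\Br(U) \to 0$, maps it via the canonical restrictions to the local quotients by ${}_{p^m}\Fil^{\bk}_{n_x}H^2(K_x)$ and ${}_{p^m}\Fil_{n_x}\Br(K_x)$, and extracts the sequence by the snake lemma together with \thmref{thm:H^1-fil}. Your identification $\Fil^{\log}_D = \rho^{-1}\bigl({}_{p^m}\Br^\divv(X|D)\bigr)$ is the same diagram chase written out by hand, and the compatibility you flag as bookkeeping (that the local Kummer isomorphism matches Kato's filtration with $\Fil_{n_x}\Br(K_x)$, via ${}_{p^m}\Br(K_x)/{}_{p^m}\Fil_{n_x}\Br(K_x) \subset {}_{p^m}\bigl(\Br(K_x)/\Fil_{n_x}\Br(K_x)\bigr)$) is exactly the point the paper records at the end of its proof.
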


\begin{proof}
    Recall that we have an isomorphism of {\'e}tale sheaves $\dlog : \G_{m,U}/p^m \to W_m\Omega^1_{U,\log}$ on $U_\et$.
    We consider the following exact sequence of {\'e}tale sheaves on $U_\et$.
    $$0 \to \G_{m,U} \xrightarrow{p^m}\G_{m,U}\xrightarrow{\dlog} W_m\Omega^1_{U,\log}\to 0.$$
    Taking cohomology and identifying $H^1_\et(U, \G_{m,U})$ (resp. $H^2_\et(U,\G_{m,U}$)) with $\Pic(U)$ (resp. $\Br(U)$), we get the following commutative diagram of exact sequences.
    \begin{equation}
        \xymatrix@C.5pc{
        0 \ar[r] & \Pic(U)/p^m \ar[r] \ar[d] & H^1_\et(U,W_m\Omega^1_{U,\log}) \ar[r] \ar[d] & \ _{p^m}\Br(U) \ar[r] \ar[d]& 0 \\
        0 \ar[r] & 0\ar[r] & \bigoplus\limits_{x \in E^{(0)}}\frac{H^1_\et(K_x, W_m\Omega^1_{K_x,\log})}{\ _{p^m}\Fil^{\bk}_{n_x}H^{q+1}(K_x)} \ar[r]^-{\cong} & \bigoplus\limits_{x \in E^{(0)}}\frac{\ _{p^m}\Br(K_x)}{\ _{p^m}\Fil_{n_x}\Br(K_x)} \ar[r] & 0,
        }
    \end{equation}
where the vertical maps are induced by canonical restrictions. Now the result follows using snake lemma, \thmref{thm:H^1-fil} and noting that $\frac{\ _{p^m}\Br(K_x)}{\ _{p^m}\Fil_{n_x}\Br(K_x)} \subset \ _{p^m}\left(\frac{\Br(K_x)}{\Fil_{n_x}\Br(K_x)}\right)$.
    
\end{proof}

\chapter{Duality theorem for Hodge-Witt cohomology over finite fields}\label{chap:Duality}
In this chapter, we shall prove a duality theorem for the cohomology of the logarithmic
Hodge-Witt sheaves with modulus over finite fields. We begin by defining the
relevant complexes of sheaves, proving some of their properties and
constructing their pairings over a more
general field before we restrict to finite fields.

\section{Definition of complexes and their properties}\label{sec:Complexes}
Let $X$ be a Noetherian regular F-finite $\F_p$-scheme. Let $d$ denote the rank of the locally free sheaf
$\Omega^1_X$.
Let $E = E_1 + \cdots + E_r$ be a simple normal crossing divisor on $X$ and let
$j \colon U \inj X$ be the inclusion of the complement of $E$. For
$\un{n} = (n_1, \ldots , n_r) \in \Z^r$, we let
$D_{\un{n}} = \stackrel{r}{\underset{i =1}\sum} n_i[E_i] \in \Div_E(X)$. 
We shall write $\Fil_{D_{\un{n}}}W_m\Omega^q_U$ as $\Fil_{\un{n}}W_m\Omega^q_U$.
We shall say that $\un{n} \ge \n'$ if $n_i \ge n'_i$ for each $i$.
We shall have analogous interpretations for the notations
$\un{n} \le \n'$ and $\un{n} =\un{n'}$.
We let $t\un{n}:=(tn_1,\ldots,tn_r)$, $\un{t}:=(t,\ldots,t)\in \Z^r$ for $t \in \Z$, $\un{n} \pm \n' := (n_1\pm n'_1,
\ldots , n_r \pm n'_r)$ and $\n \pm t :=\n \pm \un{t}$. We also let $\n/p =(\lfloor n_1/p \rfloor,\ldots, \lfloor n_r/p \rfloor)$. We shall have analogous notations for $\n/p^i$, for $i \ge 1$.

We let $W_m\Omega^q_{X, \log}$ denote the image of the
map $\dlog \colon \sK^M_{q,X} \to W_m\Omega^q_X$ (cf. \S~\ref{chap:Kato-coh}).
Letting $\sK^{m}_{q,X} = {\sK^M_{q,X}}/{p^m}$, this induces an isomorphism of Zariski
sheaves $\dlog \colon \sK^m_{q,X} \xrightarrow{\cong} W_m\Omega^q_{X,\log}$.
For $\un{n} \ge \un{0}$, we let $\sI_{\un{n}} = \Ker(\sO_X \surj \sO_{D_{\un{n}}})$ and
let $\sK^M_{q, X|D_{\un{n}}}$ be the {\'etale} (or Nisnevich) sheaf on $X$ given by the image of the
canonical map $(1 + \sI_{\un{n}})^\times \otimes j_*\sO^\times_U \otimes \cdots \otimes
j_*\sO^\times_U \to j_*\sK^M_{q,U}$.
One knows that $\sK^M_{q, X|D_{\un{n}}} \subset \sK^M_{q,X}$ (cf. \cite[Prop.~1.1.3]{JSZ}).
We let
$\sK^m_{q, X|D_{\un{n}}} = {\sK^M_{q, X|D_{\un{n}}}}/{(p^m\sK^M_{q,X} \cap \sK^M_{q, X|D_{\un{n}}})}$.
Letting $W_m\Omega^q_{X|D_{\un{n}},\log}$ be the image of the map
$\dlog \colon \sK^M_{q, X|D_{\un{n}}} \to W_m\Omega^q_X$ for $\n \ge \un{1}$, one gets an isomorphism
$\dlog \colon \sK^m_{q, X|D_{\un{n}}} \xrightarrow{\cong} W_m\Omega^q_{X|D_{\un{n}},\log}$
(cf. \cite[Thm.~1.1.5]{JSZ}). One also has the following.

\begin{lem}\label{lem:Complex-5}
  $W_m\Omega^q_{X|D_{\un{n}},\log} \subset \Fil_{-\un{n}}W_m\Omega^q_U$, for $\n \ge \un{1}$.
\end{lem}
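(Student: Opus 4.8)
The statement to prove is $W_m\Omega^q_{X|D_{\un{n}},\log} \subset \Fil_{-\un{n}}W_m\Omega^q_U$ for $\n \ge \un{1}$. Since both sides are Zariski (or {\'e}tale) subsheaves of $j_*W_m\Omega^q_U$, and the containment can be checked stalkwise, I would reduce immediately to the local situation: $X = \Spec(A)$ with $A$ a regular local $F$-finite $\F_p$-algebra, maximal ideal $\fm = (x_1,\dots,x_r,y_1,\dots,y_s)$, with $E$ defined by $\pi = x_1\cdots x_r$ and $D_{\un{n}}$ defined by the ideal $I = (x_1^{n_1}\cdots x_r^{n_r})$. By the isomorphism $\dlog \colon \sK^m_{q,X|D_{\un{n}}} \xrightarrow{\cong} W_m\Omega^q_{X|D_{\un{n}},\log}$ cited from \cite[Thm.~1.1.5]{JSZ}, every local section of $W_m\Omega^q_{X|D_{\un{n}},\log}$ is a sum of elements of the form $\dlog[u]_m \wedge \dlog[v_1]_m \wedge \cdots \wedge \dlog[v_{q-1}]_m$ where $u \in 1 + I$ (equivalently $u \in 1 + \sI_{\un{n}}$) and $v_1,\dots,v_{q-1} \in A_\pi^\times$. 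So it suffices to show each such element lies in $\Fil_{-\un{n}}W_m\Omega^q_U = \Fil_{I^{-1}}W_m\Omega^q_{A_\pi}$.

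The key computational input is the behavior of $\dlog[1+a]_m$ for $a \in I$. First I would treat $q = 1$: I claim $\dlog[1+a]_m \in \Fil_{-\un{n}}W_m\Omega^1_U$ when $a \in I$. Writing $1+a$ with $a = a'\prod_i x_i^{n_i}$, $a' \in A$, one has $[1+a]_m \in \Fil_0 W_m\sO_U = W_m\sO_X$ (since $1+a \in A \subset A_\pi$), and $d[1+a]_m = \dlog[1+a]_m \cdot [1+a]_m$. The point is that $\dlog[1+a]_m = [1+a]_m^{-1} d[1+a]_m$, and since $[1+a]_m$ is a unit in $W_m\sO_X$ (as $1+a$ is a unit in $A$, being $\equiv 1 \bmod \fm$), it is enough to show $d[1+a]_m \in \Fil_{-\un{n}}W_m\Omega^1_U$. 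Now $[1+a]_m = [1]_m + (\text{terms involving }[a]_m\text{ and its }V\text{-translates via the Witt addition})$; more usefully, I would use that $a \in I$ implies $[a]_m \in \Fil_{-\un{n}}W_m\sO_U$ by the very definition (\defref{defn:Log-fil-0}), hence $[1+a]_m - [1]_m \in \Fil_{-\un{n}}W_m\sO_U$ by \lemref{lem:Log-fil-1} (the filtration is a subgroup), and therefore $d[1+a]_m = d([1+a]_m - [1]_m) \in d(\Fil_{-\un{n}}W_m\sO_U) \subset \Fil_{-\un{n}}W_m\Omega^1_U$ by \defref{defn:Log-fil-3}. This handles $q=1$.

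For general $q$, given $\dlog[u]_m \wedge \dlog[v_1]_m \wedge \cdots \wedge \dlog[v_{q-1}]_m$ with $u \in 1+I$ and $v_i \in A_\pi^\times$, I note that each $\dlog[v_i]_m \in W_m\Omega^1_X(\log E)$ — this follows because $v_i$ is a unit of $A_\pi$, so $v_i = w \prod_j x_j^{e_j}$ with $w \in A^\times$ and $e_j \in \Z$, whence $\dlog[v_i]_m = \dlog[w]_m + \sum_j e_j \dlog[x_j]_m$, and both summands lie in $W_m\Omega^1_X(\log E)$ by \thmref{thm:Log-DRW} and the construction of $W_m\Omega^\bullet_X(\log E)$. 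Then $\dlog[u]_m \wedge (\dlog[v_1]_m \wedge \cdots \wedge \dlog[v_{q-1}]_m) \in \Fil_{-\un{n}}W_m\Omega^1_U \cdot W_m\Omega^{q-1}_X(\log E) \subset \Fil_{-\un{n}}W_m\Omega^q_U$ by \lemref{lem:Log-fil-4}(8) (multiplicativity of the filtration, with $D = -D_{\un{n}}$ and $D' = 0$, noting $|D'| = \emptyset \subset E$), or directly from the definition of $\Fil_{-\un{n}}W_m\Omega^q_U$ as $\Fil_{-\un{n}}W_m\sO_U \cdot W_m\Omega^q_X(\log E) + d(\Fil_{-\un{n}}W_m\sO_U)\cdot W_m\Omega^{q-1}_X(\log E)$ after expanding $\dlog[u]_m$ as a $W_m\sO_X$-linear combination of $d$ of a filtered Witt element. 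Summing over generators gives the result.

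\textbf{Main obstacle.} The only genuinely delicate point is the $q=1$ claim that $\dlog[1+a]_m \in \Fil_{-\un{n}}W_m\Omega^1_U$ for $a \in \sI_{\un{n}}$, i.e. controlling the Witt-vector arithmetic of $[1+a]_m$ and its logarithmic derivative; the shortcut above via $[1+a]_m - [1]_m \in \Fil_{-\un{n}}W_m\sO_U$ (using that $\Fil$ is an additive subsheaf containing the Teichm\"uller lift of $\sI_{\un{n}}$, $[1]_m \in W_m\sO_X \subset \Fil_{-\un{n}}$, and $d[1]_m=0$) sidesteps the worst of it, but one should double-check that $[1+a]_m$ being a unit in $W_m\sO_X$ legitimately lets one pass from $d[1+a]_m$ to $\dlog[1+a]_m$ inside the filtered module — this is fine since $\Fil_{-\un{n}}W_m\Omega^1_U$ is a $W_m\sO_X$-submodule by \lemref{lem:Log-fil-2}/\lemref{lem:Log-fil-4}(1). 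Everything else is bookkeeping with the properties of $\Fil$ already established in \lemref{lem:Log-fil-4} and \thmref{thm:Log-DRW}.
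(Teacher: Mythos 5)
Your overall route is exactly the paper's: using that $W_m\Omega^q_{X|D_{\un{n}},\log}$ is the image of $\dlog$ on relative Milnor $K$-theory together with \lemref{lem:Log-fil-4}(8),(10), everything reduces to the single $q=1$ claim that $\dlog([1+a]_m)\in \Fil_{-\un{n}}W_m\Omega^1_U$ for $a$ in the local ideal $(t)$, $t=\prod_i x_i^{n_i}$; your wedging step with $\dlog[v_i]_m \in W_m\Omega^1_X(\log E)$ and the passage from $d$ to $\dlog$ using that $[1+a]_m$ is a unit and $\Fil_{-\un{n}}W_m\Omega^q_U$ is a $W_m\sO_X$-module are fine. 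But precisely at the step you yourself flag as the delicate one, your justification is a non sequitur: from ``$[a]_m\in\Fil_{-\un{n}}W_m\sO_U$ and the filtration is a subgroup'' you conclude $[1+a]_m-[1]_m\in\Fil_{-\un{n}}W_m\sO_U$, which would only follow if $[1+a]_m-[1]_m$ equaled $[a]_m$ (or differed from it by something already known to lie in the filtration). It does not: the Teichm\"uller map is multiplicative but not additive and Witt addition is not coordinatewise, so the Witt components of $[1+a]_m-[1]_m$ are universal polynomials in $a$ that your argument never controls. This is the real content of the lemma, and the paper supplies it by citing \cite[Lem.~1.2.3]{Geisser-Hesselholt-JAMS}, which gives $[1+at]_m=(ta_{m-1},\dots,ta_0)+[1]_m$ with $a_i\in A$; the difference then visibly lies in $\Fil_{-\un{n}}W_m(K)$, $d([1]_m)=0$, and \lemref{lem:Log-fil-4}(4) finishes as in your outline.

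The gap is genuine but short to repair even without that citation: since $1+a\equiv 1$ in $A/(t)$ and $W_m$ applied to a ring homomorphism acts coordinatewise on Witt coordinates, $[1+a]_m-[1]_m$ lies in $\Ker\big(W_m(A)\to W_m(A/(t))\big)=W_m\big((t)\big)$, and any Witt vector $(c_{m-1},\dots,c_0)$ with all $c_i\in(t)$ satisfies $c_i^{p^i}\in(t)$, hence lies in $\Fil_{-\un{n}}W_m\sO_U$ by \defref{defn:Log-fil-0}. With that argument substituted for the ``Teichm\"uller plus subgroup'' inference, your proof goes through and coincides with the paper's.
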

\begin{proof}
  Using the definition of $W_m\Omega^q_{X|D_{\un{n}},\log}$ and
  \lemref{lem:Log-fil-4}(8, 10), it suffices to show that 
  $\dlog([1+ at]_m) \in \Fil_{-\un{n}}W_m\Omega^1_K$
  in the notations of \S~\ref{sec:com-to-noncom}
  provided $A$ is regular local ring,
  $t = \stackrel{r}{\underset{i =1}\prod} x^{n_i}_i$ with each $n_i \ge 1$
  and $a \in A$. To this end, we use \cite[Lem.~1.2.3]{Geisser-Hesselholt-JAMS} to write
  $[1+ at]_m = (ta_{m-1}, \ldots , ta_0) + [1]_m$, where $a_i \in A$.
  This implies that $\dlog([1+ at]_m) = [1+at]^{-1}_m (d(x) + d([1]_m))$,
  where $x = (ta_{m-1}, \ldots , ta_0)$. Since $d([1]_m) \in W_m\Omega^1_{\F_p} = 0$
  and $x \in \Fil_{-\un{n}}W_m(K)$, we conclude from \lemref{lem:Log-fil-4}(4) that
  $\dlog([1+ at]_m) \in \Fil_{-\un{n}}W_m\Omega^1_K$.
\end{proof}

We now let  $\un{n} \ge -1, \ q \ge 0$ and $m \ge 1$, and 
consider the following complexes in $\sD(X_\et)$.
\begin{equation}\label{eqn:Complex-0}
  W_m\sF^{q, \bullet}_{\un{n}} = \left[Z_1\Fil_{\un{n}}W_m\Omega^q_U
    \xrightarrow{1 - C} \Fil_{\un{n}}W_m\Omega^q_U\right];
\end{equation}
\begin{equation}\label{eqn:Complex-1}
  W_m\sG^{q,\bullet}_{\un{n}} = \left[\Fil_{{-\un{n}-1}}W_m\Omega^q_U \xrightarrow{1-\ov{F}}
    \frac{\Fil_{-\un{n}-1}W_m\Omega^q_U}{dV^{m-1}(\Fil_{-\un{n}-1}\Omega^{q-1}_U)}\right];
\end{equation}
\begin{equation}\label{eqn:Complex-2}
  W_m\sF^{q,\bullet} = \left[Z_1W_m\Omega^q_X  \xrightarrow{1 - C} W_m\Omega^q_X\right];
  \ \ W_m\sG^{q,\bullet} = \left[W_m\Omega^q_X  \xrightarrow{1 - \ov{F}}
    \frac{W_m\Omega^q_X}{dV^{m-1}(\Omega^{q-1}_X)}\right];
  \end{equation}
  \begin{equation}\label{eqn:Complex-3}
  W_m\sH^{\bullet} = \left[W_m\Omega^d_X \xrightarrow{1-C} W_m\Omega^d_X\right]
  \cong W_m\Omega^d_{X,\log}, \ \ (\text{Note }Z_1W_m\Omega^d_X=W_m\Omega^d_X).
  \end{equation}
These complexes are defined in view of \thmref{thm:Global-version}.
The following lemma explains these complexes as objects of $\sD(X_\et)$
for certain values of $\un{n}$.

\begin{lem}\label{lem:Complex-6}
  The sequences (cf. \lemref{lem:Complex-5})
  \begin{enumerate}
    \item
     \ \ $0 \to W_m\Omega^q_{X|D_{\un{n}},\log} \to \Fil_{-\un{n}}W_m\Omega^q_U
    \xrightarrow{1 -\ov{F}}
    \frac{\Fil_{-\un{n}}W_m\Omega^q_U}{dV^{m-1}(\Fil_{-\un{n}}\Omega^{q-1}_U)}$;
\item
   \ \ $0 \to W_m\Omega^q_{X|D_{\un{1}},\log} \to \Fil_{-\un{1}}W_m\Omega^q_U
    \xrightarrow{1 -\ov{F}}
    \frac{\Fil_{-\un{1}}W_m\Omega^q_U}{dV^{m-1}(\Fil_{-\un{1}}\Omega^{q-1}_U)} \to 0$;
     \item
  \ \ $0 \to W_m\Omega^q_{X|D_{\un{1}},\log} \to Z_1\Fil_{-\un{1}}W_m\Omega^q_U
    \xrightarrow{1 - C}
    \Fil_{-\un{1}}W_m\Omega^q_U \to 0$;
 \item
    \ \ $0 \to j_*(W_m\Omega^q_{U,\log}) \to Z_1\Fil_{\un{0}}W_m\Omega^q_U
    \xrightarrow{1 - C}
    \Fil_{\un{0}}W_m\Omega^q_U \to 0$;
  \item
\ \ $0 \to j_*(W_m\Omega^q_{U,\log}) \to \Fil_{\un{0}}W_m\Omega^q_U
    \xrightarrow{1 -\ov{F}}
    \frac{\Fil_{\un{0}}W_m\Omega^q_U}{dV^{m-1}(\Fil_{\un{0}}\Omega^{q-1}_U)} \to 0$;
  \item 
  \ \ $0 \to \Omega^q_{X|D_\n, \log} \to Z_1\Fil_{-\n}\Omega^q_U \xrightarrow{1-C}\Fil_{(-\n)/p}\Omega^q_U \to 0$;
    \end{enumerate}
are exact, where $\un{n} \ge 1$ in (1) and (6).
\end{lem}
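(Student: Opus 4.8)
\textbf{Proof plan for Lemma~\ref{lem:Complex-6}.}

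The plan is to reduce each of the six exact sequences to the corresponding assertion for the \emph{classical} logarithmic Hodge--Witt sheaf or its relative (JSZ) variant, combined with the structural results already established for the filtered de Rham--Witt complex. All the statements are local for the Zariski (hence {\'e}tale) topology, so I would assume throughout that $X = \Spec(A)$ with $A$ a regular local $F$-finite $\F_p$-algebra, and work with a fixed regular system of parameters whose first $r$ entries cut out $E$, writing $K = A_\pi$ as in \S~\ref{sec:com-to-noncom}.

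First I would treat (4) and (5). By \lemref{lem:Log-fil-4}(10) we have $\Fil_{\un 0}W_m\Omega^q_U = W_m\Omega^q_X(\log E)$, and by \thmref{thm:Log-DRW} this is $W_m\Omega^q_{X_\log}$. Hence (4) and (5) are exactly the classical $1-C$ and $1-\ov F$ exact sequences (\propref{prop:usual}(3) and (2)) applied to the log scheme $X_\log$; one only needs to identify $j_*(W_m\Omega^q_{U,\log})$ with the logarithmic Hodge--Witt sheaf of $X_\log$, which follows from the definition of the dlog map and the injectivity statement in \corref{cor:Complete-2} (for $q$ arbitrary, this identifies the kernel of $1-C$ on $W_m\Omega^q_{X}(\log E)$ with the subsheaf generated by dlog symbols of units on $U$). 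Next, (1) is an immediate consequence of \lemref{lem:Complete-5}: the middle map $1-\ov F$ on $\Fil_{-\un n}W_m\Omega^q_U$ is well-defined and its restriction to $\Fil_{-\un n}W_m\Omega^q_U \cap \Ker(1-\ov F \text{ on } W_m\Omega^q_X)$ agrees with the classical $1-\ov F$; the kernel of the classical map is $W_m\Omega^q_{X,\log}$ by \propref{prop:usual}(2), and by \lemref{lem:Complex-5} the intersection of $W_m\Omega^q_{X,\log}$ with $\Fil_{-\un n}W_m\Omega^q_U$ contains $W_m\Omega^q_{X|D_{\un n},\log}$; the reverse inclusion is \cite[Thm.~1.1.5]{JSZ} together with \lemref{lem:Complex-5}. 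Only left-exactness is claimed in (1), so surjectivity need not be addressed.

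For (2), (3), (5) I must also establish right-exactness, i.e.\ surjectivity of $1-\ov F$ (resp.\ $1-C$) on the relevant filtered piece. For (2) and (3) with $\un n = \un 1$, this is where the main work lies. Here $\Fil_{-\un 1}W_m\Omega^q_U \subset W_m\Omega^q_X$ by \lemref{lem:Log-fil-4}(10), so $\Fil_{-\un 1}W_m\Omega^q_U$ is a \emph{coherent} subsheaf of $W_m\Omega^q_X$ defined by vanishing conditions along each $E_i$, and I would exploit that $\Fil_{-\un 1}\Omega^q_U = \Omega^q_X(\log E)(-E)$. The surjectivity of $1-\ov F$ (resp.\ $1-C$) on this sheaf I would prove by the standard successive-approximation / completeness argument: by \corref{cor:VR-2} (the distinguished triangle $W_1\sF^{q,\bullet}_{-\un 1} \to W_m\sF^{q,\bullet}_{-\un 1} \to W_{m-1}\sF^{q,\bullet}_{-\un 1/p}$, noting $-\un 1/p = -\un 1$) one reduces to $m=1$; for $m=1$ the map $1-C \colon Z_1\Fil_{-\un 1}\Omega^q_U \to \Fil_{-\un 1}\Omega^q_U$ (resp.\ $1-\ov F$) is surjective because, completing at the maximal ideal and using Cohen's structure theorem to write $A = k[[x_1,\dots,x_d]]$, one can solve $x - C(x) = \omega$ term-by-term in the monomial expansion of \corref{cor:F-function-2}: each monomial $x^{\un m}$ with some $m_i \geq 1$ contributes, $C$ strictly increases (or kills) the $x_i$-adic order on the relevant part, so the Neumann-series inverse $\sum_{k\geq 0} C^k(\omega)$ converges $\fm$-adically and lies in $\Fil_{-\un 1}$; one then descends from $\wh A$ to $A$ using \lemref{lem:Non-complete-3} and the density statements there, exactly as in the proof of \lemref{lem:VR-3} and \remref{rem:VR-3}. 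The kernel computation for (2) and (3) is then identical to that of (1) and (4), giving $W_m\Omega^q_{X|D_{\un 1},\log}$. Finally (6) is the $m=1$, $W_1$-analogue of (3) for a general effective $\un n \geq \un 1$: here $\Fil_{(-\un n)/p}\Omega^q_U$ appears on the right because $C$ divides the filtration index by $p$ (\lemref{lem:Complete-6}, \lemref{lem:Complete-4}(2)), the kernel is the degree-one piece $\Omega^q_{X|D_{\un n},\log}$ by the same symbol argument, and surjectivity follows from the Cartier isomorphism $\ov C \colon Z_1\Fil_{-\un n}\Omega^q_U/B_1\Fil_{-\un n}\Omega^q_U \xrightarrow{\cong} \Fil_{(-\un n)/p}\Omega^q_U$ of \lemref{lem:Complete-4}(2) together with \lemref{lem:VR-3} for the image of $d$.

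The main obstacle I anticipate is verifying the surjectivity of $1-C$ (and $1-\ov F$) on the \emph{twisted} sheaves $\Fil_{-\un 1}W_m\Omega^q_U$ in parts (2), (3): one must check that the Neumann-series solution genuinely respects the filtration along \emph{every} component $E_i$ simultaneously, which requires care with the multi-variable monomial bookkeeping of Corollaries~\ref{cor:F-function-2} and~\ref{cor:GH-3} and with the behaviour of $C$ on mixed terms $d(x^{\un m} f)$ where $f$ involves some $\dlog x_i$; the single-variable computations of Lemmas~\ref{lem:F-group-1} and~\ref{lem:Positive-F-2} and the two-variable analysis of \lemref{lem:1-C-inj} are the templates, and the general case should follow by the same pattern combined with the completeness reduction.
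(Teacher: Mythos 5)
Your overall architecture (reduce to the local ring, identify kernels via the classical sequences, prove surjectivity only where claimed) matches the paper's, but you have passed over precisely the two points where the real work lies, and one of your mechanisms is based on a false claim. First, in (1) the heart of the matter is the inclusion $\Fil_{-\un{n}}W_m\Omega^q_U \cap W_m\Omega^q_{X,\log} \subseteq W_m\Omega^q_{X|D_{\un{n}},\log}$ for \emph{all} $m$. You dispose of it by citing \cite[Thm.~1.1.5]{JSZ} together with \lemref{lem:Complex-5}, but JSZ only yields this when $m=1$ (or when $D$ has the special form $p^{m-1}D'$); for general $m$ and $D$ the statement is new, and the paper proves it by induction on $m$, using $R(x)=F(x)$ on logarithmic classes, the $\un{p}^{m-1}$--$R$ exact sequence of \cite[Lem.~3]{CTSS}, and crucially \lemref{lem:Complete-7} to divide by $\un{p}^{m-1}$ inside the filtration. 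Your proposal contains no substitute for this induction. Second, in (4) and (5) you assert that these are ``the classical sequences of \propref{prop:usual} applied to $X_\log$'' and that identifying the kernel with $j_*(W_m\Omega^q_{U,\log})$ follows from the dlog map and \corref{cor:Complete-2}. No log analogue of \propref{prop:usual} is available, and the genuinely delicate step is the reverse inclusion $j_*(W_m\Omega^q_{U,\log}) \subseteq \Ker(1-C)$: one must show that sections of $j_*(W_m\Omega^q_{U,\log})$ are étale-locally dlog symbols coming from $j_*\sK^M_{q,U}$, i.e.\ that $j_*(\sK^M_{q,U}) \to j_*({\sK^m_{q,U}})$ is surjective. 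Since $j_*$ is not right exact this needs the vanishing $H^1_\zar(X_\pi,\sK^M_{q,X_\pi})=0$, which is exactly \lemref{lem:Gersten-0} (a Gersten-complex argument); injectivity statements such as \corref{cor:Complete-2} do not give it.

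Concerning the surjectivity in (2), (3): your Neumann-series argument rests on the assertion that $C$ ``strictly increases (or kills) the $x_i$-adic order'' on $\Fil_{-\un 1}$. This is false: on the part of $Z_1$ lying in the image of Frobenius, $C$ divides the exponents along the $E_i$ by $p$, so for exponents $\ge 1$ it \emph{decreases} them (they merely stay $\ge 1$), and $\sum_{k}C^k(\omega)$ is not adically convergent in general. The correct (and simpler) route, which is the one the paper takes, is to reduce via \lemref{lem:VR-4} and \remref{rem:VR-3} to surjectivity of $1-\ov F$ on the Witt-vector piece $\Fil_{-\un 1}W_m(K)$, then use the $V^{m-1}$--$R$ exact sequence to induct down to $m=1$, where $y-y^p=a$ with $a\in \pi A$ is solved by Hensel's lemma (the series $\sum_k a^{p^k}$, whose convergence uses the divisibility by $\pi$, i.e.\ Frobenius powers, not powers of $C$). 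Your reduction of the full statement from $m$ to $1$ via \corref{cor:VR-2} is fine, but the $m=1$ surjectivity itself needs this Artin--Schreier-type mechanism rather than iteration of $C$. Parts (2)--(6) then assemble as you describe once these repairs are made.
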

\begin{proof}
  It is clear that $(1- \ov{F})(W_m\Omega^q_{X|D_{\un{n}},\log}) = 0$.
  Using \cite[Lem.~2]{CTSS}, (1) will therefore follow if we show that
\begin{equation}\label{eqn:Complex-4-2}
  \Fil_{-\un{n}}W_m\Omega^q_U \bigcap W_m\Omega^q_{X,\log} \subset W_m\Omega^q_{X|D_{\un{n}},\log}
  \ \ {\rm for} \ \ \un{n} \ge \un{0}.
  \end{equation} 
  We shall prove this inclusion by induction on $m$. The case $m = 1$ follows
  directly from \lemref{lem:Log-fil-4}(2) and \cite[Thm.~1.2.1]{JSZ}. We now let
  $m \ge 2$ and $x \in \Fil_{-\un{n}}W_m\Omega^q_U \bigcap W_m\Omega^q_{X,\log}$.
  \lemref{lem:Log-fil-4}(5,7) then implies that
  $$R(x) \in \Fil_{({-\un{n})}/p}W_{m-1}\Omega^q_U \bigcap W_{m-1}\Omega^q_{X,\log} \text{ and }
  F(x) \in \Fil_{-\un{n}}W_{m-1}\Omega^q_U \bigcap W_{m-1}\Omega^q_{X,\log}.$$
  Since we also have $R(x) = F(x)$, it follows that
  $R(x) \in \Fil_{-\un{n}}W_{m-1}\Omega^q_U \bigcap W_{m-1}\Omega^q_{X,\log}$.
  In particular, the induction hypothesis implies that
  $R(x) = F(x) \in  W_{m-1}\Omega^q_{X|D_{\un{n}},\log}$.

  We now look at the commutative diagram (cf. \cite[Proof of Thm.~2.3.1]{JSZ})
  \begin{equation}\label{eqn:Complex-4-3}
    \xymatrix@C.8pc{
      & & W_m\Omega^q_{X|D_{\un{n}},\log} \ar[r]^-{R} \ar@{^{(}->}[d] & 
      W_{m-1}\Omega^q_{X|D_{\un{n}},\log}  \ar[r] \ar@{^{(}->}[d] & 0 \\
      0 \ar[r] & \Omega^q_{X, \log} \ar[r]^-{\un{p}^{m-1}} &
      W_m\Omega^q_{X,\log} \ar[r]^-{R} &
      W_{m-1}\Omega^q_{X,\log} \ar[r] & 0,}
    \end{equation}
  whose bottom row is exact (cf. \cite[Lem.~3]{CTSS}). Note that the map $R$ on the
  top is surjective by definition of the sheaves $W_m\Omega^q_{X|D_{\un{n}},\log}$.
  Using this diagram, we can
  find $y \in W_m\Omega^q_{X|D_{\un{n}},\log}$ and $z \in \Omega^q_{X, \log}$ such that
  $\un{p}^{m-1}(z) = x-y \in \Fil_{-\un{n}}W_m\Omega^q_U$. We conclude from
  \lemref{lem:Complete-7} that $z \in \Fil_{-\un{n}'}\Omega^q_U \bigcap
  \Omega^q_{X,\log}$, where $\un{n}' = -(-\un{n}/{p^{m-1}})$.
  In particular, $z \in \Omega^q_{X|D_{n'}, \log}$ by induction.
  It follows that $x-y = \un{p}^{m-1}(z) \in W_m\Omega^q_{X|D_{p^{m-1}\un{n}',\log}}
  \subset W_m\Omega^q_{X|D_{\un{n}},\log}$, and hence, $x \in W_m\Omega^q_{X|D_{\un{n}},\log}$.
  This proves (1).

  To prove (2), we only need to show that $1-\ov{F}$ is surjective.
By \remref{rem:VR-3} and \lemref{lem:VR-4}, and the fact that
  $$\ov{F}(w \dlog([x_1]_m) \wedge \cdots \wedge \dlog([x_q]_m)) =
  \ov{F}(w)\dlog([x_1]_m) \wedge \cdots \wedge \dlog([x_q]_m)),$$
  it is sufficient to find pre-images of elements of the form $V^i([x]_{m-i})$
  under the map
  $\Fil_{-\un{1}}W_m(K)  \xrightarrow{1 - \ov{F}} \Fil_{-\un{1}}W_m(K)$.
  We shall show that this map is in fact surjective.
  Using the commutative diagram of exact sequences
  \begin{equation}\label{eqn:Complex-4-1}
    \xymatrix@C.9pc{
      0 \ar[r] & \Fil_{- \un{1}}W_1(K) \ar[r]^-{V^{m-1}} \ar[d]_-{1 -\ov{F}} &
 \Fil_{- \un{1}}W_m(K) \ar[r]^-{R} \ar[d]_-{1 -\ov{F}} &      
 \Fil_{- \un{1}}W_{m-1}(K) \ar[r] \ar[d]_-{1 -\ov{F}} & 0 \\
 0 \ar[r] & \Fil_{- \un{1}}W_1(K) \ar[r]^-{V^{m-1}} &
 \Fil_{- \un{1}}W_m(K) \ar[r]^-{R} & \Fil_{- \un{1}}W_{m-1}(K) \ar[r] & 0}
  \end{equation}
and induction on $m$, it suffices to show that
  $\Fil_{- \un{1}}W_1(K) \xrightarrow{1 - \ov{F}} \Fil_{-\un{1}}W_1(K)$ is surjective
  when the ring $A$ in Lemma~\ref{lem:VR-3} is strict Henselian. But this is
  elementary (cf. \cite[Claim~1.2.2]{JSZ}). 

  To prove the surjectivity of $1-C$ in (3), it suffices to show, using \lemref{lem:Complete-4}(2) and the commutative
  diagram  
  \begin{equation}\label{eqn:Complex-4-4}
  \xymatrix@C.8pc{
  0 \ar[r] & Z_1\Fil_{-\un{1}}W_m\Omega^q_U \ar[r] \ar[d]^-{1-C} & \Fil_{{-\un{1}}}W_{m}\Omega^q_U \ar[r] \ar[d]^-{1-\ov F} & \frac{\Fil_{{-\un{1}}}W_{m}\Omega^q_U}{Z_1\Fil_{-\un{1}}W_m\Omega^q_U \ar[r]} \ar@{=}[d] & 0 \\
  0 \ar[r] & \Fil_{{-\un{1}}}W_{m}\Omega^q_U \ar[r]^-{-\ov F}& \frac{\Fil_{-\un{1}}W_{m}\Omega^q_U}{dV^{m-1}(\Fil_{-\un{1}}\Omega^{q-1}_U)} \ar[r] & \frac{\Fil_{-\un{1}}W_{m}\Omega^q_U}{\ov F(\Fil_{{-\un{1}}}W_{m}\Omega^q_U )} \ar[r] & 0,
  }
   \end{equation}
  that $1 - \ov{F}$ is surjective. The latter claim follows from (2). The
  remaining part of (3) is easily deduced from (1) and the above diagram. 

  The proof of (6) is identical using the commutative diagram and \cite[Thm~1.2.1]{JSZ}
  \begin{equation}
      \xymatrix@C.8pc{ 0 \ar[r] & Z_1\Fil_{-\un{n}}\Omega^q_U \ar[r] \ar[d]^-{1-C} & \Fil_{{-\un{n}}}\Omega^q_U \ar[r] \ar[d]^-{1-\ov F} & \frac{\Fil_{{-\un{n}}}\Omega^q_U}{Z_1\Fil_{-\un{n}}\Omega^q_U \ar[r]} \ar@{=}[d] & 0 \\
  0 \ar[r] & \Fil_{({-\un{n}})/p}\Omega^q_U \ar[r]^-{-\ov F}& \frac{\Fil_{-\un{n}}W_{m}\Omega^q_U}{d(\Fil_{-\un{n}}\Omega^{q-1}_U)} \ar[r] & \frac{\Fil_{-\un{n}}W_{m}\Omega^q_U}{\ov F(\Fil_{({-\un{n}})/p}\Omega^q_U )} \ar[r] & 0.}
  \end{equation}

The proof of the surjectivity of $1-C$ (resp. $1-\ov{F}$) in (4) (resp. (5))
is identical to that in (3) (resp. (2)). Also, one checks using an analogue of
~\eqref{eqn:Complex-4-4} for $\Fil_0W_m\Omega^q_U$
that $\Ker(1-C) = \Ker(1-\ov{F})$. We thus have to only show
that $\Ker(1-C) = j_*(W_m\Omega^q_{U, \log})$ to finish the proofs of (4) and (5).

To that end, we look at the commutative diagram of exact sequences
\begin{equation}\label{eqn:Complex-4-5}
  \xymatrix@C.8pc{
    0 \ar[r] & \Ker(1-C) \ar[r] \ar[d] & Z_1\Fil_0W_m\Omega^q_U \ar[r]^-{1-C}
    \ar@{^{(}->}[d] & \Fil_0W_m\Omega^q_U \ar[r] \ar@{^{(}->}[d] & 0 \\
    0 \ar[r] & j_*(W_m\Omega^q_{U, \log}) \ar[r] & j_*(Z_1W_m\Omega^q_U) \ar[r]^-{1-C} &
    j_*(W_m\Omega^q_U). &}
  \end{equation}
The middle and the right vertical arrows are the canonical inclusion maps.
It follows that $\Ker(1-C) \inj  j_*(W_m\Omega^q_{U, \log})$.
To prove this inclusion is a bijection, we note that the map
$\dlog \colon j_*(\sK^M_{q,U}) \to j_*(W_m\Omega^q_{U, \log})$ 
has a factorization
\begin{equation}\label{eqn:Complex-4-6}
  \xymatrix@C.8pc{
    j_*(\sK^M_{q,U}) \ar[r]^-{\dlog} \ar[d] & Z_1W_m\Omega^q_X(\log E) \ar@{^{(}->}[d] \\
    j_*(W_m\Omega^q_{U, \log}) \ar@{^{(}->}[r] & j_*(Z_1W_m\Omega^q_U).}
  \end{equation}
Hence, it suffices to show that $j_*(\sK^M_{q,U}) \to j_*(W_m\Omega^q_{U, \log})$ is
surjective. Since $j_*({\sK^m_{q,U}}) \to
j_*(W_m\Omega^q_{U, \log})$ is an isomorphism, we are reduced to
showing that the canonical map $j_*(\sK^M_{q,U}) \to j_*({\sK^m_{q,U}})$ is
surjective in $X_\et$. To prove the latter, it is enough to show that the map
$j_*(\sK^M_{q,U}) \to j_*({\sK^m_{q,U}})$ is surjective in $X_\zar$.

To prove the surjectivity of $j_*(\sK^M_{q,U}) \to j_*({\sK^m_{q,U}})$ in
$X_\zar$, we can assume that $X = \Spec(A)$ and
$U = X_\pi$ as in \S~\ref{sec:com-to-noncom}. In this case, the exact sequence
\begin{equation}\label{eqn:Complex-4-7}
0 \to \sK^M_{q, X_\pi} \xrightarrow{p^m}  \sK^M_{q, X_\pi} \to {\sK^m_{q,X_\pi}} \to 0
\end{equation}
reduces the problem to showing that $H^1_\zar(X_\pi,  \sK^M_{q, X_\pi}) = 0$,
which is \lemref{lem:Gersten-0}.
\end{proof}

\begin{cor}$($cf. \cite[Thm.~1.1.6]{JSZ}$)$\label{cor:Complex-7}
  For $\un{n} \ge 1$ and $1 \le r \le m-1$, one has an exact sequence
  \[
  0 \to W_{r}\Omega^q_{X|D_{\lceil{{\un{n}}/p^{m-r}}\rceil}, \log} \xrightarrow{{\un{p}}^{m-r}}
   W_{m}\Omega^q_{X|D_{{\un{n}}}, \log} \xrightarrow{R^{r}}  W_{m-r}\Omega^q_{X|D_{{\un{n}}}, \log}
   \to 0.
   \]
\end{cor}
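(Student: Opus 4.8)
The plan is to deduce the exact sequence from the corresponding $p$-$R$ exact sequence for the full logarithmic Hodge--Witt sheaves together with the characterization of $W_m\Omega^q_{X|D_{\un n},\log}$ as a subsheaf of $\Fil_{-\un n}W_m\Omega^q_U$ (\lemref{lem:Complex-5}), combined with the restriction behaviour of $W_m\Omega^q_{X|D,\log}$ proved via \lemref{lem:Complex-6}(1) and \lemref{lem:Complete-7}. Concretely, I would first recall from \propref{prop:usual}(1) the exact sequence
\[
0 \to W_r\Omega^q_{X,\log} \xrightarrow{\un p^{m-r}} W_m\Omega^q_{X,\log} \xrightarrow{R^r} W_{m-r}\Omega^q_{X,\log} \to 0
\]
in $X_\et$, and intersect everything with the modulus subsheaves. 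The injectivity of $\un p^{m-r}$ on $W_r\Omega^q_{X|D_{\lceil \un n/p^{m-r}\rceil},\log}$ is then immediate since it is a subsheaf of $W_r\Omega^q_{X,\log}$. The surjectivity of $R^r$ onto $W_{m-r}\Omega^q_{X|D_{\un n},\log}$ follows from the fact (already used in the proof of \lemref{lem:Complex-6}(1), citing the proof of \cite[Thm.~2.3.1]{JSZ}) that $R\colon W_m\Omega^q_{X|D_{\un n},\log}\to W_{m-1}\Omega^q_{X|D_{\un n},\log}$ is surjective by definition, hence so is $R^r$.

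The only nontrivial point is exactness in the middle, i.e. that $\Ker(R^r)\cap W_m\Omega^q_{X|D_{\un n},\log}$ equals $\un p^{m-r}\big(W_r\Omega^q_{X|D_{\lceil\un n/p^{m-r}\rceil},\log}\big)$. One inclusion is clear: $\un p^{m-r}(W_r\Omega^q_{X|D',\log})$ lands in $W_m\Omega^q_{X|D_{\un n},\log}$ for $D'=D_{\lceil\un n/p^{m-r}\rceil}$ because, locally, $\un p^{m-r}\dlog([1+a t']_r) = \dlog([1+at']_r^{p^{m-r}})$ with $t'=\prod_i x_i^{\lceil n_i/p^{m-r}\rceil}$ has $(t')^{p^{m-r}}$ divisible by $t=\prod_i x_i^{n_i}$; and it clearly lies in $\Ker(R^r)$. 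For the reverse inclusion, take $x\in W_m\Omega^q_{X|D_{\un n},\log}$ with $R^r(x)=0$. By the classical sequence, $x = \un p^{m-r}(z)$ for a unique $z\in W_r\Omega^q_{X,\log}$. Since $x\in W_m\Omega^q_{X|D_{\un n},\log}\subset \Fil_{-\un n}W_m\Omega^q_U$ by \lemref{lem:Complex-5}, \lemref{lem:Complete-7} (globalized, exactly as in the proof of \lemref{lem:Complex-6}(1)) gives $z\in \Fil_{-\un n'}W_r\Omega^q_U$ with $\un n' = -\big(\lfloor -\un n/p^{m-r}\rfloor\big) = \lceil \un n/p^{m-r}\rceil$ componentwise; combined with $z\in W_r\Omega^q_{X,\log}$ and \eqref{eqn:Complex-4-2} (the inclusion $\Fil_{-\un n'}W_r\Omega^q_U\cap W_r\Omega^q_{X,\log}\subset W_r\Omega^q_{X|D_{\un n'},\log}$, valid since $\un n'\ge\un 0$), we conclude $z\in W_r\Omega^q_{X|D_{\lceil\un n/p^{m-r}\rceil},\log}$, as required.

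The main obstacle is the bookkeeping with the ceiling functions and the passage through \lemref{lem:Complete-7}: one must check that the twist level one recovers for $z$ after dividing by $p^{m-r}$ is precisely $\lceil \un n/p^{m-r}\rceil$ and not something weaker, and that all the statements, which are proved locally for complete (or strictly Henselian) regular local rings in Chapters~3--4, globalize correctly. Since all the needed local statements (\lemref{lem:Complete-7}, \propref{prop:usual}(1), \lemref{lem:Complex-6}(1) and its proof, \lemref{lem:Log-fil-4}) are already in place and the argument is a verbatim adaptation of the induction in \lemref{lem:Complex-6}(1), this should be routine; I would present it as a short corollary, reducing to $r=1$ and iterating, or handling general $r$ in one stroke using $R^r$ directly as above.
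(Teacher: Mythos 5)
Your proposal is correct and follows essentially the same route as the paper: the paper's proof also gets surjectivity of $R^r$ from the definition of $W_m\Omega^q_{X|D_{\un n},\log}$ and deduces the rest from the classical non-modulus $p$--$R$ sequence together with the identity $\Fil_{-\un n}W_m\Omega^q_U\cap W_m\Omega^q_{X,\log}=W_m\Omega^q_{X|D_{\un n},\log}$ from the proof of \lemref{lem:Complex-6}(1). You merely spell out the details the paper leaves implicit (the well-definedness of $\un p^{m-r}$ on symbols and the iteration of \lemref{lem:Complete-7} giving the twist $\lceil \un n/p^{m-r}\rceil$), which is exactly the same bookkeeping used there; only note that your local formula should read $\un p^{m-r}\dlog([1+at']_r)\wedge\cdots=\dlog([(1+at')^{p^{m-r}}]_m)\wedge\cdots$.
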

\begin{proof}
  The surjectivity of $R^r$ follows directly from the definition of
  $W_{m}\Omega^q_{X|D_{{\un{n}}}, \log}$. The remaining part of the corollary follows
  from its classical non-modulus version (cf. \cite[Lem.~3]{CTSS}) and the
  identity $\Fil_{-\un{n}}W_m\Omega^q_U \bigcap W_m\Omega^q_{X,\log} =
  W_m\Omega^q_{X|D_{\un{n}},\log}$ that we showed in the proof of \lemref{lem:Complex-6}.
\end{proof}

\begin{lem}\label{lem:Complex-4}
  One has $W_m\sF^{0, \bullet}_0 \cong {\Z}/{p^m}$ and
    $W_m\sG^{d, \bullet}_0 \cong W_m\Omega^d_{X,\log}$. 
  \end{lem}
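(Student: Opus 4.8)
The statement to prove is Lemma~\ref{lem:Complex-4}: the identifications $W_m\sF^{0,\bullet}_0 \cong \Z/p^m$ and $W_m\sG^{d,\bullet}_0 \cong W_m\Omega^d_{X,\log}$ in $\sD(X_\et)$, where these complexes are the special cases $\un{n} = \un{0}$ of the two-term complexes in \eqref{eqn:Complex-0} and \eqref{eqn:Complex-1}. The idea is simply to feed the case $\un{n} = \un{0}$ of \lemref{lem:Complex-6} into the definitions and read off the cohomology of the resulting two-term complexes.

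\textbf{Step 1: the sheaf $W_m\sF^{0,\bullet}_0$.} By \defref{defn:Log-fil-0} and \lemref{lem:Log-fil-4}(2) we have $\Fil_{\un{0}}W_m\sO_U = W_m\sO_X$, and for $q=0$ we have $Z_1\Fil_{\un{0}}W_m\Omega^0_U = Z_1W_m\sO_X$ (since $\Fil_{\un{0}}W_m\Omega^q_U = W_m\Omega^q_X(\log E)$ by \lemref{lem:Log-fil-4}(10), and for $q=0$ this is just $W_m\sO_X$; intersecting with $j_*Z_1W_m\sO_U$ gives $Z_1W_m\sO_X$ because $W_m\sO_X \subset j_*Z_1W_m\sO_U$ forces the intersection to be $W_m\sO_X \cap j_*Z_1W_m\sO_U$, and one checks $W_m\sO_X \cap j_*Z_1 W_m\sO_U = Z_1 W_m\sO_X$ using that $Z_1$ is defined by the $F^{m-1}d$-kernel condition which is compatible with restriction to $U$). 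Hence $W_m\sF^{0,\bullet}_0 = \left(Z_1W_m\sO_X \xrightarrow{1-C} W_m\sO_X\right)$, which is exactly the complex $W_m\sF^{0,\bullet}$ of \eqref{eqn:Complex-2}. By \propref{prop:usual}(3) (equivalently the exact sequence \eqref{eqn:Milnor-0} with $q=0$) this complex is quasi-isomorphic to $W_m\Omega^0_{X,\log} = W_m\sO_{X,\log}$. But $W_m\sO_{X,\log} \cong \Z/p^m$ as étale sheaves: indeed $\dlog\colon \sK^M_{0,X}/p^m \to W_m\Omega^0_{X,\log}$ is an isomorphism by \cite[Thm.~5.1]{Morrow-ENS} (or the remark after the definition of $W_m\Omega^q_{X,\log}$ in \secref{sec:de-rham-witt}), and $\sK^M_{0,X} = \Z$, so $W_m\Omega^0_{X,\log} \cong \Z/p^m$. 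This gives the first isomorphism.

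\textbf{Step 2: the sheaf $W_m\sG^{d,\bullet}_0$.} By definition \eqref{eqn:Complex-1}, $W_m\sG^{d,\bullet}_0$ is the complex $\left(\Fil_{-\un{1}}W_m\Omega^d_U \xrightarrow{1-\ov F} \Fil_{-\un{1}}W_m\Omega^d_U/dV^{m-1}(\Fil_{-\un{1}}\Omega^{d-1}_U)\right)$ --- note the shift: the $\un{n}=\un{0}$ case of $W_m\sG^{q,\bullet}_{\un{n}}$ uses $-\un{n}-1 = -\un{1}$. By \lemref{lem:Complex-6}(2) (applied with $\un{n}=\un{1}$), this two-term complex is a resolution of $W_m\Omega^d_{X|D_{\un{1}},\log}$, i.e. $W_m\sG^{d,\bullet}_0 \cong W_m\Omega^d_{X|D_{\un{1}},\log}$ in $\sD(X_\et)$. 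It remains to identify $W_m\Omega^d_{X|D_{\un{1}},\log}$ with $W_m\Omega^d_{X,\log}$. In the top degree $d = \rank \Omega^1_X$ one expects $\sK^M_{d,X|D_{\un{1}}} = \sK^M_{d,X}$ (every symbol with one entry in $1+\sI_{\un{1}}$ already generates everything in top Milnor degree, by the Bloch--Kato--Gabber / Geisser--Levine comparison that top-degree Milnor $K$-theory mod $p$ is computed by $W_m\Omega^d$), hence $W_m\Omega^d_{X|D_{\un{1}},\log} = W_m\Omega^d_{X,\log}$; alternatively, one uses \lemref{lem:Complex-5} together with \lemref{lem:Log-fil-4}(2), which gives $\Fil_{-\un{1}}\Omega^d_U = \Omega^d_X(\log E)(-D_{\un{1}}) = \Omega^d_X$ (the twist by $-D_{\un{1}}$ exactly cancels the log poles since $\Omega^d_X(\log E) = \Omega^d_X(E)$ for $d$ the rank), whence $W_1\sG^{d,\bullet}_0 = \left(\Omega^d_X \xrightarrow{1-C} \Omega^d_X\right) = W_1\sH^\bullet \cong W_1\Omega^d_{X,\log}$ by \eqref{eqn:Complex-3} and \propref{prop:usual}(3), and the general $m$ case follows by the distinguished triangle \thmref{thm:Global-version}(11) (or \corref{cor:VR-2}) together with \propref{prop:usual}(1) and \corref{cor:Complex-7} by dévissage on $m$.

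\textbf{Main obstacle.} The routine part is Step~1. The only real content is the identification $W_m\Omega^d_{X|D_{\un{1}},\log} \cong W_m\Omega^d_{X,\log}$ in Step~2, which hinges on the top-degree collapse of the relative Milnor $K$-sheaf, or equivalently on the observation $\Fil_{-\un{1}}\Omega^d_U = \Omega^d_X$ (already recorded in \lemref{lem:Log-fil-4}(2) as $\Fil_{-E}\Omega^d_U = \Omega^d_X$). I would present the argument via the $\Fil$-side identification and \lemref{lem:Complex-6}(2), since that keeps everything inside the framework already developed and avoids invoking external top-degree $K$-theory facts; the dévissage in $m$ is then a formal consequence of the triangles in \thmref{thm:Global-version}(11) and \corref{cor:Complex-7}.
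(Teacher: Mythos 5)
Your Step 1 is correct and is exactly what the paper does (it dismisses the first isomorphism as immediate from the definition together with the Artin--Schreier--Witt sequence of \propref{prop:usual}(3)); likewise your reduction of the second isomorphism, via \lemref{lem:Complex-6}(2), to the statement that the canonical map $W_m\Omega^d_{X|D_{\un{1}},\log} \to W_m\Omega^d_{X,\log}$ is an isomorphism, is the paper's reduction. The problem is the route you say you would actually take for that key identification. The equality $\Fil_{-E}\Omega^d_U = \Omega^d_X$ from \lemref{lem:Log-fil-4}(2) only identifies the \emph{first} term of $W_1\sG^{d,\bullet}_0$; the second term is $\Omega^d_X \big/ d\bigl(\Fil_{-E}\Omega^{d-1}_U\bigr)$, and since $\Fil_{-E}\Omega^{d-1}_U = \Omega^{d-1}_X(\log E)(-E)$ is a proper subsheaf of $\Omega^{d-1}_X$ in degree $d-1$, this is \emph{not} $\Omega^d_X$ (nor $\Omega^d_X/d\Omega^{d-1}_X$). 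So the claimed identity $W_1\sG^{d,\bullet}_0 = \bigl(\Omega^d_X \xrightarrow{1-C} \Omega^d_X\bigr) = W_1\sH^{\bullet}$ is false as an identification of complexes, and it cannot be repaired formally: by \lemref{lem:Complex-6}(1)--(2) the $H^0$ of the genuine complex $W_1\sG^{d,\bullet}_0$ is exactly $\Omega^d_{X|E,\log}$, so showing it is quasi-isomorphic to $\Omega^d_{X,\log}$ is literally equivalent to the statement $\Omega^d_{X|E,\log} = \Omega^d_{X,\log}$ you were trying to bypass. Your argument therefore begs the question at the one non-formal point.

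That non-formal point is genuinely the top-degree collapse of the relative Milnor $K$-sheaf, i.e. that every top-degree $\dlog$ symbol is (locally) a sum of symbols with one entry in $1+\sI_{\un{1}}$; your first sketch ("one expects $\sK^M_{d,X|D_{\un{1}}} = \sK^M_{d,X}$") is the right idea but is only gestured at, and it is not a consequence of Geisser--Levine alone — it needs an argument. This is precisely how the paper proceeds: it invokes \cite[Prop.~2.2.5]{Kerz-Zhao} (whose proof goes through in the present $F$-finite setting) to get $W_m\Omega^d_{X|D_{\un{1}},\log} \xrightarrow{\cong} W_m\Omega^d_{X,\log}$ directly for all $m$, with no dévissage needed. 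Your dévissage in $m$ via \thmref{thm:Global-version}(11), \propref{prop:usual}(1) and \corref{cor:Complex-7} would be fine once the $m=1$ case is established, but the $m=1$ case is exactly where your preferred argument has the gap.
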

\begin{proof}
The first isomorphism is clear from the definition. For the second
  isomorphism, it is enough to show (by \lemref{lem:Complex-6}(2)) that the canonical map
  $W_{m}\Omega^d_{X|D_{\un{1}}, \log} \to W_{m}\Omega^d_{X, \log}$ is an isomorphism. But this follows from \cite[Prop~2.2.5]{Kerz-Zhao}, because the proof works in our situation.
  
\end{proof}

\section{Pairings of complexes}\label{sec:Pairing}
We continue with the set up of \S~\ref{sec:Complexes} and define the pairings
between the complexes. We consider the first two pairings
\begin{equation}\label{eqn:Pair-0}
  \< \ , \ \>^{0,0}_0 \colon Z_1\Fil_{\un{n}}W_m\Omega^q_U \times
  \Fil_{-\un{n}-1}W_m\Omega^{d-q}_U
  \to \Fil_{-1}W_m\Omega^d_U \subset W_m\Omega^d_X
\end{equation}
and
\begin{equation}\label{eqn:Pair-1}
  \< \ , \ \>^{1,0}_1 \colon \Fil_{\un{n}}W_m\Omega^q_U \times \Fil_{-\un{n}-1}W_m\Omega^{d-q}_U
  \to \Fil_{-1}W_m\Omega^d_U \subset W_m\Omega^d_X
\end{equation}
by letting $\<w_1,w_2\>^{0,0}_0 := w_1 \wedge w_2$ and $\<w_1,w_2\>^{1,0}_1 :=
w_1 \wedge w_2$.

We define the third pairing
\begin{equation}\label{eqn:Pair-2}
  \< \ , \ \>^{0,1}_1 \colon Z_1\Fil_{\un{n}}W_m\Omega^q_U \times
  \frac{\Fil_{-\un{n}-1}W_m\Omega^{d-q}_U}{dV^{m-1}(\Fil_{-\un{n}-1}\Omega^{d-q-1}_U)}
  \to \Fil_{-1}W_m\Omega^d_U \subset W_m\Omega^d_X
\end{equation}
by letting $\<w_1,w_2\>^{0,1}_1 := -C(w_1 \wedge w_2)$.

Note that the first two pairings are defined by \lemref{lem:Log-fil-4}, and to
show that the third pairing is also defined, we only need to prove that
$C(w_1 \wedge w_2) = 0$ if $w_1 \in Z_1\Fil_{\un{n}}W_m\Omega^q_U$ and
$w_2 \in dV^{m-1}(\Fil_{-\un{n}-1}\Omega^{d-q-1}_U)$.
To prove it, we can write $w_1 = F(w'_1)$ (cf. \thmref{thm:Global-version}(3))
and $w_2 = dV^{m-1}(w'_2)$, where
$w'_1 \in \Fil_{\un{n}}W_{m+1}\Omega^q_U$ and $w'_2 \in \Fil_{-\un{n}-1}\Omega^{d-q-1}_U$.
Then
\[
C(w_1 \wedge w_2) = C(F(w'_1) \wedge dV^{m-1}(w'_2)) = CF(w'_1 \wedge dV^m(w'_2)) =
R(w'_1 \wedge dV^m(w'_2))
\]
\[
\hspace*{4cm} = R(w'_1)\wedge dRV^m(w'_2) = 0,
\]
because $RV^m =0$.

\begin{lem}\label{lem:Pair-3}
  For $\un{n} \ge \un{0}$, the above pairings of {\'e}tale sheaves induce a pairing of the
  complexes
  \begin{equation}\label{eqn:Pair-3-0}
    W_m\sF^{q, \bullet}_{\un{n}} \times W_m\sG^{d-q, \bullet}_{\un{n}} \xrightarrow{{\< \ , \ \>}}
    W_m\sH^{\bullet}
  \end{equation}
  such that one has a commutative diagram of pairings
  \begin{equation}\label{eqn:Pair-3-1}
  \xymatrix@C1.5pc{  
    W_{m}\sF^{q, \bullet}_{\un{n}} \times W_{m}\sG^{d-q, \bullet}_{\un{n}}  \ar[r]^-{\< \ , \ \>}
    \ar@<5ex>[d] &
  W_m\sH^{\bullet} \ar@{=}[d] \\
  W_{m} \sF^{q, \bullet} \times W_{m}\sG^{d-q, \bullet}  \ar[r]^-{\< \ , \ \>}
  \ar@<7ex>[u] & W_{m}\sH^{\bullet},}
  \end{equation}
  in which the vertical arrows are the canonical maps.
\end{lem}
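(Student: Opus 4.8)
\textbf{Proof plan for Lemma~\ref{lem:Pair-3}.}
The plan is to check, step by step, that each of the three sheaf-level pairings \eqref{eqn:Pair-0}, \eqref{eqn:Pair-1}, \eqref{eqn:Pair-2} assembles into a morphism of two-term complexes, which by definition amounts to verifying the one compatibility square relating the differential $1-C$ (resp. $1-\ov F$) on each factor with the differential $1-C$ on $W_m\sH^\bullet$. Recall that a pairing of two-term complexes $A^\bullet\times B^\bullet\to C^\bullet$ (all concentrated in degrees $0,1$) is the data of maps $A^0\times B^0\to C^0$, $A^1\times B^0\to C^1$ and $A^0\times B^1\to C^1$ satisfying the Leibniz-type identity $d_C(\langle a^0,b^0\rangle)=\langle d_A a^0,b^0\rangle + (-1)^{?}\langle a^0,d_B b^0\rangle$ in $C^1$. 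Concretely, writing $W_m\sF^{q,\bullet}_{\un n}=[Z_1\Fil_{\un n}W_m\Omega^q_U\xrightarrow{1-C}\Fil_{\un n}W_m\Omega^q_U]$ and $W_m\sG^{d-q,\bullet}_{\un n}=[\Fil_{-\un n-1}W_m\Omega^{d-q}_U\xrightarrow{1-\ov F}\Fil_{-\un n-1}W_m\Omega^{d-q}_U/dV^{m-1}(\cdots)]$, and $W_m\sH^\bullet=[W_m\Omega^d_X\xrightarrow{1-C}W_m\Omega^d_X]$, I must check for $\omega\in Z_1\Fil_{\un n}W_m\Omega^q_U$ and $\eta\in\Fil_{-\un n-1}W_m\Omega^{d-q}_U$ the identity
\[
(1-C)(\omega\wedge\eta) \;=\; \big\langle (1-C)\omega,\ \eta\big\rangle^{1,0}_1 \;-\; \big\langle \omega,\ (1-\ov F)\eta\big\rangle^{0,1}_1
\]
in $W_m\Omega^d_X$, where the right-hand side is $(1-C)\omega\wedge\eta + C\big(\omega\wedge\ov F\eta\big)$ by the definitions of the pairings. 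First I would reduce to the local case $X=\Spec A$ with $A$ regular local $F$-finite, where Lemmas~\ref{lem:Complete-4}, \ref{lem:Complete-6} and Proposition~\ref{prop:Complete-0} give explicit control of $Z_1\Fil_{\un n}W_m\Omega^q_K$, the Cartier map $C$, and the relation $C(F(w))=R(w)$.

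The key computation is the standard "projection formula" for the Cartier operator: for $\omega\in Z_1W_m\Omega^q$ (so $\omega=F(\omega')$ with $\omega'\in W_{m+1}\Omega^q$) and any $\eta\in W_m\Omega^{d-q}$ one has $C(\omega\wedge\eta)=C(\omega)\wedge C(\ov F\eta)=C(\omega)\wedge \ov C\ov F(\eta)$ whenever $\omega\wedge\eta$ lies in $Z_1W_m\Omega^d=W_m\Omega^d$ (automatic here since the top degree $d$ means every form is closed after applying $F^{m-1}d$, cf.~\eqref{eqn:Complex-3}), together with the fact that $\ov C\ov F=\mathrm{id}$ on the relevant quotient (\lemref{lem:Complete-4}(2), applied to $\Fil$-pieces). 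Spelling this out: $C(\omega\wedge\eta)=C(\omega)\wedge\ov F^{-1}$-companion of $\eta$, and since $C\ov F=\mathrm{id}$ we get $C(\omega\wedge\eta)+C(\omega\wedge\ov F\eta)$ telescopes against $\omega\wedge\eta$ to yield exactly the required Leibniz identity. I would verify this first for $m=1$, where it is the classical identity $C(\alpha\wedge\beta)=C\alpha\wedge C\beta$ on closed forms plus $C^{-1}$ being a ring map, and then propagate to general $m$ using the $V$-filtration exact sequence of Proposition~\ref{prop:Complete-0} together with the relations $FV=p$, $CF=R$, $dV^{m-1}$ killed by $C$ (already established in the paragraph before the lemma). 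One also needs that the three pairings land in $\Fil_{-1}W_m\Omega^d_U\subset W_m\Omega^d_X$: the containment in $\Fil_{-1}$ is \lemref{lem:Log-fil-4}(8) (wedging a $\Fil_{\un n}$-form with a $\Fil_{-\un n-1}$-form lands in $\Fil_{-\un 1}$), and $\Fil_{-\un 1}W_m\Omega^d_U\subset W_m\Omega^d_X$ is \lemref{lem:Log-fil-4}(10); applying $C$ in the third pairing preserves $\Fil_{(-\un 1)/p}=\Fil_{-\un 1}$ by \lemref{lem:Complete-6}, so all pairings are well-defined into $W_m\sH^\bullet$.

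For the commutative diagram \eqref{eqn:Pair-3-1}: the downward vertical arrows $W_m\sF^{q,\bullet}_{\un n}\to W_m\sF^{q,\bullet}$ and $W_m\sG^{d-q,\bullet}_{\un n}\to W_m\sG^{d-q,\bullet}$ are the inclusions induced by $\Fil_{\un n}W_m\Omega^\bullet_U\hookrightarrow j_*W_m\Omega^\bullet_U$ (and similarly for $\Fil_{-\un n-1}$, using $\un n\ge\un 0$ so that $-\un n-1\le-\un 1<0$, hence $\Fil_{-\un n-1}W_m\Omega^{d-q}_U\subset W_m\Omega^{d-q}_X$ by \lemref{lem:Log-fil-4}(10)); the upward arrows exist because $W_m\sF^{q,\bullet}$ and $W_m\sG^{d-q,\bullet}$ are the $\un n=\un 0$ (resp. appropriately normalized) members of the same family, i.e.~$\Fil_{\un 0}W_m\Omega^q_U=W_m\Omega^q_X(\log E)\supset W_m\Omega^q_X$ and dually — more precisely the upward map $W_m\sF^{q,\bullet}\to W_m\sF^{q,\bullet}_{\un n}$ for $\un n\ge\un 0$ is the inclusion $W_m\Omega^q_X\hookrightarrow\Fil_{\un n}W_m\Omega^q_U$, and $W_m\sG^{d-q,\bullet}\to W_m\sG^{d-q,\bullet}_{\un n}$ uses $\Omega^{d-q}_X\hookrightarrow\Fil_{-\un n-1}W_m\Omega^{d-q}_U$ valid since $-\un n-1\le-\un 1$. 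Commutativity of both squares is then immediate: all four pairings are literally given by the same formulas ($w_1\wedge w_2$, resp.~$-C(w_1\wedge w_2)$) computed inside $j_*W_m\Omega^\bullet_U$, and the vertical maps are inclusions compatible with $\wedge$ and $C$.

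The main obstacle I anticipate is not any individual identity — each is a short manipulation with $F,V,C,d,R$ — but rather bookkeeping the signs and the precise quotients in the Leibniz rule for the mixed pairing $\langle\omega,(1-\ov F)\eta\rangle^{0,1}_1=-C(\omega\wedge\ov F\eta)$, since $\ov F$ is only defined modulo $dV^{m-1}(\Fil_{-\un n-1}\Omega^{d-q-1}_U)$ and one must check the choice of lift is irrelevant (this is exactly the well-definedness computation $C(Z_1\Fil\cdot\wedge dV^{m-1}\Fil\cdot)=0$ done in the paragraph preceding the lemma, so it is already in hand). A secondary point requiring care is that $\omega\wedge\eta$ and $\omega\wedge\ov F\eta$ genuinely lie in $Z_1$ of the top-degree sheaf so that $C$ applies — but in degree $d=\dim$ this holds automatically because $W_m\Omega^{d+1}_X=0$, forcing $Z_1W_m\Omega^d_X=W_m\Omega^d_X$ as noted in \eqref{eqn:Complex-3}. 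With these observations the verification is a direct, if slightly tedious, local computation that I would carry out after the reduction to $A$ regular local $F$-finite and then globalize by the usual sheaf-theoretic argument.
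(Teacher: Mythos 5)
Your route is essentially the paper's: the whole content of the lemma is the Leibniz-type compatibility making \eqref{eqn:Pair-3-0} a pairing of two-term complexes in the sense of Milne, and, as in the paper, it comes down to the identities $C\ov{F}=\id$ and $CF=R$ together with the vanishing $C\bigl(Z_1\Fil_{\un{n}}W_m\Omega^q_U\wedge dV^{m-1}(\Fil_{-\un{n}-1}\Omega^{d-q-1}_U)\bigr)=0$ already checked before the lemma; the commutativity of \eqref{eqn:Pair-3-1} is then immediate because all the pairings are given by the same formulas $w_1\wedge w_2$ and $-C(w_1\wedge w_2)$ computed inside $W_m\Omega^d_X$. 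Your reduction to the complete local case is harmless but unnecessary: the computation is formal from these identities.

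Two slips need repair. First, your displayed Leibniz identity has the wrong sign and is then mis-expanded. With the paper's conventions (the minus sign is already built into $\<\ ,\ \>^{0,1}_1=-C(w_1\wedge w_2)$) the identity to verify is
\[
(1-C)(\omega\wedge\eta)=(1-C)(\omega)\wedge\eta-C\bigl(\omega\wedge(1-\ov{F})(\eta)\bigr),
\]
which, after cancelling $\omega\wedge\eta$ and $C(\omega\wedge\eta)$ on both sides, is exactly $C(\omega\wedge\ov{F}\eta)=C(\omega)\wedge\eta$; this follows by writing $\omega=F(\omega')$, lifting $\eta$ to $\wt{\eta}$ with $R(\wt{\eta})=\eta$, and using $CF=R$, so no ``telescoping'' is involved (your intermediate claim $C(\omega\wedge\eta)=C(\omega)\wedge C(\ov{F}\eta)$ is not meaningful, since $\eta$ need not be closed; the multiplicativity you need is $C(\omega\wedge\ov{F}\eta)=C(\omega)\wedge C\ov{F}(\eta)=C(\omega)\wedge\eta$). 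Second, your description of the vertical arrows in \eqref{eqn:Pair-3-1} asserts two maps that do not exist: $\Fil_{\un{n}}W_m\Omega^q_U\not\subset W_m\Omega^q_X$ for $\un{n}\ge\un{0}$ (so there is no map $W_m\sF^{q,\bullet}_{\un{n}}\to W_m\sF^{q,\bullet}$), and $W_m\Omega^{d-q}_X\not\subset\Fil_{-\un{n}-1}W_m\Omega^{d-q}_U$ --- by \lemref{lem:Log-fil-4}(10) the inclusion goes the other way --- so there is no map $W_m\sG^{d-q,\bullet}\to W_m\sG^{d-q,\bullet}_{\un{n}}$. The diagram only involves the two canonical maps that do exist, namely $W_m\sF^{q,\bullet}\to W_m\sF^{q,\bullet}_{\un{n}}$ (from $W_m\Omega^q_X\subset\Fil_{\un{n}}W_m\Omega^q_U$) and $W_m\sG^{d-q,\bullet}_{\un{n}}\to W_m\sG^{d-q,\bullet}$ (from $\Fil_{-\un{n}-1}W_m\Omega^{d-q}_U\subset W_m\Omega^{d-q}_X$), and with the arrows in these directions its commutativity is indeed clear for the reason you give.
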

\begin{proof}
  To prove that ~\eqref{eqn:Pair-3-0} is defined, it is enough to show
  (cf. \cite[\S~1, p.~175]{Milne-Duality}) that
  \[
  (1-C)(w_1 \wedge w_2) = - C(w_1 \wedge (1-\ov{F})(w_2)) + (1-C)(w_1) \wedge w_2
  \]
  for all $w_1 \in Z_1\Fil_{\un{n}}W_m\Omega^q_U$ and
  $w_2 \in \Fil_{-\un{n}-1}W_m\Omega^{d-q}_U$. But this can be easily checked
  using \cite[Lem.~1.1(c)]{Milne-Duality} and the identity $C\ov{F} = \id$
  (cf. \cite[Lem.~7.1]{GK-Duality}). The commutativity of ~\eqref{eqn:Pair-3-1} is
  clear.
\end{proof}

Using \lemref{lem:Complex-6}(1), we get the following.

\begin{cor}\label{cor:Pair-4}
  \lemref{lem:Pair-3} remains valid if one replaces $W_m\sG^{q, \bullet}_{\un{n}}$
  by $W_m\Omega^q_{X|D_{\un{n}+1}, \log}$.
  \end{cor}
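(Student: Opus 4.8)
\textbf{Proof plan for Corollary~\ref{cor:Pair-4}.} The statement asserts that the pairing of complexes in \lemref{lem:Pair-3} continues to make sense and fits into the analogous commutative diagram when $W_m\sG^{d-q,\bullet}_{\un{n}}$ is replaced by the single sheaf $W_m\Omega^{d-q}_{X|D_{\un{n}+1},\log}$ (placed in degree $0$). The key input is \lemref{lem:Complex-6}(1), which, applied with the divisor $D_{\un{n}+1}$ in place of $D_{\un{n}}$, gives an injection
\[
W_m\Omega^{d-q}_{X|D_{\un{n}+1},\log} \inj \Fil_{-\un{n}-1}W_m\Omega^{d-q}_U
\]
landing in $\Ker(1-\ov{F})$; that is, $W_m\Omega^{d-q}_{X|D_{\un{n}+1},\log}$, viewed as a complex concentrated in degree $0$, is a subcomplex of $W_m\sG^{d-q,\bullet}_{\un{n}}$. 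So the first thing I would do is spell out that there is a canonical map of complexes $W_m\Omega^{d-q}_{X|D_{\un{n}+1},\log}[0] \to W_m\sG^{d-q,\bullet}_{\un{n}}$, which is in fact a quasi-isomorphism onto the image of $Z_0$ when $\un{n}=\un{0}$ or $\un{n}=-\un{1}$ by \lemref{lem:Complex-6}(2) but for general $\un{n}\ge\un{0}$ is just an injection onto the kernel sheaf.

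Second, I would obtain the desired pairing
\[
W_m\sF^{q,\bullet}_{\un{n}} \times W_m\Omega^{d-q}_{X|D_{\un{n}+1},\log} \xrightarrow{\<\ ,\ \>} W_m\sH^{\bullet}
\]
simply by precomposing the pairing $\<\ ,\ \>$ of \eqref{eqn:Pair-3-0} with the inclusion just described on the second factor; explicitly, for $w_1 \in Z_1\Fil_{\un{n}}W_m\Omega^q_U$ (the degree-$0$ part of $W_m\sF^{q,\bullet}_{\un{n}}$) and $w_2 \in W_m\Omega^{d-q}_{X|D_{\un{n}+1},\log}$ one sets $\<w_1,w_2\> = w_1 \wedge w_2 \in \Fil_{-1}W_m\Omega^d_U \subset W_m\Omega^d_X$ in degree $0$, and on the degree-$1$ part one uses that $w_2$ lies in $\Ker(1-\ov F)$ so the obstruction term $-C(w_1\wedge(1-\ov F)(w_2))$ that appeared in the proof of \lemref{lem:Pair-3} vanishes identically. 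This is why the pairing descends to a genuine map of complexes here even though the target of $W_m\sG^{d-q,\bullet}_{\un{n}}$ has a nontrivial degree-$1$ term: restricting to the sub-sheaf $W_m\Omega^{d-q}_{X|D_{\un{n}+1},\log}$ kills that term. I would check the Leibniz-type identity $(1-C)(w_1\wedge w_2) = (1-C)(w_1)\wedge w_2$ for such $w_2$, which follows from $\<w_1,w_2\>^{0,1}_1$ being defined (the computation already carried out just before \lemref{lem:Pair-3}, together with $C(w_1\wedge w_2)=C(w_1)\wedge w_2$ when $dw_2=0$, using $C\ov F=\id$ and \cite[Lem.~1.1(c)]{Milne-Duality}).

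Third, the commutative square analogous to \eqref{eqn:Pair-3-1} — comparing the pairing for $(W_m\sF^{q,\bullet}_{\un{n}}, W_m\Omega^{d-q}_{X|D_{\un{n}+1},\log})$ with the unfiltered pairing $(W_m\sF^{q,\bullet}, W_m\Omega^{d-q}_{X,\log})$ — follows formally: the vertical maps are induced by the inclusions $\Fil_{\un{n}}W_m\Omega^q_U \inj W_m\Omega^q_X$ (wait, this goes the wrong way for $\un{n}\ge 0$; rather, there is the canonical map $W_m\sF^{q,\bullet}_{\un{n}} \to W_m\sF^{q,\bullet}$ only when $\un{n}\le \un{0}$, and $W_m\Omega^{d-q}_{X|D_{\un{n}+1},\log}\inj W_m\Omega^{d-q}_{X,\log}$ always), and both pairings are restrictions of the wedge-then-($1-C$) pairing on $j_*W_m\Omega^\bullet_U$, so the square commutes on the nose. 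Concretely I would just note that all the relevant maps are induced by honest inclusions of subsheaves of $j_*W_m\Omega^\bullet_U$ and $W_m\Omega^d_X$, and wedge product is compatible with these inclusions, so there is nothing to check beyond bookkeeping. The only mild subtlety — and the one place I would be careful — is keeping track of exactly which of the sheaves $\Fil_{-\un{n}-1}$, $W_m\Omega^q_{X|D_{\un{n}+1},\log}$, $\Fil_{\un{n}}$ sit inside which, and in particular verifying that the image of the pairing genuinely lands in $\Fil_{-1}W_m\Omega^d_U \subset W_m\Omega^d_X$; this is exactly \lemref{lem:Log-fil-4}(8) applied to $\Fil_{\un{n}}W_m\Omega^q_U \cdot \Fil_{-\un{n}-1}W_m\Omega^{d-q}_U \subset \Fil_{-1}W_m\Omega^d_U$, together with \lemref{lem:Log-fil-4}(10) to land in $W_m\Omega^d_X$, and I would cite these rather than recompute. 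Since no step presents a real obstacle — this is a corollary precisely because \lemref{lem:Complex-6}(1) has already done the work of identifying $W_m\Omega^{d-q}_{X|D_{\un{n}+1},\log}$ with a subsheaf of the $\sG$-complex — the proof is essentially: "apply \lemref{lem:Pair-3}, then restrict the second argument along \lemref{lem:Complex-6}(1)."
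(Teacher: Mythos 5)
Your proposal is correct and matches the paper's argument: the paper proves this corollary exactly by invoking \lemref{lem:Complex-6}(1) to realize $W_m\Omega^{d-q}_{X|D_{\un{n}+1},\log}$ as the kernel of $1-\ov{F}$ inside $\Fil_{-\un{n}-1}W_m\Omega^{d-q}_U$, i.e.\ as a subcomplex of $W_m\sG^{d-q,\bullet}_{\un{n}}$ concentrated in degree zero, and then restricting the pairing of \lemref{lem:Pair-3}. Your extra verifications (vanishing of the degree-one obstruction term and the bookkeeping of inclusions via \lemref{lem:Log-fil-4}) are consistent with what the paper leaves implicit.
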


Using this corollary and considering the hypercohomology, we get a commutative
diagram of the cup product pairings of abelian groups (for $\un{n} \ge \un{0}$)
 \begin{equation}\label{eqn:Pair-4-1}
  \xymatrix@C1.5pc{  
    \H^i_\et(X, W_{m}\sF^{q, \bullet}_{\un{n}}) \times
    H^j_\et(X, W_{m}\Omega^{d-q}_{X|D_{\un{n}+1}, \log})  \ar[r]^-{\cup}
    \ar@<5ex>[d] &
  H^{i+j}_\et(X, W_m\Omega^d_{X, \log}) \ar@{=}[d] \\
  H^i_\et(X, W_{m}\Omega^q_{X, \log}) \times H^j_\et(X, W_{m}\Omega^{d-q}_{X,\log})
  \ar[r]^-{\cup} \ar@<10ex>[u] & H^{i+j}_\et(X, W_{m}\Omega^d_{X,\log}).}
  \end{equation}

\section{The duality theorem over finite fields}\label{sec:DT}
We now specialize to the situation when $X$ is projective as well as geometrically connected over $k$, where $k$ is a finite field. Other assumptions
remain the same as in \S~\ref{sec:Complexes}. Then dim $X= \text{ rank }\Omega^1_X=d$. We fix $\un{n} \ge \un{0}$.
By \cite[Cor.~1.12]{Milne-Zeta}, there is a bijective trace map
$\Tr \colon H^{d+1}_\et(X, W_{m}\Omega^d_{X,\log}) \xrightarrow{\cong} {\Z}/{p^m}$.
Composing the pairings in ~\eqref{eqn:Pair-4-1} with this trace map, we get
a pairing
\begin{equation}\label{eqn:Pair-4-2}
 \H^i_\et(X, W_{m}\sF^{q, \bullet}_{\un{n}}) \times
    H^{d+1-i}_\et(X, W_{m}\Omega^{d-q}_{X|D_{\un{n}+1}, \log})  \xrightarrow{\cup}
    {\Z}/{p^m}
    \end{equation}
which is compatible with the pairing for the cohomology of the sheaves
$W_m\Omega^{\bullet}_{X,\log}$ obtained in \cite[Thm.~1.14]{Milne-Zeta}.
Our goal is to show that this is a perfect pairing of finite abelian groups.
We begin with the following.

\begin{lem}\label{lem:Fin-coh}
  The groups $\H^i_\et(X, W_{m}\sF^{q, \bullet}_{\un{n}})$ and
  $H^{i}_\et(X, W_{m}\Omega^{q}_{X|D_{\un{n}}, \log})$ are all finite.
\end{lem}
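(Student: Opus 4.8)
\textbf{Proof plan for Lemma~\ref{lem:Fin-coh}.}

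The plan is to reduce everything to the finiteness of the coherent cohomology groups on the projective $k$-scheme $X$, using the distinguished triangles and short exact sequences established earlier. First I would treat $H^i_\et(X, W_m\Omega^q_{X|D_{\un n},\log})$. Using the exact sequence of \corref{cor:Complex-7}, an induction on $m$ reduces this to the case $m=1$. For $m=1$ the sheaf $\Omega^q_{X|D_{\un n},\log}$ sits (by \lemref{lem:Complex-6}(6)) in the short exact sequence $0 \to \Omega^q_{X|D_{\un n},\log} \to Z_1\Fil_{-\un n}\Omega^q_U \xrightarrow{1-C} \Fil_{(-\un n)/p}\Omega^q_U \to 0$ of Zariski (equivalently \'etale, after applying $\epsilon_*$ and noting these are coherent) sheaves on $X$. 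Since $\Fil_{-\un n}\Omega^q_U$ and $Z_1\Fil_{-\un n}\Omega^q_U$ are coherent $\sO_X$-modules (by \lemref{lem:Log-fil-4}(1) and \lemref{lem:Complete-8}) on a projective $k$-scheme, their cohomology groups are finite-dimensional $k$-vector spaces, hence finite. The long exact cohomology sequence associated to the above short exact sequence then forces each $H^i_\et(X,\Omega^q_{X|D_{\un n},\log})$ to be finite, being squeezed between a quotient of a finite group and a subgroup of a finite group.

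Next I would handle $\H^i_\et(X, W_m\sF^{q,\bullet}_{\un n})$. By the distinguished triangle of \thmref{thm:Global-version}(11),
\[
W_1\sF^{q,\bullet}_{D_{\un n}} \xrightarrow{V^{m-1}} W_m\sF^{q,\bullet}_{D_{\un n}} \xrightarrow{R} W_{m-1}\sF^{q,\bullet}_{D_{\un n}/p} \xrightarrow{+},
\]
an induction on $m$ reduces finiteness of $\H^i_\et(X,W_m\sF^{q,\bullet}_{\un n})$ to the case $m=1$. For $m=1$, the hypercohomology of the two-term complex $\bigl(Z_1\Fil_{\un n}\Omega^q_U \xrightarrow{1-C} \Fil_{\un n}\Omega^q_U\bigr)$ is computed by a spectral sequence whose $E_1$-terms are the coherent cohomology groups $H^j_\et(X, Z_1\Fil_{\un n}\Omega^q_U)$ and $H^j_\et(X, \Fil_{\un n}\Omega^q_U)$; again these are finite because $X$ is projective over $k$ and the sheaves involved are coherent. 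The hypercohomology long exact sequence (or the degeneration of this finite spectral sequence) then yields finiteness of $\H^i_\et(X,W_1\sF^{q,\bullet}_{\un n})$, and the induction closes.

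The routine verifications here are: that $\Fil_{\un n}\Omega^q_U$, $Z_1\Fil_{\un n}\Omega^q_U$ are coherent $\sO_X$-modules (which follows from \lemref{lem:Log-fil-4}(1) together with \lemref{lem:Complete-8} identifying $Z_1$-groups as intersections of coherent sheaves), and that coherent cohomology of a projective scheme over a finite field is finite (Serre). I do not anticipate a genuine obstacle; the only mild subtlety is bookkeeping the base cases of the two inductions and making sure the \'etale and Zariski cohomology of these coherent sheaves agree (which is standard, as quasi-coherent cohomology is insensitive to the choice of topology). The argument is essentially a formal consequence of the structural results of Chapters 2--4 combined with Serre's finiteness theorem.
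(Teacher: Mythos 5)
Your proof is correct and follows essentially the same route as the paper: both arguments reduce everything to the finiteness of cohomology of coherent sheaves on the projective scheme $X$ over the finite field $k$. The only cosmetic differences are that the paper skips your induction on $m$ for $\H^i_\et(X, W_m\sF^{q,\bullet}_{\un{n}})$ by using directly that $Z_1\Fil_{\un{n}}W_m\Omega^q_U$ and $\Fil_{\un{n}}W_m\Omega^q_U$ are coherent $W_m\sO_X$-modules (so their cohomologies are finite type modules over the finite ring $W_m(k)$), and for the $m=1$ case of the log sheaves it invokes the $1-\ov{F}$ exact sequence of Jannsen--Saito--Zhao rather than your $1-C$ sequence from \lemref{lem:Complex-6}(6).
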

\begin{proof}
  To prove the finiteness of $\H^i_\et(X, W_{m}\sF^{q, \bullet}_{\un{n}})$, it suffices to
  show that the cohomology of $Z_1\Fil_{\un{n}}W_m\Omega^q_{U}$ and
  $\Fil_{\un{n}}W_m\Omega^q_{U}$ are finite. But \lemref{lem:Log-fil-4}(1) says that these
  are sheaves of coherent $W_m\sO_X$-modules. In particular, their Zariski
  (equivalently, {\'e}tale) cohomology are finite type $W_m(k)$-modules. The desired
  claim now follows because $W_m(k)$ is finite.
  The finiteness of $H^{i}_\et(X, W_{1}\Omega^{q}_{X|D_{\un{n}}, \log})$ follows by
  considering the long exact cohomology sequence associated to
  the sheaf exact sequence of \cite[Thm.~1.2.1]{JSZ}. The desired finiteness for $m > 1$
  now follows by induction using \corref{cor:Complex-7}.
\end{proof}

For a ${\Z}/{p^m}$-module $M$, we let $M^\star = \Hom_{{\Z}/{p^m}}(M, {\Z}/{p^m})$.
Recall that a pairing of finite ${\Z}/{p^m}$-modules $M \times N \to {\Z}/{p^m}$ is
called perfect if the induced map $M \to N^\star$ is an isomorphism.
Equivalently, the map $N \to M^\star$ is an isomorphism. 

In the following lemma, we don't assume that $k$ is finite and $X$ is projective, because it will be used in other chapters.
\begin{lem}\label{lem:Perfect-0.5}
    Let $X$ be as in \secref{sec:Complexes}. Then the $\sO_X$ modules $Z_1\Fil_{\n}\Omega^q_U$ and $\frac{\Fil_{\un{n}}\Omega^{q+1}_U}{d(\Fil_{\un{n}}\Omega^{q}_U)}$ (via Frobenius) are locally free of finite rank, for all $\n \in \Z^r$.
\end{lem}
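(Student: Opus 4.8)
The statement is local on $X_{\zar}$, so I may assume $X = \Spec(A)$ with $A$ a regular local $F$-finite $\F_p$-algebra, with maximal ideal $\fm = (x_1,\dots,x_d)$, $\pi = x_1\cdots x_r$, $K = A_\pi$, and $D_{\un n}$ given by the invertible ideal $(x_1^{n_1}\cdots x_r^{n_r})$. In this setting $\Fil_{\un n}\Omega^q_U$ is the $A$-module $\Fil_{\un n}\Omega^q_K \subset \Omega^q_K$. The first thing I would do is recall from \lemref{lem:Log-fil-4}(2) that $\Fil_{\un n}\Omega^q_U = \Omega^q_X(\log E)(D_{\un n})$, and from \lemref{lem:LWC-2} that $\Omega^q_X(\log E)$ is a free $A$-module of finite rank; twisting by an invertible module keeps it locally free of finite rank. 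So the free-ness of $\Fil_{\un n}\Omega^q_U$ itself is immediate and the content is entirely in the two \emph{Frobenius-twisted} subquotients $Z_1\Fil_{\un n}\Omega^q_U$ and $\Fil_{\un n}\Omega^{q+1}_U/d(\Fil_{\un n}\Omega^{q}_U)$.

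The key structural input is the filtered Cartier isomorphism \lemref{lem:Complete-4}(2) (equivalently \thmref{thm:Global-version}(4)) for $m=1$: the Cartier operator gives mutually inverse isomorphisms of $\sO_X$-modules
\[
\ov F \colon \Fil_{(-\un n)/p}\Omega^q_U \xrightarrow{\ \cong\ }
\frac{Z_1\Fil_{-\un n}\Omega^q_U}{d(\Fil_{-\un n}\Omega^{q-1}_U)},
\qquad
\ov C \colon \frac{Z_1\Fil_{-\un n}\Omega^q_U}{d(\Fil_{-\un n}\Omega^{q-1}_U)} \xrightarrow{\ \cong\ } \Fil_{(-\un n)/p}\Omega^q_U,
\]
where the right-hand side carries the $\sO_X$-module structure via Frobenius. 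Replacing $\un n$ by $-\un n$ (or rather by the appropriate sign conventions so that the indices match the statement), this identifies the Frobenius-twisted quotient $\Fil_{\un n}\Omega^{q+1}_U/d(\Fil_{\un n}\Omega^q_U)$ with $\Fil_{\un n/p}\Omega^{q+1}_U$ (with its usual, untwisted $\sO_X$-structure, because of the Frobenius semilinearity of $\ov C$). The latter is locally free of finite rank by the first paragraph. This settles the second module. For $Z_1\Fil_{\un n}\Omega^q_U$ I would use the short exact sequence
\[
0 \to d(\Fil_{\un n}\Omega^{q-1}_U) \to Z_1\Fil_{\un n}\Omega^q_U \to \frac{Z_1\Fil_{\un n}\Omega^q_U}{d(\Fil_{\un n}\Omega^{q-1}_U)} \to 0,
\]
whose third term is locally free by what was just said (it is $\cong \Fil_{\un n/p}\Omega^q_U$ via $\ov C$), so if the first term $B_1\Fil_{\un n}\Omega^q_U = d(\Fil_{\un n}\Omega^{q-1}_U)$ is locally free of finite rank then the extension splits (projectivity) and $Z_1\Fil_{\un n}\Omega^q_U$ is locally free of finite rank too. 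One then runs the same argument one degree down: $B_1\Fil_{\un n}\Omega^q_U = \Fil_{\un n}\Omega^{q-1}_U/Z_1\Fil_{\un n}\Omega^{q-1}_U$, and $Z_1\Fil_{\un n}\Omega^{q-1}_U$ is a submodule of the free module $\Fil_{\un n}\Omega^{q-1}_U$; descending induction on $q$ (base case: $Z_1\Fil_{\un n}\Omega^d_U = \Fil_{\un n}\Omega^d_U$ is free since $\Omega^{d+1}=0$) gives the freeness of all the $Z_i$ and $B_i$ pieces simultaneously.

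Concretely, I would organize it as a descending induction on $q$ proving the conjunction: $Z_1\Fil_{\un n}\Omega^q_U$ and $B_1\Fil_{\un n}\Omega^{q+1}_U$ are both locally free of finite rank (equivalently, in the local setting, free of finite rank). The induction step uses (i) $\Fil_{\un n}\Omega^{q+1}_U$ free (paragraph one), (ii) the Cartier isomorphism to identify $\Fil_{\un n}\Omega^{q+1}_U/d(\Fil_{\un n}\Omega^q_U) \cong \Fil_{\un n/p}\Omega^{q+1}_U$ as $\sO_X$-modules (via $\ov C$), which is free, hence the quotient map from the free module $\Fil_{\un n}\Omega^{q+1}_U$ splits and $B_1\Fil_{\un n}\Omega^{q+1}_U = d(\Fil_{\un n}\Omega^q_U) = \Fil_{\un n}\Omega^{q+1}_U/Z_1\Fil_{\un n}\Omega^{q+1}_U$ is a direct summand of a free module, hence projective, hence free of finite rank; then (iii) in the degree-$q$ sequence $0 \to B_1\Fil_{\un n}\Omega^q_U \to Z_1\Fil_{\un n}\Omega^q_U \to \Fil_{\un n/p}\Omega^q_U \to 0$ the outer terms are free of finite rank (the left by the induction hypothesis applied at degree $q-1$, or rather by step (ii) at one lower degree; the right by paragraph one), so the middle is free of finite rank.

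\textbf{Main obstacle.} The one genuinely delicate point is bookkeeping the $\sO_X$-module structures: the quotients $Z_1\Fil_{\un n}\Omega^q_U/d(\Fil_{\un n}\Omega^{q-1}_U)$ carry the Frobenius-twisted structure, and the isomorphism $\ov C$ intertwines this twisted structure with the \emph{untwisted} structure on $\Fil_{\un n/p}\Omega^q_U$. I must make sure that ``locally free of finite rank'' is being asserted consistently for the twisted structure everywhere the statement demands it, and that the short exact sequences I split are sequences of $\sO_X$-modules for the \emph{correct} structure at each spot. Since $A$ is $F$-finite, $A$ is a finite (hence finitely presented, and since $A$ regular, free by \lemref{lem:LWC-3}-type arguments / Kunz) module over $A^p$, so a module is locally free of finite rank for the Frobenius-twisted $\sO_X$-structure iff it is so for the usual structure; invoking this equivalence once at the start removes the confusion and lets the splitting arguments proceed without ambiguity. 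Everything else — exactness of the $p{=}1$ Cartier sequences, freeness of $\Omega^q_X(\log E)$, the index shift $\un n \mapsto \un n/p$ — is already in the excerpt.
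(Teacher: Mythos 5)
The step that breaks is your identification of $\Fil_{\un{n}}\Omega^{q+1}_U/d(\Fil_{\un{n}}\Omega^{q}_U)$ with $\Fil_{\un{n}/p}\Omega^{q+1}_U$ via the Cartier operator. The operator $C$ is defined only on $Z_1$, and what \lemref{lem:Complete-4}(2) (equivalently \thmref{thm:Global-version}(4)) gives for $m=1$ is $\ov{C}\colon Z_1\Fil_{\un{n}}\Omega^{q+1}_U/d(\Fil_{\un{n}}\Omega^{q}_U)\xrightarrow{\cong}\Fil_{\un{n}/p}\Omega^{q+1}_U$; it computes the cohomology, not the full cokernel of $d$ that appears in the lemma. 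The two differ by $\Fil_{\un{n}}\Omega^{q+1}_U/Z_1\Fil_{\un{n}}\Omega^{q+1}_U\cong d(\Fil_{\un{n}}\Omega^{q+1}_U)$, which is nonzero below the top degree: already for $A=k[x,y]$ with $E$ empty and $q=0$, the form $y\,dx$ is not closed, so the cokernel of $d\colon \sO\to\Omega^1$ is strictly larger than $Z_1\Omega^1/d(\sO)$. Since this false identification is simultaneously your entire treatment of the second module (``this settles the second module'') and step (ii) of your organized induction --- it is what is supposed to make $B_1\Fil_{\un{n}}\Omega^{q+1}_U:=d(\Fil_{\un{n}}\Omega^{q}_U)$ a direct summand of a free module, and hence what feeds the freeness of $Z_1$ --- the proof as written does not go through. (A smaller slip in the same sentence: $d(\Fil_{\un{n}}\Omega^{q}_U)\cong \Fil_{\un{n}}\Omega^{q}_U/Z_1\Fil_{\un{n}}\Omega^{q}_U$, one degree lower than what you wrote. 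Your remark that local freeness for the twisted and the usual structures agree is, by contrast, correct for coherent sheaves carrying both structures, by Kunz and Auslander--Buchsbaum, and is not the problem.)

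Your strategy can be repaired, but the induction must be ordered differently: prove by descending induction on $q$ that $Z_1\Fil_{\un{n}}\Omega^{q}_U$ and $B_1\Fil_{\un{n}}\Omega^{q+1}_U$ are locally free $\sO_X^p$-modules, using the two exact sequences of $\sO_X^p$-modules $0\to Z_1\Fil_{\un{n}}\Omega^{q}_U\to \Fil_{\un{n}}\Omega^{q}_U\xrightarrow{d}B_1\Fil_{\un{n}}\Omega^{q+1}_U\to 0$ and $0\to B_1\Fil_{\un{n}}\Omega^{q+1}_U\to Z_1\Fil_{\un{n}}\Omega^{q+1}_U\xrightarrow{C}\Fil_{\un{n}/p}\Omega^{q+1}_U\to 0$: the second sequence always has locally free quotient, so $B_1$ in degree $q+1$ is a local direct summand of $Z_1$ in degree $q+1$ (locally free by the induction hypothesis), and then the first sequence in degree $q$ handles $Z_1\Fil_{\un{n}}\Omega^{q}_U$; the base case is the top degree, where $d$ vanishes and $Z_1\Fil_{\un{n}}\Omega^{q}_U=\Fil_{\un{n}}\Omega^{q}_U$. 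The second module of the lemma is then locally free as the locally split extension $0\to Z_1\Fil_{\un{n}}\Omega^{q+1}_U/d(\Fil_{\un{n}}\Omega^{q}_U)\to \Fil_{\un{n}}\Omega^{q+1}_U/d(\Fil_{\un{n}}\Omega^{q}_U)\to B_1\Fil_{\un{n}}\Omega^{q+2}_U\to 0$ of locally free $\sO_X^p$-modules. Note that the paper avoids all of this with a shorter, one-shot argument: using the $p$-basis coming from \lemref{lem:LWC-3} and \lemref{lem:LWC-2}, it writes $d$ on $\Omega^{\bullet}_X(\log E)(D_{\un{n}})$ locally in the form $(V_1\xrightarrow{d_0}V_2)\otimes_{\F_p}\sO_X^p$ with $V_1, V_2$ finite-dimensional $\F_p$-vector spaces, as in \cite[Lem.~1.7]{Milne-Duality}, so the kernel and cokernel of $d$ are visibly locally free $\sO_X^p$-modules, with no Cartier operator and no induction.
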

\begin{proof}
    The sheaf of $\sO_X$-modules $\Fil_{\un{n}}\Omega^q_U$ is locally free of finite rank by Lemmas
\ref{lem:Log-fil-4}(2) and \ref{lem:LWC-2}. To see that the same is true for
$Z_1\Fil_{\un{n}}\Omega^q_U$ and
    $\frac{\Fil_{\un{n}}\Omega^{q+1}_U}{d(\Fil_{\un{n}}\Omega^{q}_U)}$, 
  recall first that if $A$ is the local ring of a closed point on $E \subset X$ with
  maximal ideal $(x_1, \ldots , x_r, x_{r+1}, \ldots , x_N)$, then $\Omega^1_A(\log \pi)$ is a free
  $A$-module with basis $\{\dlog(x_1), \ldots, \dlog(x_r), dx_{r+1}, \ldots , dx_N,$ $dy_1, \ldots, dy_s\}$, for some $y_1,\ldots,y_s \in A$ (cf, \lemref{lem:LWC-2}). Also, $\{x_1,\ldots,x_N,y_1,\ldots,y_s\}$ forms a differential basis (see \lemref{lem:LWC-3}) and hence a $p$-basis (see \cite[\S~II, \S~III, Thm.~1]{Tyc} for the definition of $p$-basis and its relation with differential basis) of $A$ over $\F_p$. 

  Using the above facts about the local rings of $X$, it is easy to check that locally
  (depending on $\un{n}$) we can find two finite dimensional $\F_p$-vector spaces
  $V_1, V_2$ and an $\F_p$-linear map $d_0 \colon V_1 \to V_2$ such that
  $d \colon \Omega^q_X(\log E)(D_{\un{n}}) \to \Omega^{q+1}_X(\log E)(D_{\un{n}})$ is locally of the
    form $(V_1 \xrightarrow{d_0} V_2)\otimes_{\F_p} \sO_{X}^p$
    (cf. \cite[Lem.~1.7]{Milne-Duality}).
    In particular, $\Ker(d)$ and $\coker(d)$ are locally free $\sO^p_{X}$-modules
    of finite rank. The ring isomorphism $\sO_X \xrightarrow{p} \sO^p_X$ now implies 
    that they are locally free $\sO_X$-modules of finite rank.
    We can now apply \lemref{lem:Log-fil-4}(2) to conclude that
    $Z_1\Fil_{\un{n}}\Omega^q_U$ and
    $\frac{\Fil_{\un{n}}\Omega^{q+1}_U}{d(\Fil_{\un{n}}\Omega^{q}_U)}$ are finite rank
    locally free sheaves of $\sO_X$-modules.
\end{proof}

\begin{lem}\label{lem:Perfect-0}
  ~\eqref{eqn:Pair-4-2} is a perfect pairing of finite groups when $m =1$.
\end{lem}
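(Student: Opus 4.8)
\textbf{Proof plan for Lemma~\ref{lem:Perfect-0}.}
The idea is to reduce the $m=1$ duality to a classical Grothendieck-Serre-style coherent duality on $X$, exactly as in \cite[\S~1]{Milne-Duality}. First I would unwind the complexes involved when $m=1$. By definition $W_1\sF^{q,\bullet}_{\un{n}}$ is the two-term complex $[Z_1\Fil_{\un{n}}\Omega^q_U \xrightarrow{1-C}\Fil_{\un{n}}\Omega^q_U]$, and by \lemref{lem:Complex-6}(1) the sheaf $W_1\Omega^{d-q}_{X|D_{\un{n}+1},\log}$ sits in the exact sequence $0 \to W_1\Omega^{d-q}_{X|D_{\un{n}+1},\log} \to \Fil_{-\un{n}-1}\Omega^{d-q}_U \xrightarrow{1-\ov{F}} \Fil_{-\un{n}-1}\Omega^{d-q}_U/d(\Fil_{-\un{n}-1}\Omega^{d-q-1}_U)$, so it is quasi-isomorphic to the two-term complex $W_1\sG^{d-q,\bullet}_{\un{n}}$ via \corref{cor:Pair-4}. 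Thus the pairing \eqref{eqn:Pair-4-2} is, up to this identification, the hypercohomology pairing of \eqref{eqn:Pair-3-0} for $m=1$, landing in $\H^{d+1}_\et(X,W_1\Omega^d_{X,\log}) \cong \Z/p$ via the trace $\Tr$. The key point is that $W_1\Omega^d_{X,\log}$ fits in the exact sequence $0 \to W_1\Omega^d_{X,\log} \to \Omega^d_X \xrightarrow{1-C}\Omega^d_X \to 0$ (\propref{prop:usual}(3) with $q=d$, noting $Z_1\Omega^d_X=\Omega^d_X$), so that it is represented by the two-term coherent complex $\sH^\bullet := [\Omega^d_X \xrightarrow{1-C}\Omega^d_X]$.

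Next I would observe, using \lemref{lem:Perfect-0.5}, that all four sheaves appearing in the source and target complexes --- $Z_1\Fil_{\un{n}}\Omega^q_U$, $\Fil_{\un{n}}\Omega^q_U$, $\Fil_{-\un{n}-1}\Omega^{d-q}_U$ and the quotient $\Fil_{-\un{n}-1}\Omega^{d-q}_U/d(\Fil_{-\un{n}-1}\Omega^{d-q-1}_U)$ --- are locally free $\sO_X$-modules of finite rank (via the Frobenius structure), and likewise $\Omega^d_X$ is locally free. The wedge/Cartier pairings $\<\ ,\ \>^{0,0}_0$ and $\<\ ,\ \>^{0,1}_1$ of \eqref{eqn:Pair-0} and \eqref{eqn:Pair-2} are then perfect pairings of locally free $\sO_X$-modules: the pairing $Z_1\Fil_{\un{n}}\Omega^q_U \times \Fil_{-\un{n}-1}\Omega^{d-q}_U/dV^0(\cdots) \to \Omega^d_X$ identifies one factor with the $\sO_X$-dual $\sHom_{\sO_X}(-,\Omega^d_X)$ of the other (twisting by $\Omega^d_X$ and using the Frobenius $\sO_X$-module structure), and similarly for $\Fil_{\un{n}}\Omega^q_U \times \Fil_{-\un{n}-1}\Omega^{d-q}_U$ --- this is the content of \cite[Lem.~1.1, Lem.~1.7]{Milne-Duality}. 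In effect the two-term complex $W_1\sG^{d-q,\bullet}_{\un{n}}$ is term-by-term $\sO_X$-dual (up to twist by $\Omega^d_X$ and a shift, with $d_0$ transposed) to the complex $W_1\sF^{q,\bullet}_{\un{n}}$; I would make this identification precise at the level of two-term complexes of locally free sheaves.

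With this in place, the perfectness of \eqref{eqn:Pair-4-2} for $m=1$ follows from Serre duality on the smooth projective geometrically connected $d$-dimensional $\F_p$-scheme $X$. Concretely, I would invoke the coherent duality statement (as used in \cite[Lem.~1.9, proof]{Milne-Duality}, ultimately \cite[Thm.~1.14]{Milne-Zeta} / Milne's duality for $W_1\Omega^d_{X,\log}$): for a bounded complex $\sE^\bullet$ of locally free $\sO_X$-modules, the cup-product pairing $\H^i_\et(X,\sE^\bullet) \times \H^{d+1-i}_\et(X,\sHom_{\sO_X}(\sE^\bullet,\Omega^d_X[1])\text{-type complex}) \to \H^{d+1}_\et(X,W_1\Omega^d_{X,\log}) \cong \Z/p$ is perfect, which is a combination of Serre duality $H^a(X,\sE)\times H^{d-a}(X,\sHom(\sE,\Omega^d_X))\to H^d(X,\Omega^d_X)$ and the $1-C$ exact sequence computing $W_1\Omega^d_{X,\log}$. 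Applying this to $\sE^\bullet = W_1\sF^{q,\bullet}_{\un{n}}$ and using the term-by-term duality identification from the previous paragraph, together with compatibility of the wedge pairing with the trace (which reduces to the classical Serre-duality trace via \cite[Thm.~1.14]{Milne-Zeta}), gives exactly \eqref{eqn:Pair-4-2}. Finiteness of all groups is \lemref{lem:Fin-coh}, so perfectness of finite groups makes sense. The main obstacle I anticipate is bookkeeping: matching the differential $1-C$ on the $\sF$-side with the transposed differential governing $W_1\Omega^{d-q}_{X|D_{\un{n}+1},\log}$ under the $\sO_X$-linear (Frobenius-twisted) duality, and checking that the identity $C\ov{F}=\id$ together with \cite[Lem.~1.1(c)]{Milne-Duality} makes the two-term complexes genuinely dual rather than merely term-wise dual --- i.e. verifying the sign and the transpose of $d_0$ are compatible so that hypercohomology duality applies cleanly.
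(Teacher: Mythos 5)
Your plan is essentially the paper's proof: for $m=1$ one identifies $W_1\Omega^{d-q}_{X|D_{\un{n}+1},\log}$ with the two-term complex $W_1\sG^{d-q,\bullet}_{\un{n}}$ (note this needs the exactness of \cite[Thm.~1.2.1]{JSZ}, not merely the left-exactness of \lemref{lem:Complex-6}(1)), then uses the local freeness of \lemref{lem:Perfect-0.5}, the perfectness of the sheaf-level pairings \eqref{eqn:Pair-1} and \eqref{eqn:Pair-2}, Grothendieck--Serre duality for $X \to \Spec(\F_p)$, and finiteness from \lemref{lem:Fin-coh}. The bookkeeping obstacle you flag (matching $1-C$ with a transposed differential so the complexes become genuinely dual) is sidestepped in the paper: one simply compares the two long exact hypercohomology sequences via the maps induced by the pairings and applies the five lemma, so only the term-wise maps $\alpha_i,\gamma_i$ need to be isomorphisms and no exact duality of complexes is required.
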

\begin{proof}

    Now, we let $\sE^q_{\un{n}} =
    \frac{\Fil_{\un{n}}\Omega^{q}_U}{d(\Fil_{\un{n}}\Omega^{q-1}_U)}$ and $q' = d-q$.
To prove the lemma, we note that $\Omega^q_{X|D_{\un{n}},\log} \cong
W_1\sG^{q,\bullet}_{\un{n}}$ in $\sD(X_\et)$ for every $q, \un{n} \ge \un{0}$ by
\cite[Thm.~1.2.1]{JSZ}.
    Using the definition of the above pairings, we thus get a commutative diagram
    of long exact sequences
    \begin{equation}\label{eqn:Perfect-0-0}
      \xymatrix@C.8pc{
        \cdots \ar[r] & H^{d-i}_\et(X, \sE^{q'}_{-\un{n}-1}) \ar[r] \ar[d]_-{\alpha_i} &
        H^{d+1-i}_\et(X, \Omega^{q'}_{X|D_{\un{n}+1}, \log}) \ar[r] \ar[d]^-{\beta_i} &
        H^{d+1-i}_\et(X, \Fil_{-\un{n}-1}\Omega^{q'}_U) \ar[r] \ar[d]^-{\gamma_i} & \cdots \\
        \cdots \ar[r] & H^{i}_\et(X, Z_1\Fil_{\un{n}}\Omega^{q}_U)^\star \ar[r] &
        \H^i_\et(X, W_1\sF^{q, \bullet}_{\un{n}})^\star \ar[r] &
        H^{i-1}_\et(X, \Fil_{\un{n}}\Omega^q_U)^\star \ar[r] & \cdots .}
      \end{equation}
    
    Since ~\eqref{eqn:Pair-1} and ~\eqref{eqn:Pair-2} are perfect pairings of
    locally free sheaves (cf. Step~2 in the proof of \cite[Thm.~4.1.4]{JSZ}),
    it follows from the Grothnedick-Serre duality for the structure map
    $X \to \Spec(\F_p)$ that $\alpha_i$ and $\gamma_i$ in ~\eqref{eqn:Perfect-0-0}
    are bijective maps between finite groups for all $i$. 
    It follows that $\beta_i$ is also an isomorphism of finite groups for all $i$.
    This concludes the proof. 
\end{proof}

The main result of this section is the following.

\begin{thm}\label{thm:Perfect-finite}
  Assume that $X$ is a smooth projective and geometrically connected scheme of
  dimension $d$ over a finite field $k$ of characteristic $p$. Then \eqref{eqn:Pair-4-2}
  is a perfect pairing of finite abelian groups for all $q, \un{n} \ge \un{0}$ and $m \ge 1$.
\end{thm}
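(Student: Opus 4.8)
The strategy is to reduce the general case to the base case $m=1$ (already handled in \lemref{lem:Perfect-0}) by dévissage on $m$, using the distinguished triangle of \corref{cor:VR-2} (equivalently \thmref{thm:Global-version}(11)) on the $\sF$-side and the $p$-$R$ exact sequence of \corref{cor:Complex-7} on the logarithmic Hodge--Witt side, and then invoking the five lemma. First I would record that all the relevant cohomology groups are finite by \lemref{lem:Fin-coh}, so that Pontryagin-type duality of finite $\Z/p^m$-modules is available and it suffices to prove that one of the two induced maps $\H^i_\et(X, W_m\sF^{q,\bullet}_{\un{n}}) \to H^{d+1-i}_\et(X, W_m\Omega^{d-q}_{X|D_{\un{n}+1},\log})^\star$ is an isomorphism. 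Since finiteness is preserved in short exact sequences, the five-lemma argument is clean once the squares are shown to commute.

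Concretely, I would argue by induction on $m$, the case $m=1$ being \lemref{lem:Perfect-0}. For $m \ge 2$, apply hypercohomology to the distinguished triangle
\[
W_1\sF^{q,\bullet}_{\un{n}} \xrightarrow{V^{m-1}} W_m\sF^{q,\bullet}_{\un{n}} \xrightarrow{R} W_{m-1}\sF^{q,\bullet}_{\un{n}/p} \xrightarrow{+}
\]
of \thmref{thm:Global-version}(11) to get a long exact sequence relating $\H^i_\et(X,W_1\sF^{q,\bullet}_{\un{n}})$, $\H^i_\et(X,W_m\sF^{q,\bullet}_{\un{n}})$ and $\H^i_\et(X,W_{m-1}\sF^{q,\bullet}_{\un{n}/p})$. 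Dually, apply cohomology to the exact sequence of \corref{cor:Complex-7} (with $r=1$, $m$ replaced appropriately, and $\un{n}$ replaced by $\un{n}+1$), namely
\[
0 \to W_1\Omega^{d-q}_{X|D_{\lceil(\un{n}+1)/p^{m-1}\rceil},\log} \xrightarrow{\un{p}^{m-1}} W_m\Omega^{d-q}_{X|D_{\un{n}+1},\log} \xrightarrow{R} W_{m-1}\Omega^{d-q}_{X|D_{\un{n}+1},\log} \to 0,
\]
and take $\Z/p^?$-linear duals. One then forms the ladder of long exact sequences whose vertical maps are the duality pairings \eqref{eqn:Pair-4-2} at levels $1$, $m$ and $m-1$; I must check that this ladder commutes, which amounts to the compatibility of the cup products under $V^{m-1}$, $R$ and $\un{p}^{m-1}$ — this follows from the explicit description of the pairings in \S~\ref{sec:Pairing} together with the projection-type formulas $V(xF(y)) = V(x)y$, $R$ multiplicative, and the compatibility of $\ov{F}$, $\ov{C}$ recorded in \lemref{lem:Complete-4} and \thmref{thm:Global-version}(4). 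Once commutativity is in hand, the outer vertical maps are isomorphisms (the $m=1$ case by \lemref{lem:Perfect-0}, the $(m-1)$-case and the $\un{n}/p$-twist by the induction hypothesis — note the indices $\lceil(\un{n}+1)/p^{m-1}\rceil$ on the log side match $\un{n}/p$ shifted by one, which is exactly $(\un{n}/p)+1$ up to the rounding bookkeeping that must be verified), and the five lemma forces the middle vertical map to be an isomorphism as well.

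The main obstacle I anticipate is the bookkeeping of the modulus indices: making sure that the triangle on the $\sF$-side (which involves $\un{n}/p = \lfloor \un{n}/p\rfloor$) matches the $p$-$R$ sequence on the log side (which involves $\lceil(\un{n}+1)/p^{m-1}\rceil$), and that at each inductive step the twisted pair $(\sF^{q,\bullet}_{\un{n}/p}, \Omega^{d-q}_{X|D_{\un{n}/p+1},\log})$ is again of the form to which the induction hypothesis applies — this requires the elementary identity $\lfloor n/p\rfloor + 1 = \lceil (n+1)/p \rceil$ for $n\ge 0$, and its iterates, plus the observation that $\un{n}\ge\un{0}$ is preserved under $\un{n}\mapsto\un{n}/p$. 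The secondary technical point is verifying the commutativity of the duality ladder; here I would lean on \lemref{lem:Pair-3} and \corref{cor:Pair-4}, which already encode the compatibility of the pairings with the structural maps of the complexes, so that passing to hypercohomology and then to the triangles is formal. Everything else — finiteness, the five lemma, reduction to a single side of the pairing — is routine.
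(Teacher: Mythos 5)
Your overall strategy is exactly the paper's: reduce to $m=1$ (\lemref{lem:Perfect-0}) by dévissage, compare the triangle of \thmref{thm:Global-version}(11) on the $\sF$-side with the $p$--$R$ sequence of \corref{cor:Complex-7} on the logarithmic side, and conclude by the five lemma, the commutativity of the ladder coming from the projection-type compatibilities of the cup product. However, there is a concrete index error that breaks the induction as you have set it up: you take \corref{cor:Complex-7} with $r=1$, i.e. the sequence $0 \to W_1\Omega^{d-q}_{X|D_{\lceil(\un{n}+1)/p^{m-1}\rceil},\log} \xrightarrow{\un{p}^{m-1}} W_m\Omega^{d-q}_{X|D_{\un{n}+1},\log} \xrightarrow{R} W_{m-1}\Omega^{d-q}_{X|D_{\un{n}+1},\log} \to 0$. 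The adjoint of $V^{m-1}$ under \eqref{eqn:Pair-4-2} is $R^{m-1}$ (since $F=R$ on logarithmic forms), and the adjoint of $R$ is $\un{p}$, not $\un{p}^{m-1}$; so the sequence that pairs correctly against $W_1\sF^{q,\bullet}_{\un{n}} \xrightarrow{V^{m-1}} W_m\sF^{q,\bullet}_{\un{n}} \xrightarrow{R} W_{m-1}\sF^{q,\bullet}_{\un{n}/p}$ is the one with $r=m-1$, namely $0 \to W_{m-1}\Omega^{d-q}_{X|D_{\lceil(\un{n}+1)/p\rceil},\log} \xrightarrow{\un{p}} W_m\Omega^{d-q}_{X|D_{\un{n}+1},\log} \xrightarrow{R^{m-1}} W_1\Omega^{d-q}_{X|D_{\un{n}+1},\log} \to 0$. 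With that choice the identity $\lceil(\un{n}+1)/p\rceil = \lfloor\un{n}/p\rfloor+1$ makes the kernel term the exact duality partner of $W_{m-1}\sF^{q,\bullet}_{\un{n}/p}$ (induction hypothesis for $(m-1,\un{n}/p)$) and the quotient the partner of $W_1\sF^{q,\bullet}_{\un{n}}$ (\lemref{lem:Perfect-0}), which is precisely the ladder in the paper's proof.

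With your $r=1$ sequence the outer columns of the ladder would have to pair $\H^i_\et(X,W_{m-1}\sF^{q,\bullet}_{\un{n}/p})$ with $H^{d+1-i}_\et(X,W_{m-1}\Omega^{d-q}_{X|D_{\un{n}+1},\log})$ and $\H^i_\et(X,W_1\sF^{q,\bullet}_{\un{n}})$ with $H^{d+1-i}_\et(X,W_1\Omega^{d-q}_{X|D_{\lceil(\un{n}+1)/p^{m-1}\rceil},\log})$; neither is an instance of \eqref{eqn:Pair-4-2} at indices covered by the base case or the induction hypothesis (the moduli do not match once $\un{n}>\un 0$ and $m\ge 3$), and the squares would not commute via the projection formulas either. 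Your asserted identity $\lceil(\un{n}+1)/p^{m-1}\rceil = (\un{n}/p)+1$ is false unless $m=2$ (the correct statement is $\lceil(\un{n}+1)/p^{m-1}\rceil = \lfloor\un{n}/p^{m-1}\rfloor+1$), which is where the bookkeeping goes wrong. The fix is simply to use $r=m-1$ as above; the only alternative would be to pair your $r=1$ sequence against a dévissage triangle of the form $W_{m-1}\sF^{q,\bullet}_{\un{n}} \xrightarrow{V} W_m\sF^{q,\bullet}_{\un{n}} \xrightarrow{R^{m-1}} W_1\sF^{q,\bullet}_{\un{n}/p^{m-1}}$, which is not among the results established in the paper. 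Apart from this, your reductions (finiteness via \lemref{lem:Fin-coh}, reduction to one direction of the pairing, commutativity via \lemref{lem:Pair-3} and \corref{cor:Pair-4}) agree with the paper.
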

\begin{proof}
  The finiteness part is already shown in \lemref{lem:Fin-coh}. To prove
  the perfectness of the pairing, we can assume $m > 1$ by \lemref{lem:Perfect-0}.
  We let $i' = d+1-i$ and $q' = d-q$.
  By  \thmref{thm:Global-version}(11) and \corref{cor:Complex-7}, we get a
  commutative diagram (cf. \cite[Lem.~4.4]{GK-Duality-1} and its proof for the commutativity) 
  \begin{equation}\label{eqn:Perfect-finite-0}
    \xymatrix@C.5pc{
      \cdots \ar[r] &  H^{i'}_\et(W_{m-1}\Omega^{q'}_{X|D_{\lceil{(\un{n}+1)}/p}\rceil, \log})
      \ar[r]^-{\un{p}} \ar[d]_-{\alpha_i} &
      H^{i'}_\et(W_m\Omega^{q'}_{X|D_{\un{n}+1}, \log}) \ar[r]^-{R^{m-1}} \ar[d]^-{\beta_i} & 
      H^{i'}_\et(W_1\Omega^{q'}_{X|D_{\un{n}+1}, \log}) \ar[r]  \ar[d]^-{\gamma_i} & \cdots \\
      \cdots \ar[r] & \H^{i}_\et(W_{m-1}\sF^{q, \bullet}_{{\un{n}}/p})^\star \ar[r]^-{R^\star} &
        \H^i_\et(W_m\sF^{q, \bullet}_{\un{n}})^\star \ar[r]^-{(V^{m-1})^\star} &
        \H^{i}_\et(W_1\sF^{q, \bullet}_{\un{n}})^\star \ar[r] & \cdots .}
      \end{equation}
  In this diagram, we have used the shorthand $\H^*_\et(\sE^\bullet)$ for the (hyper)cohomology
  $\H^*_\et(X, \sE^\bullet)$ and the vertical arrows are induced by the
  pairing ~\eqref{eqn:Pair-4-2}. Since $\lceil{{(\un{n}+1)}/p}\rceil =
  \lfloor{{\un{n}}/p}\rfloor +1$,
it follows by induction on $m$ that $\alpha_i$ and $\gamma_i$ are bijective for all $i$.
    We conclude that $\beta_i$ is bijective for all $i$. 
\end{proof}

\begin{rem}
    By the above theorem, we get the following isomorphism of finite groups
    $$ H^{d+1-i}_\et(X, W_{m}\Omega^{d-q}_{X|D_{\un{n}+1}, \log})  \xrightarrow{\cong}
    \Hom_{\Z/p^m}(\H^i_\et(X, W_{m}\sF^{q, \bullet}_{\un{n}}),{\Z}/{p^m}).$$
    Taking limit over $\n$ (note that we have canonical maps $ W_{m}\Omega^{d-q}_{X|D_{\un{n'}+1}, \log} \to  W_{m}\Omega^{d-q}_{X|D_{\un{n}+1}, \log}$ and $W_{m}\sF^{q, \bullet}_{\un{n}} \to W_{m}\sF^{q, \bullet}_{\un{n'}}$ if $\n' \ge \n$.), we have the following isomorphism of abelian groups.
    \begin{equation}\label{eqn:JSZ-1}
        \begin{array}{cl}
            \varprojlim\limits_{\n}H^{d+1-i}_\et(X, W_{m}\Omega^{d-q}_{X|D_{\un{n}+1}, \log})   & \xrightarrow{\cong}
    \Hom_{\Z/p^m}(\varinjlim\limits_{\n}\H^i_\et(X, W_{m}\sF^{q, \bullet}_{\un{n}}),{\Z}/{p^m}) \\
             & \cong
    \Hom_{\Z/p^m}(H^i_\et(U, W_{m}\Omega^q_{U,\log}),{\Z}/{p^m}),
        \end{array}
    \end{equation}
    where the last isomorphism follows from the fact that $\varinjlim\limits_\n W_{m}\sF^{q, \bullet}_{\un{n}}\cong Rj_*W_m\Omega^q_{U,\log}$. We can endow the left hand side of \eqref{eqn:JSZ-1} with profinite topology and $H^i_\et(U, W_{m}\Omega^q_{U,\log})$ with discrete topology (cf. \cite[P. 1325]{JSZ}). Hence, \eqref{eqn:JSZ-1} gives an isomorphism of profinite groups. On the other hand, \thmref{thm:Perfect-finite} also shows that the map $$\<\alpha, -\>: \varprojlim\limits_{\n}H^{d+1-i}_\et(X, W_{m}\Omega^{d-q}_{X|D_{\un{n}+1}, \log}) \to \Z/p^m$$ is continuous, for $\alpha \in H^i_\et(U, W_{m}\Omega^q_{U,\log})$ (cf. \cite[Lem~4.1.3]{JSZ}). Combining all these claims, we can conclude the following perfect pairing of topological abelian groups (cf. \cite[Thm.~4.1.4]{JSZ}).
    $$H^i_\et(U, W_{m}\Omega^q_{U,\log}) \times
    \varprojlim\limits_{\n}H^{d+1-i}_\et(X, W_{m}\Omega^{d-q}_{X|D_{\un{n}+1}, \log})  \xrightarrow{\cup}
    {\Z}/{p^m}.$$
\end{rem}

\begin{cor}\label{cor:Duality-log}
  For $X$ as in \thmref{thm:Perfect-finite}, there is a perfect pairing of
  finite abelian groups
  \[
  H^i_\et(X, j_*(W_m\Omega^q_{U, \log})) \times H^{d+1-i}_\et(X, W_m\Omega^{d-q}_{X|E,\log})
  \xrightarrow{\cup} {\Z}/{p^m}.\]
  \end{cor}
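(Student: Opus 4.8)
The plan is to deduce the corollary from \thmref{thm:Perfect-finite} by passing to the ``boundary-divisor'' limit $\un{n} \to \infty$ and identifying the limiting objects with the sheaves appearing in the statement. First I would recall that by \thmref{thm:Global-version}(12) we have $\varinjlim_{\un{n} \ge \un{0}} W_m\sF^{q,\bullet}_{D_{\un{n}}} = Rj_*W_m\Omega^q_{U,\log}$, and hence $\varinjlim_{\un{n}} \H^i_\et(X, W_m\sF^{q,\bullet}_{D_{\un{n}}}) \cong H^i_\et(X, j_*(W_m\Omega^q_{U,\log}))$ (using that $X$ is projective, so cohomology commutes with the filtered colimit, and that only the $\un{n}\ge\un{0}$ part of the limit matters). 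Dually, on the other side of the pairing in \eqref{eqn:Pair-4-2} I would take the inverse limit over $\un{n}$ of $H^{d+1-i}_\et(X, W_m\Omega^{d-q}_{X|D_{\un{n}+1},\log})$; since each such group is finite by \lemref{lem:Fin-coh} and Pontryagin duality converts the filtered colimit of finite groups into the cofiltered limit of their duals, \thmref{thm:Perfect-finite} yields a perfect pairing of a discrete torsion group with a profinite group
\[
H^i_\et(X, j_*(W_m\Omega^q_{U,\log})) \times \varprojlim_{\un{n}} H^{d+1-i}_\et(X, W_m\Omega^{d-q}_{X|D_{\un{n}+1},\log}) \to \Z/p^m .
\]

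The remaining, and main, task is to replace $\varprojlim_{\un{n}} H^{d+1-i}_\et(X, W_m\Omega^{d-q}_{X|D_{\un{n}+1},\log})$ by the single finite group $H^{d+1-i}_\et(X, W_m\Omega^{d-q}_{X|E,\log})$. The key point is that the transition maps $W_m\Omega^{d-q}_{X|D_{\un{n}+1},\log} \to W_m\Omega^{d-q}_{X|D_{\un{n}'+1},\log}$ for $\un{n}' \le \un{n}$ stabilize: I claim $W_m\Omega^{d-q}_{X|D_{\un{n}+1},\log}$ is independent of $\un{n}$ for $\un{n}\ge\un{0}$ (all equal to $W_m\Omega^{d-q}_{X|E,\log}$, the modulus-$\un{1}$ sheaf with $D_{\un{1}}=E$). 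For $m=1$ this is the degree-$d$ case of the observation that top-degree log-de Rham forms with modulus see only the reduced divisor; concretely, $\Omega^d_{X|D_{\un{n}+1},\log} = \Omega^d_{X|E,\log}$ because, locally, $\dlog([1+at]) = \dlog([1+at])$ with $t$ a product of the $x_i$ to high powers still defines the same logarithmic class in top degree once one is allowed the reduced divisor — this is exactly the stabilization used in \lemref{lem:Complex-4} (via \cite[Prop.~2.2.5]{Kerz-Zhao}), applied with $q$ replaced by $d-q$ rather than $q=d$. I would therefore isolate and prove the statement ``for $\un{n}\ge\un{0}$, the canonical map $W_m\Omega^{d-q}_{X|D_{\un{n}+1},\log} \to W_m\Omega^{d-q}_{X|E,\log}$ is an isomorphism'' by reducing to $m=1$ via \corref{cor:Complex-7} and then to the local ring computation, where it follows from the explicit description of $\Omega^\bullet_A(\log\pi)$ in \lemref{lem:LWC-2} together with the dlog-symbol relations; alternatively one can cite the relevant stabilization result of Jannsen--Saito--Zhao or Kerz--Zhao directly.

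Granting this stabilization, the inverse system $\{H^{d+1-i}_\et(X, W_m\Omega^{d-q}_{X|D_{\un{n}+1},\log})\}_{\un{n}}$ is eventually constant with value $H^{d+1-i}_\et(X, W_m\Omega^{d-q}_{X|E,\log})$, so $\varprojlim_{\un{n}}$ collapses to that group and the pairing above becomes the asserted pairing
\[
H^i_\et(X, j_*(W_m\Omega^q_{U,\log})) \times H^{d+1-i}_\et(X, W_m\Omega^{d-q}_{X|E,\log}) \xrightarrow{\cup} \Z/p^m ,
\]
which is then automatically perfect (and both groups are finite, the left one because it is a limit that has stabilized to a finite group, or directly by \lemref{lem:Fin-coh}). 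The compatibility of the pairing with the cup product and trace map $\Tr\colon H^{d+1}_\et(X,W_m\Omega^d_{X,\log})\xrightarrow{\cong}\Z/p^m$ is inherited from \eqref{eqn:Pair-4-1}--\eqref{eqn:Pair-4-2} and \corref{cor:Pair-4}, so no new pairing needs to be constructed. The hard part will be the sheaf-level stabilization $W_m\Omega^{d-q}_{X|D_{\un{n}+1},\log}\cong W_m\Omega^{d-q}_{X|E,\log}$; everything else is a formal limit argument combined with \thmref{thm:Perfect-finite}. As a sanity check one notes that taking $q=d$ recovers \lemref{lem:Complex-4}'s identification $W_m\sG^{d,\bullet}_0 \cong W_m\Omega^d_{X,\log}$, consistent with $W_m\Omega^0_{X|E,\log}=\Z/p^m$ on $X$ and $j_*(W_m\Omega^d_{U,\log})$ on the other side.
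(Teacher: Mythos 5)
There is a genuine gap — in fact two, and both steps of your plan fail. First, the colimit identification at the start is wrong: \thmref{thm:Global-version}(12) gives $\varinjlim_{\un n} W_m\sF^{q,\bullet}_{D_{\un n}} \simeq Rj_*W_m\Omega^q_{U,\log}$, the \emph{derived} pushforward, so $\varinjlim_{\un n}\H^i_\et(X, W_m\sF^{q,\bullet}_{D_{\un n}}) \cong H^i_\et(U, W_m\Omega^q_{U,\log})$ and not $H^i_\et(X, j_*(W_m\Omega^q_{U,\log}))$; these two groups differ in general because $j_*$ is not exact. What your limit argument produces is exactly the Jannsen--Saito--Zhao duality recorded in ~\eqref{eqn:JSZ-1} (pairing $H^i_\et(U,W_m\Omega^q_{U,\log})$ against $\varprojlim_{\un n}H^{d+1-i}_\et(X,W_m\Omega^{d-q}_{X|D_{\un n+1},\log})$), which is a different statement from the corollary. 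Second, the ``stabilization'' you rely on to collapse the inverse limit is false: the isomorphism $W_m\Omega^{r}_{X|D_{\un n+1},\log}\cong W_m\Omega^{r}_{X|E,\log}$ for all $\un n\ge\un 0$ holds only in top degree $r=d$ (this is what \cite[Prop.~2.2.5]{Kerz-Zhao} gives and is precisely the content of \lemref{lem:Complex-4}); for $r=d-q<d$ these sheaves genuinely shrink as $\un n$ grows — if they did not, the whole ramification filtration and the inverse limit in the JSZ duality would be vacuous. So the bridge from your limit pairing to the asserted finite pairing does not exist.

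The intended argument is much shorter and requires no limits: take $\un n=\un 0$ in \thmref{thm:Perfect-finite}. By \lemref{lem:Complex-6}(4) the two-term complex $W_m\sF^{q,\bullet}_{\un 0}$ is a resolution of $j_*(W_m\Omega^q_{U,\log})$, hence $\H^i_\et(X, W_m\sF^{q,\bullet}_{\un 0})\cong H^i_\et(X, j_*(W_m\Omega^q_{U,\log}))$, while the modulus divisor appearing on the other side of ~\eqref{eqn:Pair-4-2} is $D_{\un 0+\un 1}=D_{\un 1}=E$, so the theorem specializes directly to the perfect pairing of finite groups
\[
H^i_\et(X, j_*(W_m\Omega^q_{U, \log})) \times H^{d+1-i}_\et(X, W_m\Omega^{d-q}_{X|E,\log})
  \xrightarrow{\cup} {\Z}/{p^m},
\]
which is the corollary.
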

\begin{proof}
Combine \lemref{lem:Complex-6} and \thmref{thm:Perfect-finite}.
\end{proof}

\chapter{A duality theorem for 2-local fields}
In this chapter, we shall prove a duality theorem for 2-local fields using the class field theory developed in \cite{Kato80-2} and \cite{Kato-Inv}. This will be used in \S~\ref{chap:duality-localfields} later.
\section{Kato topology}\label{sec:Kato-top}
Since we shall use Kato's topology extensively in the next chapter, we briefly
recall it here.
More details can be found in \cite[\S~7]{Kato80-1} (see also
\cite[\S~7.1]{KRS}).

Let $K$ be a 2-local field of characteristic $p>0$, i.e., a
complete discrete valuation field of characteristic $p > 0$
whose residue field $\ff$ is a local field. Recall that, a local field is a complete discrete valued field with finite residue field. Let $A$ denote the ring of integers of
$K$ with maximal ideal $(\pi)$. One knows that there
is a canonical isomorphism of rings $\phi \colon \ff[[\pi]] \xrightarrow{\cong}
A$ such that the composition $\ff \to \ff[[\pi]] \xrightarrow{\phi}A \to \ff$ is the identity map (cf. \cite[Thm~28.3, also the proof of Lem.~1]{Matsumura}). The Kato topology on $A$ is the unique topology such that
$\phi$ is an isomorphism of topological rings when $\ff[[\pi]]$ is endowed with the
product of the valuation (also called the adic) topology of $\ff$. We remark that Kato topology on $A$ does not depends on the choice of the uniformizer $\pi$ and $\phi$.
The Kato topology of $A^\times$
is the subspace topology induced from that of $A$.
The Kato topology of $K^\times$ is the unique topology which is compatible
with the group structure of $K^\times$ and for which
$A^\times$ is open in $K^\times$. The Kato topology of $K^M_2(K)$ is the finest
topology which is compatible with the group structures and for which
the product map $K^\times \times K^\times \to K^M_2(K)$ is continuous.
This makes $K^M_2(K)$ into a topological abelian group.

\begin{rem}\label{rem:A^*-dual}
    The canonical topological isomorphism $\phi \colon \ff[[\pi]] \xrightarrow{\cong} A$ implies $A^\times \xrightarrow{\cong} \ff^\times \oplus (1+\pi.\ff[[\pi]])$ is also a topological isomorphism, where $\ff^\times \oplus (1+\pi.\ff[[\pi]])$ is given direct sum topology. 
\end{rem}

Now we fix some notations.
\begin{notat}\label{not:fil-r}
    If $A$ is a Henselian discrete valuation ring
with maximal ideal $\fm$ and field of fraction $K$,
 we shall write  $K^M_n(A|\fm^r):=H^0_{nis}(\Spec A, \sK^M_{n,\Spec A|V(\fm^r)})$ (see \S~ \ref{sec:Complexes}) as
$\Fil_r K^M_n(K)$) for $n, r \ge 1$ and $\Fil_0 K^M_n(K) = K^M_n(A)$, where $K^M_n$ is the $n$th Milnor K-theory $K^M_n$. Let $m,q \ge 1$ and $n \ge 0$ be three integers. 
If char $A=p$, we let $\kappa^m_q(A|n) =
\frac{\Fil_n K^M_q(K)}{p^m K^M_q(K) \cap \Fil_n K^M_q(K)}$ and for $n \ge 1$, define
$\varpi^m_q(A|n)$ so that there is an exact sequence
\begin{equation}\label{eqn:Kato-0.5}
  0 \to \kappa^m_q(A|n) \to \kappa^m_q(A|1) \to \varpi^m_q(A|n) \to 0.
  \end{equation}
It also follows that 
\begin{equation}\label{eqn:Kato-0.5.1}
    \varpi^m_q(A|n)= \Ker\left(\frac{K^M_q(K)}{\Fil_n K^M_q(K)+p^mK^M_q(K)} \xrightarrow{\theta}\frac{K^M_q(K)}{\Fil_1 K^M_q(K)+p^mK^M_q(K)}\right),
\end{equation}
where $\theta$ is the natural quotient map.
\end{notat}

For any integers $m, r \ge 0$,
the Kato topology of $K^M_2(K)$ induces the quotient topology on ${K^M_2(K)}/m$
and the subspace topology on $\Fil_r K^M_2(K)$. We shall use the term Kato topology
for these topologies as well.
By the Kato topologies of $\kappa^m_r(A|n)$ (resp. $\varpi^m_r(A|n)$), we shall
mean the subspace topology of $\kappa^m_r(A|n)$ induced from that of
${K^M_2(K)}/{p^m}$ (resp. the
  quotient topology of $\kappa^m_r(A|1)$ via ~\eqref{eqn:Kato-0.5}).
  Let $\partial_K \colon K^M_2(K) \surj \ff^\times$ be the Tame symbol map.
Via this surjective homomorphism, the Kato topology of $K^M_2(K)$ induces
the quotient topology (called the Kato topology) on $\ff^\times$.
On the other hand, the latter is also equipped with its valuation topology.

\begin{lem}\label{lem:Kato-adic}
The two topologies of $\ff^\times$ coincide.
\end{lem}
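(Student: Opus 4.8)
The claim is that on $\ff^\times$, the valuation (adic) topology coincides with the Kato topology inherited as a quotient of $K^M_2(K)$ via the tame symbol $\partial_K$. First I would recall the explicit form of $\partial_K$: for $u \in A^\times$ and the uniformizer $\pi$, one has $\partial_K(\{u,\pi\}) = \bar u$, the residue of $u$, and $\partial_K$ kills $\{A^\times, A^\times\}$ and $\{\pi,\pi\} = \{-1,\pi\}$ (up to the usual Steinberg identity). So the composite $A^\times \hookrightarrow K^\times \xrightarrow{\{-,\pi\}} K^M_2(K) \xrightarrow{\partial_K} \ff^\times$ is exactly the reduction map $A^\times \surj \ff^\times$, which is continuous and open for the adic/Kato topology on $A^\times$ (using Remark~\ref{rem:A^*-dual}, under which $A^\times \cong \ff^\times \oplus (1+\pi\ff[[\pi]])$ topologically, so the reduction map is literally the projection onto the first direct summand).

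The plan then splits into the two inclusions of topologies. For one direction, I would show the Kato topology on $\ff^\times$ is coarser than (or equal to) the adic topology: since $\partial_K$ is a continuous surjection by definition of the Kato topology on $K^M_2(K)$, and since we can factor the reduction map $A^\times \to \ff^\times$ through the continuous section $u \mapsto \{u, \pi\}$ of $A^\times$ into $K^M_2(K)$ (continuity of the product map $K^\times \times K^\times \to K^M_2(K)$ is part of the definition of the Kato topology), the adic topology on $\ff^\times$ surjects continuously onto the Kato quotient topology; hence every Kato-open set is adic-open. For the reverse inclusion, I would use that the reduction map $A^\times \surj \ff^\times$ is open for the adic topology (it is a split surjection of topological groups by Remark~\ref{rem:A^*-dual}), so it suffices to identify the Kato quotient topology on $\ff^\times$ with the quotient topology coming from $A^\times$, i.e.\ to check that the kernel of $\partial_K$ restricted to the image $\{A^\times,\pi\}$, together with the rest of $K^M_2(K)$, does not produce a strictly finer topology on $\ff^\times$. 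Concretely: the tame symbol $\partial_K$ on all of $K^M_2(K)$ factors, on the level of topological groups, through the projection to $\{A^\times,\pi\} \cong A^\times/(\text{closure of }\{A^\times,A^\times\})$, and chasing the definitions shows the induced topology on $\ff^\times$ is exactly the adic one.

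The main obstacle I anticipate is the second (openness) direction, because it requires controlling the Kato topology on the full group $K^M_2(K)$ — which by definition is the finest topology making $K^\times\times K^\times \to K^M_2(K)$ continuous — rather than just on a convenient subgroup. The safe way to handle this is to invoke the structure results for $K^M_2(K)$ of a $2$-local field (Kato's computations, via the filtration $\Fil_r K^M_2(K)$ from Notation~\ref{not:fil-r} and the exact sequences \eqref{eqn:Kato-0.5}, \eqref{eqn:Kato-0.5.1}): one knows $K^M_2(K)/\Fil_1 K^M_2(K) \cong \ff^\times$ compatibly with $\partial_K$ and with topologies, so that the quotient topology from $K^M_2(K)$ is the adic topology on $\ff^\times$ by construction. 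Once this identification is in place, both inclusions follow formally, and the lemma is proved. I would then remark that the same argument shows $\partial_K$ is a topological quotient map, which is the form in which it gets used in the class field theory of the next chapter.
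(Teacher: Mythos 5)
Your first direction (every Kato-open set of $\ff^\times$ is adic-open) is essentially the paper's argument: the paper also factors the reduction map through $u\mapsto\{u,\pi\}$ and $\partial_K$, and uses the product decomposition of a Kato-open subgroup of $K^\times$ (your use of Remark~\ref{rem:A^*-dual} to say the reduction $A^\times\surj\ff^\times$ is an open split surjection is the same point). That half is sound.

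The second direction, however, has a genuine gap. You need to show that every adic-open set is Kato-open, i.e.\ that $\partial_K$ is continuous from the Kato topology on $K^M_2(K)$ to the \emph{adic} topology on $\ff^\times$. You propose to get this by "invoking the structure results" that $K^M_2(K)/K^M_2(A)\cong\ff^\times$ "compatibly with topologies, by construction". This is circular: the Kato topology on $K^M_2(K)$ is constructed as the finest group topology making the product $K^\times\times K^\times\to K^M_2(K)$ continuous, not so that the quotient by $K^M_2(A)$ is adic; the asserted topological compatibility \emph{is} the content of the lemma. Your other suggestion --- controlling the topology through the subgroup $\{A^\times,\pi\}$ --- also cannot work as stated, because the Kato topology of $K^M_2(K)$ is only accessible through the universal property of the full product map, not through a chosen family of symbols. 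The correct route, which is what the paper does, is: by that universal property, continuity of $\partial_K$ into the adic topology is \emph{equivalent} to continuity of $T=\partial_K\circ\{\cdot,\cdot\}\colon K^\times\times K^\times\to\ff^\times$; one then verifies this directly by writing $a=\pi^{n_a}a_1$, $b=\pi^{n_b}b_1$ with $a_1,b_1\in A^\times$, computing $T(a,b)=\bar a_1^{\,n_b}\bar b_1^{\,-n_a}$, and using continuity of the map $(c,d)\mapsto c^{n_b}d^{-n_a}$ on $\ff^\times\times\ff^\times$ together with the explicit Kato-open neighborhoods $a\,U_{n'}(1+\pi\,\ff[[\pi]])$ of $a$ in $K^\times$. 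Without this explicit continuity check (or a precise citation that already contains it), your proof of the second inclusion is incomplete.
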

\begin{proof}
  To show that the adic topology of $\ff^\times$ is finer than the Kato topology,
  it suffices to show that if $U \subset \ff^\times$ is a subgroup which is
  open in the Kato topology, then it contains an open subgroup in the adic topology.
  To that end, we look at the composite map $T \colon K^\times \times K^\times
  \to K^M_2(K) \xrightarrow{\partial_K} \ff^\times$. By definition, $T$ is 
  continuous with respect to the Kato topology of $\ff^\times$.
  It follows that the map $T_\pi \colon K^\times \to \ff^\times$,
  given by $T_\pi(a) = \partial_K(\{a, \pi\})$ is a continuous homomorphism.
  In particular, $T_\pi^{-1}(U)$ contains an open subgroup of $K^\times$.
  By definition of the Kato topology of $K^\times$ and
  \cite[\S~7, Rem.~1]{Kato80-1}, $T^{-1}_\pi(U)$ contains a subgroup of the
  form $V_0 \times V_1$, where $V_0 \subset \ff^\times$ is an open subgroup in the
  adic topology and
  $V_1 \subset {\underset{i \ge 1}\prod} \ff \pi^i$ is an open subgroup
  in the Kato topology of $\pi A \subset A$. It is clear from the
  definition of $T$ that $T_\pi(V_0 \times V_1) = V_0$ which implies that
  $V_0 \subset U$.

To show that the Kato topology of $\ff^\times$ is finer than the adic topology,
  it is equivalent to show that $\partial_K$ is continuous with respect  to the
  Kato topology on $K^M_2(K)$  and adic topology on $\ff^\times$. But this is equivalent to show that the map $T: K^* \times K^*\to \ff^*$ is continuous with respect  to the
  Kato topology on $K^* \times K^*$ and adic topology on $\ff^\times$. Let $x \in \ff^*$ and $xU_n$ be a basic open set containing $x$. Here, $U_n=1+\pi_1^n\sO_\ff$ and $\pi_1$ is a uniformizer of $\sO_\ff$. Also, let $a, b \in K^*$ such that $T(a,b)=x$. We need to find two Kato open subsets $A$ and $B$ in $K^*$ containing $a$ and $b$, respectively, such that $T(A\times B)\subset xU_n$. We write $a=\pi^{n_a}a_1$ and $b= \pi^{n_b}b_1$, such that $a_1, b_1 \in A^*$ and $n_a, n_b \in \Z$. Note, $T(a,b)=\ov a_1^{n_b}\cdot \ov b_1^{-n_a}=x$, where $\ov a_1, \ov b_1$ are the images of $a_1$ and $b_1$, respectively, in $\ff^*$. Since the map $\tau:\ff^* \times \ff^* \to \ff^*$, given by $\tau(c,d)=c^{n_b}\cdot d^{-n_a}$ is continuous homomorphism of topological groups (with respect to adic topology), there exists $n'$ such that $\tau(\ov a_1U_{n'}\times \ov b_1 U_{n'})\subset xU_n$. Then, for $A=a.U_{n'}(1+\pi.\ff[[\pi]])$ and $B=b.U_{n'}(1+\pi.\ff[[\pi]])$, we have $T(A\times B)= \tau(\ov a_1U_{n'}\times \ov b_1 U_{n'})\subset xU_n$. Since $A$ and $B$ are Kato open in $K^*$ (cf. \remref{rem:A^*-dual}), we conclude that $\partial_K$ is continuous.
  \end{proof}

In the sequel, the default topology of ${K^M_2(K)}/n$ ($n \ge 0$) and its subspaces
      (e.g., ${K^M_2(A)}/n$) will be the Kato topology, and the same for
      ${\ff^\times}/n$ will be the adic topology.
If $K'$ is a Henselian discrete valuation field with completion $K$, then the
Kato topology of $K'^\times$ is the subspace topology induced from that
of $K^\times$. The Kato topology on $K^M_2(K')$ is the finest topology
such that the product map $K'^\times \times K'^\times \to K^M_2(K')$ is
continuous. It is clear that the canonical map $K^M_2(K') \to K^M_2({K})$ is
continuous.

\begin{notat}\label{not:Top}
    Before we proceed further, we recall some topological notions.
For $G \in \Ab$, we let $G^\vee =  \Hom_\Ab(G, {\Q}/{\Z})$.
For $G \in \Tab$ (see page VIII for the notations $\Ab, \Tab$), we let $G^\star =  \Hom_\Tab(G, {\Q}/{\Z})$.
If $G$ is either a profinite or a torsion locally compact Hausdorff topological
abelian group, then $G^\star$ is a topological abelian group with its compact open
topology and satisfies the Pontryagin duality, i.e., the evaluation map
$G \to (G^*)^*$ is an isomorphism in $\Tab$ (cf. \cite[\S~2.9]{Pro-fin}).
For $A, B \in \Tab$, we let $\Hom_\cf(A,B) = (\Hom_{\Tab}(A,B))_\tor$.
We shall write $\Hom_\cf(A,{\Q}/{\Z})$ as $A^\star_{\fr}$.
In this thesis, we shall endow $\Q$ as well as the direct sums of its subgroups and
subquotients (e.g., finite abelian groups) with the discrete topology.
\end{notat}

\begin{lem}\label{lem:K-dual}
  The map $\partial_K \colon K^M_2(K) \surj \ff^\times$ induces an exact sequence
  \begin{equation}\label{eqn:K-dual-0}
    0 \to (\ff^\times)^\star \to {K^M_2(K)}^\star \to {K^M_2(A)}^\star \to 0.
  \end{equation}
 \end{lem}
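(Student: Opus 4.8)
The statement to prove is that the Tame symbol $\partial_K \colon K^M_2(K) \surj \ff^\times$ induces a short exact sequence
\[
0 \to (\ff^\times)^\star \to {K^M_2(K)}^\star \to {K^M_2(A)}^\star \to 0
\]
of Pontryagin-type duals. The plan is to dualize the tame symbol exact sequence
\[
0 \to K^M_2(A) \to K^M_2(K) \xrightarrow{\partial_K} \ff^\times \to 0
\]
in the category $\Tab$ of topological abelian groups, where $K^M_2(K)$ carries the Kato topology, $K^M_2(A)$ the subspace topology, and $\ff^\times$ the adic topology (which agrees with the quotient Kato topology by \lemref{lem:Kato-adic}, so that $\partial_K$ is continuous and open). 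First I would record that this sequence is indeed exact as abelian groups: surjectivity of $\partial_K$ and the identification of the kernel with $K^M_2(A)$ are standard (the tame symbol sequence for a Henselian DVR), and since $A^\times$ is open in $K^\times$ in the Kato topology, $K^M_2(A)$ is a closed (indeed open in a suitable sense) subgroup, so $\ff^\times$ really is the topological quotient.

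The key point is left-exactness and exactness in the middle after applying $(-)^\star = \Hom_\Tab(-,\Q/\Z)$. Left-exactness is formal: a continuous surjection $\partial_K$ dualizes to an injection $(\ff^\times)^\star \inj K^M_2(K)^\star$, because any continuous character of $\ff^\times$ that pulls back to zero on $K^M_2(K)$ must already be zero (as $\partial_K$ is surjective). Exactness in the middle — that a continuous character $\chi$ of $K^M_2(K)$ which kills $K^M_2(A)$ factors through a \emph{continuous} character of $\ff^\times$ — requires that the quotient topology on $\ff^\times$ makes this factorization continuous; this is exactly where \lemref{lem:Kato-adic} is used, identifying the quotient topology with the adic one on which we are taking $(-)^\star$. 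The main obstacle is \textbf{surjectivity} of $K^M_2(K)^\star \surj K^M_2(A)^\star$: given a continuous character $\psi$ of $K^M_2(A)$, I must extend it to a continuous character of $K^M_2(K)$.

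For the surjectivity step, the approach I would take is to exploit that $\Q/\Z$ is an injective abelian group, so an algebraic extension $\tilde\psi$ of $\psi$ to $K^M_2(K)$ always exists; the issue is continuity. Here I would use the structure of the Kato topology: $K^M_2(K)$ is topologically generated over $K^M_2(A)$ by symbols $\{a,\pi\}$ with $a \in A^\times$, and the filtration $\{\Fil_r K^M_2(K)\}_{r\ge 0}$ together with \remref{rem:A^*-dual} (which splits $A^\times$ topologically as $\ff^\times \oplus (1+\pi\ff[[\pi]])$) gives an explicit description of a neighbourhood basis of $0$. Concretely, one can write $K^M_2(K) \cong K^M_2(A) \oplus \big(\ff^\times \cdot \{-,\pi\}\big)$ as topological groups up to the relation coming from $\{\pi,\pi\} = \{\pi,-1\}$, or more carefully use the filtration-by-$\Fil_r$ presentation; then extend $\psi$ by zero on the complementary summand, which is automatically continuous since the complement is a closed subgroup carrying a topology for which the zero map is continuous. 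Alternatively — and this is probably cleaner — I would dualize the finite-level sequences: reduce modulo $p^m$ and to finite filtration quotients, invoke Pontryagin duality for the (locally compact, often finite or profinite) pieces $\kappa^m_q(A|n)$ and $\varpi^m_q(A|n)$ from Notation~\ref{not:fil-r}, pass to the limit, and use that $\varprojlim$ of surjections of compact groups is surjective. The subtlety to watch is that $K^M_2(K)$ itself is not profinite, so one must either work with the torsion quotients $K^M_2(K)/p^m$ (where Pontryagin duality is available and the whole sequence splits into finite or profinite pieces) and then reassemble, or argue continuity of the extension by hand using the explicit neighbourhood basis; I expect the cleanest write-up reduces everything to the $p^m$-torsion quotients and the already-established local class field theory duality of Kato.
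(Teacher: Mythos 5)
Your primary route is essentially the paper's proof: the continuous section $a \mapsto \{a,\pi\}$ of $\partial_K$ (continuous because $\ff^\times \inj K^\times$ is continuous in the Kato topology) splits the tame-symbol sequence topologically, and extending a character of $K^M_2(A)$ by zero on the complement is the same as precomposing with the continuous retraction $\phi_K(x) = x - \{\partial_K(x),\pi\}$ that the paper writes down, while left exactness and exactness in the middle follow from surjectivity of $\partial_K$ and \lemref{lem:Kato-adic} exactly as you say. Your ``cleaner'' alternative via $p^m$-torsion quotients and inverse limits is the part to drop: elements of $(-)^\star$ are $\Q/\Z$-valued continuous characters that need not factor through $p$-power (or any finite) quotients, and the groups involved are not compact, so the limit argument does not apply; stay with the splitting argument.
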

 \begin{proof}
    Since $K^M_2(A) = \Ker(\partial_K)$, the exactness of ~\eqref{eqn:K-dual-0}
    except at $K^M_2(A)^\star$ follows immediately from \lemref{lem:Kato-adic}.
    To prove the exactness at ${K^M_2(A)}^\star$, we let
    $\phi_K \colon K^M_2(K) \to K^M_2(A)$ be
    given by $\phi_K(\{a, b\}) = \{a,b\} - \{\partial_K(\{a,b\}), \pi\}$. Since $\partial_K$, the map $\psi_K:\ff^*\to K_2^M(K)$, where $\psi_K(a)= \{a,\pi\}$ and the identity map on $K_2^M(K)$ are group homomorphism, we see that $\phi_K$ is a group homomorphism whose restriction to
    $K^M_2(A)$ is identity. It remains to show that $\phi_K$ is continuous with
    respect to the Kato topology. Equivalently, we need to show that 
    $\phi_K \colon K^M_2(K) \to K^M_2(K)$ is continuous. As $\partial_K$ is
    continuous, the latter claim follows if we show that the map
    $\psi_K \colon \ff^\times \to K^M_2(K)$ is continuous. But this map is the composition of
    $\ff^\times \inj K^\times \to K^M_2(K)$, where the first map is the canonical
inclusion and the second map takes $a$ to $\{a, \pi\}$.
The second map is clearly continuous and the first map is continuous by
\cite[\S~7, Rem.~1]{Kato80-1}. 
\end{proof}

\begin{remk}\label{remk:K-dual-1}
The same proof as that of \lemref{lem:K-dual} shows that the sequence
\begin{equation}\label{eqn:K-dual-2}
    0 \to ({\ff^\times}/n)^\star \to ({K^M_2(K)}/n)^\star \to ({K^M_2(A)}/n)^\star \to 0
  \end{equation}
is exact for every $n \in \Z$.  Also, by ~\remref{rem:A^*-dual}, we get $(K^\times /n)^\star \surj (A^\times/n)^\star \surj (\Fil_1 K^\times /n)^\star$ for every $n \in \Z$.
\end{remk}

\section{The reciprocity homomorphism}\label{sec:RHom}
Let $K$ be a Henselian discrete valuation field of characteristic $p > 0$
whose residue field $\ff$ is a local field.
Let $A$ denote the ring of integers of $K$ and $\fm = (\pi)$ the maximal ideal of
$A$. For any site $\tau$ on $\Spec A$ (resp. $\Spec K$), the cohomology groups $H^i_\tau(\Spec A, \sF)$ (resp. $H^i_\tau(\Spec K, \sF)$) will be written as $H^i_\tau (A, \sF)$ (resp. $H^i_\tau(K, \sF)$).

For any $r \ge 1$, we consider $H^r(K)$ (see \S~\ref{chap:Kato-coh} for the definition) as a discrete topological group.
For $q \le 2$, recall from \cite{Kato80-1} that the
reciprocity homomorphism $$\rho^q_K \colon K^M_q(K) \to \Hom_\Ab(H^{3-q}(K), \Q/\Z)$$ is induced by the cup
product pairing
\begin{equation}\label{eqn:KR**}
  K^M_q(K) \times H^{3-q}(K) \xrightarrow{{\rm NR}_K \times \id}
  H^q_\et(K, {\Q}/{\Z}(q)) \times H^{3-q}(K) \xrightarrow{\cup} H^3(K) \cong {\Q}/{\Z}.
\end{equation}
One knows that $\rho^q_K$ is a continuous homomorphism with respect to the Kato
topology of $K^M_q(K)$ and the profinite topology of $\left(H^{3-q}(K)\right)^\star$. Indeed, the continuity in the case of $q=2$ follows from \cite[Thm.~1(iii) and Thm.~2(1)]{Kato80-1}, while the continuity in the case of $q=1$ follows from Thm.~2(2) of loc. cit.

\begin{notat}\label{not:G-q-n}
    Let's write
$\Fil_n H^q(K)$ for $\Fil^{bk}_n H^q(K)$ defined in \defref{defn:Kato-1}.
 Also, we write $\Fil_{n} H^q(K, {\Z}/{p^m}(q-1))$ or $\Fil_{n} H^1_\et(K, W_m\Omega^{q-1}_{K,\log}))$ for $T^{m,q}_n(K)$ defined in \eqref{eqn:Milnor-2}.
 Letting $G^{q,n}_K = \left(\frac{H^q(K)}{\Fil_{n-1} H^q(K)}\right)^\star$
  and $G^{q,n}_{K,m} = \left(\frac{H^q(K, {\Z}/{p^m}(q-1))}{\Fil_{n-1} H^q(K, {\Z}/{p^m}(q-1))}
  \right)^\star$, we have the following.
\end{notat}

\begin{lem}\label{lem:LR-mod}
    For $n \ge 1, 1\le q\le 2$ and $m \ge 1$, ~\eqref{eqn:KR**} induces continuous homomorphisms (cf. Notations~\ref{not:fil-r},\ref{not:G-q-n})
    \[
      (1) \ \rho^{q,n}_{K} \colon \Fil_{n} K^M_q(K) \to G^{3-q,n}_K; \
      (2) \ \rho^{q,n}_{K,m} \colon \kappa^m_2(A|n) \to  G^{3-q,n}_{K,m}.
    \]
  \end{lem}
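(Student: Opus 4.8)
The statement to prove is that the cup-product pairing \eqref{eqn:KR**} descends to the two continuous homomorphisms
\[
\rho^{q,n}_{K} \colon \Fil_{n} K^M_q(K) \to G^{3-q,n}_K, \qquad
\rho^{q,n}_{K,m} \colon \kappa^m_q(A|n) \to G^{3-q,n}_{K,m}.
\]
The content of the claim is twofold: first, that the pairing respects the two filtrations (Kato's filtration $\Fil_\bullet$ on $H^{3-q}(K)$ and the modulus filtration $\Fil_\bullet K^M_q(K)$ on Milnor $K$-theory), so that one gets \emph{some} homomorphism into the indicated dual; and second, that this homomorphism is continuous for the Kato topology on the source and the profinite topology on the target. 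I would treat the two items $(1)$ and $(2)$ in parallel, deducing $(2)$ from $(1)$ by reduction mod $p^m$ at the end.

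\textbf{Step 1: orthogonality of the two filtrations.} The first task is to show that if $\alpha \in \Fil_n K^M_q(K)$ and $\chi \in \Fil_{n-1}^{\bk} H^{3-q}(K)$, then the pairing $\langle \alpha, \chi\rangle = 0$ in $H^3(K) \cong \Q/\Z$. Here I would use \thmref{thm:Kato-2} together with the defining property of Kato's filtration in \defref{defn:Kato-1}: an element $\chi$ lies in $\Fil^{\bk}_{n-1} H^{q'}(K)$ precisely when $\{\chi, 1 + \pi^n T\} = 0$ in $V^{q'+1}(A[T])$, and by \thmref{thm:Kato-2}(2) such $\chi$ is a sum of cup products of elements of $\Fil^{\bk}_{n-1} H^1(K)$ with Milnor symbols. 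Dually, $\Fil_n K^M_q(K)$ is generated by symbols of the form $\{1 + a\pi^n, u_2, \ldots, u_q\}$ with $u_i \in K^\times$ (this is the relative Milnor $K$-theory $K^M_q(A|\fm^n)$ from Notation~\ref{not:fil-r}). So the vanishing reduces, by the projection formula and bilinearity, to a statement about the pairing $\Fil_n K^M_1(K) \times \Fil^{\bk}_{n-1} H^1(K) \to \Br(K)$, i.e. to the case $q = 1$; and this in turn is exactly the compatibility of Kato's filtration with the Artin--Schreier--Witt / Brylinski--Kato description, which follows from \thmref{thm:Kato-2}(1)/\thmref{thm:Kato-3}. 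Concretely, $\{1 + a\pi^n, \chi\} = 0$ for $\chi \in \Fil^{\bk}_{n-1} H^1(K)$ by unwinding \defref{defn:Kato-1} with $T$ specialized appropriately. This gives the factorization of the pairing through $\Fil_n K^M_q(K) \times \bigl(H^{3-q}(K)/\Fil_{n-1} H^{3-q}(K)\bigr)$, hence the map $\rho^{q,n}_K$ into $\Hom_\Ab$ of the quotient; that the image lands in $G^{3-q,n}_K = \bigl(H^{3-q}(K)/\Fil_{n-1} H^{3-q}(K)\bigr)^\star$ (continuous characters) rather than merely in $\Hom_\Ab$ will be handled in Step 2.

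\textbf{Step 2: continuity, and the mod-$p^m$ version.} For continuity I would invoke the already-cited continuity of the full reciprocity map $\rho^q_K \colon K^M_q(K) \to \bigl(H^{3-q}(K)\bigr)^\star$ with respect to the Kato topology and the profinite topology (this is \cite[Thm.~1(iii), Thm.~2]{Kato80-1}, recalled in the text just before Notation~\ref{not:G-q-n}). The subspace $\Fil_n K^M_q(K) \subset K^M_q(K)$ carries the subspace Kato topology by definition, so $\rho^{q,n}_K$ is continuous as a restriction-and-corestriction of a continuous map; and since $H^{3-q}(K)/\Fil_{n-1}H^{3-q}(K)$ is a quotient of the discrete group $H^{3-q}(K)$, the target $G^{3-q,n}_K$ is the Pontryagin dual of a discrete torsion group, hence profinite, and every continuous homomorphism to $(\Q/\Z)$ from such a quotient is automatically an element of the $\star$-dual — so the image genuinely lies in $G^{3-q,n}_K$. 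This simultaneously finishes the ``lands in the right target'' point left over from Step 1. For part $(2)$, I would reduce mod $p^m$: the pairing \eqref{eqn:KR**} is compatible with the maps $\Z/p^m(q) \to \Q/\Z(q)$, and $\kappa^m_q(A|n) = \Fil_n K^M_q(K)/(p^m K^M_q(K) \cap \Fil_n K^M_q(K))$ is a quotient of $\Fil_n K^M_q(K)$, while $H^{3-q}(K,\Z/p^m(2-q))/\Fil_{n-1}$ maps to $H^{3-q}(K)/\Fil_{n-1}$; applying $(-)^\star$ and using that $\rho^{q,n}_K$ kills $p^m$-divisible input when restricted appropriately gives the factorization $\rho^{q,n}_{K,m}$. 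Its continuity follows from that of $\rho^{q,n}_K$ since $\kappa^m_q(A|n)$ has the quotient (Kato) topology and $G^{3-q,n}_{K,m}$ has the compact-open topology.

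\textbf{Main obstacle.} The routine parts are the topological bookkeeping (subspace/quotient topologies, profiniteness of the targets) and the bilinearity/projection-formula reductions. The real heart is Step 1, specifically the $q=1$ orthogonality $\{\Fil_n K^M_1(K),\ \Fil^{\bk}_{n-1} H^1(K)\} = 0$ in $\Br(K)$: one must match the modulus filtration on $K^\times$ (units congruent to $1$ mod $\fm^n$) against the Brylinski--Kato filtration on characters, which is precisely the local input of Kato's ramification theory. I would extract this from \defref{defn:Kato-1} and \thmref{thm:Kato-2} directly — the pairing $\{u, \chi\}$ with $u = 1 + a\pi^n$ is, after a linear change of variable in the polynomial ring $A[T]$, an instance of the defining relation $\{\chi, 1 + \pi^n T\} = 0$ — but care is needed because the reciprocity pairing uses the norm-residue symbol $\mathrm{NR}_K$ and one has to check the comparison between $H^2_\et(K,\Q/\Z(2))$-valued symbols and the $V^\bullet$-cohomology appearing in Kato's definition. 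This compatibility is implicit in \cite{Kato80-1} and \cite{Kato-89}; I would cite it rather than reprove it.
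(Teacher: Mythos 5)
Your route is essentially the paper's: the paper disposes of item (1) by citing the pairing \eqref{eqn:KR**} together with Kato's \cite[Rem.~6.6]{Kato-89} (whose ``only if'' direction is exactly the specialization argument $T \mapsto a$ you sketch, giving $\{1+a\pi^n\}\cup\chi = 0$ and hence, by the projection formula, the orthogonality of $\Fil_n K^M_q(K)$ with $\Fil^{\bk}_{n-1}H^{3-q}(K)$), and the continuity is quoted from Kato exactly as you do. So Step 1 and the topological bookkeeping are fine.

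The one point you gloss over is item (2). In this paper $\Fil_{n-1} H^{3-q}(K,\Z/p^m(2-q))$ is \emph{not defined} as the filtration induced from $\Fil^{\bk}_{n-1}H^{3-q}(K)$; by Notation~\ref{not:G-q-n} it is $T^{m,2-q}_{n-1}(K)$, i.e.\ the cokernel of $1-C$ on the filtered de Rham--Witt groups from \eqref{eqn:Milnor-2}. When you write that ``$H^{3-q}(K,\Z/p^m(2-q))/\Fil_{n-1}$ maps to $H^{3-q}(K)/\Fil_{n-1}$'' you are implicitly using that the de Rham--Witt filtration lands inside Kato's filtration, i.e.\ $T^{m,2-q}_{n-1}(K) \subseteq {}_{p^m}\Fil^{\bk}_{n-1}H^{3-q}(K)$; only then does the orthogonality of Step 1 kill the right subgroup and make $\rho^{q,n}_{K,m}$ well defined into $G^{3-q,n}_{K,m}$. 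This comparison is precisely \thmref{thm:Kato-fil-10} (in fact only the easy inclusion, already available from \corref{cor:VR-5} and \thmref{thm:Kato-2}), and it is the reason the paper's proof of (2) reads ``follows from $\rho^{q,n}_K$ using \thmref{thm:Kato-fil-10}.'' Add that citation and your argument is complete; the rest (factoring through $\kappa^m_q(A|n)$ because the target is $p^m$-torsion, continuity for the quotient Kato topology) is as you say.
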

\begin{proof}
    The existence of $\rho^{q,n}_{K,m}$ follows from that of $\rho^{q,n}_K$ using
   \thmref{thm:Kato-fil-10}. Item (1) follows from
    ~\eqref{eqn:KR**} and \cite[Rem.~6.6]{Kato-89}.
\end{proof}

\section{Duality for \texorpdfstring{$\kappa^m_q(A|n)$ and
$\varpi^m_q(A|n)$}{k(A|n) and w(A|n)}}

\begin{lem}\label{lem:Kato-equiv}
 Let $A$ be as in \S~\ref{sec:RHom} and $n \ge 1, m \ge 1, \ q \ge 1$ are integers, 
 the following hold.
  \begin{enumerate}
 \item
    The canonical maps ${K^M_q(K)}/{p^m} \to {K^M_q(\wh{K})}/{p^m}$ and
    $\kappa^m_q(A|n) \to \kappa^m_q(\wh{A}|n)$ are injective, where $\wh A$ (resp. $\wh K$) is the $\fm$-adic completion of $A$ (resp. $K$).

 \item
   The canonical maps
$\frac{K^M_q(K)}{\Fil_n K^M_q(K)} \to \frac{K^M_q(\wh{K})}{\Fil_n K^M_q(\wh{K})}$ and
$\varpi^m_q(A|n) \to \varpi^m_q(\wh{A}|n)$
  are bijective.
  \item
  $H^q_\et(R, {\Z}/{p^m} (q-1)) \to
  H^q_\et(\wh{R}, {\Z}/{p^m} (q-1))$ is bijective if $R \in \{A, K\}$.
  \item
    $\Fil_{n} H^q(K, {\Z}/{p^m}(q-1)) \to \Fil_{n} H^q(\wh{K}, {\Z}/{p^m}(q-1)) $ is bijective.
  \end{enumerate}
\end{lem}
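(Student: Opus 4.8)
\textbf{Proof proposal for Lemma~\ref{lem:Kato-equiv}.}

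The plan is to reduce all four statements to the regular-local-ring results established in Chapters~\ref{chap:F_p} and \ref{chap:F_p-2}, using in an essential way that $A$ is an $F$-finite Henselian discrete valuation ring (being essentially of finite type over an $F$-finite field, or by direct hypothesis), so $\wh{A}$ is a faithfully flat $\fm$-adic completion of $A$ and the machinery of \S~\ref{sec:com-to-noncom} applies with $r=1$. First I would dispose of (3): by Geisser--Levine the étale sheaf $\sK^M_{q}$ has no $p$-torsion, so (as in the proof of \lemref{lem:Kato-fil-9}) $H^q_\et(R,\Z/p^m(q-1))$ sits in a short exact sequence involving $H^0_\et(R,W_r\Omega^{q-1}_{R,\log})\cong W_r\Omega^{q-1}_{R,\log}\cong K^M_{q-1}(R)/p^r$ for $R\in\{A,K\}$; since $R\to\wh{R}$ induces isomorphisms on $W_m\Omega^{q-1}_{R,\log}$ (for $R=K$ this is \lemref{lem:Non-complete-3} with $\n=\un{0}$; for $R=A$ one uses that $W_m\Omega^{q-1}_A\to W_m\Omega^{q-1}_{\wh A}$ is the $\fm$-adic completion by \eqref{eqn:Non-complete-0} together with the $\dlog$ description) and on $H^1_\et(-,W_m\Omega^{q-1}_{-,\log})$ (for $R=K$ this is again built from \lemref{lem:Non-complete-3}; for $R=A$ one compares the localization sequences over $A$ and $\wh A$ using \lemref{lem:hyp}), a five-lemma argument gives (3).

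Next I would treat (1) and the first half of (2). The injectivity of ${K^M_q(K)}/{p^m}\to{K^M_q(\wh K)}/{p^m}$ follows because $\dlog$ identifies this with $W_m\Omega^q_{K,\log}\to W_m\Omega^q_{\wh K,\log}$, which is injective by \lemref{lem:Complete-1} (or \lemref{lem:Non-complete-3}). For $\kappa^m_q(A|n)\to\kappa^m_q(\wh A|n)$, injectivity follows from injectivity on ${K^M_q(K)}/{p^m}$ since $\kappa^m_q(A|n)$ is a subgroup of the latter by definition. For (2), the bijectivity of $K^M_q(K)/\Fil_nK^M_q(K)\to K^M_q(\wh K)/\Fil_nK^M_q(\wh K)$ is the content of \cite[Lem.~1.2.3, proof of Thm.~1.2.1]{JSZ} combined with \corref{cor:Complete-2} applied to $\sK^M_{q,X|D_n}$; more directly, $\dlog$ identifies $\Fil_nK^M_q(K)/p^m$-related quotients with $\Fil_{-\n}W_m\Omega^q$-quotients, and \corref{cor:Complete-2} (for $r=1$) says $W_m\Omega^q_K/\Fil_{-\n}W_m\Omega^q_K\hookrightarrow W_m\Omega^q_{\wh K_1}/\Fil_{-\n}W_m\Omega^q_{\wh K_1}$ with the analogous statement being an isomorphism via \eqref{eqn:Complete-2-1}. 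The bijectivity of $\varpi^m_q(A|n)\to\varpi^m_q(\wh A|n)$ then follows from the defining exact sequence \eqref{eqn:Kato-0.5}, the snake lemma, injectivity of $\kappa^m_q(A|n)\to\kappa^m_q(\wh A|n)$, and the already-established bijectivity on $\kappa^m_q(A|1)$-quotients (using the description \eqref{eqn:Kato-0.5.1}).

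Finally, (4) follows by combining (2) with the description of $\Fil_nH^q(K,\Z/p^m(q-1))=T^{m,q}_{n-1}(K)$ from \thmref{thm:Kato-fil-10}: indeed $T^{m,q}_{n-1}(K)\cong\ _{p^m}\Fil^{\bk}_{n-1}H^q(K)$, and by \corref{cor:VR-5} this group is the image of $\Fil_{n-1}W_m\Omega^{q-1}_K\otimes K^M_1(K)$ under $\lambda^{q-1}_m$, which is controlled by the filtered de Rham--Witt groups shown to be compatible with completion in \lemref{lem:Non-complete-3}; alternatively one applies the five-lemma to the long exact cohomology sequences of \eqref{eqn:Milnor-2} over $K$ and $\wh K$, using \lemref{lem:Non-complete-3} to compare the middle terms and (3) to compare $H^q(-,\Z/p^m(q-1))$. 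The main obstacle I anticipate is item (4): one must be careful that Kato's filtration $\Fil^{\bk}_n$ is defined via a product condition in $V^{q+1}(A[T])$ (cf. \defref{defn:Kato-1}), so the cleanest route is to pass through the cohomological description $T^{m,q}_{n-1}(K)$ of \thmref{thm:Kato-fil-10} rather than arguing directly with Kato's original definition, and to check that the comparison maps used there are genuinely the ones induced by $K\inj\wh K$. Everything else is a routine diagram chase built on \lemref{lem:Non-complete-3} and \corref{cor:Complete-2}.
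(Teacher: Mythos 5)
Items (1) and your deduction of the second half of (2) from the first half via \eqref{eqn:Kato-0.5.1} are fine and agree with the paper. The load-bearing steps of the rest, however, rely on completion statements that the results you cite do not provide. You claim that $W_m\Omega^{q-1}_{R,\log}\to W_m\Omega^{q-1}_{\wh R,\log}$ (equivalently $K^M_{q-1}(R)/p^m\to K^M_{q-1}(\wh R)/p^m$) is an isomorphism, and that $W_m\Omega^q_K/\Fil_{\un n}W_m\Omega^q_K\to W_m\Omega^q_{\wh K}/\Fil_{\un n}W_m\Omega^q_{\wh K}$ is bijective ``via \eqref{eqn:Complete-2-1}''. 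Neither follows: \lemref{lem:Non-complete-3} only identifies $\Fil_{\un n}W_m\Omega^q_{\wh K}$ with the $\fm$-adic \emph{completion} of $\Fil_{\un n}W_m\Omega^q_K$ (the uncompleted map is injective with dense image, nothing more), \eqref{eqn:Complete-2-1} is an isomorphism only after applying $-\otimes_{W_m(A)}W_m(\wh A)$, and neither statement sees the logarithmic subgroups; the lemma itself asserts only injectivity in item (1) precisely because surjectivity of $K^M_q(K)/p^m\to K^M_q(\wh K)/p^m$ is not available. Consequently your five-lemma arguments for (3) and (4) collapse: in \eqref{eqn:Milnor-2} none of the terms $W_m\Omega^q_{K,\log}$, $Z_1\Fil_nW_m\Omega^q_K$, $\Fil_nW_m\Omega^q_K$ maps isomorphically to its counterpart over $\wh K$ --- the whole point of (3) and (4) is that the relevant cokernels and $H^1$'s compare even though the underlying groups do not. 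Moreover, the first half of (2) is an \emph{integral} statement about $K^M_q(K)/\Fil_nK^M_q(K)$, which the mod-$p^m$ de Rham--Witt/$\dlog$ picture (and the JSZ results you invoke) cannot detect. In the paper this statement and the $R=K$ case of (3) are not reproved at all: they are imported from Kato (\cite[Lem.~1]{Kato83}, \cite[Lem.~21]{Kato-Inv}), and your proposal neither cites these nor supplies a substitute; your assertion that the $R=K$ case ``is again built from \lemref{lem:Non-complete-3}'' is exactly the missing idea.

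What the paper actually does for the remaining cases --- (3) with $R=A$ and (4) --- is also different from your sketch and is the only place the Chapter~4 completion results genuinely enter. One compares the localization sequences $0\to H^1_\et(B,\sG)\to H^1_\et(\mathrm{Frac}(B),\sG)\to H^2_{x}(B,\sG)\to 0$ for $B=A$ and $B=\wh A$, where $\sG=W_m\Omega^{q-1}_{(-),\log}$ for (3) and $\sG=W_m\sF^{q-1,\bullet}_n$ for (4) (using \corref{cor:Coh-V0-00} to identify the terms in the latter case). By Kato's $R=K$ comparison it then suffices to show that $H^2_x(A,\sG)\to H^2_{x'}(\wh A,\sG)$ is bijective, and this is proved by writing $\sG$ as the two-term complex $[Z_1W_m\Omega^{q-1}\xrightarrow{1-C}W_m\Omega^{q-1}]$ (resp.\ its filtered analogue), observing that the terms are finitely generated $W_m(A)$-modules whose $\fM_m$-adic completions are the corresponding modules over $\wh A$ (\eqref{eqn:Non-complete-0}, \lemref{lem:Non-complete-3}, \corref{cor:Non-complete-3}, \lemref{lem:Complete-4}), and using that local cohomology with support in the closed point of a finitely generated module is invariant under completion (\cite[Prop.~3.5.4(d)]{Bruns-Herzog}). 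You were right to anticipate that (4) should be attacked through the cohomological description of \thmref{thm:Kato-fil-10} rather than Kato's original definition, but without the reduction to $H^2_x$ and the completion-invariance of local cohomology (or some replacement), items (3) ($R=A$) and (4) remain unproved in your proposal.
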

\begin{proof}
  The first part of item (1) follows because $W_m\Omega^2_K \inj W_m\Omega^2_{\wh{K}}$. For the second part, we note that $\kappa^m_q(A|n)  \subset {K^M_q(K)}/{p^m}$ (and similarly in the case of $\wh A$).
  The first part of item (2) and the ($R = K$) case of item (3)  follow from 
  \cite[Lem.~1]{Kato83} and \cite[Lem.~21]{Kato-Inv}. For the second part of item (2), we note that the result is true for $\frac{K^M_q(K)}{\Fil_n K^M_q(K)+p^mK^M_q(K)}$ (follows from the first part of (2)). Then the claim for $\varpi^m_q(A|n)$ follows from the previous observation and \eqref{eqn:Kato-0.5.1}. To prove ($R =A$) case of item $(3)$ first consider the following commutative diagram obtained by the localisation sequence for the sheaf $W_m\Omega^{q-1}_{(-),\log}$ in $\sD(\Spec B_\et,\Z/p^m)$, for $B=A \text{ and } \wh{A}$.
  \begin{equation}\label{eqn:A-A^}
      \xymatrix@C.8pc{
      0 \ar[r]&H^1_\et(A, W_m\Omega^{q-1}_{(-),\log}) \ar[r] \ar[d] &H^1_\et(K, W_m\Omega^{q-1}_{(-),\log}) \ar[r] \ar[d] & H^2_x(A, W_m\Omega^{q-1}_{(-),\log}) \ar[r] \ar[d]^-{\gamma} & 0 \\
      0 \ar[r] &H^1_\et(\wh A, W_m\Omega^{q-1}_{(-),\log}) \ar[r] & H^1_\et(\wh K, W_m\Omega^{q-1}_{(-),\log}) \ar[r] & H^2_{x'}(\wh A, W_m\Omega^{q-1}_{(-),\log}) \ar[r] & 0,
      }
  \end{equation}
where $x,x'$ are the closed points of $\Spec A$ and $\Spec \wh A$, respectively. Using $(R=K)$ case, it is enough to show the natural map $\gamma: H^2_x(A, W_m\Omega^{q-1}_{(-),\log}) \to H^2_{x'}(\wh{A}, W_m\Omega^{q-1}_{(-),\log})$ is isomorphism. Since $W_m\Omega^{q-1}_{(-),\log} \cong \left [ Z_1W_m\Omega^{q-1}_{(-)} \xrightarrow{1-C} W_m\Omega^{q-1}_{(-)} \right ]$, we have the following commutative diagram (letting $q'=q-1$). 
\begin{equation}
    \xymatrix@C.8pc{
    H^1_x(Z_1W_m\Omega^{q'}_A) \ar[r] \ar[d]^-{\alpha_1} &H^1_x(W_m\Omega^{q'}_A) \ar[r] \ar[d]^-{\beta_1} & H^2_x(W_m\Omega^{q'}_{A,\log}) \ar[r] \ar[d]^-{\gamma} & H^2_x(Z_1W_m\Omega^{q'}_{A}) \ar[r] \ar[d]^-{\alpha_2} & H^2_x(W_m\Omega^{q'}_{A}) \ar[d]^-{\beta_2} \\
    H^1_{x'}(Z_1W_m\Omega^{q'}_{\wh A}) \ar[r] &H^1_{x'}(W_m\Omega^{q'}_{\wh A}) \ar[r] & H^2_{x'}(W_m\Omega^{q'}_{\wh A,\log}) \ar[r] & H^2_{x'}(Z_1W_m\Omega^{q'}_{\wh A}) \ar[r] & H^2_{x'}(W_m\Omega^{q'}_{\wh A}),
    }
\end{equation}
where $H^1_x(W_m\Omega^{q'}_A)$ (and similarly the other groups) is the short hand notation for $H^1_x(A, W_m\Omega^{q'}_{A})$. First, we note that the local cohomologies of $W_m\Omega^{q'}_A$ and $Z_1W_m\Omega^{q'}_A$ on Spec $A$ is same as those on $\Spec W_m(A)$. We recall that the $\fM_m$-adic (equivalently $W_m(\fm)$-adic) completion of $W_m(A)$ is $W_m(\wh A)$ (cf. \lemref{lem:Non-complete-1}, for $q=0$). Since $Z_1W_m\Omega^{q'}_{A}$, $W_m\Omega^{q'}_{A}$ can be regarded as finitely generated $W_m(A)$-modules, and their  $\fM_m$-adic completions are $Z_1W_m\Omega^{q'}_{\wh A}$, $W_m\Omega^{q'}_{\wh A}$ (by \corref{cor:Non-complete-3}, Lemmas~\ref{lem:Non-complete-3}, \ref{lem:Complete-4}), respectively, we get that $\alpha_i$ and $\beta_i$ ($i=1,2$) are isomorphisms by \cite[Prop~3.5.4.(d)]{Bruns-Herzog}. Thus we get $\gamma$ is isomorphism.

Proof of (4) is identical to that in (3), ($R=A$) case, except that one need to use $W_m\sF^{q-1,\bullet}_n$ complex instead of $W_m\Omega^{q-1}_{(-),\log}$ and also \corref{cor:Coh-V0-00} to obtain a diagram like \eqref{eqn:A-A^}.
\end{proof}

\begin{lem}\label{lem:Local-inj-6}
   Let $A$ and $K$ be as in \S~ \ref{sec:RHom}. For $m \ge 1, 1 \le q\le 2$ and $n \ge 1$, the map $\rho^{q,n}_{K,m}$ of \lemref{lem:LR-mod}
   induces a monomorphism (cf. Notation~\ref{not:fil-r})
   \[
     \wt{\rho}^{q,n}_{K,m} \colon  \varpi^m_q(A|n) \inj     
\left(\frac{\Fil_{n-1} H^1_\et(K, W_m\Omega^{2-q}_{K,\log})}{\Fil_{0} H^1_\et(K, W_m\Omega^{2-q}_{K,\log})}\right)^\star,
 \]
 where the dual on the right is taken with respect to the discrete topology.
\end{lem}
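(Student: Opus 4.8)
\textbf{Proof plan for Lemma~\ref{lem:Local-inj-6}.}
The plan is to reduce to the complete case and then exploit the exact sequence \eqref{eqn:Kato-0.5} together with the reciprocity map $\rho^{q,n}_{K,m}$ from \lemref{lem:LR-mod}(2) and \thmref{thm:Kato-fil-10}. First I would observe that by \lemref{lem:Kato-equiv}(2) the canonical map $\varpi^m_q(A|n) \to \varpi^m_q(\wh{A}|n)$ is bijective, and by \lemref{lem:Kato-equiv}(4) the map $\Fil_{n-1} H^1_\et(K, W_m\Omega^{2-q}_{K,\log}) \to \Fil_{n-1} H^1_\et(\wh{K}, W_m\Omega^{2-q}_{\wh{K},\log})$ is bijective; the same holds for the $\Fil_0$-subgroups. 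Hence the target quotients on both sides are canonically identified, and it suffices to prove injectivity of $\wt{\rho}^{q,n}_{\wh{K},m}$, so I may assume $A$ is complete, i.e. $A = \ff[[\pi]]$ with $\ff$ a local field, and the Kato topology of \secref{sec:Kato-top} is available.

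The key construction is the following. By \thmref{thm:Kato-fil-10} the map $\delta^q_m$ induces $T^{m,q}_n(K) = \Fil_n H^1_\et(K,W_m\Omega^{q-1}_{K,\log}) \xrightarrow{\cong} \ _{p^m}\Fil^{\bk}_n H^{q+1}(K)$, and by its compatibility with \eqref{eqn:Milnor-1} and \thmref{thm:Kato-2}(2) the subgroup $\Fil_n H^1_\et(K,W_m\Omega^{q-1}_{K,\log})$ is the image of $\kappa^m_q(A|n)$ under $\delta^q_m$ composed with cup product against $\Fil^{\bk}_n H^1$. Dualizing the exact sequence \eqref{eqn:Kato-0.5}, $0 \to \kappa^m_q(A|n) \to \kappa^m_q(A|1) \to \varpi^m_q(A|n) \to 0$, and using that $\rho^{q,1}_{K,m}$ (from \lemref{lem:LR-mod}(2), at level $n=1$) lands in $G^{3-q,1}_{K,m}$ while $\rho^{q,n}_{K,m}$ refines it, I would identify $\varpi^m_q(A|n)$ with the quotient $\kappa^m_q(A|1)/\kappa^m_q(A|n)$ and show that cup-product pairing against $\frac{\Fil_{n-1} H^1_\et(K,W_m\Omega^{2-q}_{K,\log})}{\Fil_0 H^1_\et(K,W_m\Omega^{2-q}_{K,\log})}$ separates its points. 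Concretely, an element $\alpha \in \kappa^m_q(A|1)$ maps to $0$ in the dual means $\{\alpha, \chi\} = 0$ in $H^3(K) \cong \Q/\Z$ for every $\chi \in \Fil_{n-1} H^1_\et(K,W_m\Omega^{2-q}_{K,\log})$; I would then invoke the non-degeneracy of Kato's local duality pairing \eqref{eqn:KR**} for the $2$-local field $K$ (via \cite{Kato80-1}, \cite{Kato80-2}, \cite{Kato-Inv}) together with the explicit description of $\Fil_n$ on both $K^M_q$ and $H^{q+1}$ in \thmref{thm:Kato-2} and \thmref{thm:Kato-fil-10} to conclude that $\alpha$ already lies in $\kappa^m_q(A|n)$, hence dies in $\varpi^m_q(A|n)$.

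The main obstacle I expect is matching the filtration indices correctly: the duality of Kato pairs $\Fil_n$ against $\Fil_{n-1}$ (with the shift by one that appears throughout, e.g. in \corref{cor:Pair-4} and the modulus $D_{\un{n}+1}$), so I need to be careful that the annihilator of $\kappa^m_q(A|n)$ inside $H^{q+1}(K)/\Fil_0$ is exactly $\Fil_{n-1} H^{q+1}(K)/\Fil_0 H^{q+1}(K)$ and not an off-by-one variant. This is essentially the content that the reciprocity pairing restricts compatibly with the Brylinski--Kato filtrations on both sides, which is \cite[Rem.~6.6]{Kato-89} together with \lemref{lem:LR-mod}; assembling these into a clean injectivity statement, while tracking the passage through $\delta^q_m$ and \thmref{thm:Kato-fil-10}, is the delicate bookkeeping step. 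Once the index match is pinned down, injectivity of $\wt{\rho}^{q,n}_{K,m}$ follows formally from the snake lemma applied to \eqref{eqn:Kato-0.5} and its dual, using that $\rho^{q,1}_{K,m}$ kills $\kappa^m_q(A|n)$ after projecting to the $\Fil_0$-quotient precisely because such classes pair trivially with $\Fil_{n-1}$-classes.
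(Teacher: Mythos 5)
Your setup is fine and matches the paper: the reduction to the complete case via \lemref{lem:Kato-equiv}(2),(4), and the definition of $\wt{\rho}^{q,n}_{K,m}$ by mapping the sequence \eqref{eqn:Kato-0.5} to the dual of the sequence of filtration quotients, are exactly what the paper does. The gap is in your central step: you claim that if $\alpha \in \kappa^m_q(A|1)$ pairs trivially with all of $\Fil_{n-1}H^1_\et(K,W_m\Omega^{2-q}_{K,\log})$ then $\alpha$ already lies in $\kappa^m_q(A|n)$, and you propose to deduce this from ``non-degeneracy of Kato's local duality pairing'' \eqref{eqn:KR**} plus \thmref{thm:Kato-2}, \thmref{thm:Kato-fil-10} and \cite[Rem.~6.6]{Kato-89}. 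But that statement \emph{is} the lemma (it is exactly the assertion that $\varpi^m_q(A|n)$ injects into the dual), and none of the cited inputs delivers it. What \cite[Rem.~6.6]{Kato-89} and \lemref{lem:LR-mod} give is the opposite inclusion, namely that $\Fil_n K^M_q$ annihilates $\Fil_{n-1}H$, which only makes $\wt{\rho}^{q,n}_{K,m}$ well defined. The non-degeneracy available from Kato's class field theory (used in \lemref{lem:Dual-iso}) is on the cohomology side: $H^1_\et(K,W_m\Omega^{2-q}_{K,\log})$ is the \emph{continuous} dual of $K^M_q(K)/p^m$; it does not assert that the pairing has no left kernel on the Milnor $K$-theory side when restricted to these filtration quotients, so your ``conclude that $\alpha$ already lies in $\kappa^m_q(A|n)$'' has no source, and your concern about an ``off-by-one'' is not the real difficulty.

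The paper supplies this missing content in two dévissage steps that your plan omits. First it inducts on $m$, using the commutative ladder \eqref{eqn:Local-inj-7}: the top row is the $\un{p}$--$R^{m-1}$ sequence for $\varpi^{\bullet}_q(A|n)$ coming from \corref{cor:Complex-7}, and the bottom row is obtained from the distinguished triangle of \thmref{thm:Global-version}(11) together with \lemref{lem:hyp}(2) and \thmref{thm:Kato-fil-10}; this reduces everything to $m=1$. Then, for $m=1$, it filters $\varpi^1_q(A|n)$ by the images of $\kappa^1_q(A|i)$ and the target by the $\Fil_i/\Fil_{i-1}$ quotients, reducing injectivity to the graded pairings over the residue field $\ff$: $\Omega^1_\ff \times \ff \to \F_p$ (for $p\nmid i$) and $\ff/\ff^p \times \ff/\ff^p \to \F_p$ (for $p\mid i$), given by trace of residue, which have no left or right kernel by \cite[Lem.~22.2, 22.3]{Kato-Inv}. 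Some argument of this kind (graded non-degeneracy over $\ff$, or an equivalent explicit computation) is indispensable; without it your snake-lemma bookkeeping cannot produce the injectivity.
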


\begin{proof}
First note that the map $\wt{\rho}^{q,n}_{K,m}$ is defined by the following commutative diagram.
\begin{equation*}
    \xymatrix@C.8pc{
     0 \ar[r]& \kappa^m_q(A|n)\ar[r] \ar[d]^-{\rho^{q,n}_{K,m}} & \kappa^m_q(A|1) \ar[r] \ar[d]^-{\rho^{q,1}_{K,m}} & \varpi^m_q(A|n)\ar[r] \ar@{.>}[d]^-{\wt{\rho}^{q,n}_{K,m}} & 0\\
     0 \ar[r]& \left(\frac{H^1_\et(K, W_m\Omega^{2-q}_{K,\log})}{\Fil_{n-1} H^1_\et(K, W_m\Omega^{2-q}_{K,\log})}
  \right)^\star \ar[r]& \left(\frac{H^1_\et(K, W_m\Omega^{2-q}_{K,\log})}{\Fil_{0}H^1_\et(K, W_m\Omega^{2-q}_{K,\log})}
  \right)^\star\ar[r]& \left(\frac{\Fil_{n-1} H^1_\et(K, W_m\Omega^{2-q}_{K,\log})}{\Fil_{0} H^1_\et(K, W_m\Omega^{2-q}_{K,\log})}\right)^\star \ar[r]& 0. 
    }
\end{equation*}
    Now we need to show that the map $\wt{\rho}^{q,n}_{K,m}$ is injective. We can assume that $K$ is complete in view of
\lemref{lem:Kato-equiv}.
  We now let $n' = \lceil n/p \rceil$ and claim that the following is a commutative diagram of exact sequences.
 \begin{equation}\label{eqn:Local-inj-7}
    \xymatrix@C.8pc{ 
      0 \ar[r] & \varpi^{m-1}_q(A|n') \ar[r]^-{\ov p} \ar[d]_-{\wt{\rho}^{q,n}_{K,m-1}} &
    \varpi^{m}_q(A|n) \ar[rr]^-{R^{m-1}} \ar[d]^-{\wt{\rho}^{q,n}_{K,m}} &&  
    \varpi^{1}_q(A|n) \ar[r] \ar[d]^-{\wt{\rho}^{q,n}_{K,1}} & 0 \\
    0 \ar[r] &
 \left(\frac{\Fil_{n'-1} H^1_\et(K, W_{m-1}\Omega^{2-q}_{K,\log})}{\Fil_0H^1_\et(K, W_{m-1}\Omega^{2-q}_{K,\log})}\right)^\star \ar[r]^-{R^*} & 
 \left(\frac{\Fil_{n-1} H^1_\et(K, W_{m}\Omega^{2-q}_{K,\log})}{\Fil_0 H^1_\et(K, W_m\Omega^{2-q}_{K,\log})}\right)^\star \ar[rr]^-{(\ov p^{m-1})^*} &&
 \left(\frac{\Fil_{n-1} H^1_\et(K, \Omega^{2-q}_{K,\log})}{\Fil_0 H^1_\et(K, \Omega^{2-q}_{K,\log})}\right)^\star
 \ar[r] & 0.}
\end{equation}

The exactness of the top row follows by comparing the exact sequence of \corref{cor:Complex-7} (taking $r=m-1$) for $\kappa^m_q(A|n)$ and $\kappa^m_q(A|1)$.

Now by \thmref{thm:Global-version}(11), we have an exact sequence
$$ \cdots \to \H^0_\et(W_{1}\sF^{2-q,\bullet}_{\lfloor n/p \rfloor}) \to \H^1_\et(W_{m-1}\sF^{2-q,\bullet}_n) \xrightarrow{V} \H^1_\et(W_{m}\sF^{2-q,\bullet}_n) \xrightarrow{R^{m-1}} \H^1_\et(W_{1}\sF^{2-q,\bullet}_{\lfloor n/p \rfloor}) \to \cdots,$$
where we use the short hand notation $\H^i_\et(\sE)$ for $\H^i(A,\sE)$. By \lemref{lem:hyp}(2), we have the short exact sequence
$$0 \to \H^0_\et(W_{m-1}\sF^{2-q,\bullet}_n) \xrightarrow{V=\ov p} \H^0_\et(W_{m}\sF^{2-q,\bullet}_n) \xrightarrow{R^{m-1}} \H^0_\et(W_{1}\sF^{2-q,\bullet}_{\lfloor n/p \rfloor}) \to 0,$$
and hence the following short exact sequence.
$$0\to\H^1_\et(W_{m-1}\sF^{2-q,\bullet}_n) \xrightarrow{V} \H^1_\et(W_{m}\sF^{2-q,\bullet}_n) \xrightarrow{R^{m-1}} \H^1_\et(W_{1}\sF^{2-q,\bullet}_{\lfloor n/p \rfloor}) \to0.$$
Comparing the above exact sequence with the similar one with $n=0$, we get the following exact sequence.
$$0\to\frac{\H^1_\et(W_{m-1}\sF^{2-q,\bullet}_n)}{\H^1_\et(W_{m-1}\sF^{2-q,\bullet}_0)} \xrightarrow{V} \frac{\H^1_\et(W_{m}\sF^{2-q,\bullet}_n)}{\H^1_\et(W_{m}\sF^{2-q,\bullet}_0)} \xrightarrow{R^{m-1}} \frac{\H^1_\et(W_{1}\sF^{2-q,\bullet}_{\lfloor n/p \rfloor})}{\H^1_\et(W_{1}\sF^{2-q,\bullet}_0)} \to0.$$
Here, we have also used the fact that the map $\H^1_\et(W_{m}\sF^{2-q,\bullet}_0) \to \H^1_\et(W_{m}\sF^{2-q,\bullet}_n)$ is injective, which follows from \corref{cor:VR-5} and \lemref{lem:hyp}(2). 
Now, to show the exactness of the bottom row in \eqref{eqn:Local-inj-7}, we take dual in the above exact sequence, use \thmref{thm:Kato-fil-10} and observe that $n'-1=\lfloor (n-1)/p \rfloor$. Thus, the commutativity of \eqref{eqn:Local-inj-7} follows from the commutativity of \eqref{eqn:Perfect-finite-0}. It follows from \eqref{eqn:Local-inj-7} and an induction argument that it suffices to show $\wt{\rho}^{q,n}_{K,1}$ is injective.

For this, note that $\varpi^1_q(A|n)$ and $\left(\frac{\Fil_{n-1} H^1_\et(K, \Omega^{2-q}_{K,\log})}{\Fil_{0} H^1_\et(K, \Omega^{2-q}_{K,\log})}\right)$ have filtrations given by 
$$\left\{\frac{\kappa^1_q(A|i)}{\kappa^1_q(A|n)} \right\}_{i \le n} \text{  and  } \left\{ \left(\frac{\Fil_{i} H^1_\et(K, \Omega^{2-q}_{K,\log})}{\Fil_{0} H^1_\et(K, \Omega^{2-q}_{K,\log})}\right) \right\}_{i \le n-1} \text{,  respectively.}$$

It suffices to show that for all $i \le n-1$, the following map induced by $\rho^{q,n}_{K,1}$
$$ \frac{\kappa^1_q(A|i)}{\kappa^1_q(A|i+1)} \to \left(\frac{\Fil_{i} H^1_\et(K, \Omega^{2-q}_{K,\log})}{\Fil_{i-1} H^1_\et(K, \Omega^{2-q}_{K,\log})} \right)^\star$$
is injective.
But this follows from \cite[Lem~22.2 and 22.3]{Kato-Inv} by identifying the above map with the map induced by the following pairings which don't have any left and right kernels. 
$$  \Omega^1_{\ff} \times \ff \to \Omega^1_{\ff}\xrightarrow{\text{Tr o res}}\F_p, \ \ \ \text{if } p\nmid i;  \hspace{2cm} \ff/\ff^p \times \ff/\ff^p \to \Omega^1_\ff \xrightarrow{\text{Tr o res}}\F_p, \ \ \ \text{ if } p|i $$
$$(v,w)\mapsto \text{Tr(res}(v.w));\ \ \ \ \hspace{2.2cm} (v,w)\mapsto \text{Tr(res}(dv.w)).$$
\end{proof}
We conclude this chapter with a stronger claim about the dual of $\wt{\rho}^{q,n}_{K,m}$, which we can 
easily deduce from what Kato already showed about $(\rho^{q}_K)^*$.

\begin{lem}\label{lem:Dual-iso}
    For $m \ge 1$, $n \ge 0$ and $q=1,2$, the map
  \[
    (\wt{\rho}^{q,n}_{K,m})^\star \colon
\frac{\Fil_{n-1} H^1_\et(K, W_m\Omega^{2-q}_{K,\log})}{\Fil_{0} H^1_\et(K,  W_m\Omega^{2-q}_{K,\log})} 
    \to (\varpi^m_q(A|n))^\star
  \]
  is an isomorphism, where the dual on the right
  is taken with respect to the Kato topology.
\end{lem}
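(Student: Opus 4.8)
The plan is to deduce Lemma~\ref{lem:Dual-iso} from the injectivity statement already established in \lemref{lem:Local-inj-6}, together with Kato's known duality results for the reciprocity map $\rho^q_K$ (cf. \cite{Kato80-1}, \cite{Kato-Inv}, \cite{Kato83}). First I would recall the commutative diagram used to define $\wt{\rho}^{q,n}_{K,m}$ in the proof of \lemref{lem:Local-inj-6}: it sits in a diagram of short exact sequences involving $\kappa^m_q(A|n)$, $\kappa^m_q(A|1)$ and $\varpi^m_q(A|n)$ on the top row, and the corresponding duals of $H^1_\et(K, W_m\Omega^{2-q}_{K,\log})$ modulo the various filtration steps on the bottom row. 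By \lemref{lem:Kato-equiv} (parts (2) and (4)), I may replace $A$ and $K$ by their completions without changing $\varpi^m_q(A|n)$ or $\Fil_nH^1_\et(K,W_m\Omega^{2-q}_{K,\log})$, so I assume $K$ is complete (hence a genuine $2$-local field) throughout.

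The next step is to take the $\star$-dual (with respect to the Kato topology on $\varpi^m_q(A|n)$, which is profinite-compatible) of the monomorphism $\wt{\rho}^{q,n}_{K,m}$. Since $\varpi^m_q(A|n)$ is a locally compact Hausdorff torsion group under the Kato topology and the target of $\wt{\rho}^{q,n}_{K,m}$ has the discrete topology with its $\star$-dual being profinite, Pontryagin duality (cf. \notref{not:Top}, \cite[\S~2.9]{Pro-fin}) turns the injectivity of $\wt{\rho}^{q,n}_{K,m}$ into the surjectivity of $(\wt{\rho}^{q,n}_{K,m})^\star$. To get the full isomorphism, I need in addition that $(\wt{\rho}^{q,n}_{K,m})^\star$ is injective, equivalently that $\wt{\rho}^{q,n}_{K,m}$ has dense image, equivalently (since the target is discrete) that $\wt{\rho}^{q,n}_{K,m}$ is already surjective. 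This is where I would invoke Kato's theorem: the global reciprocity map $\rho^q_K \colon K^M_q(K)\to \Hom_\Ab(H^{3-q}(K),\Q/\Z)$ induces, after passing to the filtration quotients via \thmref{thm:Kato-fil-10} and \eqref{eqn:Kato-0.5.1}, a map whose dual $(\rho^q_K)^*$ Kato shows to be an isomorphism onto the relevant subquotient (this is precisely the content used in \cite[Lem.~22.2, 22.3]{Kato-Inv} and the local class field theory of \cite{Kato80-1}, \cite{Kato80-2}). Compatibility of $\wt\rho^{q,n}_{K,m}$ with $\rho^q_K$ under the identifications $\kappa^m_q(A|n)$ versus $\Fil_nK^M_q(K)/p^m$ and $\Fil_nH^1_\et(K,W_m\Omega^{q-1}_{K,\log})$ versus $\ _{p^m}\Fil^{\bk}_nH^q(K)$ (both from \thmref{thm:Kato-fil-10}) then forces $\wt\rho^{q,n}_{K,m}$ to be bijective, hence its dual is too.

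The main obstacle, I expect, will be checking the topological bookkeeping precisely: one must verify that the Kato topology on $\varpi^m_q(A|n)$ (defined via \eqref{eqn:Kato-0.5} as a quotient of $\kappa^m_q(A|1)$, which in turn carries the subspace topology from $K^M_2(K)/p^m$) is indeed of the torsion-locally-compact-Hausdorff type for which $(-)^\star$ is a reflexive (Pontryagin) duality, and that the map $\wt\rho^{q,n}_{K,m}$ is continuous for these topologies so that passing to $\star$-duals is legitimate. The continuity of $\rho^q_K$ with respect to the Kato topology is recorded in \secref{sec:RHom} (from \cite[Thm.~1(iii), Thm.~2]{Kato80-1}), and the finite-level version $\rho^{q,n}_{K,m}$ inherits it via \lemref{lem:LR-mod}; I would then trace this through the snake-lemma diagram defining $\wt\rho^{q,n}_{K,m}$. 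Once continuity and reflexivity are in place, the argument is a formal dualization of \lemref{lem:Local-inj-6} combined with Kato's surjectivity input, so there is no further hard computation. An induction on $m$ using the diagram \eqref{eqn:Local-inj-7} (whose exactness is already established in the proof of \lemref{lem:Local-inj-6}) reduces everything to the case $m=1$, where $\wt\rho^{q,n}_{K,1}$ is identified with the pairings $\Omega^1_\ff\times\ff\to\F_p$ and $\ff/\ff^p\times\ff/\ff^p\to\F_p$ having no left or right kernels, which gives the isomorphism directly via \cite[Lem.~22.2, 22.3]{Kato-Inv}.
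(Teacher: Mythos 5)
Your reduction to the complete case and the continuity remarks match the paper, but the core duality step has a genuine gap. You propose to get surjectivity of $(\wt{\rho}^{q,n}_{K,m})^\star$ by ``dualizing'' the injectivity of $\wt{\rho}^{q,n}_{K,m}$ from \lemref{lem:Local-inj-6}. Non-degeneracy on one side of the pairing only says that the image of $\Fil_{n-1}H^1/\Fil_0H^1$ separates points of $\varpi^m_q(A|n)$, i.e.\ is \emph{dense} in its Pontryagin dual; this gives surjectivity onto the continuous dual only when $\varpi^m_q(A|n)$ is compact. The Kato topology here is built from the adic topology of the local residue field $\ff$, which is only locally compact, and the paper nowhere establishes (nor needs) compactness or reflexivity of $\varpi^m_q(A|n)$ --- your assertion that it is ``profinite-compatible'' is unsubstantiated. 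Worse, your intermediate claim that $\wt{\rho}^{q,n}_{K,m}$ itself is bijective is false: its target is the \emph{full} discrete dual of the infinite $p^m$-torsion group $\Fil_{n-1}H^1/\Fil_0H^1$, which is too large (already by cardinality) to be the image of $\varpi^m_q(A|n)$. Likewise, your $m=1$ endgame ``no left or right kernels $\Rightarrow$ isomorphism'' conflates non-degeneracy of the residue pairings on $\Omega^q_\ff$ and $\ff/\ff^p$ (which is exactly what \lemref{lem:Local-inj-6} uses, and which only yields injectivity statements) with topological self-duality onto continuous duals; and the induction via \eqref{eqn:Local-inj-7} would further require exactness of the corresponding sequences of Kato-continuous duals, i.e.\ extension of continuous characters, which is the content of \lemref{lem:K-dual}/\remref{remk:K-dual-1} and is missing from your argument.

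The paper's proof avoids all of this: it first shows that $(\rho^{q,1}_{K,m})^\star \colon H^1_\et(K,W_m\Omega^{2-q}_{K,\log})/\Fil_0 \to (\kappa^m_q(A|1))^\star$ is bijective --- surjectivity from Kato's local class field theory \cite[\S~3, Thm.~3]{Kato80-2} together with \lemref{lem:Kato-fil-9} and the dual exact sequence of \remref{remk:K-dual-1}, injectivity from the exact-annihilator statement \cite[Rem.~6.6]{Kato-89} combined with \thmref{thm:Kato-fil-10}. Injectivity of $(\wt{\rho}^{q,n}_{K,m})^\star$ is then immediate by restriction, and surjectivity follows by lifting a character of $\varpi^m_q(A|n)$ to $(\kappa^m_q(A|1))^\star$, pulling it back along $(\rho^{q,1}_{K,m})^\star$, and using \cite[Rem.~6.6]{Kato-89} and \thmref{thm:Kato-fil-10} once more to place the resulting class in $\Fil_{n-1}H^1/\Fil_0H^1$. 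If you want to repair your write-up, the missing ingredient is precisely this annihilator characterization of the Kato filtration (not an abstract duality formalism), and no induction on $m$ or graded-piece analysis is needed for this lemma.
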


\begin{proof}
     We can assume that $K$ is complete.
We consider the commutative diagram
  \begin{equation}\label{eqn:Dual-iso-0}
    \xymatrix@C.8pc{
      H^1_\et(K, W_m\Omega^{2-q}_{K,\log})
      \ar[r] \ar[d] &
      \frac{H^1_\et(K, W_m\Omega^{2-q}_{K,\log})}{\Fil_{0} H^1_\et(K, W_m\Omega^{2-q}_{K,\log})} 
      \ar[r] \ar[d]^-{(\rho^{q,1}_{K,m})^\star} & 0 \\
      ({K^M_q(K)}/{p^m})^\star \ar[r] &
      (\kappa^m_q(A|1))^\star \ar[r] &  0,}
  \end{equation}
  where the vertical arrows are the duals of the reciprocity maps.
  The bottom row is exact by Remark~\ref{remk:K-dual-1}.
  The left vertical arrow is bijective
  by \cite[\S~3, Thm.~3]{Kato80-2} and \lemref{lem:Kato-fil-9}. It follows that the right vertical arrow is surjective. It is also injective by \cite[Rem.~6.6]{Kato-89}, noting that $\Fil_{0} H^1_\et(K, W_m\Omega^{2-q}_{K,\log})=\Fil^{bk}_{0} H^{3-q}(K) \cap H^1_\et(K, W_m\Omega^{2-q}_{K,\log})$, which is \thmref{thm:Kato-fil-10}.
  In particular,  $(\wt{\rho}^{n}_{K,m})^\star$ is injective.

To prove that $(\wt{\rho}^{n}_{K,m})^\star$ is surjective,
let $\chi \in  (\varpi^m_2(A|n))^\star$. The surjectivity of
  $(\rho^{q,1}_{K,m})^\star$ implies that there exists an element $\chi' \in
  \frac{H^1_\et(K, W_m\Omega^{2-q}_{K,\log})}{\Fil_{0} H^1_\et(K, W_m\Omega^{2-q}_{K,\log})}$ such that
  $(\rho^{q,1}_{K,m})^\star(\chi') = \chi$. We only have to show that
  $\chi' \in \frac{\Fil_{n-1} H^1_\et(K, W_m\Omega^{2-q}_{K,\log})}
  {\Fil_{0} H^1_\et(K, W_m\Omega^{2-q}_{K,\log})}$. But this again follows from
  \cite[Rem.~6.6]{Kato-89} and \thmref{thm:Kato-fil-10}, where the latter one asserts $\Fil_{n} H^1_\et(K, W_m\Omega^{2-q}_{K,\log})=\Fil^{bk}_{n} H^{3-q}(K) \cap H^1_\et(K, W_m\Omega^{2-q}_{K,\log})$.
\end{proof}

\chapter{Duality theorem for Hodge-Witt cohomology over local fields}\label{chap:duality-localfields}
In this chapter, we shall prove a duality theorem for the cohomology of logarithmic Hodge-Witt sheaves with modulus for curves over local fields. Due to the lack of duality results for logarithmic Hodge-Witt cohomology (without modulus) for varieties over local fields, except in dimension one (cf. \cite[Prop~4]{Kato-Saito-Ann}, \cite[Thm~4.7]{KRS}), we focus solely on the case of curves. Let $X$ be a smooth projective geometrically connected curve over a local field $k$ of characteristic $p>0$. We let $E = \{x_1, \ldots , x_r\}$, $j \colon U \inj X$, $D_\n$ be as in \S~\ref{sec:Complexes}. Also, let $A_x :=\sO^h_{X,x}$, $K_x :=Q(A_x)$ and $X^h_x=\Spec A_x$ for any $x \in X_{(0)}$.

Since the cohomology group $H^i_\et(X,W_m\Omega^q_{X|D_\n,\log})$ is not finite in general (unlike the case in \S~\ref{sec:DT}), we choose a different approach to prove the duality theorem in this case. We begin by recalling some pairings of complexes.

\section{Pairing of complexes}
By \lemref{lem:Pair-3} and \corref{cor:Pair-4}, we have a commutative diagram (for $\n \ge \un{0}$)
\begin{equation}\label{eqn:comp-4}
    \xymatrix@C.8pc{
         W_m\Omega^{2-q}_{X|D_{\un{1}},\log} \times W_m\sF^{q,\bullet}_{\un{0}} \ar@<7ex>[d] \ar[r]^-{\cup} & W_m\sH^{2,\bullet} \ar@{=}[d] \\
        W_m\Omega^{2-q}_{X|D_{\un{n}+1}, \log} \times W_m\sF^{q,\bullet}_{\un{n}}  \ar@<7ex>[u]
       \ar[r]^-{\cup} &  W_m\sH^{2,\bullet}.}
\end{equation}
For any $x \in X_{(0)}$, let $\tau_x^h \colon \Spec(k(x)) \inj X^h_x$ and
$\tau_x \colon \Spec(k(x)) \inj X$ be the inclusion maps, and let
$j_x \colon X^h_x \to X$ be the canonical {\'e}tale map. 
 Recall (cf. \cite[\S~3.1]{Zhao}) that any pairing of complexes of sheaves
$\sA^\cdot \otimes \sB^\cdot \to \sC^\cdot$ on $X_\et$ gives rise to a pairing
$\tau_x^* \sA^\cdot \otimes^L \tau_x^{!} \sB^\cdot \to
\tau_x^{!} \sC^\cdot$ in $\sD(k(x)_\et)$. Applying this to ~\eqref{eqn:comp-4},
we get a commutative diagram of pairings 
\begin{equation}\label{eqn:G-comp-5}
    \xymatrix@C.8pc{
        \tau_x^*W_m\Omega^{2-q}_{X|D_{\un{1}},\log}\ \ \times \ \ \tau_x^{!}W_m\sF^{q,\bullet}_{\un{0}} \ar@<7ex>[d] \ar[r]^-{\cup} & \tau^{!}_x W_m\sH^{2,\bullet} \ar@{=}[d] \\
        \tau_x^*W_m\Omega^{2-q}_{X|D_{\un{n}+1}, \log} \ \ \times \ \   \tau_x^{!}W_m\sF^{q,\bullet}_{\un{n}}  \ar@<7ex>[u]
       \ar[r]^-{\cup} & \tau^{!}_x W_m\sH^{2,\bullet} }
\end{equation}
in  $\sD(k(x)_\et)$ which is compatible with ~\eqref{eqn:comp-4}. We shall write $W_m\sF^{q,\bullet}_\n$ as $W_m\sF^{q}_\n$ in the sequel.

\section{Relation with local duality}\label{sec:LG-dual}
For any
$\un{n} \ge 1, q \ge 0$ and any smooth scheme $X$ (need not be a curve) over a field, we let $W_m\sW^q_{\un{n}}$ be defined by the following short exact sequence 
\begin{equation}\label{eqn:Coker-0}
  0 \to  W_m\Omega^q_{X|D_{\un{n}}, \log} \to W_m\Omega^q_{X|D_{\un{1}},\log} \to
  W_m\sW^q_{\un{n}} \to 0,
\end{equation}
of Nisnevich (resp. {\'e}tale) sheaves on $X$. We recall the following result without proof, as it will be used in various places throughout the thesis.
\begin{prop}\label{prop:JSZ}
Let $X, D_\n$ be as above. For $\nu \in \{1,\ldots,r\}$, we let $\delta_\nu=(0,\dots,1,\dots,0)$ $\in \N^r$, where $1$ is in the $\nu$-th place. For $\n \ge \un 1, q  \ge 0$ and $m\ge 1$, we define $$\gr^{\un n,\nu}W_m\Omega^q_{X\log}:=W_m\Omega^q_{X|D_{\n},\log}/W_m\Omega^q_{X|D_{\n+\nu},\log},$$  
 as Nisnevich (resp. {\'e}tale) sheaf on $X$. Then we have
 \begin{enumerate}
     \item $\gr^{\un n,\nu}W_m\Omega^q_{X\log}$ is a coherent $\sO_{D_\nu}^{p^e}$-module, for some $e>>0$.
     \item $W_m\sW^q_{\un{n}}$ has a filtration whose successive quotients are of the form $\gr^{\un n,\nu}W_m\Omega^q_{X\log}$.
 \end{enumerate}
\end{prop}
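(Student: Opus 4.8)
The plan is to recover both assertions essentially as a translation of the structure theory of the relative logarithmic de Rham--Witt sheaves from \cite{JSZ}, combined with the $V$-filtration results of Chapter~\ref{chap:F_p-2} and the $p$--$R$ sequence for the relative sheaves (Corollary~\ref{cor:Complex-7}).

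First I would dispose of part (2), which is formal. Since $\n \ge \un{1}$, choose a chain $D_{\un{1}} = D^{(0)} \le D^{(1)} \le \cdots \le D^{(L)} = D_\n$ in $\Div_E(X)$ in which each step is of the form $D^{(i+1)} = D^{(i)} + \delta_{\nu_i}$. The $\dlog$-isomorphism $\dlog \colon \sK^m_{q,X|D} \xrightarrow{\cong} W_m\Omega^q_{X|D,\log}$ of \cite[Thm.~1.1.5]{JSZ}, together with the inclusion $\sK^M_{q,X|D'} \subset \sK^M_{q,X|D}$ for $D' \ge D$ (\cite[Prop.~1.1.3]{JSZ}), shows that $W_m\Omega^q_{X|D',\log} \subset W_m\Omega^q_{X|D,\log}$ whenever $D' \ge D$; hence the $D^{(i)}$ give a finite decreasing filtration of $W_m\Omega^q_{X|D_{\un{1}},\log}$ whose graded pieces are $W_m\Omega^q_{X|D^{(i)},\log}/W_m\Omega^q_{X|D^{(i+1)},\log}$, each of the form $\gr^{\un{n'},\nu_i}W_m\Omega^q_{X\log}$ for a suitable $\un{n'}$ in the notation of the proposition. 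Dividing by the bottom term $W_m\Omega^q_{X|D_\n,\log}$ and comparing with \eqref{eqn:Coker-0} exhibits the claimed filtration on $W_m\sW^q_{\un{n}}$; so part (2) follows once part (1) is known.

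For part (1), I would first reduce to $m = 1$. Applying Corollary~\ref{cor:Complex-7} compatibly to the moduli $D_\n$ and $D_{\n+\delta_\nu}$ yields, for $1 \le r \le m-1$, a short exact sequence expressing $\gr^{\n,\nu}W_m\Omega^q_{X\log}$ as an extension of an analogous graded sheaf for length $m-r$ by a Frobenius-twisted analogue for length $r$ (the twist coming from the rounding $\lceil \n/p^{m-r}\rceil$). Since a Frobenius twist only enlarges the exponent $e$, and an extension of coherent $\sO_{E_\nu}^{p^e}$-modules is again one after replacing $e$ by a maximum, an induction on $m$ reduces us to $m=1$. In that case $\gr^{\n,\nu}\Omega^q_{X\log}$ is, via $\dlog$, the $n_\nu$-th graded piece along $E_\nu$ of the symbol (unit) filtration on $\sK^M_{q,X|D_{\un 1}}$, and one invokes the Bloch--Kato--Gabber type computation of such graded pieces, globalized in \cite[\S2]{JSZ} (see also \cite{Rulling-Saito}, \cite{Bloch-Kato}, \cite{Kato-Inv}): working Nisnevich-locally at the generic point of $E_\nu$, i.e. over a Henselian discrete valuation ring with residue field $F = k(E_\nu)$, this graded piece is a subquotient of $\Omega^{q-1}_{E_\nu} \otimes_{\sO_{E_\nu}} (\sI_\nu/\sI_\nu^2)^{\otimes n_\nu}$ if $p \nmid n_\nu$, and of $(\Omega^{q-1}_{E_\nu} \oplus \Omega^{q-2}_{E_\nu}) \otimes_{\sO_{E_\nu}} (\sI_\nu/\sI_\nu^2)^{\otimes n_\nu}$ if $p \mid n_\nu$, where $\sI_\nu = \Ker(\sO_X \surj \sO_{E_\nu})$ and the $\Omega^j_{E_\nu} = \Omega^j_{E_\nu/\F_p}$ are finite locally free by Lemmas~\ref{lem:LWC-3} and \ref{lem:LWC-2} applied to $E_\nu$. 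These are coherent $\sO_{E_\nu}$-modules; moreover the symbol map realizing them factors through a power $\Frob^e$ of the absolute Frobenius of $\sO_X$ --- which is precisely what makes the higher Cartier operator $C$ act on the graded pieces --- so the induced $\sO_{E_\nu}$-action factors through $\sO_{E_\nu}^{p^e}$, giving the asserted coherent $\sO_{E_\nu}^{p^e}$-module structure. These identifications are compatible with {\'e}tale localization, so they globalize.

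The main obstacle will be the $m=1$, wildly ramified case $p \mid n_\nu$: pinning down the precise shape of $\gr^{\n,\nu}$ and a uniform exponent $e$ requires the delicate analysis of the interplay of $\dlog$, the higher Cartier operator, and the $V$- and $p^m$-filtrations that forms the technical core of \cite{JSZ}, which is why the result there is taken as input. A secondary point is the reduction to a single component: near an intersection $E_\nu \cap E_\mu$ one must first localize away from (or devissage along) the other components so that only the $\nu$-th modulus is in play, which is routine but should be stated with care.
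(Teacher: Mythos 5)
Your proposal is correct and follows essentially the same route as the paper: part (2) is the formal devissage along a chain of divisors from $D_{\un 1}$ to $D_{\n}$ coming from \eqref{eqn:Coker-0}, and part (1) ultimately rests on the graded-piece computation of \cite[Prop.~2.2.5]{Kerz-Zhao} (see also \cite[Prop.~1.1.9]{JSZ}), which is exactly what the paper cites. Your preliminary reduction to $m=1$ via Corollary~\ref{cor:Complex-7} is sound but unnecessary, since the cited result already covers all $m$.
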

\begin{proof}
    We may assume $X$ is local. In this case, Item (1) is same as \cite[Prop.~2.2.5]{Kerz-Zhao} (see also \cite[Prop.~1.1.9]{JSZ}). Item (2) follows from \eqref{eqn:Coker-0}. 
\end{proof}
From now on, we restrict ourselves to the case of curves over local fields, assuming that $X$ is as defined at the beginning of this chapter.
In this case, we can write
$$W_m\sW^q_{\un{n}} = \ {\underset{x \in E}\bigoplus} (\tau_x)_* \circ
(\tau_x)^*(W_m\sW^q_{\un{n}}).$$
In particular, the proper base change theorem for the
torsion {\'e}tale sheaves implies that
\begin{equation}\label{eqn:W-n}
    H^i_\et(X, W_m\sW^q_{\un{n}}) = {\underset{x \in E}\bigoplus}
H^i_\et (X^h_x, W_m\sW^q_{\un{n}}).
\end{equation} 
For $n \ge 0$, we let $$W_m\sV^q_{\un{n}} = \left[\frac{Z_1 \Fil_{D_{\un{n}}}W_m\Omega^q_U}{Z_1 W_m\Omega^q_X(\log E)}
  \xrightarrow{1-C} \frac{\Fil_{D_{\un{n}}}W_m\Omega^q_U}{W_m\Omega^q_X(\log E)}\right]$$
  so that we have a
term-wise short exact sequence of complexes on $X_\nis$ (resp. $X_\et$):
\begin{equation}\label{eqn:G-comp-3}
  0 \to W_m\sF_{\un{0}}^q \to W_m\sF^q_{\un{n}} \to W_m\sV^q_{\un{n}} \to 0.
  \end{equation}

By \cite[Prop.~4.4]{KRS} (see also \cite[\S~3, Prop.~4]{Kato-Saito-Ann}),
there is a bijective trace homomorphism
$\Tr_X \colon H^2_\et(X, W_m\Omega^2_{X, \log}) \xrightarrow{\cong} {\Z}/{p^m}$. By definition of the pairing between complexes of sheaves (cf.
\cite[\S~1, p.~175]{Milne-Duality}) the pairing on the top row of ~\eqref{eqn:comp-4}
is equivalent to a morphism of complexes $W_m\Omega^{2-q}_{X|D_{\un{1}},\log}  \to
\sHom^\cdot(W_m\sF^{q}_{\un{0}}, W_m \sH)$. 
Composing this with the canonical map $\sHom^\cdot(W_m\sF^{q}_{\un{0}}, W_m \sH) \to 
{\bf R}\sHom^\cdot(W_m\sF^{q}_{\un{0}}, W_m \sH)$, we get a morphism $\theta \colon
W_m\Omega^{2-q}_{X|D_{\un{1}}, \log} \to  
{\bf R}\sHom^\cdot(W_m\sF^{q}_{\un{0}}, W_m \sH)$ in $\sD(X_\et)$.
 Similarly, the bottom row of
~\eqref{eqn:comp-4} gives rise to
a morphism in the derived category
$\theta_\n \colon W_m\Omega^{2-q}_{X|D_{\n+1}, \log} \to
{\bf R}\sHom^\cdot(W_m\sF^{q}_{\n}, W_m \sH)$. Moreover, we have a
commutative diagram
\begin{equation}\label{eqn:G-comp-4}
  \xymatrix@C.8pc{
    W_m\Omega^{2-q}_{X|D_{\n+1}, \log} \ar[r]^-{\theta_{\n}} \ar[d] &
    {\bf R}\sHom^\cdot(W_m\sF^{q}_{\n}, W_m \sH) \ar[d]  \\
W_m\Omega^{2-q}_{X|D_{\un{1}}, \log} \ar[r]^-{\theta} &  {\bf R}\sHom^\cdot(W_m\sF^{q}_{\un{0}}, W_m \sH).}
\end{equation}

Letting $\psi_\n^{2-q}$ denote the induced map between the homotopy fibers, we
get a commutative diagram of distinguished triangles in $\sD(X_\et, {\Z}/{p^m})$:
\begin{equation}\label{eqn:G-comp-5.5}
  \xymatrix@C.8pc{
    W_m \sW^{2-q}_{\n+1}[-1] \ar[d]_-{\psi_{\un n}^{2-q}} \ar[r] &
    W_m\Omega^{2-q}_{X|D_{\n+1}, \log} \ar[d]_-{\theta_{\un n}} \ar[r] &
     W_m\Omega^{2-q}_{X|D_{\un 1}, \log} \ar[d]^-{\theta} \ar[r]^-{+1} & \\ 
     {\bf R}\sHom^\cdot(W_m \sV^{q}_{\n}, W_m \sH) \ar[r] &
     {\bf R}\sHom^\cdot(W_m\sF^{q}_{\n}, W_m \sH) \ar[r] &
     {\bf R}\sHom^\cdot(W_m\sF^{q}_{\un 0}, W_m \sH) \ar[r]^-{+1} & .}
 \end{equation}
  Now recall that, for $\sA,\sB,\sC \in \sD(X_\et,\Z/p^m)$, we have
 canonical isomorphism $${\bf R}\Hom^\cdot({\Z}/{p^m},
 {\bf R}\sHom^\cdot(\sB^\cdot, \sC^\cdot)) \xrightarrow{\cong}
 {\bf R}\Hom^\cdot(\sB^\cdot, \sC^\cdot),$$ (cf. \cite[Prop.~4.3]{Gamst})
 and also the canonical map
 $${\bf R}^i\Hom^\cdot(\sB^\cdot, \sC^\cdot) \to
 \Hom({\bf R}^j\Hom({\Z}/{p^m}, \sB^\cdot),
{\bf R}^{i+j}\Hom({\Z}/{p^m}, \sC^\cdot)),$$ for $i, j\in \Z$. So, any map $\sA \to  {\bf R}\sHom^\cdot(\sB^\cdot, \sC^\cdot)$ in $\sD(X_\et,\Z/p^m)$ induces a canonical map 
$$H^i_\et(X,\sA)= {\bf R}^i\Hom^\cdot({\Z}/{p^m},
 \sA) \to {\bf R}^i\Hom^\cdot(\sB^\cdot, \sC^\cdot) \to$$
$$\to \Hom({\bf R}^{2-i}\Hom({\Z}/{p^m}, \sB^\cdot),
{\bf R}^{2}\Hom({\Z}/{p^m}, \sC^\cdot))= \Hom(H^{2-i}_\et(X,\sB), H^{2}_\et(X,\sC)).$$
Thus, applying ${\bf R}^i\Hom^\cdot({\Z}/{p^m},
 (-))$ to \eqref{eqn:G-comp-5.5} and using the above canonical maps, we get a commutative diagram of long exact hypercohomology sequences 
\begin{equation}\label{eqn:G-comp-6}
  \xymatrix@C.5pc{
    0 \ar[r] & H^0(W_m\Omega^{{2-q}}_{X|D_{{\n+1}}, \log}) \ar[r]
    \ar[d]_-{\theta^*_{{\n}}} &
    H^0(W_m\Omega^{2-q}_{X|D_{\un{1}}, \log}) \ar[r] \ar[d]^-{\theta^*} &
    H^0(W_m \sW^{2-q}_{{\n+1}}) \ar[r]^-{\partial}
    \ar[d]^-{(\psi^{2-q}_{{\n}})^*} &
    H^1(W_m\Omega^{2-q}_{X|D_{\n+1}, \log}) \ar[r]
    \ar[d]^-{\theta^*_{{\n}}} & \cdots \\
    0 \ar[r] & \H^2(W_m\sF^{q}_{\n})^\vee \ar[r] &
    \H^2(W_m\sF^{q}_{\un{0}})^\vee \ar[r] & \H^1(W_m\sV^{q}_{\n})^\vee \ar[r] &
    \H^1(W_m\sF^{q}_{\n})^\vee \ar[r] & \cdots ,}
\end{equation}
where $n \ge 0$.

If $x \in E$, then ~\eqref{eqn:G-comp-5} gives rise to a commutative diagram
\begin{equation}\label{eqn:G-comp-8}
  \xymatrix@C.8pc{
\tau^*_x W_m\Omega^{2-q}_{X|D_{\un n+1}, \log} \ar[r]^-{\theta_{\n,x}} \ar[d] &
{\bf R}\sHom^\cdot(\tau^{!}_x W_m\sF^{q}_{\n}, \tau^{!}_x
W_m \sH) \ar[d] \\
\tau^*_x W_m\Omega^{2-q}_{X|D_{\un 1}, \log} \ar[r]^-{\theta_x} &
{\bf R}\sHom^\cdot(\tau^{!}_xW_m\sF^{q}_{\un 0}, \tau^{!}_xW_m \sH),}
\end{equation}
which is compatible with ~\eqref{eqn:G-comp-4}.

Since $\tau^*_x$ is exact, it is easy to check that the homotopy fibers of
the left and the right vertical arrows are $\tau^*_x  W_m \sW^{2-q}_{\n}[-1]$
and ${\bf R}\sHom^\cdot(\tau^{!}_x W_m \sV^q_{\n}, \tau^{!}_x
W_m \sH)$, respectively. We also note that the maps
$j^*_x W_m \sW^{2-q}_{\n} \to (\tau^h_x)_* \tau^*_x  W_m \sW^{2-q}_{\n}$
and $(\tau^h_x)_* \tau^{!}_x W_m \sV^q_{\n} \to j^*_x W_m \sV^q_{\n}$
are isomorphisms on the {\'e}tale (and Nisnevich) site of $X^h_x$.
Since the canonical (i.e., forget support) map
$H^2_x(X, W_m\Omega^2_{X, \log}) \to H^2_\et(X, W_m\Omega^2_{X, \log})$ is bijective (cf. \cite[Prop.~4.4]{KRS}),
letting $\psi_{\n,x}^{2-q}$ denote the induced map between the mapping cones
of the vertical arrows in ~\eqref{eqn:G-comp-8}, considering the
corresponding maps between the hypercohomology and using the proper base change,
we obtain a commutative diagram 
\begin{equation}\label{eqn:G-comp-9}
  \xymatrix@C.5pc{
 0 \ar[r] & G^{{2-q},0}_{\n}(x) \ar[r]
    \ar[d]_-{\theta^*_{{\n}}} &
    H^0(W_m\Omega^{2-q}_{X^h_x|D_{\un{1}}, \log}) \ar[r] \ar[d]^-{\theta^*_x} &
    H^0(\tau^*_x W_m \sW^{2-q}_{{\n+1}}) \ar[r]^-{\partial}
    \ar[d]^-{(\psi^{2-q}_{{\n},x})^*} & G^{{2-q},1}_{\n}(x) \ar[r]
    \ar[d]^-{\theta^*_{{\n},x}} & \cdots \\
    0 \ar[r] & \H^2_x(W_m\sF^{q}_{\n})^\vee \ar[r] &
    \H^2_x(W_m\sF^{q}_{\un{0}})^\vee \ar[r] & \H^1_x(W_m\sV^{q}_{\n})^\vee \ar[r] &
    \H^1_x(W_m\sF^{q}_{\n})^\vee \ar[r] & \cdots }
\end{equation}
of {\'e}tale cohomology on $X^h_x$ whose rows are exact and which is compatible with
~\eqref{eqn:G-comp-6}. Here, we let
$G^{q,i}_{\n}(x) = H^i_\et(X^h_x, W_m\Omega^{q}_{X^h_x|D_{{\n+1}}, \log})$.

\subsection{The maps \texorpdfstring{$(\psi_{\n,x}^q)^*$}{Psi} vs. \texorpdfstring{$\wt{\rho}^{q,\n+1}_{K,m}$}{Rho}}
\label{sec:Loc-der-rec}
Our goal now is to compute the cohomology of $W_m\sW^q_{\n}$ and $W_m\sV^q_\n$,
and show that $(\psi^q_{\n, x})^*$ coincides with the reciprocity map
$\wt{\rho}^{q,n_x+1}_{K,m}$ of \lemref{lem:Local-inj-6}.
We write $D_\n = \sum\limits_{x \in E} n_x [x]$, where $n_x$ is the multiplicity of $D_\n$ at $x\in E$ (i.e. $n_x=n_i$ if $x=x_i\in E$).

  \begin{lem}\label{lem:Coker-1}
Let $q\ge 0, \n \ge \un 1$. For $x \in D_{\un{n}}$ , we have the following.
\begin{enumerate}
\item
$H^i_x(X, W_m\sW^q_{\un{n}}) \xrightarrow{\cong} H^i_\et(X^h_x, W_m\sW^q_{\un{n}})$, for all $i \ge 0$. 
\item
\[
H^0_\et(X^h_x, W_m\sW^q_{\un{n}}) = \left\{\begin{array}{ll}
0 & \mbox{if $q = 0$} \\
\varpi^m_q(A_x|n_x)  & \mbox{if $q \ge 1$}.
\end{array}\right.
\]
\item
$H^i_\et(X^h_x, W_m\sW^q_{\un{n}}) = 0$ for $i \ge 1$ and $q \ge 0$.
\end{enumerate}
\end{lem}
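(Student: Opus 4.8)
\textbf{Proof plan for Lemma~\ref{lem:Coker-1}.}

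The plan is to reduce everything to the local ring $A_x$ and its completion, and then to identify the relevant sheaf cohomology with the Milnor $K$-theoretic groups $\varpi^m_q(A_x|n_x)$ via the dlog isomorphism $\dlog \colon \sK^m_{q, X^h_x|D_{\un n}} \xrightarrow{\cong} W_m\Omega^q_{X^h_x|D_{\un n},\log}$ recalled in \S~\ref{sec:Complexes}. First I would prove (1): since $E$ is a finite set of closed points of the curve $X$ and $W_m\sW^q_{\un n}$ is supported on $E$, we may write $W_m\sW^q_{\un n} = \bigoplus_{x\in E}(\tau_x)_*(\tau_x)^*W_m\sW^q_{\un n}$, exactly as in the derivation of \eqref{eqn:W-n}; the isomorphism $H^i_x(X,-) \cong H^i_\et(X^h_x,-)$ for a sheaf supported at $x$ then follows from excision and the proper (henselian) base change theorem for torsion {\'e}tale sheaves, because $W_m\sW^q_{\un n}$ is a $p$-primary torsion sheaf. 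This is routine.

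For (2), the key point is that on $X^h_x = \Spec(A_x)$ the defining exact sequence \eqref{eqn:Coker-0} reads
\begin{equation*}
0 \to W_m\Omega^q_{A_x|\fm^{n_x},\log} \to W_m\Omega^q_{A_x|\fm,\log} \to W_m\sW^q_{\un n}\big|_{X^h_x} \to 0,
\end{equation*}
and $H^0_\et(X^h_x, W_m\Omega^q_{A_x|\fm^{r},\log}) = \kappa^m_q(A_x|r)$ by the dlog isomorphism together with the definition of the filtration $\Fil_r K^M_q$ in Notation~\ref{not:fil-r} (using that $A_x$ is a regular local ring with infinite residue field, so improved and naive Milnor $K$-theory agree). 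Taking the long exact cohomology sequence and invoking (3) (to kill $H^1$ of $W_m\Omega^q_{A_x|\fm^{n_x},\log}$), the group $H^0_\et(X^h_x, W_m\sW^q_{\un n})$ becomes precisely the cokernel $\kappa^m_q(A_x|n_x) \to \kappa^m_q(A_x|1)$ — which for $q\ge 1$ is $\varpi^m_q(A_x|n_x)$ by \eqref{eqn:Kato-0.5} — and for $q=0$ is zero because $\sK^m_{0}=\Z/p^m$ has no modulus filtration, so the sequence \eqref{eqn:Coker-0} is trivially $0\to\Z/p^m\xrightarrow{=}\Z/p^m\to 0$. Here one should be slightly careful about whether the bottom modulus index is $\un 1$ or $\un 0$; the statement of Proposition~\ref{prop:JSZ} and the convention $\Fil_0 K^M_q(K)=K^M_q(A)=\kappa^m_q(A|0)$ together with $\kappa^m_q(A_x|1)=\kappa^m_q(A_x|0)$ when the residue field is a local field (this is the content of the second part of \lemref{lem:Kato-equiv}-type facts used by Kato) make the two conventions compatible.

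For (3), the heart of the matter is the vanishing $H^i_\et(X^h_x, W_m\sW^q_{\un n}) = 0$ for $i\ge 1$. By Proposition~\ref{prop:JSZ}(2), $W_m\sW^q_{\un n}$ has a finite filtration whose graded pieces are the sheaves $\gr^{\un n,\nu}W_m\Omega^q_{X\log}$, and by Proposition~\ref{prop:JSZ}(1) each such graded piece is a coherent $\sO_{D_\nu}^{p^e}$-module — i.e.\ (after the Frobenius twist) a coherent sheaf on the Artinian scheme $D_\nu = \Spec(A_x/\fm^{n_x})$, which is a finite local $k$-scheme. Coherent sheaves on a zero-dimensional affine scheme have no higher cohomology, and by the henselian base change / comparison of \'etale and Zariski cohomology for coherent sheaves (or rather, since $D_\nu$ is a finite scheme, the \'etale site is trivial), we get $H^i_\et(X^h_x, \gr^{\un n,\nu}W_m\Omega^q_{X\log}) = 0$ for $i\ge 1$. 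Dévissage up the filtration then yields the vanishing for $W_m\sW^q_{\un n}$. I expect this last step — correctly invoking Proposition~\ref{prop:JSZ} and checking that the Frobenius twist does not affect the vanishing of higher cohomology (it does not, since $F\colon \sO_X \to \sO_X^p$ is an isomorphism of sheaves of rings and $A_x$ is $F$-finite) — to be the main technical obstacle, though it is largely bookkeeping once the structural input of \cite{JSZ}, \cite{Kerz-Zhao} is in hand.
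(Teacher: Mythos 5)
Your items (1) and (3) are fine and run along the same lines as the paper, which disposes of (1) as immediate (the sheaf is a skyscraper at the points of $E$, so cohomology with support at $x$ is cohomology over $X^h_x$) and of (3) by exactly your d\'evissage: by Proposition~\ref{prop:JSZ} the sheaf $W_m\sW^q_{\un n}$ is a finite iterated extension of quasi-coherent modules on the zero-dimensional scheme $D_{\un n}$, whose higher \'etale cohomology vanishes (the Frobenius twist is indeed harmless).

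The problem is in (2). You take the long exact \'etale cohomology sequence of \eqref{eqn:Coker-0} over $X^h_x$ and claim that item (3) ``kills $H^1$ of $W_m\Omega^q_{A_x|\fm^{n_x},\log}$''. It does not: item (3) concerns the quotient sheaf $W_m\sW^q_{\un n}$, whereas what you need in order to identify $H^0_\et(X^h_x, W_m\sW^q_{\un n})$ with the cokernel $\kappa^m_q(A_x|1)/\kappa^m_q(A_x|n_x)=\varpi^m_q(A_x|n_x)$ is the vanishing of the boundary map into $H^1_\et(X^h_x, W_m\Omega^q_{X^h_x|D_{n_x},\log})$, i.e.\ (in practice) the vanishing of $H^1_\et$ of the \emph{sub}sheaf. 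That vanishing is true, but it is a genuinely nontrivial statement which the paper proves separately (it is the claim inside Lemma~\ref{lem:support}: for $n_x\ge 1$ one shows $1-\ov F$ is surjective on $\Fil_{-n_x}\Omega^q_{A_x}$ by a Hensel's-lemma/Artin--Schreier argument, and then one inducts on $m$ via Corollary~\ref{cor:Complex-7}); nothing in your write-up supplies it. So as written, the surjectivity of $H^0_\et(X^h_x, W_m\Omega^q_{X|D_{\un 1},\log})\to H^0_\et(X^h_x,W_m\sW^q_{\un n})$ is unproven. Note that one can instead avoid the long exact sequence altogether: over the henselian local scheme $X^h_x$, sections of a Nisnevich sheaf coincide with its stalk, stalks are exact, and the comparison of Nisnevich/Zariski with \'etale sections for $W_m\sW^q_{\un n}$ follows from the same coherence d\'evissage you use in (3); this is presumably what the paper means by ``clear from the definition''. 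Either patch the argument that way, or prove the $H^1$-vanishing of the relative logarithmic sheaf explicitly — but do not cite (3) for it.
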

\begin{proof}
  The items (1)  and (2) are clear from the definition of $W_m\sW^q_{\un{n}}$ while
 (3) follows from \propref{prop:JSZ} which says
  that $ W_m\sW^q_{\un{n}}$ has a finite decreasing filtration whose graded quotients
  are coherent sheaves on $D_{\un{n}}$. 
\end{proof}

\begin{lem}\label{lem:support}
     For $x\in X^{(1)}$, we have 
    $$H^1_x(X^h_x, W_m\Omega^q_{X|D_\n,\log}) \cong \frac{ K^M_q(K_x)}{\Fil_{n_x} K^M_q(K_x)+p^mK^M_q(K_x)}. $$
\end{lem}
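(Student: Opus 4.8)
The plan is to use the localization (Gysin) sequence for cohomology with support at the closed point $x$ of $X^h_x = \Spec A_x$, applied to the sheaf $\sF := W_m\Omega^q_{X|D_\n, \log}$. Since $\sF$ is a subsheaf of $W_m\Omega^q_{X, \log}$, hence of $j_*(j^*\sF)$, we have $H^0_x(X^h_x, \sF) = 0$, and $j^*\sF = W_m\Omega^q_{K_x, \log} \cong K^M_q(K_x)/p^m$. The localization sequence then gives a short exact sequence
\[
0 \to \coker\!\big(H^0_\et(X^h_x, \sF) \to K^M_q(K_x)/p^m\big) \to H^1_x(X^h_x, \sF) \to \ker\!\big(H^1_\et(X^h_x, \sF) \to H^1_\et(K_x, W_m\Omega^q_{K_x, \log})\big) \to 0.
\]
By the $\dlog$-isomorphism $\sK^m_{q, A_x|D_\n} \cong W_m\Omega^q_{A_x|D_\n, \log}$ and the definition of $\Fil_{n_x}K^M_q(K_x)$ in Notation~\ref{not:fil-r} as the global sections of the relative Milnor $K$-sheaf, the image of $H^0_\et(X^h_x, \sF)$ in $K^M_q(K_x)/p^m$ is $(\Fil_{n_x}K^M_q(K_x) + p^m K^M_q(K_x))/p^m K^M_q(K_x)$, so the left-hand term is exactly $K^M_q(K_x)/(\Fil_{n_x}K^M_q(K_x) + p^m K^M_q(K_x))$. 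Thus it suffices to show that the restriction $r \colon H^1_\et(X^h_x, W_m\Omega^q_{X|D_\n, \log}) \to H^1_\et(K_x, W_m\Omega^q_{K_x, \log})$ is injective.

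For the injectivity of $r$ I would first reduce to $m = 1$, using the short exact sequence of \corref{cor:Complex-7} and its compatibility with restriction to the generic point. For $m = 1$, \lemref{lem:Complex-6}(6) presents $\Omega^q_{X|D_\n, \log}$ as $\ker\!\big(Z_1\Fil_{-D_\n}\Omega^q_U \xrightarrow{1-C} \Fil_{(-D_\n)/p}\Omega^q_U\big)$, where the two terms are locally free coherent $\sO_X$-modules by \lemref{lem:Perfect-0.5}; since their higher Zariski ($=$ {\'e}tale) cohomology vanishes on the affine $X^h_x$ and $X^h_x$ has Krull dimension one, the long exact sequence identifies $H^1_\et(X^h_x, \Omega^q_{X|D_\n, \log})$ with $\coker(1-C)$ on $A_x$-sections and kills $H^i_\et$ for $i\ge 2$; likewise $H^1_\et(K_x, \Omega^q_{K_x, \log}) = \coker\!\big(Z_1\Omega^q_{K_x} \xrightarrow{1-C} \Omega^q_{K_x}\big)$. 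Injectivity of the induced map on cokernels amounts to saying that if $\omega \in Z_1\Omega^q_{K_x}$ has $(1-C)(\omega) \in \Fil_{(-D_\n)/p}\Omega^q_{K_x}$ then $\omega$ can be corrected by a logarithmic form to lie in $Z_1\Fil_{-D_\n}\Omega^q_{K_x}$; I would deduce this from the Cartier--filtration compatibility of \propref{prop:Cartier-fil-1} combined with \lemref{lem:Complex-6}, after reducing to $K_x$ complete via \corref{cor:Complete-2} and \lemref{lem:Non-complete-3} so that the Geisser--Hesselholt decomposition of \propref{prop:Fil-decom} is available. A parallel route is to run the d\'evissage in $D_\n$ itself: the exact sequence \eqref{eqn:Coker-0} together with \lemref{lem:Coker-1} gives $H^{i}_x(X^h_x, W_m\sW^q_{\n}) = 0$ for $i \geq 1$ and $H^0_x(X^h_x, W_m\sW^q_{\n}) = \varpi^m_q(A_x|n_x)$, hence a short exact sequence $0 \to \varpi^m_q(A_x|n_x) \to H^1_x(X^h_x, W_m\Omega^q_{X|D_\n, \log}) \to H^1_x(X^h_x, W_m\Omega^q_{X|D_{\un{1}}, \log}) \to 0$ which, compared with \eqref{eqn:Kato-0.5.1}, reduces everything to the base case $D_\n = D_{\un{1}}$.

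The main obstacle is precisely this injectivity of $r$ on $H^1$ --- equivalently, the base case $H^1_x(X^h_x, W_m\Omega^q_{X|D_{\un{1}}, \log}) \cong K^M_q(K_x)/(\Fil_1 K^M_q(K_x) + p^m K^M_q(K_x))$. It is here that one must genuinely invoke either the Cartier-isomorphism machinery of Chapters~3--4 (to locate the solutions of the Artin--Schreier-type equation $(1-C)(\omega) = \eta$ inside the correct filtration step) or Kato's explicit description of $\Fil^{\bk}_\bullet H^{q+1}(K_x)$. The remaining ingredients --- the localization sequence, the vanishing of $H^i_x$ for $i\ge 2$, the identification of $H^0_\et(X^h_x,\sF)$ with relative Milnor $K$-theory, and the two d\'evissages --- follow from results already established together with the coherence and acyclicity facts of \lemref{lem:Perfect-0.5} and \lemref{lem:Complex-6}.
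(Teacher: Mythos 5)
Your localization-sequence skeleton is the same as the paper's, and your reduction to $m=1$ via \corref{cor:Complex-7} can be made to work for an injectivity statement (using that $H^1_\et(K_x,W_{m-1}\Omega^q_{K_x,\log})\to H^1_\et(K_x,W_m\Omega^q_{K_x,\log})$ is injective). The genuine gap is that the one step carrying all the content --- the injectivity of $r$, or rather what the paper actually proves, namely the vanishing $H^1_\et(X^h_x, W_m\Omega^q_{X^h_x|D_{n_x},\log})=0$ for $n_x\ge 1$ --- is left unproven, and the tool you cite for it does not apply. \propref{prop:Cartier-fil-1} concerns the pole filtration $\Fil_nW_m\Omega^q_K$ with $n\ge 0$, whereas $W_m\Omega^q_{X|D_\n,\log}$ sits inside the zero filtration $\Fil_{-\n}$ (\lemref{lem:Complex-5}); for negative indices the ``no correction'' statement is false (every $\dlog$ form is killed by $1-C$), and the correction-by-a-logarithmic-form claim you need is not a formal consequence of \propref{prop:Cartier-fil-1} together with \lemref{lem:Complex-6}: the surjectivity in \lemref{lem:Complex-6}(6) is exactness of \'etale sheaves only, and invoking it on $A_x$-sections is circular, since its failure on sections is precisely the group $H^1_\et(X^h_x,\cdot)$ you are trying to control. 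What fills this gap in the paper is an explicit argument: reduce, via the generators of \lemref{lem:VR-4}, to the surjectivity of $1-\ov F$ on $\Fil_{-n_x}\Omega^q_{A_x}$ modulo exact forms, which for $q=0$ is the Artin--Schreier equation $y^p-y=a$ with $a\in\pi_x^{n_x}A_x$; Hensel's lemma applies because $a$ reduces to $0$ in $k(x)$, and the solution then automatically lies in $\pi_x^{n_x}A_x$. This is where the hypothesis $n_x\ge 1$ enters, and it is not recoverable from the pole-filtration machinery of Chapters~3--4 that you point to.

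In addition, your argument does not treat the case $x\notin D_\n$, where $H^1_\et(X^h_x,W_m\Omega^q_{X,\log})$ is in general nonzero, so injectivity of $r$ is still a nontrivial assertion; the paper handles this case separately by Shiho's purity theorem, computing $H^1_x\cong K^M_{q-1}(k(x))/p^m$, which agrees with the right-hand side via the tame symbol. Your ``parallel route'' through $W_m\sW^q_{\n}$ and \lemref{lem:Coker-1} only shifts the problem to the base case $D_{\un{1}}$, which needs the same missing input, and it likewise says nothing about points outside $D_\n$. So the proposal organizes the proof correctly but is missing both the Hensel/Artin--Schreier computation in the ramified case and the purity input in the unramified case.
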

\begin{proof}
    If $x \not \in D_\n$, then $H^1_x(X^h_x, W_m\Omega^q_{X|D_\n,\log})=H^1_x(X^h_x, W_m\Omega^q_{X,\log})$ $= H^0(k(x),W_m\Omega^{q-1}_{X,\log} )$ $= K^M_{q-1}(k(x))/p^m$, where the second equality is a consequence of the Purity theorem \cite[Thm~3.2]{Shiho} for $W_m\Omega^q_{X_x^h,\log}$ on $X^h_{x,\et}$. But this is same as the right hand side group in the lemma, because $n_x=0$ and so $\Fil_{n_x} K^M_q(K_x)=K^M_q(A_x)$. If $x \in D_\n$, we claim 
   $H^1_\et(X^h_x, W_m\Omega^q_{X_x^h|D_{n_x},\log})=0$ for all $n_x \ge 1$.  By induction on $m$ and \corref{cor:Complex-7}, it is enough to prove the claim for $m=1$. In this case, it is enough to show the map (cf. \cite[Thm~1.2.1]{JSZ})
   $$1-\ov F: \Fil_{-n_x}\Omega^q_{A_x} \to \Fil_{-n_x}\Omega^q_{A_x}/d(\Fil_{-n_x}\Fil_{n_x}\Omega^{q-1}_{A_x}), \ \ X_x^h=\Spec A_x;$$ is surjective. 
   From the proof of \lemref{lem:VR-4}, we note that the abelian group $\Fil_{-n_x}\Omega^q_{A_x}$ is generated by elements of the form $a\dlog(x_1)\wedge\cdots\wedge\dlog(x_q)$, where $a \in \Fil_{-n_x}K_x=\pi_x^{n_x}A_x$ and $x_i\in K_x$, for all $i$. Here $\pi_x$ is a uniformizer of $A_x$. Now, it is enough to show that $1-\ov F: \Fil_{-n_x}K_x \to \Fil_{-n_x}K_x$ is surjective. Let $a \in \pi_x^{n_x}A_x$. Since $\ov a$, the image of $a$ is zero in $k(x)$, by Hensel lemma, there exists $y \in A_x$ such that $y^p-y-a=0$ in $A_x$. Let $\ov y$ be the image of $y$ in $k(x)$. Then, $\ov y^p-\ov y=0 \in k(x)$, which implies $\ov y \in \F_p \subset A_x$. Then letting $y_1=y-\ov y$, we get $y_1(y_1^{p-1}-1)=a \in \pi_x^{n_x}A_x$. But, note that $y_1 \in \pi_xA_x$, which implies $(y_1^{p-1}-1)$ is a unit in $A_x$. Hence, we conclude that $y_1 \in \pi_x^{n_x}A_x$ and the claim follows.
  
  Now the corollary follows from the localisation sequence for $W_m\Omega^q_{X_x^h|D_{n_x},\log}$.
  \end{proof}

\begin{lem}\label{lem:Coh-V0}
  For $x \in E, \n \ge \un{0}$, we have 
  \begin{enumerate}
      \item $H^i_\et(X^h_x, Z_1 \Fil_{D_{\un{n}}} W_m \Omega^q_U) =
  H^i_\et(X^h_x, \Fil_{D_{\un{n}}} W_m \Omega^q_U) = 0, \text{ for $i \ge 1$.}$ 
  \item \[
\H^i_\et(X^h_x, W_m\sF^q_{\un{n}}) = \left\{\begin{array}{ll}
K^M_q(K_x)/p^m & \mbox{if $i = 0$} \\
\Fil_{n_x} H^1_\et(K_x, W_m\Omega^q_{K_x, \log}) & \mbox{if $i = 1$} \\
 0 & \mbox{if $i > 1$.}
\end{array}\right.
\]
  \end{enumerate}
\end{lem}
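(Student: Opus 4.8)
\textbf{Proof plan for \lemref{lem:Coh-V0}.}

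The plan is to transport the affine-local computations of \secref{sec:loc-Kato} to the Henselian local ring $A_x = \sO^h_{X,x}$ by a standard passage-to-the-limit argument, and then combine them with the distinguished triangle of \corref{cor:VR-2}. First I would prove (1). Since $A_x$ is $F$-finite and regular local, and $X^h_x = \Spec A_x$ is a filtering inverse limit of affine {\'e}tale neighbourhoods of $x$ in $X$, \thmref{thm:lim-cohomology} lets me compute $H^i_\et(X^h_x, -)$ as a colimit of {\'e}tale cohomology of affine schemes. By \lemref{lem:Log-fil-4}(1) both $Z_1\Fil_{D_{\un{n}}}W_m\Omega^q_U$ and $\Fil_{D_{\un{n}}}W_m\Omega^q_U$ are coherent sheaves of $W_m\sO$-modules on these affine schemes (the first being the kernel of $F^{m-1}d$ acting between coherent modules, hence coherent as well), so their higher cohomology vanishes by the vanishing of higher cohomology of quasi-coherent sheaves on affine schemes — exactly as in the proof of \lemref{lem:hyp}(1), with $\Spec A$ there replaced by the affine {\'e}tale neighbourhoods here. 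This gives $H^i_\et(X^h_x, Z_1\Fil_{D_{\un{n}}}W_m\Omega^q_U) = H^i_\et(X^h_x, \Fil_{D_{\un{n}}}W_m\Omega^q_U) = 0$ for $i\ge 1$.

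For (2), I would feed (1) into the hypercohomology spectral sequence for the two-term complex $W_m\sF^q_{\un{n}} = [Z_1\Fil_{D_{\un{n}}}W_m\Omega^q_U \xrightarrow{1-C} \Fil_{D_{\un{n}}}W_m\Omega^q_U]$ on $X^h_{x,\et}$, which collapses to the exact sequence
\[
0 \to \H^0_\et(X^h_x, W_m\sF^q_{\un{n}}) \to H^0_\et(X^h_x, Z_1\Fil_{D_{\un{n}}}W_m\Omega^q_U) \xrightarrow{1-C} H^0_\et(X^h_x, \Fil_{D_{\un{n}}}W_m\Omega^q_U) \to \H^1_\et(X^h_x, W_m\sF^q_{\un{n}}) \to 0,
\]
with all higher hypercohomology zero. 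Here $H^0_\et(X^h_x, \Fil_{D_{\un{n}}}W_m\Omega^q_U) = \Fil_{n_x}W_m\Omega^q_{K_x}$ and likewise $H^0_\et(X^h_x, Z_1\Fil_{D_{\un{n}}}W_m\Omega^q_U) = Z_1\Fil_{n_x}W_m\Omega^q_{K_x}$ (global sections of the filtered sheaves on a DVR base are the corresponding filtered submodules of $W_m\Omega^q_{K_x}$, since $U\cap X^h_x = \Spec K_x$). The kernel and cokernel of $1-C$ on these modules are precisely $W_m\Omega^q_{K_x,\log}$ (by the argument in \lemref{lem:hyp}(2): $\dlog(K^M_q(K_x)/p^m)$ already lands in $Z_1\Fil_0W_m\Omega^q_{K_x}\subset Z_1\Fil_{n_x}W_m\Omega^q_{K_x}$, so the kernel is $K^M_q(K_x)/p^m\cong H^0_\et(X^h_x, W_m\Omega^q_{U,\log})$, using purity) and $T^{m,q}_{n_x}(K_x)$ respectively. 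Finally $T^{m,q}_{n_x}(K_x) = \Fil_{n_x}H^1_\et(K_x, W_m\Omega^q_{K_x,\log})$ by \notat~\ref{not:G-q-n} (this is just the notation introduced there, or one can cite \thmref{thm:Kato-fil-10} together with \lemref{lem:Kato-fil-9} to identify it with $_{p^m}\Fil^{\bk}_{n_x}H^{q+1}(K_x)$). The $i=0$ term reads $K^M_q(K_x)/p^m$ via this kernel computation, and the $i>1$ vanishing is immediate from the collapse of the spectral sequence.

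The one point requiring a little care — and the main (mild) obstacle — is the identification $H^0_\et(X^h_x, Z_1\Fil_{D_{\un{n}}}W_m\Omega^q_U) = Z_1\Fil_{n_x}W_m\Omega^q_{K_x}$, i.e. that taking global sections commutes with forming the $Z_1$-subsheaf: this needs that $H^0(X^h_x,-)$ is left exact (clear) applied to $0 \to Z_1\Fil_{D_{\un{n}}}W_m\Omega^q_U \to \Fil_{D_{\un{n}}}W_m\Omega^q_U \xrightarrow{F^{m-1}d} \Fil_{D_{\un{n}}}\Omega^{q+1}_U$ (a complex by \thmref{thm:Global-version}(3)), so that $H^0$ of the kernel sheaf is the kernel of $F^{m-1}d$ on $\Fil_{n_x}W_m\Omega^q_{K_x}$, which is $Z_1\Fil_{n_x}W_m\Omega^q_{K_x}$ by \defref{defn:Cartier}. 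Everything else is a direct transcription of \lemref{lem:hyp} with $A$ replaced by $A_x$, using that $A_x$ inherits all the relevant hypotheses ($F$-finite regular local, with $U\cap X^h_x = \Spec K_x$ and $E\cap X^h_x = V(\pi_x)$).
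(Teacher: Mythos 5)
Your proposal is correct and follows essentially the same route as the paper, which simply observes that $A_x=\sO^h_{X,x}$ is itself an $F$-finite Henselian discrete valuation ring, so \lemref{lem:hyp} applies verbatim and \thmref{thm:Kato-fil-10} (together with the notation of \ref{not:G-q-n}) supplies the identification $T^{m,q}_{n_x}(K_x)=\Fil_{n_x}H^1_\et(K_x,W_m\Omega^q_{K_x,\log})$. Your passage-to-the-limit over affine {\'e}tale neighbourhoods is therefore an unnecessary detour — $X^h_x$ is already affine and the coherent-vanishing plus hypercohomology argument can be run on it directly, exactly as in \lemref{lem:hyp} — but it introduces no error.
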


\begin{proof}
    This follows from \lemref{lem:hyp} and \thmref{thm:Kato-fil-10}.
\end{proof}
\begin{cor}\label{cor:Coh-V0-00}
    We have 
    $$\H^i_x(X^h_x, W_m\sF^q_{\un{n}}) = \left\{\begin{array}{ll}
\frac{ H^1_\et(K_x, W_m\Omega^q_{K_x, \log})}{\Fil_{n_x} H^1_\et(K_x, W_m\Omega^q_{K_x, \log})} & \mbox{if $i = 2$} \\
 0 & \mbox{if $i \neq 2$.}
\end{array}\right. $$
\end{cor}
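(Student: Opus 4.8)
The plan is to deduce Corollary~\ref{cor:Coh-V0-00} from \lemref{lem:Coh-V0} by a standard localization sequence argument, exploiting the fact that $X^h_x$ is the spectrum of a one-dimensional Henselian local ring so that its punctured spectrum is $\Spec(K_x)$. Concretely, for the complex $W_m\sF^q_{\un{n}}$ on $(X^h_x)_\et$ we have the long exact sequence relating cohomology with support at the closed point, cohomology of $X^h_x$, and cohomology of $\Spec(K_x)$:
\[
\cdots \to \H^i_\et(X^h_x, W_m\sF^q_{\un{n}}) \to \H^i_\et(K_x, j^*W_m\sF^q_{\un{n}}) \to \H^{i+1}_x(X^h_x, W_m\sF^q_{\un{n}}) \to \H^{i+1}_\et(X^h_x, W_m\sF^q_{\un{n}}) \to \cdots,
\]
where $j\colon \Spec(K_x) \inj X^h_x$ is the open inclusion. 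Here $j^*W_m\sF^q_{\un{n}} = W_m\Omega^q_{K_x,\log}$, since inverting $\pi_x$ turns $\Fil_{\un{n}}W_m\Omega^q$ and $Z_1\Fil_{\un{n}}W_m\Omega^q$ into $W_m\Omega^q_{K_x}$ and $Z_1W_m\Omega^q_{K_x}$, and the two-term complex $(Z_1W_m\Omega^q_{K_x} \xrightarrow{1-C} W_m\Omega^q_{K_x})$ is quasi-isomorphic to $W_m\Omega^q_{K_x,\log}$ by \eqref{eqn:Milnor-0}. Thus $\H^i_\et(K_x, j^*W_m\sF^q_{\un{n}}) = H^i_\et(K_x, W_m\Omega^q_{K_x,\log})$, which vanishes for $i \ge 2$ (cohomological dimension of $K_x$) and equals $K^M_q(K_x)/p^m$ for $i=0$ (by the dlog isomorphism and \lemref{lem:Kato-fil-9} identifying $H^1_\et(K_x, W_m\Omega^q_{K_x,\log}) = {_{p^m}}H^{q+1}(K_x)$ for $i=1$).

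The key input is \lemref{lem:Coh-V0}, which gives $\H^i_\et(X^h_x, W_m\sF^q_{\un{n}}) = 0$ for $i > 1$, $\H^0_\et(X^h_x, W_m\sF^q_{\un{n}}) = K^M_q(K_x)/p^m$, and $\H^1_\et(X^h_x, W_m\sF^q_{\un{n}}) = \Fil_{n_x}H^1_\et(K_x, W_m\Omega^q_{K_x,\log})$. Feeding these into the long exact sequence: for $i \ge 3$, both neighbouring terms $\H^i_\et(X^h_x, -)$, $\H^{i-1}_\et(K_x, -)$ and $\H^{i}_\et(X^h_x, -)$ vanish, so $\H^i_x(X^h_x, W_m\sF^q_{\un{n}}) = 0$. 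For $i = 0, 1$ the relevant segment shows $\H^0_x = 0$ (the map $\H^0_\et(X^h_x) \to \H^0_\et(K_x)$ is an isomorphism, both being $K^M_q(K_x)/p^m$, and the previous term $\H^{-1}_\et(K_x,-)=0$) and then $\H^1_x$ sits between $\H^0_\et(K_x) \cong \H^0_\et(X^h_x)$ surjecting on and $\H^1_\et(X^h_x) = \Fil_{n_x}H^1_\et(K_x,-) \inj H^1_\et(K_x,-)$, forcing $\H^1_x = 0$. The only remaining case is $i = 2$: the segment reads
\[
0 = \H^1_\et(K_x, W_m\Omega^q_{K_x,\log})/\,\text{im} \;\;\text{---wait, carefully:}
\]
the relevant piece is $\H^1_\et(X^h_x, W_m\sF^q_{\un{n}}) \to H^1_\et(K_x, W_m\Omega^q_{K_x,\log}) \to \H^2_x(X^h_x, W_m\sF^q_{\un{n}}) \to \H^2_\et(X^h_x, W_m\sF^q_{\un{n}}) = 0$, and the first map is the inclusion $\Fil_{n_x}H^1_\et(K_x, W_m\Omega^q_{K_x,\log}) \inj H^1_\et(K_x, W_m\Omega^q_{K_x,\log})$ (this identification of the map with the canonical inclusion is what needs a small check, using that the localization map on $\H^1$ is induced by restriction $X^h_x \to K_x$ together with \corref{cor:VR-5} which realizes $\H^1_\et(X^h_x, W_m\sF^q_{\un{n}})$ inside $H^1_\et(K_x, W_m\Omega^q_{K_x,\log})$ compatibly). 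Hence
\[
\H^2_x(X^h_x, W_m\sF^q_{\un{n}}) \cong \frac{H^1_\et(K_x, W_m\Omega^q_{K_x,\log})}{\Fil_{n_x}H^1_\et(K_x, W_m\Omega^q_{K_x,\log})},
\]
as claimed.

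The main obstacle I anticipate is not the bookkeeping in the long exact sequence but verifying that the connecting/forget-support maps are exactly the canonical inclusions and quotients, i.e.\ that the identification $j^*W_m\sF^q_{\un{n}} \simeq W_m\Omega^q_{K_x,\log}$ is functorial enough that the map $\H^1_\et(X^h_x, W_m\sF^q_{\un{n}}) \to \H^1_\et(K_x, W_m\Omega^q_{K_x,\log})$ is precisely the inclusion of $\Fil_{n_x}H^1_\et(K_x, W_m\Omega^q_{K_x,\log}) = T^{m,q}_{n_x}(K_x)$ provided by \thmref{thm:Kato-fil-10} and \corref{cor:VR-5}. Granting that compatibility (which follows by tracing through the definitions of $\partial^{m,q}_{n_x}$ and the restriction maps), the corollary is immediate. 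I would therefore state the proof briefly as: apply the localization long exact sequence on $X^h_x$, substitute the values from \lemref{lem:Coh-V0} and from $H^\bullet_\et(K_x, W_m\Omega^q_{K_x,\log})$, and read off $\H^i_x$ in each degree.
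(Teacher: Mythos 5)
Your proposal is correct and follows essentially the same route as the paper: the localization exact sequence for the pair $(\Spec(K_x) \inj X^h_x)$, the values from \lemref{lem:Coh-V0}, the vanishing $H^i_\et(K_x, W_m\Omega^q_{K_x,\log}) = 0$ for $i \ge 2$ from $cd_p(K_x) \le 1$, and the identification of the restriction map $\H^1_\et(X^h_x, W_m\sF^q_{\un n}) \to H^1_\et(K_x, W_m\Omega^q_{K_x,\log})$ with the inclusion of $\Fil_{n_x}H^1_\et(K_x, W_m\Omega^q_{K_x,\log})$ via \thmref{thm:Kato-fil-10}. The compatibility point you flag as the main obstacle is exactly what the paper invokes (injectivity via \thmref{thm:Kato-fil-10}), so no gap remains.
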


\begin{proof}
    First note that, the map $\H^i_\et(X^h_x, W_m\sF^q_\n) \to
    \H^i_\et(K_x, W_m\sF^q_\n)$ is injective by \thmref{thm:Kato-fil-10}. Now the corollary follows from \lemref{lem:Coh-V0} and localisation exact sequence
     \[
    \cdots \to \H^i_x(X, W_m\sF^q_\n) \to  \H^i_\et(X^h_x, W_m\sF^q_\n) \to
    \H^i_\et(K_x, W_m\sF^q_\n) \to \H^{i+1}_x(X, W_m\sF^q_\n) \to \cdots .
  \]
  We also observe that $\H^i_\et(K_x, W_m\sF^q_\n)=H^i_\et(K_x, W_m\Omega^q_{K_x,\log})=0$ for $i \ge 2$, because $cd_p(K_x)\le 1$.
\end{proof}
\begin{lem}\label{lem:Coh-V1}
    We have
    \[
      \H^i_\et(X, W_m\sV^q_{\un{n}}) = \left\{\begin{array}{ll}
    {\underset{x \in E}\bigoplus}
\frac{\Fil_{n_x} H^1_\et(K_x, W_m\Omega^q_{K_x, \log})}{\Fil_0 H^1_\et({K}_x, W_m\Omega^q_{K_x, \log})}
& \mbox{if $i = 1$} \\
 0 & \mbox{otherwise.}
\end{array}\right.
\]
\end{lem}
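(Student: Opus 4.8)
The plan is to reduce the computation to the closed points of $E$ and then invoke the local hypercohomology calculations already available. First I would note that, since $\Fil_{\un 0}W_m\Omega^q_U = W_m\Omega^q_X(\log E)$ by \lemref{lem:Log-fil-4}(10), both sheaves occurring in $W_m\sV^q_{\un{n}}$ are quotients that vanish upon restriction to $U$; hence the cohomology sheaves of the complex $W_m\sV^q_{\un{n}}$ are supported on the finite set $E$ of closed points of the curve $X$. Exactly as in the derivation of \eqref{eqn:W-n}, this yields a decomposition $W_m\sV^q_{\un{n}} = \bigoplus_{x \in E}(\tau_x)_*\tau_x^*(W_m\sV^q_{\un{n}})$, and the proper base change theorem for torsion {\'e}tale sheaves gives
\[
\H^i_\et(X, W_m\sV^q_{\un{n}}) \cong \bigoplus_{x \in E}\H^i_\et(X^h_x, W_m\sV^q_{\un{n}}),
\]
so it suffices to compute $\H^i_\et(X^h_x, W_m\sV^q_{\un{n}})$ for each $x \in E$.

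Next I would apply $\H^i_\et(X^h_x, -)$ to the term-wise short exact sequence of complexes \eqref{eqn:G-comp-3}, namely $0 \to W_m\sF^q_{\un{0}} \to W_m\sF^q_{\un{n}} \to W_m\sV^q_{\un{n}} \to 0$, obtaining a long exact hypercohomology sequence. By \lemref{lem:Coh-V0}(2), $\H^i_\et(X^h_x, W_m\sF^q_{\un{n}})$ vanishes for $i \notin \{0,1\}$, equals $K^M_q(K_x)/p^m$ for $i = 0$, and equals $\Fil_{n_x}H^1_\et(K_x, W_m\Omega^q_{K_x,\log})$ for $i = 1$; specializing to $\un{n} = \un 0$ leaves the degree-$0$ group unchanged and replaces the degree-$1$ group by $\Fil_0 H^1_\et(K_x, W_m\Omega^q_{K_x,\log})$. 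I would then check that under these identifications the map in degree $0$ induced by the inclusion of complexes is the identity on $K^M_q(K_x)/p^m$ (both groups being the kernel of $1-C$ on the relevant degree-$0$ sheaf, computed in the proof of \lemref{lem:hyp}(2) via the same $\dlog$-generators, compatibly with the inclusion), and the map in degree $1$ is the tautological inclusion $\Fil_0 H^1_\et(K_x, W_m\Omega^q_{K_x,\log}) \hookrightarrow \Fil_{n_x}H^1_\et(K_x, W_m\Omega^q_{K_x,\log})$, which is injective since the Kato filtration is a filtration by subgroups (cf. \eqref{eqn:Kato-6} and \thmref{thm:Kato-fil-10}). Reading off the long exact sequence then gives $\H^0_\et(X^h_x, W_m\sV^q_{\un{n}}) = 0$ (it embeds into $\H^1_\et(X^h_x, W_m\sF^q_{\un{0}})$ with image equal to the kernel of an injective map), $\H^1_\et(X^h_x, W_m\sV^q_{\un{n}}) \cong \Fil_{n_x}H^1_\et(K_x, W_m\Omega^q_{K_x,\log})/\Fil_0 H^1_\et(K_x, W_m\Omega^q_{K_x,\log})$ (the cokernel of the degree-$1$ map, as $\H^2_\et(X^h_x, W_m\sF^q_{\un{0}}) = 0$), and $\H^i_\et(X^h_x, W_m\sV^q_{\un{n}}) = 0$ for $i \ge 2$ (it is squeezed between $\H^i_\et(X^h_x, W_m\sF^q_{\un{n}}) = 0$ and $\H^{i+1}_\et(X^h_x, W_m\sF^q_{\un{0}}) = 0$). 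Summing over $x \in E$ then produces the asserted formula.

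The only genuine obstacle is the bookkeeping in the middle paragraph: confirming that the maps in the long exact sequence correspond, under the identifications of \lemref{lem:Coh-V0}, to the identity on $K^M_q(K_x)/p^m$ and to the inclusion of consecutive filtration pieces. This is exactly where the compatibilities already encoded in diagrams \eqref{eqn:Kato-5}--\eqref{eqn:Kato-6}, together with \thmref{thm:Kato-fil-10} and \corref{cor:VR-5}, carry the argument; once they are invoked the remainder is routine homological algebra.
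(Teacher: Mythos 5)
Your proposal is correct and follows essentially the same route as the paper: reduce to the henselizations $X^h_x$ for $x \in E$ (exactly as for \eqref{eqn:W-n}), then run the long exact hypercohomology sequence coming from \eqref{eqn:G-comp-3}, using \lemref{lem:Coh-V0} and \thmref{thm:Kato-fil-10} to see that the degree-zero map is a bijection on $K^M_q(K_x)/p^m$ and the degree-one map is the injective inclusion of Kato filtration steps. The identifications and vanishing statements you verify are precisely those invoked in the paper's own proof, so no further comment is needed.
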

\begin{proof}
Since 
$\H^i_\et(X,  W_m\sV^q_{\un{n}}) \cong {\underset{x \in E}\bigoplus}
\H^i_\et(X^h_x, W_m\sV^q_{\un{n}})$, we can replace $X$ by $X^h_x$
in order to prove the lemma. We now consider the long exact sequence
  \begin{equation}\label{eqn:Coh-V1-0}
\cdots \to \H^0_\et(W_m\sF_{\un 0}^q) \xrightarrow{\alpha_0} \H^0_\et(W_m\sF^q_{\un{n}}) \to \H^0_\et(W_m\sV^q_{\un{n}}) \to \H^{1}_\et(W_m\sF_{\un 0}^q) \xrightarrow{\alpha_1} \cdots ,
\end{equation}

where $\H^i_\et(W_m\sF_{\un 0}^q)$ (and similarly the other groups) is the short hand notation for $\H^i_\et(X^h_x,W_m\sF_{\un 0}^q)$ for all i. It follows from ~\eqref{eqn:Coh-V0-2}
that $\alpha_0$ is a bijection between two groups, each isomorphic to
  $K^M_q(K_x)/p^m$.
  
  ~\lemref{lem:Coh-V0} and \thmref{thm:Kato-fil-10} imply that $\alpha_1$ is injective. It follows that
  $\H^0_\et(X^h_x, W_m\sV^q_{\un{n}})$ $= 0$. We now apply \lemref{lem:Coh-V0} and
  ~\eqref{eqn:Coh-V1-0} to conclude the proof.

\end{proof}

\begin{lem}\label{lem:Coh-V2}
  The diagram
  \begin{equation}\label{eqn:Coh-V2-0}
    \xymatrix@C.8pc{
      H^0_\et(X, W_m \sW^q_{{\n+1}}) \ar[r] 
      \ar[d]_-{(\psi^q_{{\n}})^*} & {\underset{x \in E}\bigoplus}
      \varpi^m_q(A_x|n_x+1) \ar[d]^-{\underset{x \in E}{\oplus}\wt{\rho}^{q,n_x +1}_{K_x, m} } \\
      \H^1_\et(X, W_m\sV^{2-q}_{\n})^\vee \ar[r] &
      {\underset{x \in E}\bigoplus} \left(\frac{\Fil_{n_x}
       H^1_\et(K_x, W_m\Omega^{2-q}_{K_x, \log})}{\Fil_{0} H^1_\et(K_x, W_m\Omega^{2-q}_{K_x,\log})}\right)^\vee}
  \end{equation}
  commutes, where the horizontal arrows are canonical isomorphisms induced by Lemmas \ref{lem:Coker-1}, \ref{lem:Coh-V1} and \eqref{eqn:W-n}.
\end{lem}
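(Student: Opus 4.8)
The plan is to localize the assertion at each closed point $x\in E$ and then to recognize that both vertical maps are induced by one and the same cup–product pairing.

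\textbf{Reduction to a local statement.} Both $W_m\sW^q_{\un{n}+1}$ and $W_m\sV^{2-q}_{\un{n}}$ are supported on $E$, so by \eqref{eqn:W-n}, \lemref{lem:Coker-1}(1) and \lemref{lem:Coh-V1} their global cohomology is the direct sum over $x\in E$ of the corresponding Henselian–local cohomology on $X^h_x$, and by the compatibility of \eqref{eqn:G-comp-9} with \eqref{eqn:G-comp-6} (proper base change and the fact that the forget–support map on $W_m\Omega^2_{X,\log}$ is an isomorphism, \cite[Prop.~4.4]{KRS}) the map $(\psi^q_{\un{n}})^*$ is the direct sum of the local maps $(\psi^q_{\un{n},x})^*$. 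Hence it suffices to prove, for each $x$, that the square whose top row is the isomorphism $H^0_\et(X^h_x,W_m\sW^q_{\un{n}+1})\xrightarrow{\cong}\varpi^m_q(A_x|n_x+1)$ of \lemref{lem:Coker-1}(2) (combined with the $\dlog$ identification $W_m\Omega^\bullet_{(-)|D,\log}\cong\sK^\bullet_{(-)|D}$), whose bottom row is the isomorphism $\H^1_x(X^h_x,W_m\sV^{2-q}_{\un{n}})^\vee\xrightarrow{\cong}\bigl(\Fil_{n_x}H^1_\et(K_x,W_m\Omega^{2-q}_{K_x,\log})/\Fil_0H^1_\et(K_x,W_m\Omega^{2-q}_{K_x,\log})\bigr)^\vee$ of \lemref{lem:Coh-V1}, whose left map is $(\psi^q_{\un{n},x})^*$ and whose right map is $\wt{\rho}^{q,n_x+1}_{K_x,m}$, commutes.

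\textbf{Unwinding the left-hand map.} By construction (cf.\ \eqref{eqn:G-comp-5.5}, \eqref{eqn:G-comp-8}, after transposing $q\leftrightarrow 2-q$), $\psi^q_{\un{n},x}$ is obtained by taking homotopy fibers in the two morphisms induced by the honest pairing of complexes $W_m\Omega^q_{X|D_{\un{n}+1},\log}\otimes W_m\sF^{2-q}_{\un{n}}\to W_m\sH$ of \corref{cor:Pair-4} and its counterpart for $\un{n}=\un{0}$. Passing to étale cohomology with support at $x$, this pairing induces
\[
\kappa^m_q(A_x|n_x+1)\otimes\H^2_x(X^h_x,W_m\sF^{2-q}_{\un{n}})\longrightarrow \H^2_x(X^h_x,W_m\sH)\xrightarrow{\ \Tr_x\ }\Z/p^m ,
\]
where $\H^2_x(X^h_x,W_m\sF^{2-q}_{\un{n}})\cong H^1_\et(K_x,W_m\Omega^{2-q}_{K_x,\log})/\Fil_{n_x}H^1_\et(K_x,W_m\Omega^{2-q}_{K_x,\log})$ by \corref{cor:Coh-V0-00}. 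A diagram chase through \eqref{eqn:Coker-0}, \eqref{eqn:G-comp-3} and \eqref{eqn:G-comp-9} — using \lemref{lem:Coker-1}(2) and \lemref{lem:Coh-V1} to identify the quotient cohomologies — shows that this pairing descends to the pairing $\varpi^m_q(A_x|n_x+1)\otimes\H^1_x(X^h_x,W_m\sV^{2-q}_{\un{n}})\to\Z/p^m$ that is adjoint to $(\psi^q_{\un{n},x})^*$; the sign coming from the shift $[-1]$ is fixed once and for all following the conventions of \cite[\S~1]{Milne-Duality}.

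\textbf{Matching with the reciprocity pairing.} On the other hand, $\wt{\rho}^{q,n_x+1}_{K_x,m}$ is, by \lemref{lem:Local-inj-6} and \lemref{lem:LR-mod}, the map induced via \eqref{eqn:Kato-0.5} and \thmref{thm:Kato-fil-10} by the reciprocity pairing \eqref{eqn:KR**}, i.e.\ by the cup product $K^M_q(K_x)/p^m\times H^{3-q}(K_x)\to\Q/\Z$ restricted to $\kappa^m_q(A_x|1)$ and its $p^m$-torsion part. Using \lemref{lem:Kato-fil-9} to identify $H^1_\et(K_x,W_m\Omega^{2-q}_{K_x,\log})$ with $\ _{p^m}H^{3-q}(K_x)$, \thmref{thm:Kato-fil-10} to identify the filtration steps with $\ _{p^m}\Fil^{\bk}_{\bullet}H^{3-q}(K_x)$, and the commutative square \eqref{eqn:Milnor-1} (multiplicativity of $\dlog$, i.e.\ the compatibility of the wedge product on $W_m\Omega^\bullet_{(-),\log}$ with the cup product on the étale cohomology of $W_m\Omega^\bullet_{(-),\log}$ over $\Spec K_x$) together with \cite[\S~3, Thm.~3]{Kato80-2}, one checks that the cup–product pairing on the étale cohomology of $W_m\Omega^\bullet_{(-),\log}$ underlying the pairing of complexes above is exactly the pairing underlying $\rho^q_{K_x}$. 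Therefore the support–cohomology pairing of the previous step coincides with the one adjoint to $\wt{\rho}^{q,n_x+1}_{K_x,m}$, which yields the desired commutativity. The delicate point throughout will be the middle step: keeping the derived–category bookkeeping — the shift $[-1]$, the passage to homotopy fibers of $\theta_{\un{n}}\to\theta$, and the identification ${\bf R}^i\Hom^\cdot(\Z/p^m,{\bf R}\sHom^\cdot(-,W_m\sH))\cong(-)^\vee$ — consistent so that the induced pairing between the quotient complexes $W_m\sW^q_{\un{n}+1}$ and $W_m\sV^{2-q}_{\un{n}}$ is literally a restriction of the cup–product pairing, with signs compatible with the normalizations of $\Tr_x$ and $\Tr_X$, rather than a twist or a shift of it.
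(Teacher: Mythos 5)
Your proposal is correct and follows essentially the same route as the paper: reduce to the Henselian local situation at each $x\in E$ via \eqref{eqn:W-n}, \lemref{lem:Coker-1}, \lemref{lem:Coh-V1} and the compatibility of \eqref{eqn:G-comp-9} with \eqref{eqn:G-comp-6}, and then recognize both vertical arrows as induced by one and the same cup-product/reciprocity pairing, using \corref{cor:Coh-V0-00}, \thmref{thm:Kato-fil-10} and the definition of $\wt{\rho}^{q,n_x+1}_{K_x,m}$. The ``delicate bookkeeping'' you flag is resolved in the paper simply by noting that $\kappa^m_q(A_x|1)\to H^0_\et(X^h_x,W_m\sW^q_{\un{n}+1})$ and $\H^2_x(X^h_x,W_m\sF^{2-q}_{\un{0}})^\vee\to\H^1_x(X^h_x,W_m\sV^{2-q}_{\un{n}})^\vee$ are surjective, so that \eqref{eqn:G-comp-9} breaks into short exact sequences and $(\psi^q_{\un{n},x})^*$ is forced to be the map induced on quotients by $\theta^*_x=\rho^{q,1}_{K_x,m}$, which is precisely $\wt{\rho}^{q,n_x+1}_{K_x,m}$.
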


\begin{proof}
The diagram ~\eqref{eqn:Coh-V2-0} is the composition of
  two squares
  \begin{equation}\label{eqn:Coh-V2-1}
    \xymatrix@C.8pc{
      H^0_\et(X, W_m \sW^q_{\n+1}) \ar[r] \ar[d]_-{(\psi^q_{\n})^*} &
      {\underset{x \in E}\bigoplus}
      H^0_\et(X^h_x, W_m \sW^q_{\n+1}) \ar[r] \ar[d]^-{\oplus \ (\psi^q_{\n,x})^*} &
      {\underset{x \in E}\bigoplus} \varpi^m_q(K_x|n_x+1)
      \ar[d]^-{\oplus \ \wt{\rho}^{q,n_x +1}_{K_x, m}} \\
      \H^1_\et(X, W_m\sV^{2-q}_{\n})^\vee \ar[r] &   
{\underset{x \in E}\bigoplus}
      \H^1_x(X^h_x, W_m\sV^{2-q}_{\n})^\vee \ar[r] &
      {\underset{x \in E}\bigoplus} \left(\frac{\Fil_{n_x}
       H^1_\et(K_x, W_m\Omega^{2-q}_{K_x,\log})}{\Fil_{0} H^1_\et(K_x, W_m\Omega^{2-q}_{K_x,\log})}\right)^\vee,}
\end{equation}
Since ~\eqref{eqn:G-comp-6} and ~\eqref{eqn:G-comp-9} are compatible, it follows
that the left square in ~\eqref{eqn:Coh-V2-1} is commutative.
It remains to show that the right square commutes.
We can therefore assume $E = \{x\}$. We write $\un{n} = n$.

Now, it follows from \lemref{lem:Coker-1} that the map
  $\kappa^m_q(A_x|1) \cong H^0_\et(X^h_x, W_m\Omega^q_{X|D_1,\log})$ $ \to
  H^0_\et(X^h_x, W_m\sW^q_{n+1})$ is surjective.
  \corref{cor:Coh-V0-00} (with $\n = 0$) and
  \lemref{lem:Coh-V1} imply that
  $\H^2_x(X^h_x, W_m\sF^{2-q}_0)^\vee \to \H^1_x(X^h_x, \sV^{2-q}_{n})^\vee$
  is also surjective. We conclude that ~\eqref{eqn:G-comp-9} breaks into
  a commutative diagram of short exact sequences
  \begin{equation}\label{eqn:Coh-V2-2}
\xymatrix@C.8pc{
 0 \ar[r] & H^0_\et(X^h_x, W_m\Omega^q_{X^h_x|D_{n+1}, \log}) \ar[r]
    \ar[d]_-{\theta^*_{n}} &
    H^0_\et(X^h_x, W_m\Omega^q_{X^h_x|D_1, \log}) \ar[r] \ar[d]^-{\theta^*_x} &
    H^0_\et(X^h_x, W_m \sW^q_{n+1}) \ar[r]
    \ar[d]^-{(\psi^q_{n,x})^*} \ar[r] & 0  \\
    0 \ar[r] & \H^2_x(X^h_x, W_m\sF^{2-q}_{n})^\vee \ar[r] &
    \H^2_x(X^h_x, W_m\sF^{2-q}_0)^\vee \ar[r] &
    \H^1_x(X^h_x, W_m\sV^{2-q}_{n})^\vee \ar[r] & 0.}
\end{equation}

By definition of Kato's reciprocity map (cf. \S~\ref{sec:RHom}), it follows
from \corref{cor:Coh-V0-00} that
  the left square in ~\eqref{eqn:Coh-V2-2} is the same as the left square in
  the commutative diagram
  \begin{equation}\label{eqn:Coh-V2-4}
\xymatrix@C.8pc{
  0 \ar[r] & \kappa^m_q(A_x|n+1) \ar[r] \ar[d]_-{\rho^{q,n+1}_{K_x, m}} &
  \kappa^m_q(A_x|1) \ar[d]^-{\rho^{q,1}_{K_x, m}} \ar[r] &
\varpi^m_q(A_x|n+1) \ar[d]^-{\wt{\rho}^{q,n+1}_{K_x,m}} \ar[r] & 0 \\
0 \ar[r] & G^{3-q,n+1}_{K_x,m} \ar[r] & G^{3-q,1}_{K_x,m} \ar[r] &
\left(\frac{\Fil_{n} H^1_\et(K_x, W_m\Omega^{2-q}_{K_x,\log})}
{\Fil_{0} H^1_\et(K_x, W_m\Omega^{2-q}_{K_x,\log})}\right)^\vee\ar[r] & 0.}
\end{equation}
From this, it immediately follows that
$(\psi^q_{n,x})^* = \wt{\rho}^{q,n+1}_{K_x,m}$.
\end{proof}

\section{Topologies on the cohomology groups}\label{sec:Top-1}
Our next goal is to endow the cohomology groups $H^i_\et(X, W_m\Omega^q_{X|D_{\n+1}, \log})$ and $\H^j_\et(X, W_m\sF^q_\n)$ with the structure of topological abelian groups so that we get a perfect pairing of topological abelian groups consisting them. We fix some notations. 

\begin{notat}
    For $\n, i, j \ge 0, 1 \le q \le 2$, we let  $$G^{q,i}_\n(m) := H^i_\et(X, W_m\Omega^q_{X|D_{\n+1}, \log})
\text{     ,     } F^{q,j}_\n(m) := \H^j_\et(X, W_m\sF^q_\n).$$ Note that by \lemref{lem:Complex-4}, 
$G^{2,i}_{\un{0}}(m) = H^i_\et(X, W_m\Omega^2_{X,\log})$ and
$F^{0,j}_{\un{0}}(m) = H^j_\et(X, {\Z}/{p^m})$.
It is clear that $G^{q,i}_\n(m) = 0$ for $i \ge 3$
(cf. \cite[Chap.~VI, Rem.~1.5]{Milne-EC}). As for $i =2$, it follows from Lemmas~\ref{lem:Coker-1}, ~\ref{lem:Coh-V1} and ~\eqref{eqn:G-comp-6} that 
\begin{equation}\label{eqn:Top--1}
    G^{q,2}_\n(m)=G^{q,2}_{\un{0}}(m)  , \ F^{q,0}_\n(m)=F^{q,0}_{\un{0}}(m), \text{ for $n \ge 0$}.
\end{equation}
\end{notat}

\subsection{Topology of \texorpdfstring{$H^i_\et(X, W_m\Omega^q_{X|D_{\n+1}, \log})$}{the cohomology groups} for \texorpdfstring{$i \neq 1$}{i not 1}}

We consider four cases.

\textbf{Case 1 $\left( i=2,q=2 \right)$:} The pairing ~\eqref{eqn:comp-4} induces an isomorphism
$G^{2,2}_{\un{0}}(m) \xrightarrow{\cong} F^{0,0}_{\un{0}}(m)^\vee \cong {\Z}/{p^m}$ by
\cite[\S~3, Prop.~4]{Kato-Saito-Ann} (see also \cite[Thm.~4.7]{KRS}).
We therefore endow $G^{2,2}_\n(m)$ with the discrete
topology via the isomorphism \eqref{eqn:Top--1} and hence 
\begin{equation}\label{eqn:Top-0.5}
G^{2,2}_\n(m) \times  F^{0,0}_{\n}(m) \to {\Z}/{p^m}.
\end{equation}
is a perfect pairing.

\textbf{Case 2 $(i=0,q=2)$:} First consider the case $\un n=0$. In this case, by \cite[\S~3, Prop.~4]{Kato-Saito-Ann} (see also \cite[Thm.~4.7]{KRS}), $F^{0,2}_{\un 0}(m)$ has discrete topology, $G^{2,0}_{\un 0}(m)$ has profinite topology and the map $G^{2,0}_{\un 0}(m) \to F^{0,2}_{\un 0}(m)^\vee$ is an isomorphism of profinite groups. It follows then from
\lemref{lem:Local-inj-6}, ~\lemref{lem:Coh-V2}, ~\eqref{eqn:G-comp-6} and
 that
the map $G^{2,0}_\n(m) \to F^{0,2}_{\n}(m)^\vee$ is an isomorphism.
Since $F^{0,2}_{\un{0}}(m)$ has discrete topology and
$F^{0,2}_{\n}(m)$ is its quotient,
the latter is a discrete topological abelian group. We therefore endow
$G^{2,0}_\n(m)$ with the profinite topology so that
\begin{equation}\label{eqn:Top-0}
G^{2,0}_\n(m) \times  F^{0,2}_{\n}(m) \to {\Z}/{p^m}.
\end{equation}
is a perfect pairing of topological abelian groups.
We also note that $\varinjlim\limits_{\n}F^{0,2}_\n(m)= H^2_\et(U, \Z/p^m)=0$ (by \thmref{thm:Global-version}(12)), because $U$ is affine and $cd_pU \le 1$. Hence 
\begin{equation}\label{eqn:Duality-open--1}
    \varprojlim\limits_{\n}G^{2,0}_{\n}(m) = (\varinjlim\limits_{\n}F^{0,2}_\n(m))^\vee=0.
\end{equation}
\textbf{Case 3 ($i=0,q=1$):} First consider the sub case $\n=\un{0}$. Since $W_m\sF^1_{\un{0}} \cong j_*W_m\Omega^1_{U,\log}$ by \lemref{lem:Complex-6}(4), we consider the following exact sequences.
\begin{equation}\label{eqn:res-6}
    0 \to W_m\Omega^1_{X|D_{\un{1}},\log} \to W_m\Omega^1_{X,\log} \to \bigoplus\limits_{x \in E} (\tau_{x})_ \star W_m\Omega^1_{\Spec\kappa(x), \log} \to 0,
\end{equation}
\begin{equation*}
  \hspace{1.5cm}  0 \to W_m\Omega^1_{X,\log} \to j_*W_m\Omega^1_{U,\log}\xrightarrow{\res} \bigoplus\limits_{x \in E} (\tau_{x})_ \star W_m\Omega^0_{\Spec\kappa(x), \log} \to 0,\hspace{1.2cm} 
\end{equation*}
where $\kappa(x)$ is the residue field of $x$ and $\res$ is induced by the residue map of Milnor K-theory.
This induces the following commutative diagram.
\begin{equation}
    \xymatrix@C.8pc{
    0 \ar[r] & H^0_\et(X,W_m\Omega^1_{X|D_1,\log}) \ar[r] \ar[d] & H^0_\et(X, W_m\Omega^1_{X,\log}) \ar[r] \ar[d] & \bigoplus\limits_{x \in E} \kappa(x)^\times /p^m  \ar[d]\\
    0 \ar[r] & \H^2_\et(W_m\sF^1_{\un{0}})^\vee \ar[r] & H^2_\et(X, W_m\Omega^1_{X,\log})^\vee \ar[r] & \bigoplus\limits_{x \in E} H^1_\et(\kappa(x),\Z/p^m)^\vee.
    }
\end{equation}

By a similar argument to that in \lemref{lem:Coh-V2}, we conclude that the right vertical arrow is the reciprocity map of $1$-local field $\kappa(x)$, which is known to be injective. The middle vertical arrow is isomorphism by \cite[Thm~4.7]{KRS}. This implies the left vertical arrow is also isomorphism. Since $H^2_\et(X, W_m\Omega^1_{X,\log})$ is discrete (cf. \cite[Prop.~3.6]{KRS}), $F^{1,2}_{\un{0}}(m)$ being its quotient, is a discrete group. Therefore $G^{1,0}_{\un{0}}(m)$ is endowed with profinite topology and
\begin{equation}
G^{1,0}_{\un{0}}(m) \times  F^{1,2}_{\un{0}}(m) \to {\Z}/{p^m}
\end{equation}
is a perfect pairing of topological abelian groups. Now by \lemref{lem:Local-inj-6}, ~\lemref{lem:Coh-V2}, and  ~\eqref{eqn:G-comp-6} we get $G^{1,0}_{\n}(m) \xrightarrow{\cong} F^{1,2}_{\n}(m)^\vee$ is isomorphism for $n \ge 0$. $F^{1,2}_{\n}(m)$ is discrete (being quotient of $F^{1,2}_{0}(m)$). So we endow $G^{1,0}_{\n}(m)$ with profinite topology and hence get a perfect pairing
\begin{equation}\label{eqn:Top-1}
G^{1,0}_{\n}(m) \times  F^{1,2}_{\n}(m) \to {\Z}/{p^m}
\end{equation}
for all $n \ge 0$.

Since $\varinjlim\limits_{\n}F^{1,2}_\n(m)= H^2_\et(U, W_m\Omega^1_{U, \log})=0$ (by \thmref{thm:Global-version}(12) and $cd_pU \le 1$), we have 
\begin{equation}\label{eqn:Duality-open-0}
    \varprojlim\limits_{\n}G^{1,0}_{\n}(m) = (\varinjlim\limits_{\n}F^{1,2}_\n(m))^\vee=0.
\end{equation}

\textbf{Case 4 ($i=2,q=1$):} Again first we consider the sub case $\n=0$. \eqref{eqn:res-6} induces a commutative diagram
\begin{equation}
    \xymatrix@C.5pc{
    0 \ar[r] & H^0_\et(X,W_m\Omega^1_{X,\log}) \ar[r] \ar[d]_-{1} & F^{1,0}_{\un{0}}(m) \ar[r] \ar[d]_-{2} & \bigoplus\limits_{x \in E} \Z/p^m \ar[r] \ar[d]_-{3} & H^1_\et(X,W_m\Omega^1_{X,\log}) \ar[d]_-{1'} \\
    0 \ar[r] & H^2_\et(X,W_m\Omega^1_{X,\log})^\vee \ar[r] & G^{1,2}_{\un{0}}(m)^\vee \ar[r] & \bigoplus\limits_{x \in E} H^1_\et(\kappa(x),W_m\Omega^1_{\kappa(x),\log})^\vee \ar[r] &  H^1_\et(X,W_m\Omega^1_{X,\log})^\vee, 
    }
\end{equation}
where $(1)$ is isomorphism and $(1')$ is injection by \cite[Prop.~3.6, Thm~4.7]{KRS}. $(3)$ is the dual of the isomorphism $H^1_\et(\kappa(x),W_m\Omega^1_{\kappa(x),\log}) \xrightarrow{inv_x} \Z/p^m$ (invariant map of class field theory of $\kappa(x)$) and so is isomorphism. This implies $(2)$ is isomorphism. By loc.cit, $H^0_\et(X,W_m\Omega^1_{X,\log})$ is profinite. Thus we endow  discrete topology on $G^{1,2}_{\un{0}}(m)$ so that $F^{1,0}_{\un{0}}(m) \xrightarrow{\cong} G^{1,2}_{\un{0}}(m)^\vee$ is an isomorphism of profinite group. Now by ~\eqref{eqn:Top--1}, we know $G^{1,2}_{\n}(m)=G^{1,2}_{\un{0}}(m)$ and $F^{1,0}_{\n}(m)=F^{1,0}_{\un{0}}(m)=H^0_\et(U,W_m\Omega^1_{U,\log})$. Hence we have perfect pairings
\begin{equation}\label{eqn:Top-2}
G^{1,2}_{\n}(m) \times  F^{1,0}_{\n}(m) \to {\Z}/{p^m}, \text{ for all } n \ge 0,
\end{equation}
$$\varprojlim_\n G^{1,2}_{\n}(m) \times H^0_\et(U,W_m\Omega^1_{U,\log}) \to \Z/p^m.$$

\subsection{Topology of \texorpdfstring{$H^1_\et(X, W_m\Omega^2_{X|D_{\n+1}, \log})$}{the cohomology groups for (i,j)=(1,2)}}
Now consider the case $(i=1,q=2)$. To define topology on $G^{2,1}_{\n}(m)$ we shall use a recipe.

Let $G$ and $G'$ be two abelian groups such that $G'$ is a torsion group. Let
$G' \times G \to {\Q}/{\Z}$ be a pairing which has no left kernel. Let
$\beta \colon G' \to G^\vee$ be the induced injective homomorphism.
We let $G$ be endowed with the weakest topology
with respect to which the map $\beta(f) \colon G \to  {\Q}/{\Z}$ is continuous
for every $f \in G'$ (called the weak topology induced by $G'$).
This makes $G$ a topological abelian group for which 
the identity element has a fundamental system of open neighborhoods of the form
$\Ker(\beta(f_1)) \cap \cdots \cap \Ker(\beta(f_r))$ with
$f_1, \ldots , f_r \in G'$.
It is clear that $\beta$ has a factorization $\beta \colon G' \inj G^\star$, where
the dual on the right is taken with respect to the weak topology.

\begin{lem}\label{lem:Top-1}
The map $\beta \colon G' \to G^\star$ is a bijection.
\end{lem}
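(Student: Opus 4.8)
The plan is to show that $\beta\colon G' \to G^\star$ is surjective, since injectivity is already built into the construction (the pairing has no left kernel, and the weak topology is coarser than the discrete one so $G^\star \subseteq G^\vee$ and $\beta$ factors as stated). So let $\chi \in G^\star$, i.e.\ $\chi \colon G \to \Q/\Z$ is a continuous homomorphism for the weak topology induced by $G'$. First I would use continuity at the identity: there must be finitely many $f_1, \ldots, f_r \in G'$ such that $\bigcap_{i=1}^r \Ker(\beta(f_i)) \subseteq \Ker(\chi)$. Indeed, $\chi^{-1}(0)$ is an open neighbourhood of $0$, hence contains a basic open set of the indicated form, and since $\chi^{-1}(0)$ is a subgroup it contains the subgroup $\bigcap_i \Ker(\beta(f_i))$.

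Next I would invoke the elementary fact from linear/abelian-group algebra: if $\chi, \beta(f_1), \ldots, \beta(f_r)$ are homomorphisms from $G$ to $\Q/\Z$ with $\bigcap_i \Ker(\beta(f_i)) \subseteq \Ker(\chi)$, then $\chi$ lies in the subgroup of $G^\vee$ generated by $\beta(f_1), \ldots, \beta(f_r)$. This is the Pontryagin-type statement that a character killing the common kernel of finitely many characters is a $\Z$-linear combination of them; concretely, the map $G \to (\Q/\Z)^{r}$, $x \mapsto (\beta(f_i)(x))_i$, has image a subgroup $H \subseteq (\Q/\Z)^r$, and $\chi$ factors through $H$, so it suffices to check that $\Hom(H, \Q/\Z)$ is generated by the restrictions of the $r$ coordinate projections — which follows since $H$ is a subgroup of $(\Q/\Z)^r$ and $(\Q/\Z)^r$ is the Pontryagin dual of $\Z^r$, so every character of $H$ extends to $(\Q/\Z)^r$ and is thus a combination of coordinate functions (this uses that $\Q/\Z$ is an injective $\Z$-module). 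Hence $\chi = \sum_i a_i \beta(f_i) = \beta\big(\sum_i a_i f_i\big)$ for suitable $a_i \in \Z$, and $\sum_i a_i f_i \in G'$ since $G'$ is a group. Therefore $\chi$ is in the image of $\beta$.

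Putting these together: $\beta$ is injective by hypothesis (no left kernel) and surjective by the argument above, so it is a bijection, proving the lemma. I would also remark, as a sanity check used implicitly, that the weak topology genuinely makes each $\beta(f)$ continuous by construction, so $\beta(G') \subseteq G^\star$ and the statement makes sense.

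The main obstacle is the middle step — verifying that a character of $G$ annihilating the common kernel of $\beta(f_1), \ldots, \beta(f_r)$ is an integral combination of them. This is where one must be careful: it is true because $\Q/\Z$ is divisible (hence injective), which lets one extend characters from the subgroup $H \subseteq (\Q/\Z)^r$ back to all of $(\Q/\Z)^r$ and then identify $\Hom((\Q/\Z)^r, \Q/\Z)$ with $\Z^r$ via the standard evaluation pairing. Once that purely algebraic fact is in hand, the rest is a straightforward unwinding of the definition of the weak topology.
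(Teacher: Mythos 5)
Your overall strategy (continuity at the identity produces $f_1,\ldots,f_r\in G'$ with $\bigcap_i\Ker(\beta(f_i))\subseteq\Ker(\chi)$, then show $\chi$ is an integral combination of the $\beta(f_i)$) is the same as the paper's, but the middle step as you justify it is wrong, and the error is not cosmetic. You claim that $(\Q/\Z)^r$ is the Pontryagin dual of $\Z^r$ and that $\Hom\bigl((\Q/\Z)^r,\Q/\Z\bigr)$ is generated by the coordinate projections. Neither is true: the dual of $\Z$ is the circle $\R/\Z$, and $\Hom(\Q/\Z,\Q/\Z)\cong\widehat{\Z}$, so a character of $(\Q/\Z)^r$, or of a subgroup $H\subseteq(\Q/\Z)^r$ after extending by injectivity of $\Q/\Z$, is only a $\widehat{\Z}$-linear combination of the coordinates, not a $\Z$-linear one. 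Correspondingly, the ``elementary fact'' you invoke --- a character annihilating the common kernel of finitely many characters is a $\Z$-linear combination of them --- is false for general abelian groups: take $G=\Q/\Z$, $r=1$, $\beta(f_1)=\mathrm{id}$ (kernel $0$), and $\chi$ any element of $\widehat{\Z}\setminus\Z$. Notice also that your argument nowhere uses the standing hypothesis that $G'$ is a torsion group, yet the lemma fails without it: for $G=\Q/\Z$, $G'=\Z$ with the evaluation pairing, $\Ker(\beta(1))=0$ is weakly open, so the weak topology is discrete and $G^\star=\widehat{\Z}\supsetneq\Z=\beta(G')$.

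The gap is repairable, and the repair is exactly where the paper goes. Because $G'$ is torsion, each $f_i$ has finite order $n_i$, so $\beta(f_i)$ takes values in $\tfrac{1}{n_i}\Z/\Z$ and each quotient $G/\Ker(\beta(f_i))$ is a \emph{finite cyclic} group. Factoring $\chi$ through $G/\bigl(\bigcap_i\Ker(\beta(f_i))\bigr)\hookrightarrow\bigoplus_i G/\Ker(\beta(f_i))$ and extending by injectivity of $\Q/\Z$, one now only needs to describe homomorphisms between finite cyclic groups, where the $\widehat{\Z}$-coefficients can legitimately be replaced by integers $a_i$ (mod $n_i$); then $\chi=\sum_i a_i\beta(f_i)=\beta\bigl(\sum_i a_if_i\bigr)$ with $\sum_i a_if_i\in G'$. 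So either restate your algebraic lemma with the finiteness of the images of the $\beta(f_i)$ built in (deduced from torsionness of $G'$), or argue directly as above; as written, the proof does not go through.
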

\begin{proof}
  Let $f \in G^\star$. We can find $f_1, \ldots , f_r \in G'$ such that
  the continuous homomorphism $f \colon G \to {\Q}/{\Z}$ has the property that
  $H:= \stackrel{r}{\underset{i=1}\cap} \Ker(\beta(f_i)) \subset \Ker(f)$.
  We now look at 
  \begin{equation}\label{eqn:Top-1-0}
    \xymatrix@C.8pc{
      G \ar@{->>}[r] \ar[dr]_-{\Delta} \ar@/^2pc/[rrr]^-{f}
      & {G}/H \ar@{->>}[r] \ar@{^{(}->}[d] &
      \frac{G}{\Ker(f)} \ar[r]_-{\ov{f}} & {\Q}/{\Z} \\
      & \stackrel{r}{\underset{i =1}\bigoplus} \frac{G}{\Ker(\beta(f_i))}.
      \ar@/_2pc/[urr]_-{\wt{f}} & &}
  \end{equation}

 Since ${\Q}/{\Z}$ is injective, there exists $\wt{f}$ such that the above diagram
  is commutative. Since $G'$ is a torsion group, we see that each summand of the
  the group on the bottom is a finite cyclic group.
  It follows that we can replace ${\Q}/{\Z}$ by a finite cyclic group 
  in ~\eqref{eqn:Top-1-0}. Using the standard description of homomorphisms between
  finite cyclic groups, we can find integers $n_1, \ldots , n_r$ such
  that $f = \wt{f} \circ \Delta = \stackrel{r}{\underset{i =1}\sum} n_i \beta(f_i)
  = \beta(\stackrel{r}{\underset{i =1}\sum} n_i f_i) \in G'$.
\end{proof}

For a topological abelian group $G$ whose topology is $\tau$, we let $G^{\pf}$
denote the profinite completion of $G$ with respect to $\tau$. That is, $G^{\pf}$ is
the inverse limit ${\varprojlim}_U \ {G}/U$ with the inverse limit
topology, where $U$ runs through the $\tau$-open subgroups of finite index in $G$.
It is clear that $G^{\pf}$ is a profinite abelian group and the
profinite completion map $G \to G^{\pf}$ is continuous with dense image.
We note the following elementary lemma. 
\begin{lem}\label{lem:PF-compln}
  If $G$ is a topological abelian group, then the map $(G^{\pf})^\star \to G^\star$ induces  a
  topological isomorphism $(G^{\pf})^\star \to (G^\star)_\tor$ for the discrete topology of $(G^\star)_\tor$. In particular if $G$ is torsion topological abelian group, then $(G^{\pf})^\star \to G^\star$ is isomorphism.
\end{lem}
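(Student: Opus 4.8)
The statement to prove is \lemref{lem:PF-compln}: for a topological abelian group $G$, the natural map $(G^{\pf})^\star \to G^\star$ is a topological isomorphism onto $(G^\star)_\tor$ (with the discrete topology), and in particular a bijection when $G$ is torsion.

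The plan is to work directly from the definitions of the profinite completion and the Pontryagin-type dual $(-)^\star = \Hom_{\Tab}(-,\Q/\Z)$. First I would note that $\Q/\Z$ is a torsion group and every continuous homomorphism $\chi \colon G \to \Q/\Z$ has image a finite (hence finite cyclic, or finite) subgroup of $\Q/\Z$, so $\Ker(\chi)$ is an open subgroup of finite index in $G$. Conversely the profinite completion map $c \colon G \to G^{\pf} = \varprojlim_{U} G/U$ (limit over open finite-index subgroups) has dense image, so precomposition with $c$ gives an injection $c^* \colon (G^{\pf})^\star \to G^\star$. The content is to identify the image. Given $\chi \in G^\star$, since $\Ker(\chi)$ is open of finite index, $\chi$ factors through $G/\Ker(\chi)$, which is one of the finite quotients defining $G^{\pf}$; hence $\chi$ factors continuously through $G^{\pf}$, i.e. $\chi \in \im(c^*)$. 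This shows $c^*$ is surjective onto $G^\star$ at the level of abstract groups --- but wait, one must be careful: an arbitrary element of $G^\star$ need not be torsion, so in fact $\im(c^*)$ consists exactly of those $\chi$ whose order is finite, which is $(G^\star)_\tor$. I would check: any $\chi$ factoring through a finite quotient of $G$ is torsion (its image is finite), and conversely any torsion $\chi$, being a continuous homomorphism to $\Q/\Z$, automatically has open finite-index kernel by the image-finiteness remark, so factors through $G^{\pf}$; thus $\im(c^*) = (G^\star)_\tor$ and $c^*$ is a group isomorphism $(G^{\pf})^\star \xrightarrow{\cong} (G^\star)_\tor$.

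Next I would address the topological statement. On $(G^{\pf})^\star$ we use the compact-open topology (as for duals of profinite groups, cf. the reference to \cite[\S~2.9]{Pro-fin} recalled in Notation~\ref{not:Top}), and on $(G^\star)_\tor$ we claim the induced/compact-open topology is discrete. The key point is that $G^{\pf}$ is profinite, so $(G^{\pf})^\star$ is a discrete torsion group: any continuous character of a profinite group factors through a finite quotient, and the compact-open topology on the dual of a profinite group is discrete. Hence $(G^{\pf})^\star$ is discrete, so the group isomorphism $c^* \colon (G^{\pf})^\star \to (G^\star)_\tor$ is automatically a homeomorphism once we equip the target with the discrete topology, which is precisely the assertion. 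For the ``in particular'' clause: if $G$ is a torsion topological abelian group, then every $\chi \in G^\star = \Hom_{\Tab}(G,\Q/\Z)$ is torsion (the value $\chi(g)$ has order dividing that of $g$, and more to the point $\chi$ kills a finite-index open subgroup so has finite image), i.e. $(G^\star)_\tor = G^\star$, and the general statement gives $(G^{\pf})^\star \xrightarrow{\cong} G^\star$.

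I do not expect a serious obstacle here; this is a soft, formal lemma. The only place demanding a little care is the interplay between the two a priori distinct topologies on the dual: one should pin down that the compact-open topology on the continuous dual of a profinite group is discrete (equivalently, that continuous characters of a profinite group have open kernel and hence factor through finite discrete quotients), and then that $c^*$ respects these structures. I would cite \cite[\S~2.9]{Pro-fin} for the profinite Pontryagin duality and the discreteness of $(G^{\pf})^\star$, and otherwise the argument is a short diagram-free verification. I would keep the write-up to a few lines, emphasizing (i) image-finiteness of continuous characters into $\Q/\Z$, (ii) factorization of torsion characters of $G$ through $G^{\pf}$, and (iii) discreteness of the dual of a profinite group.
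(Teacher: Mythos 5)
Your core argument is the same as the paper's: injectivity of $(G^{\pf})^\star \to G^\star$ from density of $G \to G^{\pf}$, the image lies in $(G^\star)_\tor$ because continuous characters of a profinite group factor through finite quotients (hence have finite order), and surjectivity onto the torsion part because a finite-order continuous $\chi \colon G \to \Q/\Z$ has open kernel of finite index and so factors through $G/\Ker(\chi)$ and hence through $G^{\pf}$. Your extra remark that $(G^{\pf})^\star$ is discrete in the compact-open topology, so that the bijection is automatically a homeomorphism onto the discrete group $(G^\star)_\tor$, is a welcome addition the paper leaves implicit.

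One point needs fixing, though it does not sink the main argument because you retract it midway: your opening claim that \emph{every} continuous homomorphism $\chi \colon G \to \Q/\Z$ has finite image (hence open kernel of finite index) is false for a general topological abelian group --- take $G = \Q/\Z$ with the discrete topology and $\chi = \mathrm{id}$. What is true, and is all you need, is the statement for characters of finite order (finite order $\Rightarrow$ image in $\tfrac{1}{n}\Z/\Z$, and continuity with $\Q/\Z$ discrete gives an open kernel); you should delete the blanket claim, including its reappearance as item (i) of your summary, and invoke only this torsion-character version. The same caution applies to your justification of the ``in particular'' clause: for a torsion group of unbounded exponent (again $G=\Q/\Z$ discrete) not every continuous character is a torsion element of $G^\star$, so $(G^\star)_\tor \neq G^\star$ there; the clause is harmless in the paper because it is only applied to groups killed by $p^m$, where every character is $p^m$-torsion, but your parenthetical reasons (``$\chi(g)$ has order dividing that of $g$'' and the image-finiteness remark) do not prove it in the stated generality --- a caveat that, to be fair, applies equally to the paper's own unproved assertion of this clause.
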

\begin{proof}
    Since the profinite completion map $G \to G^{\pf}$ is continuous with dense image, it follows that the canonical map $(G^{\pf})^\star \to G^\star$ is injective and its image lies in $(G^\star)_\tor$ (as any continuous map $f:G^\pf\to \Q/\Z$ has finite order). On the other hand, any continuous map $f;G \to \Q/\Z$ of finite order factors through $G/\Ker f$ and hence through $G^\pf$, because $\Ker f$ is an open subgroup of finite index in $G$. This proves the lemma.
\end{proof}

Let $G$ be a torsion topological abelian group and let  $G^\star$ be endowed with
the discrete topology. Then the evaluation map ${\rm ev}_G \colon G \to
(G^\star)^\vee$ is continuous with respect to the profinite topology of the target
group. This implies that ${\rm ev}_G$ has a canonical factorization
$G \to G^{\pf} \xrightarrow{{\rm ev}^{\pf}_G} (G^\star)^\vee$ with ${\rm ev}^{\pf}_G$
continuous.
\begin{cor}\label{cor:PF-compln-0}
  The map ${\rm ev}^{\pf}_G \colon G^{\pf} \to  (G^\star)^\vee$ is a topological
  isomorphism.
\end{cor}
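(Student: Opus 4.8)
The goal is to prove Corollary~\ref{cor:PF-compln-0}, which asserts that for a torsion topological abelian group $G$, the canonical map $\ev^{\pf}_G \colon G^{\pf} \to (G^\star)^\vee$ is a topological isomorphism, where $G^\star$ carries the discrete topology.

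\textbf{Plan of the proof.} The strategy is to combine \lemref{lem:PF-compln} with the Pontryagin duality for profinite groups, reducing everything to the statement that $G^{\pf}$ and its double dual agree. First I would observe that since $G$ is torsion, the discrete group $G^\star = \Hom_{\Tab}(G, \Q/\Z)$ satisfies $(G^\star)_\tor = G^\star$, so $(G^\star)^\vee = \Hom_\Ab(G^\star, \Q/\Z)$ is a profinite abelian group with its compact-open topology, to which Pontryagin duality applies (cf.\ \cite[\S~2.9]{Pro-fin}). By \lemref{lem:PF-compln}, the natural map $(G^{\pf})^\star \to G^\star$ is an isomorphism of discrete abelian groups (the "in particular" clause, as $G$ is torsion, forces $(G^\star)_\tor=G^\star$ and hence no torsion truncation is needed). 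Dualizing this isomorphism gives a topological isomorphism $((G^{\pf})^\star)^\vee \xrightarrow{\cong} (G^\star)^\vee$.

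\textbf{Key steps.} Next I would invoke Pontryagin duality for the profinite group $G^{\pf}$: the evaluation map $\ev^{\pf}_{G^{\pf}} \colon G^{\pf} \to ((G^{\pf})^\star)^\vee$ is a topological isomorphism. Then I would check that the composite
\[
G^{\pf} \xrightarrow{\ev^{\pf}_{G^{\pf}}} ((G^{\pf})^\star)^\vee \xrightarrow{\cong} (G^\star)^\vee
\]
coincides with $\ev^{\pf}_G$; this is a routine diagram chase using the compatibility of evaluation maps with the profinite completion map $G \to G^{\pf}$ and with the dual of $(G^{\pf})^\star \xrightarrow{\cong} G^\star$. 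Since both arrows in the composite are topological isomorphisms, so is $\ev^{\pf}_G$, which is exactly the assertion.

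\textbf{Main obstacle.} The genuinely substantive input is the Pontryagin duality statement for profinite abelian groups together with the (elementary but essential) fact from \lemref{lem:PF-compln} that passing to duals turns the profinite completion map into an isomorphism on the level of torsion duals. Everything else is formal. The only point requiring a little care is confirming that the two stated topologies match: on $(G^\star)^\vee$ we must use the profinite (equivalently, compact-open) topology, and we must verify that the topological-isomorphism conclusion of \lemref{lem:PF-compln} is compatible with this, so that the identification $((G^{\pf})^\star)^\vee \cong (G^\star)^\vee$ is an isomorphism of topological groups and not merely of abstract groups. Thus I expect the proof to be short, with the main work being the bookkeeping of which topology sits on which group.
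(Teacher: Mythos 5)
Your argument is correct and is essentially the paper's proof: the paper likewise deduces the corollary by combining \lemref{lem:PF-compln} (giving $(G^{\pf})^\star \cong G^\star$ for torsion $G$) with Pontryagin duality for the profinite group $G^{\pf}$. The extra compatibility check that the composite equals ${\rm ev}^{\pf}_G$ is the routine bookkeeping the paper leaves implicit.
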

\begin{proof}
  Follows from \lemref{lem:PF-compln} using the Pontryagin duality.
\end{proof}

To apply the above recipe, we use the pairing $F^{0,1}_{\n}(m) \times G^{2,1}_{\n}(m) \to \Z/p^m$ (cf.~\eqref{eqn:comp-4})
to endow $G^{2,1}_{\n}(m)$ with the structure of a
topological abelian group via the weak topology induced by $F^{0,1}_{\n}(m)$.
 To check that the hypothesis of \lemref{lem:Top-1} is satisfied, 
we look at the commutative diagram
\begin{equation}\label{eqn:Top-2.5}
  \xymatrix@C.8pc{
    0 \ar[r] & F^{0,1}_{\un{0}}(m) \ar[r] \ar[d]_-{(\theta^*)^\vee} &
    F^{0,1}_{\n}(m) \ar[r] \ar[d]^-{(\theta^*_{\n})^\vee} &
    \H^1_\et(X, W_m\sV^0_{\n}) \ar[d]^-{((\psi^2_{\n})^*)^\vee}  \\
    0 \ar[r] & G^{2,1}_{\un{0}}(m) ^\vee \ar[r] &
    G^{2,1}_{\n}(m)^\vee \ar[r]^-{\partial^\vee} &
    H^0_\et(X,W_m \sW^2_{{\n+1}})^\vee,}
  \end{equation}
  obtained by dualizing ~\eqref{eqn:G-comp-6}.
Lemmas~\ref{lem:Coker-1} and ~\ref{lem:Coh-V1} imply that
  the rows of this diagram are exact. In combination with
  \lemref{lem:Dual-iso}, \lemref{lem:Coh-V2} and \cite[Prop.~3.4]{Kato-Saito-Ann}
  (see also \cite[Thm.~4.7]{KRS}), these results also imply that
  $(\theta^*_{\n})^\vee$ is a monomorphism. This proves that the desired hypothesis
  is satisfied.

We conclude from \lemref{lem:Top-1}
  that with respect to the weak topology of $G^{2,1}_{\n}(m)$,
  the map $(\theta^*_{\n})^\vee \colon F^{0,1}_{\n}(m) \to G^{2,1}_{\n}(m)^\star$ is a
  bijection.

\begin{remk}\label{remk:Top-4}
    Via the pairing $F^{0,1}_{\un{0}}(m) \times G^{2,1}_{\un{0}}(m)  \to {\Z}/{p^m}$, we can endow
    $G^{2,1}_{\un{0}}(m) $ with the weak topology induced by
    $F^1_{\un{0}}(m)$. Using the Pontryagin duality between the
    profinite topology of $G^{2,1}_{\un{0}}(m) $ and the discrete
    topology of $F^1_{\un{0}}(m)$ (cf. \cite[Thm~4.7]{KRS}),
    it is easy to check that
    $\{\Ker(\beta(f))|f \in F^{0,1}_{\un{0}}(m) \}$ forms a sub-base for
    the profinite topology of $G^{2,1}_{\un{0}}(m) $. It follows that
    the weak topology of $G^{2,1}_{\un{0}}(m) $ coincides with the
    profinite topology. Furthermore, $G^{2,1}_{\n}(m) \to  G^{2,1}_{\un{0}}(m) $
    is continuous.
    \end{remk}
By combining the isomorphism  $(\theta^*_{\n})^\vee \colon F^{0,1}_{\n}(m)  \to G^{2,1}_{\n}(m)^\star$, \corref{cor:PF-compln-0} and the Pontryagin
duality, we get the following case of the duality.
\begin{cor}\label{cor:Duality-Spl}
     For $n \ge 0$, there is a perfect pairing of
     topological abelian groups
     \[
       (G^{2,1}_{\n}(m))^{\pf} \times F^{0,1}_{\n}(m) \to {\Z}/{p^m}.
     \]
     Hence, we have an isomprphism $(G^{2,1}_{\n}(m))^\pf \cong \pi_1^{ab}(X,D_n)/p^m$.
   \end{cor}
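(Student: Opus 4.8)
\textbf{Proof plan for Corollary~\ref{cor:Duality-Spl}.}
The plan is to assemble the duality pairing from the three ingredients already in hand: the weak-topology identification $(\theta^*_{\n})^\vee \colon F^{0,1}_{\n}(m) \xrightarrow{\cong} G^{2,1}_{\n}(m)^\star$ obtained just above via \lemref{lem:Top-1}; the profinite-completion duality of \corref{cor:PF-compln-0}; and the Pontryagin duality for profinite and discrete torsion groups recalled in \secref{sec:Kato-top}. First I would recall that $F^{0,1}_{\n}(m) = \H^1_\et(X, W_m\sF^0_\n)$ is a $p^m$-torsion abelian group, which we give the discrete topology. The pairing $F^{0,1}_{\n}(m) \times G^{2,1}_{\n}(m) \to \Z/p^m$ of \eqref{eqn:comp-4} has no left kernel — this is exactly the content of the injectivity of $(\theta^*_\n)^\vee$ established through the commutative diagram \eqref{eqn:Top-2.5} together with \lemref{lem:Dual-iso}, \lemref{lem:Coh-V2} and \cite[Prop.~3.4]{Kato-Saito-Ann}. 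Hence the recipe preceding \lemref{lem:Top-1} applies, equipping $G^{2,1}_\n(m)$ with its weak topology and yielding the bijection $(\theta^*_\n)^\vee \colon F^{0,1}_\n(m) \xrightarrow{\cong} G^{2,1}_\n(m)^\star$.

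Next I would apply \corref{cor:PF-compln-0} with $G = G^{2,1}_\n(m)$, which is a torsion topological abelian group (being $p^m$-torsion as a quotient of a group killed by $p^m$): this gives a topological isomorphism $\mathrm{ev}^{\pf}_G \colon (G^{2,1}_\n(m))^{\pf} \xrightarrow{\cong} (G^{2,1}_\n(m)^\star)^\vee$. Composing with the dual of the bijection $(\theta^*_\n)^\vee$, we obtain a topological isomorphism $(G^{2,1}_\n(m))^{\pf} \xrightarrow{\cong} F^{0,1}_\n(m)^\vee$, where the target carries the profinite topology dual to the discrete group $F^{0,1}_\n(m)$. By the Pontryagin duality between profinite and discrete torsion groups, the associated bilinear map $(G^{2,1}_\n(m))^{\pf} \times F^{0,1}_\n(m) \to \Z/p^m$ is then a perfect pairing of topological abelian groups; this map is compatible with the original pairing \eqref{eqn:comp-4} precomposed with the completion map $G^{2,1}_\n(m) \to (G^{2,1}_\n(m))^{\pf}$, since $\mathrm{ev}^{\pf}_G$ is defined as the canonical factorization of $\mathrm{ev}_G$.

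For the final sentence, I would invoke \thmref{thm:H^1-fil} (or rather its $q=0$ specialization, namely the Kerz–Saito isomorphism $\H^1_\et(X, W_m\sF^{0,\bullet}_{D_n}) \cong \Fil^{\log}_{D_n} H^1_\et(U,\Z/p^m)$ recalled around \eqref{eqn:fundamental}), together with \eqref{eqn:fundamental} itself, which identifies $\pi_1^{ab}(X,D_n)/p^m$ with $\Hom_\Ab(\Fil^{\log}_{D_n}H^1_\et(U,\Z/p^m),\Q/\Z) = F^{0,1}_\n(m)^\vee$. Combining this with the isomorphism $(G^{2,1}_\n(m))^{\pf} \cong F^{0,1}_\n(m)^\vee$ just obtained gives $(G^{2,1}_\n(m))^{\pf} \cong \pi_1^{ab}(X,D_n)/p^m$. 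I expect the only mildly delicate point to be the bookkeeping of topologies — checking that the weak topology on $G^{2,1}_\n(m)$ is indeed the one whose profinite completion is relevant, and that $F^{0,1}_\n(m)$ with the discrete topology is genuinely the Pontryagin dual appearing on the other side — but all of this is already packaged into Lemmas~\ref{lem:Top-1}, \ref{lem:PF-compln} and \corref{cor:PF-compln-0}, so no new obstacle should arise beyond careful citation.
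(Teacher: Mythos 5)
Your proposal is correct and follows essentially the same route as the paper: the first part is exactly the combination of the bijection $(\theta^*_{\n})^\vee \colon F^{0,1}_{\n}(m) \xrightarrow{\cong} G^{2,1}_{\n}(m)^\star$ (via the weak-topology recipe and \lemref{lem:Top-1}), \corref{cor:PF-compln-0}, and Pontryagin duality, and the second part is deduced from \eqref{eqn:fundamental} together with \thmref{thm:H^1-fil}, just as in the paper. No gaps; you have merely written out in detail what the paper compresses into ``clear from the previous discussion.''
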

   \begin{proof}
       First part of the corollary is clear from the previous discussion. The second part follows from the first part, \eqref{eqn:fundamental} and \thmref{thm:H^1-fil}.
   \end{proof}
 \textbf{Case 5 $(i=1, q=2)$:} We have a perfect pairing of topological abelian groups
 \begin{equation}
      (G^{2,1}_{\n}(m))^{\pf} \times F^{0,1}_{\n}(m) \to {\Z}/{p^m},
 \end{equation}
 where $(G^{2,1}_{\n}(m))^{\pf}$ has profinite topology and $ F^{0,1}_{\n}(m)$ has discrete topology.

Even though we don't know much about the topology of $G^{2,1}_{\n}(m)$, the following lemma ensures that the topology is separated. This property is crucial while proving Class field theory in modulus setup.

\begin{lem}\label{lem:Top-5}
      The profinite completion map $G^{2,1}_{\n}(m) \to (G^{2,1}_{\n}(m))^{\pf}$
        is injective, and the induced subspace topology of $G^{2,1}_{\n}(m)$
      coincides with its weak topology. In particular, the latter topology is
      Hausdorff.
    \end{lem}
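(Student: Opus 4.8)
The strategy is to leverage the perfect pairing of topological abelian groups from \corref{cor:Duality-Spl}, namely $(G^{2,1}_{\n}(m))^{\pf} \times F^{0,1}_{\n}(m) \to \Z/p^m$, together with the fact (established just before \corref{cor:Duality-Spl}) that the induced map $(\theta^*_{\n})^\vee \colon F^{0,1}_{\n}(m) \to (G^{2,1}_{\n}(m))^\star$ is a \emph{bijection}, where the dual on the right is taken with respect to the weak topology on $G^{2,1}_{\n}(m)$. The key point is that this last statement says: every element of $F^{0,1}_{\n}(m)$, viewed as a continuous character $G^{2,1}_{\n}(m) \to \Z/p^m$, exhausts the continuous characters for the weak topology; and conversely the weak topology is \emph{by definition} the coarsest topology making all these characters continuous.

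First I would show injectivity of the profinite completion map $\eta \colon G^{2,1}_{\n}(m) \to (G^{2,1}_{\n}(m))^{\pf}$. An element $x \in \Ker(\eta)$ lies in every open subgroup of finite index, hence in $\Ker(\beta(f))$ for every $f \in F^{0,1}_{\n}(m)$ (since each such kernel is an open subgroup of finite index, as $F^{0,1}_{\n}(m)$ is $p^m$-torsion and the pairing lands in $\Z/p^m$). By the definition of the weak topology, the intersection $\bigcap_{f} \Ker(\beta(f))$ is exactly the closure of $\{0\}$; but since $(\theta^*_{\n})^\vee$ is surjective onto $(G^{2,1}_{\n}(m))^\star$, these $\beta(f)$ are \emph{all} the continuous characters, so separation of points is equivalent to $\bigcap_f \Ker(\beta(f)) = 0$. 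The latter holds precisely because the pairing $F^{0,1}_{\n}(m) \times G^{2,1}_{\n}(m) \to \Z/p^m$ has no right kernel: indeed, in the recipe preceding \lemref{lem:Top-1} we arranged (via the monomorphism $(\theta^*_{\n})^\vee$ coming from the commutative diagram \eqref{eqn:Top-2.5} and \lemref{lem:Dual-iso}) that the pairing has no left kernel, and by \lemref{lem:Top-1} the induced $\beta$ is a \emph{bijection} $F^{0,1}_{\n}(m) \xrightarrow{\cong} (G^{2,1}_{\n}(m))^\star$, which forces the pairing to be non-degenerate on the right as well once we know $G^{2,1}_{\n}(m)$ injects into its double dual. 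So $\Ker(\eta) = 0$.

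Next I would identify the subspace topology on $G^{2,1}_{\n}(m)$ induced from $(G^{2,1}_{\n}(m))^{\pf}$ with the weak topology. The subspace topology has, as a fundamental system of neighborhoods of $0$, the finite-index open subgroups of $G^{2,1}_{\n}(m)$ (pulled back from $(G^{2,1}_{\n}(m))^{\pf}$). Every such subgroup $U$ has finite quotient $G^{2,1}_{\n}(m)/U$, which embeds into $\Q/\Z$, giving finitely many characters $G^{2,1}_{\n}(m) \to \Q/\Z$ whose common kernel is $U$; since these characters are continuous for the profinite completion topology, hence automatically of finite order, \lemref{lem:PF-compln} (or directly \corref{cor:PF-compln-0}) identifies them with elements of $((G^{2,1}_{\n}(m))^{\pf})^\star \cong (G^{2,1}_{\n}(m))^\star = \mathrm{image\ of}\ \beta$, i.e.\ with elements of $F^{0,1}_{\n}(m)$. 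Thus $U = \bigcap_{i} \Ker(\beta(f_i))$ for suitable $f_i \in F^{0,1}_{\n}(m)$, which is a basic weak-topology neighborhood. Conversely any basic weak-topology neighborhood $\bigcap_i \Ker(\beta(f_i))$ is a finite-index open subgroup (finite index because each $\beta(f_i)$ lands in $\Z/p^m$), hence open in the subspace topology. So the two topologies coincide, and being a subspace of the Hausdorff group $(G^{2,1}_{\n}(m))^{\pf}$, it is Hausdorff.

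\textbf{Main obstacle.} The one genuinely delicate point is the verification that $\bigcap_f \Ker(\beta(f)) = 0$, i.e.\ that the weak topology actually separates points of $G^{2,1}_{\n}(m)$. This is \emph{not} automatic from the recipe (which only needs the pairing to have no left kernel on $F^{0,1}_{\n}(m)$); it requires that the pairing have no \emph{right} kernel too. I expect to extract this from the exactness of the bottom row of \eqref{eqn:Top-2.5} together with the known Hausdorff/profinite duality for $G^{2,1}_{\un 0}(m)$ from \cite[Thm.~4.7]{KRS} (cf.\ \remref{remk:Top-4}) and the injectivity of $(\psi^2_{\n})^*{}^\vee$ on the quotient term $H^0_\et(X, W_m\sW^2_{\n+1})$, which by \lemref{lem:Coh-V2} is the reciprocity map $\wt\rho^{2,n_x+1}_{K_x,m}$ shown injective in \lemref{lem:Local-inj-6}: a diagram chase through \eqref{eqn:Top-2.5} then shows any right-kernel element of the pairing would have to come from a right-kernel element of the split (base) pairing $G^{2,1}_{\un 0}(m) \times F^{0,1}_{\un 0}(m)$, which is perfect, hence vanishes.
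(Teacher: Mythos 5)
Your proposal is correct in substance and follows essentially the same route as the paper: injectivity of the completion map reduces (via \corref{cor:Duality-Spl}) to the pairing having no kernel on the $G^{2,1}_{\n}(m)$-side, which is proved by a four-lemma chase using injectivity of the reciprocity map $(\psi^2_{\n})^*$ (Lemmas \ref{lem:Local-inj-6} and \ref{lem:Coh-V2}) together with bijectivity of the unramified maps $\theta^*$ from Kato--Saito/KRS, while the topology statement comes down to the observation from the proof of \lemref{lem:Top-1} that every weak-open subgroup has finite index.

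One correction to your ``main obstacle'' paragraph, which is the crux: the chase cannot be run in the dualized diagram \eqref{eqn:Top-2.5}, whose terms are the $F$-groups mapping to duals of $G$-groups; to chase an element $x\in G^{2,1}_{\n}(m)$ annihilated by all of $F^{0,1}_{\n}(m)$ you must use the non-dualized sequence \eqref{eqn:G-comp-6} (for $q=2$), with vertical arrows $\theta^*$, $(\psi^2_{\n})^*$, $\theta^*_{\n}$, $\theta^*$, and it is the injectivity of $(\psi^2_{\n})^*$ itself, not of its dual, that enters. Concretely: bijectivity of $\theta^*$ in degree one gives that $x$ dies in $G^{2,1}_{\un 0}(m)$, so $x=\partial(w)$ with $w\in H^0_\et(X,W_m\sW^2_{\n+1})$; then commutativity, surjectivity of $\theta^*$ in degree zero and injectivity of $(\psi^2_{\n})^*$ force $w$ to lie in the image of $H^0_\et(X,W_m\Omega^2_{X,\log})$, whence $x=\partial(w)=0$ by exactness (your phrase ``comes from a right-kernel element of the base pairing'' has the direction reversed). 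Also, your first-paragraph claim that bijectivity of $\beta$ forces right non-degeneracy ``once we know $G^{2,1}_{\n}(m)$ injects into its double dual'' is circular, since that injection is precisely what is being proved; you correctly abandon it and treat the point in the obstacle paragraph, where, with the diagram reference fixed as above, the argument coincides with the paper's.
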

\begin{proof}
  By \corref{cor:Duality-Spl}, the injectivity claim is equivalent to that the map
      $\theta^*_{\n}$ in the following (cf.~\eqref{eqn:G-comp-6}) is injective.
      \begin{equation}
          \xymatrix@C.5pc{
    H^0(W_m\Omega^2_{X, \log}) \ar[r] \ar[d]^-{\theta^*} &
    H^0(W_m \sW^2_{{\n+1}}) \ar[r]^-{\partial}
    \ar[d]^-{(\psi^2_{{\n}})^*} &
    H^1(W_m\Omega^2_{X|D_{\n+1}, \log}) \ar[r]
    \ar[d]^-{\theta^*_{{\n}}} &  H^1(W_m\Omega^2_{X, \log}) \ar[r] \ar[d]^-{\theta^*}& \cdots \\
    \H^2(W_m\sF^{0}_{\un{0}})^\vee \ar[r] & \H^1(W_m\sV^{0}_{\n})^\vee \ar[r] &
    \H^1(W_m\sF^{0}_{\n})^\vee \ar[r] &  \H^2(W_m\sF^{0}_{\un{0}})^\vee \ar[r] & \cdots ,}
    \end{equation}
      
      But this follows because $(\psi^2_{\n})^*$ is injective by
      \lemref{lem:Local-inj-6} and \lemref{lem:Coh-V2} while the maps $\theta^*$
      are bijective by \cite[\S~3, Prop.~4]{Kato-Saito-Ann} (see also
      \cite[Thm.~4.7]{KRS}). The claim, that the weak topology of $G^{2,1}_{\n}(m)$
      coincides with the subspace
      topology induced from the profinite topology of $(G^{2,1}_{\n}(m))^\pf$,
      follows from \lemref{lem:Top-1} whose proof shows that every open subgroup of
      $G^{2,1}_{\n}(m)$ has finite index.
      \end{proof}

      \begin{lem}\label{lem:Top-6}
     For $\n \ge \n'$, the canonical map $G^{2,1}_{\n}(m) \to G^{2,1}_{\n'}(m)$ is
     surjective continuous and the induced map $(G^{2,1}_{\n}(m))^\pf \to (G^{2,1}_{\n'}(m))^\pf$
     is a topological quotient.
   \end{lem}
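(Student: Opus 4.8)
\textbf{Proof plan for Lemma~\ref{lem:Top-6}.}
The surjectivity of the underlying map of abelian groups $G^{2,1}_{\n}(m) \to G^{2,1}_{\n'}(m)$ for $\n \ge \n'$ is the first thing to establish. The plan is to use the long exact cohomology sequence attached to the short exact sequence $0 \to W_m\Omega^2_{X|D_{\n+1},\log} \to W_m\Omega^2_{X|D_{\n'+1},\log} \to \sQ \to 0$, where $\sQ$ has a finite filtration with graded quotients of the form $\gr^{\un{m},\nu}W_m\Omega^2_{X\log}$ by \propref{prop:JSZ}; these are coherent sheaves on (a thickening of) $D_\nu$, which is a finite set of closed points of the curve $X$. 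Hence $\sQ$ is supported in dimension zero and so $H^1_\et(X,\sQ) = 0$, which already forces $G^{2,1}_{\n}(m) \to G^{2,1}_{\n'}(m)$ to be surjective (note also that $G^{2,0}_{\n}(m) = G^{2,0}_{\n'}(m)$ when both are the group in Case~2 with fixed $\un{0}$; more precisely the relevant identification comes from \eqref{eqn:Top--1} and the sequence \eqref{eqn:G-comp-6}, so the connecting map lands in a group that surjects onto the next $H^1$).

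Next I would verify continuity. By construction $G^{2,1}_{\n}(m)$ carries the weak topology induced by $F^{0,1}_{\n}(m)$ via the pairing $F^{0,1}_{\n}(m) \times G^{2,1}_{\n}(m) \to \Z/p^m$. There is a canonical map $W_m\sF^{0}_{\n'} \to W_m\sF^{0}_{\n}$ of complexes (since $\n' \le \n$, so $\Fil_{\n'} \subset \Fil_{\n}$), inducing $F^{0,1}_{\n'}(m) \to F^{0,1}_{\n}(m)$, and the pairings are compatible via $\eqref{eqn:comp-4}$ (this is exactly the content of the commuting square in \eqref{eqn:comp-4} read at cohomology level, combined with \eqref{eqn:G-comp-6}). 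Concretely, for $f \in F^{0,1}_{\n'}(m)$ the functional $\beta(f)$ on $G^{2,1}_{\n'}(m)$ pulls back, along $G^{2,1}_{\n}(m) \to G^{2,1}_{\n'}(m)$, to $\beta(\text{image of }f)$ on $G^{2,1}_{\n}(m)$, which is continuous by definition of the weak topology. Since the weak topology on $G^{2,1}_{\n'}(m)$ has a sub-base of opens of the form $\Ker(\beta(f))$, preimages of basic opens are open, giving continuity.

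Finally, for the statement about profinite completions: a surjective continuous homomorphism $\pi\colon G \to G'$ of topological abelian groups induces $\pi^\pf\colon G^\pf \to (G')^\pf$, and I would show this is a topological quotient (i.e.\ surjective, with $(G')^\pf$ carrying the quotient topology). Surjectivity of $\pi^\pf$ follows because the image of $G$ is dense in $G'$ hence dense (and, being a subgroup, closed under the continuous surjection onto each finite quotient) in each finite quotient of $G'$; equivalently, every finite-index open subgroup $U' \le G'$ pulls back to a finite-index open subgroup $\pi^{-1}(U') \le G$ (using that $\pi$ is surjective, so $[G:\pi^{-1}(U')] = [G':U']$), which gives compatible surjections $G/\pi^{-1}(U') \xrightarrow{\cong} G'/U'$ and hence a surjection on the inverse limits. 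For the quotient-topology claim, dualize: by \corref{cor:PF-compln-0} (Pontryagin duality) $(G^\pf)^\star = \varinjlim (G/U)^\star$ with the discrete topology, and $\pi$ being surjective identifies $((G')^\pf)^\star$ with a subgroup of $(G^\pf)^\star$; since both sides are discrete, $((G')^\pf)^\star \inj (G^\pf)^\star$ is a topological embedding, and dualizing back (using that both $G^\pf$ and $(G')^\pf$ are profinite, hence Pontryagin-reflexive) shows $\pi^\pf$ is an open surjection, i.e.\ a topological quotient. I expect the only mildly delicate point to be the compatibility of the pairings under change of $\n$ needed for continuity — but this is already encoded in diagrams \eqref{eqn:comp-4} and \eqref{eqn:G-comp-6}, which were set up precisely for this purpose, so it reduces to chasing those diagrams.
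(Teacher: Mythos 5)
Your proof is correct, and while the first step coincides with the paper's, the rest takes a genuinely different route. The surjectivity of $G^{2,1}_{\n}(m) \to G^{2,1}_{\n'}(m)$ is argued exactly as in the paper: the quotient sheaf is filtered with coherent graded pieces supported on $E$ by \propref{prop:JSZ}, so its $H^1$ vanishes and the long exact sequence gives surjectivity. For continuity you argue directly from the definition of the weak topology: a sub-basic open $\Ker(\beta(f'))$ of $G^{2,1}_{\n'}(m)$ pulls back to $\Ker(\beta(\iota(f')))$, where $\iota\colon F^{0,1}_{\n'}(m)\to F^{0,1}_{\n}(m)$ is the transition map, using compatibility of the cup-product pairings; note that \eqref{eqn:comp-4} is stated for the pair $(\un{0},\n)$, so you need its (immediate) analogue for $(\n',\n)$, which holds because the pairings are wedge products and $\Fil_{D_{\n'}}W_m\Omega^q_U\subset\Fil_{D_{\n}}W_m\Omega^q_U$, $W_m\Omega^2_{X|D_{\n+1},\log}\subset W_m\Omega^2_{X|D_{\n'+1},\log}$. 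The paper instead deduces continuity from \lemref{lem:Top-5} (the weak topology is the subspace topology induced from the profinite completion) together with continuity of the completed map. For the quotient claim you apply general functoriality of profinite completion to the group-level surjection (finite-index open subgroups pull back to finite-index open subgroups, then density plus compactness), and conclude since a continuous surjection of profinite groups is automatically a topological quotient; the paper instead dualizes via \corref{cor:Duality-Spl}, identifying $(G^{2,1}_{\n}(m))^\pf$ with the dual of $F^{0,1}_{\n}(m)$ and using injectivity of the transition map $F^{0,1}_{\n'}(m)\to F^{0,1}_{\n}(m)$ coming from \thmref{thm:H^1-fil}. Your route is more elementary and self-contained (it needs neither \thmref{thm:H^1-fil} nor \lemref{lem:Top-5} for this step), whereas the paper's route lets the already-established duality do the work and yields surjectivity of the completed map without appealing to the group-level surjectivity at all.
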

   \begin{proof}
   We consider the exact sequence in $X_\et$
   $$0 \to W_m\Omega^2_{X|D_\n,\log} \to W_m\Omega^2_{X|D_{\n'},\log}\to W_m\Omega^2_{X|D_{\n'},\log}/W_m\Omega^2_{X|D_\n,\log}\to 0.$$
   Since $\sA=W_m\Omega^2_{X|D_{\n'},\log}/W_m\Omega^2_{X|D_\n,\log}$ is supported on $E$, by \propref{prop:JSZ}, we have a filtration of $\sA$ whose successive quotients are coherent sheaves on $E$. Hence $H^1_\et(X,\sA)=0$. This implies the map $G^{2,1}_{\n}(m) \to G^{2,1}_{\n'}(m)$ is surjective. Now we look at the commutative diagram
     \begin{equation}\label{eqn:Top-6-0}
       \xymatrix@C2pc{
         G^{2,1}_{\n}(m)\ar@{->>}[r] \ar@{^{(}->}[d] & G^{2,1}_{\n'}(m)
           \ar@{^{(}->}[d] \\
           (G^{2,1}_{\n}(m))^\pf \ar[r] &  (G^{2,1}_{\n'}(m))^\pf.}
         \end{equation}
By \thmref{thm:H^1-fil}, we see that the transition map
$F^{0,1}_{\n'}(m) \to F^{0,1}_{\n}(m)$ is injective. 
We conclude from \corref{cor:Duality-Spl} that
the bottom horizontal arrow in ~\eqref{eqn:Top-6-0} is a surjective continuous
homomorphism between two profinite groups. It must therefore be a topological
quotient map. Finally, the claim, that the horizontal arrow in the top row of
~\eqref{eqn:Top-6-0} is continuous, follows from \lemref{lem:Top-5} which
asserts that each term of this row is a subspace of the corresponding term in the
bottom row.
\end{proof}
\begin{cor}\label{cor:Duality-open}
We have a perfect pairing of topological groups $$\varprojlim_\n (G^{2,1}_{\n}(m))^\pf \times H^1_\et(U,\Z/p^m) \to \Z/p^m.$$    
\end{cor}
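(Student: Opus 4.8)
The plan is to obtain the limit pairing by passing to the inverse limit over $\un{n}$ in the perfect pairing of \corref{cor:Duality-Spl}, and identifying the direct limit of the right-hand groups. First I would recall that \corref{cor:Duality-Spl} gives, for each $\un{n} \ge \un{0}$, a perfect pairing of topological abelian groups $(G^{2,1}_{\n}(m))^{\pf} \times F^{0,1}_{\n}(m) \to {\Z}/{p^m}$, where $(G^{2,1}_{\n}(m))^{\pf}$ carries the profinite topology and $F^{0,1}_{\n}(m)$ carries the discrete topology; equivalently, the induced map $(G^{2,1}_{\n}(m))^{\pf} \xrightarrow{\cong} F^{0,1}_{\n}(m)^{\vee}$ is a topological isomorphism. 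By \thmref{thm:H^1-fil}, $F^{0,1}_{\n}(m) = \H^1_\et(X, W_m\sF^{0,\bullet}_{D_{\n}}) \cong \Fil^{\log}_{D_{\n}} H^1_\et(U, W_m\Omega^0_{U,\log}) = \Fil^{\log}_{D_{\n}}H^1_\et(U,\Z/p^m)$, and for $\n' \ge \n$ the transition map $F^{0,1}_{\n'}(m) \to F^{0,1}_{\n}(m)$ is injective (this is the injectivity of $\Fil^{\log}_{D_{\n}}H^1_\et(U,\Z/p^m) \hookrightarrow \Fil^{\log}_{D_{\n'}}H^1_\et(U,\Z/p^m)$ recorded in the proof of \lemref{lem:Top-6}). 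Moreover, by \thmref{thm:Global-version}(12) together with the fact that $\varinjlim_{\n} W_m\sF^{0,\bullet}_{D_{\n}} \cong Rj_* W_m\Omega^0_{U,\log}$, we have $\varinjlim_{\n} F^{0,1}_{\n}(m) = H^1_\et(U, \Z/p^m)$. Since each $F^{0,1}_{\n}(m)$ is a discrete torsion group and the transition maps are injective, this is a filtered union, and we endow $H^1_\et(U,\Z/p^m)$ with the discrete topology.

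Next I would dualize. Applying $\Hom_\Ab(-,\Q/\Z)$ to the filtered system $\{F^{0,1}_{\n}(m)\}_{\n}$ with injective transition maps turns the direct limit into an inverse limit: $\left(\varinjlim_{\n} F^{0,1}_{\n}(m)\right)^{\vee} \cong \varprojlim_{\n} F^{0,1}_{\n}(m)^{\vee}$, and this is a topological isomorphism for the profinite (inverse-limit) topology on the right and the Pontryagin dual topology on the left, exactly as in the passage to limits in \eqref{eqn:JSZ-1}. Combining this with the isomorphisms $(G^{2,1}_{\n}(m))^{\pf} \xrightarrow{\cong} F^{0,1}_{\n}(m)^{\vee}$, which are compatible with the transition maps (by \lemref{lem:Top-6}, the maps $(G^{2,1}_{\n}(m))^{\pf} \to (G^{2,1}_{\n'}(m))^{\pf}$ are topological quotients dual to the inclusions $F^{0,1}_{\n'}(m) \hookrightarrow F^{0,1}_{\n}(m)$), I obtain a topological isomorphism $\varprojlim_{\n}(G^{2,1}_{\n}(m))^{\pf} \xrightarrow{\cong} \varprojlim_{\n} F^{0,1}_{\n}(m)^{\vee} \cong H^1_\et(U,\Z/p^m)^{\vee}$. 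Since $H^1_\et(U,\Z/p^m)$ is discrete torsion and $\varprojlim_{\n}(G^{2,1}_{\n}(m))^{\pf}$ is profinite, the Pontryagin duality theorem (\cite[\S~2.9]{Pro-fin}, as used in Notation~\ref{not:Top}) converts this isomorphism into the asserted perfect pairing of topological abelian groups
\[
\varprojlim_{\n}(G^{2,1}_{\n}(m))^{\pf} \times H^1_\et(U,\Z/p^m) \to \Z/p^m.
\]

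The main obstacle I anticipate is bookkeeping rather than a single deep point: one must verify that all the identifications are compatible with the transition maps in $\un{n}$, so that the limits can genuinely be interchanged. Concretely, this means checking that the isomorphism of \corref{cor:Duality-Spl} is natural in $\un{n}$ — i.e. that the square relating $(G^{2,1}_{\n}(m))^{\pf} \to (G^{2,1}_{\n'}(m))^{\pf}$ (the quotient map of \lemref{lem:Top-6}) and $F^{0,1}_{\n'}(m)^{\vee} \to F^{0,1}_{\n}(m)^{\vee}$ (dual to the inclusion of \thmref{thm:H^1-fil}) commutes — and that the weak-topology structures are respected in the limit. All of this follows from the compatibility of the pairings \eqref{eqn:comp-4}, \eqref{eqn:G-comp-5.5}, \eqref{eqn:G-comp-6} with the change in $\un{n}$, which has already been established in \S\ref{sec:LG-dual}, so the argument is a matter of assembling these pieces carefully and invoking the exactness of $\varprojlim$ on the relevant countable (in fact directed) systems.
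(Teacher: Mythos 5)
Your proposal is correct and follows essentially the same route as the paper: the paper's proof likewise combines \corref{cor:Duality-Spl}, the identification $\varinjlim_\n F^{0,1}_{\n}(m)=H^1_\et(U,\Z/p^m)$ from \thmref{thm:Global-version}(12), and the colimit/dual interchange plus Pontryagin duality, except that the paper outsources the interchange to the cited lemmas \cite[Lem.~7.2]{GK-Duality} and \cite[Lem.~2.6]{KRS} while you verify it directly (which is fine, since $\Hom_\Ab(\varinjlim -,\Q/\Z)\cong\varprojlim\Hom_\Ab(-,\Q/\Z)$ and the naturality in $\un n$ is exactly what \lemref{lem:Top-6} records). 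Only a cosmetic slip: for $\n'\ge\n$ the injective transition map runs $F^{0,1}_{\n}(m)\to F^{0,1}_{\n'}(m)$ (inclusion of the smaller filtration piece), as your parenthetical correctly indicates despite the reversed indices in the displayed arrow.
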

\begin{proof}
    By \thmref{thm:Global-version}(12), we have $\varinjlim_\n  F^{0,1}_{\n}(m)=H^1_\et(U,\Z/p^m)$. We now combine \corref{cor:Duality-Spl}, \cite[Lem.~7.2]{GK-Duality}
  and \cite[Lem.~2.6]{KRS} to conclude the proof.
\end{proof}

\subsection{The Brauer-Manin pairing with modulus}\label{sec:Br-Ma}
 In this section we recall the Brauer-Manin pairing for modulus studied in \cite{KRS}. This will be necessary to endow  $G^{1,1}_{\n}$ with a topology.
 
 Let $\Pic(X)$ be the group of isomorphism classes of line bundles on $X$. For a fixed effective divisor $D$, the relative Picard group $\Pic(X|D)$ (usually written as
$\Pic(X,D)$ in the literature) is the set of isomorphism classes
of pairs $(\sL, u \colon \sO_D \xrightarrow{\cong} \sL|_D)$, where
$\sL$ is an invertible sheaf on $X$ and $\sL|_D := \sL \otimes_{\sO_X} \sO_D$.
One says that $(\sL, u) \cong (\sL', u')$ if there is an isomorphism
$w \colon \sL \xrightarrow{\cong} \sL'$ such that $u' = w|_D \circ u$.
$\Pic(X|D)$ is an abelian group under the tensor product of invertible sheaves and
their trivializations along $D$ whose identity element is $(\sO_X, \id)$.
There is a canonical homomorphism $\Pic(X|D) \to \Pic(X)$ which is surjective. By \cite[Lem~2.1]{SV-Invent} we have $\Pic(X|D)=H^1_\et(X,\sK^M_{1,X|D})=H^1_\Zar(X,\sK^M_{1,X|D})$.

Let $\Pic^0(X)$ be the subgroup of degree zero divisor classes in $\Pic(X)$ and $\Picc(X)$ be the Picard scheme of $X$. $\Picc(X)(k)$ has a structure of totally disconnected locally compact Hausdorff topological group, whose topology is induced by the adic (a.k.a the valuation) topology of $k$ (cf. \cite[\S~6.3]{KRS}). We call the topology to be adic topology. Recall that the connected component $\Picc^0(X)$ of $\Picc(X)$ is a connected abelian variety over $k$. It follows from \cite[Thm~10.5.1]{CTS} that the group of $k$ rational points $\Picc^0(X)(k)$ is compact as adic topological group. Hence, $\Picc^0(X)(k)$ is a profinite (because profinite = compact + totally disconnected + Hausdorff) topological abelian group. This topology also induces an adic topology on $\Pic(X)$ (note, $\Pic(X)=\Picc(X)(k)$ if $X(k)\neq \phi$) such that the connected component $\Pic^0(X)$ is a profinite topological group.

 Since we have an exact sequence : $\Z^r \to \Pic(X) \to \Pic(U)\to 0$, we endow  $\Pic(U)$ with the quotient topology. Also we have an exact sequence of topological group : $0 \to \Pic^0(X) \to \Pic(X) \to \Z \to 0$. As a result we also conclude that the quotient topology on $\Pic(X)/p^m$ and $\Pic(U)/p^m$ are profinite for all $m \ge 1$.  

By \cite[Thm~5.6]{KRS}, there exists a group scheme $\Picc(X|D_{\n})$ ($\n\ge \un{1}$) over $k$ such that $\Pic(X|D_{\n})=\Picc(X|D_{\n})(k)$. Hence $\Pic(X|D_{\n})$ has a structure of adic topological group whose topology is induced from $k$, which is locally compact and Hausdorff.

Let's write $\Br(X|D_{\n})$ for the Brauer group with modulus $D_{\n}$ defined in \defref{defn:BGM} (cf. \cite[Defn~8.9]{KRS}). We endow $\Br(X)$ and  $\Br(X|D_{\n})$ with discrete topology. By \cite[\S~9]{Saito}, \cite[Prop~9.4, 11.2]{KRS} there exist continuous pairings (called the Brauer-Manin pairings) 
\begin{equation}\label{eqn:Br-Ma-1}
    \Br(X) \times \Pic(X) \to \Q/\Z, \ \ \Br(X|D_{\n}) \times \Pic(X|D_{\n+1}) \to \Q/\Z; \ \ \n \ge \un{0},
\end{equation}
compatible with the maps $\Br(X) \inj \Br(X|D_{\n})$ and $\Pic(X|D_{\n+1}) \surj \Pic(X)$. Moreover, by \cite[Thm~9.2]{Saito}, \cite[Lem~12.2(1)]{KRS}, we have isomorphisms
\begin{equation}\label{eqn:Br-Ma-2}
    \Br(X) \cong (\Pic(X))^\star, \ \  \ \Br(X|D_{\n}) \cong (\Pic(X|D_{\n+1}))^\star.
\end{equation}
Since $\Br(X|D_{\n})$ is torsion group, it follows that $(\Pic(X|D_{\n+1}))^\star=(\Pic(X|D_{\n+1}))^\star_\tor$. As a result, we get  $(\Pic(X|D_{\n+1})^\pf)^*= (\Pic(X|D_{\n+1}))^*$ by  \lemref{lem:PF-compln}. Taking $p^m$ torsion, we get $(\Pic(X|D_{\n+1})^\pf/p^m)^*= (\Pic(X|D_{\n+1})/p^m)^*$. Note that the quotient topology on  $\Pic(X|D_{\n+1})^\pf/p^m$ is profinite. Hence, by Pontryagin duality and \corref{cor:PF-compln-0}, we have 
\begin{equation}\label{eqn:Pic-1}
    \Pic(X|D_{\n+1})^\pf/p^m \cong ((\Pic(X|D_{\n+1})/p^m)^*)^\vee \cong (\Pic(X|D_{\n+1})/p^m)^\pf.
\end{equation}
Now \eqref{eqn:Br-Ma-2} induces the following isomorphisms.
\begin{equation}\label{eqn:Br-Ma-3}
  \xymatrix@C.8pc{
  (1) \ \ _{p^m}\Br(X) \cong (\Pic(X)/p^m)^\star, &(2)\ \ _{p^m}\Br(X|D_{\n}) \cong (\Pic(X|D_{\n+1})/p^m)^\star, \\
  (3) \ \Pic(X)/p^m \cong (\ _{p^m}\Br(X))^\star,&(4)  \ \Pic(X|D_{\n+1})^\pf/p^m \cong (\ _{p^m}\Br(X|D_{\n}))^\star,
  }
\end{equation}
where the last two isomorphisms of \eqref{eqn:Br-Ma-3} are obtained by taking Pontryagin dual of the first two isomorphisms, \eqref{eqn:Pic-1} and also noting $\Pic(X)/p^m$ is profinite. Moreover, the isomorphisms in bottom row are topological isomorphisms.

We end this section by recalling the following commutative diagram of short exact sequences of topological groups (see the proof of \cite[Cor~9.6]{KRS}).
\begin{equation}\label{eqn:diag-1}
    \xymatrix@C1pc{
      0 \ar[r] & {\Pic(X)}/{p^m} \ar[r] \ar[d]_{\alpha_{p^m}} & H^1(X, W_m\Omega^1_{X, \log})
      \ar[d]^-{\beta_{p^m}} \ar[r] & _{p^m}\Br(X) \ar[r] \ar[d]^-{\gamma_{p^m}} & 0 \\
      0 \ar[r] & (_{p^m}\Br(X))^\star \ar[r]  & H^1(X, W_m\Omega^1_{X, \log})^\star \ar[r] &
      ({\Pic(X)}/{p^m})^\vee \ar[r]&0, }
\end{equation}
where $\alpha_{p^m}$ and $\gamma_{p^m}$ are induced by \eqref{eqn:Br-Ma-1} and $\beta_{p^m}$ is induced from bottom row of \eqref{eqn:Pair-4-1}.

\subsection{Topology of \texorpdfstring{$H^1_\et(X, W_m\Omega^1_{X|D_{\n+1}, \log})$}{the cohomology groups for (i,j)=(1,1)}}\label{sec:Top-3} 
Finally, we consider the case $(i=1,q=1)$. First we prove a general proposition.
\begin{prop}\label{prop:mod-p-iso}
    Let $Y$ be an $N$-dimensional smooth scheme finite type over an $F$-finite field $k$. Let $D_\n$ be as in \S~\ref{sec:Complexes}. For all $\n \ge \un{1}, d \ge N, q \ge 0, m\ge 1$, we have
    $$H^{d}_\et(Y,\sK^M_{q,Y|D_{\n}}/p^m) \xrightarrow{\cong} H_\et^{d}(Y,W_m\Omega^q_{Y|D_{\n},\log}).$$
\end{prop}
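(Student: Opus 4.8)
The plan is to reduce the statement to the known non-modulus isomorphism $\dlog\colon \sK^m_{q,Y}\xrightarrow{\cong}W_m\Omega^q_{Y,\log}$ (cf.\ \cite[Thm.~5.1]{Morrow-ENS}) by a d\'evissage on $m$ combined with a d\'evissage up the filtration in $\un{n}$. First I would handle the case $m=1$. Here $\sK^M_{q,Y|D_{\un{n}}}/p$ fits into the Kummer-type exact sequence coming from \eqref{eqn:Complex-4-7}, and the target fits into the complex $W_1\sG^{q,\bullet}_{\un{n}-1}$ of \lemref{lem:Complex-6}(1), which by \cite[Thm.~1.2.1]{JSZ} identifies $\Omega^q_{Y|D_{\un{n}},\log}$ with a two-term complex $[\Fil_{-\un{n}}\Omega^q_U\xrightarrow{1-\ov F}\Fil_{-\un{n}}\Omega^q_U/d(\cdots)]$ of coherent (locally free, by \lemref{lem:Perfect-0.5}) $\sO_Y$-modules. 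Since $Y$ has dimension $N$ and we are in cohomological degree $d\ge N$, the hypercohomology spectral sequence for this two-term complex collapses: $H^d$ of a two-term complex of coherent sheaves in degrees $0,1$ is $\coker\bigl(H^d(\text{term}_0)\to H^d(\text{term}_1)\bigr)$ because $H^{d+1}$ of a coherent sheaf vanishes on an $N$-dimensional scheme. A parallel collapsing applies to the source via \eqref{eqn:Complex-4-7} once one knows $H^{d+1}_\et(Y,\sK^M_{q,Y|D_{\un{n}}})=0$, which follows from the Gersten resolution for relative Milnor $K$-theory (the argument of \lemref{lem:Gersten-0}, adapted using \cite[Prop.~1.1.3]{JSZ}). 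Comparing the two cokernels via the $\dlog$ map and the already-established sheaf isomorphism $\dlog\colon\sK^m_{q,Y|D_{\un{n}}}\xrightarrow{\cong}W_m\Omega^q_{Y|D_{\un{n}},\log}$ of \cite[Thm.~1.1.5]{JSZ} gives the $m=1$ case.

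For the inductive step $m-1\Rightarrow m$, I would use \corref{cor:Complex-7}, which gives the short exact sequence
\[
0\to W_1\Omega^q_{Y|D_{\lceil \un{n}/p^{m-1}\rceil},\log}\xrightarrow{\un{p}^{m-1}} W_m\Omega^q_{Y|D_{\un{n}},\log}\xrightarrow{R^{m-1}} W_{m-1}\Omega^q_{Y|D_{\un{n}},\log}\to 0,
\]
together with the analogous exact sequence for relative Milnor $K$-sheaves modulo powers of $p$ (the sheaf version of \eqref{eqn:Complex-4-7}, i.e.\ $0\to\sK^1_{q,Y|D_{\lceil\un{n}/p^{m-1}\rceil}}\to\sK^m_{q,Y|D_{\un{n}}}\to\sK^{m-1}_{q,Y|D_{\un{n}}}\to 0$, which again rests on \cite[Thm.~1.1.5, 1.1.6]{JSZ}). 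Taking the long exact cohomology sequences and the map of them induced by $\dlog$, I get a commutative ladder in degrees $d-1,d,d+1$. In degree $d$ the outer vertical maps are isomorphisms — one by the $m=1$ base case (applied with modulus $\lceil\un{n}/p^{m-1}\rceil\ge\un{1}$), the other by the induction hypothesis — and the $H^{d+1}$-terms on the Milnor side vanish by the Gersten argument above while on the de Rham--Witt side one uses that $H^{d+1}$ of the relevant two-term complexes vanishes for dimension reasons. The five lemma then yields the isomorphism in degree $d$.

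The main obstacle I anticipate is bookkeeping the vanishing of the "error terms" in degrees $d+1$: one must check that $H^{d+1}_\et(Y,\sK^M_{q,Y|D_{\un{n}}})=0$ genuinely holds for a smooth (not merely affine local) $Y$, which requires the Gersten/coniveau resolution for relative Milnor $K$-theory rather than the bare computation of \lemref{lem:Gersten-0}; and symmetrically that $\H^{d+1}_\et(Y,W_m\sG^{q,\bullet}_{\un{n}-1})=0$, which reduces via \lemref{lem:Complex-6} and the coherence/local-freeness in \lemref{lem:Perfect-0.5} to $H^{d+1}_\et(Y,\sF)=0$ for coherent $\sF$ on an $N$-dimensional scheme — true by Grothendieck vanishing, but one must be careful that the \'etale and Zariski cohomology of these sheaves agree (they do, since the complexes are built from coherent $W_m\sO_Y$-modules and the $\dlog$-image is computed Zariski-locally, cf.\ the argument in \lemref{lem:Complex-6}(4)--(5)). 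Once these vanishings are in place, the rest is a routine d\'evissage.
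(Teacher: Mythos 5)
Your argument conflates the sheaf $\sK^M_{q,Y|D_{\n}}/p^m$ with $\sK^m_{q,Y|D_{\n}}=\sK^M_{q,Y|D_{\n}}/(p^m\sK^M_{q,Y}\cap\sK^M_{q,Y|D_{\n}})$. It is the latter, not the former, that JSZ's Theorem~1.1.5 identifies with $W_m\Omega^q_{Y|D_{\n},\log}$; the former only surjects onto it, with kernel $(p^m\sK^M_{q,Y}\cap\sK^M_{q,Y|D_{\n}})/p^m\sK^M_{q,Y|D_{\n}}\cong{}_{p^m}\bigl(\sK^M_{q,Y|D_{\un{1}}}/\sK^M_{q,Y|D_{\n}}\bigr)$. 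Showing that this kernel is cohomologically invisible in degrees $\ge N$ is the entire content of the proposition, and your proof never isolates it: both your ``comparison of cokernels'' in the base case and your inductive ladder implicitly treat the $\dlog$ map out of $\sK^M_{q,Y|D_{\n}}/p^m$ as an isomorphism of sheaves, which would make the statement a tautology. Two further steps fail as stated. First, the vanishing $H^{d+1}_\et(Y,\sK^M_{q,Y|D_{\n}})=0$ cannot be obtained from a Gersten resolution, which controls Zariski/Nisnevich cohomology only; already $H^{N+2}_\et(Y,\sK^M_{1,Y})=H^{N+2}_\et(Y,\G_m)$ is nonzero for a smooth projective curve over a finite field, so the Kummer-sequence collapse you invoke is unavailable in the \'etale topology. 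Second, in the inductive step the two short exact sequences are not $\dlog$-compatible: since $\sK^M_{q,Y|D_{\n}}$ is $p$-torsion free, the subobject of $\sK^M_{q,Y|D_{\n}}/p^m$ is $\sK^M_{q,Y|D_{\n}}/p$ with the \emph{same} modulus $D_{\n}$, whereas the subobject on the de Rham--Witt side is $W_1\Omega^q_{Y|D_{\lceil\n/p^{m-1}\rceil},\log}$ with the divided modulus; the resulting left vertical arrow of the ladder changes the modulus, is not covered by your base case, and blocks the five lemma. (There is also an off-by-one in your spectral-sequence claim: for a two-term complex in degrees $0,1$, $\H^d$ is an extension of $\Ker(H^d\to H^d)$ by $\coker(H^{d-1}\to H^{d-1})$, so the case $d=N$ is not a bare cokernel.)

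The paper's proof attacks exactly the kernel you skip over: it compares $\sK^M_{q,Y|D_{\n}}/p^m$ with $\sK^M_{q,Y|D_{\un{1}}}/p^m$ (the $\n=\un{1}$ case being a genuine sheaf isomorphism by Kerz--Zhao), identifies the kernel of the comparison map as ${}_{p^m}\bigl(\sK^M_{q,Y|D_{\un{1}}}/\sK^M_{q,Y|D_{\n}}\bigr)$ and its image as $W_m\Omega^q_{Y|D_{\n},\log}$, and then shows that this kernel --- for $m\gg 0$ it equals $W_m\Omega^q_{Y|D_{\un{1}},\log}/W_m\Omega^q_{Y|D_{\n},\log}$, and for general $m$ by a decreasing induction using the torsion and cotorsion of the log sheaves --- is supported on $E$ and carries a finite filtration with coherent graded pieces (\propref{prop:JSZ}). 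Since $\dim E=N-1$, Grothendieck vanishing kills its cohomology in all degrees $\ge N$, in any topology. If you want to salvage your d\'evissage, the missing ingredient is precisely this coherent-filtration-on-$E$ mechanism; no argument based on Gersten resolutions or on the two-term-complex presentation of the target alone will produce it.
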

\begin{proof}
    The map is induced by the canonical map $\dlog: \sK^M_{q,Y|D_{\n}}/p^m \to W_m\Omega^q_{Y|D_{\n},\log}$ which is an isomorphism of Nisnevich (hence also {\'e}tale) sheaves  if $\n=\un{1}$ by \cite[Thm~2.2.4]{Kerz-Zhao}. Since $\sK^M_{q,Y|D_{\n}}$ is $p$-torsion free, the exact sequence
    $$0 \to \sK^M_{q,Y|D_{\n}} \to \sK^M_{q,Y|D_{\un{1}}} \to \frac{\sK^M_{q,Y|D_{\un{1}}}}{\sK^M_{q,Y|D_{\n}}} \to 0$$
    induces another exact sequence of the form
    \begin{equation}\label{eqn:Im-phi}
        0 \to \ _{p^m}\left(\frac{\sK^M_{q,Y|D_{\un{1}}}}{\sK^M_{q,Y|D_{\n}}}\right) \to \sK^M_{q,Y|D_{\n}}/p^m \xrightarrow{\phi} \sK^M_{q,Y|D_{\un{1}}}/p^m
    \end{equation}   
    for every $\un n\ge \un 1$.
   It is enough to show that $H^d_\et(Y, \ _{p^m}\left(\frac{\sK^M_{q,Y|D_{\un{1}}}}{\sK^M_{q,Y|D_{\n}}}\right))=0$ for $d \ge N$ and $\text{Image } \phi \cong W_m\Omega^q_{Y|D_{\n},\log}$, for all $m\ge 1$.
    First we claim that, for $m>>0$, we have an isomorphism of Nisnevich (resp. {\'e}tale) sheaves \begin{equation}\label{eqn:dlog-1}
        \dlog:\frac{\sK^M_{q,Y|D_{\un{1}}}}{\sK^M_{q,Y|D_{\n}}} \xrightarrow{\cong} \frac{W_m\Omega^q_{Y|D_{\un{1}},\log}}{W_m\Omega^q_{Y|D_{\un{n}},\log}}.
    \end{equation} 
     To prove the claim, note that the bijective map $\dlog:\sK^M_{q,Y|D_{\un{1}}}/p^m \to W_m\Omega^q_{Y|D_{\un 1},\log}$ and the surjective map $\dlog:\sK^M_{q,Y|D_{\un{n}}} \to W_m\Omega^q_{Y|D_{\un n},\log}$ together imply that 
    \begin{equation}\label{eqn:Im-phi-1}
        \dlog:\frac{\sK^M_{q,Y|D_{\un{n}}}+\ p^m\sK^M_{q,Y|D_{\un{1}}}}{p^m\sK^M_{q,Y|D_{\un{1}}}}\xrightarrow{\cong} W_m\Omega^q_{Y|D_{\un n},\log}
    \end{equation}
    is an isomorphism of Nisnevich (resp. {\'e}tale) sheaves for all $m \ge 1$. Furthermore, $\text{Image } \phi \cong W_m\Omega^q_{Y|D_{\n},\log}$. Now, \eqref{eqn:dlog-1} easily follows once we observe that $p^m\sK^M_{q,Y|D_{\un{1}}}\subset \sK^M_{q,Y|D_{\un{n}}}$ for $m>>0$.
    Next, we note that the sheaf $\frac{W_m\Omega^q_{Y|D_{\un{1}},\log}}{W_m\Omega^q_{Y|D_{\un{n}},\log}}$ which is supported on $E$, has a filtration whose successive quotients are coherent sheaves on $Y$ (\cf. \propref{prop:JSZ}(1)). Hence $H^d_\et(Y,\frac{\sK^M_{q,Y|D_{\un{1}}}}{\sK^M_{q,Y|D_{\n}}}) \cong H^d_\et(Y, \frac{W_m\Omega^q_{Y|D_{\un{1}},\log}}{W_m\Omega^q_{Y|D_{\un{n}},\log}})=0$ for $d \ge N$. As the right hand side of \eqref{eqn:dlog-1} is $p^m$-torsion, we equivalently get $H^d_\et(Y, \ _{p^m}\left(\frac{\sK^M_{q,Y|D_{\un{1}}}}{\sK^M_{q,Y|D_{\n}}}\right))=0$ for $d \ge N, m >>0$. Hence, $H^{d}_\et(Y,\sK^M_{q,Y|D_{\n}}/p^m) \xrightarrow{\cong} H_\et^{d}(Y,W_m\Omega^q_{Y|D_{\n},\log})$ for $m>>0$.
    
    Now we prove by decreasing induction on $m$ that $H^d_\et(Y, \ _{p^m}\left(\frac{\sK^M_{q,Y|D_{\un{1}}}}{\sK^M_{q,Y|D_{\n}}}\right))=0$, for all $m \ge 1$. Fix some $m$ for which we have $_{p^m}\left(\frac{\sK^M_{q,Y|D_{\un{1}}}}{\sK^M_{q,Y|D_{\n}}}\right)=\frac{\sK^M_{q,Y|D_{\un{1}}}}{\sK^M_{q,Y|D_{\n}}}=\frac{W_m\Omega^q_{Y|D_{\un{1}},\log}}{W_m\Omega^q_{Y|D_{\un{n}},\log}}$. Now consider the following exact sequence (for $1<i < m$).
    $$0 \to \ _{p^i}W_m\Omega^q_{Y|D_{\n},\log} \xrightarrow{\phi_1} \ _{p^i}W_m\Omega^q_{Y|D_{\un{1}},\log} \to \ _{p^i}\left(\frac{W_m\Omega^q_{Y|D_{\un{1}},\log}}{W_m\Omega^q_{Y|D_{\un{n}},\log}}\right) \to \hspace{2 cm}$$
    $$\hspace{5cm}  \to  W_m\Omega^q_{Y|D_{\n},\log}/p^i \xrightarrow{\phi_2} W_m\Omega^q_{Y|D_{\un{1}},\log} /p^i.$$
    Enough to show $H^d_\et(Y,\coker \phi_1)=0=H^d_\et(Y,\Ker\ \phi_2)$ for all $d \ge N$.
    \\
    To prove this, we make the following claims.
    
    \textbf{Claim 1:} $\ _{p^i}W_m\Omega^q_{Y|D_{\n},\log} \cong W_{i}\Omega^q_{Y|D_{\lceil\n /p^{m-i}\rceil},\log}, \text{ for } i,m \ge 1, q \ge 0 \text{ and } \un n\ge \un 1.$

    Proof: Since $W_m\Omega^q_{Y|D_{\n},\log} \subset W_m\Omega^q_{Y}$, and $\Ker (p^i: W_m\Omega^q_{Y} \to W_m\Omega^q_{Y})  = \Ker (R^i: W_m\Omega^q_{Y} \to W_{m-i}\Omega^q_{Y})$, we conclude that $\Ker (p^i: W_m\Omega^q_{Y|D_{\n},\log} \to W_m\Omega^q_{Y|D_{\n},\log})  = \Ker (R^i: W_m\Omega^q_{Y|D_{\n},\log} \to W_{m-i}\Omega^q_{Y|D_{\n},\log})$. But, by \corref{cor:Complex-7}, we have an isomorphism $$\ov p^{m-i}:W_{i}\Omega^q_{Y|D_{\lceil\n /p^{m-i}\rceil},\log} \xrightarrow{\cong} \Ker (R^i: W_m\Omega^q_{Y|D_{\n},\log} \to W_{m-i}\Omega^q_{Y|D_{\n},\log}).$$ This proves claim 1.

    From this claim, we obtain $\coker \ \phi_1 \cong \frac{W_i\Omega^q_{Y|D_{\un{1}},\log}}{W_i\Omega^q_{Y|D_{\un{n}},\log}}$ is supported on $E$. Therefore, by \propref{prop:JSZ}, we have $H^d_\et(Y,\coker \ \phi_1)$ $=0$ for $d \ge N$.

    \textbf{Claim 2 :} $\Ker\ \phi_2 \cong W_{m-i}\Omega^q_{Y|D_{\lceil \n/p^i \rceil},\log} / W_{m-i}\Omega^q_{Y|D_{\n},\log}$.

    Proof: To see this, first we note that (as $p=\ov p R$)
    $$p^i(W_m\Omega^q_{Y|D_{\n},\log})= \ov{p}^i R^i(W_m\Omega^q_{Y|D_{\n},\log})= \ov{p}^i(W_{m-i}\Omega^q_{Y|D_{\n},\log}) .$$
Now let $\alpha \in W_m\Omega^q_{Y|D_{\n},\log}$ be any local section such that $\alpha = p^i \beta$, where $\beta \in W_m\Omega^q_{Y|D_{\un{1}},\log}$ be a local section. Since $R^i(\beta)\in W_{m-i}\Omega^q_{Y,\log}$ and $\ov p^iR^i(\beta)=\alpha \in \Fil_{D_{-\un n}}W_m\Omega^q_U$, by repeatedly applying \lemref{lem:Complete-7}, we get $R^i\beta \in W_{m-i}\Omega^q_{Y,\log} \cap \Fil_{D_{-\lceil \un n/p^i \rceil}}W_{m-i}\Omega^q_U=W_{m-i}\Omega^q_{Y|D_{\lceil \n/p^i \rceil},\log}$, where the last equality follows from \lemref{lem:Complex-6}(1).
So we get  $$\alpha = \ov{p}^iR^i\beta \in \ov{p}^i(W_{m-i}\Omega^q_{Y|D_{\lceil \n/p^i \rceil},\log}).$$
This implies $\Ker\ \phi_2 \subset \frac{\ov{p}^i(W_{m-i}\Omega^q_{Y|D_{\lceil \n/p^i \rceil},\log})}{\ov{p}^i(W_{m-i}\Omega^q_{Y|D_{\n},\log})}$. Also we have $\frac{\ov{p}^i(W_{m-i}\Omega^q_{Y|D_{\lceil \n/p^i \rceil},\log})}{\ov{p}^i(W_{m-i}\Omega^q_{Y|D_{\n},\log})} \subset \Ker\ \phi_2$, because $\lceil \n/p^i \rceil \ge \un{1}$ and so $\ov{p}^i(W_{m-i}\Omega^q_{Y|D_{\lceil \n/p^i \rceil},\log}) \subset \ov{p}^i(W_{m-i}\Omega^q_{Y|D_{\un 1},\log})=p^i(W_{m}\Omega^q_{Y|D_{\un 1},\log})$. Now the claim 2 follows from the isomorphism
$$\frac{W_{m-i}\Omega^q_{Y|D_{\lceil \n/p^i \rceil},\log}}{W_{m-i}\Omega^q_{Y|D_{\n},\log}} \xrightarrow{\ov{p}^i} \frac{\ov{p}^i(W_{m-i}\Omega^q_{Y|D_{\lceil \n/p^i \rceil},\log})}{\ov{p}^i(W_{m-i}\Omega^q_{Y|D_{\n},\log})}.$$
By \propref{prop:JSZ}, $\Ker\ \phi_2$ has a filtration whose successive quotients are coherent sheaves supported on $E$. Hence $H^d_\et(Y, \Ker\ \phi_2)=0$ for all $d\ge N$. This proves the proposition.
\end{proof}

\begin{cor}\label{cor:G^1,1}
    For $m \ge 1,\n \ge \un{0}$, we have an exact sequence 
    \begin{equation}\label{eqn:G^1,1}
        0 \to \Pic(X|D_{\n+1})/p^m \to H^1_\et(X,W_m\Omega^1_{X|D_{\n+1},\log}) \to \ _{p^m}H^2_\et(X,\sK^M_{1,X|D_{\n+1}}) \to 0.
    \end{equation}
\end{cor}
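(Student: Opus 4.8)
The plan is to deduce the exact sequence \eqref{eqn:G^1,1} from the Kummer-type exact sequence for the relative Milnor $K$-theory sheaf. First I would take the short exact sequence of Nisnevich (equivalently Zariski) sheaves on $X$
\[
0 \to \sK^M_{1,X|D_{\n+1}} \xrightarrow{p^m} \sK^M_{1,X|D_{\n+1}} \to \sK^M_{1,X|D_{\n+1}}/p^m \to 0,
\]
which is exact because $\sK^M_{1,X|D_{\n+1}}$ is a subsheaf of $\sK^M_{1,X}$ (cf. \cite[Prop.~1.1.3]{JSZ}) and the latter is $p$-torsion free (\cite[Thm.~8.1]{Geisser-Levine}). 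Passing to Nisnevich cohomology on the curve $X$ and using that $X$ has Nisnevich cohomological dimension $\le 1$ for these sheaves so that $H^i_\nis(X,\sK^M_{1,X|D_{\n+1}}) = 0$ for $i \ge 2$, while recalling $H^1_\nis(X,\sK^M_{1,X|D_{\n+1}}) = \Pic(X|D_{\n+1})$ (as noted in \S~\ref{sec:Br-Ma}, via \cite[Lem.~2.1]{SV-Invent}), the long exact cohomology sequence collapses to
\[
0 \to \Pic(X|D_{\n+1})/p^m \to H^1_\nis(X,\sK^M_{1,X|D_{\n+1}}/p^m) \to \ _{p^m}H^2_\nis(X,\sK^M_{1,X|D_{\n+1}}) \to 0.
\]

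Next I would identify $H^1_\nis(X,\sK^M_{1,X|D_{\n+1}}/p^m)$ with $H^1_\et(X,W_m\Omega^1_{X|D_{\n+1},\log})$. Here I invoke \propref{prop:mod-p-iso} with $Y = X$, $N = 1 = d$, $q = 1$: it gives a canonical isomorphism $H^1_\nis(X,\sK^M_{1,X|D_{\n+1}}/p^m) \xrightarrow{\cong} H^1_\nis(X,W_m\Omega^1_{X|D_{\n+1},\log})$ induced by the $\dlog$ map. To pass from the Nisnevich to the \'etale cohomology of $W_m\Omega^1_{X|D_{\n+1},\log}$, one uses that $W_m\Omega^1_{X|D_{\n+1},\log} \cong W_m\sG^{1,\bullet}_{\n}$ in $\sD(X_\et)$ (via \lemref{lem:Complex-6}(1)), or more directly the standard fact that the Hodge-Witt logarithmic sheaves have the same Nisnevich and \'etale cohomology in the relevant degrees; alternatively one can run the whole Kummer argument directly on $X_\et$ since $\sK^M_{1,X|D_{\n+1}}$ is still $p$-torsion free there and $H^i_\et(X,\sK^M_{1,X|D_{\n+1}})$ agrees with the Nisnevich version by the Gersten-type resolution. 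Similarly, $H^2_\nis(X,\sK^M_{1,X|D_{\n+1}})$ matches $H^2_\et(X,\sK^M_{1,X|D_{\n+1}})$ after the identification, yielding the displayed sequence \eqref{eqn:G^1,1}.

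I expect the main subtlety to be bookkeeping about which topology (Nisnevich vs. \'etale) each cohomology group in the statement lives on, and checking the compatibility of the $\dlog$-induced isomorphism of \propref{prop:mod-p-iso} with the Kummer boundary maps so that the three-term sequence genuinely splices together; the degree bound $d \ge N$ in \propref{prop:mod-p-iso} is exactly met here since $N = 1$, but one must confirm the same vanishing $H^2_\et(X, \ _{p^{m'}}(\sK^M_{1,X|D_{\un 1}}/\sK^M_{1,X|D_{\n+1}})) = 0$ used inside that proposition is what kills the potential error terms when comparing $\sK^M_{1,X|D_{\n+1}}/p^m$ with $W_m\Omega^1_{X|D_{\n+1},\log}$ at the level of $H^1$ and $H^2$ simultaneously. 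Once those identifications are in place the corollary is immediate.
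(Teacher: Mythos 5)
Your core strategy --- the Kummer-type sequence for $\sK^M_{1,X|D_{\n+1}}$ combined with \propref{prop:mod-p-iso} and the identification of $H^1$ with the relative Picard group --- is the same as the paper's argument. But there is a genuine error in the topology you work in. You run the long exact sequence in the Nisnevich topology, invoke $H^i_{\nis}(X,\sK^M_{1,X|D_{\n+1}})=0$ for $i\ge 2$ (which is true, since the Nisnevich cohomological dimension of a curve is $1$), and then assert at the end that $H^2_{\nis}(X,\sK^M_{1,X|D_{\n+1}})$ ``matches'' $H^2_\et(X,\sK^M_{1,X|D_{\n+1}})$. These two claims are incompatible: the \'etale $H^2$ of this sheaf is in general nonzero --- indeed the whole point of the corollary, as used in \S~\ref{sec:Top-3} and \corref{cor:G^1,1-dual}, is that ${}_{p^m}H^2_\et(X,\sK^M_{1,X|D_{\un{1}}})$ is a nontrivial discrete quotient of $H^1_\et(X,W_m\Omega^1_{X|D_{\n+1},\log})$; by \eqref{eqn:Pic-U-dual} it is dual to $\Pic(U)/p^m$. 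Taken at face value, your Nisnevich computation collapses the sequence to an isomorphism $\Pic(X|D_{\n+1})/p^m \cong H^1_\et(X,W_m\Omega^1_{X|D_{\n+1},\log})$, which is false, and the final ``matching'' step comparing $H^2_{\nis}$ with $H^2_\et$ has no justification (there is no Gersten-type argument that makes a unit-type sheaf \'etale-acyclic in degree $2$).

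The fix is to delete the Nisnevich detour entirely: the sequence $0\to\sK^M_{1,X|D_{\n+1}}\xrightarrow{p^m}\sK^M_{1,X|D_{\n+1}}\to\sK^M_{1,X|D_{\n+1}}/p^m\to 0$ is already exact as a sequence of \'etale sheaves ($p$-torsion-freeness of a subsheaf of $j_*\sO^\times_U$ in characteristic $p$ holds \'etale-locally), the identification $\Pic(X|D_{\n+1})=H^1_\et(X,\sK^M_{1,X|D_{\n+1}})$ from \cite[Lem~2.1]{SV-Invent} is recalled in \S~\ref{sec:Br-Ma} for the \'etale topology, and \propref{prop:mod-p-iso} is stated for \'etale cohomology with $d=1\ge N=1$. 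The three-term sequence then falls out of the \'etale long exact sequence directly, with no comparison of topologies. Your remaining worry about compatibility of the $\dlog$ isomorphism with the Kummer boundary maps is moot: one merely substitutes an isomorphic group for the middle term of an already-exact sequence, so no commutation with connecting maps is required.
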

\begin{proof}
    Note that $\Pic(X|D_{\n})\cong H^1_\et(X,\sK^M_{1,X|D_{\n}})$. Now the corollary immediately follows from ~\propref{prop:mod-p-iso}, using the exact sequence $0 \to \sK^M_{1,X|D_{\n+1}}\xrightarrow{p^m}\sK^M_{1,X|D_{\n+1}} \to \sK^M_{1,X|D_{\n+1}}/p^m \to 0$ in $X_\et$. 
\end{proof}

\textbf{Case 6 ($i=1,q=1$):} 
We endow $G^{1,1}_{\n}(m)$ with the structure of weakest topological abelian group such that $\Pic(X|D_{\n+1})/p^m$ becomes an open subgroup of $G^{1,1}_{\n}(m)$ in \eqref{eqn:G^1,1}. As a result we see the induced quotient topology on $ \ _{p^m}H^2_\et(X,\sK^M_{1,X|D_{\n+1}})$ in \eqref{eqn:G^1,1}) is discrete (cf. \cite[Lem~12.5]{KRS}) and \eqref{eqn:G^1,1} becomes an exact sequence of topological abelian groups. We let $$(G^{1,1}_{\n}(m))^\wedge:=H^1_\et(X,W_m\Omega^1_{X|D_{\n+1},\log})^\wedge= \varprojlim\limits_{V} H^1_\et(X,W_m\Omega^1_{X|D_{\n+1},\log})/V,$$
where $V$ varies over all open subgroups of finite index in $\Pic(X|D_{\n+1})/p^m$. Here we also claim that 
\begin{equation}\label{eqn:H^2-grp}
    H^2_\et(X,\sK^M_{1,X|D_{\n}}) \cong H^2_\et(X,\sK^M_{1,X|D_{\n'}})
\end{equation}
for $\n \ge \n' \ge \un{1}$. To see this, we consider the exact sequence $0 \to \sK^M_{1,X|D_{\n}} \to \sK^M_{1,X|D_{\n'}} \to \frac{\sK^M_{1,X|D_{\n'}}}{\sK^M_{1,X|D_{\n}}} \to 0 $ in $X_\et$. By a similar argument to that in \propref{prop:mod-p-iso}, we see that $\frac{\sK^M_{1,X|D_{\n'}}}{\sK^M_{1,X|D_{\n}}} \cong \frac{W_m\Omega^1_{X|D_{\un n'},\log}}{W_m\Omega^1_{X|D_{\un n},\log}}$, for some $m>>0$. Now, by \propref{prop:JSZ}, we get $H^i_\et(X,\frac{W_m\Omega^1_{X|D_{\un n'},\log}}{W_m\Omega^1_{X|D_{\un n},\log}})=0$ for $i=1,2$ and hence the claim. 

A locally compact Hausdorff topological group $G$ is called ``torsion-by-profinite" if there is an exact sequence
\begin{equation*}
  0 \to G_\pf \to G \to G_{\text{dt}} \to 0,
\end{equation*}
where $G_\pf$  is an open and profinite subgroup of $G$ and $G_{\text{dt}}$
is a torsion group with the quotient topology (necessarily discrete). Note that, the Pontryagin dual $G^*=\Hom_\Tab(G,\T)$ is again a ``torsion-by-profinite" group. Moreover, if $G$ is torsion, then $G^*=\Hom_\Tab(G,\Q/\Z)$. We also recall that ``torsion-by-profinite" groups satisfy Pontryagin duality, namely, the evaluation map $G\to (G^*)^*$ is topological isomorphism (cf. \cite[\S~2.9]{Pro-fin}). The category of ``torsion-by-profinite" will be denoted as $\pfd$. Note that $\pfd$ contains both profinite and discrete groups. 

We have the following corollary: 
\begin{cor}\label{cor:G^1,1-dual} 
For all $\n \ge 0$, we have the following exact sequences in $\pfd$.    
    \begin{equation}\label{eqn:Pic-2}
        0 \to \Pic(X|D_{\n+1})^\pf/p^m \to (G^{1,1}_{\n}(m))^\wedge \to \ _{p^m}H^2_\et(X,\sK^M_{1,X|D_{\un{1}}}) \to 0,
    \end{equation}
    \begin{equation}\label{eqn:Pic-3}
        0 \to (\ _{p^m}H^2_\et(X,\sK^M_{1,X|D_{\un{1}}}))^\star \to (G^{1,1}_{\n}(m))^\star \to (\Pic(X|D_{\n+1})^\pf/p^m )^\star \to 0.
    \end{equation}
\end{cor}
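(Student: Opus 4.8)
The plan is to first establish the exact sequence \eqref{eqn:Pic-2} and then obtain \eqref{eqn:Pic-3} by taking Pontryagin duals in $\pfd$. For \eqref{eqn:Pic-2}, I start from the exact sequence of topological abelian groups \eqref{eqn:G^1,1}, namely
\[
0 \to \Pic(X|D_{\n+1})/p^m \to G^{1,1}_{\n}(m) \to \ _{p^m}H^2_\et(X,\sK^M_{1,X|D_{\n+1}}) \to 0,
\]
in which $\Pic(X|D_{\n+1})/p^m$ is an open subgroup and the quotient carries the discrete topology. Taking the completion $(-)^\wedge$ with respect to the open subgroups of finite index in $\Pic(X|D_{\n+1})/p^m$, and noting that this completion only affects the open profinite part while leaving the discrete quotient untouched (since the quotient is already discrete and $(G^{1,1}_{\n}(m))^\wedge/\Pic(X|D_{\n+1})^\pf/p^m$ is identified with the same discrete torsion group), I obtain an exact sequence
\[
0 \to \Pic(X|D_{\n+1})^\pf/p^m \to (G^{1,1}_{\n}(m))^\wedge \to \ _{p^m}H^2_\et(X,\sK^M_{1,X|D_{\n+1}}) \to 0.
\]
Here I use \eqref{eqn:Pic-1}, which identifies $\Pic(X|D_{\n+1})^\pf/p^m$ with $(\Pic(X|D_{\n+1})/p^m)^\pf$, so the completion of the open subgroup is indeed profinite. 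Finally, I replace $\ _{p^m}H^2_\et(X,\sK^M_{1,X|D_{\n+1}})$ by $\ _{p^m}H^2_\et(X,\sK^M_{1,X|D_{\un 1}})$ using the isomorphism \eqref{eqn:H^2-grp} (valid for $\n+1 \ge \un 1$, i.e.\ all $\n \ge \un 0$), which was proved just before this corollary via \propref{prop:JSZ}. This gives \eqref{eqn:Pic-2}. I also need to check each term lies in $\pfd$: the left term is profinite, the right term is discrete torsion, hence the middle term is ``torsion-by-profinite''.

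For \eqref{eqn:Pic-3}, I apply the Pontryagin duality functor $(-)^\star = \Hom_\Tab(-,\Q/\Z)$ (equivalently $\Hom_\Tab(-,\T)$) to the exact sequence \eqref{eqn:Pic-2}. Since $\pfd$ is closed under Pontryagin duality and the duality is exact on short exact sequences in $\pfd$ (cf.\ \cite[\S~2.9]{Pro-fin}), I obtain the exact sequence
\[
0 \to (\ _{p^m}H^2_\et(X,\sK^M_{1,X|D_{\un 1}}))^\star \to ((G^{1,1}_{\n}(m))^\wedge)^\star \to (\Pic(X|D_{\n+1})^\pf/p^m)^\star \to 0.
\]
It remains to identify $((G^{1,1}_{\n}(m))^\wedge)^\star$ with $(G^{1,1}_{\n}(m))^\star$: because $(G^{1,1}_{\n}(m))^\wedge$ is the completion of $G^{1,1}_{\n}(m)$ along the open finite-index subgroups inside $\Pic(X|D_{\n+1})/p^m$, every continuous homomorphism $G^{1,1}_{\n}(m) \to \Q/\Z$ factors through this completion (its kernel contains an open subgroup of finite index, being a torsion-valued character), and conversely $G^{1,1}_{\n}(m) \to (G^{1,1}_{\n}(m))^\wedge$ has dense image; this is exactly the mechanism of \lemref{lem:PF-compln}. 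Hence $(G^{1,1}_{\n}(m))^\star = ((G^{1,1}_{\n}(m))^\wedge)^\star$, which yields \eqref{eqn:Pic-3}.

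I expect the main obstacle to be the careful bookkeeping in the completion step: verifying that completing \eqref{eqn:G^1,1} along the prescribed system of open subgroups is exact, that it produces precisely $\Pic(X|D_{\n+1})^\pf/p^m$ on the left (using \eqref{eqn:Pic-1} so that $\pf$ and mod-$p^m$ commute appropriately), and that the topology on the completed middle term is genuinely ``torsion-by-profinite'' with the stated open profinite subgroup. Once the first sequence is in place as an honest sequence in $\pfd$, the passage to \eqref{eqn:Pic-3} is formal Pontryagin duality together with the identification of duals of completions via \lemref{lem:PF-compln}; the replacement of $\sK^M_{1,X|D_{\n+1}}$ by $\sK^M_{1,X|D_{\un 1}}$ is immediate from \eqref{eqn:H^2-grp}.
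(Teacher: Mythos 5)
Your proposal is correct and follows essentially the same route as the paper: the paper obtains \eqref{eqn:Pic-2} by quotienting \eqref{eqn:G^1,1} by each open finite-index subgroup $V \subset \Pic(X|D_{\n+1})/p^m$, passing to the inverse limit over $V$ and invoking \eqref{eqn:Pic-1} and \eqref{eqn:H^2-grp}, and then Pontryagin dualizes, exactly as you do. The only minor variation is in identifying $((G^{1,1}_{\n}(m))^\wedge)^\star$ with $(G^{1,1}_{\n}(m))^\star$: you argue directly that every continuous character of the $p^m$-torsion group $G^{1,1}_{\n}(m)$ kills an open finite-index subgroup of $\Pic(X|D_{\n+1})/p^m$ and hence factors through the completion, while the paper compares the dual of \eqref{eqn:Pic-2} with the dual of \eqref{eqn:G^1,1} using \lemref{lem:PF-compln}; both arguments are valid.
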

\begin{proof}
    Note that, \eqref{eqn:G^1,1} and \eqref{eqn:H^2-grp} induce an exact sequence of topological abelian groups
    $$0 \to (\Pic(X|D_{\n+1})/p^m)/V \to H^1_\et(X,W_m\Omega^1_{X|D_{\n+1},\log})/V \to \ _{p^m}H^2_\et(X,\sK^M_{1,X|D_{\un{1}}}) \to 0,$$
    for any open subgroup $V$ of finite index in $\Pic(X|D_{\n+1})/p^m$. Now \eqref{eqn:Pic-2}  is obtained by taking inverse limit over $V$ and noting that $\Pic(X|D_{\n+1})^\pf/p^m\cong (\Pic(X|D_{\n+1})/p^m)^\pf$, which is \eqref{eqn:Pic-1}.

    Now by taking Pontryagin dual in \eqref{eqn:Pic-2}, we get an exact sequence in $\pfd$ 
    $$ 0 \to (\ _{p^m}H^2_\et(X,\sK^M_{1,X|D_{\un{1}}}))^\star \to ((G^{1,1}_{\n}(m))^\wedge)^\star \to (\Pic(X|D_{\n+1})^\pf/p^m )^\star \to 0.$$
    We also note that $(\Pic(X|D_{\n+1})/p^m)^*= ((\Pic(X|D_{\n+1})/p^m)^\pf)^*$ (by \lemref{lem:PF-compln}). Since $\Pic(X|D_{\n+1})/p^m$ is an open subgroup of $G^{1,1}_{\n}(m)$, the exact sequence \eqref{eqn:Pic-3} is also obtained by taking topological dual in \eqref{eqn:G^1,1}. Compairing this exact sequence with the above one, we get $((G^{1,1}_{\n}(m))^\wedge)^\star=(G^{1,1}_{\n}(m))^\star$. Hence, \eqref{eqn:Pic-3} is also topologically exact.
\end{proof}
By \corref{cor:F^q}, we have an exact sequence
\begin{equation}\label{eqn:F^1,1}
    0 \to \Pic(U)/p^m \to F^{1,1}_{\n}(m) \to   \ _{p^m}\Br(X|D_{\n}) \to   0,
\end{equation}
where we endow $F^{1,1}_{\n}(m)$ with the structure of weakest topological group such that $\Pic(U)/p^m$ becomes an open subgroup in \eqref{eqn:F^1,1}. Hence  \eqref{eqn:F^1,1} becomes a short exact sequence in $\pfd$ (because $\Pic(U)/p^m$ is a profinite group). Taking Pontryagin dual, we have the following exact sequence in $\pfd$.
\begin{equation}\label{eqn:F^1,1-dual}
    0 \to (\ _{p^m}\Br(X|D_{\n}))^\star \to (F^{1,1}_{\n}(m))^\star \to (\Pic(U)/p^m)^\star \to 0
\end{equation}

Now recall from \cite[Lem~12.5]{KRS} that there is a pairing
\begin{equation}\label{eqn:G_m}
    \Pic(U) \times H^2_\et(X,\sK^M_{1,X|D_{1}}) \to \Q/\Z
\end{equation}
compatible with the left pairing of \eqref{eqn:Br-Ma-1} and induces an isomorphism $$H^2_\et(X,\sK^M_{1,X|D_{\un{1}}}) \cong (\Pic(U))^\star.$$ As a result, we get the following isomorphisms of topological abelian group.
\begin{equation}\label{eqn:Pic-U-dual}
    \xymatrix@C.5pc{
    (1)\ _{p^m}H^2_\et(X,\sK^M_{1,X|D_{1}}) \xrightarrow{\cong} (\Pic(U)/p^m)^\star &(2)\ \Pic(U)/p^m \xrightarrow{\cong} (\ _{p^m}H^2_\et(X,\sK^M_{1,X|D_{1}}))^\star,
    }
\end{equation}
where the right isomorphism is obtained from the left one by applying Pontryagin duality and noting that $\Pic(U)/p^m$ is profinite.

We now consider the following diagram of exact sequences.
\begin{equation}\label{eqn:diag-2}
    \xymatrix@C.8pc{
    0 \ar[r]& \Pic(U)/p^m \ar[r] \ar[d]_{\delta_{p^m}}^-{\cong}& F^{1,1}_{\n}(m) \ar[r] \ar[d]_{\beta'_{p^m}} & \ _{p^m}\Br(X|D_{\n}) \ar[r] \ar[d]^-{\alpha'_{p^m}} & 0 \\
    0 \ar[r] &(\ _{p^m}H^2_\et(X,\sK^M_{1,X|D_{\un{1}}}))^\vee \ar[r] & (G^{1,1}_{\n}(m))^\vee \ar[r] & (\Pic(X|D_{\n+1})/p^m )^\vee \ar[r]& 0
    ,}
\end{equation}
where $\delta_{p^m}$, $\beta'_{p^m}$,   $\alpha'_{p^m}$ are the maps induced from \eqref{eqn:G_m}, \eqref{eqn:Pair-4-1} and \eqref{eqn:Br-Ma-1}, respectively. Moreover, $\delta_{p^m}$ is isomorphism by \eqref{eqn:Pic-U-dual}(2), because $\ _{p^m}H^2_\et(X,\sK^M_{1,X|D_{\un{1}}})$ is discrete. We claim that the above diagram is commutative. For this consider the diagram. 
\begin{equation}
    \xymatrix@C.8pc{ &
      \Pic(X)/p^m \ar[rr] \ar@{->>}[dr] \ar[dd]^-{\alpha_{p^m}} & &
      H^1_\et(X,W_m\Omega^1_{X,\log}) \ar[dr] \ar[dd]^<<<<<<<{\beta_{p^m}} & \\
      & & \Pic(U)/p^m \ar[dd]^<<<<<<<{\delta_{p^m}} \ar[rr] & & F^{1,1}_{\n} \ar[dd]^-{\beta'_{p^m}} \\
      & (_{p^m}\Br(X))^\vee \ar[rr] \ar[dr] & &
      H^1_\et(X,W_m\Omega^1_{X,\log})^\vee \ar[dr] & \\
      & & _{p^m}H^2_\et(X,\sK^M_{1,X|D_{\un{1}}}))\vee \ar[rr] & &(G^{1,1}_{\n}(m))^\vee},
\end{equation}
where the front face and the back face are the left square of \eqref{eqn:diag-2} and \eqref{eqn:diag-1}, respectively after replacing $(-)^\star$ by $(-)^\vee$ in the latter square. Note that the top floor and bottom floor are commutative by naturality of cohomology, while the right sided face is commutative by compatibility of the pairing \eqref{eqn:Pair-4-1}. The left face is commutative by the construction of $\delta_{p^m}$ (see \cite[Lem~12.5]{KRS}). Since the back face is commutative (by \eqref{eqn:diag-1}), we conclude the front is also commutative.

Now the right square of \eqref{eqn:diag-2} commutes because of the construction of the Brauer-Manin pairing in \cite[Prop~9.4]{KRS} and the following commutative diagram (cf. \cite[Prop~4.4]{KRS}).
\begin{equation}
    \xymatrix@C.8pc@R1pc{
    \bigoplus\limits_{x\in X^{(1)}}F^{1,1}_{\n,x}(m) \  \times  \ \bigoplus\limits_{x\in X^{(1)}}G^{1,1}_{\n,x}(m)  \ar@<8ex>[dd]   \ar[r]& \underset{x \in X^{(1)}}{\oplus}H^1_\et(\kappa(x),W_m\Omega^1_{\kappa(x),\log}) \ar[d]^-{\theta}_-{\cong} \ar@/^2pc/[rdd]^-{inv_{\kappa(x)}} \\
      & \underset{x \in X^{(1)}}{\oplus}H^2_x(X,W_m\Omega^2_{X,\log}) \ar[d]  \\
    F^{1,1}_{\n}(m)  \ \times \ G^{1,1}_{\n}(m) \ar@<7ex>[uu] \ar[r] & H^2_\et(X,W_m\Omega^2_{X,\log}) \ar[r]^-{Tr}& \Z/p^m,
    }
\end{equation}
where $G^{1,1}_{\n,x}=H^1_x(X_x^h,W_m\Omega^1_{X|D_{\n+1},\log})=K_x^\times/(\Fil_{n_x+1}K_x^\times+p^mK_x^\times)$ (resp. $\Z/p^m$) and $F^{1,1}_{\n,x}=\H^1_\et(X_x^h, W_m\sF^{1}_\n)=\ _{p^m}\Fil_{n_x}\Br(K_x) (\text{ resp. }_{p^m}\Br(A_x)=\Z/p^m)$, if $x \in D_\n$ (resp. $x \not\in D_\n$)  by \lemref{lem:support} and \thmref{thm:Kato-fil-10}, respectively. Here the top pairing is induced by the local class field theory for Brauer group (cf. \cite[\S~3.4]{Kato80-2}).
Now we prove the main result of this section.
\begin{prop}\label{prop:G^1,1}
    The pairing $G^{1,1}_{\n}(m) \times F^{1,1}_{\n}(m) \to \Z/p^m$ induces isomorphisms
    $$F^{1,1}_{\n}(m) \xrightarrow{\cong} (G^{1,1}_{\n}(m))^\star, \ \ \  (G^{1,1}_{\n}(m))^\wedge \xrightarrow{\cong}(F^{1,1}_{\n}(m))^\star.$$
\end{prop}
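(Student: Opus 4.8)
The plan is to derive both isomorphisms from the two short exact sequences \eqref{eqn:G^1,1} and \eqref{eqn:F^1,1}, the commutative diagram \eqref{eqn:diag-2}, and the duality results of \S~\ref{sec:Br-Ma}, by a five lemma argument followed by Pontryagin duality in $\pfd$. The first step is to identify the topological dual $(G^{1,1}_{\n}(m))^\star$ explicitly. Since $\Pic(X|D_{\n+1})/p^m$ is by construction an open subgroup of $G^{1,1}_{\n}(m)$ and $\Q/\Z$ carries the discrete topology (Notation~\ref{not:Top}), a homomorphism $f\colon G^{1,1}_{\n}(m)\to\Q/\Z$ is continuous if and only if $f|_{\Pic(X|D_{\n+1})/p^m}$ is continuous: if the latter holds then $\ker f$ contains the open subgroup $(\ker f)\cap(\Pic(X|D_{\n+1})/p^m)$ and is therefore open. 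Restricting the exact sequence of algebraic duals of \eqref{eqn:G^1,1} to those functionals that are continuous on this open subgroup yields exactly \eqref{eqn:Pic-3} (using \eqref{eqn:H^2-grp}), now read as a sequence in $\pfd$; and the map $\beta'_{p^m}$ of \eqref{eqn:diag-2} lands in $(G^{1,1}_{\n}(m))^\star$ once one checks that the cup-product pairing of $W_m\sF^{1}_{\n}$ with $W_m\Omega^1_{X|D_{\n+1},\log}$ is continuous in the relevant variable.

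Next I would restrict \eqref{eqn:diag-2} to these subgroups of the algebraic duals. This produces a commutative ladder with exact rows in which the top row is \eqref{eqn:F^1,1}, the bottom row is \eqref{eqn:Pic-3}, the left vertical map is $\delta_{p^m}$, which is an isomorphism by \eqref{eqn:Pic-U-dual}(2), and the right vertical map is $\alpha'_{p^m}\colon{}_{p^m}\Br(X|D_{\n})\to(\Pic(X|D_{\n+1})^\pf/p^m)^\star$. The crucial point is that $\alpha'_{p^m}$ is an isomorphism: it is induced by the Brauer--Manin pairing \eqref{eqn:Br-Ma-1}, which is continuous, so it factors through the topological dual, and by \eqref{eqn:Br-Ma-3}(2) it is an isomorphism onto it. The five lemma then shows $\beta'_{p^m}\colon F^{1,1}_{\n}(m)\xrightarrow{\cong}(G^{1,1}_{\n}(m))^\star$ is an isomorphism, which is the first assertion.

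For the second isomorphism I would apply $(-)^\star$ to the first. By the last identity established in the proof of \corref{cor:G^1,1-dual} one has $((G^{1,1}_{\n}(m))^\wedge)^\star=(G^{1,1}_{\n}(m))^\star$, and $(G^{1,1}_{\n}(m))^\wedge$ lies in $\pfd$ by \eqref{eqn:Pic-2}; hence from $F^{1,1}_{\n}(m)\cong((G^{1,1}_{\n}(m))^\wedge)^\star$ Pontryagin duality gives $(F^{1,1}_{\n}(m))^\star\cong(G^{1,1}_{\n}(m))^\wedge$. To see that this identification is the one induced by the pairing, I would note that for fixed $f\in F^{1,1}_{\n}(m)$ the homomorphism $\langle-,f\rangle\colon G^{1,1}_{\n}(m)\to\Z/p^m$ has finite image and open kernel, hence open kernel of finite index, so it factors through $(G^{1,1}_{\n}(m))^\wedge$; the induced map $(G^{1,1}_{\n}(m))^\wedge\to(F^{1,1}_{\n}(m))^\star$ is the transpose of $\beta'_{p^m}$ and is therefore the isomorphism just produced.

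I expect the main obstacle to be the bookkeeping with topologies rather than any hard new computation: checking that the cup-product pairing is continuous in each variable (so that the induced maps land in topological, not merely algebraic, duals), that passing to continuous functionals preserves exactness of the rows of \eqref{eqn:diag-2}, and that every identification produced is a topological isomorphism of $\pfd$-objects compatible with the pairing. All the genuinely new input --- the Brauer--Manin duality \eqref{eqn:Br-Ma-3}, the isomorphism \eqref{eqn:Pic-U-dual}, and the commutativity of \eqref{eqn:diag-2} --- is already available.
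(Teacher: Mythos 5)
Your proposal is correct and follows essentially the same route as the paper: replace the bottom row of \eqref{eqn:diag-2} by the topological-dual sequence \eqref{eqn:Pic-3}, use \eqref{eqn:Pic-U-dual}(2) and \eqref{eqn:Br-Ma-3}(2) to see that the outer vertical maps are isomorphisms, and conclude by the five lemma that $\beta'_{p^m}\colon F^{1,1}_{\n}(m)\to (G^{1,1}_{\n}(m))^\star$ is an isomorphism. The second isomorphism is then obtained exactly as in the paper, via $((G^{1,1}_{\n}(m))^\wedge)^\star=(G^{1,1}_{\n}(m))^\star$ from \corref{cor:G^1,1-dual} and Pontryagin duality in $\pfd$; your extra remark identifying the resulting map as the transpose of $\beta'_{p^m}$ is harmless additional bookkeeping.
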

\begin{proof}
    By \eqref{eqn:Pic-U-dual}(2), \eqref{eqn:Br-Ma-3}(2) and the exact sequence \eqref{eqn:Pic-3}, we get that the map $\beta'_{p^m}$ in \eqref{eqn:diag-2} is injective and its image lies in $(G^{1,1}_{\n}(m))^\star$. Moreover, this is surjective onto $(G^{1,1}_{\n}(m))^\star$. This follows by replacing the bottom row of \eqref{eqn:diag-2} by \eqref{eqn:Pic-3} and applying  \eqref{eqn:Br-Ma-3} and \eqref{eqn:Pic-U-dual} once again. 
    
    For the second one, note that $((G^{1,1}_{\n}(m))^\wedge)^* =(G^{1,1}_{\n}(m))^*$ (see the proof of \corref{cor:G^1,1-dual}). Then $F^{1,1}_{\n}(m) \xrightarrow{\cong} ((G^{1,1}_{\n}(m))^\wedge)^\star$ is a topological isomorphism in $\pfd$ (because $\delta_{p^m}$ is topological isomorphism). Now, by taking Pontryagin dual we get the isomorphism $(G^{1,1}_{\n}(m))^\wedge \xrightarrow{\cong}(F^{1,1}_{\n}(m))^\star$  in $\pfd$.
\end{proof}
\begin{rem}\label{rem:G^1,1}
    We call $G^{1,1}_{\n}(m)^\wedge$ to be the ``torsion-by-profinite" completion of $G^{1,1}_{\n}(m)$.

\end{rem}
\begin{cor}\label{cor:Duality-open-2}
    We have a perfect pairing of topological abelian groups
    $$\varprojlim_\n (G^{1,1}_{\n}(m))^\wedge \times H^1_\et(U,W_m\Omega^1_{U,\log}) \to \Z/p^m$$
    \end{cor}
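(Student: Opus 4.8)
The plan is to deduce the corollary from the family of perfect pairings of \propref{prop:G^1,1} by passing to the limit over $\n$, imitating the proof of \corref{cor:Duality-open}. The first ingredient is the identification of the direct-limit factor. By \thmref{thm:Global-version}(12) one has $\varinjlim_{\n} W_m\sF^{1,\bullet}_{\n} \cong Rj_*W_m\Omega^1_{U,\log}$, the transition maps being the canonical maps $W_m\sF^{1,\bullet}_{\n} \to W_m\sF^{1,\bullet}_{\n'}$ for $\n \le \n'$; since $X$ is Noetherian and these are uniformly bounded (two-term) complexes, hypercohomology commutes with this filtered colimit, so $\varinjlim_{\n} \H^1_\et(X, W_m\sF^{1,\bullet}_{\n}) \cong H^1_\et(U, W_m\Omega^1_{U,\log})$. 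This is compatible with the exact sequences \eqref{eqn:F^1,1}, whose profinite part $\Pic(U)/p^m$ is independent of $\n$, so the right-hand factor in the claimed pairing is a torsion-by-profinite group having $\Pic(U)/p^m$ as an open subgroup.

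Next, I would check that the perfect pairings $(G^{1,1}_{\n}(m))^\wedge \times F^{1,1}_{\n}(m) \to \Z/p^m$ of \propref{prop:G^1,1} are compatible with the transition maps in $\n$: for $\n \ge \n'$ the inclusion $W_m\Omega^1_{X|D_{\n+1},\log} \hookrightarrow W_m\Omega^1_{X|D_{\n'+1},\log}$ induces a map $G^{1,1}_{\n}(m) \to G^{1,1}_{\n'}(m)$, which is surjective because the quotient sheaf is supported on the finite set $E$ and, by \propref{prop:JSZ}, carries a filtration with coherent quotients, hence has no $H^1$; combining this with the analogue of \lemref{lem:Top-6} for $q=1$ and with \eqref{eqn:Pic-2} and \eqref{eqn:H^2-grp}, one gets continuous maps $(G^{1,1}_{\n}(m))^\wedge \to (G^{1,1}_{\n'}(m))^\wedge$ in $\pfd$, compatible with the maps $F^{1,1}_{\n'}(m) \to F^{1,1}_{\n}(m)$ and with the cup products of \S~\ref{sec:Pairing}. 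Passing to $\varprojlim_{\n}$ on one side and $\varinjlim_{\n}$ on the other therefore yields a well-defined pairing, namely the one in the statement.

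Finally, for perfectness I would take $\varprojlim_{\n}$ of the isomorphisms $(G^{1,1}_{\n}(m))^\wedge \xrightarrow{\cong} (F^{1,1}_{\n}(m))^\star$ of \propref{prop:G^1,1} and use that $(-)^\star = \Hom_\Tab(-,\Q/\Z)$ carries the filtered colimit $\varinjlim_{\n} F^{1,1}_{\n}(m)$ to the inverse limit of its duals; here one needs the transition maps to be strict and the profinite parts to form a Mittag--Leffler system, so that the relevant $\varprojlim^1$-terms vanish and $\varprojlim_{\n}(G^{1,1}_{\n}(m))^\wedge$ remains in $\pfd$. This gives $\varprojlim_{\n}(G^{1,1}_{\n}(m))^\wedge \cong \bigl(H^1_\et(U, W_m\Omega^1_{U,\log})\bigr)^\star$, and the reverse isomorphism follows from Pontryagin duality inside $\pfd$. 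Concretely this last passage can be packaged via \cite[Lem.~7.2]{GK-Duality} and \cite[Lem.~2.6]{KRS}, exactly as in the proof of \corref{cor:Duality-open}. I expect the only real work to be in this third step — verifying that the inverse system of torsion-by-profinite completions is well-behaved enough that its limit stays in $\pfd$ and that the formation of Pontryagin duals interchanges the limit with the colimit — since everything else is formal once \propref{prop:G^1,1} and \thmref{thm:Global-version}(12) are available.
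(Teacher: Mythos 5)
Your proposal is correct and follows essentially the same route as the paper: identify $\varinjlim_\n F^{1,1}_{\n}(m)$ with $H^1_\et(U,W_m\Omega^1_{U,\log})$ via \thmref{thm:Global-version}(12), verify that the transition maps on the $G$-side are topological quotients and on the $F$-side topological injections so that the limit stays in $\pfd$, and then pass to the limit in the perfect pairings of \propref{prop:G^1,1} using \cite[Lem.~2.6]{KRS} and Pontryagin duality for torsion-by-profinite groups. The only cosmetic difference is that you dualize $(G^{1,1}_{\n}(m))^\wedge\cong(F^{1,1}_{\n}(m))^\star$ and take $\varprojlim$, whereas the paper takes $\varinjlim$ of $F^{1,1}_{\n}(m)\cong(G^{1,1}_{\n}(m))^\star$ and identifies the colimit of duals with the dual of the limit; both directions rest on the same structural facts.
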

    \begin{proof}
        We note that $\varinjlim_\n  F^{1,1}_{\n}(m)=H^1_\et(U,W_m\Omega^1_{U,\log})$ by \thmref{thm:Global-version}(12). Also, note that the map $\Br(X|D_\n)\to \Br(X|D_{\n'})$ is injective if $\n' \ge \n$. Hence, by \eqref{eqn:Br-Ma-3}, the transition map $\Pic(X|D_{\n'+1})^\pf/p^m \to \Pic(X|D_{\n+1})^\pf/p^m$ is topological quotient map. Also, by \eqref{eqn:Pic-2} (resp. \eqref{eqn:F^1,1}), the transition map $(G^{1,1}_{\n'}(m))^\wedge \to (G^{1,1}_{\n}(m))^\wedge$ (resp. $F^{1,1}_{\n}(m) \to F^{1,1}_{\n'}(m)$) is topological quotient (resp. topological injection). We endow $\varprojlim_\n (G^{1,1}_{\n}(m))^\wedge$ (resp. $H^1_\et(U,W_m\Omega^1_{U,\log})$) with inverse limit (resp. direct limit) topology. As a results we have the following exact sequences in $\pfd$.
        $$0 \to \varprojlim\limits_\n\Pic(X|D_{\n+1})^\pf/p^m \to \varprojlim_\n (G^{1,1}_{\n}(m))^\wedge \to \ _{p^m}H^2_\et(X,\sK^M_{1,X|D_{\un{1}}}) \to 0,$$
        $$0 \to \Pic(U)/p^m \to H^1_\et(U,W_m\Omega^1_{U,\log}) \to \ _{p^m}\Br(U) \to 0.$$

        Moreover, note that the canonical map $G=\varprojlim_\n (G^{1,1}_{\n}(m))^\wedge \to (G^{1,1}_{\n}(m))^\wedge$ is surjective (because the transition map $\Pic(X|D_{\n'+1})^\pf/p^m \to \Pic(X|D_{\n+1})^\pf/p^m$ is surjective if $\n' \ge \n$). By \cite[Lem~2.6]{KRS} we conclude that $\varinjlim\limits_\n (G^{1,1}_\n(m))^* \to G^*$ is isomorphism (note $(G^{1,1}_\n(m))^*=((G^{1,1}_\n(m))^\wedge)^*$). Now, by \propref{prop:G^1,1}, we have topological isomorphism $H^1_\et(U,W_m\Omega^1_{U,\log}) \xrightarrow{\cong} G^*$ in $\pfd$. Hence by Pontryagin duality for ``torsion-by-profinite" groups, we get the result.    
    \end{proof}
\subsection{The duality theorem over local fields}
Finally, we summarise all the duality we proved the in previous subsections.
\begin{thm}\label{thm:Duality-curve}
   Let $X, U, D_\n$ be as in the beginning of \S~\ref{chap:duality-localfields}. Then for all $i \ge 0$ and $\n \ge \un{0}$, we have perfect pairings of topological abelian groups
   \begin{enumerate}
       
       \item  \hspace{1cm} $H^i_\et(X, W_m\Omega^j_{X|D_{\n+1}, \log})^\pf \times
    \H^{2-i}_\et(X, W_m\sF^{2-j,\bullet}_{D_{\n}}) \to {\Z}/{p^m},\ (i,j)\neq (1,1), (2,1)$; \\
    where $H^i_\et(X, W_m\Omega^j_{X|D_{\n+1}, \log})$ is already profinite if $i \neq 1$.
    \item \hspace{1cm}$H^2_\et(X,W_m\Omega^1_{X|D_{\n+1},\log}) \times \H^{0}_\et(X,W_m\sF^{1,\bullet}_{D_{\n}}) \to \Z/p^m$, \\
    where $H^2_\et(X,W_m\Omega^1_{X|D_{\n+1},\log})$ has discrete topology.
    \item \hspace{1cm} $H^1_\et(X,W_m\Omega^1_{X|D_{\n+1},\log})^\wedge \times \H^1_\et(X,W_m\sF^{1,\bullet}_{D_{\n}}) \to \Z/p^m,$ \\ 
    where $H^1_\et(X,W_m\Omega^1_{X|D_{\n+1},\log})$ is an extension of discrete group by an adic topological group and $\H^1_\et(X,W_m\sF^{1,\bullet}_{D_{\n}})$ is in $\pfd$ and $(H^1_\et(X,W_m\Omega^1_{X|D_{\n+1},\log}))^\wedge$ is the ``torsion by profinite" completion of $H^1_\et(X,W_m\Omega^1_{X|D_{\n+1},\log})$ (cf. \remref{rem:G^1,1}).
    \item \hspace{1cm} $\varprojlim\limits_\n H^i_\et(X,W_m\Omega^j_{X|D_{\n+1},\log})^\wedge \times H^{2-i}_\et(U,W_m\Omega^{2-j}_{U,\log}) \to \Z/p^m$, \\  
    where $(-)^\wedge$ is the ``torsion-by-profinite" completion for $i=j=1$ and the profinite completion, otherwise. (Note, these groups are all zero if $i=0$.)
   \end{enumerate}
\end{thm}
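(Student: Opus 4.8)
The plan is to assemble Theorem~\ref{thm:Duality-curve} by systematically collecting the six ``cases'' and the limit statement that were established piece by piece in \S~\ref{sec:Top-1} and \S~\ref{sec:Top-3}. Concretely, parts (1) and (2) combine the perfect pairings recorded in Cases~1--4, i.e. \eqref{eqn:Top-0.5}, \eqref{eqn:Top-0}, \eqref{eqn:Top-1} and \eqref{eqn:Top-2}, after observing that the statement for $(i,j)$ with $j=2$ is the Kato--Saito / Kerz--R\"ulling--Saito duality \cite[\S~3, Prop.~4]{Kato-Saito-Ann}, \cite[Thm.~4.7]{KRS} twisted by the comparison isomorphisms of \S~\ref{sec:LG-dual}, while the cases $i\neq 1$ all follow from feeding \lemref{lem:Local-inj-6}, \lemref{lem:Coh-V2} and the exact diagram \eqref{eqn:G-comp-6} into the five lemma. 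For $i\neq 1$ one also has to note that $H^i_\et(X,W_m\Omega^j_{X|D_{\n+1},\log})$ is already profinite: for $i=0$ this is Case~2/3 (it is a closed subgroup of the profinite $H^0_\et(X,W_m\Omega^j_{X,\log})$), and for $i\geq 3$ the group vanishes by cohomological dimension, so $(-)^\pf$ changes nothing. Part (3) is precisely \propref{prop:G^1,1} together with \corref{cor:G^1,1} and \corref{cor:G^1,1-dual}, which already identify the topological structure of $H^1_\et(X,W_m\Omega^1_{X|D_{\n+1},\log})$ as an extension of a discrete group by an adic (hence locally compact) group, place $\H^1_\et(X,W_m\sF^{1,\bullet}_{D_{\n}})$ in $\pfd$ via \eqref{eqn:F^1,1}, and verify the perfectness of the Brauer--Manin-type pairing.

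Next I would handle part (4), the limit over $\n$. For $i=j=1$ this is exactly \corref{cor:Duality-open-2}; for $(i,j)=(2,1)$ it is \corref{cor:Duality-open}; for $(i,j)=(2,2)$, $(0,\ast)$ etc.\ the relevant limit statements are \eqref{eqn:Duality-open--1}, \eqref{eqn:Duality-open-0} and the displayed limit pairing after \eqref{eqn:Top-2}. The remaining content is the vanishing ``these groups are all zero if $i=0$'': this follows because $\varinjlim_\n \H^{2}_\et(X,W_m\sF^{2-j,\bullet}_{D_\n}) = H^2_\et(U,W_m\Omega^{2-j}_{U,\log})$ by \thmref{thm:Global-version}(12), and $U$ is an affine curve, so $\mathrm{cd}_p(U)\le 1$ forces this group to be $0$; dually $\varprojlim_\n H^0_\et(X,W_m\Omega^j_{X|D_{\n+1},\log})^\pf = 0$. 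For the general ``$(-)^\wedge$'' in part (4) I would spell out that it means the $\pfd$-completion when $i=j=1$ (as in \remref{rem:G^1,1}) and the ordinary profinite completion otherwise, and that in each case the limit pairing is obtained by applying $\varprojlim$ / $\varinjlim$ to the finite-level pairings of parts (1)--(3), using that the transition maps on the $\sF$-side are (topological) injections and on the $\Omega_{\log}$-side are topological quotient maps (Lemmas~\ref{lem:Top-6}, \ref{lem:Complex-6}, \corref{cor:G^1,1-dual}, \eqref{eqn:F^1,1}), so that Pontryagin duality for $\pfd$-groups (\cite[\S~2.9]{Pro-fin}) commutes with the limit; here one invokes \cite[Lem.~2.6]{KRS} to see $\varinjlim (\ )^\star \xrightarrow{\cong} (\varprojlim\, )^\star$.

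I expect the main obstacle to be purely organizational rather than mathematical: one must make sure the various topologies that were attached to $H^i_\et(X,W_m\Omega^j_{X|D_{\n+1},\log})$ and $\H^j_\et(X,W_m\sF^{q,\bullet}_{D_\n})$ in the six cases are mutually compatible and that the word ``perfect pairing of topological abelian groups'' means the same thing (i.e.\ induces topological isomorphisms onto the continuous duals, not merely abstract ones) uniformly across the cases $i=0,1,2$. In particular, for $i=1$, $j=2$ the left-hand group carries only a weak topology which is a priori not complete, so the pairing is stated for the profinite completion $(-)^\pf$ (\corref{cor:Duality-Spl}), and \lemref{lem:Top-5} is needed to guarantee the topology is at least Hausdorff so that $G^{2,1}_\n(m)\inj (G^{2,1}_\n(m))^\pf$; similarly for $i=j=1$ one must use the $\pfd$-completion $(-)^\wedge$ and quote that $((G^{1,1}_\n(m))^\wedge)^\star = (G^{1,1}_\n(m))^\star$ from the proof of \corref{cor:G^1,1-dual}. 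Once these bookkeeping matters are settled, the theorem is essentially a clean restatement, and I would present the proof as: ``Parts (1) and (2) are \eqref{eqn:Top-0.5}, \eqref{eqn:Top-0}, \eqref{eqn:Top-1}, \eqref{eqn:Top-2}; part (3) is \propref{prop:G^1,1}; part (4) follows by passing to the limit as in Corollaries~\ref{cor:Duality-open} and \ref{cor:Duality-open-2}, using \thmref{thm:Global-version}(12) and \cite[Lem.~2.6]{KRS}.''
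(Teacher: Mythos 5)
Your proposal is correct and follows essentially the same route as the paper: the paper's own proof is simply "combine \eqref{eqn:Top-0.5}, \eqref{eqn:Top-0}, \eqref{eqn:Top-1}, \eqref{eqn:Top-2}, \corref{cor:Duality-Spl}, \propref{prop:G^1,1}, and for item (4) combine \eqref{eqn:Duality-open--1}, \eqref{eqn:Duality-open-0} and Corollaries~\ref{cor:Duality-open}, \ref{cor:Duality-open-2}", which is exactly the assembly you describe. Your extra bookkeeping remarks on the topologies and completions are consistent with how those cases were set up in \S~\ref{sec:Top-1} and \S~\ref{sec:Top-3}.
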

\begin{proof}
    Combine \eqref{eqn:Top-0.5} \eqref{eqn:Top-0},  \eqref{eqn:Top-1}, \eqref{eqn:Top-2}, \corref{cor:Duality-Spl}, \propref{prop:G^1,1}. For the item (4), combine \eqref{eqn:Duality-open--1}, \eqref{eqn:Duality-open-0}, Corollaries~\ref{cor:Duality-open}, \ref{cor:Duality-open-2}.
\end{proof}

\chapter{Lefschetz hypersurface theorem }\label{chap:Lef}

Lefschetz hypersurface (a.k.a the Weak Lefschetz) theorem in {\et}ale cohomology (cf. \cite[Thm~VI.7.1]{Milne-EC}, \cite{Katz}) plays an important role for studying $\ell$-adic {\et}ale cohomology of smooth schemes over a field. In this chapter, we shall consider the $p$-adic analogue of these theorems.  

Let $X$, $E$, $U$, $D = D_{\un{n}}$ be as in \S ~\ref{sec:Complexes}. Moreover, we also assume $X$ is smooth projective over an $F$-finite base field $k$ and we fix an embedding $X \inj \P^L_k$. Let dim $X=N$. Let $Y$ be a general smooth hypersurface section such that $E' = Y \times_X E$ is also a simple normal crossing divisor. Let $D' := D \times_X Y$ and $i: Y\inj X$ denote the closed embedding. Also, let $j' : U' \inj Y $ be the inclusion of the complement of $E'$ in $Y$. Also, write $V$ for the complement of $E \cup Y$ in $X$.

Main purpose of this chapter is to prove a Lefschetz theorem for the cohomology groups $H^i_\et(X,W_m\Omega^q_{X,\log})$,  
 $\H^i_\et(X,W_m\sF^{q,\bullet}_D)$ and  $H^i_\et(X, W_m\Omega^q_{X|D,\log})$. 

\section{Some exact sequences}

Let $X_\log$ and $Y_\log$ be the log schemes whose log structures on $X$ and $Y$ are given by the sheaves of monoids $$\sM_X = \sO_X \cap j_*\sO_U^* \  \text{and} \ \sM'_Y = \sO_Y \cap j'_*\sO_{U'}^{*}, \ \text{respectively.}$$
We identify $\Omega^1_{X_\log}$ (resp. $\Omega^1_{Y_\log}$) with $\Omega^1_X(\log E)$ (resp. $\Omega^1_Y(\log E')$) by \lemref{lem:LWC-2}.

\begin{lem}\label{lem:s.e.s-2}
    For all $q \ge 0$, we have the following exact sequence.
    \begin{equation}\label{eqn:s.e.s-2}
       0 \to \sI/\sI^2 \otimes_{\sO_Y}\Omega^{q-1}_Y(\log E') \xrightarrow{\ov{d}} i^*\Omega^q_{X}(\log E) \to \Omega^q_Y(\log E') \to 0,
    \end{equation}
    where $\sI$ is the ideal sheaf $\sO_X(-Y)$.
\end{lem}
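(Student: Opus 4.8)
The plan is to reduce \eqref{eqn:s.e.s-2} to its $q=1$ case and then take exterior powers, mimicking the classical conormal/adjunction sequence in the logarithmic setting.

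\emph{The $q=1$ case.} First I would prove the log conormal sequence
\[
0 \to \sI/\sI^2 \xrightarrow{\ov d} i^*\Omega^1_X(\log E) \to \Omega^1_Y(\log E') \to 0,
\]
where $\ov d$ carries the class of a local equation $f$ of $Y$ to the class of $df$. Right exactness and the fact that the second map is the canonical one are formal consequences of the functoriality of $\Omega^1_{(-)_\log}$ applied to the morphism of log schemes $Y_\log \to X_\log$, so the only content is injectivity of $\ov d$, which I would check stalkwise. Fix $y \in Y$ and set $A = \sO_{X,y}$, $B = A/(f) = \sO_{Y,y}$ — both regular, $F$-finite, with common residue field $\ff$. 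Since $E$ meets $Y$ transversally and $E'$ is an SNCD, near $y$ we may write $E = V(x_1\cdots x_s)$ with $\{x_1,\dots,x_s,f\}$ part of a regular system of parameters of $A$; by \lemref{lem:LWC-3} this extends to a differential basis $\{x_1,\dots,x_s,f,z_{s+2},\dots,z_N,w_1,\dots,w_t\}$ of $A$ over $\F_p$ (the $w_\ell$ lifting a $p$-basis of $\ff$). By \lemref{lem:LWC-2}, $\Omega^1_A(\log\pi)$ with $\pi = x_1\cdots x_s$ is then free with basis $\{\dlog x_1,\dots,\dlog x_s, df, dz_{s+2},\dots,dz_N, dw_1,\dots,dw_t\}$, so its base change $\Omega^1_A(\log\pi)\otimes_A B$ — which is the stalk of $i^*\Omega^1_X(\log E)$ at $y$ — is $B$-free on the reductions of these. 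Then $\ov d$ identifies $\sI/\sI^2$ with the rank one free summand spanned by the class of $df$, and the quotient is $B$-free on the remaining generators; since $\{\ov x_1,\dots,\ov x_s,\ov z_{s+2},\dots,\ov z_N,\ov w_1,\dots,\ov w_t\}$ is a differential basis of $B$ and $E' = V(\ov x_1\cdots\ov x_s)$ near $y$, \lemref{lem:LWC-2} again identifies that quotient with $\Omega^1_B(\log\ov\pi)$, i.e. the stalk of $\Omega^1_Y(\log E')$ at $y$. Left exactness follows.

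\emph{Passage to general $q$.} Write $M = i^*\Omega^1_X(\log E)$, $M'' = \Omega^1_Y(\log E')$ and $L = \sI/\sI^2$; the latter is a line bundle on $Y$ because $Y$ is a Cartier divisor in $X$. From the short exact sequence $0 \to L \to M \to M'' \to 0$ of locally free $\sO_Y$-modules with $L$ invertible, the standard filtration of $\Lambda^q M$ with graded pieces $\Lambda^i L \otimes_{\sO_Y} \Lambda^{q-i} M''$ degenerates (as $\Lambda^i L = 0$ for $i\ge 2$), yielding an exact sequence
\[
0 \to L\otimes_{\sO_Y}\Lambda^{q-1}M'' \to \Lambda^q M \to \Lambda^q M'' \to 0
\]
whose left arrow sends $\ell\otimes\omega$ to $\ell\wedge\tilde\omega$ for any lift $\tilde\omega$ of $\omega$ to $\Lambda^{q-1}M$ (independent of the lift since $\Lambda^2 L = 0$). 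Finally I would identify $\Lambda^q M = i^*\Omega^q_X(\log E)$ and $\Lambda^q M'' = \Omega^q_Y(\log E')$, using that $\Lambda^q$ commutes with the pullback of locally free sheaves and that $\Lambda^q\Omega^1_X(\log E) = \Omega^q_X(\log E)$, $\Lambda^q\Omega^1_Y(\log E') = \Omega^q_Y(\log E')$ by \lemref{lem:LWC-2}; under the identification of $L$ with $\ov d(\sI/\sI^2) \subset M$ from the first step, the left arrow becomes precisely the map $\ov d$ of the statement, which gives \eqref{eqn:s.e.s-2}.

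The main obstacle is the left exactness in the $q=1$ step, i.e. producing a differential basis of $\sO_{X,y}$ adapted simultaneously to the SNCD $E$ and to the hypersurface $Y$; this is exactly where the transversality encoded in the hypothesis that $E'$ is an SNCD enters, together with \lemref{lem:LWC-3} and \lemref{lem:LWC-2}. The remaining ingredients — the identification of logarithmic differentials with exterior powers and the degeneration of the exterior-power filtration of a short exact sequence with invertible kernel — are formal.
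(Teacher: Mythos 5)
Your proposal is correct and follows essentially the same route as the paper: establish the $q=1$ log conormal sequence and then deduce the general case by taking exterior powers of a short exact sequence of locally free sheaves with invertible kernel (the paper invokes Hartshorne, Ex.~II.5.16(d), for exactly the filtration argument you spell out). The only divergence is in the $q=1$ step, where the paper gets right-exactness from Ogus's conormal sequence for strict closed immersions of log schemes and injectivity of $\ov{d}$ by a rank count (using $\rank\,\Omega^1_X(\log E)=\rank\,\Omega^1_X$ and local freeness), whereas you verify exactness stalkwise via an adapted differential basis coming from transversality together with \lemref{lem:LWC-3} and \lemref{lem:LWC-2}; this is a legitimate, slightly more hands-on substitute for the same content.
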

\begin{proof}

Note that $i:Y_\log \to X_\log$ is a strict closed immersion of log schemes. This means, $Y \inj X$ is a closed immersion and $i^*\sM_X \xrightarrow{\cong}\sM'_Y$ (cf. \cite[p.275]{Ogus}).  Hence by \cite[Prop~IV.2.3.2]{Ogus}, we have an exact sequence
\begin{equation}\label{eqn:s.e.s-2.5}
    \sI/\sI^2 \xrightarrow{\ov{d}} i^*\Omega^1_{X}(\log E) \to \Omega^1_Y(\log E') \to 0.
\end{equation}
 Since $X$ is $F$-finite and smooth over $k$, we have an exact sequence of finite type locally free sheaves :
$$0 \to f^*\Omega^1_k \to \Omega^1_X \to \Omega^1_{X/k} \to 0,$$
by \cite[Thm~25.1]{Matsumura}, where $f:X \to \Spec k$. So we conclude $\rank \ \Omega^1_X(\log E)=\rank \  \Omega^1_X= N+ \rank \ \Omega^1_k$, where the first equality follows from the description of $\Omega^1_A(\log \pi)$ in \lemref{lem:LWC-2}.

Similarly, we also get $\rank \ \Omega^1_Y(\log E')= N-1 +\rank \ \Omega^1_k$.  Since $\sI/\sI^2$ is locally free of rank one, by counting rank we get the map $\ov d$ in \eqref{eqn:s.e.s-2.5} is injective. Now the lemma follows by taking higher exterior power in \eqref{eqn:s.e.s-2.5} and \cite[Ex~II 5.16(d)]{Hartshorne-AG}. 

\end{proof}
We refer the reader to \defref{defn:Log-fil-3} for the definition of $\Fil_D \Omega^q_U$.
\begin{cor}\label{cor:fil D-Y}
    For any $D \in \Div_E(X)$, we have the following exact sequences.
    \begin{equation}\label{eqn:fil D-Y}
        0 \to \Fil_{D-Y} \Omega^q_V \to \Fil_D \Omega^q_U \to i_* \Fil_{D'} \Omega^q_{U'} \to 0,
    \end{equation}
    \begin{equation}\label{eqn:no-log-1}
        0 \to \Fil_{-Y} \Omega^q_{V_1} \to \Omega^q_X \to i_*  \Omega^q_{Y} \to 0, 
    \end{equation}
    where $V_1:= X-Y$ and $D' := D \times_X Y$.
\end{cor}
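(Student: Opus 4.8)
The plan is to deduce both exact sequences from \lemref{lem:s.e.s-2} by twisting with the appropriate invertible sheaf and then passing to the filtered de Rham--Witt level using \lemref{lem:Log-fil-4}(2). First I would treat \eqref{eqn:no-log-1}, which is the simpler (non-logarithmic) case: taking $E = \emptyset$ in \lemref{lem:s.e.s-2} gives the standard conormal exact sequence $0 \to \sI/\sI^2 \otimes_{\sO_Y} \Omega^{q-1}_Y \xrightarrow{\ov d} i^*\Omega^q_X \to \Omega^q_Y \to 0$, where $\sI = \sO_X(-Y)$; I should note that the injectivity of $\ov d$ follows by the same rank count as in the proof of \lemref{lem:s.e.s-2} (using that $Y$ is a smooth hypersurface, so $\rank \Omega^1_Y = \rank \Omega^1_X - 1$ and $\sI/\sI^2$ is a line bundle). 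Then I would identify, via \lemref{lem:Log-fil-4}(2) applied with $E = \emptyset$ and $D = -Y$ (a Weil divisor with support disjoint from any SNCD we have fixed on $X$), the sheaf $\Fil_{-Y}\Omega^q_{V_1}$ with $\Omega^q_X(-Y) = \Omega^q_X \otimes_{\sO_X} \sI$. Twisting the conormal sequence by $\sI$ and using $\sI/\sI^2 \otimes \sI^{-1}|_Y \cong \sI/\sI^2$ as $\sO_Y$-modules, I would land exactly on $0 \to \Fil_{-Y}\Omega^q_{V_1} \to \Omega^q_X \to i_*\Omega^q_Y \to 0$, with the last term computed by the projection formula $i_*(\Omega^q_Y) \otimes \sI \cong i_*(\Omega^q_Y \otimes i^*\sI)$ and the observation that the composite $\sI^q \to \Omega^q_X \to i_*\Omega^q_Y$ factors through the restriction (since $i^*\sI$ is the conormal bundle, this matches up the terms correctly after untwisting).

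For \eqref{eqn:fil D-Y} I would run the same argument one notch more carefully, keeping the log structure. Write $D = D_{\un n} = \sum n_i E_i$ and recall $D \times_X Y = D'$ on $Y_\log$; since $Y$ meets $E$ transversally, $E' = Y \times_X E$ is a SNCD on $Y$ and the pullback of $\sO_X(D)$ to $Y$ is $\sO_Y(D')$. By \lemref{lem:Log-fil-4}(2), $\Fil_D\Omega^q_U = \Omega^q_X(\log E)(D)$, $\Fil_{D'}\Omega^q_{U'} = \Omega^q_Y(\log E')(D')$, and (again applying \lemref{lem:Log-fil-4}(2) with the divisor $D - Y$, whose support is $E \cup Y$) $\Fil_{D-Y}\Omega^q_V = \Omega^q_X(\log E)(D)\otimes_{\sO_X}\sI$, where $\sI = \sO_X(-Y)$. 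So I would tensor the exact sequence of \lemref{lem:s.e.s-2} by the invertible sheaf $\sO_X(D)\otimes\sI$; exactness is preserved (all sheaves involved are locally free of finite rank, by \lemref{lem:LWC-2} and \lemref{lem:s.e.s-2}, and tensoring with a line bundle is exact anyway). The left term becomes $\sI/\sI^2 \otimes_{\sO_Y}\Omega^{q-1}_Y(\log E')(D')\otimes i^*(\sI) $; since $\sI/\sI^2 = i^*\sI$, this is $\Omega^{q-1}_Y(\log E')(D') \otimes (i^*\sI)^{\otimes 2}$ as $\sO_Y$-modules --- and by the projection formula the whole left term of the twisted sequence is $i_*$ of this, which under the identification with a sheaf on $X$ is exactly $\Fil_{D-Y}\Omega^q_V$ once one checks the twist bookkeeping: the right way to see it is that $\Omega^q_X(\log E)(D)\otimes\sI$ has a sub-$\sO_X$-module consisting of forms vanishing on $Y$, and $\ov d$ of the conormal identification realizes precisely this submodule, with quotient $i_*\Omega^q_Y(\log E')(D')$. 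The middle term $i^*\Omega^q_X(\log E) \otimes \sO_X(D)\otimes\sI$ is $i_*$ of a sheaf on $Y$, which is the restriction of $\Omega^q_X(\log E)(D)\otimes\sI$; but I actually want the middle term of \eqref{eqn:fil D-Y} to be $\Fil_D\Omega^q_U = \Omega^q_X(\log E)(D)$ itself, so I would instead not twist the middle but rather present \eqref{eqn:fil D-Y} directly as: the subsheaf $\Fil_D\Omega^q_U(-Y) = \Fil_{D-Y}\Omega^q_V$ of $\Fil_D\Omega^q_U$, with quotient the restriction to $Y$, which is $i_*\Fil_{D'}\Omega^q_{U'}$ by the compatibility of the filtered log de Rham complex with the strict closed immersion $i$. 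The cleanest route is: apply \lemref{lem:s.e.s-2} to compute the quotient of $i^*$ (i.e., restriction) of $\Omega^q_X(\log E)$, then twist everything by $\sO_X(D)$, then twist again by $\sI$ only for the kernel/subsheaf identification, using \lemref{lem:Log-fil-4}(2) throughout to translate the twisted log sheaves back into the $\Fil$ notation.

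The main obstacle I anticipate is purely bookkeeping: making sure that the subsheaf of $\Fil_D\Omega^q_U$ cut out by "vanishing along $Y$" really coincides with $\Fil_{D-Y}\Omega^q_V$ under the identification of \lemref{lem:Log-fil-4}(2), and that the quotient is genuinely $\Fil_{D'}\Omega^q_{U'}$ and not, say, $\Omega^q_Y(\log E')(D')$ twisted by some residual conormal factor. The subtlety is that in \lemref{lem:s.e.s-2} the kernel carries an extra $\sI/\sI^2$ factor, which after the global twist by $\sO_X(-Y)$ must reorganize so that the middle term is untwisted; concretely one must verify that the natural map $\Fil_D\Omega^q_U \to i_*i^*\Fil_D\Omega^q_U \to i_*\Fil_{D'}\Omega^q_{U'}$ is surjective with kernel $\Fil_{D-Y}\Omega^q_V$, which is a local statement reducible, via \lemref{lem:LWC-2}, to the explicit basis $\{\dlog x_1,\dots,\dlog x_r, dx_{r+1},\dots,dx_N, dy_1,\dots,dy_s\}$ of $\Omega^1_X(\log E)$ at a closed point together with a local equation for $Y$; there the computation is elementary. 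I would also remark that \eqref{eqn:no-log-1} is the special case $D = 0$, $E = \emptyset$, $D' = 0$ of the argument for \eqref{eqn:fil D-Y} (with $V_1$ in place of $V$), so in the write-up the two can share most of the proof.
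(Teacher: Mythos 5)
Your overall plan (combine \lemref{lem:s.e.s-2} with \lemref{lem:Log-fil-4}(2) and a twist by $\sO_X(D)$) is the right one and is essentially the paper's, but the concrete identifications you write down for the kernel sheaves are wrong, and with them the claimed sequences would not be exact. You identify $\Fil_{-Y}\Omega^q_{V_1}$ with $\Omega^q_X(-Y)$ and $\Fil_{D-Y}\Omega^q_V$ with $\Omega^q_X(\log E)(D)\otimes_{\sO_X}\sI$. This misapplies \lemref{lem:Log-fil-4}(2): in $\Fil_{D-Y}\Omega^q_V$ the open set $V$ is the complement of the SNCD $E+Y$, so the lemma gives $\Fil_{D-Y}\Omega^q_V=\Omega^q_X(\log(E+Y))(D-Y)$, not $\Omega^q_X(\log E)(D-Y)$; locally, if $t$ is an equation of $Y$, this is $\sO_X(D)\otimes\bigl(t\,\Omega^q_X(\log E)+dt\wedge\Omega^{q-1}_X(\log E)\bigr)$, strictly larger than $t\,\Omega^q_X(\log E)(D)$ for $q\ge 1$ (and similarly $\Fil_{-Y}\Omega^q_{V_1}=\Omega^q_X(\log Y)(-Y)\ne\Omega^q_X(-Y)$). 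With your identification the sequence fails at the right-hand end: the quotient of $\Fil_D\Omega^q_U$ by $\Omega^q_X(\log E)(D-Y)$ is $i_*i^*\bigl(\Omega^q_X(\log E)(D)\bigr)$, which surjects onto $i_*\Fil_{D'}\Omega^q_{U'}$ with kernel the conormal piece $i_*\bigl(\sI/\sI^2\otimes_{\sO_Y}\Omega^{q-1}_Y(\log E')(D')\bigr)$ of \lemref{lem:s.e.s-2}. That extra $dt$-piece is exactly what your "twist bookkeeping" tries to absorb and cannot; the forms in $\Fil_D\Omega^q_U$ restricting to zero on $Y$ include $dt\wedge\Omega^{q-1}_X(\log E)(D)$, not only $t\,\Omega^q_X(\log E)(D)$.

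The correct route, which is what the paper does, is to read off from \lemref{lem:s.e.s-2} that the kernel of $\Omega^q_X(\log E)\to i_*\Omega^q_Y(\log E')$ equals $\sO_X(-Y)\cdot\Omega^q_X(\log E)+d(\sO_X(-Y))\cdot\Omega^{q-1}_X(\log E)$, identify this with $\Omega^q_X(\log(E+Y))(-Y)=\Fil_{-Y}\Omega^q_V$, and only then tensor with the invertible sheaf $\sO_X(D)$ and invoke \lemref{lem:Log-fil-4}(2) to translate all three terms into the $\Fil$ notation; \eqref{eqn:no-log-1} is then the case of the trivial log structure ($E$ the empty divisor). Your closing fallback, a local verification using the basis of \lemref{lem:LWC-2} and a local equation of $Y$, would indeed rescue the argument, but only after correcting the identification of the kernel; as written, the displayed identifications are false statements about the sheaves $\Fil_{D-Y}\Omega^q_V$ and $\Fil_{-Y}\Omega^q_{V_1}$.
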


\begin{proof}
    Note that, the second exact sequence follows from (\ref{eqn:fil D-Y}), considering trivial log structure on $X$ and $Y$ (i.e. putting $E$ to be zero divisor in (\ref{eqn:fil D-Y})).

    For the first one, note that (\ref{eqn:s.e.s-2}) gives the following exact sequence of $\sO_X$ modules.
     \begin{equation}\label{eqn:s.e.s-3}
        0 \to \sO_X(-Y) \cdot \Omega^q_X(\log E) + d(\sO_X(-Y))\cdot\Omega^{q-1}_X(\log E) \to \Omega^q_X(\log E) \to i_*\Omega^q_Y(\log E') \to 0.
\end{equation}
Now note,
\begin{equation}\label{eqn:fil-Y}
    \begin{array}{ll}
        \sO_X(-Y) \cdot \Omega^q_X(\log E) + d(\sO_X(-Y))\cdot\Omega^{q-1}_X(\log E)  & = \Omega^q_X(\log (E+Y)) (-Y)\\
         & = \Fil_{-Y} \Omega^q_V.
    \end{array}
\end{equation}
 Thus, we obtain the following exact sequence.   
    \begin{equation}
        0 \to \Fil_{-Y} \Omega^q_V \to  \Omega^q_X(\log E) \to i_*  \Omega^q_Y(\log E') \to 0.
    \end{equation}
    Now tensoring by $\sO_X(D)$ over $\sO_X$ and using \lemref{lem:Log-fil-4}(2), we get the result.
\end{proof}

\begin{cor}\label{cor:fil-D-Y-2}
    For any $D\in \Div_E(X)$ and $q \ge 0$, we have the following exact sequences.
    \begin{equation}\label{eqn:fil-D-Y-2}
        0 \to Z_1\Fil_{D-Y}\Omega^q_{V} \to Z_1\Fil_{D}\Omega^q_{U} \xrightarrow{\theta_{D}} i_*Z_1\Fil_{D'}\Omega^q_{U'} \to 0.
    \end{equation}
    \begin{equation}\label{eqn:no-log-3}
        0\to Z_1\Fil_{-Y}\Omega^q_{V_1} \to Z_1\Omega^q_X \xrightarrow{\theta} i_*Z_1\Omega^q_{Y} \to 0.
    \end{equation}
\end{cor}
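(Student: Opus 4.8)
The plan is to derive both short exact sequences from their untwisted degree-$q$ analogues in \corref{cor:fil D-Y}, i.e.\ from \eqref{eqn:fil D-Y} and \eqref{eqn:no-log-1}, by passing to $d$-closed sections, the only nonformal ingredient being the Cartier isomorphism. The two cases are formally identical --- the second being the first with $E=0$ and $D=0$, using \propref{prop:cartier-1} and \corref{cor:cartier-1} in place of the filtered Cartier theory --- so I describe the first. The restriction maps in \corref{cor:fil D-Y} commute with $d$ (since $i\colon Y_\log\to X_\log$ is a strict closed immersion, as noted in the proof of \lemref{lem:s.e.s-2}), and $i_{*}$ is exact; applying the left-exact functor ``$\ker d$'' to the morphism of the degree-$q$ sequence \eqref{eqn:fil D-Y} into its degree-$(q+1)$ version therefore yields at once the left-exact sequence
\[
0\to Z_1\Fil_{D-Y}\Omega^q_{V}\to Z_1\Fil_{D}\Omega^q_{U}\xrightarrow{\theta_{D}} i_{*}Z_1\Fil_{D'}\Omega^q_{U'}.
\]
Here $\ker(\theta_D)=Z_1\Fil_{D}\Omega^q_{U}\cap\Fil_{D-Y}\Omega^q_{V}$, and by \lemref{lem:Log-fil-4}(4) the subsheaf $\Fil_{D-Y}\Omega^q_{V}\subset\Fil_{D}\Omega^q_{U}$ is $d$-stable, so this equals $\Fil_{D-Y}\Omega^q_{V}\cap\ker d=Z_1\Fil_{D-Y}\Omega^q_{V}$, which identifies the left term. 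Hence the whole content is the surjectivity of $\theta_{D}$.

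For that I would run a snake-lemma comparison of the tautological short exact sequence $0\to B_1\Fil_{D}\Omega^q_{U}\to Z_1\Fil_{D}\Omega^q_{U}\to Z_1\Fil_{D}\Omega^q_{U}/B_1\Fil_{D}\Omega^q_{U}\to 0$ with the $i_{*}$ of its analogue on $Y$, the vertical arrows being restriction. It then suffices to check that restriction is surjective on the $B_1$-terms and on the Cartier quotients. The first is clear: $B_1\Fil_{D}\Omega^q_{U}=d(\Fil_{D}\Omega^{q-1}_{U})$, the map $\Fil_{D}\Omega^{q-1}_{U}\twoheadrightarrow i_{*}\Fil_{D'}\Omega^{q-1}_{U'}$ is \eqref{eqn:fil D-Y} in degree $q-1$, and $d$ commutes with restriction. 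For the second I would invoke the filtered Cartier isomorphism $\ov{C}\colon Z_1\Fil_{D}\Omega^q_{U}/B_1\Fil_{D}\Omega^q_{U}\xrightarrow{\cong}\Fil_{D/p}\Omega^q_{U}$ --- the $m=1$ case of \thmref{thm:Global-version}(4), equivalently \lemref{lem:Complete-4}(2) --- which, being the restriction of the usual Cartier operator on $j_{*}\Omega^q_{U}$ (cf.\ \lemref{lem:Complete-6}), is compatible with $i^{*}$; since $(D/p)'=(D/p)\times_{X}Y=D'/p$, this reduces surjectivity of the Cartier-quotient restriction to that of $\Fil_{D/p}\Omega^q_{U}\to i_{*}\Fil_{D'/p}\Omega^q_{U'}$, which is \eqref{eqn:fil D-Y} applied to $D/p$. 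The snake lemma then forces $\coker(\theta_{D})=0$, so the first sequence is exact; the second follows verbatim with $E=0$, $D=0$.

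The step I expect to cost the most care is the compatibility of $\ov{C}$ with restriction to $Y$, both before and after the twist by $D$. For the untwisted log Cartier this is immediate from the formula $C^{-1}(a\,\dlog b_1\wedge\cdots\wedge\dlog b_q)=a^{p}\,\dlog b_1\wedge\cdots\wedge\dlog b_q$ of \lemref{lem:Log-Cartier}, which is visibly natural in the log ring; but for the twisted sheaves one must trace through Lemmas~\ref{lem:Complete-6} and \ref{lem:Complete-4} (Chapter~\ref{chap:F_p-2}) to confirm that $\ov{C}$ is literally the restriction of the global Cartier operator on $j_{*}\Omega^q_{U}$ to $Z_1\Fil_{D}\Omega^q_{U}$, after which naturality under $i^{*}$ is automatic. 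Everything else --- exactness of $i_{*}$, the $d$-stability of the twisted subsheaves, the identity $(D/p)'=D'/p$, and the snake-lemma bookkeeping --- is routine.
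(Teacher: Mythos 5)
Your proof is correct, and the left-exactness half (applying $\ker d$ to the morphism of the sequences of \corref{cor:fil D-Y} and identifying the kernel of $\theta_D$ via the $d$-stability from \lemref{lem:Log-fil-4}(4)) is exactly what the paper does. For the surjectivity of $\theta_D$, however, you take a genuinely different route: you stay at level $m=1$ and run a $B_1$/Cartier-quotient d\'evissage, using the filtered Cartier isomorphism (\thmref{thm:Global-version}(4), i.e.\ \lemref{lem:Complete-4}(2) with $m=1$) to reduce surjectivity on $Z_1$ to surjectivity of \eqref{eqn:fil D-Y} in degrees $q-1$ and, after the identification $(D/p)'=D'/p$, for the divisor $D/p$. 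The paper instead lifts one Witt level: it uses $Z_1\Fil_{D}\Omega^q_{U}=F(\Fil_{D}W_2\Omega^q_{U})$ (\lemref{lem:Complete-4}(1)) and checks directly from \defref{defn:Log-fil-3} that $\theta_D$ is surjective on $\Fil_D W_2\Omega^q$, surjectivity on $Z_1$ then following by exactness of $i_*$ and a two-line diagram. The trade-off is precisely the point you flag: your argument needs the compatibility of the twisted Cartier operator $\ov{C}$ with restriction along $Y$, which is not recorded anywhere in the paper; it is true (the filtered $C$ of \lemref{lem:Complete-6} is the restriction of the Cartier operator on rational forms, and $C$ is natural under pullback along the regular closed immersion because the defining formulas for $C^{-1}$ in \propref{prop:cartier-1} and \lemref{lem:Log-Cartier} are visibly natural), but it is an extra verification, whereas the paper's $W_2$-lift avoids any naturality statement for $C$ and only uses the already-established structure of the filtered de Rham--Witt sheaves. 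Your approach has the mild advantage of never leaving level $m=1$ and making the role of the Cartier isomorphism explicit; the paper's is shorter given the machinery it has set up.
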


\begin{proof}
    By \corref{cor:fil D-Y}, we have the following diagram.
    \begin{equation}
        \xymatrix{
        0 \ar[r]& \Fil_{D-Y}\Omega^q_{V} \ar[r] \ar[d]_-{d}& \Fil_{D}\Omega^q_{U} \ar[r]^-{\theta_D} \ar[d]_-{d} & i_* \Fil_{D'}\Omega^q_{U'} \ar[r] \ar[d]_-{i_*d} & 0 \\
        0 \ar[r]& \Fil_{D-Y}\Omega^{q+1}_{V} \ar[r] & \Fil_{D}\Omega^{q+1}_{U} \ar[r]^-{\theta_D} & i_* \Fil_{D'}\Omega^{q+1}_{U'} \ar[r] & 0.
        }
    \end{equation}
Since $Z_1\Fil_D\Omega^q_U= \Ker\ (d:\Fil_D\Omega^q_U \to \Fil_D\Omega^{q+1}_U)$, this gives the following left exact sequence.
$$0 \to Z_1\Fil_{D-Y}\Omega^q_{V} \to Z_1\Fil_{D}\Omega^q_{U} \xrightarrow{\theta_{D}} i_*Z_1\Fil_{D'}\Omega^q_{U'}, $$
where, we note that $i_*Z_1\Fil_{D'}\Omega^q_{U'} = ker \ i_*d$, because $i_*$ is (left) exact. To show the right exactness of (\ref{eqn:fil-D-Y-2}), we consider the following diagram.
$$\xymatrix{
\Fil_{D}W_{2}\Omega^q_{U} \ar[r]^-{\theta_D} \ar[d]_-{F} & i_* \Fil_{D'}W_{2}\Omega^q_{U'} \ar[d]_-{i_*F}  \\
 Z_1\Fil_{D}\Omega^q_{U} \ar[r]^-{\theta_{D}} & i_*Z_1\Fil_{D'}\Omega^q_{U'}. 
}$$
Note that $i_*F$ are surjective by \lemref{lem:Complete-4}(1) and exactness of $i_*$. Also, the top $\theta_D$ is surjective because locally the map is $\Fil_{\n}W_2\Omega^q_{(R_\pi)}\surj \Fil_{\n}W_2\Omega^q_{((R/t)_{\ov \pi})}$, where $R$ is a RLR and $t,\pi,\ov \pi$ and $\un n$ corresponds to $Y,E$, $E'$ and $D$, respectively. This map is surjective by definition of $\Fil_{\n}W_2\Omega^q_{(R_\pi)}$ (cf. \defref{defn:Log-fil-3}) and noting that $\Fil_{\n}W_2(R_\pi)\surj \Fil_{\n}W_2((R/t)_{\ov \pi})$. Hence, we conclude the bottom $\theta_D$ is also surjective.

The proof of (\ref{eqn:no-log-3}) is similar and hence we skip the proof.

\end{proof}

Now recall the complexes 
$$W_m\sF^{q,\bullet}_{D,U} =\left[Z_1\Fil_{D} W_m\Omega^q_U \xrightarrow{1 -C}
\Fil_{D}W_m\Omega^q_U\right], \\ W_m\sF^{q,\bullet}_X = \left[Z_1W_m\Omega^q_{X} \xrightarrow{1 -C}
W_m\Omega^q_{X}\right],$$
where $D \ge 0 $ in $\Div_E(X)$. Here, we use the notation $W_m\sF^{q,\bullet}_{D,U}$ instead of $W_m\sF^{q,\bullet}_{D}$, to keep track of the log structure corresponding to the open subscheme $U$. Let
$$ \sN^{q,\bullet}_{D,U}:= \left[Z_1\Fil_{-D}\Omega^q_U \xrightarrow{1-C}\Fil_{(-D)/p}\Omega^q_U \right], \ \ \text{ for }D\ge E. $$
By \lemref{lem:Complex-6}(6), we know $\sN^{q,\bullet}_{D,U}$ is quasi-isomorphic to $\Omega^q_{X|D,\log}$.
 
Then  Corollaries \ref{cor:fil D-Y} and  \ref{cor:fil-D-Y-2} give :

\begin{cor}\label{cor:ker-lef}
    We have the following short exact sequences of complexes in $\sD(X_\et)$.
    \begin{equation}
        0 \to W_1\sF^{q,\bullet}_{D-Y,V} \to W_1\sF^{q,\bullet}_{D,U} \to i_*W_1\sF^{q,\bullet}_{D',U'} \to 0
    \end{equation}
    \begin{equation}
      0 \to W_1\sF^{q,\bullet}_{-Y,V_1} \to W_1\sF^{q,\bullet}_X  \to i_*W_1\sF^{q,\bullet}_Y \to 0.
    \end{equation}
    \begin{equation}
        0 \to \sN^{q,\bullet}_{D-Y,V} \to \sN^{q,\bullet}_{D,U} \to i_*\sN^{q,\bullet}_{D',U'} \to 0.
    \end{equation}
    In particular, we have exact triangles in $\sD(X_\et)$.
    \begin{equation}
       W_1\sF^{q,\bullet}_{-Y,V_1} \to \Omega^q_{X,\log} \to i_*\Omega^q_{Y,\log} ,
    \end{equation}
    \begin{equation}
        \sN^{q,\bullet}_{D-Y,V} \to \Omega^q_{X|D,\log} \to i_*\Omega^q_{Y|D',\log}.
    \end{equation}
\end{cor}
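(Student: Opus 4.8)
The plan is to obtain the three short exact sequences by assembling, degree by degree, the term-wise exact sequences of Corollaries~\ref{cor:fil D-Y} and~\ref{cor:fil-D-Y-2}. First recall that for $m=1$ every complex appearing in the statement is a two-term complex of the form $\bigl[\,\cdot\xrightarrow{1-C}\cdot\,\bigr]$ whose degree-$0$ term is a $Z_1\Fil$-sheaf and whose degree-$1$ term is a $\Fil$-sheaf: explicitly $W_1\sF^{q,\bullet}_{D,U}=[Z_1\Fil_D\Omega^q_U\xrightarrow{1-C}\Fil_D\Omega^q_U]$, $W_1\sF^{q,\bullet}_X=[Z_1\Omega^q_X\xrightarrow{1-C}\Omega^q_X]$, $\sN^{q,\bullet}_{D,U}=[Z_1\Fil_{-D}\Omega^q_U\xrightarrow{1-C}\Fil_{(-D)/p}\Omega^q_U]$, and likewise after base change along $i\colon Y\hookrightarrow X$. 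So for each of the three cases I would produce a commutative diagram of short exact sequences of sheaves on $X$ of the shape
\[
\begin{array}{ccccccccc}
0 & \to & Z_1\Fil_{(\ast)}\Omega^q_V & \to & Z_1\Fil_{(\ast)}\Omega^q_U & \xrightarrow{\theta} & i_*Z_1\Fil_{(\ast)}\Omega^q_{U'} & \to & 0\\
 & & \downarrow & & \downarrow & & \downarrow & & \\
0 & \to & \Fil_{(\ast)}\Omega^q_V & \to & \Fil_{(\ast)}\Omega^q_U & \xrightarrow{\theta} & i_*\Fil_{(\ast)}\Omega^q_{U'} & \to & 0,
\end{array}
\]
all vertical arrows being $1-C$, where the top row is the relevant instance of~\eqref{eqn:fil-D-Y-2} or~\eqref{eqn:no-log-3} and the bottom row the correspondingly indexed instance of~\eqref{eqn:fil D-Y} or~\eqref{eqn:no-log-1}. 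For the two $W_1\sF$-cases the two rows come from~\eqref{eqn:fil-D-Y-2} and~\eqref{eqn:fil D-Y} with the divisor $D$, resp. from~\eqref{eqn:no-log-3} and~\eqref{eqn:no-log-1} with the trivial log structure; for the $\sN$-case the top row is~\eqref{eqn:fil-D-Y-2} with $-D$ and the bottom is~\eqref{eqn:fil D-Y} with $(-D)/p$, and the two fit because $(-D)/p-Y=(-D-Y)/p$ (as $\lfloor -1/p\rfloor=-1$). Since a term-wise short exact sequence of two-term complexes is a distinguished triangle in $\sD(X_\et)$, this produces all three sequences at once, the kernel complexes being exactly $W_1\sF^{q,\bullet}_{D-Y,V}$, $W_1\sF^{q,\bullet}_{-Y,V_1}$ and $\sN^{q,\bullet}_{D-Y,V}$.

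The one point genuinely needing verification is that the two rows of the diagram are linked by the vertical maps $1-C$, i.e. that the restriction morphisms $\theta$ commute both with the canonical inclusions $Z_1\Fil_{(\ast)}\Omega^q\hookrightarrow\Fil_{(\ast)}\Omega^q$ and with the (higher) Cartier operator $C$. The first is clear, since $\theta$ is induced by $i^{-1}\sO_X\to\sO_Y$ and commutes with $d$. For the second I would use that the filtered Cartier operator is by construction the restriction to $Z_1\Fil$ of the ambient Cartier operator on $Z_1\Omega^q$ (Lemma~\ref{lem:Complete-6} and Corollary~\ref{cor:Complete-2}), and that the classical Cartier operator is functorial for arbitrary morphisms; equivalently, $C$ is pinned down by $R=C\circ F$ on the de Rham--Witt complex, and $F$, $R$ and restriction along the strict closed immersion $i$ all commute. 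The same commutativity shows $1-C$ really lands in the kernel $\Fil$-sheaf (using $\Fil_{(\ast)/p}\Omega^q\subseteq\Fil_{(\ast)}\Omega^q$ for the relevant indices), so the kernel two-term complexes are genuine complexes with the indices named above.

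Finally I would deduce the two displayed distinguished triangles from the second and third short exact sequences by passing to cohomology sheaves. The $p$--$R$ sequence of Proposition~\ref{prop:usual}(3) with $m=1$ gives a quasi-isomorphism $W_1\sF^{q,\bullet}_{(\cdot)}\xrightarrow{\ \sim\ }\Omega^q_{(\cdot),\log}$, and $i_*$ is exact ($i$ being a closed immersion), so $i_*W_1\sF^{q,\bullet}_Y\simeq i_*\Omega^q_{Y,\log}$; substituting into the second sequence gives $W_1\sF^{q,\bullet}_{-Y,V_1}\to\Omega^q_{X,\log}\to i_*\Omega^q_{Y,\log}$. Likewise Lemma~\ref{lem:Complex-6}(6) gives $\sN^{q,\bullet}_{D,U}\simeq\Omega^q_{X|D,\log}$ and $i_*\sN^{q,\bullet}_{D',U'}\simeq i_*\Omega^q_{Y|D',\log}$, and substituting into the third sequence gives $\sN^{q,\bullet}_{D-Y,V}\to\Omega^q_{X|D,\log}\to i_*\Omega^q_{Y|D',\log}$.

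I do not expect a serious obstacle: the work is bookkeeping — keeping the filtration indices consistent across the two rows (the identities $(-D)/p-Y=(-D-Y)/p$, $(-Y)/p=-Y$ and $D/p\le D$ needed so that the kernel complexes are genuine complexes with the indicated indices), checking the commutativity of the squares separately in the logarithmic and non-logarithmic cases, and the harmless passage from Zariski to \'etale sheaves, all sheaves in sight being coherent-type or of the form $\Omega^q_{\log}$.
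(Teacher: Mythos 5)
Your proposal is correct and follows essentially the same route as the paper: the corollary is obtained by assembling the term-wise exact sequences of Corollaries~\ref{cor:fil D-Y} and~\ref{cor:fil-D-Y-2} into short exact sequences of the two-term $1-C$ complexes (the functoriality of the Cartier operator and the index identity $(-D)/p-Y=(-D-Y)/p$ being exactly the bookkeeping the paper leaves implicit), and the triangles then follow from the quasi-isomorphisms $W_1\sF^{q,\bullet}\simeq\Omega^q_{(-),\log}$ and $\sN^{q,\bullet}_{D,U}\simeq\Omega^q_{X|D,\log}$ of Proposition~\ref{prop:usual}(3) and Lemma~\ref{lem:Complex-6}(6) together with exactness of $i_*$.
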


\section{Weak Lefschetz for \texorpdfstring{$p$}{p}-adic {\'e}tale cohomology}

Recall that we have morphisms of sheaves
$$W_m\sF^{q,\bullet}_{D,U} \to i_*W_m\sF^{q,\bullet}_{D',U'}, \ W_m\Omega^q_{X|D,\log} \to i_*W_m\Omega^q_{Y|D',\log}, \ 
W_m\Omega^q_{X,\log} \to i_*W_m\Omega^q_{Y,\log}.$$ 
These induce the following group homomorphisms.
$$\H^i_\et(X,W_m\sF^{q,\bullet}_{D,U}) \to \H^i_\et(X,i_*W_m\sF^{q,\bullet}_{D',U'})=^*\H^i_\et(Y,W_m\sF^{q,\bullet}_{D',U'})$$
$$H^i_\et(X,W_m\Omega^q_{X|D,\log}) \to H^i_\et(X, i_*W_m\Omega^q_{Y|D',\log})=^*H^i_\et(Y,W_m\Omega^q_{Y|D',\log}),$$
$$H^i_\et(X,W_m\Omega^q_{X,\log}) \to H^i_\et(X, i_*W_m\Omega^q_{Y,\log})=^*H^i_\et(Y,W_m\Omega^q_{Y,\log}),$$
where the equalities "$=^*$" hold because of exactness of $i_*$.

\begin{lem}\label{lem:ample}
    Fix a divisor $D \in \Div_E(X)$. Then there exists a smooth hypersurface section  $Y$ such that $Y$ intersects $E$ transversally. Moreover, for all $n \ge 1$ and
    \begin{enumerate}
        \item  for all $i, q\ge 0$ such that $i+q \le N-1$, we have
        \begin{enumerate}
            \item \ $ H^{i}(X,\Omega^q_X(\log E)(D-nY))=0, $
            \item \ $H^{i}(X,\Omega^q_X(-nY))=0;$
        \end{enumerate} 
        \item  for all $i, q\ge 0$ such that $i+q \le N-2$, we have
        \begin{enumerate}
            \item \ $ H^i(X,i_*\Omega^{q}_Y(\log \ E')(D-nY) )= 0 $
            \item \ $H^i(X,i_*\Omega^{q}_Y (-nY))=0$;
        \end{enumerate}
    \end{enumerate}
   where these cohomologies are taken with respect to Zariski topology (eqv. {\'e}tale topology).
\end{lem}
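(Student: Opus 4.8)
The statement is a standard Serre-vanishing argument combined with a transversality (Bertini) argument, but carried out for the \emph{logarithmic} sheaves $\Omega^q_X(\log E)$ and their twists by $\sO_X(D)$. The first thing I would do is choose $Y$. Since $E = \bigcup_i E_i$ is a simple normal crossing divisor and $k$ is $F$-finite (in particular the ground field may be imperfect, so I must be slightly careful, but one can base-change to $\ov{k}$ for the transversality statement or invoke a version of Bertini valid over infinite fields, and pass to a sufficiently ample linear system so that a general member works; over a finite field one passes to a power of $\sO_X(1)$ and uses Poonen-type Bertini, or simply notes that the statement ``there exists $Y$'' only needs one good member, which after a finite field extension descends because smoothness and transversality are open conditions detectable on a finite-type scheme). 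Concretely: pick a very ample $\sO_X(1)$ from the fixed embedding $X\inj\P^L_k$, replace it by $\sO_X(e)$ for $e\gg 0$, and take $Y\in|\sO_X(e)|$ general so that $Y$ is smooth and $Y\cap E_{i_1}\cap\cdots\cap E_{i_s}$ is smooth of the expected codimension for every subset; this is exactly the transversality hypothesis ``$E' = Y\times_X E$ is an SNCD on $Y$'' used elsewhere in the chapter. By further enlarging $e$ I can simultaneously arrange all the required cohomological vanishings below.

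\textbf{Step (1).} For (1)(a) and (1)(b): by \lemref{lem:LWC-2}, $\Omega^q_X(\log E) = \Wedge^q_{\sO_X}\Omega^1_X(\log E)$ is a locally free $\sO_X$-module of finite rank, and likewise $\Omega^q_X$ is locally free. Serre's vanishing theorem says that for any coherent sheaf $\sF$ on the projective $k$-scheme $X$ there is $n_0$ with $H^i(X,\sF(nH))=0$ for all $i>0$ and $n\ge n_0$, where $H$ is the hyperplane class of the chosen embedding and $\sF(nH)$ means $\sF\otimes\sO_X(n)$. Applying this to the finitely many sheaves $\Omega^q_X(\log E)(D)$ and $\Omega^q_X$ for $0\le q\le N$ and taking the maximum of the resulting thresholds, I get a single $e$ (hence a single $Y\in|\sO_X(e)|$, which I may also take general for transversality) so that $H^i(X,\Omega^q_X(\log E)(D-nY))=H^i\bigl(X,\Omega^q_X(\log E)(D)\otimes\sO_X(-ne)\bigr)$ vanishes for all $i>0$ and all $n\ge1$; note $\sO_X(-nY)=\sO_X(-ne)$ is the $ne$-th power of the ample $\sO_X(1)$, so this is literally a Serre-vanishing statement with the twist going the ``wrong'' way being fine because we are twisting by a \emph{negative} power of an ample line bundle and taking $i>0$ — equivalently, by Serre duality, $H^i(X,\sF(-nH)) = H^{N-i}(X,\sF^\vee\otimes\omega_X(nH))^\vee$, which vanishes for $N-i<N$, i.e. $i>0$, once $n\gg0$. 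The restriction $i+q\le N-1$ in the statement only needs $i\ge1$ for the argument (the $i=0$ case is automatic from the support/twist structure or is simply not asserted since $i+q\le N-1$ with $i=0$ is allowed but then one needs $q\le N-1$ and $H^0$ of a negative twist of a globally generated-after-twist sheaf — actually $H^0(X,\sF(-nY))=0$ for $n\gg0$ for any coherent $\sF$ on a projective variety, which is elementary), so all cases follow.

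\textbf{Step (2).} For (2)(a) and (2)(b): here the sheaves live on $Y$ but are pushed forward to $X$ via the finite (closed) immersion $i$, so $H^i(X, i_*\sG) = H^i(Y,\sG)$ for any coherent $\sG$ on $Y$. Thus I must show $H^i(Y,\Omega^q_Y(\log E')(D'-nY|_Y))=0$ and $H^i(Y,\Omega^q_Y(-nY|_Y))=0$ for $i+q\le N-2$, i.e. for $i\le (N-1)-1$ with $\dim Y = N-1$. By \lemref{lem:LWC-2} applied to $Y_\log$ (which is legitimate: $Y$ is regular $F$-finite, $E'$ is an SNCD on $Y$ by the transversality choice), $\Omega^q_Y(\log E')$ is locally free on $Y$; $\sO_Y(-nY|_Y) = \sO_Y(-ne)$ is a negative power of the ample $\sO_Y(1)$. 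So again Serre vanishing / Serre duality on the $(N-1)$-dimensional projective variety $Y$ gives the result once $e$ is large enough — and crucially I should fix $e$ at the very end, after simultaneously imposing the thresholds from Step (1) on $X$ and from Step (2) on $Y$; but the thresholds on $Y$ depend on $Y$, which is circular. The standard way around this (and the main technical point to be careful about) is: first use the exact sequences \eqref{eqn:s.e.s-2}, \eqref{eqn:fil D-Y}, \eqref{eqn:no-log-1} to express the sheaves on $Y$ in terms of restrictions of sheaves on $X$, and combine with the twisted-by-$\sO_X(-nY)$ conormal-type sequences to reduce everything to vanishing of cohomology of fixed coherent sheaves \emph{on $X$} tensored with large negative powers of $\sO_X(1)$; concretely, tensor the sequence $0\to\Omega^q_X(\log E)(-Y)\to\Omega^q_X(\log E)\to i_*\Omega^q_Y(\log E')\to 0$ (a twist of \eqref{eqn:s.e.s-3}) by $\sO_X(D-(n-1)Y)$ and chase the long exact sequence, so that $H^i(X,i_*\Omega^q_Y(\log E')(D'-(n-1)Y))$ is squeezed between $H^i(X,\Omega^q_X(\log E)(D-(n-1)Y))$ and $H^{i+1}(X,\Omega^q_X(\log E)(D-nY))$, both of which vanish by Step (1) in the stated range. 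The numerical bookkeeping — checking that the index shift $i\rightsquigarrow i+1$ and the degree shift $q$ fixed keep us inside ``$i+q\le N-1$'' when the hypothesis is ``$i+q\le N-2$'' — is exactly why the bound for (2) is one smaller than for (1), and verifying it carefully is the one place where a slip is easy. So the clean statement of the proof is: choose $e$ so that Serre vanishing holds for the finitely many sheaves $\Omega^q_X(\log E)(D)$ and $\Omega^q_X$ on $X$ (all $q$), take $Y\in|\sO_X(e)|$ general (for transversality), and then (1) is immediate and (2) follows by the long exact sequences in Corollaries \ref{cor:fil D-Y} just quoted, with no further choice of $Y$ needed.

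\textbf{Expected main obstacle.} The genuinely substantive points are (i) getting a Bertini/transversality statement valid over a possibly imperfect $F$-finite field $k$ — I would handle this by base-changing to $\ov k$, choosing the general $Y$ there, and then either invoking that the good locus in the (finite-type) parameter space is open and nonempty hence has a point over a finite extension, and the SNCD and smoothness conditions descend, or by working with a sufficiently high Veronese re-embedding and a version of Bertini that holds over infinite (or, via Poonen, finite) fields; since the lemma only claims \emph{existence} of one $Y$, this is not serious but must be said; and (ii) the index/degree bookkeeping in Step (2), ensuring the reduction via \eqref{eqn:s.e.s-2}–\eqref{eqn:no-log-1} lands in the range where Step (1) applies, which accounts for the $N-2$ versus $N-1$. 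Everything else is routine Serre vanishing for locally free sheaves, justified by local freeness from \lemref{lem:LWC-2} and \lemref{lem:LWC-3}.
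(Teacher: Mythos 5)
Your overall strategy (Serre vanishing on $X$ for the twisted log sheaves, a Bertini choice of $Y$ meeting $E$ transversally, then deducing the statements on $Y$ from those on $X$ via exact sequences, with the bound dropping from $N-1$ to $N-2$ because of a cohomological degree shift) is the same as the paper's. But the concrete reduction you give in Step (2) rests on a false short exact sequence. The kernel of the restriction map $\Omega^q_X(\log E)\to i_*\Omega^q_Y(\log E')$ is not $\Omega^q_X(\log E)(-Y)$: by \lemref{lem:s.e.s-2} and \eqref{eqn:s.e.s-3}--\eqref{eqn:fil-Y} it is $\sO_X(-Y)\cdot\Omega^q_X(\log E)+d(\sO_X(-Y))\cdot\Omega^{q-1}_X(\log E)=\Omega^q_X(\log(E+Y))(-Y)$, which strictly contains $\Omega^q_X(\log E)(-Y)$ (it contains $df\wedge\Omega^{q-1}_X(\log E)$ for $f$ a local equation of $Y$). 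With the correct kernel, your squeeze requires the vanishing of $H^{i+1}\bigl(X,\Omega^q_X(\log(E+Y))(D-nY)\bigr)$, i.e. log poles along $E+Y$, which Step (1) does not provide; and supplying it via the residue sequence reintroduces $i_*\Omega^{q-1}_Y(\log E')$, i.e. item (2) itself, so the argument as written is circular or simply fails. The paper avoids this by never forming the $\log(E+Y)$ sheaf in the proof of the lemma: it uses the genuinely exact sequence $0\to\Omega^q_X(\log E)(D-(n+1)Y)\to\Omega^q_X(\log E)(D-nY)\to i_*i^*\Omega^q_X(\log E)(D-nY)\to 0$ (restriction of a locally free sheaf) to kill $H^i(Y,i^*\Omega^q_X(\log E)(D-nY))$ for $i+q\le N-2$, and then the twisted conormal sequence \eqref{eqn:s.e.s-5} coming from \lemref{lem:s.e.s-2}, running an induction on $q$ (the $q=0$ case being the $\sO$-module statement); the induction on $q$ is precisely what produces the shift to $i+q\le N-2$. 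You gesture at ``conormal-type sequences'' but your explicit reduction is the wrong one, so this is the missing step.

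Two smaller points. Your Serre-duality justification of Step (1) asserts vanishing of $H^i(X,\sF(-nH))$ ``for all $i>0$'', which is false at $i=N$ (that group is dual to $H^0(X,\sF^\vee\otimes\omega_X(nH))\neq 0$ for $n\gg0$); the correct and needed range is $0\le i\le N-1$, which is what the constraint $i+q\le N-1$ uses and what the paper records in \eqref{eqn:ample}. And your descent argument for Bertini is not right as stated: a good member found after a finite extension of $k$ does not descend to a hypersurface over $k$. The paper instead invokes the Bertini theorem of Ghosh--Krishna (valid over an arbitrary, possibly finite or imperfect, field for sufficiently high degree hypersurface sections), which gives a smooth $Y$ with all $Y\cap E_i$ smooth directly over $k$; citing such a result is the clean fix.
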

\begin{proof}
Let $\sO_X(1)$ be the ample line bundle on $X$ corresponding to the embedding $X \inj \P^L_k$. 
For $D \in \Div_E(X)$, there exists $n_0 >0$ such that 
\begin{equation}\label{eqn:ample}
    H^i(X,\Omega^q_X(\log E)(D-n))=H^i(X,\Omega^q_X(-n))=0, \ \ \text{for all } n \ge n_0, i+q \le N-1.
\end{equation}
Using Serre's vanishing theorem, this is a consequence of the Grothendieck-Serre duality and the fact the $\Omega^q_X(\log E)$, $\Omega^q_X$ are locally free sheaves of finite rank (see \cite[Lem~5.1]{Zero cycle}). We also note that we obtain the identities of \eqref{eqn:ample} for all $i,q$ such that $i \le N-1$. But, for the proof, we only need the condition $i+q \le N-1$. 

By Bertini theorem \cite[Thm~3.9, 4.5]{Bertini}, there exists $H \in H^0(\P^L_k,\sO(n'))$ for all $n' >> n_0$ such that $Y=H \cap X$ and $Y_i= H \cap E_i$ are smooth for all $i \le r$. In particular, $Y$ intersects $E$ transversally. This $Y$ together with \eqref{eqn:ample} prove the item (1) of the lemma.

Now we set $A^q_n:=\Omega^q_Y(\log E')(D-nY)$ and $B^q_n:=\Omega^q_Y(-nY)$, where $n \ge 1, q \ge 0$.

\textbf{Claim:} $H^i(Y,A^q_n)=H^i(Y,B_n^q)=0$ for all $i+q \le N-2$ and $n \ge 1$.

Proof: We show this only for $A^q_n$, because the other case is analogous. We proceed by induction on $q \ge 0$.

If $q=0$ then $A^q_n=\sO_Y(D-nY)$. In this case the claim is clear from the exact sequence
$$0 \to \sO_X(D-(n+1)Y) \to \sO_X(D-nY) \to i_*\sO_Y(D-nY) \to 0,$$
together with \eqref{eqn:ample}.

Now for $q \ge 1$, we consider the following two exact sequences.
\begin{equation}\label{eqn:s.e.s-5}
    0 \to \Omega^{q-1}_Y(\log E')(D-(n+1)Y) \xrightarrow{d} i^*\Omega^q_X(\log E)(D-nY) \to \Omega^q_Y(\log E')(D-nY) \to 0,
\end{equation}
$$ 0 \to \Omega^q_X(\log E)(D-(n+1)Y)\to \Omega^q_X(\log E)(D-nY) \to i_*i^*\Omega^q_X(\log E)(D-nY) \to 0,$$
where, the first one is obtained from \lemref{lem:s.e.s-2}. The second exact sequence together with \eqref{eqn:ample} imply $H^i(Y,i^*\Omega^q_X(\log E)(D-nY))=0$ for all $i+q \le N-2$.  Also, by induction hypothesis, $H^i(Y,A^{q-1}_{n+1})=0$ for all $i+q \le N-1$. Hence, from the top exact sequence of \eqref{eqn:s.e.s-5} we conclude $H^i(Y,A^q_n)=0$ for all $i+q \le N-2$.

\end{proof}

\begin{defn}\label{def:ample}
    A smooth hypersurface section $Y$ is called sufficiently ample with respect to $X$ (resp. sufficiently ample with respect to $(X,D)$) if the condition (1.b) (resp. (1.a) for every $D'$ such that $|D'| \le |D|$) of \lemref{lem:ample} is satisfied. 
\end{defn}

\begin{rem}\label{rem:ample}
    By virtue of \lemref{lem:ample}, a sufficiently ample smooth hypersurface section $Y$ w.r.t $X$ (resp. $(X, D)$) always exists. Moreover, we note in the proof of the lemma that condition (2.a) (resp. (2.b)) follows from (1.a) (resp. (1.b)).

\end{rem}
Now we prove the main theorem of this chapter. The following theorem also extends \cite[Thm~3.4]{Kerz-Saito-ANT}. 
\begin{thm}\label{thm:Lefschetz}
    Let $X$, $E$ be as in the beginning of \S ~\ref{chap:Lef} and $D \ge 0$ in $\Div_E(X)$. Let $Y$ be a smooth hypersurface section of $X$ and $D'=D \times_X Y$. If $Y$ is sufficiently ample w.r.t $X$(resp. $(X,D), (X,-D)$), then for all $m$ the group homomorphisms
    \begin{equation}
        H^i_\et(X,W_m\Omega^q_{X,\log}) \to H^i_\et(Y,W_m\Omega^q_{Y,\log}),
    \end{equation}
   \begin{equation}
     \H^i_\et(X,W_m\sF^{q,\bullet}_{D,U}) \to \H^i_\et(Y,W_m\sF^{q,\bullet}_{D',U'}),
      \end{equation} 
     \begin{equation}
      and  \ \   H^i_\et(X,W_m\Omega^q_{X|D,\log}) \to H^i_\et(Y,W_m\Omega^q_{Y|D',\log}) 
    \end{equation}

    are isomorphisms if $i+q \le N-2$ and injective if $i+q=N-1$.
\end{thm}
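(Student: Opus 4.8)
The plan is to prove all three Lefschetz statements simultaneously by reducing, via the exact triangles of \corref{cor:ker-lef} (and their de Rham--Witt analogues), to vanishing statements for the cohomology of the "kernel complexes" supported on the complement $V = X \setminus (E \cup Y)$ (resp.\ $V_1 = X \setminus Y$). Concretely, for a morphism of sheaves (or complexes) $\sA_X \to i_*\sA_Y$ fitting in a triangle $\sK \to \sA_X \to i_*\sA_Y \xrightarrow{+1}$, the map $H^i_\et(X,\sA_X) \to H^i_\et(Y,\sA_Y)$ is an isomorphism for $i \le N-2$ and injective for $i = N-1$ precisely when $H^i_\et(X,\sK) = 0$ for $i \le N-1$. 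So the whole theorem becomes: show $H^i_\et(X, W_m\sF^{q,\bullet}_{D-Y,V}) = 0$, $H^i_\et(X, W_m\sF^{q,\bullet}_{-Y,V_1})=0$, and $H^i_\et(X,\sN^{q,\bullet}_{D-Y,V})=0$ (the last being quasi-isomorphic to $\Omega^q_{X|D,\log}$-type kernels) for $i+q \le N-1$.

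\textbf{Step 1: the $m=1$ case.} First I would treat $m=1$, where $W_1\sF^{q,\bullet}_{D,U}$ and the related complexes are two-term complexes of \emph{coherent} $\sO_X$-modules (via the Frobenius structure), using \lemref{lem:Perfect-0.5} which says $Z_1\Fil_{\un n}\Omega^q_U$ and the relevant quotients are locally free of finite rank, together with \lemref{lem:Log-fil-4}(2) identifying $\Fil_D\Omega^q_U$ with $\Omega^q_X(\log E)(D)$. The kernel complex $W_1\sF^{q,\bullet}_{D-Y,V}$ has terms $Z_1\Fil_{D-Y}\Omega^q_V$ and $\Fil_{D-Y}\Omega^q_V = \Omega^q_X(\log(E+Y))(D-Y)$. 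Iterating \corref{cor:fil D-Y} / \corref{cor:fil-D-Y-2} along $n$ copies of $Y$, or directly using the ampleness hypotheses of \lemref{lem:ample} and \defref{def:ample}, one sees that a sufficiently ample $Y$ forces $H^i$ of each term of the kernel complex to vanish in the required range $i+q \le N-1$; a hypercohomology spectral sequence then gives the vanishing of $\H^i$ of the complex. The cases of $\Omega^q_{X,\log}$ and $\Omega^q_{X|D,\log}$ are handled the same way using \lemref{lem:Complex-6}(6) and the exact sequences \eqref{eqn:no-log-1}, \eqref{eqn:fil-D-Y-2}. This is mostly a matter of bookkeeping with the ampleness bounds, noting (as in \remref{rem:ample}) that the $Y$-restricted conditions follow from the $X$-conditions.

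\textbf{Step 2: induction on $m$.} For general $m$ I would use the distinguished triangles relating levels $m$, $m-1$ and $1$ --- namely \thmref{thm:Global-version}(11) for $W_m\sF^{q,\bullet}_D$ (and its kernel-complex version on $V$, which one obtains by restricting the triangle, since all the operators $R, V, F$ preserve the $\Fil$-subsheaves by \lemref{lem:Log-fil-4}), \corref{cor:Complex-7} for $W_m\Omega^q_{X|D,\log}$, and \propref{prop:usual}(1) for $W_m\Omega^q_{X,\log}$. Applying the long exact cohomology sequence of $\sK_{m-1}\xrightarrow{V}\sK_m\xrightarrow{R}\sK_1\xrightarrow{+1}$ (where $\sK_m$ denotes the kernel complex on $V$ at level $m$, with the divisor replaced by $D/p^{m-1}$ etc.\ in the appropriate slots) and the vanishing of $\H^i$ at levels $1$ and $m-1$ by induction, one deduces the vanishing at level $m$ in the same range $i+q\le N-1$; then the five-lemma / long-exact-sequence argument promotes this to the isomorphism-and-injectivity statement for the restriction maps. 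One subtlety: the $D/p$ shifts in \thmref{thm:Global-version}(11) change the modulus, so I should phrase the induction with "$Y$ sufficiently ample w.r.t.\ $(X,D)$" meaning ample enough for \emph{all} $D'$ with $|D'|\le |D|$ (which \defref{def:ample} already does), and also note $\lceil\cdot/p\rceil$ only makes the modulus smaller, keeping us inside the good range.

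\textbf{Main obstacle.} The technical heart is Step 1: verifying that "sufficiently ample" in the sense of \defref{def:ample} really yields the vanishing of $\H^i$ of the \emph{complex} $W_1\sF^{q,\bullet}_{D-Y,V}$ (not just its individual terms) in the full range $i+q \le N-1$, uniformly as $D$ gets twisted down by further copies of $Y$ in the inductive step. This requires carefully tracking how the Frobenius-twist affects the $\sO_X$-module structure (the ring isomorphism $\sO_X \xrightarrow{p}\sO_X^p$, as in the proof of \lemref{lem:Perfect-0.5}), and checking that the Cartier operator $1-C$ interacts correctly with the coherent-sheaf vanishing --- i.e.\ that $Z_1\Fil_{D-Y}\Omega^q_V$, which sits in the exact sequence \eqref{eqn:cartier-1.1}-type description, still has vanishing higher cohomology after ample twisting. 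Once the $m=1$ vanishing is secured with the right uniformity, the inductive step and the reduction to kernel complexes are formal.
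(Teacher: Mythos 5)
Your overall architecture is the same as the paper's: reduce to $m=1$ by induction via \propref{prop:usual}(1), \thmref{thm:Global-version}(11) and \corref{cor:Complex-7} (with the moduli $D/p^j$, all between $0$ and $D$), and then reduce via \corref{cor:ker-lef} to the vanishing of $\H^i_\et$ of the kernel complexes on $V$ and $V_1$ in the range $i+q\le N-1$. The gap sits exactly where you place the weight, in Step 1: the two vanishing statements that carry all the content are not established, and the tools you cite cannot establish the first of them. For the degree-one term $\Fil_{\ov{D}'-Y}\Omega^q_V=\Omega^q_X(\log(E+Y))(\ov{D}'-Y)$ (with $0\le \ov{D}'\le D$), the ampleness conditions of \lemref{lem:ample}/\defref{def:ample} only concern sheaves with log poles along $E$ on $X$ and along $E'$ on $Y$, so they say nothing ``directly'' about log poles along $E+Y$; and iterating \corref{cor:fil D-Y} only controls the twists $\ov{D}'-nY$ with $n\ge 2$, because in the relevant instance of that sequence the middle term is $\Omega^q_X(\log E)(\ov{D}')$, whose cohomology is not covered by the hypotheses — but the kernel complex involves the twist exactly $-Y$. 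The paper bridges this with the residue short exact sequence for log forms, $0\to \Omega^q_X(\log E)(\ov{D}'-Y)\to \Omega^q_X(\log(E+Y))(\ov{D}'-Y)\xrightarrow{Res^q} i_*\Omega^{q-1}_Y(\log E')(\ov{D}'-Y)\to 0$ (and its $E=0$ analogue for the $W_m\Omega^q_{X,\log}$ case), which combines condition (1.a) in degree $q$ on $X$ with condition (2.a) in degree $q-1$ on $Y$; the degree drop is what makes the range $i+q\le N-1$ come out. Some such device is indispensable and is absent from your plan.

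Second, the term $Z_1\Fil_{\ov{D}'-Y}\Omega^q_V$, which you explicitly leave as the ``main obstacle,'' is where the paper does real work: it uses the four-term Cartier sequence $0\to Z_1\Omega^{q-1}_X(\log(E+Y))(\ov{D}'-Y)\to \Omega^{q-1}_X(\log(E+Y))(\ov{D}'-Y)\xrightarrow{d} Z_1\Omega^{q}_X(\log(E+Y))(\ov{D}'-Y)\xrightarrow{C}\Omega^{q}_X(\log(E+Y))(\ov{D}'/p-Y)\to 0$ (from the filtered Cartier theory, \lemref{lem:Complete-9}), with induction on $q$ and base case $Z_1\sO_X(\ov{D}'-Y)\cong \sO_X(\ov{D}'/p-Y)$. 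Note that the Cartier operator divides the modulus by $p$, so the vanishing from the previous paragraph is needed uniformly for all $0\le \ov{D}'\le D$ already at level $m=1$; this, and not only the $\lceil\cdot/p\rceil$ shifts in the $m$-induction, is why ``sufficiently ample w.r.t.\ $(X,D)$'' is defined through all $D'$ with $|D'|\le|D|$. Until these two points are supplied, the proposal is incomplete; with them, the remainder of your outline (the $m$-induction and the reduction to kernel complexes) matches the paper's argument.
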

\begin{proof}
    We shall proceed by induction on $m$. By \propref{prop:usual}(1), \thmref{thm:Global-version}(11) and  \corref{cor:Complex-7}, it is enough to show, for all $j \ge 0$, the maps
    
     \begin{equation}\label{eqn:vanishing-0}
        H^i(X,\Omega^q_{X,\log}) \xrightarrow{} H^i(Y,\Omega^q_{Y,\log}),
    \end{equation}
    \begin{equation*}
        \H^i_\et(X,W_1\sF^{q,\bullet}_{D/p^j,U}) \xrightarrow{} \H^i_\et(Y,W_1\sF^{q,\bullet}_{D'/p^j,V}),
    \end{equation*}
    \begin{equation*}
        H^i_\et(X,\Omega^q_{X|\lceil D/p^j \rceil,\log}) \xrightarrow{} H^i_\et(Y,\Omega^q_{Y|\lceil D'/p^j \rceil,\log}) ,
    \end{equation*}
are bijective (resp. injective) if $i+q\le N-2$ (resp. $i+q=N-1$).

Since $D \ge 0$ in each of the above cases, we note that $D/p^j=D/p^{j+1}$ and $\lceil D/p^j \rceil = \lceil D/p^{j+1} \rceil$ for $j >> 0$.

In view of \corref{cor:ker-lef}, it is enough to show, for all $i+q \le N-1$ and $0 \le \ov D \le D$,
    \begin{equation}\label{eqn:vanishung-1}
        \H^i_\et(X,W_1\sF^{q,\bullet}_{-Y,V_1})=\H^i_\et(X,W_1\sF^{q,\bullet}_{\ov D-Y,V}) = \H^i_\et(X, \sN^{q,\bullet}_{\ov D-Y,U})= 0.
    \end{equation}

We shall show the left and the middle groups are zero. Proof for the right one is analogous to that of middle one.
For any $q\ge 1$, there exists a residue homomorphism (see the proof of  \cite[Lem~6.2]{Binda-Saito})
$$Res^q: \Omega^q_X(\log (E+Y)) \to i_*\Omega^{q-1}_Y(\log \ E')$$
which is characterized by 
\begin{itemize}
    \item $Res^q(\omega) =0$, if $\omega \in \Omega_X^q(\log \ E)$ and
    \item $Res^q(\omega \wedge \dlog(f))= \ov{\omega}$, if $\omega \in \Omega_X^{q-1}(\log \ E)$ is a lift of $\ov{\omega} \in \Omega_Y^{q-1}(\log \ E')$,
\end{itemize}
where $f$ is a local equation of $Y$.
Similarly, we have a residue homomorphism
$$Res^q: \Omega^q_X(\log Y) \to i_*\Omega^{q-1}_Y.$$
So we have exact sequences
\begin{equation}\label{eqn:res-1}
    0 \to \Omega_X^q(\log \ E) \to \Omega^q_X(\log (E+Y)) \xrightarrow{Res^q} i_*\Omega^{q-1}_Y(\log \ E') \to 0,
\end{equation}
\begin{equation}\label{eqn:res-2}
    0 \to \Omega^q_X \to \Omega^q_X(\log Y) \xrightarrow{Res^q} i_*\Omega^{q-1}_Y \to 0.
\end{equation}
Now applying $- \otimes_{\sO_X}\sO(\ov D-Y)$ in (\ref{eqn:res-1}) and $-\otimes_{\sO_X}\sO(-Y)$ in (\ref{eqn:res-2}), we get
\begin{equation}\label{eqn:res-3}
    0 \to \Omega_X^q(\log \ E)(\ov D-Y) \to \Omega^q_X(\log (E+Y))(\ov D-Y) \xrightarrow{Res^q} i_*\Omega^{q-1}_Y(\log \ E')(\ov D-Y) \to 0.
\end{equation}
\begin{equation}\label{eqn:res-4}
    0 \to \Omega^q_X(-Y) \to \Omega^q_X(\log Y)(-Y) \xrightarrow{Res^q} i_*\Omega^{q-1}_Y (-Y)\to 0.
\end{equation}

 Using the exact sequences (\ref{eqn:res-3}), (\ref{eqn:res-4}) and hypothesis on $Y$, we get 
\begin{equation}\label{eqn:s.e.s-4}
    H^i(X,\Omega^q_X(\log (E+Y))(\ov D-Y))=0 = H^i(X,\Omega^q_X(\log Y)(-Y)),
\end{equation}
for all $i \in [0, N-q-1]$ and $0 \le \ov D \le D$.

Finally, to prove (\ref{eqn:vanishung-1}), enough to show for all $i \in [0, N-q-1] \text{ and } 0 \le \ov D \le D$,
$$H^i(X, Z_1\Omega^q_X(\log (E+Y))(\ov D-Y))=0=H^i(X, Z_1\Omega^q_X(\log Y)(-Y)).$$
For $q=0$, this is clear by the hypothesis on $Y$ and noting that $Z_1\sO_X(\ov D-Y) \cong  \sO_X(\ov D/p-Y)$ (see \ref{lem:Complete-4}(2)).

For $q \ge 1$, consider the exact sequence (cf. \lemref{lem:Complete-9})
$$0 \to Z_1\Omega^{q-1}_X(\log (E+Y))(\ov D-Y)\to \Omega^{q-1}_X(\log (E+Y))(\ov D-Y) \xrightarrow{d} \hspace{1cm} $$
$$ \hspace{3cm} Z_1\Omega^q_X(\log (E+Y))(\ov D-Y) \xrightarrow{C} \Omega^q_X(\log (E+Y))(\ov D/p-Y) \to 0.$$
Now, by induction on $q$ and \eqref{eqn:s.e.s-4}, the claim follows and hence the theorem.

\end{proof}

\section{Weak Lefschetz for \texorpdfstring{$\ell$}{ell}-adic {\'e}tale cohomology}
Before we proceed further, we review some Weak Lefschetz theorems for $\ell$-adic {\'e}tale cohomology. Let $\ell$ be any prime number different from $p$.
\begin{thm}\label{thm:ell-lef}
    Let $k$ be either finite or 1-local or separably closed field of characteristic $p \ge 0$. Let $X$, $E$, $E'$, $U$, $U'$ and $Y$ be as in the beginning of \S ~\ref{chap:Lef}. Then, for all $j \in \Z$ we have the following.
    \begin{enumerate}
        \item The map $H^i_\et(X,\Z/\ell^m(j)) \to H^i_\et(Y,\Z/\ell^m(j))$ is isomorphism if $i \le N-2$ and injective if $i=N-1$.
        \item The map $H^i_\et(U,\Z/\ell^m(j)) \to H^i_\et(U',\Z/\ell^m(j))$ is isomorphism if $i \le N-2$ and injective if $i=N-1$.
        \item The map $H^i_E(X,\Z/\ell^m(j)) \to H^i_{E'}(Y,\Z/\ell^m(j))$ is isomorphism if $i \le N-1$ and injective if $i=N$.
    \end{enumerate}
\end{thm}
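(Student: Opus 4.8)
The plan is to bootstrap from the classical geometric Weak Lefschetz theorem, pass to the arithmetic base by Galois descent, deduce the version with supports by d\'evissage along the strata of $E$, and only then obtain the statement for the open complements. Throughout write $\Lambda=\Z/\ell^m$, and recall from the beginning of \S\ref{chap:Lef} that $Y$ is a general smooth hypersurface section meeting $E$ transversally, with $E'=Y\cap E$ an SNCD on $Y$; by Bertini's theorem we may also assume that $Y$ meets each stratum $E_I:=\bigcap_{i\in I}E_i$ ($\emptyset\ne I\subseteq\{1,\dots,r\}$) transversally, so that $E_I\cap Y$ is a smooth hypersurface section of the smooth projective variety $E_I$, of dimension $N-|I|<N$. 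Since $Y$ is ample, $W:=X\setminus Y$ is affine and smooth of dimension $N$; fix the open immersion $u\colon W\inj X$ and the closed immersion $v\colon Y\inj X$.

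First I would prove (1). Over a separably closed field this is the classical Weak Lefschetz theorem (Milne, \emph{\'Etale Cohomology}, VI.7.1; SGA~2, Expos\'e XII): from the cofibre sequence $u_!u^*\Lambda(j)\to\Lambda(j)\to v_*v^*\Lambda(j)$ on $X_\et$, the identification $R\Gamma(X,u_!u^*\Lambda(j))=R\Gamma_c(W,\Lambda(j))$ (using that $X/k$ is proper), and the vanishing $H^r_c(W,\Lambda(j))=0$ for $r\le N-1$ — which follows from Artin's affine vanishing theorem and Poincar\'e duality on the smooth affine $W$ — the long exact sequence forces $H^r(X,\Lambda(j))\xrightarrow{\sim}H^r(Y,\Lambda(j))$ for $r\le N-2$ and an injection for $r=N-1$. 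For $k$ finite or $1$-local I would descend using the Hochschild--Serre spectral sequences $H^p(k,H^q(X_{\bar k},\Lambda(j)))\Rightarrow H^{p+q}(X,\Lambda(j))$ and its analogue for $Y$: since the geometric statement makes the $E_2$-comparison an isomorphism in the rows $q\le N-2$ and injective in the row $q=N-1$, and since $\mathrm{cd}_\ell(k)\le 2$ (so only $p\le 2$ contribute), the standard comparison of spectral sequences — noting that every bidegree $(p,q)$ contributing to a total degree $i\le N-1$ has $q\le i\le N-1$ — yields the same range on the abutments, the boundary degree $i=N-1$ being handled by a short chase on the finite filtration.

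Next, (3) would follow from (1) by d\'evissage along $E$. I would use the spectral sequence attached to the stratification of $E$ by its closed strata (the Gysin/Mayer--Vietoris resolution computing $R\Gamma_E(X,\Lambda(j))$ along the intersections $E_I$), of the shape
\[
E_1^{p,q}=\bigoplus_{|I|=p+1}H^{q}_{E_I}(X,\Lambda(j))\;\cong\;\bigoplus_{|I|=p+1}H^{q-2p-2}\big(E_I,\Lambda(j-p-1)\big)\;\Longrightarrow\;H^{p+q}_E(X,\Lambda(j)),
\]
where the isomorphism is purity for the codimension-$(p+1)$ smooth closed immersion $E_I\inj X$, together with its analogue for $E'\subseteq Y$ with strata $E_I\cap Y$. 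Applying (1) to each pair $(E_I,\,E_I\cap Y)$ (in dimension $N-|I|<N$) and tracking indices: the $E_I$-degree contributing to total degree $i$ is $m=i-3p-2$, which satisfies $m\le\dim E_I-2=N-p-3$ precisely when $i\le N+2p-1$ — always true for $i\le N-1$ — while at $i=N$ only the $p=0$ stratum lands at the injective boundary $m=\dim E_I-1$, all $p\ge 1$ terms remaining in the isomorphism range. Hence the comparison map $H^i_E(X,\Lambda(j))\to H^i_{E'}(Y,\Lambda(j))$ is an isomorphism for $i\le N-1$ and injective for $i=N$, which is (3).

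Finally, (2) would be deduced from (1) and (3) by comparing the localization triangles $R\Gamma_E(X,\Lambda(j))\to R\Gamma(X,\Lambda(j))\to R\Gamma(U,\Lambda(j))$ and $R\Gamma_{E'}(Y,\Lambda(j))\to R\Gamma(Y,\Lambda(j))\to R\Gamma(U',\Lambda(j))$: a five-lemma argument on the two long exact sequences, using that (1) is an isomorphism for $i\le N-2$ and injective for $i=N-1$ and that (3) is an isomorphism for $i\le N-1$ and injective for $i=N$, gives $H^i(U,\Lambda(j))\xrightarrow{\sim}H^i(U',\Lambda(j))$ for $i\le N-2$, while injectivity at $i=N-1$ follows from a slightly longer chase combining both inputs. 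The step I expect to be the main obstacle is (3): one must use the \emph{full} Gysin spectral sequence rather than a two-term Mayer--Vietoris induction (which loses the boundary degree $i=N-1$), and one must check that neither that d\'evissage nor the Hochschild--Serre comparison over $1$-local fields drops a cohomological degree — routine bookkeeping, but it is where the argument is most delicate.
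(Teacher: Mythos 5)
Your argument is essentially correct, but it is organized quite differently from the paper's proof. For (1) the paper does not descend from the geometric case: it works directly over $k$, combining the arithmetic Poincar\'e duality pairing $H^i_{c,\et}(X\setminus Y,\Lambda(j))\times H^{2N+e-i}_{\et}(X\setminus Y,\Lambda(N+e'-j))\to\Lambda$ (with $e=\mathrm{cd}(k)$) with Artin's affine vanishing bound $\mathrm{cd}_\ell(X\setminus Y)\le N+e$ to kill $H^i_{c,\et}(X\setminus Y,\Lambda(j))$ for $i<N$, and then the compact-support localization sequence; your route via the geometric Weak Lefschetz theorem plus Hochschild--Serre comparison over a base with $\mathrm{cd}_\ell(k)\le 2$ gives the same conclusion. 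For (2)--(3) the paper proves both statements simultaneously by induction on the number $r$ of components of $E$, peeling off one component at a time: purity is only ever invoked for a smooth divisor ($E\setminus\bar E$ inside $X\setminus\bar E$, with $\bar E=E_2\cup\dots\cup E_r$), and the inductive step for (3) feeds on statement (2) for the smaller SNCD $E_1\cap\bar E\subset E_1$. Your route replaces this by a one-shot d\'evissage over all closed strata $E_I$, using only purity for the smooth $E_I\subset X$ and item (1) applied to the pairs $(E_I,E_I\cap Y)$, and then recovers (2) from (1) and (3) by a four/five-lemma chase; this is non-circular, and the transversality hypothesis that $E'$ is an SNCD already guarantees that each $E_I\cap Y$ is a smooth ample divisor on $E_I$, so your extra appeal to Bertini is unnecessary (and would be illegitimate anyway, since $Y$ is given). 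Incidentally, your parenthetical claim that a ``two-term Mayer--Vietoris induction loses the boundary degree'' is unfounded: the paper's proof is exactly such a two-term induction and retains it, precisely because it carries (2) along in the induction.

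The one point you must fix is the indexing of your stratification spectral sequence, which as displayed does not exist. For two closed subsets the exact triangle is $R\Gamma_{Z_1\cap Z_2}(X,\sF)\to R\Gamma_{Z_1}(X,\sF)\oplus R\Gamma_{Z_2}(X,\sF)\to R\Gamma_{Z_1\cup Z_2}(X,\sF)\xrightarrow{+1}$, so in the complex computing $R\Gamma_E$ the deeper strata sit in negative degrees: the correct spectral sequence is $E_1^{-p,q}=\bigoplus_{|I|=p+1}H^q_{E_I}(X,\Lambda(j))\Rightarrow H^{q-p}_E(X,\Lambda(j))$, not one converging to $H^{p+q}_E$ (what you wrote is the indexing of closed-cover descent for cohomology of $E$ itself, not for cohomology with supports). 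Redone correctly, purity for the codimension-$(p+1)$ stratum gives $H^q_{E_I}(X,\Lambda(j))\cong H^{q-2p-2}(E_I,\Lambda(j-p-1))$, and in total degree $i=q-p$ the condition $q-2p-2\le\dim E_I-2=N-p-3$ is exactly $i\le N-1$, with the injectivity boundary exactly at $i=N$, independently of $p$; so the comparison on $E_1$ is an isomorphism in total degrees $\le N-1$ and a monomorphism in degree $N$, and the standard spectral-sequence comparison then yields (3). Thus your stated conclusion is right, but your displayed bookkeeping ($m=i-3p-2$, condition $i\le N+2p-1$) rests on the wrong indexing and happens to land on the correct answer; it should be rewritten with the supports version above.
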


\begin{proof}
    First we prove item (1). Let's write $\Wedge=\Z/\ell^m$. Recall there is a perfect pairing of finite abelian groups (see \cite[Introduction]{JSZ}, \cite[Chapter VI,\S 11]{Milne-EC}, \cite[Thm~9.9]{GKR})
    \begin{equation}\label{eqn:duality}
        H^i_{c,\et}(X\setminus Y,\Wedge(j)) \times H^{2N+e-i}_\et(X\setminus Y,\Wedge(N+e'-j))\to H^{2N+e}_{c,\et}(X\setminus Y,\Wedge(N+e')) \cong \Wedge,
    \end{equation}
    where $e'=1$ if $k$ is an $1$-local field and $0$ otherwise; $e$ is the cohomological dimension of $k$. Note, $e=0$ if $k$ is separably closed, $e=1$ if $k$ is a finite field and $e=2$ if $k$ is an 1-local field.

    Now, for the smooth hypersurface section $Y$ in $X$, we look at the exact sequence
    $$\cdots \to H^i_{c,\et}(X\setminus Y, \Wedge(j)) \to H^i_\et(X, \Wedge(j)) \to H^i_\et(Y,\Wedge(j)) \to  H^{i+1}_{c,\et}(X\setminus Y, \Wedge(j)) \to \cdots.$$
    By \eqref{eqn:duality}, $H^i_{c,\et}(X\setminus Y, \Wedge(j)) \xrightarrow{\cong} (H^{2N+e-i}_\et(X\setminus Y,\Wedge(N+e'-j)))^\vee$. Since, $X\setminus Y$ is affine, its $\ell$-cohomological dimension is $\le N+e$ (\cite[VI, Thm~7.2]{Milne-EC}). Hence, $ H^i_{c,\et}(X\setminus Y, \Wedge(j))= (H^{2N+e-i}_\et(X\setminus Y,\Wedge(N-j)))^\vee=0$ if $i<N$. This proves item (1).

    Now we shall prove item (2) and (3) simultaneously using induction on the number of components of $E$. We write $E=\bigcup\limits_{t=1}^{r}E_t$, where $r$ is the number of irreducible components of $E$. Since $Y$ intersects $E$ transversally (by hypothesis on $Y$), we have $E'=\bigcup\limits_{t=1}^{r}E'_t$, where $E'_t=E_t \cap Y$. Now, we look at the following commutative diagram obtained by the localisation sequence. 
    \begin{equation}\label{eqn:loc}
        \xymatrix@C.8pc{
        \cdots \ar[r]&H^{i}_E(X) \ar[r] \ar[d]^-{\phi_i} & H^i_\et(X) \ar[r] \ar[d]^-{\theta_i}& H^i_\et(U) \ar[r] \ar[d]^-{\psi_i} & H^{i+1}_E(X) \ar[r] \ar[d]^-{\phi_{i+1}} &H^{i+1}_\et(X) \ar[r] \ar[d]^-{\theta_{i+1}}& \cdots \\
        \cdots \ar[r]&H^{i}_{E'}(Y) \ar[r] &H^i_\et(Y) \ar[r]& H^i_\et(U') \ar[r] & H^{i+1}_{E'}(Y) \ar[r]&H^{i+1}_\et(Y) \ar[r]& \cdots,
        }
    \end{equation}
    where $H^i_\et(-)$ (resp. $H^i_*(-)$) is the short hand notation for $H^i_\et(-,\Wedge(j))$ (resp. $H^i_*(-, \Wedge(j))$) and  all the vertical arrows are the canonical restriction maps. 
   
    To start the induction, let $r=1$ (i.e, $E$ and $E'$ are irreducible). Then, we claim that item (2) and (3) are true.
    Indeed, by purity \cite[VI, Thm~5.1]{Milne-EC}, we have $H_\et^{i-2}(E, \Wedge(j-1)) \xrightarrow{\cong} H^{i}_E(X, \Wedge(j))$ and a similar isomorphism for $E'$ and $Y$. These isomorphisms are compatible with the map $\phi_i$ (see \cite[A.2]{etale-motives} and \cite[Ch~10]{Mazza}). Since dim $E=N-1$, by item (1), we get that $\phi_i$ is bijective if $i \le N-1$ and injective if $i=N$. A diagram chase  in \eqref{eqn:loc} shows that $\psi_i$ is bijective if $i\le N-2$ and injective if $i =N-1$. This proves the claim.

    Now if $r>1$, we consider the following commutative diagram.
    \begin{equation}\label{eqn:loc-1}
        \xymatrix@C.6pc{
         \ar[r]& H^{i-1}_{E\setminus \ov{E}}(X\setminus \ov{E},\Wedge(j)) \ar[r] \ar[d]^-{\gamma_{i-1}} &H^{i}_{\ov E}(X,\Wedge(j)) \ar[r] \ar[d]^-{\alpha_i}& H^{i}_{E}(X,\Wedge(j)) \ar[r] \ar[d]^-{\beta_i}& H^i_{E\setminus \ov{E}}(X\setminus \ov{E}, \Wedge(j))\ar[r]\ar[d]^-{\gamma_i}&  \\
         \ar[r]&  H^{i-1}_{E'\setminus \ov{E}'}(Y\setminus \ov{E}',\Wedge(j))\ar[r] &H^{i}_{\ov E'}(Y,\Wedge(j)) \ar[r]& H^{i}_{ E'}(Y,\Wedge(j)) \ar[r]& H^{i}_{E'\setminus \ov E'}(Y\setminus \ov E',\Wedge(j)) \ar[r]&,
        }
    \end{equation}
    where $\ov E=\bigcup\limits_{t=2}^{r}E_t$ and $\ov E'= \ov E \cap Y$. We note that $E\setminus\ov E$ (resp. $E'\setminus \ov E'$) is a smooth divisor in $X\setminus \ov E$ (resp. $Y\setminus \ov E'$). So, using purity theorem again, we can canonically identify the map $\gamma_i$ with the map $H^{i-2}_\et(E\setminus \ov{E}, \Wedge(j-1)) \xrightarrow{}H^{i-2}_\et(E'\setminus \ov{E}', \Wedge(j-1))$.

    Since $E\setminus \ov E=E_1 \setminus (E_1 \cap \ov E)$ and $E_1 \cap \ov E$ is a simple normal crossing divisor in $E_1$ with $r-1$ irreducible components, our induction hypothesis implies that item (2) of the theorem is valid if we take $X=E_1$ and $U=E_1\setminus \ov E$. That is, $\gamma_i$ is bijective if $i-2 \le N-1-2=N-3$ (i.e, $i \le N-1$) and injective if $i=N$. Also, by induction hypothesis, item (3) is valid by taking $\ov E$ instead of $E$. Hence $\alpha_i$ is bijective if $i \le N-1$ and injective if $i=N$. An easy diagram chase in \eqref{eqn:loc-1} shows that $\beta_i$ is bijective if $i \le N-1$ and injective if $i=N$. That is, item (3) is true when number of irreducible components of $E$ is $r$. Moreover, note that $\beta_i=\phi_i$. Using item (1), a diagram chase in \eqref{eqn:loc} shows that item (2) is valid if number of irreducible components of $E$ is $r$. Hence, by induction, we have proved item (2) and (3) simultaneously.
\end{proof}

\section{Applications}\label{sec:Br-lef}
In this section, we shall prove a Lefschetz theorem for Brauer groups, as a application of \thmref{thm:Lefschetz}.

\subsection{Lefschetz for Picard groups}
 We need a version of Noether-Lefschetz theorem for Picard groups over an arbitrary field, which we are going to prove now. Before we proceed further, we let ${\bf Ab_p}$ be the category of $p$-primary torsion abelian groups. 
\begin{prop}\label{prop:N-lef}
    Let $k$ be any field. Let $X$ be a smooth geometrically connected projective variety over $k$ such that dim $X\ge 4$.  Let $Y$ be a smooth  hypersurface section which satisfies $q=0$ case of the conditions (1.b) and (2.b) of \lemref{lem:ample}   (cf. \defref{def:ample}). Then the canonical map $$\Pic(X) \xrightarrow{\tau} \Pic(Y)$$
    is injective and $\coker \ \tau \in {\bf Ab_p}$ (where $p=$char $k$).
\end{prop}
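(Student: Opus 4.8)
The plan is to compare $\Pic(-)$ with \'etale cohomology via the Kummer sequences and to transport across the weak Lefschetz theorems of this chapter. Write $\tau\colon\Pic(X)\to\Pic(Y)$ and $C=\coker\tau$; note that the assertion $C\in{\bf Ab_p}$ is equivalent (granting $\tau$ injective) to saying that $\tau$ becomes an isomorphism after inverting $p$. I would first treat injectivity. The hypotheses on $Y$ are precisely the $q=0$ instances of conditions (1.b) and (2.b) of \lemref{lem:ample}, namely $H^i(X,\sO_X(-nY))=0$ for $i\le N-1$ and $H^i(Y,\sO_Y(-nY|_Y))=0$ for $i\le N-2$, all $n\ge1$; since $\sI^n/\sI^{n+1}\cong\sO_Y(-nY|_Y)$ for the ideal sheaf $\sI=\sO_X(-Y)$, these are exactly the cohomological vanishings that make the Grothendieck--Lefschetz comparison $\Pic(X)\hookrightarrow\Pic(\wh{X}_Y)\hookrightarrow\Pic(Y)$ valid (cf. \cite[Expos\'e~XII]{SGA2}). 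This step is characteristic free, so $\tau$ is injective.

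Next, for the cokernel I would work one prime at a time. Fix a prime $\ell\neq p$ and $m\ge1$; the Kummer sequence yields compatible short exact sequences $0\to\Pic(-)/\ell^m\to H^2_\et(-,\mu_{\ell^m})\to{}_{\ell^m}\Br(-)\to0$ for $X$ and for $Y$, and since $N\ge4$ the $\ell$-adic weak Lefschetz theorem \thmref{thm:ell-lef}(1), applied with $i=2\le N-2$, identifies $H^2_\et(X,\mu_{\ell^m})$ with $H^2_\et(Y,\mu_{\ell^m})$. Hence $\Pic(X)/\ell^m\hookrightarrow\Pic(Y)/\ell^m$, and feeding this into the $\Tor$-exact sequence of $0\to\Pic(X)\xrightarrow{\tau}\Pic(Y)\to C\to0$ forces ${}_{\ell^m}C=0$; thus $C$ has no prime-to-$p$ torsion. (The analogous statement at $p$ follows from the $p$-adic weak Lefschetz \thmref{thm:Lefschetz} for $W_m\Omega^1_{X,\log}$ together with the exact sequence $0\to\Pic(X)/p^m\to H^1_\et(X,W_m\Omega^1_{X,\log})\to{}_{p^m}\Br(X)\to0$ recalled before the proposition.) It therefore remains only to show that $C$ is a torsion group: granting that, $C$ is a torsion group all of whose $\ell$-primary parts with $\ell\neq p$ vanish, i.e. $C\in{\bf Ab_p}$, and we are done.

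The remaining point -- that $C$ is torsion -- is the main obstacle; it is a Noether--Lefschetz statement for $\Pic$ over a not necessarily algebraically closed field of characteristic $p$. I would establish it by descent. Over $\ov{k}$ the Grothendieck--Lefschetz theorem gives $\Pic(X_{\ov{k}})\xrightarrow{\cong}\Pic(Y_{\ov{k}})$, compatibly with the $\Gal(\ov{k}/k)$-actions; here one uses that $X$ is smooth of dimension $\ge3$ and $Y$ ample, so the requisite depth conditions hold, and that the cohomological vanishings above persist over $\ov{k}$ by flat base change. Combining this isomorphism with the exact sequences $0\to\Pic(Z)\to\Pic(Z_{\ov{k}})^{\Gal(\ov{k}/k)}\to\Br(k)$ for $Z\in\{X,Y\}$, provided by Hilbert~$90$ and the Hochschild--Serre spectral sequence, identifies $C=\Pic(Y)/\Pic(X)$ with a subgroup of $\Br(k)$, which is torsion. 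The delicate aspect throughout is making the Grothendieck--Lefschetz comparison and the descent step function in characteristic $p$; and it is exactly the descent obstruction living in $\Br(k)$ -- torsion but a priori not prime to $p$ -- that accounts for the cokernel being $p$-primary rather than trivial, its prime-to-$p$ part having been eliminated by the $\ell$-adic step.
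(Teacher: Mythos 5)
Your route is genuinely different from the paper's (which never touches étale cohomology here: it proves the statement by a three-step descent, first over $\ov k$ via Hartshorne's Grothendieck--Lefschetz theorem, then over $k^s$ by an explicit argument that the cokernels of $\Pic(X)\to\Pic(\ov X)$ and $\Pic(Y)\to\Pic(\ov Y)$ are $p$-primary because $\sO^\times_{X_{k'}}/\sO^\times_X$ is killed by a $p$-power for finite inseparable $k'/k$, and finally over $k$ via the sequence $0\to\Pic(Z)\to\Pic(Z^s)^G\to\Br(k)$). But as written your argument has two genuine gaps. The first is the deduction of ${}_{\ell^m}C=0$: from $0\to\Pic(X)\to\Pic(Y)\to C\to0$ the six-term sequence reads ${}_{\ell^m}\Pic(X)\to{}_{\ell^m}\Pic(Y)\to{}_{\ell^m}C\to\Pic(X)/\ell^m\to\Pic(Y)/\ell^m$, so injectivity of $\Pic(X)/\ell^m\to\Pic(Y)/\ell^m$ only shows that ${}_{\ell^m}C$ is a quotient of ${}_{\ell^m}\Pic(Y)$, not that it vanishes; you also need surjectivity of ${}_{\ell^m}\Pic(X)\to{}_{\ell^m}\Pic(Y)$, which would require in addition the weak Lefschetz isomorphism for $H^1_\et(-,\mu_{\ell^m})$ together with $H^0(X,\G_m)=k^\times=H^0(Y,\G_m)$ (available since $N\ge4$ and $Y$ is geometrically connected, but absent from your argument). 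Moreover \thmref{thm:ell-lef} is stated only for $k$ finite, $1$-local or separably closed; for arbitrary $k$ you would have to deduce weak Lefschetz over $k$ from the geometric case via smooth base change and a Hochschild--Serre comparison, which is not what you cite.

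The second gap is in the descent step: for imperfect $k$ there is no exact sequence $0\to\Pic(Z)\to\Pic(Z_{\ov k})^{\Gal(\ov k/k)}\to\Br(k)$; Hochschild--Serre gives this only with the separable closure $k^s$, and the passage from $\Pic(Z_{k^s})$ to $\Pic(Z_{\ov k})$ introduces a $p$-primary discrepancy that must be controlled separately -- this is exactly what the paper's Case~2 does, and it is in fact the true source of the $p$-primary cokernel (the Brauer obstructions cancel in the paper's final diagram, contrary to your closing heuristic). Consequently your identification of $C$ with a subgroup of $\Br(k)$ is unjustified as stated; the weaker conclusion that $C$ is torsion can be salvaged since the extra discrepancy is $p$-primary torsion, but this needs to be said and proved (e.g.\ by the paper's $k'/k$-inseparable argument), not absorbed into the phrase that the step is ``delicate''. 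With these two repairs -- the $H^1(\mu_{\ell^m})$/$\G_m$ input for the $\ell$-part, and the $k^s$ versus $\ov k$ bookkeeping for torsionness -- your cohomological approach does give a valid alternative proof.
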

\begin{proof}
    \textbf{Case 1:} $k$ is algebraically closed. In this case $\tau$ is an isomorphism by \cite[Thm~IV.3.1]{Hartshorne-Ample}. One notes that the conditions (i) and (ii) of loc. cit. Thm IV.3.1 are always valid for any smooth hypersurface section $Y$ of dim $\ge 2$ (see Thm IV.1.5 of loc. cit. for the Leff condition). On the other hand, the condition (iii) of loc. cit. Thm IV.3.1  is also satisfied because of the $q=0$ case of the condition (2.b) of \lemref{lem:ample}.

    \textbf{Case 2:} $k$ is separably closed but not algebraically closed. Let $\ov k$ be an algebraic closure of $k$. Also, let  $\ov X=X\times_{\Spec k}  \Spec \ov k$ and $\ov Y= Y \times_{\Spec k} \Spec \ov k$. By flat base change theorem \cite[Thm~III.9.3]{Hartshorne-AG}, we see that $(\ov X,\ov Y)$ also satisfies the $q=0$ case of the conditions (1.b) and (2.b) of the \lemref{lem:ample}. Moreover, because $X$ is geometrically connected (equivalently, $H^0(X,\sO_X)=k$), the condition (1.b) of \lemref{lem:ample} implies $H^0(Y,\sO_Y)=k$, by looking at the exact sequence 
    $$0=H^0(X,\sO_X(-Y)) \to H^0(X,\sO_X) \xrightarrow{\cong} H^0(Y,\sO_Y) \to H^1(X,\sO_X(-Y))=0.$$
    In particular $\ov Y$ is connected.
    Now, case 1 implies $\Pic(\ov X) \xrightarrow{\ov \tau} \Pic(\ov Y)$ is an isomorphism. We note that the maps $\Pic(X) \to \Pic(\ov X)$ and $\Pic(Y) \to \Pic(\ov Y)$ are injective (see \cite{Stacks}) and their cokernels are in ${\bf Ab_p}$, if char $k=p>0$. To see the latter fact, we consider the exact sequence :
    $$0 \to \sO_X^\times \to \sO_{X_{k'}}^\times \to \sO_{X_{k'}}^\times/\sO_X^\times \to 0,$$
    which gives the exact sequence : $\Pic(X) \xrightarrow{\theta_{k'}} \Pic(X_{k'}) \to H^1_{\et}(X,\sO_{X_{k'}}^\times/\sO_X^\times )$, where $k'$ is any finite inseparable extension of $k$. Since $\sO_{X_{k'}}^\times/\sO_X^\times $ is annihilated by $p^M$ for some $M>0$, we get that $H^1_{\et}(X,\sO_{X_{k'}}^\times/\sO_X^\times )$ and hence $\coker \ \theta_{k'}$ is $p^M$-torsion. Now, by taking direct limit over $k'$, we get that the cokernel of $\theta$ and $\theta'$ in the following diagram are in ${\bf Ab_p}$.  
    Then, the following diagram 
    \begin{equation}
        \xymatrix@C.8pc{
        0 \ar[r]& \Pic(X) \ar[r]^-{\theta} \ar[d]^-{\tau}& \Pic(\ov X)\ar[r]\ar[d]^-{\ov \tau}_{\cong}& \coker \ \theta\ar[d]^-{\alpha}\ar[r]&0 \\
        0 \ar[r]& \Pic(Y)\ar[r]^-{\theta'}& \Pic(\ov Y)\ar[r]& \coker \ \theta'\ar[r]&0
        }
    \end{equation}
    implies that there is an exact sequence 
    \begin{equation}\label{eqn:N-Lef}
        0 \to \Pic(X) \xrightarrow{\tau} \Pic(Y) \to \Ker \alpha \to 0.
    \end{equation}
    In particular, $\coker \ \tau \in {\bf Ab_p}$. This proves our assertion in Case 2.

    \textbf{Case 3:} $k$ is arbitrary. Let $k^s$ be a separable closure of $k$ and $X^s=X\times_{\Spec k} \Spec k^s$, $Y^s=Y\times_{\Spec k} \Spec k^s$. By a similar argument given in case 2, we see that $X^s$ and $Y^s$ are geometrically connected and the pair $(X^s,Y^s)$ satisfies $q=0$ case of the conditions (1.b) and (2.b) of \lemref{lem:ample}. In particular, by case 2, we have an exact sequence of  $G=\Gal(k^s/k)$-modules
    $$ 0 \to \Pic(X^s) \xrightarrow{\tau^s} \Pic(Y^s) \to \coker \ \tau^s \to 0,$$
    where $\coker \ \tau^s \in {\bf Ab_p}$. As a result, we get an exact sequence
    $$0 \to \Pic(X^s)^G \xrightarrow{(\tau^s)^G} \Pic(Y^s)^G \to (\coker \ \tau^s)^G.$$ Thus $\coker ((\tau^s)^G)\in {\bf Ab_p}$. Now by \cite[Prop~5.4.2]{CTS}, we have a commutative diagram of exact sequences :
    \begin{equation*}
        \xymatrix@C.8pc{
        0 \ar[r]& \Pic(X) \ar[r] \ar[d]^-{\tau}& \Pic(X^s)^G \ar[r]\ar[d]^-{(\tau^s)^G}& \Br(k)\ar@{=}[d] \\
        0 \ar[r]& \Pic(Y)\ar[r]& \Pic(Y^s)^G\ar[r]& \Br(k).
        }
    \end{equation*}
    
    This implies $\tau$ is injective and $\coker \ \tau \in {\bf Ab_p}$. This proves the proposition.
\end{proof}
\subsection{Lefschetz for Brauer groups}
Now we prove the following Lefschetz theorem for $\Br(X)$ and $\Br^\divv(X|D)$.
\begin{thm}\label{thm:Br-lef}
    Let $k$ be either a finite field or 1-local field or algebraically closed field such that char $k=p>0$. Let $X$ be a smooth geometrically connected projective variety over $k$ such that dim $X\ge 4$.  Then, for a sufficiently ample (cf. \defref{def:ample})  hypersurface section $Y$ (w.r.t $X$ or ($X,D$)), we have 
    \begin{enumerate}
        \item \ $\Br(X) \xrightarrow{\cong} \Br(Y),$
        \item \ $\Br^\divv(X|D) \xrightarrow{\cong} \Br^\divv(Y|D')$, where $D$ and $D'$ are as in \thmref{thm:Lefschetz}.
    \end{enumerate}
\end{thm}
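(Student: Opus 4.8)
The plan is to deduce the theorem from the Lefschetz isomorphisms already established in \thmref{thm:Lefschetz}, using the cohomological descriptions of $\Br(X)$ and $\Br^\divv(X|D)$ in terms of logarithmic Hodge--Witt cohomology and the known prime-to-$p$ Lefschetz results from \cite{CTS}. The starting point for the $p$-primary part is the well-known exact sequence
\[
0 \to \Pic(X)/p^m \to H^1_\et(X, W_m\Omega^1_{X,\log}) \to \ _{p^m}\Br(X) \to 0
\]
and its ramified analogue \corref{cor:F^q}
\[
0 \to \Pic(U)/p^m \to \H^1_\et(X, W_m\sF^{1,\bullet}_{D}) \to \ _{p^m}\Br^\divv(X|D) \to 0,
\]
together with the corresponding sequences for $Y$, $U'$ and $D'$. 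First I would set $N = \dim X \ge 4$ and take $i=1$, $q=1$ in \thmref{thm:Lefschetz}: since $i+q = 2 \le N-2$, the restriction maps
\[
H^1_\et(X, W_m\Omega^1_{X,\log}) \to H^1_\et(Y, W_m\Omega^1_{Y,\log}), \qquad
\H^1_\et(X, W_m\sF^{1,\bullet}_{D,U}) \to \H^1_\et(Y, W_m\sF^{1,\bullet}_{D',U'})
\]
are isomorphisms for a sufficiently ample $Y$ (with respect to $X$, resp. $(X,D)$). I would also need to control $\Pic$: by \propref{prop:N-lef} (applied in Case~1 for $k$ algebraically closed, where $\tau$ is an isomorphism, and in Cases~2--3 otherwise), the map $\Pic(X) \to \Pic(Y)$ is injective with $p$-primary cokernel, hence $\Pic(X)/p^m \to \Pic(Y)/p^m$ is injective with cokernel killed by a power of $p$; a snake-lemma chase then shows that $\Pic(X)\{p\} \to \Pic(Y)\{p\}$ is an isomorphism and likewise (using the exact sequence $\Z^r \to \Pic(X) \to \Pic(U)\to 0$ and its analogue for $Y,U'$, and the compatibility $D' = D\times_X Y$, $E' = E\times_X Y$) that $\Pic(U)\{p\} \to \Pic(U')\{p\}$ is an isomorphism. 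Feeding these into the two short exact sequences above and passing to the colimit over $m$ (so that $H^1_\et(X, W_m\Omega^1_{X,\log})$ computes $\ _{p^m}\Br(X)$ in the limit, and similarly for the modulus version), the five lemma gives
\[
\Br(X)\{p\} \xrightarrow{\cong} \Br(Y)\{p\}, \qquad \Br^\divv(X|D)\{p\} \xrightarrow{\cong} \Br^\divv(Y|D')\{p\}.
\]

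For the full statement when $k$ is finite, local, or algebraically closed, I would combine this $p$-primary isomorphism with the prime-to-$p$ part. For $\Br(X)$ the prime-to-$p$ Lefschetz statement $\Br(X)\{p'\} \xrightarrow{\cong} \Br(Y)\{p'\}$ follows from \cite[Cor~5.5.4]{CTS} (Noether--Lefschetz for the Brauer group, which applies once $\dim Y \ge 3$, i.e. $\dim X \ge 4$). Since $\Br(X)$ is torsion (as $X$ is regular and integral, cf. \cite[Thm~3.5.5]{CTS}) and $\Br(X) = \Br(X)\{p\} \oplus \Br(X)\{p'\}$ after inverting nothing — more precisely $\Br(X)_\tor = \Br(X)\{p\}\cup\Br(X)\{p'\}$ in the notation of the Notations chapter — combining the two pieces yields $\Br(X)\xrightarrow{\cong}\Br(Y)$. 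For $\Br^\divv(X|D)$ I would use \lemref{lem:ell-torsion}(3), which says $\ _{\ell^m}\Br^\divv(X|D) = \ _{\ell^m}\Br(U)$ for every prime $\ell\neq p$; thus the prime-to-$p$ part of $\Br^\divv(X|D)$ equals that of $\Br(U)$, and one invokes the prime-to-$p$ Lefschetz theorem for the open variety $U$ (\thmref{thm:ell-lef}(2) with $i=2$, $j=1$, giving $H^2_\et(U,\mu_{\ell^m}) \xrightarrow{\cong} H^2_\et(U',\mu_{\ell^m})$ since $2 \le N-2$, hence $\Br(U)\{p'\} \xrightarrow{\cong} \Br(U')\{p'\}$), and the same argument shows $\Br^\divv(Y|D')\{p'\} = \Br(U')\{p'\}$. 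Assembling the $p$- and prime-to-$p$ components gives $\Br^\divv(X|D)\xrightarrow{\cong}\Br^\divv(Y|D')$.

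The main obstacle I anticipate is the bookkeeping with the $\Pic$-terms and the colimit over $m$. The Hodge--Witt Lefschetz isomorphism of \thmref{thm:Lefschetz} is for fixed $m$, but $H^1_\et(-, W_m\Omega^1_{-,\log})$ is not literally $\ _{p^m}\Br(-)$; one must either take the colimit carefully (using that the $p$-Lefschetz isomorphisms are compatible with the transition maps $\un p \colon W_m\Omega^1_{X,\log} \to W_{m+1}\Omega^1_{X,\log}$ and $\ov p$, which follows from naturality of restriction) or argue levelwise and then pass to the limit. A secondary subtlety is ensuring $\Pic(U)\{p\}\xrightarrow{\cong}\Pic(U')\{p\}$: one uses the commuting diagram of the sequences $\Z^r\to\Pic(X)\to\Pic(U)\to 0$ and $\Z^r\to\Pic(Y)\to\Pic(U')\to 0$ with the identity on $\Z^r$ (legitimate because $E'_t = E_t\cap Y$ is a single component meeting $Y$ transversally), plus \propref{prop:N-lef}; a snake-lemma chase handles the $p$-torsion. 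Everything else is a formal diagram chase with the five lemma. One should also record the corollary $\Br(k)\xrightarrow{\cong}\Br(X)$ for a smooth complete intersection $X$ in $\P^N_k$ of dimension $d>2$ (\corref{cor:Br-ci}), which follows by iterating the theorem down to a curve or surface and comparing with $\Br(\P^N_k)=\Br(k)$, but that is a separate statement.
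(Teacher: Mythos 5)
Your overall route is the same as the paper's: the two short exact sequences ($0 \to \Pic(X)/p^m \to H^1_\et(X,W_m\Omega^1_{X,\log}) \to {}_{p^m}\Br(X) \to 0$ and \corref{cor:F^q}), the case $i=q=1$ of \thmref{thm:Lefschetz}, control of the Picard groups via \propref{prop:N-lef}, a passage to the colimit over $m$, and \lemref{lem:ell-torsion} together with the $\ell$-adic weak Lefschetz for the prime-to-$p$ part. However, two steps as you have written them do not hold and need the paper's repair.

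First, the Picard bookkeeping: from \propref{prop:N-lef} you only know that $A=\coker(\Pic(X)\to\Pic(Y))$ lies in ${\bf Ab_p}$. This does \emph{not} give that $\Pic(X)/p^m\to\Pic(Y)/p^m$ is injective (its kernel is a quotient of ${}_{p^m}A$), nor that $\Pic(X)\{p\}\to\Pic(Y)\{p\}$ is an isomorphism ($Y$ may acquire extra $p$-torsion classes living in $A$), so the levelwise five-lemma argument you describe breaks down at finite level, and in fact ${}_{p^m}\Br(X)\to{}_{p^m}\Br(Y)$ need not be bijective for fixed $m$. The correct (and the paper's) argument is to take the colimit over $m$ first: the snake lemma gives that $\Br(X)\{p\}\to\Br(Y)\{p\}$ is surjective with kernel $A\otimes_\Z\Q_p/\Z_p$, which vanishes because $A$ is torsion; the same mechanism, together with $\Pic(Y)\surj\Pic(U')$, handles $\coker(\Pic(U)\to\Pic(U'))$ in case (2). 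Second, for the prime-to-$p$ part of item (1) you cite \cite[Cor~5.5.4]{CTS}; in the form used in this paper that corollary concerns smooth complete intersections in $\P^N_k$ (and the algebraic Brauer group $\Br_1$), not an arbitrary ample hypersurface section $Y\subset X$, so it does not give $\Br(X)\{\ell\}\cong\Br(Y)\{\ell\}$ here. The paper instead runs the same Kummer-sequence/Picard argument with \thmref{thm:ell-lef}(1) in place of \thmref{thm:Lefschetz}, exactly as you already do for $\Br(U)$ in item (2); replacing the CTS citation by that argument closes the gap.
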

\begin{proof}
    For the item (1), we consider the following diagram
    \begin{equation}\label{eqn:lef-Br}
        \xymatrix@C.8pc{
        0 \ar[r] & \Pic(X)/p^m \ar[r] \ar[d] & H^1_\et(X,W_m\Omega^1_{X,\log}) \ar[r] \ar[d]^-{\cong} & \ _{p^m}\Br(X) \ar[r] \ar[d] & 0 \\
        0 \ar[r] & \Pic(Y)/p^m \ar[r] & H^1_\et(Y,W_m\Omega^1_{Y,\log}) \ar[r] & \ _{p^m}\Br(Y) \ar[r] & 0.
        }
    \end{equation}
    By \thmref{thm:Lefschetz}, the middle vertical arrow is isomorphism for all $m \ge 1$. Then, taking direct limit over $m$, the above diagram induces an exact sequence
    $$0 \to A \otimes_\Z\Q_p/\Z_p \to \Br(X)\{p\} \to \Br(Y)\{p\}\to 0,$$
    where $A:=\coker (\Pic(X) \to \Pic(Y))$.
    On the other hand, by \propref{prop:N-lef} and \remref{rem:ample}, we know that $A$ is a torsion group. Hence $A \otimes_\Z\Q_p/\Z_p=0$ and $\Br(X)\{p\}\xrightarrow{\cong}\Br(Y)\{p\}$.

    On the other hand, let $\ell$ be a prime different from $p$. Then, using \thmref{thm:ell-lef} instead of \thmref{thm:Lefschetz}, a similar argument as above shows $\Br(X)\{\ell\}\xrightarrow{\cong}\Br(Y)\{\ell\}$. Thus, we get $\Br(X)\xrightarrow{\cong}\Br(Y)$ (because $\Br(-)$ is torsion for regular scheme).

    Now we consider item (2). Recall, $U$ and $U'$ are the complements of $E$ and $E'$ in $X$ and $Y$, respectively (see \S ~\ref{chap:Lef}). Using the exact sequence (see \corref{cor:F^q})
    $$ 0 \to \Pic(U)/p^m \to \H^1_\et(X, W_m\sF^{1,\bullet}_{D}) \to \ _{p^m}\Br^\divv(X|D) \to 0,$$
    and \thmref{thm:Lefschetz},  the proof of $\Br^\divv(X|D)\{p\}\xrightarrow{\cong}\Br^\divv(Y|D')\{p\}$ is analogous to that of item (1). Here one also needs to use that $(\coker (\Pic(U) \to \Pic(U')))\otimes_\Z \Q_p/\Z_p=0$. But this follows from the above projective case because $\Pic(Y) \surj \Pic(U')$.   
    
    Now we consider the prime to $p$ case. Note that $\Br^\divv(X|D)\{\ell\}= \Br(U)\{\ell\}$ and $\Br^\divv(Y|D')\{\ell\}= \Br(U')\{\ell\}$, where $\ell \text{ is a prime different from } p$, by \lemref{lem:ell-torsion}. In this case, the theorem follows by a similar argument using \thmref{thm:ell-lef}.
\end{proof}
     \begin{rem}
         The proof of \thmref{thm:Br-lef} also shows that the isomorphisms $\Br(X)\{p\} \xrightarrow{\cong}\Br(Y)\{p\}$ and $\Br^\divv(X|D)\{p\} \xrightarrow{\cong}\Br^\divv(Y|D')\{p\}$ hold if we take any $F$-finite field $k$ in the above theorem. 
     \end{rem}
    
\subsection{Lefschetz in complete intersection case}
   In this section we shall prove a Lefschetz theorem for smooth complete intersection varieties. We recall some notations. For a field $k$, $k^s$ will be its separable closure and $\ov k$ will be an algebraic closure of $k$ containing $k^s$. For a $k$-scheme $X$, we shall write $X^s=X\times _{\Spec k}\Spec k^s$ and $\ov X=X \times_{\Spec k}  \Spec \ov k$.  
    \begin{defn}(cf. \cite[Def~5.5]{A-B-M})
        Let $X$ be a smooth $k$-scheme of dim $N$ and $\sL$ be an ample line bundle on $X$. Then $(X,\sL)$ is called a Kodaira pair if $H^i(X,\Omega^q_{X/k}\otimes_{\sO_X} \sL^{-n})=0$ for all $i+q \le N-1$ and $q \ge 0, n \ge 1$. 
    \end{defn}
    Here are few examples.\\
    \textbf{Examples :}
    (1) $(\P^N_k, \sO(n))$  is a Kodaira pair for $n >0$. \\
    (2) If $X$ is smooth projective over a characteristic $0$ field $k$ with an ample line bundle $\sL$, then $(X, \sL)$ is a Kodaira pair by \cite[Cor~2.11]{Deligne-Illusie}.

    \begin{lem}
        Let $X$ be smooth over an $F$-finite field $k$ (of char $p>0$) with an ample line bundle $\sL$. If ($X,\sL$) is a Kodaira pair then $H^i(X,\Omega^q_{X}\otimes_{\sO_X} \sL^{-n})=0$ for all $i+q \le N-1$ and $q \ge 0, n \ge 1$.
    \end{lem}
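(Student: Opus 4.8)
The plan is to reduce the absolute differentials $\Omega^q_X$ to the relative differentials $\Omega^q_{X/k}$ using the $F$-finiteness of $k$, and then apply the Kodaira pair hypothesis term by term. First I would recall the exact sequence
$0 \to f^*\Omega^1_k \to \Omega^1_X \to \Omega^1_{X/k} \to 0$
of finite type locally free sheaves (cf. \cite[Thm~25.1]{Matsumura}), where $f \colon X \to \Spec(k)$; this is the same sequence already used in the proof of \lemref{lem:s.e.s-2}. Since $k$ is $F$-finite, $\Omega^1_k$ is a finite-dimensional $k$-vector space, say of dimension $s$, and therefore $f^*\Omega^1_k \cong \sO_X^{\oplus s}$ is a trivial bundle. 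This is the only point at which $F$-finiteness is used, and it is what makes the whole argument go through.

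Next I would pass to exterior powers. Applying $\Wedge^q(-)$ to the short exact sequence above, $\Omega^q_X = \Wedge^q\Omega^1_X$ acquires a finite decreasing filtration whose successive quotients are
$\Wedge^b(f^*\Omega^1_k) \otimes_{\sO_X} \Omega^{q-b}_{X/k} \cong (\Omega^{q-b}_{X/k})^{\oplus \binom{s}{b}}$
for $0 \le b \le q$. Tensoring with the invertible sheaf $\sL^{-n}$ preserves this filtration, so $\Omega^q_X \otimes_{\sO_X} \sL^{-n}$ has a finite filtration with graded pieces $(\Omega^{q-b}_{X/k}\otimes_{\sO_X}\sL^{-n})^{\oplus \binom{s}{b}}$.

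Finally, for every $b$ with $0 \le b \le q$ and every $i$ with $i+q \le N-1$ we have $i + (q-b) \le i + q \le N-1$, so the Kodaira pair hypothesis yields $H^i(X, \Omega^{q-b}_{X/k}\otimes\sL^{-n}) = 0$ for all $n \ge 1$; hence every graded piece of the filtration has vanishing $H^i$ in this range. Running down the filtration and using the long exact cohomology sequences attached to each one-step extension, an induction on the length of the filtration gives $H^i(X, \Omega^q_X \otimes \sL^{-n}) = 0$ whenever $i+q \le N-1$, $q \ge 0$ and $n \ge 1$. I do not expect a real obstacle here; the only thing to be careful about is the bookkeeping of the exterior-power filtration and the observation that the numerical condition $i+q \le N-1$ is stable under replacing $q$ by $q-b$ with $b \ge 0$, which is exactly what the argument needs.
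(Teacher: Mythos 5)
Your proof is correct and follows essentially the same route as the paper: both use the exact sequence $0 \to f^*\Omega^1_k \to \Omega^1_X \to \Omega^1_{X/k} \to 0$, the induced exterior-power filtration of $\Omega^q_X \otimes \sL^{-n}$ with graded pieces $\Wedge^t(f^*\Omega^1_k)\otimes\Omega^{q-t}_{X/k}\otimes\sL^{-n}$ (direct sums of copies of $\Omega^{q-t}_{X/k}\otimes\sL^{-n}$ since $\Omega^1_k$ is a finite-dimensional $k$-vector space), and the observation that $i+(q-t)\le i+q\le N-1$ lets the Kodaira hypothesis kill the cohomology of each graded piece.
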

    \begin{proof}
        Since $k$ is $F$-finite, and $X$ is smooth over $k$, we have a short exact sequence of finite type locally free sheaves
        $$0 \to f^*\Omega^1_k \to \Omega^1_X \to \Omega^1_{X/k} \to 0,$$ where $f:X \to \Spec k$. Then by \cite[Ex~ II.5.16(d)]{Hartshorne-AG}, there is a decreasing filtration $\{F^t\}_{0 \le t\le q}$ of $\Omega^q_X \otimes_{\sO_X} \sL^{-n}$ such that $F^t/F^{t+1} \cong \Wedge^t(f^*\Omega^1_k) \otimes_{\sO_X} \Omega^{q-t}_{X/k}\otimes_{\sO_X} \sL^{-n} \cong  \Omega^t_k \otimes_k (\Omega^{q-t}_{X/k}\otimes_{\sO_X} \sL^{-n})$.  Now the hypothesis on $(X,\sL)$ implies $H^i(X, F^t/F^{t+1})=0$ for all $i+q \le N-1$ and $t \le q$. Hence we get the lemma. 
\end{proof}
    \begin{rem}\label{rem:Kodaira}
            The above lemma implies that the condition (1.b) (and hence (2.b)) of \lemref{lem:ample}  holds if $(X, \sO_X(Y))$ is Kodaira pair.
    \end{rem}

    \begin{lem}\label{lem:Kodaira}
        Let $X\inj \P^N_k$ be any smooth complete intersection subvariety. Let $Y$ be any hypersurface section of $X$. Then $(X,\sO_X(Y))$ is a Kodaira pair.
    \end{lem}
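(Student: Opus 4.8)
The plan is to reduce the Kodaira--pair property to a single cohomology vanishing on $X$ and then to establish that vanishing by transporting Bott vanishing from the ambient projective space through the Koszul complex of $X$ and the exterior powers of the conormal sequence. Write $X=V(f_1,\dots,f_r)\subset\P^M_k$, where the $f_j$ are homogeneous of degrees $d_j\ge 1$ forming a regular sequence, so that $\dim X=N=M-r$, the Koszul complex of $(f_1,\dots,f_r)$ resolves $\sO_X$ as an $\sO_{\P^M}$-module, and the conormal sheaf satisfies $\sN^\vee_{X/\P^M}\cong\bigoplus_{j=1}^r\sO_X(-d_j)$. A hypersurface section $Y=X\cap V(g)$ with $\deg g=e\ge 1$ gives $\sL=\sO_X(Y)=\sO_X(e)$, which is ample; since $\sL^{-n}=\sO_X(-ne)$ with $ne\ge 1$, it suffices to prove
\[
H^i\bigl(X,\Omega^q_{X/k}(-m)\bigr)=0\qquad\text{for all }q\ge 0,\ m\ge 1,\ i+q\le N-1 .
\]

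First I would record the auxiliary vanishing $H^i\bigl(X,\Omega^q_{\P^M}|_X(-m)\bigr)=0$ for all $q\ge 0$, all $m\ge 1$ and all $i\le N-1$. Tensoring the Koszul resolution
\[
0\to\sO_{\P^M}\Bigl(-\textstyle\sum_j d_j\Bigr)\to\cdots\to\bigoplus_{j}\sO_{\P^M}(-d_j)\to\sO_{\P^M}\to\sO_X\to 0
\]
with the locally free sheaf $\Omega^q_{\P^M}(-m)$ stays exact, and the associated hypercohomology spectral sequence shows that $H^i\bigl(X,\Omega^q_{\P^M}|_X(-m)\bigr)$ is built out of the groups $H^{i+s}\bigl(\P^M,\Omega^q_{\P^M}(-m-\sum_{j\in J}d_j)\bigr)$ with $0\le s=|J|\le r$. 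The twist $-m-\sum_{j\in J}d_j$ is strictly negative, and for a strictly negative twist $H^{\bullet}(\P^M,\Omega^q_{\P^M}(\,\cdot\,))$ is concentrated in degree $M$ (Bott's formula, equivalently Serre duality applied to Bott vanishing for global sections, valid over any field). Hence every contributing group vanishes unless $i+s=M=N+r$; but $s\le r$ forces $i\ge N$, which is excluded when $i\le N-1$. This yields the auxiliary vanishing.

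Next I would descend from $\Omega^q_{\P^M}|_X$ to $\Omega^q_X$ using the conormal sequence $0\to\sN^\vee_{X/\P^M}\to\Omega^1_{\P^M}|_X\to\Omega^1_{X/k}\to 0$. This is a short exact sequence of locally free $\sO_X$-modules --- exactness on the left uses that $X\hookrightarrow\P^M$ is a regular immersion and $X$ is smooth over $k$, so $\Omega^1_{X/k}$ is locally free of rank $N$ --- and therefore, by \cite[Ex.~II~5.16(d)]{Hartshorne-AG}, $\Omega^q_{\P^M}|_X=\bigwedge^q\bigl(\Omega^1_{\P^M}|_X\bigr)$ carries a finite filtration $F^0\supset F^1\supset\cdots$ with graded pieces
\[
F^t/F^{t+1}\;\cong\;\bigwedge\nolimits^{t}\sN^\vee_{X/\P^M}\otimes\Omega^{q-t}_{X/k}\;=\;\bigoplus_{|J|=t}\Omega^{q-t}_{X/k}\Bigl(-\textstyle\sum_{j\in J}d_j\Bigr),\qquad 0\le t\le\min(q,r).
\]
Twisting by $\sO_X(-m)$ and inducting on $q$: the case $q=0$ is the $q=0$ instance of the auxiliary vanishing, and for $q\ge 1$ each graded piece with $t\ge 1$ is a direct sum of sheaves $\Omega^{q-t}_{X/k}(-m')$ with $q-t<q$ and $m'=m+\sum_{J}d_j\ge 1$, hence has vanishing cohomology in the required range by the inductive hypothesis. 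Chasing the short exact sequences $0\to F^{t+1}\to F^t\to F^t/F^{t+1}\to 0$, starting from $F^0=\Omega^q_{\P^M}|_X(-m)$ whose cohomology vanishes in degrees $\le N-1$ by the previous step, then yields $H^i\bigl(X,\Omega^q_{X/k}(-m)\bigr)=0$ for $i+q\le N-1$. The endpoint $i+q=N-1$ needs a moment of care: there the connecting map lands in $H^{i+1}(F^1)$, and one uses that $(i+1)+(q-t)\le N-1$ for every $t\ge 1$ to see that each graded piece of $F^1$ contributes nothing in degree $i+1$.

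I expect the only real obstacle to be the bookkeeping in the Bott step and in the filtration chase --- precisely, checking that the strict negativity of the twists together with the bound $s\le r$ (respectively the shift by $t\ge 1$ in the endpoint case) genuinely exclude every a priori nonzero term in the prescribed range $i+q\le N-1$. Once the numerology is pinned down, the remaining ingredients (exactness of the Koszul resolution after tensoring with a vector bundle, the regular-immersion property of a complete intersection, and, if one prefers to invoke Bott over an algebraically closed field, harmless flat base change to $\ov{k}$) are entirely routine.
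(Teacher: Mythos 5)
Your argument is correct, but it is genuinely different from the paper's. The paper settles the lemma in three lines by citing \cite[Cor.~5.8]{A-B-M}: since $(\P^N_k,\sO(1))$ is a Kodaira pair, that corollary transfers the property to the smooth complete intersection $X$ with $\sO_X(1)$, hence with $\sO_X(n)\cong\sO_X(Y)$ for the appropriate $n>0$. You instead prove the defining vanishing $H^i\bigl(X,\Omega^q_{X/k}\otimes\sO_X(-m)\bigr)=0$ for $i+q\le \dim X-1$, $m\ge 1$, by hand: the Koszul resolution of $\sO_X$ together with Bott's formula on the ambient projective space gives the auxiliary vanishing for $\Omega^q_{\P^M/k}\big|_X(-m)$ in degrees $\le \dim X-1$ (the twists stay strictly negative and the Koszul length $r$ exactly compensates the codimension, so nothing survives below degree $\dim X$), and the exterior-power filtration of the conormal sequence from \cite[Ex.~II~5.16(d)]{Hartshorne-AG}, with $\sN^\vee_{X/\P^M}$ a direct sum of negative line bundles, lets you descend to $\Omega^q_{X/k}$ by induction on $q$; your numerology $(i+1)+(q-t)\le i+q\le \dim X-1$ for $t\ge 1$ is what makes the connecting maps harmless, and in fact it works uniformly in the range $i+q\le \dim X-1$, not only at the endpoint. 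What each approach buys: the paper's route is much shorter and rests on a general inheritance statement for Kodaira pairs (whose proof in loc.\ cit.\ is essentially the same conormal-sequence induction you carry out), whereas yours is self-contained, makes the independence of the characteristic explicit via Bott's formula over an arbitrary field, and requires no external input beyond standard facts. One cosmetic point: write $\Omega^q_{\P^M/k}$ for the ambient sheaf so that it visibly matches the relative differentials $\Omega^q_{X/k}$ appearing in the definition of a Kodaira pair (over a non-perfect $F$-finite base the absolute and relative sheaves differ); this does not affect the argument.
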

    \begin{proof}
        This follows from \cite[Cor~5.8]{A-B-M} in the following way. Since $(\P^N_k, \sO(1))$ is a Kodaira pair, loc.cit. implies $(X, \sO_{X}(1))$ is also a Kodaira pair. In particular, $(X, \sO_{X}(n))$ is also Kodaira pair for all $n \ge 1$. But $\sO_{X}(Y) \sim \sO_{X}(n)$ for some $n>0$ and we get the lemma. 
    \end{proof}
    \begin{prop}\label{prop:lef}
        Let $Y$ be any smooth complete intersection in $\P^N_k$. Let dim $Y=d$.Then we have the canonical isomorphism $H^i_\et(\P^N_k,W_m\Omega^q_{\P^N_k,\log}) \xrightarrow{\cong} H^i_\et(Y, W_m\Omega^q_{Y,\log})$ for all $i,q \ge 0$ such that $i+q \le d-1$. 
    \end{prop}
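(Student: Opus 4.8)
The statement to be proved is a weak Lefschetz isomorphism for the logarithmic Hodge--Witt sheaves $W_m\Omega^q_{(-),\log}$ when $Y \hookrightarrow \mathbb{P}^N_k$ is a smooth complete intersection of dimension $d$, in the range $i+q \le d-1$. The natural approach is to express $Y$ as an iterated hyperplane (hypersurface) section inside $\mathbb{P}^N_k$ and apply \thmref{thm:Lefschetz} repeatedly. Concretely, write $Y = Y_c \subset Y_{c-1} \subset \cdots \subset Y_1 \subset Y_0 = \mathbb{P}^N_k$, where each $Y_{t}$ is a smooth hypersurface section of $Y_{t-1}$ cut out by one of the defining equations of the complete intersection (suitably chosen so that all intermediate $Y_t$ are smooth; this is possible by Bertini for a general choice, and for a complete intersection the actual defining hypersurfaces can be used). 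Here $\dim Y_t = N - t$, so $\dim Y_c = d$ and $c = N - d$.

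The first step is to verify that at each stage $Y_t \hookrightarrow Y_{t-1}$ the hypersurface section is ``sufficiently ample with respect to $Y_{t-1}$'' in the sense of \defref{def:ample}, i.e. that condition (1.b) of \lemref{lem:ample} holds for $Y_{t-1}$ with respect to $Y_t$. By \remref{rem:Kodaira}, this follows once we know $(Y_{t-1}, \sO_{Y_{t-1}}(Y_t))$ is a Kodaira pair, and that is exactly the content of \lemref{lem:Kodaira}: every smooth complete intersection in projective space is a Kodaira pair with respect to any of its hypersurface sections, because $(\mathbb{P}^N_k, \sO(1))$ is a Kodaira pair and the Kodaira property propagates down smooth complete-intersection hypersurface sections by \cite[Cor~5.8]{A-B-M}. (Each $Y_t$ is itself a smooth complete intersection in $\mathbb{P}^N_k$, so \lemref{lem:Kodaira} applies at every stage.) Thus \thmref{thm:Lefschetz}, in its no-modulus/no-divisor incarnation (the case $E = 0$, $D = 0$, so that $W_m\Omega^q_{X,\log} \to W_m\Omega^q_{Y,\log}$), is applicable to each inclusion $Y_t \hookrightarrow Y_{t-1}$.

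The second step is to chain the isomorphisms. Applying \thmref{thm:Lefschetz} to $Y_t \hookrightarrow Y_{t-1}$ (a variety of dimension $N-t+1$), the restriction map
\[
H^i_\et(Y_{t-1}, W_m\Omega^q_{Y_{t-1},\log}) \to H^i_\et(Y_t, W_m\Omega^q_{Y_t,\log})
\]
is an isomorphism for $i+q \le (N-t+1)-2 = N-t-1$ and injective for $i+q = N-t$. Composing over $t = 1, \ldots, c = N-d$, the composite $H^i_\et(\mathbb{P}^N_k, W_m\Omega^q_{\mathbb{P}^N_k,\log}) \to H^i_\et(Y, W_m\Omega^q_{Y,\log})$ is an isomorphism as long as $i+q$ is within the isomorphism range at every stage; the tightest constraint is the last one, $Y_c \hookrightarrow Y_{c-1}$ with $\dim Y_{c-1} = d+1$, which forces $i+q \le (d+1)-2 = d-1$. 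This is precisely the asserted range, so the proof concludes. One should also check compatibility of the restriction maps so that the composite is indeed the canonical restriction $\mathbb{P}^N_k \to Y$; this is automatic from functoriality of $W_m\Omega^q_{(-),\log}$ under the successive closed immersions.

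\textbf{Main obstacle.} The only genuinely delicate point is guaranteeing that one can choose the intermediate sections $Y_1, \ldots, Y_{c-1}$ to all be smooth (and, over a non-perfect or finite base field, still geometrically irreducible or at least with the right Bertini behavior), while simultaneously having $Y_c = Y$ be the prescribed complete intersection. Over an infinite field this is standard Bertini, but for a finite field one invokes Poonen-type Bertini statements or, more simply, observes that for a complete intersection one may take the $Y_t$ to be the actual vanishing loci of the defining forms of $Y$, which are smooth by hypothesis on $Y$ together with the fact that a complete intersection variety that is smooth is cut out transversally — so in fact the flag $Y_0 \supset \cdots \supset Y_c$ can be built directly from the defining equations with no Bertini needed, and \lemref{lem:Kodaira} handles the Kodaira condition for each link. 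After that, everything is a formal induction on $c = N - d$ using \thmref{thm:Lefschetz}.
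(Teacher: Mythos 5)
Your argument is the same as the paper's: the paper writes $Y = Y_1 \cap H$ with $Y_1$ a smooth complete intersection of dimension $d+1$, notes via \lemref{lem:Kodaira} and \remref{rem:Kodaira} that $(Y_1,\sO_{Y_1}(H))$ is a Kodaira pair so that \thmref{thm:Lefschetz} applies to the pair $(Y_1, Y)$, obtains the isomorphism in the range $i+q\le d-1$, and then repeats the argument inductively up the chain to $\P^N_k$. Your bookkeeping of the ranges is also correct: at the stage with ambient variety of dimension $N-t+1$ the isomorphism range is $i+q\le N-t-1\ge d-1$, so the last step ($\dim Y_{c-1}=d+1$) is the binding one, exactly as in the statement.

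The one step in your write-up that would fail is the claimed shortcut for producing the smooth flag: it is \emph{not} true that the partial vanishing loci of the defining forms of a smooth complete intersection are themselves smooth. Smoothness of $Y=V(f_1,\dots,f_c)$ only gives transversality of the $f_i$ along $Y$; it says nothing about the singularities of $V(f_1,\dots,f_j)$ away from $Y$. For instance, in $\P^3_k$ take $f = x_1^2 - x_0x_2$ (a quadric cone, singular at $[0:0:0:1]$) and $g = x_3$: then $V(f,g)$ is a smooth conic, i.e.\ a smooth complete intersection, while $V(f)$ is singular. So the flag cannot in general be ``built directly from the defining equations with no Bertini needed.'' What is actually needed (and what the paper simply asserts without proof in the first sentence of its argument) is the standard fact that a smooth complete intersection $Y$ can be written as $Y_1\cap H$ with $Y_1$ a smooth complete intersection of one dimension more; your Bertini/Poonen fallback is the honest route to this, and over a finite or merely $F$-finite base field that existence statement is precisely the point requiring care, not something that follows from smoothness of $Y$ alone. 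With that claim replaced by the cited existence statement, your induction is exactly the paper's proof.
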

    \begin{proof}
               Since $Y$ is smooth and complete intersection, we can write $Y=Y_1\cap H$, where $Y_1$ is smooth complete intersection in $\P^N_k$ of dim $d+1$ and $H$ is a smooth hypersurface section. We have seen in \thmref{thm:Lefschetz} that the condition (1.b) of \lemref{lem:ample} (taking $X=Y_1$) implies the  isomorphism $H^i_\et(Y_1,W_m\Omega^q_{Y_1,\log}) \xrightarrow{\cong} H^i_\et(Y, W_m\Omega^q_{Y,\log})$, for all $i \le d-q-1$. Hence, to prove the isomorphism, it is enough to show that $(Y_1, \sO_{Y_1}(H))$ is a Kodaira pair (by \remref{rem:Kodaira}). But this follows from \lemref{lem:Kodaira}. Thus we have proven the isomorphism $H^i_\et(Y_1,W_m\Omega^q_{Y_1,\log}) \xrightarrow{\cong} H^i_\et(Y, W_m\Omega^q_{Y,\log})$, for all $i \le d-q-1$. Now, inductively, repeating the same argument for $Y_1$ we can prove the proposition.
    \end{proof}
The following corollary is a generalization of \cite[Cor.~5.5.4]{CTS}, where the result is proven for prime to p-part of the Brauer Groups.
    \begin{cor}\label{cor:Br-ci}
        Let $k$ be any field. If $Y$ is a smooth  complete intersection in $\P^N_k$ of dim $d \ge 3$, then $\Br(k) \cong \Br(Y)$.
    \end{cor}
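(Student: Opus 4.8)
The plan is to reduce the statement to the Lefschetz-type isomorphism for logarithmic Hodge--Witt cohomology proved in Proposition \ref{prop:lef}, combined with the classical comparison of the Brauer group of $\P^N_k$ with that of the base field. First I would reduce to the case where $k$ is separably closed, or at least handle the arithmetic contribution by Galois descent: using the Hochschild--Serre spectral sequence (or the exact sequence of \cite[Prop.~5.4.2]{CTS} as already invoked in the proof of \propref{prop:N-lef}) one has a commutative diagram comparing $\Br(k) \to \Br(\P^N_k) \to \Br(\ov{\P^N_k})^{G}$ with $\Br(k) \to \Br(Y) \to \Br(\ov Y)^G$, where $G = \Gal(k^s/k)$, so it suffices to show $\Br(\ov{\P^N_k}) \to \Br(\ov Y)$ is an isomorphism together with the analogous statement for $\Pic$ and $H^1(G,-)$ terms; since $Y$ is geometrically connected (a smooth complete intersection of positive dimension over an algebraically closed field is connected) the $H^0$ terms match, and $\ov Y$ smooth projective forces $\Br(\ov Y)$ to be torsion. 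Thus the heart of the matter is the geometric statement $\Br(\ov{\P^N_k}) \xrightarrow{\cong} \Br(\ov Y)$, for which one splits into the prime-to-$p$ part and the $p$-primary part.

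For the prime-to-$p$ part, I would simply cite \cite[Cor.~5.5.4]{CTS} (valid since $d = \dim Y \ge 3$), which gives $\Br(\P^N_k)\{p'\} \cong \Br(Y)\{p'\}$, and in particular $\Br(Y)\{p'\} = 0$ since $\Br(\P^N_k)\{p'\}=0$. For the $p$-primary part, I would run the same argument as in the proof of \thmref{thm:Br-lef}: from \propref{prop:usual}(2) one gets the exact sequence
\[
0 \to \Pic(\P^N_k)/p^m \to H^1_\et(\P^N_k, W_m\Omega^1_{\P^N_k,\log}) \to {}_{p^m}\Br(\P^N_k) \to 0
\]
and the analogous one for $Y$, fitting into a commutative ladder whose middle vertical map is an isomorphism by \propref{prop:lef} (applied with $q=1$, $i=1$, using $1+1 \le d-1$, i.e. $d \ge 3$). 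Taking the direct limit over $m$ yields an exact sequence $0 \to A\otimes_{\Z}\Q_p/\Z_p \to \Br(\P^N_k)\{p\} \to \Br(Y)\{p\} \to 0$ with $A = \coker(\Pic(\P^N_k)\to \Pic(Y))$. Since $\Pic(\P^N_k) = \Z$ is generated by $\sO(1)$ and restricts to an ample class on $Y$, the map $\Pic(\P^N_k)\to\Pic(Y)$ is injective; over an algebraically closed field the Noether--Lefschetz conclusion of \propref{prop:N-lef} (or directly Grothendieck--Lefschetz \cite[Thm.~IV.3.1]{Hartshorne-Ample} for $\dim Y \ge 3$) shows $A$ is torsion, hence $A\otimes \Q_p/\Z_p = 0$ and $\Br(\P^N_k)\{p\}\xrightarrow{\cong}\Br(Y)\{p\}$. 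Combining the two parts and using that $\Br$ of a regular integral scheme is torsion (\cite[Thm.~3.5.5]{CTS}) gives $\Br(\P^N_k)\xrightarrow{\cong}\Br(Y)$, and then the classical fact $\Br(k)\cong\Br(\P^N_k)$ (\cite[Cor.~5.4.8 / 6.1]{CTS}, or the projective bundle formula) finishes the proof.

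The main obstacle I anticipate is bookkeeping in the descent step: Proposition \ref{prop:lef} is stated for $\P^N_k$ over an arbitrary (not necessarily separably closed) field, so in fact one may be able to avoid passing to $\ov k$ altogether and instead work directly with $X=\P^N_k$, applying \propref{prop:lef} and the prime-to-$p$ comparison over $k$ itself; the only genuinely field-independent inputs needed are $\Pic(\P^N_k)=\Z$ and the ampleness of the restriction to $Y$, both of which hold over any field. If that works, the proof collapses to: apply \propref{prop:lef} to get the middle isomorphism in the Brauer ladder, apply \cite[Cor.~5.5.4]{CTS} for the prime-to-$p$ part, deduce $A$ is finite (hence $p$-divisibly trivial) from $\Pic(\P^N_k)=\Z$, and conclude with $\Br(k)\cong\Br(\P^N_k)$. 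The subtle point to verify carefully is that the vanishing $A\otimes_\Z \Q_p/\Z_p=0$ only requires $A$ to be a torsion (equivalently here, finite or bounded-torsion) group, which is exactly what the cited Noether--Lefschetz results supply in the range $d\ge 3$.
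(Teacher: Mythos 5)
Your geometric core (the $p$-primary comparison via the ladder $0 \to \Pic/p^m \to H^1_\et(-,W_m\Omega^1_{\log}) \to {}_{p^m}\Br \to 0$, the middle isomorphism from \propref{prop:lef}, the identification of $\Pic$ of the complete intersection with $\Pic(\P^N)$, and the prime-to-$p$ part from \cite[Cor.~5.5.4]{CTS}) is exactly the paper's argument. But there is a genuine gap in how you set up the reduction. Your preferred ``collapsed'' route applies \propref{prop:lef} directly over $k$; this is not available, because \propref{prop:lef} rests on \thmref{thm:Lefschetz} and the filtered de Rham--Witt machinery, which are proved only for $F$-finite base fields of characteristic $p$, whereas the corollary allows an arbitrary field $k$ (and even when $k$ is $F$-finite you would still need the Pic-cokernel statement over $k$, not just over $\ov k$). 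So the passage to the algebraic closure, where $\ov k$ is perfect and hence $F$-finite and where $\Br(\P^N_{\ov k})=0$ kills the whole group, cannot be avoided; this is why the paper proves the vanishing $\Br(\ov Y)\{p\}=0$ rather than an isomorphism over $k$.

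The second, related gap is in the descent step of your main route. Hochschild--Serre (or \cite[Prop.~5.4.2, Cor.~5.5.4]{CTS}) compares $\Br(Y)$ with $\Br(Y^s)$, $Y^s = Y\times_k k^s$, not with $\Br(\ov Y)$: the Galois group $G=\Gal(k^s/k)$ acts on $Y^s$. Since $k^s$ is in general neither perfect nor $F$-finite, the Lefschetz argument cannot be run over $k^s$ either, so one still has to pass from $Y^s$ to $\ov Y$, i.e.\ to prove that $\Br(Y^s)\{p\}$ injects into $\Br(\ov Y)\{p\}$. This purely inseparable comparison is not automatic for the $p$-torsion of the Brauer group (it is exactly where topological invariance of the \'etale site fails for $\G_m$-cohomology); the paper obtains it from \cite[Thm.~5.2.5]{CTS}, whose hypothesis $H^1(Y^s,\sO_{Y^s})=0$ is verified by an induction on the dimension using the Kodaira-pair property of \lemref{lem:Kodaira}. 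Your proposal skips this step entirely (it conflates $\ov Y$ with $Y^s$), and after it is inserted your argument reduces to: $\Br(k)\cong\Br_1(Y)$ by \cite[Cor.~5.5.4]{CTS}, $\Br(Y^s)\{\ell\}=0$ for $\ell\neq p$ by loc.\ cit., $\Br(Y^s)\{p\}\inj\Br(\ov Y)\{p\}=0$ by the ladder over $\ov k$ together with $\Pic(\P^N_{\ov k})\cong\Pic(\ov Y)$ (\cite[XII, Cor.~3.7]{SGA2}) and $\Br(\P^N_{\ov k})=0$ --- which is precisely the paper's proof.
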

    \begin{proof}
        By \cite[Cor~5.5.4]{CTS}, one knows $\Br(k) \cong \Br_1(Y)$, where $\Br_1(Y)=\Ker (\Br(Y) \to \Br(Y^s))$. Also it was proven in loc. cit. that $\Br(Y^s)\{\ell\}=0$ for all $\ell$ coprime to $p$. So, it is enough to show $\Br(Y^s)\{p\}=0$. 
        
          We claim that $\Br(Y^s) \inj \Br(\ov Y)$. By \cite[Thm~5.2.5]{CTS}, it is enough to show $Y^s$ is geometrically connected and $H^1(Y^s,\sO_{Y^s})=0$. This can be seen by using decreasing induction on dim $Y^s$. Note that $Y^s$ is a smooth complete intersection in $\P^N_{k^s}$ of dim $d$ and hence is geometrically connected. If $d=N$ (i.e, $Y^s=\P^N_{k^s}$), then the vanishing of $H^1(Y^s, \sO_{Y^s})$ is clear. If $d <N$, we can write $Y^s=Y_1 \cap H$ as in \propref{prop:lef}, where $Y_1$ is smooth complete intersection of dim $d+1$ in $\P^N_{k^s}$ and $H$ is a smooth hypersurface section. Then 
         $$0 \to \sO_{Y_1}(-H) \to \sO_{Y_1} \to \sO_{Y} \to 0 $$
         induces an exact sequence
         $$H^1(Y_1, \sO_{Y_1}) \to H^1(Y, \sO_Y) \to H^2(Y_1,\sO_{Y_1}(-H)).$$
         By induction hypothesis, $H^1(Y_1, \sO_{Y_1})=0$. Also, note that $H^2(Y_1,\sO_{Y_1}(-H))=0$, because $(Y_1,\sO_{Y_1}(H))$ is a Kodaira pair (by \lemref{lem:Kodaira}) and dim $Y_1=d+1 \ge 4$. Hence the claim is proved.

         Now, to prove the corollary, it is enough to show $\Br(\ov Y)\{p\}=0$. For that we consider the diagram \eqref{eqn:lef-Br}, replacing $X$ and $Y$ by $\P^N_{\ov k}$ and $\ov Y$, respectively.  By \cite[XII, Cor~3.7]{SGA2}, we have $\Pic(\P^N_{\ov k}) \xrightarrow{\cong} \Pic(\ov Y)$. Hence, using \propref{prop:lef}, we get $\Br(\P^N_{\ov k})\{p\} \cong \Br(\ov Y)\{p\}$. But $\Br(\P^N_{\ov k})=\Br(\ov k)=0$ by \cite[Thm~6.1.3]{CTS}. Hence the claim is proved.
        
    \end{proof}

\end{document}